\setlist[enumerate,1]{label=\arabic*.,ref=\arabic*}
\newtheorem{thm}{Theorem}
\newtheorem{lem}[thm]{Lemma}
\newtheorem{prop}[thm]{Proposition}
\newtheorem{cor}[thm]{Corollary}
\newtheorem*{claim}{Claim}
\newtheorem{intthm}{Theorem}
\newtheorem{intprop}[intthm]{Proposition}
\theoremstyle{definition}
\newtheorem{dfn}[thm]{Definition}
\newtheorem{rem}[thm]{Remark}
\newtheorem{example}[thm]{Example}
\newtheorem{notation}[thm]{Notation}
\newcommand\Hom{\mathop{\mathrm{Hom}}\nolimits}
\newcommand\Nat{\mathop{\mathrm{Nat}}\nolimits}
\newcommand\End{\mathop{\mathrm{End}}\nolimits}
\newcommand\Aut{\mathop{\mathrm{Aut}}\nolimits}
\newcommand\Out{\mathop{\mathrm{Out}}\nolimits}
\newcommand\Ad{\mathop{\mathrm{Ad}}\nolimits}
\newcommand\id{\mathop{\mathrm{id}}\nolimits}
\newcommand\im{\mathop{\mathrm{im}}\nolimits}
\newcommand\GL{\mathop{\mathrm{GL}}\nolimits}
\newcommand\PGL{\mathop{\mathrm{PGL}}\nolimits}
\newcommand\SL{\mathop{\mathrm{SL}}\nolimits}
\newcommand\PSL{\mathop{\mathrm{PSL}}\nolimits}
\newcommand\PSO{\mathop{\mathrm{PSO}}\nolimits}
\newcommand\U{\mathop{\mathrm{U}}\nolimits}
\newcommand\SU{\mathop{\mathrm{SU}}\nolimits}
\newcommand\Homeo{\mathop{\mathrm{Homeo}}\nolimits}
\newcommand\Diff{\mathop{\mathrm{Diff}}\nolimits}
\newcommand\Isom{\mathop{\mathrm{Isom}}\nolimits}
\newcommand\Stab{\mathop{\mathrm{Stab}}\nolimits}
\newcommand\Mon{\mathop{\mathrm{Mon}}\nolimits}
\newcommand\Pair{\mathop{\mathrm{Pair}}\nolimits}
\newcommand\Lie{\mathop{\mathrm{Lie}}\nolimits}
\newcommand\MAG{\mathop{\mathrm{MAG}}\nolimits}
\newcommand\MHS{\mathop{\mathrm{MHS}}\nolimits}
\newcommand\MCG{\mathop{\mathrm{MCG}}\nolimits}
\newcommand\hol{\mathop{\mathrm{hol}}\nolimits}
\newcommand{\bb}[1]{{\mathbb{#1}}}
\newcommand{\mca}[1]{{\mathcal{#1}}}
\newcommand{\mr}[1]{{\mathrm{#1}}}
\newcommand{\ms}[1]{{\mathscr{#1}}}
\newcommand{\mt}[1]{{\mathtt{#1}}}
\newcommand{\ol}[1]{{\overline{#1}}}
\newcommand{\ul}[1]{{\underline{#1}}}
\newcommand{\wt}[1]{{\widetilde{#1}}}
\newcommand{\wh}[1]{{\widehat{#1}}}
\newcommand{\bs}{{\backslash}}
\begin{document}
\title[The orbit-fixing deformation spaces of an action of a Lie groupoid]{The orbit-fixing deformation spaces\\
of an action of a Lie groupoid}
\author[H. Maruhashi]{Hirokazu Maruhashi}
\address{Department of Mathematics and Informatics, Faculty of Science, Chiba University, 1-33 Yayoi-cho, Inage, Chiba, 263-8522, Japan}
\email{maruhashihirokazu@gmail.com}
\subjclass[2020]{Primary 37C15; Secondary 22A22, 22E40, 22F05, 22F30, 30F60, 37C85, 37C86, 53C24, 57S20, 57S25, 58D27, 58H05, 58H15}
\keywords{Actions of Lie groupoids, the deformation spaces of actions, Teichm\"{u}ller spaces, parameter rigidity, cocycle rigidity, bornologies, Stefan singular foliations}

\begin{abstract}
The orbit-fixing deformation spaces of $C^\infty$ locally free actions of simply connected Lie groups on closed $C^\infty$ manifolds have been studied by several authors. In this paper we reformulate the deformation space by imitating the Teichm\"{u}ller space of a surface. The new formulation seems to be more appropriate for actions of Lie groups which are not simply connected. We also consider actions which may not be locally free, and generalize the deformation spaces for actions of Lie groupoids. Furthermore by using bornologies on Lie groupoids, we make the definition of the deformation space more suitable to deal with actions on noncompact manifolds. In this generality we prove that ``cocycle rigidity'' implies the deformation space is a point. We compute the deformation space of the action of $\PSL(2,\bb{R})$ on $\Gamma\bs\PSL(2,\bb{R})$ by right multiplication for a torsion free cocompact lattice $\Gamma$ in $\PSL(2,\bb{R})$. 
\end{abstract}

\maketitle
\tableofcontents

\section{Introduction}
The orbit-fixing deformation spaces of $C^\infty$ locally free actions of simply connected Lie groups on closed $C^\infty$ manifolds have been studied in papers such as \cite{KS}, \cite{MM}, \cite{dSa}, \cite{A2}, \cite{Ma}, \cite{Ma2} and \cite{Ma3}. 

For a $C^\infty$ locally free right action $\rho_0$ of a simply connected Lie group $G$ on a closed $C^\infty$ manifold $N_0$, the orbit-fixing deformation space of $\rho_0$ is 
\begin{equation*}
\mca{A}_{LF}(\mca{O}_{\rho_0},G)/(\Diff(\mca{O}_{\rho_0})_1\times\Aut(G)), 
\end{equation*}
ie the quotient of the set of locally free $C^\infty$ right actions of $G$ on $N_0$ which have the same orbit foliation $\mca{O}_{\rho_0}$ as $\rho_0$ by $C^\infty$ conjugacies given by diffeomorphisms of $N_0$ with certain properties and an automorphism of $G$ (Definition \ref{pralfdiffaut}). 

In this paper we introduce a different formulation of the deformation space, which uses Lie groupoids and is defined analogously to the Teichm\"{u}ller spaces of surfaces. 

Recall that the Teichm\"{u}ller space $T(\Sigma_0)$ of a surface $\Sigma_0$ is defined as the set of all homotopy (isotopy) classes of marked hyperbolic surfaces $\varphi\colon\Sigma_0\to\Sigma$ (Definition \ref{mhssmhsfxx}). Imitating this we define the Teichm\"{u}ller spaces of a $C^\infty$ right action $\rho_0$ of a Lie groupoid $\mca{G}\rightrightarrows M$ on a surjective submersion $\nu_0\colon N_0\to M$ as the sets of certain homotopy classes of ``$\rho_0$-marked action groupoids'' $\varphi\colon N_0\rtimes_{\rho_0}\mca{G}\to N\rtimes_\rho\mca{G}$ (Definition \ref{gmngprmagl} and Definition \ref{Teistarsim}). Here we consider three types of homotopies: 
\begin{itemize}
\setlength\itemsep{0em}
\item orbitwise homotopy $\mathrel{\ul{\sim_o}}$ (Definition \ref{orbitwiseh}, Definition \ref{lgpcccffsimo} and Definition \ref{gmngprmagl})
\item $nt$-homotopy $\mathrel{\ul{\sim_{nt}}}$ (Definition \ref{nthomptppp}, Definition \ref{raragpppp} and Definition \ref{gmngprmagl})
\item bounded $nt$-homotopy $\mathrel{\ul{\sim_{bnt}}}$ (Definition \ref{hnhnpnib}, Definition \ref{simbntbthe} and Definition \ref{gmngprmagl}), 
\end{itemize}
resulting in three types of Teichm\"{u}ller spaces $T_o(\rho_0)$, $T_{nt}(\rho_0)$ and $T_{bnt}(\rho_0)$. (To be more precise we use only ``bibounded $\rho_0$-marked action groupoids'' for $T_{bnt}(\rho_0)$.) $nt$ stands for natural transformations, and $b$ in $bnt$ comes from boundedness, so in the case of $bnt$ we require certain boundedness for homotopies. We use ``bornologies'' to define boundedness. A bornology is a structure on a set which gives a concept of bounded subsets (Definition \ref{asetbbor}). 

We have maps 
\begin{equation*}
T_{bnt}(\rho_0)\to T_{nt}(\rho_0)\twoheadrightarrow T_o(\rho_0)
\end{equation*}
and we will see $T_{bnt}(\rho_0)\simeq T_{nt}(\rho_0)$ if $N_0$ is compact (Proposition \ref{gmncptbnt}) since compactness implies boundedness, and $T_{nt}(\rho_0)\simeq T_o(\rho_0)$ if $\rho_0$ is locally free (Corollary \ref{tnteqto}). So all these deformation spaces coincide for a locally free action on a compact manifold. 

We will prove that if simply connectedness and locally freeness are assumed, the new and old deformation spaces coincide (Proposition \ref{gmsimiso}). 

\begin{intprop}
Let $\mca{G}\rightrightarrows M$ be an $s$-simply connected Lie groupoid, $N_0$ be a $C^\infty$ manifold, $\nu_0\colon N_0\to M$ be a surjective submersion and $\rho_0$ be a $C^\infty$ locally free right action of $\mca{G}$ on $\nu_0$. Then we have 
\begin{equation*}
T_o(\rho_0)\simeq\mca{A}_{LF}(\mca{O}_{\rho_0},\mca{G})/(\ul{\Diff}(\mca{O}_{\rho_0})_1\times\ul{\Aut}(\mca{G})). 
\end{equation*}
\end{intprop}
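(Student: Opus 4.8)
The plan is to produce mutually inverse bijections between $T_o(\rho_0)$ and the quotient $\mca{A}_{LF}(\mca{O}_{\rho_0},\mca{G})/(\ul{\Diff}(\mca{O}_{\rho_0})_1\times\ul{\Aut}(\mca{G}))$, the core being a rigidity statement for action groupoids in which both standing hypotheses enter. Fix $\sigma\in\mca{A}_{LF}(\mca{O}_{\rho_0},\mca{G})$, ie a $C^\infty$ locally free right action of $\mca{G}$ on $\nu_0$ whose orbit foliation equals $\mca{O}_{\rho_0}$. Local freeness makes the infinitesimal actions $\widehat{\rho_0},\widehat{\sigma}\colon\nu_0^*\Lie\mca{G}\to TN_0$ fibrewise injective, and the coincidence of orbit foliations forces $\im\widehat{\rho_0}=\im\widehat{\sigma}$ fibrewise, equal to the tangent distribution of $\mca{O}_{\rho_0}$; hence $\widehat{\sigma}^{-1}\circ\widehat{\rho_0}$ is a vector bundle isomorphism over $\id_{N_0}$, and since the bracket of each action Lie algebroid is pinned down by its injective anchor, a direct check shows it is the unique Lie algebroid morphism $N_0\rtimes_{\rho_0}\mca{G}\to N_0\rtimes_\sigma\mca{G}$ over $\id_{N_0}$ and an isomorphism. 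The $s$-fibres of an action groupoid of $\mca{G}$ are $s$-fibres of $\mca{G}$, so $N_0\rtimes_{\rho_0}\mca{G}$ and $N_0\rtimes_\sigma\mca{G}$ are $s$-simply connected, and Lie's second theorem for Lie groupoids integrates this algebroid isomorphism to an isomorphism of Lie groupoids $\varphi_{0,\sigma}\colon N_0\rtimes_{\rho_0}\mca{G}\to N_0\rtimes_\sigma\mca{G}$ over $\id_{N_0}$; it is in fact the only Lie groupoid morphism over $\id_{N_0}$ between them, because morphisms out of an $s$-simply connected Lie groupoid are determined by the induced Lie algebroid morphisms. By Definition \ref{gmngprmagl}, $\varphi_{0,\sigma}$ is a $\rho_0$-marked action groupoid.

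I then define $\Phi([\sigma])=[\varphi_{0,\sigma}]\in T_o(\rho_0)$. Conversely, a $\rho_0$-marked action groupoid $\varphi\colon N_0\rtimes_{\rho_0}\mca{G}\to N\rtimes_\rho\mca{G}$ is an isomorphism of Lie groupoids covering a diffeomorphism $f$ of $N_0$ onto the object manifold $N$ over $M$, so $f^*\rho$ is an action of $\mca{G}$ on $\nu_0$, again locally free, and since $\varphi$ carries orbit foliations to orbit foliations its orbit foliation is $\mca{O}_{\rho_0}$; thus $f^*\rho\in\mca{A}_{LF}(\mca{O}_{\rho_0},\mca{G})$, and I set $\Psi([\varphi])=[f^*\rho]$. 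Then $\Psi\circ\Phi=\id$ since $\varphi_{0,\sigma}$ covers $\id_{N_0}$, so $(\id_{N_0})^*\sigma=\sigma$. For $\Phi\circ\Psi=\id$, note that $f$ induces an isomorphism $N_0\rtimes_{f^*\rho}\mca{G}\to N\rtimes_\rho\mca{G}$ over $f$; composing $\varphi$ with its inverse yields an isomorphism $N_0\rtimes_{\rho_0}\mca{G}\to N_0\rtimes_{f^*\rho}\mca{G}$ over $\id_{N_0}$, which by the uniqueness just proved equals $\varphi_{0,f^*\rho}$, so $\varphi$ is $\varphi_{0,f^*\rho}$ followed by an isomorphism of targets and hence $[\varphi]=[\varphi_{0,f^*\rho}]=\Phi([f^*\rho])$ in $T_o(\rho_0)$.

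The remaining, and I expect hardest, task is to check that $\Phi$ and $\Psi$ descend to the indicated quotients; here the explicit descriptions of $\mathrel{\ul{\sim_o}}$, of $\ul{\Diff}(\mca{O}_{\rho_0})_1$, and of the $\ul{\Diff}(\mca{O}_{\rho_0})_1\times\ul{\Aut}(\mca{G})$-action of Definition \ref{pralfdiffaut} are used. Given $(h,\alpha)$ in $\ul{\Diff}(\mca{O}_{\rho_0})_1\times\ul{\Aut}(\mca{G})$, the action sends $\sigma$ to an action $\sigma'$ equipped with a canonical isomorphism of target action groupoids $N_0\rtimes_{\sigma'}\mca{G}\to N_0\rtimes_\sigma\mca{G}$ covering $h$ and $\alpha$, and an isotopy from $h$ to $\id_{N_0}$ through leaf-preserving diffeomorphisms produces, via the uniqueness of a Lie groupoid morphism over a prescribed object map, a path of $\rho_0$-marked action groupoids realizing an orbitwise homotopy between $\varphi_{0,\sigma}$ and the image of $\varphi_{0,\sigma'}$ under that isomorphism; hence $\Phi([\sigma'])=\Phi([\sigma])$. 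Conversely, an orbitwise homotopy between two $\rho_0$-marked action groupoids moves the object map within the leaves of the orbit foliation, and combined with the comparison isomorphism of their targets (which incorporates a diffeomorphism of the target manifold and an automorphism of $\mca{G}$) it shows that the actions obtained on $N_0$ via the two object maps represent the same class in $\mca{A}_{LF}(\mca{O}_{\rho_0},\mca{G})/(\ul{\Diff}(\mca{O}_{\rho_0})_1\times\ul{\Aut}(\mca{G}))$; the crucial point, once more, is the uniqueness of a morphism of action groupoids over a prescribed object map, which forces an orbitwise homotopy to be governed entirely by the induced leaf-preserving isotopy of object maps. Granting these verifications, $\Phi$ and $\Psi$ are well defined and mutually inverse, proving $T_o(\rho_0)\simeq\mca{A}_{LF}(\mca{O}_{\rho_0},\mca{G})/(\ul{\Diff}(\mca{O}_{\rho_0})_1\times\ul{\Aut}(\mca{G}))$.
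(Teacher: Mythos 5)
Your construction of the canonical marking $\varphi_{0,\sigma}$ over $\id_{N_0}$ is correct, and it is a genuinely different route from the paper's. You pass to the action Lie algebroids, use local freeness to make the anchors fibrewise injective, use the equality $\mca{O}_\sigma=\mca{O}_{\rho_0}$ of orbit foliations (as partitions into submanifolds) to identify their images with $T\mca{F}_{\rho_0}$, observe that anchor-compatibility forces bracket-compatibility when the target anchor is injective, and then integrate by Lie II, using that the action groupoids have fibres diffeomorphic to those of $\mca{G}$ and hence are $s$-simply connected. The paper instead obtains the same existence-and-uniqueness statement (Corollary \ref{simmon}) from monodromy groupoids, via the isomorphism of an $s$-simply connected foliation groupoid with $\Mon$ of its orbit foliation (Proposition \ref{fgmonlod}); the author notes after Proposition \ref{simconfolbij} that the algebroid-integration argument is an admissible alternative, so this part is fine modulo citing Lie II in the (possibly non-Hausdorff) groupoid setting. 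Your reduction to markings with base map $\id_{N_0}$ and the two mutually inverse assignments also match the paper's Lemma \ref{magid} and the maps of Section \ref{1111}.

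The step you yourself flag as hardest, the descent to the quotients, is however genuinely incomplete, and your sketch of it rests on a misreading of the definitions. First, $h\in\ul{\Diff}(\mca{O}_{\rho_0})_1$ means $h\mathrel{\ul{\sim_o}}\id$, i.e.\ $h$ admits an orbitwise \emph{homotopy} through orbit-preserving smooth maps; no isotopy through leaf-preserving diffeomorphisms is available, and avoiding isotopies is precisely the point of Section \ref{222} (the gap in \cite{MM}). Consequently your ``path of $\rho_0$-marked action groupoids'' cannot consist of isomorphisms: the intermediate base maps $h_t$ are not invertible. Second, and more seriously, ``uniqueness of a morphism over a prescribed object map'' only pins down each time slice; it gives neither the existence of a morphism of groupoids over each non-invertible $h_t$, nor --- the real crux --- the joint continuity in the point and the homotopy parameter of the resulting family, which is exactly what an orbitwise homotopy of morphisms requires. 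In the paper this is the content of Lemma \ref{gmgmpmigt}, Proposition \ref{homcon} and Corollary \ref{bggoogg}: the orbitwise homotopy of base maps is lifted to an $nt$-homotopy of morphisms by unique path lifting along the coverings $s^\prime\colon(\mca{H}^\prime)^{y}\to O^\prime$ furnished by Proposition \ref{isoorb} for foliation groupoids (this is also where local freeness of the target action is used again). Some version of this lifting argument is indispensable for the descent of your map $\Phi$ to $\mca{A}_{LF}(\mca{O}_{\rho_0},\mca{G})/(\ul{\Diff}(\mca{O}_{\rho_0})_1\times\ul{\Aut}(\mca{G}))$, equivalently for the injectivity of the comparison map, and it is missing from your proposal; the descent of $\Psi$, obtained by taking base maps and constant parts of the comparison conjugacy, is fine modulo routine checks.
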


In the studies of the orbit-fixing deformation spaces, we have been assuming the locally freeness of the actions, the simply connectedness of the Lie groups and the compactness of the manifolds. These come mainly from the following proposition in \cite{A} (not stated explicitly) which generalize Proposition 2.3 of \cite{MM}. 

\begin{intprop}\label{simpoint}
Let $G$ be a simply connected Lie group, $N_0$ be a closed $C^\infty$ manifold and $\rho_0$ be a $C^\infty$ locally free right action of $G$ on $N_0$. Assume that for any $G$-valued $C^\infty$ cocycle $c\colon N_0\times G\to G$ over $\rho_0$, there exists $\Phi\in\End(G)$ such that $c$ is $C^\infty$ cohomologous to the constant cocycle $\Phi$. Then the deformation space $\mca{A}_{LF}(\mca{O}_{\rho_0},G)/(\Diff(\mca{O}_{\rho_0})_1\times\Aut(G))$ is a point. 
\end{intprop}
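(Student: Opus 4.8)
The plan is to run the standard reduction (as in \cite{MM} and \cite{A}) from the orbit-fixing deformation space to a cohomology-type statement about $G$-valued cocycles over $\rho_0$, and then feed in the hypothesis. Given $\rho\in\mca{A}_{LF}(\mca{O}_{\rho_0},G)$, observe that since $\rho$ and $\rho_0$ are both locally free and share the orbit foliation, for every $x\in N_0$ the orbit maps $g\mapsto x\rho_0(g)$ and $g\mapsto x\rho(g)$ are universal covering maps onto the leaf $L_x$ through $x$; here simple connectedness of $G$ and discreteness of the stabilizers are used. Lifting the first through the second with the normalization $e\mapsto e$ produces a $C^\infty$ map $\bar{c}_\rho\colon N_0\times G\to G$ characterized by $x\rho_0(g)=x\rho\bigl(\bar{c}_\rho(x,g)\bigr)$; for fixed $x$ it is a diffeomorphism of $G$, and uniqueness of lifts yields the cocycle identity $\bar{c}_\rho(x,gh)=\bar{c}_\rho(x,g)\,\bar{c}_\rho\bigl(x\rho_0(g),h\bigr)$ over $\rho_0$, with $\bar{c}_{\rho_0}(x,g)=g$.

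Next I would check that this assignment intertwines the two equivalence relations. Conjugating $\rho$ by a diffeomorphism of the form $h_b\colon x\mapsto x\rho_0(b(x))$ with $b\colon N_0\to G$ of class $C^\infty$ replaces $\bar{c}_\rho$ by the cocycle cohomologous to it via the transfer function $x\mapsto\bar{c}_\rho(x,b(x))$; conversely, since $\bar{c}_\rho(x,\cdot)$ is a diffeomorphism of $G$ depending smoothly on $x$, every cocycle cohomologous to $\bar{c}_\rho$ is obtained this way. Each such $h_b$ lies in $\Diff(\mca{O}_{\rho_0})_1$: as $G$ is connected, $b$ is homotopic to the constant map with value $e$, which gives an isotopy from $h_b$ to $\id$ through $\mca{O}_{\rho_0}$-preserving diffeomorphisms. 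Finally, precomposing $\rho$ with an automorphism of $G$ amounts to post-composing the values of $\bar{c}_\rho$ with an automorphism, and so sends a constant cocycle to a constant cocycle.

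Now apply the hypothesis to $\bar{c}_\rho$: there is $\Phi\in\End(G)$ with $\bar{c}_\rho$ cohomologous to the constant cocycle $g\mapsto\Phi(g)$. By the previous paragraph there is $h_b\in\Diff(\mca{O}_{\rho_0})_1$ such that the action $\rho'$ obtained from $\rho$ by conjugation by $h_b$ has $\bar{c}_{\rho'}(x,g)=\Phi(g)$, i.e. $x\rho_0(g)=x\rho'\bigl(\Phi(g)\bigr)$ for all $x,g$. Differentiating at $g=e$ gives $W_\xi=V_{d\Phi(\xi)}$ as vector fields on $N_0$, for every $\xi\in\mf{g}$, where $W_\xi$ and $V_\xi$ are the vector fields generated by $\rho_0$ and $\rho'$. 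Since $\rho_0$ is locally free the $W_\xi$ span a space of dimension $\dim G$ at each point, hence so do the $V_{d\Phi(\xi)}$; as the latter span at most $\dim(\im d\Phi)$ dimensions, $d\Phi$ is invertible, so (using that $G$ is connected) $\Phi$ is a surjective covering homomorphism of $G$ onto itself with discrete, hence central, kernel. Simple connectedness of $G$ then forces $\ker\Phi=\{e\}$, so $\Phi\in\Aut(G)$, and $x\rho_0(g)=x\rho'(\Phi(g))$ says precisely that $\rho'$ is $\rho_0$ precomposed with $\Phi^{-1}$. Thus $\rho'$ lies simultaneously in the $\Diff(\mca{O}_{\rho_0})_1$-orbit of $\rho$ and in the $\Aut(G)$-orbit of $\rho_0$, so $\rho$ and $\rho_0$ represent the same point of $\mca{A}_{LF}(\mca{O}_{\rho_0},G)/(\Diff(\mca{O}_{\rho_0})_1\times\Aut(G))$. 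As $\rho$ was arbitrary and $\rho_0$ itself lies in $\mca{A}_{LF}(\mca{O}_{\rho_0},G)$, the quotient is a single point.

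I expect the main obstacle to be the bookkeeping of the second paragraph: one must pin down exactly how cohomology of cocycles over $\rho_0$, together with the twist by $\Aut(G)$, corresponds to conjugation by $\Diff(\mca{O}_{\rho_0})_1$ and precomposition by $\Aut(G)$ on actions --- in particular, that every transfer-function diffeomorphism genuinely belongs to the identity component $\Diff(\mca{O}_{\rho_0})_1$ --- so that the hypothesis, which is phrased purely in terms of cocycles, can be transported to a statement about the deformation space. The step that upgrades the endomorphism $\Phi$ to an automorphism, where the simple connectedness of $G$ is indispensable, is short but is the conceptual core of the argument.
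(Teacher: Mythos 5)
Your overall strategy is the classical reduction of \cite{MM} and \cite{A} (orbit-lifted cocycle $\bar c_\rho$, cocycle rigidity, then untwist), not the route this paper takes (which deduces the statement from Theorem \ref{cosp} via Propositions \ref{gmsimiso}, \ref{gmncptbnt} and Corollary \ref{tnteqto}, with the hard geometric content isolated in Proposition \ref{btop} and Theorem \ref{ctop}); that would be fine, but the two steps that carry all the difficulty are exactly the ones you do not prove. The central gap is the ``converse'' in your second paragraph: from a transfer function $P$ with $\bar c_\rho(x,g)=P(x)\Phi(g)P(x\rho_0(g))^{-1}$ you obtain only a smooth map $b$ and hence a smooth leaf-preserving map $h_b(x)=x\rho_0(b(x))=x\rho(P(x))$ satisfying the semiconjugacy $h_b(x\rho_0(g))=h_b(x)\rho(\Phi(g))$; nothing you say shows $h_b$ is injective, surjective, or has invertible differential, yet you immediately ``conjugate $\rho$ by $h_b$''. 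Your later argument that $\Phi\in\Aut(G)$ cannot repair this, because it presupposes that $\rho'$ is an action, i.e.\ that $h_b$ is already a diffeomorphism: differentiating only the semiconjugacy gives $(dh_b)W_\xi=V_{d\Phi(\xi)}\circ h_b$, from which invertibility of $d\Phi$ does not follow unless $dh_b$ is known to be injective along the leaves. Upgrading this semiconjugacy to a conjugacy is the real content of the proposition; in this paper it is done by the degree-theoretic and covering-space arguments of Proposition \ref{btop} and Theorem \ref{ctop} (with properness supplied by compactness/boundedness), and it is likewise a genuine step in \cite{MM} and \cite{A}.

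The second gap is the claim that $h_b\in\Diff(\mca{O}_{\rho_0})_1$. Your justification, ``as $G$ is connected, $b$ is homotopic to the constant map $e$,'' is false in general: $G$ simply connected does not make $[N_0,G]$ trivial (already $G=\SU(2)$, $N_0=\SU(2)$, $b=\id$ is essential, and locally free actions force $\dim N_0\ge\dim G$, so $\pi_3(G)$ obstructions are available). Moreover, even granted a null-homotopy $b_t$, the maps $x\mapsto x\rho_0(b_t(x))$ are only leaf-preserving smooth maps, not diffeomorphisms, so your claimed ``isotopy through $\mca{O}_{\rho_0}$-preserving diffeomorphisms'' is precisely the unproved assertion that this paper identifies as the gap in Proposition 2.3 of \cite{MM} (Section \ref{222}); for the $\Diff(\mca{O}_{\rho_0})_1$-version only a homotopy through maps in $C^\infty(\mca{O}_{\rho_0})$ is required, but one still has to exhibit a null-homotopic transfer function, and the bare hypothesis ``cohomologous to a constant cocycle'' does not provide one (a homotopy of leaf-preserving maps from $\id$ to $h_b$ lifts, by the covering structure of the leaves, to a null-homotopy of some transfer function inducing $h_b$, so this is not mere bookkeeping). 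This is exactly why the paper's generalization (Theorem \ref{cosp}) phrases its hypothesis as a bounded $nt$-homotopy to the constant cocycle, i.e.\ with the path $P(\cdot,t)$, $P(\cdot,0)=1$, built into the data. To make your argument complete you must either produce such a path from the classical hypothesis or import the semiconjugacy-to-conjugacy and homotopy arguments from \cite{A} or from Theorem \ref{ctop}.
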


One purpose of this paper is generalizing Proposition \ref{simpoint} to the following theorem (Theorem \ref{cosp}). 

\begin{intthm}\label{gmlnchombbt}
Let $\mca{G}\rightrightarrows M$ be a Lie groupoid, $N_0$ be a $C^\infty$ manifold, $\nu_0\colon N_0\to M$ be a surjective submersion and $\rho_0$ be a $C^\infty$ right action of $\mca{G}$ on $\nu_0$. Assume that for any ``bibounded invertible $\mca{G}$-valued $C^\infty$ cocycle'' $c\in\ul{\Hom}^{bb}(N_0\rtimes_{\rho_0}\mca{G},\mca{G})^\times$, there exists $\Phi\in\ul{\End}(\mca{G})$ such that $c$ is boundedly $nt$-homotopic to the constant cocycle $\Phi\varphi_{\rho_0}$ with respect to the ``locally compact bornology'' $\mca{B}_\mca{G}$, ie $c\mathrel{\ul{\sim_{bnt}}}\Phi\varphi_{\rho_0}$. Then $T_{bnt}(\rho_0)$ is a point. 
\end{intthm}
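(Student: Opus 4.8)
The plan is to transcribe into the language of action groupoids and (bounded) natural transformations the classical argument behind Proposition~\ref{simpoint}: turn a marking into a cocycle, trivialize the cocycle by means of the hypothesis, and then turn it back. Since $T_{bnt}(\rho_0)$ always contains the class of the identity marking $\id\colon N_0\rtimes_{\rho_0}\mca{G}\to N_0\rtimes_{\rho_0}\mca{G}$, it suffices to show that an arbitrary bibounded $\rho_0$-marked action groupoid $\varphi\colon N_0\rtimes_{\rho_0}\mca{G}\to N\rtimes_\rho\mca{G}$ represents this same class.

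First I would attach to $\varphi$ the $\mca{G}$-valued $C^\infty$ cocycle $c:=P_\rho\circ\varphi$ over $\rho_0$, where $P_\rho\colon N\rtimes_\rho\mca{G}\to\mca{G}$ is the canonical projection functor; note that the base marking $\varphi=\id$ gives $c=P_{\rho_0}=\varphi_{\rho_0}$. Because $\varphi$ is a bibounded isomorphism of Lie groupoids, $c$ lies in $\ul{\Hom}^{bb}(N_0\rtimes_{\rho_0}\mca{G},\mca{G})^\times$, and unwinding the definitions of $T_{bnt}$ and $\mathrel{\ul{\sim_{bnt}}}$ identifies $T_{bnt}(\rho_0)$ with $\ul{\Hom}^{bb}(N_0\rtimes_{\rho_0}\mca{G},\mca{G})^\times$ modulo $\mathrel{\ul{\sim_{bnt}}}$ together with the $\ul{\Aut}(\mca{G})$-action by post-composition, under which $[\varphi]$ corresponds to $[c]$. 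Applying the hypothesis to $c$ yields $\Phi\in\ul{\End}(\mca{G})$ with $c\mathrel{\ul{\sim_{bnt}}}\Phi\varphi_{\rho_0}$. Since $c$ is invertible and a bounded natural transformation has a bounded inverse (inversion in $\mca{G}$ preserves $\mca{B}_\mca{G}$), the constant cocycle $\Phi\varphi_{\rho_0}$ is invertible as well, and this forces $\Phi\in\ul{\Aut}(\mca{G})$.

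It then remains to read this back as a statement about markings. The constant cocycle $\Phi\varphi_{\rho_0}$ is the cocycle of the marking $\varphi_\Phi\colon N_0\rtimes_{\rho_0}\mca{G}\to N_0\rtimes_{\rho_0\Phi^{-1}}\mca{G}$, $(n,g)\mapsto(n,\Phi(g))$, which is a bibounded isomorphism because $\Phi$ is an automorphism; hence, via the bijection above, $c\mathrel{\ul{\sim_{bnt}}}\Phi\varphi_{\rho_0}$ translates into $[\varphi]=[\varphi_\Phi]$ in $T_{bnt}(\rho_0)$. Finally $\varphi_\Phi^{-1}$ is an admissible isomorphism of the target (it covers the automorphism $\Phi^{-1}\in\ul{\Aut}(\mca{G})$) and $\varphi_\Phi^{-1}\circ\varphi_\Phi=\id$, so $[\varphi_\Phi]=[\id]$. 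Therefore $[\varphi]=[\id]$, and $T_{bnt}(\rho_0)$ is a point.

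The routine verifications — that $c$ is a well-defined bibounded invertible cocycle, that $\varphi\mapsto c$ descends to the bijection above, and that inverses and composable products of bounded natural transformations stay bounded for $\mca{B}_\mca{G}$ — should be available from the earlier constructions of $\ul{\Hom}^{bb}$, $\mathrel{\ul{\sim_{bnt}}}$ and $T_{bnt}$. The two points I expect to need real care are: keeping every object inside the correct bornological class at each step (which is precisely why the hypothesis is stated with ``bibounded'' and $\mathrel{\ul{\sim_{bnt}}}$ rather than their unbounded counterparts), and the implication that an invertible constant cocycle $\Phi\varphi_{\rho_0}$ must come from an automorphism $\Phi\in\ul{\Aut}(\mca{G})$ — i.e.\ pinning down exactly which $\Phi\in\ul{\End}(\mca{G})$ make $\Phi\varphi_{\rho_0}$ land in $\ul{\Hom}^{bb}(N_0\rtimes_{\rho_0}\mca{G},\mca{G})^\times$.
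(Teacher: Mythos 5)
Your reduction of the problem to cocycles is fine (it is essentially Proposition \ref{ampre}, which the paper proves — though note that identification is itself not a mere unwinding of definitions; its proof already uses Proposition \ref{btop}). The genuine gap is the step ``since $c$ is invertible and a bounded natural transformation has a bounded inverse, the constant cocycle $\Phi\varphi_{\rho_0}$ is invertible as well, and this forces $\Phi\in\ul{\Aut}(\mca{G})$.'' Invertibility of a cocycle here means being an \emph{action morphism} (that $c^!$ is a diffeomorphism), and this property is not transported along an $nt$-homotopy by any soft argument about boundedness of the transformation or of its inverse. The paper itself exhibits the failure: for $\rho(x,t)=x+t$ on $\bb{R}$, the invertible cocycle $\varphi_\rho$ is $nt$-homotopic to the constant cocycle $\Phi\varphi_\rho$ with $\Phi=0\in\ul{\End}(\bb{R})\setminus\ul{\Aut}(\bb{R})$ (the example following Theorem \ref{ctop}, composed with $\varphi_\rho$). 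So ``$c$ invertible and $c\mathrel{\ul{\sim_{nt}}}\Phi\varphi_{\rho_0}$'' does \emph{not} force $\Phi\varphi_{\rho_0}$ to be invertible; only the boundedness hypothesis (the homotopy lands in $\mca{B}_\mca{G}$) rescues the conclusion, and proving that it does is exactly the content of the Semiconjugacy-to-Conjugacy Theorem for $bnt$ (Theorem \ref{ctop}), which is the hard core of the paper: it needs properness of the homotopy extracted from $\mca{B}$-boundedness, degree arguments on orbits and on the target fibers, Sard's theorem, and the hypotheses $F_\Phi=\id$ and $\nu_0$ a surjective submersion. You flagged this point as one ``needing real care'' at the end, but the proof as written treats it as an immediate consequence of a boundedness remark, and that inference is false; without it you cannot even place $\Phi\varphi_{\rho_0}$ in $\ul{\Hom}^{bb}(N_0\rtimes_{\rho_0}\mca{G},\mca{G})^\times$, so the quotient identification of Proposition \ref{ampre} cannot be applied to conclude $[\varphi]=[\id]$.

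For comparison, the paper's proof of Theorem \ref{cosp} avoids asserting $\Phi\in\ul{\Aut}(\mca{G})$ altogether (indeed it warns elsewhere that constant parts of conjugacies need not be isomorphisms): given $\varphi_\rho\varphi\mathrel{\ul{\sim_{bnt}}}\Phi\varphi_{\rho_0}$ it lifts the bounded homotopy $P$ to $\wt{P}$ in $N_0\rtimes_\rho\mca{G}$, sets $\varphi^\prime=\varphi\wt{P}(\cdot,1)$, observes $\varphi_\rho\varphi^\prime=\Phi\varphi_{\rho_0}$ so that $\varphi^\prime$ is a semiconjugacy boundedly $nt$-homotopic to the isomorphism $\varphi$, and then invokes Theorem \ref{ctop} to conclude $\varphi^\prime$ is a conjugacy. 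If you want to keep your cocycle-level formulation, you must route the key step through Theorem \ref{ctop} (or reprove its content); as it stands the argument has a hole precisely where the theorem's main difficulty lies.
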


We do not explain the terminology here but the assumptions corresponding to simply connectedness and locally freeness in Proposition \ref{simpoint} are removed, and the one corresponding to compactness is weakened to certain boundedness property expressed by $\mathrel{\ul{\sim_{bnt}}}$ and $\mca{B}_\mca{G}$ (Definition \ref{simbntbthe} and Definition \ref{loccombor}). So the generalization in this paper is focusing on the following four points: 
\begin{itemize}
\setlength\itemsep{0em}
\item actions of Lie groups which may not be simply connected
\item actions which may not be locally free
\item actions on possibly noncompact manifolds
\item actions of Lie groupoids instead of Lie groups (a Lie group is a Lie groupoid over a point). 
\end{itemize}

The first (and partially the second) is made possible by the use of the Teichm\"{u}ller space like formulation. The third is done by introducing bornologies on Lie groupoids. 

We deduce Theorem \ref{gmlnchombbt} from the following ``main theorem'' of this paper (Theorem \ref{ctop}). It says a semiconjugacy is in fact a conjugacy under certain conditions. 

\begin{intthm}[Semiconjugacy-to-Conjugacy Theorem for $bnt$]
Let $\mca{G}\rightrightarrows M$, $\mca{G}^\prime\rightrightarrows M^\prime$ be Lie groupoids, $N$, $N^\prime$ be $C^\infty$ manifolds, $\nu\colon N\to M$, $\nu^\prime\colon N^\prime\to M^\prime$ be $C^\infty$ maps, $\rho$ (resp. $\rho^\prime$) be a $C^\infty$ right action of $\mca{G}$ (resp. $\mca{G}^\prime$) on $\nu$ (resp. $\nu^\prime$) and 
\begin{equation*}
N\rtimes_\rho\mca{G}\overset{\varphi}{\underset{\varphi^\prime}{\rightrightarrows}}N^\prime\rtimes_{\rho^\prime}\mca{G}^\prime
\end{equation*}
be morphisms between the action groupoids. Assume $\varphi$ is an isomorphism and $\varphi^\prime$ is a semiconjugacy, ie there exists a morphism $\Phi\colon\mca{G}\to\mca{G}^\prime$ such that 
\begin{equation*}
\begin{tikzcd}
N\rtimes_\rho\mca{G}\ar[r,"\varphi^\prime"]\ar[d]\ar[rd,phantom,"\circlearrowright"{xshift=-5}]&N^\prime\rtimes_{\rho^\prime}\mca{G}^\prime\ar[d]\\
\mca{G}\ar[r,"\Phi"']&\mca{G}^\prime, 
\end{tikzcd}
\end{equation*}
where the vertical arrows are the second projections. Let $F_\Phi\colon M\to M^\prime$ be the base map of $\Phi$. If: 
\begin{itemize}
\setlength\itemsep{0em}
\item $\varphi\sim_{bnt}\varphi^\prime$ with respect to the locally compact bornology $\mca{B}_{N^\prime\rtimes_{\rho^\prime}\mca{G}^\prime}$
\item $F_\Phi\colon\nu(N)\to M^\prime$ is injective
\item $F_\Phi\colon\ul{O}\to M^\prime$ is an immersion for any orbit $\ul{O}$ of $\mca{G}$ such that $\ul{O}\subset\nu(N)$
\item $\nu^\prime$ is a surjective submersion, 
\end{itemize}
then $\varphi^\prime$ is a conjugacy. 
\end{intthm}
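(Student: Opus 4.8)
The plan is to reduce the assertion to the single claim that the base map $\varphi'_0\colon N\to N'$ of $\varphi'$ is a diffeomorphism. I would first unwind $\varphi\sim_{bnt}\varphi'$: it provides a natural transformation $\eta$ from $\varphi$ to $\varphi'$, bounded with respect to $\mca{B}_{N'\rtimes_{\rho'}\mca{G}'}$. Writing $\eta(x)=(\varphi_0(x),c(x))$ with $c\colon N\to\mca{G}'$ smooth, this says $\varphi'_0(x)=\varphi_0(x)\cdot_{\rho'}c(x)$, with $t'(c(x))=\nu'(\varphi_0(x))$ and $s'(c(x))=\nu'(\varphi'_0(x))$. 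The commuting square with the second projections then gives $\nu'\circ\varphi'_0=F_\Phi\circ\nu$ and $\varphi'(x,g)=(\varphi'_0(x),\Phi(g))$ on arrows, and naturality of $\eta$ gives the identity $\Phi(g)=c(x)^{-1}\gamma(x,g)\,c(x\cdot g)$, where I write $\varphi(x,g)=(\varphi_0(x),\gamma(x,g))$; since $\varphi$ is an isomorphism, $\varphi_0$ is a diffeomorphism carrying $\mca{G}$-orbits onto $\mca{G}'$-orbits and each $\gamma(x,-)$ is a diffeomorphism between the relevant source fibres. Granting that $\varphi'_0$ is a diffeomorphism, the arrow map $(x,g)\mapsto(\varphi'_0(x),\Phi(g))$ of $\varphi'$ then has an explicit smooth inverse — sending $(n',h')$ to $(x,g)$ with $x=(\varphi'_0)^{-1}(n')$, $u=(\varphi'_0)^{-1}(n'\cdot_{\rho'}h')$, and $g$ the unique solution of $\gamma(x,g)=c(x)\,h'\,c(u)^{-1}$ — so $\varphi'$ is an isomorphism and hence a conjugacy. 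So everything comes down to showing $\varphi'_0$ is a diffeomorphism.

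For injectivity of $\varphi'_0$: if $\varphi'_0(x)=\varphi'_0(y)$ then $F_\Phi(\nu(x))=F_\Phi(\nu(y))$, so $\nu(x)=\nu(y)$ by injectivity of $F_\Phi$ on $\nu(N)$; also $\varphi_0(x)$ and $\varphi_0(y)$ lie in one $\mca{G}'$-orbit (they are joined to $\varphi'_0(x)$ through $c$), so $x$ and $y$ lie in one $\mca{G}$-orbit since $\varphi_0$ preserves orbits, whence $y=x\cdot h$ for some $h$ in the $\mca{G}$-isotropy group at $\nu(x)$. From $\Phi(k)=c(x)^{-1}\gamma(x,k)c(x)$ for $k$ fixing $x$, one checks that $\Phi$ restricts to a group isomorphism $\Stab_\rho(x)\xrightarrow{\sim}\Stab_{\rho'}(\varphi'_0(x))$, and combining this with the hypothesis that $F_\Phi$ is an immersion on each orbit $\ul{O}\subset\nu(N)$ — which keeps $\ker d\Phi$ tangent to the isotropy directions over such orbits — forces $x\cdot h=x$, i.e. $y=x$. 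For $\varphi'_0$ to be a local diffeomorphism: using that $\nu'$ is a submersion and $F_\Phi$ is an injective immersion on each orbit inside $\nu(N)$, any $v\in\ker d(\varphi'_0)_x$ must be tangent to the orbit foliation $\mca{O}_\rho$ at $x$; along leaves $\varphi'_0$ is $\Phi$-equivariant, and the diffeomorphism property of $\gamma(x,-)$ together with the same immersion hypothesis forces $v=0$. Hence $d(\varphi'_0)_x$ is injective, and since $\varphi_0$ is a diffeomorphism $\dim N=\dim N'$, so $\varphi'_0$ is a local diffeomorphism. (Only the natural transformation $\eta$, not its boundedness, has been used so far; this already yields a ``local conjugacy''.)

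It remains to show $\varphi'_0$ is onto, which is where the boundedness in $\sim_{bnt}$ and the locally compact bornology $\mca{B}_{N'\rtimes_{\rho'}\mca{G}'}$ are used: boundedness of $\eta$ controls $c$, and hence $\varphi'_0$, closely enough to the proper diffeomorphism $\varphi_0$ to make $\varphi'_0$ proper; on noncompact $N$ this is the substitute for compactness. Then $\varphi'_0$, being an injective proper local diffeomorphism, maps $N$ diffeomorphically onto an open and closed submanifold of $N'$, and one concludes this submanifold is all of $N'$ by comparing with the diffeomorphism $\varphi_0$ along orbits and using that $\nu'$ is a surjective submersion; so $\varphi'_0$ is a diffeomorphism and $\varphi'$ a conjugacy. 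The step I expect to be hardest is the local-diffeomorphism assertion in the non-locally-free case: controlling $d\varphi'_0$ both along and transverse to the orbit foliation needs a careful analysis of the isotropy groups and of how $\ker d\Phi$ sits relative to them, and this is exactly where the immersion hypothesis on orbits and the submersion hypothesis on $\nu'$ enter together; extracting properness from the precise bornological meaning of $\sim_{bnt}$ is the other delicate point.
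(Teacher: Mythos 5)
Your overall reduction (``everything comes down to $F_{\varphi'}$ being a diffeomorphism'') is reasonable, and your explicit inverse formula for $\varphi'$ is a genuine shortcut past the paper's separate proper-homotopy argument on the target fibers. But the way you propose to prove that $F_{\varphi'}$ is a diffeomorphism has a genuine gap: you claim that injectivity and the local-diffeomorphism property of $F_{\varphi'}$ follow from the natural transformation together with the injectivity/immersion hypotheses on $F_\Phi$ and the submersion hypothesis on $\nu'$, \emph{before} any use of boundedness (``Only the natural transformation $\eta$, not its boundedness, has been used so far''). This is false, and the paper's own example after Theorem \ref{ctop} refutes it: for the translation action $\rho(x,t)=x+t$ of $\bb{R}$ on $\bb{R}$, the isomorphism $\varphi=\id\times\id$ and the semiconjugacy $\varphi'=0\times0$ (with $\Phi=0$, so $M=M'=\mathrm{pt}$ and $F_\Phi$ is trivially injective and an immersion on the unique orbit, and $\nu'$ is a surjective submersion) are $nt$-homotopic, yet $F_{\varphi'}\equiv0$ is neither injective nor a local diffeomorphism. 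In particular your step ``the immersion hypothesis forces $x\cdot h=x$'' has no justification: the element $h$ you produce lies in the groupoid isotropy $\mca{G}^{\nu(x)}_{\nu(x)}$, not in $\Stab_\rho(x)$, and the isomorphism $\Phi\colon\Stab_\rho(x)\to\Stab_{\rho'}(F_{\varphi'}(x))$ (which is correct) says nothing about elements outside $\Stab_\rho(x)$ whose $\Phi$-image happens to fix $F_{\varphi'}(x)$ --- in the example every $h$ does. Likewise your infinitesimal argument along leaves cannot force $v=0$: in the example $dF_{\varphi'}=0$ everywhere. The correct logical order is the opposite one: boundedness of the $nt$-homotopy makes $H=s'P\colon N\times I\to N'$ (and the analogous fiberwise homotopies, after paracompactness of target fibers, Lemma \ref{tarpara}) \emph{proper}; degree theory for proper maps then gives surjectivity of $F_{\varphi'}$ onto each orbit; only then does Sard's theorem produce a regular point, which combined with the immersion hypothesis on $F_\Phi|_{\ul{O}}$ and translation by the action (Lemma \ref{onuonud}) yields that $(F_{\varphi'})_*$ is an isomorphism along orbits, and the normal-direction isomorphism comes from the natural transformation via Lemma \ref{tsiso} as in Proposition \ref{btop}; finally properness plus local diffeomorphism makes $F_{\varphi'}$ a covering, which has degree one because $F_{\varphi'}$ is homotopic to the diffeomorphism $F_\varphi$, and injectivity falls out of covering-space theory, not out of the pointwise hypotheses on $F_\Phi$.

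A secondary, fixable slip: ``so $\varphi'$ is an isomorphism and hence a conjugacy'' is not a valid implication in this paper's terminology --- there are semiconjugacies that are isomorphisms but not conjugacies. Your explicit inverse is not of the form $F\rtimes\Phi'$ (it involves $c(x)$ and $c(u)$ pointwise), so you have not shown it is a semiconjugacy; you need Proposition \ref{conjdiffinj}, which is exactly where the injectivity of $F_\Phi\colon\nu(N)\to M'$ and the surjective submersion hypothesis on $\nu'$ are consumed in the paper (rather than in the injectivity of the base map, where you tried to use them).
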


\subsubsection*{Outline of the paper}
In Section \ref{222} we review the definitions of parameter rigidity in \cite{MM} (Definition \ref{prmmgna}) and \cite{Ma} (Definition \ref{pralfdiffaut}). This is the case when the orbit-fixing deformation space is a point. So roughly speaking, a $C^\infty$ locally free action of a connected Lie group on a compact $C^\infty$ manifold is parameter rigid if any $C^\infty$ action of the Lie group with the same orbit foliation is $C^\infty$ conjugate to the original action. The definition in \cite{MM} requires the conjugacy is isotopic to the identity map through leaf-preserving diffeomorphisms, whereas the definition in \cite{Ma} requires the conjugacy is only homotopic to the identity map through leaf-preserving diffeomorphisms. Generally speaking, isotopies are more difficult to deal with than homotopies. And in fact with the definition in \cite{MM} there is a gap in the proof of Proposition 2.3 in \cite{MM}, which is essential to the theory. Namely we do not know how to show the conjugacy is isotopic to the identity, while the existence of a homotopy is obvious. For this reason I changed the definition of parameter rigidity in \cite{Ma}, which uses a homotopy instead of an isotopy. 

In Section \ref{333} we review basic things about Lie groupoids (Definition \ref{grptsgm1g2} and Definition \ref{liegrpgmtx}) and morphisms between them (Definition \ref{morgmgmpfpfp}). 

In Section \ref{444} we review the definitions of an action of a Lie groupoid (Definition \ref{actleftnrgyt}), an action morphism (Definition \ref{hnggnmactmor}) and an action groupoid (Definition \ref{actiongroupoid}). Conjugacies and semiconjugacies between actions are formulated as certain morphisms between corresponding action groupoids (Definition \ref{semiconconpng}). 

In Section \ref{999} we review the fact that the target fibers of a Lie groupoid have the structures of principal bundles (Proposition \ref{isoorb}). Here we deal with the case when the base manifold of a Lie groupoid may not be Hausdorff or second countable, though the proof of the fact remains the same. 

In Section \ref{555} we define orbitwise homotopies (Definition \ref{orbitwiseh} and Definition \ref{lgpcccffsimo}) and $nt$-homotopies (Definition \ref{nthomptppp} and Definition \ref{raragpppp}) between morphisms of Lie groupoids, which are used to define two of the three Teichm\"{u}ller spaces. To prove certain properties (Lemma \ref{coocooc} and Lemma \ref{hhscconor}) we review basic things on Stefan singular foliations (Definition \ref{stelmixlf}) and singular distributions (Definition \ref{sigdistmdm}). 

In Section \ref{666} after reviewing the definition of a bornology (Definition \ref{asetbbor}), we introduce the locally compact bornology on a topological groupoid with Hausdorff base space (Proposition \ref{loccombor}) and the action groupoid bornology of an action of a Lie groupoid (Definition \ref{acgrbong}). We define bounded $nt$-homotopies here (Definition \ref{hnhnpnib} and Definition \ref{simbntbthe}). 

In Section \ref{777} after reviewing coarse structures (Definition \ref{coarsestrdee}), we define a fiberwise coarse structure (Definition \ref{fibwcoaseee}) and show that there is a one-to-one correspondence between base-bounded compatible bornologies and fiberwise coarse structures on a topological groupoid (Proposition \ref{glfcsbbcb}). 

In Section \ref{888} we define three types of Teichm\"{u}ller spaces $T_o(\rho_0)$, $T_{nt}(\rho_0)$ and $T_{bnt}(\rho_0)$ of an action $\rho_0$ of a Lie groupoid (Definition \ref{gmngprmagl} and Definition \ref{Teistarsim}). We use a bornology to define $T_{bnt}(\rho_0)$. We introduce the mapping class group $\MCG_a(\rho_0)$, which acts on $T_a(\rho_0)$, for each $a\in\left\{o,nt,bnt\right\}$. 

In Section \ref{1010} we review the monodromy groupoids (Definition \ref{c0xfmon} and Proposition \ref{monts1lgm}) and foliation groupoids (Definition \ref{folgrgxxx}). Since the action groupoid of an action is a foliation groupoid if and only if the action is locally free (Example \ref{gmnmnrgyy}), this amounts to deal with locally freeness in terms of action groupoids. 

In Section \ref{1111} we define a comparison map between the Teichm\"{u}ller space $T_o(\rho_0)$ and the set $\mca{A}_{LF}(\mca{O}_{\rho_0},\mca{G})/(\ul{\Diff}(\mca{O}_{\rho_0})_1\times\ul{\Aut}(\mca{G}))$. We show the map is bijective if the Lie groupoid is $s$-simply connected (Proposition \ref{gmsimiso}). 

In Section \ref{1313} we prove that under certain conditions a semiconjugacy which is $nt$-homotopic to an isomorphism is in fact a conjugacy (Proposition \ref{btop}). As a corollary we see the Teichm\"{u}ller spaces $T_{bnt}(\rho_0)$ and $T_{nt}(\rho_0)$ are identified with certain quotient sets of some sets of ``invertible $\mca{G}$-valued cocycles'' (Proposition \ref{ampre}): 
\begin{align*}
T_{nt}(\rho_0)&\simeq\ul{\Aut}(\mca{G})\bs(\ul{\Hom}(N_0\rtimes_{\rho_0}\mca{G},\mca{G})^\times/\ul{\sim_{nt}}), \\
T_{bnt}(\rho_0)&\simeq\ul{\Aut}(\mca{G})\bs(\ul{\Hom}^{bb}(N_0\rtimes_{\rho_0}\mca{G},\mca{G})^\times/\ul{\sim_{bnt}}). 
\end{align*}

In Section \ref{1212} we prove that under certain conditions a semiconjugacy which is boundedly $nt$-homotopic to an isomorphism is in fact a conjugacy (Theorem \ref{ctop}). This is the main result of the paper. As a corollary we see that ``cocycle rigidity'' implies ``parameter rigidity'' in this general setting (Theorem \ref{cosp}). 

In Section \ref{1414} we compute the Teichm\"{u}ller space $T_o(\rho_0)$ of a transitive action of a Lie groupoid (Proposition \ref{trans}). 

In Section \ref{1717} we compute the Teichm\"{u}ller space $T_o(\rho_0)$ for the action $G\curvearrowleft G$ by right multiplication and for the action $\Gamma\bs S\curvearrowleft S$ by right multiplication, where $S$ is a simply connected solvable Lie group and $\Gamma$ is a lattice in $S$ (Proposition \ref{torautshgs} and its corollaries). 

In Section \ref{1818} we compute $T_o(\rho_\Gamma)$ for the action $\Gamma\bs\PSL(2,\bb{R})\stackrel{\rho_\Gamma}{\curvearrowleft}\PSL(2,\bb{R})$ by right multiplication, where $\Gamma$ is a torsion free cocompact lattice in $\PSL(2,\bb{R})$. As a result we obtain 
\begin{equation*}
T_o(\rho_\Gamma)\simeq T(\Sigma)\times\Hom(\wt{\Gamma},Z), 
\end{equation*}
where $T(\Sigma)$ is the Teichm\"{u}ller space of the surface $\Sigma=\Gamma\bs\PSL(2,\bb{R})/\PSO(2)$ and 
\begin{equation*}
1\to Z\to\wt{\Gamma}\to\Gamma\to1
\end{equation*}
is the central extension obtained from the universal covering $\wt{\PSL}(2,\bb{R})\to\PSL(2,\bb{R})$ (Theorem \ref{traghg}, Theorem \ref{authauthhom} and Proposition \ref{bijtsigauth}).

\subsubsection*{Notations and conventions}
\begin{itemize}
\setlength\itemsep{0em}
\item A topological space $X$ is simply connected if $X$ is path connected and $\pi_1(X,x)$ is trivial for some point $x\in X$. 
\item $C^\infty$ manifolds are assumed to be pure, Hausdorff, second countable and without boundary unless otherwise stated. 
\item For $C^\infty$ manifolds $M$ and $M^\prime$, $C^\infty(M,M^\prime)$ denotes the set of $C^\infty$ maps from $M$ to $M^\prime$, and $\Diff(M)$ denotes the group of $C^\infty$ diffeomorphisms of $M$. 
\item For a $C^\infty$ real vector bundle $E\to M$, $\Gamma(E)$ denotes the real vector space of $C^\infty$ sections of $E$. 
\item $\id$ denotes the identity map. 
\item $I$ often denotes the closed unit interval $[0,1]$. 
\item For Lie groupoids $\mca{G}\rightrightarrows M$, $\mca{G}^\prime\rightrightarrows M^\prime$ and a morphism $\varphi\colon\mca{G}\to\mca{G}^\prime$, $F_\varphi\colon M\to M^\prime$ denotes the base map of $\varphi$. 
\item For a Lie groupoid $\mca{G}\rightrightarrows M$, $\mca{O}_\mca{G}$ is the set of orbits of $\mca{G}$ and $\mca{F}_\mca{G}$ is the set of connected components of orbits of $\mca{G}$. 
\item For a Lie groupoid $\mca{G}\rightrightarrows M$, a $C^\infty$ manifold $N$, a $C^\infty$ map $\nu\colon N\to M$ and an action $\rho\in\mca{A}(\nu,\mca{G})$, $\varphi_\rho\colon N\rtimes_\rho\mca{G}\to\mca{G}$ denotes the second projection, which we call the associated action morphism of $\rho$. We write $\mca{O}_\rho=\mca{O}_{N\rtimes_\rho\mca{G}}$ and $\mca{F}_\rho=\mca{F}_{N\rtimes_\rho\mca{G}}$. 
\item See Definition \ref{gxgxpxx} for $\iota_x\colon\pi_1(\Gamma\bs X,\Gamma x)\to\Gamma$. 
\item See Notation \ref{gh2agh2yb} for $\ol{y}$, $\ol{i}$ and so on. 
\end{itemize}

\subsubsection*{Acknowledgement}
In 2022, I was asked to give informal lectures on parameter rigidity at Chiba University by Hiroki Matui. During the preparation, I tried using Lie groupoids, and it finally resulted in this paper. I would like to thank Hiroki Matui and those who attended my talks, including Masahiro Futaki, Akihiko Arai, Azuna Nishida, Takehiko Mori, Kaoru Kusano and Hiroshi Ando. 

I also thank Mitsunobu Tsutaya and Yu Nishimura for the discussion on this paper in online seminars held almost every two weeks during the development of this paper. The idea of comparing the diffeomorphism group with the homeomorphism group in the proof of Lemma \ref{sigconorihthe} is due to Mitsunobu Tsutaya. 

I thank Masayuki Asaoka for pointing out an error in the proof of a lemma about $\Diff(\mca{F})$, which is not contained in the current version of the paper.

\section{Two different definitions of parameter rigidity}\label{222}
\subsection{The definition in \cite{MM}}
\begin{dfn}
Let $G$ be a Lie group and $N$ be a $C^\infty$ manifold. Let 
\begin{equation*}
\mca{A}(N,G)=\left\{\text{$C^\infty$ right actions of $G$ on $N$}\right\}. 
\end{equation*}
For $\rho\in\mca{A}(N,G)$, $F\in\Diff(N)$ and $\Phi\in\Aut(G)$, define $\rho^{(F,\Phi)}\in\mca{A}(N,G)$ by $\rho^{(F,\Phi)}(y,g)=F^{-1}\rho(F(y),\Phi(g))$ for $y\in N$ and $g\in G$. Then we have an action $\mca{A}(N,G)\curvearrowleft\Diff(N)\times\Aut(G)$ defined by $(\rho,(F,\Phi))\mapsto\rho^{(F,\Phi)}$. 
\end{dfn}

\begin{dfn}
Let $G$ be a Lie group and $N$ be a $C^\infty$ manifold. Let 
\begin{equation*}
\mca{A}_{LF}(N,G)=\left\{\rho\in\mca{A}(N,G)\ \middle|\ \text{$\rho$ is locally free}\right\}. 
\end{equation*}
When $G$ is connected, let $\mca{F}_\rho$ be the orbit foliation of $\rho\in\mca{A}_{LF}(N,G)$. 
\end{dfn}

\begin{dfn}
Let $N$ be a $C^\infty$ manifold and $\mca{F}$ be a $C^\infty$ foliation of $N$. Let 
\begin{equation*}
\Diff(\mca{F})=\left\{F\in\Diff(N)\ \middle|\ F(L)\subset L\text{ for any }L\in\mca{F}\right\}. 
\end{equation*}
It is known that $\Diff(N)$, equipped with the weak $C^\infty$ topology (aka the $C^\infty$ compact-open topology or the weak Whitney topology) of $C^\infty(N,N)$, is a topological group. So $\Diff(\mca{F})$ is also a topological group, and let $\Diff(\mca{F})_0$ be its identity path component. 
\end{dfn}

\begin{rem}
We do not know whether the path components of $\Diff(\mca{F})$ coincide with the connected components of $\Diff(\mca{F})$. (The path components of $\Diff(N)$ coincide with the connected components of $\Diff(N)$.) 
\end{rem}

\begin{dfn}
Let $G$ be a connected Lie group, $N$ be a $C^\infty$ manifold and $\mca{F}$ be a $C^\infty$ foliation of $N$. Let 
\begin{equation*}
\mca{A}_{LF}(\mca{F},G)=\left\{\rho\in\mca{A}_{LF}(N,G)\ \middle|\ \mca{F}_\rho=\mca{F}\right\}. 
\end{equation*}
We have an action $\mca{A}_{LF}(\mca{F},G)\curvearrowleft\Diff(\mca{F})\times\Aut(G)$. 
\end{dfn}

Let $I=[0,1]$. 

\begin{lem}
Let $N$ be a compact $C^\infty$ manifold, $\mca{F}$ be a $C^\infty$ foliation of $N$ and $F$, $F^\prime\in\Diff(\mca{F})$. Then the following are equivalent: 
\begin{enumerate}
\item There exists a continuous map $c\colon I\to\Diff(\mca{F})$ such that $c(0)=F$ and $c(1)=F^\prime$, where $\Diff(\mca{F})$ is equipped with the weak $C^\infty$ topology of $C^\infty(N,N)$. 
\item There exists a continuous map $H\colon N\times I\to N$ such that: 
\begin{itemize}
\item $H(\cdot,0)=F$ and $H(\cdot,1)=F^\prime$
\item $H(\cdot,t)\in\Diff(\mca{F})$ for all $t\in I$. 
\end{itemize}
\item There exists a $C^\infty$ map $H\colon N\times I\to N$ such that: 
\begin{itemize}
\item $H(\cdot,0)=F$ and $H(\cdot,1)=F^\prime$
\item $H(\cdot,t)\in\Diff(\mca{F})$ for all $t\in I$. 
\end{itemize}
\end{enumerate}
\end{lem}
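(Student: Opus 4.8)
The plan is to prove the chain of implications $(1)\Rightarrow(2)$, $(2)\Rightarrow(3)$, $(3)\Rightarrow(1)$, which closes the cycle. The first and last are soft and I would dispose of them quickly; the real content is $(2)\Rightarrow(3)$, a foliated smoothing of a homotopy, and that is where essentially all the work lies.

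For $(1)\Rightarrow(2)$: the weak $C^\infty$ topology refines the compact--open topology, so a continuous $c\colon I\to\Diff(\mca{F})$ is in particular continuous into $C^0(N,N)$; since $N$ is locally compact Hausdorff the evaluation map $C^0(N,N)\times N\to N$ is continuous, so $H(y,t):=c(t)(y)$ defines a continuous map $N\times I\to N$ with $H(\cdot,0)=F$, $H(\cdot,1)=F^\prime$ and $H(\cdot,t)=c(t)\in\Diff(\mca{F})$. For $(3)\Rightarrow(1)$: if $H\colon N\times I\to N$ is $C^\infty$ then, $N$ being compact, all $N$-directional partial derivatives of $H$ up to any fixed order are uniformly continuous on $N\times I$, whence $t\mapsto H(\cdot,t)$ is continuous into $C^\infty(N,N)$ with the weak $C^\infty$ topology; as each slice lies in $\Diff(\mca{F})$ by hypothesis, this is the desired path.

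For $(2)\Rightarrow(3)$ I would argue as follows. Each slice $H(\cdot,t)$ is \emph{already} a $C^\infty$ leaf-preserving diffeomorphism (hence so is its inverse), so the task is to upgrade the merely continuous joint dependence on $(y,t)$ to a $C^\infty$ one while keeping every slice a leaf-preserving diffeomorphism and fixing the two ends. Fix a $C^\infty$ Riemannian metric on $N$ and let $\exp^{\mca{F}}$ be the associated leafwise exponential map. Using a finite cover of $N$ by foliation charts together with the compactness of $N$, one obtains $\epsilon>0$ such that $\exp^{\mca{F}}$ restricts to a $C^\infty$ embedding of the $\epsilon$-disc subbundle of $T\mca{F}$ onto a leafwise $\epsilon$-neighborhood of the diagonal, and such that any $C^\infty$ self-map of $N$ of the form $\exp^{\mca{F}}(v)$ with $v$ a $C^1$-small section of that disc bundle is automatically a leaf-preserving diffeomorphism. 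By uniform continuity of $H$ and a Lebesgue-number argument for a foliation-chart cover — on a connected subset of a single leaf lying in one foliation chart, leafwise distance is controlled by ambient distance — one can choose a subdivision $0=t_0<\dots<t_m=1$ so fine that on each $[t_{k-1},t_k]$ the homotopy $t\mapsto H(\cdot,t)\circ H(\cdot,t_{k-1})^{-1}$ moves every point within a single, arbitrarily small, plaque; on such a subinterval it therefore has the form $\exp^{\mca{F}}(v_t)$ for a continuous family of $C^\infty$ sections $v_t$ with $v_{t_{k-1}}=0$ and $\lVert v_t\rVert_{C^0}$ as small as we wish. Mollify $(y,t)\mapsto v_t(y)$ in the $t$-direction and, within the foliation charts, in the plaque directions, at a fixed scale $\delta$; the integration-by-parts estimate $\lVert\nabla(v*\phi_\delta)\rVert\le\lVert v\rVert_{C^0}\lVert\nabla\phi_\delta\rVert_{L^1}$ shows that when $\lVert v_t\rVert_{C^0}\ll\delta$ the mollified family $\wt v_t$ is $C^1$-small, so $\exp^{\mca{F}}(\wt v_t)$ is a leaf-preserving diffeomorphism for every $t$ and is $C^\infty$ in $(y,t)$. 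Gluing the resulting pieces — after a reparametrization of each subinterval that makes the family $t$-constant near the subdivision points, and after prepending and appending short leafwise-exponential homotopies connecting the smoothed $C^1$-small perturbations $F_\delta,F^\prime_\delta$ of $F,F^\prime$ back to $F$ and $F^\prime$ themselves — yields the required $C^\infty$ map $\wt H\colon N\times I\to N$.

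The hard part is the foliated bookkeeping in $(2)\Rightarrow(3)$: constructing the uniform leafwise tubular neighborhood of the diagonal, bounding leafwise displacement by ambient displacement so that the subdivision can be made effective, and — the crux — carrying out the mollification so that it respects the foliation (keeping the smoothed slices leaf-preserving) while still producing diffeomorphisms, which forces the mollification scale to dominate the leafwise displacement and hence the use of a partition of unity subordinate to a sufficiently fine foliation-chart cover rather than a naive global smoothing. Once these estimates are in place the remaining details are routine.
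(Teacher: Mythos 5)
Your outer skeleton for $(2)\Rightarrow(3)$ — subdivide $[0,1]$ so that consecutive slices are leafwise $\epsilon$-close, write the increment as the leafwise exponential of a small smooth section, and exponentiate back — is the same as the paper's, and your $(1)\Rightarrow(2)$, $(3)\Rightarrow(1)$ are fine. The genuine gap is in your smoothing device. You mollify the family $(y,t)\mapsto v_t(y)$ only in $t$ and in the plaque directions and claim the result is $C^\infty$ in $(y,t)$ and $C^1$-small. Neither claim follows. Hypothesis (2) gives no control on the $y$-derivatives of the slices as functions of $t$: each $H(\cdot,t)$ is smooth, but the family is only $C^0$ in $t$, so the transverse derivatives of $v_s$ may blow up as $s$ varies. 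Consequently you cannot differentiate the $s$-convolution under the integral sign in the un-mollified (transverse) variables, and in fact an $s$-average of a jointly continuous family of slice-wise smooth functions need not be differentiable at all in a variable you do not convolve in: model families of the shape $v_s(y)=s\sin(y/s^{2})$ (arbitrarily small in sup norm, smooth in $y$ for each $s$, jointly continuous) have $\int_0^1 v_s(y)\,ds\sim\tfrac12\,y\log(1/\lvert y\rvert)$ near $y=0$, which is not differentiable there, and such behavior can be placed in a transverse coordinate. So $\wt v_t$ need not be smooth, $\exp^{\mca{F}}(\wt v_t)$ need not be a $C^\infty$ map of $N$, and your $C^1$-smallness estimate only bounds derivatives in the directions you convolve in. (A secondary unresolved point: after any mollification the values of consecutive smoothed pieces no longer agree at the interior subdivision points $t_k$; you only correct at $F$ and $F'$, and making the family $t$-constant near $t_k$ before smoothing does not reconcile the two different smoothed values there once you also smooth in $y$.)

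The paper avoids all of this by not averaging at all. It keeps only the finitely many smooth slices $H(\cdot,t_i)$ and the finitely many smooth sections $\xi_i$ with $\exp_{H(y,t_i)}(\xi_i(y))=H(y,t_{i+1})$, and defines $H^\prime(y,t)=\exp_{H(y,t_i)}\left(\theta_i(t)\xi_i(y)\right)$ with smooth cutoffs $\theta_i$ equal to $0$ near $t_i$ and $1$ near $t_{i+1}$. The $t$-dependence then enters only through the scalar $\theta_i(t)$, so joint smoothness is automatic; the pieces agree exactly at the junctions and at the endpoints $F$, $F^\prime$, so no correction homotopies are needed; and each slice is $\exp$ of a sup-norm-small section of $T\mca{F}$ composed with $H(\cdot,t_i)\in\Diff(\mca{F})$, which is where the paper's choice of $\epsilon$ is invoked. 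If you want to retain your $C^1$-type criterion for when an exponentiated section is a diffeomorphism, the repair is to replace the mollification by this cutoff interpolation between the fixed slices and then argue directly that the intermediate maps $\exp(\theta\xi_i)$ are leaf-preserving diffeomorphisms for $\lVert\xi_i\rVert<\epsilon$ — not to manufacture $C^1$-smallness by averaging in $t$, which is precisely the step that fails.
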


\begin{proof}
$3\Rightarrow1$ and $1\Rightarrow2$ are OK. We prove $2\Rightarrow3$. Let $H\colon N\times I\to N$ be a continuous map such that: 
\begin{itemize}
\setlength\itemsep{0em}
\item $H(\cdot,0)=F$ and $H(\cdot,1)=F^\prime$
\item $H(\cdot,t)\in\Diff(\mca{F})$ for all $t\in I$. 
\end{itemize}
Let 
\begin{equation*}
N\times_\mca{F}N=\left\{(y,y^\prime)\in N\times N\ \middle|\ \text{$y$ and $y^\prime$ are in the same leaf of $\mca{F}$}\right\}. 
\end{equation*}
Take a $C^\infty$ Riemannian metric $g$ of $p\colon T\mca{F}\to N$. Let $\exp\colon T\mca{F}\to N$ be the exponential map 
and $d_\mca{F}\colon N\times_\mca{F}N\to\bb{R}$ be the metric defined by $g$. For $\epsilon>0$, let 
\begin{equation*}
U_\epsilon=\left\{v\in T\mca{F}\ \middle|\ \lVert v\rVert<\epsilon\right\}
\end{equation*}
and 
\begin{equation*}
V_\epsilon=\left\{(y,y^\prime)\in N\times_\mca{F}N\ \middle|\ d_\mca{F}(y,y^\prime)<\epsilon\right\}. 
\end{equation*}
Since $N$ is compact, there exists $\epsilon>0$ such that: 
\begin{itemize}
\setlength\itemsep{0em}
\item $\exp\xi\in\Diff(\mca{F})$ for any $\xi\in\Gamma(T\mca{F})$ such that $\lVert\xi\rVert<\epsilon$
\item $V_\epsilon$ is an embedded submanifold of $N\times N$ and 
\begin{align*}
E\colon U_\epsilon&\to V_\epsilon\\
v&\mapsto(pv,\exp v)
\end{align*}
is a diffeomorphism. 
\end{itemize}
Since $N$ is compact, there exist $0=t_0<t_1<\cdots<t_k=1$ such that 
\begin{equation*}
d_\mca{F}(H(\cdot,t_i),H(\cdot,t_{i+1}))<\epsilon
\end{equation*}
for all $i=0,\ldots,k-1$. Since the map 
\begin{align*}
N&\to V_\epsilon\\
y&\mapsto(H(y,t_i),H(y,t_{i+1}))
\end{align*}
is $C^\infty$, $\xi_i=E^{-1}(H(\cdot,t_i),H(\cdot,t_{i+1}))\colon N\to T\mca{F}$ is $C^\infty$. (Note that $\xi_i(y)\in T_{H(y,t_i)}\mca{F}$, so $\xi_i$ may not be a section of $T\mca{F}$.) We have 
\begin{equation*}
\exp_{H(y,t_i)}(\xi_i(y))=H(y,t_{i+1}). 
\end{equation*}
For each $i=0,\ldots,k-1$, take a $C^\infty$ function $\theta_i\colon[t_i,t_{i+1}]\to\bb{R}$ such that: 
\begin{itemize}
\setlength\itemsep{0em}
\item $0\leq\theta_i\leq1$
\item $\theta_i=0$ on a neighborhood of $t_i$
\item $\theta_i=1$ on a neighborhood of $t_{i+1}$. 
\end{itemize}
Define $H^\prime\colon N\times I\to N$ by 
\begin{equation*}
H^\prime(y,t)=\exp_{H(y,t_i)}\left(\theta_i(t)\xi_i(y)\right)
\end{equation*}
for $t_i\leq t\leq t_{i+1}$. $H^\prime$ is $C^\infty$. We have 
\begin{equation*}
H^\prime(\cdot,t)=\left(\exp_{H(\cdot,t_i)}\left(\theta_i(t)\xi_i\circ H(\cdot,t_i)^{-1}\right)\right)\circ H(\cdot,t_i)
\end{equation*}
for $t_i\leq t\leq t_{i+1}$. Since $\theta_i(t)\xi_i\circ H(\cdot,t_i)^{-1}\in\Gamma(T\mca{F})$ and $\lVert\theta_i(t)\xi_i\circ H(\cdot,t_i)^{-1}\rVert<\epsilon$, $H^\prime(\cdot,t)\in\Diff(\mca{F})$ for all $t\in I$. 
\end{proof}

\begin{rem}
We do not know whether compactness of $N$ can be dropped in the above lemma. 
\end{rem}

\begin{cor}
Let $N$ be a compact $C^\infty$ manifold and $\mca{F}$ be a $C^\infty$ foliation of $N$. Then 
\begin{align*}
\Diff(\mca{F})_0&=\left\{F\in\Diff(\mca{F})\ \middle|\ \text{$F$ is $C^0$ homotopic to $\id$ through maps in $\Diff(\mca{F})$}\right\}\\
&=\left\{F\in\Diff(\mca{F})\ \middle|\ \text{$F$ is $C^\infty$ homotopic to $\id$ through maps in $\Diff(\mca{F})$}\right\}. 
\end{align*}
\end{cor}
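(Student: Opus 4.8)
The plan is to deduce the Corollary directly from the preceding Lemma by specializing $F^\prime$ to $\id$; no new argument is needed.

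First I would unwind the definition of $\Diff(\mca{F})_0$. By definition it is the path component of $\id$ in the topological group $\Diff(\mca{F})$ equipped with the weak $C^\infty$ topology, so $F\in\Diff(\mca{F})_0$ if and only if there is a continuous map $c\colon I\to\Diff(\mca{F})$ with $c(0)=\id$ and $c(1)=F$. Reversing the path (replace $c$ by $t\mapsto c(1-t)$) shows this is equivalent to the existence of a continuous $c\colon I\to\Diff(\mca{F})$ with $c(0)=F$ and $c(1)=\id$, which is exactly condition $(1)$ of the Lemma applied with $F^\prime=\id$.

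Next I would observe that condition $(2)$ of the Lemma with $F^\prime=\id$ is literally the assertion that $F$ is $C^0$ homotopic to $\id$ through maps in $\Diff(\mca{F})$ (a continuous $H\colon N\times I\to N$ with $H(\cdot,0)=F$, $H(\cdot,1)=\id$, and $H(\cdot,t)\in\Diff(\mca{F})$ for all $t$), hence the second set in the displayed equality; and condition $(3)$ with $F^\prime=\id$ is the assertion that $F$ is $C^\infty$ homotopic to $\id$ through maps in $\Diff(\mca{F})$, hence the third set. Since the Lemma gives $(1)\Leftrightarrow(2)\Leftrightarrow(3)$, the three sets coincide, and the compactness hypothesis on $N$ is inherited from the Lemma.

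There is essentially no obstacle: all the real content (in particular the use of compactness of $N$, an exponential map of a leafwise metric, and a partition of the interval, appearing in the $2\Rightarrow3$ step) is already packaged in the Lemma. The only points requiring a line of care are the path-direction convention, handled by reversing a path, and checking that ``homotopic to $\id$ through maps in $\Diff(\mca{F})$'' as phrased in the Corollary is verbatim conditions $(2)$/$(3)$ of the Lemma with $F^\prime=\id$; both are immediate.
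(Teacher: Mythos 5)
Your proposal is correct and is exactly how the paper intends the corollary to be read: it is an immediate specialization of the preceding lemma with $F^\prime=\id$ (which is why the paper gives no separate proof), together with unwinding the definition of $\Diff(\mca{F})_0$ as the identity path component. Nothing further is needed.
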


\begin{dfn}[The definition of parameter rigidity in \cite{MM} (and \cite{A})]\label{prmmgna}
Let $G$ be a connected Lie group, $N$ be a compact $C^\infty$ manifold and $\rho\in\mca{A}_{LF}(N,G)$. We say that $\rho$ is \emph{parameter rigid} if $\mca{A}_{LF}(\mca{F}_\rho,G)/(\Diff(\mca{F}_\rho)_0\times\Aut(G))$ is a singleton. 
\end{dfn}

With this definition of parameter rigidity, there is a gap in the proof of Proposition 2.3 in \cite{MM}, which I expect cannot be fixed. Proposition 2.3 in \cite{MM} says cocycle rigidity implies parameter rigidity, which is a fundamental result. To make Proposition 2.3 in \cite{MM} hold, I changed the definition of parameter rigidity in \cite{Ma}, \cite{Ma2}, \cite{Ma3}, to what we explain in the following subsection.

\subsection{The definition in \cite{Ma}}
\begin{dfn}
Let $N$ be a $C^\infty$ manifold and $\mca{F}$ be a $C^\infty$ foliation of $N$. Let 
\begin{equation*}
C^\infty(\mca{F})=\left\{F\in C^\infty(N,N)\ \middle|\ \text{$F(L)\subset L$ for all $L\in\mca{F}$}\right\}. 
\end{equation*}
\end{dfn}

\begin{dfn}
Let $M$, $M^\prime$ be $C^\infty$ manifolds, $\mca{F}$, $\mca{F}^\prime$ be $C^\infty$ foliations of $M$, $M^\prime$ and $r\in\bb{Z}_{\geq0}\cup\{\infty\}$. Let 
\begin{equation*}
C^r(\mca{F},\mca{F}^\prime)=\left\{F\in C^r(M,M^\prime)\ \middle|\ 
\begin{gathered}
\text{for all $L\in\mca{F}$, there exists}\\
\text{$L^\prime\in\mca{F}^\prime$ such that $F(L)\subset L^\prime$}
\end{gathered}
\right\}
\end{equation*}
and 
\begin{equation*}
C^{0,\infty}(\mca{F},\mca{F}^\prime)=\left\{F\in C^0(\mca{F},\mca{F}^\prime)\ \middle|\ 
\begin{gathered}
\text{for any foliation charts $U$, $U^\prime=L^\prime\times T^\prime$ of $\mca{F}$, $\mca{F}^\prime$}\\
\text{such that $F(U)\subset U^\prime$, $p_{T^\prime}F\colon U\to T^\prime$ is $C^\infty$}
\end{gathered}
\right\}. 
\end{equation*}
\end{dfn}

\begin{prop}[A consequence of leafwise approximation theorem (Theorem 4.3) in \cite{ALK}]\label{latffmm}
Let $M$, $M^\prime$ be $C^\infty$ manifolds and $\mca{F}$ (resp. $\mca{F}^\prime$) be a $C^\infty$ foliation of $M$ (resp. $M^\prime$). Let $E$ be a closed subset of $M$ and $F\in C^{0,\infty}(\mca{F},\mca{F}^\prime)$ be such that $F$ is $C^\infty$ on an open neighborhood of $E$ in $M$. Then there exists $F^\prime\in C^\infty(\mca{F},\mca{F}^\prime)$ such that $F=F^\prime$ on an open neighborhood of $E$ in $M$. 
\end{prop}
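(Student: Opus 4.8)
The plan is to deduce the statement directly from the leafwise approximation theorem, Theorem~4.3 of \cite{ALK}, which turns a leafwise‑smooth foliated map into a genuinely $C^\infty$ foliated map uniformly close to it, and in its relative form can be arranged to coincide with the original on a neighborhood of any closed set on which the latter is already $C^\infty$. So the real task is to recognize that an $F\in C^{0,\infty}(\mca{F},\mca{F}^\prime)$ which is $C^\infty$ near $E$ is precisely the input that theorem accepts, and then to read off the conclusion.

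First I would look at $F$ in foliation charts. Near a point of $M$ choose foliation charts $U\cong L\times T$ of $\mca{F}$ and $U^\prime=L^\prime\times T^\prime$ of $\mca{F}^\prime$ with $F(U)\subset U^\prime$, and write $F|_U=(p_{L^\prime}F,\,p_{T^\prime}F)$. Since $F$ is foliated it sends each plaque $L\times\{t\}$ into a single plaque of $\mca{F}^\prime$, so $p_{T^\prime}F$ is constant in the $L$‑direction and descends to a map $T\to T^\prime$, which by the defining property of $C^{0,\infty}(\mca{F},\mca{F}^\prime)$ is already $C^\infty$; thus only the tangential part $p_{L^\prime}F$ — a continuous, $T$‑parametrized family of continuous maps $L\to L^\prime$ — still needs smoothing, and keeping $p_{T^\prime}F$ fixed while smoothing $p_{L^\prime}F$ within the leaves of $\mca{F}^\prime$ produces a map that is still foliated. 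This is a leafwise (parametrized) smoothing problem relative to $E$, exactly the content of \cite[Thm.~4.3]{ALK}; applying it gives $F^\prime\in C^\infty(M,M^\prime)$ which is foliated — the transverse data is unchanged and the smoothing takes place within leaves of $\mca{F}^\prime$ — and equal to $F$ on an open neighborhood of $E$, hence $F^\prime\in C^\infty(\mca{F},\mca{F}^\prime)$ with $F^\prime=F$ near $E$. (One could equivalently embed $M^\prime$ in some $\bb{R}^n$, pick a tubular‑neighborhood retraction onto $M^\prime$ that is a foliated map onto $(M^\prime,\mca{F}^\prime)$, smooth $\iota\circ F$ there, and compose back; the globalization over a foliated atlas while preserving agreement near $E$ is the usual partition‑of‑unity argument carried out in \cite{ALK}.)

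The main obstacle — and essentially the only delicate point — is ensuring the approximant remains foliated: a generic small perturbation of a map whose image lies in one leaf need not again have image in a leaf, since leaves may be dense. This forces us not to perturb $F$ freely in $M^\prime$ but to keep rigid the transverse/holonomy data isolated above and carry out the smoothing inside the leaves of $\mca{F}^\prime$, which is precisely what the formulation of the leafwise approximation theorem builds in. Everything else — local smoothing, interpolating so as to stay $C^\infty$ where $F$ already was, and patching — is routine.
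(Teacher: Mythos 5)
Your proposal matches the paper exactly: the paper gives no independent argument for Proposition \ref{latffmm}, but simply invokes the leafwise approximation theorem (Theorem 4.3 of \cite{ALK}) in its relative form, which is precisely what you do, and your chart-level discussion of why an element of $C^{0,\infty}(\mca{F},\mca{F}^\prime)$ that is already $C^\infty$ near the closed set $E$ is admissible input for that theorem is sound. So the proposal is correct and takes essentially the same route as the paper.
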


\begin{rem}
In the above proposition, $F^\prime$ can be chosen arbitrarily close to $F$ in the strong plaquewise topology of $C^{0,\infty}(\mca{F},\mca{F}^\prime)$ defined in \cite{ALK}. 
\end{rem}

\begin{rem}
$C^{0,\infty}(\mca{F},\mca{F}^\prime)$ is denoted by $C^{\infty,0}(\mca{F},\mca{F}^\prime)$ in \cite{ALK}. (We changed the notation because our notation looks more natural to us.) 
\end{rem}

\begin{lem}
Let $N$ be a $C^\infty$ manifold, $\mca{F}$ be a $C^\infty$ foliation of $N$ and $F$, $F^\prime\in C^\infty(\mca{F})$. Then the following are equivalent: 
\begin{enumerate}
\item There exists a continuous map $c\colon I\to C^\infty(\mca{F})$ such that $c(0)=F$ and $c(1)=F^\prime$, where $C^\infty(\mca{F})$ is equipped with the weak $C^\infty$ topology of $C^\infty(N,N)$. 
\item $F$ is $C^0$ homotopic to $F^\prime$ through maps in $C^\infty(\mca{F})$. 
\item $F$ is $C^\infty$ homotopic to $F^\prime$ through maps in $C^\infty(\mca{F})$. 
\end{enumerate}
\end{lem}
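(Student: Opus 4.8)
The plan is to prove the cycle of implications $3\Rightarrow1\Rightarrow2\Rightarrow3$; the first two implications are formal, and the third is where the work lies. For $3\Rightarrow1$, given a $C^\infty$ homotopy $H\colon N\times I\to N$ through maps of $C^\infty(\mca{F})$, I set $c(t)=H(\cdot,t)$ and note that $c\colon I\to C^\infty(N,N)$ is continuous for the weak $C^\infty$ topology, since continuity there is tested on compact subsets of $N$ and all partial derivatives $\partial_y^\alpha H$ are continuous on $N\times I$; by hypothesis $c$ takes values in $C^\infty(\mca{F})$, which carries the subspace topology. For $1\Rightarrow2$, given such a $c$, the map $H(y,t)=c(t)(y)$ is continuous because the evaluation $C^\infty(N,N)\times N\to N$ is continuous already for the weak $C^0$ topology, hence for the finer weak $C^\infty$ topology, and $H(\cdot,t)=c(t)\in C^\infty(\mca{F})$ with the correct endpoints.

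The substantive implication is $2\Rightarrow3$, and here I would use the leafwise approximation theorem (Proposition \ref{latffmm}) rather than the exponential-map chopping argument of the compact case, precisely so as to avoid any compactness hypothesis on $N$. Let $H\colon N\times I\to N$ be a $C^0$ homotopy with $H(\cdot,0)=F$, $H(\cdot,1)=F'$ and $H(\cdot,t)\in C^\infty(\mca{F})$ for all $t$. First extend $H$ to $\wh{H}\colon N\times\bb{R}\to N$ by $\wh{H}(y,t)=F(y)$ for $t\le0$ and $\wh{H}(y,t)=F'(y)$ for $t\ge1$; joint continuity at $t=0$ and $t=1$ is immediate from $H(\cdot,0)=F$ and $H(\cdot,1)=F'$. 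Equip $N\times\bb{R}$ with the product foliation $\wh{\mca{F}}$ whose leaves are $L\times\bb{R}$ for $L\in\mca{F}$. Since $F$, $F'$ and every $H(\cdot,t)$ preserve leaves of $\mca{F}$, we get $\wh{H}(L\times\bb{R})\subset L$ for all $L\in\mca{F}$.

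The crucial step is to check $\wh{H}\in C^{0,\infty}(\wh{\mca{F}},\mca{F})$. In a product foliation chart $(L_0\times T)\times J$ of $\wh{\mca{F}}$ (with $L_0\times T$ a foliation chart of $\mca{F}$ and $J\subset\bb{R}$) mapping into a foliation chart $L_0'\times T'$ of $\mca{F}$, leaf-preservation of $\wh{H}$ forces the transverse component $p_{T'}\wh{H}$ to be constant along each plaque $L_0\times\{s\}\times J$, hence a function of the transverse coordinate $s\in T$ alone; evaluating at $t=0$ identifies this function with $p_{T'}F$, which is $C^\infty$ because $F\in C^\infty(\mca{F})\subset C^\infty(N,N)$. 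Thus $\wh{H}\in C^{0,\infty}(\wh{\mca{F}},\mca{F})$, and since $\wh{H}$ is already $C^\infty$ on the open set $N\times((-\infty,0)\cup(1,\infty))$, Proposition \ref{latffmm} applied with $E=N\times((-\infty,-1]\cup[2,\infty))$ produces $H'\in C^\infty(\wh{\mca{F}},\mca{F})$ with $H'=\wh{H}$ on a neighborhood of $E$. As $H'$ coincides with $\wh{H}=F$ on $N\times\{-1\}$ and $F(L)\subset L$, the leaf $L\times\bb{R}$ must be sent into $L$ by $H'$, so $H'(\cdot,s)\in C^\infty(\mca{F})$ for every $s$; reparametrizing by $t\mapsto H'(\cdot,-1+3t)$ gives the desired $C^\infty$ homotopy from $F$ to $F'$ through $C^\infty(\mca{F})$.

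I expect the main obstacle to be precisely the verification that $\wh{H}$ lies in $C^{0,\infty}(\wh{\mca{F}},\mca{F})$ — that is, pinning down its transverse component via leaf-preservation — since this is exactly the input required by Proposition \ref{latffmm}; the remaining points (joint continuity of the extension, the bookkeeping with product foliation charts, and the reparametrization recovering the correct endpoints and leaf-preservation) are routine. It is also worth being slightly careful that agreement of $H'$ with $\wh{H}$ on merely some neighborhood of $E$ still suffices to guarantee both endpoints and the leafwise behavior of the reparametrized homotopy.
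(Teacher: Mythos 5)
Your proof is correct and follows essentially the same route as the paper: extend the homotopy to $N\times\bb{R}$ by constants, check it lies in $C^{0,\infty}(\mca{F}\times\bb{R},\mca{F})$, and apply the leafwise approximation theorem (Proposition \ref{latffmm}) relative to a closed set $E$ where the map is already smooth, the only cosmetic difference being that the paper first makes the homotopy constant for $t<0.1$ and $t>0.9$ so that $E=(N\times(-\infty,0])\cup(N\times[1,\infty))$ works and no final reparametrization is needed. One small repair in your $C^{0,\infty}$ verification: a chart $(L_0\times T)\times J$ need not meet $t=0$, so identify the transverse function of $s$ with $p_{T^\prime}\bigl(\wh{H}(\cdot,t_0)\bigr)$ for an arbitrary $t_0\in J$ (every slice lies in $C^\infty(\mca{F})$), rather than with $p_{T^\prime}F$.
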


\begin{proof}
$3\Rightarrow1$ and $1\Rightarrow2$ are OK. We prove $2\Rightarrow3$. There exists a continuous map $H\colon N\times\bb{R}\to N$ such that: 
\begin{itemize}
\setlength\itemsep{0em}
\item $H(\cdot,t)=F$ for all $t<0.1$
\item $H(\cdot,t)=F^\prime$ for all $t>0.9$
\item $H(\cdot,t)\in C^\infty(\mca{F})$ for all $t\in\bb{R}$. 
\end{itemize}
Let $\mca{F}\times\bb{R}=\left\{L\times\bb{R}\ \middle|\ L\in\mca{F}\right\}$. This is a $C^\infty$ foliation of $N\times\bb{R}$ and $H\in C^{0,\infty}(\mca{F}\times\bb{R},\mca{F})$. Applying Proposition \ref{latffmm} for $E=(N\times(-\infty,0])\cup(N\times[1,\infty))$, there exists $H^\prime\in C^\infty(\mca{F}\times\bb{R},\mca{F})$ such that $H^\prime(\cdot,t)=H(\cdot,t)$ for $t\leq0$ or $t\geq1$. So $H^\prime\colon N\times\bb{R}\to N$ is a $C^\infty$ map such that: 
\begin{itemize}
\setlength\itemsep{0em}
\item $H^\prime(L\times\bb{R})\subset L$ for all $L\in\mca{F}$
\item $H^\prime(\cdot,t)=F$ for all $t\leq0$
\item $H^\prime(\cdot,t)=F^\prime$ for all $t\geq1$
\item $H^\prime(\cdot,t)\in C^\infty(\mca{F})$ for all $t\in\bb{R}$. \qedhere
\end{itemize}
\end{proof}

\begin{dfn}
Let $N$ be a $C^\infty$ manifold and $\mca{F}$ be a $C^\infty$ foliation of $N$. Define 
\begin{align*}
\Diff(\mca{F})_1&=\left\{F\in\Diff(\mca{F})\ \middle|\ \text{$F$ is $C^0$ homotopic to $\id$ through maps in $C^\infty(\mca{F})$}\right\}\\
&=\left\{F\in\Diff(\mca{F})\ \middle|\ \text{$F$ is $C^\infty$ homotopic to $\id$ through maps in $C^\infty(\mca{F})$}\right\}. 
\end{align*}
Let $C^\infty(\mca{F})_0$ be the path component of $C^\infty(\mca{F})$ equipped with the weak $C^\infty$ topology of $C^\infty(N,N)$ containing $\id$. Then $\Diff(\mca{F})_1=C^\infty(\mca{F})_0\cap\Diff(\mca{F})$ and $\Diff(\mca{F})_0\subset\Diff(\mca{F})_1$. 
\end{dfn}

\begin{dfn}[The definition of parameter rigidity in \cite{Ma}, \cite{Ma2}, \cite{Ma3}]\label{pralfdiffaut}
Let $G$ be a connected Lie group, $N$ be a $C^\infty$ manifold and $\rho\in\mca{A}_{LF}(N,G)$. We say that $\rho$ is \emph{parameter rigid} if $\mca{A}_{LF}(\mca{F}_\rho,G)/(\Diff(\mca{F}_\rho)_1\times\Aut(G))$ is a singleton. 
\end{dfn}

With this definition of parameter rigidity, Proposition 2.3 in \cite{MM} is correct. We will use this definition. Note that we have a map 
\begin{equation*}
\mca{A}_{LF}(\mca{F}_\rho,G)/(\Diff(\mca{F}_\rho)_0\times\Aut(G))\twoheadrightarrow\mca{A}_{LF}(\mca{F}_\rho,G)/(\Diff(\mca{F}_\rho)_1\times\Aut(G)). 
\end{equation*}

\section{Lie groupoids}\label{333}
\subsection{Lie groupoids}
\begin{lem}\label{fiberproduct}
Let $N_1$, $N_2$, $M$ be $C^\infty$ manifolds (perhaps non Hausdorff, non second countable) and $f_i\colon N_i\to M$ be a $C^\infty$ map for each $i=1$, $2$. Let 
\begin{equation*}
N_1\times_{f_1,f_2}N_2=\left\{(y_1,y_2)\in N_1\times N_2\ \middle|\ f_1(y_1)=f_2(y_2)\right\}. 
\end{equation*}
We have 
\begin{equation*}
\begin{tikzcd}
N_1\times_{f_1,f_2}N_2\ar[r,"p_2"]\ar[d,"p_1"']\ar[rd,phantom,"\circlearrowright"]&N_2\ar[d,"f_2"]\\
N_1\ar[r,"f_1"']&M, 
\end{tikzcd}
\end{equation*}
where $p_1$, $p_2$ are the projections. Then the following statements hold: 
\begin{enumerate}
\item If $M$ is Hausdorff, then $N_1\times_{f_1,f_2}N_2$ is closed in $N_1\times N_2$. 
\item If $f_2$ is surjective, then $p_1$ is surjective. 
\item If $f_2$ is a submersion, then: 
\begin{enumerate}
\item $N_1\times_{f_1,f_2}N_2$ is an embedded submanifold of $N_1\times N_2$. 
\item $T_{(y_1,y_2)}(N_1\times_{f_1,f_2}N_2)=\left\{(Y_1,Y_2)\in T_{y_1}N_1\times T_{y_2}N_2\ \middle|\ (f_1)_*(Y_1)=(f_2)_*(Y_2)\right\}$ for any $(y_1,y_2)\in N_1\times_{f_1,f_2}N_2$. 
\item $p_1$ is a submersion. 
\end{enumerate}
\end{enumerate}
\end{lem}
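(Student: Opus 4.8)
The plan is to treat the three statements separately; only the third requires real work, the first two being essentially formal.

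For the first statement I would note that
\[
N_1\times_{f_1,f_2}N_2=(f_1\times f_2)^{-1}(\Delta_M),
\]
where $\Delta_M=\{(m,m)\mid m\in M\}$ and $f_1\times f_2\colon N_1\times N_2\to M\times M$ is the (continuous) product map; since $M$ is Hausdorff, $\Delta_M$ is closed in $M\times M$, so the preimage is closed. The commutativity of the square is immediate from the definition of the fiber product. For the second statement, given $y_1\in N_1$ I use surjectivity of $f_2$ to choose $y_2\in N_2$ with $f_2(y_2)=f_1(y_1)$; then $(y_1,y_2)\in N_1\times_{f_1,f_2}N_2$ and $p_1(y_1,y_2)=y_1$.

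For the third statement I would argue locally, since ``embedded submanifold'' and ``submersion'' are local conditions. Fix a point $(y_1^0,y_2^0)$ of the fiber product and put $m_0=f_1(y_1^0)=f_2(y_2^0)$. By the local normal form for submersions, after shrinking and choosing charts there are open neighborhoods $W_2\ni y_2^0$ in $N_2$ and $U\ni m_0$ in $M$, with $U$ identified with an open subset of $\bb{R}^d$ ($d$ the dimension of $M$ near $m_0$) and a diffeomorphism $W_2\cong U\times V$ for some open $V\subset\bb{R}^l$, under which $f_2$ becomes the first projection $(u,v)\mapsto u$. Writing $N_1^U=f_1^{-1}(U)$, which is open in $N_1$, inside the open set $N_1^U\times W_2\cong N_1^U\times U\times V$ the fiber product is
\[
\{(y_1,u,v)\mid f_1(y_1)=u\}=\{(y_1,f_1(y_1),v)\mid y_1\in N_1^U,\ v\in V\},
\]
which is the image of the $C^\infty$ map $j\colon N_1^U\times V\to N_1^U\times U\times V$, $j(y_1,v)=(y_1,f_1(y_1),v)$. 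Since $j$ has the $C^\infty$ left inverse $(y_1,u,v)\mapsto(y_1,v)$, it is a $C^\infty$ embedding, so its image is an embedded submanifold of $N_1^U\times W_2$; this proves (a). Differentiating $j$ reads off (b): the tangent space at a point of the fiber product consists of the $(Y_1,X,Y)$ with $X=(f_1)_*Y_1$, and since $(f_2)_*(X,Y)=X$ this is the stated description. Finally, in these coordinates $p_1$ becomes the projection $N_1^U\times V\to N_1^U$, which is a submersion, giving (c).

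The argument is routine, and the only point to keep in mind is that $M$, $N_1$, $N_2$ need not be Hausdorff or second countable. This causes no trouble: the one use of Hausdorffness, closedness of $\Delta_M$, is confined to statement (1), and everything in statement (3) is local and chart-based. I could instead obtain (3) from a transversality theorem — $f_1\times f_2$ is transverse to $\Delta_M$ precisely because $f_2$ is a submersion — but I prefer the explicit chart computation, as it is self-contained and avoids relying on a transversality formalism set up only for Hausdorff manifolds.
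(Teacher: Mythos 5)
Your proof is correct. Parts (1) and (2) coincide with the paper's argument (closedness of $\Delta_M$ for Hausdorff $M$, and the obvious lift for surjectivity of $p_1$). For part (3) you take a genuinely different route: the paper observes that $f_1\times f_2$ is transverse to $\Delta_M$ because $f_2$ is a submersion (checking surjectivity of the composition $T_{y_1}N_1\times T_{y_2}N_2\to(T_{m_0}M\times T_{m_0}M)/\Delta_{T_{m_0}M}$), invokes the preimage/transversality theorem for (a), gets (b) from the inclusion $\subset$ plus a dimension count, and proves (c) by lifting an arbitrary $Y_1$ to some $Y_2$ with $(f_1)_*Y_1=(f_2)_*Y_2$; you instead use the submersion normal form to exhibit the fiber product locally as the graph-like image of the embedding $j(y_1,v)=(y_1,f_1(y_1),v)$, and read (a), (b), (c) off directly from this parametrization. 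Your chart computation is more self-contained and makes the local product structure of $p_1$ explicit (you get the submersion property of $p_1$ and the tangent-space identity simultaneously, with no dimension count), and it sidesteps any worry about quoting transversality machinery in the non-Hausdorff, non--second-countable setting; the paper's argument is shorter and emphasizes that the tangent-space description follows formally once transversality is known. Both are valid, since the transversality/preimage theorem is a purely local statement and applies equally well here.
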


\begin{proof}
1. The map $f_1\times f_2\colon N_1\times N_2\to M\times M$ is continuous and $N_1\times_{f_1,f_2}N_2=(f_1\times f_2)^{-1}(\Delta_M)$, where $\Delta_M$ is the diagonal set of $M\times M$. Recall that $\Delta_M$ is closed in $M\times M$ if and only if $M$ is Hausdorff. So $N_1\times_{f_1,f_2}N_2$ is closed in $N_1\times N_2$. 

2. OK. 

3. (a) For $(y_1,y_2)\in N_1\times_{f_1,f_2}N_2$, the composition 
\begin{equation*}
T_{y_1}N_1\times T_{y_2}N_2\xrightarrow{(f_1\times f_2)_*}T_{f_1(y_1)}M\times T_{f_2(y_2)}M\to(T_{f_1(y_1)}M\times T_{f_2(y_2)}M)/\Delta_{T_{f_1(y_1)}M}
\end{equation*}
is surjective since $f_2$ is a submersion. $f_1\times f_2$ is transverse to $\Delta_M$. Hence $N_1\times_{f_1,f_2}N_2$ is an embedded submanifold of $N_1\times N_2$. 

(b) The inclusion $\subset$ is OK. This is $=$ by dimension counting. Both spaces have dimension $\dim N_1+\dim N_2-\dim M$. 

(c) We have 
\begin{align*}
(p_1)_*\colon T_{(y_1,y_2)}(N_1\times_{f_1,f_2}N_2)&\to T_{y_1}N_1\\
(Y_1,Y_2)&\mapsto Y_1. 
\end{align*}
For any $Y_1\in T_{y_1}N_1$, there exists $Y_2\in T_{y_2}N_2$ such that $(f_1)_*(Y_1)=(f_2)_*(Y_2)$. Hence $(p_1)_*$ is surjective. 
\end{proof}

\begin{dfn}\label{grptsgm1g2}
We say that $\mca{G}\rightrightarrows M$ is a \emph{groupoid} if $\mca{G}$, $M$ are sets and there are maps 
\begin{equation}\label{structmaps}
t,s\colon\mca{G}\to M,\qquad
\begin{aligned}
1\colon M&\to\mca{G}\qquad&\mca{G}&\to\mca{G}\\
x&\mapsto1_x, &g&\mapsto g^{-1}, 
\end{aligned}
\end{equation}
\begin{align}\label{composition}
\begin{split}
\mca{G}^{(2)}&\to\mca{G}\\
(g_1,g_2)&\mapsto g_1g_2, 
\end{split}
\end{align}
where 
\begin{equation*}
\mca{G}^{(2)}=\mca{G}\times_{s,t}\mca{G}=\left\{(g_1,g_2)\in\mca{G}^2\ \middle|\ s(g_1)=t(g_2)\right\}, 
\end{equation*}
such that: 
\begin{itemize}
\setlength\itemsep{0em}
\item $t(g_1g_2)=t(g_1)$, $s(g_1g_2)=s(g_2)$ for all $(g_1,g_2)\in\mca{G}^{(2)}$
\item $(g_1g_2)g_3=g_1(g_2g_3)$ for all $(g_1,g_2,g_3)\in\mca{G}^{(3)}$, where 
\begin{equation*}
\mca{G}^{(3)}=\left\{(g_1,g_2,g_3)\in\mca{G}^3\ \middle|\ (g_1,g_2),(g_2,g_3)\in\mca{G}^{(2)}\right\}
\end{equation*}
\item $t(1_x)=x=s(1_x)$ for all $x\in M$
\item $1_{t(g)}g=g=g1_{s(g)}$ for all $g\in\mca{G}$
\item $t(g^{-1})=s(g)$, $s(g^{-1})=t(g)$ for all $g\in\mca{G}$
\item $g^{-1}g=1_{s(g)}$, $gg^{-1}=1_{t(g)}$ for all $g\in\mca{G}$. 
\end{itemize}

\begin{equation*}
\begin{tikzpicture}[every label/.append style={font=\scriptsize},matrix of math nodes,decoration={markings,mark=at position0.5with{\arrow{>}}}]
\node(1)[label=below:t(g)]{};
\node(2)[right=of 1,label=below:s(g)]{};
\draw[postaction={decorate}](2.center)--node[label=above:g]{}(1.center);
\foreach\x in{1,2}\filldraw(\x)circle(1pt);
\end{tikzpicture}
\end{equation*}
\begin{equation*}
\begin{tikzpicture}[every label/.append style={font=\scriptsize},matrix of math nodes,decoration={markings,mark=at position0.5with{\arrow{>}}}]
\node(1){};
\node(2)[right=of 1]{};
\node(3)[right=of 2]{};
\draw[postaction={decorate}](3.center)--node[label=above:g_2]{}(2.center);
\draw[postaction={decorate}](2.center)--node[label=above:g_1]{}(1.center);
\draw[postaction={decorate}](3.center)to[out=200,in=340]node[label=below:g_1g_2]{}(1.center);
\foreach\x in{1,2,3}\filldraw(\x)circle(1pt);
\end{tikzpicture}
\end{equation*}
\end{dfn}

\begin{notation}
Let $\mca{G}\rightrightarrows M$ be a groupoid. We put 
\begin{equation*}
\mca{G}^{x^\prime}=t^{-1}(x^\prime),\quad\mca{G}_x=s^{-1}(x),\quad\mca{G}^{x^\prime}_x=\mca{G}^{x^\prime}\cap\mca{G}_x
\end{equation*}
for $x$, $x^\prime\in M$, 
\begin{equation*}
\mca{G}^{S^\prime}=t^{-1}(S^\prime),\quad\mca{G}_S=s^{-1}(S),\quad\mca{G}^{S^\prime}_S=\mca{G}^{S^\prime}\cap\mca{G}_S,\quad1_S=\left\{1_x\ \middle|\ x\in S\right\}
\end{equation*}
for $S$, $S^\prime\subset M$. 
\end{notation}

\begin{dfn}
A groupoid $\mca{G}\rightrightarrows M$ is a \emph{topological groupoid} if $\mca{G}$, $M$ are topological spaces and the maps \eqref{structmaps}, \eqref{composition} are continuous, where $\mca{G}^{(2)}$ is equipped with the subspace topology of the product topology of $\mca{G}^2$. 
\end{dfn}

\begin{dfn}\label{liegrpgmtx}
A groupoid $\mca{G}\rightrightarrows M$ is a \emph{Lie groupoid} if: 
\begin{itemize}
\setlength\itemsep{0em}
\item $\mca{G}$ is a $C^\infty$ manifold (perhaps non Hausdorff, non second countable)
\item $M$ is a Hausdorff, second countable $C^\infty$ manifold
\item the maps \eqref{structmaps} are $C^\infty$
\item $t$ is a submersion (hence $\mca{G}^{(2)}$ is an embedded submanifold of $\mca{G}^2$ by Lemma \ref{fiberproduct})
\item the map \eqref{composition} is $C^\infty$
\item $\mca{G}^x$ is Hausdorff for any $x\in M$. 
\end{itemize}
\end{dfn}

\begin{lem}
For a Lie groupoid $\mca{G}\rightrightarrows M$ the following statements hold: 
\begin{enumerate}
\item The map 
\begin{align*}
1\colon M&\to\mca{G}\\
x&\mapsto1_x
\end{align*}
is an embedding (but the image is not necessarily closed). 
\item If $\mca{G}$ is Hausdorff, then $1_M$ is closed in $\mca{G}$. 
\item $\mca{G}\to\mca{G}$, $g\mapsto g^{-1}$ is an involutive diffeomorphism. 
\item $s\colon\mca{G}\to M$ is a submersion. 
\item $\mca{G}_x$, $\mca{G}^x$ are Hausdorff closed embedded submanifolds of $\mca{G}$ for any $x\in M$. 
\end{enumerate}
\end{lem}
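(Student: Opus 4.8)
The plan is to establish the five assertions essentially separately, feeding earlier parts into later ones, using only the groupoid axioms of Definition \ref{grptsgm1g2} together with the smoothness conditions in Definition \ref{liegrpgmtx}. For (1), the key observation is that $t\circ 1=\id_M$, so $1$ is a $C^\infty$ section of the submersion $t$; hence $1$ is injective and an immersion. Moreover $1\colon M\to 1_M$ is a continuous bijection whose inverse is $t|_{1_M}$, which is continuous, so $1$ is a homeomorphism onto its image; an injective immersion that is a homeomorphism onto its image is an embedding. The image need not be closed precisely because $\mca{G}$ is allowed to be non-Hausdorff — and indeed part (2) shows it \emph{is} closed when $\mca{G}$ is Hausdorff.

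For (2), I would write $1_M=\{g\in\mca{G}\mid g=1_{t(g)}\}$, i.e.\ $1_M$ is the coincidence set of the two continuous maps $\id_\mca{G}$ and $1\circ t$ from $\mca{G}$ to $\mca{G}$; the coincidence set of two continuous maps into a Hausdorff space is closed. For (3), the inversion map $\iota\colon g\mapsto g^{-1}$ is $C^\infty$ by hypothesis, and the axiom $(g^{-1})^{-1}=g$ says $\iota\circ\iota=\id_\mca{G}$, so $\iota$ is a $C^\infty$ bijection with $C^\infty$ inverse $\iota$ itself, hence an involutive diffeomorphism.

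For (4), since $s(g)=t(g^{-1})$ we have $s=t\circ\iota$; as $\iota$ is a diffeomorphism by (3) and $t$ is a submersion, $s$ is a submersion. For (5), $\mca{G}^x=t^{-1}(x)$ is an embedded submanifold because $t$ is a submersion, is closed because $\{x\}$ is closed in the Hausdorff manifold $M$, and is Hausdorff by the last bullet of Definition \ref{liegrpgmtx}. Then $\mca{G}_x=s^{-1}(x)$ is an embedded submanifold by (4), closed again because $\{x\}$ is closed, and Hausdorff because $\mca{G}_x=\iota(\mca{G}^x)$ with $\iota$ a diffeomorphism.

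None of the steps is substantial; the only places needing a moment's care are the verification in (1) that the smooth injective immersion $1$ is an embedding, for which the explicit continuous inverse $t|_{1_M}$ is the decisive point, and in (2) recognizing $1_M$ as a coincidence set so that Hausdorffness of $\mca{G}$ can be applied. I anticipate no serious obstacle.
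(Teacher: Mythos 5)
Your argument is correct, and parts (1), (3), (4), (5) coincide with the paper's treatment (the paper proves (1) exactly as you do, via the continuous inverse $t|_{1_M}$, and dismisses (3)--(5) as easy). Where you genuinely diverge is part (2): the paper argues with nets, taking $g\in\overline{1_M}$, a compact neighborhood $K$ of $g$ (using local compactness of the manifold $\mca{G}$), and observing that eventually $1_{x_a}\in 1_{t(K)}$, which is compact and hence closed in the Hausdorff space $\mca{G}$. You instead write $1_M$ as the coincidence set of $\id_{\mca{G}}$ and $1\circ t$, i.e.\ as $(\id,1\circ t)^{-1}(\Delta_{\mca{G}})$, and invoke closedness of the diagonal in a Hausdorff space — the very fact the paper itself uses in its fiber-product lemma. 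Your route is shorter, avoids any appeal to local compactness or net convergence, and works verbatim for topological groupoids with continuous unit and target maps; the paper's argument buys nothing extra here and is simply more laborious. One small precision point: $(g^{-1})^{-1}=g$ is not listed among the axioms in the paper's definition of a groupoid, but it follows in one line from $g^{-1}g=1_{s(g)}$, $gg^{-1}=1_{t(g)}$ and the unit laws, so your use of it in (3) is harmless; you may want to phrase it as a consequence rather than an axiom.
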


\begin{proof}
1. $1\colon M\to\mca{G}$ is an injective immersion since $t1=\id$. $1\colon M\to1_M$ is a homeomorphism since $t\colon1_M\to M$ is the continuous inverse. 

2. Let $g\in\ol{1_M}$. There exists a net $(x_a)_{a\in A}$ in $M$ such that $1_{x_a}\to g$. Take a compact neighborhood $K$ of $g$ in $\mca{G}$. There exists $a_0\in A$ such that $1_{x_a}\in K$ for $a\geq a_0$. Hence $x_a\in t(K)$ for $a\geq a_0$. The subset $1_{t(K)}$ is closed in $\mca{G}$ since $1_{t(K)}$ is compact and $\mca{G}$ is Hausdorff. Hence $g=\lim_{\substack{a\to\infty\\a\geq a_0}}1_{x_a}\in1_{t(K)}\subset1_M$. So $1_M$ is closed in $\mca{G}$. 

3. 4. and 5. are easy. 
\end{proof}

\begin{dfn}
Let $\mca{G}\rightrightarrows M$ be a Lie groupoid. For $x$, $x^\prime\in M$, write $x\sim x^\prime$ if $\mca{G}^x_{x^\prime}\neq\emptyset$. Then $\sim$ is an equivalence relation on $M$ and an equivalence class modulo $\sim$ is called an \emph{orbit} of $\mca{G}$. The set of all orbits of $\mca{G}$ is denoted by $\mca{O}_\mca{G}$. 
\end{dfn}

\begin{example}
Let $\mca{G}\rightrightarrows M$ be a groupoid and $S$ be a subset of $M$ which is a union of orbits of $\mca{G}$. Let $\mca{G}|_S=t^{-1}(S)=s^{-1}(S)$. Then $\mca{G}|_S\rightrightarrows S$ is a groupoid. 
\end{example}

\begin{example}
A Lie groupoid $G\rightrightarrows\mr{pt}$ over a point is a (perhaps non second countable) Lie group $G$. 
\end{example}

\begin{example}
For a $C^\infty$ manifold $M$, $M\rightrightarrows M$ is a Lie groupoid called the \emph{base groupoid}, where $t,s,1=\id\colon M\to M$. 
\end{example}

\begin{example}
For a $C^\infty$ manifold $M$, let $\Pair(M)=M\times M$. Then $\Pair(M)\rightrightarrows M$ is a Lie groupoid, which is called the \emph{pair groupoid} of $M$, with the following structure: 
\begin{align*}
t\colon\Pair(M)&\to M&s\colon\Pair(M)&\to M\\
(x,x^\prime)&\mapsto x,&(x,x^\prime)&\mapsto x^\prime, 
\end{align*}
\begin{align*}
\Pair(M)\times_{s,t}\Pair(M)&\to\Pair(M)\\
((x,x^\prime),(x^\prime,x^{\prime\prime}))&\mapsto(x,x^{\prime\prime}), 
\end{align*}
\begin{align*}
1\colon M&\to\Pair(M)&\Pair(M)&\to\Pair(M)\\
x&\mapsto(x,x),&(x,x^\prime)&\mapsto(x^\prime,x). 
\end{align*}
\end{example}

\begin{example}\label{mapgroupoid}
Let $\mca{H}^\prime\rightrightarrows N^\prime$ be a Lie groupoid and $N$ be a $C^\infty$ manifold. Then $C^\infty(N,\mca{H}^\prime)\rightrightarrows C^\infty(N,N^\prime)$ is a groupoid with the following structure: 
\begin{align*}
t^\prime\colon C^\infty(N,\mca{H}^\prime)&\to C^\infty(N,N^\prime)&s^\prime\colon C^\infty(N,\mca{H}^\prime)&\to C^\infty(N,N^\prime)\\
P&\mapsto t^\prime P,&P&\mapsto s^\prime P,
\end{align*}
\begin{align*}
C^\infty(N,\mca{H}^\prime)\times_{s^\prime,t^\prime}C^\infty(N,\mca{H}^\prime)&\to C^\infty(N,\mca{H}^\prime)\\
(P,P^\prime)&\mapsto PP^\prime, 
\end{align*}
\begin{align*}
1\colon C^\infty(N,N^\prime)&\to C^\infty(N,\mca{H}^\prime)&C^\infty(N,\mca{H}^\prime)&\to C^\infty(N,\mca{H}^\prime)\\
F&\mapsto 1_F,&P&\mapsto P^{-1}, 
\end{align*}
where $(PP^\prime)(y)=P(y)P^\prime(y)$, $1_F(y)=1_{F(y)}$, $P^{-1}(y)=P(y)^{-1}$ for $y\in N$. 
\end{example}

\begin{example}
For Lie groupoids $\mca{G}_1\rightrightarrows M_1$, $\mca{G}_2\rightrightarrows M_2$, $\mca{G}_1\times\mca{G}_2\rightrightarrows M_1\times M_2$ is a Lie groupoid with the following structure: $t=t_1\times t_2$, $s=s_1\times s_2$, $(g_1,g_2)(g_1^\prime,g_2^\prime)=(g_1g_1^\prime,g_2g_2^\prime)$, $1_{(x_1,x_2)}=(1_{x_1},1_{x_2})$, $(g_1,g_2)^{-1}=(g_1^{-1},g_2^{-1})$. 
\end{example}

\subsection{Morphisms of Lie groupoids}
\begin{dfn}\label{morgmgmpfpfp}
Let $\mca{G}\rightrightarrows M$ and $\mca{G}^\prime\rightrightarrows M^\prime$ be Lie groupoids. A $C^\infty$ map $\varphi\colon\mca{G}\to\mca{G}^\prime$ is a \emph{morphism} if there exists a $C^\infty$ map $F_\varphi\colon M\to M^\prime$ such that 
\begin{equation*}
\begin{tikzcd}
\mca{G}\ar[r,"\varphi"]\ar[d,"t"']\ar[rd,phantom,"\circlearrowright"{xshift=2,yshift=-2}]&\mca{G}^\prime\ar[d,"t"]\\
M\ar[r,"F_\varphi"']&M^\prime, 
\end{tikzcd}\qquad
\begin{tikzcd}
\mca{G}\ar[r,"\varphi"]\ar[d,"s"']\ar[rd,phantom,"\circlearrowright"{xshift=1,yshift=-2}]&\mca{G}^\prime\ar[d,"s"]\\
M\ar[r,"F_\varphi"']&M^\prime
\end{tikzcd}
\end{equation*}
and $\varphi(g_1g_2)=\varphi(g_1)\varphi(g_2)$ for all $(g_1,g_2)\in\mca{G}^{(2)}$. 
\begin{equation*}
\begin{tikzpicture}[every label/.append style={font=\scriptsize},matrix of math nodes,decoration={markings,mark=at position0.5with{\arrow{>}}}]
\node(1)[label=below:t(g)]{};
\node(2)[right=of 1,label=below:s(g)]{};
\draw[postaction={decorate}](2.center)--node[label=above:g]{}(1.center);
\node(3)[right=6em of 2,label=below:F_\varphi(t(g))]{};
\node(4)[right=of 3,label=below:F_\varphi(s(g))]{};
\draw[postaction={decorate}](4.center)--node[label=above:\varphi(g)]{}(3.center);
\foreach\x in{1,...,4}\filldraw(\x)circle(1pt);
\end{tikzpicture}
\end{equation*}
The map $F_\varphi$ is unique and is called the \emph{base map} of $\varphi$. The set of morphisms from $\mca{G}$ to $\mca{G}^\prime$ is denoted by $\Hom(\mca{G},\mca{G}^\prime)$. 

For a morphism $\varphi\colon\mca{G}\to\mca{G}^\prime$, we have $\varphi(1_x)=1_{F_\varphi(x)}$ for any $x\in M$ and $\varphi(g^{-1})=\varphi(g)^{-1}$ for any $g\in\mca{G}$. 

A morphism $\varphi\colon\mca{G}\to\mca{G}^\prime$ is called an \emph{isomorphism} if there exists a morphism $\varphi^\prime\colon\mca{G}^\prime\to\mca{G}$ such that $\varphi\circ\varphi^\prime=\id$ and $\varphi^\prime\circ\varphi=\id$. 

If $\varphi$ is an isomorphism, then $\varphi$ and $F_\varphi$ are diffeomorphisms and $F_{\varphi^{-1}}=F_\varphi^{-1}$. 
\end{dfn}

\begin{prop}\label{isodiff}
Let $\mca{G}\rightrightarrows M$ and $\mca{G}^\prime\rightrightarrows M^\prime$ be Lie groupoids and $\varphi\in\Hom(\mca{G},\mca{G}^\prime)$. Then $\varphi$ is an isomorphism if and only if $\varphi\colon\mca{G}\to\mca{G}^\prime$ is a diffeomorphism. 
\end{prop}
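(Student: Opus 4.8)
The plan is to show that the set-theoretic inverse map $\varphi^{-1}\colon\mca{G}^\prime\to\mca{G}$ is automatically a morphism of Lie groupoids once we know $\varphi$ is a diffeomorphism. One direction is already recorded in Definition \ref{morgmgmpfpfp}: if $\varphi$ is an isomorphism then $\varphi$ is a diffeomorphism. So the content is the converse. First I would observe that a bijective morphism $\varphi$ has a bijective base map $F_\varphi$: injectivity of $F_\varphi$ follows because $F_\varphi(x)=F_\varphi(x^\prime)$ forces $\varphi(1_x)=1_{F_\varphi(x)}=1_{F_\varphi(x^\prime)}=\varphi(1_{x^\prime})$, hence $1_x=1_{x^\prime}$, hence $x=x^\prime$; surjectivity follows since $\varphi$ surjective means every $1_{x^\prime}\in\mca{G}^\prime$ (for $x^\prime\in M^\prime$) is hit, say $1_{x^\prime}=\varphi(g)$, and applying $t$ gives $x^\prime=F_\varphi(t(g))$.

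Next, assuming $\varphi\colon\mca{G}\to\mca{G}^\prime$ is a diffeomorphism, I want to deduce that $F_\varphi\colon M\to M^\prime$ is also a diffeomorphism, so that $F_\varphi^{-1}$ is a well-defined $C^\infty$ map. Smoothness of $F_\varphi^{-1}$ is the one place where the groupoid axioms do real work: the identity-section embedding $1\colon M\to\mca{G}$ (which is an embedding by the earlier lemma) intertwines $F_\varphi$ with $\varphi$ via $\varphi\circ 1=1\circ F_\varphi$, i.e.\ $F_\varphi=t\circ\varphi\circ 1$; reading this backwards, $F_\varphi^{-1}=t\circ\varphi^{-1}\circ 1$, which is a composite of $C^\infty$ maps, hence $C^\infty$. (Here I use that $\varphi^{-1}$ is $C^\infty$ by hypothesis and $t$, $1$ are $C^\infty$ by the definition of a Lie groupoid.) Then I define $\psi:=\varphi^{-1}\colon\mca{G}^\prime\to\mca{G}$ and check it is a morphism with base map $F_\varphi^{-1}$: the squares $t\circ\psi=F_\varphi^{-1}\circ t$ and $s\circ\psi=F_\varphi^{-1}\circ s$ are obtained by applying $\psi$ and $F_\varphi^{-1}$ to the corresponding squares for $\varphi$ and using bijectivity; multiplicativity $\psi(h_1 h_2)=\psi(h_1)\psi(h_2)$ for $(h_1,h_2)\in(\mca{G}^\prime)^{(2)}$ follows by writing $h_i=\varphi(g_i)$, noting that the compatibility $s(h_1)=t(h_2)$ together with the commuting squares for $\varphi$ and the injectivity of $F_\varphi$ gives $s(g_1)=t(g_2)$, so $(g_1,g_2)\in\mca{G}^{(2)}$, and then $h_1 h_2=\varphi(g_1)\varphi(g_2)=\varphi(g_1 g_2)$, whence $\psi(h_1 h_2)=g_1 g_2=\psi(h_1)\psi(h_2)$. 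Finally $\psi$ is $C^\infty$ by hypothesis, so $\psi\in\Hom(\mca{G}^\prime,\mca{G})$, and $\varphi\circ\psi=\id$, $\psi\circ\varphi=\id$ exhibit $\varphi$ as an isomorphism.

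The only mildly delicate point — and the step I expect to be the main obstacle to a fully clean write-up — is verifying that the fiber-product compatibility condition is preserved under $\psi$, i.e.\ that $(h_1,h_2)\in(\mca{G}^\prime)^{(2)}$ really does force $(\psi(h_1),\psi(h_2))\in\mca{G}^{(2)}$; this is where the injectivity of $F_\varphi$ (established in the first step) is essential, since without it one only gets $F_\varphi(s(g_1))=F_\varphi(t(g_2))$ rather than $s(g_1)=t(g_2)$. Everything else is a routine diagram chase. I would present the argument in the order: (1) $F_\varphi$ bijective; (2) $F_\varphi$ a diffeomorphism via $F_\varphi^{-1}=t\circ\varphi^{-1}\circ 1$; (3) $\psi=\varphi^{-1}$ respects $t$, $s$ and composition, hence is a morphism; (4) conclude.
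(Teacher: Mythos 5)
Your proposal is correct and follows essentially the same route as the paper: reduce everything to showing $F_\varphi$ is a diffeomorphism, then verify that $\varphi^{-1}$ is a morphism with base map $F_\varphi^{-1}$ by the same composability-and-multiplicativity check (your use of injectivity of $F_\varphi$ is equivalent to the paper's chain $s\varphi^{-1}(g_1^\prime)=F_\varphi^{-1}s^\prime(g_1^\prime)=F_\varphi^{-1}t^\prime(g_2^\prime)=t\varphi^{-1}(g_2^\prime)$). The only minor divergence is in the first step: the paper gets that $F_\varphi$ is a diffeomorphism by reading off that it is a surjective submersion from $s^\prime\varphi=F_\varphi s$ and an injective immersion from $\varphi\circ1=1\circ F_\varphi$, whereas you establish bijectivity directly and exhibit the smooth inverse via the formula $F_\varphi^{-1}=t\circ\varphi^{-1}\circ1$ (with $1\colon M^\prime\to\mca{G}^\prime$ and $t\colon\mca{G}\to M$), which is an equally valid, slightly more explicit variant.
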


\begin{proof}
The implication $\Rightarrow$ is obvious. We prove $\Leftarrow$. Assume $\varphi$ is a diffeomorphism. We have 
\begin{equation*}
\begin{tikzcd}
\mca{G}\ar[r,"\varphi"]\ar[d,"s"']\ar[rd,phantom,"\circlearrowright"{xshift=2,yshift=-2}]&\mca{G}^\prime\ar[d,"s"]\\
M\ar[r,"F_\varphi"']&M^\prime, 
\end{tikzcd}
\end{equation*}
hence $F_\varphi$ is a surjective submersion. On the other hand, 
\begin{equation*}
\begin{tikzcd}
\mca{G}\ar[r,"\varphi"]\ar[rd,phantom,"\circlearrowright"{xshift=2,yshift=-2}]&\mca{G}^\prime\\
M\ar[r,"F_\varphi"']\ar[u,"1"]&M^\prime\ar[u,"1"']
\end{tikzcd}
\end{equation*}
shows that $F_\varphi$ is an injective immersion. So $F_\varphi\colon M\to M^\prime$ is a diffeomorphism. We have 
\begin{equation*}
\begin{tikzcd}
\mca{G}\ar[d,shift left]\ar[d,shift right]\ar[rd,phantom,"\circlearrowright"{xshift=2,yshift=-2}]&\mca{G}^\prime\ar[l,"\varphi^{-1}"']\ar[d,shift left]\ar[d,shift right]\\
M&M^\prime. \ar[l,"F_\varphi^{-1}"]
\end{tikzcd}
\end{equation*}
For $(g^\prime_1,g^\prime_2)\in(\mca{G}^\prime)^{(2)}$, put $g_i=\varphi^{-1}(g^\prime_i)$. We have 
\begin{equation*}
s(g_1)=s\varphi^{-1}(g^\prime_1)=F_\varphi^{-1}s^\prime(g^\prime_1)=F_\varphi^{-1}t^\prime(g^\prime_2)=t\varphi^{-1}(g^\prime_2)=t(g_2), 
\end{equation*}
so $(g_1,g_2)\in\mca{G}^{(2)}$. Since $\varphi(g_1g_2)=\varphi(g_1)\varphi(g_2)=g^\prime_1g^\prime_2$, we have 
\begin{equation*}
\varphi^{-1}(g^\prime_1)\varphi^{-1}(g^\prime_2)=g_1g_2=\varphi^{-1}(g^\prime_1g^\prime_2). 
\end{equation*}
Hence $\varphi^{-1}\colon\mca{G}^\prime\to\mca{G}$ is a morphism with base map $F_\varphi^{-1}$. Thus $\varphi$ is an isomorphism. 
\end{proof}

\begin{dfn}
Let $\mca{G}\rightrightarrows M$ be a Lie groupoid. A \emph{Lie subgroupoid} of $\mca{G}\rightrightarrows M$ is a pair $(\mca{G}^\prime\rightrightarrows M^\prime,\varphi)$ of a Lie groupoid $\mca{G}^\prime\rightrightarrows M^\prime$ and $\varphi\in\Hom(\mca{G}^\prime,\mca{G})$ such that $\varphi$ and $F_\varphi$ are injective immersions. 

An \emph{embedded Lie subgroupoid} is a Lie subgroupoid $(\mca{G}^\prime\rightrightarrows M^\prime,\varphi)$ such that $\varphi$ and $F_\varphi$ are embeddings. 

A Lie subgroupoid $(\mca{G}^\prime\rightrightarrows M^\prime,\varphi)$ of $\mca{G}\rightrightarrows M$ is \emph{wide} if $M^\prime=M$ and $F_\varphi=\id$. 
\end{dfn}

\subsection{The identity component of a Lie groupoid}
\begin{lem}\label{identitycomp}
Let $\mca{G}\rightrightarrows M$ be a Lie groupoid. For $x\in M$, let $(\mca{G}_0)^x$ be the connected component of $\mca{G}^x$ containing $1_x$, and let $\mca{G}_0=\bigcup_{x\in M}(\mca{G}_0)^x$. Then $\mca{G}_0$ is a wide embedded Lie subgroupoid of $\mca{G}$, called the \emph{identity component} of $\mca{G}$. For each $x\in M$, $(\mca{G}_0)_x$ coincides with the connected component of $\mca{G}_x$ containing $1_x$. 
\end{lem}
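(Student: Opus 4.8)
The plan is to establish three things in order: $\mca{G}_0$ is open in $\mca{G}$; it is closed under the structure maps, so it is a subgroupoid; and the source fibers of $\mca{G}_0$ are the identity components of the source fibers of $\mca{G}$. Throughout I use that every fiber $\mca{G}^x$ (and $\mca{G}_x$), being a Hausdorff topological manifold, is locally path connected, so its connected components coincide with its path components; in particular $(\mca{G}_0)^x$ is the path component of $1_x$ in $\mca{G}^x$.

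\emph{Openness.} Let $g\in\mca{G}_0$ and pick a path $\gamma\colon[0,1]\to\mca{G}^{t(g)}$ from $1_{t(g)}$ to $g$. Since $t$ is a submersion, every point of $\mca{G}$ lies in a submersion chart for $t$ which, after shrinking the fiber factor, has connected fibers; covering the compact set $\gamma([0,1])$ by finitely many such charts gives charts $U_0,\dots,U_n$ and a partition $0=\tau_0<\dots<\tau_{n+1}=1$ with $\gamma([\tau_i,\tau_{i+1}])\subset U_i$, so that $1_{t(g)}\in U_0$, $g\in U_n$, and $\gamma(\tau_{i+1})\in U_i\cap U_{i+1}$. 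Because $1\colon M\to\mca{G}$ is continuous, $t$ is an open map, and the $U_i$ have connected fibers, there is an open neighborhood $V\ni t(g)$ in $M$ such that for every $x\in V$ one has $1_x\in U_0$, each $U_i\cap\mca{G}^x$ is a connected subset of $\mca{G}^x$, and $U_i\cap U_{i+1}\cap\mca{G}^x\neq\emptyset$. Chaining through the connected sets $U_0\cap\mca{G}^x,\dots,U_n\cap\mca{G}^x$ then shows that $1_x$ and every point of $U_n\cap\mca{G}^x$ lie in one component of $\mca{G}^x$, i.e.\ $U_n\cap\mca{G}^x\subset\mca{G}_0$. Hence $U_n\cap t^{-1}(V)$ is an open neighborhood of $g$ contained in $\mca{G}_0$.

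\emph{$\mca{G}_0$ is a wide embedded Lie subgroupoid.} Clearly $1_M\subset\mca{G}_0$. If $g,h\in\mca{G}_0$ are composable, concatenating a path from $1_{t(g)}$ to $g$ in $\mca{G}^{t(g)}$ with the image, under the continuous left translation $L_g\colon\mca{G}^{s(g)}\to\mca{G}^{t(g)}$, $k\mapsto gk$, of a path from $1_{s(g)}$ to $h$ in $\mca{G}^{s(g)}$ yields a path from $1_{t(gh)}$ to $gh$ in $\mca{G}^{t(gh)}$, so $gh\in\mca{G}_0$; likewise, applying $L_{g^{-1}}$ to a path from $1_{t(g)}$ to $g$ produces a path from $g^{-1}$ to $1_{s(g)}$ in $\mca{G}^{s(g)}$, so $g^{-1}\in\mca{G}_0$. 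Thus $\mca{G}_0$ is closed under all structure maps. Being an open submanifold of $\mca{G}$, it is a $C^\infty$ manifold, $t$ and $s$ restrict to submersions $\mca{G}_0\to M$, the set $\mca{G}_0\times_{s,t}\mca{G}_0=\mca{G}^{(2)}\cap(\mca{G}_0\times\mca{G}_0)$ is open in $\mca{G}^{(2)}$, $(\mca{G}_0)^x=\mca{G}_0\cap\mca{G}^x$ is open in the Hausdorff manifold $\mca{G}^x$ hence Hausdorff, and all structure maps of $\mca{G}$ restrict to $C^\infty$ maps for $\mca{G}_0$; so $\mca{G}_0\rightrightarrows M$ is a Lie groupoid, and the inclusion is a morphism whose base map $\id_M$ is an embedding. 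Therefore $(\mca{G}_0,\text{incl})$ is a wide embedded Lie subgroupoid.

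\emph{The source fibers, and the main obstacle.} The inversion $\mca{G}\to\mca{G}$, $g\mapsto g^{-1}$, is an involutive diffeomorphism carrying $\mca{G}^x$ onto $\mca{G}_x$, fixing $1_x$, and (by the previous step) carrying $\mca{G}_0$ onto $\mca{G}_0$; hence it restricts to a homeomorphism $(\mca{G}_0)^x\to\mca{G}_0\cap\mca{G}_x=(\mca{G}_0)_x$ fixing $1_x$. Since $(\mca{G}_0)^x$ is a connected component of $\mca{G}^x$ it is clopen there, so $(\mca{G}_0)_x$ is a nonempty clopen connected subset of the locally connected space $\mca{G}_x$ containing $1_x$, which is precisely the connected component of $\mca{G}_x$ containing $1_x$. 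The only genuinely delicate point is the openness of $\mca{G}_0$: a submersion need not restrict to a locally trivial bundle, so connectedness of one fiber must be transported to nearby fibers by hand, and the real work is the bookkeeping with the chained connected-fiber charts and the choice of the common neighborhood $V$; everything else is formal.
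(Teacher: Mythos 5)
Your proof is correct. Note that the paper itself gives no argument for this lemma: it simply cites Proposition 1.5.1 of Mackenzie's book, so what you have written is a complete self-contained proof of the standard type rather than a variant of an in-paper argument. The three steps you isolate are exactly the right ones, and the delicate one is handled properly: openness of $\mca{G}_0$ follows from chaining submersion charts with connected fibers along a path from $1_{t(g)}$ to $g$, where continuity of the unit map gives $1_x\in U_0$ for $x$ near $t(g)$ and openness of $t$ guarantees that the consecutive overlaps $U_i\cap U_{i+1}$ still meet the fiber $\mca{G}^x$ (which also supplies the nonemptiness needed for the chain to be connected). The closure under multiplication and inversion via the left translations $L_g$, $L_{g^{-1}}$ restricted to target fibers is fine, as is the verification that an open subset closed under the structure maps inherits the Lie groupoid axioms of the paper (in particular the target fibers $(\mca{G}_0)^x$ are subspaces of the Hausdorff manifolds $\mca{G}^x$, and non-Hausdorffness or non-second-countability of $\mca{G}$ causes no trouble since all arguments are local or take place inside the fibers). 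The last step, transporting the statement to source fibers by the inversion diffeomorphism and identifying a clopen connected subset containing $1_x$ with the component of $1_x$, is also correct; local connectedness of $\mca{G}^x$ is what makes $(\mca{G}_0)^x$ open in $\mca{G}^x$, and the clopen-plus-connected argument then pins down $(\mca{G}_0)_x$ exactly.
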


\begin{proof}
See Proposition 1.5.1 of \cite{Mackenzie}. 
\end{proof}

\begin{lem}\label{connorb}
Let $\mca{G}\rightrightarrows M$ be a Lie groupoid and $O\in\mca{O}_\mca{G}$. If $O_0$ is a connected component of $O$, then $O_0$ is an orbit of $\mca{G}_0$. 
\end{lem}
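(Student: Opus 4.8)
The statement to prove is: for a Lie groupoid $\mca{G}\rightrightarrows M$ and an orbit $O\in\mca{O}_\mca{G}$, every connected component $O_0$ of $O$ is an orbit of the identity component $\mca{G}_0$. I would prove both inclusions, namely that $O_0$ is $\mca{G}_0$-invariant (so it is a union of $\mca{G}_0$-orbits) and that $\mca{G}_0$ acts transitively on $O_0$ (so it is a single orbit).

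First I would fix a point $x_0\in O_0$ and show that the $\mca{G}_0$-orbit of $x_0$ is exactly $O_0$. The key tool is Lemma \ref{identitycomp}, which tells us $(\mca{G}_0)_{x_0}$ is the connected component of $\mca{G}_{x_0}$ containing $1_{x_0}$, and that $\mca{G}_0$ is a wide embedded Lie subgroupoid. For the ``$\subset$'' direction: the $\mca{G}_0$-orbit of $x_0$ is $s\bigl((\mca{G}_0)^{x_0}\bigr)$... more precisely, the orbit of $x_0$ under $\mca{G}_0$ is $\{s(g)\mid g\in(\mca{G}_0)^{x_0}\}$ (the images under $s$ of the arrows in $\mca{G}_0$ emanating from $x_0$ via $t$). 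Since $(\mca{G}_0)^{x_0}$ is connected (it is the identity component of $\mca{G}^{x_0}$) and $s$ is continuous, its image is connected, contains $x_0 = s(1_{x_0})$, and is contained in $O$; hence it lies in the connected component $O_0$ of $O$. This uses that $s\colon\mca{G}^{x_0}\to M$ has image equal to the orbit $O$ — a standard fact about Lie groupoids that I would cite or note as immediate from the definition of orbit.

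For the ``$\supset$'' direction I need: every point of $O_0$ is in the $\mca{G}_0$-orbit of $x_0$. Take $x\in O_0$. Since $x,x_0$ are in the same $\mca{G}$-orbit there is $g\in\mca{G}^{x}_{x_0}$, i.e. $t(g)=x$, $s(g)=x_0$. The issue is that $g$ need not lie in $\mca{G}_0$. Here is where I would use connectedness of $O_0$ together with a local-to-global argument: the target map $t\colon\mca{G}^{x_0}\to O$ (corestricted to the orbit) behaves like the projection of a principal bundle, so it is an open map onto $O$ with its (immersed) submanifold structure, and the image of the connected set $(\mca{G}_0)^{x_0}$ under $t$ is an open subset of $O$; by the same argument applied to translates it is also closed in $O$, hence a union of connected components of $O$, hence contains all of $O_0$ since it contains $x_0$. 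Concretely: let $P = t\bigl((\mca{G}_0)^{x_0}\bigr)$ be the $\mca{G}_0$-orbit of $x_0$. One shows $P$ is open in $O$ (using that $(\mca{G}_0)^{x_0}$ is open in $\mca{G}^{x_0}$ — it's a connected component of a manifold, hence open — and that $t\colon\mca{G}^{x_0}\to O$ is open, which follows from $t$ being a submersion onto the immersed orbit, or can be extracted from the principal-bundle structure of target fibers reviewed in Section \ref{999} via Proposition \ref{isoorb}). Then $O\setminus P$ is a union of other $\mca{G}_0$-orbits in $O$, each of which is likewise open in $O$, so $P$ is also closed in $O$; since $P$ is nonempty, connected — being a continuous image of the connected $(\mca{G}_0)^{x_0}$ — and contains $x_0$, it must equal the connected component $O_0$.

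Finally I would check $\mca{G}_0$-invariance of $O_0$ as a sanity point, though it follows from the above: the $\mca{G}_0$-orbits partition $O$, each is connected and open (hence each is a whole connected component of $O$), so the connected components of $O$ are precisely the $\mca{G}_0$-orbits contained in $O$, which is exactly the claim. The main obstacle I anticipate is rigorously justifying that the $\mca{G}_0$-orbit $P=t((\mca{G}_0)^{x_0})$ is open in $O$; everything else is formal. This openness is really the statement that the target map restricted to a target-fiber is an open map onto the orbit, for which the cleanest route is to invoke the principal bundle structure of Proposition \ref{isoorb} rather than to argue with submersions directly, since the orbit $O$ is only an immersed (not embedded) submanifold of $M$ in general.
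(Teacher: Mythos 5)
Your argument is correct and follows essentially the same route as the paper's proof: the paper likewise uses that $s\colon(\mca{G}_0)^x\to O_0$ is a submersion (coming from $s\colon\mca{G}^x\to O$, i.e.\ the structure in Proposition \ref{isoorb}), so that the $\mca{G}_0$-orbits partition $O_0$ into open subsets, and connectedness of $O_0$ then forces it to be a single orbit. One small slip to fix: in your second half you write $P=t\bigl((\mca{G}_0)^{x_0}\bigr)$ and speak of ``$t\colon\mca{G}^{x_0}\to O$'', but $t$ is constant on $\mca{G}^{x_0}=t^{-1}(x_0)$; you mean $s$ there (or equivalently work with $\mca{G}_{x_0}$ and $t$), exactly as you did correctly in the first half.
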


\begin{proof}
Let $x\in O_0$. Since $s\colon\mca{G}^x\to O$ is a submersion, so is $s\colon(\mca{G}_0)^x\to O_0$. Hence $O_0$ is a disjoint union of orbits of $\mca{G}_0$. Since each of these orbits is open in $O_0$, $O_0$ must coincide with an orbit of $\mca{G}_0$ by connectedness. 
\end{proof}

\section{Actions of Lie groupoids}\label{444}
\subsection{Actions of Lie groupoids}
\begin{dfn}\label{actleftnrgyt}
Let $\mca{G}\rightrightarrows M$ be a Lie groupoid, $N$ be a $C^\infty$ manifold and $\nu\colon N\to M$ be a $C^\infty$ map. Then $\mca{G}\times_{s,\nu}N$ is a closed embedded submanifold of $\mca{G}\times N$ since $s$ is a submersion and $M$ is Hausdorff (Lemma \ref{fiberproduct}). A $C^\infty$ \emph{left action} of $\mca{G}$ on $\nu$ is a $C^\infty$ map $\rho\colon\mca{G}\times_{s,\nu}N\to N$ such that: 
\begin{itemize}
\setlength\itemsep{0em}
\item $\nu(\rho(g,y))=t(g)$ for all $(g,y)\in\mca{G}\times_{s,\nu}N$
\item $\rho(g,\rho(g^\prime,y))=\rho(gg^\prime,y)$ for all $(g,g^\prime)\in\mca{G}^{(2)}$ and $(g^\prime,y)\in\mca{G}\times_{s,\nu}N$
\item $\rho(1_{\nu(y)},y)=y$ for all $y\in N$. 
\end{itemize}
A $C^\infty$ \emph{right action} of $\mca{G}$ on $\nu$ is a $C^\infty$ map $\rho\colon N\times_{\nu,t}\mca{G}\to N$ such that: 
\begin{itemize}
\setlength\itemsep{0em}
\item $\nu(\rho(y,g))=s(g)$ for all $(y,g)\in N\times_{\nu,t}\mca{G}$
\item $\rho(\rho(y,g),g^\prime)=\rho(y,gg^\prime)$ for all $(y,g)\in N\times_{\nu,t}\mca{G}$ and $(g,g^\prime)\in\mca{G}^{(2)}$
\item $\rho(y,1_{\nu(y)})=y$ for all $y\in N$. 
\end{itemize}
Let 
\begin{equation*}
\mca{A}(\nu,\mca{G})=\left\{\text{$C^\infty$ right actions of $\mca{G}$ on $\nu$}\right\}. 
\end{equation*}
If $\mca{A}(\nu,\mca{G})\neq\emptyset$, then $\nu(N)$ is a union of orbits of $\mca{G}$. We have $\mca{G}|_{\nu(N)}=t^{-1}(\nu(N))$ and $\mca{G}|_{\nu(N)}\rightrightarrows\nu(N)$ is a groupoid (but not necessarily a Lie subgroupoid of $\mca{G}\rightrightarrows M$). 
\end{dfn}

\begin{example}
The subset $\nu(N)$ may not be a submanifold of $M$. Let $N$, $M$ be $C^\infty$ manifolds and $\nu\colon N\to M$ be any $C^\infty$ map. Consider the base groupoid $M\rightrightarrows M$. Then $\mca{A}(\nu,M)=\{\rho\}$, where $\rho(y,1_{\nu(y)})=y$ for all $y\in N$. 
\end{example}

\begin{example}\label{canaction}
Let $\mca{G}\rightrightarrows M$ be a Lie groupoid. Consider the map $(t,s)\colon\mca{G}\to M\times M$ and the product groupoid $\mca{G}\times\mca{G}\rightrightarrows M\times M$. Define $\gamma\in\mca{A}((t,s),\mca{G}\times\mca{G})$ by $\gamma(g,(g_1,g_2))=g_1^{-1}gg_2$ for $(g,(g_1,g_2))\in\mca{G}\times_{(t,s),t\times t}(\mca{G}\times\mca{G})$, ie $(t(g),s(g))=(t(g_1),t(g_2))$. Then the action groupoid $\mca{G}\rtimes_\gamma(\mca{G}\times\mca{G})\rightrightarrows\mca{G}$ is a Lie groupoid where the manifold $\mca{G}$ of objects might be non Hausdorff, non second countable. 
\end{example}

\subsection{Action morphisms and action groupoids}
\begin{dfn}[Definition 1.6.15 in \cite{Mackenzie}]\label{hnggnmactmor}
Let $\mca{H}\rightrightarrows N$, $\mca{G}\rightrightarrows M$ be Lie groupoids and $\varphi\in\Hom(\mca{H},\mca{G})$. Let $\varphi^!=(t,\varphi)\colon\mca{H}\to N\times_{F_\varphi,t}\mca{G}$. We have a commutative diagram 
\begin{equation*}
\begin{tikzcd}
\mca{H}\ar[rd,"\varphi^!"]\ar[rrd,bend left,"\varphi"]\ar[rdd,bend right,"t"']\\
&N\times_{F_\varphi,t}\mca{G}\ar[r]\ar[d]&\mca{G}\ar[d,"t"]\\
&N\ar[r,"F_\varphi"']&M. 
\end{tikzcd}
\end{equation*}
We say that $\varphi$ is an \emph{action morphism} if $\varphi^!$ is a diffeomorphism. Let 
\begin{equation*}
\Hom(\mca{H},\mca{G})^\times=\left\{\varphi\in\Hom(\mca{H},\mca{G})\ \middle|\ \text{$\varphi$ is an action morphism}\right\}. 
\end{equation*}
If $\varphi$ is an action morphism and $y\in N$, then $\varphi\colon\mca{H}^y\to\mca{G}^{F_\varphi(y)}$ is a diffeomorphism. 
\end{dfn}

\begin{prop}\label{actiongroupoid}
Let $\mca{G}\rightrightarrows M$ be a Lie groupoid, $N$ be a Hausdorff second countable $C^\infty$ manifold, $\nu\colon N\to M$ be a $C^\infty$ map and $\rho\in\mca{A}(\nu,\mca{G})$. Then we have a Lie groupoid $N\rtimes_\rho\mca{G}\rightrightarrows N$, called the \emph{action groupoid} of $\rho$, defined by $N\rtimes_\rho\mca{G}=N\times_{\nu,t}\mca{G}$ and 
\begin{align*}
t\colon N\rtimes_\rho\mca{G}&\to N&s\colon N\rtimes_\rho\mca{G}&\to N\\
(y,g)&\mapsto y, &(y,g)&\mapsto\rho(y,g), 
\end{align*}
\begin{align*}
(N\rtimes_\rho\mca{G})^{(2)}&\to N\rtimes_\rho\mca{G}\\
((y,g),(\rho(y,g),g^\prime))&\mapsto(y,gg^\prime), 
\end{align*}
\begin{align*}
N&\to N\rtimes_\rho\mca{G}&N\rtimes_\rho\mca{G}&\to N\rtimes_\rho\mca{G}\\
y&\mapsto(y,1_{\nu(y)}), &(y,g)&\mapsto(\rho(y,g),g^{-1}). 
\end{align*}
(Even if $N$ is not assumed to be Hausdorff or second countable, $N\rtimes_\rho\mca{G}\rightrightarrows N$ satisfies the conditions to be a Lie groupoid except $N$ being Hausdorff and second countable.) 
\begin{align*}
\varphi_\rho\colon N\rtimes_\rho\mca{G}&\to\mca{G}\\
(y,g)&\mapsto g
\end{align*}
is an action morphism with base map $\nu$. We call $\varphi_\rho$ the \emph{associated action morphism} of $\rho$. If $\nu$ is surjective, then $\varphi_\rho$ is surjective. If $\nu$ is a submersion, then $\varphi_\nu$ is a submersion. 
\end{prop}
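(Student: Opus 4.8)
The plan is to verify, in order, (i) that $N\rtimes_\rho\mca{G}$ is a $C^\infty$ manifold, (ii) that the listed structure maps are well defined, i.e.\ actually take values in the fiber product $N\times_{\nu,t}\mca{G}$, (iii) that they satisfy the algebraic axioms of Definition~\ref{grptsgm1g2}, (iv) that they satisfy the smoothness and regularity requirements of Definition~\ref{liegrpgmtx}, and finally (v) the assertions about $\varphi_\rho$. Essentially all of this is routine bookkeeping, and the order matters only because several algebraic identities silently depend on the relation $\nu(\rho(y,g))=s(g)$ and on a structure map landing in the correct submanifold before one may speak of its smoothness.

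For (i): since $t\colon\mca{G}\to M$ is a submersion and $M$ is Hausdorff, Lemma~\ref{fiberproduct} gives that $N\rtimes_\rho\mca{G}=N\times_{\nu,t}\mca{G}$ is a closed embedded submanifold of $N\times\mca{G}$ and that the first projection is a submersion; this uses nothing about $N$, so it remains valid when $N$ is not Hausdorff or second countable. For (ii)--(iii): a pair $((y,g),(y^\prime,g^\prime))$ is composable precisely when $\rho(y,g)=y^\prime$, and then $s(g)=\nu(\rho(y,g))=\nu(y^\prime)=t(g^\prime)$ automatically, so $gg^\prime$ is defined and $((y,g),(\rho(y,g),g^\prime))\mapsto(y,gg^\prime)$ makes sense; likewise $1_y=(y,1_{\nu(y)})$ lies in the fiber product because $t(1_{\nu(y)})=\nu(y)$, and $(y,g)^{-1}=(\rho(y,g),g^{-1})$ lies there because $\nu(\rho(y,g))=s(g)=t(g^{-1})$. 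With these identifications, every groupoid axiom reduces to the corresponding identity in $\mca{G}$ together with one of the three defining properties of a right action; for example $s\bigl((y,g)^{-1}\bigr)=\rho(\rho(y,g),g^{-1})=\rho(y,gg^{-1})=\rho(y,1_{\nu(y)})=y=t((y,g))$.

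For (iv): each structure map is the (co)restriction to embedded submanifolds of a smooth map assembled from the two projections, $\nu$, the unit, inversion and multiplication of $\mca{G}$, and $\rho$; since $N\times_{\nu,t}\mca{G}$ and $\mca{G}^{(2)}$ are embedded submanifolds, smoothness into the ambient products suffices, so $t,s,1$, inversion and the composition are all $C^\infty$; $t$ is the first projection, hence a submersion by Lemma~\ref{fiberproduct}, which in turn makes $(N\rtimes_\rho\mca{G})^{(2)}$ an embedded submanifold; and the $t$-fiber over $y$ is $\{y\}\times\mca{G}^{\nu(y)}$, which is Hausdorff because $\mca{G}^{\nu(y)}$ is. The hypotheses that $N$ be Hausdorff and second countable are used only to guarantee these properties for the object manifold $N$ of the new Lie groupoid, which is the content of the parenthetical remark. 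For (v): $\varphi_\rho$ is the second projection, hence smooth; $t\circ\varphi_\rho(y,g)=t(g)=\nu(y)$ and $s\circ\varphi_\rho(y,g)=s(g)=\nu(\rho(y,g))$ exhibit it as a morphism with base map $\nu$, and multiplicativity is $p_2\bigl((y,g)(\rho(y,g),g^\prime)\bigr)=gg^\prime$. Since $F_{\varphi_\rho}=\nu$ we get $N\times_{F_{\varphi_\rho},t}\mca{G}=N\rtimes_\rho\mca{G}$ and $\varphi_\rho^!=(t,\varphi_\rho)=\id$, so $\varphi_\rho$ is an action morphism in the sense of Definition~\ref{hnggnmactmor}. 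If $\nu$ is surjective, then for $g\in\mca{G}$ we choose $y$ with $\nu(y)=t(g)$, whence $(y,g)\in N\rtimes_\rho\mca{G}$ maps to $g$; and if $\nu$ is a submersion, then $\varphi_\rho$ is the second projection of $N\times_{\nu,t}\mca{G}$, which is a submersion by Lemma~\ref{fiberproduct} applied with the roles of the two factors interchanged.

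The only real pitfall is organizational: one must verify that each structure map lands in $N\times_{\nu,t}\mca{G}$ before invoking its smoothness, and remember that identities such as the unit and inverse axioms rest on $\nu(\rho(y,g))=s(g)$. There is no genuine difficulty — no transversality to arrange beyond what Lemma~\ref{fiberproduct} already supplies — and the cleanest observation, that $\varphi_\rho^!$ is literally the identity map, needs no computation at all.
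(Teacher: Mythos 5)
Your proposal is correct and follows essentially the same route as the paper: Lemma \ref{fiberproduct} supplies the embedded submanifold structure and the submersion property of $t$, the $t$-fibers $\{y\}\times\mca{G}^{\nu(y)}$ give Hausdorffness, the identity $\nu(\rho(y,g))=s(g)$ shows $\varphi_\rho$ is a morphism with base map $\nu$, and the observation that $\varphi_\rho^!$ is literally the identity yields the action morphism claim, with the surjectivity and submersion statements again from Lemma \ref{fiberproduct}. You merely write out the routine groupoid-axiom and smoothness verifications that the paper leaves implicit (and you correctly read the paper's ``$\varphi_\nu$'' as $\varphi_\rho$).
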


\begin{proof}
We have 
\begin{equation*}
\begin{tikzcd}
N\rtimes_\rho\mca{G}\ar[r,"\varphi_\rho"]\ar[d,"t"']\ar[rd,phantom,"\circlearrowright"]&\mca{G}\ar[d,"t"]\\
N\ar[r,"\nu"']&M. 
\end{tikzcd}
\end{equation*}
$N\rtimes_\rho\mca{G}$ is an embedded submanifold of $N\times\mca{G}$ and $t\colon N\rtimes_\rho\mca{G}\to N$ is a submersion since $t\colon\mca{G}\to M$ is a submersion (Lemma \ref{fiberproduct}). For $y\in N$, $t^{-1}(y)=\left\{y\right\}\times\mca{G}^{\nu(y)}$ is Hausdorff. Since $s\varphi_\rho(y,g)=s(g)=\nu\rho(y,g)=\nu s(y,g)$ for $(y,g)\in N\rtimes_\rho\mca{G}$, $\varphi_\rho$ is a morphism with base map $\nu$. Since $\varphi_\rho^!\colon N\rtimes_\rho\mca{G}\to N\times_{\nu,t}\mca{G}$, $\varphi_\rho^!(y,g)=(y,g)$ is the identity map, $\varphi_\rho$ is an action morphism. If $\nu$ is surjective (resp. a submersion), $\varphi_\nu$ is surjective (resp. a submersion) by Lemma \ref{fiberproduct}. 
\end{proof}

\begin{prop}[Theorem 1.6.16 and Proposition 1.6.17 (i) in \cite{Mackenzie}]
Let $\mca{H}\rightrightarrows N$, $\mca{G}\rightrightarrows M$ be Lie groupoids and $\varphi\in\Hom(\mca{H},\mca{G})^\times$. Let 
\begin{equation*}
\rho_\varphi\colon N\times_{F_\varphi,t}\mca{G}\xrightarrow{(\varphi^!)^{-1}}\mca{H}\xrightarrow{s}N. 
\end{equation*}
Then $\rho_\varphi\in\mca{A}(F_\varphi,\mca{G})$, $\varphi^!\colon\mca{H}\to N\rtimes_{\rho_\varphi}\mca{G}$ is an isomorphism with base map $\id$ and 
\begin{equation*}
\begin{tikzcd}[column sep=tiny]
\mca{H}\ar[rr,"\sim"{name=U},"\varphi^!"{yshift=7}]\ar[rd,"\varphi"']&[2]&N\rtimes_{\rho_\varphi}\mca{G}\ar[ld,"\varphi_{\rho_\varphi}"]\\
&\mca{G}. \ar[to=U,phantom,"\circlearrowright"{xshift=2}]
\end{tikzcd}
\end{equation*}
\end{prop}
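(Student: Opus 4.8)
The plan is to prove all three assertions by a direct diagram chase, using throughout the single structural fact that $\varphi$ is an action morphism, i.e.\ that $\varphi^!=(t,\varphi)\colon\mca{H}\to N\times_{F_\varphi,t}\mca{G}$ is a diffeomorphism. This has two consequences I will exploit: $(\varphi^!)^{-1}$ is $C^\infty$, so $\rho_\varphi=s\circ(\varphi^!)^{-1}$ is $C^\infty$; and for every $(y,g)\in N\times_{F_\varphi,t}\mca{G}$ there is a \emph{unique} $h\in\mca{H}$ with $t(h)=y$ and $\varphi(h)=g$, and then $\rho_\varphi(y,g)=s(h)$. I will phrase everything in terms of this ``unique lift'' $h$. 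I will also keep track of the source/target compatibilities $F_\varphi\circ s=s\circ\varphi$ and $F_\varphi\circ t=t\circ\varphi$ (from $\varphi$ being a morphism) to make sure each application of $\rho_\varphi$ is legal.

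\textbf{Step 1: $\rho_\varphi\in\mca{A}(F_\varphi,\mca{G})$.} Since $\mca{H}\rightrightarrows N$ is a Lie groupoid, $N$ is Hausdorff and second countable, and $F_\varphi\colon N\to M$ is $C^\infty$, so $\mca{A}(F_\varphi,\mca{G})$ and the action groupoid $N\rtimes_{\rho_\varphi}\mca{G}$ make sense once the axioms are checked. Smoothness of $\rho_\varphi$ is immediate. For the value condition: if $h$ lifts $(y,g)$, then $F_\varphi(\rho_\varphi(y,g))=F_\varphi(s(h))=s(\varphi(h))=s(g)$. For the unit condition: $1_y$ lifts $(y,1_{F_\varphi(y)})$ because $\varphi(1_y)=1_{F_\varphi(y)}$, so $\rho_\varphi(y,1_{F_\varphi(y)})=s(1_y)=y$. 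For associativity: given $(y,g)\in N\times_{F_\varphi,t}\mca{G}$ and $(g,g^\prime)\in\mca{G}^{(2)}$, let $h$ lift $(y,g)$; then $F_\varphi(s(h))=s(g)=t(g^\prime)$, so $(s(h),g^\prime)$ is in the domain and has a lift $h^\prime$, with $t(h^\prime)=s(h)$. Hence $hh^\prime$ is defined, $t(hh^\prime)=y$ and $\varphi(hh^\prime)=gg^\prime$, so $hh^\prime$ lifts $(y,gg^\prime)$ and $\rho_\varphi(y,gg^\prime)=s(hh^\prime)=s(h^\prime)=\rho_\varphi(\rho_\varphi(y,g),g^\prime)$.

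\textbf{Step 2: $\varphi^!$ is an isomorphism with base map $\id_N$, and the triangle commutes.} The underlying manifold of $N\rtimes_{\rho_\varphi}\mca{G}$ is $N\times_{F_\varphi,t}\mca{G}$, which is exactly the codomain of $\varphi^!$, so $\varphi^!$ is already a diffeomorphism of manifolds. It remains to see it is a groupoid morphism over $\id_N$. Write $\varphi^!(h)=(t(h),\varphi(h))$ and use the structure maps of $N\rtimes_{\rho_\varphi}\mca{G}$ from Proposition \ref{actiongroupoid}: the target of $\varphi^!(h)$ is $t(h)$; its source is $\rho_\varphi(t(h),\varphi(h))=s(h)$, since $h$ is itself the lift of $(t(h),\varphi(h))$; units map to units, as $\varphi^!(1_y)=(y,1_{F_\varphi(y)})$; and for $(h_1,h_2)\in\mca{H}^{(2)}$ one has $\rho_\varphi(t(h_1),\varphi(h_1))=s(h_1)=t(h_2)$, so $\varphi^!(h_1)$ and $\varphi^!(h_2)$ are composable with product $(t(h_1),\varphi(h_1)\varphi(h_2))=(t(h_1h_2),\varphi(h_1h_2))=\varphi^!(h_1h_2)$. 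Thus $\varphi^!$ is a morphism with base map $\id_N$, hence an isomorphism by Proposition \ref{isodiff}. The triangle commutes because $\varphi_{\rho_\varphi}(\varphi^!(h))=\varphi_{\rho_\varphi}(t(h),\varphi(h))=\varphi(h)$.

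\textbf{Expected obstacle.} I do not anticipate a real difficulty: the content of the statement is precisely that ``action morphism'' packages exactly the data of a right $\mca{G}$-action on $F_\varphi$. The only point demanding a little care is bookkeeping — ensuring at each step that the pair to which $\rho_\varphi$ is applied actually lies in $N\times_{F_\varphi,t}\mca{G}$, for which the morphism identities $F_\varphi\circ s=s\circ\varphi$ and $F_\varphi\circ t=t\circ\varphi$ suffice — together with keeping the distinction between the structure maps of $\mca{H}$ and those of $N\rtimes_{\rho_\varphi}\mca{G}$ straight.
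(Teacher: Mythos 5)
Your proposal is correct and follows essentially the same route as the paper's proof: the "unique lift" $h$ of $(y,g)$ is exactly $(\varphi^!)^{-1}(y,g)$, and the verifications of the action axioms, of $\varphi^!$ being a morphism over $\id_N$ (hence an isomorphism via Proposition \ref{isodiff}), and of the commuting triangle are the same computations. No gaps.
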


\begin{proof}
For $(y,g)\in N\times_{F_\varphi,t}\mca{G}$, let $h=(\varphi^!)^{-1}(y,g)\in\mca{H}$. Then $(t(h),\varphi(h))=(y,g)$ and 
\begin{equation}\label{fvarfs}
F_\varphi(\rho_\varphi(y,g))=F_\varphi(s(h))=s\varphi(h)=s(g). 
\end{equation}

For $(y,g)\in N\times_{F_\varphi,t}\mca{G}$ and $(g,g^\prime)\in\mca{G}^{(2)}$, we have $(\rho_\varphi(y,g),g^\prime)\in N\times_{F_\varphi,t}\mca{G}$ by \eqref{fvarfs}. Let $h=(\varphi^!)^{-1}(y,g)$, $h^\prime=(\varphi^!)^{-1}(\rho_\varphi(y,g),g^\prime)$. Then $(t(h),\varphi(h))=(y,g)$ and $(t(h^\prime),\varphi(h^\prime))=(\rho_\varphi(y,g),g^\prime)=(s(h),g^\prime)$. In particular $s(h)=t(h^\prime)$, hence $hh^\prime$ is defined, and $\varphi(hh^\prime)=\varphi(h)\varphi(h^\prime)=gg^\prime$. So $\varphi^!(hh^\prime)=(t(hh^\prime),\varphi(hh^\prime))=(y,gg^\prime)$. We have 
\begin{gather*}
\rho_\varphi(\rho_\varphi(y,g),g^\prime)=s(\varphi^!)^{-1}(\rho_\varphi(y,g),g^\prime)=s(h^\prime), \\
\rho_\varphi(y,gg^\prime)=s(\varphi^!)^{-1}(y,gg^\prime)=s(hh^\prime)=s(h^\prime). 
\end{gather*}
For $y\in N$, we have $\varphi^!(1_y)=(y,1_{F_\varphi(y)})$, hence 
\begin{equation*}
\rho_\varphi(y,1_{F_\varphi(y)})=s(\varphi^!)^{-1}(y,1_{F_\varphi(y)})=s(1_y)=y. 
\end{equation*}
Therefore $\rho_\varphi\in\mca{A}(F_\varphi,\mca{G})$. 

For $h\in\mca{H}$, we have 
\begin{gather*}
t\varphi^!(h)=t(t(h),\varphi(h))=t(h), \\
s\varphi^!(h)=\rho_\varphi\varphi^!(h)=s(\varphi^!)^{-1}\varphi^!(h)=s(h), \\
\varphi^!(hh^\prime)=(t(hh^\prime),\varphi(hh^\prime))=(t(h),\varphi(h)\varphi(h^\prime)), \\
\varphi^!(h)\varphi^!(h^\prime)=(t(h),\varphi(h))(t(h^\prime),\varphi(h^\prime))=(t(h),\varphi(h)\varphi(h^\prime)), 
\end{gather*}
hence $\varphi^!$ is a morphism with base map $\id$. It is an isomorphism by Proposition \ref{isodiff}. 
\end{proof}

\begin{lem}\label{compacmor}
Let $\mca{G}\rightrightarrows M$, $\mca{G}^\prime\rightrightarrows M^\prime$, $\mca{G}^{\prime\prime}\rightrightarrows M^{\prime\prime}$ be Lie groupoids and $\varphi\in\Hom(\mca{G},\mca{G}^\prime)^\times$, $\varphi^\prime\in\Hom(\mca{G}^\prime,\mca{G}^{\prime\prime})^\times$. Then $\varphi^\prime\varphi\in\Hom(\mca{G},\mca{G}^{\prime\prime})^\times$. 
\end{lem}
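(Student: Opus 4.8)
The statement to prove is Lemma \ref{compacmor}: if $\varphi\in\Hom(\mca{G},\mca{G}^\prime)^\times$ and $\varphi^\prime\in\Hom(\mca{G}^\prime,\mca{G}^{\prime\prime})^\times$ are action morphisms, then the composite $\varphi^\prime\varphi$ is again an action morphism. The plan is to work directly from Definition \ref{hnggnmactmor}: an action morphism $\psi\colon\mca{H}\to\mca{K}$ is a morphism such that $\psi^!=(t,\psi)\colon\mca{H}\to N\times_{F_\psi,t}\mca{K}$ is a diffeomorphism. So I must show $(\varphi^\prime\varphi)^!\colon\mca{G}\to M\times_{F_{\varphi^\prime\varphi},t}\mca{G}^{\prime\prime}$ is a diffeomorphism, where $F_{\varphi^\prime\varphi}=F_{\varphi^\prime}F_\varphi$.

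\textbf{Key steps.} First I would note that $\varphi^\prime\varphi$ is a morphism with base map $F_{\varphi^\prime}\circ F_\varphi$ (composition of morphisms, immediate from Definition \ref{morgmgmpfpfp}). Next, the heart of the argument is to factor $(\varphi^\prime\varphi)^!$ through the two given diffeomorphisms $\varphi^!\colon\mca{G}\xrightarrow{\sim} M\times_{F_\varphi,t}\mca{G}^\prime$ and $\varphi^{\prime!}\colon\mca{G}^\prime\xrightarrow{\sim} M^\prime\times_{F_{\varphi^\prime},t}\mca{G}^{\prime\prime}$. Concretely, I would build a map
\begin{equation*}
M\times_{F_\varphi,t}\mca{G}^\prime\to M\times_{F_{\varphi^\prime}F_\varphi,t}\mca{G}^{\prime\prime},\qquad (x,g^\prime)\mapsto(x,\varphi^\prime(g^\prime)),
\end{equation*}
which is well-defined since $t(\varphi^\prime(g^\prime))=F_{\varphi^\prime}(t(g^\prime))=F_{\varphi^\prime}(F_\varphi(x))$, and observe that composing it with $\varphi^!$ yields exactly $(\varphi^\prime\varphi)^!$: indeed $(\varphi^\prime\varphi)^!(g)=(t(g),\varphi^\prime\varphi(g))$, while $\varphi^!(g)=(t(g),\varphi(g))$ maps to $(t(g),\varphi^\prime(\varphi(g)))$. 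Since $\varphi^!$ is a diffeomorphism by hypothesis, it remains to show the newly constructed map is a diffeomorphism. For this I would exhibit its inverse using $(\varphi^{\prime!})^{-1}$: the fibered product $M\times_{F_\varphi,t}\mca{G}^\prime$ is identified, fiberwise over $M^\prime$-points of the form $F_\varphi(x)$, with a fibered product involving $M^\prime\times_{F_{\varphi^\prime},t}\mca{G}^{\prime\prime}$ via $\varphi^{\prime!}$; tracking through, the inverse of $(x,g^\prime)\mapsto(x,\varphi^\prime(g^\prime))$ is $(x,g^{\prime\prime})\mapsto(x,(\varphi^{\prime!})^{-1}(F_\varphi(x),g^{\prime\prime}))$, using that $(\varphi^{\prime!})^{-1}$ restricted to the fiber $\{x^\prime\}\times\mca{G}^{\prime\prime}{}^{F_{\varphi^\prime}(x^\prime)}$ lands in $\mca{G}^\prime{}^{x^\prime}$ with $t$ equal to $x^\prime$. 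Smoothness of both directions follows because all maps involved ($\varphi^\prime$, $F_\varphi$, $(\varphi^{\prime!})^{-1}$, and the projections) are $C^\infty$, and the fibered products are embedded submanifolds by Lemma \ref{fiberproduct} since $t$ is a submersion.

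\textbf{Main obstacle.} The only real care needed is bookkeeping with the fibered-product identifications: verifying that the candidate inverse is genuinely well-defined (lands in the correct fibered product) and genuinely two-sided. There is a cleaner packaging: since $\varphi$ is an action morphism, Definition \ref{hnggnmactmor} already tells us $\varphi\colon\mca{G}^x\to\mca{G}^\prime{}^{F_\varphi(x)}$ is a diffeomorphism for every $x\in M$, and likewise $\varphi^\prime\colon\mca{G}^\prime{}^{x^\prime}\to\mca{G}^{\prime\prime}{}^{F_{\varphi^\prime}(x^\prime)}$ is a diffeomorphism for every $x^\prime$; composing gives that $\varphi^\prime\varphi\colon\mca{G}^x\to\mca{G}^{\prime\prime}{}^{F_{\varphi^\prime}F_\varphi(x)}$ is a diffeomorphism for every $x$. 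This fiberwise bijectivity of $(\varphi^\prime\varphi)^!$ together with the factorization above (which shows $(\varphi^\prime\varphi)^!$ is a $C^\infty$ map that is a bijection with $C^\infty$ inverse built from the two given inverses) finishes it. I expect the write-up to be short; the factorization diagram is the key insight and the rest is routine diagram-chasing, so I would present the factorization and then simply remark that each factor is a diffeomorphism.
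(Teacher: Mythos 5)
Your proposal is correct and follows essentially the same route as the paper: the paper also factors $(\varphi^\prime\varphi)^!$ as $\varphi^!$ followed by a map induced by $(\varphi^\prime)^!$, only it packages the second factor through an intermediate iterated fiber product $M\times_{F_\varphi,p_1}(M^\prime\times_{F_{\varphi^\prime},t^{\prime\prime}}\mca{G}^{\prime\prime})$ canonically identified with $M\times_{F_{\varphi^\prime}F_\varphi,t^{\prime\prime}}\mca{G}^{\prime\prime}$, which is just a cosmetic reformulation of your map $(x,g^\prime)\mapsto(x,\varphi^\prime(g^\prime))$ with inverse $(x,g^{\prime\prime})\mapsto(x,(\varphi^{\prime!})^{-1}(F_\varphi(x),g^{\prime\prime}))$.
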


\begin{proof}
We have 
\begin{equation*}
\begin{tikzcd}
M\times_{F_\varphi,p_1}(M^\prime\times_{F_{\varphi^\prime},t^{\prime\prime}}\mca{G}^{\prime\prime})\ar[r]\ar[d]&M^\prime\times_{F_{\varphi^\prime},t^{\prime\prime}}\mca{G}^{\prime\prime}\ar[r]\ar[d,"p_1"]&\mca{G}^{\prime\prime}\ar[d,"t^{\prime\prime}"]\\
M\ar[r,"F_\varphi"']&M^\prime\ar[r,"F_{\varphi^\prime}"']&M^{\prime\prime}. 
\end{tikzcd}
\end{equation*}
By Lemma \ref{fiberproduct}, $M\times_{F_\varphi,p_1}(M^\prime\times_{F_{\varphi^\prime},t^{\prime\prime}}\mca{G}^{\prime\prime})$, $M\times_{F_{\varphi^\prime}F_\varphi,t^{\prime\prime}}\mca{G}^{\prime\prime}$ are $C^\infty$ manifolds and 
\begin{align*}
M\times_{F_\varphi,p_1}(M^\prime\times_{F_{\varphi^\prime},t^{\prime\prime}}\mca{G}^{\prime\prime})&\simeq M\times_{F_{\varphi^\prime}F_\varphi,t^{\prime\prime}}\mca{G}^{\prime\prime}\\
(x,x^\prime,g^{\prime\prime})&\mapsto(x,g^{\prime\prime})\\
(x,F_\varphi(x),g^{\prime\prime})&\mapsfrom(x,g^{\prime\prime})
\end{align*}
is a diffeomorphism. We have 
\begin{equation*}
\begin{tikzcd}
\mca{G}\ar[r,"(\varphi^\prime\varphi)^!"]\ar[d,"\sim"{sloped},"\varphi^!"']\ar[rd,phantom,"\circlearrowright"{xshift=5,yshift=-3}]&M\times_{F_{\varphi^\prime}F_\varphi,t^{\prime\prime}}\mca{G}^{\prime\prime}\ar[d,dash,sloped,"\sim"]\\
M\times_{F_\varphi,t^\prime}\mca{G}^\prime\ar[r,"\sim","\id\times(\varphi^\prime)^!"']&M\times_{F_\varphi,p_1}(M^\prime\times_{F_{\varphi^\prime},t^{\prime\prime}}\mca{G}^{\prime\prime}), 
\end{tikzcd}
\end{equation*}
where the bottom map comes from 
\begin{equation*}
\begin{tikzcd}[column sep=tiny]
\mca{G}^\prime\ar[rr,"\sim","(\varphi^\prime)^!"{yshift=7},""{name=U}]\ar[rd,"t^\prime"']&&M^\prime\times_{F_{\varphi^\prime},t^{\prime\prime}}\mca{G}^{\prime\prime}\ar[ld,"p_1"]\\
&M^\prime\ar[to=U,phantom,"\circlearrowright"{xshift=4,yshift=-1}]
\end{tikzcd}
\end{equation*}
and commutativity of the square is checked as 
\begin{equation*}
g\mapsto(t(g),\varphi(g))\mapsto(t(g),t^\prime\varphi(g),\varphi^\prime\varphi(g))\mapsto(t(g),\varphi^\prime\varphi(g)). 
\end{equation*}
Hence $(\varphi^\prime\varphi)^!$ is a diffeomorphism. 
\end{proof}

\begin{prop}\label{isoactdiff}
Let $\mca{G}\rightrightarrows M$, $\mca{G}^\prime\rightrightarrows M^\prime$ be Lie groupoids and $\varphi\in\Hom(\mca{G},\mca{G}^\prime)$. Then the following statements hold: 
\begin{enumerate}
\setlength\itemsep{0em}
\item If $\varphi$ is an isomorphism, then $\varphi$ is an action morphism. 
\item If $\varphi$ is an action morphism and $F_\varphi$ is a diffeomorphism, then $\varphi$ is an isomorphism. 
\end{enumerate}
\end{prop}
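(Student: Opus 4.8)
The plan is to work entirely with the map $\varphi^!=(t,\varphi)\colon\mca{G}\to M\times_{F_\varphi,t}\mca{G}^\prime$ and the second projection $p_2\colon M\times_{F_\varphi,t}\mca{G}^\prime\to\mca{G}^\prime$, using the obvious identity $p_2\circ\varphi^!=\varphi$. By definition $\varphi$ is an action morphism precisely when $\varphi^!$ is a diffeomorphism, and by Proposition \ref{isodiff} $\varphi$ is an isomorphism precisely when $\varphi$ is a diffeomorphism; so both parts become statements about when these maps are diffeomorphisms. Throughout, the fact that $\mca{G}^{(2)}$-type fiber products over the submersion $t$ are embedded submanifolds and that the projections from them are submersions comes from Lemma \ref{fiberproduct}, which is what makes the explicit inverse maps below smooth.

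For (1), assume $\varphi$ is an isomorphism, so that $\varphi$ and $F_\varphi$ are diffeomorphisms, $\varphi^{-1}$ is a morphism, and $F_{\varphi^{-1}}=F_\varphi^{-1}$. I would exhibit an explicit inverse of $\varphi^!$, namely
\begin{equation*}
\psi\colon M\times_{F_\varphi,t}\mca{G}^\prime\to\mca{G},\qquad(x,g^\prime)\mapsto\varphi^{-1}(g^\prime),
\end{equation*}
which is $C^\infty$ since $\varphi^{-1}$ is. One checks $\psi\circ\varphi^!(g)=\varphi^{-1}(\varphi(g))=g$ directly, and $\varphi^!\circ\psi(x,g^\prime)=(t(\varphi^{-1}(g^\prime)),g^\prime)=(F_\varphi^{-1}(t(g^\prime)),g^\prime)=(x,g^\prime)$, where the last equality uses the defining equation $F_\varphi(x)=t(g^\prime)$ of the fiber product. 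Hence $\varphi^!$ is a diffeomorphism and $\varphi$ is an action morphism.

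For (2), assume $\varphi$ is an action morphism (so $\varphi^!$ is a diffeomorphism) and $F_\varphi$ is a diffeomorphism. Then $p_2$ is a diffeomorphism: its inverse is $g^\prime\mapsto(F_\varphi^{-1}(t(g^\prime)),g^\prime)$, which is a $C^\infty$ map into the embedded submanifold $M\times_{F_\varphi,t}\mca{G}^\prime$ of $M\times\mca{G}^\prime$ by Lemma \ref{fiberproduct}, and the two-sided inverse property is checked by the same fiber-product bookkeeping as in (1). Therefore $\varphi=p_2\circ\varphi^!$ is a composition of diffeomorphisms, hence a diffeomorphism, and Proposition \ref{isodiff} yields that $\varphi$ is an isomorphism. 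I do not expect a genuine obstacle here; the only point requiring attention is the well-definedness and smoothness of the inverse maps $\psi$ and $p_2^{-1}$, which is exactly what Lemma \ref{fiberproduct} supplies together with smoothness of $\varphi^{-1}$ and $F_\varphi^{-1}$ when they exist.
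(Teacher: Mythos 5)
Your proposal is correct and follows essentially the same route as the paper: for (1) the same explicit inverse $(x,g^\prime)\mapsto\varphi^{-1}(g^\prime)$ of $\varphi^!$, and for (2) the same factorization $\varphi=p_2\circ\varphi^!$ with $p_2$ inverted by $g^\prime\mapsto(F_\varphi^{-1}(t^\prime(g^\prime)),g^\prime)$, concluding via Proposition \ref{isodiff}. The extra bookkeeping you include (smoothness via Lemma \ref{fiberproduct} and the fiber-product identities) is exactly what the paper leaves implicit.
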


\begin{proof}
1. The map 
\begin{align*}
M\times_{F_\varphi,t^\prime}\mca{G}^\prime&\to\mca{G}\\
(x,g^\prime)&\mapsto\varphi^{-1}(g^\prime)
\end{align*}
is the inverse of $\varphi^!$. 

2. We have a diffeomorphism 
\begin{align*}
M\times_{F_\varphi,t^\prime}\mca{G}^\prime&\simeq\mca{G}^\prime\\
(x,g^\prime)&\mapsto g^\prime\\
(F_\varphi^{-1}t^\prime(g^\prime),g^\prime)&\mapsfrom g^\prime
\end{align*}
and a diagram 
\begin{equation*}
\begin{tikzcd}[column sep=tiny]
\mca{G}\ar[rr,"\varphi"{name=U}]\ar[rd,"\sim"{sloped},"\varphi^!"']&&\mca{G}^\prime\\
&M\times_{F_\varphi,t^\prime}\mca{G}^\prime. \ar[ru,sloped,"\sim"]
\ar[from=2-2,to=U,phantom,"\circlearrowright"]
\end{tikzcd}
\end{equation*}
So $\varphi$ is a diffeomorphism, hence an isomorphism by Proposition \ref{isodiff}. 
\end{proof}

\subsection{(Semi)conjugacies and (constant) cocycles}
\begin{dfn}\label{semiconconpng}
Let $\mca{G}\rightrightarrows M$, $\mca{G}^\prime\rightrightarrows M^\prime$ be Lie groupoids, $N$, $N^\prime$ be $C^\infty$ manifolds, $\nu\colon N\to M$, $\nu^\prime\colon N^\prime\to M^\prime$ be $C^\infty$ maps, $\rho\in\mca{A}(\nu,\mca{G})$, $\rho^\prime\in\mca{A}(\nu^\prime,\mca{G}^\prime)$ and $\varphi\in\Hom(N\rtimes_\rho\mca{G},N^\prime\rtimes_{\rho^\prime}\mca{G}^\prime)$. 
\begin{itemize}
\item $\varphi$ is a \emph{semiconjugacy} if there exists $\Phi\in\Hom(\mca{G},\mca{G}^\prime)$ such that 
\begin{equation*}
\begin{tikzcd}
N\rtimes_\rho\mca{G}\ar[r,"\varphi"]\ar[d,"\varphi_\rho"']\ar[rd,phantom,"\circlearrowright"{xshift=-5}]&N^\prime\rtimes_{\rho^\prime}\mca{G}^\prime\ar[d,"\varphi_{\rho^\prime}"]\\
\mca{G}\ar[r,"\Phi"']&\mca{G}^\prime. 
\end{tikzcd}
\end{equation*}
We write $\varphi=F_\varphi\rtimes\Phi$ and call $\Phi$ a \emph{constant part} of $\varphi$. We have $\varphi_\rho(N\rtimes_\rho\mca{G})=\mca{G}|_{\nu(N)}$ and $\Phi$ is unique on $\mca{G}|_{\nu(N)}$. 
\item $\varphi$ is a \emph{conjugacy} if $\varphi$ is a semiconjugacy and there exists a semiconjugacy $\varphi^\prime\colon N^\prime\rtimes_{\rho^\prime}\mca{G}^\prime\to N\rtimes_\rho\mca{G}$ such that $\varphi^\prime\varphi=\id$, $\varphi\varphi^\prime=\id$. 
\item We say that $\rho$ is \emph{conjugate} to $\rho^\prime$ if there exists a conjugacy $N\rtimes_\rho\mca{G}\to N^\prime\rtimes_{\rho^\prime}\mca{G}^\prime$. 
\end{itemize}
\end{dfn}

\begin{prop}\label{conjdiffinj}
Let $\mca{G}\rightrightarrows M$, $\mca{G}^\prime\rightrightarrows M^\prime$ be Lie groupoids, $N$, $N^\prime$ be $C^\infty$ manifolds, $\nu\colon N\to M$, $\nu^\prime\colon N^\prime\to M^\prime$ be $C^\infty$ maps, $\rho\in\mca{A}(\nu,\mca{G})$, $\rho^\prime\in\mca{A}(\nu^\prime,\mca{G}^\prime)$ and $\varphi=F_\varphi\rtimes\Phi\in\Hom(N\rtimes_\rho\mca{G},N^\prime\rtimes_{\rho^\prime}\mca{G}^\prime)$ be a semiconjugacy. Assume $\nu^\prime$ is a surjective submersion. Then $\varphi$ is a conjugacy if and only if $\varphi$ is a diffeomorphism and $F_\Phi\colon\nu(N)\to M^\prime$ is injective. 
\end{prop}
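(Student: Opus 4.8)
The plan is to treat the two implications separately; the ``only if'' direction is purely formal and essentially all the work is in ``if''. For ``only if'', a conjugacy $\varphi$ comes equipped with a semiconjugacy $\varphi^\prime$ satisfying $\varphi^\prime\varphi=\id$ and $\varphi\varphi^\prime=\id$, so $\varphi$ is a bijective morphism with $C^\infty$ inverse $\varphi^\prime$, hence a diffeomorphism. Writing $\varphi^\prime=F_{\varphi^\prime}\rtimes\Phi^\prime$ and composing the two defining squares of Definition \ref{semiconconpng}, one gets $\Phi^\prime\circ\Phi\circ\varphi_\rho=\Phi^\prime\circ\varphi_{\rho^\prime}\circ\varphi=\varphi_\rho\circ\varphi^\prime\circ\varphi=\varphi_\rho$; since $\varphi_\rho(N\rtimes_\rho\mca{G})=\mca{G}|_{\nu(N)}$ this forces $\Phi^\prime\circ\Phi=\id$ on $\mca{G}|_{\nu(N)}$, and evaluating on the units $1_x$ with $x\in\nu(N)$ gives $F_{\Phi^\prime}\circ F_\Phi=\id$ on $\nu(N)$, so $F_\Phi\colon\nu(N)\to M^\prime$ is injective.

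For ``if'', assume $\varphi$ is a diffeomorphism and $F_\Phi|_{\nu(N)}$ is injective. By Proposition \ref{isodiff} $\varphi$ is an isomorphism, so $F_\varphi\colon N\to N^\prime$ is a diffeomorphism and $\varphi^{-1}$ is a morphism; by Proposition \ref{isoactdiff} $\varphi$ is an action morphism, so for each $y\in N$ it restricts to a diffeomorphism of $t$-fibers $(N\rtimes_\rho\mca{G})^y\to(N^\prime\rtimes_{\rho^\prime}\mca{G}^\prime)^{F_\varphi(y)}$. Unwinding these fibers together with the square $\varphi_{\rho^\prime}\varphi=\Phi\varphi_\rho$ shows that $\varphi$ is the map $(y,g)\mapsto(F_\varphi(y),\Phi(g))$ and that $\Phi$ restricts to a bijection $\mca{G}^x\to(\mca{G}^\prime)^{F_\Phi(x)}$ for every $x\in\nu(N)$. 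The goal is then to prove that $\varphi^{-1}$ is itself a semiconjugacy, i.e.\ to construct $\Psi\in\Hom(\mca{G}^\prime,\mca{G})$ with $\varphi_\rho\circ\varphi^{-1}=\Psi\circ\varphi_{\rho^\prime}$; once this is done, $\varphi^{-1}\varphi=\id=\varphi\varphi^{-1}$ exhibits $\varphi$ as a conjugacy.

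I would build $\Psi$ as the ``fiberwise inverse'' of $\Phi$: given $g^\prime\in\mca{G}^\prime$, choose (using surjectivity of $\nu^\prime$) some $y^\prime\in N^\prime$ with $\nu^\prime(y^\prime)=t(g^\prime)$, put $x:=\nu(F_\varphi^{-1}(y^\prime))\in\nu(N)$, note $F_\Phi(x)=t(g^\prime)$, and set $\Psi(g^\prime):=(\Phi|_{\mca{G}^x})^{-1}(g^\prime)\in\mca{G}^x$. Independence of the choice of $y^\prime$ is exactly where injectivity of $F_\Phi|_{\nu(N)}$ is used: two admissible choices produce $x_1,x_2\in\nu(N)$ with $F_\Phi(x_1)=F_\Phi(x_2)=t(g^\prime)$, hence $x_1=x_2$. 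By construction $\varphi_\rho\circ\varphi^{-1}=\Psi\circ\varphi_{\rho^\prime}$, $\Phi\circ\Psi=\id_{\mca{G}^\prime}$, $\Psi\circ\Phi=\id$ on $\mca{G}|_{\nu(N)}$ and $\im\Psi\subset\mca{G}|_{\nu(N)}$. Smoothness of $\Psi$ then follows because $\varphi_{\rho^\prime}$ is a surjective submersion (Proposition \ref{actiongroupoid}, since $\nu^\prime$ is a surjective submersion) while $\varphi_\rho\circ\varphi^{-1}$ is $C^\infty$. Moreover $F_\Phi|_{\nu(N)}\colon\nu(N)\to M^\prime$ is a bijection (injective by hypothesis, surjective since $\nu^\prime$ is), so $\Phi$ restricts to a bijective groupoid homomorphism $\mca{G}|_{\nu(N)}\to\mca{G}^\prime$ and its set-theoretic inverse $\Psi$ is again a groupoid homomorphism: it is multiplicative, unit- and inverse-preserving, with base map $F_\Psi:=(F_\Phi|_{\nu(N)})^{-1}$ compatible with $s$ and $t$; finally $F_\Psi$ equals $t\circ\Psi$ precomposed with the unit map of $\mca{G}^\prime$, hence is $C^\infty$. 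Thus $\Psi\in\Hom(\mca{G}^\prime,\mca{G})$, $\varphi^{-1}=F_\varphi^{-1}\rtimes\Psi$ is a semiconjugacy, and $\varphi$ is a conjugacy.

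The hard part is the ``if'' direction, and within it the only real point is to recognize that one must extract an honest Lie-groupoid morphism $\Psi$ inverting $\Phi$: its well-definedness is precisely where the injectivity of $F_\Phi$ on $\nu(N)$ is needed, its $C^\infty$-ness is forced by the surjective submersion $\varphi_{\rho^\prime}$ (which is where the hypothesis on $\nu^\prime$ is used), and its multiplicativity is a cost-free categorical consequence of $\Phi$ restricting to a bijective groupoid homomorphism $\mca{G}|_{\nu(N)}\to\mca{G}^\prime$; everything else is bookkeeping with base maps and the identifications of $t$-fibers.
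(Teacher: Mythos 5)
Your proposal is correct and follows essentially the same route as the paper: both directions hinge on the same diagram chase for ``only if'', and for ``if'' both construct the inverse morphism as $(\Phi|_{\mca{G}|_{\nu(N)}})^{-1}$, using injectivity of $F_\Phi|_{\nu(N)}$ (plus bijectivity of $\varphi$ and surjectivity of $\nu^\prime$) for bijectivity/well-definedness and local $C^\infty$ sections of the surjective submersion $\varphi_{\rho^\prime}$ for smoothness, before checking multiplicativity and concluding that $\varphi^{-1}$ is a semiconjugacy. The only cosmetic deviations are that you obtain bijectivity of $\Phi$ fiberwise via the action-morphism property of the isomorphism $\varphi$ rather than by the paper's direct surjectivity/injectivity argument, and you get smoothness of the base map of $\Psi$ by composing with the unit section instead of using local sections of $t^\prime$.
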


\begin{proof}
First we prove $\Rightarrow$. Assume $\varphi$ is a conjugacy. Then $\varphi$ is a diffeomorphism. Since $\varphi^{-1}$ is a semiconjugacy, $\varphi^{-1}=F_{\varphi^{-1}}\rtimes\Phi^\prime$ for some $\Phi^\prime\in\Hom(\mca{G}^\prime,\mca{G})$. We have 
\begin{equation*}
\begin{tikzcd}
N\rtimes_\rho\mca{G}\ar[r,"\varphi"]\ar[d,"\varphi_\rho"']\ar[rd,phantom,"\circlearrowright"{xshift=-5}]&N^\prime\rtimes_{\rho^\prime}\mca{G}^\prime\ar[r,"\varphi^{-1}"]\ar[d,"\varphi_{\rho^\prime}"]\ar[rd,phantom,"\circlearrowright"{xshift=-5,yshift=1}]&N\rtimes_\rho\mca{G}\ar[d,"\varphi_\rho"]\\
\mca{G}\ar[r,"\Phi"']\ar[d,"t"']\ar[rd,phantom,"\circlearrowright"{yshift=-2}]&\mca{G}^\prime\ar[r,"\Phi^\prime"']\ar[d,"t^\prime"]\ar[rd,phantom,"\circlearrowright"{yshift=-2}]&\mca{G}\ar[d,"t"]\\
M\ar[r,"F_\Phi"']&M^\prime\ar[r,"F_{\Phi^\prime}"']&M, 
\end{tikzcd}
\end{equation*}
hence 
\begin{equation*}
\begin{tikzcd}
N\rtimes_\rho\mca{G}\ar[r,"\id"]\ar[d,"t\varphi_\rho"']\ar[rd,phantom,"\circlearrowright"{xshift=-4}]&N\rtimes_\rho\mca{G}\ar[d,"t\varphi_\rho"]\\
M\ar[r,"F_{\Phi^\prime}F_\Phi"']&M. 
\end{tikzcd}
\end{equation*}
Since $t\varphi_\rho(N\rtimes_\rho\mca{G})=\nu(N)$, $F_{\Phi^\prime}F_\Phi=\id$ on $\nu(N)$. So $F_\Phi\colon\nu(N)\to M^\prime$ is injective. 

Next we prove $\Leftarrow$. We have 
\begin{equation*}
\begin{tikzcd}
N\rtimes_\rho\mca{G}\ar[r,"\varphi"]\ar[d,"\varphi_\rho"']\ar[rd,phantom,"\circlearrowright"{xshift=-5}]&N^\prime\rtimes_{\rho^\prime}\mca{G}^\prime\ar[d,"\varphi_{\rho^\prime}"]\\
\mca{G}|_{\nu(N)}\ar[r,"\Phi"']&\mca{G}^\prime. 
\end{tikzcd}
\end{equation*}
$\varphi_{\rho^\prime}$ is surjective since $\nu^\prime$ is surjective. $\Phi\colon\mca{G}|_{\nu(N)}\to\mca{G}^\prime$ is surjective since $\varphi$, $\varphi_{\rho^\prime}$ are surjective. 

\begin{claim}
$\Phi\colon\mca{G}|_{\nu(N)}\to\mca{G}^\prime$ is injective, hence bijective. 
\end{claim}

\begin{proof}
Let $g$, $g^\prime\in\mca{G}|_{\nu(N)}$ be such that $\Phi(g)=\Phi(g^\prime)$. We have 
\begin{equation*}
F_\Phi t(g)=t^\prime\Phi(g)=t^\prime\Phi(g^\prime)=F_\Phi t(g^\prime). 
\end{equation*}
Then $t(g)=t(g^\prime)$ by the injectivity of $F_\Phi\colon\nu(N)\to M^\prime$. There exists $y\in N$ such that $\nu(y)=t(g)$. Hence $(y,g)$, $(y,g^\prime)\in N\rtimes_\rho\mca{G}$ and 
\begin{equation*}
\varphi(y,g)=(F_\varphi(y),\Phi(g))=(F_\varphi(y),\Phi(g^\prime))=\varphi(y,g^\prime). 
\end{equation*}
We have $(y,g)=(y,g^\prime)$ since $\varphi$ is injective. So $g=g^\prime$. 
\end{proof}

Thus we have 
\begin{equation}\label{ngphng}
\begin{tikzcd}
N\rtimes_\rho\mca{G}\ar[d,"\varphi_\rho"']\ar[rd,phantom,"\circlearrowright"{xshift=-5}]&N^\prime\rtimes_{\rho^\prime}\mca{G}^\prime\ar[l,"\varphi^{-1}"']\ar[d,"\varphi_{\rho^\prime}"]\\
\mca{G}&\mca{G}^\prime. \ar[l,"(\Phi|_{\mca{G}|_{\nu(N)}})^{-1}"]
\end{tikzcd}
\end{equation}
$\varphi_{\rho^\prime}$ is a surjective submersion since $\nu^\prime$ is a surjective submersion (Proposition \ref{actiongroupoid}). So there exists a $C^\infty$ local section of $\varphi_{\rho^\prime}$ at any point of $\mca{G}^\prime$. Therefore $(\Phi|_{\mca{G}|_{\nu(N)}})^{-1}\colon\mca{G}^\prime\to\mca{G}$ is $C^\infty$. Since the horizontal maps in 
\begin{equation*}
\begin{tikzcd}[column sep=large]
\mca{G}|_{\nu(N)}\ar[r,"\Phi|_{\mca{G}|_{\nu(N)}}"]\ar[d,shift left]\ar[d,shift right]\ar[rd,phantom,"\circlearrowright"{xshift=-2,yshift=1}]&\mca{G}^\prime\ar[d,shift left]\ar[d,shift right]\\
\nu(N)\ar[r,"F_\Phi|_{\nu(N)}"']&M^\prime
\end{tikzcd}
\end{equation*}
are bijective, we get 
\begin{equation*}
\begin{tikzcd}[column sep=huge]
\mca{G}\ar[d,shift left]\ar[d,shift right]\ar[rd,phantom,"\circlearrowright"{xshift=2,yshift=-2}]&\mca{G}^\prime\ar[l,"(\Phi|_{\mca{G}|_{\nu(N)}})^{-1}"']\ar[d,shift left]\ar[d,shift right]\\
M&M^\prime. \ar[l,"(F_\Phi|_{\nu(N)})^{-1}"]
\end{tikzcd}
\end{equation*}
$(F_\Phi|_{\nu(N)})^{-1}\colon M^\prime\to M$ is $C^\infty$ since $t^\prime\colon\mca{G}^\prime\to M^\prime$ is a surjective submersion. 

Let $(g^\prime_1,g^\prime_2)\in(\mca{G}^\prime)^{(2)}$. There exists $g_i\in\mca{G}|_{\nu(N)}$ such that $\Phi|_{\mca{G}|_{\nu(N)}}(g_i)=g^\prime_i$. We have $(g_1,g_2)\in\mca{G}^{(2)}$ and $\Phi|_{\mca{G}|_{\nu(N)}}(g_1g_2)=\Phi|_{\mca{G}|_{\nu(N)}}(g_1)\Phi|_{\mca{G}|_{\nu(N)}}(g_2)=g^\prime_1g^\prime_2$. Hence $(\Phi|_{\mca{G}|_{\nu(N)}})^{-1}(g^\prime_1g^\prime_2)=g_1g_2=(\Phi|_{\mca{G}|_{\nu(N)}})^{-1}(g^\prime_1)(\Phi|_{\mca{G}|_{\nu(N)}})^{-1}(g^\prime_2)$. So $(\Phi|_{\mca{G}|_{\nu(N)}})^{-1}\in\Hom(\mca{G}^\prime,\mca{G})$. Thus $\varphi^{-1}$ is a semiconjugacy by \eqref{ngphng}. This proves $\varphi$ is a conjugacy. 
\end{proof}

\begin{dfn}
Let $\mca{G}\rightrightarrows M$, $\mca{G}^\prime\rightrightarrows M^\prime$ be Lie groupoids, $N$ be a $C^\infty$ manifold, $\nu\colon N\to M$ be a $C^\infty$ map and $\rho\in\mca{A}(\nu,\mca{G})$. 
\begin{itemize}
\item A \emph{$\mca{G}^\prime$-valued $C^\infty$ cocycle} over $\rho$ is an element of $\Hom(N\rtimes_\rho\mca{G},\mca{G}^\prime)$. 
\item A cocycle $c\in\Hom(N\rtimes_\rho\mca{G},\mca{G}^\prime)$ is \emph{constant} if it is a semiconjugacy, where $\mca{G}^\prime$ is identified with $M^\prime\rtimes_{\tau^\prime}\mca{G}^\prime$ by $\varphi_{\tau^\prime}$ for $\mca{A}(\id,\mca{G}^\prime)=\{\tau^\prime\}$, ie there exists $\Phi\in\Hom(\mca{G},\mca{G}^\prime)$ such that 
\begin{equation*}
\begin{tikzcd}
N\rtimes_\rho\mca{G}\ar[r,"c"]\ar[d,"\varphi_\rho"']&\mca{G}^\prime. \\
\mca{G}\ar[ru,"\Phi"'{name=U}]
\ar[from=1-1,to=U,phantom,"\circlearrowright"{pos=.4}]
\end{tikzcd}
\end{equation*}
This means that $c(y,g)$ does not depend on $y$. 

If $\nu$ is surjective, then 
\begin{align*}
\Hom(\mca{G},\mca{G}^\prime)&\to\Hom(N\rtimes_\rho\mca{G},\mca{G}^\prime)\\
\Phi&\mapsto\Phi\varphi_\rho
\end{align*}
is injective. So constant cocycles can be identified with morphisms $\mca{G}\to\mca{G}^\prime$. 
\end{itemize}
\end{dfn}

\subsection{Action of an action groupoid}
\begin{prop}\label{nqmaphic}
Let $N$, $Q$, $M$ be $C^\infty$ manifolds, $\nu\colon N\to Q$, $\mu\colon Q\to M$ be $C^\infty$ maps: 
\begin{equation}\label{qnmgmnu}
\begin{tikzcd}[column sep=tiny]
N\ar[rr,"\nu"{name=U}]\ar[rd,"\mu\nu"']&&Q\ar[ld,"\mu"]\\
&M. \ar[to=U,phantom,"\circlearrowright"]
\end{tikzcd}
\end{equation}
Let $\mca{G}\rightrightarrows M$ be a Lie groupoid and $\tau\in\mca{A}(\mu,\mca{G})$. We have a Lie groupoid $Q\rtimes_\tau\mca{G}\rightrightarrows Q$. Then there is a bijection 
\begin{align*}
\mca{A}(\nu,Q\rtimes_\tau\mca{G})&\simeq\left\{\rho^\prime\in\mca{A}(\mu\nu,\mca{G})\ \middle|\ \text{$\nu$ is $(\rho^\prime,\tau)$-equivariant}\right\}\\
\rho&\leftrightarrow\rho^\prime
\end{align*}
such that $\rho^\prime(y,g)=\rho(y,(\nu(y),g))$ for all $(y,g)\in N\rtimes_{\rho^\prime}\mca{G}$. We have an isomorphism 
\begin{align*}
\varphi\colon N\rtimes_\rho(Q\rtimes_\tau\mca{G})&\xrightarrow{\sim}N\rtimes_{\rho^\prime}\mca{G}\\
(y,(\nu(y),g))&\mapsto(y,g)
\end{align*}
and a commutative diagram 
\begin{equation}\label{nqfngqgg}
\begin{tikzcd}
N\rtimes_\rho(Q\rtimes_\tau\mca{G})\ar[r,"\sim","\varphi"{yshift=7}]\ar[d,"\varphi_\rho"']&N\rtimes_{\rho^\prime}\mca{G}\ar[d,"\varphi_{\rho^\prime}"]\ar[ld,"\nu\rtimes\id"]\\
Q\rtimes_\tau\mca{G}\ar[r,"\varphi_\tau"']&\mca{G}. 
\end{tikzcd}
\end{equation}
In particular $\varphi=\id\rtimes\varphi_\tau$ is a semiconjugacy. 

Assume that $\mu\nu\colon N\to M$ is a surjective submersion. Then $\varphi$ is a conjugacy if and only if $\mu\colon\nu(N)\to M$ is injective. 
\end{prop}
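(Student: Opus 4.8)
The plan is to read off the claim from Proposition~\ref{conjdiffinj}. The earlier part of the proposition already produces the isomorphism $\varphi\colon N\rtimes_\rho(Q\rtimes_\tau\mca{G})\xrightarrow{\sim}N\rtimes_{\rho^\prime}\mca{G}$ together with the commutative square \eqref{nqfngqgg}, which exhibits $\varphi$ as a semiconjugacy whose constant part is $\varphi_\tau\colon Q\rtimes_\tau\mca{G}\to\mca{G}$; in the notation of Definition~\ref{semiconconpng}, $\varphi=\id\rtimes\varphi_\tau$. By Proposition~\ref{actiongroupoid} the base map of $\varphi_\tau$ is $\mu\colon Q\to M$.

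First I would match the situation with the hypotheses of Proposition~\ref{conjdiffinj}: take $Q\rtimes_\tau\mca{G}\rightrightarrows Q$ in the role of ``$\mca{G}\rightrightarrows M$'', the Lie groupoid $\mca{G}\rightrightarrows M$ in the role of ``$\mca{G}^\prime\rightrightarrows M^\prime$'', $N$ together with $\nu\colon N\to Q$ in the role of ``$N$'' with ``$\nu$'', $N$ together with $\mu\nu\colon N\to M$ in the role of ``$N^\prime$'' with ``$\nu^\prime$'', and $\varphi_\tau$ in the role of the constant part $\Phi$. Under these identifications ``$F_\Phi\colon\nu(N)\to M^\prime$'' becomes $\mu\colon\nu(N)\to M$, and the hypothesis ``$\nu^\prime$ is a surjective submersion'' becomes exactly our assumption that $\mu\nu\colon N\to M$ is a surjective submersion. (One only needs to note that $Q$ is Hausdorff and second countable, so that $Q\rtimes_\tau\mca{G}\rightrightarrows Q$ is a genuine Lie groupoid and Proposition~\ref{conjdiffinj} applies verbatim.)

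With these identifications Proposition~\ref{conjdiffinj} asserts that $\varphi$ is a conjugacy if and only if $\varphi$ is a diffeomorphism and $\mu\colon\nu(N)\to M$ is injective. Since $\varphi$ is an isomorphism of Lie groupoids, it is in particular a diffeomorphism, so the first condition is automatically satisfied and we are left with precisely ``$\mu\colon\nu(N)\to M$ is injective'', which is the asserted equivalence. There is no real obstacle here beyond this bookkeeping: the substance has already been absorbed into Proposition~\ref{conjdiffinj}, and the only points to keep track of are that the constant part of $\varphi$ in \eqref{nqfngqgg} is $\varphi_\tau$ with base map $\mu$, and that $\varphi$ is known to be an isomorphism.
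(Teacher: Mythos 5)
Your reduction of the \emph{final} assertion to Proposition~\ref{conjdiffinj} is correct and is exactly what the paper does: with $Q\rtimes_\tau\mca{G}\rightrightarrows Q$ playing the role of the acting groupoid, $\varphi=\id\rtimes\varphi_\tau$ a semiconjugacy whose constant part $\varphi_\tau$ has base map $\mu$, and $\mu\nu$ a surjective submersion, that proposition gives ``$\varphi$ is a conjugacy $\Leftrightarrow$ $\varphi$ is a diffeomorphism and $\mu\colon\nu(N)\to M$ is injective'', and the first condition holds since $\varphi$ is an isomorphism.

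However, there is a genuine gap: you only prove the last sentence of the proposition and treat everything before it as already established (``the earlier part of the proposition already produces the isomorphism $\varphi$ together with the commutative square''). That earlier part \emph{is} part of the statement to be proven, so your argument is circular there. Concretely, what is missing is: (i) that $\rho\mapsto\rho^\prime$ with $\rho^\prime(y,g)=\rho(y,(\nu(y),g))$ really yields an element of $\mca{A}(\mu\nu,\mca{G})$ for which $\nu$ is $(\rho^\prime,\tau)$-equivariant — in the paper this is done by observing that $\varphi_\tau\varphi_\rho$ is an action morphism (Lemma~\ref{compacmor}) and setting $\rho^\prime=\rho_{\varphi_\tau\varphi_\rho}$, which also immediately gives that $\varphi=(\varphi_\tau\varphi_\rho)^!$ is an isomorphism over $\id_N$ and that Diagram~\eqref{nqfngqgg} commutes; (ii) the inverse assignment: given $\rho^\prime$ with $\nu$ equivariant, one must define $\rho(y,(\nu(y),g))=\rho^\prime(y,g)$ and check the three action axioms (the equivariance of $\nu$ is exactly what makes $\nu\rho=s$ on the action groupoid); and (iii) that these two assignments are mutually inverse, so that the displayed correspondence is a bijection. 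None of this is ``absorbed into Proposition~\ref{conjdiffinj}'', which only concerns when a semiconjugacy is a conjugacy; without (i)–(iii) the objects $\rho^\prime$, $\varphi$ and the square~\eqref{nqfngqgg} to which you apply Proposition~\ref{conjdiffinj} have not been constructed.
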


\begin{rem}
The upper left triangle of Diagram \eqref{nqfngqgg} says the action groupoid of an action of the action groupoid of an action of $\mca{G}$ is identified with the action groupoid of an action of $\mca{G}$, and the associated action morphism of the action of an action groupoid is identified with the quotient map of the action groupoids of the actions of $\mca{G}$. 
\end{rem}

\begin{proof}[Proof of Proposition \ref{nqmaphic}]
Let $\rho\in\mca{A}(\nu,Q\rtimes_\tau\mca{G})$. Consider the morphism $\varphi_\tau\varphi_\rho\colon N\rtimes_\rho(Q\rtimes_\tau\mca{G})\to Q\rtimes_\tau\mca{G}\to\mca{G}$ with base map $F_{\varphi_\tau\varphi_\rho}=F_{\varphi_\tau}F_{\varphi_\rho}=\mu\nu$. Since 
\begin{align*}
\varphi:=(\varphi_\tau\varphi_\rho)^!\colon N\rtimes_\rho(Q\rtimes_\tau\mca{G})&\to N\times_{\mu\nu,t}\mca{G}\\
(y,(\nu(y),g))&\mapsto(y,g)
\end{align*}
is a diffeomorphism, $\varphi_\tau\varphi_\rho$ is an action morphism. Let $\rho^\prime=\rho_{\varphi_\tau\varphi_\rho}\in\mca{A}(\mu\nu,\mca{G})$. Then 
\begin{equation*}
\rho^\prime(y,g)=\rho_{\varphi_\tau\varphi_\rho}(y,g)=s((\varphi_\tau\varphi_\rho)^!)^{-1}(y,g)=\rho(y,(\nu(y),g))
\end{equation*}
for $(y,g)\in N\rtimes_{\rho^\prime}\mca{G}$. Since 
\begin{equation*}
\nu\rho^\prime(y,g)=\nu\rho(y,(\nu(y),g))=s(\nu(y),g)=\tau(\nu(y),g)
\end{equation*}
for $(y,g)\in N\rtimes_{\rho^\prime}\mca{G}$, $\nu\colon N\to Q$ is $\mca{G}$-equivariant. The commutativity of Diagram \eqref{nqfngqgg} is obvious. 

For the opposite direction, let $\rho^\prime\in\mca{A}(\mu\nu,\mca{G})$ be such that $\nu$ is $(\rho^\prime,\tau)$-equivariant. Define 
\begin{align*}
\rho\colon N\times_{\nu,t}(Q\rtimes_\tau\mca{G})&\to N\\
(y,(\nu(y),g))&\mapsto\rho^\prime(y,g). 
\end{align*}

\begin{claim}
$\rho\in\mca{A}(\nu,Q\rtimes_\tau\mca{G})$. 
\end{claim}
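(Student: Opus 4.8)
The plan is to verify directly that the map $\rho\colon N\times_{\nu,t}(Q\rtimes_\tau\mca{G})\to N$ defined by $\rho(y,(\nu(y),g))=\rho^\prime(y,g)$ satisfies the three axioms of a $C^\infty$ right action of $Q\rtimes_\tau\mca{G}$ on $\nu\colon N\to Q$. First I would check that $\rho$ is well-defined and $C^\infty$: an element of $N\times_{\nu,t}(Q\rtimes_\tau\mca{G})$ is a pair $(y,(q,g))$ with $\nu(y)=q$ and $\mu(q)=t(g)$, so it is automatically of the form $(y,(\nu(y),g))$ with $(y,g)\in N\times_{\mu\nu,t}\mca{G}$; smoothness follows since $\rho^\prime$ is $C^\infty$ and the identification $N\times_{\nu,t}(Q\rtimes_\tau\mca{G})\simeq N\times_{\mu\nu,t}\mca{G}$, $(y,(\nu(y),g))\mapsto(y,g)$, is a diffeomorphism.

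Next I would check the target condition: the source map of $Q\rtimes_\tau\mca{G}$ sends $(\nu(y),g)$ to $\tau(\nu(y),g)$, so I must show $\nu(\rho(y,(\nu(y),g)))=\tau(\nu(y),g)$, i.e. $\nu(\rho^\prime(y,g))=\tau(\nu(y),g)$. This is exactly the hypothesis that $\nu$ is $(\rho^\prime,\tau)$-equivariant (combined with the fact that $\nu\rho^\prime(y,g)$ is defined, using $\mu\nu\rho^\prime(y,g)=s(g)$). Then I would verify the identity axiom: $\rho(y,1_{\nu(y)})=\rho(y,(\nu(y),1_{\mu\nu(y)}))=\rho^\prime(y,1_{\mu\nu(y)})=y$, using the unit of $Q\rtimes_\tau\mca{G}$ at $\nu(y)$ is $(\nu(y),1_{\mu(\nu(y))})$ and the identity axiom for $\rho^\prime$.

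Finally I would check associativity. Given composable $((\nu(y),g),(\rho(y,(\nu(y),g)),\text{\dots})) \in (Q\rtimes_\tau\mca{G})^{(2)}$, the composable pairs in $Q\rtimes_\tau\mca{G}$ have the form $((q,g),(\tau(q,g),g^\prime))$ with $(q,g)$ composable in $\mca{G}$ with $g^\prime$, and their product is $(q,gg^\prime)$. So I need $\rho(\rho(y,(\nu(y),g)),(\tau(\nu(y),g),g^\prime))=\rho(y,(\nu(y),gg^\prime))$. Writing $y^\prime=\rho^\prime(y,g)$, the equivariance of $\nu$ gives $\nu(y^\prime)=\tau(\nu(y),g)$, so the left side is $\rho(y^\prime,(\nu(y^\prime),g^\prime))=\rho^\prime(y^\prime,g^\prime)=\rho^\prime(\rho^\prime(y,g),g^\prime)$, which equals $\rho^\prime(y,gg^\prime)$ by associativity of $\rho^\prime$; the right side is $\rho^\prime(y,gg^\prime)$ as well. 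This establishes $\rho\in\mca{A}(\nu,Q\rtimes_\tau\mca{G})$. The only subtlety — hardly an obstacle — is keeping the base-point bookkeeping straight (which equivariance relation is needed to make the composite $\nu(y^\prime)=\tau(\nu(y),g)$ identity land in the right fiber); once that is tracked carefully the computations are routine, and one should also note the correspondence $\rho\leftrightarrow\rho^\prime$ just constructed is inverse to the one in the first half of the proof, so the claimed bijection follows.
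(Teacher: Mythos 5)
Your proof is correct and follows essentially the same route as the paper: a direct verification of the three action axioms for $\rho$, using the $(\rho^\prime,\tau)$-equivariance of $\nu$ for the source condition and the corresponding axioms of $\rho^\prime$ for associativity and the identity. The extra remarks on smoothness via the identification $N\times_{\nu,t}(Q\rtimes_\tau\mca{G})\simeq N\times_{\mu\nu,t}\mca{G}$ are fine but not a different method.
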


\begin{proof}
We have 
\begin{equation*}
\nu\rho(y,(\nu(y),g))=\nu\rho^\prime(y,g)=\tau(\nu(y),g)=s(\nu(y),g)
\end{equation*}
for $(y,g)\in N\rtimes_{\rho^\prime}\mca{G}$, 
\begin{align*}
\rho(y,(\nu(y),g)(\tau(\nu(y),g),g^\prime))&=\rho(y,(\nu(y),gg^\prime))=\rho^\prime(y,gg^\prime)\\
&=\rho^\prime(\rho(y,(\nu(y),g)),g^\prime)\\
&=\rho(\rho(y,(\nu(y),g)),(\tau(\nu(y),g),g^\prime))
\end{align*}
for $(y,g)\in N\rtimes_{\rho^\prime}\mca{G}$ and $(g,g^\prime)\in\mca{G}^{(2)}$, 
\begin{equation*}
\rho(y,(\nu(y),1_{\mu\nu(y)}))=\rho^\prime(y,1_{\mu\nu(y)})=y
\end{equation*}
for any $y\in N$. 
\end{proof}

It is obvious that the two maps $\rho\mapsto\rho^\prime$ and $\rho^\prime\mapsto\rho$ are inverse to each other. The final assertion follows from Proposition \ref{conjdiffinj}. 
\end{proof}

\begin{example}
Specializing Proposition \ref{nqmaphic}, we see that there exists a semiconjugacy which is an isomorphism but not a conjugacy. Let $N_0$ be a nonempty $C^\infty$ manifold, $G$ be a Lie group and $\rho_0\in\mca{A}(N_0,G)$. Let $\nu_0\colon N_0\to\mr{pt}$ be the unique map and $\nu=\nu_0\sqcup\nu_0\colon N_0\sqcup N_0\to\mr{pt}\sqcup\mr{pt}$. Diagram \eqref{qnmgmnu} in Proposition \ref{nqmaphic} is now 
\begin{equation*}
\begin{tikzcd}[column sep=tiny]
N_0\sqcup N_0\ar[rr,"\nu"{name=U}]\ar[rd]&&\mr{pt}\sqcup\mr{pt}\ar[ld]\\
&\mr{pt}. \ar[to=U,phantom,"\circlearrowright"]
\end{tikzcd}
\end{equation*}
Let $\tau\in\mca{A}(\mr{pt}\sqcup\mr{pt},G)$ be the trivial action and $\rho=\rho_0\sqcup\rho_0\in\mca{A}(\nu,(\mr{pt}\sqcup\mr{pt})\rtimes_\tau G)$. Then $\rho^\prime=\rho_0\sqcup\rho_0\in\mca{A}(N_0\sqcup N_0,G)$. 
Consider $\varphi=\id\rtimes\varphi_\tau$ as in 
\begin{equation*}
\begin{tikzcd}
(N_0\sqcup N_0)\rtimes_\rho((\mr{pt}\sqcup\mr{pt})\rtimes_\tau G)\ar[r,"\sim","\varphi"{yshift=7}]\ar[d,"\varphi_\rho"']\ar[rd,phantom,"\circlearrowright"{xshift=-7,yshift=2}]&(N_0\sqcup N_0)\rtimes_{\rho^\prime}G\ar[d,"\varphi_{\rho^\prime}"]\\
(\mr{pt}\sqcup\mr{pt})\rtimes_\tau G\ar[r,"\varphi_\tau"']&G. 
\end{tikzcd}
\end{equation*}
Then $\varphi$ is a semiconjugacy and an isomorphism. But $\mr{pt}\sqcup\mr{pt}\to\mr{pt}$ is not injective, hence $\varphi$ is not a conjugacy. (It is easy to see directly that $\varphi^{-1}$ is not a semiconjugacy.) 
\end{example}

\begin{example}
The constant parts of a conjugacy may not be an isomorphism. Let $\nu\colon\{0\}\to\bb{R}$ be the inclusion. We specialize Diagram \eqref{qnmgmnu} in Proposition \ref{nqmaphic} to 
\begin{equation*}
\begin{tikzcd}[column sep=tiny]
\{0\}\ar[rr,"\nu"{name=U}]\ar[rd]&&\bb{R}\ar[ld]\\
&\mr{pt}. \ar[to=U,phantom,"\circlearrowright"]
\end{tikzcd}
\end{equation*}
Define $\tau\in\mca{A}(\bb{R},\bb{R})$ by $\tau(x,t)=xe^t$. Then $\mca{A}(\nu,\bb{R}\rtimes_\tau\bb{R})=\{\rho\}$ and $\mca{A}(\{0\},\bb{R})=\{\rho^\prime\}$. We have 
\begin{equation*}
\begin{tikzcd}
\{0\}\rtimes_\rho(\bb{R}\rtimes_\tau\bb{R})\ar[r,"\sim","\varphi"{yshift=7}]\ar[d,"\varphi_\rho"']&\{0\}\rtimes_{\rho^\prime}\bb{R}\ar[d,"\sim"{sloped},"\varphi_{\rho^\prime}"{xshift=7}]\ar[ld,"\nu\rtimes\id"]\\
\bb{R}\rtimes_\tau\bb{R}\ar[r,"\varphi_\tau"']&\bb{R}. 
\end{tikzcd}
\end{equation*}
Since $\nu(\{0\})\to\mr{pt}$ is injective, $\varphi=\id\rtimes\varphi_\tau$ is a conjugacy. Indeed let 
\begin{align*}
\theta=(\nu\rtimes\id)(\varphi_{\rho^\prime})^{-1}\colon\bb{R}&\to\bb{R}\rtimes_\tau\bb{R}\\
t&\mapsto(0,t). 
\end{align*}
Then $\theta$ is a morphism and 
\begin{equation*}
\begin{tikzcd}
\{0\}\rtimes_\rho(\bb{R}\rtimes_\tau\bb{R})\ar[d,"\varphi_\rho"']\ar[rd,phantom,"\circlearrowright"{xshift=-5,yshift=1}]&\{0\}\rtimes_{\rho^\prime}\bb{R}\ar[l,"\varphi^{-1}"']\ar[d,"\varphi_{\rho^\prime}"]\\
\bb{R}\rtimes_\tau\bb{R}&\bb{R}. \ar[l,"\theta"]
\end{tikzcd}
\end{equation*}
Hence $\varphi^{-1}=\id\rtimes\theta$ is a semiconjugacy. Note that the constant parts $\varphi_\tau$ and $\theta$ are not isomorphisms. 
\end{example}

\subsection{Action groupoid morphisms}
\begin{dfn}\label{acgrmois}
Let $\mca{G}\rightrightarrows M$ be a Lie groupoid, $N$, $N^\prime$ be $C^\infty$ manifolds, $\nu\colon N\to M$, $\nu^\prime\colon N^\prime\to M$ be $C^\infty$ maps and $\rho\in\mca{A}(\nu,\mca{G})$, $\rho^\prime\in\mca{A}(\nu^\prime,\mca{G})$. 
\begin{itemize}
\item Put 
\begin{equation*}
\ul{C}^\infty(N,N^\prime)=\left\{F\in C^\infty(N,N^\prime)\ \middle|\ 
\begin{tikzcd}[column sep=tiny]
N\ar[rr,"F"{name=U}]\ar[rd,"\nu"']&&N^\prime\ar[ld,"\nu^\prime"]\\
&M\ar[to=U,phantom,"\circlearrowright"]
\end{tikzcd}
\right\}
\end{equation*}
and 
\begin{equation*}
\ul{\Diff}(N)=\left\{F\in\Diff(N)\ \middle|\ \nu F=\nu\right\}. 
\end{equation*}
\item Let $\varphi\in\Hom(N\rtimes_\rho\mca{G},N^\prime\rtimes_{\rho^\prime}\mca{G})$. We say that $\varphi$ is an \emph{action groupoid morphism} if $F_\varphi\in\ul{C}^\infty(N,N^\prime)$. Let 
\begin{align*}
\ul{\Hom}(N\rtimes_\rho\mca{G},N^\prime\rtimes_{\rho^\prime}\mca{G})&=\left\{\varphi\in\Hom(N\rtimes_\rho\mca{G},N^\prime\rtimes_{\rho^\prime}\mca{G})\ \middle|\ F_\varphi\in\ul{C}^\infty(N,N^\prime)\right\}, \\
\ul{\End}(N\rtimes_\rho\mca{G})&=\ul{\Hom}(N\rtimes_\rho\mca{G},N\rtimes_\rho\mca{G}). 
\end{align*}
\item Let $\varphi\in\ul{\Hom}(N\rtimes_\rho\mca{G},N^\prime\rtimes_{\rho^\prime}\mca{G})$. We say that $\varphi$ is an \emph{action groupoid isomorphism} if there exists $\varphi^\prime\in\ul{\Hom}(N^\prime\rtimes_{\rho^\prime}\mca{G},N\rtimes_\rho\mca{G})$ such that $\varphi\varphi^\prime=\id$ and $\varphi^\prime\varphi=\id$. Put 
\begin{equation*}
\ul{\Aut}(N\rtimes_\rho\mca{G})=\left\{\varphi\in\Aut(N\rtimes_\rho\mca{G})\ \middle|\ F_\varphi\in\ul{C}^\infty(N,N)\right\}. 
\end{equation*}
\end{itemize}
\end{dfn}

\begin{example}
We have $\nu\rtimes\id\in\ul{\Hom}(N\rtimes_{\rho^\prime}\mca{G},Q\rtimes_\tau\mca{G})$ in Proposition \ref{nqmaphic}. 
\end{example}

\begin{lem}\label{ulid}
Let $\mca{G}\rightrightarrows M$, $\mca{G}^\prime\rightrightarrows M^\prime$ be Lie groupoids, $N$, $N^\prime$ be $C^\infty$ manifolds, $\nu\colon N\to M$, $\nu^\prime\colon N^\prime\to M^\prime$ be $C^\infty$ maps, $\rho\in\mca{A}(\nu,\mca{G})$, $\rho^\prime\in\mca{A}(\nu^\prime,\mca{G}^\prime)$ and $\varphi=F_\varphi\rtimes\Phi\in\Hom(N\rtimes_\rho\mca{G},N^\prime\rtimes_{\rho^\prime}\mca{G}^\prime)$ be a semiconjugacy. Assume $\nu$ is surjective. Then $\varphi\in\ul{\Hom}(N\rtimes_\rho\mca{G},N^\prime\rtimes_{\rho^\prime}\mca{G}^\prime)$ if and only if $F_\Phi=\id$. 
\end{lem}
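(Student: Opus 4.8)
The plan is to reduce the statement to the commutative square of base maps underlying the definition of a semiconjugacy, and then to cancel $\nu$ on the left using its surjectivity; there is essentially no analytic content.

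First I would record what $\varphi=F_\varphi\rtimes\Phi$ means at the level of base maps. By definition of a semiconjugacy the square $\varphi_{\rho^\prime}\varphi=\Phi\varphi_\rho$ commutes. Taking base maps, and using that $\varphi_\rho$, $\varphi_{\rho^\prime}$ have base maps $\nu$, $\nu^\prime$ by Proposition \ref{actiongroupoid}, that $\varphi$ has base map $F_\varphi$, and that $\Phi$ has base map $F_\Phi$ (together with the fact that the base map of a composite is the composite of the base maps, which is immediate from the uniqueness in Definition \ref{morgmgmpfpfp}), one obtains the identity $\nu^\prime F_\varphi=F_\Phi\nu$ of $C^\infty$ maps $N\to M^\prime$.

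Next, by Definition \ref{acgrmois}, the condition $\varphi\in\ul{\Hom}(N\rtimes_\rho\mca{G},N^\prime\rtimes_{\rho^\prime}\mca{G}^\prime)$ is by definition the condition $F_\varphi\in\ul{C}^\infty(N,N^\prime)$, i.e. $\nu^\prime F_\varphi=\nu$. (This already presupposes $M=M^\prime$, which is also what makes ``$F_\Phi=\id$'' a meaningful statement; I would note this at the outset.) Combining with the identity from the previous step, $\varphi\in\ul{\Hom}$ is therefore equivalent to $F_\Phi\nu=\nu$.

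Finally, since $\nu\colon N\to M$ is surjective, $F_\Phi\nu=\nu$ holds if and only if $F_\Phi=\id_M$, which gives both implications at once. The only point requiring a little care is the bookkeeping of which of $\nu$, $\nu^\prime$, $F_\Phi$ is composed with $F_\varphi$ on which side; and the one genuinely used hypothesis, as opposed to purely formal manipulation, is the surjectivity of $\nu$, which is exactly what is needed to pass from $F_\Phi\nu=\nu$ to $F_\Phi=\id$.
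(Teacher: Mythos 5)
Your proof is correct and follows essentially the same route as the paper's: both rest on the commutative base-map square $\nu^\prime F_\varphi=F_\Phi\nu$, identify $\varphi\in\ul{\Hom}$ with the condition $\nu^\prime F_\varphi=\nu$, and use the surjectivity of $\nu$ only to cancel it in $F_\Phi\nu=\nu$ (the converse direction being trivial). Your added remark that the statement implicitly requires $M=M^\prime$ for $\ul{\Hom}$ and ``$F_\Phi=\id$'' to make sense is a fair observation, but it does not change the argument.
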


\begin{proof}
We have 
\begin{equation*}
\begin{tikzcd}
N\ar[r,"F_\varphi"]\ar[d,"\nu"']\ar[rd,phantom,"\circlearrowright"{xshift=1,yshift=-1}]&N^\prime\ar[d,"\nu^\prime"]\\
M\ar[r,"F_\Phi"']&M^\prime. 
\end{tikzcd}
\end{equation*}
If $F_\Phi=\id$, then $\varphi$ is an action groupoid morphism. If $\varphi$ is an action groupoid morphism, then $F_\Phi\nu=\nu^\prime F_\varphi=\nu$. $F_\Phi=\id$ since $\nu$ is surjective. 
\end{proof}

\begin{example}
Let $\mca{G}\rightrightarrows M$ be a Lie groupoid. Then $\mca{A}(\id,\mca{G})=\left\{\tau\right\}$. We have $\tau(x,g)=s(g)$ for all $(x,g)\in M\times_{\id,t}\mca{G}$ and $\varphi_\tau\colon M\rtimes_\tau\mca{G}\to\mca{G}$ is an isomorphism. By regarding $\mca{G}$ as the action groupoid $M\rtimes_\tau\mca{G}$, we have 
\begin{equation*}
\ul{\End}(\mca{G})=\left\{\Phi\in\End(\mca{G})\ \middle|\ F_\Phi=\id\right\}
\end{equation*}
and 
\begin{equation*}
\ul{\Aut}(\mca{G})=\left\{\Phi\in\Aut(\mca{G})\ \middle|\ F_\Phi=\id\right\}. 
\end{equation*}
\end{example}

\begin{lem}\label{ngconj}
Let $\mca{G}\rightrightarrows M$ be a Lie groupoid, $N$, $N^\prime$ be $C^\infty$ manifolds, $\nu^\prime\colon N^\prime\to M$ be a $C^\infty$ map, $\rho^\prime\in\mca{A}(\nu^\prime,\mca{G})$, $F\colon N\to N^\prime$ be a diffeomorphism and $\Phi\in\ul{\Aut}(\mca{G})$. Let 
\begin{align*}
(\rho^\prime)^{(F,\Phi)}\colon N\times_{\nu^\prime F,t}\mca{G}&\to N\\
(y,g)&\mapsto F^{-1}\rho^\prime(F(y),\Phi(g)). 
\end{align*}
Then $(\rho^\prime)^{(F,\Phi)}\in\mca{A}(\nu^\prime F,\mca{G})$ and $F\rtimes\Phi\in\ul{\Hom}(N\rtimes_{(\rho^\prime)^{(F,\Phi)}}\mca{G},N^\prime\rtimes_{\rho^\prime}\mca{G})$ is a conjugacy. 
\end{lem}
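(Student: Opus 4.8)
The plan is to prove the lemma by bare-hands verification; there is no deep content, only a sequence of diagram chases. In particular I will \emph{not} invoke Proposition \ref{conjdiffinj}, since that requires $\nu^\prime$ to be a surjective submersion, which is not among the hypotheses here; instead I will exhibit the inverse semiconjugacy explicitly. Throughout, the standing remark is that $\Phi\in\ul{\Aut}(\mca{G})$ means $F_\Phi=\id$ and $\Phi$ is a diffeomorphism with $\Phi^{-1}\in\ul{\Aut}(\mca{G})$.

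First I would check that $(\rho^\prime)^{(F,\Phi)}$ is a genuine $C^\infty$ right action of $\mca{G}$ on $\nu^\prime F$. The only place the hypothesis $F_\Phi=\id$ is really used is to make the fiber-product domains match up: for $(y,g)\in N\times_{\nu^\prime F,t}\mca{G}$ one has $t(\Phi(g))=F_\Phi(t(g))=t(g)=\nu^\prime(F(y))$, so $(F(y),\Phi(g))\in N^\prime\times_{\nu^\prime,t}\mca{G}$ and $\rho^\prime(F(y),\Phi(g))$ is defined. The action axioms then follow formally: compatibility with $\nu^\prime F$ from $\nu^\prime\rho^\prime(F(y),\Phi(g))=s(\Phi(g))=s(g)$; associativity from $FF^{-1}=\id$ together with $\Phi(g)\Phi(g^\prime)=\Phi(gg^\prime)$; the unit axiom from $\Phi(1_x)=1_{F_\Phi(x)}=1_x$ and the unit axiom for $\rho^\prime$. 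Smoothness is clear, as $(\rho^\prime)^{(F,\Phi)}$ is a composite of $C^\infty$ maps and its domain is an embedded submanifold of $N\times\mca{G}$ by Lemma \ref{fiberproduct}.

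Next I would treat $F\rtimes\Phi$. By the uniqueness built into Definition \ref{semiconconpng}, a semiconjugacy with base map $F$ and constant part $\Phi$ must be the map $(y,g)\mapsto(F(y),\Phi(g))$, so I would define it by this formula and verify directly that: (i) it is well defined as a map $N\rtimes_{(\rho^\prime)^{(F,\Phi)}}\mca{G}\to N^\prime\rtimes_{\rho^\prime}\mca{G}$ (the same fiber-product computation as above); (ii) it is a morphism of Lie groupoids with base map $F$ — the $t$- and $s$-squares are immediate and multiplicativity reduces to $F\bigl((\rho^\prime)^{(F,\Phi)}(y,g)\bigr)=\rho^\prime(F(y),\Phi(g))$, which is the definition; (iii) it is a semiconjugacy with constant part $\Phi$, since the square against $\varphi_{\rho^\prime}$ and $\varphi_{(\rho^\prime)^{(F,\Phi)}}$ commutes on the nose (both composites send $(y,g)$ to $\Phi(g)$); and (iv) it lies in $\ul{\Hom}$ because its base map $F$ satisfies $\nu^\prime\circ F=\nu^\prime F$, i.e.\ $F\in\ul{C}^\infty(N,N^\prime)$. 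Moreover $F\rtimes\Phi$ is the restriction of the diffeomorphism $F\times\Phi\colon N\times\mca{G}\to N^\prime\times\mca{G}$ and it carries $N\times_{\nu^\prime F,t}\mca{G}$ bijectively onto $N^\prime\times_{\nu^\prime,t}\mca{G}$, so it is a diffeomorphism of the total spaces; by Proposition \ref{isodiff} it is then an isomorphism of Lie groupoids.

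Finally, to see that $F\rtimes\Phi$ is a \emph{conjugacy}, I would note that its inverse as a Lie groupoid isomorphism is, on underlying sets, the map $(y^\prime,g^\prime)\mapsto(F^{-1}(y^\prime),\Phi^{-1}(g^\prime))$, and check that this inverse is again a semiconjugacy, with constant part $\Phi^{-1}\in\ul{\Aut}(\mca{G})$: the defining square commutes because both composites send $(y^\prime,g^\prime)$ to $\Phi^{-1}(g^\prime)$. Hence $F\rtimes\Phi$ admits a semiconjugacy inverse, which is precisely what it means to be a conjugacy. I do not expect any genuine obstacle: the only points needing care are keeping the fiber-product domains straight — this is exactly where $F_\Phi=\id$ enters — and remembering that being an isomorphism of Lie groupoids does not by itself give a conjugacy, so the inverse must be checked to be a semiconjugacy, which here is the one-line verification above.
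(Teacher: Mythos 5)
Your proof is correct and takes essentially the same route as the paper's: a direct verification in which the hypothesis $\Phi\in\ul{\Aut}(\mca{G})$ (i.e.\ $F_\Phi=\id$) is used exactly where the paper uses it, namely for the well-definedness of $\rho^\prime(F(y),\Phi(g))$ and for the computation $\nu^\prime F(\rho^\prime)^{(F,\Phi)}(y,g)=s\Phi(g)=s(g)$, together with the identity $F(\rho^\prime)^{(F,\Phi)}(y,g)=\rho^\prime(F(y),\Phi(g))$. The paper leaves the remaining routine checks (action axioms, morphism and $\ul{\Hom}$ conditions, and the explicit inverse semiconjugacy $(y^\prime,g^\prime)\mapsto(F^{-1}(y^\prime),\Phi^{-1}(g^\prime))$) implicit, whereas you spell them out; this is a difference only in level of detail, not of method.
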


\begin{proof}
Note that the assumption $\Phi\in\ul{\Aut}(\mca{G})$ instead of $\Phi\in\Aut(\mca{G})$ is used for example for the well definedness of $\rho^\prime(F(y),\Phi(g))$ and the computation 
\begin{equation*}
\nu^\prime F(\rho^\prime)^{(F,\Phi)}(y,g)=\nu^\prime\rho^\prime(F(y),\Phi(g))=s\Phi(g)=s(g). 
\end{equation*}
We obviously have $F(\rho^\prime)^{(F,\Phi)}(y,g)=\rho^\prime(F(y),\Phi(g))$ for $(y,g)\in N\rtimes_{(\rho^\prime)^{(F,\Phi)}}\mca{G}$. 
\end{proof}

\begin{lem}
Let $\mca{G}\rightrightarrows M$ be a Lie groupoid, $N$, $N^\prime$, $N^{\prime\prime}$ be $C^\infty$ manifolds, $\nu^{\prime\prime}\colon N^{\prime\prime}\to M$ be a $C^\infty$ map, $\rho^{\prime\prime}\in\mca{A}(\nu^{\prime\prime},\mca{G})$, $F\colon N\to N^\prime$, $F^\prime\colon N^\prime\to N^{\prime\prime}$ be diffeomorphisms and $\Phi$, $\Phi^\prime\in\ul{\Aut}(\mca{G})$. Then $(\rho^{\prime\prime})^{(F^\prime F,\Phi^\prime\Phi)}=((\rho^{\prime\prime})^{(F^\prime,\Phi^\prime)})^{(F,\Phi)}$. 
\end{lem}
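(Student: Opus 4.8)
The plan is to verify the identity by a direct computation on an arbitrary point, after first checking that both sides are defined and have the same domain. First I would observe that $\ul{\Aut}(\mca{G})$ is closed under composition: if $\Phi,\Phi^\prime\in\ul{\Aut}(\mca{G})$ then $\Phi^\prime\Phi\in\Aut(\mca{G})$ with base map $F_{\Phi^\prime}F_\Phi=\id$, so $\Phi^\prime\Phi\in\ul{\Aut}(\mca{G})$. Hence by the previous lemma the left-hand side $(\rho^{\prime\prime})^{(F^\prime F,\Phi^\prime\Phi)}$ is a well-defined element of $\mca{A}(\nu^{\prime\prime}F^\prime F,\mca{G})$, with domain $N\times_{\nu^{\prime\prime}F^\prime F,\,t}\mca{G}$. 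Applying the previous lemma twice, $(\rho^{\prime\prime})^{(F^\prime,\Phi^\prime)}\in\mca{A}(\nu^{\prime\prime}F^\prime,\mca{G})$ and then $\bigl((\rho^{\prime\prime})^{(F^\prime,\Phi^\prime)}\bigr)^{(F,\Phi)}\in\mca{A}\bigl((\nu^{\prime\prime}F^\prime)F,\mca{G}\bigr)$; since $(\nu^{\prime\prime}F^\prime)F=\nu^{\prime\prime}(F^\prime F)$, this has exactly the same domain as the left-hand side.

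Next I would unwind the definitions on a point $(y,g)$ of this common domain. Writing $\sigma=(\rho^{\prime\prime})^{(F^\prime,\Phi^\prime)}$, so that $\sigma(z,h)=(F^\prime)^{-1}\rho^{\prime\prime}(F^\prime(z),\Phi^\prime(h))$, one computes
\begin{equation*}
\sigma^{(F,\Phi)}(y,g)=F^{-1}\sigma(F(y),\Phi(g))=F^{-1}\bigl((F^\prime)^{-1}\rho^{\prime\prime}(F^\prime(F(y)),\Phi^\prime(\Phi(g)))\bigr),
\end{equation*}
while on the other hand
\begin{equation*}
(\rho^{\prime\prime})^{(F^\prime F,\Phi^\prime\Phi)}(y,g)=(F^\prime F)^{-1}\rho^{\prime\prime}\bigl((F^\prime F)(y),(\Phi^\prime\Phi)(g)\bigr)=F^{-1}\bigl((F^\prime)^{-1}\rho^{\prime\prime}(F^\prime(F(y)),\Phi^\prime(\Phi(g)))\bigr).
\end{equation*}
The two expressions coincide, which proves the equality of the two actions.

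There is essentially no obstacle here: the statement is a routine functoriality check for the $(F,\Phi)$-action on the set of actions, and the only mild care needed is in matching the domains (which reduces to the trivial identity $(\nu^{\prime\prime}F^\prime)F=\nu^{\prime\prime}(F^\prime F)$) and in noting that $\ul{\Aut}(\mca{G})$ is a group so that the right-hand side is even defined as an element of $\mca{A}(\nu^{\prime\prime}F^\prime F,\mca{G})$.
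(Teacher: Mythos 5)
Your proof is correct; the paper actually states this lemma without any proof, treating it as a routine functoriality check, and your argument (closure of $\ul{\Aut}(\mca{G})$ under composition, matching of the domains via $(\nu^{\prime\prime}F^\prime)F=\nu^{\prime\prime}(F^\prime F)$, and the pointwise computation using $(F^\prime F)^{-1}=F^{-1}(F^\prime)^{-1}$) is exactly the intended verification.
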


\begin{prop}\label{gmnnmfnnfn}
Let $\mca{G}\rightrightarrows M$ be a Lie groupoid, $N$ be a $C^\infty$ manifold, $\nu\colon N\to M$ be a $C^\infty$ map, $\rho\in\mca{A}(\nu,\mca{G})$, $F\in\ul{\Diff}(N)$ and $\Phi\in\ul{\Aut}(\mca{G})$. Then $\rho^{(F,\Phi)}\in\mca{A}(\nu,\mca{G})$, $F\rtimes\Phi\in\ul{\Hom}(N\rtimes_{\rho^{(F,\Phi)}}\mca{G},N\rtimes_\rho\mca{G})$ is a conjugacy and 
\begin{align*}
\mca{A}(\nu,\mca{G})\times(\ul{\Diff}(N)\times\ul{\Aut}(\mca{G}))&\to\mca{A}(\nu,\mca{G})\\
(\rho,(F,\Phi))&\mapsto\rho^{(F,\Phi)}
\end{align*}
is a right action. 
\end{prop}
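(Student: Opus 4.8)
The plan is to reduce everything to Lemma~\ref{ngconj} and to the (unlabelled) composition lemma that immediately precedes the proposition, exploiting that $F\in\ul{\Diff}(N)$ means exactly $\nu F=\nu$. For the first two claims I would apply Lemma~\ref{ngconj} with $N^\prime=N$, $\nu^\prime=\nu$ and $\rho^\prime=\rho$. Since $\nu^\prime F=\nu F=\nu$, the domain $N\times_{\nu^\prime F,t}\mca{G}$ of $(\rho^\prime)^{(F,\Phi)}$ is $N\times_{\nu,t}\mca{G}$, so the conclusion $(\rho^\prime)^{(F,\Phi)}\in\mca{A}(\nu^\prime F,\mca{G})$ becomes $\rho^{(F,\Phi)}\in\mca{A}(\nu,\mca{G})$. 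The same lemma yields that $F\rtimes\Phi\in\ul{\Hom}(N\rtimes_{\rho^{(F,\Phi)}}\mca{G},N\rtimes_\rho\mca{G})$ is a conjugacy: its base map is $F$, and membership in $\ul{\Hom}$ rather than $\Hom$ is precisely the condition $\nu F=\nu$, which holds because $F\in\ul{\Diff}(N)$.

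For the last assertion, I would first note that $\ul{\Diff}(N)=\{F\in\Diff(N)\mid\nu F=\nu\}$ and $\ul{\Aut}(\mca{G})=\{\Phi\in\Aut(\mca{G})\mid F_\Phi=\id\}$ are subgroups (closed under composition and inverses), so $\ul{\Diff}(N)\times\ul{\Aut}(\mca{G})$ is a group under componentwise composition, with identity $(\id,\id)$. The identity axiom is the computation $\rho^{(\id,\id)}(y,g)=\id^{-1}\rho(\id(y),\id(g))=\rho(y,g)$, that is, $\rho^{(\id,\id)}=\rho$. The compatibility axiom $(\rho^{(F,\Phi)})^{(F^\prime,\Phi^\prime)}=\rho^{(FF^\prime,\Phi\Phi^\prime)}$ is the composition lemma preceding the proposition, taken with $\rho^{\prime\prime}=\rho$; and since $FF^\prime$ there denotes $F\circ F^\prime$, this is exactly the right-action identity for the componentwise composition group law. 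Hence the map is a right action.

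I expect no real obstacle, as all the content sits in Lemma~\ref{ngconj} and the composition lemma. The only thing to be careful about is bookkeeping: checking that the notation $\rho^{(F,\Phi)}$ in the proposition coincides with $(\rho^\prime)^{(F,\Phi)}$ of Lemma~\ref{ngconj} when $\rho^\prime=\rho$, and tracking the order of composition so that one obtains a right, rather than a left, action.
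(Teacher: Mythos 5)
Your proposal is correct and follows exactly the route the paper intends: the paper states this proposition without proof, placing it immediately after Lemma~\ref{ngconj} and the unlabelled composition lemma, and your specialization ($N'=N$, $\nu'=\nu$, $\rho'=\rho$, using $\nu F=\nu$ to identify $\mca{A}(\nu F,\mca{G})=\mca{A}(\nu,\mca{G})$) together with the identity computation $\rho^{(\id,\id)}=\rho$ is precisely that argument.
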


\section{The principal bundle structures on the target fibers}\label{999}
The results in this section are taken from Section 5.1 of \cite{MoerMr}. The difference is that we state the results for a Lie groupoid $\mca{G}\rightrightarrows M$, where $M$ is not assumed to be Hausdorff or second countable. But the proofs do not change. Our exposition is much more detailed to make it clear that Hausdorff and second countability assumptions are not necessary. We will apply these results in Lemma \ref{tsiso} to the action groupoid of the action $\mca{G}\stackrel{\gamma}{\curvearrowleft}\mca{G}\times\mca{G}$ by multiplication (Example \ref{canaction}) for a Lie groupoid $\mca{G}\rightrightarrows M$, where the base manifold $\mca{G}$ may not be Hausdorff or second countable. This in turn will be used to prove the main results of this paper (Proposition \ref{btop} and Theorem \ref{ctop}). 

\begin{dfn}\label{localbisection}
Let $\mca{G}\rightrightarrows M$ be a Lie groupoid ($M$ can be non Hausdorff, non second countable). A \emph{local bisection} of $\mca{G}$ is a $C^\infty$ map $\sigma\colon V\to\mca{G}$ on an open set $V$ of $M$ such that: 
\begin{itemize}
\setlength\itemsep{0em}
\item $t\sigma=\id$
\item $U=s\sigma(V)$ is an open set of $M$
\item $s\sigma\colon V\to U$ is a diffeomorphism. 
\end{itemize}
In this situation, define 
\begin{align*}
\tau\colon U&\to\mca{G}\\
x&\mapsto\sigma((s\sigma)^{-1}(x))^{-1}. 
\end{align*}
Then $\tau$ is $C^\infty$, $t\tau=\id$, $s\tau=(s\sigma)^{-1}$ and $s\tau(U)=V$. Hence $\tau$ is a local bisection of $\mca{G}$. 
\end{dfn}

\begin{lem}
Let $\mca{G}\rightrightarrows M$ be a Lie groupoid ($M$ can be non Hausdorff, non second countable) and $g\in\mca{G}$. Then there exists a local bisection $\sigma$ of $\mca{G}$ such that $\sigma(t(g))=g$. 
\end{lem}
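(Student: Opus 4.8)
The plan is to produce the bisection as (the restriction of) a carefully chosen local section of $t$ through $g$. The point to watch is that an arbitrary local section $\alpha$ of $t$ with $\alpha(t(g))=g$ need not satisfy the remaining bisection requirements — namely that $s\alpha$ be a diffeomorphism onto an open set — so one has to prescribe the $1$-jet of $\alpha$ at $t(g)$ with care.

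Write $x=t(g)$ and $y=s(g)$. First I would isolate the linear-algebraic input. Since $t$ and $s$ are submersions (for $s$, by the lemma established just above), the differentials $dt_g$ and $ds_g$ map $T_g\mca{G}$ onto $T_xM$ and $T_yM$ respectively; since $M$ is pure-dimensional (standing convention), $\dim T_xM=\dim T_yM$, and hence $\ker dt_g$ and $\ker ds_g$ are subspaces of $T_g\mca{G}$ of the same dimension. Now I invoke the standard fact that any two subspaces of equal dimension of a finite-dimensional real vector space admit a common complement (the complements of either one form an open dense subset of the relevant Grassmannian, and two such subsets meet). This yields a subspace $W\subseteq T_g\mca{G}$ with $\dim W=\dim T_xM$ and $W\cap\ker dt_g=0=W\cap\ker ds_g$, so that $dt_g|_W$ and $ds_g|_W$ are isomorphisms onto $T_xM$ and $T_yM$.

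Next I would upgrade $W$ to an honest section. Set $\beta=(dt_g|_W)^{-1}\colon T_xM\to T_g\mca{G}$, a linear right inverse of $dt_g$ with image $W$. Using a submersion chart for $t$ near $g$ (the possible non-Hausdorffness of $\mca{G}$ is irrelevant, as this is a purely local matter), one constructs a $C^\infty$ local section $\alpha\colon V_0\to\mca{G}$ of $t$ on an open neighbourhood $V_0$ of $x$ with $\alpha(x)=g$ and $d\alpha_x=\beta$; concretely one realises $\alpha$ as an ``affine'' section in the chart with the prescribed derivative. Then $d(s\alpha)_x=ds_g\circ\beta$ is an isomorphism $T_xM\to T_yM$, so by the inverse function theorem $s\alpha$ restricts to a diffeomorphism from some open $V\subseteq V_0$ containing $x$ onto the open set $U:=s\alpha(V)$. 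Taking $\sigma=\alpha|_V$ then gives $t\sigma=\id$, $U=s\sigma(V)$ open, and $s\sigma\colon V\to U$ a diffeomorphism, while $\sigma(t(g))=\alpha(x)=g$; thus $\sigma$ is the required local bisection.

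The one genuinely non-formal ingredient is the common-complement fact together with the realisation that it is precisely the transversality of the section to the $s$-fibres that must be arranged (a bare section of $t$ will not do); the remaining steps — existence of a section of a submersion with prescribed $1$-jet, and shrinking a local diffeomorphism to a diffeomorphism onto its image — are routine, so I do not expect a serious obstacle.
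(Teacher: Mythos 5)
Your proposal is correct and follows essentially the same route as the paper: choose a common complement of $\ker t_*$ and $\ker s_*$ at $g$ (the paper's subspace $L$, your $W$), build a local section of $t$ through $g$ whose differential at $t(g)$ has that image, observe that $(s\sigma)_*$ is then an isomorphism, and conclude with the inverse function theorem. The only difference is that you spell out the common-complement and prescribed-$1$-jet steps in more detail than the paper does.
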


\begin{proof}
Since $\dim\mca{G}^{t(g)}=\dim\mca{G}_{s(g)}$, there exists a subspace $L$ of $T_g\mca{G}$ such that $T_g\mca{G}=T_g\mca{G}^{t(g)}\oplus L$ and $T_g\mca{G}=T_g\mca{G}_{s(g)}\oplus L$. There exist an open neighborhood $V_1$ of $t(g)$ in $M$ and a $C^\infty$ map $\sigma\colon V_1\to\mca{G}$ such that $t\sigma=\id$, $\sigma(t(g))=g$ and $\sigma_*T_{t(g)}M=L$. Since $T_g\mca{G}_{s(g)}=\ker s_*$, $(s\sigma)_*\colon T_{t(g)}M\to T_{s(g)}M$ is an isomorphism. By the inverse function theorem there exists an open neighborhood $V\subset V_1$ of $t(g)$ such that $s\sigma$ is a diffeomorphism from $V$ to an open subset $U$ of $M$. Then $\sigma\colon V\to\mca{G}$ satisfies the required conditions. 
\end{proof}

\begin{lem}\label{constantrank}
Let $\mca{G}\rightrightarrows M$ be a Lie groupoid ($M$ can be non Hausdorff, non second countable) and $x\in M$. Then $s\colon\mca{G}^x\to M$ is of constant rank. 
\end{lem}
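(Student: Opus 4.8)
The plan is to move any point of $\mca{G}^x$ to any other by a right translation along a local bisection, in such a way that the source map $s$ gets conjugated to a diffeomorphism on the base; this forces the rank of $s|_{\mca{G}^x}$ to be the same at the two points.

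First I would set up right translations. Given a local bisection $\sigma\colon V\to\mca{G}$ with $U=s\sigma(V)$ and its associated ``inverse'' local bisection $\tau\colon U\to\mca{G}$ as in Definition \ref{localbisection}, define $R_\sigma\colon\mca{G}_V\to\mca{G}$ by $R_\sigma(g)=g\,\sigma(s(g))$, where $\mca{G}_V=s^{-1}(V)$. I would check the following routine points: (i) $R_\sigma$ is well defined (the pair $(g,\sigma(s(g)))$ lies in $\mca{G}^{(2)}$, which is an embedded submanifold of $\mca{G}^2$ by Lemma \ref{fiberproduct} since $t$ is a submersion) and $C^\infty$ (it is the composite of $g\mapsto(g,\sigma(s(g)))$ with multiplication); (ii) $R_\sigma$ takes values in $\mca{G}_U$ and $R_\tau\colon\mca{G}_U\to\mca{G}_V$ is a two-sided inverse (using $\tau(s\sigma(s(g)))=\sigma(s(g))^{-1}$ and $gg^{-1}=1_{t(g)}$), so $R_\sigma\colon\mca{G}_V\to\mca{G}_U$ is a diffeomorphism; (iii) $t\circ R_\sigma=t$, hence $R_\sigma$ preserves $t$-fibers and restricts to a diffeomorphism $\mca{G}^x\cap\mca{G}_V\to\mca{G}^x\cap\mca{G}_U$ between open submanifolds of $\mca{G}^x$; (iv) $s\circ R_\sigma=(s\sigma)\circ s$ on $\mca{G}_V$, since the source of a product is the source of its second factor.

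Next, let $g,g^\prime\in\mca{G}^x$, so $t(g)=t(g^\prime)=x$. By the preceding lemma on the existence of local bisections through a prescribed element, applied to $g^{-1}g^\prime$ (note $t(g^{-1}g^\prime)=s(g)$), there is a local bisection $\sigma$ defined on an open neighborhood $V$ of $s(g)$ with $\sigma(s(g))=g^{-1}g^\prime$. Then $g\in\mca{G}_V$ and $R_\sigma(g)=g(g^{-1}g^\prime)=g^\prime$. Differentiating the identity $s\circ R_\sigma=(s\sigma)\circ s$, restricted to $\mca{G}^x\cap\mca{G}_V$, at the point $g$, and using that $(R_\sigma)_{*,g}$ and $(s\sigma)_{*,s(g)}$ are isomorphisms, I obtain $\rank\bigl(s|_{\mca{G}^x}\bigr)_{*,g}=\rank\bigl(s|_{\mca{G}^x}\bigr)_{*,g^\prime}$. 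Since $g,g^\prime$ are arbitrary, $s\colon\mca{G}^x\to M$ has constant rank.

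The only points needing a little care are the smoothness of $R_\sigma$ and the fact that it restricts cleanly to the embedded (Hausdorff) submanifold $\mca{G}^x=t^{-1}(x)$; I expect this to be the main ``obstacle,'' though it is entirely standard. The non-Hausdorff, non-second-countable nature of $M$ plays no role, because the whole argument is local around $s(g)$ and uses only that $t$ is a submersion and that local bisections exist. In particular no connectedness of $\mca{G}^x$ is needed, since we compare arbitrary pairs of points directly.
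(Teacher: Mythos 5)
Your proposal is correct and follows essentially the same route as the paper: right translation $R_\sigma(g_1)=g_1\sigma(s(g_1))$ along a local bisection chosen through $g^{-1}g^\prime$, with inverse $R_\tau$, and the identity $s\circ R_\sigma=(s\sigma)\circ s$ differentiated at $g$ to transport the rank of $s|_{\mca{G}^x}$ from $g$ to $g^\prime$. The only (cosmetic) difference is that you define $R_\sigma$ on all of $s^{-1}(V)$ and then restrict to the $t$-fiber, while the paper works directly on $\mca{G}^x_V$.
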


\begin{proof}
Let $g$, $g^\prime\in\mca{G}^x$. We have $g^\prime=g(g^{-1}g^\prime)$. There exist an open neighborhood $V$ of $s(g)$ in $M$ and a local bisection $\sigma\colon V\to\mca{G}$ such that $\sigma(s(g))=g^{-1}g^\prime$. Let $U=s\sigma(V)$ and define $\tau\colon U\to\mca{G}$ as in Definition \ref{localbisection}. Consider $C^\infty$ maps 
\begin{align*}
R_\sigma\colon\mca{G}^x_V&\to\mca{G}^x_U&R_\tau\colon\mca{G}^x_U&\to\mca{G}^x_V\\
g_1&\mapsto g_1\sigma(s(g_1)), &g_2&\mapsto g_2\tau(s(g_2)). 
\end{align*}
Then $R_\sigma R_\tau=\id$, $R_\tau R_\sigma=\id$, hence $R_\sigma$ is a diffeomorphism. We have 
\begin{equation*}
\begin{tikzcd}
\mca{G}^x_V\ar[r,"\sim","R_\sigma"{yshift=7}]\ar[d,"s"']\ar[rd,phantom,"\circlearrowright"{xshift=-2,yshift=1}]&\mca{G}^x_U\ar[d,"s"]\\
V\ar[r,"\sim","s\sigma"']&U
\end{tikzcd}
\end{equation*}
and $R_\sigma(g)=g^\prime$. Hence 
\begin{equation*}
\begin{tikzcd}
T_g\mca{G}^x\ar[r,"\sim","(R_\sigma)_*"{yshift=6}]\ar[d,"s_*"']\ar[rd,phantom,"\circlearrowright"{xshift=-1}]&T_{g^\prime}\mca{G}^x\ar[d,"s_*"]\\
T_{s(g)}M\ar[r,"\sim","(s\sigma)_*"']&T_{s(g^\prime)}M. 
\end{tikzcd}
\end{equation*}
Therefore $s\colon\mca{G}^x\to M$ is of constant rank. 
\end{proof}

\begin{lem}\label{fttwhen}
Let $G$ be a Lie group (perhaps non second countable), $P$ be a Hausdorff $C^\infty$ manifold (perhaps non second countable), $\mu\colon G\times P\to P$ be a free $C^\infty$ action with quotient map $\pi\colon P\to G\bs P$. Then the following are equivalent: 
\begin{enumerate}
\item For any $x\in P$, there exists an embedded submanifold $S$ of $P$ such that: 
\begin{itemize}
\setlength\itemsep{0em}
\item $x\in S$
\item $\mu\colon G\times S\to P$ is an open embedding. 
\end{itemize}
\item There exists a $C^\infty$ differential structure on $G\bs P$ (perhaps non Hausdorff, non second countable) such that $\pi\colon P\to G\bs P$ is a left principal $G$ bundle. 
\item There exist a $C^\infty$ manifold $M$ (perhaps non Hausdorff, non second countable), a $C^\infty$ map $f\colon P\to M$ and a map $\ol{f}\colon G\bs P\to M$ such that: 
\begin{itemize}
\setlength\itemsep{0em}
\item \qquad$\begin{aligned}
\begin{tikzcd}
P\ar[r,"f"]\ar[d,"\pi"']&M\\
G\bs P\ar[ru,"\ol{f}"'{name=U}]
\ar[from=1-1,to=U,phantom,"\circlearrowright"{pos=.4}]
\end{tikzcd}
\end{aligned}$
\item For any $x\in P$, the image of $T_1G\times\{0\}$ by $\mu_*\colon T_1G\times T_xP\to T_xP$ coincides with $\ker f_*$. (Note that we always have $\mu_*(T_1G\times\{0\})\subset\ker f_*$.) 
\end{itemize}
\end{enumerate}
When one of the above three conditions holds, the map $\ol{f}\colon G\bs P\to M$ in $3$ is an immersion. 
\end{lem}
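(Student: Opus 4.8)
The plan is to prove the cycle $1\Rightarrow 2\Rightarrow 3\Rightarrow 1$ and then read off the final assertion. For $1\Rightarrow 2$: given a slice $S\ni x$ as in (1), injectivity of $\mu\colon G\times S\to P$ forces any two points of $S$ lying in one $G$-orbit to coincide, so $\pi|_S$ is injective; since orbit-space quotient maps are open and $\mu(G\times S)$ is open and saturated, $\pi(S)=\pi(\mu(G\times S))$ is open and $\pi|_S$ is a homeomorphism onto it. I would take $\{(\pi(S),(\pi|_S)^{-1})\}$, over all slices, as an atlas on $G\bs P$: the transition for two slices $S,S'$ sends $x'\in S$ to the unique point of $S'$ in $Gx'$, namely $\pr_{S'}\bigl((\mu|_{G\times S'})^{-1}(x')\bigr)$, which is $C^\infty$. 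In the chart given by the diffeomorphism $\mu\colon G\times S\xrightarrow{\ \sim\ }\mu(G\times S)$ the map $\pi$ becomes $\pr_S$, so $\pi$ is a surjective submersion admitting local sections, and, the action being free, $\pi\colon P\to G\bs P$ is a left principal $G$-bundle. No separation issue arises, as $G\bs P$ is allowed to be non-Hausdorff and non-second-countable.

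For $2\Rightarrow 3$ take $M=G\bs P$, $f=\pi$, $\bar f=\id$: for a principal $G$-bundle the orbit map $G\to Gx=\pi^{-1}(\pi(x))$ is a diffeomorphism onto the fibre, so $\mu_*\colon T_1G\times\{0\}\to T_xP$ is an isomorphism onto $\ker\pi_*|_x=\ker f_*|_x$, which is the condition in (3). For $3\Rightarrow 1$: freeness of $\mu$ makes $\mu_*\colon T_1G\times\{0\}\to T_xP$ injective at every $x$ (if $\mu_*(X,0)|_x=0$ the fundamental vector field of $X$ vanishes at $x$, so $\exp(tX)x=x$ and $X=0$), so by (3) $\ker f_*$ has constant rank $\dim G$ and $f$ has constant rank $\dim P-\dim G$. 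By the rank theorem I would choose, near a given $x$, a coordinate slice $S\ni x$ transverse to $\ker f_*$ along all of $S$ and with $f|_S$ an embedding. Differentiating $\mu\colon G\times S\to P$ at $(g,x')$, $x'\in S$ — after composing with the diffeomorphism $p\mapsto g^{-1}p$ — yields $(X,v)\mapsto\mu_*(X,0)|_{x'}+v$, an isomorphism since $\mu_*(T_1G\times\{0\})|_{x'}=\ker f_*|_{x'}$ is complementary to $T_{x'}S$; so $\mu$ is a local diffeomorphism. It is injective too: $g_1x_1=g_2x_2$ with $x_i\in S$ gives $f(x_1)=f(x_2)$ because $f$ is constant on orbits, hence $x_1=x_2$ as $f|_S$ is injective, and then $g_1=g_2$ by freeness; an injective local diffeomorphism is an open embedding, which is (1).

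For the final assertion, assume (3) holds (hence all three do). By (2) the map $\pi$ is a surjective submersion, $f=\bar f\circ\pi$, and $\ker f_*|_x=\mu_*(T_1G\times\{0\})|_x=\ker\pi_*|_x$ for every $x$; lifting tangent vectors of $G\bs P$ through $\pi_*$ then shows $\bar f_*$ is injective at each point, so $\bar f\colon G\bs P\to M$ is an immersion. The main obstacle is $3\Rightarrow 1$: producing a single slice $S$ on which $\mu\colon G\times S\to P$ is simultaneously a local diffeomorphism and globally injective. The local-diffeomorphism part is a transversality computation via the rank theorem, while global injectivity needs the extra facts that $f$ is constant on orbits and that $f|_S$ is an embedding; one also has to be careful with the chain rule for $\mu$ at points $(g,x')$ with $g\neq 1$, which is handled by factoring through left translation by $g$.
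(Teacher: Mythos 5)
Your proof is correct and follows essentially the same route as the paper: the cycle $1\Rightarrow2\Rightarrow3\Rightarrow1$ with slice charts on $G\bs P$, the trivial choice $M=G\bs P$, $f=\pi$, $\ol{f}=\id$, and, for $3\Rightarrow1$, a slice transverse to $\ker f_*$ on which injectivity of $\mu\colon G\times S\to P$ comes from $f$ being constant on orbits, followed by the translation trick and the inverse function theorem. The only variation is minor: you invoke the constant-rank theorem for $f$ (justified by freeness via fundamental vector fields), whereas the paper avoids any rank discussion by composing $f$ with an auxiliary submersion $h\colon V\to\bb{R}^k$ and shrinking $S$ so that $hf|_S$ is an open embedding.
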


\begin{proof}
$1\Rightarrow2$. Equip $G\bs P$ with the quotient topology. Let $S$ be an embedded submanifold of $P$ such that: 
\begin{itemize}
\setlength\itemsep{0em}
\item $\mu\colon G\times S\to P$ is an open embedding
\item $S$ is diffeomorphic to an open subset of $\bb{R}^{\dim S}$. 
\end{itemize}
Let 
\begin{equation*}
\begin{tikzcd}
&P\ar[d,"\pi"]\\
S\ar[r,"\iota_S"']\ar[ru,hookrightarrow,""{name=U}]&G\bs P. \ar[to=U,phantom,"\circlearrowright"{pos=.3}]
\end{tikzcd}
\end{equation*}
$\iota_S$ is continuous. 

\begin{claim}
$\iota_S$ is injective. 
\end{claim}

\begin{proof}
Let $y$, $y^\prime\in S$ be such that $\iota_S(y)=\iota_S(y^\prime)$. There exists $g\in G$ such that $y=\mu(g,y^\prime)$. Since $\mu(1,y)=\mu(g,y^\prime)$, we have $(1,y)=(g,y^\prime)$, hence $y=y^\prime$. 
\end{proof}

\begin{claim}
$\iota_S$ is an open map. 
\end{claim}

\begin{proof}
Let $U$ be an open subset of $S$. Then $\pi^{-1}\iota_S(U)=\mu(G\times U)$ is open in $P$ since $\mu\colon G\times S\to P$ is an open map. This means that $\iota_S(U)$ is open in $G\bs P$. 
\end{proof}

Hence $\iota_S$ is a topological embedding and $\iota_S(S)$ is open in $G\bs P$. Let $S^\prime$ be another embedded submanifold of $P$ satisfying the same two conditions as $S$ such that $\iota_S(S)\cap\iota_{S^\prime}(S^\prime)\neq\emptyset$. We have 
\begin{equation*}
\begin{tikzcd}
&P\ar[d,"\pi"]&G\times S^\prime\ar[l,"\mu"']\ar[d,"p_{S^\prime}"]\\
S\ar[r,"\iota_S"']\ar[ru]&G\bs P&S^\prime, \ar[l,"\iota_{S^\prime}"]\ar[lu]
\end{tikzcd}
\end{equation*}
where $p_{S^\prime}$ is the projection. By restriction we get 
\begin{equation*}
\begin{tikzcd}
&\pi^{-1}(\iota_S(S)\cap\iota_{S^\prime}(S^\prime))\ar[d,"\pi"]&G\times(\iota_{S^\prime}^{-1}(\iota_S(S)\cap\iota_{S^\prime}(S^\prime)))\ar[l,"\sim"',"\mu"'{yshift=7}]\ar[d,"p_{S^\prime}"]\\
\iota_S^{-1}(\iota_S(S)\cap\iota_{S^\prime}(S^\prime))\ar[r,"\iota_S"']\ar[ru]&\iota_S(S)\cap\iota_{S^\prime}(S^\prime)&\iota_{S^\prime}^{-1}(\iota_S(S)\cap\iota_{S^\prime}(S^\prime)). \ar[l,"\iota_{S^\prime}"]
\end{tikzcd}
\end{equation*}
This shows $\iota_{S^\prime}^{-1}\iota_S\colon\iota_S^{-1}(\iota_S(S)\cap\iota_{S^\prime}(S^\prime))\to\iota_{S^\prime}^{-1}(\iota_S(S)\cap\iota_{S^\prime}(S^\prime))$ is $C^\infty$. Hence $G\bs P$ is a $C^\infty$ manifold and $\pi\colon P\to G\bs P$ is a $C^\infty$ map. $\pi\colon P\to G\bs P$ is a principal $G$ bundle, where the local triviality comes from 
\begin{equation*}
\begin{tikzcd}
G\times S\ar[r,"\mu"]\ar[d,"p_S"']\ar[rd,phantom,"\circlearrowright"]&P\ar[d,"\pi"]\\
S\ar[r,"\iota_S"']&G\bs P. 
\end{tikzcd}
\end{equation*}

$2\Rightarrow3$. Let $M=G\bs P$, $f=\pi$ and $\ol{f}=\id$. 

$3\Rightarrow1$. Let $x\in P$. We have $f_*T_xP\subset T_{f(x)}M$ and let $k=\dim f_*T_xP$. There exist an open neighborhood $V$ of $f(x)$ in $M$ and a submersion $h\colon V\to\bb{R}^k$ such that $T_{f(x)}M=f_*T_xP\oplus\ker h_*$. There exists  an embedded submanifold $S$ of $P$ such that: 
\begin{itemize}
\setlength\itemsep{0em}
\item $x\in S$
\item $T_yP=\mu_*(T_1G\times\{0\})\oplus T_yS=\ker f_*\oplus T_yS$ for all $y\in S$
\item $f(S)\subset V$. 
\end{itemize}
Consider $hf\colon S\to\bb{R}^k$. Note that 
\begin{equation*}
\dim S=\dim T_xP-\dim\ker f_*=\dim f_*T_xP=k. 
\end{equation*}
$(hf)_*\colon T_xS\to T_{hf(x)}\bb{R}^k$ is injective, hence an isomorphism. Take $S$ small enough so that $hf\colon S\to\bb{R}^k$ is an open embedding. 

\begin{claim}
$\mu\colon G\times S\to P$ is injective. 
\end{claim}

\begin{proof}
Let $(g,y)$, $(g^\prime,y^\prime)\in G\times S$ be such that $\mu(g,y)=\mu(g^\prime,y^\prime)$. Applying $f$, we get $f(y)=f(y^\prime)\in V$. Hence $hf(y)=hf(y^\prime)$. So $y=y^\prime$ and $g=g^\prime$. 
\end{proof}

\begin{claim}
For any $(g,y)\in G\times S$, $\mu_*\colon T_{(g,y)}(G\times S)\to T_{\mu(g,y)}P$ is an isomorphism. 
\end{claim}

\begin{proof}
$\mu_*\colon T_{(1,y)}(G\times S)\to T_yP$ is surjective, hence an isomorphism. We have 
\begin{equation*}
\begin{tikzcd}
G\times S\ar[r,"\mu"]\ar[d,"L_{g^{-1}}\times\id"']\ar[rd,phantom,"\circlearrowright"]&P\ar[d,"\mu(g^{-1}{,}\cdot)"]\\
G\times S\ar[r,"\mu"']&P, 
\end{tikzcd}
\end{equation*}
hence 
\begin{equation*}
\begin{tikzcd}
T_{(g,y)}(G\times S)\ar[r,"\mu_*"]\ar[d,"\sim"{sloped},"(L_{g^{-1}})_*\times\id"']\ar[rd,phantom,"\circlearrowright"]&T_{\mu(g,y)}P\ar[d,"\sim"{sloped},"\mu(g^{-1}{,}\cdot)_*"{xshift=7}]\\
T_{(1,y)}(G\times S)\ar[r,"\sim","\mu_*"']&T_yP. 
\end{tikzcd}
\end{equation*}
\end{proof}

$\mu\colon G\times S\to P$ is an open embedding by the inverse function theorem. 

We prove the final assertion of the lemma. Let $x\in P$ and $Y\in T_{\pi(x)}(G\bs P)$ be such that $\ol{f}_*Y=0$. There exists $X\in T_xP$ such that $\pi_*X=Y$. Then $f_*X=\ol{f}_*\pi_*X=0$. Hence $X\in\ker f_*=\mu_*(T_1G\times\{0\})$ and $Y=\pi_*X=0$. So $\ol{f}$ is an immersion. 
\end{proof}

\begin{prop}\label{isoorb}
Let $\mca{G}\rightrightarrows M$ be a Lie groupoid ($M$ can be non Hausdorff, non second countable). Then the following statements hold: 
\begin{enumerate}
\item For any $x$, $x^\prime\in M$, $\mca{G}^{x^\prime}_x$ is a Hausdorff closed embedded submanifold of $\mca{G}$ (perhaps non second countable) and for any $g\in\mca{G}^{x^\prime}_x$, we have 
\begin{equation*}
T_g\mca{G}^{x^\prime}_x=\ker\left((t,s)_*\colon T_g\mca{G}\to T_{x^\prime}M\times T_xM\right). 
\end{equation*}
\item For any $x\in M$, $\mca{G}^x_x$ is a Lie group (perhaps non second countable). 
\item For any $x\in M$, there exists a $C^\infty$ differential structure on the quotient topological space $\mca{G}^x_x\bs\mca{G}^x$ (perhaps non Hausdorff, non second countable) with respect to the left action 
\begin{align*}
\mca{G}^x_x\times\mca{G}^x&\to\mca{G}^x\\
(g,g^\prime)&\mapsto gg^\prime
\end{align*}
such that the quotient map $\pi\colon\mca{G}^x\to\mca{G}^x_x\bs\mca{G}^x$ is a left principal $\mca{G}^x_x$ bundle. 
\item Let $O\in\mca{O}_\mca{G}$ and take $x\in O$. We put a $C^\infty$ manifold structure on $O$ through the bijection $\ol{s}\colon\mca{G}^x_x\bs\mca{G}^x\to O$ given by 
\begin{equation*}
\begin{tikzcd}
\mca{G}^x\ar[r,"s"]\ar[d,"\pi"']&O. \\
\mca{G}^x_x\bs\mca{G}^x\ar[ru,"\ol{s}"'{name=U}]
\ar[from=1-1,to=U,phantom,"\circlearrowright"{xshift=-3}]
\end{tikzcd}
\end{equation*}
The $C^\infty$ manifold structure does not depend on the choice of $x\in O$. $O$ is an injectively immersed submanifold of $M$ (perhaps non Hausdorff, non second countable) and $s\colon\mca{G}^x\to O$ is a left principal $\mca{G}^x_x$ bundle. 

Similarly $t\colon\mca{G}_x\to O$ is a right principal $\mca{G}^x_x$ bundle, where $O$ is equipped with the previous $C^\infty$ manifold structure. 
\end{enumerate}
\end{prop}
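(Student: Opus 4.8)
The whole of Proposition \ref{isoorb} is essentially an organized application of Lemma \ref{constantrank} (constancy of the rank of $s$ restricted to a target fiber) and Lemma \ref{fttwhen} (the recognition criterion for a free action to have a principal-bundle quotient). I would proceed part by part.

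\textbf{Parts 1 and 2.} The target fiber $\mca{G}^{x^\prime}=t^{-1}(x^\prime)$ is already known to be a Hausdorff closed embedded submanifold of $\mca{G}$, and by Lemma \ref{constantrank} the restriction $s\colon\mca{G}^{x^\prime}\to M$ has constant rank. The rank theorem then shows its point-preimage $\mca{G}^{x^\prime}_x$ is a closed embedded submanifold of $\mca{G}^{x^\prime}$ (hence of $\mca{G}$), Hausdorff as a subspace, with $T_g\mca{G}^{x^\prime}_x=\ker(s_*|_{T_g\mca{G}^{x^\prime}})=\ker((t,s)_*)$ since $T_g\mca{G}^{x^\prime}=\ker t_*$. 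Taking $x^\prime=x$, the set $\mca{G}^x_x$ is a group under the partially defined product (it contains $1_x$ and is closed under product and inversion), and since $\mca{G}^x_x\times\mca{G}^x_x$ sits as an embedded submanifold inside $\mca{G}^{(2)}$ while the structure maps are $C^\infty$, it is a Lie group (possibly non second countable).

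\textbf{Part 3.} I would apply Lemma \ref{fttwhen} with $G=\mca{G}^x_x$, $P=\mca{G}^x$ (Hausdorff by the Lie groupoid axioms), and the free $C^\infty$ left action $\mu(h,g)=hg$ (freeness: $hg=g$ forces $h=1_x$ after right-multiplying by $g^{-1}$). I would verify condition (3) of that lemma using the base manifold $M$, the map $f=s\colon\mca{G}^x\to M$, and $\ol{f}=\ol{s}$, which is well defined because $s(hg)=s(g)$. The content to check is the equality $\ker s_*=\mu_*(T_{1_x}\mca{G}^x_x\times\{0\})$ at every point of $\mca{G}^x$: the inclusion $\supseteq$ is automatic, and for equality I count dimensions. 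At $1_x$, $\ker(s_*|_{T_{1_x}\mca{G}^x})=\ker((t,s)_*)=T_{1_x}\mca{G}^x_x$ by Part 1, so $\rank(s|_{\mca{G}^x})=\dim\mca{G}^x-\dim\mca{G}^x_x$ at $1_x$ and hence everywhere by constancy of rank; meanwhile the orbit map $h\mapsto hg$ is an immersion, being the restriction of the right-translation diffeomorphism $R_g\colon\mca{G}_x\to\mca{G}_{s(g)}$. Lemma \ref{fttwhen} then yields the $C^\infty$ structure on $\mca{G}^x_x\bs\mca{G}^x$ making $\pi$ a left principal $\mca{G}^x_x$ bundle.

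\textbf{Part 4.} The fibers of $s\colon\mca{G}^x\to O$ are exactly the $\mca{G}^x_x$-orbits (if $s(g)=s(g^\prime)$ then $g^\prime g^{-1}\in\mca{G}^x_x$ and $g^\prime=(g^\prime g^{-1})g$, and conversely), so $\ol{s}$ is a bijection; I transport the structure of Part 3 along it, so that by fiat $\ol{s}$ is a diffeomorphism and $s=\ol{s}\,\pi\colon\mca{G}^x\to O$ is a left principal $\mca{G}^x_x$ bundle. For independence of $x$, given $x,x^\prime\in O$ I choose $g_0\in\mca{G}^{x^\prime}_x$; left translation $L_{g_0}\colon\mca{G}^x\to\mca{G}^{x^\prime}$ is a diffeomorphism with $s\circ L_{g_0}=s$ and $L_{g_0}(hg)=(g_0hg_0^{-1})(g_0g)$, so it descends to a diffeomorphism of the quotients intertwining the two copies of $\ol{s}$, whence the two transported structures on $O$ coincide. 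That $O$ is an injectively immersed submanifold of $M$ is then immediate: $\ol{s}$ is the map $\ol{f}$ of condition (3), which Lemma \ref{fttwhen} asserts is an immersion, and it is a bijection onto $O$. Finally, inversion $g\mapsto g^{-1}$ restricts to a diffeomorphism $\mca{G}^x\to\mca{G}_x$ with $t\circ(\cdot)^{-1}=s$ on $\mca{G}^x$, carrying the left action $(h,g)\mapsto hg$ to the right action $(k,h)\mapsto kh$ of $\mca{G}^x_x$ on $\mca{G}_x$ (via the anti-automorphism $h\mapsto h^{-1}$); hence $t\colon\mca{G}_x\to O$ is a right principal $\mca{G}^x_x$ bundle. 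The only step requiring genuine thought is the equality $\ker s_*=\mu_*(T_{1_x}\mca{G}^x_x\times\{0\})$ in Part 3 — that the orbit directions fill the whole kernel of $ds$ — which is exactly where Lemma \ref{constantrank} and Part 1 get combined via the dimension count; everything else is bookkeeping around those two lemmas.
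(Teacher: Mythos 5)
Your proof is correct and follows essentially the same route as the paper: parts 1–2 via Lemma \ref{constantrank} and the rank theorem, part 3 by verifying condition 3 of Lemma \ref{fttwhen} with $f=s$ and the dimension count identifying $\ker s_*$ with the orbit directions, and part 4 via left translations for independence of the basepoint and inversion for the right principal bundle $t\colon\mca{G}_x\to O$. The only (cosmetic) difference is that you run the dimension count through constancy of rank of $s|_{\mca{G}^x}$, whereas the paper equates $\dim\ker(s_*|_{T_g\mca{G}^x})$ with $\dim\mca{G}^x_{s(g)}$ using part 1 at the point $g$; both rest on the same lemmas.
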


\begin{proof}
1. $s\colon\mca{G}^{x^\prime}\to M$ is of constant rank by Lemma \ref{constantrank}. Hence $\mca{G}^{x^\prime}_x=(s|_{\mca{G}^{x^\prime}})^{-1}(x)$ is an embedded submanifold of $\mca{G}^{x^\prime}$ by the constant rank theorem. $\mca{G}^{x^\prime}_x=(s|_{\mca{G}^{x^\prime}})^{-1}(x)$ is closed in $\mca{G}^{x^\prime}$ since $\{x\}$ is closed in $M$. $\mca{G}^{x^\prime}_x$ is a closed embedded submanifold of $\mca{G}$. We have 
\begin{align*}
\ker\left((t,s)_*\colon T_g\mca{G}\to T_{x^\prime}M\times T_xM\right)&=\ker(t_*\colon T_g\mca{G}\to T_{x^\prime}M)\cap\ker s_*\\
&=T_g\mca{G}^{x^\prime}\cap\ker s_*\\
&=T_g\mca{G}^{x^\prime}_x. 
\end{align*}

2. $\mca{G}^x_x$ is a Hausdorff closed embedded submanifold of $\mca{G}$ by 1. $\mca{G}^x_x\times\mca{G}^x_x$ is an embedded submanifold of $\mca{G}\times\mca{G}$, hence an embedded submanifold of $\mca{G}\times_{s,t}\mca{G}$. Since the multiplication $\mca{G}\times_{s,t}\mca{G}\to\mca{G}$ and the inverse $\mca{G}\to\mca{G}$ are $C^\infty$, the restrictions $\mca{G}^x_x\times\mca{G}^x_x\to\mca{G}^x_x$ and $\mca{G}^x_x\to\mca{G}^x_x$ are $C^\infty$. 

3, 4. We will check the condition 3 in Lemma \ref{fttwhen} is satisfied. $s\colon\mca{G}^x\to M$ is a $C^\infty$ map, the multiplication $\mu\colon\mca{G}^x_x\times\mca{G}^x\to\mca{G}^x$ is a free $C^\infty$ action and there exists a map $\ol{s}$ such that 
\begin{equation*}
\begin{tikzcd}
\mca{G}^x\ar[r,"s"]\ar[d,"\pi"']&M. \\
\mca{G}^x_x\bs\mca{G}^x\ar[ru,"\ol{s}"'{name=U}]
\ar[from=1-1,to=U,phantom,"\circlearrowright"{xshift=-3}]
\end{tikzcd}
\end{equation*}
Let $g\in\mca{G}^x$. We have $\mu_*\colon T_{(1_x,g)}(\mca{G}^x_x\times\mca{G}^x)\to T_g\mca{G}^x$ and 
\begin{equation*}
\mu_*(T_{1_x}\mca{G}^x_x\times\{0\})\subset\ker(s_*\colon T_g\mca{G}^x\to T_{s(g)}M). 
\end{equation*}
Since 
\begin{equation*}
\dim\mu_*(T_{1_x}\mca{G}^x_x\times\{0\})=\dim\mca{G}^x_x=\dim\mca{G}^x_{s(g)}=\dim\ker(s_*\colon T_g\mca{G}^x\to T_{s(g)}M), 
\end{equation*}
we get $\mu_*(T_{1_x}\mca{G}^x_x\times\{0\})=\ker(s_*\colon T_g\mca{G}^x\to T_{s(g)}M)$. 

By Lemma \ref{fttwhen}, $\pi\colon\mca{G}^x\to\mca{G}^x_x\bs\mca{G}^x$ is a left principal $\mca{G}^x_x$ bundle and $\ol{s}\colon\mca{G}^x_x\bs\mca{G}^x\to M$ is an injective immersion with image $O$. Hence $O$, with the $C^\infty$ manifold structure which makes $\ol{s}\colon\mca{G}^x_x\bs\mca{G}^x\to O$ a diffeomorphism, is an injectively immersed submanifold of $M$. 

To check the independence of the $C^\infty$ manifold structure, let $x^\prime\in O$ and take $g\in\mca{G}^{x^\prime}_x$. Then $L_g\colon\mca{G}^x\to\mca{G}^{x^\prime}$ is a diffeomorphism which is $(g\cdot g^{-1}\colon\mca{G}^x_x\simeq\mca{G}^{x^\prime}_{x^\prime})$-equivariant. Hence there exists a bijection $\ol{L_g}\colon\mca{G}^x_x\bs\mca{G}^x\to\mca{G}^{x^\prime}_{x^\prime}\bs\mca{G}^{x^\prime}$ such that 
\begin{equation*}
\begin{tikzcd}
\mca{G}^x\ar[r,"L_g"]\ar[d,"\pi"']\ar[rd,phantom,"\circlearrowright"{xshift=4,yshift=-3}]&\mca{G}^{x^\prime}\ar[d,"\pi^\prime"]\\
\mca{G}^x_x\bs\mca{G}^x\ar[r,"\ol{L_g}"']&\mca{G}^{x^\prime}_{x^\prime}\bs\mca{G}^{x^\prime}. 
\end{tikzcd}
\end{equation*}
Since $L_g$ is a diffeomorphism and $\pi$, $\pi^\prime$ are surjective submersions, $\ol{L_g}$ is a diffeomorphism (Take local $C^\infty$ sections of $\pi$, $\pi^\prime$). We have 
\begin{equation*}
\begin{tikzcd}
\mca{G}^x_x\bs\mca{G}^x\ar[d,"\sim"{sloped},"\ol{s}"']\ar[r,"\sim","\ol{L_g}"{yshift=7}]\ar[rd,phantom,"\circlearrowright"{xshift=-5}]&\mca{G}^{x^\prime}_{x^\prime}\bs\mca{G}^{x^\prime}\ar[d,"\sim"{sloped},"\ol{s}"{xshift=7}]\\
O\ar[r,"\id"']&O, 
\end{tikzcd}
\end{equation*}
which shows the independence. 

Since 
\begin{equation*}
\begin{tikzcd}[column sep=tiny]
\mca{G}^x\ar[rd,"s"']\ar[rr,"\cdot^{-1}"{name=U}]&&\mca{G}_x\ar[ld,"t"]\\
&O\ar[to=U,phantom,"\circlearrowright"]
\end{tikzcd}
\end{equation*}
and $\cdot^{-1}$ transforms the left action of $\mca{G}^x_x$ on $\mca{G}^x$ to the right action of $\mca{G}^x_x$ on $\mca{G}_x$ by right multiplication, $t\colon\mca{G}_x\to O$ is a right principal $\mca{G}^x_x$ bundle. 
\end{proof}

\begin{example}\label{discs1s1}
Let $S^1_d$ be $S^1$ with the discrete topology. Then $S^1_d$ is paracompact but not second countable. Let $\rho_d\in\mca{A}(S^1,S^1_d)$ be the action defined by $\rho_d(x,y)=xy$ for $x\in S^1$ and $y\in S^1_d$. Consider $S^1\rtimes_{\rho_d}S^1_d$. Then $\mca{O}_{S^1\rtimes_{\rho_d}S^1_d}=\left\{O_d\right\}$. For $x\in O_d$, $s\colon(S^1\rtimes_{\rho_d}S^1_d)^x\to O_d$ is a bijection. Since $(S^1\rtimes_{\rho_d}S^1_d)^x$ is diffeomorphic to $S^1_d$, $O_d$ is diffeomorphic to $S^1_d$. So $O_d\to S^1$ is not a diffeomorphism. 
\end{example}

\section{Orbitwise homotopies and $nt$-homotopies}\label{555}
\subsection{Natural transformations}
\begin{dfn}\label{lsphif}
Let $\mca{H}\rightrightarrows N$, $\mca{H}^\prime\rightrightarrows N^\prime$ be Lie groupoids and $\varphi$, $\varphi^\prime\in\Hom(\mca{H},\mca{H}^\prime)$. We say that $P$ is a \emph{natural transformation} from $\varphi$ to $\varphi^\prime$ if $P\colon N\to\mca{H}^\prime$ is a $C^\infty$ map, $t^\prime P=F_\varphi$, $s^\prime P=F_{\varphi^\prime}$ and 
\begin{equation*}
\varphi(h)P(s(h))=P(t(h))\varphi^\prime(h)
\end{equation*}
for all $h\in\mca{H}$. 
\begin{equation*}
\begin{tikzpicture}[every label/.append style={font=\scriptsize},matrix of math nodes,decoration={markings,mark=at position0.5with{\arrow{>}}}]
\def\a{3};
\node(1)[label=left:F_\varphi(t(h))]{};
\node(2)[right=\a em of 1,label=right:F_\varphi(s(h))]{};
\node(3)[below=\a em of 2,label=right:F_{\varphi^\prime}(s(h))]{};
\node(4)[left=\a em of 3,label=left:F_{\varphi^\prime}(t(h))]{};
\draw[postaction={decorate}](2.center)--node[label=above:\varphi(h)]{}(1.center);
\draw[postaction={decorate}](3.center)--node[label=right:P(s(h))]{}(2.center);
\draw[postaction={decorate}](3.center)--node[label=below:\varphi^\prime(h)]{}(4.center);
\draw[postaction={decorate}](4.center)--node(5)[label=left:P(t(h))]{}(1.center);
\node(6)[left=7em of 5,label=below:s(h)]{};
\node(7)[left=of 6,label=below:t(h)]{};
\draw[postaction={decorate}](6.center)--node[label=above:h]{}(7.center);
\path(1)--node{\circlearrowright}(3);
\foreach\x in{1,...,4,6,7}\filldraw(\x)circle(1pt);
\end{tikzpicture}
\end{equation*}
Let $\Nat(\varphi,\varphi^\prime)$ denote the set of all natural transformations from $\varphi$ to $\varphi^\prime$. 
\end{dfn}

\begin{lem}\label{hnhncnnb}
Let $\mca{H}\rightrightarrows N$, $\mca{H}^\prime\rightrightarrows N^\prime$ be Lie groupoids. Recall that we have a groupoid $C^\infty(N,\mca{H}^\prime)\rightrightarrows C^\infty(N,N^\prime)$ from Example \ref{mapgroupoid}. Let 
\begin{align*}
b\colon\Hom(\mca{H},\mca{H}^\prime)&\to C^\infty(N,N^\prime)\\
\varphi&\mapsto F_\varphi. 
\end{align*}
Define 
\begin{align*}
\varphi P\colon\mca{H}&\to\mca{H}^\prime\\
h&\mapsto P(t(h))^{-1}\varphi(h)P(s(h))=\gamma^\prime(\varphi(h),(P(t(h)),P(s(h))))
\end{align*}
for $(\varphi,P)\in\Hom(\mca{H},\mca{H}^\prime)\times_{b,t^\prime}C^\infty(N,\mca{H}^\prime)$, where $\gamma^\prime\in\mca{A}((t^\prime,s^\prime),\mca{H}^\prime\times\mca{H}^\prime)$ is the action by multiplication (Example \ref{canaction}) and 
\begin{equation*}
FP=s^\prime P\in C^\infty(N,N^\prime)
\end{equation*}
for $(F,P)\in C^\infty(N,N^\prime)\times_{\id,t^\prime}C^\infty(N,\mca{H}^\prime)$. 
\begin{equation*}
\begin{tikzpicture}[every label/.append style={font=\scriptsize},matrix of math nodes,decoration={markings,mark=at position0.5with{\arrow{>}}}]
\def\a{3};
\node(1)[label=left:F_\varphi(t(h))]{};
\node(2)[right=\a em of 1,label=right:F_\varphi(s(h))]{};
\node(3)[below=\a em of 2,label=right:(F_\varphi P)(s(h))]{};
\node(4)[left=\a em of 3,label=left:(F_\varphi P)(t(h))]{};
\draw[postaction={decorate}](2.center)--node[label=above:\varphi(h)]{}(1.center);
\draw[postaction={decorate}](3.center)--node[label=right:P(s(h))]{}(2.center);
\draw[postaction={decorate}](3.center)--node[label=below:(\varphi P)(h)]{}(4.center);
\draw[postaction={decorate}](4.center)--node(5)[label=left:P(t(h))]{}(1.center);
\node(6)[left=7em of 5,label=below:s(h)]{};
\node(7)[left=of 6,label=below:t(h)]{};
\draw[postaction={decorate}](6.center)--node[label=above:h]{}(7.center);
\path(1)--node{\circlearrowright}(3);
\foreach\x in{1,...,4,6,7}\filldraw(\x)circle(1pt);
\end{tikzpicture}
\end{equation*}
Then $\varphi P\in\Hom(\mca{H},\mca{H}^\prime)$ and $F_{\varphi P}=F_\varphi P$. 
\end{lem}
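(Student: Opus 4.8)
The plan is to verify the three conditions making up the statement: that $\varphi P\colon\mca{H}\to\mca{H}^\prime$ is a well-defined $C^\infty$ map, that it has $s^\prime P$ as a base map (which, by the definition of $FP$, coincides with $F_\varphi P$), and that it is multiplicative. Everything is a direct computation; the only thing to keep track of carefully is the source/target bookkeeping, and all of it is forced by the single fiber-product hypothesis: $(\varphi,P)\in\Hom(\mca{H},\mca{H}^\prime)\times_{b,t^\prime}C^\infty(N,\mca{H}^\prime)$ means exactly $F_\varphi=t^\prime\circ P$.

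First I would record well-definedness and smoothness. Using $F_\varphi=t^\prime P$ together with $t^\prime\varphi(h)=F_\varphi t(h)$ and $s^\prime\varphi(h)=F_\varphi s(h)$, one checks that for every $h$ the triple $(\varphi(h),(P(t(h)),P(s(h))))$ lies in the domain of the canonical action $\gamma^\prime\in\mca{A}((t^\prime,s^\prime),\mca{H}^\prime\times\mca{H}^\prime)$ of Example \ref{canaction}, so $(\varphi P)(h)=\gamma^\prime(\varphi(h),(P(t(h)),P(s(h))))=P(t(h))^{-1}\varphi(h)P(s(h))$ is defined; since $h\mapsto(\varphi(h),(P(t(h)),P(s(h))))$ is $C^\infty$ (built from $\varphi$, $t$, $s$, $P$) and $\gamma^\prime$ is $C^\infty$, so is $\varphi P$.

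Next I would read off the base map from the groupoid identities: $t^\prime((\varphi P)(h))=t^\prime(P(t(h))^{-1})=s^\prime(P(t(h)))=(s^\prime P)(t(h))$ and $s^\prime((\varphi P)(h))=s^\prime(P(s(h)))=(s^\prime P)(s(h))$, so $F_{\varphi P}=s^\prime P$. Since $F_\varphi=t^\prime P$, the pair $(F_\varphi,P)$ lies in $C^\infty(N,N^\prime)\times_{\id,t^\prime}C^\infty(N,\mca{H}^\prime)$ and by definition $F_\varphi P=s^\prime P$, whence $F_{\varphi P}=F_\varphi P$.

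The one step with any content is multiplicativity. For $(h_1,h_2)\in\mca{H}^{(2)}$, so $s(h_1)=t(h_2)$, I would use $t(h_1h_2)=t(h_1)$, $s(h_1h_2)=s(h_2)$, $\varphi(h_1h_2)=\varphi(h_1)\varphi(h_2)$, and insert $1_{F_\varphi(s(h_1))}=P(s(h_1))P(s(h_1))^{-1}$ between $\varphi(h_1)$ and $\varphi(h_2)$ (valid because $t^\prime P(s(h_1))=F_\varphi(s(h_1))=s^\prime\varphi(h_1)$), obtaining
\begin{align*}
(\varphi P)(h_1h_2)&=P(t(h_1))^{-1}\varphi(h_1)\varphi(h_2)P(s(h_2))\\
&=\bigl(P(t(h_1))^{-1}\varphi(h_1)P(s(h_1))\bigr)\bigl(P(t(h_2))^{-1}\varphi(h_2)P(s(h_2))\bigr)\\
&=(\varphi P)(h_1)\,(\varphi P)(h_2),
\end{align*}
using $P(s(h_1))^{-1}=P(t(h_2))^{-1}$ in the middle. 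Hence $\varphi P\in\Hom(\mca{H},\mca{H}^\prime)$. I do not expect any genuine obstacle; if anything deserves the label ``main point,'' it is just ensuring every product written down is composable, which is automatic from $F_\varphi=t^\prime P$. (As a sanity check, the identity $\varphi(h)P(s(h))=P(t(h))(\varphi P)(h)$ says precisely $P\in\Nat(\varphi,\varphi P)$ in the sense of Definition \ref{lsphif}.)
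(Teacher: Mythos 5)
Your verification is correct and is exactly the routine check the paper leaves implicit (the lemma is stated without proof there): composability of all products follows from $F_\varphi=t^\prime P$, the base map computation gives $F_{\varphi P}=s^\prime P=F_\varphi P$, and multiplicativity follows by inserting $P(s(h_1))P(s(h_1))^{-1}$. Your closing remark that the defining identity says $P\in\Nat(\varphi,\varphi P)$ is also consistent with Definition \ref{lsphif}, so nothing is missing.
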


\begin{prop}
Let $\mca{H}\rightrightarrows N$, $\mca{H}^\prime\rightrightarrows N^\prime$ be Lie groupoids. Then 
\begin{align*}
\Hom(\mca{H},\mca{H}^\prime)\times_{b,t^\prime}C^\infty(N,\mca{H}^\prime)&\to\Hom(\mca{H},\mca{H}^\prime)\\
(\varphi,P)&\mapsto\varphi P, 
\end{align*}
\begin{align*}
C^\infty(N,N^\prime)\times_{\id,t^\prime}C^\infty(N,\mca{H}^\prime)&\to C^\infty(N,N^\prime)\\
(F,P)&\mapsto FP
\end{align*}
are right actions of $C^\infty(N,\mca{H}^\prime)\rightrightarrows C^\infty(N,N^\prime)$ on $b$ and $\id$, and $b\colon\Hom(\mca{H},\mca{H}^\prime)\to C^\infty(N,N^\prime)$ is an equivariant map. 
\end{prop}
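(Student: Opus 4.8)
The plan is to check the three groupoid‑action axioms directly for each of the two maps, and then to read off equivariance of $b$ from Lemma~\ref{hnhncnnb}. Write $\mca{K}=C^\infty(N,\mca{H}^\prime)\rightrightarrows C^\infty(N,N^\prime)$ for the groupoid of Example~\ref{mapgroupoid}, with structure maps $t^\prime,s^\prime,1,\cdot,{}^{-1}$ acting pointwise. The fiber‑product condition defining the domains is $F_\varphi=t^\prime P$ (resp. $F=t^\prime P$), and I would note once at the start that this is exactly what makes every product written below a product of composable elements, so all the expressions are well defined; likewise associativity of multiplication in $\mca{H}^\prime$ is what lets me drop parentheses in the associativity check.

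First I would handle the base action $(F,P)\mapsto FP=s^\prime P$ on $\id\colon C^\infty(N,N^\prime)\to C^\infty(N,N^\prime)$. Compatibility with $s^\prime$ is the tautology $\id(FP)=s^\prime P=s^\prime(P)$. For associativity, for $y\in N$ one has $s^\prime\bigl((PP^\prime)(y)\bigr)=s^\prime\bigl(P(y)P^\prime(y)\bigr)=s^\prime\bigl(P^\prime(y)\bigr)$, so $s^\prime(PP^\prime)=s^\prime P^\prime$; hence $(FP)P^\prime=s^\prime P^\prime=F(PP^\prime)$ whenever $(P,P^\prime)$ is composable and $F=t^\prime P$. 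The unit axiom is $F\cdot 1_F=s^\prime(1_F)=F$, since $1_F(y)=1_{F(y)}$ and $s^\prime(1_{F(y)})=F(y)$. (This is simply the canonical action of a groupoid on the identity map of its base.)

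Next I would treat $(\varphi,P)\mapsto\varphi P$ on $b$. By Lemma~\ref{hnhncnnb} we already know $\varphi P\in\Hom(\mca{H},\mca{H}^\prime)$ and $F_{\varphi P}=F_\varphi P=s^\prime P$, so $b(\varphi P)=s^\prime(P)$, which is compatibility with $s^\prime$. For associativity, for $h\in\mca{H}$,
\[
\bigl((\varphi P)P^\prime\bigr)(h)=P^\prime(t(h))^{-1}P(t(h))^{-1}\varphi(h)P(s(h))P^\prime(s(h)),
\]
while $(PP^\prime)(x)=P(x)P^\prime(x)$ gives $(PP^\prime)(t(h))^{-1}=P^\prime(t(h))^{-1}P(t(h))^{-1}$ and $(PP^\prime)(s(h))=P(s(h))P^\prime(s(h))$, so $\bigl(\varphi(PP^\prime)\bigr)(h)$ is the same expression; hence $(\varphi P)P^\prime=\varphi(PP^\prime)$. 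For the unit, $\bigl(\varphi\cdot 1_{F_\varphi}\bigr)(h)=1_{F_\varphi(t(h))}\varphi(h)1_{F_\varphi(s(h))}=1_{t^\prime\varphi(h)}\varphi(h)1_{s^\prime\varphi(h)}=\varphi(h)$. Thus $(\varphi,P)\mapsto\varphi P$ is a right action on $b$. Finally, equivariance of $b$ is immediate from Lemma~\ref{hnhncnnb}: $b(\varphi P)=F_{\varphi P}=F_\varphi P=b(\varphi)P$. There is no genuine obstacle here; the only care needed is the domain bookkeeping described above.
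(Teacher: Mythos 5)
Your verification is correct: the three action axioms for both maps and the equivariance of $b$ follow exactly as you compute, with Lemma \ref{hnhncnnb} supplying $\varphi P\in\Hom(\mca{H},\mca{H}^\prime)$ and $F_{\varphi P}=F_\varphi P$, and the composability bookkeeping you flag is the only point needing care. The paper omits the proof as a routine check, and your argument is precisely that intended direct verification, so there is nothing further to compare.
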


\begin{rem}
For the action groupoid $\Hom(\mca{H},\mca{H}^\prime)\rtimes C^\infty(N,\mca{H}^\prime)\rightrightarrows\Hom(\mca{H},\mca{H}^\prime)$, we have $(\Hom(\mca{H},\mca{H}^\prime)\rtimes C^\infty(N,\mca{H}^\prime))^\varphi_{\varphi^\prime}\simeq\Nat(\varphi,\varphi^\prime)$. 
\end{rem}

\begin{prop}\label{natmor}
Let $\mca{H}\rightrightarrows N$, $\mca{H}^\prime\rightrightarrows N^\prime$ be Lie groupoids and $\varphi$, $\varphi^\prime\in\Hom(\mca{H},\mca{H}^\prime)$. Then there is a bijection 
\begin{equation*}
\Nat(\varphi,\varphi^\prime)\simeq\left\{\psi\in\Hom(\mca{H},\mca{H}^\prime\rtimes_{\gamma^\prime}(\mca{H}^\prime\times\mca{H}^\prime))\ \middle|\ 
\begin{tikzcd}
&\mca{H}^\prime\rtimes_{\gamma^\prime}(\mca{H}^\prime\times\mca{H}^\prime)\ar[d,"\varphi_{\gamma^\prime}"]\\
\mca{H}\ar[r,"(\varphi{,}\varphi^\prime)"']\ar[ru,"\psi",""{name=U,very near end}]&\mca{H}^\prime\times\mca{H}^\prime\ar[to=U,phantom,"\circlearrowright"{pos=.4,xshift=-2,yshift=-1}]
\end{tikzcd}
\right\}
\end{equation*}
by 
\begin{align*}
P&\mapsto(Pt,(\varphi,\varphi^\prime))\\
F_\psi&\mapsfrom\psi, 
\end{align*}
ie 
\begin{equation*}
\begin{tikzcd}
\mca{H}\ar[r,"\psi"]\ar[d,shift left]\ar[d,shift right]\ar[rd,phantom,"\circlearrowright"{yshift=-2}]&\mca{H}^\prime\rtimes_{\gamma^\prime}(\mca{H}^\prime\times\mca{H}^\prime)\ar[d,shift left]\ar[d,shift right]\\
N\ar[r,"P"']&\mca{H}^\prime. 
\end{tikzcd}
\end{equation*}
\end{prop}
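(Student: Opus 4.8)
The plan is to write down the two assignments explicitly and check they are mutually inverse; the entire content is a translation between the defining equation of a natural transformation and the groupoid-morphism axioms for a map into the action groupoid $\mca{H}^\prime\rtimes_{\gamma^\prime}(\mca{H}^\prime\times\mca{H}^\prime)$, together with the description of composition in an action groupoid and of the associated action morphism $\varphi_{\gamma^\prime}$ as the second projection.

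First, given $P\in\Nat(\varphi,\varphi^\prime)$, I would define $\psi\colon\mca{H}\to\mca{H}^\prime\rtimes_{\gamma^\prime}(\mca{H}^\prime\times\mca{H}^\prime)$ by $\psi(h)=(P(t(h)),(\varphi(h),\varphi^\prime(h)))$ and verify: (i) this lands in the underlying set $\mca{H}^\prime\times_{(t^\prime,s^\prime),t^\prime\times t^\prime}(\mca{H}^\prime\times\mca{H}^\prime)$, which is exactly the pair of identities $t^\prime P(t(h))=t^\prime\varphi(h)=F_\varphi(t(h))$ and $s^\prime P(t(h))=t^\prime\varphi^\prime(h)=F_{\varphi^\prime}(t(h))$, i.e. the conditions $t^\prime P=F_\varphi$, $s^\prime P=F_{\varphi^\prime}$ from the definition of a natural transformation; (ii) $\psi$ is $C^\infty$, since $\mca{H}^\prime\rtimes_{\gamma^\prime}(\mca{H}^\prime\times\mca{H}^\prime)$ is an embedded submanifold of $\mca{H}^\prime\times\mca{H}^\prime\times\mca{H}^\prime$ (Lemma \ref{fiberproduct}, as $t^\prime\times t^\prime$ is a submersion) and $h\mapsto(P(t(h)),\varphi(h),\varphi^\prime(h))$ into the ambient product is manifestly smooth; (iii) $\psi$ is a morphism with base map $P$: the target of $\psi(h)$ in the action groupoid is $P(t(h))$ and its source is $\gamma^\prime(P(t(h)),(\varphi(h),\varphi^\prime(h)))=\varphi(h)^{-1}P(t(h))\varphi^\prime(h)$, which equals $P(s(h))$ precisely by the equation $\varphi(h)P(s(h))=P(t(h))\varphi^\prime(h)$; and multiplicativity $\psi(h_1h_2)=\psi(h_1)\psi(h_2)$ unwinds, via the composition formula in an action groupoid, to $t(h_1h_2)=t(h_1)$ together with multiplicativity of $\varphi$ and of $\varphi^\prime$. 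Finally $\varphi_{\gamma^\prime}\psi=(\varphi,\varphi^\prime)$ is immediate, and surjectivity of $t\colon\mca{H}\to N$ (the unit section is a right inverse) gives that the base map of $\psi$ is $P$ itself.

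Conversely, given $\psi$ in the displayed set with base map $F_\psi$, I would set $P:=F_\psi$. Writing $\psi(h)=(g(h),(g_1(h),g_2(h)))$, the base-map condition forces $g(h)=t(\psi(h))=F_\psi(t(h))=P(t(h))$ and $P(s(h))=F_\psi(s(h))=s(\psi(h))=g_1(h)^{-1}g(h)g_2(h)$, while $\varphi_{\gamma^\prime}\psi=(\varphi,\varphi^\prime)$ gives $(g_1(h),g_2(h))=(\varphi(h),\varphi^\prime(h))$. Combining these, $\psi(h)=(P(t(h)),(\varphi(h),\varphi^\prime(h)))$ and $\varphi(h)P(s(h))=P(t(h))\varphi^\prime(h)$, which is the natural transformation equation; membership of $\psi(h)$ in the fiber product yields $t^\prime P=F_\varphi$ and $s^\prime P=F_{\varphi^\prime}$ once one uses surjectivity of $t\colon\mca{H}\to N$ to pass from relations holding on $\mca{H}$ to equalities of maps on $N$; and $P=F_\psi$ is $C^\infty$. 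Hence $P\in\Nat(\varphi,\varphi^\prime)$. That the two assignments are inverse to each other is then read off from the common formula $\psi(h)=(P(t(h)),(\varphi(h),\varphi^\prime(h)))$, again using surjectivity of $t$ to recover $P$ from $F_\psi$.

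I do not expect a genuine obstacle: the only points needing a moment's care are the smoothness of $\psi$, handled by the embedded-submanifold property of fiber products over a submersion (Lemma \ref{fiberproduct}), and the repeated use of surjectivity of $t\colon\mca{H}\to N$ to upgrade the relations valid along $\mca{H}$ to the equalities $t^\prime P=F_\varphi$, $s^\prime P=F_{\varphi^\prime}$ of maps on $N$. Everything else is a direct dictionary between the two sets of axioms.
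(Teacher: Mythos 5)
Your proposal is correct and follows essentially the same route as the paper: define $\psi(h)=(P(t(h)),(\varphi(h),\varphi^\prime(h)))$, verify membership, smoothness, the source compatibility via the naturality equation, and multiplicativity via the action-groupoid composition; conversely read off $P=F_\psi$ and recover the naturality data (your appeal to surjectivity of $t$ is exactly the paper's device of evaluating at $h=1_y$). Your explicit invocation of Lemma \ref{fiberproduct} for smoothness is a minor extra detail the paper leaves implicit, but the argument is the same.
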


\begin{proof}
Let $P\in\Nat(\varphi,\varphi^\prime)$. We have $(t^\prime,s^\prime)P(t(h))=t^\prime(\varphi(h),\varphi^\prime(h)))$ for $h\in\mca{H}$. (See the diagram in Definition \ref{lsphif}.) Hence 
\begin{align*}
(Pt,(\varphi,\varphi^\prime))\colon\mca{H}&\to\mca{H}^\prime\rtimes_{\gamma^\prime}(\mca{H}^\prime\times\mca{H}^\prime)\\
h&\mapsto(P(t(h)),(\varphi(h),\varphi^\prime(h)))
\end{align*}
is defined and $C^\infty$. We have 
\begin{equation*}
\begin{tikzcd}[column sep=huge]
\mca{H}\ar[r,"(Pt{,}(\varphi{,}\varphi^\prime))"]\ar[d,shift left]\ar[d,shift right]\ar[rd,phantom,"\circlearrowright"]&\mca{H}^\prime\rtimes_{\gamma^\prime}(\mca{H}^\prime\times\mca{H}^\prime)\ar[d,shift left]\ar[d,shift right]\\
N\ar[r,"P"']&\mca{H}^\prime
\end{tikzcd}
\end{equation*}
since 
\begin{align*}
s^\prime(P(t(h)),(\varphi(h),\varphi^\prime(h)))&=\varphi(h)^{-1}P(t(h))\varphi^\prime(h)\\
&=\varphi(h)^{-1}P(t(h))P(t(h))^{-1}\varphi(h)P(s(h))\\
&=P(s(h))
\end{align*}
for all $h\in\mca{H}$. And $(Pt,(\varphi,\varphi^\prime))\in\Hom(\mca{H},\mca{H}^\prime\rtimes_{\gamma^\prime}(\mca{H}^\prime\times\mca{H}^\prime))$ since 
\begin{align*}
(Pt,(\varphi,\varphi^\prime))(hh^\prime)&=(P(t(h)),(\varphi(h)\varphi(h^\prime),\varphi^\prime(h)\varphi^\prime(h^\prime)))\\
&=(P(t(h)),(\varphi(h),\varphi^\prime(h)))(P(t(h^\prime)),(\varphi(h^\prime),\varphi^\prime(h^\prime)))\\
&=(Pt,(\varphi,\varphi^\prime))(h)(Pt,(\varphi,\varphi^\prime))(h^\prime)
\end{align*}
for all $h$, $h^\prime\in\mca{H}$. Hence the map $P\mapsto(Pt,(\varphi,\varphi^\prime))$ is defined. 

For the opposite direction let $\psi\in\Hom(\mca{H},\mca{H}^\prime\rtimes_{\gamma^\prime}(\mca{H}^\prime\times\mca{H}^\prime))$ be such that $\varphi_{\gamma^\prime}\psi=(\varphi,\varphi^\prime)$. Then $F_\psi\colon N\to\mca{H}^\prime$ and $\psi=(F_\psi t,(\varphi,\varphi^\prime))$. Since $(F_\psi(t(h)),(\varphi(h),\varphi^\prime(h)))\in\mca{H}^\prime\rtimes_{\gamma^\prime}(\mca{H}^\prime\times\mca{H}^\prime)$ for $h\in\mca{H}$, we have $(t^\prime,s^\prime)F_\psi(t(h))=t^\prime(\varphi(h),\varphi^\prime(h))$, hence $t^\prime F_\psi(t(h))=t^\prime\varphi(h)$, $s^\prime F_\psi(t(h))=t^\prime\varphi^\prime(h)$. By setting $h=1_y$ for $y\in N$, we get $t^\prime F_\psi(y)=F_\varphi(y)$, $s^\prime F_\psi(y)=F_{\varphi^\prime}(y)$. By the commutativity with source maps, we have 
\begin{equation*}
F_\psi(s(h))=s^\prime(F_\psi(t(h)),(\varphi(h),\varphi^\prime(h)))=\varphi(h)^{-1}F_\psi(t(h))\varphi^\prime(h), 
\end{equation*}
hence $\varphi^\prime(h)=F_\psi(t(h))^{-1}\varphi(h)F_\psi(s(h))$. Therefore $F_\psi\in\Nat(\varphi,\varphi^\prime)$. 
\end{proof}

\subsection{Orbitwise homotopies}
Recall that $I=[0,1]$. 

\begin{dfn}\label{orbitwiseh}
Let $\mca{H}\rightrightarrows N$, $\mca{H}^\prime\rightrightarrows N^\prime$ be Lie groupoids. 
\begin{itemize}
\item Let 
\begin{equation*}
C^\infty(\mca{O}_\mca{H},\mca{O}_{\mca{H}^\prime})=\left\{F\in C^\infty(N,N^\prime)\ \middle|\ 
\begin{gathered}
\text{for any $O\in\mca{O}_\mca{H}$, there exists}\\
\text{$O^\prime\in\mca{O}_{\mca{H}^\prime}$ such that $F(O)\subset O^\prime$}\\
\text{and $F\colon O\to O^\prime$ is $C^\infty$}
\end{gathered}
\right\}. 
\end{equation*}
\item Let $F$, $F^\prime\in C^\infty(\mca{O}_\mca{H},\mca{O}_{\mca{H}^\prime})$. An \emph{orbitwise homotopy} between $F$ and $F^\prime$ is a continuous map $H\colon N\times I\to N^\prime$ satisfying the following conditions: 
\begin{itemize}
\setlength\itemsep{0em}
\item $H(\cdot,t)\in C^\infty(N,N^\prime)$ for all $t\in I$
\item $H(\cdot,0)=F$, $H(\cdot,1)=F^\prime$
\item For any $O\in\mca{O}_\mca{H}$, there exists $O^\prime\in\mca{O}_{\mca{H}^\prime}$ such that $H(O\times I)\subset O^\prime$ and $H\colon O\times I\to O^\prime$ is continuous. 
\end{itemize}
We say that $F$ is \emph{orbitwise homotopic} to $F^\prime$ (written $F\sim_oF^\prime$) if there exists an orbitwise homotopy between $F$ and $F^\prime$. $\sim_o$ is an equivalence relation on $C^\infty(\mca{O}_\mca{H},\mca{O}_{\mca{H}^\prime})$. 
\item Let $\varphi$, $\varphi^\prime\in\Hom(\mca{H},\mca{H}^\prime)$. A continuous map $\eta\colon\mca{H}\times I\to\mca{H}^\prime$ is an \emph{orbitwise homotopy} between $\varphi$ and $\varphi^\prime$ if: 
\begin{itemize}
\setlength\itemsep{0em}
\item $\eta(\cdot,t)\in\Hom(\mca{H},\mca{H}^\prime)$ for all $t\in I$
\item $\eta(\cdot,0)=\varphi$, $\eta(\cdot,1)=\varphi^\prime$
\item $F_{\eta(\cdot,t)}$ is an orbitwise homotopy. 
\end{itemize}
We say that $\varphi$ is \emph{orbitwise homotopic} to $\varphi^\prime$ (written $\varphi\sim_o\varphi^\prime$) if there exists an orbitwise homotopy between $\varphi$ and $\varphi^\prime$. Then $\sim_o$ is an equivalence relation on $\Hom(\mca{H},\mca{H}^\prime)$. 
\end{itemize}
\end{dfn}

\begin{lem}\label{oosmhs}
Let $\mca{G}\rightrightarrows M$, $\mca{G}^\prime\rightrightarrows M^\prime$ be Lie groupoids ($M$, $M^\prime$ can be non Hausdorff, non second countable) and $\varphi\in\Hom(\mca{G},\mca{G}^\prime)$. Then $F_\varphi\in C^\infty(\mca{O}_\mca{G},\mca{O}_{\mca{G}^\prime})$. 
\end{lem}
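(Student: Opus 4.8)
The plan is to verify directly the two defining conditions for membership in $C^\infty(\mca{O}_\mca{G},\mca{O}_{\mca{G}^\prime})$ from Definition \ref{orbitwiseh}. That $F_\varphi\in C^\infty(M,M^\prime)$ is immediate from the definition of a morphism (Definition \ref{morgmgmpfpfp}), so the actual content is: (i) $F_\varphi$ carries each orbit of $\mca{G}$ into a single orbit of $\mca{G}^\prime$; and (ii) the induced map between orbits is $C^\infty$ for the manifold structures on orbits furnished by Proposition \ref{isoorb}.

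For (i) I would fix an orbit $O\in\mca{O}_\mca{G}$ and points $x,x^{\prime\prime}\in O$. Since $x\sim x^{\prime\prime}$ there is some $g\in\mca{G}^x_{x^{\prime\prime}}$; applying $\varphi$ and using its commutativity with the target and source maps gives $t^\prime(\varphi(g))=F_\varphi(x)$ and $s^\prime(\varphi(g))=F_\varphi(x^{\prime\prime})$, so $\varphi(g)\in(\mca{G}^\prime)^{F_\varphi(x)}_{F_\varphi(x^{\prime\prime})}\neq\emptyset$, whence $F_\varphi(x)\sim F_\varphi(x^{\prime\prime})$ in $\mca{G}^\prime$. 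Thus $F_\varphi(O)$ lies in a single orbit $O^\prime\in\mca{O}_{\mca{G}^\prime}$, and we may take $O^\prime$ to be the orbit of $\mca{G}^\prime$ through $x^\prime:=F_\varphi(x)$ for any chosen $x\in O$.

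For (ii) I would fix $x\in O$ and set $x^\prime=F_\varphi(x)\in O^\prime$. For $g\in\mca{G}^x$ one has $t^\prime(\varphi(g))=F_\varphi(x)=x^\prime$, so $\varphi$ restricts to a map $\mca{G}^x\to(\mca{G}^\prime)^{x^\prime}$, which is $C^\infty$ since $\mca{G}^x$ and $(\mca{G}^\prime)^{x^\prime}$ are embedded submanifolds (Proposition \ref{isoorb}). Commutativity of $\varphi$ with source maps gives $s^\prime\circ\varphi=F_\varphi\circ s$ on $\mca{G}^x$, i.e.\ a commutative square with the two left principal bundle projections $s\colon\mca{G}^x\to O$ and $s^\prime\colon(\mca{G}^\prime)^{x^\prime}\to O^\prime$ of Proposition \ref{isoorb}. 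As $s\colon\mca{G}^x\to O$ is a surjective submersion, about any $y\in O$ there is a $C^\infty$ local section $\sigma\colon U\to\mca{G}^x$ with $s\sigma=\id_U$, and then on $U$ we get $F_\varphi=F_\varphi\circ s\circ\sigma=s^\prime\circ\varphi\circ\sigma$, a composite of $C^\infty$ maps; hence $F_\varphi\colon O\to O^\prime$ is $C^\infty$.

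I do not expect a genuine obstacle here. The only points needing slight care are that the orbit manifold structures used are precisely those of Proposition \ref{isoorb}—so that $s$, $s^\prime$ are honest surjective submersions admitting local sections even when $M$, $M^\prime$ are non-Hausdorff or non-second-countable—and that $\varphi$ restricts to a smooth map into the \emph{embedded} submanifold $(\mca{G}^\prime)^{x^\prime}$. With these in hand the statement follows from the short diagram chase and local-section argument above.
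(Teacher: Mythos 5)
Your proof is correct and is essentially the paper's own argument: restrict $\varphi$ to a smooth map $\mca{G}^x\to(\mca{G}^\prime)^{x^\prime}$ between (embedded) target fibers, use the commuting square with the source maps, and pull back along local $C^\infty$ sections of the surjective submersion $s\colon\mca{G}^x\to O$ from Proposition \ref{isoorb} to conclude $F_\varphi\colon O\to O^\prime$ is $C^\infty$. The only difference is that you spell out the (immediate) fact that $F_\varphi$ sends an orbit into an orbit, which the paper leaves implicit.
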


\begin{proof}
Let $O\in\mca{O}_\mca{G}$. Take $O^\prime\in\mca{O}_{\mca{G}^\prime}$ such that $F_\varphi(O)\subset O^\prime$. It suffices to prove $F_\varphi\colon O\to O^\prime$ is $C^\infty$. Take $x\in O$ and put $x^\prime=F_\varphi(x)\in O^\prime$. We have 
\begin{equation*}
\begin{tikzcd}
\mca{G}^x\ar[r,"\varphi"]\ar[d,"s"']\ar[rd,phantom,"\circlearrowright"]&(\mca{G}^\prime)^{x^\prime}\ar[d,"s"]\\
O\ar[r,"F_\varphi"']&O^\prime. 
\end{tikzcd}
\end{equation*}
$\varphi\colon\mca{G}^x\to(\mca{G}^\prime)^{x^\prime}$ is $C^\infty$ since $(\mca{G}^\prime)^{x^\prime}$ is an embedded submanifold of $\mca{G}^\prime$. $s\colon\mca{G}^x\to O$ is a surjective submersion by Proposition \ref{isoorb}. Hence there exists a local $C^\infty$ sections of $s$ at any point of $O$. Therefore $F_\varphi\colon O\to O^\prime$ is $C^\infty$. 
\end{proof}

The proofs of the following two lemmas will be given in the next subsection. 

\begin{lem}\label{coocooc}
Let $\mca{H}\rightrightarrows N$, $\mca{H}^\prime\rightrightarrows N^\prime$ be Lie groupoids. Then the following statements hold: 
\begin{enumerate}
\item We have 
\begin{equation*}
C^\infty(\mca{O}_\mca{H},\mca{O}_{\mca{H}^\prime})=\left\{F\in C^\infty(N,N^\prime)\ \middle|\ 
\begin{gathered}
\text{for any $O\in\mca{O}_\mca{H}$, there exists}\\
\text{$O^\prime\in\mca{O}_{\mca{H}^\prime}$ such that $F(O)\subset O^\prime$}\\
\text{and $F\colon O\to O^\prime$ is continuous}
\end{gathered}
\right\}. 
\end{equation*}
\item If $\mca{H}^\prime$ is second countable, then 
\begin{equation*}
C^\infty(\mca{O}_\mca{H},\mca{O}_{\mca{H}^\prime})=\left\{F\in C^\infty(N,N^\prime)\ \middle|\ 
\begin{gathered}
\text{for any $O\in\mca{O}_\mca{H}$, there exists}\\
\text{$O^\prime\in\mca{O}_{\mca{H}^\prime}$ such that $F(O)\subset O^\prime$}
\end{gathered}
\right\}. 
\end{equation*}
\end{enumerate}
\end{lem}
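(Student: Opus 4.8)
The statement is really two set equalities, and in both the inclusion $\subset$ is trivial (by the very definition of $C^\infty(\mca{O}_\mca{H},\mca{O}_{\mca{H}^\prime})$, every $F$ in it restricts to a $C^\infty$, hence continuous, map on each orbit). So the whole content is the reverse inclusion $\supset$ in each case: given $F\in C^\infty(N,N^\prime)$ which maps each orbit $O$ of $\mca{H}$ into some orbit $O^\prime$ of $\mca{H}^\prime$, with $F|_O\colon O\to O^\prime$ merely \emph{continuous} (part 1) or with no regularity assumed at all (part 2, under second countability), we must upgrade this to $F|_O$ being $C^\infty$ as a map of the (possibly non-Hausdorff, non-second-countable) manifolds $O$ and $O^\prime$ furnished by Proposition \ref{isoorb}.

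The plan is to fix an orbit $O$ of $\mca{H}$ and an orbit $O^\prime$ of $\mca{H}^\prime$ with $F(O)\subset O^\prime$, pick basepoints $y\in O$ and $y^\prime=F(y)\in O^\prime$, and use the principal bundle structures $s\colon\mca{H}^y\to O$ and $s\colon(\mca{H}^\prime)^{y^\prime}\to O^\prime$ from Proposition \ref{isoorb}(4). The key geometric observation is that $\ol{s}\colon\mca{H}^y_y\bs\mca{H}^y\to O$ is a diffeomorphism and $s\colon\mca{H}^y\to O$ is a surjective submersion, and likewise for the primed data; moreover $O^\prime$ is an \emph{injectively immersed} submanifold of $N^\prime$. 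Now $F\colon N\to N^\prime$ is $C^\infty$ as a map of the ambient manifolds, so $F|_O\colon O\to N^\prime$ is $C^\infty$ (since $O\to N$ is an injective immersion and in fact, the smooth structure on $O$ is such that the inclusion is $C^\infty$). The point is to factor this through the immersion $O^\prime\hookrightarrow N^\prime$: we want to conclude that the set-theoretic corestriction $F|_O\colon O\to O^\prime$ is $C^\infty$. For part 2 (second countable $\mca{H}^\prime$), each orbit $O^\prime$ is then second countable, so $O^\prime\hookrightarrow N^\prime$ is an immersion between manifolds with $O^\prime$ second countable; by the standard lemma that a $C^\infty$ map into an ambient manifold which set-theoretically lands in a second-countable immersed submanifold is $C^\infty$ into that submanifold (this uses Lindel\"of/second countability of the source of the immersion in an essential way — it is false without it), we get $F|_O\in C^\infty(O,O^\prime)$, and since $O$ was arbitrary this gives $F\in C^\infty(\mca{O}_\mca{H},\mca{O}_{\mca{H}^\prime})$; in particular continuity of $F|_O$ was automatic, which is why no hypothesis on $F|_O$ is needed in part 2. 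For part 1, we do not assume second countability, so we must instead \emph{use} the hypothesis that $F|_O$ is already continuous: continuity plus the local picture of the immersion $O^\prime\hookrightarrow N^\prime$ (locally a slice) lets us check smoothness locally — around each point, choose a submersion chart for the immersion and a $C^\infty$ local section of $s\colon(\mca{H}^\prime)^{y^\prime}\to O^\prime$, and smoothness of $F|_O$ reduces to smoothness of the ambient $F$ composed with $C^\infty$ maps; continuity is exactly what is needed to know that the point stays in the chart's slice, circumventing the failure of the second-countability argument.

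The main obstacle — and the reason this lemma is being proved at all rather than cited — is precisely that $O$ and $O^\prime$ need not be Hausdorff or second countable, so the usual textbook statement ``a smooth map into $M^\prime$ that lands in an immersed submanifold is smooth into it, provided the submanifold is weakly embedded / initial'' must be reexamined. I expect the cleanest route is: for part 2, invoke second countability of $O^\prime$ to run the standard argument (the immersed submanifold $O^\prime$ of $N^\prime$ is then weakly embedded since it is a leaf-like, second-countable immersed submanifold — here one can cite that an injectively immersed second-countable submanifold through which every smooth map factoring set-theoretically factors smoothly, which follows from Lindel\"of and the rank theorem); for part 1, localize and use the explicit local bisection / submersion $s$ picture from Section \ref{999} together with the assumed continuity to avoid second countability entirely. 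I would also remark that the deferral to the next subsection is because the cleanest proof packages $O$ and $O^\prime$ as leaves of the Stefan singular foliations $\mca{F}_\mca{H}$, $\mca{F}_{\mca{H}^\prime}$ (to be introduced there), where the weakly-embedded property of leaves is the standard tool, making part 1 a consequence of the corresponding statement for leaves of singular foliations and part 2 its second-countable refinement.

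\begin{proof}
See the next subsection; the argument is packaged using Stefan singular foliations, via Lemma \ref{hhscconor} below.
\end{proof}
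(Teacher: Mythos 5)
Your part 1 is essentially sound: with the continuity of $F\colon O\to O^\prime$ as a hypothesis, smoothness is local, and the local normal form of the injective immersion $O^\prime\to N^\prime$ (it is locally an embedding onto an embedded submanifold) reduces everything to the standard fact that a smooth map into $N^\prime$ whose image lies in an embedded submanifold is smooth into it; this is the same content as the paper's route through Lemma \ref{nlnmcinf} (the paper uses continuity only to send each connected component of $O$ into a single leaf of $\mca{F}_{\mca{H}^\prime}$ and then invokes the initial property built into Definition \ref{stelmixlf}).

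The gap is in part 2. You rest it on ``the standard lemma that a $C^\infty$ map into an ambient manifold which set-theoretically lands in a second-countable immersed submanifold is $C^\infty$ into that submanifold,'' claiming this follows from Lindel\"of and the rank theorem. That statement is false: the figure-eight, the image of an injective immersion of an interval into $\bb{R}^2$, is connected and second countable, yet a smooth curve in $\bb{R}^2$ passing through the crossing point along the other branch lies set-theoretically in it without being even continuous into it. Second countability of the submanifold does not give the weakly embedded (initial) property; what does is the plaque structure of the image. The actual content of part 2 is therefore the following chain, which your plan does not supply: each connected component of $O^\prime$ is a leaf of the Stefan singular foliation $\mca{F}_{\mca{H}^\prime}$ (Proposition \ref{gmstsif}); a leaf meets a foliation chart in only countably many plaques (Lemma \ref{slcount}); and --- this is precisely where second countability of $\mca{H}^\prime$ enters --- $O^\prime$ has only countably many components, because $s\colon(\mca{H}^\prime)^{y^\prime}\to O^\prime$ is a continuous open surjection from a second countable space (Lemma \ref{topopsec}). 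Hence the trace of $O^\prime$ on a transversal is countable, so totally disconnected, and a map from a locally connected space into $N^\prime$ with image in $O^\prime$ that is continuous into $N^\prime$ is automatically continuous into $O^\prime$ (Lemma \ref{locconncon}); then part 1 applies. Your closing remark does point to Stefan foliations and weak embeddedness of leaves, but leaves are only the components of orbits, and the passage from leaves to whole orbits (the countable-components step, without which the statement fails --- compare the orbit $O_d\simeq S^1_d$ of Example \ref{discs1s1}) is the nontrivial point you omit; also your deferral ``via Lemma \ref{hhscconor}'' points to the wrong statement, since that lemma is another consequence of the same machinery (Lemma \ref{locconncon}), not a tool for this one.
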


\begin{lem}\label{hhscconor}
Let $\mca{H}\rightrightarrows N$, $\mca{H}^\prime\rightrightarrows N^\prime$ be Lie groupoids and $F$, $F^\prime\in C^\infty(\mca{O}_\mca{H},\mca{O}_{\mca{H}^\prime})$. Let $H\colon N\times I\to N^\prime$ be a continuous map satisfying the following conditions: 
\begin{itemize}
\setlength\itemsep{0em}
\item $H(\cdot,t)\in C^\infty(N,N^\prime)$ for all $t\in I$
\item $H(\cdot,0)=F$, $H(\cdot,1)=F^\prime$
\item For any $O\in\mca{O}_\mca{H}$, there exists $O^\prime\in\mca{O}_{\mca{H}^\prime}$ such that $H(O\times I)\subset O^\prime$. 
\end{itemize}
If $\mca{H}^\prime$ is second countable, then $H\colon O\times I\to O^\prime$ is continuous, hence $H$ is an orbitwise homotopy between $F$ and $F^\prime$. 
\end{lem}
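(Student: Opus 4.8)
The plan is to prove exactly one thing: that $H\colon O\times I\to O'$ is continuous, where $O'$ carries the injectively immersed submanifold structure of Proposition \ref{isoorb}; all the other clauses of Definition \ref{orbitwiseh} ($H(\cdot,t)$ smooth, the correct endpoints, $H(O\times I)\subset O'$) are already among the hypotheses, so once this continuity is established $H$ is an orbitwise homotopy between $F$ and $F'$ and we are done. Since $O\times I$ is locally connected (it is locally Euclidean, possibly with boundary) and continuity is a local property, it suffices to find, around each $(y,t)\in O\times I$, a connected open neighbourhood on which $H$ is continuous as a map into $O'$. I would also record at the outset that $O'$ is second countable: by Proposition \ref{isoorb} it is an open continuous surjective image of the closed embedded submanifold $(\mca{H}')^{x'}$ of the second countable manifold $\mca{H}'$, and an open continuous image of a second countable space is second countable. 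The real input will be that the orbit partition of $\mca{H}'$, with these manifold structures, underlies a Stefan singular foliation $\mca{F}$ of $N'$ (the leaves being the connected components of the orbits).

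With that in hand, the local step goes as follows. Fix $(y,t)\in O\times I$, set $y'=H(y,t)\in O'$, and pick a Stefan chart $U'\cong P\times T$ of $\mca{F}$ about $y'$. Then $O'\cap U'$ is a disjoint union $\bigsqcup_{\tau\in A'}(P\times\{\tau\})$ of plaques over some $A'\subset T$; each plaque $P\times\{\tau\}$ with $\tau\in A'$ is nonempty and open in $O'$ for the leaf topology (it is open in the leaf containing it, and that leaf, being a connected component of the locally connected manifold $O'$, is open in $O'$), so $A'$ must be countable because $O'$ is second countable. By continuity of $H\colon N\times I\to N'$ choose a connected open neighbourhood $V$ of $(y,t)$ in $O\times I$ with $H(V)\subset U'$, hence $H(V)\subset O'\cap U'$. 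Composing with the projection $U'\to T$ gives a continuous map from the connected set $V$ into $A'$; as $A'$ is a countable subset of the manifold $T$ it is totally disconnected, so this composite is constant, say $\tau_1\in A'$, and therefore $H(V)\subset P\times\{\tau_1\}$. Finally $P\times\{\tau_1\}$ is an embedded submanifold of $U'$ whose subspace topology agrees with its topology as an open subset of $O'$ in the leaf topology, so $H|_V\colon V\to P\times\{\tau_1\}\subset O'$ is continuous, completing the local step and the lemma.

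The main obstacle is the ingredient invoked at the start of the second paragraph: that the orbits of $\mca{H}'$ underlie a Stefan singular foliation of $N'$, equivalently that every point of $N'$ admits a Stefan chart adapted to the orbits. This is where the review of singular distributions (Definition \ref{sigdistmdm}) and Stefan singular foliations (Definition \ref{stelmixlf}) is used: one exhibits the singular distribution on $N'$ spanned pointwise by the image of the anchor $t'_*\colon\ker\bigl(s'_*\colon T_{1_{y'}}\mca{H}'\to T_{y'}N'\bigr)\to T_{y'}N'$ and verifies it is integrable in Stefan's sense with the orbits as unions of leaves, the plaque data being produced from the principal bundle structure of Proposition \ref{isoorb} (the local slices supplied by Lemma \ref{fttwhen} applied to $s'\colon(\mca{H}')^{x'}\to O'$) together with a transverse direction in $N'$. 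The other, routine point is that within a Stefan chart a plaque is an embedded submanifold of the chart whose subspace topology coincides with the leaf topology — this is precisely what licenses upgrading ``continuous into $U'$'' to ``continuous into $O'$'' at the end of the argument.
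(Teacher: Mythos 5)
Your proposal is correct and is essentially the paper's own argument: the paper reduces the lemma to Lemma \ref{locconncon} (for a locally connected domain, continuity into an orbit can be tested after composing with the inclusion), whose proof is exactly your local step --- the Stefan singular foliation structure of Proposition \ref{gmstsif}, second countability of $O^\prime$ via the open surjection $s^\prime\colon(\mca{H}^\prime)^{y^\prime}\to O^\prime$ (Lemma \ref{topopsec}), countability and hence total disconnectedness of the transverse trace, and constancy of the transverse coordinate on a connected neighbourhood. The only cosmetic differences are that the paper obtains countability of the transverse trace from Lemma \ref{slcount} (rather than from your disjoint open plaques, whose openness in the leaf is itself the content of Lemma \ref{slcount} and uses that all components of $O^\prime$ have the chart's plaque dimension), and that it sidesteps your final plaque-versus-leaf topology identification by shrinking the chart (Lemma \ref{smallvsv}) so that the central plaque lies inside a prescribed open subset of $O^\prime$.
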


\begin{example}
Let $S^1_d$ be $S^1$ with the discrete topology. Let $\rho\in\mca{A}(S^1,S^1)$, $\rho_d\in\mca{A}(S^1,S^1_d)$ be the actions defined by $\rho(x,y)=xy$ and $\rho_d(x,y)=xy$. Then $S^1\rtimes_{\rho_d}S^1_d$ is not second countable and $\mca{O}_{S^1\rtimes_\rho S^1}=\left\{O=S^1\right\}$, $\mca{O}_{S^1\rtimes_{\rho_d}S^1_d}=\left\{O_d=S^1_d\right\}$. 

For $\id\colon S^1\to S^1$, $\id\colon O\to O_d$ is not continuous, so $\id\not\in C^\infty(\mca{O}_{S^1\rtimes_\rho S^1},\mca{O}_{S^1\rtimes_{\rho_d}S^1_d})$. This shows the continuity (smoothness) condition of $F\colon O\to O^\prime$ in the definition of $C^\infty(\mca{O}_\mca{H},\mca{O}_{\mca{H}^\prime})$ is not superfluous. 

For $z\in S^1$, define $\varphi_z\in\Aut(S^1\rtimes_{\rho_d}S^1_d)$ by $\varphi_z(x,y)=(xz,y)$. Then $F_{\varphi_z}(x)=xz$. Take a nonconstant continuous map $c\colon I\to S^1$ and consider $\varphi_{c(t)}$. We have 
\begin{align*}
F_{\varphi_{c(t)}}\colon O_d\times I&\to O_d\\
(x,t)&\mapsto xc(t). 
\end{align*}
Since $O_d$ has the discrete topology, this is not continuous. So the continuity assumption on $H\colon O\times I\to O^\prime$ in Definition \ref{orbitwiseh} is not superfluous as well. 
\end{example}

\subsection{Proofs of Lemma \ref{coocooc} and Lemma \ref{hhscconor}}
\subsubsection{Stefan singular foliations}
\begin{dfn}[\cite{Stefan}]\label{stelmixlf}
Let $M$ be an $n$-dimensional $C^\infty$ manifold and $\mca{F}$ be a partition of $M$. We say that $\mca{F}$ is a \emph{$C^\infty$ Stefan singular foliation} of $M$ if: 
\begin{itemize}
\setlength\itemsep{0em}
\item for any $L\in\mca{F}$, there exists a $C^\infty$ manifold structure of $L$ such that: 
\begin{itemize}
\setlength\itemsep{0em}
\item $L$ is connected
\item the inclusion $\iota_L\colon L\to M$ is an immersion 
\item for a locally connected topological space $X$ and a map $f\colon X\to L$, $f\colon X\to L$ is continuous if and only if $\iota_Lf\colon X\to M$ is continuous
\end{itemize}
\item let $x_0\in L_0\in\mca{F}$ and $k=\dim L_0$. Then there exist an open neighborhood $U$ of $\iota_{L_0}(x_0)$ in $M$, an open neighborhood $U_1$ of $0$ in $\bb{R}^k$, an open neighborhood $U_2$ of $0$ in $\bb{R}^{n-k}$ and a diffeomorphism $\theta\colon U\to U_1\times U_2$ such that: 
\begin{itemize}
\setlength\itemsep{0em}
\item $\theta\iota_{L_0}(x_0)=(0,0)$
\item for any $L\in\mca{F}$, $\theta(\iota_L(L)\cap U)=U_1\times S_L$, where 
\begin{equation*}
S_L=\left\{u_2\in U_2\ \middle|\ (0,u_2)\in\theta(\iota_L(L)\cap U)\right\}. 
\end{equation*}
\end{itemize}
\end{itemize}


In \cite{Stefan} a $C^\infty$ Stefan singular foliation of $M$ is called a $C^\infty$ foliation of $M$ with singularities. 
\end{dfn}

\begin{lem}\label{nlnmcinf}
Let $M$ be a $C^\infty$ manifold, $\mca{F}$ be a $C^\infty$ Stefan singular foliation of $M$ and $L\in\mca{F}$. Then for a $C^\infty$ manifold $N$ and a map $f\colon N\to L$, $f\colon N\to L$ is $C^\infty$ if and only if $\iota_Lf\colon N\to M$ is $C^\infty$. 
\end{lem}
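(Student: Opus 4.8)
The plan is to prove the nontrivial implication; the forward direction is immediate, since $\iota_L\colon L\to M$ is an immersion and hence $C^\infty$, so $\iota_L f$ is $C^\infty$ whenever $f$ is.

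So assume $\iota_L f\colon N\to M$ is $C^\infty$. First I would record that $\iota_L f$ is then continuous, and that $N$, being a $C^\infty$ manifold, is locally connected; applying the topological property of leaves built into Definition \ref{stelmixlf} (with the role of $X$ played by $N$ and the map being $f$ itself), the corestriction $f\colon N\to L$ is continuous. This is the only step that really invokes the Stefan structure.

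Next, since $C^\infty$-ness is local on the source, fix $p\in N$ and put $x_0=f(p)\in L$. Because $\iota_L$ is an immersion, the local immersion theorem provides an open neighborhood $W$ of $x_0$ in $L$, an open neighborhood $\wt{U}$ of $\iota_L(x_0)$ in $M$ with $\iota_L(W)\subset\wt{U}$, and a $C^\infty$ retraction $r\colon\wt{U}\to W$ satisfying $r\circ(\iota_L|_W)=\id_W$ (in adapted charts $r$ is just projection onto the first $\dim L$ coordinates, transported back through the chart of $W$; shrink $\wt{U}$ so that this projection lands in the chart domain of $W$). By the continuity of $f$ just established, $f^{-1}(W)$ is an open neighborhood of $p$ in $N$. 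On $f^{-1}(W)$ the map $\iota_L f$ takes values in $\iota_L(W)\subset\wt{U}$, so $r\circ(\iota_L f)|_{f^{-1}(W)}$ is a well-defined $C^\infty$ map into $W\subset L$; and it coincides with $f$ on $f^{-1}(W)$ because $f\bigl(f^{-1}(W)\bigr)\subset W$ and $r\circ\iota_L|_W=\id_W$. Hence $f$ is $C^\infty$ on a neighborhood of $p$, and as $p$ was arbitrary, $f\in C^\infty(N,L)$.

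I do not expect a genuine obstacle: the only point requiring a little care is making sure the topological property of Definition \ref{stelmixlf} is legitimately applicable — namely that $N$ is locally connected (true for any manifold) and that it is precisely the corestriction $f\colon N\to L$ whose continuity is extracted. Once continuity of $f\colon N\to L$ is in hand, the remainder is just the standard fact that an immersion admits local $C^\infty$ retractions, and the foliation charts of Definition \ref{stelmixlf} are not needed again.
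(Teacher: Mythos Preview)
Your proposal is correct and follows essentially the same approach as the paper: both invoke the topological property in the Stefan definition (with $X=N$, which is locally connected) to obtain continuity of $f\colon N\to L$, and then use the local immersion form of $\iota_L$ to upgrade this to smoothness. The only cosmetic difference is that you package the coordinate projection as a $C^\infty$ retraction $r$ with $r\circ\iota_L|_W=\id_W$, whereas the paper works directly in adapted charts and reads off the first $k$ components; the content is identical.
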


\begin{proof}
Let $k=\dim L$. Assume $\iota_Lf\colon N\to M$ is $C^\infty$. Let $y_0\in N$. Since $\iota_L\colon L\to M$ is an immersion, there exist a chart $(U,\varphi)$ of $L$ and a chart $(V,\psi)$ of $M$ such that $f(y_0)\in U$, $\iota_Lf(y_0)\in V$, $\iota_L(U)\subset V$ and 
\begin{equation}
(\psi\iota_L\varphi^{-1})(x_1,\ldots,x_k)=(x_1,\ldots,x_k,0,\ldots,0)^\top\label{xxxx0}
\end{equation}
for all $(x_1,\ldots,x_k)\in\varphi(U)$. Since $\iota_Lf$ is continuous and $\mca{F}$ is a $C^\infty$ Stefan singular foliation, $f$ is continuous. So there exists a chart $(W,\theta)$ of $N$ such that $y_0\in W$ and $f(W)\subset U$. $(\psi\iota_L\varphi^{-1})(\varphi f\theta^{-1})=\psi\iota_Lf\theta^{-1}$ is $C^\infty$. By \eqref{xxxx0}, $\varphi f\theta^{-1}$ is $C^\infty$. So $f$ is $C^\infty$. 
\end{proof}

\begin{lem}\label{slcount}
Let $M$ be an $n$-dimensional $C^\infty$ manifold and $\mca{F}$ be a $C^\infty$ Stefan singular foliation of $M$. Let $x_0\in L_0\in\mca{F}$, $k=\dim L_0$, $U$ be an open neighborhood of $\iota_{L_0}(x_0)$ in $M$, $U_1$ be an open neighborhood of $0$ in $\bb{R}^k$, $U_2$ be an open neighborhood of $0$ in $\bb{R}^{n-k}$ and $\theta\colon U\to U_1\times U_2$ be a diffeomorphism such that: 
\begin{itemize}
\setlength\itemsep{0em}
\item $\theta\iota_{L_0}(x_0)=(0,0)$
\item for any $L\in\mca{F}$, $\theta(\iota_L(L)\cap U)=U_1\times S_L$, where 
\begin{equation*}
S_L=\left\{u_2\in U_2\ \middle|\ (0,u_2)\in\theta(\iota_L(L)\cap U)\right\}. 
\end{equation*}
\end{itemize}
Then the following statements hold: 
\begin{enumerate}
\setlength\itemsep{0em}
\item For any $L\in\mca{F}$ and $x\in\iota_L^{-1}(U)$, let $\theta\iota_L(x)=(a_1,a_2)$. Then 
\begin{equation*}
(\iota_L)_*T_xL\supset T_{\iota_L(x)}\theta^{-1}(U_1\times\left\{a_2\right\}). 
\end{equation*}
So we have $(\iota_L)_*T_xL=T_{\iota_L(x)}\theta^{-1}(U_1\times\left\{a_2\right\})$ if $\dim L=k$. 
\item Let $L\in\mca{F}$ be such that $\dim L=k$. 
\begin{enumerate}
\item If $V$ is a connected open subset of $\iota_L^{-1}(U)$, then $p_2\theta\iota_L(V)$ is a point, where $p_2\colon U_1\times U_2\to U_2$ is the projection. 
\item $S_L$ is countable. 
\end{enumerate}
\end{enumerate}
\end{lem}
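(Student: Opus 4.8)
The plan is to handle the three assertions in order, extracting from the foliation chart $\theta$ an explicit slice sitting inside each $k$-dimensional leaf. \emph{For (1):} fix $L\in\mca{F}$ and $x\in\iota_L^{-1}(U)$ and write $\theta\iota_L(x)=(a_1,a_2)$. Since $\iota_L(x)\in\iota_L(L)\cap U$, the point $(a_1,a_2)$ lies in $\theta(\iota_L(L)\cap U)=U_1\times S_L$, so $a_2\in S_L$ and hence $U_1\times\{a_2\}\subset\theta(\iota_L(L)\cap U)$. Therefore the $C^\infty$ embedding $g\colon U_1\to M$, $g(u_1)=\theta^{-1}(u_1,a_2)$, has image $\theta^{-1}(U_1\times\{a_2\})\subset\iota_L(L)$. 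As $\iota_L$ is the (injective) inclusion of the subset $L$, the map $g$ factors as $g=\iota_L\circ\tilde g$ for a unique $\tilde g\colon U_1\to L$, and Lemma \ref{nlnmcinf} applied to $\tilde g$ gives that $\tilde g$ is $C^\infty$, since $\iota_L\tilde g=g$ is. Because $g$ is an embedding onto $\theta^{-1}(U_1\times\{a_2\})$ and $\tilde g(a_1)=x$, differentiating at $a_1$ yields
\[
T_{\iota_L(x)}\theta^{-1}(U_1\times\{a_2\})=g_*T_{a_1}U_1=(\iota_L)_*\bigl(\tilde g_*T_{a_1}U_1\bigr)\subset(\iota_L)_*T_xL ,
\]
which is the claimed inclusion; when $\dim L=k$ both sides have dimension $k$, so they agree.

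\emph{For (2)(a):} consider the $C^\infty$ map $h=p_2\circ\theta\circ\iota_L$ on the open subset $\iota_L^{-1}(U)$ of $L$. For $x\in\iota_L^{-1}(U)$ with $\theta\iota_L(x)=(a_1,a_2)$, part (1) (with $\dim L=k$) identifies $(\iota_L)_*T_xL$ with the tangent space of the slice $\theta^{-1}(U_1\times\{a_2\})$ at $\iota_L(x)$; since $p_2\circ\theta$ is constant equal to $a_2$ on that slice, its differential annihilates this subspace, so $h_{*x}=0$. Hence $h$ is locally constant, and therefore constant on the connected set $V$, i.e.\ $p_2\theta\iota_L(V)$ is a single point.

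\emph{For (2)(b):} the leaf $L$ is a $C^\infty$ manifold, hence second countable, so the open set $\iota_L^{-1}(U)$ has at most countably many connected components $\{V_j\}_{j\in J}$; by (2)(a) each set $p_2\theta\iota_L(V_j)=\{b_j\}$ for some $b_j\in U_2$. I would then check that $S_L=\{b_j:j\in J\}$. If $u_2\in S_L$, then $\theta^{-1}(0,u_2)\in\iota_L(L)\cap U$, so it equals $\iota_L(x)$ for some $x\in\iota_L^{-1}(U)$; with $x\in V_j$ we get $b_j=p_2\theta\iota_L(x)=u_2$. Conversely, for any $j$ and any $x\in V_j$ we have $\theta\iota_L(x)\in\theta(\iota_L(L)\cap U)=U_1\times S_L$ with second coordinate $b_j$, so $b_j\in S_L$. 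Thus $S_L$ is countable.

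The step I expect to be the main obstacle is (1): one must upgrade the slice $\theta^{-1}(U_1\times\{a_2\})$ — which a priori lies only set-theoretically inside $\iota_L(L)\subset M$ — to a bona fide $C^\infty$ map into the leaf $L$ with its intrinsic, possibly non-embedded, smooth structure. This is precisely where the defining properties of a Stefan singular foliation are used, through Lemma \ref{nlnmcinf}. Once (1) is in hand, (2)(a) and (2)(b) follow quickly.
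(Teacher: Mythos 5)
Your proof is correct and follows essentially the same route as the paper: Lemma \ref{nlnmcinf} to transfer the slice $\theta^{-1}(U_1\times\{a_2\})$ into the intrinsic smooth structure of $L$ for (1), vanishing of the differential of $p_2\theta\iota_L$ (via (1)) for (2)(a), and second countability of $L$, hence of $\iota_L^{-1}(U)$, for (2)(b). The only differences are cosmetic: you parametrize the whole slice rather than arguing with curves through a given tangent vector, and in (2)(b) you count connected components of $\iota_L^{-1}(U)$ instead of choosing a section of $p_2\theta\iota_L$ with pairwise disjoint connected neighborhoods, which is the paper's phrasing of the same countability argument.
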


\begin{proof}
1. Let $v\in T_{\iota_L(x)}\theta^{-1}(U_1\times\left\{a_2\right\})$. There exists a $C^\infty$ map $c\colon\bb{R}\to M$ such that $c(0)=\iota_L(x)$, $\frac{dc}{dt}(0)=v$ and $c(\bb{R})\subset\theta^{-1}(U_1\times\left\{a_2\right\})\subset\iota_L(L)\cap U$. There exists a unique map $c^\prime\colon\bb{R}\to L$ such that $c=\iota_Lc^\prime$. Then $c^\prime$ is $C^\infty$ by Lemma \ref{nlnmcinf}. So $v=(\iota_L)_*\frac{dc^\prime}{dt}(0)\in(\iota_L)_*T_xL$. 

2.(a). $V$ is smoothly path connected. Let $c\colon I\to V$ be any $C^\infty$ map. Then $\frac{d}{dt}(p_2\theta\iota_Lc)(t)=0$ by 1. Hence $p_2\theta\iota_Lc$ is constant. Therefore $p_2\theta\iota_L(V)$ is a point. 

2.(b). Since $p_2\theta\iota_L\colon\iota_L^{-1}(U)\to S_L$ is surjective, there exists a map $\sigma\colon S_L\to\iota_L^{-1}(U)$ such that $p_2\theta\iota\sigma=\id$. Take a connected open neighborhood $V(u_2)$ of $\sigma(u_2)$ in $\iota_L^{-1}(U)$ for each $u_2\in S_L$. Then $p_2\theta\iota_LV(u_2)=\left\{u_2\right\}$ for any $u_2\in S_L$ by the above claim. It follows that $V(u_2)\cap V(u_2^\prime)=\emptyset$ if $u_2$, $u_2^\prime\in S_L$ and $u_2\neq u_2^\prime$. Hence $\left\{V(u_2)\ \middle|\ u_2\in S_L\right\}$ is countable since $\iota_L^{-1}(U)$ is second countable. Therefore $S_L$ is countable. 
\end{proof}

\begin{dfn}\label{sigdistmdm}
Let $M$ be a $C^\infty$ manifold and $\Delta\subset TM$. We say that $\Delta$ is a \emph{singular distribution} on $M$ if for each $x\in M$, $\Delta_x=\Delta\cap T_xM$ is a subspace of $T_xM$. 
\end{dfn}

\begin{dfn}
Let $M$ be a $C^\infty$ manifold and $\Delta$ be a singular distribution on $M$. 
\begin{itemize}
\item A \emph{local $C^\infty$ section} of $\Delta$ is a $C^\infty$ section $X$ of $TM|_U$ for some open subset $U$ of $M$ such that $X_x\in\Delta_x$ for any $x\in U$. 
\item $\Delta$ is $C^\infty$ if for any $x\in M$ and $v\in\Delta_x$, there exists a local $C^\infty$ section $X$ of $\Delta$ such that $X_x=v$. 
\item $L$ is an \emph{integral submanifold} of $\Delta$ if $L$ is a connected immersed submanifold of $M$ such that $T_xL=\Delta_x$ for any $x\in L$. 
\item $L$ is a \emph{maximal integral submanifold} of $\Delta$ if $L$ is an integral submanifold of $\Delta$ and if $L^\prime$ is an integral submanifold of $\Delta$ containing $L$, then $L^\prime=L$. 
\item $\Delta$ is \emph{integrable} if for any $x\in M$, there exists an integral submanifold $L$ of $\Delta$ containing $x$. 
\end{itemize}
\end{dfn}

\begin{lem}\label{ssfisd}
Let $M$ be a $C^\infty$ manifold. 
\begin{enumerate}
\item Let $\Delta$ be an integrable $C^\infty$ singular distribution on $M$. Then the following statements hold: 
\begin{enumerate}
\item For any $x\in M$, there exists a unique maximal integral submanifold of $\Delta$ containing $x$. 
\item Let $\mca{F}$ be the set of all maximal integral submanifolds of $\Delta$. Then $\mca{F}$ is a partition of $M$. 
\end{enumerate}
\item Let $\mca{F}$ be a $C^\infty$ Stefan singular foliation of $M$. Then 
\begin{equation*}
T\mca{F}=\bigcup_{L\in\mca{F}}TL\subset TM
\end{equation*}
is an integrable $C^\infty$ singular distribution on $M$. 
\end{enumerate}
\end{lem}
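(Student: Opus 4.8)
The plan is to prove the two directions of Stefan's correspondence \cite{Stefan} separately, starting with part 2, which is the easy one. Since $\mca{F}$ is a partition of $M$, each $x\in M$ lies in exactly one leaf $L$, so $T\mca{F}\cap T_xM=(\iota_L)_*(T_xL)$ is a linear subspace and $T\mca{F}$ is a singular distribution. For smoothness I would fix $x_0\in M$, let $L_0$ be its leaf, $k=\dim L_0$, and take a Stefan chart $\theta\colon U\to U_1\times U_2$ centered at $x_0$ as in Definition \ref{stelmixlf}; writing $E_1,\dots,E_k$ for the pushforwards under $\theta^{-1}$ of the coordinate fields $\partial/\partial u_1^i$, Lemma \ref{slcount}(1) gives $T_y\theta^{-1}(U_1\times\{a_2\})\subset(\iota_L)_*(T_xL)$ at every $y=\iota_L(x)\in U$ and every leaf $L$, so each $E_i$ is a $C^\infty$ section of $T\mca{F}$ over $U$; and the ``so'' clause of Lemma \ref{slcount}(1) applied to $L_0$ (of dimension $k$) gives $(T\mca{F})_{x_0}=(\iota_{L_0})_*(T_{x_0}L_0)=\spn\{E_1(x_0),\dots,E_k(x_0)\}$. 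Hence any $v\in(T\mca{F})_{x_0}$ equals $\sum_ic_iE_i(x_0)$ with $c_i\in\bb{R}$, and $\sum_ic_iE_i$ is a $C^\infty$ local section of $T\mca{F}$ with value $v$ at $x_0$; as $x_0$ is arbitrary, $T\mca{F}$ is $C^\infty$. Integrability is then immediate: each leaf $L_0$ is a connected immersed submanifold with $(\iota_{L_0})_*(T_xL_0)=(T\mca{F})_{\iota_{L_0}(x)}$ for all $x$, i.e.\ an integral submanifold through each of its points.

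For part 1 the key tool will be flow-invariance of integral submanifolds: if $X$ is a local $C^\infty$ section of $\Delta$ on an open set $W$ and $L$ is an integral submanifold, then $X(q)\in\Delta_q=(\iota_L)_*(T_qL)$ for $q\in L\cap W$, so by the immersion normal form $X$ has a $C^\infty$ restriction $X_L$ to the open subset $L\cap W$ of $L$, and by uniqueness of integral curves in $M$ the flows of $X$ and of $X_L$ agree where both are defined; thus small-time flows of local sections of $\Delta$ carry points of an integral submanifold back into it. I would then define, for $x_0\in M$, the orbit $\mca{O}(x_0)$ as the set of points reachable from $x_0$ by finitely many small-time flows of local $C^\infty$ sections of $\Delta$; reachability is an equivalence relation (flows are invertible), so the orbits partition $M$. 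For $p\in M$, choosing local $C^\infty$ sections $X_1,\dots,X_k$ ($k=\dim\Delta_p$) with $X_i(p)$ a basis of $\Delta_p$, the map $\Theta_p(t_1,\dots,t_k)=\phi^{X_1}_{t_1}\cdots\phi^{X_k}_{t_k}(p)$ is near $0$ a $C^\infty$ immersion whose differential at $0$ has image $\Delta_p$ and whose image, by flow-invariance, lies in every integral submanifold through $p$; taking the maps $\{\Theta_p\}_{p\in\mca{O}(x_0)}$ as charts puts on $\mca{O}(x_0)$ a $C^\infty$ structure making the inclusion into $M$ an immersion with $T_q\mca{O}(x_0)=\Delta_q$, and $\mca{O}(x_0)$ is connected, so it is an integral submanifold. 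A standard open--closed argument along any connected integral submanifold $L$ through $x_0$, using these charts, shows $\iota_L(L)$ is an open submanifold of $\mca{O}(x_0)$; hence $\mca{O}(x_0)$ contains every integral submanifold through $x_0$ and is maximal, and a maximal integral submanifold through $x_0$ must equal it, which is (a). Part (b) follows because every $x$ lies in the integral submanifold $\mca{O}(x)$ and the $\mca{O}(x)$ partition $M$, each being the maximal integral submanifold through each of its points.

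The hard part will be the flow-invariance lemma together with checking that the flow-box maps $\Theta_p$ patch into a consistent $C^\infty$ atlas on $\mca{O}(x_0)$: one must verify that the corestriction $X_L$ of $X$ to an immersed integral submanifold is genuinely $C^\infty$ (immersion normal form plus ODE uniqueness), and that when two integral submanifolds through a point $q$ overlap, the plaques $\im\Theta_q$ that each contains coincide near $q$ with matching smooth structures, so that the transition maps between $\Theta$-charts are smooth. Incompleteness of flows causes no trouble, as everything is carried out with small time parameters on open sets where the sections are defined.
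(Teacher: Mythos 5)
Your argument is correct, and part 2 is essentially the paper's proof: the paper also verifies smoothness of $T\mca{F}$ by pushing forward the constant coordinate fields of a Stefan chart and invoking Lemma \ref{slcount}, exactly as you do. Where you diverge is part 1: the paper disposes of 1(a) by citing 3.22 Lemma of \cite{Michor} and deduces 1(b) from it, whereas you reprove existence and uniqueness of maximal integral submanifolds from scratch via accessible sets --- flow-invariance of integral submanifolds under local sections of $\Delta$, orbits $\mca{O}(x_0)$ charted by composed flows $\Theta_p$, and an open--closed argument showing every connected integral submanifold through $x_0$ sits openly inside $\mca{O}(x_0)$. This is in spirit the same machinery (Stefan/Sussmann orbits) that the paper itself deploys later in the proof of Proposition \ref{steinsin}, so your route makes the lemma self-contained at the cost of the technical bookkeeping you flag: one must check that the restriction $X_L$ is $C^\infty$ and that the $\Theta$-plaques have matching smooth structures, and also that $T_q\mca{O}(x_0)=\Delta_q$ at \emph{every} point of a plaque, not only at the center --- your observation that $\im\Theta_p$ lies in the integral submanifold $L_p$ through $p$ (so its tangent spaces are trapped in $\Delta$, and the rank cannot exceed $\dim\Delta_p$) is precisely what closes this. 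One routine point you leave implicit is that the orbit, with its manifold topology, is Hausdorff and second countable (pull back a Riemannian metric along the immersion, as the paper does elsewhere for orbits), which is needed for it to be an immersed submanifold under the paper's conventions. The paper's approach buys brevity by citation; yours buys a complete argument using only the lemmas already established in the section.
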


\begin{proof}
1.(a) See 3.22. Lemma in \cite{Michor}. 

1.(b) This follows from 1.(a). 

2. $T\mca{F}$ is an integrable singular distribution on $M$. We prove that $T\mca{F}$ is $C^\infty$. Let $x_0\in L_0\in\mca{F}$ and $v\in T_{\iota_{L_0}(x_0)}\mca{F}$. Let $k=\dim L_0$. There exist an open neighborhood $U$ of $\iota_{L_0}(x_0)$ in $M$, an open neighborhood $U_1$ of $0$ in $\bb{R}^k$, an open neighborhood $U_2$ of $0$ in $\bb{R}^{n-k}$ and a diffeomorphism $\theta\colon U\to U_1\times U_2$ such that: 
\begin{itemize}
\setlength\itemsep{0em}
\item $\theta\iota_{L_0}(x_0)=(0,0)$
\item for any $L\in\mca{F}$, $\theta(\iota_L(L)\cap U)=U_1\times S_L$, where 
\begin{equation*}
S_L=\left\{u_2\in U_2\ \middle|\ (0,u_2)\in\theta(\iota_L(L)\cap U)\right\}. 
\end{equation*}
\end{itemize}
Then 
\begin{equation*}
v\in T_{\iota_{L_0}(x_0)}\mca{F}=(\iota_{L_0})_*T_{x_0}L_0=T_{\iota_{L_0}(x_0)}\theta^{-1}(U_1\times\left\{0\right\})
\end{equation*}
by Lemma \ref{slcount}. Hence $\theta_*v\in T_0U_1\times\left\{0\right\}\subset T_{(0,0)}(U_1\times U_2)$. Let $X$ be the constant vector field on $U_1\times U_2\subset\bb{R}^n$ with value $\theta_*v$. Let $L\in\mca{F}$ and $x\in\iota_L^{-1}(U)$. Let $\theta\iota_L(x)=(a_1,a_2)$. Then 
\begin{equation*}
T_{\iota_L(x)}\mca{F}=(\iota_L)_*T_xL\supset T_{\iota_L(x)}\theta^{-1}(U_1\times\left\{a_2\right\})\ni(\theta^*X)_{\iota_L(x)}
\end{equation*}
by Lemma \ref{slcount}. $\theta^*X$ is a $C^\infty$ local section of $T\mca{F}$ on $U$ such that $(\theta^*X)_{\iota_{L_0}(x_0)}=v$. Hence $T\mca{F}$ is $C^\infty$. 
\end{proof}

\begin{prop}\label{steinsin}
Let $M$ be a $C^\infty$ manifold. Then we have a bijection 
\begin{align*}
\left\{\text{$C^\infty$ Stefan singular foliations of $M$}\right\}&\simeq\left\{\text{integrable $C^\infty$ singular distributions on $M$}\right\}\\
\mca{F}&\mapsto T\mca{F}\\
\left\{\text{maximal integral submanifolds of $\Delta$}\right\}&\mapsfrom\Delta. 
\end{align*}
\end{prop}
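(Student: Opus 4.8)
The plan is to read this proposition as the packaging of Lemma \ref{ssfisd} together with the chart-existence half of Stefan's theorem, and then to verify directly that the two displayed maps are mutually inverse. By Lemma \ref{ssfisd}(2) the assignment $\mca{F}\mapsto T\mca{F}$ already lands in the set of integrable $C^\infty$ singular distributions, so that map is well-defined; the work lies in the other direction and in bijectivity.

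First I would show that for an integrable $C^\infty$ singular distribution $\Delta$ the set $\mca{F}_\Delta$ of maximal integral submanifolds is a $C^\infty$ Stefan singular foliation. By Lemma \ref{ssfisd}(1) it is a partition of $M$, and each member is a connected immersed submanifold carrying the canonical manifold structure for which a map from a locally connected space is continuous into the leaf if and only if it is continuous into $M$ (this is part of the construction of maximal integral submanifolds in \cite{Michor}, 3.22). The one thing not yet available is the flag-type chart of Definition \ref{stelmixlf} around a point $x_0$ of a leaf $L_0$ of dimension $k$: one chooses local $C^\infty$ sections $X_1,\dots,X_k$ of $\Delta$ with $(X_1)_{x_0},\dots,(X_k)_{x_0}$ spanning $\Delta_{x_0}$, uses the composition of their flows to parametrise the first factor $U_1$, completes this to a chart $\theta\colon U\xrightarrow{\sim}U_1\times U_2$, and checks that $\theta$ carries each leaf to a union $U_1\times S_L$ of plaques. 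This is precisely the content of Stefan's theorem \cite{Stefan}; note that Lemma \ref{slcount}(2)(b) already records the countability of $S_{L_0}$ in such a chart, which is the consequence one actually uses later. Hence $\Delta\mapsto\mca{F}_\Delta$ is well-defined.

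Next I would check $T\mca{F}_\Delta=\Delta$: for $x\in M$, letting $L$ be the leaf of $\mca{F}_\Delta$ through $x$, we get $(T\mca{F}_\Delta)_x=T_xL=\Delta_x$, since $L$ is an integral submanifold of $\Delta$ and $x$ lies in no other leaf. And I would check $\mca{F}_{T\mca{F}}=\mca{F}$ for any $C^\infty$ Stefan singular foliation $\mca{F}$: each $L\in\mca{F}$ is a connected immersed submanifold with $T_xL=(T\mca{F})_x$, hence an integral submanifold of $T\mca{F}$, and it is contained in a maximal one $\hat L$ (Lemma \ref{ssfisd}(1)(a)). If $L'\in\mca{F}$ meets $\hat L$, then $L'$ is an integral submanifold of $T\mca{F}$ through that point, so $L'\subset\hat L$ by uniqueness of the maximal integral submanifold through a point; moreover $\dim L'=\dim\hat L$ and the inclusion $L'\hookrightarrow\hat L$ is a $C^\infty$ immersion — continuous by the continuity-detection property of $\hat L$, and smooth by Lemma \ref{nlnmcinf} applied to the Stefan singular foliation $\mca{F}_{T\mca{F}}$ produced in the previous step — hence an open map. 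Thus the connected space $\hat L$ is a disjoint union of the open sets $L'\in\mca{F}$ it contains, so $\hat L=L$. Together with $T\mca{F}_\Delta=\Delta$ this shows the two maps are mutually inverse bijections.

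The main obstacle is the existence of the flag charts for $\mca{F}_\Delta$, i.e. deriving the local normal form of Definition \ref{stelmixlf} from integrability and $C^\infty$-ness of $\Delta$; this is the substantive content of Stefan's theorem and the only point where more than bookkeeping is needed (and where I would simply invoke \cite{Stefan}). Everything else is manipulation of partitions, dimension counts, and the detection properties already isolated in Lemmas \ref{nlnmcinf} and \ref{slcount}.
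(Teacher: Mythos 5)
Your proposal is correct and follows the same overall skeleton as the paper: one direction is Lemma \ref{ssfisd}, the hard existence is delegated to \cite{Stefan}, and the rest is checking the two maps are mutually inverse. The one place where your route genuinely differs is the well-definedness of $\Delta\mapsto\{\text{maximal integral submanifolds}\}$: you invoke Stefan's theorem as if it directly produced the flag charts of Definition \ref{stelmixlf} around the maximal integral submanifolds of $\Delta$, whereas Theorem 1 of \cite{Stefan} is stated for the partition into \emph{accessible sets} of the flows of local sections of $\Delta$, whose tangent distribution is $\ol{\Delta A}$, not a priori $\Delta$. The paper bridges this by observing $\Delta A=\Delta$ and, using integrability, $\ol{\Delta A}=\Delta$ (3.24 Lemma in \cite{Michor}), whence the accessible sets are exactly the maximal integral submanifolds and inherit Stefan's charts; this is also where the paper gets $T\mca{F}_\Delta=\Delta$ for free. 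So if you cite only Stefan's Theorem 1, you still owe this identification step (your sketch of "choose sections, flow, complete to a chart" is morally Stefan's construction, but the leaf-saturation statement you need is about integral manifolds of $\Delta$, not just orbits of the flows). In compensation, your verification that the two maps are inverse is carried out in full, whereas the paper dismisses it as easy: your argument that each $L\in\mca{F}$ is open in the maximal integral submanifold $\hat L$ of $T\mca{F}$ containing it (continuity from the detection property of $\hat L$, smoothness from Lemma \ref{nlnmcinf}, openness by equal dimension) and then $\hat L=L$ by connectedness is a correct and welcome explicit version of what the paper leaves implicit, and your use of Lemma \ref{slcount} is only for the countability remark, as in the paper.
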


\begin{proof}
The map $\mca{F}\mapsto T\mca{F}$ is defined by 2 in Lemma \ref{ssfisd}. 

We prove the well-definedness of the map from right to left. Let $\Delta$ be an integrable $C^\infty$ singular distribution on $M$. Let $\Gamma$ be the set of all local $C^\infty$ sections of $\Delta$, $A$ be the set of local flows generated by the elements of $\Gamma$ and $\mca{F}$ be the set of accessible sets of $A$ (see \cite{Stefan}). Define singular distributions $\Delta A$, $\ol{\Delta A}$ on $M$ by 
\begin{align*}
(\Delta A)_x&=\left\{\frac{d}{dt}a(t,y)\in T_xM\ \middle|\ a\in A,a(t,y)=x\right\}, \\
(\ol{\Delta A})_x&=\left\{a(t,\cdot)_*v\in T_xM\ \middle|\ a\in A,a(t,y)=x,v\in(\Delta A)_y\right\}
\end{align*}
for $x\in M$. (Note that $\Delta A\subset\ol{\Delta A}$.) Then $\mca{F}$ is a $C^\infty$ Stefan singular foliation of $M$ and $T\mca{F}=\ol{\Delta A}$ by Theorem 1 in \cite{Stefan}. Obviously $\Delta A=\Delta$. We have $\ol{\Delta A}=\Delta$ since $\Delta$ is integrable (see (1) $\Rightarrow$ (2) of 3.24 Lemma in \cite{Michor}). Thus $T\mca{F}=\Delta$. In particular an accessible set of $A$ is an integral submanifold of $\Delta$. Hence an accessible set of $A$ is a maximal integral submanifold of $\Delta$. So $\mca{F}$ is the set of all maximal integral submanifolds of $\Delta$. Therefore $\left\{\text{maximal integral submanifolds of $\Delta$}\right\}\mapsfrom\Delta$ is well-defined. 

It is easy to see two maps are inverse to each other. 
\end{proof}

\begin{cor}\label{mrfmssf}
Let $M$ be a $C^\infty$ manifold and $\mca{F}$ be a regular $C^\infty$ foliation of $M$. Then $\mca{F}$ is a $C^\infty$ Stefan singular foliation of $M$. 
\end{cor}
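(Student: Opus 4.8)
The plan is to verify directly the two conditions of Definition \ref{stelmixlf}, using the standard leaf structure of a regular foliation. Write $k=\dim\mca{F}$ and $n=\dim M$. For each leaf $L\in\mca{F}$ equip $L$ with its usual $C^\infty$ leaf structure, whose charts are the plaques of the foliation charts of $\mca{F}$ and whose topology (the leaf topology) is generated by these plaques. With this structure $L$ is a connected $k$-dimensional $C^\infty$ manifold, $\iota_L\colon L\to M$ is an injective immersion, and $L$ is second countable because $M$ is (so $L$ cannot contain an uncountable family of disjoint non-empty open sets). This accounts for the first two sub-bullets of the first condition of Definition \ref{stelmixlf}, so it remains to check the continuity characterization and the existence of flattening charts.

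For the continuity characterization, let $X$ be a locally connected space and $f\colon X\to L$ a map. If $f$ is continuous then so is $\iota_Lf$, since $\iota_L$ is. Conversely assume $\iota_Lf$ is continuous and fix $x_0\in X$. Choose a foliation chart $\theta\colon U\to U_1\times U_2$ of $\mca{F}$, with $U_1\subset\bb{R}^k$ and $U_2\subset\bb{R}^{n-k}$ open, such that $\iota_Lf(x_0)\in U$ and the plaques of $\mca{F}$ in $U$ are the slices $\theta^{-1}(U_1\times\{u_2\})$. Then $\iota_L(L)\cap U$ is a union of plaques, so $\theta(\iota_L(L)\cap U)=U_1\times S_L$ for some $S_L\subset U_2$, and $S_L$ is countable because $L$ is second countable. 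By local connectedness pick a connected neighbourhood $W$ of $x_0$ in $X$ with $\iota_Lf(W)\subset U$. The map $p_2\theta\iota_Lf\colon W\to S_L$ is continuous, has connected domain, and has image inside a countable subset of $\bb{R}^{n-k}$, which is totally disconnected; hence it is constant, so $\iota_Lf(W)$ lies in a single plaque $P$. Since $\iota_L|_P\colon P\to M$ is an embedding, $f|_W=(\iota_L|_P)^{-1}\circ(\iota_Lf|_W)$ is continuous, and as $x_0$ was arbitrary, $f$ is continuous.

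For the second condition of Definition \ref{stelmixlf}, let $x_0\in L_0\in\mca{F}$ and take a foliation chart $(U,\theta)$ of $\mca{F}$ with $\iota_{L_0}(x_0)\in U$; after composing with a translation we may assume $\theta\iota_{L_0}(x_0)=(0,0)$, and after shrinking $U$ we may assume $\theta(U)=U_1\times U_2$ with $U_1$ an open neighbourhood of $0$ in $\bb{R}^k$ and $U_2$ an open neighbourhood of $0$ in $\bb{R}^{n-k}$, the plaques of $\mca{F}$ in $U$ being the slices $\theta^{-1}(U_1\times\{u_2\})$. For every $L\in\mca{F}$ the set $\iota_L(L)\cap U$ is a union of such slices, so $\theta(\iota_L(L)\cap U)=U_1\times S_L$ with $S_L=\{u_2\in U_2\mid(0,u_2)\in\theta(\iota_L(L)\cap U)\}$, which is precisely the required form.

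The only step that is not purely formal is the countability of $S_L$ in the second paragraph, i.e.\ that a leaf of a regular foliation on a second countable manifold meets each foliation chart in at most countably many plaques; this is the standard structure theorem for leaves (equivalent to second countability of the leaf in its leaf topology), which I would simply cite. Alternatively one can sidestep the direct verification: the tangent distribution $T\mca{F}$ is involutive of constant rank $k$, hence a $C^\infty$ singular distribution, and it is integrable since each leaf with its leaf structure is an integral submanifold; its maximal integral submanifolds are exactly the leaves of $\mca{F}$, so $\mca{F}$ is the image of $T\mca{F}$ under the bijection of Proposition \ref{steinsin} and is therefore a $C^\infty$ Stefan singular foliation.
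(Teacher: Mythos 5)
Your proof is correct, but your primary route is different from the paper's. The paper disposes of the corollary in two lines: $T\mca{F}$ is an integrable $C^\infty$ singular distribution, $\mca{F}$ is its set of maximal integral submanifolds, and Proposition \ref{steinsf} --- more precisely Proposition \ref{steinsin} --- does the rest; this is exactly the ``alternative'' you sketch in your last sentences, so that part coincides with the paper. Your main argument instead verifies Definition \ref{stelmixlf} directly: the flattening-chart condition is immediate from a foliation chart, and the continuity characterization is proved by the countable-plaques/total-disconnectedness argument. This is essentially a regular-foliation specialization of what the paper proves in the singular setting in Lemma \ref{slcount} and Lemma \ref{locconncon}, so nothing is wasted, and it has the virtue of not invoking the Stefan--Frobenius correspondence (the paper's route silently uses that the leaves of a regular foliation are precisely the maximal integral submanifolds of $T\mca{F}$, a Frobenius-type fact it leaves unproved, just as you do in your alternative). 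Two small points. First, your parenthetical justification that $L$ is second countable in the leaf topology (``$M$ is second countable, so $L$ cannot contain an uncountable family of disjoint non-empty open sets'') does not work as stated, since leaf-topology open sets need not be open in $M$; but you correctly isolate the real content --- each leaf meets a foliation chart in at most countably many plaques, equivalently leaves are second countable --- and citing that standard fact is legitimate, so this is a presentational blemish rather than a gap. Second, note that in the paper's own development this countability is only proved (Lemma \ref{slcount}) under the standing convention that leaves of a Stefan foliation are second countable manifolds, so your citation is genuinely external to the paper, whereas the paper's short proof hides the issue inside the identification of $\mca{F}$ with the maximal integral submanifolds of $T\mca{F}$; each approach buys brevity at a different place.
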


\begin{proof}
$T\mca{F}$ is an integrable $C^\infty$ singular distribution on $M$. Since $\mca{F}$ is the set of maximal integral submanifolds of $T\mca{F}$, $\mca{F}$ is a $C^\infty$ Stefan singular foliation of $M$ by Proposition \ref{steinsin}. 
\end{proof}

\begin{dfn}
For a Lie groupoid $\mca{G}\rightrightarrows M$, let $\mca{F}_\mca{G}$ be the set of connected components of orbits of $\mca{G}$. 
\end{dfn}

\begin{prop}\label{gmstsif}
Let $\mca{G}\rightrightarrows M$ be a Lie groupoid. Then $\mca{F}_\mca{G}$ is a $C^\infty$ Stefan singular foliation of $M$. 
\end{prop}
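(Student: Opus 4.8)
The plan is to realize $\mca{F}_\mca{G}$ as the $C^\infty$ Stefan singular foliation associated, via Proposition \ref{steinsin}, to an integrable $C^\infty$ singular distribution, and then to check directly that this Stefan singular foliation coincides with $\mca{F}_\mca{G}$. Setup: every $L\in\mca{F}_\mca{G}$ is a connected component of an orbit $O\in\mca{O}_\mca{G}$, and by Proposition \ref{isoorb} the orbit $O$ is an injectively immersed submanifold of $M$, so $L$ is an open submanifold of $O$ and $\iota_L\colon L\to M$ is an injective immersion. Put $\Delta=T\mca{F}_\mca{G}=\bigcup_{L\in\mca{F}_\mca{G}}(\iota_L)_*TL\subset TM$. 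If $x\in M$ lies in the orbit $O$, then since $s\colon\mca{G}^x\to O$ is a submersion (Proposition \ref{isoorb}) and $T_{1_x}\mca{G}^x=\ker\bigl((t_*)_{1_x}\colon T_{1_x}\mca{G}\to T_xM\bigr)$, we get $\Delta_x=(\iota_O)_*T_xO=s_*\bigl(T_{1_x}\mca{G}^x\bigr)=s_*\bigl(\ker(t_*)_{1_x}\bigr)$; in particular each $\Delta_x$ is a linear subspace of $T_xM$, so $\Delta$ is a singular distribution (Definition \ref{sigdistmdm}). By Proposition \ref{steinsin} it then suffices to show that $\Delta$ is integrable and $C^\infty$ and that its set of maximal integral submanifolds is $\mca{F}_\mca{G}$.

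Integrability is immediate: for $x\in M$ the member $L$ of $\mca{F}_\mca{G}$ through $x$ is a connected immersed submanifold with $(\iota_L)_*T_zL=\Delta_z$ for all $z\in L$ (it is open in an orbit), so $L$ is an integral submanifold of $\Delta$ through $x$. For smoothness, fix $x\in M$ and $v\in\Delta_x$, and pick $w\in\ker(t_*)_{1_x}$ with $s_*w=v$. Since $t\colon\mca{G}\to M$ is a submersion, $\ker t_*$ is a vector subbundle of $T\mca{G}$, so there is a $C^\infty$ section $W$ of $\ker t_*$ on a neighbourhood of $1_x$ in $\mca{G}$ with $W_{1_x}=w$. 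As $1\colon M\to\mca{G}$ and the differential $s_*\colon T\mca{G}\to TM$ are $C^\infty$, the assignment $z\mapsto X_z:=s_*\bigl(W_{1_z}\bigr)$ defines a $C^\infty$ vector field $X$ on a neighbourhood of $x$ in $M$ with $X_x=s_*w=v$; moreover $W_{1_z}\in\ker(t_*)_{1_z}=T_{1_z}\mca{G}^z$, so $X_z\in s_*\bigl(T_{1_z}\mca{G}^z\bigr)=\Delta_z$. Hence $\Delta$ is an integrable $C^\infty$ singular distribution.

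By Proposition \ref{steinsin} and Lemma \ref{ssfisd}, the set $\mca{F}_\Delta$ of maximal integral submanifolds of $\Delta$ is a $C^\infty$ Stefan singular foliation and a partition of $M$. Fix $L\in\mca{F}_\mca{G}$. As a connected integral submanifold of $\Delta$, $L$ is contained in a unique $\tilde{L}\in\mca{F}_\Delta$ (the standard fact that a connected integral submanifold lies in a leaf, which follows from the local plaque structure; cf. Lemma \ref{slcount}). The composite of the inclusion $L\hookrightarrow\tilde{L}$ with $\iota_{\tilde{L}}$ equals $\iota_L$, which is continuous; since $L$ is locally connected, the defining property of the Stefan leaf $\tilde{L}$ (Definition \ref{stelmixlf}) makes $L\hookrightarrow\tilde{L}$ continuous, hence $C^\infty$ by Lemma \ref{nlnmcinf}, hence an immersion, and for $z\in L$ we have $\dim L=\dim\Delta_z=\dim\tilde{L}$. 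So $L\hookrightarrow\tilde{L}$ is an injective immersion between manifolds of the same dimension, therefore an open map by the inverse function theorem, and $L$ is open in $\tilde{L}$. The same argument shows that every member of $\mca{F}_\mca{G}$ contained in $\tilde{L}$ is open in $\tilde{L}$; since these cover $\tilde{L}$ disjointly and $\tilde{L}$ is connected, $L=\tilde{L}$. Thus every member of $\mca{F}_\mca{G}$ is a maximal integral submanifold of $\Delta$, and as both $\mca{F}_\mca{G}$ and $\mca{F}_\Delta$ partition $M$ we conclude $\mca{F}_\mca{G}=\mca{F}_\Delta$, which is a $C^\infty$ Stefan singular foliation.

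The only genuinely technical step is the smoothness of $\Delta$, handled above by pushing a $C^\infty$ $t$-vertical local section of $T\mca{G}$ down to $M$ along the unit section and the source map; this uses only that $t$ is a submersion and the principal bundle picture of Proposition \ref{isoorb}. The leaf comparison is soft but must be done with care, because an orbit of a Lie groupoid can fail to be second countable (Example \ref{discs1s1}); this is precisely why one passes to connected components of orbits and why the ``initial submanifold'' clause in Definition \ref{stelmixlf} is invoked.
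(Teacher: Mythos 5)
Your proof is correct and essentially the paper's: both show that $T\mca{F}_\mca{G}$ is an integrable $C^\infty$ singular distribution via $T_x\mca{F}_\mca{G}=s_*\bigl(\ker(t_*)_{1_x}\bigr)$ (the paper packages this using the Lie algebroid $\Lie\mca{G}\simeq(\ker t_*)|_{1_M}$ and its anchor, while you push a local $C^\infty$ section of $\ker t_*$ down along the unit section, which is the same computation) and then invoke the bijection of Proposition \ref{steinsin}. Your leaf-comparison argument is just a spelled-out version of the paper's terse assertion that each member of $\mca{F}_\mca{G}$ is a maximal integral submanifold, so no genuinely different route is involved.
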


\begin{proof}
$\mca{F}_\mca{G}$ is a partition of $M$ and each element of $\mca{F}_\mca{G}$ is a connected immersed submanifold of $M$ by Proposition \ref{isoorb}. So $T\mca{F}_\mca{G}=\bigcup_{L\in\mca{F}_\mca{G}}TL\subset TM$ is an integrable singular distribution on $M$. 

Let $(\Lie\mca{G}\to M,\mt{a},[\cdot,\cdot])$ be the Lie algebroid of $\mca{G}\rightrightarrows M$. By definition 
\begin{equation*}
\Lie\mca{G}=(T\mca{G})|_{1_M}/T1_M,\quad\mt{a}=s_*-t_*\colon\Lie\mca{G}\to TM. 
\end{equation*}
We have $(\ker t_*)|_{1_M}\simeq\Lie\mca{G}$ and $\mt{a}=s_*\colon(\ker t_*)|_{1_M}\to TM$. 

\begin{claim}
$T\mca{F}_\mca{G}=\mt{a}(\Lie\mca{G})$. 
\end{claim}

\begin{proof}
Let $x\in M$ and $x\in O\in\mca{O}_\mca{G}$. Then $s\colon\mca{G}^x\to O$ is a submersion by Proposition \ref{isoorb}. Hence 
\begin{equation*}
T_x\mca{F}_\mca{G}=T_xO=s_*T_{1_x}\mca{G}^x=s_*(\ker t_*)_{1_x}=\mt{a}((\Lie\mca{G})_x). \qedhere
\end{equation*}
\end{proof}

This shows $T\mca{F}_\mca{G}$ is $C^\infty$ since $\Lie\mca{G}$ is a vector bundle on $M$. Each element of $\mca{F}_\mca{G}$ is an integral submanifold of $T\mca{F}_\mca{G}$. Hence each element of $\mca{F}_\mca{G}$ is a maximal integral submanifold of $T\mca{F}_\mca{G}$. Therefore $T\mca{F}_\mca{G}$ corresponds to $\mca{F}_\mca{G}$ by the bijection in Proposition \ref{steinsin}. Hence $\mca{F}_\mca{G}$ is a $C^\infty$ Stefan singular foliation of $M$. 
\end{proof}

\subsubsection{Proofs of Lemma \ref{coocooc} and Lemma \ref{hhscconor}}
\begin{lem}\label{smallvsv}
Let $M$ be an $n$-dimensional $C^\infty$ manifold and $\mca{F}$ be a $C^\infty$ Stefan singular foliation of $M$. Let $x\in L_0\in\mca{F}$, $k=\dim L_0$ and $U$ be an open neighborhood of $x$ in $L_0$. Then there exist an open neighborhood $V$ of $\iota_{L_0}(x)$ in $M$, an open neighborhood $V_1$ of $0$ in $\bb{R}^k$, an open neighborhood $V_2$ of $0$ in $\bb{R}^{n-k}$ and a diffeomorphism $\theta\colon V\to V_1\times V_2$ such that: 
\begin{itemize}
\setlength\itemsep{0em}
\item $\theta\iota_{L_0}(x)=(0,0)$
\item for any $L\in\mca{F}$, $\theta(\iota_L(L)\cap V)=V_1\times S_L$, where 
\begin{equation*}
S_L=\left\{v_2\in V_2\ \middle|\ (0,v_2)\in\theta(\iota_L(L)\cap V)\right\}
\end{equation*}
\item $\theta^{-1}(V_1\times\left\{0\right\})\subset\iota_{L_0}(U)$. 
\end{itemize}
\end{lem}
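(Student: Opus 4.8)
The plan is to take an arbitrary Stefan chart around $\iota_{L_0}(x)$ supplied by Definition \ref{stelmixlf} and then shrink it until its central plaque lands inside $\iota_{L_0}(U)$. First I would fix an open neighborhood $W$ of $\iota_{L_0}(x)$ in $M$, open neighborhoods $W_1\ni 0$ in $\bb{R}^k$ and $W_2\ni 0$ in $\bb{R}^{n-k}$, and a diffeomorphism $\theta_0\colon W\to W_1\times W_2$ with $\theta_0\iota_{L_0}(x)=(0,0)$ and $\theta_0(\iota_L(L)\cap W)=W_1\times S_L^{\theta_0}$ for every $L\in\mca{F}$. Since $\theta_0\iota_{L_0}(x)=(0,0)$ belongs to $\theta_0(\iota_{L_0}(L_0)\cap W)$, the central plaque $P:=\theta_0^{-1}(W_1\times\{0\})$ is contained in $\iota_{L_0}(L_0)$, and $\theta_0$ restricts to a diffeomorphism of $P$ onto $W_1\times\{0\}$; in particular $P$ is an embedded $k$-dimensional submanifold of $M$.

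The main point is to understand how $L_0$ sits inside $P$ near $x$. Let $C$ be the connected component of $\iota_{L_0}^{-1}(W)$ containing $x$, which is open in $L_0$. Because $\dim L_0=k$, Lemma \ref{slcount}(2)(a) applied to $C$ shows $p_2\theta_0\iota_{L_0}(C)$ is a single point, hence $\{0\}$, so $\iota_{L_0}(C)\subset P$. Now $\iota_{L_0}\colon C\to P$ is smooth (the image lies in the embedded submanifold $P$), it is an immersion into $P$ (the composite $C\to P\hookrightarrow M$ is an immersion, and a rank count forces $C\to P$ to be one), and it is injective since $\iota_{L_0}$ is the inclusion of a subset; as $\dim C=k=\dim P$, it is therefore a diffeomorphism of $C$ onto an open subset of $P$. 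Setting $U'':=U\cap C$, an open neighborhood of $x$ in $L_0$, I get that $\iota_{L_0}(U'')$ is an open neighborhood of $\iota_{L_0}(x)$ in $P$; transporting it through $\theta_0$ yields an open neighborhood $W_1''$ of $0$ in $\bb{R}^k$ with $\theta_0(\iota_{L_0}(U''))=W_1''\times\{0\}$.

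Finally I would put $V_1=W_1''$, $V_2=W_2$, $V=\theta_0^{-1}(V_1\times V_2)$ and $\theta=\theta_0|_V\colon V\to V_1\times V_2$, and check the three requirements. We have $\theta\iota_{L_0}(x)=(0,0)$. For $L\in\mca{F}$, since $V\subset W$, $\theta(\iota_L(L)\cap V)=\theta_0(\iota_L(L)\cap W)\cap(V_1\times V_2)=V_1\times(S_L^{\theta_0}\cap V_2)$, which has the required product form; because $0\in V_1$ this forces $S_L=S_L^{\theta_0}\cap V_2$, so the plaque condition holds. And $\theta^{-1}(V_1\times\{0\})=\theta_0^{-1}(W_1''\times\{0\})=\iota_{L_0}(U'')\subset\iota_{L_0}(U)$, which is the new ingredient.

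I expect the one delicate step to be the middle paragraph: showing that $\iota_{L_0}\colon C\to P$ is an open embedding, i.e. that near $x$ the leaf $L_0$ fills out a genuinely open portion of the central plaque rather than merely accumulating on it. This is precisely where the structure theory of Stefan charts enters, through Lemma \ref{slcount} and the fact that leaves are honest subsets of $M$, so their inclusions are injective. Everything else is elementary manipulation of a restricted product chart.
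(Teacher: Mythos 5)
Your proof is correct and follows essentially the same route as the paper: fix a Stefan chart around $\iota_{L_0}(x)$, show via Lemma \ref{slcount} that near $x$ the leaf $L_0$ coincides with an open piece of the central plaque, and then shrink the chart in the plaque direction so that the central plaque lands inside $\iota_{L_0}(U)$, the restricted product form for the other leaves being automatic. The only cosmetic difference is in how the openness is verified: the paper applies the inverse function theorem to $p_1\theta\iota_{L_0}$ using part 1 of Lemma \ref{slcount}, whereas you corestrict $\iota_{L_0}$ to the embedded central plaque and use that an injective immersion between equidimensional manifolds is an open embedding, which is the same inverse-function-theorem step in different clothing.
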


\begin{proof}
There exist an open neighborhood $V^\prime$ of $\iota_{L_0}(x)$ in $M$, an open neighborhood $V^\prime_1$ of $0$ in $\bb{R}^k$, an open neighborhood $V_2$ of $0$ in $\bb{R}^{n-k}$ and a diffeomorphism $\theta\colon V^\prime\to V^\prime_1\times V_2$ such that: 
\begin{itemize}
\setlength\itemsep{0em}
\item $\theta\iota_{L_0}(x)=(0,0)$
\item for any $L\in\mca{F}$, $\theta(\iota_L(L)\cap V^\prime)=V^\prime_1\times S_L$, where 
\begin{equation*}
S_L=\left\{v_2\in V_2\ \middle|\ (0,v_2)\in\theta(\iota_L(L)\cap V^\prime)\right\}. 
\end{equation*}
\end{itemize}
Since $(\iota_{L_0})_*T_xL_0=T_{\iota_{L_0}(x)}\theta^{-1}(V^\prime_1\times\left\{0\right\})$ by 1 in Lemma \ref{slcount}, $(p_1\theta\iota_{L_0})_*\colon T_xL_0\to T_0V^\prime_1$ is an isomorphism. Hence there exist a connected open neighborhood $U_1$ of $x$ in $L_0$ and an open neighborhood $V_1$ of $0$ in $V^\prime_1$ such that $U_1\subset U$ and $p_1\theta\iota_{L_0}\colon U_1\to V_1$ is a diffeomorphism. Let $V=\theta^{-1}(V_1\times V_2)$. We have $p_2\theta\iota_{L_0}(U_1)=\left\{0\right\}$ by 2.(a) in Lemma \ref{slcount}. Hence $\theta\iota_{L_0}(U_1)=V_1\times\left\{0\right\}$ and so $\theta^{-1}(V_1\times\left\{0\right\})=\iota_{L_0}(U_1)\subset\iota_{L_0}(U)$. 
\end{proof}

\begin{lem}\label{topopsec}
Let $X$, $Y$ be topological spaces and $f\colon X\to Y$ be a continuous open map. If $X$ is second countable, then $f(X)$ is second countable. 
\end{lem}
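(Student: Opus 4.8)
The plan is to show directly that the images of a countable basis of $X$ form a countable basis for the subspace topology on $f(X)$. So first I would fix a countable basis $\mca{U}=\left\{U_n\ \middle|\ n\in\bb{N}\right\}$ of $X$. Since $f$ is an open map, each $f(U_n)$ is open in $Y$, and since $f(U_n)\subset f(X)$, the set $f(U_n)=f(U_n)\cap f(X)$ is open in $f(X)$. Thus $\left\{f(U_n)\ \middle|\ n\in\bb{N}\right\}$ is a countable family of open subsets of $f(X)$, and it remains to check that it is a basis.

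Next I would verify the basis condition. Let $V$ be an open subset of $f(X)$ and let $y\in V$. Write $V=W\cap f(X)$ for some open subset $W$ of $Y$, and choose $x\in X$ with $f(x)=y$; then $x\in f^{-1}(W)$, which is open in $X$ because $f$ is continuous. By the basis property of $\mca{U}$ there exists $n$ with $x\in U_n\subset f^{-1}(W)$. Applying $f$ gives $y=f(x)\in f(U_n)\subset W$, and since also $f(U_n)\subset f(X)$ we get $f(U_n)\subset W\cap f(X)=V$. Hence every point of $V$ lies in some $f(U_n)$ contained in $V$, so $V$ is a union of members of $\left\{f(U_n)\ \middle|\ n\in\bb{N}\right\}$. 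This shows the family is a basis, and being countable, $f(X)$ is second countable.

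There is essentially no obstacle here; the only point requiring a little care is the routine but necessary distinction between openness in $Y$ and openness in the subspace $f(X)$, which is handled by intersecting with $f(X)$. (Note that openness of $f$ is genuinely used: without it $f(U_n)$ need not be open, e.g. a constant map onto a point of a non-discrete space.) I would keep the write-up to the two short paragraphs above.
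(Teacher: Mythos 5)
Your proof is correct, and it is exactly the standard argument the paper has in mind: the lemma is stated there without proof, being treated as a routine fact, and your two steps (images of a countable basis are open in $f(X)$ by openness of $f$, and they form a basis by combining continuity with the basis property in $X$) supply precisely that routine verification, including the right care about subspace-versus-ambient openness.
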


\begin{lem}\label{locconncon}
Let $\mca{G}\rightrightarrows M$ be a Lie groupoid such that $\mca{G}$ is second countable, $O\in\mca{O}_\mca{G}$, $\iota\colon O\to M$ be the inclusion map, $X$ be a locally connected topological space and $f\colon X\to O$ be a map. Then $f\colon X\to O$ is continuous with respect to the submanifold topology of $O$ if and only if $\iota f\colon X\to M$ is continuous. 
\end{lem}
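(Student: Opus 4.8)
The plan is to reduce the statement to the corresponding fact about the Stefan singular foliation $\mca{F}_\mca{G}$, namely Lemma \ref{nlnmcinf} and the defining property of a $C^\infty$ Stefan singular foliation in Definition \ref{stelmixlf}, which says precisely that a map $f\colon X\to L$ from a locally connected space $X$ is continuous (for the leaf topology) iff $\iota_Lf$ is continuous into $M$. The only gap between that and what we want is that an orbit $O$ of $\mca{G}$ need not be connected; it is a disjoint union of leaves $L\in\mca{F}_\mca{G}$ (Lemma \ref{connorb}), and the orbit topology on $O$ coming from Proposition \ref{isoorb}(4) need not be the disjoint-union topology of the leaf topologies — a priori it could be coarser. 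So the real content is: for the $C^\infty$ manifold structure on $O$ produced in Proposition \ref{isoorb}(4), each connected component (= each leaf $L$) is open in $O$, equivalently $O$ carries the disjoint-union topology of its components. Once that is known, the lemma follows: continuity of $f\colon X\to O$ is a local question, $X$ is locally connected so can be covered by connected opens $W$, each $f(W)$ lands in a single component $L$, and on each such piece we apply the Stefan foliation characterization.

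The first step, then, is to show the components $L$ of $O$ are open in $O$ with the topology from Proposition \ref{isoorb}(4), and that the submanifold structure $L$ inherits as an open subset of $O$ agrees with the leaf structure making $\iota_L\colon L\to M$ the immersion of Proposition \ref{isoorb}(4)/Proposition \ref{gmstsif}. Here I would use the description $\ol{s}\colon\mca{G}^x_x\bs\mca{G}^x\xrightarrow{\sim}O$: the source fiber $\mca{G}^x$ is a Hausdorff second countable manifold (it is second countable since $\mca{G}$ is), and $\mca{G}^x_x\bs\mca{G}^x$ has the quotient topology; its connected components are open because a quotient of a locally connected space by an open map is locally connected (the principal bundle projection $\pi$ is open). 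Transporting along $\ol{s}$, the components of $O$ are open, and by Lemma \ref{connorb} they are exactly the leaves $L\in\mca{F}_\mca{G}$ through points of $O$, each with its Stefan-foliation structure (this matches because both are obtained from the same bundle picture $s\colon(\mca{G}_0)^x\to L$).

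With that in hand the proof is routine. For the ``only if'' direction: if $f\colon X\to O$ is continuous and $\iota_O\colon O\to M$ is continuous (it is continuous, indeed an immersion, by Proposition \ref{isoorb}(4)), then $\iota f=\iota_O f$ is continuous. For the ``if'' direction: suppose $\iota f\colon X\to M$ is continuous; fix $x_0\in X$ and a connected open neighborhood $W$ of $x_0$ in $X$ (using local connectedness). Since $\iota f(W)$ is connected and contained in $O$, and since distinct leaves $L$ are separated in $M$ only after restricting to a foliation chart — more carefully, since $f(W)$ is connected in $O$ once we know $f|_W$ is continuous, which is what we are proving — I instead argue: $f(x_0)$ lies in a unique component $L$; let $U=f^{-1}(L)$, which need not yet be known open, so work directly. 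Restricting the foliation chart of Definition \ref{stelmixlf} around $\iota_Lf(x_0)$ pulls back under $\iota f$ to an open neighborhood of $x_0$ on which $f$ takes values in $\iota_L(L)\cap U$; on this neighborhood $\iota_Lf$ is continuous into $M$, so by Lemma \ref{nlnmcinf}'s topological companion (the Stefan characterization in Definition \ref{stelmixlf}) $f$ is continuous into $L$, hence into $O$ since $L$ is open in $O$. This gives continuity of $f$ at $x_0$, and $x_0$ was arbitrary.

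The main obstacle is the first step: verifying that the orbit topology from Proposition \ref{isoorb}(4) makes each leaf open, i.e. that the ``injectively immersed, possibly non-Hausdorff'' manifold $O$ genuinely has the disjoint-union topology across its components. Everything after that is a standard patching argument using Definition \ref{stelmixlf} and local connectedness of $X$.
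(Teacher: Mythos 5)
Your overall strategy---show each leaf is open in $O$, then patch using local connectedness of $X$ and the Stefan axiom for a single leaf---breaks down at the one step that carries all the difficulty. You claim that pulling back a foliation chart $U$ around $\iota_Lf(x_0)$ gives a neighborhood of $x_0$ on which $f$ takes values in $\iota_L(L)\cap U$. Continuity of $\iota f$ only lets you arrange $\iota f(W)\subset U$ for some connected open $W\ni x_0$; it does not prevent $f(W)$ from meeting other leaves of the same orbit, since $\theta(\iota(O)\cap U)=U_1\times S_O$ where $S_O$ collects the transversal traces of \emph{all} components of $O$, and a priori $f$ can jump between them arbitrarily close to $x_0$. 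This is exactly where the hypothesis that $\mca{G}$ is second countable must enter, and your proof never uses it in an essential way (it appears only in Step 1, where it is not needed: every manifold is locally connected, so the components of $O$ are automatically open). The statement is in fact false without second countability: in the paper's example built from $S^1_d$ (Example \ref{discs1s1} and the example following Lemma \ref{hhscconor}), take $X=S^1$ and $f=\id\colon S^1\to O_d$; every leaf is a single point, leaves are open in $O_d$, $\iota f$ is continuous, yet $f$ is not continuous and does not locally land in one leaf---so the chart-pullback step, as you state it, fails there.

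The paper's proof supplies precisely the missing ingredient. Since $\mca{G}$ is second countable, $\mca{G}^{f(x_0)}$ is second countable, and $s\colon\mca{G}^{f(x_0)}\to O$ is a continuous open surjection, so $O$ is second countable (Lemma \ref{topopsec}); hence $O$ has countably many components, each contributing a countable transversal trace by Lemma \ref{slcount}, so $S_O$ is countable and therefore totally disconnected. Choosing the chart via Lemma \ref{smallvsv} so that the central plaque lies in $\iota(V)$, and taking a connected open $U\ni x_0$ with $\iota f(U)\subset W$, the connected set $p_2\theta\iota f(U)\subset S_O$ containing $0$ must be $\{0\}$, which simultaneously forces $f(U)$ into the plaque of $f(x_0)$ and into $V$. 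If you want to keep your leaf-by-leaf reduction, you would still need this countability/total-disconnectedness argument (or something equivalent) to show that $f$ locally lands in a single leaf; as written, that step is asserted rather than proved.
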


\begin{proof}
Assume $\iota f\colon X\to M$ is continuous. Let $x_0\in X$ and $V$ be an open neighborhood of $f(x_0)$ in $O$. It suffices to find an open neighborhood $U$ of $x_0$ in $X$ such that $f(U)\subset V$. Let $f(x_0)\in O_0\in\mca{F}_\mca{G}$. $\mca{F}_\mca{G}$ is a $C^\infty$ Stefan singular foliation of $M$ by Proposition \ref{gmstsif}. Let $n=\dim M$ and $k=\dim O$. By Lemma \ref{smallvsv} there exist an open neighborhood $W$ of $\iota f(x_0)$ in $M$, an open neighborhood $W_1$ of $0$ in $\bb{R}^k$, an open neighborhood $W_2$ of $0$ in $\bb{R}^{n-k}$ and a diffeomorphism $\theta\colon W\to W_1\times W_2$ such that: 
\begin{itemize}
\setlength\itemsep{0em}
\item $\theta\iota f(x_0)=(0,0)$
\item if $L\in\mca{F}_\mca{G}$, then there exists $S_L\subset W_2$ such that $\theta(\iota_L(L)\cap W)=W_1\times S_L$
\item $\theta^{-1}(W_1\times\left\{0\right\})=(p_2\theta)^{-1}(0)\subset\iota(V)$, where $p_2\colon W_1\times W_2\to W_2$ is the projection. 
\end{itemize}
Since $\mca{G}$ is second countable, $\mca{G}^{f(x_0)}$ is second countable. Since $s\colon\mca{G}^{f(x_0)}\to O$ is a continuous open surjective map, $O$ is second countable by Lemma \ref{topopsec}. So $O$ has countably many connected components: Let $A$ be a countable set such that $0\in A$, $O=\bigsqcup_{a\in A}O_a$, where $O_a$ is a connected component of $O$ for each $a\in A$. We have $O_a\in\mca{F}_\mca{G}$. Put $S_O=\bigcup_{a\in A}S_{O_a}\subset W_2$. Then $\theta(\iota(O)\cap W)=W_1\times S_O$. Since $S_{O_a}$ is countable by Lemma \ref{slcount}, $S_O$ is countable. $S_O$ is totally disconnected since a connected metrizable space containing two different points is uncountable (the image of a distance function is a connected subset of $\bb{R}$). By the continuity of $\iota f\colon X\to M$, there exists a connected open neighborhood $U$ of $x_0$ in $X$ such that $\iota f(U)\subset W$. Note that $\iota f(U)\subset\iota(O)\cap W$. So $p_2\theta\iota f(U)\subset S_O$ is defined and connected, and $0\in p_2\theta\iota f(U)$. Hence $\left\{0\right\}=p_2\theta\iota f(U)$ by the total disconnectedness. Therefore $f(U)\subset(p_2\theta\iota)^{-1}(0)\subset\iota^{-1}\iota(V)=V$. So $f\colon X\to O$ is continuous at $x_0$ and $f\colon X\to O$ is continuous. 
\end{proof}

\begin{proof}[Proof of Lemma \ref{coocooc}]
1. Take $F$ from the right hand side of the equality. Let $O\in\mca{O}_\mca{H}$ and $O^\prime\in\mca{O}_{\mca{H}^\prime}$ be such that $F(O)\subset O^\prime$. Let $O_0$ be a connected component of $O$. Since $F\colon O_0\to O^\prime$ is continuous, there exists a connected component $O^\prime_0$ of $O^\prime$ such that $F(O_0)\subset O^\prime_0$. By Proposition \ref{gmstsif}, $\mca{F}_{\mca{H}^\prime}$ is a $C^\infty$ Stefan singular foliation of $N^\prime$ and $O^\prime_0\in\mca{F}_{\mca{H}^\prime}$. By Lemma \ref{nlnmcinf}, $F\colon O_0\to O^\prime_0$ is $C^\infty$ since $F\colon O_0\to N^\prime$ is $C^\infty$. So $F\colon O\to O^\prime$ is $C^\infty$. 

2. Take $F$ from the right hand side of the equality. Let $O\in\mca{O}_\mca{H}$ and $O^\prime\in\mca{O}_{\mca{H}^\prime}$ be such that $F(O)\subset O^\prime$. By Lemma \ref{locconncon}, $F\colon O\to O^\prime$ is continuous since $F\colon O\to N^\prime$ is continuous. Hence $F\in C^\infty(\mca{O}_\mca{H},\mca{O}_{\mca{H}^\prime})$ by 1. 
\end{proof}

\begin{proof}[Proof of Lemma \ref{hhscconor}]
$\mca{H}^\prime\rightrightarrows N^\prime$ is a Lie groupoid such that $\mca{H}^\prime$ is second countable, $O^\prime\in\mca{O}_{\mca{H}^\prime}$, $O\times I$ is locally connected and we have a map $H\colon O\times I\to O^\prime$. Since $H\colon O\times I\to N^\prime$ is continuous, $H\colon O\times I\to O^\prime$ is continuous by Lemma \ref{locconncon}. 
\end{proof}

\subsection{$nt$-homotopies}
\begin{dfn}\label{nthomptppp}
Let $\mca{H}\rightrightarrows N$, $\mca{H}^\prime\rightrightarrows N^\prime$ be Lie groupoids and $\varphi$, $\varphi^\prime\in\Hom(\mca{H},\mca{H}^\prime)$. An \emph{$nt$-homotopy} between $\varphi$ and $\varphi^\prime$ is a continuous map $P\colon N\times I\to\mca{H}^\prime$ satisfying: 
\begin{itemize}
\setlength\itemsep{0em}
\item $P(\cdot,t)\in C^\infty(N,\mca{H}^\prime)$ for all $t\in I$
\item $t^\prime P(\cdot,t)=F_\varphi$ for all $t\in I$
\item $P(\cdot,0)=1_{F_\varphi}$
\item $\varphi P(\cdot,1)=\varphi^\prime$. 
\end{itemize}
We say that $\varphi$ is \emph{$nt$-homotopic} to $\varphi^\prime$ (written $\varphi\sim_{nt}\varphi^\prime$) if there exists an $nt$-homotopy between $\varphi$ and $\varphi^\prime$. 
\end{dfn}

\begin{lem}
Let $\mca{H}\rightrightarrows N$, $\mca{H}^\prime\rightrightarrows N^\prime$ be Lie groupoids. Then $\sim_{nt}$ is an equivalence relation on $\Hom(\mca{H},\mca{H}^\prime)$. 
\end{lem}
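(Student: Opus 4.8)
The plan is to exhibit explicit $nt$-homotopies witnessing reflexivity, transitivity and symmetry, working throughout with the groupoid $C^\infty(N,\mca{H}^\prime)\rightrightarrows C^\infty(N,N^\prime)$ of Example \ref{mapgroupoid}, its right action $(\varphi,P)\mapsto\varphi P$ on $\Hom(\mca{H},\mca{H}^\prime)$, and the equivariance of $b\colon\varphi\mapsto F_\varphi$ (Lemma \ref{hnhncnnb} and the proposition after it). These give $F_{\varphi P}=s^\prime P$, so that the condition $\varphi P(\cdot,1)=\varphi^\prime$ in Definition \ref{nthomptppp} forces $F_{\varphi^\prime}=s^\prime P(\cdot,1)$, together with the action identities $\varphi\,1_{F_\varphi}=\varphi$ and $(\varphi P)P^\prime=\varphi(PP^\prime)$. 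Smoothness of each time-slice will come from Example \ref{mapgroupoid}, and continuity of the homotopies as maps $N\times I\to\mca{H}^\prime$ from continuity of multiplication and inversion in $\mca{H}^\prime$; every composability assertion below reduces to a trivial identity of base maps.

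For reflexivity I would take the constant homotopy $P(\cdot,t)=1_{F_\varphi}$: the first three conditions of Definition \ref{nthomptppp} are immediate (note $1_{F_\varphi}$ is the unit arrow at $b(\varphi)$, so $t^\prime 1_{F_\varphi}=F_\varphi$), and $\varphi P(\cdot,1)=\varphi\,1_{F_\varphi}=\varphi$ by the unit axiom of the action. For transitivity, given $nt$-homotopies $P$ from $\varphi$ to $\varphi^\prime$ and $R$ from $\varphi^\prime$ to $\varphi^{\prime\prime}$, I would first left-translate $R$ by $P(\cdot,1)$: since $t^\prime R(\cdot,t)=F_{\varphi^\prime}=s^\prime P(\cdot,1)$, the product $P(\cdot,1)R(\cdot,t)$ is defined in $C^\infty(N,\mca{H}^\prime)$ with target $t^\prime\bigl(P(\cdot,1)R(\cdot,t)\bigr)=F_\varphi$. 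Then I would concatenate, setting $S(\cdot,t)=P(\cdot,2t)$ for $t\le1/2$ and $S(\cdot,t)=P(\cdot,1)R(\cdot,2t-1)$ for $t\ge1/2$; the two formulas agree at $t=1/2$ because $P(\cdot,1)\,1_{F_{\varphi^\prime}}=P(\cdot,1)$, so $S$ is continuous, and $\varphi S(\cdot,1)=\varphi\bigl(P(\cdot,1)R(\cdot,1)\bigr)=(\varphi P(\cdot,1))R(\cdot,1)=\varphi^\prime R(\cdot,1)=\varphi^{\prime\prime}$ by associativity of the action. The conditions $S(\cdot,t)\in C^\infty(N,\mca{H}^\prime)$, $t^\prime S(\cdot,t)=F_\varphi$ and $S(\cdot,0)=1_{F_\varphi}$ are checked piecewise.

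The step I expect to require the most care is symmetry, because the naive reversal $t\mapsto P(\cdot,1-t)$ does not work: reversing interchanges the fixed target $F_\varphi$ with the moving source $s^\prime P(\cdot,t)$, so the requirement $t^\prime(\text{homotopy})=F_{\varphi^\prime}$ fails. My fix is to left-translate the reversed path by $P(\cdot,1)^{-1}$, i.e. to set $Q(\cdot,t)=P(\cdot,1)^{-1}P(\cdot,1-t)$. This is composable since $s^\prime\bigl(P(\cdot,1)^{-1}\bigr)=t^\prime P(\cdot,1)=F_\varphi=t^\prime P(\cdot,1-t)$; it has constant target $t^\prime Q(\cdot,t)=s^\prime P(\cdot,1)=F_{\varphi^\prime}$; it satisfies $Q(\cdot,0)=P(\cdot,1)^{-1}P(\cdot,1)=1_{F_{\varphi^\prime}}$ and $Q(\cdot,1)=P(\cdot,1)^{-1}P(\cdot,0)=P(\cdot,1)^{-1}1_{F_\varphi}=P(\cdot,1)^{-1}$; and $\varphi^\prime Q(\cdot,1)=(\varphi P(\cdot,1))P(\cdot,1)^{-1}=\varphi\bigl(P(\cdot,1)P(\cdot,1)^{-1}\bigr)=\varphi\,1_{F_\varphi}=\varphi$. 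Hence $\varphi^\prime\sim_{nt}\varphi$, and combining the three properties shows $\sim_{nt}$ is an equivalence relation on $\Hom(\mca{H},\mca{H}^\prime)$.
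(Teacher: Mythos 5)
Your proof is correct and follows essentially the same route as the paper: the constant homotopy $1_{F_\varphi}$ for reflexivity, $P(\cdot,1)^{-1}P(\cdot,1-t)$ for symmetry, and a concatenation left-translated by $P(\cdot,1)$ for transitivity. The only discrepancy is in the transitivity step, where the paper writes the second branch as $P^\prime(y,2t-1)P(y,1)$, whereas your order $P(\cdot,1)R(\cdot,2t-1)$ is the one actually compatible with the paper's composition convention ($g_1g_2$ defined when $s(g_1)=t(g_2)$, with target $t(g_1)$) and with the identity $\varphi(PQ)=(\varphi P)Q$, so your version is, if anything, the more carefully stated one.
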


\begin{proof}
For $\varphi\in\Hom(\mca{H},\mca{H}^\prime)$, 
\begin{align*}
P\colon N\times I&\to\mca{H}^\prime\\
(y,t)&\mapsto1_{F_\varphi(y)}
\end{align*}
is an $nt$-homotopy between $\varphi$ and $\varphi$. 

For $\varphi$, $\varphi^\prime\in\Hom(\mca{H},\mca{H}^\prime)$ such that $\varphi\sim_{nt}\varphi^\prime$, take an $nt$-homotopy $P$ between $\varphi$ and $\varphi^\prime$. Then 
\begin{align*}
P^\prime\colon N\times I&\to\mca{H}^\prime\\
(y,t)&\mapsto P(y,1)^{-1}P(y,1-t)
\end{align*}
is an $nt$-homotopy between $\varphi^\prime$ and $\varphi$. 

Finally assume $\varphi\sim_{nt}\varphi^\prime$ and $\varphi^\prime\sim_{nt}\varphi^{\prime\prime}$. Let $P$ be an $nt$-homotopy between $\varphi$ and $\varphi^\prime$, and $P^\prime$ be an $nt$-homotopy between $\varphi^\prime$ and $\varphi^{\prime\prime}$. Then 
\begin{align*}
P^{\prime\prime}\colon N\times I&\to\mca{H}^\prime\\
(y,t)&\mapsto
\begin{cases}
P(y,2t)&0\leq t\leq\frac{1}{2}\\
P^\prime(y,2t-1)P(y,1)&\frac{1}{2}\leq t\leq1
\end{cases}
\end{align*}
gives an $nt$-homotopy between $\varphi$ and $\varphi^{\prime\prime}$. 
\end{proof}

\begin{lem}\label{hnhnphipoh}
Let $\mca{H}\rightrightarrows N$, $\mca{H}^\prime\rightrightarrows N^\prime$ be Lie groupoids, $\varphi$, $\varphi^\prime\in\Hom(\mca{H},\mca{H}^\prime)$ and $P$ be an $nt$-homotopy between $\varphi$ and $\varphi^\prime$. Then $\varphi P(\cdot,t)\in\Hom(\mca{H},\mca{H}^\prime)$ is an orbitwise homotopy between $\varphi$ and $\varphi^\prime$. 
\end{lem}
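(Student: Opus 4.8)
The plan is to verify that the map $\eta\colon\mca{H}\times I\to\mca{H}^\prime$, $\eta(h,t)=(\varphi P(\cdot,t))(h)=P(t(h),t)^{-1}\varphi(h)P(s(h),t)$, meets the three requirements in Definition~\ref{orbitwiseh} for an orbitwise homotopy between the morphisms $\varphi$ and $\varphi^\prime$. First I would note that, because $t^\prime P(\cdot,t)=F_\varphi$ for each $t\in I$, the pair $(\varphi,P(\cdot,t))$ lies in $\Hom(\mca{H},\mca{H}^\prime)\times_{b,t^\prime}C^\infty(N,\mca{H}^\prime)$, so Lemma~\ref{hnhncnnb} gives $\eta(\cdot,t)=\varphi P(\cdot,t)\in\Hom(\mca{H},\mca{H}^\prime)$ with base map $F_{\eta(\cdot,t)}=F_\varphi P(\cdot,t)=s^\prime P(\cdot,t)$. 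The map $\eta$ is continuous, being a finite composite of $t$, $s$, $P$, $\varphi$, the inversion and the multiplication of $\mca{H}^\prime$, where composability at each stage comes from the two commuting squares for $\varphi$ together with $t^\prime P(\cdot,t)=F_\varphi$ (e.g. $s^\prime(P(t(h),t)^{-1})=F_\varphi(t(h))=t^\prime(\varphi(h))$). The endpoint conditions are read off from the definition of an $nt$-homotopy (Definition~\ref{nthomptppp}): $P(\cdot,0)=1_{F_\varphi}$ forces $\eta(h,0)=1_{F_\varphi(t(h))}\varphi(h)1_{F_\varphi(s(h))}=\varphi(h)$, and $\eta(\cdot,1)=\varphi P(\cdot,1)=\varphi^\prime$.

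It then remains to show that $H(y,t):=s^\prime(P(y,t))=F_{\eta(\cdot,t)}(y)$ is an orbitwise homotopy between $F_\varphi$ and $F_{\varphi^\prime}$, both of which lie in $C^\infty(\mca{O}_\mca{H},\mca{O}_{\mca{H}^\prime})$ by Lemma~\ref{oosmhs}. That $H(\cdot,t)\in C^\infty(N,N^\prime)$, that $H(\cdot,0)=F_\varphi$ and $H(\cdot,1)=F_{\varphi^\prime}$, and that $H\colon N\times I\to N^\prime$ is continuous are all routine. For the orbit condition, fix $O\in\mca{O}_\mca{H}$ and pick $O^\prime\in\mca{O}_{\mca{H}^\prime}$ with $F_\varphi(O)\subset O^\prime$ (Lemma~\ref{oosmhs}); since for $(y,t)\in O\times I$ the element $P(y,t)$ is an arrow of $\mca{H}^\prime$ with target $t^\prime(P(y,t))=F_\varphi(y)\in O^\prime$, its source $H(y,t)=s^\prime(P(y,t))$ lies in the same orbit $O^\prime$, so $H(O\times I)\subset O^\prime$.

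The step I expect to be the main obstacle is the remaining requirement, that $H\colon O\times I\to O^\prime$ be continuous for the intrinsic (immersed-submanifold) topology on $O^\prime$, since $\mca{H}^\prime$ is not assumed second countable and hence Lemma~\ref{hhscconor} is unavailable. I plan to prove this locally using the principal-bundle structure of the target fibres. Fix $(y_0,t_0)\in O\times I$ and put $x_0^\prime=F_\varphi(y_0)\in O^\prime$. By Proposition~\ref{isoorb}(4), $s^\prime\colon(\mca{H}^\prime)^{x_0^\prime}\to O^\prime$ is a left principal bundle, so it admits a $C^\infty$ local section $\gamma\colon U^\prime\to(\mca{H}^\prime)^{x_0^\prime}$ on an open neighbourhood $U^\prime$ of $x_0^\prime$ in $O^\prime$, with $t^\prime\circ\gamma$ constant equal to $x_0^\prime$ and $s^\prime\circ\gamma=\id$. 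On the open set $F_\varphi^{-1}(U^\prime)\times I$ — open in $O\times I$ and containing $(y_0,t_0)$ since $F_\varphi\colon O\to O^\prime$ is continuous by Lemma~\ref{oosmhs} — the product $S(y,t):=\gamma(F_\varphi(y))\,P(y,t)$ is defined, because $s^\prime(\gamma(F_\varphi(y)))=F_\varphi(y)=t^\prime(P(y,t))$; it takes values in the Hausdorff closed embedded submanifold $(\mca{H}^\prime)^{x_0^\prime}$ of $\mca{H}^\prime$, is continuous, and satisfies $s^\prime(S(y,t))=s^\prime(P(y,t))=H(y,t)$. Since $s^\prime\colon(\mca{H}^\prime)^{x_0^\prime}\to O^\prime$ is smooth, hence continuous into $O^\prime$, by Proposition~\ref{isoorb}, the composite $H=s^\prime\circ S$ is continuous on $F_\varphi^{-1}(U^\prime)\times I$; as $(y_0,t_0)$ was arbitrary, $H\colon O\times I\to O^\prime$ is continuous. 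Assembling these facts shows $\eta$ is an orbitwise homotopy between $\varphi$ and $\varphi^\prime$.
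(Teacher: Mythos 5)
Your proof is correct and takes essentially the same route as the paper's: the only delicate point, continuity of $H=s^\prime P\colon O\times I\to O^\prime$ for the intrinsic topology, is handled by lifting $H$ through $s^\prime$ into a single target fibre and invoking Proposition \ref{isoorb}. The paper builds the lift as $\wt{H}(y,t)=\varphi(\sigma(y))P(y,t)$ from a local section $\sigma$ of $s\colon\mca{H}^{y_0}\to O$ on the domain side, whereas you use a local section $\gamma$ of $s^\prime\colon(\mca{H}^\prime)^{x_0^\prime}\to O^\prime$ on the codomain side and set $S(y,t)=\gamma(F_\varphi(y))P(y,t)$ — an immaterial variation of the same argument.
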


\begin{proof}
We have $F_{\varphi P(\cdot,t)}=s^\prime P(\cdot,t)$ for all $t\in I$. Put 
\begin{align*}
H\colon N\times I&\to N^\prime\\
(y,t)&\mapsto s^\prime P(y,t). 
\end{align*}
Let $O\in\mca{O}_\mca{H}$. There exists $O^\prime\in\mca{O}_{\mca{H}^\prime}$ such that $F_\varphi(O)\subset O^\prime$. Then $H(O\times I)\subset O^\prime$. We need to show that $H\colon O\times I\to O^\prime$ is continuous. Let $y_0\in O$. There exists an open neighborhood $U$ of $y_0$ in $O$ and a $C^\infty$ local section $\sigma\colon U\to\mca{H}^{y_0}$ of $s\colon\mca{H}^{y_0}\to O$. Put $y_0^\prime=F_\varphi(y_0)\in O^\prime$. The map 
\begin{align*}
\wt{H}\colon U\times I&\to(\mca{H}^\prime)^{y_0^\prime}\\
(y,t)&\mapsto\varphi(\sigma(y))P(y,t)
\end{align*}
is continuous. We have 
\begin{equation*}
\begin{tikzcd}
&(\mca{H}^\prime)^{y_0^\prime}\ar[d,"s^\prime"]\\
U\times I\ar[ur,"\wt{H}"{name=U}]\ar[r,"H"']&O^\prime. \ar[to=U,phantom,"\circlearrowright"{xshift=3,yshift=-1}]
\end{tikzcd}
\end{equation*}
Hence $H\colon U\times I\to O^\prime$ is continuous. 
\end{proof}

\begin{lem}\label{simsim}
Let $\mca{H}\rightrightarrows N$, $\mca{H}^\prime\rightrightarrows N^\prime$, $\mca{H}^{\prime\prime}\rightrightarrows N^{\prime\prime}$ be Lie groupoids, $\varphi$, $\varphi^\prime\in\Hom(\mca{H},\mca{H}^\prime)$ and $\psi$, $\psi^\prime\in\Hom(\mca{H}^\prime,\mca{H}^{\prime\prime})$. Let $a\in\left\{o,nt\right\}$. If $\varphi\sim_a\varphi^\prime$ and $\psi\sim_a\psi^\prime$, then $\psi\varphi\sim_a\psi^\prime\varphi^\prime$. 
\end{lem}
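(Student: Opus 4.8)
The plan is to reduce the claim to two elementary moves and conclude by transitivity of $\sim_a$, treating $a=o$ and $a=nt$ in parallel since the arguments are structurally identical. Concretely, I would establish $\psi\varphi\sim_a\psi\varphi^\prime$ (changing the outer factor, using $\varphi\sim_a\varphi^\prime$) and $\psi\varphi^\prime\sim_a\psi^\prime\varphi^\prime$ (changing the inner factor, using $\psi\sim_a\psi^\prime$), and then invoke the fact, proved just above for $nt$ and clear for $o$, that $\sim_a$ is an equivalence relation on $\Hom(\mca{H},\mca{H}^{\prime\prime})$.

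For the first move I post-compose a homotopy with $\psi$. If $\eta\colon\mca{H}\times I\to\mca{H}^\prime$ is an orbitwise homotopy from $\varphi$ to $\varphi^\prime$, then $(h,t)\mapsto\psi(\eta(h,t))$ is an orbitwise homotopy from $\psi\varphi$ to $\psi\varphi^\prime$; for $nt$, if $P\colon N\times I\to\mca{H}^\prime$ is an $nt$-homotopy from $\varphi$ to $\varphi^\prime$, then $Q(y,t)=\psi(P(y,t))$ is an $nt$-homotopy from $\psi\varphi$ to $\psi\varphi^\prime$ — the only point needing a line of computation is $(\psi\varphi)Q(\cdot,1)=\psi\varphi^\prime$, which follows from $\psi$ being a morphism: for $h\in\mca{H}$,
\begin{equation*}
Q(t(h),1)^{-1}\,\psi\varphi(h)\,Q(s(h),1)=\psi\bigl(P(t(h),1)^{-1}\varphi(h)P(s(h),1)\bigr)=\psi\bigl(\varphi P(\cdot,1)(h)\bigr)=\psi\varphi^\prime(h).
\end{equation*}
For the second move I pre-compose a homotopy between $\psi$ and $\psi^\prime$ with $\varphi^\prime$. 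If $\zeta$ is an orbitwise homotopy from $\psi$ to $\psi^\prime$, then $(h,t)\mapsto\zeta(\varphi^\prime(h),t)$ does the job; for $nt$, if $P^\prime\colon N^\prime\times I\to\mca{H}^{\prime\prime}$ is an $nt$-homotopy from $\psi$ to $\psi^\prime$, then $Q^\prime(y,t)=P^\prime(F_{\varphi^\prime}(y),t)$ is an $nt$-homotopy from $\psi\varphi^\prime$ to $\psi^\prime\varphi^\prime$, where one rewrites, using $F_{\varphi^\prime}\circ t=t^\prime\circ\varphi^\prime$ and $F_{\varphi^\prime}\circ s=s^\prime\circ\varphi^\prime$,
\begin{equation*}
(\psi\varphi^\prime)Q^\prime(\cdot,1)(h)=P^\prime(t^\prime\varphi^\prime(h),1)^{-1}\,\psi\varphi^\prime(h)\,P^\prime(s^\prime\varphi^\prime(h),1)=\psi P^\prime(\cdot,1)\bigl(\varphi^\prime(h)\bigr)=\psi^\prime\varphi^\prime(h).
\end{equation*}

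The remaining verifications are routine: each $\eta(\cdot,t)$, $\psi\circ\eta(\cdot,t)$, $\zeta(\cdot,t)\circ\varphi^\prime$, $P(\cdot,t)$, $Q(\cdot,t)$, $Q^\prime(\cdot,t)$ is a morphism (resp.\ $C^\infty$ map into $\mca{H}^{\prime\prime}$) with the expected base map; all maps produced are continuous as composites of continuous maps; and for $nt$ the $t^{\prime\prime}$-compatibility and the $t=0$ normalisation hold because $t^{\prime\prime}\psi=F_\psi t^\prime$ and $\psi(1_x)=1_{F_\psi(x)}$. The one slightly delicate point — the step I'd treat as the main obstacle — is the orbitwise-continuity clause in the definition of orbitwise homotopy after composing: if $\eta(\cdot,t)$ sends an orbit $O$ of $\mca{H}$ continuously into an orbit $O^\prime$ of $\mca{H}^\prime$, I need $F_\psi$ to send $O^\prime$ continuously into an orbit $O^{\prime\prime}$ of $\mca{H}^{\prime\prime}$; this is exactly Lemma \ref{oosmhs}, which gives $F_\psi\in C^\infty(\mca{O}_{\mca{H}^\prime},\mca{O}_{\mca{H}^{\prime\prime}})$, hence $F_\psi$ is smooth (so continuous) on each orbit, and the composite $O\times I\to O^\prime\to O^{\prime\prime}$ is continuous; likewise $F_{\varphi^\prime}$ is smooth on each orbit of $\mca{H}$ by the same lemma, so $(h,t)\mapsto F_{\zeta(\cdot,t)}(F_{\varphi^\prime}(h))$ restricts continuously to each $O\times I$. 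Granting this, concatenating the two moves and using transitivity of $\sim_a$ yields $\psi\varphi\sim_a\psi^\prime\varphi^\prime$.
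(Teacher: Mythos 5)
Your proof is correct and follows essentially the same route as the paper: in the $nt$ case the paper uses exactly your two homotopies $\psi P(\cdot,t)$ and $P^\prime(F_{\varphi^\prime}(\cdot),t)$ together with transitivity, and in the $o$ case it merely composes the two orbitwise homotopies simultaneously as $\psi_t\varphi_t$ instead of in two steps, which is only a cosmetic difference. Your appeal to Lemma \ref{oosmhs} for continuity of the composed base maps on each orbit is precisely the point the paper's argument rests on as well.
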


\begin{proof}
Let $a=o$. Let $\varphi_t\in\Hom(\mca{H},\mca{H}^\prime)$ be an orbitwise homotopy between $\varphi$ and $\varphi^\prime$ and $\psi_t\in\Hom(\mca{H}^\prime,\mca{H}^{\prime\prime})$ be an orbitwise homotopy between $\psi$ and $\psi^\prime$. We have $\psi_t\varphi_t\in\Hom(\mca{H},\mca{H}^{\prime\prime})$ and $F_{\psi_t\varphi_t}=F_{\psi_t}F_{\varphi_t}$. Let $O\in\mca{O}_\mca{H}$. There exists $O^\prime\in\mca{O}_{\mca{H}^\prime}$ such that $F_{\varphi_t}(O)\subset O^\prime$ for all $t\in I$, and there exists $O^{\prime\prime}\in\mca{O}_{\mca{H}^{\prime\prime}}$ such that $F_{\psi_t}(O^\prime)\subset O^{\prime\prime}$ for all $t\in I$. Hence $F_{\psi_t\varphi_t}(O)\subset O^{\prime\prime}$ and $\psi_t\varphi_t$ is an orbitwise homotopy between $\psi\varphi$ and $\psi^\prime\varphi^\prime$. 

Let $a=nt$. Let $P\colon N\times I\to\mca{H}^\prime$ be an $nt$-homotopy between $\varphi$ and $\varphi^\prime$, and $Q\colon N^\prime\times I\to\mca{H}^{\prime\prime}$ be an $nt$-homotopy between $\psi$ and $\psi^\prime$. Then 
\begin{align*}
N\times I&\to\mca{H}^{\prime\prime}\\
(y,t)&\mapsto\psi P(y,t)
\end{align*}
is an $nt$-homotopy between $\psi\varphi$ and $\psi\varphi^\prime$ and 
\begin{align*}
N\times I&\to\mca{H}^{\prime\prime}\\
(y,t)&\mapsto Q(F_{\varphi^\prime}(y),t)
\end{align*}
is an $nt$-homotopy between $\psi\varphi^\prime$ and $\psi^\prime\varphi^\prime$. 
\end{proof}

\subsection{$\mathrel{\ul{\sim_o}}$ and $\mathrel{\ul{\sim_{nt}}}$}
\begin{dfn}
Let $\mca{G}\rightrightarrows M$ be a Lie groupoid, $N$ be a $C^\infty$ manifold, $\nu\colon N\to M$ be a $C^\infty$ map and $\rho\in\mca{A}(\nu,\mca{G})$. An orbit of $N\rtimes_\rho\mca{G}$ is called an orbit of $\rho$, and $\mca{O}_{N\rtimes_\rho\mca{G}}$ is denoted by $\mca{O}_\rho$. 
\end{dfn}

\begin{dfn}\label{lgpcccffsimo}
Let $\mca{G}\rightrightarrows M$ be a Lie groupoid, $N$, $N^\prime$ be $C^\infty$ manifolds, $\nu\colon N\to M$, $\nu^\prime\colon N^\prime\to M$ be $C^\infty$ maps and $\rho\in\mca{A}(\nu,\mca{G})$, $\rho^\prime\in\mca{A}(\nu^\prime,\mca{G})$. 
\begin{itemize}
\item Put 
\begin{equation*}
\ul{C}^\infty(\mca{O}_\rho,\mca{O}_{\rho^\prime})=C^\infty(\mca{O}_\rho,\mca{O}_{\rho^\prime})\cap\ul{C}^\infty(N,N^\prime). 
\end{equation*}
\item For $F$, $F^\prime\in\ul{C}^\infty(\mca{O}_\rho,\mca{O}_{\rho^\prime})$, $F\mathrel{\ul{\sim_o}}F^\prime$ if there exists an orbitwise homotopy $H\colon N\times I\to N^\prime$ between $F$ and $F^\prime$ such that $H(\cdot,t)\in\ul{C}^\infty(N,N^\prime)$ for all $t\in I$. 
\item For $\varphi$, $\varphi^\prime\in\ul{\Hom}(N\rtimes_\rho\mca{G},N^\prime\rtimes_{\rho^\prime}\mca{G})$, $\varphi\mathrel{\ul{\sim_o}}\varphi^\prime$ if there exists an orbitwise homotopy $\eta\colon(N\rtimes_\rho\mca{G})\times I\to N^\prime\rtimes_{\rho^\prime}\mca{G}$ between $\varphi$ and $\varphi^\prime$ such that $\eta(\cdot,t)\in\ul{\Hom}(N\rtimes_\rho\mca{G},N^\prime\rtimes_{\rho^\prime}\mca{G})$ for all $t\in I$. $\mathrel{\ul{\sim_o}}$ is an equivalence relation on $\ul{\Hom}(N\rtimes_\rho\mca{G},N^\prime\rtimes_{\rho^\prime}\mca{G})$. 
\end{itemize}
\end{dfn}

\begin{dfn}\label{raragpppp}
Let $\mca{G}\rightrightarrows M$ be a Lie groupoid, $N$, $N^\prime$ be $C^\infty$ manifolds, $\nu\colon N\to M$, $\nu^\prime\colon N^\prime\to M$ be $C^\infty$ maps, $\rho\in\mca{A}(\nu,\mca{G})$, $\rho^\prime\in\mca{A}(\nu^\prime,\mca{G})$ and $\varphi$, $\varphi^\prime\in\ul{\Hom}(N\rtimes_\rho\mca{G},N^\prime\rtimes_{\rho^\prime}\mca{G})$. We write $\varphi\mathrel{\ul{\sim_{nt}}}\varphi^\prime$ if there exists an $nt$-homotopy $P$ between $\varphi$ and $\varphi^\prime$ such that $\varphi P(\cdot,t)\in\ul{\Hom}(N\rtimes_\rho\mca{G},N^\prime\rtimes_{\rho^\prime}\mca{G})$ for all $t\in I$. 

\begin{claim}
$\varphi P(\cdot,t)\in\ul{\Hom}(N\rtimes_\rho\mca{G},N^\prime\rtimes_{\rho^\prime}\mca{G})$ if and only if $\varphi_{\rho^\prime}P(y,t)\in\mca{G}^{\nu(y)}_{\nu(y)}$ for all $y\in N$. 
\end{claim}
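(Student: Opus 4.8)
The plan is to unwind two things: membership in $\ul{\Hom}$ is a condition on the \emph{base map}, and $P(\cdot,t)$ takes values in the fibre product $N^\prime\rtimes_{\rho^\prime}\mca{G}=N^\prime\times_{\nu^\prime,t}\mca{G}$ with prescribed target; together these pin down both the target and the source in $\mca{G}$ of the element $\varphi_{\rho^\prime}P(y,t)$.

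First I would fix $t\in I$ and apply Lemma \ref{hnhncnnb}. One of the defining conditions of an $nt$-homotopy is $t^\prime\circ P(\cdot,t)=F_\varphi$, where $t^\prime$ is the target of $N^\prime\rtimes_{\rho^\prime}\mca{G}$ (the first projection); since $F_\varphi=b(\varphi)$, the pair $(\varphi,P(\cdot,t))$ lies in the fibre product $\Hom(N\rtimes_\rho\mca{G},N^\prime\rtimes_{\rho^\prime}\mca{G})\times_{b,t^\prime}C^\infty(N,N^\prime\rtimes_{\rho^\prime}\mca{G})$, so $\varphi P(\cdot,t)$ is a morphism with base map $F_{\varphi P(\cdot,t)}=F_\varphi P(\cdot,t)=s^\prime\circ P(\cdot,t)$, where $s^\prime(y^\prime,g)=\rho^\prime(y^\prime,g)$ is the source of $N^\prime\rtimes_{\rho^\prime}\mca{G}$. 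Hence $\varphi P(\cdot,t)\in\ul{\Hom}$ if and only if $\nu^\prime\circ s^\prime\circ P(\cdot,t)=\nu$ on $N$.

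Next I would use the identity $\nu^\prime\circ s^\prime=s\circ\varphi_{\rho^\prime}$ on $N^\prime\rtimes_{\rho^\prime}\mca{G}$, which is immediate from the action axiom $\nu^\prime(\rho^\prime(y^\prime,g))=s(g)$, to rewrite the condition as $s(\varphi_{\rho^\prime}P(y,t))=\nu(y)$ for all $y\in N$, i.e. $\varphi_{\rho^\prime}P(y,t)\in\mca{G}_{\nu(y)}$. Then I would note that the target side is automatic: writing $P(y,t)=(F_\varphi(y),\varphi_{\rho^\prime}P(y,t))$ via $t^\prime\circ P(\cdot,t)=F_\varphi$, membership in $N^\prime\times_{\nu^\prime,t}\mca{G}$ forces $t(\varphi_{\rho^\prime}P(y,t))=\nu^\prime(F_\varphi(y))$, and since $\varphi\in\ul{\Hom}$ we have $\nu^\prime\circ F_\varphi=\nu$, so $t(\varphi_{\rho^\prime}P(y,t))=\nu(y)$ for every $y$ and $t$ with no further hypothesis; that is, $\varphi_{\rho^\prime}P(y,t)\in\mca{G}^{\nu(y)}$ unconditionally. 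Combining, $\varphi P(\cdot,t)\in\ul{\Hom}$ iff $\varphi_{\rho^\prime}P(y,t)\in\mca{G}^{\nu(y)}\cap\mca{G}_{\nu(y)}=\mca{G}^{\nu(y)}_{\nu(y)}$ for all $y\in N$.

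There is essentially no obstacle here; the only care required is bookkeeping — keeping the structure maps of $\mca{G}$ distinct from those of the action groupoids $N\rtimes_\rho\mca{G}$ and $N^\prime\rtimes_{\rho^\prime}\mca{G}$ — and invoking $\varphi\in\ul{\Hom}$ at the right point so that the target-fibre condition is free and only the source-fibre condition remains to encode $F_{\varphi P(\cdot,t)}\in\ul{C}^\infty(N,N^\prime)$.
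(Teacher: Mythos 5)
Your proof is correct and follows essentially the same route as the paper's: compute the base map of $\varphi P(\cdot,t)$ as $s^\prime P(\cdot,t)$ via Lemma \ref{hnhncnnb}, identify $\nu^\prime s^\prime P(\cdot,t)=s\varphi_{\rho^\prime}P(\cdot,t)$ so that membership in $\ul{\Hom}$ is exactly the source-fibre condition, and observe that the target-fibre condition $t\varphi_{\rho^\prime}P(\cdot,t)=\nu^\prime F_\varphi=\nu$ holds automatically since $\varphi\in\ul{\Hom}$. No gaps.
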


\begin{proof}
Since 
\begin{equation*}
\nu^\prime F_{\varphi P(\cdot,t)}=\nu^\prime s^\prime P(\cdot,t)=\nu^\prime\rho^\prime P(\cdot,t)=s\varphi_{\rho^\prime}P(\cdot,t), 
\end{equation*}
$\varphi P(\cdot,t)\in\ul{\Hom}(N\rtimes_\rho\mca{G},N^\prime\rtimes_{\rho^\prime}\mca{G})$ if and only if $s\varphi_{\rho^\prime}P(\cdot,t)=\nu$. We have 
\begin{equation*}
t\varphi_{\rho^\prime}P(\cdot,t)=\nu^\prime t^\prime P(\cdot,t)=\nu^\prime F_\varphi=\nu, 
\end{equation*}
hence $\varphi_{\rho^\prime}P(y,t)\in\mca{G}^{\nu(y)}$ for all $y\in N$. Therefore $\varphi_{\rho^\prime}P(y,t)\in\mca{G}^{\nu(y)}_{\nu(y)}$ for all $y\in N$ if and only if $s\varphi_{\rho^\prime}P(\cdot,t)=\nu$. 
\end{proof}

$\mathrel{\ul{\sim_{nt}}}$ is an equivalence relation on $\ul{\Hom}(N\rtimes_\rho\mca{G},N^\prime\rtimes_{\rho^\prime}\mca{G})$. 
\end{dfn}

\begin{lem}\label{ulsimsim}
Let $\mca{G}\rightrightarrows M$ be a Lie groupoid, $N$, $N^\prime$, $N^{\prime\prime}$ be $C^\infty$ manifolds, $\nu\colon N\to M$, $\nu^\prime\colon N^\prime\to M$, $\nu^{\prime\prime}\colon N^{\prime\prime}\to M$ be $C^\infty$ maps, $\rho\in\mca{A}(\nu,\mca{G})$, $\rho^\prime\in\mca{A}(\nu^\prime,\mca{G})$, $\rho^{\prime\prime}\in\mca{A}(\nu^{\prime\prime},\mca{G})$, $\varphi$, $\varphi^\prime\in\ul{\Hom}(N\rtimes_\rho\mca{G},N^\prime\rtimes_{\rho^\prime}\mca{G})$ and $\psi$, $\psi^\prime\in\ul{\Hom}(N^\prime\rtimes_{\rho^\prime}\mca{G},N^{\prime\prime}\rtimes_{\rho^{\prime\prime}}\mca{G})$. Let $a\in\left\{o,nt\right\}$. If $\varphi\mathrel{\ul{\sim_a}}\varphi^\prime$ and $\psi\mathrel{\ul{\sim_a}}\psi^\prime$, then $\psi\varphi\mathrel{\ul{\sim_a}}\psi^\prime\varphi^\prime$. 
\end{lem}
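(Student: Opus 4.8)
The plan is to re-run the proof of Lemma~\ref{simsim}, keeping track in addition that every intermediate morphism stays in the appropriate $\ul{\Hom}$-space at every time $t$. The one elementary fact I will use repeatedly is that $\ul{\Hom}$ is closed under composition: if $\alpha\in\ul{\Hom}(N\rtimes_\rho\mca{G},N^\prime\rtimes_{\rho^\prime}\mca{G})$ and $\beta\in\ul{\Hom}(N^\prime\rtimes_{\rho^\prime}\mca{G},N^{\prime\prime}\rtimes_{\rho^{\prime\prime}}\mca{G})$, then $\nu^\prime F_\alpha=\nu$ and $\nu^{\prime\prime}F_\beta=\nu^\prime$ give $\nu^{\prime\prime}F_{\beta\alpha}=\nu^{\prime\prime}F_\beta F_\alpha=\nu^\prime F_\alpha=\nu$, so $\beta\alpha\in\ul{\Hom}(N\rtimes_\rho\mca{G},N^{\prime\prime}\rtimes_{\rho^{\prime\prime}}\mca{G})$. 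In particular $\psi\varphi$, $\psi\varphi^\prime$, $\psi^\prime\varphi^\prime$ all lie in $\ul{\Hom}(N\rtimes_\rho\mca{G},N^{\prime\prime}\rtimes_{\rho^{\prime\prime}}\mca{G})$, so the assertion is meaningful.

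For $a=o$ I would pick orbitwise homotopies $\eta$ from $\varphi$ to $\varphi^\prime$ and $\zeta$ from $\psi$ to $\psi^\prime$ with $\eta(\cdot,t)\in\ul{\Hom}$ and $\zeta(\cdot,t)\in\ul{\Hom}$ for all $t$, and set $\xi(h,t)=\zeta(\eta(h,t),t)$. By the $a=o$ case of Lemma~\ref{simsim}, $\xi$ is an orbitwise homotopy between $\psi\varphi$ and $\psi^\prime\varphi^\prime$: the orbit-tracking of $\eta$ and of $\zeta$ is uniform in $t$, hence composes, and continuity on orbits composes as well. Since $\xi(\cdot,t)=\zeta(\cdot,t)\circ\eta(\cdot,t)$, the closure fact above gives $\xi(\cdot,t)\in\ul{\Hom}$ for all $t$, so $\psi\varphi\mathrel{\ul{\sim_o}}\psi^\prime\varphi^\prime$.

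For $a=nt$, following Lemma~\ref{simsim}, let $P\colon N\times I\to N^\prime\rtimes_{\rho^\prime}\mca{G}$ be an $nt$-homotopy witnessing $\varphi\mathrel{\ul{\sim_{nt}}}\varphi^\prime$, so $\varphi P(\cdot,t)\in\ul{\Hom}$ for all $t$, and let $Q\colon N^\prime\times I\to N^{\prime\prime}\rtimes_{\rho^{\prime\prime}}\mca{G}$ witness $\psi\mathrel{\ul{\sim_{nt}}}\psi^\prime$. Then $(y,t)\mapsto\psi(P(y,t))$ is an $nt$-homotopy from $\psi\varphi$ to $\psi\varphi^\prime$, and since $\psi$ is a morphism a direct computation gives $(\psi\varphi)(\psi\circ P(\cdot,t))=\psi\circ(\varphi P(\cdot,t))$, which lies in $\ul{\Hom}$ as a composite of elements of $\ul{\Hom}$; hence $\psi\varphi\mathrel{\ul{\sim_{nt}}}\psi\varphi^\prime$. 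Likewise $(y,t)\mapsto Q(F_{\varphi^\prime}(y),t)$ is an $nt$-homotopy from $\psi\varphi^\prime$ to $\psi^\prime\varphi^\prime$, and using $F_{\varphi^\prime}t=t^\prime\varphi^\prime$ and $F_{\varphi^\prime}s=s^\prime\varphi^\prime$ one obtains $(\psi\varphi^\prime)(Q(F_{\varphi^\prime}(\cdot),t))=(\psi Q(\cdot,t))\circ\varphi^\prime$, which is in $\ul{\Hom}$ for all $t$; hence $\psi\varphi^\prime\mathrel{\ul{\sim_{nt}}}\psi^\prime\varphi^\prime$. Transitivity of $\mathrel{\ul{\sim_{nt}}}$ (Definition~\ref{raragpppp}) then gives $\psi\varphi\mathrel{\ul{\sim_{nt}}}\psi^\prime\varphi^\prime$.

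The work is essentially all bookkeeping; the one place where care is needed is the $nt$-case identification of the two associated morphisms $(\psi\varphi)(\psi\circ P(\cdot,t))$ and $(\psi\varphi^\prime)(Q(F_{\varphi^\prime}(\cdot),t))$ with the honest composites $\psi\circ(\varphi P(\cdot,t))$ and $(\psi Q(\cdot,t))\circ\varphi^\prime$ — once this is done, closure of $\ul{\Hom}$ under composition finishes everything. Invoking transitivity of $\mathrel{\ul{\sim_{nt}}}$ spares us from writing the concatenation of the two $nt$-homotopies (and from rechecking the order of the pointwise products in it), but if one prefers to avoid that appeal, the same concatenation used to prove $\mathrel{\ul{\sim_{nt}}}$ is an equivalence relation works verbatim, with the underlined condition at each time again following from the closure fact.
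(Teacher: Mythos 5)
Your proof is correct, and it is exactly the argument the paper intends: the paper states Lemma \ref{ulsimsim} without proof because it follows by running the constructions from the proof of Lemma \ref{simsim} (the composite orbitwise homotopy $\psi_t\varphi_t$, and the two $nt$-homotopies $\psi\circ P(\cdot,t)$ and $Q(F_{\varphi^\prime}(\cdot),t)$) and noting, as you do, that the underlined condition is preserved because $\ul{\Hom}$ is closed under composition and the associated morphisms identify with $\psi\circ(\varphi P(\cdot,t))$ and $(\psi Q(\cdot,t))\circ\varphi^\prime$. Your appeal to transitivity of $\mathrel{\ul{\sim_{nt}}}$ mirrors the paper's use of transitivity in Lemma \ref{simsim}, so no further changes are needed.
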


\section{Bornologies on groupoids}\label{666}
\subsection{Bornologies}
\begin{dfn}\label{asetbbor}
Let $X$ be a set and $\mca{B}\subset P(X)$, where $P(X)$ is the power set of $X$. We say that $\mca{B}$ is a \emph{bornology} on $X$ if: 
\begin{itemize}
\setlength\itemsep{0em}
\item $B^\prime\in\mca{B}$ if $B\in\mca{B}$ and $B^\prime\subset B$
\item $B\cup B^\prime\in\mca{B}$ if $B$, $B^\prime\in\mca{B}$
\item $X=\bigcup_{B\in\mca{B}}B$. 
\end{itemize}
The pair $(X,\mca{B})$ is called a \emph{bornological set}. An element of $\mca{B}$ is called a \emph{bounded set}. 
\end{dfn}

\begin{dfn}
Let $(X,\mca{B})$, $(X^\prime,\mca{B}^\prime)$ be bornological sets and $f\colon X\to X^\prime$ be a map. 
\begin{itemize}
\item $f$ is \emph{bounded} if $f(B)\in\mca{B}^\prime$ for all $B\in\mca{B}$. 
\item $f$ is \emph{bibounded} if $f$ is bijective and $f$, $f^{-1}$ are bounded. 
\end{itemize}
The composition of two bounded maps is bounded. 
\end{dfn}

\begin{dfn}
Let $\mca{G}\rightrightarrows M$ be a groupoid and $\mca{B}$ be a bornology on $\mca{G}$. We say that $\mca{B}$ is \emph{compatible} if: 
\begin{itemize}
\setlength\itemsep{0em}
\item $B^{-1}\in\mca{B}$ for $B\in\mca{B}$, where $B^{-1}=\left\{g^{-1}\in\mca{G}\ \middle|\ g\in B\right\}$
\item $BB^\prime\in\mca{B}$ for $B$, $B^\prime\in\mca{B}$, where $BB^\prime=\left\{gg^\prime\in\mca{G}\ \middle|\ (g,g^\prime)\in B\times_{s,t}B^\prime\right\}$. 
\end{itemize}
\end{dfn}

\begin{dfn}
Let $\mca{G}\rightrightarrows M$ be a groupoid and $\mca{B}$ be a bornology on $\mca{G}$. We say that $\mca{B}$ is \emph{base-bounded} if $1_M\in\mca{B}$, where $1_M=\left\{1_x\in\mca{G}\ \middle|\ x\in M\right\}$. 
\end{dfn}

\subsection{The locally compact bornology on a topological groupoid}
\begin{prop}\label{loccombor}
Let $\mca{G}\rightrightarrows M$ be a topological groupoid such that $M$ is Hausdorff. Then 
\begin{align*}
\mca{B}_\mca{G}&=\left\{B\subset\mca{G}\ \middle|\ 
\begin{gathered}
\text{for any compact subset $K\subset\mca{G}$, }\\
\text{there exists a compact subset $K^\prime\subset\mca{G}$}\\
\text{such that $KB$, $BK\subset K^\prime$}
\end{gathered}
\right\}\\
&=\left\{B\subset\mca{G}\ \middle|\ 
\begin{gathered}
\text{for any compact subset $K\subset M$, }\\
\text{there exists a compact subset $K^\prime\subset\mca{G}$}\\
\text{such that $B\cap t^{-1}(K)$, $B\cap s^{-1}(K)\subset K^\prime$}
\end{gathered}
\right\}
\end{align*}
is a base-bounded compatible bornology on $\mca{G}$ containing all compact subsets of $\mca{G}$. We call $\mca{B}_\mca{G}$ the \emph{locally compact bornology} on $\mca{G}$. 
\end{prop}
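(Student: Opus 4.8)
Write $\mca{B}^{(1)}$ and $\mca{B}^{(2)}$ for the two families described on the right-hand side (the first in terms of compact subsets of $\mca{G}$, the second in terms of compact subsets of $M$). The plan is: first show $\mca{B}^{(1)}=\mca{B}^{(2)}$; then show this common family contains every compact subset of $\mca{G}$; then verify the three bornology axioms; and finally verify base-boundedness and compatibility. The one fact invoked throughout is that the product $KL=\{gg'\mid(g,g')\in K\times_{s,t}L\}$ of two compact sets $K,L\subset\mca{G}$ is again compact: the set $K\times_{s,t}L=(s\times t)^{-1}(\Delta_M)\cap(K\times L)$ is closed in the compact space $K\times L$, because $\Delta_M$ is closed in $M\times M$ (here is where Hausdorffness of $M$ enters; cf. Lemma \ref{fiberproduct}), hence $K\times_{s,t}L$ is compact, and so is its continuous image $KL$ under multiplication. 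This immediately gives that a compact $K\subset\mca{G}$ lies in $\mca{B}^{(1)}$: for any compact $K'\subset\mca{G}$ the sets $K'K$ and $KK'$ are compact, and their union is the required compact set. In particular singletons are bounded, so $\mca{G}=\bigcup_{g\in\mca{G}}\{g\}$ is covered by bounded sets.

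For $\mca{B}^{(2)}\subset\mca{B}^{(1)}$: given $B\in\mca{B}^{(2)}$ and a compact $K\subset\mca{G}$, every element of $KB$ has the form $gg'$ with $g\in K$, $g'\in B$, $s(g)=t(g')$, so $g'\in B\cap t^{-1}(s(K))$; as $s(K)$ is compact there is a compact $K_1$ with $B\cap t^{-1}(s(K))\subset K_1$, whence $KB\subset KK_1$. Symmetrically $BK\subset K_2K$ for a compact $K_2\supset B\cap s^{-1}(t(K))$, and $KK_1\cup K_2K$ is a witness for $B\in\mca{B}^{(1)}$. For $\mca{B}^{(1)}\subset\mca{B}^{(2)}$: given $B\in\mca{B}^{(1)}$ and a compact $K\subset M$, the set $1_K$ is compact, so there is a compact $K'$ with $1_KB\cup B1_K\subset K'$; since $g=1_{t(g)}g\in1_KB$ whenever $g\in B$, $t(g)\in K$, and $g=g1_{s(g)}\in B1_K$ whenever $g\in B$, $s(g)\in K$, we conclude $B\cap t^{-1}(K)\subset K'$ and $B\cap s^{-1}(K)\subset K'$, i.e. $B\in\mca{B}^{(2)}$. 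Hence $\mca{B}^{(1)}=\mca{B}^{(2)}=:\mca{B}_\mca{G}$.

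The bornology axioms follow from the $\mca{B}^{(1)}$ description: a compact set working for $B$ works for every $B'\subset B$; the union of compact sets working for $B_1$ and $B_2$ works for $B_1\cup B_2$, using $K(B_1\cup B_2)=KB_1\cup KB_2$ and the mirror identity; and $\mca{G}$ is covered as above. Base-boundedness follows from the $\mca{B}^{(2)}$ description, since $1_M\cap t^{-1}(K)=1_M\cap s^{-1}(K)=1_K$ is compact for every compact $K\subset M$. For compatibility: $B^{-1}\in\mca{B}_\mca{G}$ because inversion is a homeomorphism and, in the $\mca{B}^{(2)}$ description, $B^{-1}\cap t^{-1}(K)=(B\cap s^{-1}(K))^{-1}$ and $B^{-1}\cap s^{-1}(K)=(B\cap t^{-1}(K))^{-1}$ sit inside the inverses of the compact sets furnished for $B$; and $BB'\in\mca{B}_\mca{G}$ because, by associativity, $K(BB')\subset(KB)B'$ and $(BB')K\subset B(B'K)$ for every $K\subset\mca{G}$, so applying the defining property of $\mca{B}^{(1)}$ first to $B$ (with $K$) and then to $B'$ (with the resulting compact set), and symmetrically on the other side, produces the needed compact set.

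The whole argument is routine bookkeeping; the step I would watch most carefully — and the only place Hausdorffness of $M$ is really used — is the compactness of the fibered products $K\times_{s,t}L$, which underpins both the equality of the two descriptions and the fact that compact sets are bounded.
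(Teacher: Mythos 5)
Your proof is correct and follows essentially the same route as the paper: the compactness of products $KL$ via closedness of $\Delta_M$ (Hausdorffness of $M$), the identities $B\cap t^{-1}(K)=1_KB$, $B\cap s^{-1}(K)=B1_K$ (in your version, the equivalent unit-element observations) for the equality of the two descriptions, and associativity $K(BB')\subset(KB)B'$, $(BB')K\subset B(B'K)$ for compatibility. You merely supply a few details the paper leaves as ``easy to see'' (the bornology axioms, $B^{-1}$, base-boundedness), so no further comment is needed.
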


\begin{proof}
\begin{claim}
If $K$, $K^\prime\subset\mca{G}$ are compact, then $KK^\prime$ is compact. 
\end{claim}

\begin{proof}
The map $(s,t)\colon K\times K^\prime\to M\times M$ is continuous and $K\times_{s,t}K^\prime=(s,t)^{-1}(\Delta_M)$. Since $M$ is Hausdorff, $\Delta_M\subset M\times M$ is closed. So $K\times_{s,t}K^\prime\subset K\times K^\prime$ is closed, hence compact. Since $KK^\prime$ is the image of $K\times_{s,t}K^\prime$ by $\mca{G}^{(2)}\to\mca{G}$, $KK^\prime$ is compact. 
\end{proof}

Define $\mca{B}_\mca{G}$ by the first equality in the proposition. Then $\mca{B}_\mca{G}$ contains all compact subsets of $\mca{G}$ by the claim. It is easy to see that $\mca{B}_\mca{G}$ is a base-bounded bornology on $\mca{G}$ and $B^{-1}\in\mca{B}_\mca{G}$ for any $B\in\mca{B}_\mca{G}$. 

\begin{claim}
$BB^\prime\in\mca{B}_\mca{G}$ for any $B$, $B^\prime\in\mca{B}_\mca{G}$. Hence $\mca{B}_\mca{G}$ is compatible. 
\end{claim}

\begin{proof}
For any compact subset $K$ of $\mca{G}$, there exists a compact subset $K^\prime$ of $\mca{G}$ such that $B^\prime K$, $KB\subset K^\prime$. Then there exists a compact subset $K^{\prime\prime}$ of $\mca{G}$ such that $BK^\prime$, $K^\prime B^\prime\subset K^{\prime\prime}$. We have 
\begin{gather*}
(BB^\prime)K=B(B^\prime K)\subset BK^\prime\subset K^{\prime\prime}, \\
K(BB^\prime)=(KB)B^\prime=K^\prime B^\prime\subset K^{\prime\prime}. 
\end{gather*}
Therefore $BB^\prime\in\mca{B}_\mca{G}$. 
\end{proof}

Finally we need to show the second equality in the proposition. To prove the inclusion $\subset$, let $B\in\mca{B}_\mca{G}$. For a compact subset $K$ of $M$, we have 
\begin{equation*}
B\cap t^{-1}(K)=1_KB\quad\text{and}\quad B\cap s^{-1}(K)=B1_K. 
\end{equation*}
So there exists a compact subset $K^\prime\subset\mca{G}$ such that $B\cap t^{-1}(K)$, $B\cap s^{-1}(K)\subset K^\prime$. 

For the inclusion $\supset$, take $B$ from the second set. Let $K$ be a compact subset of $\mca{G}$. There exists a compact subset $K^\prime\subset\mca{G}$ such that $1_{s(K)}B$, $B1_{t(K)}\subset K^\prime$. Then 
\begin{gather*}
KB=(K1_{s(K)})B=K(1_{s(K)}B)\subset KK^\prime, \\
BK=B(1_{t(K)}K)=(B1_{t(K)})K\subset K^\prime K. 
\end{gather*}
Since $KK^\prime\cup K^\prime K$ is compact by the first claim, we have $B\in\mca{B}_\mca{G}$. 
\end{proof}

\begin{prop}\label{properbounded}
Let $\mca{G}\rightrightarrows M$, $\mca{G}^\prime\rightrightarrows M^\prime$ be topological groupoids such that $M$, $M^\prime$ are Hausdorff and $\varphi\colon\mca{G}\to\mca{G}^\prime$ be a continuous morphism. If $F_\varphi$ is proper, then $\varphi$ is bounded with respect to $\mca{B}_\mca{G}$ and $\mca{B}_{\mca{G}^\prime}$. 
\end{prop}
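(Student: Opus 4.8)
The plan is to use the \emph{second} description of the locally compact bornology given in Proposition \ref{loccombor}, which characterizes bounded subsets of a topological groupoid in terms of compact subsets of its base manifold rather than of the groupoid itself. This is exactly the form in which the base map $F_\varphi$ can enter directly, since a morphism satisfies $t^\prime\varphi=F_\varphi t$ and $s^\prime\varphi=F_\varphi s$.

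First I would fix $B\in\mca{B}_\mca{G}$ and a compact subset $K^\prime$ of $M^\prime$, and set $K=F_\varphi^{-1}(K^\prime)$, which is a compact subset of $M$ because $F_\varphi$ is proper. Applying the (second) definition of $\mca{B}_\mca{G}$ to the compact set $K$, I obtain a single compact subset $\wt{K}\subset\mca{G}$ with $B\cap t^{-1}(K)\subset\wt{K}$ and $B\cap s^{-1}(K)\subset\wt{K}$. Put $K^{\prime\prime}=\varphi(\wt{K})$, which is compact since $\varphi$ is continuous.

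Next I would verify the two required containments. If $g\in B$ satisfies $t^\prime(\varphi(g))\in K^\prime$, then $F_\varphi(t(g))=t^\prime(\varphi(g))\in K^\prime$, so $t(g)\in K$ and hence $g\in B\cap t^{-1}(K)\subset\wt{K}$, giving $\varphi(g)\in K^{\prime\prime}$; thus $\varphi(B)\cap(t^\prime)^{-1}(K^\prime)\subset K^{\prime\prime}$. Running the same argument with $s$, $s^\prime$ in place of $t$, $t^\prime$ yields $\varphi(B)\cap(s^\prime)^{-1}(K^\prime)\subset K^{\prime\prime}$. By the second description of $\mca{B}_{\mca{G}^\prime}$ in Proposition \ref{loccombor}, this shows $\varphi(B)\in\mca{B}_{\mca{G}^\prime}$, i.e. $\varphi$ is bounded with respect to $\mca{B}_\mca{G}$ and $\mca{B}_{\mca{G}^\prime}$.

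There is essentially no real obstacle here; the proof is a routine verification. The only points to be careful about are: to work throughout with the base-space characterization of the bornology (so that properness of $F_\varphi$ is the relevant input, via $F_\varphi^{-1}(K^\prime)$ being compact) rather than the original definition phrased with compact subsets of $\mca{G}$ and $\mca{G}^\prime$; and to choose a single $\wt{K}$ dominating both $B\cap t^{-1}(K)$ and $B\cap s^{-1}(K)$, so that one compact set $K^{\prime\prime}=\varphi(\wt{K})$ simultaneously witnesses both defining conditions for $\varphi(B)$.
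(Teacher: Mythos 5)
Your proof is correct and follows essentially the same route as the paper: properness of $F_\varphi$ pulls $K^\prime$ back to the compact set $F_\varphi^{-1}(K^\prime)\subset M$, and the base-space characterization of the locally compact bornology is then applied on both sides. The only cosmetic difference is that the paper phrases the two conditions via products with unit sections ($\varphi(B)1_{K^\prime}$ and $1_{K^\prime}\varphi(B)$), whereas you work directly with $\varphi(B)\cap(t^\prime)^{-1}(K^\prime)$ and $\varphi(B)\cap(s^\prime)^{-1}(K^\prime)$, which is the same content.
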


\begin{proof}
Let $B\in\mca{B}_\mca{G}$ and $K^\prime$ be a compact subset of $M^\prime$. Since $F_\varphi^{-1}(K^\prime)\subset M$ is compact by properness of $F_\varphi$, there exists a compact subset $K$ of $\mca{G}$ such that $B1_{F_\varphi^{-1}(K^\prime)}$, $1_{F_\varphi^{-1}(K^\prime)}B\subset K$. 

\begin{claim}
$\varphi(B)1_{K^\prime}=\varphi(B1_{F_\varphi^{-1}(K^\prime)})\subset\varphi(K)$. 
\end{claim}

\begin{proof}
$\supset$ part of the equality and $\subset$ are obvious. For $\subset$ part of the equality, let $g^\prime\in\varphi(B)1_{K^\prime}$. There exist $g\in B$ and $x^\prime\in K^\prime$ such that $g^\prime=\varphi(g)1_{x^\prime}=\varphi(g)$. Then $x^\prime=s^\prime\varphi(g)=F_\varphi(s(g))$. So $s(g)\in F_\varphi^{-1}(K^\prime)$. Hence $g^\prime=\varphi(g)=\varphi(g1_{s(g)})\in\varphi(B1_{F_\varphi^{-1}(K^\prime)})$. 
\end{proof}

Similarly we have $1_{K^\prime}\varphi(B)=\varphi(1_{F_\varphi^{-1}(K^\prime)}B)\subset\varphi(K)$. Hence $\varphi(B)\in\mca{B}_{\mca{G}^\prime}$. 
\end{proof}

\begin{lem}\label{properp}
Let $\mca{G}\rightrightarrows M$ be a topological groupoid such that $M$ is Hausdorff. If $t\colon\mca{G}\to M$ is proper, then $\mca{B}_\mca{G}=P(\mca{G})$. 
\end{lem}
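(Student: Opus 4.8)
The plan is to use the second description of the locally compact bornology $\mca{B}_\mca{G}$ given in Proposition \ref{loccombor}: a subset $B\subset\mca{G}$ lies in $\mca{B}_\mca{G}$ if and only if for every compact $K\subset M$ there is a compact $K^\prime\subset\mca{G}$ with $B\cap t^{-1}(K)\subset K^\prime$ and $B\cap s^{-1}(K)\subset K^\prime$. So I would fix an arbitrary $B\subset\mca{G}$ and an arbitrary compact $K\subset M$, and produce such a $K^\prime$.

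First I would observe that properness of $t$ means exactly that $t^{-1}(K)$ is a compact subset of $\mca{G}$ for any compact $K\subset M$; in particular $B\cap t^{-1}(K)\subset t^{-1}(K)$. Next I would handle the source side by noting that $s=t\circ(\cdot^{-1})$, where the inversion map $\cdot^{-1}\colon\mca{G}\to\mca{G}$ is a homeomorphism (indeed an involution); hence $s$ is proper as well (composition of a proper map with a homeomorphism), or directly $s^{-1}(K)=(t^{-1}(K))^{-1}$ is the image of the compact set $t^{-1}(K)$ under a continuous map, so it is compact, and $B\cap s^{-1}(K)\subset s^{-1}(K)$.

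Then I would set $K^\prime=t^{-1}(K)\cup s^{-1}(K)$, which is compact as a finite union of compact sets (this uses nothing about Hausdorffness of $\mca{G}$), and it is a subset of $\mca{G}$. By construction $B\cap t^{-1}(K)\subset K^\prime$ and $B\cap s^{-1}(K)\subset K^\prime$, so $B\in\mca{B}_\mca{G}$. Since $B\subset\mca{G}$ was arbitrary, $\mca{B}_\mca{G}=P(\mca{G})$.

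There is essentially no obstacle here: the only point worth a sentence of care is that compact subsets of the possibly non-Hausdorff manifold $\mca{G}$ need not be closed, but the argument only uses that preimages under $t$ are compact and that finite unions and continuous images of compact sets are compact, all of which hold in arbitrary topological spaces.
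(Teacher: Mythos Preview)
Your proof is correct and follows essentially the same approach as the paper: use the second characterization of $\mca{B}_\mca{G}$ from Proposition \ref{loccombor}, observe that $t^{-1}(K)$ is compact by properness of $t$ and that $s^{-1}(K)=(t^{-1}(K))^{-1}$ is compact because inversion is a homeomorphism (equivalently, $s$ is proper). The paper's version is slightly terser and does not explicitly name the union $K^\prime=t^{-1}(K)\cup s^{-1}(K)$, but the argument is the same.
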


\begin{proof}
Let $B\subset\mca{G}$. For a compact subset $K\subset M$, we have $B\cap t^{-1}(K)\subset t^{-1}(K)$ and $B\cap s^{-1}(K)\subset s^{-1}(K)$. By assumption $t^{-1}(K)$ is compact. Since $s\colon\mca{G}\to M$ is also proper, $s^{-1}(K)$ is compact. Hence $B\in\mca{B}_\mca{G}$. 
\end{proof}

\subsection{The base-bounded compact bornology on a topological groupoid}
\begin{dfn}
Let $X$ be a set and $\mca{S}\subset P(X)$. Since the intersection of bornologies is again a bornology, and $P(X)$ is a bornology, there exists the smallest bornology $\mca{B}$ on $X$ which contains $\mca{S}$. $\mca{B}$ is called the \emph{bornology generated by} $\mca{S}$. It is easy to see 
\begin{equation*}
\mca{B}=\left\{B\subset X\ \middle|\ 
\begin{gathered}
\text{$B\subset B_1\cup\cdots\cup B_n\cup F$ for some}\\
\text{$B_1,\ldots,B_n\in\mca{S}$ and a finite subset $F$ of $X$}
\end{gathered}
\right\}. 
\end{equation*}
\end{dfn}

\begin{lem}\label{borncomp}
Let $\mca{G}\rightrightarrows M$ be a groupoid and $\mca{S}\subset P(\mca{G})$ satisfy: 
\begin{itemize}
\setlength\itemsep{0em}
\item $B^{-1}\in\mca{S}$ for any $B\in\mca{S}$
\item $BB^\prime\in\mca{S}$ for any $B$, $B^\prime\in\mca{S}$
\item $\mca{G}=\bigcup_{B\in\mca{S}}B$. 
\end{itemize}
Then the bornology $\mca{B}$ on $\mca{G}$ generated by $\mca{S}$ is compatible. 
\end{lem}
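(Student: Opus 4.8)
The plan is to argue directly from the explicit description of the generated bornology recalled just above the statement: $B\in\mca{B}$ if and only if $B\subset B_1\cup\cdots\cup B_n\cup F$ for some $B_1,\dots,B_n\in\mca{S}$ and some finite $F\subset\mca{G}$. The first step is to observe that the finite summand $F$ is redundant. Indeed, since $\mca{G}=\bigcup_{B\in\mca{S}}B$, every point of $F$ lies in some member of $\mca{S}$, so $F$ itself is contained in a finite union of members of $\mca{S}$; substituting this in shows that $\mca{B}$ consists precisely of those $B\subset\mca{G}$ contained in a finite union $B_1\cup\cdots\cup B_n$ of members of $\mca{S}$ (allowing $n=0$, giving $B=\emptyset$; if $\mca{G}=\emptyset$ the statement is trivial, so one may assume $\mca{S}\neq\emptyset$).

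Granting this reduction, both closure conditions defining compatibility fall out immediately. For inverses: if $B\subset B_1\cup\cdots\cup B_n$ with $B_i\in\mca{S}$, then $B^{-1}\subset B_1^{-1}\cup\cdots\cup B_n^{-1}$, and each $B_i^{-1}\in\mca{S}$ by the first hypothesis on $\mca{S}$, so $B^{-1}\in\mca{B}$. For products: suppose $B\subset B_1\cup\cdots\cup B_n$ and $B'\subset B'_1\cup\cdots\cup B'_m$ with $B_i,B'_j\in\mca{S}$; if $gg'\in BB'$, so that $g\in B$, $g'\in B'$ and $s(g)=t(g')$, then $g\in B_i$ and $g'\in B'_j$ for some $i,j$, and since $s(g)=t(g')$ we get $gg'\in B_iB'_j$. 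Hence $BB'\subset\bigcup_{i,j}B_iB'_j$, a finite union of members of $\mca{S}$ by the second hypothesis, so $BB'\in\mca{B}$. Combined with the bornology axioms (which $\mca{B}$ satisfies by construction), this is exactly the assertion that $\mca{B}$ is \emph{compatible}.

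I do not expect a genuine obstacle here; the one point that needs a moment's care is precisely the treatment of the finite-set term $F$. One should not attempt to multiply the finite pieces and control the result directly — a set of the form $B_i\cdot g'$ with $g'\in F$ is in general neither finite nor a member of $\mca{S}$ — so it is essential to absorb $F$ into the $\mca{S}$-union first, using the covering property $\mca{G}=\bigcup_{B\in\mca{S}}B$, and only then perform the multiplication. After that reduction the verification is purely formal.
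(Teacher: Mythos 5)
Your proposal is correct and follows essentially the same route as the paper: the paper's proof likewise uses the covering property $\mca{G}=\bigcup_{B\in\mca{S}}B$ to rewrite the generated bornology as the sets contained in a finite union of members of $\mca{S}$ (dropping the finite-set term), and then reads off compatibility from that expression. Your explicit verification of the inverse and product closures simply spells out what the paper leaves as immediate.
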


\begin{proof}
We have 
\begin{align*}
\mca{B}&=\left\{B\subset\mca{G}\ \middle|\ 
\begin{gathered}
\text{$B\subset B_1\cup\cdots\cup B_n\cup F$ for some}\\
\text{$B_1,\ldots,B_n\in\mca{S}$ and a finite subset $F$ of $\mca{G}$}
\end{gathered}
\right\}\\
&=\left\{B\subset\mca{G}\ \middle|\ \text{$B\subset B_1\cup\cdots\cup B_n$ for some $B_1,\ldots,B_n\in\mca{S}$}\right\}. 
\end{align*}
Compatibility follows from this expression. 
\end{proof}

\begin{prop}
Let $\mca{G}\rightrightarrows M$ be a topological groupoid such that $M$ is Hausdorff. Let $\mca{B}^c_\mca{G}$ be the bornology generated by compact subsets of $\mca{G}$ and $1_M$. Then $\mca{B}^c_\mca{G}$ is a base-bounded compatible bornology on $\mca{G}$. We have 
\begin{equation}\label{BK1KG}
\mca{B}^c_\mca{G}=\left\{B\subset\mca{G}\ \middle|\ \text{$B\subset K\cup1_M$ for some compact subset $K\subset\mca{G}$}\right\}. 
\end{equation}
We call $\mca{B}^c_\mca{G}$ the \emph{base-bounded compact bornology} on $\mca{G}$. 
\end{prop}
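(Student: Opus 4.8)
The plan is to deduce the statement from Lemma~\ref{borncomp} together with the explicit description of a generated bornology. Write $\mca{S}=\{K\subset\mca{G}\mid K\text{ is compact}\}\cup\{1_M\}$, so that by definition $\mca{B}^c_\mca{G}$ is the bornology generated by $\mca{S}$. Base-boundedness is then immediate: $1_M\in\mca{S}\subset\mca{B}^c_\mca{G}$.

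To obtain compatibility I would verify that $\mca{S}$ satisfies the three hypotheses of Lemma~\ref{borncomp}. It covers $\mca{G}$, because each $g\in\mca{G}$ lies in $\{g\}\in\mca{S}$. It is closed under inversion, because $g\mapsto g^{-1}$ is a homeomorphism of $\mca{G}$, so $K^{-1}$ is compact whenever $K$ is, and $1_M^{-1}=1_M$. It is closed under products: first, if $K$, $K^\prime\subset\mca{G}$ are compact then $KK^\prime$ is compact --- this is exactly the first Claim in the proof of Proposition~\ref{loccombor}, where Hausdorffness of $M$ makes $\Delta_M$ closed, hence $K\times_{s,t}K^\prime$ closed in $K\times K^\prime$ and thus compact, and $KK^\prime$ its image under the continuous multiplication; second, $K1_M=1_MK=K$ for any $K\subset\mca{G}$ (since $g1_{s(g)}=g=1_{t(g)}g$ for $g\in\mca{G}$) and $1_M1_M=1_M$, so all products of members of $\mca{S}$ lie back in $\mca{S}$. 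Lemma~\ref{borncomp} then yields that $\mca{B}^c_\mca{G}$ is compatible.

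For the description \eqref{BK1KG}, I would apply the explicit formula for the generated bornology from the Definition preceding Lemma~\ref{borncomp}: $B\in\mca{B}^c_\mca{G}$ if and only if $B\subset B_1\cup\dots\cup B_n\cup F$ for some $B_1,\dots,B_n\in\mca{S}$ and some finite $F\subset\mca{G}$. Grouping those $B_i$ that are compact subsets together with the finite (hence compact) set $F$ into a single compact set $K$, and noting that the remaining $B_i$ all equal $1_M$, one gets $B\subset K\cup1_M$. Conversely, any $B\subset K\cup1_M$ with $K$ compact lies in $\mca{B}^c_\mca{G}$ since $K,1_M\in\mca{S}$. This establishes \eqref{BK1KG}.

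The whole argument is routine; the only points that need a moment's care are the elementary identities $K1_M=1_MK=K$ and the fact, borrowed from the proof of Proposition~\ref{loccombor}, that the product of two compact subsets of $\mca{G}$ is compact when $M$ is Hausdorff. Neither constitutes a genuine obstacle.
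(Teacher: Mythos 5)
Your proof is correct and follows essentially the same route as the paper: apply Lemma \ref{borncomp} to $\mca{S}=\{\text{compact subsets}\}\cup\{1_M\}$, using the compactness of $KK^\prime$ (borrowed from the first claim in the proof of Proposition \ref{loccombor}) and the identities $K1_M=1_MK=K$, $1_M1_M=1_M$, then read off \eqref{BK1KG} from the explicit description of the generated bornology. No issues.
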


\begin{proof}
As in the proof of the first claim in the proof of Proposition \ref{loccombor}, $KK^\prime$ is compact for any compact subsets $K$ and $K^\prime$ of $\mca{G}$. Let $\mca{S}=\left\{\text{compact subsets of $\mca{G}$}\right\}\cup\left\{1_M\right\}$. Then $\mca{S}$ satisfies the conditions in Lemma \ref{borncomp}. Hence $\mca{B}^c_\mca{G}$ is compatible. 
\end{proof}

\begin{prop}\label{tgcmbrebb}
Let $\mca{G}\rightrightarrows M$, $\mca{G}^\prime\rightrightarrows M^\prime$ be topological groupoids such that $M$, $M^\prime$ are Hausdorff, and $\varphi\colon\mca{G}\to\mca{G}^\prime$ be a continuous morphism. Then $\varphi$ is bounded with respect to $\mca{B}^c_\mca{G}$ and $\mca{B}^c_{\mca{G}^\prime}$. 
\end{prop}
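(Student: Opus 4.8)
The plan is to reduce everything to the explicit description \eqref{BK1KG} of $\mca{B}^c_\mca{G}$ established just above, namely that a subset $B\subset\mca{G}$ is bounded if and only if $B\subset K\cup1_M$ for some compact $K\subset\mca{G}$, together with two elementary facts about the morphism $\varphi$: it is continuous, and it respects units.

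Concretely, I would first take an arbitrary $B\in\mca{B}^c_\mca{G}$ and pick a compact $K\subset\mca{G}$ with $B\subset K\cup1_M$, so that $\varphi(B)\subset\varphi(K)\cup\varphi(1_M)$. Next I would observe that $\varphi(K)$ is a compact subset of $\mca{G}^\prime$ because $\varphi$ is continuous and $K$ is compact. Then I would recall from Definition \ref{morgmgmpfpfp} that a morphism satisfies $\varphi(1_x)=1_{F_\varphi(x)}$ for every $x\in M$, hence $\varphi(1_M)\subset1_{M^\prime}$. Combining these gives $\varphi(B)\subset\varphi(K)\cup1_{M^\prime}$ with $\varphi(K)$ compact, and by \eqref{BK1KG} this means exactly $\varphi(B)\in\mca{B}^c_{\mca{G}^\prime}$. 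Since $B$ was arbitrary, $\varphi$ is bounded with respect to $\mca{B}^c_\mca{G}$ and $\mca{B}^c_{\mca{G}^\prime}$.

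There is essentially no obstacle here: the only point that requires care is not to forget the unit part of the bornology, i.e. that one must use $\varphi(1_M)\subset1_{M^\prime}$ rather than trying to argue $\varphi(1_M)$ is compact (it need not be, since $1_M$ need not be compact). This is in contrast with the locally compact bornology $\mca{B}_\mca{G}$, where boundedness of a continuous morphism required a properness hypothesis on $F_\varphi$ (Proposition \ref{properbounded}); for $\mca{B}^c_\mca{G}$ no such hypothesis is needed because the base itself is always bounded and compactness is preserved by continuous maps.
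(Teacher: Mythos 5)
Your proof is correct and is exactly the argument the paper intends: the paper's proof consists of the single line ``This follows from \eqref{BK1KG}'', and your spelled-out version (write $B\subset K\cup1_M$, use continuity for compactness of $\varphi(K)$ and $\varphi(1_M)\subset1_{M^\prime}$ from the morphism property) is the standard way to fill it in. No gaps.
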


\begin{proof}
This follows from \eqref{BK1KG}
\end{proof}

\begin{prop}
Let $\mca{G}\rightrightarrows M$ be a topological groupoid such that $M$ is Hausdorff. Then the following statements hold: 
\begin{enumerate}
\item $\mca{B}^c_\mca{G}\subset\mca{B}_\mca{G}$. 
\item If $\mca{B}^c_\mca{G}\neq\mca{B}_\mca{G}$, then there exists a net $g_i\in\mca{G}\setminus1_M$ such that $t(g_i)\to\infty$, $s(g_i)\to\infty$. 
\item If $\mca{B}^c_\mca{G}\neq\mca{B}_\mca{G}$ and $M$ is $\sigma$-compact, then there exists a sequence $g_i\in\mca{G}\setminus1_M$ such that $t(g_i)\to\infty$, $s(g_i)\to\infty$. 
\item If there exists a sequence $g_i\in\mca{G}\setminus1_M$ such that $t(g_i)\to\infty$, $s(g_i)\to\infty$, then $\mca{B}^c_\mca{G}\neq\mca{B}_\mca{G}$. 
\item Let $B\in\mca{B}_\mca{G}$. $B\not\in\mca{B}^c_\mca{G}$ if and only if there exists a net $g_i\in B\setminus1_M$ such that $t(g_i)\to\infty$. 
\item Let $M_{\neq1}=\left\{x\in M\ \middle|\ \mca{G}^x\neq\{1_x\}\right\}=\left\{x\in M\ \middle|\ \mca{G}_x\neq\{1_x\}\right\}$. If $M_{\neq1}$ is relatively compact in $M$ (for example $M$ is compact), then $\mca{B}^c_\mca{G}=\mca{B}_\mca{G}$. 
\end{enumerate}
\end{prop}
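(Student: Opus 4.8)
The plan is to make statement 5 the hub of the argument: statements 2 and 6 will drop out of it immediately, statement 1 is a formal consequence of minimality, statement 4 is a direct construction, and statement 3 is the one place where a genuine difficulty appears. For statement 1, recall from Proposition \ref{loccombor} that $\mca{B}_\mca{G}$ is a base-bounded bornology containing every compact subset of $\mca{G}$; since $\mca{B}^c_\mca{G}$ is by definition the smallest bornology containing $1_M$ together with all compact subsets of $\mca{G}$, minimality gives $\mca{B}^c_\mca{G}\subset\mca{B}_\mca{G}$. For statement 5, I fix $B\in\mca{B}_\mca{G}$ and use the description \eqref{BK1KG}: $B\in\mca{B}^c_\mca{G}$ exactly when $B\subset K\cup1_M$ for some compact $K\subset\mca{G}$. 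The implication ``such a net exists $\Rightarrow B\notin\mca{B}^c_\mca{G}$'' I prove by contraposition: if $B\subset K\cup1_M$ then every $g\in B\setminus1_M$ lies in $K$, so $t(B\setminus1_M)\subset t(K)$ is relatively compact and no net in $B\setminus1_M$ can satisfy $t(g_i)\to\infty$. Conversely, if $B\notin\mca{B}^c_\mca{G}$, I direct the compact subsets of $M$ by inclusion and claim that for each compact $K\subset M$ there is $g_K\in B\setminus1_M$ with $t(g_K)\notin K$: otherwise $B\setminus1_M\subset B\cap t^{-1}(K)$, and since $B\in\mca{B}_\mca{G}$ there is a compact $K'\subset\mca{G}$ with $B\cap t^{-1}(K)\subset K'$, forcing $B\subset K'\cup1_M\in\mca{B}^c_\mca{G}$, a contradiction. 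The assignment $K\mapsto g_K$ is then a net with $t(g_K)\to\infty$, since for any compact $L$ and any $K\supset L$ one has $t(g_K)\notin K\supset L$.

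Statement 2 follows by combining statements 1 and 5 with an automatic upgrade on the source side. By statement 1 the hypothesis $\mca{B}^c_\mca{G}\neq\mca{B}_\mca{G}$ supplies $B\in\mca{B}_\mca{G}\setminus\mca{B}^c_\mca{G}$, and statement 5 gives a net $g_i\in B\setminus1_M$ with $t(g_i)\to\infty$. I then check $s(g_i)\to\infty$ for free: if not, $s(g_i)$ is cofinally inside some compact $L\subset M$, and the corresponding subnet lies in $B\cap s^{-1}(L)\subset K'$ with $K'\subset\mca{G}$ compact (as $B\in\mca{B}_\mca{G}$), so its $t$-images lie in the compact set $t(K')$ — contradicting that every subnet of a net escaping to infinity again escapes. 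Statement 6 is the dual application of statement 5: if $\mca{B}^c_\mca{G}\neq\mca{B}_\mca{G}$ there is a net $g_i\in\mca{G}\setminus1_M$ with $t(g_i)\to\infty$; but every $g\in\mca{G}\setminus1_M$ has $\mca{G}^{t(g)}\neq\{1_{t(g)}\}$, so $t(g_i)\in M_{\neq1}\subset\ol{M_{\neq1}}$, which is compact when $M_{\neq1}$ is relatively compact, contradicting escape. Hence $\mca{B}^c_\mca{G}=\mca{B}_\mca{G}$.

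Statement 4 is a direct construction that does not need statement 5. Given a sequence $g_i\in\mca{G}\setminus1_M$ with $t(g_i)\to\infty$ and $s(g_i)\to\infty$, I set $B=\{g_i\mid i\}$. For any compact $K\subset M$, escape to infinity makes both $B\cap t^{-1}(K)$ and $B\cap s^{-1}(K)$ finite, so their union is a finite (hence compact) subset of $\mca{G}$, giving $B\in\mca{B}_\mca{G}$. On the other hand $B\setminus1_M=B$, and any compact $K\subset\mca{G}$ with $B\subset K\cup1_M$ would force $t(g_i)\in t(K)$ for all $i$, which is impossible; thus $B\notin\mca{B}^c_\mca{G}$ and $\mca{B}^c_\mca{G}\neq\mca{B}_\mca{G}$. (This uses only that finite sets are compact in the Hausdorff space $\mca{G}$ and that a sequence escaping to infinity meets each compact set finitely.)

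Statement 3 refines statement 2 to a sequence when $M$ is $\sigma$-compact, and this is where I expect the only real difficulty. I fix an increasing exhaustion $M=\bigcup_n C_n$ by compact sets and, starting from $B\in\mca{B}_\mca{G}\setminus\mca{B}^c_\mca{G}$, observe that $B\cap(t^{-1}(C_n)\cup s^{-1}(C_n))$ lies in a compact $D_n\subset\mca{G}$ (as $B\in\mca{B}_\mca{G}$) while $B\setminus1_M\not\subset D_n$ (as $B\notin\mca{B}^c_\mca{G}$); choosing $h_n\in(B\setminus1_M)\setminus D_n$ forces $t(h_n),s(h_n)\notin C_n$. This produces a sequence $h_n\in\mca{G}\setminus1_M$ whose images leave the exhaustion in both coordinates. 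The obstacle is upgrading ``leaves $C_n$ for every $n$'' to genuine escape from \emph{every} compact subset of $M$, which is the meaning of $\to\infty$ used in statement 4; this upgrade is immediate once $\{C_n\}$ is cofinal among compact subsets of $M$, as happens whenever $M$ is locally compact, and in particular for the manifolds arising in applications. I therefore expect the net/exhaustion-to-sequence passage of statement 3 to be the single delicate point, the remaining statements being formal once statement 5 is established.
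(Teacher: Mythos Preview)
Your argument is correct, and the organization differs only mildly from the paper's: you make statement~5 the hub and derive statement~2 from it via a subnet argument, whereas the paper proves statement~2 directly (constructing the net so that both $t(g_C)$ and $s(g_C)$ escape simultaneously, by choosing $g_C\in B\setminus(1_M\cup K_C)$ where $K_C$ contains $B\cap(t^{-1}(C)\cup s^{-1}(C))$) and then refers back to that construction for one direction of statement~5. Both routes are fine; yours is a bit more modular, at the cost of the extra subnet step in statement~2.

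The one place where you hesitate, however, is not actually delicate. In statement~3 you worry that the exhaustion $M=\bigcup_n C_n$ by increasing compacts might fail to be cofinal among all compact subsets of $M$, and you suggest local compactness is needed to fix this. In fact cofinality is automatic from Hausdorffness alone: given any compact $K\subset M$, each $C_n$ is compact, hence closed in the Hausdorff space $M$, so the sets $K\setminus C_n$ are closed in $K$, decreasing, and satisfy
\[
\bigcap_n(K\setminus C_n)=K\setminus\bigcup_n C_n=K\setminus M=\emptyset.
\]
By compactness of $K$ and the finite intersection property, some $K\setminus C_n$ is already empty, i.e.\ $K\subset C_n$. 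Thus your sequence $h_n$ with $t(h_n),s(h_n)\notin C_n$ genuinely escapes every compact subset of $M$, and no additional hypothesis on $M$ is required. The paper's proof of statement~3 simply asserts this conclusion without comment; your instinct to scrutinize it was reasonable, but the resolution is this short compactness argument rather than an appeal to local compactness. (As a minor aside: in statement~4 you invoke Hausdorffness of $\mca{G}$ to conclude finite sets are compact, but finite sets are compact in any topological space, so nothing about $\mca{G}$ is needed there.)
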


\begin{proof}
1. Obvious. 

2. Let $B\in\mca{B}_\mca{G}\setminus\mca{B}^c_\mca{G}$. Let $\ms{A}=\left\{\text{compact subsets of $M$}\right\}$. $\ms{A}$ is a directed set with respect to $\subset$. Let $C\in\ms{A}$. There exists a compact subset $K\subset\mca{G}$ such that $(B\cap t^{-1}(C))\cup(B\cap s^{-1}(C))\subset K$. Since $B\not\in\mca{B}^c_\mca{G}$, there exists $g_C\in B\setminus(1_M\cup K)$. Since $t(g_C)$, $s(g_C)\in M\setminus C$, the net $(g_C)_{C\in\ms{A}}$ satisfies $t(g_C)\to\infty$, $s(g_C)\to\infty$. 

3. Let $B\in\mca{B}_\mca{G}\setminus\mca{B}^c_\mca{G}$. There exist compact subsets $C_i$ ($i\in\bb{N}$) of $M$ such that $C_i\subset C_{i+1}$ for all $i\in\bb{N}$ and $M=\bigcup_{i=1}^\infty C_i$. There exists a compact subset $K_i\subset\mca{G}$ such that $(B\cap t^{-1}(C_i))\cup(B\cap s^{-1}(C_i))\subset K_i$. Since $B\not\in\mca{B}^c_\mca{G}$, there exists $g_i\in B\setminus(1_M\cup K_i)$. We have $t(g_i)$, $s(g_i)\in M\setminus C_i$. Hence $t(g_i)\to\infty$, $s(g_i)\to\infty$. 

4. Let $B=\left\{g_i\ \middle|\ i\in\bb{N}\right\}$. For a compact subset $C\subset M$, $(B\cap t^{-1}(C))\cup(B\cap s^{-1}(C))$ is finite, hence compact. So $B\in\mca{B}_\mca{G}$. 

If $B\in\mca{B}^c_\mca{G}$, there exists a compact subset $K$ of $\mca{G}$ such that $B\subset1_M\cup K$. We have $B\subset K$, hence $t(g_i)\in t(K)$. Since $t(K)$ is compact, this is a contradiction. Thus $B\in\mca{B}_\mca{G}\setminus\mca{B}^c_\mca{G}$. 
%
%
%
%

5. $\Rightarrow$ is proved in 2. We prove $\Leftarrow$. If $B\in\mca{B}^c_\mca{G}$, there exists a compact subset $K$ of $\mca{G}$ such that $B\subset1_M\cup K$. Since $g_i\in B\setminus1_M$, we have $g_i\in K$, hence $t(g_i)\in t(K)$. Since $t(K)$ is compact, this is a contradiction. 

6. If $\mca{B}^c_\mca{G}\neq\mca{B}_\mca{G}$, there exist $B\in\mca{B}_\mca{G}\setminus\mca{B}^c_\mca{G}$ and a net $g_i\in B\setminus1_M$ such that $t(g_i)\to\infty$ by 5. Since $t(g_i)\in M_{\neq1}$, this is a contradiction. 
\end{proof}

\subsection{The action groupoid bornologies}
\begin{dfn}
Let $X$ be a set, $(X^\prime,\mca{B}^\prime)$ be a bornological set and $f\colon X\to X^\prime$ be a map. Let $[f^{-1}(\mca{B}^\prime)]$ be the bornology generated by $\left\{f^{-1}(B^\prime)\subset X\ \middle|\ B^\prime\in\mca{B}^\prime\right\}$. The bornology $[f^{-1}(\mca{B}^\prime)]$ is called the \emph{inverse image bornology} induced by $f$. 
\end{dfn}

\begin{lem}\label{borlarbou}
Let $X$ be a set, $(X^\prime,\mca{B}^\prime)$ be a bornological set and $f\colon X\to X^\prime$ be a map. Then $[f^{-1}(\mca{B}^\prime)]=\left\{B\subset X\ \middle|\ f(B)\in\mca{B}^\prime\right\}$. Hence $[f^{-1}(\mca{B}^\prime)]$ is the largest bornology on $X$ such that $f\colon X\to X^\prime$ is bounded. 
\end{lem}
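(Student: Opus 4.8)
The plan is to set $\mca{C}=\left\{B\subset X\ \middle|\ f(B)\in\mca{B}^\prime\right\}$ and prove the two inclusions $\mca{C}\subset[f^{-1}(\mca{B}^\prime)]$ and $[f^{-1}(\mca{B}^\prime)]\subset\mca{C}$, after first checking $\mca{C}$ is a bornology. For the latter: if $B\in\mca{C}$ and $B^{\prime\prime}\subset B$ then $f(B^{\prime\prime})\subset f(B)\in\mca{B}^\prime$, so $B^{\prime\prime}\in\mca{C}$; if $B,B^{\prime\prime}\in\mca{C}$ then $f(B\cup B^{\prime\prime})=f(B)\cup f(B^{\prime\prime})\in\mca{B}^\prime$; and for the covering condition, given $x\in X$ there is $B^\prime\in\mca{B}^\prime$ with $f(x)\in B^\prime$, so $\{x\}\in\mca{C}$ since $f(\{x\})=\{f(x)\}\subset B^\prime$. (This is the one point that uses that $\mca{B}^\prime$ covers $X^\prime$; it is otherwise a trivial verification.)

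Next I would prove $[f^{-1}(\mca{B}^\prime)]\subset\mca{C}$. Since $[f^{-1}(\mca{B}^\prime)]$ is by definition the smallest bornology containing all $f^{-1}(B^\prime)$ with $B^\prime\in\mca{B}^\prime$, and $\mca{C}$ is a bornology, it suffices to note $f^{-1}(B^\prime)\in\mca{C}$ for each $B^\prime\in\mca{B}^\prime$, which holds because $f(f^{-1}(B^\prime))\subset B^\prime\in\mca{B}^\prime$. For the reverse inclusion $\mca{C}\subset[f^{-1}(\mca{B}^\prime)]$: given $B\in\mca{C}$, we have $f(B)\in\mca{B}^\prime$, hence $f^{-1}(f(B))\in[f^{-1}(\mca{B}^\prime)]$; since $B\subset f^{-1}(f(B))$ and $[f^{-1}(\mca{B}^\prime)]$ is downward closed, $B\in[f^{-1}(\mca{B}^\prime)]$. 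This gives $[f^{-1}(\mca{B}^\prime)]=\mca{C}$.

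Finally, for the maximality statement: $f\colon(X,\mca{C})\to(X^\prime,\mca{B}^\prime)$ is bounded by the very definition of $\mca{C}$; and if $\mca{B}$ is any bornology on $X$ making $f$ bounded, then every $B\in\mca{B}$ satisfies $f(B)\in\mca{B}^\prime$, i.e. $B\in\mca{C}=[f^{-1}(\mca{B}^\prime)]$, so $\mca{B}\subset[f^{-1}(\mca{B}^\prime)]$.

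I do not expect a genuine obstacle here; the argument is elementary set-theoretic bookkeeping. The only place requiring a moment's care is the covering axiom for $\mca{C}$, where one must invoke that $\mca{B}^\prime$ covers $X^\prime$ to see that singletons lie in $\mca{C}$.
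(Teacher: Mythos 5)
Your proof is correct and follows essentially the same route as the paper: define the candidate bornology $\left\{B\subset X\ \middle|\ f(B)\in\mca{B}^\prime\right\}$, show each $f^{-1}(B^\prime)$ belongs to it (using $ff^{-1}(B^\prime)\subset B^\prime$) for one inclusion, and use $B\subset f^{-1}f(B)$ with downward closedness for the other. The only difference is that you spell out the verification that the candidate is a bornology (in particular the covering axiom), which the paper leaves implicit.
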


\begin{proof}
Let $\mca{B}=\left\{B\subset X\ \middle|\ f(B)\in\mca{B}^\prime\right\}$. Then $\mca{B}$ is a bornology on $X$. 

For any $B^\prime\in\mca{B}^\prime$, we have $ff^{-1}(B^\prime)\subset B^\prime$, hence $ff^{-1}(B^\prime)\in\mca{B}^\prime$. Thus $f^{-1}(B^\prime)\in\mca{B}$. This proves $[f^{-1}(\mca{B}^\prime)]\subset\mca{B}$. 

Let $B\in\mca{B}$. Since $f(B)\in\mca{B}^\prime$, we have $B\subset f^{-1}f(B)\in[f^{-1}(\mca{B}^\prime)]$, hence $B\in[f^{-1}(\mca{B}^\prime)]$. Therefore $\mca{B}\subset[f^{-1}(\mca{B}^\prime)]$. 
\end{proof}

\begin{dfn}\label{acgrbong}
Let $\mca{G}\rightrightarrows M$ be a Lie groupoid, $N$ be a $C^\infty$ manifold, $\nu\colon N\to M$ be a $C^\infty$ map and $\rho\in\mca{A}(\nu,\mca{G})$. Let $\mca{B}_\rho$ be the inverse image bornology induced by the projection $\varphi_\rho\colon N\rtimes_\rho\mca{G}\to\mca{G}$, where $\mca{G}$ is equipped with the locally compact bornology $\mca{B}_\mca{G}$. Hence 
\begin{equation*}
\mca{B}_\rho=\left\{B\subset N\rtimes_\rho\mca{G}\ \middle|\ \varphi_\rho(B)\in\mca{B}_\mca{G}\right\}
\end{equation*}
by Lemma \ref{borlarbou} and $\varphi_\rho$ is bounded. $\mca{B}_\rho$ is a base-bounded compatible bornology on $N\rtimes_\rho\mca{G}$ which contains all compact subsets of $N\rtimes_\rho\mca{G}$. We call $\mca{B}_\rho$ the \emph{action groupoid bornology} of $\rho$. 
\end{dfn}

\begin{prop}\label{crhol}
Let $\mca{G}\rightrightarrows M$ be a Lie groupoid, $N$ be a $C^\infty$ manifold, $\nu\colon N\to M$ be a $C^\infty$ map and $\rho\in\mca{A}(\nu,\mca{G})$. Then the following statements hold: 
\begin{enumerate}
\item $\mca{B}^c_{N\rtimes_\rho\mca{G}}\subset\mca{B}_\rho\subset\mca{B}_{N\rtimes_\rho\mca{G}}$. 
\item $\mca{B}^c_{N\rtimes_\rho\mca{G}}=\mca{B}_\rho=\mca{B}_{N\rtimes_\rho\mca{G}}$ if $N$ is compact. 
\item If $t\colon\mca{G}\to M$ is proper, then $\mca{B}_\rho=P(N\rtimes_\rho\mca{G})$. 
\item If $\nu\colon N\to M$ is proper, then $\mca{B}_\rho=\mca{B}_{N\rtimes_\rho\mca{G}}$. 
%
%
\end{enumerate}
\end{prop}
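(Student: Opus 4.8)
The plan is to prove the four items in order; items 2--4 reduce quickly to item 1 together with earlier results, so the substantive work is item 1. For $\mca{B}^c_{N\rtimes_\rho\mca{G}}\subset\mca{B}_\rho$ I would note that $\varphi_\rho$ is continuous, hence sends compact subsets of $N\rtimes_\rho\mca{G}$ to compact subsets of $\mca{G}$, which lie in $\mca{B}_\mca{G}$ by Proposition \ref{loccombor}, and that $\varphi_\rho(1_N)=1_{\nu(N)}\subset1_M\in\mca{B}_\mca{G}$ by base-boundedness; thus every generator of $\mca{B}^c_{N\rtimes_\rho\mca{G}}$ lies in $\mca{B}_\rho$. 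For $\mca{B}_\rho\subset\mca{B}_{N\rtimes_\rho\mca{G}}$, take $B\in\mca{B}_\rho$, so $\varphi_\rho(B)\in\mca{B}_\mca{G}$, and a compact $K\subset N$. Since $\nu(K)$ is compact in $M$, the second description of $\mca{B}_\mca{G}$ in Proposition \ref{loccombor} gives a compact $L\subset\mca{G}$ with $\varphi_\rho(B)\cap t^{-1}(\nu(K))\subset L$ and $\varphi_\rho(B)\cap s^{-1}(\nu(K))\subset L$. Using that $N\rtimes_\rho\mca{G}=N\times_{\nu,t}\mca{G}$ is closed in $N\times\mca{G}$ (Lemma \ref{fiberproduct}, as $M$ is Hausdorff), I would argue: if $(y,g)\in B$ and $y\in K$, then $t(g)=\nu(y)\in\nu(K)$ forces $g\in L$, so $(y,g)\in(K\times L)\cap(N\rtimes_\rho\mca{G})$; and if $(y,g)\in B$ and $\rho(y,g)\in K$, then $s(g)=\nu(\rho(y,g))\in\nu(K)$ again forces $g\in L$, while $y=\rho(\rho(y,g),g^{-1})$ places $y$ in the compact set $\rho\bigl((K\times L^{-1})\cap(N\times_{\nu,t}\mca{G})\bigr)$. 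The union of these two compact sets is the required compact subset of $N\rtimes_\rho\mca{G}$ containing $B\cap t^{-1}(K)$ and $B\cap s^{-1}(K)$, so $B\in\mca{B}_{N\rtimes_\rho\mca{G}}$.

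Items 2 and 3 are then immediate. For item 2, I would apply the earlier proposition comparing $\mca{B}^c$ and $\mca{B}$ for a topological groupoid with Hausdorff base to $N\rtimes_\rho\mca{G}\rightrightarrows N$: when $N$ is compact the set $N_{\neq1}$ is relatively compact, hence $\mca{B}^c_{N\rtimes_\rho\mca{G}}=\mca{B}_{N\rtimes_\rho\mca{G}}$, and item 1 squeezes $\mca{B}_\rho$ between them. For item 3, if $t\colon\mca{G}\to M$ is proper then $\mca{B}_\mca{G}=P(\mca{G})$ by Lemma \ref{properp}, so $\varphi_\rho(B)\in\mca{B}_\mca{G}$ for every $B\subset N\rtimes_\rho\mca{G}$, i.e. $\mca{B}_\rho=P(N\rtimes_\rho\mca{G})$.

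For item 4 I only need the reverse of the second inclusion of item 1 under the hypothesis that $\nu$ is proper. Given $B\in\mca{B}_{N\rtimes_\rho\mca{G}}$ and a compact $K\subset M$, the set $\nu^{-1}(K)\subset N$ is compact, so there is a compact $K'\subset N\rtimes_\rho\mca{G}$ with $B\cap t^{-1}(\nu^{-1}(K))\subset K'$ and $B\cap s^{-1}(\nu^{-1}(K))\subset K'$, where $t,s$ are now the structure maps of $N\rtimes_\rho\mca{G}$. Since $t(\varphi_\rho(y,g))=\nu(y)$ and $s(\varphi_\rho(y,g))=\nu(\rho(y,g))$, this yields $\varphi_\rho(B)\cap t^{-1}(K)\subset\varphi_\rho(K')$ and $\varphi_\rho(B)\cap s^{-1}(K)\subset\varphi_\rho(K')$ with $\varphi_\rho(K')$ compact, hence $\varphi_\rho(B)\in\mca{B}_\mca{G}$, i.e. $B\in\mca{B}_\rho$; combined with item 1 this gives $\mca{B}_\rho=\mca{B}_{N\rtimes_\rho\mca{G}}$. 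I expect the only real obstacle to be the $s$-direction of item 1: there $\nu$ is not assumed proper, so $\rho(y,g)$ lying in a compact set does not by itself confine $y$, and one must recover $y$ through the action identity $y=\rho(\rho(y,g),g^{-1})$ and the compactness of $L^{-1}$; the rest is routine bookkeeping with the two descriptions of the locally compact bornology and with closedness of fibered products over a Hausdorff base.
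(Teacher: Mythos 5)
Your proof is correct and follows essentially the same route as the paper: both rest on the second description of $\mca{B}_\mca{G}$ in Proposition \ref{loccombor}, compactness of fibered products over the Hausdorff base (Lemma \ref{fiberproduct}), and the reduction of items 2--4 to item 1 together with Lemma \ref{properp} and the comparison of $\mca{B}^c$ with $\mca{B}$. The only deviation is cosmetic: where the paper treats the source-direction estimates in items 1 and 4 by running the target-direction argument on $B^{-1}$ and inverting (using compatibility of $\mca{B}_\rho$), you argue directly via $y=\rho(\rho(y,g),g^{-1})$; just note that the compact subset of $N\rtimes_\rho\mca{G}$ containing $B\cap s^{-1}(K)$ should be stated as $\bigl(\rho\bigl((K\times L^{-1})\cap(N\times_{\nu,t}\mca{G})\bigr)\times L\bigr)\cap(N\rtimes_\rho\mca{G})$, which is compact by the same closedness argument you already invoke.
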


\begin{proof}
1. $\mca{B}^c_{N\rtimes_\rho\mca{G}}\subset\mca{B}_\rho$ is OK. Let $B\in\mca{B}_\rho$. Let $K\subset N$ be a compact subset. Since $\varphi_\rho(B)\in\mca{B}_\mca{G}$ and $\nu(K)$ is a compact subset of $M$, there exists a compact subset $K^\prime\subset\mca{G}$ such that $\varphi_\rho(B)\cap t^{-1}(\nu(K))\subset K^\prime$. 

\begin{claim}
$B\cap t^{-1}(K)\subset K\times_{\nu,t}K^\prime$. 
\end{claim}

\begin{proof}
Let $(y,g)\in B\cap t^{-1}(K)$. We have $y\in K$ and $t(g)=\nu(y)\in\nu(K)$. Hence $g\in\varphi_\rho(B)\cap t^{-1}(\nu(K))\subset K^\prime$. 
\end{proof}

$K\times_{\nu,t}K^\prime$ is compact since $M$ is Hausdorff. Since $B^{-1}\in\mca{B}_\rho$, the same argument shows that there exists a compact subset $K^{\prime\prime}\subset N\rtimes_\rho\mca{G}$ such that $B^{-1}\cap t^{-1}(K)\subset K^{\prime\prime}$. Since $B\cap s^{-1}(K)=(B^{-1}\cap t^{-1}(K))^{-1}\subset(K^{\prime\prime})^{-1}$, we have $B\in\mca{B}_{N\rtimes_\rho\mca{G}}$. 

2. $\mca{B}^c_{N\rtimes_\rho\mca{G}}=\mca{B}_{N\rtimes_\rho\mca{G}}$ since $N$ is compact. 
%
%
%
%
%
%
%
%
%
%
%

3. $\mca{B}_\mca{G}=P(\mca{G})$ by Lemma \ref{properp}. Hence $\mca{B}_\rho=P(N\rtimes_\rho\mca{G})$. 

4. Let $B\in\mca{B}_{N\rtimes_\rho\mca{G}}$. We must show $\varphi_\rho(B)\in\mca{B}_\mca{G}$. Let $K\subset M$ be a compact subset. 

\begin{claim}
$\varphi_\rho(B)\cap t^{-1}(K)=\varphi_\rho(B\cap t^{-1}\nu^{-1}(K))$. 
\end{claim}

\begin{proof}
To prove $\subset$, let $g\in\varphi_\rho(B)\cap t^{-1}(K)$. There exists $(y,g)\in B$. We have $\nu(y)=t(g)\in K$, hence $(y,g)\in B\cap t^{-1}\nu^{-1}(K)$. So $g\in\varphi_\rho(B\cap t^{-1}\nu^{-1}(K))$. 

For $\supset$, let $g\in\varphi_\rho(B\cap t^{-1}\nu^{-1}(K))$. There exists $(y,g)\in B\cap t^{-1}\nu^{-1}(K)$. We have $y\in\nu^{-1}(K)$ and $t(g)=\nu(y)\in K$. Hence $g\in\varphi_\rho(B)\cap t^{-1}(K)$. 
\end{proof}

Since $\nu$ is proper, $\nu^{-1}(K)$ is compact. There exists a compact subset $K^\prime\subset N\rtimes_\rho\mca{G}$ such that $B\cap t^{-1}\nu^{-1}(K)\subset K^\prime$. Applying $\varphi_\rho$ to this inclusion and using the claim, we get $\varphi_\rho(B)\cap t^{-1}(K)\subset\varphi_\rho(K^\prime)$. 

Since $B^{-1}\in\mca{B}_{N\rtimes_\rho\mca{G}}$, the same argument shows that there exists a compact subset $K^{\prime\prime}\subset N\rtimes_\rho\mca{G}$ such that $\varphi_\rho(B^{-1})\cap t^{-1}(K)\subset\varphi_\rho(K^{\prime\prime})$. Taking the inverse, we get $\varphi_\rho(B)\cap s^{-1}(K)\subset\varphi_\rho((K^{\prime\prime})^{-1})$. Therefore $\varphi_\rho(B)\in\mca{B}_\mca{G}$. Hence $B\in\mca{B}_\rho$. 
\end{proof}

\begin{prop}\label{semibounded}
Let $\mca{G}\rightrightarrows M$, $\mca{G}^\prime\rightrightarrows M^\prime$ be Lie groupoids, $N$, $N^\prime$ be $C^\infty$ manifolds, $\nu\colon N\to M$, $\nu^\prime\colon N^\prime\to M^\prime$ be $C^\infty$ maps, $\rho\in\mca{A}(\nu,\mca{G})$, $\rho^\prime\in\mca{A}(\nu^\prime,\mca{G}^\prime)$ and $\varphi\in\Hom(N\rtimes_\rho\mca{G},N^\prime\rtimes_{\rho^\prime}\mca{G}^\prime)$. Consider the action groupoid bornology $\mca{B}_\rho$ (resp. $\mca{B}_{\rho^\prime}$) on $N\rtimes_\rho\mca{G}$ (resp. $N^\prime\rtimes_{\rho^\prime}\mca{G}^\prime$). Then the following statements hold: 
\begin{enumerate}
\item If $\varphi=F_\varphi\rtimes\Phi$ is a semiconjugacy and $F_\Phi\colon M\to M^\prime$ is proper, then $\varphi$ is bounded. 
\item If $\varphi=F_\varphi\rtimes\Phi$ is a semiconjugacy and $F_\Phi\colon\nu(N)\to M^\prime$ is proper, then $\varphi$ is bounded. 
\item If $\varphi$ is a conjugacy, then $\varphi$ is bibounded. 
\item If $N$ is compact, then $\varphi$ is bounded. 
\item If $F_\varphi\colon N\to N^\prime$ and $\nu^\prime\colon N^\prime\to M^\prime$ are proper, then $\varphi$ is bounded. 
\item If $t^\prime\colon\mca{G}^\prime\to M^\prime$ is proper, then $\varphi$ is bounded. 
\end{enumerate}
\end{prop}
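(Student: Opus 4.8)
The plan is to observe that the properness of $t^\prime$ forces the target bornology to be the full power set, so that boundedness of $\varphi$ becomes automatic. Concretely, I would first apply Lemma \ref{properp} to the Lie groupoid $\mca{G}^\prime\rightrightarrows M^\prime$: since $M^\prime$ is Hausdorff (it is the base of a Lie groupoid) and $t^\prime\colon\mca{G}^\prime\to M^\prime$ is proper by hypothesis, Lemma \ref{properp} yields $\mca{B}_{\mca{G}^\prime}=P(\mca{G}^\prime)$.

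Next I would feed this into the action groupoid bornology of $\rho^\prime$. By definition $\mca{B}_{\rho^\prime}=\{B\subset N^\prime\rtimes_{\rho^\prime}\mca{G}^\prime\mid\varphi_{\rho^\prime}(B)\in\mca{B}_{\mca{G}^\prime}\}$ (Definition \ref{acgrbong} together with Lemma \ref{borlarbou}), and since $\mca{B}_{\mca{G}^\prime}=P(\mca{G}^\prime)$ the condition $\varphi_{\rho^\prime}(B)\in\mca{B}_{\mca{G}^\prime}$ is vacuous; hence $\mca{B}_{\rho^\prime}=P(N^\prime\rtimes_{\rho^\prime}\mca{G}^\prime)$. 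This is exactly the conclusion of Proposition \ref{crhol}(3) applied to $\rho^\prime$, so I can simply cite that. Consequently, for any $B\in\mca{B}_\rho$ we trivially have $\varphi(B)\in P(N^\prime\rtimes_{\rho^\prime}\mca{G}^\prime)=\mca{B}_{\rho^\prime}$, which is precisely the assertion that $\varphi$ is bounded with respect to $\mca{B}_\rho$ and $\mca{B}_{\rho^\prime}$.

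There is essentially no obstacle here: the only thing to check is that the hypotheses of Lemma \ref{properp} (Hausdorffness of the base) are met, which is immediate, and that the reduction of $\mca{B}_{\rho^\prime}$ to $P(N^\prime\rtimes_{\rho^\prime}\mca{G}^\prime)$ is valid, which is Proposition \ref{crhol}(3). No properties of $\varphi$ beyond being a map are used.
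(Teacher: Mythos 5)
Your argument for item 6 is correct and is essentially the paper's own proof, which simply cites Proposition \ref{crhol}(3) (whose proof is exactly your reduction via Lemma \ref{properp} to $\mca{B}_{\mca{G}^\prime}=P(\mca{G}^\prime)$ and hence $\mca{B}_{\rho^\prime}=P(N^\prime\rtimes_{\rho^\prime}\mca{G}^\prime)$) to conclude that boundedness of $\varphi$ is automatic. Note, however, that this covers only item 6 of the proposition; items 1--5 still require their own (nontrivial) arguments.
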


\begin{proof}
1. We have 
\begin{equation*}
\begin{tikzcd}
N\rtimes_\rho\mca{G}\ar[r,"\varphi"]\ar[d,"\varphi_\rho"']\ar[rd,phantom,"\circlearrowright"{xshift=-5}]&N^\prime\rtimes_{\rho^\prime}\mca{G}^\prime\ar[d,"\varphi_{\rho^\prime}"]\\
\mca{G}\ar[r,"\Phi"']&\mca{G}^\prime. 
\end{tikzcd}
\end{equation*}
$\varphi_\rho$ is bounded with respect to $\mca{B}_\rho$ and $\mca{B}_\mca{G}$. $\Phi$ is bounded with respect to $\mca{B}_\mca{G}$ and $\mca{B}_{\mca{G}^\prime}$ by Proposition \ref{properbounded}. If $B\in\mca{B}_\rho$, we have $\varphi_{\rho^\prime}\varphi(B)=\Phi\varphi_\rho(B)\in\mca{B}_{\mca{G}^\prime}$, hence $\varphi(B)\in\mca{B}_{\rho^\prime}$. 

2. Let $B\in\mca{B}_\rho$. To prove $\varphi(B)\in\mca{B}_{\rho^\prime}$, we need to show $\varphi_{\rho^\prime}\varphi(B)=\Phi\varphi_\rho(B)\in\mca{B}_{\mca{G}^\prime}$. Let $K^\prime$ be a compact subset of $M^\prime$. Since $\varphi_\rho(B)\subset\mca{G}|_{\nu(N)}$, we have 
\begin{equation}\label{phiphibkkp}
\begin{gathered}
\Phi\varphi_\rho(B)\cap(t^\prime)^{-1}(K^\prime)\subset\Phi(\varphi_\rho(B)\cap t^{-1}(F_\Phi|_{\nu(N)})^{-1}(K^\prime)), \\
\Phi\varphi_\rho(B)\cap(s^\prime)^{-1}(K^\prime)\subset\Phi(\varphi_\rho(B)\cap s^{-1}(F_\Phi|_{\nu(N)})^{-1}(K^\prime)). 
\end{gathered}
\end{equation}
Since $\varphi_\rho(B)\in\mca{B}_\mca{G}$ and $(F_\Phi|_{\nu(N)})^{-1}(K^\prime)$ is a compact subset of $M$, there exists a compact subset $K\subset\mca{G}$ such that 
\begin{equation*}
(\varphi_\rho(B)\cap t^{-1}(F_\Phi|_{\nu(N)})^{-1}(K^\prime))\cup(\varphi_\rho(B)\cap s^{-1}(F_\Phi|_{\nu(N)})^{-1}(K^\prime))\subset K. 
\end{equation*}
Applying $\Phi$ to this inclusion and using \eqref{phiphibkkp}, we get 
\begin{equation*}
(\Phi\varphi_\rho(B)\cap(t^\prime)^{-1}(K^\prime))\cup(\Phi\varphi_\rho(B)\cap(s^\prime)^{-1}(K^\prime))\subset\Phi(K). 
\end{equation*}
Therefore $\Phi\varphi_\rho(B)\in\mca{B}_{\mca{G}^\prime}$. 

3. Let $B\in\mca{B}_\rho$. We prove $\Phi\varphi_\rho(B)\in\mca{B}_{\mca{G}^\prime}$. Let $K^\prime$ be a compact subset of $M^\prime$. There exist $\Phi\in\Hom(\mca{G},\mca{G}^\prime)$ and $\Phi^\prime\in\Hom(\mca{G}^\prime,\mca{G})$ such that $\varphi=F_\varphi\rtimes\Phi$ and $\varphi^{-1}=F_\varphi^{-1}\rtimes\Phi^\prime$. Since 
\begin{equation*}
\begin{tikzcd}
N\ar[r,"F_\varphi"]\ar[d,"\nu"']\ar[rd,phantom,"\circlearrowright"]&N^\prime\ar[d,"\nu^\prime"]\\
M\ar[r,"F_\Phi"']&M^\prime, 
\end{tikzcd}
\qquad
\begin{tikzcd}
N^\prime\ar[r,"F_\varphi^{-1}"]\ar[d,"\nu^\prime"']\ar[rd,phantom,"\circlearrowright"]&N\ar[d,"\nu"]\\
M^\prime\ar[r,"F_{\Phi^\prime}"']&M, 
\end{tikzcd}
\end{equation*}
we have maps 
\begin{equation*}
\begin{tikzcd}
\nu(N)\ar[r,shift left,"F_\Phi"]&\nu^\prime(N^\prime)\ar[l,shift left,"F_{\Phi^\prime}"]
\end{tikzcd}
\end{equation*}
which are inverse to each other. Since $\varphi_\rho(B)\subset\mca{G}|_{\nu(N)}$, we have 
\begin{equation}\label{pptkpppp}
\begin{aligned}
\Phi\varphi_\rho(B)\cap(t^\prime)^{-1}(K^\prime)&\subset\Phi(\varphi_\rho(B)\cap t^{-1}(F_\Phi|_{\nu(N)})^{-1}(K^\prime))\\
&\subset\Phi(\varphi_\rho(B)\cap t^{-1}F_{\Phi^\prime}(K^\prime)), \\
\Phi\varphi_\rho(B)\cap(s^\prime)^{-1}(K^\prime)&\subset\Phi(\varphi_\rho(B)\cap s^{-1}(F_\Phi|_{\nu(N)})^{-1}(K^\prime))\\
&\subset\Phi(\varphi_\rho(B)\cap s^{-1}F_{\Phi^\prime}(K^\prime)). 
\end{aligned}
\end{equation}
Since $\varphi_\rho(B)\in\mca{B}_\mca{G}$ and $F_{\Phi^\prime}(K^\prime)$ is a compact subset of $M$, there exists a compact subset $K\subset\mca{G}$ such that 
\begin{equation*}
(\varphi_\rho(B)\cap t^{-1}F_{\Phi^\prime}(K^\prime))\cup(\varphi_\rho(B)\cap s^{-1}F_{\Phi^\prime}(K^\prime))\subset K. 
\end{equation*}
Applying $\Phi$ to this inclusion and using \eqref{pptkpppp}, we get 
\begin{equation*}
(\Phi\varphi_\rho(B)\cap(t^\prime)^{-1}(K^\prime))\cup(\Phi\varphi_\rho(B)\cap(s^\prime)^{-1}(K^\prime))\subset\Phi(K). 
\end{equation*}
Hence $\Phi\varphi_\rho(B)\in\mca{B}_{\mca{G}^\prime}$. Since $\varphi_{\rho^\prime}\varphi(B)=\Phi\varphi_\rho(B)\in\mca{B}_{\mca{G}^\prime}$, we have $\varphi(B)\in\mca{B}_{\rho^\prime}$, hence $\varphi$ is bounded. $\varphi^{-1}$ is bounded by the same argument. 

4. $\mca{B}_\rho=\mca{B}^c_{N\rtimes_\rho\mca{G}}$ since $N$ is compact. Since $\varphi(\mca{B}^c_{N\rtimes_\rho\mca{G}})\subset\mca{B}^c_{N^\prime\rtimes_{\rho^\prime}\mca{G}^\prime}$ by Proposition \ref{tgcmbrebb}, and $\mca{B}^c_{N^\prime\rtimes_{\rho^\prime}\mca{G}^\prime}\subset\mca{B}_{\rho^\prime}$, we get $\varphi(\mca{B}_\rho)\subset\mca{B}_{\rho^\prime}$. 

5. We have $\mca{B}_\rho\subset\mca{B}_{N\rtimes_\rho\mca{G}}$ by Proposition \ref{crhol}. Since $F_\varphi$ is proper, $\varphi(\mca{B}_{N\rtimes_\rho\mca{G}})\subset\mca{B}_{N^\prime\rtimes_{\rho^\prime}\mca{G}^\prime}$ by Proposition \ref{properbounded}. Since $\nu^\prime$ is proper, $\mca{B}_{N^\prime\rtimes_{\rho^\prime}\mca{G}^\prime}=\mca{B}_{\rho^\prime}$ by Proposition \ref{crhol}. Hence $\varphi(\mca{B}_\rho)\subset\varphi(\mca{B}_{N\rtimes_\rho\mca{G}})\subset\mca{B}_{N^\prime\rtimes_{\rho^\prime}\mca{G}^\prime}=\mca{B}_{\rho^\prime}$. 

6. Since $t^\prime\colon\mca{G}^\prime\to M^\prime$ is proper, we have $\mca{B}_{\rho^\prime}=P(N^\prime\rtimes_{\rho^\prime}\mca{G}^\prime)$ by Proposition \ref{crhol}. Hence $\varphi$ is bounded. 
\end{proof}

\subsection{Bounded $nt$-homotopies}
\begin{dfn}\label{hnhnpnib}
Let $\mca{H}\rightrightarrows N$, $\mca{H}^\prime\rightrightarrows N^\prime$ be Lie groupoids, $\mca{B}^\prime$ be a base-bounded compatible bornology on $\mca{H}^\prime$ and $\varphi$, $\varphi^\prime\in\Hom(\mca{H},\mca{H}^\prime)$. An $nt$-homotopy $P\colon N\times I\to\mca{H}^\prime$ between $\varphi$ and $\varphi^\prime$ is \emph{bounded} if $P(N\times I)\in\mca{B}^\prime$. 

We say that $\varphi$ is \emph{boundedly $nt$-homotopic} to $\varphi^\prime$ (written as $\varphi\sim_{bnt}\varphi^\prime$) if there exists a bounded $nt$-homotopy between $\varphi$ and $\varphi^\prime$. 
\end{dfn}

\begin{lem}
$\sim_{bnt}$ is an equivalence relation on $\Hom(\mca{H},\mca{H}^\prime)$. 
\end{lem}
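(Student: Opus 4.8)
The plan is to retrace the proof that $\sim_{nt}$ is an equivalence relation on $\Hom(\mca{H},\mca{H}^\prime)$ and to observe that, in each of its three steps, the $nt$-homotopy produced there is already bounded whenever the input $nt$-homotopies are bounded. The three hypotheses on $\mca{B}^\prime$ are exactly what make this work: base-boundedness handles reflexivity, closure under $B\mapsto B^{-1}$ and $(B,B^\prime)\mapsto BB^\prime$ handles symmetry, and these together with heredity and closure under finite unions handle transitivity. For reflexivity, the constant map $P(y,t)=1_{F_\varphi(y)}$ is an $nt$-homotopy from $\varphi$ to itself, and its image $P(N\times I)$ is contained in $1_{N^\prime}$, which belongs to $\mca{B}^\prime$ by base-boundedness; by heredity $P(N\times I)\in\mca{B}^\prime$, so $\varphi\sim_{bnt}\varphi$.

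For symmetry, suppose $P$ is a bounded $nt$-homotopy from $\varphi$ to $\varphi^\prime$, and set $B=P(N\times I)\in\mca{B}^\prime$. The map $P^\prime(y,t)=P(y,1)^{-1}P(y,1-t)$ used in the $\sim_{nt}$ proof is an $nt$-homotopy from $\varphi^\prime$ to $\varphi$; since $t^\prime P(\cdot,t)=F_\varphi$ for every $t$ (Definition \ref{nthomptppp}), the product $P(y,1)^{-1}P(y,1-t)$ is a defined groupoid product of two elements of $B$ with equal targets, hence lies in $B^{-1}B$. Thus $P^\prime(N\times I)\subset B^{-1}B\in\mca{B}^\prime$ by compatibility, so $P^\prime$ is bounded and $\varphi^\prime\sim_{bnt}\varphi$.

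For transitivity, suppose $P$ (from $\varphi$ to $\varphi^\prime$) and $P^\prime$ (from $\varphi^\prime$ to $\varphi^{\prime\prime}$) are bounded $nt$-homotopies, and put $B=P(N\times I)$ and $B^\prime=P^\prime(N\times I)\in\mca{B}^\prime$. The concatenated $nt$-homotopy $P^{\prime\prime}$ of the $\sim_{nt}$ proof takes values in $B$ on $[0,\tfrac12]$ and in a groupoid product of $B$ and $B^\prime$ on $[\tfrac12,1]$ (the composability of that product being again immediate from the source/target identities recorded in Definition \ref{nthomptppp} and Lemma \ref{hnhnphipoh}). Hence $P^{\prime\prime}(N\times I)$ is contained in the union of $B$ with a product of $B$ and $B^\prime$, a set lying in $\mca{B}^\prime$ by compatibility together with closure under finite unions. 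Therefore $P^{\prime\prime}$ is bounded and $\varphi\sim_{bnt}\varphi^{\prime\prime}$.

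There is no genuine obstacle here; the only point requiring care is bookkeeping — keeping the composition order in the $\sim_{nt}$ constructions consistent so that the inclusions $P^\prime(N\times I)\subset B^{-1}B$ and $P^{\prime\prime}(N\times I)\subset B\cup (B B^\prime)$ hold literally, and confirming that every groupoid product appearing in those formulas is defined. Both follow at once from the identities $t^\prime P(\cdot,t)=F_\varphi$ and $s^\prime P(\cdot,1)=F_{\varphi^\prime}$ that are part of the definition of an $nt$-homotopy, so the proof amounts to inserting these three boundedness checks into the already-established equivalence-relation argument for $\sim_{nt}$.
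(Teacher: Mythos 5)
Your proof is correct and follows essentially the same route as the paper: reuse the three $\sim_{nt}$ constructions (constant, reversed, concatenated $nt$-homotopies) and check boundedness via base-boundedness, compatibility, heredity, and finite unions, exactly as the paper does. The only cosmetic point is that with the paper's concatenation $P^{\prime\prime}(y,t)=P^\prime(y,2t-1)P(y,1)$ the image lands in $B\cup B^\prime B$ rather than $B\cup BB^\prime$, but since compatibility makes any product of bounded sets bounded this changes nothing.
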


\begin{proof}
For $\varphi\in\Hom(\mca{H},\mca{H}^\prime)$ and the $nt$-homotopy 
\begin{align*}
P\colon N\times I&\to\mca{H}^\prime\\
(y,t)&\mapsto1_{F_\varphi(y)}
\end{align*}
between $\varphi$ and $\varphi$, we have $P(N\times I)\subset1_{N^\prime}\in\mca{B}^\prime$, hence $P(N\times I)\in\mca{B}^\prime$. Thus $\varphi\sim_{bnt}\varphi$. 

For $\varphi$, $\varphi^\prime\in\Hom(\mca{H},\mca{H}^\prime)$ such that $\varphi\sim_{bnt}\varphi^\prime$, take a bounded $nt$-homotopy $P$ between $\varphi$ and $\varphi^\prime$ and consider the $nt$-homotopy 
\begin{align*}
P^\prime\colon N\times I&\to\mca{H}^\prime\\
(y,t)&\mapsto P(y,1)^{-1}P(y,1-t)
\end{align*}
from $\varphi^\prime$ and $\varphi$. Then $P^\prime(N\times I)\subset P(N\times I)^{-1}P(N\times I)\in\mca{B}^\prime$, hence $P^\prime(N\times I)\in\mca{B}^\prime$. Therefore $\varphi^\prime\sim_{bnt}\varphi$. 

Assume $\varphi\sim_{bnt}\varphi^\prime$, $\varphi^\prime\sim_{bnt}\varphi^{\prime\prime}$ and take a bounded $nt$-homotopy $P$ between $\varphi$ and $\varphi^\prime$, and a bounded $nt$-homotopy $P^\prime$ between $\varphi^\prime$ and $\varphi^{\prime\prime}$. 
Then the $nt$-homotopy 
\begin{align*}
P^{\prime\prime}\colon N\times I&\to\mca{H}^\prime\\
(y,t)&\mapsto
\begin{cases}
P(y,2t)&0\leq t\leq\frac{1}{2}\\
P^\prime(y,2t-1)P(y,1)&\frac{1}{2}\leq t\leq1
\end{cases}
\end{align*}
between $\varphi$ and $\varphi^{\prime\prime}$ satisfies $P^{\prime\prime}(N\times I)\subset P(N\times I)\cup P^\prime(N\times I)P(N\times I)\in\mca{B}^\prime$, hence $P^{\prime\prime}(N\times I)\in\mca{B}^\prime$. Thus $\varphi\sim_{bnt}\varphi^{\prime\prime}$. 
\end{proof}

Note that $\varphi\sim_{bnt}\varphi^\prime\ \Rightarrow\ \varphi\sim_{nt}\varphi^\prime\ \Rightarrow\ \varphi\sim_o\varphi^\prime$. 

\begin{lem}\label{simsim2}
Let $\mca{H}\rightrightarrows N$, $\mca{H}^\prime\rightrightarrows N^\prime$, $\mca{H}^{\prime\prime}\rightrightarrows N^{\prime\prime}$ be Lie groupoids, $\mca{B}^\prime$ (resp. $\mca{B}^{\prime\prime}$) be a base-bounded compatible bornology on $\mca{H}^\prime$ (resp. $\mca{H}^{\prime\prime}$), $\varphi$, $\varphi^\prime\in\Hom(\mca{H},\mca{H}^\prime)$ and $\psi$, $\psi^\prime\in\Hom(\mca{H}^\prime,\mca{H}^{\prime\prime})$. Then the following statements hold: 
\begin{enumerate}
\setlength\itemsep{0em}
\item If $\psi\sim_{bnt}\psi^\prime$, then $\psi\varphi\sim_{bnt}\psi^\prime\varphi$. 
\item If $\varphi\sim_{bnt}\varphi^\prime$ and $\psi$ is bounded, then $\psi\varphi\sim_{bnt}\psi\varphi^\prime$. 
\end{enumerate}
\end{lem}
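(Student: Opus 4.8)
The plan is to reuse the two explicit $nt$-homotopy constructions that already appeared in the proof of Lemma \ref{simsim} for the case $a=nt$, and simply check that the boundedness hypotheses force the relevant total images to lie in the appropriate bornologies. For item 1, suppose $\psi\sim_{bnt}\psi^\prime$, witnessed by a bounded $nt$-homotopy $Q\colon N^\prime\times I\to\mca{H}^{\prime\prime}$ between $\psi$ and $\psi^\prime$, so $Q(N^\prime\times I)\in\mca{B}^{\prime\prime}$. Following Lemma \ref{simsim}, define
\begin{align*}
Q^\varphi\colon N\times I&\to\mca{H}^{\prime\prime}\\
(y,t)&\mapsto Q(F_{\varphi^\prime}(y),t).
\end{align*}
One checks, exactly as in Lemma \ref{simsim}, that $Q^\varphi$ is an $nt$-homotopy between $\psi\varphi$ and $\psi^\prime\varphi$: the conditions $t^{\prime\prime}Q(\cdot,t)=F_\psi$, $Q(\cdot,0)=1_{F_\psi}$ and $\psi Q(\cdot,1)=\psi^\prime$ pull back along $F_{\varphi^\prime}$ to give $t^{\prime\prime}Q^\varphi(\cdot,t)=F_\psi F_{\varphi^\prime}=F_{\psi\varphi^\prime}$, $Q^\varphi(\cdot,0)=1_{F_{\psi\varphi^\prime}}$ and $(\psi\varphi)Q^\varphi(\cdot,1)=(\psi Q(\cdot,1))\circ\varphi=\psi^\prime\varphi$; note $F_{\psi\varphi}=F_\psi F_\varphi$ and we must also use that $F_\varphi=F_{\varphi^\prime}$ is not assumed, so one actually applies the homotopy from $\psi\varphi$ to $\psi^\prime\varphi$ along the same base map $F_\varphi$ — more precisely the $nt$-homotopy is taken with respect to the morphism $\psi\varphi$, whose base map is $F_\psi F_\varphi$, and the formula $(y,t)\mapsto Q(F_\varphi(y),t)$ is the correct one, matching Lemma \ref{simsim}. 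For boundedness, $Q^\varphi(N\times I)\subset Q(N^\prime\times I)\in\mca{B}^{\prime\prime}$, and since $\mca{B}^{\prime\prime}$ is downward closed, $Q^\varphi(N\times I)\in\mca{B}^{\prime\prime}$. Hence $\psi\varphi\sim_{bnt}\psi^\prime\varphi$.

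For item 2, suppose $\varphi\sim_{bnt}\varphi^\prime$, witnessed by a bounded $nt$-homotopy $P\colon N\times I\to\mca{H}^\prime$ between $\varphi$ and $\varphi^\prime$, so $P(N\times I)\in\mca{B}^\prime$, and suppose $\psi\colon\mca{H}^\prime\to\mca{H}^{\prime\prime}$ is bounded (with respect to $\mca{B}^\prime$ and $\mca{B}^{\prime\prime}$). Following Lemma \ref{simsim}, set
\begin{align*}
\psi P\colon N\times I&\to\mca{H}^{\prime\prime}\\
(y,t)&\mapsto\psi(P(y,t)).
\end{align*}
Again as in Lemma \ref{simsim}, $\psi P$ is an $nt$-homotopy between $\psi\varphi$ and $\psi\varphi^\prime$: $t^{\prime\prime}\psi P(\cdot,t)=F_\psi t^\prime P(\cdot,t)=F_\psi F_\varphi=F_{\psi\varphi}$, $\psi P(\cdot,0)=\psi(1_{F_\varphi})=1_{F_\psi F_\varphi}$ using $\psi(1_x)=1_{F_\psi(x)}$, and $(\psi\varphi)(\psi P(\cdot,1))=\psi(\varphi P(\cdot,1))=\psi\varphi^\prime$ (here I use that $\psi$ of the $\mca{H}^\prime$-action on morphisms by natural transformations is compatible with composition, i.e. $\psi(\chi P')=(\psi\chi)(\psi\circ P')$ for $\chi\in\Hom(\mca{H},\mca{H}^\prime)$, which follows from $\psi$ being a morphism; this is the same identity implicitly used in Lemma \ref{simsim}). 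For boundedness, $(\psi P)(N\times I)=\psi(P(N\times I))\in\mca{B}^{\prime\prime}$ precisely because $P(N\times I)\in\mca{B}^\prime$ and $\psi$ is a bounded map. Hence $\psi\varphi\sim_{bnt}\psi\varphi^\prime$.

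I expect the only real point requiring care — the ``main obstacle'', such as it is — is bookkeeping the base maps and verifying that the two displayed formulas genuinely are $nt$-homotopies in the sense of Definition \ref{nthomptppp} for the composed morphisms; all of this is already done in Lemma \ref{simsim}, so the new content is exactly the two one-line containments $Q^\varphi(N\times I)\subset Q(N^\prime\times I)$ and $(\psi P)(N\times I)=\psi(P(N\times I))$, combined with downward closure of bornologies and boundedness of $\psi$. In particular note why the hypotheses are asymmetric: for precomposition (item 1) no boundedness of $\varphi$ is needed because precomposing a bounded-image map with any smooth map does not enlarge the image, whereas for postcomposition (item 2) one genuinely needs $\psi$ to be a bounded map to keep the image of the homotopy inside $\mca{B}^{\prime\prime}$.
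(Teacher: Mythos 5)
Your proof is correct and is essentially the paper's own argument: the paper simply observes that the two $nt$-homotopies built in the proof of Lemma \ref{simsim}, namely $(y,t)\mapsto Q(F_\varphi(y),t)$ for precomposition and $(y,t)\mapsto\psi P(y,t)$ for postcomposition, are bounded under the stated hypotheses, which is exactly your pair of containments $Q(F_\varphi(N)\times I)\subset Q(N^\prime\times I)\in\mca{B}^{\prime\prime}$ and $\psi(P(N\times I))\in\mca{B}^{\prime\prime}$. The only blemish is the initial use of $F_{\varphi^\prime}$ in item 1, which you correctly replace by $F_\varphi$ before concluding.
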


\begin{proof}
The $nt$-homotopies constructed in the proof of Lemma \ref{simsim} are bounded. 
\end{proof}

\begin{lem}\label{bounboun}
Let $\mca{H}\rightrightarrows N$, $\mca{H}^\prime\rightrightarrows N^\prime$ be groupoids, $\mca{B}$ be a bornology on $\mca{H}$, $\mca{B}^\prime$ be a compatible bornology on $\mca{H}^\prime$ and $\varphi\in\Hom(\mca{H},\mca{H}^\prime)$ be bounded with respect to $\mca{B}$ and $\mca{B}^\prime$. Let $P\colon N\to\mca{H}^\prime$ be a map such that $t^\prime P=F_\varphi$ and $P(N)\in\mca{B}^\prime$. Then $\varphi P\in\Hom(\mca{H},\mca{H}^\prime)$ is bounded with respect to $\mca{B}$ and $\mca{B}^\prime$. 
\end{lem}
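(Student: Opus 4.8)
Let $\mca{H}\rightrightarrows N$, $\mca{H}^\prime\rightrightarrows N^\prime$ be groupoids, $\mca{B}$ be a bornology on $\mca{H}$, $\mca{B}^\prime$ be a compatible bornology on $\mca{H}^\prime$ and $\varphi\in\Hom(\mca{H},\mca{H}^\prime)$ be bounded with respect to $\mca{B}$ and $\mca{B}^\prime$. Let $P\colon N\to\mca{H}^\prime$ be a map such that $t^\prime P=F_\varphi$ and $P(N)\in\mca{B}^\prime$. Then $\varphi P\in\Hom(\mca{H},\mca{H}^\prime)$ is bounded with respect to $\mca{B}$ and $\mca{B}^\prime$.

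Wait, but the lemma is for groupoids (not Lie groupoids), and $\varphi P$ refers to the construction from Lemma~\ref{hnhncnnb}, which needs $t'P = F_\varphi$... let me think about what $\varphi P$ means here.

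Let me reconsider. In Lemma~\ref{hnhncnnb}, for $(\varphi,P) \in \Hom(\mca{H},\mca{H}^\prime) \times_{b,t^\prime} C^\infty(N,\mca{H}^\prime)$, we have $\varphi P(h) = P(t(h))^{-1}\varphi(h)P(s(h))$. So the condition $t'P = F_\varphi = b(\varphi)$ is exactly the fiber product condition. The claim $\varphi P \in \Hom(\mca{H},\mca{H}^\prime)$ is from Lemma~\ref{hnhncnnb}. So actually the content of this lemma is just the boundedness.
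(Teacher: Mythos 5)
Your proposal stops exactly where the proof has to start: after correctly identifying that $\varphi P$ is the construction of Lemma \ref{hnhncnnb} (so that $\varphi P\in\Hom(\mca{H},\mca{H}^\prime)$ is already known) and that ``the content of this lemma is just the boundedness,'' you never prove that boundedness. No bounded set is ever taken, and neither the compatibility of $\mca{B}^\prime$ nor the hypothesis $P(N)\in\mca{B}^\prime$ is ever used, so the actual assertion of the lemma is left unverified.

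The missing argument is short but it is the whole point. Take $B\in\mca{B}$. For $h\in\mca{H}$ one has $(\varphi P)(h)=P(t(h))^{-1}\varphi(h)P(s(h))$, hence
\begin{equation*}
(\varphi P)(B)\subset P(N)^{-1}\,\varphi(B)\,P(N).
\end{equation*}
Since $\varphi$ is bounded, $\varphi(B)\in\mca{B}^\prime$; since $P(N)\in\mca{B}^\prime$ and $\mca{B}^\prime$ is compatible, $P(N)^{-1}\in\mca{B}^\prime$ and the product $P(N)^{-1}\varphi(B)P(N)$ lies in $\mca{B}^\prime$. Finally, because bornologies are closed under passing to subsets, $(\varphi P)(B)\in\mca{B}^\prime$, which is exactly what is needed. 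This is also the paper's proof; your write-up needs this paragraph to be a proof at all.
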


\begin{proof}
Let $B\in\mca{B}$. For $h\in\mca{H}$, we have $(\varphi P)(h)=P(t(h))^{-1}\varphi(h)P(s(h))$, hence $(\varphi P)(B)\subset P(N)^{-1}\varphi(B)P(N)\in\mca{B}^\prime$. Thus $(\varphi P)(B)\in\mca{B}^\prime$. 
\end{proof}

\begin{example}
In the situation of Definition \ref{hnhnpnib}, let $\mca{H}^\prime=N^\prime\rtimes_{\rho^\prime}\mca{G}^\prime$ and $\mca{B}^\prime=\mca{B}_{\rho^\prime}$, where $\mca{G}^\prime\rightrightarrows M^\prime$ be a Lie groupoid, $N^\prime$ be a $C^\infty$ manifold, $\nu^\prime\colon N^\prime\to M^\prime$ be a $C^\infty$ map and $\rho^\prime\in\mca{A}(\nu^\prime,\mca{G}^\prime)$. If $N$ is compact or $t^\prime\colon\mca{G}^\prime\to M^\prime$ is proper, then any $nt$-homotopy is bounded by Proposition \ref{crhol}. 
\end{example}

\begin{dfn}\label{simbntbthe}
Let $\mca{G}\rightrightarrows M$ be a Lie groupoid, $N$, $N^\prime$ be $C^\infty$ manifolds, $\nu\colon N\to M$, $\nu^\prime\colon N^\prime\to M$ be $C^\infty$ maps, $\rho\in\mca{A}(\nu,\mca{G})$, $\rho^\prime\in\mca{A}(\nu^\prime,\mca{G})$ and $\varphi$, $\varphi^\prime\in\ul{\Hom}(N\rtimes_\rho\mca{G},N^\prime\rtimes_{\rho^\prime}\mca{G})$. We write $\varphi\mathrel{\ul{\sim_{bnt}}}\varphi^\prime$ if there exists a bounded $nt$-homotopy $P$ between $\varphi$ and $\varphi^\prime$ such that $\varphi P(\cdot,t)\in\ul{\Hom}(N\rtimes_\rho\mca{G},N^\prime\rtimes_{\rho^\prime}\mca{G})$ for all $t\in I$. The binary relation $\mathrel{\ul{\sim_{bnt}}}$ is an equivalence relation on $\ul{\Hom}(N\rtimes_\rho\mca{G},N^\prime\rtimes_{\rho^\prime}\mca{G})$. 
\end{dfn}

\begin{lem}\label{ulsimsim2}
Let $\mca{G}\rightrightarrows M$ be a Lie groupoid, $N$, $N^\prime$, $N^{\prime\prime}$ be $C^\infty$ manifolds, $\nu\colon N\to M$, $\nu^\prime\colon N^\prime\to M$, $\nu^{\prime\prime}\colon N^{\prime\prime}\to M$ be $C^\infty$ maps, $\rho\in\mca{A}(\nu,\mca{G})$, $\rho^\prime\in\mca{A}(\nu^\prime,\mca{G})$, $\rho^{\prime\prime}\in\mca{A}(\nu^{\prime\prime},\mca{G})$, $\varphi$, $\varphi^\prime\in\ul{\Hom}(N\rtimes_\rho\mca{G},N^\prime\rtimes_{\rho^\prime}\mca{G})$ and $\psi$, $\psi^\prime\in\ul{\Hom}(N^\prime\rtimes_{\rho^\prime}\mca{G},N^{\prime\prime}\rtimes_{\rho^{\prime\prime}}\mca{G})$. Then the following statements hold: 
\begin{enumerate}
\setlength\itemsep{0em}
\item If $\psi\mathrel{\ul{\sim_{bnt}}}\psi^\prime$, then $\psi\varphi\mathrel{\ul{\sim_{bnt}}}\psi^\prime\varphi$. 
\item If $\varphi\mathrel{\ul{\sim_{bnt}}}\varphi^\prime$ and $\psi$ is bounded, then $\psi\varphi\mathrel{\ul{\sim_{bnt}}}\psi\varphi^\prime$. 
\end{enumerate}
\end{lem}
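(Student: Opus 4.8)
The plan is to re-use the two explicit $nt$-homotopies produced in the proof of Lemma~\ref{simsim} (the case $a=nt$) and to attach to each the verification of the extra constraint imposed on $\mathrel{\ul{\sim_{bnt}}}$ by Definition~\ref{simbntbthe}. First I would record the elementary fact that $\ul{\Hom}$ is closed under composition: if $F_\varphi$ satisfies $\nu^\prime F_\varphi=\nu$ and $F_\psi$ satisfies $\nu^{\prime\prime}F_\psi=\nu^\prime$, then $\nu^{\prime\prime}F_{\psi\varphi}=\nu^{\prime\prime}F_\psi F_\varphi=\nu^\prime F_\varphi=\nu$; in particular $\psi\varphi$, $\psi^\prime\varphi$ and $\psi\varphi^\prime$ all lie in $\ul{\Hom}$, so the two conclusions are meaningful. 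I would also note that the bornologies in play are the action groupoid bornologies $\mca{B}_\rho$, $\mca{B}_{\rho^\prime}$, $\mca{B}_{\rho^{\prime\prime}}$ of Definition~\ref{acgrbong}, so the boundedness assertions are literally those already proved in Lemma~\ref{simsim2}.

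For statement~1, let $Q\colon N^\prime\times I\to N^{\prime\prime}\rtimes_{\rho^{\prime\prime}}\mca{G}$ be a bounded $nt$-homotopy between $\psi$ and $\psi^\prime$ with $\psi Q(\cdot,t)\in\ul{\Hom}(N^\prime\rtimes_{\rho^\prime}\mca{G},N^{\prime\prime}\rtimes_{\rho^{\prime\prime}}\mca{G})$ for all $t$, witnessing $\psi\mathrel{\ul{\sim_{bnt}}}\psi^\prime$. Following the proof of Lemma~\ref{simsim} I would set $R(y,t)=Q(F_\varphi(y),t)$; by Lemma~\ref{simsim2} this is a bounded $nt$-homotopy between $\psi\varphi$ and $\psi^\prime\varphi$ (boundedness being immediate from $R(N\times I)\subset Q(N^\prime\times I)\in\mca{B}_{\rho^{\prime\prime}}$). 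It then remains to check $(\psi\varphi)R(\cdot,t)\in\ul{\Hom}$ for every $t$. Using $t^\prime\varphi=F_\varphi t$ and $s^\prime\varphi=F_\varphi s$ one computes, for $h\in N\rtimes_\rho\mca{G}$,
\[
(\psi\varphi)R(\cdot,t)(h)=Q(t^\prime\varphi(h),t)^{-1}\,\psi(\varphi(h))\,Q(s^\prime\varphi(h),t)=\bigl(\psi Q(\cdot,t)\bigr)(\varphi(h)),
\]
so $(\psi\varphi)R(\cdot,t)=\bigl(\psi Q(\cdot,t)\bigr)\varphi$, a composition of elements of $\ul{\Hom}$ and hence in $\ul{\Hom}$. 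This yields $\psi\varphi\mathrel{\ul{\sim_{bnt}}}\psi^\prime\varphi$.

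For statement~2, let $P\colon N\times I\to N^\prime\rtimes_{\rho^\prime}\mca{G}$ be a bounded $nt$-homotopy between $\varphi$ and $\varphi^\prime$ with $\varphi P(\cdot,t)\in\ul{\Hom}(N\rtimes_\rho\mca{G},N^\prime\rtimes_{\rho^\prime}\mca{G})$ for all $t$, and assume $\psi$ bounded. As in the proof of Lemma~\ref{simsim} I would put $R(y,t)=\psi(P(y,t))$; Lemma~\ref{simsim2} shows this is a bounded $nt$-homotopy between $\psi\varphi$ and $\psi\varphi^\prime$. Since $\psi$ is a morphism, applying $\psi$ to the identity $(\varphi P(\cdot,t))(h)=P(t(h),t)^{-1}\varphi(h)P(s(h),t)$ gives $(\psi\varphi)R(\cdot,t)=\psi\circ(\varphi P(\cdot,t))$, once more a composition of elements of $\ul{\Hom}$ and therefore in $\ul{\Hom}$. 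Hence $\psi\varphi\mathrel{\ul{\sim_{bnt}}}\psi\varphi^\prime$. The only place where anything beyond routine bookkeeping enters is the two displayed identities, and even there the content is just the observation that the homotopies coming from the construction of Lemma~\ref{simsim} are, pointwise, the honest compositions $\bigl(\psi Q(\cdot,t)\bigr)\varphi$ and $\psi\circ\bigl(\varphi P(\cdot,t)\bigr)$ of maps already known to lie in $\ul{\Hom}$; closure of $\ul{\Hom}$ under composition then finishes both cases. So I do not anticipate a genuine obstacle: this lemma is to Lemma~\ref{simsim2} exactly what Lemma~\ref{ulsimsim} is to the $a=nt$ half of Lemma~\ref{simsim}.
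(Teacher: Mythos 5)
Your argument is correct and is essentially the paper's own proof: the paper disposes of this lemma by citing Lemma \ref{simsim2}, whose homotopies $Q(F_\varphi(\cdot),t)$ and $\psi P(\cdot,t)$ are exactly the ones you use, and your verification that $(\psi\varphi)R(\cdot,t)$ equals $(\psi Q(\cdot,t))\varphi$ resp. $\psi\circ(\varphi P(\cdot,t))$, hence lies in $\ul{\Hom}$, is just the (implicit) underline check made explicit. No gap.
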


\begin{proof}
This follows from Lemma \ref{simsim2}. 
\end{proof}

\section{Fiberwise coarse structures}\label{777}
We will not use the results in this section later in this paper, but 2 in Corollary \ref{gmgmngngde} is a ``generalization'' of 4.1.3 Lemma in \cite{Ma2}, which is used to show parameter rigidity of certain actions. 

\subsection{Coarse structures}
\begin{dfn}[Definition 2.3 in \cite{Roe}]\label{coarsestrdee}
Let $X$ be a set and $\mca{E}\subset P(X\times X)$. We say that $\mca{E}$ is a \emph{coarse structure} on $X$ if: 
\begin{itemize}
\setlength\itemsep{0em}
\item $\Delta_X\in\mca{E}$, where $\Delta_X=\left\{(x,x)\in X\times X\ \middle|\ x\in X\right\}$
\item $E\cup E^\prime\in\mca{E}$ for any $E$, $E^\prime\in\mca{E}$
\item $E^\prime\in\mca{E}$ if $E\in\mca{E}$ and $E^\prime\subset E$
\item $E^{-1}\in\mca{E}$ for any $E\in\mca{E}$, where $E^{-1}=\left\{(x^\prime,x)\in X\times X\ \middle|\ (x,x^\prime)\in E\right\}$
\item $E\circ E^\prime\in\mca{E}$ for any $E$, $E^\prime\in\mca{E}$, where 
\begin{equation*}
E\circ E^\prime=\left\{(x,x^\prime)\in X\times X\ \middle|\ \text{$(x,x^{\prime\prime})\in E$, $(x^{\prime\prime},x^\prime)\in E^\prime$ for some $x^{\prime\prime}\in X$}\right\}. 
\end{equation*}
\end{itemize}
An element of $\mca{E}$ is called an \emph{entourage} (or a \emph{controlled set}). The pair $(X,\mca{E})$ is called a \emph{coarse space}. 
\end{dfn}

\begin{dfn}
Let $(X,\mca{E})$ be a coarse space. We say that $(X,\mca{E})$ is \emph{connected} if $\left\{(x,x^\prime)\right\}\in\mca{E}$ for any $(x,x^\prime)\in X\times X$. 
\end{dfn}

\begin{dfn}
Let $X$ be a set and $(X^\prime,\mca{E}^\prime)$ be a coarse space and $f$, $f^\prime\colon X\to X^\prime$ be maps. We say that $f$ and $f^\prime$ are \emph{close} if $\left\{(f(x),f^\prime(x))\in X^\prime\times X^\prime\ \middle|\ x\in X\right\}\in\mca{E}^\prime$. 
\end{dfn}

\begin{dfn}
Let $(X,\mca{E})$, $(X^\prime,\mca{E}^\prime)$ be coarse spaces and $f\colon X\to X^\prime$. We say that $f$ is \emph{bornologous} (or \emph{coarsely uniform}) if $(f\times f)(E)\in\mca{E}^\prime$ for $E\in\mca{E}$. $f$ is a \emph{coarse equivalence} if $f$ is coarsely uniform and there exists a coarsely uniform map $f^\prime\colon X^\prime\to X$ such that $ff^\prime$, $f^\prime f$ are close to the identity maps. 
\end{dfn}

\begin{dfn}[\cite{NR}]\label{gccsegggg}
Let $G$ be a topological group. Let 
\begin{align*}
q\colon G\times G&\to G\\
(g,g^\prime)&\mapsto g^{-1}g^\prime. 
\end{align*}
Define 
\begin{align*}
\mca{E}_G&=\left\{E\subset G\times G\ \middle|\ \text{$E\subset G(K\times K)$ for some compact set $K\subset G$}\right\}\\
&=\left\{E\subset G\times G\ \middle|\ \text{$q(E)$ is relatively compact in $G$}\right\}. 
\end{align*}
Then $\mca{E}_G$ is a coarse structure on $G$, called the \emph{group-compact coarse structure} on $G$. 
\end{dfn}

\begin{dfn}
Let $G$ be a group and $d$ be a left invariant metric on $G$. Define 
\begin{equation*}
\mca{E}_d=\left\{E\subset G\times G\ \middle|\ \sup_{(g,g^\prime)\in E}d(g,g^\prime)<\infty\right\}. 
\end{equation*}
Then $\mca{E}_d$ is a coarse structure on $G$, called the \emph{bounded coarse structure} associated to $d$. 
\end{dfn}

\begin{rem}
A base-bounded compatible bornology on a groupoid can be seen as a generalization of a connected coarse structure. Indeed a base-bounded compatible bornology on a pair groupoid $X\times X\rightrightarrows X$ is a connected coarse structure on $X$. 

For a Hausdorff space $X$, the locally compact bornology on the pair groupoid $X\times X\rightrightarrows X$ is the indiscrete coarse structure on $X$ (see Example 2.8 of \cite{Roe}). 
\end{rem}

\subsection{Fiberwise coarse structures}
\begin{dfn}\label{fibwcoaseee}
Let $X$, $Y$ be sets and $f\colon X\to Y$ be a surjective map. Let $\mca{E}\subset P(X\times_{f,f}X)$. We say that $\mca{E}$ is a \emph{fiberwise coarse structure} of $f$ if: 
\begin{itemize}
\setlength\itemsep{0em}
\item $\Delta_X\in\mca{E}$, where $\Delta_X=\left\{(x,x)\in X\times_{f,f}X\ \middle|\ x\in X\right\}$
\item $E\cup E^\prime\in\mca{E}$ for any $E$, $E^\prime\in\mca{E}$
\item $E^\prime\in\mca{E}$ for any $E\in\mca{E}$ and $E^\prime\subset E$
\item $E^{-1}\in\mca{E}$ if $E\in\mca{E}$, where $E^{-1}=\left\{(x^\prime,x)\in X\times_{f,f}X\ \middle|\ (x,x^\prime)\in E\right\}$
\item $E\circ E^\prime\in\mca{E}$ for $E$, $E^\prime\in\mca{E}$, where 
\begin{equation*}
E\circ E^\prime=\left\{(x,x^\prime)\in X\times_{f,f}X\ \middle|\ \text{$(x,x^{\prime\prime})\in E$, $(x^{\prime\prime},x^\prime)\in E^\prime$ for some $x^{\prime\prime}\in X$}\right\}. 
\end{equation*}
\end{itemize}
When $Y=\mr{pt}$, we recover the definition of a coarse structure. 
\end{dfn}

\begin{dfn}
Let $X$, $Y$ be sets, $f\colon X\to Y$ be a surjective map and $\mca{E}$ be a fiberwise coarse structure of $f$. We say that $\mca{E}$ is \emph{connected} if $\left\{(x,x^\prime)\right\}\in\mca{E}$ for any $(x,x^\prime)\in X\times_{f,f}X$. 
\end{dfn}

\begin{dfn}
Let $X$, $Y$, $X^\prime$, $Y^\prime$ be sets, $f\colon X\to Y$, $f^\prime\colon X^\prime\to Y^\prime$ be surjective maps and $\mca{E}$ (resp. $\mca{E}^\prime$) be a fiberwise coarse structure of $f$ (resp. $f^\prime$). Let $\varphi\colon X\to X^\prime$ and $F\colon Y\to Y^\prime$ be such that 
\begin{equation*}
\begin{tikzcd}
X\ar[r,"\varphi"]\ar[d,"f"']\ar[rd,phantom,"\circlearrowright"{xshift=1,yshift=-1}]&X^\prime\ar[d,"f^\prime"]\\
Y\ar[r,"F"']&Y^\prime. 
\end{tikzcd}
\end{equation*}
We say that $\varphi$ is \emph{bornologous} if $(\varphi\times\varphi)(E)\in\mca{E}^\prime$ for any $E\in\mca{E}$. 
\end{dfn}

\begin{prop}\label{xyfeyefy}
Let $X$, $Y$ be sets, $f\colon X\to Y$ be a surjective map and $\mca{E}$ be a fiberwise coarse structure of $f$. Then for any $y\in Y$, 
\begin{align*}
\mca{E}_y&=\left\{E\cap(f^{-1}(y)\times f^{-1}(y))\ \middle|\ E\in\mca{E}\right\}\\
&=\left\{E\in\mca{E}\ \middle|\ E\subset f^{-1}(y)\times f^{-1}(y)\right\}
\end{align*}
is a coarse structure on $f^{-1}(y)$. 
\end{prop}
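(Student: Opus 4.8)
The plan is to verify the five axioms of a coarse structure for $\mca{E}_y$ directly, transporting each one from the corresponding axiom for the fiberwise coarse structure $\mca{E}$ of $f$. First I would establish the equality of the two descriptions of $\mca{E}_y$: if $E\in\mca{E}$ then $E\cap(f^{-1}(y)\times f^{-1}(y))\in\mca{E}$ since it is a subset of $E$ (using closure under subsets), so the first set is contained in the second; conversely any $E\in\mca{E}$ with $E\subset f^{-1}(y)\times f^{-1}(y)$ equals $E\cap(f^{-1}(y)\times f^{-1}(y))$, giving the reverse containment. This identification is what makes the rest routine, since the second description exhibits $\mca{E}_y$ as a sub-collection of $\mca{E}$ consisting of those entourages supported on the single fiber.

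Next I would run through the axioms. For the diagonal: $\Delta_X\in\mca{E}$, and $\Delta_X\cap(f^{-1}(y)\times f^{-1}(y))=\Delta_{f^{-1}(y)}$, so $\Delta_{f^{-1}(y)}\in\mca{E}_y$. For unions: given $E,E'\in\mca{E}_y$, using the second description they lie in $\mca{E}$ and are both supported on $f^{-1}(y)\times f^{-1}(y)$, hence $E\cup E'\in\mca{E}$ and is still supported there, so $E\cup E'\in\mca{E}_y$. Closure under subsets is immediate from the second description and closure of $\mca{E}$ under subsets. For inverses: if $E\in\mca{E}_y$ then $E^{-1}\in\mca{E}$ and $E^{-1}\subset f^{-1}(y)\times f^{-1}(y)$, so $E^{-1}\in\mca{E}_y$; here one should note the inverse taken inside $f^{-1}(y)\times f^{-1}(y)$ agrees with the inverse taken inside $X\times_{f,f}X$ because $E$ is already supported on the fiber. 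For composition: if $E,E'\in\mca{E}_y$, then $E\circ E'\in\mca{E}$, and the key point is that the composition computed within $X\times_{f,f}X$ automatically lands in $f^{-1}(y)\times f^{-1}(y)$ — indeed if $(x,x'')\in E$ and $(x'',x')\in E'$ with $E,E'$ supported on the fiber over $y$, then $x,x'',x'\in f^{-1}(y)$, so the intermediate point is forced into the same fiber and no discrepancy arises between the ambient composition and the composition internal to $f^{-1}(y)$. Hence $E\circ E'\in\mca{E}_y$.

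The only subtlety worth flagging — and the place I would be most careful — is the compatibility between operations ($\cdot^{-1}$ and $\circ$) performed inside the small set $f^{-1}(y)\times f^{-1}(y)$ versus inside the big set $X\times_{f,f}X$; once one observes that entourages in $\mca{E}_y$ are supported on a single fiber, the intermediate points in any composition are trapped in that fiber, so the two notions coincide and there is nothing more to check. I do not expect any genuine obstacle here; the statement is essentially a bookkeeping lemma recording that a fiberwise coarse structure restricts to an honest coarse structure on each fiber, exactly as one expects from the remark that a fiberwise coarse structure over a point is just a coarse structure.
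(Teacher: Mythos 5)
Your proof is correct, and it is exactly the routine axiom-by-axiom verification the paper has in mind (the paper itself dismisses the proof as ``Easy''). Your care in checking that the inverse and composition computed inside $f^{-1}(y)\times f^{-1}(y)$ agree with those computed in $X\times_{f,f}X$ is the only point of substance, and you handle it correctly by observing that entourages supported on a single fiber trap all intermediate points in that fiber.
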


\begin{proof}
Easy. 
\end{proof}

\begin{prop}\label{wywyffphiyyfy}
Let $X$, $Y$, $X^\prime$, $Y^\prime$ be sets, $f\colon X\to Y$, $f^\prime\colon X^\prime\to Y^\prime$ be surjective maps and $\mca{E}$ (resp. $\mca{E}^\prime$) be a fiberwise coarse structure of $f$ (resp. $f^\prime$). Let $\varphi\colon X\to X^\prime$ and $F\colon Y\to Y^\prime$ be such that 
\begin{equation*}
\begin{tikzcd}
X\ar[r,"\varphi"]\ar[d,"f"']\ar[rd,phantom,"\circlearrowright"{xshift=1,yshift=-1}]&X^\prime\ar[d,"f^\prime"]\\
Y\ar[r,"F"']&Y^\prime
\end{tikzcd}
\end{equation*}
and $\varphi$ is bornologous. Then for any $y\in Y$, $\varphi\colon f^{-1}(y)\to(f^\prime)^{-1}(F(y))$ is bornologous. 
\end{prop}

\begin{proof}
This follows from the second expression of $\mca{E}_y$ in Proposition \ref{xyfeyefy}. 
\end{proof}

\begin{dfn}
Let $\mca{G}\rightrightarrows M$ be a groupoid. Let 
\begin{align*}
t\colon\mca{G}\times_{t,t}\mca{G}&\to M\\
(g,g^\prime)&\mapsto t(g)
\end{align*}
and define $\mca{G}\curvearrowright\mca{G}\times_{t,t}\mca{G}$ by 
\begin{align*}
\mca{G}\times_{s,t}(\mca{G}\times_{t,t}\mca{G})&\to\mca{G}\times_{t,t}\mca{G}\\
(g,(g^\prime,g^{\prime\prime}))&\mapsto(gg^\prime,gg^{\prime\prime}). 
\end{align*}
Let $\mca{E}\subset P(\mca{G}\times_{t,t}\mca{G})$ be a fiberwise coarse structure of $t\colon\mca{G}\to M$. We say that $\mca{E}$ is \emph{left invariant} if $\mca{G}E\in\mca{E}$ for any $E\in\mca{E}$. Note that $\mca{E}$ is left invariant if and only if any $E\in\mca{E}$ is contained in a $\mca{G}$-invariant $E^\prime\in\mca{E}$. 
\end{dfn}

\subsection{The correspondence between bornologies and fiberwise coarse structures}
\begin{prop}\label{glfcsbbcb}
Let $\mca{G}\rightrightarrows M$ be a groupoid and let 
\begin{align*}
q\colon\mca{G}\times_{t,t}\mca{G}&\to\mca{G}\\
(g,g^\prime)&\mapsto g^{-1}g^\prime. 
\end{align*}
Then the following statements hold: 
\begin{enumerate}
\item For a connected left invariant fiberwise coarse structure $\mca{E}$ of $t\colon\mca{G}\to M$, 
\begin{equation*}
\mca{B}_\mca{E}=\left\{q(E)\subset\mca{G}\ \middle|\ E\in\mca{E}\right\}
\end{equation*}
is a base-bounded compatible bornology on $\mca{G}$. 
\item For a base-bounded compatible bornology $\mca{B}$ on $\mca{G}$, 
\begin{equation}\label{ebeggegbb}
\begin{aligned}
\mca{E}_\mca{B}&=\left\{E\subset\mca{G}\times_{t,t}\mca{G}\ \middle|\ \text{$E\subset\mca{G}(B\times_{t,t}B)$ for some $B\in\mca{B}$}\right\}\\
&=\left\{E\subset\mca{G}\times_{t,t}\mca{G}\ \middle|\ q(E)\in\mca{B}\right\}
\end{aligned}
\end{equation}
is a connected left invariant fiberwise coarse structure of $t\colon\mca{G}\to M$. 
\item We have a bijection 
\begin{align*}
\left\{
\begin{gathered}
\text{connected left invariant fiberwise}\\
\text{coarse structures of $t\colon\mca{G}\to M$}
\end{gathered}
\right\}&\simeq\left\{
\begin{gathered}
\text{base-bounded compatible}\\
\text{bornologies on $\mca{G}$}
\end{gathered}
\right\}\\
\mca{E}&\mapsto\mca{B}_\mca{E}\\
\mca{E}_\mca{B}&\mapsfrom\mca{B}. 
\end{align*}
\end{enumerate}
\end{prop}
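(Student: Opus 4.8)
The plan is to verify the three numbered statements in turn, with part 3 being essentially formal once parts 1 and 2 are in hand. For part 1, I would start from a connected left invariant fiberwise coarse structure $\mca{E}$ of $t\colon\mca{G}\to M$ and check the three bornology axioms for $\mca{B}_\mca{E}=\{q(E)\mid E\in\mca{E}\}$, plus base-boundedness and compatibility. Downward closure: if $B^\prime\subset q(E)$ with $E\in\mca{E}$, then $B^\prime=q(E^\prime)$ where $E^\prime=E\cap q^{-1}(B^\prime)\in\mca{E}$ by the subset axiom (noting $q$ is surjective onto each $t$-fiber since $q(1_x,g)=g$ for $g\in\mca{G}^x$, so every $B^\prime$ arises as $q$ of some subset of $\mca{G}\times_{t,t}\mca{G}$). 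Finite unions: $q(E)\cup q(E^\prime)=q(E\cup E^\prime)$. Covering: $\mca{G}=q(\Delta_\mca{G})$ is wrong — rather, for $g\in\mca{G}$ we have $g=q(1_{t(g)},g)$, and the singleton $\{(1_{t(g)},g)\}$ lies in $\mca{E}$ by connectedness, so $g\in q(\{(1_{t(g)},g)\})\in\mca{B}_\mca{E}$. Base-boundedness: $1_M=q(\Delta_\mca{G})$ and $\Delta_\mca{G}\in\mca{E}$. For compatibility I need $q(E)^{-1}\in\mca{B}_\mca{E}$ and $q(E)q(E^\prime)\in\mca{B}_\mca{E}$. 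The first follows from $q(E)^{-1}=q(E^{-1})$ (using $(g^{-1}g^\prime)^{-1}=(g^\prime)^{-1}g$). For the product, here is where left invariance enters: given $E,E^\prime\in\mca{E}$, enlarge $E$ to a $\mca{G}$-invariant $\widetilde E\in\mca{E}$ (possible by the remark in the definition of left invariant); then one checks $q(\widetilde E)q(E^\prime)\subset q(\widetilde E\circ E^\prime)$ by tracking an element $g^{-1}g^\prime\cdot h^{-1}h^\prime$ with $(g,g^\prime)\in\widetilde E$, $(h,h^\prime)\in E^\prime$, $s(g)=t(h)$: multiply the first pair on the left by $g^{-1}$ to land in $\widetilde E$ again (invariance), getting $(1,g^{-1}g^\prime)\in\widetilde E$, and then compose appropriately so that the composite lies over $t(h)$. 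I would write this tracking computation carefully as a short display.

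For part 2, starting from a base-bounded compatible bornology $\mca{B}$, I first verify the two expressions for $\mca{E}_\mca{B}$ in \eqref{ebeggegbb} agree: if $E\subset\mca{G}(B\times_{t,t}B)$ then $q(E)\subset q(\mca{G}(B\times_{t,t}B))=B^{-1}B\in\mca{B}$ (using compatibility), so $q(E)\in\mca{B}$; conversely if $q(E)\in\mca{B}$ write $B=1_M\cup q(E)\in\mca{B}$ (base-boundedness) and check $E\subset\mca{G}(B\times_{t,t}B)$ by sending $(g,g^\prime)\mapsto g\cdot(1_{s(g)},g^{-1}g^\prime)$. Then I check the five fiberwise-coarse-structure axioms: $\Delta_\mca{G}$ works since $q(\Delta_\mca{G})=1_M\in\mca{B}$; unions and subsets are immediate from $q(E\cup E^\prime)=q(E)\cup q(E^\prime)$ and monotonicity; $E^{-1}$ works since $q(E^{-1})=q(E)^{-1}\in\mca{B}$ by compatibility; and $E\circ E^\prime$: if $q(E),q(E^\prime)\in\mca{B}$ then for $(g,g^\prime)\in E\circ E^\prime$ via intermediate $g^{\prime\prime}$ we have $g^{-1}g^\prime=(g^{-1}g^{\prime\prime})(g^{\prime\prime-1}g^\prime)\in q(E)q(E^\prime)\in\mca{B}$, so $q(E\circ E^\prime)\subset q(E)q(E^\prime)\in\mca{B}$. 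Connectedness is clear since $q(\{(g,g^\prime)\})=\{g^{-1}g^\prime\}$ is a singleton in $\mca{B}$, and left invariance holds because $q(\mca{G}E)\subset\mca{G}q(E)\cdot$-type reasoning, or more simply $q(\mca{G}E)=q(E)$ on the nose (left translation by $h$ sends $(g,g^\prime)$ to $(hg,hg^\prime)$ with the same $q$-value), so $\mca{E}_\mca{B}$ is even $\mca{G}$-invariant-closed.

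For part 3, I would show the two constructions are mutually inverse. Given $\mca{B}$, the bornology $\mca{B}_{\mca{E}_\mca{B}}=\{q(E)\mid q(E)\in\mca{B}\}$; every $B\in\mca{B}$ equals $q(E)$ for $E=\{(1_{t(g)},g)\mid g\in B\}$ with $q(E)=B\in\mca{B}$, so $B\in\mca{B}_{\mca{E}_\mca{B}}$, and conversely $q(E)\in\mca{B}$ by definition — hence $\mca{B}_{\mca{E}_\mca{B}}=\mca{B}$. Given $\mca{E}$ (connected, left invariant), I must show $\mca{E}_{\mca{B}_\mca{E}}=\{E\mid q(E)\in\mca{B}_\mca{E}\}$ equals $\mca{E}$. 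The inclusion $\mca{E}\subset\mca{E}_{\mca{B}_\mca{E}}$ is immediate since $q(E)\in\mca{B}_\mca{E}$ for $E\in\mca{E}$. For the reverse, suppose $q(E)\in\mca{B}_\mca{E}$, so $q(E)\subset q(E_0)$ for some $E_0\in\mca{E}$; then for $(g,g^\prime)\in E$, $g^{-1}g^\prime=h^{-1}h^\prime$ for some $(h,h^\prime)\in E_0$ with $t(h)$ possibly different from $t(g)$, and I recover $(g,g^\prime)$ from $(h,h^\prime)$ by left-translating $E_0$ — here is the one genuinely delicate point: I need that $(g,g^\prime)$ lies in $g h^{-1}\cdot E_0$, which is in $\mca{E}$ by left invariance, hence $(g,g^\prime)$ lies in some member of $\mca{E}$ and therefore (subset axiom, after taking the union over all such, or arguing $E\subset$ a single invariant enlargement) $E\in\mca{E}$. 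The main obstacle I anticipate is precisely this last step — controlling $E_{\mca{B}_\mca{E}}\subset\mca{E}$ requires a uniform choice, so I would first replace $E_0$ by its $\mca{G}$-invariant enlargement $\widetilde E_0\in\mca{E}$ (available by left invariance), and then observe directly that $E\subset\widetilde E_0$: indeed for $(g,g^\prime)\in E$ with $g^{-1}g^\prime=h^{-1}h^\prime$, $(h,h^\prime)\in E_0\subset\widetilde E_0$, left-translation by $gh^{-1}$ gives $(g,g^\prime)=gh^{-1}\cdot(h,h^\prime)\in\widetilde E_0$ by invariance; since $\widetilde E_0\in\mca{E}$ and $E\subset\widetilde E_0$, the subset axiom yields $E\in\mca{E}$. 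The verification that the two maps are well-defined (parts 1 and 2) does the real work; part 3 is then bookkeeping.
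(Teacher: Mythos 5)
Your proposal is correct and follows the paper's proof essentially step for step: the same axiom checks, the same verification of the two expressions in \eqref{ebeggegbb}, and the same mechanism (an invariant enlargement plus a left translation to realize $BB^\prime$ inside $q(\wt{E}\circ E^\prime)$, and the identity $(g,g^\prime)=gh^{-1}\cdot(h,h^\prime)$ for the inclusion $\mca{E}_{\mca{B}_\mca{E}}\subset\mca{E}$), with only cosmetic differences such as replacing the paper's appeals to the axiom of choice by the explicit sets $E\cap q^{-1}(B^\prime)$ and $\left\{(1_{t(g)},g)\ \middle|\ g\in B\right\}$. The one slip, easily repaired and within the idea you already set up, is in your sketched product step: the composability condition is $s(g^\prime)=s(h)$ (not $s(g)=t(h)$), and the needed translation of the pair $(g,g^\prime)\in\wt{E}$ is by $h(g^\prime)^{-1}$, giving $(h(g^\prime)^{-1}g,h)\in\wt{E}$ and hence $(h(g^\prime)^{-1}g,h^\prime)\in\wt{E}\circ E^\prime$ with $q$-value $(g^{-1}g^\prime)(h^{-1}h^\prime)$, whereas translating by $g^{-1}$ yields $(1_{s(g)},g^{-1}g^\prime)$, whose second entry does not match $h$ and so does not compose with $(h,h^\prime)$.
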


\begin{proof}
1. We have $B$, $B^\prime\in\mca{B}_\mca{E}$ $\Rightarrow$ $B\cup B^\prime\in\mca{B}_\mca{E}$. $B\in\mca{B}_\mca{E}$, $B^\prime\subset B$ $\Rightarrow$ $B^\prime\in\mca{B}_\mca{E}$ by the axiom of choice. For $g\in\mca{G}$, $\left\{(1_{t(g)},g)\right\}\in\mca{E}$ by the connectedness of $\mca{E}$, hence $\left\{g\right\}\in\mca{B}_\mca{E}$. We have $1_M=q(\Delta_\mca{G})\in\mca{B}_\mca{E}$. Therefore $\mca{B}_\mca{E}$ is a base-bounded bornology on $\mca{G}$. 

We prove the compatibility. Let $B\in\mca{B}_\mca{E}$. Then $B=q(E)$ for some $E\in\mca{E}$. We have $E^{-1}\in\mca{E}$ and $B^{-1}=q(E)^{-1}=q(E^{-1})\in\mca{B}_\mca{E}$. 

Let $B$, $B^\prime\in\mca{B}_\mca{E}$. $B=q(E)$, $B^\prime=q(E^\prime)$ for some $E$, $E^\prime\in\mca{E}$. Since $\mca{E}$ is left invariant, there exists $\wt{E^\prime}\in\mca{E}$ such that $E^\prime\subset\wt{E^\prime}$ and $\wt{E^\prime}$ is $\mca{G}$-invariant. We have $BB^\prime\subset q(E)q(\wt{E^\prime})$. 

\begin{claim}
$q(E)q(\wt{E^\prime})=q(E\circ\wt{E^\prime})$. 
\end{claim}

\begin{proof}
$\supset$ is OK. We prove $\subset$. Let $(g,h)\in E$, $(g^\prime,h^\prime)\in\wt{E^\prime}$ be such that $s(h)=s(g^\prime)$. Since $\wt{E^\prime}$ is $\mca{G}$-invariant, we have $\wt{E^\prime}\ni h(g^\prime)^{-1}(g^\prime,h^\prime)=(h,h(g^\prime)^{-1}h^\prime)$. Hence $(g,h(g^\prime)^{-1}h^\prime)\in E\circ\wt{E^\prime}$ and $q(g,h)q(g^\prime,h^\prime)=g^{-1}h(g^\prime)^{-1}h^\prime\in q(E\circ\wt{E^\prime})$. 
\end{proof}

Since $E\circ\wt{E^\prime}\in\mca{E}$, $q(E\circ\wt{E^\prime})\in\mca{B}_\mca{E}$. Hence $BB^\prime\in\mca{B}_\mca{E}$. 

2. First we prove the second equality of \eqref{ebeggegbb}. Let $B\in\mca{B}$ and $E\subset\mca{G}(B\times_{t,t}B)$. Since $q(E)\subset B^{-1}B\in\mca{B}$ by the compatibility, we have $q(E)\in\mca{B}$. 

Conversely let $E\subset\mca{G}\times_{t,t}\mca{G}$ be such that $q(E)\in\mca{B}$. Since $1_M\in\mca{B}$, we have $B:=q(E)\cup1_M\in\mca{B}$. For any $(g,g^\prime)\in E$, $(g,g^\prime)=g(1_{s(g)},g^{-1}g^\prime)\in\mca{G}(B\times_{t,t}B)$. Hence $E\subset\mca{G}(B\times_{t,t}B)$. 

It is easy to check that $\mca{E}_\mca{B}$ is a connected left invariant fiberwise coarse structure of $t\colon\mca{G}\to M$ using the second expression of $\mca{E}_\mca{B}$ in \eqref{ebeggegbb}. (For $E$, $E^\prime\in\mca{E}_\mca{B}$, we have $q(E)$, $q(E^\prime)\in\mca{B}$ and $q(E\circ E^\prime)\subset q(E)q(E^\prime)\in\mca{B}$, hence $E\circ E^\prime\in\mca{E}_\mca{B}$.) 

3. $\mca{E}\mapsto\mca{B}_\mca{E}\mapsto\mca{E}_{\mca{B}_\mca{E}}$. We prove $\mca{E}_{\mca{B}_\mca{E}}=\mca{E}$. We have 
\begin{align*}
\mca{E}_{\mca{B}_\mca{E}}&=\left\{E\subset\mca{G}\times_{t,t}\mca{G}\ \middle|\ \text{$E\subset\mca{G}(B\times_{t,t}B)$ for some $B\in\mca{B}_\mca{E}$}\right\}\\
&=\left\{E\subset\mca{G}\times_{t,t}\mca{G}\ \middle|\ q(E)\in\mca{B}_\mca{E}\right\}, 
\end{align*}
\begin{equation*}
\mca{B}_\mca{E}=\left\{q(E)\subset\mca{G}\ \middle|\ E\in\mca{E}\right\}. 
\end{equation*}
The inclusion $\mca{E}_{\mca{B}_\mca{E}}\supset\mca{E}$ is obvious. Let $E\in\mca{E}_{\mca{B}_\mca{E}}$. So $q(E)\in\mca{B}_\mca{E}$. Then there exists $E^\prime\in\mca{E}$ such that $q(E)=q(E^\prime)=q(\mca{G}E^\prime)$. We have $\mca{G}E^\prime\in\mca{E}$ since $\mca{E}$ is left invariant. 

\begin{claim}
$E\subset\mca{G}E^\prime$. 
\end{claim}

\begin{proof}
Let $(g,h)\in E$. There exists $(g^\prime,h^\prime)\in E^\prime$ such that $g^{-1}h=(g^\prime)^{-1}h^\prime$. Then 
\begin{equation*}
(g,h)=g(1_{s(g)},g^{-1}h)=g(1_{s(g)},(g^\prime)^{-1}h^\prime)=g(g^\prime)^{-1}(g^\prime,h^\prime)\in\mca{G}E^\prime. \qedhere
\end{equation*}
\end{proof}

Therefore $E\in\mca{E}$ and $\mca{E}_{\mca{B}_\mca{E}}=\mca{E}$. 

$\mca{B}\mapsto\mca{E}_\mca{B}\mapsto\mca{B}_{\mca{E}_\mca{B}}$. We prove $\mca{B}_{\mca{E}_\mca{B}}=\mca{B}$. We have 
\begin{equation*}
\mca{B}_{\mca{E}_\mca{B}}=\left\{q(E)\subset\mca{G}\ \middle|\ E\in\mca{E}_\mca{B}\right\}, 
\end{equation*}
\begin{align*}
\mca{E}_\mca{B}&=\left\{E\subset\mca{G}\times_{t,t}\mca{G}\ \middle|\ \text{$E\subset\mca{G}(B\times_{t,t}B)$ for some $B\in\mca{B}$}\right\}\\
&=\left\{E\subset\mca{G}\times_{t,t}\mca{G}\ \middle|\ q(E)\in\mca{B}\right\}. 
\end{align*}
It follows that $\mca{B}_{\mca{E}_\mca{B}}\subset\mca{B}$. Conversely let $B\in\mca{B}$. Then $B^\prime:=B\cup1_M\in\mca{B}$ by the base-boundedness. Since $q(\mca{G}(B^\prime\times_{t,t}B^\prime))\subset(B^\prime)^{-1}B^\prime\in\mca{B}$ by the compatibility, $\mca{G}(B^\prime\times_{t,t}B^\prime)\in\mca{E}_\mca{B}$. We have $B\subset q(\mca{G}(B^\prime\times_{t,t}B^\prime))$. There exists $E\subset\mca{G}(B^\prime\times_{t,t}B^\prime)$ such that $B=q(E)$ by the axiom of choice. Since $E\in\mca{E}_\mca{B}$, $B\in\mca{B}_{\mca{E}_\mca{B}}$. Hence $\mca{B}_{\mca{E}_\mca{B}}\supset\mca{B}$. 
\end{proof}

\begin{prop}\label{gmgmttbbee}
Let $\mca{G}\rightrightarrows M$, $\mca{G}^\prime\rightrightarrows M^\prime$ be groupoids, $\mca{E}$ (resp. $\mca{E}^\prime$) be a connected left invariant fiberwise coarse structure of $t\colon\mca{G}\to M$ (resp. $t^\prime\colon\mca{G}^\prime\to M^\prime$) and $\varphi\in\Hom(\mca{G},\mca{G}^\prime)$. Then $\varphi$ is bornologous if and only if $\varphi$ is bounded with respect to $\mca{B}_\mca{E}$ and $\mca{B}_{\mca{E}^\prime}$. 
\end{prop}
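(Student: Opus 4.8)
The plan is to exploit the correspondence of Proposition~\ref{glfcsbbcb}: both $\mca{B}_\mca{E}=q(\mca{E})$ and $\mca{B}_{\mca{E}^\prime}=q^\prime(\mca{E}^\prime)$, and the two fiberwise coarse structures are recovered as $\mca{E}=\mca{E}_{\mca{B}_\mca{E}}=\{E\mid q(E)\in\mca{B}_\mca{E}\}$ and similarly for $\mca{E}^\prime$. The key algebraic observation is that $\varphi$ intertwines the difference maps $q$ and $q^\prime$: for $(g,g^\prime)\in\mca{G}\times_{t,t}\mca{G}$ we have $q^\prime(\varphi(g),\varphi(g^\prime))=\varphi(g)^{-1}\varphi(g^\prime)=\varphi(g^{-1}g^\prime)=\varphi(q(g,g^\prime))$, using that $\varphi$ is a morphism. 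Also note that if $t(g)=t(g^\prime)$ then $F_\varphi t(g)=t^\prime\varphi(g)=t^\prime\varphi(g^\prime)=F_\varphi t(g^\prime)$, so $(\varphi\times\varphi)$ maps $\mca{G}\times_{t,t}\mca{G}$ into $\mca{G}^\prime\times_{t^\prime,t^\prime}\mca{G}^\prime$ and the relevant compositions make sense.

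First I would prove the ``if'' direction. Assume $\varphi$ is bounded, i.e.\ $\varphi(\mca{B}_\mca{E})\subset\mca{B}_{\mca{E}^\prime}$. Let $E\in\mca{E}$. Then $q(E)\in\mca{B}_\mca{E}$, so $\varphi(q(E))\in\mca{B}_{\mca{E}^\prime}$. By the intertwining identity, $q^\prime((\varphi\times\varphi)(E))=\varphi(q(E))\in\mca{B}_{\mca{E}^\prime}$, and therefore $(\varphi\times\varphi)(E)\in\mca{E}_{\mca{B}_{\mca{E}^\prime}}=\mca{E}^\prime$ by the second expression in \eqref{ebeggegbb}. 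Hence $\varphi$ is bornologous.

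For the ``only if'' direction, assume $\varphi$ is bornologous. Let $B\in\mca{B}_\mca{E}$; write $B=q(E)$ for some $E\in\mca{E}$. Since $\varphi$ is bornologous, $(\varphi\times\varphi)(E)\in\mca{E}^\prime$, so $q^\prime((\varphi\times\varphi)(E))\in\mca{B}_{\mca{E}^\prime}$. But $q^\prime((\varphi\times\varphi)(E))=\varphi(q(E))=\varphi(B)$ by the intertwining identity, so $\varphi(B)\in\mca{B}_{\mca{E}^\prime}$. Thus $\varphi$ is bounded. I do not anticipate a serious obstacle here; the only point requiring a little care is checking that $(\varphi\times\varphi)$ genuinely sends the fiber product over $t$ into the fiber product over $t^\prime$ (done above) and that the set-theoretic images under $q$, $q^\prime$, $\varphi$ commute as claimed — both are immediate from $\varphi$ being a morphism, so the proof is essentially a two-line computation in each direction once the correspondence of Proposition~\ref{glfcsbbcb} is invoked.
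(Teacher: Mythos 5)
Your proof is correct and is essentially the paper's own argument: the paper simply cites the commuting square $q^\prime\circ(\varphi\times\varphi)=\varphi\circ q$ together with Proposition \ref{glfcsbbcb}, and your two directions just spell out what that reference entails, using $\mca{B}_\mca{E}=\left\{q(E)\ \middle|\ E\in\mca{E}\right\}$ and $\mca{E}^\prime=\mca{E}_{\mca{B}_{\mca{E}^\prime}}=\left\{E^\prime\ \middle|\ q^\prime(E^\prime)\in\mca{B}_{\mca{E}^\prime}\right\}$. No changes needed.
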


\begin{proof}
This follows from 
\begin{equation*}
\begin{tikzcd}
\mca{G}\times_{t,t}\mca{G}\ar[r,"\varphi\times\varphi"]\ar[d,"q"']\ar[rd,phantom,"\circlearrowright"{xshift=-6}]&\mca{G}^\prime\times_{t^\prime,t^\prime}\mca{G}^\prime\ar[d,"q^\prime"]\\
\mca{G}\ar[r,"\varphi"']&\mca{G}^\prime
\end{tikzcd}
\end{equation*}
and Proposition \ref{glfcsbbcb}. 
\end{proof}

\begin{dfn}
Let $\mca{G}\rightrightarrows M$ be a topological groupoid such that $M$ is Hausdorff. Then we have base-bounded compatible bornologies $\mca{B}_\mca{G}$ and $\mca{B}^c_\mca{G}$ on $\mca{G}$. The connected left invariant fiberwise coarse structure of $t\colon\mca{G}\to M$ corresponding to $\mca{B}_\mca{G}$ (resp. $\mca{B}^c_\mca{G}$) is denoted by $\mca{E}_\mca{G}$ (resp. $\mca{E}^c_\mca{G}$). 

When $M=\mr{pt}$, we have $\mca{B}_\mca{G}=\mca{B}^c_\mca{G}$, hence $\mca{E}_\mca{G}=\mca{E}^c_\mca{G}$ and this coincides with the group-compact coarse structure on $\mca{G}$ (Definition \ref{gccsegggg}). 
\end{dfn}

\begin{dfn}
Let $\mca{G}\rightrightarrows M$ be a Lie groupoid, $N$ be a $C^\infty$ manifold, $\nu\colon N\to M$ be a $C^\infty$ map and $\rho\in\mca{A}(\nu,\mca{G})$. Then $\mca{B}_\rho$ is a base-bounded compatible bornology on $N\rtimes_\rho\mca{G}$. The connected left invariant fiberwise coarse structure of $t\colon N\rtimes_\rho\mca{G}\to N$ corresponding to $\mca{B}_\rho$ is denoted by $\mca{E}_\rho$. 
\end{dfn}

\begin{lem}
Let $\mca{G}\rightrightarrows M$ be a Lie groupoid, $N$ be a $C^\infty$ manifold, $\nu\colon N\to M$ be a $C^\infty$ map and $\rho\in\mca{A}(\nu,\mca{G})$. Then the following statements hold: 
\begin{enumerate}
\item $\varphi_\rho\colon(N\rtimes_\rho\mca{G},\mca{E}_\rho)\to(\mca{G},\mca{E}_\mca{G})$ is bornologous. 
\item For any $y\in N$, 
\begin{equation*}
\varphi_\rho\colon((N\rtimes_\rho\mca{G})^y,(\mca{E}_\rho)_y)\simeq(\mca{G}^{\nu(y)},(\mca{E}_\mca{G})_{\nu(y)})
\end{equation*}
as coarse spaces. 
\end{enumerate}
\end{lem}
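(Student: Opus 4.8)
The statement to prove is the final lemma: for a Lie groupoid $\mca{G}\rightrightarrows M$, a $C^\infty$ manifold $N$, a $C^\infty$ map $\nu\colon N\to M$ and $\rho\in\mca{A}(\nu,\mca{G})$, (1) $\varphi_\rho\colon(N\rtimes_\rho\mca{G},\mca{E}_\rho)\to(\mca{G},\mca{E}_\mca{G})$ is bornologous, and (2) for each $y\in N$, $\varphi_\rho$ restricts to an isomorphism of coarse spaces $((N\rtimes_\rho\mca{G})^y,(\mca{E}_\rho)_y)\simeq(\mca{G}^{\nu(y)},(\mca{E}_\mca{G})_{\nu(y)})$.

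\textbf{Part (1).} The plan is to reduce everything to the bornological statement already available. Recall $\mca{E}_\rho$ corresponds to $\mca{B}_\rho$ and $\mca{E}_\mca{G}$ corresponds to $\mca{B}_\mca{G}$ under the bijection of Proposition \ref{glfcsbbcb}. By Definition \ref{acgrbong}, $\varphi_\rho\colon N\rtimes_\rho\mca{G}\to\mca{G}$ is bounded with respect to $\mca{B}_\rho$ and $\mca{B}_\mca{G}$ (indeed $\mca{B}_\rho$ is by construction the inverse image bornology, the largest bornology making $\varphi_\rho$ bounded, cf.\ Lemma \ref{borlarbou}). Since $\varphi_\rho\in\Hom(N\rtimes_\rho\mca{G},\mca{G})$, Proposition \ref{gmgmttbbee} applies directly: a morphism is bornologous iff it is bounded with respect to the corresponding bornologies. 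Hence $\varphi_\rho$ is bornologous. This part is essentially a one-line citation once the correspondences are spelled out.

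\textbf{Part (2).} First, by Proposition \ref{wywyffphiyyfy} applied to the square with $\varphi=\varphi_\rho$, $F=\nu$, $f=t\colon N\rtimes_\rho\mca{G}\to N$, $f'=t\colon\mca{G}\to M$ (so $F(y)=\nu(y)$), the bornologous map $\varphi_\rho$ restricts to a bornologous map $((N\rtimes_\rho\mca{G})^y,(\mca{E}_\rho)_y)\to(\mca{G}^{\nu(y)},(\mca{E}_\mca{G})_{\nu(y)})$. Next I must show this restriction is a \emph{bijection} with bornologous inverse. Bijectivity: since $\varphi_\rho$ is an action morphism (Proposition \ref{actiongroupoid}), $\varphi_\rho\colon(N\rtimes_\rho\mca{G})^y\to\mca{G}^{\nu(y)}$ is a diffeomorphism — this is exactly the last sentence of Definition \ref{hnggnmactmor}; concretely $(N\rtimes_\rho\mca{G})^y=\{y\}\times\mca{G}^{\nu(y)}$ and $\varphi_\rho(y,g)=g$, with inverse $g\mapsto(y,g)$. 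For the inverse to be bornologous I use the characterization of $(\mca{E}_\rho)_y$ and $(\mca{E}_\mca{G})_{\nu(y)}$ from Proposition \ref{xyfeyefy}: an entourage of the fiber $f^{-1}(y)$ is the restriction to $f^{-1}(y)\times f^{-1}(y)$ of an entourage of the ambient fiberwise coarse structure. Given $E\subset\mca{G}^{\nu(y)}\times\mca{G}^{\nu(y)}$ with $E\in(\mca{E}_\mca{G})_{\nu(y)}$, I need $(\varphi_\rho^{-1}\times\varphi_\rho^{-1})(E)\in(\mca{E}_\rho)_y$; concretely $q(E)\in\mca{B}_\mca{G}$ (using the second description of $\mca{E}_\mca{B}$ in \eqref{ebeggegbb}), and I must check $q$ applied to the image in $(N\rtimes_\rho\mca{G})^y\times_{t,t}(N\rtimes_\rho\mca{G})^y$ lands in $\mca{B}_\rho$. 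But $\varphi_\rho^{-1}(g)=(y,g)$ so the image is $\{((y,g),(y,g'))\}$ and applying $q$ (in $N\rtimes_\rho\mca{G}$) gives $\{(y,g)^{-1}(y,g')\}=\{(\rho(y,g),g^{-1}g')\}$, whose image under $\varphi_\rho$ is exactly $q(E)=\{g^{-1}g'\}\in\mca{B}_\mca{G}$. By definition of the inverse image bornology $\mca{B}_\rho$ this set lies in $\mca{B}_\rho$, so the inverse is bornologous. Combining with Part (1) restricted, $\varphi_\rho$ is a coarse equivalence (in fact a coarse isomorphism) between the fibers.

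\textbf{Expected main obstacle.} There is no deep obstacle; the only care needed is bookkeeping — correctly translating between the fiberwise coarse structures $\mca{E}_\rho,\mca{E}_\mca{G}$ and the bornologies $\mca{B}_\rho,\mca{B}_\mca{G}$ via Proposition \ref{glfcsbbcb} and Proposition \ref{xyfeyefy}, and checking that the fiber of $t\colon N\rtimes_\rho\mca{G}\to N$ over $y$ maps by $\varphi_\rho$ to the fiber of $t\colon\mca{G}\to M$ over $\nu(y)$ compatibly with the composition maps $q$. The identity $\varphi_\rho(q((y,g),(y,g')))=q(g,g')$ for elements of a common $t$-fiber is the computational heart, and it is immediate from the action groupoid structure maps in Proposition \ref{actiongroupoid}.
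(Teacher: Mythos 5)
Your proof is correct and follows essentially the same route as the paper: part (1) is the combination of the boundedness of $\varphi_\rho$ (from the definition of $\mca{B}_\rho$ as the inverse image bornology) with Proposition \ref{gmgmttbbee}, and part (2) uses Proposition \ref{wywyffphiyyfy} for one direction and the identity $q(\varphi_\rho\times\varphi_\rho)(E)=\varphi_\rho q(E)$ on a common $t$-fiber together with Lemma \ref{borlarbou} for the other. Your phrasing of the converse via the inverse map $g\mapsto(y,g)$ is just the bijective reformulation of the paper's argument, so there is nothing to correct.
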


\begin{proof}
1. Since $\varphi_\rho\colon(N\rtimes_\rho\mca{G},\mca{B}_\rho)\to(\mca{G},\mca{B}_\mca{G})$ is bounded, this follows from Proposition \ref{gmgmttbbee}. 

2. $\varphi_\rho\colon(N\rtimes_\rho\mca{G})^y\simeq\mca{G}^{\nu(y)}$ is bornologous by 1 and Proposition \ref{wywyffphiyyfy}. Conversely let $E\subset(N\rtimes_\rho\mca{G})^y\times(N\rtimes_\rho\mca{G})^y$ be such that $(\varphi_\rho\times\varphi_\rho)(E)\in(\mca{E}_\mca{G})_{\nu(y)}$. Then $\mca{B}_\mca{G}\ni q(\varphi_\rho\times\varphi_\rho)(E)=\varphi_\rho q(E)$, hence $q(E)\in\mca{B}_\rho$. Thus $E\in\mca{E}_\rho$ and $E\in(\mca{E}_\rho)_y$. 
\end{proof}

\begin{cor}\label{gmgmngngde}
Let $\mca{G}\rightrightarrows M$, $\mca{G}^\prime\rightrightarrows M^\prime$ be Lie groupoids, $N$, $N^\prime$ be $C^\infty$ manifolds, $\nu\colon N\to M$, $\nu^\prime\colon N^\prime\to M^\prime$ be $C^\infty$ maps, $\rho\in\mca{A}(\nu,\mca{G})$, $\rho^\prime\in\mca{A}(\nu^\prime,\mca{G}^\prime)$ and $\varphi\in\Hom(N\rtimes_\rho\mca{G},N^\prime\rtimes_{\rho^\prime}\mca{G}^\prime)$. For any $y\in N$, define $\ol{\varphi}(y,\cdot)\colon\mca{G}^{\nu(y)}\to(\mca{G}^\prime)^{\nu^\prime F_\varphi(y)}$ by 
\begin{equation*}
\begin{tikzcd}
(N\rtimes_\rho\mca{G})^y\ar[r,"\varphi"]\ar[d,dash,"\varphi_\rho"',"\sim"{sloped}]\ar[rd,phantom,"\circlearrowright"{xshift=-1,yshift=-1}]&(N^\prime\rtimes_{\rho^\prime}\mca{G}^\prime)^{F_\varphi(y)}\ar[d,dash,"\varphi_{\rho^\prime}"',"\sim"{sloped}]\\
\mca{G}^{\nu(y)}\ar[r,"\ol{\varphi}(y{,}\cdot)"']&(\mca{G}^\prime)^{\nu^\prime F_\varphi(y)}. 
\end{tikzcd}
\end{equation*}
Then the following statements hold: 
\begin{enumerate}
\item If $\varphi\colon N\rtimes_\rho\mca{G}\to N^\prime\rtimes_{\rho^\prime}\mca{G}^\prime$ is bounded with respect to $\mca{B}_\rho$ and $\mca{B}_{\rho^\prime}$, then $\ol{\varphi}(y,\cdot)$ is bornologous. 
\item If $\varphi\colon N\rtimes_\rho\mca{G}\to N^\prime\rtimes_{\rho^\prime}\mca{G}^\prime$ is a bibounded isomorphism with respect to $\mca{B}_\rho$ and $\mca{B}_{\rho^\prime}$, then $\ol{\varphi}(y,\cdot)$ is an isomorphism of coarse spaces (in particular a coarse equivalence). 
\end{enumerate}
\end{cor}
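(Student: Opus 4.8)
The plan is to reduce Corollary \ref{gmgmngngde} to the already-established Lemma and to the correspondence between bornologies and fiberwise coarse structures in Proposition \ref{glfcsbbcb}. First I would observe that the map $\ol\varphi(y,\cdot)$ is well-defined precisely because $\varphi_\rho\colon(N\rtimes_\rho\mca{G})^y\to\mca{G}^{\nu(y)}$ and $\varphi_{\rho^\prime}\colon(N^\prime\rtimes_{\rho^\prime}\mca{G}^\prime)^{F_\varphi(y)}\to(\mca{G}^\prime)^{\nu^\prime F_\varphi(y)}$ are diffeomorphisms (Definition \ref{hnggnmactmor}, since $\varphi_\rho$, $\varphi_{\rho^\prime}$ are action morphisms), and because $\varphi$ restricts to a map between the target fibers over $y$ and $F_\varphi(y)$ by the commutativity with the target maps in Definition \ref{semiconconpng}/Definition \ref{acgrmois}. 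So $\ol\varphi(y,\cdot)=\varphi_{\rho^\prime}\circ\varphi\circ\varphi_\rho^{-1}$ on these fibers.

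For part 1: assume $\varphi$ is bounded with respect to $\mca{B}_\rho$, $\mca{B}_{\rho^\prime}$. By Proposition \ref{gmgmttbbee} applied to the fiberwise coarse structures $\mca{E}_\rho$, $\mca{E}_{\rho^\prime}$ (which correspond to $\mca{B}_\rho$, $\mca{B}_{\rho^\prime}$ by definition), $\varphi\colon(N\rtimes_\rho\mca{G},\mca{E}_\rho)\to(N^\prime\rtimes_{\rho^\prime}\mca{G}^\prime,\mca{E}_{\rho^\prime})$ is bornologous. Then Proposition \ref{wywyffphiyyfy} gives that $\varphi\colon((N\rtimes_\rho\mca{G})^y,(\mca{E}_\rho)_y)\to((N^\prime\rtimes_{\rho^\prime}\mca{G}^\prime)^{F_\varphi(y)},(\mca{E}_{\rho^\prime})_{F_\varphi(y)})$ is bornologous. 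Finally, the preceding Lemma identifies $\varphi_\rho\colon((N\rtimes_\rho\mca{G})^y,(\mca{E}_\rho)_y)\simeq(\mca{G}^{\nu(y)},(\mca{E}_\mca{G})_{\nu(y)})$ and likewise for $\rho^\prime$ as coarse spaces (i.e.\ these are isomorphisms of coarse spaces, hence bornologous both ways). Composing, $\ol\varphi(y,\cdot)=\varphi_{\rho^\prime}\circ\varphi\circ\varphi_\rho^{-1}$ is a composite of bornologous maps, hence bornologous.

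For part 2: if $\varphi$ is a bibounded isomorphism, then $\varphi^{-1}$ is also bounded with respect to $\mca{B}_{\rho^\prime}$, $\mca{B}_\rho$, and applying part 1 to both $\varphi$ and $\varphi^{-1}$ shows that $\ol\varphi(y,\cdot)$ and $\ol{\varphi^{-1}}(F_\varphi(y),\cdot)$ are bornologous. Since $\varphi^{-1}$ restricted to the target fiber over $F_\varphi(y)$ is the inverse of $\varphi$ restricted to the target fiber over $y$ (as $F_{\varphi^{-1}}=F_\varphi^{-1}$, so $F_{\varphi^{-1}}(F_\varphi(y))=y$), conjugating by the coarse isomorphisms $\varphi_\rho$, $\varphi_{\rho^\prime}$ shows $\ol{\varphi^{-1}}(F_\varphi(y),\cdot)$ is a two-sided inverse of $\ol\varphi(y,\cdot)$ as maps; hence $\ol\varphi(y,\cdot)$ is a bijection whose inverse is bornologous, i.e.\ an isomorphism of coarse spaces, and in particular a coarse equivalence (take $f^\prime=\ol\varphi(y,\cdot)^{-1}$; the compositions are literally the identity maps, which are trivially close to themselves).

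The only mildly delicate point — and the one I would write out carefully — is the bookkeeping of base points and the verification that $\varphi$ does restrict to the claimed maps between target fibers and that these restrictions invert correctly under $\varphi\mapsto\varphi^{-1}$; everything else is a formal concatenation of results already in the excerpt (Proposition \ref{glfcsbbcb}, Proposition \ref{gmgmttbbee}, Proposition \ref{wywyffphiyyfy}, and the preceding Lemma). I do not expect a genuine obstacle here, just care with the diagrams.
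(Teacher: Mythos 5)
Your proposal is correct and is essentially the derivation the paper intends: the corollary is stated without proof precisely because it follows by composing the restriction $\varphi\colon(N\rtimes_\rho\mca{G})^y\to(N^\prime\rtimes_{\rho^\prime}\mca{G}^\prime)^{F_\varphi(y)}$ (bornologous via Proposition \ref{glfcsbbcb}, Proposition \ref{gmgmttbbee} and Proposition \ref{wywyffphiyyfy}) with the fiberwise coarse identifications $\varphi_\rho$, $\varphi_{\rho^\prime}$ from the preceding Lemma, and applying the same to $\varphi^{-1}$ for part 2. Your bookkeeping of base points ($F_{\varphi^{-1}}=F_\varphi^{-1}$, so the two fiber restrictions are mutually inverse) is exactly the only detail that needs checking, and it is handled correctly.
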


\begin{rem}
2 in Corollary \ref{gmgmngngde} is a ``generalization'' of 4.1.3 Lemma in \cite{Ma2}. 
\end{rem}

\section{The Teichm\"{u}ller spaces of an action}\label{888}
\subsection{The Teichm\"{u}ller spaces of an action}
\begin{dfn}\label{gmngprmagl}
Let $\mca{G}\rightrightarrows M$ be a Lie groupoid, $N_0$ be a $C^\infty$ manifold, $\nu_0\colon N_0\to M$ be a surjective submersion and $\rho_0\in\mca{A}(\nu_0,\mca{G})$. 
\begin{itemize}
\item $(N\rtimes_\rho\mca{G},\varphi)$ is a \emph{$\rho_0$-marked action groupoid} if: 
\begin{itemize}
\item $N$ is a $C^\infty$ manifold, $\nu\colon N\to M$ is a $C^\infty$ map and $\rho\in\mca{A}(\nu,\mca{G})$
\item $\varphi\colon N_0\rtimes_{\rho_0}\mca{G}\to N\rtimes_\rho\mca{G}$ is an action groupoid isomorphism (Definition \ref{acgrmois}). 
\end{itemize}
\item $\MAG(\rho_0)=\left\{\text{$\rho_0$-marked action groupoids}\right\}$. 
\item Let $(N\rtimes_\rho\mca{G},\varphi)\in\MAG(\rho_0)$. We say that $(N\rtimes_\rho\mca{G},\varphi)$ is \emph{bibounded} if $\varphi$ is bibounded with respect to the action groupoid bornologies $\mca{B}_{\rho_0}$ and $\mca{B}_\rho$ (Definition \ref{acgrbong}). 
\item $\MAG^{bb}(\rho_0)=\left\{\text{bibounded $\rho_0$-marked action groupoids}\right\}$. We have $(N_0\rtimes_{\rho_0}\mca{G},\id)\in\MAG^{bb}(\rho_0)$. 
\item Let $(N\rtimes_\rho\mca{G},\varphi)$, $(N^\prime\rtimes_{\rho^\prime}\mca{G},\varphi^\prime)\in\MAG(\rho_0)$ and $a\in\left\{o,nt,bnt\right\}$. We write $(N\rtimes_\rho\mca{G},\varphi)\mathrel{\ul{\sim_a}}(N^\prime\rtimes_{\rho^\prime}\mca{G},\varphi^\prime)$ if there exists a conjugacy $\psi\in\ul{\Hom}(N\rtimes_\rho\mca{G},N^\prime\rtimes_{\rho^\prime}\mca{G})$ such that $\psi\varphi\mathrel{\ul{\sim_a}}\varphi^\prime$. 
\begin{equation*}
\begin{tikzcd}[row sep=tiny]
&N\rtimes_\rho\mca{G}\ar[dd,"\psi"]\\
N_0\rtimes_{\rho_0}\mca{G}\ar[ru,"\varphi"]\ar[rd,"\varphi^\prime"']\\
&N^\prime\rtimes_{\rho^\prime}\mca{G}
\end{tikzcd}
\end{equation*}
Note that $\psi\varphi\mathrel{\ul{\sim_a}}\varphi^\prime$ if and only if $\varphi^\prime\varphi^{-1}\mathrel{\ul{\sim_a}}\psi$ by Lemma \ref{ulsimsim} and Lemma \ref{ulsimsim2}. For a semiconjugacy $\psi=F_\psi\rtimes\Psi\colon N\rtimes_\rho\mca{G}\to N^\prime\rtimes_{\rho^\prime}\mca{G}$, we have $\psi\in\ul{\Hom}(N\rtimes_\rho\mca{G},N^\prime\rtimes_{\rho^\prime}\mca{G})$ if and only if $F_\Psi=\id$ by Lemma \ref{ulid}. 
\end{itemize}
\end{dfn}

\begin{rem}
In the above definition we often do not need the assumption that $\nu_0$ is a \emph{surjective submersion}, but we assume so for the sake of simplicity. 
\end{rem}

\begin{rem}
Note that the notation $(N\rtimes_\rho\mca{G},\varphi)$ does not contain $\nu$. For $(N\rtimes_\rho\mca{G},\varphi)\in\MAG(\rho_0)$, $\nu$ is always meant to be the map $\nu=\nu_0F_\varphi^{-1}\colon N\to M$. 
\end{rem}

\begin{lem}
Let $\mca{G}\rightrightarrows M$ be a Lie groupoid, $N_0$ be a $C^\infty$ manifold, $\nu_0\colon N_0\to M$ be a surjective submersion and $\rho_0\in\mca{A}(\nu_0,\mca{G})$. Let $a\in\left\{o,nt,bnt\right\}$. Then $\mathrel{\ul{\sim_a}}$ is an equivalence relation on $\MAG(\rho_0)$. The equivalence class of $(N\rtimes_\rho\mca{G},\varphi)$ is denoted by $[N\rtimes_\rho\mca{G},\varphi]$. 
\end{lem}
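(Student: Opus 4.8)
The plan is to verify the three axioms of an equivalence relation for $\mathrel{\ul{\sim_a}}$ on $\MAG(\rho_0)$, for each $a\in\{o,nt,bnt\}$ uniformly, using the already-established facts about composability and symmetry of $\mathrel{\ul{\sim_o}}$, $\mathrel{\ul{\sim_{nt}}}$, $\mathrel{\ul{\sim_{bnt}}}$ under action groupoid morphisms and conjugacies (Lemma \ref{ulsimsim}, Lemma \ref{ulsimsim2}, together with the remark recorded in Definition \ref{gmngprmagl} that $\psi\varphi\mathrel{\ul{\sim_a}}\varphi^\prime$ iff $\varphi^\prime\varphi^{-1}\mathrel{\ul{\sim_a}}\psi$). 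Throughout, whenever $a=bnt$ one must also remember that the relevant $\mathrel{\ul{\sim_{bnt}}}$-manipulations (Lemma \ref{ulsimsim2}) require a boundedness hypothesis on one of the factors, and the point is that conjugacies between action groupoids are bibounded by Proposition \ref{semibounded}(3), so all the conjugacies $\psi$ appearing below are bibounded and the bnt-case goes through just like the nt-case.

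For \emph{reflexivity}: given $(N\rtimes_\rho\mca{G},\varphi)\in\MAG(\rho_0)$, take $\psi=\id\in\ul{\Hom}(N\rtimes_\rho\mca{G},N\rtimes_\rho\mca{G})$, which is a conjugacy; then $\psi\varphi=\varphi\mathrel{\ul{\sim_a}}\varphi$ since $\mathrel{\ul{\sim_a}}$ is reflexive on $\ul{\Hom}(N_0\rtimes_{\rho_0}\mca{G},N\rtimes_\rho\mca{G})$ (reflexivity of $\mathrel{\ul{\sim_o}}$, $\mathrel{\ul{\sim_{nt}}}$, $\mathrel{\ul{\sim_{bnt}}}$ was established earlier). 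For \emph{symmetry}: suppose $(N\rtimes_\rho\mca{G},\varphi)\mathrel{\ul{\sim_a}}(N^\prime\rtimes_{\rho^\prime}\mca{G},\varphi^\prime)$ via a conjugacy $\psi$ with $\psi\varphi\mathrel{\ul{\sim_a}}\varphi^\prime$. I would propose $\psi^{-1}$ as the witnessing conjugacy in the other direction; it is again a conjugacy, and it is bibounded (Proposition \ref{semibounded}(3)). Applying $\psi^{-1}$ on the left to $\psi\varphi\mathrel{\ul{\sim_a}}\varphi^\prime$ using Lemma \ref{ulsimsim} (resp. Lemma \ref{ulsimsim2}(2), whose boundedness hypothesis is met since $\psi^{-1}$ is bounded) gives $\varphi\mathrel{\ul{\sim_a}}\psi^{-1}\varphi^\prime$, i.e. $\psi^{-1}\varphi^\prime\mathrel{\ul{\sim_a}}\varphi$, which is exactly $(N^\prime\rtimes_{\rho^\prime}\mca{G},\varphi^\prime)\mathrel{\ul{\sim_a}}(N\rtimes_\rho\mca{G},\varphi)$.

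For \emph{transitivity}: suppose $(N\rtimes_\rho\mca{G},\varphi)\mathrel{\ul{\sim_a}}(N^\prime\rtimes_{\rho^\prime}\mca{G},\varphi^\prime)$ via $\psi$ and $(N^\prime\rtimes_{\rho^\prime}\mca{G},\varphi^\prime)\mathrel{\ul{\sim_a}}(N^{\prime\prime}\rtimes_{\rho^{\prime\prime}}\mca{G},\varphi^{\prime\prime})$ via $\psi^\prime$, so $\psi\varphi\mathrel{\ul{\sim_a}}\varphi^\prime$ and $\psi^\prime\varphi^\prime\mathrel{\ul{\sim_a}}\varphi^{\prime\prime}$. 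I would take $\psi^\prime\psi$ as the witnessing conjugacy (a composite of conjugacies is a conjugacy, and bibounded by Proposition \ref{semibounded}(3)); the chain is
\begin{equation*}
\psi^\prime\psi\varphi\mathrel{\ul{\sim_a}}\psi^\prime\varphi^\prime\mathrel{\ul{\sim_a}}\varphi^{\prime\prime},
\end{equation*}
where the first relation is obtained by applying $\psi^\prime$ on the left to $\psi\varphi\mathrel{\ul{\sim_a}}\varphi^\prime$ (Lemma \ref{ulsimsim}, resp. Lemma \ref{ulsimsim2}(2) with $\psi^\prime$ bounded), and the second is the hypothesis; then transitivity of $\mathrel{\ul{\sim_a}}$ on the relevant Hom-set gives $\psi^\prime\psi\varphi\mathrel{\ul{\sim_a}}\varphi^{\prime\prime}$. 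The main thing to be careful about — the only real ``obstacle'' — is the $bnt$ case: one must confirm at each step that the conjugacy being pushed through is bounded, which is precisely Proposition \ref{semibounded}(3), and that the composite/inverse conjugacies inherit the bibounded $\rho_0$-marking compatibility; once this bookkeeping is in place the argument is formal. I would write it as a single proof treating all three $a$ at once, invoking Lemma \ref{ulsimsim} for $a\in\{o,nt\}$ and Lemma \ref{ulsimsim2} for $a=bnt$.
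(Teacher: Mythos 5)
Your proposal is correct and follows essentially the same route as the paper's proof: reflexivity via $\psi=\id$, symmetry by pushing $\psi^{-1}$ (bounded since conjugacies are bibounded, Proposition \ref{semibounded}) through the relation via Lemma \ref{ulsimsim} / Lemma \ref{ulsimsim2}, and transitivity via the composite conjugacy $\psi^\prime\psi$ with the chain $\psi^\prime\psi\varphi\mathrel{\ul{\sim_a}}\psi^\prime\varphi^\prime\mathrel{\ul{\sim_a}}\varphi^{\prime\prime}$. The only difference is that you spell out the boundedness bookkeeping for the $bnt$ case more explicitly than the paper does, which is fine.
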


\begin{proof}
We have $(N\rtimes_\rho\mca{G},\varphi)\mathrel{\ul{\sim_a}}(N\rtimes_\rho\mca{G},\varphi)$ for any $(N\rtimes_\rho\mca{G},\varphi)\in\MAG(\rho_0)$. 

Assume $(N\rtimes_\rho\mca{G},\varphi)\mathrel{\ul{\sim_a}}(N^\prime\rtimes_{\rho^\prime}\mca{G},\varphi^\prime)$. There exists a conjugacy $\psi\in\ul{\Hom}(N\rtimes_\rho\mca{G},N^\prime\rtimes_{\rho^\prime}\mca{G})$ such that $\psi\varphi\mathrel{\ul{\sim_a}}\varphi^\prime$. Then $\psi^{-1}$ is bounded by Proposition \ref{semibounded}. Hence $\psi^{-1}\varphi^\prime\mathrel{\ul{\sim_a}}\varphi$ by Lemma \ref{ulsimsim} and Lemma \ref{ulsimsim2}. So $(N^\prime\rtimes_{\rho^\prime}\mca{G},\varphi^\prime)\mathrel{\ul{\sim_a}}(N\rtimes_\rho\mca{G},\varphi)$. 

Assume $(N\rtimes_\rho\mca{G},\varphi)\mathrel{\ul{\sim_a}}(N^\prime\rtimes_{\rho^\prime}\mca{G},\varphi^\prime)$ and $(N^\prime\rtimes_{\rho^\prime}\mca{G},\varphi^\prime)\mathrel{\ul{\sim_a}}(N^{\prime\prime}\rtimes_{\rho^{\prime\prime}}\mca{G},\varphi^{\prime\prime})$. There exist conjugacies $\psi\in\ul{\Hom}(N\rtimes_\rho\mca{G},N^\prime\rtimes_{\rho^\prime}\mca{G})$, $\psi^\prime\in\ul{\Hom}(N^\prime\rtimes_{\rho^\prime}\mca{G},N^{\prime\prime}\rtimes_{\rho^{\prime\prime}}\mca{G})$ such that $\psi\varphi\mathrel{\ul{\sim_a}}\varphi^\prime$ and $\psi^\prime\varphi^\prime\mathrel{\ul{\sim_a}}\varphi^{\prime\prime}$. Since $\psi^\prime$ is bounded, $\psi^\prime\psi\varphi\mathrel{\ul{\sim_a}}\psi^\prime\varphi^\prime\mathrel{\ul{\sim_a}}\varphi^{\prime\prime}$. Hence $(N\rtimes_\rho\mca{G},\varphi)\mathrel{\ul{\sim_a}}(N^{\prime\prime}\rtimes_{\rho^{\prime\prime}}\mca{G},\varphi^{\prime\prime})$. 
\end{proof}

\begin{dfn}\label{Teistarsim}
The \emph{Teichm\"{u}ller spaces} of $\rho_0$ are defined as 
\begin{equation*}
T_a(\rho_0)=\MAG(\rho_0)/\ul{\sim_a}
\end{equation*}
for $a\in\left\{o,nt\right\}$ and 
\begin{equation*}
T_{bnt}(\rho_0)=\MAG^{bb}(\rho_0)/\ul{\sim_{bnt}}. 
\end{equation*}
We have 
\begin{equation*}
T_{bnt}(\rho_0)\to T_{nt}(\rho_0)\twoheadrightarrow T_o(\rho_0). 
\end{equation*}
\end{dfn}

\begin{prop}\label{gmncptbnt}
Let $\mca{G}\rightrightarrows M$ be a Lie groupoid, $N_0$ be a $C^\infty$ manifold, $\nu_0\colon N_0\to M$ be a surjective submersion and $\rho_0\in\mca{A}(\nu_0,\mca{G})$. If $N_0$ is compact or $t\colon\mca{G}\to M$ is proper, then $T_{bnt}(\rho_0)=T_{nt}(\rho_0)$. 
\end{prop}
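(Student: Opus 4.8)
The claim is that when $N_0$ is compact or $t\colon\mca{G}\to M$ is proper, the natural map $T_{bnt}(\rho_0)\to T_{nt}(\rho_0)$ is a bijection. The strategy is to show this map is surjective and injective separately, the key mechanism in both cases being Proposition \ref{crhol} together with Proposition \ref{semibounded}, which under either hypothesis force \emph{every} relevant morphism and \emph{every} $nt$-homotopy to be automatically bounded.

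\textbf{Surjectivity.} Given a class in $T_{nt}(\rho_0)$ represented by $(N\rtimes_\rho\mca{G},\varphi)\in\MAG(\rho_0)$, I first observe that under our hypothesis the marking $\varphi$ is automatically bibounded: if $N_0$ is compact then $\varphi$ is bounded by Proposition \ref{semibounded}(4) and $\varphi^{-1}$ is bounded since $\varphi^{-1}$ is a conjugacy, hence bibounded, by Proposition \ref{semibounded}(3); alternatively, if $t\colon\mca{G}\to M$ is proper then $\mca{B}_{\rho_0}=P(N_0\rtimes_{\rho_0}\mca{G})$ and $\mca{B}_\rho=P(N\rtimes_\rho\mca{G})$ by Proposition \ref{crhol}(3), so every map is bounded. (One should note that when $N_0$ is compact, the isomorphism $\varphi$ forces $N$ to be compact as well, since $F_\varphi\colon N_0\to N$ is a diffeomorphism, so Proposition \ref{semibounded}(3) and (4) indeed apply to $\varphi^{-1}$.) Therefore $(N\rtimes_\rho\mca{G},\varphi)\in\MAG^{bb}(\rho_0)$ and it represents a class in $T_{bnt}(\rho_0)$ mapping to the given class. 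Hence the map is surjective.

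\textbf{Injectivity.} Suppose $(N\rtimes_\rho\mca{G},\varphi),(N^\prime\rtimes_{\rho^\prime}\mca{G},\varphi^\prime)\in\MAG^{bb}(\rho_0)$ become equal in $T_{nt}(\rho_0)$, i.e.\ $(N\rtimes_\rho\mca{G},\varphi)\mathrel{\ul{\sim_{nt}}}(N^\prime\rtimes_{\rho^\prime}\mca{G},\varphi^\prime)$: there is a conjugacy $\psi\in\ul{\Hom}(N\rtimes_\rho\mca{G},N^\prime\rtimes_{\rho^\prime}\mca{G})$ with $\psi\varphi\mathrel{\ul{\sim_{nt}}}\varphi^\prime$. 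I must upgrade $\ul{\sim_{nt}}$ to $\ul{\sim_{bnt}}$. Write out the $nt$-homotopy $P\colon N_0\times I\to N^\prime\rtimes_{\rho^\prime}\mca{G}$ between $\psi\varphi$ and $\varphi^\prime$. The point is that $P(N_0\times I)\in\mca{B}_{\rho^\prime}$ automatically: when $t\colon\mca{G}\to M$ is proper, $\mca{B}_{\rho^\prime}=P(N^\prime\rtimes_{\rho^\prime}\mca{G})$ by Proposition \ref{crhol}(3), so nothing to check; when $N_0$ is compact, $N^\prime$ is compact, so $\mca{B}_{\rho^\prime}=\mca{B}^c_{N^\prime\rtimes_{\rho^\prime}\mca{G}}=\mca{B}_{N^\prime\rtimes_{\rho^\prime}\mca{G}}$ by Proposition \ref{crhol}(2), and $P(N_0\times I)$, being the image of a compact set under the continuous map $P$, is a compact subset of $N^\prime\rtimes_{\rho^\prime}\mca{G}$, hence lies in $\mca{B}^c_{N^\prime\rtimes_{\rho^\prime}\mca{G}}\subset\mca{B}_{\rho^\prime}$. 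Thus $P$ is a bounded $nt$-homotopy, so $\psi\varphi\mathrel{\ul{\sim_{bnt}}}\varphi^\prime$, giving $(N\rtimes_\rho\mca{G},\varphi)\mathrel{\ul{\sim_{bnt}}}(N^\prime\rtimes_{\rho^\prime}\mca{G},\varphi^\prime)$. Hence the map $T_{bnt}(\rho_0)\to T_{nt}(\rho_0)$ is injective as well, and therefore bijective.

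\textbf{Expected obstacle.} This proof is essentially bookkeeping: the real content is already packaged in Propositions \ref{crhol} and \ref{semibounded}. The one subtlety worth care is making sure that compactness of $N_0$ really does propagate to compactness of the target manifolds $N$, $N^\prime$ appearing in markings (so that the ``$N$ compact'' clauses of those propositions apply to the right objects), and that an $nt$-homotopy, being a continuous map out of $N_0\times I$ rather than merely a path in a mapping space, genuinely has compact image when $N_0$ is compact — both are immediate but deserve an explicit sentence. No serious difficulty is anticipated.
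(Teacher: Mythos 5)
Your proof is correct and follows essentially the same route as the paper: under either hypothesis every marking is automatically bibounded (so $\MAG^{bb}(\rho_0)=\MAG(\rho_0)$), and every $nt$-homotopy out of $N_0\times I$ automatically has image in $\mca{B}_{\rho^\prime}$, exactly as in the paper's use of Propositions \ref{crhol} and \ref{semibounded}. The only blemish is the clause asserting $\varphi^{-1}$ is bounded ``since $\varphi^{-1}$ is a conjugacy'' — an action groupoid isomorphism need not be a conjugacy, so Proposition \ref{semibounded}(3) does not apply — but your own parenthetical remark (compactness of $N=F_\varphi(N_0)$ and Proposition \ref{semibounded}(4) applied to $\varphi^{-1}$) supplies the correct justification, which is what the paper's proof uses.
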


\begin{proof}
Let $(N\rtimes_\rho\mca{G},\varphi)\in\MAG(\rho_0)$. If $N_0$ is compact or $t\colon\mca{G}\to M$ is proper, $\varphi$ is bibounded by Proposition \ref{semibounded}, hence $\MAG^{bb}(\rho_0)=\MAG(\rho_0)$. 

Let $(N\rtimes_\rho\mca{G},\varphi)$, $(N^\prime\rtimes_{\rho^\prime}\mca{G},\varphi^\prime)\in\MAG(\rho_0)$ be such that $(N\rtimes_\rho\mca{G},\varphi)\mathrel{\ul{\sim_{nt}}}(N^\prime\rtimes_{\rho^\prime}\mca{G},\varphi^\prime)$. There exists a conjugacy $\psi\in\ul{\Hom}(N\rtimes_\rho\mca{G},N^\prime\rtimes_{\rho^\prime}\mca{G})$ such that $\psi\varphi\mathrel{\ul{\sim_{nt}}}\varphi^\prime$. Let $P\colon N_0\times I\to N^\prime\rtimes_{\rho^\prime}\mca{G}$ be an $nt$-homotopy between $\psi\varphi$ and $\varphi^\prime$. 

\begin{itemize}
\item If $N_0$ is compact, $P(N_0\times I)$ is compact, hence $P(N_0\times I)\in\mca{B}_{\rho^\prime}$. 
\item If $t\colon\mca{G}\to M$ is proper, then $\mca{B}_{\rho^\prime}=P(N^\prime\rtimes_{\rho^\prime}\mca{G})$ by Proposition \ref{crhol}. Hence $P(N_0\times I)\in\mca{B}_{\rho^\prime}$. 
\end{itemize}
Therefore $\psi\varphi\mathrel{\ul{\sim_{bnt}}}\varphi^\prime$ and $(N\rtimes_\rho\mca{G},\varphi)\mathrel{\ul{\sim_{bnt}}}(N^\prime\rtimes_{\rho^\prime}\mca{G},\varphi^\prime)$. 
\end{proof}

In Section \ref{morlieorb} we will prove $T_{nt}(\rho_0)=T_o(\rho_0)$ if $\rho_0$ is locally free. 

\begin{lem}\label{parae}
Let $\mca{G}\rightrightarrows M$ be a Lie groupoid, $N_0$ be a $C^\infty$ manifold, $\nu_0\colon N_0\to M$ be a surjective submersion and $\rho_0\in\mca{A}(\nu_0,\mca{G})$. Let $a\in\left\{o,nt,bnt\right\}$. Then the following are equivalent: 
\begin{enumerate}
\item $T_a(\rho_0)$ is a singleton. 
\item For any $(N\rtimes_\rho\mca{G},\varphi)\in\MAG(\rho_0)$ ($\MAG^{bb}(\rho_0)$ when $a=bnt$), there exists a conjugacy $\varphi^\prime\in\ul{\Hom}(N_0\rtimes_{\rho_0}\mca{G},N\rtimes_\rho\mca{G})$ such that $\varphi\mathrel{\ul{\sim_a}}\varphi^\prime$. 
\end{enumerate}
\end{lem}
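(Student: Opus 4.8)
The statement is the expected ``for a Teichm\"{u}ller-type quotient, being a singleton is equivalent to every point being related to the basepoint''. Recall that the basepoint of $T_a(\rho_0)$ is the class $[N_0\rtimes_{\rho_0}\mca{G},\id]$. The implication $1\Rightarrow2$ is essentially immediate: if $T_a(\rho_0)$ is a singleton, then for any marked action groupoid $(N\rtimes_\rho\mca{G},\varphi)$ we have $[N\rtimes_\rho\mca{G},\varphi]=[N_0\rtimes_{\rho_0}\mca{G},\id]$, so by the definition of $\mathrel{\ul{\sim_a}}$ (Definition \ref{gmngprmagl}) there is a conjugacy $\psi\in\ul{\Hom}(N\rtimes_\rho\mca{G},N_0\rtimes_{\rho_0}\mca{G})$ with $\psi\varphi\mathrel{\ul{\sim_a}}\id$; then $\psi^{-1}$ is itself a conjugacy in $\ul{\Hom}(N_0\rtimes_{\rho_0}\mca{G},N\rtimes_\rho\mca{G})$, and applying Lemma \ref{ulsimsim} (for $a\in\{o,nt\}$) or Lemma \ref{ulsimsim2} (for $a=bnt$)---using that $\psi^{-1}$ is bounded by Proposition \ref{semibounded}(3) when $a=bnt$---to $\psi\varphi\mathrel{\ul{\sim_a}}\id$ gives $\varphi\mathrel{\ul{\sim_a}}\psi^{-1}$. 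So $\varphi^\prime=\psi^{-1}$ works.

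For $2\Rightarrow1$, I would show that every class in $T_a(\rho_0)$ equals the basepoint class. Given $(N\rtimes_\rho\mca{G},\varphi)$, hypothesis $2$ provides a conjugacy $\varphi^\prime\in\ul{\Hom}(N_0\rtimes_{\rho_0}\mca{G},N\rtimes_\rho\mca{G})$ with $\varphi\mathrel{\ul{\sim_a}}\varphi^\prime$. I want to conclude $[N\rtimes_\rho\mca{G},\varphi]=[N_0\rtimes_{\rho_0}\mca{G},\id]$, i.e.\ that there is a conjugacy $\psi\in\ul{\Hom}(N\rtimes_\rho\mca{G},N_0\rtimes_{\rho_0}\mca{G})$ with $\psi\varphi\mathrel{\ul{\sim_a}}\id$. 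Take $\psi=(\varphi^\prime)^{-1}$, which is a conjugacy in $\ul{\Hom}(N\rtimes_\rho\mca{G},N_0\rtimes_{\rho_0}\mca{G})$. Then $\psi\varphi=(\varphi^\prime)^{-1}\varphi$. From $\varphi\mathrel{\ul{\sim_a}}\varphi^\prime$ and the fact that $(\varphi^\prime)^{-1}$ is bounded (Proposition \ref{semibounded}(3) when $a=bnt$; no boundedness needed for $a\in\{o,nt\}$), Lemma \ref{ulsimsim}/Lemma \ref{ulsimsim2} gives $(\varphi^\prime)^{-1}\varphi\mathrel{\ul{\sim_a}}(\varphi^\prime)^{-1}\varphi^\prime=\id$. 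Hence $\psi\varphi\mathrel{\ul{\sim_a}}\id$, so $(N\rtimes_\rho\mca{G},\varphi)\mathrel{\ul{\sim_a}}(N_0\rtimes_{\rho_0}\mca{G},\id)$, and $T_a(\rho_0)$ is a singleton (it is nonempty since it contains the basepoint).

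One point to handle carefully in the $a=bnt$ case is the domains of the objects involved: hypothesis $2$ is stated for $(N\rtimes_\rho\mca{G},\varphi)\in\MAG^{bb}(\rho_0)$, so the conjugacy $\varphi^\prime$ and the equivalence $\varphi\mathrel{\ul{\sim_a}}\varphi^\prime$ both live inside the bibounded world; I should check that $\psi=(\varphi^\prime)^{-1}$ is bibounded (so that $[N\rtimes_\rho\mca{G},\varphi]$ is even defined in $T_{bnt}(\rho_0)$ and the relation $\mathrel{\ul{\sim_{bnt}}}$ makes sense for the composite), which follows from Proposition \ref{semibounded}(3) applied to the conjugacy $\varphi^\prime$ together with bibiboundedness of $\varphi$. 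There is no real obstacle here---the lemma is a formal manipulation of the definition of $\mathrel{\ul{\sim_a}}$ together with the already-established invariance lemmas---but the mild bookkeeping to keep the boundedness hypotheses of Lemma \ref{ulsimsim2} satisfied in the $bnt$ case is the only spot requiring attention.
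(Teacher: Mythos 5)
Your proof is correct and is essentially the paper's argument: the paper's (very terse) proof just observes that condition 2 is a restatement of $(N_0\rtimes_{\rho_0}\mca{G},\id)\mathrel{\ul{\sim_a}}(N\rtimes_\rho\mca{G},\varphi)$ via a conjugacy $\varphi^\prime$ with $\varphi^\prime\mathrel{\ul{\sim_a}}\varphi$, which together with $\mathrel{\ul{\sim_a}}$ being an equivalence relation gives both implications. Your passage through $(\varphi^\prime)^{-1}$ and Lemmas \ref{ulsimsim}/\ref{ulsimsim2} (with Proposition \ref{semibounded} in the $bnt$ case) is exactly the bookkeeping the paper has already packaged into those lemmas and into the remark in Definition \ref{gmngprmagl}.
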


\begin{proof}
Each of the two implications follows from the diagram 
\begin{equation*}
\begin{tikzcd}[row sep=tiny]
&N_0\rtimes_{\rho_0}\mca{G}\ar[dd,"\exists\varphi^\prime"]\\
N_0\rtimes_{\rho_0}\mca{G}\ar[ru,"\id"]\ar[rd,"\varphi"']\\
&N\rtimes_\rho\mca{G}, 
\end{tikzcd}
\end{equation*}
where $\varphi^\prime\in\ul{\Hom}(N_0\rtimes_{\rho_0}\mca{G},N\rtimes_\rho\mca{G})$ is a conjugacy such that $\varphi^\prime\mathrel{\ul{\sim_a}}\varphi$. 
\end{proof}

\begin{example}
Let $\mca{G}\rightrightarrows M$ be a Lie groupoid. Then $\mca{A}(\id,\mca{G})=\left\{\tau\right\}$ and $T_{bnt}(\tau)=T_{nt}(\tau)=T_o(\tau)=\mr{pt}$. 

\begin{proof}
Let $(N\rtimes_\rho\mca{G},\varphi)\in\MAG(\tau)$. We have 
\begin{equation*}
\begin{tikzcd}[column sep=tiny]
M\ar[rr,"F_\varphi"{name=U}]\ar[rd,"\id"']&&N\ar[ld,"\nu"]\\
&M, \ar[to=U,phantom,"\circlearrowright"]
\end{tikzcd}
\end{equation*}
hence $\nu$ is a diffeomorphism. Since $\varphi_\tau$, $\varphi_\rho$ are isomorphisms, we can define the bottom map in the diagram 
\begin{equation*}
\begin{tikzcd}
M\rtimes_\tau\mca{G}\ar[r,"\sim","\varphi"{yshift=7}]\ar[d,"\sim"{sloped},"\varphi_\tau"']\ar[rd,phantom,"\circlearrowright"{xshift=-3,yshift=1}]&N\rtimes_\rho\mca{G}\ar[d,"\sim"{sloped},"\varphi_\rho"{xshift=7}]\\
\mca{G}\ar[r]&\mca{G}. 
\end{tikzcd}
\end{equation*}
Therefore $\varphi$ is a conjugacy and $T_{bnt}(\tau)=T_{nt}(\tau)=T_o(\tau)=\mr{pt}$ by Lemma \ref{parae}. 
\end{proof}
\end{example}

\subsection{$\MAG_{\id}(\rho_0)$}
\begin{dfn}\label{magidmagnfp}
Let $\mca{G}\rightrightarrows M$ be a Lie groupoid, $N_0$ be a $C^\infty$ manifold, $\nu_0\colon N_0\to M$ be a surjective submersion and $\rho_0\in\mca{A}(\nu_0,\mca{G})$. Define 
\begin{equation*}
\MAG_{\id}(\rho_0)=\left\{(N\rtimes_\rho\mca{G},\varphi)\in\MAG(\rho_0)\ \middle|\ N=N_0,F_\varphi=\id\right\}
\end{equation*}
and 
\begin{equation*}
\MAG^{bb}_{\id}(\rho_0)=\MAG_{\id}(\rho_0)\cap\MAG^{bb}(\rho_0). 
\end{equation*}
\end{dfn}

\begin{lem}\label{magid}
Let $\mca{G}\rightrightarrows M$ be a Lie groupoid, $N_0$ be a $C^\infty$ manifold, $\nu_0\colon N_0\to M$ be a surjective submersion and $\rho_0\in\mca{A}(\nu_0,\mca{G})$. Then 
\begin{align*}
T_a(\rho_0)&\simeq\MAG_{\id}(\rho_0)/\ul{\sim_a}\\
[N\rtimes_\rho\mca{G},\varphi]&\mapsto[N_0\rtimes_{\rho^{(F_\varphi,\id)}}\mca{G},(F_\varphi^{-1}\rtimes\id)\varphi]\\
[N_0\rtimes_\rho\mca{G},\varphi]&\mapsfrom[N_0\rtimes_\rho\mca{G},\varphi]
\end{align*}
for $a\in\left\{o,nt\right\}$, where $\rho^{(F_\varphi,\id)}\in\mca{A}(\nu_0,\mca{G})$ is defined in Lemma \ref{ngconj} and 
\begin{equation*}
T_{bnt}(\rho_0)\simeq\MAG^{bb}_{\id}(\rho_0)/\ul{\sim_{bnt}}
\end{equation*}
by similar maps. 
\end{lem}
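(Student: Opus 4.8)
The plan is to show that the two maps displayed in the statement are mutually inverse bijections between $T_a(\rho_0)$ and $\MAG_{\id}(\rho_0)/\ul{\sim_a}$ (and likewise for the bibounded versions), so there are essentially three things to check: that the forward map is well defined on equivalence classes, that the backward map is well defined, and that the composites are the identity. The backward direction is trivial since $\MAG_{\id}(\rho_0)\subset\MAG(\rho_0)$ and $\MAG^{bb}_{\id}(\rho_0)\subset\MAG^{bb}(\rho_0)$, so the inclusion of sets descends to a map of quotients (using that $\ul{\sim_a}$ on $\MAG_{\id}(\rho_0)$ is the restriction of $\ul{\sim_a}$ on $\MAG(\rho_0)$, which is immediate from the definition). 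The composite $[N_0\rtimes_\rho\mca{G},\varphi]\mapsto[N_0\rtimes_\rho\mca{G},\varphi]$ when $F_\varphi=\id$ is also immediate because $\rho^{(\id,\id)}=\rho$ and $\id\rtimes\id=\id$.

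So the real content is the forward map. First I would verify it is well defined as a map from $\MAG(\rho_0)$: given $(N\rtimes_\rho\mca{G},\varphi)\in\MAG(\rho_0)$, we have $F_\varphi\in\ul{\Diff}(N_0,N)$ in the sense that $\nu F_\varphi=\nu_0$ (recall $\nu=\nu_0F_\varphi^{-1}$), hence $F_\varphi^{-1}\colon N\to N_0$ satisfies $\nu_0F_\varphi^{-1}=\nu$. By Lemma \ref{ngconj} applied with the diffeomorphism $F_\varphi^{-1}\colon N\to N_0$ (wait — more precisely apply it with $F=F_\varphi\colon N_0\to N$, $\Phi=\id$, $\rho^\prime=\rho$), $\rho^{(F_\varphi,\id)}\in\mca{A}(\nu F_\varphi,\mca{G})=\mca{A}(\nu_0,\mca{G})$ and $F_\varphi\rtimes\id\in\ul{\Hom}(N_0\rtimes_{\rho^{(F_\varphi,\id)}}\mca{G},N\rtimes_\rho\mca{G})$ is a conjugacy. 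Hence $(F_\varphi^{-1}\rtimes\id)\varphi=(F_\varphi\rtimes\id)^{-1}\varphi$ is an action groupoid isomorphism $N_0\rtimes_{\rho_0}\mca{G}\to N_0\rtimes_{\rho^{(F_\varphi,\id)}}\mca{G}$ whose base map is $F_\varphi^{-1}F_\varphi=\id$, so $(N_0\rtimes_{\rho^{(F_\varphi,\id)}}\mca{G},(F_\varphi^{-1}\rtimes\id)\varphi)\in\MAG_{\id}(\rho_0)$. For the bibounded case one additionally notes that $F_\varphi\rtimes\id$ is a conjugacy, hence bibounded by Proposition \ref{semibounded}(3), so biboundedness of $\varphi$ is equivalent to biboundedness of $(F_\varphi^{-1}\rtimes\id)\varphi$.

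Next I would check that this descends to equivalence classes. Suppose $(N\rtimes_\rho\mca{G},\varphi)\mathrel{\ul{\sim_a}}(N^\prime\rtimes_{\rho^\prime}\mca{G},\varphi^\prime)$, witnessed by a conjugacy $\psi\in\ul{\Hom}(N\rtimes_\rho\mca{G},N^\prime\rtimes_{\rho^\prime}\mca{G})$ with $\psi\varphi\mathrel{\ul{\sim_a}}\varphi^\prime$. Writing $\wt\varphi=(F_\varphi^{-1}\rtimes\id)\varphi$ and $\wt\varphi^\prime=((F_{\varphi^\prime})^{-1}\rtimes\id)\varphi^\prime$, I want a conjugacy $\wt\psi\in\ul{\Hom}$ between the targets $N_0\rtimes_{\rho^{(F_\varphi,\id)}}\mca{G}$ and $N_0\rtimes_{\rho^{\prime(F_{\varphi^\prime},\id)}}\mca{G}$ with $\wt\psi\wt\varphi\mathrel{\ul{\sim_a}}\wt\varphi^\prime$. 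The natural candidate is $\wt\psi=((F_{\varphi^\prime})^{-1}\rtimes\id)\psi(F_\varphi\rtimes\id)$, which is a composite of conjugacies hence a conjugacy, and its base map is $(F_{\varphi^\prime})^{-1}F_\psi F_\varphi$; since $\psi$ witnessed $\mathrel{\ul{\sim_a}}$ one has $F_{\varphi^\prime}=F_\psi F_\varphi$ up to the relevant homotopy, and in fact — because $\psi\varphi\mathrel{\ul{\sim_a}}\varphi^\prime$ forces $F_\psi F_\varphi$ and $F_{\varphi^\prime}$ to agree after an orbitwise homotopy, but for $\wt\psi$ to lie in $\ul{\Hom}$ we need its base map to equal $\id$ exactly. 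Here is where I would be careful: one should instead read off from $\psi\varphi\mathrel{\ul{\sim_a}}\varphi^\prime$ that $\psi$ is already an action groupoid morphism in the appropriate normalization, or absorb the discrepancy into the $nt$-homotopy. Then $\wt\psi\wt\varphi=((F_{\varphi^\prime})^{-1}\rtimes\id)(\psi\varphi)\mathrel{\ul{\sim_a}}((F_{\varphi^\prime})^{-1}\rtimes\id)\varphi^\prime=\wt\varphi^\prime$ by Lemma \ref{ulsimsim} (and Lemma \ref{ulsimsim2} for $bnt$, using that $(F_{\varphi^\prime})^{-1}\rtimes\id$ is bounded, being a conjugacy). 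Finally the composite $\MAG_{\id}\hookrightarrow\MAG\to\MAG_{\id}$ sends $(N_0\rtimes_\rho\mca{G},\varphi)$ with $F_\varphi=\id$ to $(N_0\rtimes_{\rho^{(\id,\id)}}\mca{G},(\id\rtimes\id)\varphi)=(N_0\rtimes_\rho\mca{G},\varphi)$ on the nose, so both composites are identities. The main obstacle I anticipate is precisely the bookkeeping in the well-definedness-on-classes step: making sure the base map of the transported conjugacy $\wt\psi$ is literally $\id$ (so that $\wt\psi\in\ul{\Hom}$) rather than merely homotopic to $\id$, which requires using the constraint $t^\prime\varphi_{\rho^\prime}\psi=\nu_0F_\varphi$ coming from $\psi$ being an action groupoid morphism together with surjectivity of $\nu_0$ (cf. Lemma \ref{ulid}).
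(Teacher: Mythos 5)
The only incomplete point in your proposal is exactly the step you flag at the end, and the difficulty there is not real: it stems from misreading what $\ul{\Hom}$ requires. By Definition \ref{acgrmois}, for your transported conjugacy $\wt{\psi}=((F_{\varphi^\prime})^{-1}\rtimes\id)\,\psi\,(F_\varphi\rtimes\id)$ to lie in $\ul{\Hom}(N_0\rtimes_{\rho^{(F_\varphi,\id)}}\mca{G},\,N_0\rtimes_{(\rho^\prime)^{(F_{\varphi^\prime},\id)}}\mca{G})$ one needs only $F_{\wt{\psi}}\in\ul{C}^\infty(N_0,N_0)$, i.e.\ $\nu_0F_{\wt{\psi}}=\nu_0$, \emph{not} $F_{\wt{\psi}}=\id$; the condition ``base map $=\id$'' enters the definition of $\MAG_{\id}(\rho_0)$ only for the markings, while $\mathrel{\ul{\sim_a}}$ on $\MAG_{\id}(\rho_0)$ is just the restriction of $\mathrel{\ul{\sim_a}}$ on $\MAG(\rho_0)$, whose witnessing conjugacies are merely required to be in $\ul{\Hom}$. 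And $\nu_0F_{\wt{\psi}}=\nu_0$ holds automatically: since $\nu^\prime F_{\varphi^\prime}=\nu_0$, $\nu^\prime F_\psi=\nu$ and $\nu F_\varphi=\nu_0$, one computes $\nu_0(F_{\varphi^\prime})^{-1}F_\psi F_\varphi=\nu^\prime F_\psi F_\varphi=\nu F_\varphi=\nu_0$. So $\wt{\psi}$ is a conjugacy in $\ul{\Hom}$ with $\wt{\psi}\wt{\varphi}=((F_{\varphi^\prime})^{-1}\rtimes\id)\psi\varphi\mathrel{\ul{\sim_a}}((F_{\varphi^\prime})^{-1}\rtimes\id)\varphi^\prime=\wt{\varphi^\prime}$ by Lemma \ref{ulsimsim} (resp.\ Lemma \ref{ulsimsim2} for $bnt$, using that the conjugacy $(F_{\varphi^\prime})^{-1}\rtimes\id$ is bounded by Proposition \ref{semibounded}); no renormalization of $\psi$ and no absorption into an $nt$-homotopy is needed, and Lemma \ref{ulid} is irrelevant here since $\psi$ need not be a semiconjugacy.

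Apart from this, your verifications (Lemma \ref{ngconj} on objects, base map $\id$ of $(F_\varphi^{-1}\rtimes\id)\varphi$, the bibounded case via Proposition \ref{semibounded}, and the composites being identities) are correct, but note that the paper avoids the descent check for the forward map entirely: since $\mathrel{\ul{\sim_a}}$ on $\MAG_{\id}(\rho_0)$ is the restriction of $\mathrel{\ul{\sim_a}}$ on $\MAG(\rho_0)$, the inclusion induces an injective map $\MAG_{\id}(\rho_0)/\ul{\sim_a}\to\MAG(\rho_0)/\ul{\sim_a}$, and this map is surjective because $F_\varphi^{-1}\rtimes\id$ is a conjugacy in $\ul{\Hom}$ whose composite with the new marking equals $\varphi$ on the nose, so $(N\rtimes_\rho\mca{G},\varphi)\mathrel{\ul{\sim_a}}(N_0\rtimes_{\rho^{(F_\varphi,\id)}}\mca{G},(F_\varphi^{-1}\rtimes\id)\varphi)$. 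The displayed forward formula is then automatically the inverse bijection, so its well-definedness on classes never has to be checked separately; you may either adopt that shortcut or keep your direct argument with the correction above.
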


\begin{proof}
Let $a\in\left\{o,nt\right\}$. We have $\MAG_{\id}(\rho_0)\subset\MAG(\rho_0)$ and $\MAG_{\id}(\rho_0)/\ul{\sim_a}\to\MAG(\rho_0)/\ul{\sim_a}$ is injective. Let $(N\rtimes_\rho\mca{G},\varphi)\in\MAG(\rho_0)$. Then 
\begin{equation*}
\begin{tikzcd}[row sep=tiny]
&N\rtimes_\rho\mca{G}\\
N_0\rtimes_{\rho_0}\mca{G}\ar[ru,"\varphi"]\ar[rd,"(F_\varphi^{-1}\rtimes\id)\varphi"'{xshift=5}]\\
&N_0\rtimes_{\rho^{(F_\varphi,\id)}}\mca{G}\ar[uu,"F_\varphi\rtimes\id"'{name=U}]\ar[from=2-1,to=U,phantom,"\circlearrowright"]
\end{tikzcd}
\end{equation*}
and $F_\varphi\rtimes\id\in\ul{\Hom}(N_0\rtimes_{\rho^{(F_\varphi,\id)}}\mca{G},N\rtimes_\rho\mca{G})$ is a conjugacy. Since $F_{(F_\varphi^{-1}\rtimes\id)\varphi}=\id$, $(N_0\rtimes_{\rho^{(F_\varphi,\id)}}\mca{G},(F_\varphi^{-1}\rtimes\id)\varphi)\in\MAG_{\id}(\rho_0)$. Hence $(N\rtimes_\rho\mca{G},\varphi)\mathrel{\ul{\sim_a}}(N_0\rtimes_{\rho^{(F_\varphi,\id)}}\mca{G},(F_\varphi^{-1}\rtimes\id)\varphi)$ and $\MAG_{\id}(\rho_0)/\ul{\sim_a}\simeq\MAG(\rho_0)/\ul{\sim_a}$. 
\end{proof}

\subsection{The mapping class groups}\label{mpcgs}
Let $\mca{G}\rightrightarrows M$ be a Lie groupoid, $N_0$ be a $C^\infty$ manifold, $\nu_0\colon N_0\to M$ be a surjective submersion and $\rho_0\in\mca{A}(\nu_0,\mca{G})$. We have an action $\MAG(\rho_0)\curvearrowleft\ul{\Aut}(N_0\rtimes_{\rho_0}\mca{G})$ defined by $(N\rtimes_\rho\mca{G},\varphi)\theta=(N\rtimes_\rho\mca{G},\varphi\theta)$. 

\subsubsection{$a\in\left\{o,nt\right\}$}
\begin{claim}
$T_a(\rho_0)\curvearrowleft\ul{\Aut}(N_0\rtimes_{\rho_0}\mca{G})$ by $[N\rtimes_\rho\mca{G},\varphi]\theta=[N\rtimes_\rho\mca{G},\varphi\theta]$. 
\end{claim}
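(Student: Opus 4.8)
The claim to be proved is that the formula $[N\rtimes_\rho\mca{G},\varphi]\theta=[N\rtimes_\rho\mca{G},\varphi\theta]$ gives a well-defined right action of $\ul{\Aut}(N_0\rtimes_{\rho_0}\mca{G})$ on $T_a(\rho_0)$ for $a\in\{o,nt\}$. Since we already have the right action $\MAG(\rho_0)\curvearrowleft\ul{\Aut}(N_0\rtimes_{\rho_0}\mca{G})$ at the level of $\rho_0$-marked action groupoids, the entire content is to check that the action descends to the quotient $\MAG(\rho_0)/\ul{\sim_a}$, i.e. that $\ul{\sim_a}$ is preserved by precomposition with any $\theta\in\ul{\Aut}(N_0\rtimes_{\rho_0}\mca{G})$.

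Here is the plan. First I would recall that $\theta\in\ul{\Aut}(N_0\rtimes_{\rho_0}\mca{G})$ means $\theta$ is an action groupoid isomorphism of $N_0\rtimes_{\rho_0}\mca{G}$ with $F_\theta\in\ul C^\infty(N_0,N_0)$, so in particular $\theta$ is a conjugacy; hence $(N\rtimes_\rho\mca{G},\varphi\theta)$ is again a $\rho_0$-marked action groupoid (the composite $\varphi\theta$ of action groupoid isomorphisms is an action groupoid isomorphism). Then suppose $(N\rtimes_\rho\mca{G},\varphi)\mathrel{\ul{\sim_a}}(N'\rtimes_{\rho'}\mca{G},\varphi')$, so there is a conjugacy $\psi\in\ul{\Hom}(N\rtimes_\rho\mca{G},N'\rtimes_{\rho'}\mca{G})$ with $\psi\varphi\mathrel{\ul{\sim_a}}\varphi'$. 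I want to show $(N\rtimes_\rho\mca{G},\varphi\theta)\mathrel{\ul{\sim_a}}(N'\rtimes_{\rho'}\mca{G},\varphi'\theta)$. Using the \emph{same} conjugacy $\psi$, it suffices to check $\psi(\varphi\theta)\mathrel{\ul{\sim_a}}\varphi'\theta$. But $\psi(\varphi\theta)=(\psi\varphi)\theta$, and since $\psi\varphi\mathrel{\ul{\sim_a}}\varphi'$ and trivially $\theta\mathrel{\ul{\sim_a}}\theta$, Lemma~\ref{ulsimsim} (stability of $\mathrel{\ul{\sim_a}}$ under composition, $a\in\{o,nt\}$) gives $(\psi\varphi)\theta\mathrel{\ul{\sim_a}}\varphi'\theta$. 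This proves $\ul{\sim_a}$ is $\theta$-invariant, so the action descends.

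Finally I would verify the action axioms on $T_a(\rho_0)$: these are inherited directly from the action on $\MAG(\rho_0)$, namely $[N\rtimes_\rho\mca{G},\varphi]\id=[N\rtimes_\rho\mca{G},\varphi\id]=[N\rtimes_\rho\mca{G},\varphi]$ and $([N\rtimes_\rho\mca{G},\varphi]\theta)\theta'=[N\rtimes_\rho\mca{G},(\varphi\theta)\theta']=[N\rtimes_\rho\mca{G},\varphi(\theta\theta')]=[N\rtimes_\rho\mca{G},\varphi](\theta\theta')$, using associativity of composition of morphisms of action groupoids.

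I do not expect any serious obstacle here: the statement is essentially bookkeeping, and the one nontrivial ingredient — that $\mathrel{\ul{\sim_a}}$ is compatible with composition — is exactly Lemma~\ref{ulsimsim}, which is available for $a\in\{o,nt\}$ (the reason the claim is separated from the $bnt$ case, where one instead needs Lemma~\ref{ulsimsim2} together with boundedness of $\theta$, which holds because conjugacies are bibounded by Proposition~\ref{semibounded}). The only point requiring a word of care is that we reuse the \emph{same} conjugacy $\psi$ witnessing $(N\rtimes_\rho\mca{G},\varphi)\mathrel{\ul{\sim_a}}(N'\rtimes_{\rho'}\mca{G},\varphi')$ rather than constructing a new one, and that $\varphi\theta$ is still a marking, which is immediate.
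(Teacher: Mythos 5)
Your proposal is correct and follows essentially the same route as the paper: reuse the same conjugacy $\psi$ witnessing $(N\rtimes_\rho\mca{G},\varphi)\mathrel{\ul{\sim_a}}(N^\prime\rtimes_{\rho^\prime}\mca{G},\varphi^\prime)$ and apply Lemma \ref{ulsimsim} to get $\psi\varphi\theta\mathrel{\ul{\sim_a}}\varphi^\prime\theta$, the action axioms being immediate. One harmless inaccuracy in your asides: an element $\theta\in\ul{\Aut}(N_0\rtimes_{\rho_0}\mca{G})$ need not be a (semi)conjugacy, so neither the claim ``$\theta$ is a conjugacy'' nor the appeal to Proposition \ref{semibounded} for boundedness in the $bnt$ case is justified — what you actually need (and correctly state) is only that $\varphi\theta$ is a composition of action groupoid isomorphisms, and in the $bnt$ case the paper restricts to $\ul{\Aut}^{bb}(N_0\rtimes_{\rho_0}\mca{G})$ precisely because general elements of $\ul{\Aut}$ need not be bibounded.
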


\begin{proof}
Let $(N\rtimes_\rho\mca{G},\varphi)$, $(N^\prime\rtimes_{\rho^\prime}\mca{G},\varphi^\prime)\in\MAG(\rho_0)$ be such that $(N\rtimes_\rho\mca{G},\varphi)\mathrel{\ul{\sim_a}}(N^\prime\rtimes_{\rho^\prime}\mca{G},\varphi^\prime)$. There exists a conjugacy $\psi\in\ul{\Hom}(N\rtimes_\rho\mca{G},N^\prime\rtimes_{\rho^\prime}\mca{G})$ such that $\psi\varphi\mathrel{\ul{\sim_a}}\varphi^\prime$. We have $\psi\varphi\theta\mathrel{\ul{\sim_a}}\varphi^\prime\theta$ by Lemma \ref{ulsimsim}. Hence $(N\rtimes_\rho\mca{G},\varphi\theta)\mathrel{\ul{\sim_a}}(N^\prime\rtimes_{\rho^\prime}\mca{G},\varphi^\prime\theta)$. 
\end{proof}

Let 
\begin{equation*}
\ul{\Aut}(N_0\rtimes_{\rho_0}\mca{G})_{a,1}=\left\{\theta\in\ul{\Aut}(N_0\rtimes_{\rho_0}\mca{G})\ \middle|\ \theta\mathrel{\ul{\sim_a}}\id\right\}. 
\end{equation*}

\begin{claim}
$\ul{\Aut}(N_0\rtimes_{\rho_0}\mca{G})_{a,1}$ is a normal subgroup of $\ul{\Aut}(N_0\rtimes_{\rho_0}\mca{G})$ and acts trivially on $T_a(\rho_0)$. 
\end{claim}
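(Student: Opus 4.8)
The plan is to verify the two assertions of the claim separately, in both cases by reducing everything to the already established behavior of $\mathrel{\ul{\sim_a}}$ under composition, namely Lemma \ref{ulsimsim} (and the diagram-chasing of Definition \ref{gmngprmagl}).

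First I would show that $\ul{\Aut}(N_0\rtimes_{\rho_0}\mca{G})_{a,1}$ is a subgroup. It is nonempty since $\id\mathrel{\ul{\sim_a}}\id$. For closure under multiplication, take $\theta,\theta^\prime\in\ul{\Aut}(N_0\rtimes_{\rho_0}\mca{G})_{a,1}$, so $\theta\mathrel{\ul{\sim_a}}\id$ and $\theta^\prime\mathrel{\ul{\sim_a}}\id$; applying Lemma \ref{ulsimsim} to the composable pair $(\theta,\theta^\prime)$ and $(\id,\id)$ gives $\theta\theta^\prime\mathrel{\ul{\sim_a}}\id$. For inverses, from $\theta\mathrel{\ul{\sim_a}}\id$ I would precompose and postcompose with $\theta^{-1}$: by Lemma \ref{ulsimsim}, $\theta^{-1}\theta\theta^{-1}\mathrel{\ul{\sim_a}}\theta^{-1}\id\,\theta^{-1}$, i.e. $\theta^{-1}\mathrel{\ul{\sim_a}}(\theta^{-1})^2$; alternatively, and more cleanly, I would observe that $\mathrel{\ul{\sim_a}}$ is symmetric (it is an equivalence relation by the lemma preceding Definition \ref{lgpcccffsimo} together with Definition \ref{raragpppp}), so $\id\mathrel{\ul{\sim_a}}\theta$, and then Lemma \ref{ulsimsim} applied to $(\theta^{-1},\id)\mathrel{\ul{\sim_a}}(\theta^{-1},\theta)$ yields $\theta^{-1}\mathrel{\ul{\sim_a}}\theta^{-1}\theta=\id$. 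Hence it is a subgroup.

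For normality, let $\theta\in\ul{\Aut}(N_0\rtimes_{\rho_0}\mca{G})_{a,1}$ and $\sigma\in\ul{\Aut}(N_0\rtimes_{\rho_0}\mca{G})$; I want $\sigma\theta\sigma^{-1}\mathrel{\ul{\sim_a}}\id$. Writing $\id=\sigma\,\id\,\sigma^{-1}$ and using $\theta\mathrel{\ul{\sim_a}}\id$ together with $\sigma\mathrel{\ul{\sim_a}}\sigma$ and $\sigma^{-1}\mathrel{\ul{\sim_a}}\sigma^{-1}$, two applications of Lemma \ref{ulsimsim} (first to the composable pair $(\sigma,\theta)$ versus $(\sigma,\id)$, then to $(\sigma\theta,\sigma^{-1})$ versus $(\sigma,\sigma^{-1})$) give $\sigma\theta\sigma^{-1}\mathrel{\ul{\sim_a}}\sigma\sigma^{-1}=\id$. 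Thus $\ul{\Aut}(N_0\rtimes_{\rho_0}\mca{G})_{a,1}$ is normal.

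Finally, for the triviality of the action on $T_a(\rho_0)$: given $(N\rtimes_\rho\mca{G},\varphi)\in\MAG(\rho_0)$ and $\theta\in\ul{\Aut}(N_0\rtimes_{\rho_0}\mca{G})_{a,1}$, I must show $[N\rtimes_\rho\mca{G},\varphi\theta]=[N\rtimes_\rho\mca{G},\varphi]$. I would take the conjugacy $\psi=\id\in\ul{\Hom}(N\rtimes_\rho\mca{G},N\rtimes_\rho\mca{G})$ and check $\id\circ(\varphi\theta)=\varphi\theta\mathrel{\ul{\sim_a}}\varphi$. Since $\theta\mathrel{\ul{\sim_a}}\id$ and $\varphi\mathrel{\ul{\sim_a}}\varphi$, Lemma \ref{ulsimsim} applied to the composable pair $(\varphi,\theta)$ versus $(\varphi,\id)$ gives $\varphi\theta\mathrel{\ul{\sim_a}}\varphi\,\id=\varphi$, which is exactly the condition $(N\rtimes_\rho\mca{G},\varphi\theta)\mathrel{\ul{\sim_a}}(N\rtimes_\rho\mca{G},\varphi)$ from Definition \ref{gmngprmagl}. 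I do not expect any real obstacle here; the only thing to be careful about is that Lemma \ref{ulsimsim} requires the codomain of the left factor to equal the domain of the left factor of the other pair, which holds since every map in sight is an endomorphism of $N_0\rtimes_{\rho_0}\mca{G}$, and that the relation $\mathrel{\ul{\sim_a}}$ is known to be symmetric, which I would cite explicitly where used.
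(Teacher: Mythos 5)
Your argument is correct and follows essentially the same route as the paper: the triviality of the action is exactly the paper's proof (take $\psi=\id$ and use Lemma \ref{ulsimsim} with $\theta\mathrel{\ul{\sim_a}}\id$ to get $\varphi\theta\mathrel{\ul{\sim_a}}\varphi$), while the subgroup and normality statements, which the paper leaves implicit, are handled by the same routine applications of Lemma \ref{ulsimsim} together with the fact that $\mathrel{\ul{\sim_a}}$ is an equivalence relation. Just discard the first (inconclusive) attempt at the inverse step, since your second argument via symmetry is the one that works.
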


\begin{proof}
Let $\theta\in\ul{\Aut}(N_0\rtimes_{\rho_0}\mca{G})_{a,1}$ and $[N\rtimes_\rho\mca{G},\varphi]\in T_a(\rho_0)$. Since $\theta\mathrel{\ul{\sim_a}}\id$, we have $\varphi\mathrel{\ul{\sim_a}}\varphi\theta$. So $(N\rtimes_\rho\mca{G},\varphi\theta)\mathrel{\ul{\sim_a}}(N\rtimes_\rho\mca{G},\varphi)$. 
\begin{equation*}
\begin{tikzcd}[row sep=tiny]
&N\rtimes_\rho\mca{G}\ar[dd,"\id"]\\
N_0\rtimes_{\rho_0}\mca{G}\ar[ru,"\varphi"]\ar[rd,"\varphi\theta"']\\
&N\rtimes_\rho\mca{G}
\end{tikzcd}
\end{equation*}
\end{proof}

Define 
\begin{equation*}
\MCG_a(\rho_0)=\ul{\Aut}(N_0\rtimes_{\rho_0}\mca{G})/\ul{\Aut}(N_0\rtimes_{\rho_0}\mca{G})_{a,1}. 
\end{equation*}
Hence we get an action 
\begin{equation*}
T_a(\rho_0)\curvearrowleft\MCG_a(\rho_0). 
\end{equation*}

\subsubsection{$a=bnt$}
Let 
\begin{equation*}
\ul{\Aut}^{bb}(N_0\rtimes_{\rho_0}\mca{G})=\left\{\theta\in\ul{\Aut}(N_0\rtimes_{\rho_0}\mca{G})\ \middle|\ \text{$\theta$ is bibounded with respect to $\mca{B}_{\rho_0}$}\right\}. 
\end{equation*}
We have $\MAG^{bb}(\rho_0)\curvearrowleft\ul{\Aut}^{bb}(N_0\rtimes_{\rho_0}\mca{G})$ and $T_{bnt}(\rho_0)\curvearrowleft\ul{\Aut}^{bb}(N_0\rtimes_{\rho_0}\mca{G})$ by Lemma \ref{ulsimsim2}. Let 
\begin{equation*}
\ul{\Aut}(N_0\rtimes_{\rho_0}\mca{G})_{bnt,1}=\left\{\theta\in\ul{\Aut}(N_0\rtimes_{\rho_0}\mca{G})\ \middle|\ \text{$\theta\mathrel{\ul{\sim_{bnt}}}\id$ with respect to $\mca{B}_{\rho_0}$}\right\}. 
\end{equation*}
Then $\ul{\Aut}(N_0\rtimes_{\rho_0}\mca{G})_{bnt,1}\subset\ul{\Aut}^{bb}(N_0\rtimes_{\rho_0}\mca{G})$ by Lemma \ref{bounboun}. This is a normal subgroup of $\ul{\Aut}^{bb}(N_0\rtimes_{\rho_0}\mca{G})$ and acts trivially on $T_{bnt}(\rho_0)$ by Lemma \ref{ulsimsim2}. Define 
\begin{equation*}
\MCG_{bnt}(\rho_0)=\ul{\Aut}^{bb}(N_0\rtimes_{\rho_0}\mca{G})/\ul{\Aut}(N_0\rtimes_{\rho_0}\mca{G})_{bnt,1}. 
\end{equation*}
Then 
\begin{equation*}
T_{bnt}(\rho_0)\curvearrowleft\MCG_{bnt}(\rho_0). 
\end{equation*}

\begin{rem}
Usually the elements of the mapping class groups are isotopy classes, not homotopy classes. So the name ``mapping class group'' here may not be appropriate, but we call them so in this paper. 
\end{rem}

\section{Foliation groupoids}\label{1010}
\subsection{The monodromy groupoids}
\begin{dfn}\label{c0xfmon}
Let $M$ be a $C^\infty$ manifold and $\mca{F}$ be a $C^\infty$ regular foliation of $M$. For a locally connected topological space $X$, let 
\begin{align*}
C^0(X,\mca{F})&=\left\{F\in C^0(X,M)\ \middle|\ \text{there exists $L\in\mca{F}$ such that $F(X)\subset L$}\right\}\\
&=\left\{F\ \middle|\ 
\begin{gathered}
\text{$L\in\mca{F}$ and $F\colon X\to L$ is a continuous map}\\
\text{with respect to the submanifold topology of $L$}
\end{gathered}
\right\}. 
\end{align*}
The second equality follows from Corollary \ref{mrfmssf}. For $\gamma$, $\gamma^\prime\in C^0(I,\mca{F})$, write $\gamma\sim_\mca{F}\gamma^\prime$ if there exists $H\in C^0(I\times I,\mca{F})$ such that: 
\begin{itemize}
\setlength\itemsep{0em}
\item $H(\cdot,0)=\gamma$, $H(\cdot,1)=\gamma^\prime$
\item $H(0,s)=\gamma(0)$, $H(1,s)=\gamma(1)$ for all $s\in I$. 
\end{itemize}
Then $\sim_\mca{F}$ is an equivalence relation on $C^0(I,\mca{F})$. Define $\Mon(\mca{F})=C^0(I,\mca{F})/{\sim}_\mca{F}$ and let $[\gamma]$ denote the equivalence class of $\gamma$ with respect to $\sim_\mca{F}$. 
\end{dfn}

\begin{prop}\label{monts1lgm}
Let $M$ be a $C^\infty$ manifold and $\mca{F}$ be a $C^\infty$ regular foliation of $M$. Consider 
\begin{align*}
t\colon\Mon(\mca{F})&\to M&s\colon\Mon(\mca{F})&\to M&\Mon(\mca{F})\times_{s,t}\Mon(\mca{F})&\to\Mon(\mca{F})\\
[\gamma]&\mapsto\gamma(1), &[\gamma]&\mapsto\gamma(0), &([\gamma],[\gamma^\prime])&\mapsto[\gamma\gamma^\prime], 
\end{align*}
\begin{equation*}
\begin{aligned}
\Mon(\mca{F})&\to\Mon(\mca{F})\\
[\gamma]&\mapsto[\gamma^{-1}]
\end{aligned}
\qquad\text{and}\qquad
\begin{aligned}
1\colon M&\to\Mon(\mca{F})\\
x&\mapsto[x]. 
\end{aligned}
\end{equation*}
\begin{equation*}
\begin{tikzpicture}[every label/.append style={font=\scriptsize},matrix of math nodes,decoration={markings,mark=at position0.5with{\arrow{>}}}]
\node(1){};
\node(2)[right=of 1,label=below:\gamma\gamma^\prime]{};
\node(3)[right=of 2]{};
\draw[postaction={decorate}](2.center)--node[label=above:\gamma]{}(1.center);
\draw[postaction={decorate}](3.center)--node[label=above:\gamma^\prime]{}(2.center);
\foreach\x in{1,2,3}\filldraw(\x)circle(1pt);
\end{tikzpicture}
\end{equation*}
Then $\Mon(\mca{F})\rightrightarrows M$ is a Lie groupoid with these structure maps, called the \emph{monodromy groupoid} of $\mca{F}$. $\Mon(\mca{F})$ may be non Hausdorff, non second countable. 
\end{prop}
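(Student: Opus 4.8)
The plan is to verify directly that the proposed structure maps make $\Mon(\mca{F})\rightrightarrows M$ a Lie groupoid, which amounts to: (i) checking the groupoid axioms at the set-theoretic level, (ii) equipping $\Mon(\mca{F})$ with a $C^\infty$ manifold structure, and (iii) verifying the smoothness and submersion conditions from Definition~\ref{liegrpgmtx}.

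First I would establish well-definedness and the algebraic axioms. Here the standard homotopy-theoretic facts for paths in a space do the work, applied leafwise: concatenation $\gamma\gamma^\prime$ is defined when $\gamma(0)=\gamma^\prime(1)$ (matching the source/target conventions above, with $t[\gamma]=\gamma(1)$, $s[\gamma]=\gamma(0)$), and since $\gamma,\gamma^\prime$ lie in the same leaf $L$ (because $s[\gamma]=t[\gamma^\prime]$ forces this), the concatenation is again a path in $L$, so lies in $C^0(I,\mca{F})$. Associativity up to $\sim_\mca{F}$, the identity laws using constant paths $[x]$, and the inverse laws using reverse paths $[\gamma^{-1}]$ are the usual arguments about path homotopy classes, performed inside each leaf $L$ (with its submanifold topology, which is locally connected so that $C^0(I,\mca{F})$ behaves well). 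The compatibility of $t,s$ with composition and inversion is immediate from the formulas.

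Second, and this is the heart of the matter, I would put a $C^\infty$ structure on $\Mon(\mca{F})$. The classical construction: for a leaf $L$, the monodromy groupoid of $L$ alone is $\wt{L}\times_{\pi_1}\wt{L}$ where $\wt{L}\to L$ is the universal cover; more usefully, foliation charts give local models. Concretely, fix $[\gamma]$ with $\gamma(0)=x$, $\gamma(1)=y$; choose foliation charts $U\ni x$, $V\ni y$ with plaques $P_x$, $P_y$. Using holonomy-type transport along $\gamma$ one gets a chart of $\Mon(\mca{F})$ near $[\gamma]$ modeled on $P_x\times(\text{transversal at }y)$, of dimension $\dim M + (\dim M - \dim\mca{F})$; alternatively one takes the model $\{[\gamma^\prime] : \gamma^\prime(0)\in P_x,\ \gamma^\prime \sim \gamma \text{ rel endpoints up to sliding}\}$. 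I would cite that this is how $\Mon(\mca{F})$ is constructed in the standard references (e.g.\ the source the section is drawn from, \cite{MoerMr}), rather than redo the chart bookkeeping. The transition maps between such charts are $C^\infty$ because they are compositions of plaque identifications and holonomy diffeomorphisms, which are $C^\infty$; this also shows $t$ and $s$ are $C^\infty$. Note $\Mon(\mca{F})$ need not be Hausdorff or second countable precisely because a leaf can limit on itself and distinct homotopy classes can fail to be separated, and because $M$ is not assumed second countable here.

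Third, I would check the remaining clauses of Definition~\ref{liegrpgmtx}: $1\colon M\to\Mon(\mca{F})$, $x\mapsto[x]$ is $C^\infty$ (it is the zero section in the local model); $t$ is a submersion since in the local model $t$ is the projection to $P_x\subset L\subset M$ followed by inclusion, and more precisely $t$ restricted to a chart surjects submersively onto an open set of $M$; the multiplication $\Mon(\mca{F})^{(2)}\to\Mon(\mca{F})$ and the inversion are $C^\infty$ by writing them in the local charts (concatenation and reversal of plaque-paths are smooth operations on the models); and finally $\Mon(\mca{F})^x = t^{-1}(x)$ is Hausdorff --- indeed it is the universal cover $\wt{L}$ of the leaf $L$ through $x$ (pointed at $x$), which is a manifold, hence Hausdorff. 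The main obstacle is the second step: setting up the $C^\infty$ atlas on $\Mon(\mca{F})$ carefully and checking the cocycle condition for the transition functions; since the section explicitly says ``the results in this section are taken from'' a reference and ``the proofs do not change'', I expect the intended proof to invoke that reference for the manifold structure and then just note that the non-Hausdorff, non-second-countable base causes no difficulty, with the endpoint/constant-path/concatenation formulas checked by hand as above.
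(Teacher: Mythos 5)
Your overall plan (check the groupoid axioms leafwise, build an atlas from foliation charts plus holonomy, then verify the clauses of Definition~\ref{liegrpgmtx}) is the same shape as the paper's proof, but the description you give of the charts --- which you rightly call the heart of the matter --- is wrong. The paper's chart centered at $[\gamma]$ uses foliation charts at \emph{both} endpoints: with $p=\dim\mca{F}$ and $q=\dim M-p$, it is a map $\theta\colon V_1\times V_2\times U_1\to\Mon(\mca{F})$, $(v_1,v_2,u_1)\mapsto[\gamma^{v_1,v_2,u_1}]$, where $V_1,U_1\subset\bb{R}^p$ are plaque directions at $\gamma(1)$ and $\gamma(0)$, $V_2\subset\bb{R}^q$ is a transversal at $\gamma(1)$, and $\gamma^{v_1,v_2,u_1}$ is the concatenation of a plaque path in $V$, a holonomy transport $H(\cdot,v_2)$ of $\gamma$, and a plaque path in $U$. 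Hence $\dim\Mon(\mca{F})=\dim M+\dim\mca{F}=2p+q$; your proposed model $P_x\times(\text{transversal at }y)$ has dimension $\dim M$, and your stated dimension $\dim M+(\dim M-\dim\mca{F})$ is also not correct. The error is not cosmetic: with a chart of the form plaque~$\times$~transversal, $t$ could not be a submersion whose fiber $t^{-1}(x)$ is the universal cover of the leaf (a $p$-dimensional manifold), which you yourself use at the end; one genuinely needs a plaque factor at each endpoint and a single transverse factor.

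Your fallback of citing a reference also rests on a misreading: the remark ``the results in this section are taken from \cite{MoerMr} \dots\ the proofs do not change'' is the preamble of Section~\ref{999} (the principal bundle structure on target fibers), not of the section containing this proposition. Here the paper proves the statement by the explicit two-endpoint chart construction, and it needs that explicit description afterwards: the proofs of Proposition~\ref{monfmonfmb} and of Proposition~\ref{simconfolbij} compute directly in charts ``centered at $[\gamma]$'', the latter for a base manifold that is allowed to be non-Hausdorff and non-second-countable, so a bare citation would not support the later arguments. (Also, $M$ in this proposition is Hausdorff and second countable by the paper's conventions; the possible failure of Hausdorffness or second countability is a feature of $\Mon(\mca{F})$ itself, not of the base.) To close the gap you must actually set up the charts $\theta$ as above, check that their images form a base of a topology and that the transition maps --- compositions of plaque identifications and holonomy maps --- are $C^\infty$, and then read off in these coordinates the smoothness of $t$, $s$, $1$, multiplication and inversion, the submersion property of $t$, and the Hausdorffness of each $t$-fiber.
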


\begin{rem}
Another way to define a groupoid structure is as follows: 
\begin{align*}
t\colon\Mon(\mca{F})&\to M&s\colon\Mon(\mca{F})&\to M&\Mon(\mca{F})\times_{s,t}\Mon(\mca{F})&\to\Mon(\mca{F})\\
[\gamma]&\mapsto\gamma(0), &[\gamma]&\mapsto\gamma(1), &([\gamma],[\gamma^\prime])&\mapsto[\gamma\gamma^\prime]. 
\end{align*}
\begin{equation*}
\begin{tikzpicture}[every label/.append style={font=\scriptsize},matrix of math nodes,decoration={markings,mark=at position0.5with{\arrow{>}}}]
\node(1){};
\node(2)[right=of 1,label=below:\gamma\gamma^\prime]{};
\node(3)[right=of 2]{};
\draw[postaction={decorate}](1.center)--node[label=above:\gamma]{}(2.center);
\draw[postaction={decorate}](2.center)--node[label=above:\gamma^\prime]{}(3.center);
\foreach\x in{1,2,3}\filldraw(\x)circle(1pt);
\end{tikzpicture}
\end{equation*}
This may be a more common way to define the concatenation of paths. 
\end{rem}

\begin{proof}[Proof of Proposition \ref{monts1lgm}]
Let $p=\dim\mca{F}$ and $q=\dim M-\dim\mca{F}$. Let $[\gamma]\in\Mon(\mca{F})$. There exist foliation charts $(V,\psi)$, $(U,\varphi)$ of $\mca{F}$ such that: 
\begin{itemize}
\setlength\itemsep{0em}
\item $\gamma(1)\in V$, $\gamma(0)\in U$
\item $\psi\colon V\to V_1\times V_2$, $\varphi\colon U\to U_1\times U_2$ are diffeomorphisms, where $V_1$, $U_1$ are simply connected open neighborhoods of $0$ in $\bb{R}^p$, $V_2$, $U_2$ are open neighborhoods of $0$ in $\bb{R}^q$
\item $\psi(\gamma(1))=(0,0)$, $\varphi(\gamma(0))=(0,0)$
\item the holonomy $\hol_\gamma\colon S\to T$ along $\gamma$ is defined and is a bijection, where $T=\psi^{-1}(\left\{0\right\}\times V_2)$, $S=\varphi^{-1}(\left\{0\right\}\times U_2)$. 
\end{itemize}
\begin{equation*}
\begin{tikzpicture}[every label/.append style={font=\scriptsize},matrix of math nodes,decoration={markings,mark=at position0.5with{\arrow{>}}}]
\def\a{2}
\def\b{1}
\def\c{7}
\node(1)at(0,0){};
\node(2)at(\a,0){};
\node(3)at(\a,\a){};
\node(4)at(0,\a)[label={[xshift=-8,yshift=-8]V}]{};
\node(5)at(\b,0){};
\node(6)at(\b,\a)[label=above:T]{};
\node(7)at(\c,0){};
\node(8)at(\c+\a,0){};
\node(9)at(\c+\a,\a)[label={[xshift=8,yshift=-8]U}]{};
\node(10)at(\c,\a){};
\node(11)at(\c+\b,0){};
\node(12)at(\c+\b,\a)[label=above:S]{};
\node(13)at(\b,\b){};
\node(14)at(\c+\b,\b){};
\node(15)at(\b,\b)[label=left:\gamma(1)]{};
\node(16)at(\c+\b,\b)[label=right:\gamma(0)]{};
\draw[dotted](1.center)--(2.center)--(3.center)--(4.center)--cycle;
\draw[dotted](7.center)--(8.center)--(9.center)--(10.center)--cycle;
\draw(5.center)--(6.center);
\draw(11.center)--(12.center);
\draw[postaction={decorate}](14.center)--node[label=above:\gamma]{}(13.center);
\foreach\x in{15,16}\filldraw(\x)circle(1pt);
\end{tikzpicture}
\end{equation*}
There exists a continuous map $H\colon I\times V_2\to M$ such that: 
\begin{itemize}
\setlength\itemsep{0em}
\item $H(\cdot,0)=\gamma$
\item $H(\cdot,v_2)\in C^0(I,\mca{F})$ for all $v_2\in V_2$
\item $H(1,v_2)=\psi^{-1}(0,v_2)$, $H(0,v_2)=\hol_\gamma^{-1}(\psi^{-1}(0,v_2))$ for all $v_2\in V_2$. 
\end{itemize}
For $(v_1,v_2,u_1)\in V_1\times V_2\times U_1$, let $\gamma^{v_1,v_2,u_1}\in C^0(I,\mca{F})$ be a concatenation of: 
\begin{itemize}
\setlength\itemsep{0em}
\item a continuous curve connecting $\psi^{-1}(0,v_2)$ and $\psi^{-1}(v_1,v_2)$ in a plaque of $(V,\psi)$
\item $H(\cdot,v_2)$
\item a continuous curve connecting $\varphi^{-1}(u_1,u_2)$ and $\varphi^{-1}(0,u_2)$ in a plaque of $(U,\varphi)$, where $u_2$ is defined by $\hol_\gamma^{-1}\psi^{-1}(0,v_2)=\varphi^{-1}(0,u_2)$. 
\end{itemize}
\begin{equation*}
\begin{tikzpicture}[every label/.append style={font=\scriptsize},matrix of math nodes,decoration={markings,mark=at position0.5with{\arrow{>}}}]
\def\a{2}
\def\b{1}
\def\c{7}
\node(1)at(0,0){};
\node(2)at(\a,0){};
\node(3)at(\a,\a){};
\node(4)at(0,\a)[label={[xshift=-8,yshift=-8]V}]{};
\node(5)at(\b,0){};
\node(6)at(\b,\a)[label=above:T]{};
\node(7)at(\c,0){};
\node(8)at(\c+\a,0){};
\node(9)at(\c+\a,\a)[label={[xshift=8,yshift=-8]U}]{};
\node(10)at(\c,\a){};
\node(11)at(\c+\b,0){};
\node(12)at(\c+\b,\a)[label=above:S]{};
\node(13)at(\b,\b){};
\node(14)at(\c+\b,\b){};
\node(15)at(\b/2,\b/2)[label=left:\psi^{-1}(v_1{,}v_2)]{};
\node(16)at(\c+\a-\b/2,\b/2)[label=right:\varphi^{-1}(u_1{,}u_2)]{};
\node(17)at(\b,\b)[label=left:\gamma(1)]{};
\node(18)at(\c+\b,\b)[label=right:\gamma(0)]{};
\node(19)at(\b,\b/2)[label={[xshift=21,yshift=-19]\psi^{-1}(0{,}v_2)}]{};
\node(20)at(\c+\b,\b/2)[label={[xshift=-30,yshift=-19]\hol_\gamma^{-1}\psi^{-1}(0{,}v_2)},label={[xshift=-25,yshift=-31]=\varphi^{-1}(0{,}u_2)}]{};
\draw[dotted](1.center)--(2.center)--(3.center)--(4.center)--cycle;
\draw[dotted](7.center)--(8.center)--(9.center)--(10.center)--cycle;
\draw[loosely dotted](2.center)--(7.center);
\draw[loosely dotted](3.center)--(10.center);
\draw(5.center)--(6.center);
\draw(11.center)--(12.center);
\draw[postaction={decorate}](14.center)--node[label=above:\gamma]{}(13.center);
\draw[postaction={decorate}](16.center)--node[label={[xshift=0,yshift=-19]H(\cdot,v_2)}]{}(15.center);
\foreach\x in{15,...,20}\filldraw(\x)circle(1pt);
\end{tikzpicture}
\end{equation*}
Let 
\begin{align*}
\theta\colon V_1\times V_2\times U_1&\to\Mon(\mca{F})\\
(v_1,v_2,u_1)&\mapsto[\gamma^{v_1,v_2,u_1}]. 
\end{align*}
Then 
\begin{equation*}
\left\{\theta(V_1\times V_2\times U_1)\ \middle|\ \gamma,V,U,V_i,U_i,\psi,\varphi,H\right\}
\end{equation*}
is a base for a topology of $\Mon(\mca{F})$ and 
\begin{equation*}
\left\{(\theta(V_1\times V_2\times U_1),\theta^{-1})\ \middle|\ \gamma,V,U,V_i,U_i,\psi,\varphi,H\right\}
\end{equation*}
is a $C^\infty$ atlas of $\Mon(\mca{F})$. The structure maps are $C^\infty$. We call $(\theta(V_1\times V_2\times U_1),\theta^{-1})$ a chart of $\Mon(\mca{F})$ \emph{centered} at $[\gamma]$. 
\end{proof}

\begin{prop}\label{monfmonfmb}
Let $M$, $M^\prime$ be $C^\infty$ manifolds, $\mca{F}$, $\mca{F}^\prime$ be $C^\infty$ regular foliations of $M$, $M^\prime$ and $F\colon M\to M^\prime$ be a $C^\infty$ map such that for each $L\in\mca{F}$, there exists $L^\prime\in\mca{F}^\prime$ satisfying $F(L)\subset L^\prime$. Then 
\begin{align*}
F_*\colon\Mon(\mca{F})&\to\Mon(\mca{F}^\prime)\\
[\gamma]&\mapsto[F\gamma]
\end{align*}
is a morphism with base map $F$. 
\end{prop}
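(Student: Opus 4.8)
Proof proposal for Proposition \ref{monfmonfmb}.

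The plan is to verify that $F_*$ is well-defined, that it is a $C^\infty$ map, and that it respects all the structure maps, with base map $F$. First I would address well-definedness: given $[\gamma] \in \Mon(\mca{F})$, pick $L \in \mca{F}$ with $\gamma(I) \subset L$, and $L^\prime \in \mca{F}^\prime$ with $F(L) \subset L^\prime$. Since $F\colon L \to L^\prime$ is $C^\infty$ between leaves (this uses Lemma \ref{nlnmcinf} together with Corollary \ref{mrfmssf}, or more directly that a $C^\infty$ map into $M^\prime$ landing in a leaf is $C^\infty$ into that leaf), the composite $F\gamma\colon I \to L^\prime$ is continuous into the submanifold topology, so $F\gamma \in C^0(I,\mca{F}^\prime)$. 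If $\gamma \sim_\mca{F} \gamma^\prime$ via $H \in C^0(I\times I,\mca{F})$, then $H$ maps $I\times I$ into a single leaf $L$, so $FH$ maps $I\times I$ into $L^\prime$; since $FH\colon I\times I \to L^\prime$ is continuous in the submanifold topology, $FH \in C^0(I\times I,\mca{F}^\prime)$, and it is a homotopy rel endpoints between $F\gamma$ and $F\gamma^\prime$. Hence $[F\gamma] = [F\gamma^\prime]$ and $F_*$ is well-defined.

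Next I would check the algebraic compatibilities, which are all immediate from the formulas in Proposition \ref{monts1lgm}. We have $t^\prime(F_*[\gamma]) = (F\gamma)(1) = F(\gamma(1)) = F(t[\gamma])$ and likewise $s^\prime F_* = F s$, so $F$ is the base map (and it is $C^\infty$ by hypothesis). For the concatenation: if $(g_1,g_2)\in\mca{G}^{(2)}$, i.e. $s[\gamma] = t[\gamma^\prime]$ so $\gamma(0) = \gamma^\prime(1)$ and the product $\gamma\gamma^\prime$ is defined, then $F(\gamma\gamma^\prime) = (F\gamma)(F\gamma^\prime)$ as concatenations of paths, so $F_*([\gamma][\gamma^\prime]) = [F(\gamma\gamma^\prime)] = [F\gamma][F\gamma^\prime] = F_*[\gamma]\,F_*[\gamma^\prime]$. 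The compatibility $F_*(1_x) = F_*[x] = [F(x)] = 1_{F(x)}$ with units and $F_*[\gamma^{-1}] = [F\gamma^{-1}] = [(F\gamma)^{-1}] = (F_*[\gamma])^{-1}$ with inverses follow the same way; note that by Definition \ref{morgmgmpfpfp} the unit and inverse compatibilities actually follow automatically once multiplicativity and the base-map diagrams are established, so this step is essentially bookkeeping.

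The one genuinely substantive point is smoothness of $F_*\colon\Mon(\mca{F}) \to \Mon(\mca{F}^\prime)$, and this is where I would spend most of the effort. The strategy is to work in the charts constructed in the proof of Proposition \ref{monts1lgm}: fix $[\gamma]\in\Mon(\mca{F})$ and a chart $(\theta(V_1\times V_2\times U_1),\theta^{-1})$ centered at $[\gamma]$, built from foliation charts $(V,\psi)$, $(U,\varphi)$ of $\mca{F}$ at $\gamma(1)$, $\gamma(0)$ and a homotopy $H$; similarly fix a chart of $\Mon(\mca{F}^\prime)$ centered at $[F\gamma] = F_*[\gamma]$, built from foliation charts $(V^\prime,\psi^\prime)$, $(U^\prime,\varphi^\prime)$ of $\mca{F}^\prime$ at $F(\gamma(1))$, $F(\gamma(0))$ and a homotopy $H^\prime$. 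Shrinking $V$, $U$ so that $F(V) \subset V^\prime$ and $F(U) \subset U^\prime$ (using continuity of $F$ and that $F$ sends plaques into plaques, since it sends leaves into leaves, up to possibly further shrinking the transversals), one expresses $\theta^{-1}_{\mca{F}^\prime}\circ F_*\circ\theta_\mca{F}$ in coordinates. For $(v_1,v_2,u_1)$, the path $\gamma^{v_1,v_2,u_1}$ is a concatenation of a plaque curve in $V$, the slice $H(\cdot,v_2)$, and a plaque curve in $U$; applying $F$ sends these to: a plaque curve in $V^\prime$ (with transverse parameter a $C^\infty$ function of $(v_1,v_2)$ given by the transverse component of $F$ read in the charts $\psi$, $\psi^\prime$), the path $FH(\cdot,v_2)$ which is homotopic rel endpoints within its leaf to $H^\prime(\cdot,\cdot)$ composed with the transverse-holonomy identification, and a plaque curve in $U^\prime$. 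Tracking endpoints, the coordinate expression of $F_*$ turns out to be, in each slot, a composition of the transverse expression of $F$ with the relevant transition functions between the holonomy pseudogroup charts — all $C^\infty$. The main obstacle is purely organizational: carefully choosing the auxiliary data (the shrunk charts, the homotopy $H^\prime$, the holonomy identifications) so that the coordinate formula is visibly smooth, and checking that the resulting map is independent of these choices at the level of $\Mon(\mca{F}^\prime)$-points (which is guaranteed by well-definedness already proved). Once the coordinate computation is set up correctly, smoothness is manifest because every ingredient — transition maps of foliation charts, the transverse component of the $C^\infty$ map $F$, holonomy maps — is $C^\infty$.
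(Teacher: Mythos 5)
Your well-definedness argument and the verification of the algebraic compatibilities and of the base map are fine (the paper treats these as routine and does not even write them out), and your overall strategy for smoothness — read $F_*$ in the monodromy charts of Proposition \ref{monts1lgm} and show its coordinate expression is $(v_1,v_2,u_1)\mapsto(\psi^\prime F\psi^{-1}(v_1,v_2),\,p^\prime_1\varphi^\prime F\varphi^{-1}(u_1,u_2))$ — is exactly the paper's. But there is a genuine gap at the decisive step. You assert that $F\gamma^{(v_1,v_2,u_1)}$ (equivalently, that $FH(\cdot,v_2)$ together with the image plaque curves) ``is homotopic rel endpoints within its leaf'' to the chart-defining path of $\Mon(\mca{F}^\prime)$, and you offer no argument for this. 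Knowing that the two paths have the same endpoints and are close in $M^\prime$ does not determine an element of $\Mon(\mca{F}^\prime)$: the fibers of $(t^\prime,s^\prime)$ are discrete and distinct leafwise homotopy classes can share endpoints and be arbitrarily ambiently close (this is the very source of non-Hausdorffness of monodromy groupoids), and the chart $\theta^\prime(V^\prime_1\times V^\prime_2\times U^\prime_1)$ contains only the specific classes $[(\gamma^\prime)^{v^\prime_1,v^\prime_2,u^\prime_1}]$. Identifying \emph{which} class $F_*[\gamma^{(v_1,v_2,u_1)}]$ is, is the whole content of the smoothness proof, so calling the remaining work ``purely organizational'' misjudges where the difficulty lies.

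The paper closes exactly this gap with genuine analytic input. It fixes a chain of relatively compact foliation charts $V^\prime=W^\prime_\ell,\ldots,W^\prime_1=U^\prime$ of $\mca{F}^\prime$ and a partition $0=t_0<\cdots<t_\ell=1$ with $F\gamma[t_{i-1},t_i]\subset W^\prime_i$, then shrinks the chart of $\Mon(\mca{F})$ so that $F\gamma^{(v_1,v_2,u_1)}[t_{i-1},t_i]\subset W^\prime_i$ for all parameters. On each plaque the leafwise distance $d_{\mca{F}^\prime}$ is bounded by a constant times the ambient distance, so $d_{\mca{F}^\prime}\bigl(F\gamma^{(v_1,v_2,u_1)}(t),c^{(v_1,v_2,u_1)}(t)\bigr)$ is uniformly small, where $c^{(v_1,v_2,u_1)}$ is the chart path with the same endpoints. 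Then Lemma \ref{mfmgfedfxy} (uniform existence and uniqueness of short leafwise geodesics, depending continuously on their endpoints — a leafwise version of a lemma from Morse theory, whose proof occupies its own subsection) produces a homotopy $\sim_{\mca{F}^\prime}$ rel endpoints between $F\gamma^{(v_1,v_2,u_1)}$ and $c^{(v_1,v_2,u_1)}$, which is what licenses the coordinate formula and hence smoothness. Your proposal needs some argument of this kind (or an equivalent holonomy/plaque-chain argument establishing the leafwise homotopy) before the coordinate computation can be invoked.
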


To prove the proposition we use the following lemma (see Corollary 10.8 of \cite{Milnor_1963}), whose proof will be given in the next subsection. 

\begin{lem}\label{mfmgfedfxy}
Let $M$ be a $C^\infty$ manifold, $\mca{F}$ be a $C^\infty$ regular foliation of $M$, $g_\mca{F}$ be a leafwise Riemannian metric of $\mca{F}$, $d_\mca{F}$ be the leafwise metric induced by $g_\mca{F}$ and $K$ be a compact subset of $M$. Then there exists $\epsilon>0$ satisfying the following property: For any $x$, $y\in K$ in the same leaf of $\mca{F}$ with $d_\mca{F}(x,y)<\epsilon$, there exists a unique geodesic $c_{x,y}\colon I\to M$ in the leaf of length less than $\epsilon$ such that $c_{x,y}(0)=x$, $c_{x,y}(1)=y$. $c_{x,y}(t)$ depends continuously on $x$, $y$, $t$. 
\end{lem}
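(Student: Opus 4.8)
The plan is to deduce the lemma from the classical fact (Corollary 10.8 of \cite{Milnor_1963}) applied to the plaques of $\mca{F}$, the key point being that a short leafwise geodesic cannot leave a single plaque, so the immersedness of the leaves never intervenes.

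First I would normalize the setup: extend $g_\mca{F}$ to a $C^\infty$ Riemannian metric $g_M$ on $TM$, so that the $g_\mca{F}$-length of a path tangent to $\mca{F}$ equals its $g_M$-length, whence $d_M(x,y)\le d_\mca{F}(x,y)$ whenever $x,y$ lie in a common leaf, where $d_M$ is the distance of $g_M$. Then I would cover $K$ by finitely many foliation charts $\varphi_i\colon U_i\xrightarrow{\sim}P_i\times T_i$ together with relatively compact subcharts $U_i'=\varphi_i^{-1}(P_i'\times T_i')\subset\subset U_i$ still covering $K$, and, using compactness of $K$, fix $\delta>0$ so that $B_M(x,2\delta)\subset U_i'$ for some $i$ for every $x\in K$. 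On a plaque $\mathfrak{p}_{i,t}=\varphi_i^{-1}(P_i\times\{t\})$, which is an embedded submanifold of $M$, the metric $g_\mca{F}$ restricts to a Riemannian metric depending smoothly on $t\in T_i$; running Milnor's argument on $\mathfrak{p}_{i,t}$ with the compact subset $\varphi_i^{-1}(\ol{P_i'}\times\{t\})$, uniformly in $t\in\ol{T_i'}$ (the maps $(\pi,\exp^t)$ depend smoothly on $t$ and $\ol{T_i'}$ is compact), should yield $\epsilon_i>0$ such that any two points of $\varphi_i^{-1}(\ol{P_i'}\times\{t\})$ at $\mathfrak{p}_{i,t}$-distance $<\epsilon_i$ are joined by a unique $\mathfrak{p}_{i,t}$-geodesic of length $<\epsilon_i$, minimal and depending smoothly on its endpoints. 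Setting $\epsilon=\min(\delta,\min_i\epsilon_i)$ I would then run the following argument.

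Given $x,y\in K$ in a common leaf $L$ with $d_\mca{F}(x,y)<\epsilon$, I pick a piecewise $C^1$ leafwise path $\gamma$ from $x$ to $y$ of length $<\epsilon$; then $\gamma(I)\subset B_M(x,\epsilon)\subset B_M(x,\delta)\subset U_i'$ for some $i$, and since $\gamma(I)$ is connected and the connected components of $L\cap U_i$ are plaques, $\gamma(I)$ lies in a single plaque $\mathfrak{p}=\mathfrak{p}_{i,t}$. As $g_\mca{F}$ restricts to the plaque metric, the plaque-geodesics of $\mathfrak{p}$ are exactly the leafwise geodesics contained in $\mathfrak{p}$, so Milnor's statement, applied to $x,y\in\varphi_i^{-1}(\ol{P_i'}\times\{t\})$ with $\mathfrak{p}$-distance $\le\mathrm{length}(\gamma)<\epsilon\le\epsilon_i$, provides the desired unique short geodesic $c_{x,y}$, of length $d_\mca{F}(x,y)<\epsilon$. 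For uniqueness inside $L$, any leafwise geodesic $c$ of length $<\epsilon$ from $x$ to $y$ has $c(I)$, and likewise $c_{x,y}(I)$, confined to one plaque $\mathfrak{p}_{j,u}$ (determined by $B_M(x,\delta)\subset U_j'$), so $c=c_{x,y}$ by the uniqueness clause of Milnor's statement; continuous dependence follows because, for $(x,y)$ near a fixed pair, the curve $c_{x,y}$ lives in a plaque of one fixed chart, and one invokes the smooth dependence on the endpoints and on the transverse parameter $t$.

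I expect the main obstacle to be exactly what this setup is built to neutralize: since leaves are only immersed, one cannot apply Milnor's corollary on a leaf directly, and one must instead trap a short leafwise geodesic inside a single plaque. This is what the inequality $d_M\le d_\mca{F}$ (arranged by letting $g_M$ extend $g_\mca{F}$) together with the uniform Lebesgue radius $\delta$ are for; a secondary technical point is the uniformity in the transverse coordinate $t$ of the constants produced by Milnor's proof, which I would handle by the usual compactness argument applied to the smoothly varying family of plaque exponential maps.
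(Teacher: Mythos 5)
Your argument is correct modulo standard details, but it globalizes differently from the paper. The local core is the same in substance: your ``Milnor with a transverse parameter'' on plaques, made uniform over $\ol{T_i'}$ by compactness and smooth dependence, is exactly what the paper carries out in Lemmas \ref{mfmgfxfe} and \ref{mfufxecxy} (the geodesic ODE written in a foliation chart with the transverse coordinates as parameters, then the inverse function theorem applied to $v\mapsto(c_v(0),c_v(1))$); you assert this step, so you should either reproduce that computation or cite a parametric version, since Corollary 10.8 of Milnor as stated concerns a single Riemannian manifold and taking a minimum of the per-plaque $\epsilon$'s over uncountably many $t$ would not give uniformity. Where you genuinely diverge is the reduction of $K$ to charts: the paper proves a leafwise Lebesgue-number lemma with respect to $d_\mca{F}$ (Lemma \ref{mfmkeuuxy}) and, inside Lemma \ref{mfufxecxy}, shrinks $\epsilon$ below the leafwise separation of distinct plaques, whereas you extend $g_\mca{F}$ to an ambient metric $g_M$ with $d_M\le d_\mca{F}$ on leaves, take an ordinary Lebesgue number for $d_M$, and trap any leafwise path of length $<\epsilon$ issuing from $x\in K$ inside an ambient ball contained in one subchart, hence in a single plaque. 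This confinement device delivers existence, uniqueness among all leafwise geodesics of length $<\epsilon$, and (via the spare factor of $2$ in $B_M(x,2\delta)\subset U_i'$, which lets you keep the chart fixed for nearby pairs) the continuity statement in one stroke, at the cost of introducing the auxiliary metric; the paper avoids the auxiliary metric but needs the separate leafwise Lebesgue-type lemma. Two points to tighten: justify ``$\gamma(I)$ lies in a single plaque'' by the constancy of the transverse coordinate along a leafwise path inside $U_i$ (your phrasing via components of $L\cap U_i$ in the ambient topology needs the countability, hence total disconnectedness, of the transversal trace, or else the leaf topology), and note that the minimality you invoke is minimality in the plaque metric, which already yields length $<\epsilon$; the stronger identity with $d_\mca{F}(x,y)$ requires applying the same confinement to arbitrary short leafwise competitors.
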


\begin{proof}[Proof of Proposition \ref{monfmonfmb}]
Let $[\gamma]\in\Mon(\mca{F})$. We will prove that $F_*$ is continuous at $[\gamma]$ and $C^\infty$ on an open neighborhood of $[\gamma]$. We have $F_*[\gamma]=[F\gamma]$. Take a chart $(\theta^\prime(V^\prime_1\times V^\prime_2\times U^\prime_1),(\theta^\prime)^{-1})$ of $\Mon(\mca{F}^\prime)$ centered at $[F\gamma]$: 
\begin{equation*}
\begin{tikzpicture}[every label/.append style={font=\scriptsize},matrix of math nodes,decoration={markings,mark=at position0.5with{\arrow{>}}}]
\def\a{2}
\def\b{1}
\def\c{7}
\node(1)at(0,0){};
\node(2)at(\a,0){};
\node(3)at(\a,\a){};
\node(4)at(0,\a)[label={[xshift=-8,yshift=-8]V^\prime}]{};
\node(5)at(\b,0){};
\node(6)at(\b,\a)[label=above:T^\prime]{};
\node(7)at(\c,0){};
\node(8)at(\c+\a,0){};
\node(9)at(\c+\a,\a)[label={[xshift=8,yshift=-8]U^\prime}]{};
\node(10)at(\c,\a){};
\node(11)at(\c+\b,0){};
\node(12)at(\c+\b,\a)[label=above:S^\prime]{};
\node(13)at(\b,\b){};
\node(14)at(\c+\b,\b){};
\node(15)at(\b/2,\b/2)[label=left:(\psi^\prime)^{-1}(v^\prime_1{,}v^\prime_2)]{};
\node(16)at(\c+\a-\b/2,\b/2)[label=right:(\varphi^\prime)^{-1}(u^\prime_1{,}u^\prime_2)]{};
\node(17)at(\b,\b)[label=left:F\gamma(1)]{};
\node(18)at(\c+\b,\b)[label=right:F\gamma(0)]{};
\node(19)at(\b,\b/2)[label={[xshift=24,yshift=-19](\psi^\prime)^{-1}(0{,}v^\prime_2)}]{};
\node(20)at(\c+\b,\b/2)[label={[xshift=-34,yshift=-19]\hol_{F\gamma}^{-1}(\psi^\prime)^{-1}(0{,}v^\prime_2)},label={[xshift=-28,yshift=-31]=(\varphi^\prime)^{-1}(0{,}u^\prime_2)}]{};
\draw[dotted](1.center)--(2.center)--(3.center)--(4.center)--cycle;
\draw[dotted](7.center)--(8.center)--(9.center)--(10.center)--cycle;
\draw[loosely dotted](2.center)--(7.center);
\draw[loosely dotted](3.center)--(10.center);
\draw(5.center)--(6.center);
\draw(11.center)--(12.center);
\draw[postaction={decorate}](14.center)--node[label=above:F\gamma]{}(13.center);
\draw[postaction={decorate}](16.center)--node[label={[xshift=0,yshift=-19]H^\prime(\cdot,v^\prime_2)}]{}(15.center);
\foreach\x in{15,...,20}\filldraw(\x)circle(1pt);
\end{tikzpicture}
\end{equation*}
\begin{align*}
\theta^\prime\colon V^\prime_1\times V^\prime_2\times U^\prime_1&\to\Mon(\mca{F}^\prime)\\
(v^\prime_1,v^\prime_2,u^\prime_1)&\mapsto[(\gamma^\prime)^{v^\prime_1,v^\prime_2,u^\prime_1}]
\end{align*}
We fix relatively compact foliation charts $V^\prime=W^\prime_\ell,\ldots,W^\prime_1=U^\prime$ of $\mca{F}^\prime$ and a partition $1=t_\ell>\cdots>t_0=0$ such that $F\gamma[t_{i-1},t_i]\subset W^\prime_i$ for all $i=1,\ldots,\ell$ which can be used to define the holonomy along $F\gamma$. Take a chart $(\theta(V_1\times V_2\times U_1),\theta^{-1})$ of $\Mon(\mca{F})$ centered at $[\gamma]$ such that $F\gamma^{(v_1,v_2,u_1)}[t_{i-1},t_i]\subset W^\prime_i$ for all $(v_1,v_2,u_1)\in V_1\times V_2\times U_1$ and $i=1,\ldots,\ell$. Note that $F(V)\subset V^\prime$, $F(U)\subset U^\prime$ in particular. 
\begin{equation*}
\begin{tikzpicture}[every label/.append style={font=\scriptsize},matrix of math nodes,decoration={markings,mark=at position0.5with{\arrow{>}}}]
\def\a{2}
\def\b{1}
\def\c{7}
\node(1)at(0,0){};
\node(2)at(\a,0){};
\node(3)at(\a,\a){};
\node(4)at(0,\a)[label={[xshift=-8,yshift=-8]V}]{};
\node(5)at(\b,0){};
\node(6)at(\b,\a)[label=above:T]{};
\node(7)at(\c,0){};
\node(8)at(\c+\a,0){};
\node(9)at(\c+\a,\a)[label={[xshift=8,yshift=-8]U}]{};
\node(10)at(\c,\a){};
\node(11)at(\c+\b,0){};
\node(12)at(\c+\b,\a)[label=above:S]{};
\node(13)at(\b,\b){};
\node(14)at(\c+\b,\b){};
\node(15)at(\b/2,\b/2)[label=left:\psi^{-1}(v_1{,}v_2)]{};
\node(16)at(\c+\a-\b/2,\b/2)[label=right:\varphi^{-1}(u_1{,}u_2)]{};
\node(17)at(\b,\b)[label=left:\gamma(1)]{};
\node(18)at(\c+\b,\b)[label=right:\gamma(0)]{};
\node(19)at(\b,\b/2)[label={[xshift=21,yshift=-19]\psi^{-1}(0{,}v_2)}]{};
\node(20)at(\c+\b,\b/2)[label={[xshift=-30,yshift=-19]\hol_\gamma^{-1}\psi^{-1}(0{,}v_2)},label={[xshift=-25,yshift=-31]=\varphi^{-1}(0{,}u_2)}]{};
\draw[dotted](1.center)--(2.center)--(3.center)--(4.center)--cycle;
\draw[dotted](7.center)--(8.center)--(9.center)--(10.center)--cycle;
\draw[loosely dotted](2.center)--(7.center);
\draw[loosely dotted](3.center)--(10.center);
\draw(5.center)--(6.center);
\draw(11.center)--(12.center);
\draw[postaction={decorate}](14.center)--node[label=above:\gamma]{}(13.center);
\draw[postaction={decorate}](16.center)--node[label={[xshift=0,yshift=-19]H(\cdot,v_2)}]{}(15.center);
\foreach\x in{15,...,20}\filldraw(\x)circle(1pt);
\end{tikzpicture}
\end{equation*}
\begin{align*}
\theta\colon V_1\times V_2\times U_1&\to\Mon(\mca{F})\\
(v_1,v_2,u_1)&\mapsto[\gamma^{v_1,v_2,u_1}]
\end{align*}
Let $g^\prime$ be a Riemannian metric of $M^\prime$ and $(g^\prime)_{\mca{F}^\prime}$ be the leafwise Riemannian metric of $\mca{F}^\prime$ induced by $g^\prime$. Let $d^\prime$ be the metric induced by $g^\prime$ on the connected component of $M^\prime$ containing $F\gamma(I)$, and $d_{\mca{F}^\prime}$ be the leafwise metric of $\mca{F}^\prime$ induced by $(g^\prime)_{\mca{F}^\prime}$. For $(v_1,v_2,u_1)\in V_1\times V_2\times U_1$, put 
\begin{equation*}
c^{(v_1,v_2,u_1)}=(\gamma^\prime)^{(\psi^\prime F\psi^{-1}(v_1,v_2),p^\prime_1\varphi^\prime F\varphi^{-1}(u_1,u_2))}\in C^0(I,\mca{F}^\prime), 
\end{equation*}
where $p^\prime_1\colon U^\prime_1\times U^\prime_2\to U^\prime_1$ is the projection. Note that this curve has the same endpoints as $F\gamma^{(v_1,v_2,u_1)}$. By taking $V$ and $U$ small enough, $d^\prime(F\gamma^{(v_1,v_2,u_1)}(t),c^{(v_1,v_2,u_1)}(t))$ is uniformly small with respect to $v_1$, $v_2$, $u_1$ and $t$. Since $F\gamma^{(v_1,v_2,u_1)}(t)$ and $c^{(v_1,v_2,u_1)}(t)$ are in the same plaque of some $W^\prime_i$, 
\begin{equation*}
d_{\mca{F}^\prime}(F\gamma^{(v_1,v_2,u_1)}(t),c^{(v_1,v_2,u_1)}(t))\leq Cd^\prime(F\gamma^{(v_1,v_2,u_1)}(t),c^{(v_1,v_2,u_1)}(t))
\end{equation*}
for some $C>0$ independent of $v_1$, $v_2$, $u_1$ and $t$. Hence $d_{\mca{F}^\prime}(F\gamma^{(v_1,v_2,u_1)}(t),c^{(v_1,v_2,u_1)}(t))$ is uniformly small. By Lemma \ref{mfmgfedfxy} there exists a unique ``short'' geodesic in the leaf connecting $F\gamma^{(v_1,v_2,u_1)}(t)$ and $c^{(v_1,v_2,u_1)}(t)$ and it gives a homotopy: $F\gamma^{(v_1,v_2,u_1)}\sim_{\mca{F}^\prime}c^{(v_1,v_2,u_1)}$. Thus 
\begin{equation*}
F_*[\gamma^{(v_1,v_2,u_1)}]=[c^{(v_1,v_2,u_1)}]=[(\gamma^\prime)^{(\psi^\prime F\psi^{-1}(v_1,v_2),p^\prime_1\varphi^\prime F\varphi^{-1}(u_1,u_2))}]. 
\end{equation*}
Hence $F_*$ is represented by $(v_1,v_2,u_1)\mapsto(\psi^\prime F\psi^{-1}(v_1,v_2),p^\prime_1\varphi^\prime F\varphi^{-1}(u_1,u_2))$ with respect to the charts. Therefore $F_*$ is $C^\infty$. 
\end{proof}

\subsection{Proof of Lemma \ref{mfmgfedfxy}}
We follow \S10 of \cite{Milnor_1963}. 

\begin{lem}\label{mfmgfxfe}
Let $M$ be a $C^\infty$ manifold, $\mca{F}$ be a $C^\infty$ regular foliation of $M$, $g_\mca{F}$ be a leafwise Riemannian metric of $\mca{F}$ and $x_0\in M$. Then there exist an open neighborhood $U$ of $x_0$ in $M$ and $\epsilon>0$ with the following property: For any $v\in(T\mca{F})|_U$ with $\lVert v\rVert<\epsilon$, there exists a unique geodesic $c_v\colon[-2,2]\to M$ in a leaf such that $\frac{dc_v}{dt}(0)=v$. $c_v(t)$ depends smoothly on $v$ and $t$. 
\end{lem}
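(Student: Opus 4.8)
The plan is to reduce the statement to the classical theory of the geodesic spray, carried out fibrewise over a single foliation chart. First I would set $p=\dim\mca{F}$, $q=\dim M-\dim\mca{F}$ and choose a foliation chart $\theta\colon U\xrightarrow{\sim}U_1\times U_2$ of $\mca{F}$ around $x_0$, with $U_1\subset\bb{R}^p$, $U_2\subset\bb{R}^q$ open and the plaques of $(U,\theta)$ being the sets $U_1\times\{u_2\}$. Via $\theta$ I identify $(T\mca{F})|_U$ with $U_1\times U_2\times\bb{R}^p$, the last factor being tangent to the plaques. Under this identification the leafwise metric $g_\mca{F}$ becomes a $C^\infty$ map $(u_1,u_2)\mapsto(g_{ij}(u_1,u_2))_{i,j=1}^p$ with values in positive definite symmetric matrices, and its Christoffel symbols $\Gamma^k_{ij}(u_1,u_2)$ are $C^\infty$ functions of $(u_1,u_2)\in U_1\times U_2$ — the transverse coordinate $u_2$ entering merely as a smooth parameter, since $g_\mca{F}$ is smooth on the whole bundle $T\mca{F}$.

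Next I would form the leafwise geodesic spray: the $C^\infty$ vector field $S$ on $U_1\times U_2\times\bb{R}^p$ given in the obvious coordinates by
\[
S(u_1,u_2,w)=\Bigl(w,\;0,\;-\Gamma(u_1,u_2)(w,w)\Bigr),\qquad \Gamma(u_1,u_2)(w,w)^k=\sum_{i,j}\Gamma^k_{ij}(u_1,u_2)w^iw^j.
\]
The vanishing second component encodes that a leafwise geodesic stays in the plaque through its initial point, and the integral curves of $S$ projected to $U_1\times U_2$ are exactly the geodesics of the plaque metrics. Applying the standard theorem on flows of $C^\infty$ vector fields (existence, uniqueness, and $C^\infty$-dependence on initial conditions and time; compare \S10 of \cite{Milnor_1963}), I obtain an open neighborhood $W$ of $(\theta(x_0),0)$ in $U_1\times U_2\times\bb{R}^p$ and $\delta>0$ such that the flow $\Phi\colon(-\delta,\delta)\times W\to U_1\times U_2\times\bb{R}^p$ of $S$ is defined and $C^\infty$. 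Then the usual homogeneity of geodesics (if $c$ is a geodesic then $t\mapsto c(\lambda t)$ is a geodesic whose initial velocity is scaled by $\lambda$) lets me shrink $U$ to a smaller neighborhood of $x_0$ and choose $\epsilon>0$ so that, for every $v\in(T\mca{F})|_U$ with $\lVert v\rVert<\epsilon$, the appropriately rescaled integral curve of $S$ through $v$ is defined for $t\in[-2,2]$. Transporting back by $\theta^{-1}$, the spatial part of this curve is the desired geodesic $c_v\colon[-2,2]\to M$, which lies in the plaque of the base point of $v$, hence in a leaf; smoothness of $(v,t)\mapsto c_v(t)$ is inherited from that of $\Phi$ and of $\theta^{-1}$. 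For uniqueness, any leafwise geodesic with initial velocity $v$ is, for small time, contained in the plaque through the base point of $v$ and therefore solves the plaque geodesic ODE with the same initial data, so it coincides with $c_v$ by ODE uniqueness.

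I do not expect a serious obstacle: the statement is essentially the classical local existence and smoothness theorem for geodesics, with the transverse coordinate of a foliation chart playing the role of an extra smooth parameter. The only points requiring a little care are (i) checking that the Christoffel symbols, and hence $S$, are smooth jointly in the plaque and transverse variables, which is immediate from smoothness of $g_\mca{F}$ on the vector bundle $T\mca{F}$, and (ii) the elementary rescaling bookkeeping needed to guarantee the uniform interval $[-2,2]$ on a neighborhood of $x_0$. Everything else is a direct application of the Picard--Lindel\"{o}f theorem with parameters.
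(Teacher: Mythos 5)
Your proposal is correct and follows essentially the same route as the paper: a foliation chart in which the leafwise geodesic equation becomes the first-order ODE system (your spray field is exactly the paper's system \eqref{dcdtckuk}, with the transverse coordinate frozen), followed by the standard existence/uniqueness/smooth-dependence theorem and the homogeneity rescaling to get the uniform interval $[-2,2]$. No gaps.
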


\begin{proof}
Let $n=\dim M$ and $p=\dim\mca{F}$. Let $(U^\prime,\varphi)$ be a foliation chart of $\mca{F}$ such that $x_0\in U^\prime$ and $\varphi(x_0)=0$. Let $J$ be an interval in $\bb{R}$ such that $0\in J$. For $\gamma\colon J\to U^\prime$, let $c=\varphi\gamma=
\begin{pmatrix}
c_1&\cdots&c_n
\end{pmatrix}
$. Let $\nabla$ be the Levi-Civita connection of $g_\mca{F}$ and $\Gamma_{ij}^k\colon U^\prime\to\bb{R}$ ($1\leq i,j,k\leq p$) be the Christoffel symbols of $g_\mca{F}$ with respect to $(U^\prime,\varphi)$. Let $v\in(T\mca{F})|_{U^\prime}$ and assume $\frac{d\gamma}{dt}(0)=v$. Then $\gamma$ is a geodesic with respect to $g_\mca{F}$ if and only if $\gamma(J)$ is contained in a leaf and $\nabla_{\frac{d\gamma}{dt}(t)}\frac{d\gamma}{dt}(t)=0$ for all $t\in J$. Let $a=
\begin{pmatrix}
a_1&\cdots&a_n
\end{pmatrix}
=\varphi(\gamma(0))\in\varphi(U^\prime)$, $b=
\begin{pmatrix}
b_1&\cdots&b_p
\end{pmatrix}
=\varphi_*v\in\bb{R}^p$. Then $\gamma$ is a geodesic if and only if 
\begin{equation}\label{ckdtckabt}
\begin{cases}
\frac{d^2c_k}{dt^2}(t)+\sum_{i,j=1}^p\Gamma_{ij}^k(c(t))\frac{dc_i}{dt}(t)\frac{dc_j}{dt}(t)=0&1\leq k\leq p\\
\frac{dc_k}{dt}(t)=0&p+1\leq k\leq n\\
c_k(0)=a_k&1\leq k\leq n\\
\frac{dc_k}{dt}(0)=b_k&1\leq k\leq p. 
\end{cases}
\end{equation}
Consider 
\begin{equation}\label{dcdtckuk}
\begin{cases}
\frac{dc_k}{dt}(t)=u_k(t)&1\leq k\leq p\\
\frac{dc_k}{dt}(t)=0&p+1\leq k\leq n\\
\frac{du_k}{dt}(t)=-\sum_{i,j=1}^p\Gamma_{ij}^k(c(t))u_i(t)u_j(t)&1\leq k\leq p\\
c_k(0)=a_k&1\leq k\leq n\\
u_k(0)=b_k&1\leq k\leq p. 
\end{cases}
\end{equation}
Then there is a bijection 
\begin{align*}
c&\mapsto
\begin{pmatrix}
c&\frac{dc}{dt}
\end{pmatrix}\\
c&\mapsfrom
\begin{pmatrix}
c&u
\end{pmatrix}
\end{align*}
between the solutions of \eqref{ckdtckabt} and those of \eqref{dcdtckuk}. There exist an open neighborhood $V$ of $0$ in $\bb{R}^n$ and $\epsilon_i>0$ ($i=1$, $2$) such that $\varphi(U^\prime)\supset V$ and for any $a\in V$ and $b\in\bb{R}^p$ such that $\lVert b\rVert<\epsilon_1$, there exists a unique solution $
\begin{pmatrix}
c_{a,b}&u_{a,b}
\end{pmatrix}
$ of \eqref{dcdtckuk} for $\lvert t\rvert\leq2\epsilon_2$, which depends smoothly on $a$, $b$ and $t$. Let $U=\varphi^{-1}(V)$. For $a\in V$, $\lVert b\rVert<\epsilon_1\epsilon_2$ and $\lvert t\rvert\leq2$, let $c^\prime_{a,b}(t)=c_{a,\epsilon_2^{-1}b}(\epsilon_2t)$. Then $c^\prime_{a,b}(0)=a$, $\frac{dc^\prime_{a,b}}{dt}(0)=b$ and $\varphi^{-1}c^\prime_{a,b}$ is a geodesic. 
\end{proof}

\begin{lem}\label{mfufxecxy}
Let $M$ be a $C^\infty$ manifold, $\mca{F}$ be a $C^\infty$ regular foliation of $M$, $g_\mca{F}$ be a leafwise Riemannian metric of $\mca{F}$, $d_\mca{F}$ be the leafwise metric induced by $g_\mca{F}$ and $x_0\in M$. Then there exist a foliation chart $U$ of $\mca{F}$ containing $x_0$ and $\epsilon>0$ with the following property: For any $x$, $y\in U$ with $d_\mca{F}(x,y)<\epsilon$, there exists a unique geodesic $c_{x,y}\colon I\to M$ in the leaf of length less than $\epsilon$ such that $c_{x,y}(0)=x$, $c_{x,y}(1)=y$. $c_{x,y}(t)$ depends continuously on $x$, $y$, $t$. 
\end{lem}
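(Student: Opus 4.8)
The plan is to imitate the classical construction of a uniformly normal neighborhood (\S 10 of \cite{Milnor_1963}), replacing the exponential map with the \emph{leafwise} exponential map furnished by Lemma \ref{mfmgfxfe}, and using a foliation chart to linearize the transverse direction.

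First I would apply Lemma \ref{mfmgfxfe} to get an open neighborhood $U_0\ni x_0$ and $\epsilon_0>0$ so that the leafwise exponential map $\exp(v)=c_v(1)$ is defined and $C^\infty$ on $W_0=\{v\in(T\mca{F})|_{U_0}\ :\ \lVert v\rVert<\epsilon_0\}$, with $\exp(0_x)=x$ and, by homogeneity of geodesics, $\tfrac{d}{ds}\exp_x(sv)\big|_{s=0}=v$. Next, fix a foliation chart $(U^\prime,\varphi)$, $\varphi=(\varphi_1,\varphi_2)\colon U^\prime\xrightarrow{\sim}U_1\times U_2\subset\bb{R}^p\times\bb{R}^q$, with $x_0\in U^\prime$, $\varphi(x_0)=(0,0)$, $\ol{U^\prime}\subset U_0$ compact, and $T_x\mca{F}=\ker(d\varphi_2)_x$. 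After shrinking to a neighborhood $W_1\ni 0_{x_0}$ of the domain on which $p(W_1),\exp(W_1)\subset U^\prime$ (where $p\colon T\mca{F}\to M$ is the projection), I would consider the $C^\infty$ map
\[
G\colon W_1\to\bb{R}^p\times\bb{R}^p\times\bb{R}^q,\qquad G(v)=\bigl(\varphi_1(p(v)),\ \varphi_1(\exp v),\ \varphi_2(p(v))\bigr).
\]
Using the splitting $T_{0_{x_0}}W_1\cong T_{x_0}M\oplus T_{x_0}\mca{F}$ and the two properties of $\exp$ above, a direct computation gives $dG(X\oplus v)=(X_1,\ X_1+v_1,\ X_2)$ in the obvious coordinates (writing $d\varphi(X)=(X_1,X_2)$, $d\varphi(v)=(v_1,0)$), which is invertible; since $\dim W_1=\dim M+p=2p+q$ matches the target dimension, the inverse function theorem lets me shrink $W_1$ further so that $G$ restricts to a diffeomorphism onto an open neighborhood of $(0,0,0)$.

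The last step is bookkeeping of sizes. A standard argument comparing $g_\mca{F}$ with an auxiliary ambient Riemannian metric on the compact set $\ol{U^\prime}$ produces $\delta>0$ such that, for a sufficiently small precompact foliation subchart $U\ni x_0$ with $\ol{U}\subset U^\prime$, every leafwise path of length $<\delta$ starting in $U$ has image in $U^\prime$, hence in a single plaque. I would then choose $U$ small and $\epsilon\in(0,\min(\delta,\epsilon_0))$ small enough that $W:=\{v\in(T\mca{F})|_U\ :\ \lVert v\rVert<\epsilon\}\subset W_1$ and that $(\varphi_1(x),\varphi_1(y),\varphi_2(x))\in G(W)$ whenever $x,y\in U$ lie in one plaque of $U^\prime$. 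Given $x,y\in U$ with $d_\mca{F}(x,y)<\epsilon$: since $\epsilon<\delta$ they lie in a common plaque, so $\varphi_2(x)=\varphi_2(y)$; set $v=G^{-1}(\varphi_1(x),\varphi_1(y),\varphi_2(x))\in W$. Then $p(v)=x$, and as $c_v([0,1])$ has length $\lVert v\rVert<\epsilon<\delta$ it stays in the plaque through $x$, forcing $\varphi_2(\exp v)=\varphi_2(x)=\varphi_2(y)$ and hence $\exp(v)=y$; thus $c_{x,y}:=c_v$ is a leafwise geodesic from $x$ to $y$ of length $\lVert v\rVert<\epsilon$, depending continuously (indeed smoothly) on $x,y,t$ through $G^{-1}$ and Lemma \ref{mfmgfxfe}. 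For uniqueness, any leafwise geodesic $c$ from $x$ to $y$ of length $<\epsilon$ has $\dot c(0)\in W$ and $c=c_{\dot c(0)}$ by Lemma \ref{mfmgfxfe}; its image stays in $U^\prime$, so $G(\dot c(0))=(\varphi_1(x),\varphi_1(y),\varphi_2(x))=G(v)$, and injectivity of $G$ on $W$ gives $\dot c(0)=v$, i.e. $c=c_{x,y}$.

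The main obstacle I anticipate is the final paragraph: arranging the nested neighborhoods $W\subset W_1$, $U\subset U^\prime\subset U_0$ and the single constant $\epsilon$ consistently, and in particular justifying cleanly that short leafwise paths emanating from the small chart $U$ cannot escape $U^\prime$. Because $g_\mca{F}$ is an arbitrary leafwise metric rather than one induced by an ambient metric, this last point genuinely requires the compactness comparison on $\ol{U^\prime}$, and it is what makes the hypothesis $d_\mca{F}(x,y)<\epsilon$ (rather than merely $x,y\in U$) indispensable for the existence half of the statement.
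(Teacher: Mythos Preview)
Your proposal is correct and follows essentially the same approach as the paper: both apply the inverse function theorem to the map $v\mapsto(p(v),\exp v)$ furnished by Lemma \ref{mfmgfxfe}, with the same Jacobian computation $\begin{pmatrix}I&O\\I&I\end{pmatrix}$ at $0_{x_0}$. The only cosmetic difference is that the paper works with the intrinsic map $F\colon\{v:\lVert v\rVert<\epsilon\}\to\{(x,y)\in U\times M:d_\mca{F}(x,y)<\epsilon\}$ (noting the target is a manifold for small $\epsilon$), whereas you factor through a foliation chart to land in $\bb{R}^p\times\bb{R}^p\times\bb{R}^q$; and for the final bookkeeping the paper argues that distinct plaques of $U$ are $\epsilon$-separated rather than bounding the reach of short leafwise paths via an auxiliary ambient metric as you do.
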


\begin{proof}
Take $U$, $\epsilon$ as in Lemma \ref{mfmgfxfe}. Consider 
\begin{align*}
F\colon\left\{v\in(T\mca{F})|_U\ \middle|\ \lVert v\rVert<\epsilon\right\}&\to\left\{(x,y)\in U\times M\ \middle|\ d_\mca{F}(x,y)<\epsilon\right\}\\
v&\mapsto(c_v(0),c_v(1)). 
\end{align*}
We may assume that $U$ is relatively compact in $M$. Then by taking $\epsilon$ small enough, the right hand side has a natural structure of a $C^\infty$ manifold. $F$ is $C^\infty$ and the Jacobian matrix of $F$ at $0\in T_{x_0}\mca{F}$ with respect to the obvious choices of coordinates is $
\begin{pmatrix}
I&O\\
I&I
\end{pmatrix}
$. By the inverse function theorem, taking $U$ and $\epsilon$ small enough, $F$ is a diffeomorphism onto its image. By taking $\epsilon$ small enough, we may also assume that for any two distinct plaques $P$, $P^\prime$ of $U$ in the same leaf of $\mca{F}$ and any $x\in P$, $x^\prime\in P^\prime$, we have $d_\mca{F}(x,x^\prime)>\epsilon$. Take an open neighborhood $U^\prime$ of $x_0$ in $M$ such that 
\begin{equation*}
F\left\{v\in(T\mca{F})|_U\ \middle|\ \lVert v\rVert<\epsilon\right\}\supset\left\{(x,y)\in U^\prime\times U^\prime\ \middle|\ \text{$x$, $y$ are in the same plaque of $U^\prime$}\right\}. 
\end{equation*}
Since the right hand side equals to $\left\{(x,y)\in U^\prime\times U^\prime\ \middle|\ d_\mca{F}(x,y)<\epsilon\right\}$, the lemma follows. 
\end{proof}

\begin{lem}\label{mfmkeuuxy}
Let $M$ be a $C^\infty$ manifold, $\mca{F}$ be a $C^\infty$ regular foliation of $M$, $g_\mca{F}$ be a leafwise Riemannian metric of $\mca{F}$, $d_\mca{F}$ be the leafwise metric induced by $g_\mca{F}$, $K$ be a compact subset of $M$, $\ms{U}$ be an open cover of $K$. Then there exists $\epsilon>0$ such that for any $x$, $y\in K$ in the same leaf with $d_\mca{F}(x,y)<\epsilon$, there exists $U\in\ms{U}$ such that $x$, $y\in U$. 
\end{lem}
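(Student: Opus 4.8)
The plan is to argue by contradiction, via a Lebesgue-number-type compactness argument adapted to the leafwise metric; the proof is self-contained and does not need Lemma \ref{mfmgfxfe} or Lemma \ref{mfufxecxy}. First I would negate the statement: if no such $\epsilon$ exists, then for every $n\in\bb{N}$ there are points $x_n,y_n\in K$ lying in a common leaf of $\mca{F}$ with $d_\mca{F}(x_n,y_n)<1/n$, yet no member of $\ms{U}$ contains both $x_n$ and $y_n$. By compactness of $K$, after passing to a subsequence twice I may assume $x_n\to x_\infty$ and $y_n\to y_\infty$ with $x_\infty,y_\infty\in K$.

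The key step is to show $x_\infty=y_\infty$. For this I would fix any Riemannian metric $g$ on $M$, let $d_g$ be its induced length metric on the component of $M$ containing $x_\infty$, and fix a relatively compact open neighborhood $\Omega$ of $x_\infty$. Over the compact set $\ol{\Omega}$ the restriction $g|_{T\mca{F}}$ and $g_\mca{F}$ are two continuous fiber metrics on the bundle $T\mca{F}|_{\ol{\Omega}}$, so there is $C>0$ with $\lVert v\rVert_g\leq C\lVert v\rVert_{g_\mca{F}}$ for all $v\in T\mca{F}|_{\ol{\Omega}}$; hence any (piecewise $C^1$) leafwise curve contained in $\ol{\Omega}$ has $g$-length at most $C$ times its $g_\mca{F}$-length. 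Picking a leafwise curve $c_n$ joining $x_n$ to $y_n$ of $g_\mca{F}$-length $<1/n$, a first-exit-time argument applies: once $d_g(x_n,x_\infty)<\tfrac12 d_g(x_\infty,\partial\Omega)$ and $C/n<\tfrac12 d_g(x_\infty,\partial\Omega)$, the curve $c_n$ cannot reach $\partial\Omega$ (the initial segment up to a would-be first exit time lies in $\ol{\Omega}$, hence has $g$-length $<C/n$, too small to travel from $x_n$ to $\partial\Omega$), so $c_n$ stays in $\Omega$. Then $d_g(x_n,y_n)\leq C/n\to0$, which forces $x_\infty=y_\infty$.

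Finally, since $\ms{U}$ covers $K$ and $x_\infty=y_\infty\in K$, I would pick $U\in\ms{U}$ with $x_\infty\in U$; as $U$ is open and $x_n\to x_\infty$, $y_n\to x_\infty$, for all large $n$ we have $x_n,y_n\in U$, contradicting the choice of $x_n,y_n$. I expect the only delicate point to be making the step $x_\infty=y_\infty$ fully precise: spelling out the first-exit-time estimate and handling the degenerate case in which the component of $x_\infty$ is compact (then one may take $\Omega$ to be that whole component, $\partial\Omega=\emptyset$, and the length bound is vacuously enough to keep $c_n$ inside). Everything else is routine.
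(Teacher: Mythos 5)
Your argument is correct, and it reaches the lemma by a genuinely different route than the paper. The paper's proof is a direct Lebesgue-number argument: it sets $U_\epsilon=\left\{x\in K\ \middle|\ B_\epsilon(x)\subset U\text{ for some }U\in\ms{U}\right\}$ with $B_\epsilon(x)$ the leafwise closed $\epsilon$-ball intersected with $K$, observes that each $U_\epsilon$ is open in $K$ and that $K=\bigcup_{\epsilon>0}U_\epsilon$, extracts a finite subcover, and uses monotonicity $U_\epsilon\supset U_{\epsilon^\prime}$ for $\epsilon<\epsilon^\prime$ to conclude $K=U_{\epsilon_1}$. You instead argue by contradiction with sequences $x_n,y_n$ and make explicit the analytic fact that the paper leaves implicit (it is what underlies both the openness of $U_\epsilon$ and the covering property): over a compact set the ambient norm is dominated by the leafwise norm, $\lVert v\rVert_g\leq C\lVert v\rVert_{g_\mca{F}}$ on $T\mca{F}|_{\ol{\Omega}}$, so via your first-exit estimate a short leafwise curve cannot stray far in $M$, forcing $x_\infty=y_\infty$ and the contradiction. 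The trade-off: the paper's proof is shorter and avoids choosing an auxiliary metric, but its two unproved claims about $U_\epsilon$ quietly require essentially the comparison you spell out (or an Arzel\`a--Ascoli-type argument for leafwise curves); your proof is longer but self-contained, with the only compactness inputs being sequential compactness of $K$ and the fiber-metric comparison, and, as you note, neither proof uses Lemma \ref{mfmgfxfe} or Lemma \ref{mfufxecxy}. Your handling of the degenerate case $\partial\Omega=\emptyset$ and of the first-exit time is the right level of care; no gaps remain.
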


\begin{proof}
For $x\in K$ and $\epsilon>0$, let $B_\epsilon(x)=\left\{y\in K\ \middle|\ d_\mca{F}(x,y)\leq\epsilon\right\}$. For $\epsilon>0$, consider 
\begin{equation*}
U_\epsilon=\left\{x\in K\ \middle|\ \text{there exists $U\in\ms{U}$ such that $B_\epsilon(x)\subset U$}\right\}. 
\end{equation*}
Then $U_\epsilon$ is open in $K$ and $K=\bigcup_{\epsilon>0}U_\epsilon$. Hence there exist $0<\epsilon_1<\cdots<\epsilon_k$ such that $K=\bigcup_{i=1}^kU_{\epsilon_i}$. For $0<\epsilon<\epsilon^\prime$, we have $U_\epsilon\supset U_{\epsilon^\prime}$. Thus $K=U_{\epsilon_1}$. $\epsilon_1$ satisfies the required condition of the lemma. 
\end{proof}

\begin{proof}[Proof of Lemma \ref{mfmgfedfxy}]
For $x\in K$, take $U_x$, $\epsilon_x$ as in Lemma \ref{mfufxecxy}. For the open cover $\ms{U}=\left\{K\cap U_x\ \middle|\ x\in K\right\}$, let $\epsilon>0$ be as in Lemma \ref{mfmkeuuxy}. This $\epsilon$ satisfies the condition of the lemma. 
\end{proof}

\subsection{Foliation groupoids}
\begin{dfn}[\cite{CM}]\label{folgrgxxx}
A Lie groupoid $\mca{G}\rightrightarrows M$ is called a \emph{foliation groupoid} if $\mca{G}^x_x$ is a discrete subset of $\mca{G}$ for each $x\in M$. 
\end{dfn}

Recall that for a Lie groupoid $\mca{G}\rightrightarrows M$, $\mca{F}_\mca{G}$ denotes the set of connected components of orbits of $\mca{G}$. 

\begin{prop}\label{folgconcorf}
Let $\mca{G}\rightrightarrows M$ be a foliation groupoid. Then $\mca{F}_\mca{G}$ is a regular $C^\infty$ foliation of $M$. 
\end{prop}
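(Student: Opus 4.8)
The plan is to use the general structure theorem for Stefan singular foliations established in Proposition \ref{gmstsif}, together with the ``foliation groupoid'' hypothesis, to upgrade the singular foliation $\mca{F}_\mca{G}$ to a regular one. By Proposition \ref{gmstsif} we already know that $\mca{F}_\mca{G}$ is a $C^\infty$ Stefan singular foliation of $M$, so the only extra thing to prove is that all leaves have the same dimension; once that is known, the Stefan foliation charts are ordinary foliation charts and $\mca{F}_\mca{G}$ is a regular $C^\infty$ foliation in the usual sense.

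First I would fix $x \in M$ and recall from Proposition \ref{isoorb} that $s\colon\mca{G}^x\to O$ is a left principal $\mca{G}^x_x$ bundle over the orbit $O$ through $x$, where $O$ carries its canonical immersed-submanifold structure. Hence $\dim O = \dim\mca{G}^x - \dim\mca{G}^x_x$. The foliation groupoid hypothesis says $\mca{G}^x_x$ is discrete in $\mca{G}$, i.e.\ $\dim\mca{G}^x_x = 0$, so $\dim O = \dim\mca{G}^x$. Next, the dimension of $\mca{G}^x = t^{-1}(x)$ is constant in $x$: since $t\colon\mca{G}\to M$ is a submersion, $\dim\mca{G}^x = \dim\mca{G} - \dim M$ for every $x$ (this uses that $M$ is connected — which, together with $\mca{G}$ being pure, follows from the standing conventions; if one does not want to assume $M$ connected one argues locally, the function $x\mapsto\dim\mca{G}^x$ being locally constant). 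Therefore every connected component of every orbit, being an element of $\mca{F}_\mca{G}$ and carrying the submanifold structure of its orbit by Lemma \ref{connorb} and Proposition \ref{isoorb}, has dimension $\dim\mca{G}-\dim M$. Thus all leaves of $\mca{F}_\mca{G}$ have the same dimension $p := \dim\mca{G}-\dim M$.

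Finally I would feed this into the Stefan chart condition. For $x_0\in L_0\in\mca{F}_\mca{G}$ with $k=\dim L_0=p$, Definition \ref{stelmixlf} gives a chart $\theta\colon U\to U_1\times U_2$ with $U_1\subset\bb{R}^p$, $U_2\subset\bb{R}^{n-p}$, such that $\theta(\iota_L(L)\cap U)=U_1\times S_L$ for each $L\in\mca{F}_\mca{G}$. Since every such $L$ also has dimension $p$, part 1 of Lemma \ref{slcount} applies to each plaque: $(\iota_L)_*T_xL = T_{\iota_L(x)}\theta^{-1}(U_1\times\{a_2\})$, which forces each nonempty slice $\theta(\iota_L(L)\cap U)$ to be a union of full plaques $U_1\times\{a_2\}$, and by part 2(a) of Lemma \ref{slcount} (applied now to $L$ itself, of dimension $k=p$) each connected piece maps to a single point of $U_2$, so $S_L$ is a set of points. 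Hence $\theta$ is an ordinary foliation chart: the plaques are exactly the sets $\theta^{-1}(U_1\times\{c\})$. Covering $M$ by such charts exhibits $\mca{F}_\mca{G}$ as a regular $C^\infty$ foliation.

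The main obstacle I anticipate is purely bookkeeping about the immersed-submanifold structures: one must make sure that the ``leaf'' $L_0\in\mca{F}_\mca{G}$ carries precisely the submanifold structure coming from Proposition \ref{isoorb} (via Lemma \ref{connorb}, $L_0$ is an orbit of $\mca{G}_0$), so that ``$\dim L_0$'' in the Stefan chart condition is the same integer $p$ computed from the principal-bundle description — and that $\mca{G}^x_x$ discrete genuinely means $0$-dimensional as a manifold, not merely totally disconnected as a topological space (here one uses that $\mca{G}^x_x$ is an embedded submanifold of $\mca{G}$ by Proposition \ref{isoorb}, hence a Lie group whose underlying space being discrete forces dimension $0$). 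Once these identifications are in place the dimension count and the reduction of the Stefan charts are routine.
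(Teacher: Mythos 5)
Your proposal is correct and follows essentially the same route as the paper: invoke Proposition \ref{gmstsif} to get a $C^\infty$ Stefan singular foliation, then observe that the foliation groupoid hypothesis forces all leaves to have the common dimension $\dim\mca{G}-\dim M$ (via the principal bundle $s\colon\mca{G}^x\to O$ with discrete structure group from Proposition \ref{isoorb}), so the Stefan charts are ordinary foliation charts. You merely spell out the dimension count and the chart reduction that the paper leaves implicit.
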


\begin{proof}
By Proposition \ref{gmstsif}, $\mca{F}_\mca{G}$ is a $C^\infty$ Stefan singular foliation of $M$. Since the leaves of $\mca{F}_\mca{G}$ have the same dimension, $\mca{F}_\mca{G}$ is a regular $C^\infty$ foliation of $M$. 
\end{proof}

\begin{dfn}
A Lie groupoid $\mca{G}\rightrightarrows M$ is said to be $s$-connected (resp. $s$-simply connected) if $\mca{G}_x$ is connected (resp. simply connected) for each $x\in M$. 
\end{dfn}

\begin{dfn}
Let $\mca{G}\rightrightarrows M$ be a Lie groupoid, $N$ be a $C^\infty$ manifold and $\nu\colon N\to M$ be a $C^\infty$ map. 
\begin{itemize}
\item For $\rho\in\mca{A}(\nu,\mca{G})$ and $y\in N$, 
\begin{equation*}
\Stab_\rho(y)=\left\{g\in\mca{G}\ \middle|\ \rho(y,g)=y\right\}=\varphi_\rho\left((N\rtimes_\rho\mca{G})^y_y\right)
\end{equation*}
is called the \emph{stabilizer} of $\rho$ at $y$. 
\item We say that $\rho\in\mca{A}(\nu,\mca{G})$ is \emph{locally free} if $\Stab_\rho(y)$ is a discrete subset of $\mca{G}$ for each $y\in N$. 
\item Let 
\begin{equation*}
\mca{A}_{LF}(\nu,\mca{G})=\left\{\rho\in\mca{A}(\nu,\mca{G})\ \middle|\ \text{$\rho$ is locally free}\right\}. 
\end{equation*}
\end{itemize}
\end{dfn}

\begin{example}\label{gmnmnrgyy}
Let $\mca{G}\rightrightarrows M$ be a Lie groupoid, $N$ be a $C^\infty$ manifold, $\nu\colon N\to M$ be a $C^\infty$ map and $\rho\in\mca{A}(\nu,\mca{G})$. For $y\in N$, we have 
\begin{equation*}
(N\rtimes_\rho\mca{G})^y_y=\left\{(y,g)\in N\rtimes_\rho\mca{G}\ \middle|\ \rho(y,g)=y\right\}\simeq\Stab_\rho(y). 
\end{equation*}
So $N\rtimes_\rho\mca{G}$ is a foliation groupoid if and only if $\rho\in\mca{A}_{LF}(\nu,\mca{G})$. For $\rho\in\mca{A}(\nu,\mca{G})$, define $\mca{F}_\rho=\mca{F}_{N\rtimes_\rho\mca{G}}$. Since $t^{-1}(y)=\left\{(y,g)\in N\rtimes_\rho\mca{G}\right\}\simeq\mca{G}^{\nu(y)}$ for $y\in N$, $N\rtimes_\rho\mca{G}$ is $s$-connected (resp. $s$-simply connected) if and only if $\mca{G}^{\nu(y)}$ is connected (resp. simply connected) for all $y\in N$. 
\end{example}

\begin{example}
Let $M$ be a $C^\infty$ manifold and $\mca{F}$ be a $C^\infty$ regular foliation of $M$. For $x\in M$, $\Mon(\mca{F})^x_x$ is discrete. Let $x\in L\in\mca{F}$. Then $\Mon(\mca{F})_x$ is diffeomorphic to the universal cover of $(L,x)$. Hence $\Mon(\mca{F})$ is an $s$-simply connected foliation groupoid. 
\end{example}

\begin{prop}\label{fgmonlod}
Let $\mca{G}\rightrightarrows M$ be a foliation groupoid. Then the following statements hold: 
\begin{enumerate}
\setlength\itemsep{0em}
\item There exists a unique $\varphi\in\Hom(\Mon(\mca{F}_\mca{G}),\mca{G})$ such that $F_\varphi=\id$. 
\item $\varphi$ is a local diffeomorphism. 
\item $\mca{G}$ is $s$-connected if and only if $\varphi$ is surjective. 
\item $\mca{G}$ is $s$-simply connected if and only if $\varphi$ is an isomorphism. 
\end{enumerate}
\end{prop}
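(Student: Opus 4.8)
The statement to prove is Proposition \ref{fgmonlod}: for a foliation groupoid $\mca{G}\rightrightarrows M$, there is a unique morphism $\varphi\colon\Mon(\mca{F}_\mca{G})\to\mca{G}$ covering the identity, it is a local diffeomorphism, $\mca{G}$ is $s$-connected iff $\varphi$ is surjective, and $\mca{G}$ is $s$-simply connected iff $\varphi$ is an isomorphism. I would organize the proof around the four numbered claims, proving them in order since each uses the previous.

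For \emph{part 1} (existence and uniqueness of $\varphi$ with $F_\varphi=\id$), the idea is path-lifting in the target fibers. By Proposition \ref{folgconcorf}, $\mca{F}_\mca{G}$ is a regular $C^\infty$ foliation of $M$, so $\Mon(\mca{F}_\mca{G})$ makes sense. Given $[\gamma]\in\Mon(\mca{F}_\mca{G})$ with $\gamma$ a path in a leaf from $x=\gamma(0)$ to $x'=\gamma(1)$, I want to lift $\gamma$ through $s\colon\mca{G}^{x}\to O$ (the orbit through $x$), which is a left principal $\mca{G}^x_x$ bundle by Proposition \ref{isoorb}; since $\mca{G}^x_x$ is discrete (foliation groupoid), $s\colon\mca{G}^x\to O$ is a covering map onto the leaf $L$ (the connected component of $O$ through $x$, which by Lemma \ref{connorb} is an orbit of $\mca{G}_0$, and $\mca{G}^x\cap\mca{G}_0$ covers it). Starting the lift at $1_x$ gives a unique path $\wt\gamma$ in $\mca{G}^x$ with $\wt\gamma(0)=1_x$; set $\varphi([\gamma])=\wt\gamma(1)^{-1}$ — wait, I should be careful with the convention: $t\wt\gamma(1)=x$ and $s\wt\gamma(1)=x'$, so $\wt\gamma(1)\in\mca{G}^x_{x'}$; to get something in $\mca{G}^{x'}_x$ (matching $t[\gamma]=\gamma(1)=x'$ under the groupoid structure of Proposition \ref{monts1lgm}) I take $\varphi([\gamma])=\wt\gamma(1)^{-1}\in\mca{G}^{x'}_x$. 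Homotopy invariance (well-definedness on $\sim_{\mca{F}_\mca{G}}$) follows from the homotopy lifting property of the covering $s\colon\mca{G}^x\to L$, using that fixed-endpoint homotopies lift to fixed-endpoint homotopies because the fiber is discrete. Smoothness of $\varphi$ is checked in the explicit charts of $\Mon(\mca{F}_\mca{G})$ from Proposition \ref{monts1lgm}: on $\theta(V_1\times V_2\times U_1)$ one lifts the reference path once and then the lift of $\gamma^{v_1,v_2,u_1}$ is obtained by composing with local bisections/plaque-paths, all smooth. Multiplicativity $\varphi([\gamma][\gamma'])=\varphi([\gamma])\varphi([\gamma'])$ is a direct check: concatenation of paths corresponds to multiplication of the lifted group elements because left translation $L_g\colon\mca{G}^{s(g)}\to\mca{G}^{t(g)}$ carries lifts to lifts. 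Uniqueness: any morphism $\varphi'$ with $F_{\varphi'}=\id$ must send $[\gamma]$ to the endpoint of the $\varphi'$-image of $\gamma$, and since $\varphi'$ is continuous and $s\varphi'(\text{lift of small piece})$ is determined, $t'\varphi'=F_{\varphi'}t=t$ and $s'\varphi'=s$ force $\varphi'\circ(\text{path lift})$ to be \emph{a} lift of $\gamma$ starting at $1_x$, hence \emph{the} lift; so $\varphi'=\varphi$.

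For \emph{part 2}, $\varphi$ is a local diffeomorphism: both $\Mon(\mca{F}_\mca{G})$ and $\mca{G}$ have the same dimension ($\dim M + \dim\mca{F}_\mca{G}$, since $\mca{G}^x_x$ is $0$-dimensional and $\dim\mca{G} = \dim M + \dim\mca{G}^x = \dim M + \dim L$), so it suffices to show $\varphi_*$ is injective at each point, equivalently that $\ker\varphi_*=0$. Because $t'\varphi = t$ and $s'\varphi = s$, the differential $\varphi_*$ restricts to a map $T_g\Mon(\mca{F}_\mca{G})^{x'}_x\to T_{\varphi(g)}\mca{G}^{x'}_x$ between the tangent spaces of the ($0$-dimensional!) subgroupoids $\Mon(\mca{F}_\mca{G})^{x'}_x$ and $\mca{G}^{x'}_x$ — actually the right way: $\Mon(\mca{F}_\mca{G})^{x'}_x$ is discrete, and on the bigger fibers, from the local chart description in part 1 the representing matrix of $\varphi_*$ is an identity on the $V_1$ (leaf) directions and on the $V_2$ (transverse) directions it is $\mca{G}$'s transverse coordinate, so it's block-triangular with identity blocks; hence invertible. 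Alternatively and more cleanly: $s = s'\varphi$ and both $s\colon\mca{G}^x\to L$, $s\colon\Mon(\mca{F}_\mca{G})^x\to L$ are covering maps (étale), so $\varphi\colon\Mon(\mca{F}_\mca{G})^x\to\mca{G}^x$ is a local diffeomorphism fiberwise; combined with $t'\varphi=t$ and $t$, $t'$ being submersions with the leaf directions handled, $\varphi$ is a local diffeomorphism.

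For \emph{parts 3 and 4}, this reduces to the theory of coverings of the leaves. Fix $x\in M$ with leaf $L\ni x$. Then $s\colon\Mon(\mca{F}_\mca{G})^x\to L$ is (isomorphic to) the universal cover $\wt L\to L$ based at $x$ — this is essentially the definition of the monodromy groupoid, cf.\ the example $\Mon(\mca{F})_x\simeq$ universal cover of $(L,x)$ stated in the excerpt — while $s\colon\mca{G}^x\to L$ is \emph{some} covering of $L$ (connected iff $\mca{G}$ is $s$-connected at $x$, since $\mca{G}_x$ connected $\iff$ $\mca{G}^x$ connected via the inverse). Under these identifications $\varphi\colon\Mon(\mca{F}_\mca{G})^x\to\mca{G}^x$ is a morphism of coverings of $L$ sending basepoint $[x]=1_x$ to $1_x$; a pointed covering map from the universal cover is always surjective onto the total space of any connected covering and is an isomorphism iff that covering is simply connected. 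Hence: if $\mca{G}$ is $s$-connected, $\mca{G}^x$ is connected for all $x$, so $\varphi|_{\mca{G}^x}$ is onto, and since $F_\varphi=\id$ is onto, $\varphi$ is surjective; conversely if $\varphi$ is surjective then each $\mca{G}^x = \varphi(\Mon(\mca{F}_\mca{G})^x)$ is a continuous image of a connected space, hence connected, so $\mca{G}$ is $s$-connected. Likewise $\varphi$ is an isomorphism iff $\varphi$ is bijective (Proposition \ref{isodiff}), iff $\varphi|_{\mca{G}^x}$ is bijective for all $x$ (as $F_\varphi=\id$), iff each $\mca{G}^x$ is the universal cover, iff $\mca{G}^x$ is simply connected for all $x$, iff $\mca{G}$ is $s$-simply connected (using $\mca{G}_x\cong\mca{G}^x$ by inversion).

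\textbf{Main obstacle.} I expect the technical heart to be part 1 — specifically, rigorously producing the path-lifting/homotopy-lifting correspondence and then verifying that the resulting $\varphi$ is \emph{smooth} (not merely continuous) in the monodromy charts of Proposition \ref{monts1lgm}. The covering-space arguments for parts 3–4 are then formal, and part 2 is a dimension count plus the étale property of $s$ on target fibers; but part 1 requires carefully matching the concatenation/inversion conventions of $\Mon(\mca{F}_\mca{G})$ with groupoid multiplication in $\mca{G}$ and checking the chart formula for $\varphi$, which is where sign/direction errors creep in. The explicit local construction of $\Mon$-charts already done in the proof of Proposition \ref{monts1lgm} should make the smoothness verification tractable by essentially the same bookkeeping.
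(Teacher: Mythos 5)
Your proposal is correct and follows essentially the same route as the paper: the paper defers the proof to Proposition 1 of \cite{CM} and only records the path-lifting description of $\varphi$ (lift $\gamma$ through the covering $t\colon(\mca{G}_0)_{\gamma(0)}\to L$ starting at $1_{\gamma(0)}$ and set $\varphi[\gamma]=\wt{\gamma}(1)$), which is exactly your construction up to the inversion convention, since lifting through $s\colon\mca{G}^{\gamma(0)}\to L$ and inverting the endpoint gives the same element. Your fleshed-out verification of smoothness in the monodromy charts, the dimension count plus injectivity of $\varphi_*$ for part 2 (most cleanly seen from $(t,s)$ being an immersion on both groupoids), and the covering-space arguments for parts 3 and 4 reconstruct the cited argument faithfully.
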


\begin{proof}
See Proposition 1 and its proof in \cite{CM}. $\varphi$ can be described alternatively as follows. Let $[\gamma]\in\Mon(\mca{F}_\mca{G})$. Take $L\in\mca{F}_\mca{G}$ such that $\gamma(I)\subset L$. Let $\mca{G}_0$ be the identity component of $\mca{G}$ (see Lemma \ref{identitycomp}). Then $t\colon(\mca{G}_0)_{\gamma(0)}\to L$ is a principal $(\mca{G}_0)_{\gamma(0)}^{\gamma(0)}$ bundle by Lemma \ref{connorb}. Since $(\mca{G}_0)_{\gamma(0)}^{\gamma(0)}$ is discrete in $\mca{G}_0$, $t\colon(\mca{G}_0)_{\gamma(0)}\to L$ is a covering map. Let $\wt{\gamma}\colon I\to(\mca{G}_0)_{\gamma(0)}$ be the lift of $\gamma$ starting at $1_{\gamma(0)}$. Define $\varphi\colon\Mon(\mca{F}_\mca{G})\to\mca{G}_0$ by $\varphi[\gamma]=\wt{\gamma}(1)$. 
\end{proof}

\begin{cor}\label{gmngphigphi}
Let $\mca{G}\rightrightarrows M$ be a Lie groupoid, $N$ be a $C^\infty$ manifold, $\nu\colon N\to M$ be a $C^\infty$ map and $\rho\in\mca{A}_{LF}(\nu,\mca{G})$. Then the following statements hold: 
\begin{enumerate}
\setlength\itemsep{0em}
\item There exists a unique morphism $\varphi\in\Hom(\Mon(\mca{F}_\rho),N\rtimes_\rho\mca{G})$ such that $F_\varphi=\id$. 
\item $\mca{G}^{\nu(y)}$ is connected for all $y\in N$ if and only if $\varphi$ is surjective. 
\item $\mca{G}^{\nu(y)}$ is simply connected for all $y\in N$ if and only if $\varphi$ is an isomorphism. 
\end{enumerate}
\end{cor}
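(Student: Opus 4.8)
The plan is to deduce this directly from Proposition \ref{fgmonlod} applied to the action groupoid $N\rtimes_\rho\mca{G}\rightrightarrows N$. First I would observe that, since $\rho\in\mca{A}_{LF}(\nu,\mca{G})$, the action groupoid $N\rtimes_\rho\mca{G}$ is a foliation groupoid by Example \ref{gmnmnrgyy}, so that $\mca{F}_{N\rtimes_\rho\mca{G}}$ is a regular $C^\infty$ foliation of $N$ by Proposition \ref{folgconcorf}. By definition $\mca{F}_\rho=\mca{F}_{N\rtimes_\rho\mca{G}}$, hence $\Mon(\mca{F}_\rho)$ is the monodromy groupoid of this foliation and Proposition \ref{fgmonlod} is available with the Lie groupoid there replaced by $N\rtimes_\rho\mca{G}\rightrightarrows N$.

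For part 1, Proposition \ref{fgmonlod} part 1 yields a unique $\varphi\in\Hom(\Mon(\mca{F}_\rho),N\rtimes_\rho\mca{G})$ with $F_\varphi=\id$, which is exactly the assertion.

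For parts 2 and 3, the remaining ingredient is the translation of $s$-connectedness of $N\rtimes_\rho\mca{G}$ into a statement about $\mca{G}$: by Example \ref{gmnmnrgyy}, $t^{-1}(y)\simeq\mca{G}^{\nu(y)}$ for $y\in N$ (and the $s$-fibers are diffeomorphic to the $t$-fibers via inversion), so $N\rtimes_\rho\mca{G}$ is $s$-connected (resp.\ $s$-simply connected) if and only if $\mca{G}^{\nu(y)}$ is connected (resp.\ simply connected) for every $y\in N$. Combining this equivalence with Proposition \ref{fgmonlod} part 3 gives part 2, and with Proposition \ref{fgmonlod} part 4 gives part 3.

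Since every step is an invocation of an already-established result, I do not expect a genuine obstacle; the only point requiring a little care is making sure the hypotheses of Proposition \ref{fgmonlod} are in force, namely that $N\rtimes_\rho\mca{G}$ is a \emph{foliation} groupoid, so that $\mca{F}_\rho$ is a regular foliation and $\Mon(\mca{F}_\rho)$ is defined. This is precisely where local freeness of $\rho$ enters, via Example \ref{gmnmnrgyy}.
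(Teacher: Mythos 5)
Your proposal is correct and is essentially the paper's intended argument: the corollary is stated without proof precisely because it is the specialization of Proposition \ref{fgmonlod} to the action groupoid $N\rtimes_\rho\mca{G}$, which is a foliation groupoid by Example \ref{gmnmnrgyy} since $\rho$ is locally free, with that same example supplying the translation between $s$-(simple) connectedness of $N\rtimes_\rho\mca{G}$ and (simple) connectedness of the fibers $\mca{G}^{\nu(y)}$.
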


\begin{cor}\label{simmon}
Let $\mca{G}\rightrightarrows M$, $\mca{G}^\prime\rightrightarrows M^\prime$ be $s$-simply connected Lie groupoids, $N$, $N^\prime$ be $C^\infty$ manifolds, $\nu\colon N\to M$, $\nu^\prime\colon N^\prime\to M^\prime$ be $C^\infty$ maps and $\rho\in\mca{A}_{LF}(\nu,\mca{G})$, $\rho^\prime\in\mca{A}_{LF}(\nu^\prime,\mca{G}^\prime)$. Let $F\colon N\to N^\prime$ be a diffeomorphism such that $F(\mca{F}_\rho)=\mca{F}_{\rho^\prime}$. Then there exists a unique isomorphism $\varphi\in\Hom(N\rtimes_\rho\mca{G},N^\prime\rtimes_{\rho^\prime}\mca{G}^\prime)$ such that $F_\varphi=F$. 
\end{cor}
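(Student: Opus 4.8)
The plan is to reduce the statement to the corresponding fact about monodromy groupoids, using Corollary \ref{gmngphigphi} and the functoriality of $\Mon$ recorded in Proposition \ref{monfmonfmb}.

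First I would pass to the monodromy picture. Since $\rho$ and $\rho^\prime$ are locally free, $N\rtimes_\rho\mca{G}$ and $N^\prime\rtimes_{\rho^\prime}\mca{G}^\prime$ are foliation groupoids by Example \ref{gmnmnrgyy}, so by Proposition \ref{folgconcorf} the partitions $\mca{F}_\rho$ and $\mca{F}_{\rho^\prime}$ are regular $C^\infty$ foliations of $N$ and $N^\prime$. Moreover, because the inversion $g\mapsto g^{-1}$ restricts to a diffeomorphism $\mca{G}^x\xrightarrow{\sim}\mca{G}_x$, the $s$-simple connectedness of $\mca{G}$ implies that $\mca{G}^{\nu(y)}$ is simply connected for every $y\in N$ (and similarly for $\mca{G}^\prime$). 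Hence Corollary \ref{gmngphigphi} supplies isomorphisms $\iota\colon\Mon(\mca{F}_\rho)\xrightarrow{\sim}N\rtimes_\rho\mca{G}$ and $\iota^\prime\colon\Mon(\mca{F}_{\rho^\prime})\xrightarrow{\sim}N^\prime\rtimes_{\rho^\prime}\mca{G}^\prime$, each having base map $\id$ and each being the unique morphism with that base map.

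For existence I would use that $F(\mca{F}_\rho)=\mca{F}_{\rho^\prime}$ forces both $F$ and $F^{-1}$ to send leaves into leaves, so Proposition \ref{monfmonfmb} gives morphisms $F_*\colon\Mon(\mca{F}_\rho)\to\Mon(\mca{F}_{\rho^\prime})$ and $(F^{-1})_*\colon\Mon(\mca{F}_{\rho^\prime})\to\Mon(\mca{F}_\rho)$ with base maps $F$ and $F^{-1}$. From the defining formula $[\gamma]\mapsto[F\gamma]$ one reads off $(G\circ F)_*=G_*\circ F_*$ and $(\id)_*=\id$, so $F_*$ and $(F^{-1})_*$ are mutually inverse; in particular $F_*$ is an isomorphism. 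I then set $\varphi=\iota^\prime\circ F_*\circ\iota^{-1}$, which is an isomorphism of Lie groupoids with base map $\id\circ F\circ\id=F$.

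For uniqueness I would show that any $\psi\in\Hom(N\rtimes_\rho\mca{G},N^\prime\rtimes_{\rho^\prime}\mca{G}^\prime)$ with $F_\psi=F$ equals $\varphi$; conjugating by $\iota$ and $\iota^\prime$, this reduces to showing that a morphism $\Psi\colon\Mon(\mca{F}_\rho)\to\Mon(\mca{F}_{\rho^\prime})$ with base map $F$ is necessarily $F_*$. Fix $x_0\in N$ lying in the leaf $L$ of $\mca{F}_\rho$, so $F(L)$ is a leaf of $\mca{F}_{\rho^\prime}$. By Proposition \ref{monts1lgm} and the remarks around Proposition \ref{fgmonlod}, the source fiber $s^{-1}(x_0)\subset\Mon(\mca{F}_\rho)$ is the universal cover of $(L,x_0)$ with covering projection $t$ carrying $1_{x_0}$ to $x_0$ (and similarly for $\mca{F}_{\rho^\prime}$). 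Both $\Psi$ and $F_*$ restrict to maps $s^{-1}(x_0)\to s^{-1}(F(x_0))$ that lift $F\circ t$ along the covering $t\colon s^{-1}(F(x_0))\to F(L)$ and send $1_{x_0}$ to $1_{F(x_0)}$; since $s^{-1}(x_0)$ is connected, uniqueness of lifts forces them to agree on $s^{-1}(x_0)$, and as $x_0$ was arbitrary, $\Psi=F_*$. This covering-space uniqueness is the only non-formal point of the argument; everything else is bookkeeping with the equivalences $\iota$, $\iota^\prime$ and the functor $\Mon$.
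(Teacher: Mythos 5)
Your proposal is correct and follows essentially the same route as the paper: existence is obtained exactly as in the text, by combining the isomorphisms of Corollary \ref{gmngphigphi} (valid since $s$-simple connectedness of $\mca{G}$, $\mca{G}^\prime$ makes the relevant target fibers simply connected) with the functorial isomorphism $F_*$ of monodromy groupoids from Proposition \ref{monfmonfmb}. For uniqueness the paper simply invokes Proposition \ref{connuni} applied to the $s$-connected groupoid $N\rtimes_\rho\mca{G}$ and the foliation groupoid $N^\prime\rtimes_{\rho^\prime}\mca{G}^\prime$; your covering-space lifting argument on the source fibers of the monodromy groupoids is just an inline re-proof of that same fact, so there is no substantive difference.
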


\begin{proof}
We have isomorphisms $\psi\colon\Mon(\mca{F}_\rho)\to N\rtimes_\rho\mca{G}$, $\psi^\prime\colon\Mon(\mca{F}_{\rho^\prime})\to N^\prime\rtimes_{\rho^\prime}\mca{G}^\prime$ by Corollary \ref{gmngphigphi}, and an isomorphism $F_*\colon\Mon(\mca{F}_\rho)\to\Mon(\mca{F}_{\rho^\prime})$ by Lemma \ref{monfmonfmb}, which give an isomorphism $\psi^\prime F_*\psi^{-1}\colon N\rtimes_\rho\mca{G}\to N^\prime\rtimes_{\rho^\prime}\mca{G}^\prime$ with base map $F$. The uniqueness follows from Proposition \ref{connuni} in the next subsection. 
\end{proof}

\subsection{Morphisms are determined by their base maps}
\begin{prop}\label{connuni}
Let $\mca{G}\rightrightarrows M$ be an $s$-connected Lie groupoid and $\mca{G}^\prime\rightrightarrows M^\prime$ be a foliation groupoid. Then 
\begin{align*}
b\colon\Hom(\mca{G},\mca{G}^\prime)&\to C^\infty(\mca{O}_\mca{G},\mca{O}_{\mca{G}^\prime})\\
\varphi&\mapsto F_\varphi
\end{align*}
is injective. 
\end{prop}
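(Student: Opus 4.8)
The statement asserts that a morphism $\varphi\colon\mca{G}\to\mca{G}^\prime$ between Lie groupoids, where $\mca{G}$ is $s$-connected and $\mca{G}^\prime$ is a foliation groupoid, is uniquely determined by its base map $F_\varphi$. So suppose $\varphi$, $\varphi^\prime\in\Hom(\mca{G},\mca{G}^\prime)$ have $F_\varphi=F_{\varphi^\prime}=:F$. The plan is to show $\varphi=\varphi^\prime$ by fixing $x\in M$ and proving $\varphi=\varphi^\prime$ on the target fiber $\mca{G}^x$, which suffices since $\mca{G}=\bigcup_{x\in M}\mca{G}^x$. Because $\mca{G}$ is $s$-connected, each $\mca{G}^x$ is connected: indeed $\mca{G}^x$ is diffeomorphic (via inversion) to $\mca{G}_x$, which is connected by hypothesis. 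So it is enough to show that the set $\{g\in\mca{G}^x\mid\varphi(g)=\varphi^\prime(g)\}$ is nonempty, open, and closed in $\mca{G}^x$.

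\textbf{Key steps.} First, nonemptiness: $1_x\in\mca{G}^x$ and $\varphi(1_x)=1_{F(x)}=\varphi^\prime(1_x)$, so the set contains $1_x$. Second, closedness: this is clear because $\mca{G}^x$ is Hausdorff (part of the definition of a Lie groupoid) and $\varphi,\varphi^\prime$ are continuous — the coincidence set of two continuous maps into a Hausdorff space is closed. Third, and this is the heart of the argument, openness. Here I would use that $\mca{G}^\prime$ is a foliation groupoid, i.e. $(\mca{G}^\prime)^{x^\prime}_{x^\prime}$ is discrete for each $x^\prime$. Fix $g_0\in\mca{G}^x$ with $\varphi(g_0)=\varphi^\prime(g_0)=:g_0^\prime$. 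Choose a local bisection $\sigma\colon V\to\mca{G}$ near $s(g_0)$ (Definition \ref{localbisection}, which is available since $\mca{G}$ has submersive $t$), and consider right translation $R_\sigma\colon\mca{G}^x_V\to\mca{G}^x$, $g\mapsto g\sigma(s(g))$, which is an open embedding onto a neighborhood of $g_0$ by the proof of Lemma \ref{constantrank}. For $g$ in this neighborhood, write $g=g_1\sigma(s(g_1))$. Then $\varphi(g)=\varphi(g_1)\varphi(\sigma(s(g_1)))$ and $\varphi^\prime(g)=\varphi^\prime(g_1)\varphi^\prime(\sigma(s(g_1)))$; more usefully, I would consider the "difference" map $g\mapsto\varphi(g)\varphi^\prime(g)^{-1}$, noting that $t^\prime\varphi(g)=F(t(g))=t^\prime\varphi^\prime(g)$, so this lands in $(\mca{G}^\prime)^{F(t(g))}$. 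Restricting to the fiber $\mca{G}^x$, the difference map $\delta\colon g\mapsto\varphi(g)\varphi^\prime(g)^{-1}$ is continuous and satisfies $s^\prime\delta(g)=t^\prime\varphi^\prime(g)$; one checks $\delta$ is a morphism-like object whose value lies in an isotropy group when source and target agree. Alternatively, and perhaps more cleanly, I would pass through the monodromy groupoid: by Proposition \ref{fgmonlod} the canonical morphism $\Mon(\mca{F}_{\mca{G}^\prime})\to\mca{G}^\prime$ with base map $\id$ is a local diffeomorphism, and $\Mon(\mca{F}_{\mca{G}^\prime})\to\mca{G}^\prime$ restricted to target fibers is a covering map onto each leaf; combining this with $s$-connectedness of $\mca{G}$ lets me phrase the problem as a lifting/uniqueness statement for covering maps.

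\textbf{Cleanest route.} I expect the cleanest argument is the covering-space one. Fix $x$ and consider the composites $\varphi|_{\mca{G}^x},\varphi^\prime|_{\mca{G}^x}\colon\mca{G}^x\to(\mca{G}^\prime)^{F(x)}$, both landing in the single orbit-component $L^\prime\in\mca{F}_{\mca{G}^\prime}$ through $F(x)$ after composing with $s^\prime$ — wait, more precisely these maps cover $F_*\colon O\to O^\prime$ on orbits (Lemma \ref{oosmhs}). Since $\mca{G}^\prime$ is a foliation groupoid, $s^\prime\colon(\mca{G}_0^\prime)_{F(x)}\to L^\prime$ is a covering map (as in the proof of Proposition \ref{fgmonlod}), and by inversion the same holds for target fibers over each leaf. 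Now $\mca{G}^x$ is connected, $\varphi$ and $\varphi^\prime$ agree at the basepoint $1_x$ (which maps to $1_{F(x)}$), and both are sections/lifts over the same map $F_*$ on the base orbit into a covering space; by the uniqueness of lifts for covering maps from a connected space, $\varphi|_{\mca{G}^x}=\varphi^\prime|_{\mca{G}^x}$. The one point requiring care — and I expect this to be the main obstacle — is checking that $\varphi|_{\mca{G}^x}$ and $\varphi^\prime|_{\mca{G}^x}$ genuinely are \emph{lifts of the same continuous map} against a covering projection, i.e. that the diagram
\begin{equation*}
\begin{tikzcd}
&(\mca{G}^\prime)^{F(x)}\ar[d,"\ol{s}^\prime"]\\
\mca{G}^x\ar[ru,"\varphi"]\ar[r,"\ol{F_*}"']&O^\prime
\end{tikzcd}
\end{equation*}
commutes with $\ol{s}^\prime$ a covering map onto the orbit $O^\prime$ of $\mca{G}^\prime$ through $F(x)$ equipped with its manifold structure from Proposition \ref{isoorb}; one must verify $\ol{s}^\prime$ restricted to the relevant component is an actual covering (not merely a submersion), using discreteness of the isotropy and Lemma \ref{connorb}, and that $\mca{G}^x$ being connected forces the lift into a single connected component of $(\mca{G}^\prime)^{F(x)}$. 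Once that is set up, standard covering-space uniqueness closes the argument; I would also remark that $F_\varphi$ automatically lies in $C^\infty(\mca{O}_\mca{G},\mca{O}_{\mca{G}^\prime})$ by Lemma \ref{oosmhs}, so $b$ is well-defined.
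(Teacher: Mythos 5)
Your covering-space route is correct and is essentially the paper's own argument: the paper also uses that, since the isotropy groups of $\mca{G}^\prime$ are discrete, Proposition \ref{isoorb}(4) turns the restriction of the target (resp.\ source) map on a fiber of $\mca{G}^\prime$ into a covering of the orbit, and then concludes by uniqueness of lifts together with $s$-connectedness; the only cosmetic difference is that the paper lifts a path $c\colon I\to\mca{G}_{s(g)}$ from $1_{s(g)}$ to $g$ along the covering $t^\prime\colon(\mca{G}^\prime)_{F(s(g))}\to O^\prime$, whereas you lift the whole map on the connected fiber $\mca{G}^x$. The points you flag as needing care are exactly what Proposition \ref{isoorb}(4) supplies (a principal bundle with discrete structure group is a covering onto the orbit with its manifold structure), so your argument closes.
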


\begin{proof}
The map $b$ is well-defined by Lemma \ref{oosmhs}. Assume $\varphi_1$, $\varphi_2\in\Hom(\mca{G},\mca{G}^\prime)$ have the same base map $F$. Let $g\in\mca{G}$ and $x=s(g)\in M$. Since $\mca{G}_x$ is a connected manifold, it is path connected. Hence there exists a continuous curve $c\colon I\to\mca{G}_x$ such that $c(0)=1_x$, $c(1)=g$. Let $x\in O\in\mca{O}_\mca{G}$, $F(x)\in O^\prime\in\mca{O}_{\mca{G}^\prime}$. Then $F\colon O\to O^\prime$ is continuous by the definition of $C^\infty(\mca{O}_\mca{G},\mca{O}_{\mca{G}^\prime})$. We have 
\begin{equation*}
\begin{tikzcd}
\mca{G}_x\ar[r,"\varphi_i"]\ar[d,"t"']\ar[rd,phantom,"\circlearrowright"]&(\mca{G}^\prime)_{F(x)}\ar[d,"t^\prime"]\\
O\ar[r,"F"']&O^\prime. 
\end{tikzcd}
\end{equation*}
Since $(\mca{G}^\prime)^{F(x)}_{F(x)}$ is discrete in $\mca{G}^\prime$, $t^\prime\colon(\mca{G}^\prime)_{F(x)}\to O^\prime$ is a covering map by 4 in Proposition \ref{isoorb}. $Ftc\colon I\to O^\prime$ and $\varphi_ic\colon I\to(\mca{G}^\prime)_{F(x)}$ are continuous curves and $t^\prime\varphi_ic=Ftc$. So both $\varphi_1c$ and $\varphi_2c$ are continuous lifts of $Ftc$ starting at $1_{F(x)}$. Hence $\varphi_1c=\varphi_2c$ and $\varphi_1(g)=(\varphi_1c)(1)=(\varphi_2c)(1)=\varphi_2(g)$. 
\end{proof}

\begin{cor}
Let $\mca{G}\rightrightarrows M$ be an $s$-connected foliation groupoid (for example, fundamental groupoids, monodromy groupoids), $N_0$ be a $C^\infty$ manifold, $\nu_0\colon N_0\to M$ be a surjective submersion and $\rho_0\in\mca{A}(\nu_0,\mca{G})$. Then $T_{bnt}(\rho_0)=T_{nt}(\rho_0)=T_o(\rho_0)=\mr{pt}$. 
\end{cor}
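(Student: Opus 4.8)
The plan is to reduce the statement to an application of the earlier results on foliation groupoids and on the rigidity of Teichm\"{u}ller spaces under $s$-connectedness. Recall the hierarchy $T_{bnt}(\rho_0)\to T_{nt}(\rho_0)\twoheadrightarrow T_o(\rho_0)$, so it suffices to produce a surjection back and, more precisely, to exhibit for every $(N\rtimes_\rho\mca{G},\varphi)\in\MAG(\rho_0)$ (resp.\ $\MAG^{bb}(\rho_0)$) a conjugacy $\varphi^\prime\in\ul{\Hom}(N_0\rtimes_{\rho_0}\mca{G},N\rtimes_\rho\mca{G})$ with $\varphi\mathrel{\ul{\sim_a}}\varphi^\prime$, by Lemma \ref{parae}. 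By Lemma \ref{magid} we may even assume $N=N_0$ and $F_\varphi=\id$, so that $\varphi\in\ul{\Aut}(N_0\rtimes_{\rho_0}\mca{G})$, and the task becomes: show every such $\varphi$ with base map $\id$ is $\mathrel{\ul{\sim_a}}$ to a conjugacy.

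First I would observe that $\mca{G}$ being an $s$-connected foliation groupoid forces strong constraints. By Proposition \ref{connuni}, a morphism between an $s$-connected Lie groupoid and a foliation groupoid is determined by its base map; here both source and target of $\varphi$ are $N_0\rtimes_{\rho_0}\mca{G}$, which by Example \ref{gmnmnrgyy} is $s$-connected (since each $\mca{G}^{\nu_0(y)}$ is connected — this uses $s$-connectedness of $\mca{G}$, i.e.\ connectedness of $\mca{G}_x$, together with the inverse diffeomorphism identifying $\mca{G}_x\simeq\mca{G}^x$) and is a foliation groupoid (we do \emph{not} know $\rho_0$ is locally free in general, so I must be careful — but $\mca{G}$ being a foliation groupoid does not immediately give that $N_0\rtimes_{\rho_0}\mca{G}$ is one). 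This is the first subtlety to address: I need that the action of an $s$-connected foliation groupoid on a surjective submersion has locally free action groupoid, or else find another route. In fact $\Stab_{\rho_0}(y)=\varphi_{\rho_0}((N_0\rtimes_{\rho_0}\mca{G})^y_y)$ is a subgroup of $\mca{G}^{\nu_0(y)}_{\nu_0(y)}$, which is discrete since $\mca{G}$ is a foliation groupoid; so $\rho_0$ \emph{is} locally free and $N_0\rtimes_{\rho_0}\mca{G}$ is an $s$-connected foliation groupoid. Good.

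Next, with $\varphi\in\ul{\Aut}(N_0\rtimes_{\rho_0}\mca{G})$ and $F_\varphi=\id$, Proposition \ref{connuni} applied to the $s$-connected foliation groupoid $N_0\rtimes_{\rho_0}\mca{G}$ and itself (as target, which is a foliation groupoid) shows $\varphi=\id$, since $F_{\id}=\id$ too and $b$ is injective. Hence $\MAG_{\id}(\rho_0)=\{(N_0\rtimes_{\rho_0}\mca{G},\id)\}$, so by Lemma \ref{magid} each $T_a(\rho_0)$ is a single point for $a\in\{o,nt\}$; and $(N_0\rtimes_{\rho_0}\mca{G},\id)\in\MAG^{bb}(\rho_0)$ with the analogous reduction giving $T_{bnt}(\rho_0)=\mr{pt}$. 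Alternatively, and perhaps cleaner to write, I would directly invoke Proposition \ref{gmsimiso}/Corollary \ref{simmon}: any isomorphism $N_0\rtimes_{\rho_0}\mca{G}\to N\rtimes_\rho\mca{G}$ with a prescribed base diffeomorphism respecting $\mca{F}$ is unique, so the marking $\varphi$ is itself a conjugacy with constant part forced, and $\varphi\mathrel{\ul{\sim_a}}\varphi$ trivially, giving the single point via Lemma \ref{parae}.

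The main obstacle I anticipate is purely bookkeeping: checking that $\varphi$ being an \emph{action groupoid} isomorphism (base map compatible with the submersions to $M$) is compatible with the uniqueness statement of Proposition \ref{connuni}, which is phrased for arbitrary morphisms with the same base map in $C^\infty(\mca{O}_\mca{G},\mca{O}_{\mca{G}^\prime})$ — one must verify $F_\varphi\in C^\infty(\mca{O}_{\rho_0},\mca{O}_\rho)$, but this is automatic by Lemma \ref{oosmhs}. A second point needing a line is that after the reduction to $F_\varphi=\id$ via Lemma \ref{magid}, the relevant action $\rho^{(F_\varphi,\id)}$ on $N_0$ may differ from $\rho_0$, so the two groupoids $N_0\rtimes_{\rho^{(\id,\id)}}\mca{G}=N_0\rtimes_{\rho_0}\mca{G}$ coincide and $\varphi$ lands in $\ul{\Aut}(N_0\rtimes_{\rho_0}\mca{G})$; then uniqueness kills it. I do not expect any genuinely hard analytic step — the content is entirely in assembling Proposition \ref{connuni}, Example \ref{gmnmnrgyy}, Lemma \ref{magid} and Lemma \ref{parae}.

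\begin{proof}
By Example \ref{gmnmnrgyy}, for any $y\in N_0$ we have $\Stab_{\rho_0}(y)=\varphi_{\rho_0}\bigl((N_0\rtimes_{\rho_0}\mca{G})^y_y\bigr)\subset\mca{G}^{\nu_0(y)}_{\nu_0(y)}$, which is discrete in $\mca{G}$ since $\mca{G}$ is a foliation groupoid; hence $\rho_0$ is locally free and $N_0\rtimes_{\rho_0}\mca{G}$ is a foliation groupoid. Moreover $t^{-1}(y)\simeq\mca{G}^{\nu_0(y)}$ is connected because $\mca{G}$ is $s$-connected (the inversion diffeomorphism identifies $\mca{G}^{\nu_0(y)}$ with $\mca{G}_{\nu_0(y)}$), so $N_0\rtimes_{\rho_0}\mca{G}$ is an $s$-connected foliation groupoid.

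By Lemma \ref{magid} it suffices to show $T_a(\rho_0)\simeq\MAG_{\id}(\rho_0)/\ul{\sim_a}$ (resp.\ $\MAG^{bb}_{\id}(\rho_0)/\ul{\sim_{bnt}}$) is a point, so let $(N\rtimes_\rho\mca{G},\varphi)\in\MAG_{\id}(\rho_0)$. Then $N=N_0$, $F_\varphi=\id$, and in particular $\nu=\nu_0F_\varphi^{-1}=\nu_0$, so $\varphi\in\ul{\Aut}(N_0\rtimes_{\rho_0}\mca{G})$ with $F_\varphi=\id$ and $\rho=\rho_0$. Now apply Proposition \ref{connuni} to the $s$-connected foliation groupoid $N_0\rtimes_{\rho_0}\mca{G}$ and the foliation groupoid $N_0\rtimes_{\rho_0}\mca{G}$: the map $b\colon\Hom(N_0\rtimes_{\rho_0}\mca{G},N_0\rtimes_{\rho_0}\mca{G})\to C^\infty(\mca{O}_{\rho_0},\mca{O}_{\rho_0})$ is injective. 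Since $F_\varphi=\id=F_{\id}$, we conclude $\varphi=\id$.

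Therefore $\MAG_{\id}(\rho_0)=\{(N_0\rtimes_{\rho_0}\mca{G},\id)\}$ and $\MAG^{bb}_{\id}(\rho_0)=\{(N_0\rtimes_{\rho_0}\mca{G},\id)\}$, so $T_o(\rho_0)$, $T_{nt}(\rho_0)$ and $T_{bnt}(\rho_0)$ are each a point. Equivalently, for any $(N\rtimes_\rho\mca{G},\varphi)\in\MAG(\rho_0)$ (resp.\ $\MAG^{bb}(\rho_0)$), the marking $\varphi$ is itself a conjugacy (being an isomorphism of action groupoids determined by its base map, cf.\ Corollary \ref{simmon}), and $\varphi\mathrel{\ul{\sim_a}}\varphi$; hence $T_a(\rho_0)=\mr{pt}$ by Lemma \ref{parae}.
\end{proof}
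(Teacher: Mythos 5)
There is a genuine gap at the key step. After reducing via Lemma \ref{magid} to $(N_0\rtimes_\rho\mca{G},\varphi)\in\MAG_{\id}(\rho_0)$, you assert ``$\nu=\nu_0$, so $\varphi\in\ul{\Aut}(N_0\rtimes_{\rho_0}\mca{G})$ \ldots and $\rho=\rho_0$.'' But $\nu=\nu_0$ does not give $\rho=\rho_0$: distinct actions on the same map $\nu_0$ are exactly what $\MAG_{\id}(\rho_0)$ parametrizes (the underlying manifolds $N_0\times_{\nu_0,t}\mca{G}$ coincide, but the source maps $s(y,g)=\rho(y,g)$ and $s(y,g)=\rho_0(y,g)$ differ). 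Consequently $\id$ is not a morphism $N_0\rtimes_{\rho_0}\mca{G}\to N_0\rtimes_\rho\mca{G}$ unless $\rho=\rho_0$, so you cannot invoke the injectivity of $b$ in Proposition \ref{connuni} to compare $\varphi$ with $\id$; the argument is circular at precisely the point where the content lies. The fallback you mention, Corollary \ref{simmon}, is also unavailable, since it requires $s$-\emph{simply} connectedness, while here $\mca{G}$ is only $s$-connected.

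The fix — and the paper's actual proof — is to apply Proposition \ref{connuni} not to endomorphisms of the action groupoid but to the two cocycles $\varphi_\rho\varphi$ and $\varphi_{\rho_0}$ in $\Hom(N_0\rtimes_{\rho_0}\mca{G},\mca{G})$: both have base map $\nu_0$, the source $N_0\rtimes_{\rho_0}\mca{G}$ is $s$-connected because $\mca{G}$ is, and the \emph{target} $\mca{G}$ is a foliation groupoid by hypothesis, so $\varphi_\rho\varphi=\varphi_{\rho_0}$. This says $\varphi$ is a semiconjugacy with constant part $\id_\mca{G}$; the same argument applied to $\varphi^{-1}$ (using $\varphi_{\rho_0}\varphi^{-1}=\varphi_\rho$) shows it is a conjugacy, whence $(N\rtimes_\rho\mca{G},\varphi)\mathrel{\ul{\sim_a}}(N_0\rtimes_{\rho_0}\mca{G},\id)$ for all $a\in\{o,nt,bnt\}$ and no reduction via Lemma \ref{magid} is even needed. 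Note that with this route your preliminary step (that $\rho_0$ is locally free, so $N_0\rtimes_{\rho_0}\mca{G}$ is itself a foliation groupoid) is correct but superfluous: what matters is that the target $\mca{G}$ of the cocycles is a foliation groupoid. Your intermediate claim $\MAG_{\id}(\rho_0)=\{(N_0\rtimes_{\rho_0}\mca{G},\id)\}$ is in fact true, but only as a consequence of the identity $\varphi_\rho\varphi=\varphi_{\rho_0}$ that your write-up skips.
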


\begin{proof}
Let $(N\rtimes_\rho\mca{G},\varphi)\in\MAG(\rho_0)$. 
Then $\varphi_\rho\varphi$, $\varphi_{\rho_0}\in\Hom(N_0\rtimes_{\rho_0}\mca{G},\mca{G})$ have the same base map: $F_{\varphi_\rho\varphi}=\nu F_\varphi=\nu_0=F_{\varphi_{\rho_0}}$. $N_0\rtimes_{\rho_0}\mca{G}$ is $s$-connected since $\mca{G}$ is $s$-connected. $\mca{G}$ is a foliation groupoid. Hence $\varphi_\rho\varphi=\varphi_{\rho_0}$ by Proposition \ref{connuni}. So $\varphi$ is a semiconjugacy. $\varphi^{-1}$ is also a semiconjugacy since $\varphi_{\rho_0}\varphi^{-1}=\varphi_\rho$. Therefore $\varphi$ is a conjugacy and $(N\rtimes_\rho\mca{G},\varphi)\mathrel{\ul{\sim_a}}(N_0\rtimes_{\rho_0}\mca{G},\id)$ for all $a\in\left\{o,nt,bnt\right\}$. 
\end{proof}

\begin{prop}\label{simconfolbij}
Let $\mca{G}\rightrightarrows M$ be an $s$-simply connected Lie groupoid and $\mca{G}^\prime\rightrightarrows M^\prime$ be a foliation groupoid. Then 
\begin{align*}
b\colon\Hom(\mca{G},\mca{G}^\prime)&\to C^\infty(\mca{O}_\mca{G},\mca{O}_{\mca{G}^\prime})\\
\varphi&\mapsto F_\varphi
\end{align*}
is bijective. 
\end{prop}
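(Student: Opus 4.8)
By Proposition \ref{connuni} the map $b$ is already injective, so the only task is surjectivity. The plan is to take $F\in C^\infty(\mca{O}_\mca{G},\mca{O}_{\mca{G}^\prime})$ and construct a morphism $\varphi\colon\mca{G}\to\mca{G}^\prime$ with $F_\varphi=F$. The idea is to factor through the monodromy groupoids. Since $\mca{G}^\prime$ is a foliation groupoid, $\mca{F}_{\mca{G}^\prime}$ is a regular $C^\infty$ foliation of $M^\prime$ by Proposition \ref{folgconcorf}, and by Proposition \ref{fgmonlod} there is a canonical local diffeomorphism $\varphi^\prime\colon\Mon(\mca{F}_{\mca{G}^\prime})\to\mca{G}^\prime$ with $F_{\varphi^\prime}=\id$. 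So it suffices to build a morphism $\mca{G}\to\Mon(\mca{F}_{\mca{G}^\prime})$ over $F$ and postcompose with $\varphi^\prime$.

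\textbf{Constructing the morphism into $\Mon(\mca{F}_{\mca{G}^\prime})$.} First I would check that $F$ maps each connected component $L$ of an orbit of $\mca{G}$ (i.e. each $L\in\mca{F}_\mca{G}$) into a leaf of $\mca{F}_{\mca{G}^\prime}$: indeed $F(O)$ is contained in some orbit $O^\prime$ of $\mca{G}^\prime$, $F\colon O\to O^\prime$ is continuous, hence $F(L)$ is connected and lies in a connected component $L^\prime$ of $O^\prime$, i.e. $L^\prime\in\mca{F}_{\mca{G}^\prime}$; moreover $F\colon L\to L^\prime$ is $C^\infty$ by Lemma \ref{nlnmcinf}. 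Now define, for $g\in\mca{G}$ with $x=s(g)$, a path as follows. Since $\mca{G}$ is $s$-simply connected, $\mca{G}_x$ is simply connected, so $t\colon\mca{G}_x\to O$ (for $x\in O\in\mca{O}_\mca{G}$) is, after restricting to the identity component $\mca{G}_0$, a covering of the leaf $L$ through $x$, with $\mca{G}_x$ identified with the universal cover of $L$; choose a path $c\colon I\to\mca{G}_x$ from $1_x$ to $g$ and set $\gamma=tc\colon I\to L\subset M$. Then $F\gamma\colon I\to L^\prime\subset M^\prime$ is a continuous path in a leaf of $\mca{F}_{\mca{G}^\prime}$, so $[F\gamma]\in\Mon(\mca{F}_{\mca{G}^\prime})$. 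I would then show $[F\gamma]$ depends only on $g$ and not on $c$: two choices of $c$ are homotopic rel endpoints in $\mca{G}_x$ (simple connectivity), $t$ pushes this to a homotopy rel endpoints in $L$, and $F$ pushes that to a homotopy rel endpoints in $L^\prime$, so the classes in $\Mon(\mca{F}_{\mca{G}^\prime})$ agree. Call the resulting map $\psi\colon\mca{G}\to\Mon(\mca{F}_{\mca{G}^\prime})$, $\psi(g)=[F\gamma]$.

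\textbf{Checking $\psi$ is a $C^\infty$ morphism.} Multiplicativity: for composable $g_1,g_2$ with $s(g_1)=t(g_2)=x_1$, $s(g_2)=x_2$, pick paths realizing $g_2\in\mca{G}_{x_2}$ and $g_1g_2\in\mca{G}_{x_2}$; one checks a path to $g_1g_2$ can be taken as (path to $g_2$ in $\mca{G}_{x_2}$) followed by the left translate $L_{g_1}$ applied to a path to $g_1$ in $\mca{G}_{x_1}$, so that under $t$ and then $F$ one gets a concatenation in $M^\prime$, giving $\psi(g_1g_2)=\psi(g_1)\psi(g_2)$ in $\Mon(\mca{F}_{\mca{G}^\prime})$; the base-map and unit compatibilities are immediate from the construction ($t^\prime[F\gamma]=F\gamma(1)=F(t(g))$, $s^\prime[F\gamma]=F(x)$, $\psi(1_x)=[F(x)]=1_{F(x)}$). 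Smoothness of $\psi$ is the main technical point: one must show $\psi$ is $C^\infty$ in a neighborhood of each $g\in\mca{G}$. Here I would use a local bisection $\sigma$ through $g$ to trivialize, reduce to a statement about how $[F\gamma]$ varies as the endpoint $x=s(g)$ and the leaf direction vary, and compare with the explicit chart description of $\Mon(\mca{F}_{\mca{G}^\prime})$ centered at $\psi(g)$ from the proof of Proposition \ref{monts1lgm} — this is essentially the same local analysis (holonomy + short geodesic homotopies) as in the proof of Proposition \ref{monfmonfmb}, applied to the $C^\infty$ map $F$ on leaves together with the $C^\infty$ structure maps of $\mca{G}$. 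Granting smoothness, $\psi\in\Hom(\mca{G},\Mon(\mca{F}_{\mca{G}^\prime}))$ with $F_\psi=F$, and then $\varphi:=\varphi^\prime\psi\in\Hom(\mca{G},\mca{G}^\prime)$ satisfies $F_\varphi=F_{\varphi^\prime}F_\psi=F$, proving surjectivity.

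\textbf{Main obstacle.} The only nontrivial step is verifying that $\psi$ (equivalently $\varphi$) is $C^\infty$; everything else — well-definedness, multiplicativity, base-map and unit compatibility — is formal monodromy bookkeeping. I expect to handle smoothness by the local-chart argument sketched above, reusing the machinery already developed for $\Mon(\mca{F})$ in Section \ref{1010} (charts centered at a point, Lemma \ref{mfmgfedfxy} on short geodesics in leaves, and the computation in the proof of Proposition \ref{monfmonfmb}), now with the "curve-valued" data coming from paths in the $s$-fibers of the $s$-simply connected groupoid $\mca{G}$.
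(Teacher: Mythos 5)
Your proposal is correct and is essentially the paper's argument: injectivity is Proposition \ref{connuni}, and surjectivity is obtained by sending $g$ to the class of $F\circ t\circ c_g$ in $\Mon(\mca{F}_{\mca{G}^\prime})$ (well-defined by $s$-simple connectedness, multiplicative by concatenating suitably translated paths) and composing with the canonical morphism $\Mon(\mca{F}_{\mca{G}^\prime})\to\mca{G}^\prime$ of Proposition \ref{fgmonlod}. The only real difference is how smoothness is handled: the paper factors your map as $(Ft)_*\circ\psi$, where $\psi\colon\mca{G}\to\Mon(\mca{F})$ (with $\mca{F}$ the foliation of $\mca{G}$ by $s$-fibers) is smooth by inspection of the charts of $\Mon(\mca{F})$ and $(Ft)_*$ is smooth by the proof of Proposition \ref{monfmonfmb}, so the local holonomy/short-geodesic analysis you plan to redo with local bisections gets reused wholesale instead.
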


\begin{proof}
The map $b$ is injective by Proposition \ref{connuni}. We prove surjectivity. Let $F\in C^\infty(\mca{O}_\mca{G},\mca{O}_{\mca{G}^\prime})$. 

Let $g\in\mca{G}$ and $x=s(g)\in M$. Since $\mca{G}_x$ is simply connected, there exists a continuous curve $c\colon I\to\mca{G}_x$ such that $c(0)=1_x$, $c(1)=g$, which is unique up to homotopy relative to endpoints. Let $x\in O\in\mca{O}_\mca{G}$, $F(x)\in O^\prime\in\mca{O}_{\mca{G}^\prime}$. We have 
\begin{equation*}
\begin{tikzcd}
\mca{G}_x\ar[d,"t"']&(\mca{G}^\prime)_{F(x)}\ar[d,"t^\prime"]\\
O\ar[r,"F"']&O^\prime, 
\end{tikzcd}
\end{equation*}
where $t^\prime\colon(\mca{G}^\prime)_{F(x)}\to O^\prime$ is a covering. There exists a unique continuous map $\wt{c}\colon I\to(\mca{G}^\prime)_{F(x)}$ such that $t^\prime\wt{c}=Ftc$ and $\wt{c}(0)=1_{F(x)}$. Then $\varphi(g)=\wt{c}(1)$ does not depend on the choice of $c$ by the homotopy lifting property. We must show that $\varphi\colon\mca{G}\to\mca{G}^\prime$ is $C^\infty$, but it was not easy, so we will construct $\varphi$ in a different way. 

Since $s\colon\mca{G}\to M$ is a submersion, $\mca{F}=\left\{\mca{G}_x\ \middle|\ x\in M\right\}$ is a $C^\infty$ foliation of $\mca{G}$ (note that $\mca{G}$ may not be second countable or Hausdorff). We can define $\Mon(\mca{F})\rightrightarrows\mca{G}$ and show that it is a Lie groupoid (but the base manifold may not be second countable or Hausdorff). 

Since $\mca{G}$ is $s$-connected, $Ft\colon\mca{G}\to M^\prime$ is a leaf preserving $C^\infty$ map with respect to $\mca{F}$ and $\mca{F}_{\mca{G}^\prime}$. Since $\mca{G}$ may not be second countable or Hausdorff, we cannot apply Proposition \ref{monfmonfmb} directly, but the same proof shows that $Ft$ induces a morphism $(Ft)_*\colon\Mon(\mca{F})\to\Mon(\mca{F}_{\mca{G}^\prime})$ with base map $Ft$. 

For $g\in\mca{G}$, there exists a continuous map $c_g\colon I\to\mca{G}_{s(g)}$ such that $c_g(1)=g$ and $c_g(0)=1_{s(g)}$. Define 
\begin{align*}
\psi\colon\mca{G}&\to\Mon(\mca{F})\\
g&\mapsto[c_g]. 
\end{align*}
Since $\mca{G}$ is $s$-simply connected, $\psi$ is well-defined. 

\begin{claim}
$\psi$ is $C^\infty$. 
\end{claim}

\begin{proof}
This follows from the definition of charts of $\Mon(\mca{F})$. See the proof of Proposition \ref{monts1lgm}. 
\end{proof}

We will prove that the composition 
\begin{equation*}
\mca{G}\xrightarrow{\psi}\Mon(\mca{F})\xrightarrow{(Ft)_*}\Mon(\mca{F}_{\mca{G}^\prime})
\end{equation*}
is a morphism with base map $F$. Note that $\mca{G}\rightrightarrows M$ acts from the right on $ss\colon\Mon(\mca{F})\xrightarrow{s}\mca{G}\xrightarrow{s}M$ by 
\begin{align*}
\Mon(\mca{F})\times_{ss,t}\mca{G}&\to\Mon(\mca{F})\\
([\gamma],g)&\mapsto[\gamma g], 
\end{align*}
where 
\begin{align*}
(\gamma g)\colon I&\to\mca{G}_{s(g)}\\
t&\mapsto\gamma(t)g. 
\end{align*}

\begin{claim}
$\psi(g_1g_2)=(\psi(g_1)g_2)\psi(g_2)$ for all $(g_1,g_2)\in\mca{G}^{(2)}$. 
\end{claim}

\begin{proof}
Define $c\colon I\to\mca{G}^{s(g_2)}$ by 
\begin{equation*}
c(t)=
\begin{cases}
c_{g_2}(2t)&0\leq t\leq\frac{1}{2}\\
c_{g_1}(2t-1)g_2&\frac{1}{2}\leq t\leq1. 
\end{cases}
\end{equation*}
Then $c$ is continuous and $c(1)=g_1g_2$, $c(0)=1_{s(g_2)}$. We can take $c$ as $c_{g_1g_2}$. 
\end{proof}

Therefore $(Ft)_*\psi(g_1g_2)=(Ft)_*\psi(g_1)(Ft)_*\psi(g_2)$ for all $(g_1,g_2)\in\mca{G}^{(2)}$. We have $t\psi=\id$, $s\psi=1_s$, hence 
\begin{equation*}
t(Ft)_*\psi=Ftt\psi=Ft,\quad s(Ft)_*\psi=Fts\psi=Fs. 
\end{equation*}
So $(Ft)_*\psi\colon\mca{G}\to\Mon(\mca{F}_{\mca{G}^\prime})$ is a morphism with base map $F$. 

There exists a morphism $\theta\colon\Mon(\mca{F}_{\mca{G}^\prime})\to\mca{G}^\prime$ such that $F_\theta=\id$ by Proposition \ref{fgmonlod}. Then 
\begin{equation*}
\mca{G}\xrightarrow{\psi}\Mon(\mca{F})\xrightarrow{(Ft)_*}\Mon(\mca{F}_{\mca{G}^\prime})\xrightarrow{\theta}\mca{G}^\prime. 
\end{equation*}
is a morphism with base map $F$, hence $b$ is surjective. 
\end{proof}

\begin{rem}
Another proof might be given by using Lie algebroids and integrating a morphism of Lie algebroids to a morphism of Lie groupoids. 
\end{rem}

\subsection{Orbitwise homotopies correspond to $nt$-homotopies}\label{morlieorb}
\begin{lem}\label{gmgmpmigt}
Let $\mca{G}\rightrightarrows M$ be a Lie groupoid, $\mca{G}^\prime\rightrightarrows M^\prime$ be a foliation groupoid, $F$, $F^\prime\in C^\infty(\mca{O}_\mca{G},\mca{O}_{\mca{G}^\prime})$ and $H\colon M\times I\to M^\prime$ be an orbitwise homotopy between $F$ and $F^\prime$. Then there exists a unique continuous map $P\colon M\times I\to\mca{G}^\prime$ such that: 
\begin{itemize}
\setlength\itemsep{0em}
\item $P(\cdot,t)\in C^\infty(M,\mca{G}^\prime)$ for all $t\in I$
\item $t^\prime P(\cdot,t)=F$ for all $t\in I$
\item $P(\cdot,0)=1_F$
\item $s^\prime P=H$. 
\end{itemize}
\end{lem}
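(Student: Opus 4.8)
The statement asks us to lift an orbitwise homotopy $H\colon M\times I\to M'$ (the base-level data) to a homotopy $P\colon M\times I\to \mca{G}'$ at the groupoid level, in a way that is fiberwise-over-$t'$ constant (equal to $F$), starts at the identity section, and covers $H$ via $s'$. Since $\mca{G}'$ is a foliation groupoid, for each $x\in M$ the restriction $t'\colon (\mca{G}')_{F(x)}\to O'$ is a covering map onto the orbit $O'\ni F(x)$ (by Proposition~\ref{isoorb}(4), using that $(\mca{G}')^{F(x)}_{F(x)}$ is discrete). The curve $t\mapsto H(x,t)$ lies in $O'$ and starts at $F(x)$; lifting it through this covering, starting at $1_{F(x)}$, produces the unique continuous path $t\mapsto P(x,t)$ in $(\mca{G}')_{F(x)}$ with $t' P(x,\cdot)=F(x)$, $s'P(x,\cdot)=H(x,\cdot)$, and $P(x,0)=1_{F(x)}$. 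This settles uniqueness and pointwise existence immediately; the content is smoothness of $P(\cdot,t)$ in $x$ and joint continuity of $P$.

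\textbf{Key steps.} First I would make precise the covering-space setup: for $x\in O\in\mca{O}_\mca{G}$ with $F(O)\subset O'\in\mca{O}_{\mca{G}'}$, the map $F\colon O\to O'$ is continuous (indeed $C^\infty$) by the definition of $C^\infty(\mca{O}_\mca{G},\mca{O}_{\mca{G}'})$, and $t'\colon(\mca{G}')_{F(x)}\to O'$ is a covering. Define $P(x,t)$ as the endpoint-at-time-$t$ of the lift of $s\mapsto H(x,s)$ starting at $1_{F(x)}$; uniqueness of $P$ as a map $M\times I\to\mca{G}'$ with the four listed properties follows from uniqueness of path-lifting. Next I would prove joint continuity of $P$: this is a parametrized path-lifting statement. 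Fix $(x_0,t_0)$; since lifts depend continuously on the path in the compact-open topology for covering maps, and $H$ is continuous, $P$ is continuous on a neighborhood — but the subtlety is that the ``covering'' $t'\colon(\mca{G}')_{F(x)}\to O'$ varies with $x$, and the orbit $O'$ carries the immersed (not subspace) topology. To handle this I would work locally in foliation charts of $\mca{F}_{\mca{G}'}$ (which is a regular foliation by Proposition~\ref{folgconcorf}), using that near any point of $\mca{G}'$ the target map $t'$ looks like a projection and that $H(\cdot,t)$ being $C^\infty$ into $M'$ combined with orbit-continuity lets one read off the plaque coordinate locally; then $P$ is obtained by concatenating local lifts across a partition $0=t_0<\cdots<t_k=1$ of $I$ chosen (by compactness, uniformly in $x$ near $x_0$) so that each $H(\{x\}\times[t_{i-1},t_i])$ stays in one foliation chart. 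Finally, smoothness of $P(\cdot,t)$ in $x$: in such a chart the lift is given by a fixed $C^\infty$ local section of $t'$ composed with $H(\cdot,s)$, each piece $C^\infty$ in $x$, so $P(\cdot,t)$ is $C^\infty$; that $t'P(\cdot,t)=F$ holds because the lift is in $(\mca{G}')_{F(x)}$ for each $x$, that $P(\cdot,0)=1_F$ is the chosen starting point, and $s'P=H$ is the covering equation.

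\textbf{Main obstacle.} The hard part is the parametrized/uniform path-lifting with values in an immersed submanifold whose ambient environment (the base $M'$) need not even be where the relevant topology lives: one must lift paths in the orbit $O'$ through $t'\colon(\mca{G}')_{F(x)}\to O'$ while the basepoint $F(x)$ — hence the entire covering space and its deck structure — moves with $x$. The clean way around this is exactly the chart argument sketched above: reduce everything to the statement that near a point $g'\in\mca{G}'$ there are a foliation chart $U'$ of $\mca{G}'$ and compatible foliation charts of $\mca{F}_{\mca{G}'}$ in which $t'$, $s'$ are coordinate projections, so that lifting amounts to choosing the plaque containing $1_{F(x)}$ and following $H$ inside it, and to patch these lifts over a partition of $I$ produced by compactness of $I$ together with continuity of $H$. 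Provided one is willing to invoke (as the excerpt does elsewhere, e.g.\ in the proof of Proposition~\ref{monfmonfmb}) the standard local structure of regular foliations and of foliation groupoids, the remaining verifications of the four bulleted properties are routine.
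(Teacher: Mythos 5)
Your construction is correct in substance, but it is organized differently from the paper's proof, and it is worth comparing the two. The paper does not run the parametrized path-lifting by hand: it sets $\wt{P}(x,t)=[H(x,t\,\cdot\,)]\in\Mon(\mca{F}_{\mca{G}^\prime})$, using that $\mca{F}_{\mca{G}^\prime}$ is a regular foliation (Proposition \ref{folgconcorf}), records that $\wt{P}$ is continuous, $\wt{P}(\cdot,t)$ is $C^\infty$, $s\wt{P}(\cdot,t)=F$, $\wt{P}(\cdot,0)=1_F$ and $t\wt{P}=H$, and then obtains $P$ by composing with the canonical morphism $\varphi\colon\Mon(\mca{F}_{\mca{G}^\prime})\to\mca{G}^\prime$ of Proposition \ref{fgmonlod} and with inversion; uniqueness is proved exactly as you do, via unique path lifting through the covering $s^\prime\colon(\mca{G}^\prime)^{F(x)}\to O^\prime$. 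So the paper outsources all the chart-level work to the already-established smooth structure on $\Mon(\mca{F}_{\mca{G}^\prime})$ (the charts built in the proof of Proposition \ref{monts1lgm}, and the parametrized-family arguments of Proposition \ref{monfmonfmb} together with Lemma \ref{mfmgfedfxy}), whereas your plan redoes that work directly in $\mca{G}^\prime$: partition $I$, pick bi-foliated charts in which $t^\prime$, $s^\prime$ are projections, and patch local lifts given by local sections. Your route is legitimate and in effect supplies the verification that the paper compresses into the assertion that $\wt{P}$ is continuous and leafwise $C^\infty$; the price is that the steps you call routine are exactly the delicate ones (existence of charts in which both $t^\prime$ and $s^\prime$ are simultaneously normalized, which uses that $(t^\prime,s^\prime)$ is an immersion for a foliation groupoid by Proposition \ref{isoorb}, a choice of partition of $I$ that is uniform in $x$ near a given point, and the inductive fact that the starting point $P(\cdot,t_{i-1})$ of each stage is already $C^\infty$), and these are the same estimates the monodromy-groupoid machinery was built to package. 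One small slip to fix: you lift through $t^\prime\colon(\mca{G}^\prime)_{F(x)}\to O^\prime$ but then claim $t^\prime P(x,\cdot)=F(x)$ and $s^\prime P(x,\cdot)=H(x,\cdot)$; a path in the source fiber $(\mca{G}^\prime)_{F(x)}$ has constant $s^\prime$, not constant $t^\prime$. Either lift $H(x,\cdot)$ through $s^\prime\colon(\mca{G}^\prime)^{F(x)}\to O^\prime$ directly, or lift through $t^\prime$ on the source fiber and compose with inversion at the end (which is precisely the final $\cdot^{-1}$ in the paper's formula for $P$).
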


\begin{proof}
$\mca{F}_{\mca{G}^\prime}$ is a $C^\infty$ regular foliation by Proposition \ref{folgconcorf}. There exists a unique $\varphi\in\Hom(\Mon(\mca{F}_{\mca{G}^\prime}),\mca{G}^\prime)$ such that $F_\varphi=\id$ by 1 in Proposition \ref{fgmonlod}. Let 
\begin{align*}
\wt{P}\colon M\times I&\to\Mon(\mca{F}_{\mca{G}^\prime})\\
(x,t)&\mapsto[H(x,t\ \cdot\ )]. 
\end{align*}
\begin{equation*}
\begin{tikzpicture}[every label/.append style={font=\scriptsize},matrix of math nodes,decoration={markings,mark=at position0.5with{\arrow{>}}}]
\node(1)[label=above:F(x)]{};
\node(2)[below=.5 of 1,label=below:H(x{,}t)]{};
\draw[postaction={decorate}](1.center)--node[label=right:H(x{,}t\ \cdot\ )]{}(2.center);
\foreach\x in{1,2}\filldraw(\x)circle(1pt);
\end{tikzpicture}
\end{equation*}
Then: 
\begin{itemize}
\setlength\itemsep{0em}
\item $\wt{P}$ is continuous
\item $\wt{P}(\cdot,t)\in C^\infty(M,\Mon(\mca{F}_{\mca{G}^\prime}))$ for all $t\in I$
\item $s\wt{P}(\cdot,t)=F$ for all $t\in I$
\item $\wt{P}(\cdot,0)=1_F$
\item $t\wt{P}=H$. 
\end{itemize}
Define 
\begin{equation*}
P\colon M\times I\xrightarrow{\wt{P}}\Mon(\mca{F}_{\mca{G}^\prime})\xrightarrow{\varphi}\mca{G}^\prime\xrightarrow{\cdot^{-1}}\mca{G}^\prime. 
\end{equation*}
\begin{equation*}
\begin{tikzpicture}[every label/.append style={font=\scriptsize},matrix of math nodes,decoration={markings,mark=at position0.5with{\arrow{>}}}]
\node(1)[label=above:F(x)]{};
\node(2)[below=.5 of 1,label=below:H(x{,}t)]{};
\draw[postaction={decorate}](2.center)--node[label=right:P(x{,}t)]{}(1.center);
\foreach\x in{1,2}\filldraw(\x)circle(1pt);
\end{tikzpicture}
\end{equation*}
Then: 
\begin{itemize}
\setlength\itemsep{0em}
\item $P$ is continuous
\item $P(\cdot,t)\in C^\infty(M,\mca{G}^\prime)$ for all $t\in I$
\item $t^\prime P(\cdot,t)=F$ for all $t\in I$
\item $P(\cdot,0)=1_F$
\item $s^\prime P=H$. 
\end{itemize}

We show the uniqueness of $P$. Let $x\in M$ and $F(x)\in O^\prime\in\mca{O}_{\mca{G}^\prime}$. Then $s^\prime\colon(\mca{G}^\prime)^{F(x)}\to O^\prime$ is a covering and $s^\prime P(x,t)=H(x,t)$, so $P(x,\cdot)$ is a lift of $H(x,\cdot)$ which starts at $1_{F(x)}$. Hence $P$ is unique. 
\end{proof}

\begin{prop}\label{homcon}
Let $\mca{G}\rightrightarrows M$ be a Lie groupoid, $\mca{G}^\prime\rightrightarrows M^\prime$ be a foliation groupoid, $\varphi\in\Hom(\mca{G},\mca{G}^\prime)$, $F^\prime\in C^\infty(\mca{O}_\mca{G},\mca{O}_{\mca{G}^\prime})$ and $H$ be an orbitwise homotopy between $F_\varphi$ and $F^\prime$. Then there exist a unique $\varphi^\prime\in\Hom(\mca{G},\mca{G}^\prime)$ and a unique $nt$-homotopy $P$ between $\varphi$ and $\varphi^\prime$ such that $H=s^\prime P$. 
\end{prop}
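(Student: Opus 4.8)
The plan is to use Lemma \ref{gmgmpmigt} to produce the $nt$-homotopy $P$ from the orbitwise homotopy $H$, and then let $\varphi^\prime$ be determined by $P$ via the formula $\varphi^\prime = \varphi P(\cdot,1)$. Concretely, apply Lemma \ref{gmgmpmigt} with $F = F_\varphi$ and $F^\prime$ as given; this yields a unique continuous $P\colon M\times I\to\mca{G}^\prime$ with $P(\cdot,t)\in C^\infty(M,\mca{G}^\prime)$ for all $t$, $t^\prime P(\cdot,t)=F_\varphi$ for all $t$, $P(\cdot,0)=1_{F_\varphi}$, and $s^\prime P = H$. Then set $\varphi^\prime = \varphi P(\cdot,1)\in\Hom(\mca{G},\mca{G}^\prime)$, using Lemma \ref{hnhncnnb} which guarantees $\varphi P(\cdot,1)$ is indeed a morphism with base map $F_{\varphi P(\cdot,1)} = s^\prime P(\cdot,1) = H(\cdot,1) = F^\prime$.

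The next step is to verify that $P$ is an $nt$-homotopy between $\varphi$ and $\varphi^\prime$ in the sense of Definition \ref{nthomptppp}. The four conditions there are: $P(\cdot,t)\in C^\infty(M,\mca{G}^\prime)$ (given by Lemma \ref{gmgmpmigt}); $t^\prime P(\cdot,t)=F_\varphi$ (given); $P(\cdot,0)=1_{F_\varphi}$ (given); and $\varphi P(\cdot,1)=\varphi^\prime$, which holds by our definition of $\varphi^\prime$. So $P$ is an $nt$-homotopy between $\varphi$ and $\varphi^\prime$ with $s^\prime P = H$, as required. The relation $\varphi\sim_{nt}\varphi^\prime$ follows immediately.

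For uniqueness: suppose $\varphi^{\prime\prime}\in\Hom(\mca{G},\mca{G}^\prime)$ and $Q$ is an $nt$-homotopy between $\varphi$ and $\varphi^{\prime\prime}$ with $s^\prime Q = H$. Then $Q$ satisfies $t^\prime Q(\cdot,t) = F_\varphi$, $Q(\cdot,0)=1_{F_\varphi}$, $s^\prime Q = H$, and $Q(\cdot,t)\in C^\infty(M,\mca{G}^\prime)$, so by the uniqueness clause of Lemma \ref{gmgmpmigt} we get $Q = P$. In particular $\varphi^{\prime\prime} = \varphi Q(\cdot,1) = \varphi P(\cdot,1) = \varphi^\prime$. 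This settles both uniqueness assertions at once.

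I do not expect a serious obstacle here: the proposition is essentially a repackaging of Lemma \ref{gmgmpmigt} together with the observation (from Lemma \ref{hnhncnnb}) that $\varphi P(\cdot,1)$ is a morphism with the correct base map. The only points requiring a small amount of care are checking that the hypotheses of Lemma \ref{gmgmpmigt} are met verbatim (in particular that $H$ is genuinely an orbitwise homotopy between $F_\varphi$ and $F^\prime$, which is the hypothesis) and that the base map of $\varphi P(\cdot,1)$ is $F^\prime$ rather than something merely orbitwise homotopic to it — but this is exactly $s^\prime P(\cdot,1) = H(\cdot,1) = F^\prime$, so it is immediate. No nontrivial estimate or construction beyond Lemma \ref{gmgmpmigt} is needed.
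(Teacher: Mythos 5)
Your proof is correct and is essentially the paper's own argument: apply Lemma \ref{gmgmpmigt} to $F_\varphi$ and $H$ to get the unique $P$, set $\varphi^\prime=\varphi P(\cdot,1)$, and deduce uniqueness of $(\varphi^\prime,P)$ from the uniqueness clause of that lemma. The paper's proof is just a terser version of the same construction.
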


\begin{proof}
By Lemma \ref{gmgmpmigt} there exists a unique continuous $P\colon M\times I\to\mca{G}^\prime$ such that: 
\begin{itemize}
\setlength\itemsep{0em}
\item $P(\cdot,t)\in C^\infty(M,\mca{G}^\prime)$ for all $t\in I$
\item $t^\prime P(\cdot,t)=F_\varphi$ for all $t\in I$
\item $P(\cdot,0)=1_{F_\varphi}$
\item $s^\prime P=H$. 
\end{itemize}
Let $\varphi^\prime=\varphi P(\cdot,1)$. 
\end{proof}

\begin{cor}\label{bggoogg}
The following statements hold: 
\begin{enumerate}
\item Let $\mca{G}\rightrightarrows M$ be an $s$-connected Lie groupoid and $\mca{G}^\prime\rightrightarrows M^\prime$ be a foliation groupoid. Then 
\begin{equation*}
b\colon\Hom(\mca{G},\mca{G}^\prime)/{\sim_o}\to C^\infty(\mca{O}_\mca{G},\mca{O}_{\mca{G}^\prime})/{\sim_o}
\end{equation*}
is injective. 
\item Let $\mca{G}\rightrightarrows M$ be an $s$-connected Lie groupoid, $N$, $N^\prime$ be $C^\infty$ manifolds, $\nu\colon N\to M$, $\nu^\prime\colon N^\prime\to M$ be $C^\infty$ maps and $\rho\in\mca{A}(\nu,\mca{G})$, $\rho^\prime\in\mca{A}_{LF}(\nu^\prime,\mca{G})$. Then 
\begin{equation*}
b\colon\ul{\Hom}(N\rtimes_\rho\mca{G},N^\prime\rtimes_{\rho^\prime}\mca{G})/\ul{\sim_o}\to\ul{C}^\infty(\mca{O}_\rho,\mca{O}_{\rho^\prime})/\ul{\sim_o}
\end{equation*}
is injective. 
\end{enumerate}
\end{cor}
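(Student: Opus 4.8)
The plan is to reduce the statement about action groupoid morphisms (part 2) to the statement about general Lie groupoid morphisms (part 1), then prove part 1 directly using the correspondence between orbitwise homotopies and $nt$-homotopies established in Proposition \ref{homcon}, together with the injectivity of $b$ on the level of morphisms from Proposition \ref{connuni}.

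For part 1, suppose $\varphi_1$, $\varphi_2\in\Hom(\mca{G},\mca{G}^\prime)$ satisfy $F_{\varphi_1}\sim_o F_{\varphi_2}$, witnessed by an orbitwise homotopy $H\colon M\times I\to M^\prime$ with $H(\cdot,0)=F_{\varphi_1}$, $H(\cdot,1)=F_{\varphi_2}$. Since $\mca{G}^\prime$ is a foliation groupoid, Proposition \ref{homcon} applies: there exist a unique $\varphi^\prime\in\Hom(\mca{G},\mca{G}^\prime)$ and a unique $nt$-homotopy $P$ between $\varphi_1$ and $\varphi^\prime$ with $s^\prime P=H$. By Lemma \ref{hnhnphipoh}, $\varphi_1 P(\cdot,t)$ is an orbitwise homotopy between $\varphi_1$ and $\varphi^\prime=\varphi_1 P(\cdot,1)$, so in particular $\varphi_1\sim_o\varphi^\prime$. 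On the other hand $F_{\varphi^\prime}=s^\prime P(\cdot,1)=H(\cdot,1)=F_{\varphi_2}$, so $\varphi^\prime$ and $\varphi_2$ are morphisms $\mca{G}\to\mca{G}^\prime$ with the same base map; since $\mca{G}$ is $s$-connected and $\mca{G}^\prime$ is a foliation groupoid, Proposition \ref{connuni} forces $\varphi^\prime=\varphi_2$. Hence $\varphi_1\sim_o\varphi_2$, proving $b$ is injective on $\ul\sim_o$-classes.

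For part 2, I would apply part 1 with $\mca{G}$ replaced by $N\rtimes_\rho\mca{G}$ and $\mca{G}^\prime$ replaced by $N^\prime\rtimes_{\rho^\prime}\mca{G}$. The hypothesis $\rho^\prime\in\mca{A}_{LF}(\nu^\prime,\mca{G})$ guarantees $N^\prime\rtimes_{\rho^\prime}\mca{G}$ is a foliation groupoid by Example \ref{gmnmnrgyy}, and $\rho\in\mca{A}(\nu,\mca{G})$ with $\mca{G}$ $s$-connected makes $N\rtimes_\rho\mca{G}$ $s$-connected by the same example. So given $\varphi_1$, $\varphi_2\in\ul\Hom(N\rtimes_\rho\mca{G},N^\prime\rtimes_{\rho^\prime}\mca{G})$ with $F_{\varphi_1}\mathrel{\ul\sim_o}F_{\varphi_2}$, part 1 (its proof, not just its statement) produces the intermediate morphism $\varphi^\prime=\varphi_1 P(\cdot,1)$ with $\varphi_1\sim_o\varphi^\prime=\varphi_2$; the orbitwise homotopy $H$ witnessing $F_{\varphi_1}\mathrel{\ul\sim_o}F_{\varphi_2}$ has all slices $H(\cdot,t)\in\ul C^\infty(N,N^\prime)$, hence $s^\prime P(\cdot,t)=H(\cdot,t)$ lands in $\ul C^\infty(N,N^\prime)$, and one checks (as in the claim in Definition \ref{raragpppp}, using $t^\prime P(\cdot,t)=F_{\varphi_1}$ which is also in $\ul C^\infty$) that the slices $\varphi_1 P(\cdot,t)$ lie in $\ul\Hom(N\rtimes_\rho\mca{G},N^\prime\rtimes_{\rho^\prime}\mca{G})$, so the homotopy realizes $\varphi_1\mathrel{\ul\sim_o}\varphi_2$.

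The main obstacle is the bookkeeping in part 2: I must verify that the $nt$-homotopy $P$ from Proposition \ref{homcon}, which a priori is just a homotopy of morphisms between the big groupoids, respects the ``underlined'' structure, i.e. that each $\varphi_1 P(\cdot,t)$ is an \emph{action groupoid} morphism (base map commuting with the $\nu$'s) and that the orbitwise homotopy it induces is an underlined one in the sense of Definition \ref{lgpcccffsimo}. This is where the hypothesis that $H$ is an \emph{underlined} orbitwise homotopy (all slices in $\ul C^\infty(N,N^\prime)$) gets used, via the identity $s^\prime P=H$ and the computation $\nu^\prime F_{\varphi_1 P(\cdot,t)}=\nu^\prime s^\prime P(\cdot,t)=\nu^\prime H(\cdot,t)=\nu$; everything else is a direct transcription of the argument for part 1.
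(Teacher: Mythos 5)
Your argument for part 1 is exactly the paper's proof: apply Proposition \ref{homcon} to the orbitwise homotopy $H$ to get $\varphi^\prime$ and an $nt$-homotopy $P$ with $s^\prime P=H$, identify $\varphi^\prime$ with the second morphism via Proposition \ref{connuni} since the base maps agree, and conclude with Lemma \ref{hnhnphipoh}. For part 2 the paper just says ``proved similarly,'' and your bookkeeping — $N^\prime\rtimes_{\rho^\prime}\mca{G}$ is a foliation groupoid and $N\rtimes_\rho\mca{G}$ is $s$-connected by Example \ref{gmnmnrgyy}, and $\nu^\prime F_{\varphi_1 P(\cdot,t)}=\nu^\prime H(\cdot,t)=\nu$ keeps the slices in $\ul{\Hom}$ — is precisely the intended verification, so your proposal is correct and matches the paper's approach.
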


\begin{proof}
1. Let $\varphi$, $\varphi^\prime\in\Hom(\mca{G},\mca{G}^\prime)$ be such that $F_\varphi\sim_oF_{\varphi^\prime}$. Let $H$ be an orbitwise homotopy between $F_\varphi$ and $F_{\varphi^\prime}$. By Proposition \ref{homcon} there exists a unique $\varphi^{\prime\prime}\in\Hom(\mca{G},\mca{G}^\prime)$ and a unique $nt$-homotopy $P$ between $\varphi$ and $\varphi^{\prime\prime}$ such that $s^\prime P=H$. Since $F_{\varphi^{\prime\prime}}=s^\prime P(\cdot,1)=H(\cdot,1)=F_{\varphi^\prime}$, we have $\varphi^{\prime\prime}=\varphi^\prime$ by Proposition \ref{connuni}. Hence $\varphi\sim_{nt}\varphi^\prime$, and $\varphi\sim_o\varphi^\prime$ by Lemma \ref{hnhnphipoh}. 

2. Proved similarly. 
\end{proof}

\begin{prop}
Let $\mca{G}\rightrightarrows M$ be a Lie groupoid, $\mca{G}^\prime\rightrightarrows M^\prime$ be a foliation groupoid, $\varphi$, $\varphi^\prime\in\Hom(\mca{G},\mca{G}^\prime)$ and $\eta$ be an orbitwise homotopy between $\varphi$ and $\varphi^\prime$. Then there exists a unique $nt$-homotopy $P$ between $\varphi$ and $\varphi^\prime$ such that $\eta(\cdot,t)=\varphi P(\cdot,t)$ for all $t\in I$. Hence 
\begin{equation*}
\left\{\text{$nt$-homotopies between $\varphi$ and $\varphi^\prime$}\right\}\simeq\left\{\text{orbitwise homotopies between $\varphi$ and $\varphi^\prime$}\right\}
\end{equation*}
and $\sim_o$ and $\sim_{nt}$ on $\Hom(\mca{G},\mca{G}^\prime)$ coincide. 
\end{prop}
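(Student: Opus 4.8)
The plan is to build the bijection fiberwise over each point of $M$ and then globalize. First I would fix $x \in M$ and $O \in \mca{O}_\mca{G}$ with $x \in O$, and let $O' \in \mca{O}_{\mca{G}'}$ be such that $F_\varphi(O) \subset O'$ (hence $F_{\varphi'}(O) \subset O'$ as well, since $\eta$ is an orbitwise homotopy and $F_{\eta(\cdot,t)}$ connects them inside $O'$). Since $\mca{G}'$ is a foliation groupoid, $(\mca{G}')^{F_\varphi(x)}_{F_\varphi(x)}$ is discrete, so by part 4 of Proposition \ref{isoorb} the source map $s' \colon (\mca{G}')^{F_\varphi(x)} \to O'$ is a covering map. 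This is the structural input that makes everything work.

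Next I would run the construction of Lemma \ref{gmgmpmigt} (or rather its proof via $\Mon(\mca{F}_{\mca{G}'})$ and the canonical morphism $\varphi_{\Mon} \colon \Mon(\mca{F}_{\mca{G}'}) \to \mca{G}'$ from part 1 of Proposition \ref{fgmonlod}) applied to the orbitwise homotopy $H := F_{\eta(\cdot,\cdot)} = s'\eta(\cdot,\cdot)\cdot(\text{endpoint data})$; more precisely I'd consider the orbitwise homotopy between $F_\varphi$ and $F_{\varphi'}$ given by the base maps of $\eta$. Lemma \ref{gmgmpmigt} produces a unique continuous $P \colon M \times I \to \mca{G}'$ with $P(\cdot,t) \in C^\infty(M,\mca{G}')$, $t'P(\cdot,t) = F_\varphi$, $P(\cdot,0) = 1_{F_\varphi}$, and $s'P = s'\eta(\cdot,\cdot)$. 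The key claim is then that $\varphi P(\cdot,t) = \eta(\cdot,t)$ for all $t \in I$. To see this, fix $g \in \mca{G}$ with $s(g) = x$; both $t \mapsto \varphi(g)P(s(g),t)^{-1}\cdot$ — actually more cleanly, both $t \mapsto \eta(g,t)$ and $t \mapsto (\varphi P(\cdot,t))(g) = P(t(g),t)^{-1}\varphi(g)P(s(g),t)$ are continuous paths in $\mca{G}'$ lying over the path $t \mapsto s'\eta(g,t)$ in $O'$ under the covering $s'\colon (\mca{G}')^{F_\varphi(t(g))} \to O'$, and they agree at $t=0$ (both equal $\varphi(g)$). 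Hence by uniqueness of path lifting they coincide, giving $\eta(\cdot,t) = \varphi P(\cdot,t)$. Uniqueness of $P$ with this property follows the same way: $P(x,\cdot)$ is forced to be the lift of $s'\eta(\cdot)$ at $1_{F_\varphi(x)}$.

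It then remains to check $P$ is an $nt$-homotopy between $\varphi$ and $\varphi'$ in the sense of Definition \ref{nthomptppp}: $P(\cdot,t) \in C^\infty(N,\mca{H}')$ — here $C^\infty(M,\mca{G}')$ — holds by Lemma \ref{gmgmpmigt}, $t'P(\cdot,t) = F_\varphi$ holds, $P(\cdot,0) = 1_{F_\varphi}$ holds, and $\varphi P(\cdot,1) = \eta(\cdot,1) = \varphi'$ holds by the claim above. Conversely, any $nt$-homotopy $P$ between $\varphi$ and $\varphi'$ yields an orbitwise homotopy $t \mapsto \varphi P(\cdot,t)$ by Lemma \ref{hnhnphipoh}, and these two assignments are mutually inverse: starting from $P$, the orbitwise homotopy $\varphi P(\cdot,\cdot)$ has associated base-map homotopy $s'P$, and feeding that back through the uniqueness clause recovers $P$. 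This establishes the displayed bijection, and since orbitwise homotopic (resp.\ $nt$-homotopic) is precisely the existence of such a homotopy, $\sim_o$ and $\sim_{nt}$ coincide on $\Hom(\mca{G},\mca{G}')$.

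The main obstacle I anticipate is bookkeeping the endpoint/base-point data so that the path-lifting argument is literally applied to the correct covering space: $\eta(g,t)$ lives in the target fiber $(\mca{G}')^{F_\varphi(t(g))}$, not in a single source fiber, so I should lift along $s'\colon (\mca{G}')^{F_\varphi(t(g))} \to O'$ over the curve $s'\eta(g,\cdot)$, and verify that $(\varphi P(\cdot,t))(g)$ genuinely lies in that same target fiber — which it does since $t'(\varphi P(\cdot,t))(g) = t'P(t(g),t)^{-1}\cdot$ unwinds via $t'P(\cdot,t) = F_\varphi$ and the commutation relation. A secondary point requiring a line of care is that $F_{\eta(\cdot,t)}$ maps $O$ continuously into the \emph{fixed} orbit $O'$ for all $t$ simultaneously (not an orbit depending on $t$), which is exactly the third bullet in the definition of orbitwise homotopy of morphisms in Definition \ref{orbitwiseh}; this is what guarantees the lifting takes place in one covering space. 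Everything else is a direct invocation of Lemma \ref{gmgmpmigt}, Lemma \ref{hnhnphipoh}, Proposition \ref{connuni} (for uniqueness of $\varphi'$, already used implicitly via uniqueness of the lift), and Proposition \ref{isoorb}.
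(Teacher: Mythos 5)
Your overall strategy is the paper's: produce $P$ from the base-map homotopy via Lemma \ref{gmgmpmigt} (equivalently Proposition \ref{homcon}) and then identify $\eta(\cdot,t)$ with $\varphi P(\cdot,t)$ by a covering-space uniqueness argument, finally invoking Lemma \ref{hnhnphipoh} for the converse direction. But the key comparison step, as you wrote it, does not go through. You assert that for fixed $g$ the two paths $t\mapsto\eta(g,t)$ and $t\mapsto(\varphi P(\cdot,t))(g)$ lie in the single target fiber $(\mca{G}^\prime)^{F_\varphi(t(g))}$ and are lifts of $s^\prime\eta(g,\cdot)$ along the covering $s^\prime\colon(\mca{G}^\prime)^{F_\varphi(t(g))}\to O^\prime$. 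This is false in general: $t^\prime\eta(g,t)=F_{\eta(\cdot,t)}(t(g))$, and likewise $t^\prime\bigl((\varphi P(\cdot,t))(g)\bigr)=s^\prime P(t(g),t)=F_{\eta(\cdot,t)}(t(g))$, so the target of both paths moves with $t$ and equals $F_\varphi(t(g))$ only at $t=0$. Your attempted verification (``unwinds via $t^\prime P(\cdot,t)=F_\varphi$'') conflates $t^\prime P$ with $s^\prime P$: the leftmost factor of $(\varphi P(\cdot,t))(g)$ is $P(t(g),t)^{-1}$, whose target is $s^\prime P(t(g),t)=F_{\eta(\cdot,t)}(t(g))$, not $F_\varphi(t(g))$. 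Since neither path stays in a fixed fiber, unique path lifting cannot be applied to them as they stand.

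The repair is exactly the translation step the paper performs. With $y=t(g)$, $x=s(g)$, compare instead $\eta(g,t)P(x,t)^{-1}$ and $(\varphi P(\cdot,t))(g)P(x,t)^{-1}=P(y,t)^{-1}\varphi(g)$. Both of these do lie in the fixed source fiber $(\mca{G}^\prime)_{F_\varphi(x)}$ for all $t$, both map under $t^\prime$ to the same path $t\mapsto F_{\eta(\cdot,t)}(y)$ in $O^\prime$, and both equal $\varphi(g)$ at $t=0$; since $t^\prime\colon(\mca{G}^\prime)_{F_\varphi(x)}\to O^\prime$ is a covering (Proposition \ref{isoorb}), they coincide, and cancelling $P(x,t)^{-1}$ gives $\eta(\cdot,t)=\varphi P(\cdot,t)$. (Equivalently, left-translate by $P(y,t)$ and lift along $s^\prime$ on the fixed target fiber $(\mca{G}^\prime)^{F_\varphi(y)}$ --- that is the covering you named, but it only becomes usable after this translation.) With this correction, the rest of your argument --- existence and uniqueness of $P$ from Lemma \ref{gmgmpmigt}, the converse via Lemma \ref{hnhnphipoh}, and the conclusion that $\sim_o$ and $\sim_{nt}$ coincide --- matches the paper.
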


\begin{proof}
By definition $F_{\eta(\cdot,t)}$ is an orbitwise homotopy between $F_\varphi$ and $F_{\varphi^\prime}$. By Proposition \ref{homcon}, there exist a unique $\varphi^{\prime\prime}\in\Hom(\mca{G},\mca{G}^\prime)$ and a unique $nt$-homotopy $P$ between $\varphi$ and $\varphi^{\prime\prime}$ such that $F_{\eta(\cdot,t)}=s^\prime P(\cdot,t)$ for all $t\in I$. 

\begin{claim}
$\eta(\cdot,t)=\varphi P(\cdot,t)$ for all $t\in I$. 
\end{claim}

\begin{proof}
Let $g\in\mca{G}$ and put $y=t(g)$, $x=s(g)$. 
\begin{equation*}
\begin{tikzpicture}[every label/.append style={font=\scriptsize},matrix of math nodes,decoration={markings,mark=at position0.5with{\arrow{>}}}]
\node(1)[label=below:y]{};
\node(2)[right=of 1,label=below:x]{};
\draw[postaction={decorate}](2.center)--node[label=above:g]{}(1.center);
\foreach\x in{1,2}\filldraw(\x)circle(1pt);
\end{tikzpicture}
\end{equation*}
\begin{equation*}
\begin{tikzpicture}[every label/.append style={font=\scriptsize},matrix of math nodes,decoration={markings,mark=at position0.5with{\arrow{>}}}]
\node(1)[label=below:F_{\eta(\cdot{,}t)}(y)]{};
\node(2)[right=2 of 1,label=below:F_{\eta(\cdot{,}t)}(x)]{};
\node(3)[above=.5 of 2,label=above:F_\varphi(x)]{};
\draw[postaction={decorate}](2.center)--node[label=above:\eta(g{,}t)]{}(1.center);
\draw[postaction={decorate}](2.center)--node[label=right:P(x{,}t)]{}(3.center);
\foreach\x in{1,2,3}\filldraw(\x)circle(1pt);
\end{tikzpicture}\qquad
\begin{tikzpicture}[every label/.append style={font=\scriptsize},matrix of math nodes,decoration={markings,mark=at position0.5with{\arrow{>}}}]
\node(1)[label=below:F_{\eta(\cdot{,}t)}(y)]{};
\node(2)[right=2 of 1,label=below:F_{\eta(\cdot{,}t)}(x)]{};
\node(3)[above=.5 of 2,label=above:F_\varphi(x)]{};
\draw[postaction={decorate}](2.center)--node[label=above:(\varphi P(\cdot{,}t))(g)]{}(1.center);
\draw[postaction={decorate}](2.center)--node[label=right:P(x{,}t)]{}(3.center);
\foreach\x in{1,2,3}\filldraw(\x)circle(1pt);
\end{tikzpicture}
\end{equation*}
We have $\eta(g,t)P(x,t)^{-1}$, $(\varphi P(\cdot,t))(g)P(x,t)^{-1}\in(\mca{G}^\prime)_{F_\varphi(x)}$ for all $t\in I$, and they are $\varphi(g)$ when $t=0$. Let $F_\varphi(x)\in O^\prime\in\mca{O}_{\mca{G}^\prime}$. Since $t^\prime\colon(\mca{G}^\prime)_{F_\varphi(x)}\to O^\prime$ is a covering, $\eta(g,t)P(x,t)^{-1}=(\varphi P(\cdot,t))(g)P(x,t)^{-1}$ for all $t\in I$. Hence $\eta(g,t)=(\varphi P(\cdot,t))(g)$. 
\end{proof}

Therefore $\varphi^\prime=\varphi^{\prime\prime}$. 
\end{proof}

\begin{cor}
Let $\mca{G}\rightrightarrows M$ be a Lie groupoid, $N$, $N^\prime$ be $C^\infty$ manifolds, $\nu\colon N\to M$, $\nu^\prime\colon N^\prime\to M$ be $C^\infty$ maps and $\rho\in\mca{A}(\nu,\mca{G})$, $\rho^\prime\in\mca{A}_{LF}(\nu^\prime,\mca{G})$. Then $\mathrel{\ul{\sim_o}}$ and $\mathrel{\ul{\sim_{nt}}}$ on $\ul{\Hom}(N\rtimes_\rho\mca{G},N^\prime\rtimes_{\rho^\prime}\mca{G})$ coincide. 
\end{cor}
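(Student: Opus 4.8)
The plan is to deduce this Corollary directly from the Proposition immediately preceding it, in the same way that that Proposition's unadorned statement was deduced from Lemma~\ref{hnhnphipoh} and Proposition~\ref{homcon}. The structural point is that, since $\rho^\prime\in\mca{A}_{LF}(\nu^\prime,\mca{G})$, the action groupoid $N^\prime\rtimes_{\rho^\prime}\mca{G}$ is a foliation groupoid by Example~\ref{gmnmnrgyy}; hence the preceding Proposition applies verbatim with $N\rtimes_\rho\mca{G}$ in the role of $\mca{G}$ and $N^\prime\rtimes_{\rho^\prime}\mca{G}$ in the role of $\mca{G}^\prime$. Everything then reduces to tracking that the extra ``underlined'' requirement---that every time-slice of a homotopy lie in $\ul{\Hom}(N\rtimes_\rho\mca{G},N^\prime\rtimes_{\rho^\prime}\mca{G})$---survives the correspondence between $nt$-homotopies and orbitwise homotopies that the Proposition provides.

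First I would treat $\varphi\mathrel{\ul{\sim_{nt}}}\varphi^\prime\Rightarrow\varphi\mathrel{\ul{\sim_o}}\varphi^\prime$. Given an $nt$-homotopy $P$ between $\varphi$ and $\varphi^\prime$ with $\varphi P(\cdot,t)\in\ul{\Hom}(N\rtimes_\rho\mca{G},N^\prime\rtimes_{\rho^\prime}\mca{G})$ for all $t$ (Definition~\ref{raragpppp}), Lemma~\ref{hnhnphipoh} shows $t\mapsto\varphi P(\cdot,t)$ is an orbitwise homotopy between $\varphi$ and $\varphi^\prime$; since each slice already lies in $\ul{\Hom}$, this is exactly a witness for $\varphi\mathrel{\ul{\sim_o}}\varphi^\prime$ in the sense of Definition~\ref{lgpcccffsimo}. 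This direction is the expected easy one, and parallels the remark $\sim_{nt}\Rightarrow\sim_o$ already noted for the unadorned relations.

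For the converse $\varphi\mathrel{\ul{\sim_o}}\varphi^\prime\Rightarrow\varphi\mathrel{\ul{\sim_{nt}}}\varphi^\prime$, I would choose an orbitwise homotopy $\eta\colon(N\rtimes_\rho\mca{G})\times I\to N^\prime\rtimes_{\rho^\prime}\mca{G}$ between $\varphi$ and $\varphi^\prime$ with $\eta(\cdot,t)\in\ul{\Hom}(N\rtimes_\rho\mca{G},N^\prime\rtimes_{\rho^\prime}\mca{G})$ for all $t$. Applying the preceding Proposition to the two action groupoids---legitimate precisely because $N^\prime\rtimes_{\rho^\prime}\mca{G}$ is a foliation groupoid---yields an $nt$-homotopy $P$ between $\varphi$ and $\varphi^\prime$ with $\varphi P(\cdot,t)=\eta(\cdot,t)$ for all $t$. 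Then $\varphi P(\cdot,t)=\eta(\cdot,t)\in\ul{\Hom}$ for every $t$, which is precisely the condition in Definition~\ref{raragpppp} required for $\varphi\mathrel{\ul{\sim_{nt}}}\varphi^\prime$. Together with the first implication this proves that $\mathrel{\ul{\sim_o}}$ and $\mathrel{\ul{\sim_{nt}}}$ coincide on $\ul{\Hom}(N\rtimes_\rho\mca{G},N^\prime\rtimes_{\rho^\prime}\mca{G})$.

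There is no genuine analytic obstacle here; the only step needing attention is bookkeeping: one must confirm that $N\rtimes_\rho\mca{G}$ is a Lie groupoid (Proposition~\ref{actiongroupoid}), that elements of $\ul{\Hom}$ are in particular morphisms of the action groupoids so the preceding Proposition's hypotheses are literally met, and that an orbitwise homotopy witnessing $\mathrel{\ul{\sim_o}}$ is an orbitwise homotopy in the unadorned sense between $\varphi$ and $\varphi^\prime$. The entire new content is the observation that local freeness of $\rho^\prime$ makes $N^\prime\rtimes_{\rho^\prime}\mca{G}$ a foliation groupoid; after that, the already-established Proposition transports directly, and the $\ul{\Hom}$-conditions pass through automatically because the two homotopies share the same time-slices.
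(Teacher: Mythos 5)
Your proof is correct and is exactly the intended deduction: the paper states this corollary without proof precisely because, once one notes that $N^\prime\rtimes_{\rho^\prime}\mca{G}$ is a foliation groupoid (Example \ref{gmnmnrgyy}) so the preceding proposition applies to the action groupoids, the underlined conditions match up through the identity $\eta(\cdot,t)=\varphi P(\cdot,t)$, with Lemma \ref{hnhnphipoh} handling the easy direction, just as you argue.
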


\begin{cor}\label{tnteqto}
Let $\mca{G}\rightrightarrows M$ be a Lie groupoid, $N_0$ be a $C^\infty$ manifold, $\nu_0\colon N_0\to M$ be a surjective submersion and $\rho_0\in\mca{A}_{LF}(\nu_0,\mca{G})$. Then $T_{nt}(\rho_0)=T_o(\rho_0)$. 
\end{cor}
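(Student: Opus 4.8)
The statement $T_{nt}(\rho_0)=T_o(\rho_0)$ for a locally free $\rho_0$ asks that the canonical surjection $T_{nt}(\rho_0)\twoheadrightarrow T_o(\rho_0)$ from Definition~\ref{Teistarsim} is injective. The plan is to unravel this to a statement about the equivalence relations $\ul{\sim_{nt}}$ and $\ul{\sim_o}$ on $\MAG(\rho_0)$ and then invoke the preceding corollary, which says $\ul{\sim_o}$ and $\ul{\sim_{nt}}$ coincide on $\ul{\Hom}(N\rtimes_\rho\mca{G},N^\prime\rtimes_{\rho^\prime}\mca{G})$ whenever $\rho^\prime$ is locally free. Since every $\rho_0$-marked action groupoid $\varphi\colon N_0\rtimes_{\rho_0}\mca{G}\to N\rtimes_\rho\mca{G}$ is an isomorphism and $\rho_0$ is locally free, each such $N\rtimes_\rho\mca{G}$ is a foliation groupoid (Example~\ref{gmnmnrgyy}), and in particular the target action $\rho$ is locally free as well. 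This is precisely the hypothesis needed to apply the corollary, so the two relations agree on all relevant hom-sets.

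\textbf{Key steps.}
First I would recall that $T_{nt}(\rho_0)=\MAG(\rho_0)/\ul{\sim_{nt}}$ and $T_o(\rho_0)=\MAG(\rho_0)/\ul{\sim_o}$ are quotients of the \emph{same} set $\MAG(\rho_0)$, so it suffices to prove: for $(N\rtimes_\rho\mca{G},\varphi),(N^\prime\rtimes_{\rho^\prime}\mca{G},\varphi^\prime)\in\MAG(\rho_0)$, if $(N\rtimes_\rho\mca{G},\varphi)\mathrel{\ul{\sim_o}}(N^\prime\rtimes_{\rho^\prime}\mca{G},\varphi^\prime)$ then $(N\rtimes_\rho\mca{G},\varphi)\mathrel{\ul{\sim_{nt}}}(N^\prime\rtimes_{\rho^\prime}\mca{G},\varphi^\prime)$. (The reverse implication is automatic from $\varphi\mathrel{\ul{\sim_{nt}}}\varphi^\prime\Rightarrow\varphi\mathrel{\ul{\sim_o}}\varphi^\prime$.) Second, by Definition~\ref{gmngprmagl}, $(N\rtimes_\rho\mca{G},\varphi)\mathrel{\ul{\sim_o}}(N^\prime\rtimes_{\rho^\prime}\mca{G},\varphi^\prime)$ means there is a conjugacy $\psi\in\ul{\Hom}(N\rtimes_\rho\mca{G},N^\prime\rtimes_{\rho^\prime}\mca{G})$ with $\psi\varphi\mathrel{\ul{\sim_o}}\varphi^\prime$, both sides lying in $\ul{\Hom}(N_0\rtimes_{\rho_0}\mca{G},N^\prime\rtimes_{\rho^\prime}\mca{G})$. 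Third, since $\rho_0$ is locally free, so is $\rho^\prime$ (because $\varphi^\prime$ is an isomorphism $N_0\rtimes_{\rho_0}\mca{G}\xrightarrow{\sim}N^\prime\rtimes_{\rho^\prime}\mca{G}$ and the stabilizer condition in Example~\ref{gmnmnrgyy} is preserved under isomorphism), hence $N^\prime\rtimes_{\rho^\prime}\mca{G}$ is a foliation groupoid. Fourth, apply the preceding corollary (the one just before this statement, asserting $\mathrel{\ul{\sim_o}}$ and $\mathrel{\ul{\sim_{nt}}}$ coincide on $\ul{\Hom}(N\rtimes_\rho\mca{G},N^\prime\rtimes_{\rho^\prime}\mca{G})$ when $\rho^\prime$ is locally free) with the base Lie groupoid taken to be $N_0\rtimes_{\rho_0}\mca{G}$: from $\psi\varphi\mathrel{\ul{\sim_o}}\varphi^\prime$ we conclude $\psi\varphi\mathrel{\ul{\sim_{nt}}}\varphi^\prime$, hence $(N\rtimes_\rho\mca{G},\varphi)\mathrel{\ul{\sim_{nt}}}(N^\prime\rtimes_{\rho^\prime}\mca{G},\varphi^\prime)$. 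This gives a bijection $T_{nt}(\rho_0)\simeq T_o(\rho_0)$ compatible with the quotient maps, i.e.\ $T_{nt}(\rho_0)=T_o(\rho_0)$.

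\textbf{Main obstacle.}
There is essentially no obstacle: all the analytic content (the leafwise approximation / monodromy-covering argument turning an orbitwise homotopy into an $nt$-homotopy) is already packaged in Proposition~\ref{homcon} and the corollary immediately preceding this statement. The only point requiring a line of care is verifying that local freeness of $\rho_0$ transfers to $\rho^\prime$ for an arbitrary $(N^\prime\rtimes_{\rho^\prime}\mca{G},\varphi^\prime)\in\MAG(\rho_0)$ — but this is immediate from Example~\ref{gmnmnrgyy}, since an action groupoid isomorphism restricts to isomorphisms $(N_0\rtimes_{\rho_0}\mca{G})^y_y\simeq(N^\prime\rtimes_{\rho^\prime}\mca{G})^{F_{\varphi^\prime}(y)}_{F_{\varphi^\prime}(y)}$ and discreteness of isotropy is clearly preserved. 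So the write-up is short: reduce to the corollary, note the foliation-groupoid hypothesis is met, done.

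\begin{proof}
Since $\MAG(\rho_0)$ is a single set and $T_{nt}(\rho_0)=\MAG(\rho_0)/\ul{\sim_{nt}}$, $T_o(\rho_0)=\MAG(\rho_0)/\ul{\sim_o}$, and since $\varphi\mathrel{\ul{\sim_{nt}}}\varphi^\prime$ implies $\varphi\mathrel{\ul{\sim_o}}\varphi^\prime$, it suffices to show that $\mathrel{\ul{\sim_o}}$ implies $\mathrel{\ul{\sim_{nt}}}$ on $\MAG(\rho_0)$.

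Let $(N\rtimes_\rho\mca{G},\varphi),(N^\prime\rtimes_{\rho^\prime}\mca{G},\varphi^\prime)\in\MAG(\rho_0)$ with $(N\rtimes_\rho\mca{G},\varphi)\mathrel{\ul{\sim_o}}(N^\prime\rtimes_{\rho^\prime}\mca{G},\varphi^\prime)$. By Definition~\ref{gmngprmagl} there exists a conjugacy $\psi\in\ul{\Hom}(N\rtimes_\rho\mca{G},N^\prime\rtimes_{\rho^\prime}\mca{G})$ such that $\psi\varphi\mathrel{\ul{\sim_o}}\varphi^\prime$ in $\ul{\Hom}(N_0\rtimes_{\rho_0}\mca{G},N^\prime\rtimes_{\rho^\prime}\mca{G})$.

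Since $\varphi^\prime\colon N_0\rtimes_{\rho_0}\mca{G}\to N^\prime\rtimes_{\rho^\prime}\mca{G}$ is an action groupoid isomorphism and $\rho_0$ is locally free, Example~\ref{gmnmnrgyy} shows $(N_0\rtimes_{\rho_0}\mca{G})^y_y$ is discrete for all $y\in N_0$; as $\varphi^\prime$ restricts to isomorphisms $(N_0\rtimes_{\rho_0}\mca{G})^y_y\xrightarrow{\sim}(N^\prime\rtimes_{\rho^\prime}\mca{G})^{F_{\varphi^\prime}(y)}_{F_{\varphi^\prime}(y)}$ and $F_{\varphi^\prime}$ is surjective, every isotropy group of $N^\prime\rtimes_{\rho^\prime}\mca{G}$ is discrete, i.e. $\rho^\prime\in\mca{A}_{LF}(\nu^\prime,\mca{G})$.

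Applying the previous corollary with the Lie groupoid $N_0\rtimes_{\rho_0}\mca{G}$ in place of $N\rtimes_\rho\mca{G}$ there, $\mathrel{\ul{\sim_o}}$ and $\mathrel{\ul{\sim_{nt}}}$ coincide on $\ul{\Hom}(N_0\rtimes_{\rho_0}\mca{G},N^\prime\rtimes_{\rho^\prime}\mca{G})$. Hence $\psi\varphi\mathrel{\ul{\sim_o}}\varphi^\prime$ gives $\psi\varphi\mathrel{\ul{\sim_{nt}}}\varphi^\prime$, so $(N\rtimes_\rho\mca{G},\varphi)\mathrel{\ul{\sim_{nt}}}(N^\prime\rtimes_{\rho^\prime}\mca{G},\varphi^\prime)$. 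Therefore the canonical surjection $T_{nt}(\rho_0)\twoheadrightarrow T_o(\rho_0)$ is bijective, i.e. $T_{nt}(\rho_0)=T_o(\rho_0)$.
\end{proof}
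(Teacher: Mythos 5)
Your proof is correct and follows exactly the route the paper intends: Corollary \ref{tnteqto} is stated without proof precisely because it is immediate from the preceding corollary (that $\mathrel{\ul{\sim_o}}$ and $\mathrel{\ul{\sim_{nt}}}$ coincide on $\ul{\Hom}(N_0\rtimes_{\rho_0}\mca{G},N^\prime\rtimes_{\rho^\prime}\mca{G})$ when $\rho^\prime$ is locally free), applied after noting that local freeness transfers to any $\rho_0$-marked action groupoid via Example \ref{gmnmnrgyy}. Your extra verification that the isotropy groups of $N^\prime\rtimes_{\rho^\prime}\mca{G}$ are discrete is the only detail the paper leaves implicit, and it is handled correctly.
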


\section{$T_o(\rho_0)\to\mca{A}_{LF}(\mca{O}_{\rho_0},\mca{G})/(\ul{\Diff}(\mca{O}_{\rho_0})_1\times\ul{\Aut}(\mca{G}))$}\label{1111}
\begin{dfn}
Let $\mca{G}\rightrightarrows M$ be a Lie groupoid, $N_0$ be a $C^\infty$ manifold, $\nu_0\colon N_0\to M$ be a $C^\infty$ map and $\rho_0\in\mca{A}_{LF}(\nu_0,\mca{G})$. Define 
\begin{equation*}
\mca{A}_{LF}(\mca{O}_{\rho_0},\mca{G})=\left\{\rho\in\mca{A}_{LF}(\nu_0,\mca{G})\ \middle| \ 
\begin{gathered}
\text{$\mca{O}_\rho=\mca{O}_{\rho_0}$ as partitions into $C^\infty$ submanifolds}\\
\text{(not just as partitions into subsets)}
\end{gathered}
\right\}, 
\end{equation*}
\begin{equation*}
\ul{\Diff}(\mca{O}_{\rho_0})=\left\{F\in\ul{\Diff}(N_0)\ \middle|\ F,F^{-1}\in C^\infty(\mca{O}_{\rho_0},\mca{O}_{\rho_0})\right\}, 
\end{equation*}
\begin{equation*}
\ul{\Diff}(\mca{O}_{\rho_0})_1=\left\{F\in\ul{\Diff}(\mca{O}_{\rho_0})\ \middle|\ F\mathrel{\ul{\sim_o}}\id\right\}. 
\end{equation*}
Then $\ul{\Diff}(\mca{O}_{\rho_0})_1$ is a normal subgroup of $\ul{\Diff}(\mca{O}_{\rho_0})$ by Lemma \ref{ulsimsim} and define 
\begin{equation*}
\ul{\MCG}(\mca{O}_{\rho_0})=\ul{\Diff}(\mca{O}_{\rho_0})/\ul{\Diff}(\mca{O}_{\rho_0})_1. 
\end{equation*}
\end{dfn}

\begin{cor}
Let $\mca{G}\rightrightarrows M$ be a Lie groupoid, $N_0$ be a $C^\infty$ manifold, $\nu_0\colon N_0\to M$ be a surjective submersion and $\rho_0\in\mca{A}_{LF}(\nu_0,\mca{G})$. We have a homomorphism 
\begin{align*}
b\colon\ul{\Aut}(N_0\rtimes_{\rho_0}\mca{G})&\to\ul{\Diff}(\mca{O}_{\rho_0})\\
\theta&\mapsto F_\theta
\end{align*}
satisfying $b(\ul{\Aut}(N_0\rtimes_{\rho_0}\mca{G})_{o,1})\subset\ul{\Diff}(\mca{O}_{\rho_0})_1$. 
\begin{enumerate}
\item If $\mca{G}$ is $s$-connected, then $b$ is injective. 
\item If $\mca{G}$ is $s$-simply connected, then $b$ is bijective and $b(\ul{\Aut}(N_0\rtimes_{\rho_0}\mca{G})_{o,1})=\ul{\Diff}(\mca{O}_{\rho_0})_1$. Hence 
\begin{equation*}
\MCG_o(N_0\rtimes_{\rho_0}\mca{G})\simeq\ul{\MCG}(\mca{O}_{\rho_0}). 
\end{equation*}
\end{enumerate}
\end{cor}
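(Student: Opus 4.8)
The plan is to deduce the corollary from the two preceding results on the map $b\colon\Hom(\mca{G},\mca{G}^\prime)\to C^\infty(\mca{O}_\mca{G},\mca{O}_{\mca{G}^\prime})$ (Proposition \ref{connuni} and Proposition \ref{simconfolbij}), applied to the action groupoids $N_0\rtimes_{\rho_0}\mca{G}$ in the role of both $\mca{G}$ and $\mca{G}^\prime$. Note $N_0\rtimes_{\rho_0}\mca{G}$ is a foliation groupoid by Example \ref{gmnmnrgyy} since $\rho_0\in\mca{A}_{LF}(\nu_0,\mca{G})$, and it is $s$-connected (resp. $s$-simply connected) exactly when $\mca{G}^{\nu_0(y)}$ is connected (resp. simply connected) for all $y$, which is implied by $\mca{G}$ being $s$-connected (resp. $s$-simply connected) because $\nu_0$ is surjective. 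First I would check that $b$ is well-defined as a homomorphism into $\ul{\Diff}(\mca{O}_{\rho_0})$: for $\theta\in\ul{\Aut}(N_0\rtimes_{\rho_0}\mca{G})$ we have $F_\theta\in\ul{\Diff}(N_0)$ since $\theta$ is an action groupoid isomorphism (Definition \ref{acgrmois}), and $F_\theta$, $F_\theta^{-1}=F_{\theta^{-1}}\in C^\infty(\mca{O}_{\rho_0},\mca{O}_{\rho_0})$ by Lemma \ref{oosmhs}; functoriality $F_{\theta\theta^\prime}=F_\theta F_{\theta^\prime}$ is immediate. That $b(\ul{\Aut}(N_0\rtimes_{\rho_0}\mca{G})_{o,1})\subset\ul{\Diff}(\mca{O}_{\rho_0})_1$ follows because an orbitwise homotopy $\eta$ between $\theta$ and $\id$ in $\ul{\Hom}$ has $F_{\eta(\cdot,t)}$ an orbitwise homotopy between $F_\theta$ and $\id$ lying in $\ul{C}^\infty(N_0,N_0)$ for all $t$, so $F_\theta\mathrel{\ul{\sim_o}}\id$.

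For part 1, injectivity of $b$ on $\ul{\Aut}(N_0\rtimes_{\rho_0}\mca{G})$ when $\mca{G}$ is $s$-connected: two automorphisms $\theta_1,\theta_2$ with $F_{\theta_1}=F_{\theta_2}$ are, in particular, morphisms $N_0\rtimes_{\rho_0}\mca{G}\to N_0\rtimes_{\rho_0}\mca{G}$ with the same base map, so $\theta_1=\theta_2$ by Proposition \ref{connuni} (the target is a foliation groupoid, the source is $s$-connected). For part 2, when $\mca{G}$ is $s$-simply connected: surjectivity of $b$ onto $\ul{\Diff}(\mca{O}_{\rho_0})$ uses Proposition \ref{simconfolbij}: given $F\in\ul{\Diff}(\mca{O}_{\rho_0})$, it lies in $C^\infty(\mca{O}_{\rho_0},\mca{O}_{\rho_0})$, so there is a unique $\theta\in\Hom(N_0\rtimes_{\rho_0}\mca{G},N_0\rtimes_{\rho_0}\mca{G})$ with $F_\theta=F$, and applying the same to $F^{-1}$ gives $\theta^\prime$ with $F_{\theta^\prime}=F^{-1}$; then $\theta^\prime\theta$ and $\theta\theta^\prime$ have base map $\id$, hence equal $\id$ by the injectivity already established (Proposition \ref{connuni}), so $\theta$ is an isomorphism; since $F\in\ul{C}^\infty(N_0,N_0)$, i.e. $\nu_0 F=\nu_0$, we get $\theta\in\ul{\Aut}(N_0\rtimes_{\rho_0}\mca{G})$.

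The one genuinely nontrivial point — and the step I expect to be the main obstacle — is the reverse inclusion $b(\ul{\Aut}(N_0\rtimes_{\rho_0}\mca{G})_{o,1})\supset\ul{\Diff}(\mca{O}_{\rho_0})_1$, equivalently: if $\theta\in\ul{\Aut}(N_0\rtimes_{\rho_0}\mca{G})$ and $F_\theta\mathrel{\ul{\sim_o}}\id$, then $\theta\mathrel{\ul{\sim_o}}\id$. Here I would invoke the correspondence between orbitwise homotopies and $nt$-homotopies established in Section \ref{morlieorb}: given an orbitwise homotopy $H\colon N_0\times I\to N_0$ between $F_\theta$ and $\id$ with $H(\cdot,t)\in\ul{C}^\infty(N_0,N_0)$, apply Proposition \ref{homcon} to the morphism $\theta$ (with target the foliation groupoid $N_0\rtimes_{\rho_0}\mca{G}$, using $F_\theta=F_\theta$ and $F^\prime=\id$) — but one must be careful: Proposition \ref{homcon} produces an $nt$-homotopy between $\theta$ and some $\theta^\prime$ with $F_{\theta^\prime}=H(\cdot,1)=\id$, and by Proposition \ref{connuni} applied to $N_0\rtimes_{\rho_0}\mca{G}$ ($s$-connected — this requires $\mca{G}$ $s$-connected, which holds in part 2) we get $\theta^\prime=\id$; thus $\theta\sim_{nt}\id$. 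Then I must upgrade this to $\theta\mathrel{\ul{\sim_{nt}}}\id$, i.e. check the "underlined" condition $\theta P(\cdot,t)\in\ul{\Hom}$ for all $t$, which by the Claim in Definition \ref{raragpppp} amounts to $\varphi_{\rho_0}P(y,t)\in\mca{G}^{\nu_0(y)}_{\nu_0(y)}$; this should follow since $s^\prime P=H$ and $H(\cdot,t)\in\ul{C}^\infty(N_0,N_0)$ gives $\nu_0\circ s^\prime P(\cdot,t)=\nu_0$, while $t^\prime P(\cdot,t)=F_\theta$ gives $\nu_0\circ t^\prime P(\cdot,t)=\nu_0$. Finally $\theta\mathrel{\ul{\sim_{nt}}}\id$ implies $\theta\mathrel{\ul{\sim_o}}\id$ (by Lemma \ref{hnhnphipoh} together with the underlined versions), so $\theta\in\ul{\Aut}(N_0\rtimes_{\rho_0}\mca{G})_{o,1}$. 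The last sentence, $\MCG_o(N_0\rtimes_{\rho_0}\mca{G})\simeq\ul{\MCG}(\mca{O}_{\rho_0})$, is then a formal consequence: $b$ is a surjective homomorphism carrying $\ul{\Aut}(N_0\rtimes_{\rho_0}\mca{G})_{o,1}$ onto $\ul{\Diff}(\mca{O}_{\rho_0})_1$, so it descends to an isomorphism of quotients.
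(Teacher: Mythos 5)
Your proposal is correct and follows essentially the same route as the paper: well-definedness via Lemma \ref{oosmhs}, part 1 via Proposition \ref{connuni}, and surjectivity in part 2 via Proposition \ref{simconfolbij} (with the inverse automorphism produced exactly as you describe). The only difference is that for the reverse inclusion $b(\ul{\Aut}(N_0\rtimes_{\rho_0}\mca{G})_{o,1})\supset\ul{\Diff}(\mca{O}_{\rho_0})_1$ the paper simply cites Corollary \ref{bggoogg}, whereas you re-derive the needed case of that corollary via Proposition \ref{homcon}, Proposition \ref{connuni} and the Claim in Definition \ref{raragpppp}, which is precisely the argument behind it, including the correct verification of the underlined condition.
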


\begin{proof}
The map $b$ is well-defined by Lemma \ref{oosmhs}. 1 follows from Proposition \ref{connuni}. For 2, bijectivity of $b$ follows from Proposition \ref{simconfolbij} and $b(\ul{\Aut}(N_0\rtimes_{\rho_0}\mca{G})_{o,1})=\ul{\Diff}(\mca{O}_{\rho_0})_1$ follows from corollary \ref{bggoogg}. 
\end{proof}

\begin{lem}
Let $\mca{G}\rightrightarrows M$ be a Lie groupoid, $N_0$ be a $C^\infty$ manifold, $\nu_0\colon N_0\to M$ be a $C^\infty$ map and $\rho_0\in\mca{A}_{LF}(\nu_0,\mca{G})$. Then the action $\mca{A}(\nu_0,\mca{G})\curvearrowleft\ul{\Diff}(N_0)\times\ul{\Aut}(\mca{G})$ in Proposition \ref{gmnnmfnnfn} restricts to the action $\mca{A}_{LF}(\mca{O}_{\rho_0},\mca{G})\curvearrowleft\ul{\Diff}(\mca{O}_{\rho_0})\times\ul{\Aut}(\mca{G})$. 
\end{lem}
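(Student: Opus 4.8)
The statement asserts that the right action $\mca{A}(\nu_0,\mca{G})\curvearrowleft\ul{\Diff}(N_0)\times\ul{\Aut}(\mca{G})$ of Proposition \ref{gmnnmfnnfn} restricts to an action on $\mca{A}_{LF}(\mca{O}_{\rho_0},\mca{G})$ by the subgroup $\ul{\Diff}(\mca{O}_{\rho_0})\times\ul{\Aut}(\mca{G})$. Since we already know from Proposition \ref{gmnnmfnnfn} that we have an action, and $\ul{\Diff}(\mca{O}_{\rho_0})\times\ul{\Aut}(\mca{G})$ is a subgroup of $\ul{\Diff}(N_0)\times\ul{\Aut}(\mca{G})$ (it is a subgroup because $\ul{\Diff}(\mca{O}_{\rho_0})$ is, by the Lemma before Definition \ref{pralfdiffaut}-style reasoning and Lemma \ref{ulsimsim}, closed under composition and inverses), the only thing to check is that $\mca{A}_{LF}(\mca{O}_{\rho_0},\mca{G})$ is invariant: for $\rho\in\mca{A}_{LF}(\mca{O}_{\rho_0},\mca{G})$, $F\in\ul{\Diff}(\mca{O}_{\rho_0})$ and $\Phi\in\ul{\Aut}(\mca{G})$, we must show $\rho^{(F,\Phi)}\in\mca{A}_{LF}(\mca{O}_{\rho_0},\mca{G})$. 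This breaks into three sub-claims: (i) $\rho^{(F,\Phi)}\in\mca{A}(\nu_0,\mca{G})$, which is immediate from Proposition \ref{gmnnmfnnfn}; (ii) $\rho^{(F,\Phi)}$ is locally free; (iii) $\mca{O}_{\rho^{(F,\Phi)}}=\mca{O}_{\rho_0}$ as partitions into $C^\infty$ submanifolds.

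For (ii), the cleanest route is to use that $F\rtimes\Phi\in\ul{\Hom}(N_0\rtimes_{\rho^{(F,\Phi)}}\mca{G},N_0\rtimes_\rho\mca{G})$ is a conjugacy (Proposition \ref{gmnnmfnnfn}), hence in particular an isomorphism of Lie groupoids. By Example \ref{gmnmnrgyy}, $\rho$ locally free is equivalent to $N_0\rtimes_\rho\mca{G}$ being a foliation groupoid, i.e. $(N_0\rtimes_\rho\mca{G})^y_y$ discrete for every $y$; and an isomorphism of Lie groupoids carries the isotropy group $(N_0\rtimes_{\rho^{(F,\Phi)}}\mca{G})^y_y$ diffeomorphically onto $(N_0\rtimes_\rho\mca{G})^{F(y)}_{F(y)}$, so discreteness transfers. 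Thus $\rho^{(F,\Phi)}$ is locally free. For (iii), first note that as mere partitions into subsets we have $\mca{O}_{\rho^{(F,\Phi)}}=F^{-1}(\mca{O}_\rho)$: indeed $\rho^{(F,\Phi)}(y,g)=F^{-1}\rho(F(y),\Phi(g))$ and $\Phi$ is an automorphism with $F_\Phi=\id$, so $y$ and $y'$ lie in the same $\rho^{(F,\Phi)}$-orbit iff $F(y)$ and $F(y')$ lie in the same $\rho$-orbit. Since $\rho\in\mca{A}_{LF}(\mca{O}_{\rho_0},\mca{G})$, $\mca{O}_\rho=\mca{O}_{\rho_0}$ as partitions into $C^\infty$ submanifolds, so $\mca{O}_{\rho^{(F,\Phi)}}=F^{-1}(\mca{O}_{\rho_0})$ as partitions into subsets. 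Because $F\in\ul{\Diff}(\mca{O}_{\rho_0})$, i.e. $F$ and $F^{-1}$ lie in $C^\infty(\mca{O}_{\rho_0},\mca{O}_{\rho_0})$, the map $F$ permutes the orbits $O\in\mca{O}_{\rho_0}$ and restricts to a diffeomorphism $F\colon F^{-1}(O)\to O$ for each such $O$; hence $F^{-1}(\mca{O}_{\rho_0})=\mca{O}_{\rho_0}$ not just as partitions into subsets but as partitions into $C^\infty$ submanifolds. Combining, $\mca{O}_{\rho^{(F,\Phi)}}=\mca{O}_{\rho_0}$ with the $C^\infty$ submanifold structures, giving (iii).

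Putting (i)--(iii) together shows $\rho^{(F,\Phi)}\in\mca{A}_{LF}(\mca{O}_{\rho_0},\mca{G})$, so the action restricts as claimed; one should also remark that the orbit map of the restricted action is well-defined precisely because $\ul{\Diff}(\mca{O}_{\rho_0})\times\ul{\Aut}(\mca{G})$ is a subgroup and the ambient formula $(\rho,(F,\Phi))\mapsto\rho^{(F,\Phi)}$ already satisfies the action axioms by Proposition \ref{gmnnmfnnfn}. The main (minor) obstacle is bookkeeping in step (iii): one must be careful to distinguish equality of partitions into subsets from equality of partitions into $C^\infty$ submanifolds, and to invoke $F,F^{-1}\in C^\infty(\mca{O}_{\rho_0},\mca{O}_{\rho_0})$ (rather than mere diffeomorphy of $N_0$) exactly at the point where the smooth structure on the orbits is matched up. Everything else is a direct appeal to Proposition \ref{gmnnmfnnfn} and Example \ref{gmnmnrgyy}.
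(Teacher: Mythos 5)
Your proposal is correct and follows essentially the same route as the paper: $\rho^{(F,\Phi)}\in\mca{A}(\nu_0,\mca{G})$ by Proposition \ref{gmnnmfnnfn}, local freeness transfers through the conjugacy $F\rtimes\Phi$ being an isomorphism, and the orbit partition is identified with $\mca{O}_{\rho_0}$ using $F\in\ul{\Diff}(\mca{O}_{\rho_0})$. The only point worth tightening is the ``combining'' step in (iii): to match the canonical $C^\infty$ structures on the $\rho^{(F,\Phi)}$-orbits with those on the $\rho$-orbits you should again invoke the isomorphism $F\rtimes\Phi$ (via Lemma \ref{oosmhs} applied to it and its inverse), not only the subset-level formula --- but since that isomorphism is already in hand from your step (ii), this is a matter of wording rather than a gap.
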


\begin{proof}
Let $\rho\in\mca{A}_{LF}(\mca{O}_{\rho_0},\mca{G})$ and $(F,\Phi)\in\ul{\Diff}(\mca{O}_{\rho_0})\times\ul{\Aut}(\mca{G})$. We have $\rho^{(F,\Phi)}\in\mca{A}_{LF}(\nu_0,\mca{G})$. Since $F\rtimes\Phi\colon N_0\rtimes_{\rho^{(F,\Phi)}}\mca{G}\to N_0\rtimes_\rho\mca{G}$ is a conjugacy (in particular an isomorphism), $F(\mca{O}_{\rho^{(F,\Phi)}})=\mca{O}_\rho=\mca{O}_{\rho_0}$, hence $\mca{O}_{\rho^{(F,\Phi)}}=F^{-1}(\mca{O}_{\rho_0})=\mca{O}_{\rho_0}$ as partitions into $C^\infty$ submanifolds. Thus $\rho^{(F,\Phi)}\in\mca{A}_{LF}(\mca{O}_{\rho_0},\mca{G})$. 
\end{proof}

\begin{lem}
Let $\mca{G}\rightrightarrows M$ be a Lie groupoid, $N_0$ be a $C^\infty$ manifold, $\nu_0\colon N_0\to M$ be a surjective submersion and $\rho_0\in\mca{A}_{LF}(\nu_0,\mca{G})$. Then we have a map 
\begin{align*}
\MAG_{\id}(\rho_0)&\to\mca{A}_{LF}(\mca{O}_{\rho_0},\mca{G})\\
(N_0\rtimes_\rho\mca{G},\varphi)&\mapsto\rho
\end{align*}
(see Definition \ref{magidmagnfp} for the definition of $\MAG_{\id}(\rho_0)$) and it induces a map 
\begin{equation*}
\MAG_{\id}(\rho_0)/\ul{\sim_o}\to\mca{A}_{LF}(\mca{O}_{\rho_0},\mca{G})/(\ul{\Diff}(\mca{O}_{\rho_0})_1\times\ul{\Aut}(\mca{G})). 
\end{equation*}
\end{lem}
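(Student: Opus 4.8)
The plan is to verify first that the assignment $(N_0\rtimes_\rho\mca{G},\varphi)\mapsto\rho$ is well-defined on $\MAG_{\id}(\rho_0)$, and then that it descends to the quotient by $\ul{\sim_o}$. For well-definedness on $\MAG_{\id}(\rho_0)$: an element of $\MAG_{\id}(\rho_0)$ is a pair $(N_0\rtimes_\rho\mca{G},\varphi)$ with $\varphi\in\ul{\Aut}$-type data, $\varphi\colon N_0\rtimes_{\rho_0}\mca{G}\to N_0\rtimes_\rho\mca{G}$ an action groupoid isomorphism with $F_\varphi=\id$. Since $\varphi$ is an isomorphism with base map $\id$ and both $\rho_0$, $\rho$ are locally free, $\varphi$ sends the orbit partition $\mca{O}_{\rho_0}$ to $\mca{O}_\rho$ as partitions into $C^\infty$ submanifolds; with $F_\varphi=\id$ this gives $\mca{O}_\rho=\mca{O}_{\rho_0}$ as partitions into $C^\infty$ submanifolds (not merely as sets). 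Also $\nu=\nu_0F_\varphi^{-1}=\nu_0$, so $\rho\in\mca{A}_{LF}(\nu_0,\mca{G})$, and hence $\rho\in\mca{A}_{LF}(\mca{O}_{\rho_0},\mca{G})$. So the first map is legitimately defined; this step is essentially bookkeeping using the fact (recorded in the excerpt) that an isomorphism of action groupoids carries orbit foliations to orbit foliations and the convention $\nu=\nu_0F_\varphi^{-1}$.

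Next I would show the map respects the equivalence relations. Suppose $(N_0\rtimes_\rho\mca{G},\varphi)\mathrel{\ul{\sim_o}}(N_0\rtimes_{\rho^\prime}\mca{G},\varphi^\prime)$ with both pairs in $\MAG_{\id}(\rho_0)$. By definition there is a conjugacy $\psi\in\ul{\Hom}(N_0\rtimes_\rho\mca{G},N_0\rtimes_{\rho^\prime}\mca{G})$ with $\psi\varphi\mathrel{\ul{\sim_o}}\varphi^\prime$. Write $\psi=F_\psi\rtimes\Psi$; since $\psi$ is an action groupoid morphism, Lemma \ref{ulid} (applied with $\nu_0$ surjective) gives $F_\Psi=\id$, so $\Psi\in\ul{\Aut}(\mca{G})$ (as $\psi$ is a conjugacy, $\Psi$ is invertible; here we also use $\nu_0$ surjective so that $\Phi$ is genuinely determined on all of $\mca{G}$). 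Set $F=F_\psi\in\ul{\Diff}(N_0)$; since $\psi$ is an isomorphism with both $\rho$, $\rho^\prime$ locally free, $F$ carries $\mca{O}_\rho$ to $\mca{O}_{\rho^\prime}$, both equal to $\mca{O}_{\rho_0}$ as partitions into $C^\infty$ submanifolds, hence $F\in\ul{\Diff}(\mca{O}_{\rho_0})$. Then by Lemma \ref{ngconj} the conjugacy $\psi=F\rtimes\Psi$ identifies $\rho^{(F,\Psi)}$ with $\rho^\prime$ via $F\rtimes\Psi$, i.e. $\rho^\prime=\rho^{(F,\Psi)}$ (using uniqueness of the action for which $F\rtimes\Psi$ is a conjugacy, or simply unwinding the defining formula in Lemma \ref{ngconj}). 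Therefore $\rho$ and $\rho^\prime$ lie in the same $\ul{\Diff}(\mca{O}_{\rho_0})\times\ul{\Aut}(\mca{G})$ orbit.

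To conclude it suffices to see that $F=F_\psi$ actually lies in $\ul{\Diff}(\mca{O}_{\rho_0})_1$, not merely in $\ul{\Diff}(\mca{O}_{\rho_0})$. This is where the extra hypothesis $\psi\varphi\mathrel{\ul{\sim_o}}\varphi^\prime$ enters: since $F_\varphi=F_{\varphi^\prime}=\id$, taking base maps of the orbitwise homotopy between $\psi\varphi$ and $\varphi^\prime$ gives an orbitwise homotopy between $F_{\psi\varphi}=F_\psi=F$ and $F_{\varphi^\prime}=\id$, through maps in $\ul{C}^\infty(N_0,N_0)$ (the maps preserve $\nu_0$ because $\psi$, $\varphi$, $\varphi^\prime$ are action groupoid morphisms). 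Hence $F\mathrel{\ul{\sim_o}}\id$, so $F\in\ul{\Diff}(\mca{O}_{\rho_0})_1$. Thus $\rho$ and $\rho^\prime$ are related by $\ul{\Diff}(\mca{O}_{\rho_0})_1\times\ul{\Aut}(\mca{G})$, and the induced map on quotients is well-defined. The main obstacle I anticipate is the careful tracking of the ``as partitions into $C^\infty$ submanifolds'' refinement — one must be sure that an action groupoid isomorphism (resp. conjugacy) with the relevant base map transports the orbit foliation not just set-theoretically but with its smooth structure, which follows from Proposition \ref{isoorb} giving the canonical $C^\infty$ structure on orbits and from isomorphisms of action groupoids commuting with source/target maps — but it needs to be spelled out rather than waved through.
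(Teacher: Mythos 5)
Your argument is correct and follows the paper's proof essentially step for step: well-definedness of the first map from $F_\varphi=\id$ (so $\nu=\nu_0$, $\rho$ locally free, $\mca{O}_\rho=\mca{O}_{\rho_0}$ as partitions into $C^\infty$ submanifolds), then for descent the extraction of a conjugacy $\psi=F_\psi\rtimes\Psi$, $\Psi\in\ul{\Aut}(\mca{G})$ via Lemma \ref{ulid}, $F_\psi\in\ul{\Diff}(\mca{O}_{\rho_0})$ from preservation of orbit partitions, and $F_\psi\mathrel{\ul{\sim_o}}\id$ by taking base maps of $\psi\varphi\mathrel{\ul{\sim_o}}\varphi^\prime$. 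One minor slip of convention: unwinding Lemma \ref{ngconj} for $\psi\colon N_0\rtimes_\rho\mca{G}\to N_0\rtimes_{\rho^\prime}\mca{G}$ yields $\rho=(\rho^\prime)^{(F_\psi,\Psi)}$ rather than $\rho^\prime=\rho^{(F_\psi,\Psi)}$, but this is harmless for your conclusion since $\ul{\Diff}(\mca{O}_{\rho_0})_1\times\ul{\Aut}(\mca{G})$ is closed under inverses, so $\rho$ and $\rho^\prime$ still lie in the same orbit.
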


\begin{proof}
Let $(N_0\rtimes_\rho\mca{G},\varphi)\in\MAG_{\id}(\rho_0)$. Then $\nu=\nu_0$, $\rho$ is locally free and $\mca{O}_\rho=\mca{O}_{\rho_0}$ as partitions into $C^\infty$ submanifolds, hence $\rho\in\mca{A}_{LF}(\mca{O}_{\rho_0},\mca{G})$. 

Let $(N_0\rtimes_\rho\mca{G},\varphi)$, $(N_0\rtimes_{\rho^\prime}\mca{G},\varphi^\prime)\in\MAG_{\id}(\rho_0)$ be such that $(N_0\rtimes_\rho\mca{G},\varphi)\mathrel{\ul{\sim_o}}(N_0\rtimes_{\rho^\prime}\mca{G},\varphi^\prime)$. There exists a conjugacy $\psi=F_\psi\rtimes\Psi\in\ul{\Hom}(N_0\rtimes_\rho\mca{G},N_0\rtimes_{\rho^\prime}\mca{G})$ such that $\psi\varphi\mathrel{\ul{\sim_o}}\varphi^\prime$. 
We have $\rho=(\rho^\prime)^{(F_\psi,\Psi)}$ and $F_\psi\in\ul{\Diff}(N_0)$. Since $F_\psi(\mca{O}_{\rho_0})=F_\psi(\mca{O}_\rho)=\mca{O}_{\rho^\prime}=\mca{O}_{\rho_0}$ as partitions into $C^\infty$ submanifolds, $F_\psi\in\ul{\Diff}(\mca{O}_{\rho_0})$. By taking the base maps of $\psi\varphi\mathrel{\ul{\sim_o}}\varphi^\prime$, we get $F_\psi\mathrel{\ul{\sim_o}}\id$. Thus $F_\psi\in\ul{\Diff}(\mca{O}_{\rho_0})_1$. Finally $\Psi\in\ul{\Aut}(\mca{G})$ follows from Lemma \ref{ulid}. 
\end{proof}

By composing with the bijection 
\begin{align*}
T_o(\rho_0)&\simeq\MAG_{\id}(\rho_0)/\ul{\sim_o}\\
[N\rtimes_\rho\mca{G},\varphi]&\mapsto[N_0\rtimes_{\rho^{(F_\varphi,\id)}}\mca{G},(F_\varphi^{-1}\rtimes\id)\varphi]
\end{align*}
in Lemma \ref{magid}, we get a map 
\begin{align*}
T_o(\rho_0)&\to\mca{A}_{LF}(\mca{O}_{\rho_0},\mca{G})/(\ul{\Diff}(\mca{O}_{\rho_0})_1\times\ul{\Aut}(\mca{G}))\\
[N\rtimes_\rho\mca{G},\varphi]&\mapsto\rho^{(F_\varphi,\id)}. 
\end{align*}

\begin{prop}
Let $\mca{G}\rightrightarrows M$ be an $s$-connected Lie groupoid, $N_0$ be a $C^\infty$ manifold, $\nu_0\colon N_0\to M$ be a surjective submersion and $\rho_0\in\mca{A}_{LF}(\nu_0,\mca{G})$. Then 
\begin{equation*}
T_o(\rho_0)\to\mca{A}_{LF}(\mca{O}_{\rho_0},\mca{G})/(\ul{\Diff}(\mca{O}_{\rho_0})_1\times\ul{\Aut}(\mca{G}))
\end{equation*}
is injective. 
\end{prop}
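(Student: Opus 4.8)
The statement to prove is that the map
\[
T_o(\rho_0)\to\mca{A}_{LF}(\mca{O}_{\rho_0},\mca{G})/(\ul{\Diff}(\mca{O}_{\rho_0})_1\times\ul{\Aut}(\mca{G}))
\]
is injective. By the bijection $T_o(\rho_0)\simeq\MAG_{\id}(\rho_0)/\ul{\sim_o}$ of Lemma \ref{magid} and the construction preceding the proposition, it suffices to show the map $\MAG_{\id}(\rho_0)/\ul{\sim_o}\to\mca{A}_{LF}(\mca{O}_{\rho_0},\mca{G})/(\ul{\Diff}(\mca{O}_{\rho_0})_1\times\ul{\Aut}(\mca{G}))$ sending $[N_0\rtimes_\rho\mca{G},\varphi]$ to the class of $\rho$ is injective. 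So I would take two elements $(N_0\rtimes_\rho\mca{G},\varphi)$, $(N_0\rtimes_{\rho^\prime}\mca{G},\varphi^\prime)\in\MAG_{\id}(\rho_0)$ whose images agree, i.e.\ there exist $F\in\ul{\Diff}(\mca{O}_{\rho_0})_1$ and $\Phi\in\ul{\Aut}(\mca{G})$ with $\rho=(\rho^\prime)^{(F,\Phi)}$, and I would produce a conjugacy $\psi\in\ul{\Hom}(N_0\rtimes_\rho\mca{G},N_0\rtimes_{\rho^\prime}\mca{G})$ with $\psi\varphi\mathrel{\ul{\sim_o}}\varphi^\prime$, which is exactly what $\mathrel{\ul{\sim_o}}$ demands.

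\textbf{Key steps.} First, from $\rho=(\rho^\prime)^{(F,\Phi)}$ and Lemma \ref{ngconj} (with $N=N^\prime=N_0$, $\nu^\prime=\nu_0$), we get a canonical conjugacy $\psi_0:=F\rtimes\Phi\in\ul{\Hom}(N_0\rtimes_\rho\mca{G},N_0\rtimes_{\rho^\prime}\mca{G})$ with base map $F$; note $\psi_0\in\ul{\Hom}$ because $\Phi\in\ul{\Aut}(\mca{G})$ (base map $\id$), by Lemma \ref{ulid}. Now I need $\psi_0\varphi\mathrel{\ul{\sim_o}}\varphi^\prime$, or rather I am free to postcompose by an automorphism: I would look for $\theta\in\ul{\Aut}(N_0\rtimes_{\rho^\prime}\mca{G})$ with $\theta\psi_0\varphi\mathrel{\ul{\sim_o}}\varphi^\prime$, equivalently $\varphi^\prime\varphi^{-1}\psi_0^{-1}\mathrel{\ul{\sim_o}}\theta^{-1}$; but $\varphi^\prime\varphi^{-1}\psi_0^{-1}$ is already an element of $\ul{\Hom}(N_0\rtimes_{\rho^\prime}\mca{G},N_0\rtimes_{\rho^\prime}\mca{G})$ that happens to be an isomorphism, hence lies in $\ul{\Aut}(N_0\rtimes_{\rho^\prime}\mca{G})$. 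Its base map is $F_{\varphi^\prime}F_\varphi^{-1}F^{-1}=\id\cdot\id\cdot F^{-1}=F^{-1}$ since $F_\varphi=F_{\varphi^\prime}=\id$. So the obstruction reduces to: the element $\chi:=\varphi^\prime\varphi^{-1}\psi_0^{-1}\in\ul{\Aut}(N_0\rtimes_{\rho^\prime}\mca{G})$ has base map $F^{-1}$, and $F^{-1}\mathrel{\ul{\sim_o}}\id$ (because $F\in\ul{\Diff}(\mca{O}_{\rho_0})_1$ and $\mathrel{\ul{\sim_o}}$ is an equivalence relation). I must upgrade the orbitwise homotopy $F^{-1}\mathrel{\ul{\sim_o}}\id$ of base maps to an orbitwise homotopy $\chi\mathrel{\ul{\sim_o}}\id$ of groupoid automorphisms, and that is precisely where the hypothesis that $\mca{G}$ is $s$-connected enters: the action groupoid $N_0\rtimes_{\rho^\prime}\mca{G}$ is $s$-connected (Example \ref{gmnmnrgyy}) and is a foliation groupoid (Example \ref{gmnmnrgyy}, since $\rho^\prime$ is locally free). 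By Proposition \ref{homcon}, an orbitwise homotopy of base maps starting from $F_\chi=F^{-1}$ lifts to an $nt$-homotopy $P$ from $\chi$ to a unique $\chi^\prime$, and $\chi^\prime$ has base map $\id$; by $s$-connectedness and Proposition \ref{connuni} an endomorphism with base map $\id$ of a foliation groupoid that is $nt$-homotopic (hence orbitwise homotopic, Lemma \ref{hnhnphipoh}) to $\id$ must actually equal... no — here I must be careful: Proposition \ref{connuni} only gives uniqueness given the base map, and $\chi^\prime$ and $\id$ both have base map $\id$, so $\chi^\prime=\id$. Thus $\chi\mathrel{\ul{\sim_{nt}}}\id$, and in particular $\chi\mathrel{\ul{\sim_o}}\id$ by Lemma \ref{hnhnphipoh}; the orbitwise homotopy obtained stays in $\ul{\Hom}$ because its base maps stay in $\ul{C}^\infty(N_0,N_0)$ (they are the given orbitwise homotopy $F^{-1}\mathrel{\ul{\sim_o}}\id$ through maps in $\ul{\Diff}(\mca{O}_{\rho_0})$, up to checking the source-map commutation via the Claim in Definition \ref{raragpppp}).

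\textbf{Conclusion and main obstacle.} Having $\chi\mathrel{\ul{\sim_o}}\id$, i.e.\ $\varphi^\prime\varphi^{-1}\psi_0^{-1}\mathrel{\ul{\sim_o}}\id$, Lemma \ref{ulsimsim} gives $\varphi^\prime\varphi^{-1}\mathrel{\ul{\sim_o}}\psi_0$, hence $\psi_0\varphi\mathrel{\ul{\sim_o}}\varphi^\prime$ (using that $\psi_0$ is a conjugacy, hence bounded, and the reformulation in Definition \ref{gmngprmagl} that $\psi\varphi\mathrel{\ul{\sim_a}}\varphi^\prime$ iff $\varphi^\prime\varphi^{-1}\mathrel{\ul{\sim_a}}\psi$). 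Since $\psi_0\in\ul{\Hom}(N_0\rtimes_\rho\mca{G},N_0\rtimes_{\rho^\prime}\mca{G})$ is a conjugacy, this shows $(N_0\rtimes_\rho\mca{G},\varphi)\mathrel{\ul{\sim_o}}(N_0\rtimes_{\rho^\prime}\mca{G},\varphi^\prime)$, proving injectivity. I expect the main obstacle to be the bookkeeping in the second step: verifying that the lift $P$ of the base-map homotopy produced by Proposition \ref{homcon} can be arranged so that every intermediate map $\chi P(\cdot,t)$ stays in $\ul{\Hom}$ (i.e.\ the action-groupoid condition, source-map side), and identifying the endpoint $\chi^\prime$ with $\id$ via the uniqueness Proposition \ref{connuni} rather than accidentally only getting $\chi^\prime$ homotopic to $\id$. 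A secondary subtlety is tracking base maps correctly through the compositions $\varphi^\prime\varphi^{-1}\psi_0^{-1}$, which is routine but must be done carefully since $\psi_0$ has nontrivial base map $F$.
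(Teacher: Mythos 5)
Your proposal is correct and takes essentially the same route as the paper: reduce to $\MAG_{\id}(\rho_0)/\ul{\sim_o}$, use the conjugacy $F\rtimes\Phi$, and conclude from the fact that a morphism out of an $s$-connected Lie groupoid into a foliation groupoid is determined, up to $nt$-/orbitwise homotopy, by its base map. The only difference is organizational: the paper applies this in the packaged form of Corollary \ref{bggoogg} to $(\varphi^\prime)^{-1}\psi\varphi\in\ul{\End}(N_0\rtimes_{\rho_0}\mca{G})$, whereas you re-derive that corollary inline (via Proposition \ref{homcon}, Proposition \ref{connuni} and Lemma \ref{hnhnphipoh}) for $\varphi^\prime\varphi^{-1}\psi_0^{-1}\in\ul{\Aut}(N_0\rtimes_{\rho^\prime}\mca{G})$, which is the same argument.
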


\begin{proof}
It suffices to show that $\MAG_{\id}(\rho_0)/\ul{\sim_o}\to\mca{A}_{LF}(\mca{O}_{\rho_0},\mca{G})/(\ul{\Diff}(\mca{O}_{\rho_0})_1\times\ul{\Aut}(\mca{G}))$ is injective. Let $(N_0\rtimes_\rho\mca{G},\varphi)$, $(N_0\rtimes_{\rho^\prime}\mca{G},\varphi^\prime)\in\MAG_{\id}(\rho_0)$ and assume that there exist $F\in\ul{\Diff}(\mca{O}_{\rho_0})_1$ and $\Phi\in\ul{\Aut}(\mca{G})$ such that $\rho=(\rho^\prime)^{(F,\Phi)}$. Let $\psi=F\rtimes\Phi\in\ul{\Hom}(N_0\rtimes_\rho\mca{G},N_0\rtimes_{\rho^\prime}\mca{G})$. This is a conjugacy. We have $(\varphi^\prime)^{-1}\psi\varphi\in\ul{\End}(N_0\rtimes_{\rho_0}\mca{G})$ and $F_{(\varphi^\prime)^{-1}\psi\varphi}=F\mathrel{\ul{\sim_o}}\id=F_{\id}$. Hence $(\varphi^\prime)^{-1}\psi\varphi\mathrel{\ul{\sim_o}}\id$ by Corollary \ref{bggoogg}. Thus $\psi\varphi\mathrel{\ul{\sim_o}}\varphi^\prime$ and $(N_0\rtimes_\rho\mca{G},\varphi)\mathrel{\ul{\sim_o}}(N_0\rtimes_{\rho^\prime}\mca{G},\varphi^\prime)$. 
\end{proof}

\begin{prop}\label{gmsimiso}
Let $\mca{G}\rightrightarrows M$ be an $s$-simply connected Lie groupoid, $N_0$ be a $C^\infty$ manifold, $\nu_0\colon N_0\to M$ be a surjective submersion and $\rho_0\in\mca{A}_{LF}(\nu_0,\mca{G})$. Then we have 
\begin{equation*}
T_o(\rho_0)\simeq\mca{A}_{LF}(\mca{O}_{\rho_0},\mca{G})/(\ul{\Diff}(\mca{O}_{\rho_0})_1\times\ul{\Aut}(\mca{G})). 
\end{equation*}
\end{prop}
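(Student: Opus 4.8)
The plan is to build the desired bijection by combining the map
\[
T_o(\rho_0)\to\mca{A}_{LF}(\mca{O}_{\rho_0},\mca{G})/(\ul{\Diff}(\mca{O}_{\rho_0})_1\times\ul{\Aut}(\mca{G}))
\]
constructed just before the statement with a map in the opposite direction, and checking the two are mutually inverse. We already know this map is injective by the preceding proposition (which only needs $s$-connectedness). So the real content is surjectivity, and this is where the $s$-simply connectedness hypothesis, through Proposition \ref{simconfolbij}, enters decisively.

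For surjectivity, I would argue as follows. Recall from Lemma \ref{magid} that $T_o(\rho_0)\simeq\MAG_{\id}(\rho_0)/\ul{\sim_o}$, and that under this identification the comparison map sends $[N_0\rtimes_\rho\mca{G},\varphi]$ to the class of $\rho$ in $\mca{A}_{LF}(\mca{O}_{\rho_0},\mca{G})/(\ul{\Diff}(\mca{O}_{\rho_0})_1\times\ul{\Aut}(\mca{G}))$. So given $\rho\in\mca{A}_{LF}(\mca{O}_{\rho_0},\mca{G})$, I need to produce $(N_0\rtimes_\rho\mca{G},\varphi)\in\MAG_{\id}(\rho_0)$, i.e. an action groupoid isomorphism $\varphi\colon N_0\rtimes_{\rho_0}\mca{G}\to N_0\rtimes_\rho\mca{G}$ with $F_\varphi=\id$. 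Since $\rho$ and $\rho_0$ are both locally free, $N_0\rtimes_{\rho_0}\mca{G}$ and $N_0\rtimes_\rho\mca{G}$ are foliation groupoids by Example \ref{gmnmnrgyy}, with $\mca{F}_{\rho_0}=\mca{F}_\rho$ as $C^\infty$ (regular) foliations of $N_0$ because $\mca{O}_\rho=\mca{O}_{\rho_0}$ as partitions into $C^\infty$ submanifolds. Both are $s$-simply connected since $\mca{G}$ is $s$-simply connected and $(N_0\rtimes_\rho\mca{G})^y\simeq\mca{G}^{\nu_0(y)}$. Therefore Corollary \ref{simmon}, applied with $F=\id\colon N_0\to N_0$, yields a (unique) isomorphism $\varphi\in\Hom(N_0\rtimes_{\rho_0}\mca{G},N_0\rtimes_\rho\mca{G})$ with $F_\varphi=\id$; in particular $\varphi$ is an action groupoid isomorphism and $(N_0\rtimes_\rho\mca{G},\varphi)\in\MAG_{\id}(\rho_0)$. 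Its image under the comparison map is the class of $\rho$, so the map is surjective.

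Putting injectivity and surjectivity together gives the bijection. I would also remark, for cleanliness, that the identification $T_o(\rho_0)\simeq\MAG_{\id}(\rho_0)/\ul{\sim_o}$ and the explicit description of the comparison map mean nothing further needs to be checked: the map $[N\rtimes_\rho\mca{G},\varphi]\mapsto\rho^{(F_\varphi,\id)}$ is well-defined (this was established before the statement) and we have just shown it is bijective.

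The main obstacle is really packaged away inside Corollary \ref{simmon} / Proposition \ref{simconfolbij}: the surjectivity of the base-map functor $b\colon\Hom(\mca{G},\mca{G}^\prime)\to C^\infty(\mca{O}_\mca{G},\mca{O}_{\mca{G}^\prime})$ when the source is $s$-simply connected and the target is a foliation groupoid. Granting that result, the proof above is essentially bookkeeping. If one wanted a self-contained argument one would have to reprove that surjectivity here — lifting the leafwise map $\id$ through the covering maps $t'\colon(\mca{G}')_{F(x)}\to O'$ compatibly and smoothly, which is exactly the delicate smoothness argument carried out via monodromy groupoids in the proof of Proposition \ref{simconfolbij} — but since that proposition is already available in the excerpt I would simply invoke it.
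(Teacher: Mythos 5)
Your proposal is correct and follows essentially the same route as the paper: injectivity is quoted from the preceding proposition (which only needs $s$-connectedness), and surjectivity is obtained by applying Corollary \ref{simmon} with $F=\id$ to produce, for each $\rho\in\mca{A}_{LF}(\mca{O}_{\rho_0},\mca{G})$, the unique isomorphism $\varphi\colon N_0\rtimes_{\rho_0}\mca{G}\to N_0\rtimes_\rho\mca{G}$ with $F_\varphi=\id$, giving an element of $\MAG_{\id}(\rho_0)$ mapping to $\rho$. The extra details you supply (equality of the orbit foliations, the foliation-groupoid observation) are consistent with the paper's argument, which indeed packages the hard work into Corollary \ref{simmon} and Proposition \ref{simconfolbij}.
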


\begin{proof}
It suffices to show $\MAG_{\id}(\rho_0)/\ul{\sim_o}\to\mca{A}_{LF}(\mca{O}_{\rho_0},\mca{G})/(\ul{\Diff}(\mca{O}_{\rho_0})_1\times\ul{\Aut}(\mca{G}))$ is surjective. Let $\rho\in\mca{A}_{LF}(\mca{O}_{\rho_0},\mca{G})$. There exists a unique isomorphism $\varphi\colon N_0\rtimes_{\rho_0}\mca{G}\to N_0\rtimes_\rho\mca{G}$ such that $F_\varphi=\id$ by Corollary \ref{simmon}. We have $(N_0\rtimes_\rho\mca{G},\varphi)\in\MAG_{\id}(\rho_0)$ and it maps to $\rho$. 
\end{proof}

\begin{example}
The statement of Proposition \ref{gmsimiso} is not true if $\mca{G}\rightrightarrows M$ is not $s$-simply connected. For $n\in\bb{Z}_{>0}$, let 
\begin{align*}
\rho_n\colon\U(1)\times\U(1)&\to\U(1)\\
(z,w)&\mapsto zw^n. 
\end{align*}
Then $\U(1)\stackrel{\rho_1}{\curvearrowleft}\U(1)$ is the action by right multiplication. We have $\rho_n\in\mca{A}_{LF}(\mca{O}_{\rho_1},\U(1))$. $T_o(\rho_1)$ is a singleton by Example \ref{tnto}, which we will see later. On the other hand 
\begin{equation*}
\mca{A}_{LF}(\mca{O}_{\rho_1},\U(1))/(\Diff(\U(1))_1\times\Aut(\U(1)))
\end{equation*}
is an infinite set. Hence 
\begin{equation*}
T_o(\rho_1)\to\mca{A}_{LF}(\mca{O}_{\rho_1},\U(1))/(\Diff(\U(1))_1\times\Aut(\U(1)))
\end{equation*}
is not surjective. 
\end{example}

\section{Semiconjugacy-to-Conjugacy Theorem for $o$ and $nt$}\label{1313}
\subsection{Lemma on the normal spaces to orbits}
\begin{lem}\label{tsiso}
Let $\mca{G}\rightrightarrows M$ be a Lie groupoid and $\mca{G}\stackrel{\gamma}{\curvearrowleft}\mca{G}\times\mca{G}$ be the action by multiplication (Example \ref{canaction}). Then the following statements hold: 
\begin{enumerate}
\item Let $\wt{O}\in\mca{O}_\gamma$. Then there exists $O\in\mca{O}_\mca{G}$ such that $t(\wt{O})$, $s(\wt{O})\subset O$. The maps $t,s\colon\wt{O}\to O$ are $C^\infty$. For any $g\in\wt{O}$, 
\begin{equation*}
t_*\colon T_g\mca{G}/T_g\wt{O}\to T_{t(g)}M/T_{t(g)}O,\quad s_*\colon T_g\mca{G}/T_g\wt{O}\to T_{s(g)}M/T_{s(g)}O
\end{equation*}
are isomorphisms. 
\item Let $\sigma\colon M\to\mca{G}$ be a $C^\infty$ map such that $t\sigma=\id$. Let $O\in\mca{O}_\mca{G}$. Then there exists $\wt{O}\in\mca{O}_\gamma$ such that $\sigma(O)\subset\wt{O}$. We have $t(\wt{O})$, $s(\wt{O})\subset O$. The map $\sigma\colon O\to\wt{O}$ is $C^\infty$. For any $x\in O$, 
\begin{equation*}
\sigma_*\colon T_xM/T_xO\to T_{\sigma(x)}\mca{G}/T_{\sigma(x)}\wt{O}
\end{equation*}
is the inverse of $t_*\colon T_{\sigma(x)}\mca{G}/T_{\sigma(x)}\wt{O}\to T_xM/T_xO$. 
\end{enumerate}
\end{lem}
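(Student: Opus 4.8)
The statement is really a pair of linear-algebra facts about the action groupoid of the canonical action $\mca{G}\curvearrowleft\mca{G}\times\mca{G}$, packaged with some observations about orbits. The key input is Proposition \ref{isoorb}: each orbit of a Lie groupoid is an injectively immersed submanifold, its target (and source) fibers are principal bundles over it, and the tangent space to the target fiber at an identity is the kernel of $s_*-t_*$ (more precisely one has the description of $T_g\mca{G}^{x'}_x$ as $\ker(t,s)_*$). I would first unwind what $\mca{O}_\gamma$ means. Recall $\mca{G}\rtimes_\gamma(\mca{G}\times\mca{G})$ has object space $\mca{G}$, and $(g,(g_1,g_2))$ is an arrow from $g$ to $g_1^{-1}gg_2$; hence the orbit $\wt{O}$ of $g$ is $\{g_1^{-1}gg_2 \mid (t(g_1),t(g_2))=(t(g),s(g))\}$. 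From this description it is immediate that $t(\wt O)$ and $s(\wt O)$ lie in a single orbit $O=[t(g)]=[s(g)]$ of $\mca{G}$: applying $t$ to $g_1^{-1}gg_2$ gives $s(g_1)$, which is $\mca{G}$-equivalent to $t(g_1)=t(g)$, and similarly for $s$. Smoothness of $t,s\colon\wt O\to O$ follows from Lemma \ref{oosmhs} (the base map of a morphism is smooth orbit-by-orbit), applied to the action morphisms $\varphi_\gamma$ and to $t$ composed appropriately — or more directly, by the same argument as Lemma \ref{oosmhs} using that $s\colon\mca{G}^x\to O$ admits local smooth sections.

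For the isomorphism statements in (1), fix $g\in\wt O$. I want to show $t_*\colon T_g\mca{G}/T_g\wt O\to T_{t(g)}M/T_{t(g)}O$ is an isomorphism. The cleanest route is a dimension count plus exactness. The orbit $\wt O$ of $\gamma$ through $g$ is the image of the $s$-fiber $(\mca{G}\rtimes_\gamma(\mca{G}\times\mca{G}))^g$ under the target map of the action groupoid, i.e. $\wt O=s(\text{$t$-fiber at }g)$, and by Proposition \ref{isoorb}(4) this target map is a principal bundle projection, so $T_g\wt O = \ker(\text{stabilizer directions})$ pushed forward, and $\dim\wt O = \dim\mca{G} - \dim\mca{G}^{t(g)}_{t(g)}$ after subtracting the isotropy. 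Meanwhile $\dim O = \dim M - \dim\mca{G}^{x}_{x}$ for $x=t(g)$, with the same isotropy group (the isotropy of $\gamma$ at $g$ is isomorphic to $\mca{G}^{t(g)}_{s(g)}$-conjugation data, but in any case one computes $\dim\mca{G}-\dim\wt O = \dim M - \dim O$). So the two quotient spaces have equal dimension; it then suffices to prove $t_*$ is surjective on the quotient, which amounts to: $t_*^{-1}(T_{t(g)}O) = T_g\wt O + \ker t_*$. The inclusion $\supset$ is clear since $t(\wt O)\subset O$ and $\ker t_*\subset t_*^{-1}(0)$; for $\subset$, given $v\in T_g\mca{G}$ with $t_*v\in T_{t(g)}O$, lift $t_*v$ along $s\colon\mca{G}^{t(g)}\to O$ — wait, I actually want to move $v$ by a vector tangent to $\wt O$ to kill its $t$-component modulo $T_{t(g)}O$; concretely, the orbit directions $T_g\wt O$ surject onto $T_{t(g)}O$ via $t_*$ (since left multiplications $L_{g_1^{-1}}$ move $g$ within $\wt O$ and their $t$-derivatives sweep out $s_*T\mca{G}^{t(g)} = T_{t(g)}O$), so after correcting by an element of $T_g\wt O$ we reduce to $t_*v=0$. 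This gives the claimed equality, hence injectivity of the induced map, hence (with the dimension count) the isomorphism. The same argument verbatim with $s$ in place of $t$ handles $s_*$.

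For part (2), let $\sigma\colon M\to\mca{G}$ be smooth with $t\sigma=\id$ (a ``global section of $t$'', generalizing a bisection). Given $O\in\mca{O}_\mca{G}$ and $x_0\in O$: since $\sigma(x)$ for $x\in O$ is obtained from $\sigma(x_0)$ by moving the target along $O$, and moving the target within an orbit is a groupoid operation, $\sigma(O)$ lies in a single $\gamma$-orbit $\wt O$ — precisely, for $x,x'\in O$ pick $h\in\mca{G}^{x}_{x'}$, then $\sigma(x')$ and $h^{-1}\sigma(x)(\text{something in }\mca{G}^{s(\sigma(x))})$ are in the same $\gamma$-orbit; I will spell this out using that $\mca{G}^{x}_{x'}\neq\emptyset$ and the explicit formula for $\gamma$. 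Then $t(\wt O), s(\wt O)\subset O$ by part (1) applied to $\wt O$. Smoothness of $\sigma\colon O\to\wt O$ is again Lemma \ref{oosmhs}-type: $t\sigma|_O=\id_O$ is smooth and $t\colon\wt O\to O$ has local smooth sections, but more simply $\sigma|_O$ is smooth into $M$ hence into $O$ by Lemma \ref{nlnmcinf}/Lemma \ref{locconncon} since orbits are leaves of a Stefan foliation. Finally, on normal spaces: differentiating $t\sigma=\id$ at $x$ gives $t_*\sigma_* = \id$ on $T_xM$, which descends to $t_*\circ\sigma_* = \id$ on $T_xM/T_xO \to T_{\sigma(x)}\mca{G}/T_{\sigma(x)}\wt O \to T_xM/T_xO$ (using $\sigma_*(T_xO)\subset T_{\sigma(x)}\wt O$, valid since $\sigma(O)\subset\wt O$); since we already know $t_*$ is an isomorphism of these quotients from part (1), $\sigma_*$ must be its two-sided inverse.

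\textbf{Main obstacle.} I expect the only real subtlety to be the surjectivity/exactness step $t_*^{-1}(T_{t(g)}O) = T_g\wt O + \ker t_*$ in part (1) — equivalently, pinning down $T_g\wt O$ well enough to see that $t_*(T_g\wt O) = T_{t(g)}O$. This requires carefully identifying $\wt O$ with the image of a target-fiber of the action groupoid under the principal-bundle projection of Proposition \ref{isoorb}(4), and keeping track of which isotropy group appears so the dimension count closes. Everything else (the orbit-inclusion claims, the smoothness claims, the formal consequence in part (2)) is routine given the earlier results, especially Proposition \ref{isoorb} and Lemma \ref{oosmhs}.
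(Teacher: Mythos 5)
There is a genuine gap in part (2): the smoothness of $\sigma\colon O\to\wt{O}$. You propose to get it from Lemma \ref{nlnmcinf}/Lemma \ref{locconncon} (smooth into the ambient space implies smooth into the leaf/orbit), but those results are proved for groupoids over an honest base, and Lemma \ref{locconncon} explicitly assumes the arrow space is second countable. Here $\wt{O}$ is an orbit of the action groupoid $\mca{G}\rtimes_\gamma(\mca{G}\times\mca{G})\rightrightarrows\mca{G}$, whose base is the arrow space $\mca{G}$, which in this paper is allowed to be non-Hausdorff and non-second-countable; the paper stresses at the start of Section \ref{999} that this is exactly why Proposition \ref{isoorb} is redone in that generality before being applied to the present lemma. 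Your fallback (``$t\sigma|_O=\id$ and $t\colon\wt{O}\to O$ has local smooth sections'') is not a proof either: $\sigma$ need not locally coincide with such a section, so nothing forces it to be smooth into $\wt{O}$. The paper closes this by observing that $\sigma\in\Nat(\id,\id\sigma)$, so by Proposition \ref{natmor} $\sigma$ is the base map of a morphism $\mca{G}\to\mca{G}\rtimes_\gamma(\mca{G}\times\mca{G})$, namely $g\mapsto(\sigma(t(g)),(g,\sigma(t(g))^{-1}g\sigma(s(g))))$; then Lemma \ref{oosmhs}, which holds without any countability hypotheses, gives both $\sigma(O)\subset\wt{O}$ and the smoothness of $\sigma\colon O\to\wt{O}$ in one stroke. (Alternatively, repeat the proof of Lemma \ref{oosmhs} by hand: near $x_0\in O$ precompose with a local smooth section of the submersion $s\colon\mca{G}^{x_0}\to O$ and express $\sigma$ as the source map of the action groupoid applied to a smooth map into its target fiber at $\sigma(x_0)$.) Your argument for $\sigma(O)\subset\wt{O}$ itself, and your part (1) smoothness via $\varphi_\gamma$ and Lemma \ref{oosmhs}, are fine and agree with the paper.

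The isomorphism step in part (1) also needs repair as written, although the plan can be made to work. Both of your intermediate dimension formulas are wrong: the target fiber of the action groupoid at $g$ is $\mca{G}^{t(g)}\times\mca{G}^{s(g)}$ and the isotropy at $g$ is isomorphic to $\mca{G}^{t(g)}_{t(g)}$, so $\dim\wt{O}=2(\dim\mca{G}-\dim M)-\dim\mca{G}^{t(g)}_{t(g)}$, not $\dim\mca{G}-\dim\mca{G}^{t(g)}_{t(g)}$; likewise $\dim O=\dim\mca{G}-\dim M-\dim\mca{G}^{x}_{x}$, not $\dim M-\dim\mca{G}^{x}_{x}$. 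The codimension identity you assert, $\dim\mca{G}-\dim\wt{O}=\dim M-\dim O$, is nevertheless true, and once it is actually verified your argument closes, since surjectivity of the induced map on normal spaces is immediate ($t$ is a submersion; the paper instead deduces it from the unit section $1\colon M\to\mca{G}$ as a special case of part (2)). Note also that your ``exactness'' condition $t_*^{-1}(T_{t(g)}O)=T_g\wt{O}+\ker t_*$ is really injectivity, not surjectivity, and to turn it into injectivity you additionally need $\ker t_*=T_g\mca{G}^{t(g)}\subset T_g\wt{O}$, which is true because $\mca{G}^{t(g)}=g\mca{G}\subset\wt{O}$ but is unstated. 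The paper sidesteps the isotropy bookkeeping entirely: from $\mca{G}^{t(g)},\mca{G}_{s(g)}\subset\wt{O}$ it gets $T_g\mca{G}^{t(g)}+T_g\mca{G}_{s(g)}\subset T_g\wt{O}$, hence $\dim T_g\mca{G}/T_g\wt{O}\le\dim M-\dim O$ by Proposition \ref{isoorb}, and this one-sided bound plus surjectivity already gives the isomorphism with no exact computation of $\dim\wt{O}$.
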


\begin{proof}
Let 
\begin{equation*}
\theta_i\colon\mca{G}\rtimes_\gamma(\mca{G}\times\mca{G})\xrightarrow{\varphi_\gamma}\mca{G}\times\mca{G}\xrightarrow{p_i}\mca{G}, 
\end{equation*}
where $p_i$ is the $i$-th projection. Since $F_{\varphi_\gamma}=(t,s)\colon\mca{G}\to M\times M$, the base maps of $\theta_1$, $\theta_2$ are $t$, $s$. Hence there exist $O_i\in\mca{O}_\mca{G}$ such that $t(\wt{O})\subset O_1$, $s(\wt{O})\subset O_2$. We have $O_1=O_2$, so let $O=O_i$. The maps $t$, $s\colon\wt{O}\to O$ are $C^\infty$ by Lemma \ref{oosmhs}. Hence 
\begin{equation}\label{tgmsgm}
t_*\colon T_g\mca{G}/T_g\wt{O}\to T_{t(g)}M/T_{t(g)}O,\quad s_*\colon T_g\mca{G}/T_g\wt{O}\to T_{s(g)}M/T_{s(g)}O
\end{equation}
are defined. 

Assume that $\sigma\colon M\to\mca{G}$ is a $C^\infty$ map such that $t\sigma=\id$. Consider $\id$, $\id\sigma\colon\mca{G}\to\mca{G}$. Then $\sigma\in\Nat(\id,\id\sigma)$. Hence $\sigma$ is the base map of 
\begin{align*}
\sigma t\rtimes(\id\times\id\sigma)\colon\mca{G}&\to\mca{G}\rtimes_\gamma(\mca{G}\times\mca{G})\\
g&\mapsto(\sigma(t(g)),(g,\sigma(t(g))^{-1}g\sigma(s(g))))
\end{align*}
by Proposition \ref{natmor}. So there exists $\wt{O}\in\mca{O}_\gamma$ such that $\sigma(O)\subset\wt{O}$. The map $\sigma\colon O\to\wt{O}$ is $C^\infty$ by Lemma \ref{oosmhs}. So $\sigma_*\colon T_xM/T_xO\to T_{\sigma(x)}\mca{G}/T_{\sigma(x)}\wt{O}$ is defined and $t_*\sigma_*=\id$. 

As $\sigma$ we can take $1\colon M\to\mca{G}$. Since we have $t1=\id$, $s1=\id$, the maps $t_*$, $s_*$ in \eqref{tgmsgm} are surjective. 

We have $\mca{G}^{t(g)}$, $\mca{G}_{s(g)}\subset\wt{O}$ since $\wt{O}$ is $\mca{G}\times\mca{G}$-invariant, $g\in\wt{O}$, $\mca{G}^{t(g)}=g\mca{G}$ and $\mca{G}_{s(g)}=\mca{G}g$. Hence $T_g\mca{G}^{t(g)}+T_g\mca{G}_{s(g)}\subset T_g\wt{O}$. Thus 
\begin{align*}
\dim T_g\mca{G}/T_g\wt{O}&\leq\dim T_g\mca{G}/(T_g\mca{G}^{t(g)}+T_g\mca{G}_{s(g)})\\
&=\dim\mca{G}-(\dim\mca{G}^{t(g)}+\dim\mca{G}_{s(g)}-\dim\mca{G}^{t(g)}_{s(g)})\\
&=\dim M-(\dim\mca{G}_{s(g)}-\dim\mca{G}^{s(g)}_{s(g)})\\
&=\dim M-\dim O=\dim T_{t(g)}M/T_{t(g)}O
\end{align*}
by Proposition \ref{isoorb}. Therefore $t_*$, $s_*$ in \eqref{tgmsgm} are isomorphisms. 
\end{proof}

\subsection{The cases of $o$ and $nt$}
\begin{prop}\label{btop}
Let $\mca{G}\rightrightarrows M$, $\mca{G}^\prime\rightrightarrows M^\prime$ be Lie groupoids, $N$, $N^\prime$ be $C^\infty$ manifolds, $\nu\colon N\to M$, $\nu^\prime\colon N^\prime\to M^\prime$ be $C^\infty$ maps, $\rho\in\mca{A}(\nu,\mca{G})$, $\rho^\prime\in\mca{A}(\nu^\prime,\mca{G}^\prime)$ and $\varphi$, $\varphi^\prime\in\Hom(N\rtimes_\rho\mca{G},N^\prime\rtimes_{\rho^\prime}\mca{G}^\prime)$. 
Assume $\varphi$ is an isomorphism and $\varphi^\prime=F_{\varphi^\prime}\rtimes\Phi$ is a semiconjugacy. Then the following statements hold: 
\begin{enumerate}
\setlength\itemsep{0em}
\item If $\varphi\sim_o\varphi^\prime$ and $\Phi$ is an isomorphism, then for any $O\in\mca{O}_\rho$, we have $F_{\varphi^\prime}(O)\in\mca{O}_{\rho^\prime}$, the map $F_{\varphi^\prime}\colon O\to F_{\varphi^\prime}(O)$ is a diffeomorphism and 
\begin{align*}
\mca{O}_\rho&\to\mca{O}_{\rho^\prime}\\
O&\mapsto F_{\varphi^\prime}(O)
\end{align*}
is bijective. In particular, $F_{\varphi^\prime}\colon N\to N^\prime$ is bijective (but may not be a diffeomorphism. See Example \ref{notadiff}). 
\item If $\varphi\sim_{nt}\varphi^\prime$ and $\Phi$ is an isomorphism, then $\varphi^\prime$ is a conjugacy. 
\end{enumerate}
\end{prop}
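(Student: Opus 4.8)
\textbf{Proof strategy for Proposition \ref{btop}.}

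The plan is to prove the two statements in sequence, with statement 1 doing most of the geometric work and statement 2 then being a short deduction using Proposition \ref{conjdiffinj}. For statement 1, I would start from an orbitwise homotopy $\eta$ between $\varphi$ and $\varphi^\prime$; by definition $F_{\eta(\cdot,t)}$ is an orbitwise homotopy between $F_\varphi$ and $F_{\varphi^\prime}$, so for a fixed orbit $O\in\mca{O}_\rho$ there is a single orbit $O^\prime\in\mca{O}_{\rho^\prime}$ with $F_{\eta(\cdot,t)}(O)\subset O^\prime$ for all $t$; in particular $F_{\varphi}(O)\subset O^\prime$ and $F_{\varphi^\prime}(O)\subset O^\prime$. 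Since $\varphi$ is an isomorphism, $F_\varphi$ is a diffeomorphism carrying $\mca{O}_\rho$ bijectively onto $\mca{O}_{\rho^\prime}$ (it sends orbits to orbits in both directions), so $F_\varphi(O)=O^\prime$. The crux is then to upgrade the inclusion $F_{\varphi^\prime}(O)\subset O^\prime$ to an equality together with $F_{\varphi^\prime}\colon O\to O^\prime$ being a diffeomorphism. For this I would use the hypothesis that $\Phi$ is an isomorphism: from the semiconjugacy square $\varphi_{\rho^\prime}\varphi^\prime=\Phi\varphi_\rho$ and the fact that $\varphi^\prime\colon(N\rtimes_\rho\mca{G})^y\to(N^\prime\rtimes_{\rho^\prime}\mca{G}^\prime)^{F_{\varphi^\prime}(y)}$ factors through the target-fibre diffeomorphisms of Proposition \ref{isoorb} and the isomorphism $\Phi\colon\mca{G}^{\nu(y)}\xrightarrow{\sim}(\mca{G}^\prime)^{F_\Phi(\nu(y))}$, one gets that $\varphi^\prime$ restricts to a diffeomorphism on each target fibre. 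Combined with the principal-bundle description $s\colon\mca{G}^x\to O$ (Proposition \ref{isoorb}) and the analogous statement for $\rho^\prime$, the map $F_{\varphi^\prime}\colon O\to O^\prime$ becomes a covering map; since it is also orbitwise homotopic to the diffeomorphism $F_\varphi\colon O\to O^\prime$, it is homotopic to a homeomorphism and hence is itself a bijective covering, i.e. a diffeomorphism. Bijectivity of $O\mapsto F_{\varphi^\prime}(O)$ follows because $O\mapsto F_\varphi(O)$ is bijective and both maps land an orbit in the \emph{same} orbit (via the homotopy), so $F_{\varphi^\prime}$ induces the same map on $\mca{O}_\rho$ as $F_\varphi$ does; in particular $F_{\varphi^\prime}\colon N\to N^\prime$ is a bijection.

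For statement 2, I would first record that $\varphi\sim_{nt}\varphi^\prime$ implies $\varphi\sim_o\varphi^\prime$ (noted in the excerpt after Lemma \ref{bounboun}), so statement 1 applies: $F_{\varphi^\prime}\colon N\to N^\prime$ is a bijection, and since $\Phi$ is an isomorphism, $F_\Phi\colon M\to M^\prime$ is a diffeomorphism, hence in particular $F_\Phi\colon\nu(N)\to M^\prime$ is injective. By Proposition \ref{conjdiffinj} it now suffices to prove that $\varphi^\prime$ is a diffeomorphism (we also need $\nu^\prime$ to be a surjective submersion — I would note this is automatic here since $\varphi^\prime$ is a semiconjugacy with $\Phi$ an isomorphism and $\nu$ a submersion is not assumed, so instead I will argue directly that $\varphi^\prime$ is an isomorphism without invoking Proposition \ref{conjdiffinj}, or alternatively restrict to the case at hand). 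The cleaner route: show $\varphi^\prime$ is a bijective local diffeomorphism. Bijectivity of $\varphi^\prime$ follows from bijectivity of $F_{\varphi^\prime}$ together with the fibrewise bijectivity of $\varphi^\prime$ (each $\varphi^\prime\colon(N\rtimes_\rho\mca{G})^y\to(N^\prime\rtimes_{\rho^\prime}\mca{G}^\prime)^{F_{\varphi^\prime}(y)}$ is a diffeomorphism, from the $\Phi$-isomorphism argument above). For the local-diffeomorphism part I would use the $nt$-homotopy more seriously: writing $P$ for an $nt$-homotopy between $\varphi$ and $\varphi^\prime$, we have $\varphi^\prime=\varphi P(\cdot,1)$ (up to the identification of Lemma \ref{hnhnphipoh}), and $\varphi P(\cdot,1)$ differs from the isomorphism $\varphi$ by the ``gauge transformation'' $P(\cdot,1)$, which is a diffeomorphism of $N\rtimes_\rho\mca{G}$-type; here the tangent-space computation of Lemma \ref{tsiso} enters to show that conjugation by $P(\cdot,1)$ does not change the rank of $\varphi^\prime$ along orbits or transverse to them, so $\varphi^\prime$ is a local diffeomorphism everywhere. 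Then $\varphi^\prime$ is a bijective local diffeomorphism, hence a diffeomorphism, hence an isomorphism by Proposition \ref{isodiff}; being also a semiconjugacy whose inverse is a semiconjugacy (the inverse of $\Phi$ furnishes the constant part), it is a conjugacy.

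The main obstacle I anticipate is the ``bijectivity $\Rightarrow$ diffeomorphism'' and ``local diffeomorphism'' bookkeeping in statement 1 and the local-diffeomorphism step of statement 2: the homotopy only gives continuity/covering-map information, and to get smoothness of $F_{\varphi^\prime}\colon O\to O^\prime$ and of $\varphi^\prime$ one must combine the principal-bundle structure of Proposition \ref{isoorb}, the isomorphism $\Phi$, and the normal-space isomorphisms of Lemma \ref{tsiso} carefully — in particular controlling the derivative of $\varphi^\prime$ in directions transverse to the orbits, which is exactly what Lemma \ref{tsiso} is designed to handle via the action $\mca{G}\stackrel{\gamma}{\curvearrowleft}\mca{G}\times\mca{G}$. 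I expect the orbitwise part (derivative along orbits) to be routine and the transverse part (showing $\varphi^\prime_*$ is onto modulo $T\wt O$, hence an isomorphism by dimension count) to be where the real argument lies; this is presumably why the authors isolated Lemma \ref{tsiso} immediately before this proposition.
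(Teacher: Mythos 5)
Your proposal is correct and takes essentially the same route as the paper: in part 1 you use the isomorphism $\Phi$ to get fibrewise diffeomorphisms on target fibres, deduce from the principal-bundle structure of Proposition \ref{isoorb} and a dimension count that $F_{\varphi^\prime}$ is a covering on (components of) orbits, and use the homotopy to the diffeomorphism $F_\varphi$ to force degree one; in part 2 you use the natural transformation coming from the $nt$-homotopy together with Lemma \ref{tsiso} (applied with the section $P F_\varphi^{-1}$ of $t^\prime$ on $N^\prime\rtimes_{\rho^\prime}\mca{G}^\prime$) to control the derivative transverse to orbits, and you rightly bypass Proposition \ref{conjdiffinj} (whose hypothesis on $\nu^\prime$ is unavailable) by producing $(F_{\varphi^\prime})^{-1}\rtimes\Phi^{-1}$ as the inverse semiconjugacy, exactly as the paper does. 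The remaining differences are presentational: the paper spells out the connected-component and based-versus-free homotopy bookkeeping in the degree-one step and proves the normal-space isomorphism for $F_{\varphi^\prime}$ rather than for $\varphi^\prime$, details you gloss but which cause no real difficulty.
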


\begin{proof}
Put $\mca{H}=N\rtimes_\rho\mca{G}$, $\mca{H}^\prime=N^\prime\rtimes_{\rho^\prime}\mca{G}^\prime$ and $F=F_\varphi$, $F^\prime=F_{\varphi^\prime}\colon N\to N^\prime$. Then $F$ is a diffeomorphism. 

$1$. Assume $\varphi\sim_o\varphi^\prime$ and $\Phi\colon\mca{G}\to\mca{G}^\prime$ is an isomorphism. Let $O\in\mca{O}_\rho$. Then $O^\prime=F(O)\in\mca{O}_{\rho^\prime}$ since $\varphi$ is an isomorphism. The map $F\colon O\to O^\prime$ is a diffeomorphism by Lemma \ref{oosmhs}. $F^\prime\colon O\to O^\prime$ is defined since $\varphi\sim_o\varphi^\prime$, and is $C^\infty$ by Lemma \ref{oosmhs}. $F$, $F^\prime\colon O\to O^\prime$ are $C^0$ homotopic by the definition of $\varphi\sim_o\varphi^\prime$. Let $O_0$ be a connected component of $O$. Then $O^\prime_0=F(O_0)$ is a connected component of $O^\prime$ and $F\colon O_0\to O^\prime_0$ is a diffeomorphism. $F^\prime\colon O_0\to O^\prime_0$ is defined and is $C^\infty$, and $F$, $F^\prime\colon O_0\to O^\prime_0$ are $C^0$ homotopic. Let $\mca{G}_0$, $\mca{G}^\prime_0$, $\mca{H}_0$, $\mca{H}^\prime_0$ be the identity components of $\mca{G}$, $\mca{G}^\prime$, $\mca{H}$, $\mca{H}^\prime$. Then $O_0$ is an orbit of $\mca{H}_0$ by Lemma \ref{connorb}. 

\begin{claim}
$F^\prime\colon O_0\to O^\prime_0$ is a covering map. 
\end{claim}

\begin{proof}
Let $y\in O_0$. Since $\varphi_\rho\colon\mca{H}\to\mca{G}$ is an action morphism, $\varphi_\rho\colon\mca{H}^y\to\mca{G}^{\nu(y)}$ is a diffeomorphism. Hence $\varphi_\rho\colon(\mca{H}_0)^y\to(\mca{G}_0)^{\nu(y)}$ is a diffeomorphism. We have 
\begin{equation*}
\begin{tikzcd}
(\mca{G}_0)^{\nu(y)}\ar[r,"\Phi"]\ar[rd,phantom,"\circlearrowright"]&(\mca{G}^\prime_0)^{\nu^\prime F^\prime(y)}\\
(\mca{H}_0)^y\ar[r,"\varphi^\prime"]\ar[u,"\sim"{sloped},"\varphi_\rho"{xshift=-6}]\ar[d,"s"']\ar[rd,phantom,"\circlearrowright"{xshift=-4}]&(\mca{H}^\prime_0)^{F^\prime(y)}\ar[u,"\sim"{sloped},"\varphi_{\rho^\prime}"']\ar[d,"s^\prime"]\\
O_0\ar[r,"F^\prime"']&O^\prime_0, 
\end{tikzcd}
\end{equation*}
where the upper square is obtained by restricting 
\begin{equation*}
\begin{tikzcd}
\mca{G}\ar[r,"\Phi"]&\mca{G}^\prime\\
\mca{H}\ar[r,"\varphi^\prime"']\ar[u,"\varphi_\rho"]\ar[ru,phantom,"\circlearrowright"]&\mca{H}^\prime. \ar[u,"\varphi_{\rho^\prime}"']
\end{tikzcd}
\end{equation*}
$s\colon(\mca{H}_0)^y\to O_0$ is a principal $(\mca{H}_0)^y_y$ bundle and $s^\prime\colon(\mca{H}^\prime_0)^{F^\prime(y)}\to O^\prime_0$ is a principal $(\mca{H}^\prime_0)^{F^\prime(y)}_{F^\prime(y)}$ bundle. Let $\rho_0\in\mca{A}(\nu,\mca{G}_0)$, $\rho^\prime_0\in\mca{A}(\nu^\prime,\mca{G}^\prime_0)$ be the restrictions of $\rho$, $\rho^\prime$. Hence 
\begin{equation*}
\begin{tikzcd}
\Stab_{\rho_0}(y)\bs(\mca{G}_0)^{\nu(y)}\ar[r,"\Phi"]\ar[rd,phantom,"\circlearrowright"]&\Stab_{\rho^\prime_0}(F^\prime(y))\bs(\mca{G}^\prime_0)^{\nu^\prime F^\prime(y)}\\
(\mca{H}_0)^y_y\bs(\mca{H}_0)^y\ar[r,"\varphi^\prime"]\ar[u,"\sim"{sloped},"\varphi_\rho"{xshift=-6}]\ar[d,"s"',"\sim"{sloped}]\ar[rd,phantom,"\circlearrowright"{xshift=-15}]&(\mca{H}^\prime_0)^{F^\prime(y)}_{F^\prime(y)}\bs(\mca{H}^\prime_0)^{F^\prime(y)}\ar[u,"\sim"{sloped},"\varphi_{\rho^\prime}"']\ar[d,"\sim"{sloped},"s^\prime"{xshift=7}]\\
O_0\ar[r,"F^\prime"']&O^\prime_0. 
\end{tikzcd}
\end{equation*}
Since $\Phi\colon\mca{G}_0\to\mca{G}^\prime_0$ is an isomorphism, the top horizontal map is decomposed as 
\begin{equation*}
\Stab_{\rho_0}(y)\bs(\mca{G}_0)^{\nu(y)}\simeq\Phi(\Stab_{\rho_0}(y))\bs(\mca{G}^\prime_0)^{\nu^\prime F^\prime(y)}\to\Stab_{\rho^\prime_0}(F^\prime(y))\bs(\mca{G}^\prime_0)^{\nu^\prime F^\prime(y)}
\end{equation*}
The second map is a fiber bundle. Since $\dim O_0=\dim O^\prime_0$, the total space and the base space have the same dimension, so it must be a covering map. 
\end{proof}

\begin{claim}
For $y\in O_0$, $F^\prime_*\colon\pi_1(O_0,y)\to\pi_1(O^\prime_0,F^\prime(y))$ is an isomorphism. 
\end{claim}

\begin{proof}
$F^\prime\colon O_0\to O^\prime_0$ is $C^0$ homotopic to a diffeomorphism $F\colon O_0\to O^\prime_0$. Let $f_t\colon O_0\to O^\prime_0$ be a $C^0$ homotopy between $F^\prime$ and $F$. There exists a $C^0$ homotopy $h_t\colon O^\prime_0\to O^\prime_0$ such that: 
\begin{itemize}
\setlength\itemsep{0em}
\item $h_0=\id$
\item $h_t\colon O_0\to O_0$ is a homeomorphism for all $t\in I$
\item $h_tF^\prime(y)=f_t(y)$ for all $t\in I$. 
\end{itemize}
Then $h_t^{-1}\circ f_t\colon O_0\to O^\prime_0$ is a based $C^0$ homotopy between $F^\prime$ and $h_1^{-1}\circ F$. Hence $F^\prime_*=(h_1^{-1}\circ F)_*\colon\pi_1(O_0,y)\to\pi_1(O^\prime_0,F^\prime(y))$ and 
\begin{equation*}
\begin{tikzcd}[column sep=tiny]
\pi_1(O_0,y)\ar[rr,"(h_1^{-1}\circ F)_*"{name=U}]\ar[rd,"\sim"{sloped},"F_*"']&&\pi_1(O^\prime_0,F^\prime(y))\\
&\pi_1(O^\prime_0,F(y)). \ar[ru,"\sim"{sloped},"(h_1^{-1})_*"']\ar[to=U,phantom,"\circlearrowright"]
\end{tikzcd}
\end{equation*}
So $F^\prime_*$ is an isomorphism. 
\end{proof}

Therefore the degree of the covering $F^\prime\colon O_0\to O^\prime_0$ is $1$, which means $F^\prime\colon O_0\to O^\prime_0$ is a diffeomorphism. Hence $F^\prime\colon O\to O^\prime$ is a diffeomorphism. $F^\prime\colon N\to N^\prime$ is bijective. 

$2$. Assume that $\varphi\sim_{nt}\varphi^\prime$ and $\Phi$ is an isomorphism. $F^\prime\colon N\to N^\prime$ is bijective by $1$. For $y\in N$, let $y\in O\in\mca{O}_\rho$ and put $O^\prime=F(O)\in\mca{O}_{\rho^\prime}$. Then $F^\prime\colon O\to O^\prime$ is a diffeomorphism by $1$. We have 
\begin{equation*}
\begin{tikzcd}[row sep=scriptsize]
T_yN\ar[r,"F^\prime_*"]&T_{F^\prime(y)}N^\prime\\
T_yO\ar[r,"\sim"]\ar[u,phantom,sloped,"\subset"]&T_{F^\prime(y)}O^\prime. \ar[u,phantom,sloped,"\subset"]
\end{tikzcd}
\end{equation*}

\begin{claim}
$F^\prime_*\colon T_yN/T_yO\to T_{F^\prime(y)}N^\prime/T_{F^\prime(y)}O^\prime$ is an isomorphism. 
\end{claim}

\begin{proof}
There exists $P\in\Nat(\varphi,\varphi^\prime)$. Then $P\colon N\to\mca{H}^\prime$ is a $C^\infty$ map such that $t^\prime P=F$ and $\varphi P=\varphi^\prime$. 
%
%
Hence $PF^{-1}\colon N^\prime\to\mca{H}^\prime$ is $C^\infty$ and $t^\prime PF^{-1}=\id$. Let $\gamma^\prime\in\mca{A}((t^\prime,s^\prime),\mca{H}^\prime\times\mca{H}^\prime)$ be the action by multiplication (Example \ref{canaction}), where $(t^\prime,s^\prime)\colon\mca{H}^\prime\to N^\prime\times N^\prime$. By Lemma \ref{tsiso} (letting $\sigma=PF^{-1}$): 
\begin{itemize}
\setlength\itemsep{0em}
\item there exists $\wt{O^\prime}\in\mca{O}_{\gamma^\prime}$ such that $PF^{-1}(O^\prime)\subset\wt{O^\prime}$
\item $PF^{-1}\colon O^\prime\to\wt{O^\prime}$ is $C^\infty$
\item $s^\prime_*\colon T_{P(y)}\mca{H}^\prime/T_{P(y)}\wt{O^\prime}\to T_{F^\prime(y)}N^\prime/T_{F^\prime(y)}O^\prime$ is an isomorphism
\item $(PF^{-1})_*\colon T_{F(y)}N^\prime/T_{F(y)}O^\prime\to T_{P(y)}\mca{H}^\prime/T_{P(y)}\wt{O^\prime}$ is an isomorphism. 
\end{itemize}
Since $F^\prime=s^\prime\circ(PF^{-1})\circ F$, we have 
\begin{equation*}
\begin{tikzcd}[column sep=huge]
T_yN/T_yO\ar[r,"F^\prime_*"]\ar[d,"\sim"{sloped},"F_*"']\ar[rd,phantom,"\circlearrowright"{xshift=4,yshift=-2}]&T_{F^\prime(y)}N^\prime/T_{F^\prime(y)}O^\prime\\
T_{F(y)}N^\prime/T_{F(y)}O^\prime\ar[r,"\sim","(PF^{-1})_*"']&T_{P(y)}\mca{H}^\prime/T_{P(y)}\wt{O^\prime}. \ar[u,"\sim"{sloped},"s^\prime_*"']
\end{tikzcd}
\end{equation*}
Hence $F^\prime_*\colon T_yN/T_yO\to T_{F^\prime(y)}N^\prime/T_{F^\prime(y)}O^\prime$ is an isomorphism. 
\end{proof}

Therefore $F^\prime_*\colon T_yN\to T_{F^\prime(y)}N^\prime$ is an isomorphism. So $F^\prime$ is a diffeomorphism by the inverse function theorem. 

\begin{claim}
The map 
\begin{align*}
(F^\prime)^{-1}\rtimes\Phi^{-1}\colon N^\prime\rtimes_{\rho^\prime}\mca{G}^\prime&\to N\rtimes_\rho\mca{G}\\
(y^\prime,g^\prime)&\mapsto((F^\prime)^{-1}(y^\prime),\Phi^{-1}(g^\prime))
\end{align*}
is defined. 
\end{claim}

\begin{proof}
$t^\prime\Phi=F_\Phi t$ implies $F_\Phi^{-1}t^\prime=t\Phi^{-1}$ and 
$\nu^\prime F^\prime=F_\Phi\nu$ implies $F_\Phi^{-1}\nu^\prime=\nu(F^\prime)^{-1}$. For $(y^\prime,g^\prime)\in N^\prime\rtimes_{\rho^\prime}\mca{G}^\prime$, we have 
\begin{equation*}
\nu(F^\prime)^{-1}(y^\prime)=F_\Phi^{-1}\nu^\prime(y^\prime)=F_\Phi^{-1}t^\prime(g^\prime)=t\Phi^{-1}(g^\prime). 
\end{equation*}
Hence $((F^\prime)^{-1}(y^\prime),\Phi^{-1}(g^\prime))\in N\rtimes_\rho\mca{G}$. 
\end{proof}

We have $(\varphi^\prime)^{-1}=(F^\prime)^{-1}\rtimes\Phi^{-1}$, so $(\varphi^\prime)^{-1}$ is a semiconjugacy. 
%
%
Hence $\varphi^\prime$ is a conjugacy. 
\end{proof}

\begin{rem}
The proof shows the assumption $\varphi\sim_{nt}\varphi^\prime$ in $2$ of the theorem can be relaxed to the following: $\varphi\sim_o\varphi^\prime$ and that there exists $P\in\Nat(\varphi,\varphi^\prime)$. 
\end{rem}

\subsection{Presentations of Teichm\"{u}ller spaces by action morphisms}
\begin{dfn}
Let $\mca{G}\rightrightarrows M$ be a Lie groupoid, $N_0$ be a $C^\infty$ manifold, $\nu_0\colon N_0\to M$ be a $C^\infty$ map and $\rho_0\in\mca{A}(\nu_0,\mca{G})$. We have $\mca{A}(\id,\mca{G})=\left\{\tau\right\}$, where $\tau(x,g)=s(g)$ for all $x\in M$, $g\in\mca{G}$ such that $x=t(g)$. Then $\mca{G}\simeq M\rtimes_\tau\mca{G}$. Define 
\begin{align*}
\ul{\Hom}(N_0\rtimes_{\rho_0}\mca{G},\mca{G})&=\ul{\Hom}(N_0\rtimes_{\rho_0}\mca{G},M\rtimes_\tau\mca{G})\\
&=\left\{c\in\Hom(N_0\rtimes_{\rho_0}\mca{G},\mca{G})\ \middle|\ F_c=\nu_0\right\}, 
\end{align*}
\begin{equation*}
\ul{\Hom}(N_0\rtimes_{\rho_0}\mca{G},\mca{G})^\times=\ul{\Hom}(N_0\rtimes_{\rho_0}\mca{G},\mca{G})\cap\Hom(N_0\rtimes_{\rho_0}\mca{G},\mca{G})^\times
\end{equation*}
and 
\begin{equation*}
\ul{\Hom}^{bb}(N_0\rtimes_{\rho_0}\mca{G},\mca{G})^\times=\left\{c\in\ul{\Hom}(N_0\rtimes_{\rho_0}\mca{G},\mca{G})^\times\ \middle|\ 
\begin{gathered}
c^!\colon N_0\rtimes_{\rho_0}\mca{G}\to N_0\rtimes_{\rho_c}\mca{G}\\
\text{is bibounded with}\\
\text{respect to $\mca{B}_{\rho_0}$ and $\mca{B}_{\rho_c}$}
\end{gathered}
\right\}. 
\end{equation*}
\end{dfn}

\begin{lem}
Let $\mca{G}\rightrightarrows M$ be a Lie groupoid, $N_0$ be a $C^\infty$ manifold, $\nu_0\colon N_0\to M$ be a $C^\infty$ map and $\rho_0\in\mca{A}(\nu_0,\mca{G})$. Then the following statements hold: 
\begin{enumerate}
\item $\ul{\Aut}(\mca{G})\curvearrowright\ul{\Hom}(N_0\rtimes_{\rho_0}\mca{G},\mca{G})^\times$ by post composition. \label{authompo}
\item $\ul{\Hom}^{bb}(N_0\rtimes_{\rho_0}\mca{G},\mca{G})^\times$ is an $\ul{\Aut}(\mca{G})$-invariant subset of $\ul{\Hom}(N_0\rtimes_{\rho_0}\mca{G},\mca{G})^\times$ with respect to the action in \ref{authompo}. 
\item For $a\in\left\{o,nt\right\}$, the action in \ref{authompo} descends to $\ul{\Aut}(\mca{G})\curvearrowright\ul{\Hom}(N_0\rtimes_{\rho_0}\mca{G},\mca{G})^\times/\ul{\sim_a}$. 
\item The action in \ref{authompo} induces $\ul{\Aut}(\mca{G})\curvearrowright\ul{\Hom}^{bb}(N_0\rtimes_{\rho_0}\mca{G},\mca{G})^\times/\ul{\sim_{bnt}}$, where $\mathrel{\ul{\sim_{bnt}}}$ is with respect to $\mca{B}_\mca{G}$. The isomorphism $\mca{G}\simeq M\rtimes_\tau\mca{G}$ takes $\mca{B}_\mca{G}$ to $\mca{B}_\tau$. 
\end{enumerate}
\end{lem}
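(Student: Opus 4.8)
The plan is to prove the four assertions in order, each of them being a routine verification once the right objects are identified.

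For assertion \ref{authompo}, given $c \in \ul{\Hom}(N_0\rtimes_{\rho_0}\mca{G},\mca{G})^\times$ and $\Psi \in \ul{\Aut}(\mca{G})$, I would first check that $\Psi c \in \Hom(N_0\rtimes_{\rho_0}\mca{G},\mca{G})$ with $F_{\Psi c} = F_\Psi F_c = \id \circ \nu_0 = \nu_0$, so $\Psi c \in \ul{\Hom}(N_0\rtimes_{\rho_0}\mca{G},\mca{G})$. That $\Psi c$ is again an action morphism follows from Lemma \ref{compacmor}: both $c$ and $\Psi$ (via Proposition \ref{isoactdiff}, since $\Psi$ is an isomorphism) are action morphisms, hence so is their composite. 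That this is a left action of the group $\ul{\Aut}(\mca{G})$ is immediate from associativity of composition and $\id \cdot c = c$. For assertion 2, if $c \in \ul{\Hom}^{bb}(N_0\rtimes_{\rho_0}\mca{G},\mca{G})^\times$, I would identify the action groupoid $N_0 \rtimes_{\rho_{\Psi c}} \mca{G}$ and note that $\Psi$ induces an isomorphism $N_0 \rtimes_{\rho_c} \mca{G} \xrightarrow{\sim} N_0 \rtimes_{\rho_{\Psi c}} \mca{G}$ covering $\id$ on $N_0$; since $(\Psi c)^! = (\text{this isomorphism}) \circ c^!$, bibounded\-ness of $(\Psi c)^!$ with respect to $\mca{B}_{\rho_0}$ and $\mca{B}_{\rho_{\Psi c}}$ will follow from bibounded\-ness of $c^!$ together with the fact (to be checked, but routine from Definition \ref{acgrbong} and the naturality of $\mca{B}_\mca{G}$ under the isomorphism $\Psi$) that the isomorphism $N_0 \rtimes_{\rho_c}\mca{G} \to N_0 \rtimes_{\rho_{\Psi c}}\mca{G}$ is bibounded for $\mca{B}_{\rho_c}$ and $\mca{B}_{\rho_{\Psi c}}$. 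This last point reduces to $\Psi$ being bibounded on $\mca{G}$ for $\mca{B}_\mca{G}$, which holds by Proposition \ref{properbounded} applied to $\Psi$ and $\Psi^{-1}$ (their base maps are $\id$, hence proper).

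For assertions 3 and 4, I would use Lemma \ref{ulsimsim} (for $a \in \{o,nt\}$) and Lemma \ref{ulsimsim2} (for $bnt$) applied with one of the two morphisms being $\Psi$ (an isomorphism, which in particular is $nt$-homotopic, hence $o$-homotopic, to itself, and for the $bnt$ case is bounded by the argument above). Concretely, if $c \mathrel{\ul{\sim_a}} c'$ then post-composing with $\Psi$ gives $\Psi c \mathrel{\ul{\sim_a}} \Psi c'$, so the action descends to the quotient by $\ul{\sim_a}$; the same with $\ul{\sim_{bnt}}$, using that $\Psi$ is bounded so that Lemma \ref{ulsimsim2}(2) applies. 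The final sentence of assertion 4 — that the isomorphism $\mca{G} \simeq M \rtimes_\tau \mca{G}$ carries $\mca{B}_\mca{G}$ to $\mca{B}_\tau$ — I would prove by unwinding Definition \ref{acgrbong}: $\mca{B}_\tau$ is the inverse image bornology induced by $\varphi_\tau \colon M \rtimes_\tau \mca{G} \to \mca{G}$, and since $\varphi_\tau$ is precisely the isomorphism in question (with inverse of the form $g \mapsto (t(g), g)$), Lemma \ref{borlarbou} gives $\mca{B}_\tau = \{ B \mid \varphi_\tau(B) \in \mca{B}_\mca{G}\}$, i.e. $\varphi_\tau$ is bibounded and carries $\mca{B}_\tau$ to $\mca{B}_\mca{G}$.

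The only genuinely non-formal point is the claim, used in assertion 2, that an isomorphism of action groupoids over $\id_{N_0}$ coming from $\Psi \in \ul{\Aut}(\mca{G})$ is automatically bibounded for the action groupoid bornologies. I expect this to be the main obstacle, though a mild one: it should follow by chasing the commuting square
\begin{equation*}
\begin{tikzcd}
N_0\rtimes_{\rho_c}\mca{G}\ar[r]\ar[d,"\varphi_{\rho_c}"']&N_0\rtimes_{\rho_{\Psi c}}\mca{G}\ar[d,"\varphi_{\rho_{\Psi c}}"]\\
\mca{G}\ar[r,"\Psi"']&\mca{G}
\end{tikzcd}
\end{equation*}
together with $\Psi$ (and $\Psi^{-1}$) being bounded for $\mca{B}_\mca{G}$ via Proposition \ref{properbounded} and the characterization $\mca{B}_{\rho} = \{B \mid \varphi_\rho(B) \in \mca{B}_\mca{G}\}$ from Definition \ref{acgrbong}. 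I would spell that chase out in one short paragraph and keep the remaining verifications to one or two lines each, since they are all direct applications of the cited lemmas.
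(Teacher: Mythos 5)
Your proposal is correct and follows essentially the same route as the paper: part 1 via Lemma \ref{compacmor} and Proposition \ref{isoactdiff}, part 2 via the factorization $(\Psi c)^!=(\id\rtimes\Psi)\circ c^!$, and parts 3--4 via Lemma \ref{ulsimsim} and Lemma \ref{ulsimsim2} together with boundedness of $\Psi$ from Proposition \ref{properbounded}. The only cosmetic difference is in part 2, where the paper simply notes that $\id\rtimes\Psi$ is a conjugacy and cites Proposition \ref{semibounded} for its biboundedness, whereas you carry out the underlying bornology chase by hand, which amounts to the same argument.
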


\begin{proof}
1. This follows from Lemma \ref{compacmor} and 1 in Proposition \ref{isoactdiff}. 

2. Let $\Phi\in\ul{\Aut}(\mca{G})$ and $c\in\ul{\Hom}^{bb}(N_0\rtimes_{\rho_0}\mca{G},\mca{G})^\times$. Since $c^!=(t,c)$ and $(\Phi c)^!=(t,\Phi c)$, we get $(\Phi c)^!=(\id\rtimes\Phi)c^!$. We have 
\begin{equation*}
\begin{tikzcd}[row sep=tiny]
&N_0\rtimes_{\rho_c}\mca{G}\ar[dd,"\id\rtimes\Phi"{name=U}]\ar[r,equal]&[-15]N_0\times_{\nu_0,t}\mca{G}\\
N_0\rtimes_{\rho_0}\mca{G}\ar[ru,"c^!"]\ar[rd,"(\Phi c)^!"']\ar[to=U,phantom,"\circlearrowright"]\\
&N_0\rtimes_{\rho_{\Phi c}}\mca{G}\ar[r,equal]&N_0\times_{\nu_0,t}\mca{G}. 
\end{tikzcd}
\end{equation*}
Since $\id\rtimes\Phi$ is a conjugacy, $\id\rtimes\Phi$ is bibounded by Proposition \ref{semibounded}. Hence $(\Phi c)^!$ is bibounded. 

3. This follows from Lemma \ref{ulsimsim}. 

4. Let $\Phi\in\ul{\Aut}(\mca{G})$ and $c$, $c^\prime\in\ul{\Hom}^{bb}(N_0\rtimes_{\rho_0}\mca{G},\mca{G})^\times$ be such that $c\mathrel{\ul{\sim_{bnt}}}c^\prime$. Since $F_\Phi=\id$ is proper, $\Phi$ is bounded with respect to $\mca{B}_\mca{G}$ by Proposition \ref{properbounded}. Hence $\Phi c\mathrel{\ul{\sim_{bnt}}}\Phi c^\prime$ by Lemma \ref{ulsimsim2}. 
\end{proof}

\begin{lem}
Let $\mca{G}\rightrightarrows M$ be a Lie groupoid, $N_0$ be a $C^\infty$ manifold, $\nu_0\colon N_0\to M$ be a surjective submersion and $\rho_0\in\mca{A}(\nu_0,\mca{G})$. For $\Phi\in\ul{\Aut}(\mca{G})$ and $(N\rtimes_\rho\mca{G},\varphi)\in\MAG(\rho_0)$, we have 
\begin{equation*}
\begin{tikzcd}[row sep=tiny]
&N\rtimes_\rho\mca{G}\ar[dd,"\id\rtimes\Phi"{name=U}]\\
N_0\rtimes_{\rho_0}\mca{G}\ar[ru,"\varphi"]\ar[rd,"(\id\rtimes\Phi)\varphi"']\ar[to=U,phantom,"\circlearrowright"]\\
&N\rtimes_{\rho^{(\id,\Phi^{-1})}}\mca{G}, 
\end{tikzcd}
\end{equation*}
where $\rho^{(\id,\Phi^{-1})}\in\mca{A}(\nu,\mca{G})$ is defined in Lemma \ref{ngconj}. Define 
\begin{equation*}
\Phi(N\rtimes_\rho\mca{G},\varphi)=(N\rtimes_{\rho^{(\id,\Phi^{-1})}}\mca{G},(\id\rtimes\Phi)\varphi)\in\MAG(\rho_0). 
\end{equation*}
Then we get an action $\ul{\Aut}(\mca{G})\curvearrowright\MAG(\rho_0)$ and $\Phi(N\rtimes_\rho\mca{G},\varphi)\mathrel{\ul{\sim_a}}(N\rtimes_\rho\mca{G},\varphi)$ for any $a\in\left\{o,nt,bnt\right\}$. The subsets $\MAG_{\id}(\rho_0)$, $\MAG^{bb}(\rho_0)$ and $\MAG_{\id}^{bb}(\rho_0)$ of $\MAG(\rho_0)$ are $\ul{\Aut}(\mca{G})$-invariant. 
\end{lem}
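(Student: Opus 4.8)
The plan is to verify the claimed diagram, check that $\Phi(N\rtimes_\rho\mca{G},\varphi)$ really lies in $\MAG(\rho_0)$, confirm that this defines a group action, exhibit a conjugacy relating $\Phi(N\rtimes_\rho\mca{G},\varphi)$ to $(N\rtimes_\rho\mca{G},\varphi)$, and finally check invariance of the three distinguished subsets. Most of this is bookkeeping on top of lemmas already proved, so I would keep the exposition brief.

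First I would recall from Lemma \ref{ngconj} that for a Lie groupoid $\mca{G}\rightrightarrows M$, a diffeomorphism $F\colon N\to N^\prime$ (here $N=N^\prime$ and $F=\id$) and $\Phi^{-1}\in\ul{\Aut}(\mca{G})$, the action $\rho^{(\id,\Phi^{-1})}\in\mca{A}(\nu,\mca{G})$ is defined by $\rho^{(\id,\Phi^{-1})}(y,g)=\rho(y,\Phi^{-1}(g))$, and $\id\rtimes\Phi^{-1}\colon N\rtimes_{\rho^{(\id,\Phi^{-1})}}\mca{G}\to N\rtimes_\rho\mca{G}$ is a conjugacy; hence its inverse $\id\rtimes\Phi\colon N\rtimes_\rho\mca{G}\to N\rtimes_{\rho^{(\id,\Phi^{-1})}}\mca{G}$ is also a conjugacy (Definition \ref{semiconconpng}). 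Composing with $\varphi\colon N_0\rtimes_{\rho_0}\mca{G}\to N\rtimes_\rho\mca{G}$, which is an action groupoid isomorphism, yields $(\id\rtimes\Phi)\varphi\colon N_0\rtimes_{\rho_0}\mca{G}\to N\rtimes_{\rho^{(\id,\Phi^{-1})}}\mca{G}$, again an action groupoid isomorphism by Lemma \ref{compacmor} and the fact that $F_{(\id\rtimes\Phi)\varphi}=F_\varphi\in\ul{C}^\infty(N_0,N)$ (since $F_{\id\rtimes\Phi}=\id$). This gives $\Phi(N\rtimes_\rho\mca{G},\varphi)=(N\rtimes_{\rho^{(\id,\Phi^{-1})}}\mca{G},(\id\rtimes\Phi)\varphi)\in\MAG(\rho_0)$, and the claimed triangle commutes by construction.

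Next I would check the action axioms. For $\id\in\ul{\Aut}(\mca{G})$ we have $\rho^{(\id,\id)}=\rho$ and $(\id\rtimes\id)\varphi=\varphi$, so $\id$ acts trivially. For $\Phi,\Psi\in\ul{\Aut}(\mca{G})$, one uses the composition law for the twisting operation — the statement right after Lemma \ref{ngconj} that $(\rho^{\prime\prime})^{(F^\prime F,\Phi^\prime\Phi)}=((\rho^{\prime\prime})^{(F^\prime,\Phi^\prime)})^{(F,\Phi)}$ — specialized to $F=F^\prime=\id$, together with $(\id\rtimes(\Psi\Phi))=(\id\rtimes\Psi)(\id\rtimes\Phi)$, to get $\Psi(\Phi(N\rtimes_\rho\mca{G},\varphi))=(\Psi\Phi)(N\rtimes_\rho\mca{G},\varphi)$; some care is needed here to match conventions (note the inverse in $\rho^{(\id,\Phi^{-1})}$, which is why the action is by post-composition with $\id\rtimes\Phi$ rather than $\id\rtimes\Phi^{-1}$). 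The relation $\Phi(N\rtimes_\rho\mca{G},\varphi)\mathrel{\ul{\sim_a}}(N\rtimes_\rho\mca{G},\varphi)$ for each $a\in\{o,nt,bnt\}$ follows from Definition \ref{gmngprmagl}: take the conjugacy $\psi=\id\rtimes\Phi^{-1}\colon N\rtimes_{\rho^{(\id,\Phi^{-1})}}\mca{G}\to N\rtimes_\rho\mca{G}$, which lies in $\ul{\Hom}$ since its base map is $\id$ (Lemma \ref{ulid}), and observe $\psi((\id\rtimes\Phi)\varphi)=\varphi$, which is trivially $\ul{\sim_a}\varphi$; for $a=bnt$ the constant homotopy is bounded.

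For the invariance of the subsets: $\MAG_{\id}(\rho_0)$-invariance is immediate since $F_{(\id\rtimes\Phi)\varphi}=F_\varphi$, so if $N=N_0$ and $F_\varphi=\id$ the same holds after applying $\Phi$. For $\MAG^{bb}(\rho_0)$, I would argue that $\id\rtimes\Phi$ is bibounded with respect to $\mca{B}_\rho$ and $\mca{B}_{\rho^{(\id,\Phi^{-1})}}$ — this is Proposition \ref{semibounded}(3), as it is a conjugacy — and that the composition of a bibounded map with a bibounded map is bibounded, so $(\id\rtimes\Phi)\varphi$ is bibounded whenever $\varphi$ is. Finally $\MAG_{\id}^{bb}(\rho_0)=\MAG_{\id}(\rho_0)\cap\MAG^{bb}(\rho_0)$ is invariant as an intersection of invariant sets. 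The main obstacle, modest as it is, will be keeping the variance conventions straight in the composition law so that the inverses appearing in $\rho^{(\id,\Phi^{-1})}$ cancel correctly and one genuinely gets a left action rather than a right action; everything else is a direct appeal to the cited lemmas.
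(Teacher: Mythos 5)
Your verification is correct and is exactly the routine checking the paper dismisses as ``obvious'': the conjugacies $\id\rtimes\Phi^{\pm1}$ supplied by Lemma \ref{ngconj}, the composition law $\rho^{(\id,\Phi^{-1}\Psi^{-1})}=(\rho^{(\id,\Phi^{-1})})^{(\id,\Psi^{-1})}$, reflexivity of $\mathrel{\ul{\sim_a}}$ (with the constant, hence bounded, homotopy for $a=bnt$), and Proposition \ref{semibounded} for biboundedness give everything. Only a cosmetic remark: to see that $(\id\rtimes\Phi)\varphi$ is an action groupoid isomorphism you need no appeal to Lemma \ref{compacmor} -- a composition of isomorphisms is an isomorphism, and the identity $F_{(\id\rtimes\Phi)\varphi}=F_\varphi$ already gives the base-map condition.
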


\begin{proof}
These are obvious. 
\end{proof}

\begin{prop}\label{magidhom}
Let $\mca{G}\rightrightarrows M$ be a Lie groupoid, $N_0$ be a $C^\infty$ manifold, $\nu_0\colon N_0\to M$ be a surjective submersion and $\rho_0\in\mca{A}(\nu_0,\mca{G})$. Then there is a bijection 
\begin{align*}
\MAG_{\id}(\rho_0)&\simeq\ul{\Hom}(N_0\rtimes_{\rho_0}\mca{G},\mca{G})^\times\\
(N_0\rtimes_\rho\mca{G},\varphi)&\mapsto\varphi_\rho\varphi\\
(N_0\rtimes_{\rho_c}\mca{G},c^!)&\mapsfrom c. 
\end{align*}
This is $\ul{\Aut}(\mca{G})$-equivariant and $\MAG^{bb}_{\id}(\rho_0)$ corresponds to $\ul{\Hom}^{bb}(N_0\rtimes_{\rho_0}\mca{G},\mca{G})^\times$. 
\end{prop}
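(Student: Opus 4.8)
The plan is to construct the two maps explicitly and check they are mutually inverse, then verify equivariance and the bibounded refinement. For the forward map, given $(N_0\rtimes_\rho\mca{G},\varphi)\in\MAG_{\id}(\rho_0)$, I set $c=\varphi_\rho\varphi\colon N_0\rtimes_{\rho_0}\mca{G}\to\mca{G}$. By Lemma \ref{compacmor} the composite of two action morphisms is an action morphism, and $\varphi_\rho$ is an action morphism (Proposition \ref{actiongroupoid}) while $\varphi$, being an isomorphism, is an action morphism (Proposition \ref{isoactdiff}); hence $c\in\Hom(N_0\rtimes_{\rho_0}\mca{G},\mca{G})^\times$. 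The base map is $F_c=F_{\varphi_\rho}F_\varphi=\nu\circ\id=\nu=\nu_0$ (using $\nu=\nu_0F_\varphi^{-1}=\nu_0$ since $F_\varphi=\id$), so $c\in\ul{\Hom}(N_0\rtimes_{\rho_0}\mca{G},\mca{G})^\times$.

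For the backward map, given $c\in\ul{\Hom}(N_0\rtimes_{\rho_0}\mca{G},\mca{G})^\times$, I use the construction $\rho_c\in\mca{A}(F_c,\mca{G})=\mca{A}(\nu_0,\mca{G})$ and the isomorphism $c^!\colon N_0\rtimes_{\rho_0}\mca{G}\to N_0\rtimes_{\rho_c}\mca{G}$ with base map $\id$ from the proposition following Proposition \ref{actiongroupoid} (``Theorem 1.6.16 and Proposition 1.6.17 (i)''). Since $F_{c^!}=\id$, we have $(N_0\rtimes_{\rho_c}\mca{G},c^!)\in\MAG_{\id}(\rho_0)$. The two maps are inverse to each other: starting from $c$, the forward map gives $\varphi_{\rho_c}c^!$, and the cited proposition states precisely that $\varphi_{\rho_c}c^!=c$ (the triangle $\mca{H}\to N\rtimes_{\rho_\varphi}\mca{G}\to\mca{G}$ commutes); starting from $(N_0\rtimes_\rho\mca{G},\varphi)$, set $c=\varphi_\rho\varphi$, then one must check $\rho_c=\rho$ and $c^!=\varphi$. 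For this I compute $\rho_c(y,g)=s((c^!)^{-1}(y,g))$; since $c^!=(t,c)=(t,\varphi_\rho\varphi)$ and $\varphi$ is an isomorphism with base map $\id$, one has $c^!=(\id\rtimes\varphi_\rho)\circ(\text{identification})$ identifying $c^!$ with $\varphi$ itself under $N_0\rtimes_{\rho_c}\mca{G}=N_0\times_{\nu_0,t}\mca{G}=N_0\rtimes_\rho\mca{G}$, whence $\rho_c=\rho$.

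Next I verify $\ul{\Aut}(\mca{G})$-equivariance. With the action $\Phi(N_0\rtimes_\rho\mca{G},\varphi)=(N_0\rtimes_{\rho^{(\id,\Phi^{-1})}}\mca{G},(\id\rtimes\Phi)\varphi)$ on the left and post-composition $c\mapsto\Phi c$ on the right, I must check $\varphi_{\rho^{(\id,\Phi^{-1})}}\circ(\id\rtimes\Phi)\varphi=\Phi\circ(\varphi_\rho\varphi)$. This reduces, after cancelling $\varphi$, to $\varphi_{\rho^{(\id,\Phi^{-1})}}(\id\rtimes\Phi)=\Phi\varphi_\rho$, which is immediate from the definitions of the second projection and of $\id\rtimes\Phi$ (it sends $(y,g)\mapsto(y,\Phi(g))$, and the associated action morphism reads off the second coordinate). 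For the bibounded statement: $(N_0\rtimes_\rho\mca{G},\varphi)\in\MAG^{bb}_{\id}(\rho_0)$ means $\varphi$ is bibounded with respect to $\mca{B}_{\rho_0}$ and $\mca{B}_\rho$; under the identification $c^!=\varphi$ and $\rho_c=\rho$, this is exactly the defining condition for $c\in\ul{\Hom}^{bb}(N_0\rtimes_{\rho_0}\mca{G},\mca{G})^\times$, so the correspondence restricts as claimed.

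The main obstacle I anticipate is the bookkeeping in identifying $c^!$ with $\varphi$ and $\rho_c$ with $\rho$ when going around the round trip starting from a marked action groupoid: one must be careful that $N_0\rtimes_{\rho_c}\mca{G}$ and $N_0\rtimes_\rho\mca{G}$ are literally the same subset $N_0\times_{\nu_0,t}\mca{G}$ of $N_0\times\mca{G}$ (since $F_\varphi=\id$ forces $\nu=\nu_0$), and that under this identification the diffeomorphism $c^!=(t,\varphi_\rho\varphi)$ coincides with $\varphi$ because $\varphi$ commutes with $t$ and has $\varphi_\rho\varphi$ as its $\mca{G}$-component. Once that identification is pinned down, $\rho_c=s\circ(c^!)^{-1}=s\circ\varphi^{-1}$ and the action-groupoid structure on $N_0\rtimes_\rho\mca{G}$ gives $s\circ\varphi^{-1}=\rho$ directly. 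Everything else is a routine diagram chase using the already-established functoriality lemmas.
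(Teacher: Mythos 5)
Your proposal is correct and follows essentially the same route as the paper: both verify the bijection via the identities $\varphi_{\rho_c}c^{!}=c$, $(\varphi_\rho\varphi)^{!}=(t,\varphi_\rho\varphi)=\varphi$ and $\rho_{\varphi_\rho\varphi}=\rho_0\circ\varphi^{-1}=\rho$, with the well-definedness, $\ul{\Aut}(\mca{G})$-equivariance and bibounded refinement being the routine checks you spell out (and the paper leaves implicit).
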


\begin{proof}
The maps are well-defined. We have $c\mapsto(N_0\rtimes_{\rho_c}\mca{G},c^!)\mapsto c$. On the other hand, $(N_0\rtimes_\rho\mca{G},\varphi)\mapsto\varphi_\rho\varphi\mapsto(N_0\rtimes_{\rho_{\varphi_\rho\varphi}}\mca{G},(\varphi_\rho\varphi)^!)$. Since $(\varphi_\rho\varphi)^!=(t,\varphi_\rho\varphi)=\varphi$ and $\rho_{\varphi_\rho\varphi}=\rho_0((\varphi_\rho\varphi)^!)^{-1}=\rho_0\varphi^{-1}=\rho$, $(N_0\rtimes_{\rho_{\varphi_\rho\varphi}}\mca{G},(\varphi_\rho\varphi)^!)=(N_0\rtimes_\rho\mca{G},\varphi)$. Hence the maps are inverse to each other. 
\end{proof}

\begin{rem}
Since $\ul{\Hom}(N_0\rtimes_{\rho_0}\mca{G},\mca{G})^\times$ and $\ul{\Hom}^{bb}(N_0\rtimes_{\rho_0}\mca{G},\mca{G})^\times$ are subsets of $C^\infty(N_0\rtimes_{\rho_0}\mca{G},\mca{G})$, they can be given the strong or weak topologies. So we can topologize $T_a(\rho_0)$ via the bijections above and the quotient topology. 
\end{rem}

\begin{prop}\label{ampre}
Let $\mca{G}\rightrightarrows M$ be a Lie groupoid, $N_0$ be a $C^\infty$ manifold, $\nu_0\colon N_0\to M$ be a surjective submersion and $\rho_0\in\mca{A}(\nu_0,\mca{G})$. Then the following statements hold: 
\begin{enumerate}
\item There is a bijection 
\begin{align*}
T_{bnt}(\rho_0)&\simeq\ul{\Aut}(\mca{G})\bs(\ul{\Hom}^{bb}(N_0\rtimes_{\rho_0}\mca{G},\mca{G})^\times/\ul{\sim_{bnt}})\\
[N\rtimes_\rho\mca{G},\varphi]&\mapsto[\varphi_\rho\varphi]\\
[N_0\rtimes_{\rho_c}\mca{G},c^!]&\mapsfrom[c]. 
\end{align*}
\item There is a bijection 
\begin{align*}
T_{nt}(\rho_0)&\simeq\ul{\Aut}(\mca{G})\bs(\ul{\Hom}(N_0\rtimes_{\rho_0}\mca{G},\mca{G})^\times/\ul{\sim_{nt}})\\
[N\rtimes_\rho\mca{G},\varphi]&\mapsto[\varphi_\rho\varphi]\\
[N_0\rtimes_{\rho_c}\mca{G},c^!]&\mapsfrom[c]. 
\end{align*}
\item There is a surjective map 
\begin{align*}
T_o(\rho_0)&\twoheadrightarrow\ul{\Aut}(\mca{G})\bs(\ul{\Hom}(N_0\rtimes_{\rho_0}\mca{G},\mca{G})^\times/\ul{\sim_o})\\
[N\rtimes_\rho\mca{G},\varphi]&\mapsto[\varphi_\rho\varphi]. 
\end{align*}
\end{enumerate}
\end{prop}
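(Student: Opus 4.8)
The plan is to reduce the whole statement to a statement about cocycles by means of Lemma \ref{magid} and Proposition \ref{magidhom}, and then to feed the injectivity part into Proposition \ref{btop}. First I would record that Lemma \ref{magid} gives $T_a(\rho_0)\simeq\MAG_{\id}(\rho_0)/\ul{\sim_a}$ for $a\in\left\{o,nt\right\}$ and $T_{bnt}(\rho_0)\simeq\MAG^{bb}_{\id}(\rho_0)/\ul{\sim_{bnt}}$, while Proposition \ref{magidhom} gives the $\ul{\Aut}(\mca{G})$-equivariant bijection $\MAG_{\id}(\rho_0)\simeq\ul{\Hom}(N_0\rtimes_{\rho_0}\mca{G},\mca{G})^\times$, $(N_0\rtimes_\rho\mca{G},\varphi)\mapsto\varphi_\rho\varphi$, restricting to $\MAG^{bb}_{\id}(\rho_0)\simeq\ul{\Hom}^{bb}(N_0\rtimes_{\rho_0}\mca{G},\mca{G})^\times$. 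Under these identifications the maps in the statement are the obvious ones, so it remains to compare the relation $\ul{\sim_a}$ on $\MAG_{\id}(\rho_0)$, transported to the cocycle side, with the equivalence relation generated there by the $\ul{\Aut}(\mca{G})$-action (post-composition) and by $\ul{\sim_a}$: I must show this transported relation is \emph{contained in} that one (giving the surjections in all three parts and well-definedness of the inverse maps for $a=nt,bnt$) and, for $a\in\left\{nt,bnt\right\}$, \emph{equal to} it (giving injectivity).

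For the containment, suppose $(N\rtimes_\rho\mca{G},\varphi)\ul{\sim_a}(N^\prime\rtimes_{\rho^\prime}\mca{G},\varphi^\prime)$ via a conjugacy $\psi\in\ul{\Hom}$ with $\psi\varphi\ul{\sim_a}\varphi^\prime$. Since the maps $N\to M$, $N^\prime\to M$ are surjective, $\psi=F_\psi\rtimes\Psi$ with $F_\Psi=\id$ by Lemma \ref{ulid}, and the uniqueness of constant parts together with the fact that $\psi^{-1}$ is again a semiconjugacy in $\ul{\Hom}$ forces $\Psi\in\ul{\Aut}(\mca{G})$. Composing $\psi\varphi\ul{\sim_a}\varphi^\prime$ on the left with the fixed morphism $\varphi_{\rho^\prime}$ and using that composition preserves $\sim_a$ and $\ul{\sim_a}$ (Lemmas \ref{simsim}, \ref{simsim2}, \ref{ulsimsim}, \ref{ulsimsim2}) yields $\Psi\,(\varphi_\rho\varphi)\sim_a\varphi_{\rho^\prime}\varphi^\prime$, i.e.\ the two cocycles lie in one $(\ul{\Aut}(\mca{G}),\sim_a)$-class. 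Surjectivity of $T_a(\rho_0)\to\ul{\Aut}(\mca{G})\bs(\ul{\Hom}(\ldots)^\times/\sim_a)$ is then immediate: $[N_0\rtimes_{\rho_c}\mca{G},c^!]$ maps to $[\varphi_{\rho_c}c^!]=[c]$ by Proposition \ref{magidhom}. This already proves part 3 and the ``$\twoheadrightarrow$'' halves of parts 1 and 2.

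For injectivity (parts 1 and 2) I reduce, using the $\ul{\Aut}(\mca{G})$-action (which on the $\MAG$ side does not change $\ul{\sim_a}$-classes, by the lemma preceding Proposition \ref{magidhom}), to the following: given $c,c^\prime\in\ul{\Hom}(N_0\rtimes_{\rho_0}\mca{G},\mca{G})^\times$ with $c\,\ul{\sim_{nt}}\,c^\prime$ (resp.\ $\ul{\sim_{bnt}}$), the marked action groupoids $(N_0\rtimes_{\rho_c}\mca{G},c^!)$ and $(N_0\rtimes_{\rho_{c^\prime}}\mca{G},(c^\prime)^!)$ are $\ul{\sim_{nt}}$-equivalent (resp.\ $\ul{\sim_{bnt}}$). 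Here I would take $\psi:=(c^\prime)^!(c^!)^{-1}\colon N_0\rtimes_{\rho_c}\mca{G}\to N_0\rtimes_{\rho_{c^\prime}}\mca{G}$, an isomorphism with base map $\id$; writing $Q=P(\cdot,1)$ for the terminal value of an $nt$-homotopy $P$ from $c$ to $c^\prime$ (by Definition \ref{raragpppp} valued in the isotropy groups $\mca{G}^{\nu_0(y)}_{\nu_0(y)}$, with $cQ=c^\prime$), one has the formula $\psi(y,h)=(y,\,Q(y)^{-1}h\,Q(\rho_c(y,h)))$ and, trivially, $\psi c^!=(c^\prime)^!$. Thus it suffices to produce a semiconjugacy $\psi_1\colon N_0\rtimes_{\rho_c}\mca{G}\to N_0\rtimes_{\rho_{c^\prime}}\mca{G}$ with isomorphism constant part which is $\ul{\sim_{nt}}$ to $\psi$: Proposition \ref{btop}(2) (applied with the isomorphism $\psi$) then forces $\psi_1$ to be a conjugacy, and $\psi_1c^!\,\ul{\sim_{nt}}\,\psi c^!=(c^\prime)^!$ by Lemma \ref{ulsimsim}, which is exactly the required relation in $\MAG$. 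In the $bnt$ case one passes through the $nt$ case and then checks that every homotopy used is bounded; this follows from boundedness of $P$ with respect to $\mca{B}_\mca{G}$, compatibility of the locally compact bornology (Lemma \ref{bounboun}), and biboundedness of conjugacies (Proposition \ref{semibounded}), since $\ul{\sim_{bnt}}$ on the cocycle side is taken with respect to $\mca{B}_\mca{G}\cong\mca{B}_\tau$.

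The main obstacle is exactly this last construction: exhibiting, from the path $P$, the semiconjugacy $\psi_1$ with invertible constant part that the ``conjugation-by-$Q$'' isomorphism $\psi$ is $nt$-homotopic to, so that Proposition \ref{btop} can be invoked — equivalently, showing directly that $\psi$ lies in the same $\ul{\sim_{nt}}$-class as an honest conjugacy. Once that is in place, everything else is bookkeeping with the bijections of Lemmas \ref{magid} and \ref{magidhom} and the functoriality of $\sim_a$ and $\ul{\sim_a}$ under composition.
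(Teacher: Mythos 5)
Your reduction via Lemma \ref{magid} and Proposition \ref{magidhom}, your argument that a $\ul{\sim_a}$-equivalence in $\MAG_{\id}(\rho_0)$ yields $\Psi(\varphi_\rho\varphi)\mathrel{\ul{\sim_a}}\varphi_{\rho^\prime}\varphi^\prime$ with $\Psi\in\ul{\Aut}(\mca{G})$, and your framing of the converse as an application of Proposition \ref{btop}(2) all match the paper's strategy. However, the step you yourself flag as ``the main obstacle'' --- producing, from the $nt$-homotopy $P$ between $c$ and $c^\prime$, a semiconjugacy with invertible constant part lying in the same $\ul{\sim_{nt}}$-class as the isomorphism $(c^\prime)^!(c^!)^{-1}$ --- is exactly the heart of the injectivity argument, and you do not supply it. Without that construction Proposition \ref{btop} cannot be invoked, so as written your proposal only establishes part 3 and the surjectivity halves of parts 1 and 2. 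Note also that $(c^\prime)^!(c^!)^{-1}$ itself is in general not a semiconjugacy ($\varphi_{\rho_{c^\prime}}\circ(c^\prime)^!(c^!)^{-1}=c^\prime\circ(c^!)^{-1}$ need not factor through $\varphi_{\rho_c}$), so one genuinely has to exhibit a different map.

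The missing construction is short and is what the paper does: lift $P$ into the action groupoid by setting $\wt{P}(y,t)=(y,P(y,t))\in N_0\rtimes_{\rho_c}\mca{G}$. Since $tP(\cdot,t)=sP(\cdot,t)=\nu_0$, this $\wt{P}$ is an $nt$-homotopy from $c^!$ to $c^!\wt{P}_1$ with $c^!\wt{P}(\cdot,t)\in\ul{\Hom}$, i.e.\ $c^!\mathrel{\ul{\sim_{nt}}}c^!\wt{P}_1$; in the $bnt$ case $\wt{P}(N_0\times I)\in\mca{B}_{\rho_c}$ because $\varphi_{\rho_c}\wt{P}(N_0\times I)=P(N_0\times I)\in\mca{B}_\mca{G}$. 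A direct computation gives $\varphi_{\rho_c}(c^!\wt{P}_1)(y,g)=P(y,1)^{-1}c(y,g)P(\rho_0(y,g),1)=c^\prime(y,g)$, hence $\psi_1:=(c^!\wt{P}_1)\circ((c^\prime)^!)^{-1}\colon N_0\rtimes_{\rho_{c^\prime}}\mca{G}\to N_0\rtimes_{\rho_c}\mca{G}$ satisfies $\varphi_{\rho_c}\psi_1=\varphi_{\rho_{c^\prime}}$, so it is a semiconjugacy whose constant part is $\id_\mca{G}$, in particular an isomorphism. Moreover $\psi_1(c^\prime)^!=c^!\wt{P}_1\mathrel{\ul{\sim_{nt}}}c^!$ gives $\psi_1\mathrel{\ul{\sim_{nt}}}c^!((c^\prime)^!)^{-1}$ (resp.\ $\mathrel{\ul{\sim_{bnt}}}$), so Proposition \ref{btop}(2) makes $\psi_1$ a conjugacy and $(N_0\rtimes_{\rho_{c^\prime}}\mca{G},(c^\prime)^!)\mathrel{\ul{\sim_{nt}}}(N_0\rtimes_{\rho_c}\mca{G},c^!)$ (resp.\ $\mathrel{\ul{\sim_{bnt}}}$), which is exactly the well-definedness of the inverse map that you needed. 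With this inserted, the remaining bookkeeping goes through as you describe.
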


\begin{proof}
1. We have 
\begin{align}
\MAG_{\id}^{bb}(\rho_0)&\simeq\ul{\Hom}^{bb}(N_0\rtimes_{\rho_0}\mca{G},\mca{G})^\times\label{maghom}\\
(N_0\rtimes_\rho\mca{G},\varphi)&\mapsto\varphi_\rho\varphi\nonumber\\
(N_0\rtimes_{\rho_c}\mca{G},c^!)&\mapsfrom c\nonumber
\end{align}
by Proposition \ref{magidhom}. 

Let $(N_0\rtimes_\rho\mca{G},\varphi)$, $(N_0\rtimes_{\rho^\prime}\mca{G},\varphi^\prime)\in\MAG_{\id}^{bb}(\rho_0)$ be such that $(N_0\rtimes_\rho\mca{G},\varphi)\mathrel{\ul{\sim_{bnt}}}(N_0\rtimes_{\rho^\prime}\mca{G},\varphi^\prime)$. There exists a conjugacy $\psi=F_\psi\rtimes\Psi\in\ul{\Hom}(N_0\rtimes_\rho\mca{G},N_0\rtimes_{\rho^\prime}\mca{G})$ such that $\psi\varphi\mathrel{\ul{\sim_{bnt}}}\varphi^\prime$. $\Psi\in\ul{\Aut}(\mca{G})$ since $\nu_0$ is surjective. Since $\varphi_{\rho^\prime}\colon N_0\rtimes_{\rho^\prime}\mca{G}\to\mca{G}$ is bounded, $\Psi\varphi_\rho\varphi=\varphi_{\rho^\prime}\psi\varphi\mathrel{\ul{\sim_{bnt}}}\varphi_{\rho^\prime}\varphi^\prime$ by Lemma \ref{ulsimsim2}. Hence we get the map 
\begin{equation*}
\MAG_{\id}^{bb}(\rho_0)/\ul{\sim_{bnt}}\to\ul{\Aut}(\mca{G})\bs(\ul{\Hom}^{bb}(N_0\rtimes_{\rho_0}\mca{G},\mca{G})^\times/\ul{\sim_{bnt}}). 
\end{equation*}

Let $c$, $c^\prime\in\ul{\Hom}^{bb}(N_0\rtimes_{\rho_0}\mca{G},\mca{G})^\times$ be such that $c\mathrel{\ul{\sim_{bnt}}}c^\prime$. There exists a continuous map $P\colon N_0\times I\to\mca{G}$ such that: 
\begin{itemize}
\setlength\itemsep{0em}
\item $P(\cdot,t)\in C^\infty(N_0,\mca{G})$ for all $t\in I$
\item $\nu_0=tP(\cdot,t)=sP(\cdot,t)$ for all $t\in I$
\item $P(\cdot,0)=1_{\nu_0}$
\item $c^\prime=cP(\cdot,1)$
\item $P(N_0\times I)\in\mca{B}_\mca{G}$. 
\end{itemize}
Define 
\begin{align*}
\wt{P}\colon N_0\times I&\to N_0\rtimes_{\rho_c}\mca{G}\\
(y,t)&\mapsto(y,P(y,t)). 
\end{align*}
Then 
\begin{itemize}
\setlength\itemsep{0em}
\item $\wt{P}$ is continuous
\item $\wt{P}(\cdot,t)\in C^\infty(N_0,N_0\rtimes_{\rho_c}\mca{G})$ for all $t\in I$
\item $t\wt{P}(\cdot,t)=\id$
\item $\wt{P}(\cdot,0)=1_{\id}$
\item $\nu_0s\wt{P}(y,t)=\nu_0\rho_c(y,P(y,t))=sP(y,t)=\nu_0(y)$ for all $y\in N_0$
\item $\varphi_{\rho_c}\wt{P}(N_0\times I)=P(N_0\times I)\in\mca{B}_\mca{G}$, hence $\wt{P}(N_0\times I)\in\mca{B}_{\rho_c}$. 
\end{itemize}
Put $\wt{P}_1=\wt{P}(\cdot,1)$. Then $\wt{P}$ is a bounded $nt$-homotopy between $c^!$ and $c^!\wt{P}_1$, and $c^!\mathrel{\ul{\sim_{bnt}}}c^!\wt{P}_1$. Let $\psi=(c^!\wt{P}_1)((c^\prime)^!)^{-1}\in\ul{\Hom}(N_0\rtimes_{\rho_{c^\prime}}\mca{G},N_0\rtimes_{\rho_c}\mca{G})$. We have 
\begin{equation*}
\begin{tikzcd}[row sep=small]
&N_0\rtimes_{\rho_c}\mca{G}\\
N_0\rtimes_{\rho_0}\mca{G}\ar[ru,shift right,"c^!\wt{P}_1"']\ar[ur,shift left,"c^!"]\ar[rd,"(c^\prime)^!"']\\
&N_0\rtimes_{\rho_{c^\prime}}\mca{G}. \ar[uu,"\psi"']
\end{tikzcd}
\end{equation*}

\begin{claim}
$\varphi_{\rho_c}(c^!\wt{P}_1)=c^\prime$. 
\end{claim}

\begin{proof}
For $(y,g)\in N_0\rtimes_{\rho_0}\mca{G}$, 
\begin{align*}
\varphi_{\rho_c}(c^!\wt{P}_1)(y,g)&=\varphi_{\rho_c}(\wt{P}_1(y)^{-1}c^!(y,g)\wt{P}_1(\rho_0(y,g)))\\
&=P(y,1)^{-1}c(y,g)P(\rho_0(y,g),1)\\
&=(cP(\cdot,1))(y,g)=c^\prime(y,g). \qedhere
\end{align*}
\end{proof}


\begin{claim}
$\varphi_{\rho_c}\psi=\varphi_{\rho_{c^\prime}}$, hence $\psi$ is a semiconjugacy. 
\end{claim}

\begin{proof}
We have $\varphi_{\rho_c}\psi(c^\prime)^!=\varphi_{\rho_c}(c^!\wt{P}_1)=c^\prime=\varphi_{\rho_{c^\prime}}(c^\prime)^!$, so $\varphi_{\rho_c}\psi=\varphi_{\rho_{c^\prime}}$. 
\end{proof}

Since $\psi(c^\prime)^!=c^!\wt{P}_1\mathrel{\ul{\sim_{bnt}}}c^!$, we have $\psi\mathrel{\ul{\sim_{bnt}}}c^!((c^\prime)^!)^{-1}$. In particular $\psi\sim_{nt}c^!((c^\prime)^!)^{-1}$, so $\psi$ is a conjugacy by Proposition \ref{btop}. Therefore $(N_0\rtimes_{\rho_c}\mca{G},c^!)\mathrel{\ul{\sim_{bnt}}}(N_0\rtimes_{\rho_{c^\prime}}\mca{G},(c^\prime)^!)$ and we get the map 
\begin{equation*}
\ul{\Hom}^{bb}(N_0\rtimes_{\rho_0}\mca{G},\mca{G})^\times/\ul{\sim_{bnt}}\to\MAG_{\id}^{bb}(\rho_0)/\ul{\sim_{bnt}}. 
\end{equation*}
By the $\ul{\Aut}(\mca{G})$-equivariance of \eqref{maghom}, we get 
\begin{equation*}
\ul{\Aut}(\mca{G})\bs(\ul{\Hom}^{bb}(N_0\rtimes_{\rho_0}\mca{G},\mca{G})^\times/\ul{\sim_{bnt}})\to\MAG_{\id}^{bb}(\rho_0)/\ul{\sim_{bnt}}. 
\end{equation*}

Therefore we have 
\begin{align*}
\MAG_{\id}^{bb}(\rho_0)/\ul{\sim_{bnt}}&\simeq\ul{\Aut}(\mca{G})\bs(\ul{\Hom}^{bb}(N_0\rtimes_{\rho_0}\mca{G},\mca{G})^\times/\ul{\sim_{bnt}})\\
[N_0\rtimes_\rho\mca{G},\varphi]&\mapsto[\varphi_\rho\varphi]\\
[N_0\rtimes_{\rho_c}\mca{G},c^!]&\mapsfrom[c]. 
\end{align*}
By composing with the bijection 
\begin{align*}
T_{bnt}(\rho_0)&\simeq\MAG_{\id}^{bb}(\rho_0)/\ul{\sim_{bnt}}\\
[N\rtimes_\rho\mca{G},\varphi]&\mapsto[N_0\rtimes_{\rho^{(F_\varphi,\id)}}\mca{G},(F_\varphi^{-1}\rtimes\id)\varphi]\\
[N_0\rtimes_\rho\mca{G},\varphi]&\mapsfrom[N_0\rtimes_\rho\mca{G},\varphi], 
\end{align*}
we obtain the desired bijection. 

2 and 3 can be proved similarly. 
\end{proof}

\section{Semiconjugacy-to-Conjugacy Theorem for $bnt$}\label{1212}
\subsection{Preparations}
\subsubsection{The target fibers are paracompact}
In this section we prove the following lemma. 

\begin{lem}\label{tarpara}
Let $\mca{G}\rightrightarrows M$ be a Lie groupoid and $x\in M$. Then $\mca{G}^x$ is paracompact. 
\end{lem}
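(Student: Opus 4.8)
The plan is to show that $\mca{G}^x$ is paracompact by exhibiting it as a (not necessarily second countable, not necessarily connected) manifold which is metrizable, since metrizable spaces are paracompact; alternatively, and perhaps more cleanly, by showing each connected component is second countable (hence paracompact) and that $\mca{G}^x$ is a disjoint union of such components, which is automatically paracompact. Recall from the lemma after Definition \ref{liegrpgmtx} that $\mca{G}^x$ is a Hausdorff closed embedded submanifold of $\mca{G}$; the only thing that could fail is the Lindel\"of/second-countability property globally, but a disjoint union of paracompact spaces is paracompact, so it suffices to treat one connected component $C$ of $\mca{G}^x$ at a time.

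First I would fix a connected component $C$ of $\mca{G}^x$. By Proposition \ref{isoorb}, $s\colon\mca{G}^x\to O$ is a left principal $\mca{G}^x_x$ bundle onto the orbit $O$ through $x$, and $O$ carries a $C^\infty$ manifold structure as an injectively immersed submanifold of $M$. Since $M$ is second countable, every immersed submanifold of $M$ has at most countably many connected components, and each connected component of $O$, being a connected manifold, is second countable (connected manifolds are second countable when they embed — here immerse — into a second countable manifold; more directly, a connected component of a leaf of the Stefan singular foliation $\mca{F}_\mca{G}$ from Proposition \ref{gmstsif} is a connected $C^\infty$ manifold, hence second countable). So I would restrict attention to $s\colon C\to O_0$ where $O_0=s(C)$ is a connected component of $O$, a second countable connected manifold, and $C$ is a connected component of the principal $\mca{G}^x_x$-bundle $s^{-1}(O_0)\to O_0$.

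Next, the key step: the structure group $\mca{G}^x_x$ is a Lie group (Proposition \ref{isoorb}), but it may be non second countable. However, the component $C$ only sees those group elements lying in a single coset-orbit; more precisely, fixing $g_0\in C$, the map $\mca{G}^x_x\to s^{-1}(O_0)$, $h\mapsto h g_0$ (for the left action) lands in the component $C$ exactly when $h$ lies in the connected component $(\mca{G}^x_x)_0$ of the identity, because $C$ is connected and $(\mca{G}^x_x)_0 g_0$ is connected, contains $g_0$, and is open and closed in the fiber over $s(g_0)$. Thus $C\to O_0$ is a principal bundle with structure group $(\mca{G}^x_x)_0$, and $(\mca{G}^x_x)_0$ is a connected Lie group, hence second countable. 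A principal bundle over a second countable base with second countable structure group has second countable total space (it is covered by countably many trivializing opens $U_i\times (\mca{G}^x_x)_0$, each second countable). Therefore $C$ is second countable, hence paracompact.

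Finally I would assemble: $\mca{G}^x=\bigsqcup_\alpha C_\alpha$ as a topological space (the components are open since a manifold is locally connected), each $C_\alpha$ is paracompact, and a topological sum of paracompact Hausdorff spaces is paracompact. The main obstacle I anticipate is the bookkeeping in the key step — carefully justifying that a connected component of the principal bundle reduces its structure group to the identity component, and that this forces second countability of the component despite $\mca{G}$ itself and $\mca{G}^x_x$ being possibly non second countable; this is where the argument has to be written with care, using Lemma \ref{connorb} and Lemma \ref{identitycomp} to identify $(\mca{G}_0)_x$ with the identity component of $\mca{G}_x$ and hence control the fibers. Everything else is standard point-set topology.
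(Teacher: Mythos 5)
The key step of your argument is incorrect as stated. You claim that a connected component $C$ of $s^{-1}(O_0)$ is a principal bundle for the identity component $(\mca{G}^x_x)_0$, justified by saying that $(\mca{G}^x_x)_0g_0$ is open and closed in the fiber over $s(g_0)$. Openness and closedness \emph{in the fiber} only shows that $(\mca{G}^x_x)_0g_0$ is a union of components of the fiber; it does not show that it equals $C\cap s^{-1}(s(g_0))$, because the component $C$ of the total space can meet a single fiber in several of its components: two points in different fiber components can be joined inside $C$ by a path that leaves the fiber (monodromy around loops in $O_0$). A concrete counterexample is the gauge groupoid of the connected double cover $S^1\to S^1$ regarded as a principal $\bb{Z}/2$-bundle: there $\mca{G}^x\simeq S^1$ is connected while $\mca{G}^x_x\simeq\bb{Z}/2$, so your claim would force $s\colon\mca{G}^x\to O\simeq S^1$ to be a homeomorphism, whereas it is a two-fold covering. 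Consequently the reduction of the structure group to $(\mca{G}^x_x)_0$, and with it the second countability of $C$, is not established by your argument.

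The strategy can be repaired, but it needs the missing monodromy input: the correct statement is that $C\to O_0$ is a principal bundle for the open subgroup $H_C=\left\{h\in\mca{G}^x_x\ \middle|\ hC=C\right\}$, which contains $(\mca{G}^x_x)_0$ but may be larger, and one must then argue that $H_C/(\mca{G}^x_x)_0$ is countable — for instance because $C/(\mca{G}^x_x)_0\to O_0$ is a connected covering of a second countable manifold, whose fibers are quotients of the countable group $\pi_1(O_0)$. With that extra step your route does give second countability of each component and hence paracompactness of the disjoint union. The paper takes a different route that avoids second countability entirely: it fixes a connected component $O_0$ of the orbit, shows $O_0$ is $\sigma$-compact (via metrizability from a Riemannian metric), covers it by countably many compact sets $D_a$ so that $\mca{G}^x_{D_a}\cong D_a\times\mca{G}^x_x$ is paracompact (compact times paracompact, Lemma \ref{paracomcompara}), and applies the criterion of Lemma \ref{regpara} to the regular Hausdorff space $\mca{G}^x_{O_0}$; this works even when $\mca{G}^x_x$ is not second countable and $\mca{G}^x_{O_0}$ itself is not second countable.
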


We need some preparations for the proof. 

\begin{dfn}
Let $X$ be a topological space and $\ms{A}\subset P(X)$. We say that $\ms{A}$ is \emph{countably locally finite} if there exist a countable set $A$ and $\ms{A}_a\subset P(X)$ for each $a\in A$ such that: 
\begin{itemize}
\setlength\itemsep{0em}
\item $\ms{A}_a$ is locally finite for each $a\in A$
\item $\ms{A}=\bigcup_{a\in A}\ms{A}_a$. 
\end{itemize}
\end{dfn}

\begin{lem}
Let $X$ be a regular Hausdorff space. Then $X$ is paracompact if and only if for any open covering $\ms{A}$ of $X$, there exists a countably locally finite open covering of $X$ which refines $\ms{A}$. 
\end{lem}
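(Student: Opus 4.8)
This is a standard characterization of paracompactness for regular Hausdorff spaces, and I would recognize it as (essentially) a classical theorem — one direction of the Michael/Stone circle of equivalences. So the plan is to cite and adapt the standard proof rather than reinvent it, but let me lay out the structure.

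The forward direction is immediate: if $X$ is paracompact and $\ms{A}$ is an open covering, then $\ms{A}$ admits a locally finite open refinement, and a locally finite family is trivially countably locally finite (take the index set $A$ to be a singleton). So no work is needed there.

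The substantive direction is the converse: assume every open covering admits a countably locally finite open refinement, and deduce paracompactness. The plan is to run the classical three-step argument. Given an open covering $\ms{A}$, first take a countably locally finite open refinement $\ms{B} = \bigcup_{n\in\bb{N}} \ms{B}_n$ with each $\ms{B}_n$ locally finite. Second, shrink to get a countable open covering that is ``locally finite in a graded sense'': for each $n$ set $V_n = \bigcup \ms{B}_n$ and then define, for $B \in \ms{B}_n$, the set $B^\ast = B \setminus \bigcup_{k<n} \ol{V_k}$; one checks the $B^\ast$ still cover $X$ (using regularity to pass to closures) and refine $\ms{A}$. Third, verify this refinement is locally finite: any point $x$ lies in some $V_n$ (take $n$ minimal with $x \in V_n$), which meets only the finitely many members of $\ms{B}_1, \dots, \ms{B}_n$ near $x$ and is disjoint from all $B^\ast$ with $B \in \ms{B}_k$, $k > n$. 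This produces a locally finite open refinement of $\ms{A}$, hence $X$ is paracompact. The one genuinely delicate point — and the place I'd be most careful — is the use of regularity in the shrinking step to ensure the $B^\ast$ still form a covering (this is where Hausdorffness alone would not suffice); I would state this carefully but not belabor it.

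Since this lemma is purely point-set topology and independent of the Lie-groupoid material, I would actually prefer to simply attribute it to a standard reference (e.g. the relevant theorem in Munkres' \emph{Topology} or Engelking's \emph{General Topology}) and either omit the proof or include only the short sketch above. The statement is used in the excerpt only as a stepping stone toward Lemma \ref{tarpara}, so a citation is appropriate and keeps the exposition lean.

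\begin{proof}
($\Rightarrow$) If $X$ is paracompact, any open covering $\ms{A}$ has a locally finite open refinement, which is countably locally finite with index set a singleton.

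($\Leftarrow$) This is a classical fact; see e.g.\ the proof of the equivalence of paracompactness with the existence of countably locally finite open refinements in standard references on general topology. We sketch the argument. Let $\ms{A}$ be an open covering and let $\ms{B}=\bigcup_{n\in\bb{N}}\ms{B}_n$ be a countably locally finite open refinement with each $\ms{B}_n$ locally finite. For $n\in\bb{N}$ put $V_n=\bigcup_{B\in\ms{B}_n}B$, and for $B\in\ms{B}_n$ set
\begin{equation*}
B^\ast=B\setminus\bigcup_{k<n}\ol{V_k}.
\end{equation*}
Using regularity of $X$ one checks that $\left\{B^\ast\ \middle|\ B\in\ms{B}\right\}$ is still an open covering of $X$, and it refines $\ms{A}$ since each $B^\ast\subset B$. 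Finally, given $x\in X$, let $n$ be minimal with $x\in V_n$; then $V_n$ is an open neighborhood of $x$ meeting only finitely many members of $\ms{B}_1\cup\cdots\cup\ms{B}_n$ (by local finiteness of each $\ms{B}_k$) and disjoint from every $B^\ast$ with $B\in\ms{B}_k$, $k>n$. Hence $\left\{B^\ast\ \middle|\ B\in\ms{B}\right\}$ is locally finite, so $X$ is paracompact.
\end{proof}
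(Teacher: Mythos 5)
Your fallback — simply citing a standard reference — is exactly what the paper does (its entire proof is ``See Lemma 41.3 in \cite{Munk}''), so deferring to Munkres/Engelking is perfectly in line with the paper's treatment. The forward direction as you state it is also fine.

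However, the sketch you give for the converse contains a genuine gap: the sets $B^\ast=B\setminus\bigcup_{k<n}\ol{V_k}$ need \emph{not} cover $X$, and regularity does not repair this in the direct way you claim. If $x\in X$ and $n$ is minimal with $x\in V_n$, you only know $x\notin V_k$ for $k<n$; the point may well lie in $\ol{V_k}\setminus V_k$, in which case it is deleted from every $B\in\ms{B}_n$ containing it. Concretely, take $X=\bb{R}$ with $\ms{B}_1=\{(0,2)\}$, $\ms{B}_2=\{(-1,1)\}$, $\ms{B}_3=\{(-\infty,-\tfrac{1}{2}),(\tfrac{3}{2},\infty)\}$: this is a countably locally finite open cover, but the point $0$ lies in no $B^\ast$, since $0\in\ol{V_1}=[0,2]$ kills the only candidate $(-1,1)^\ast=(-1,0)$. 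The classical proof (Michael/Munkres) avoids this by removing $V_k$ rather than $\ol{V_k}$, which yields a locally finite refinement that is merely a family of sets, not open; one then needs two further steps — passing to a \emph{closed} locally finite refinement (this is where regularity enters) and then the expansion trick $E(B)=X\setminus\bigcup\left\{C\in\ms{C}\ \middle|\ C\cap B=\emptyset\right\}$ against a second closed locally finite cover — to produce an open locally finite refinement. Your sketch compresses these three implications into one step that, as written, fails; if you keep a sketch at all, it should follow the genuine chain of implications or be dropped in favor of the bare citation.
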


\begin{proof}
See Lemma 41.3 in \cite{Munk}. 
\end{proof}

\begin{lem}\label{regpara}
Let $X$ be a regular Hausdorff space, $A$ be a countable set and $X_a\subset X$ be a closed paracompact subset for each $a\in A$ such that $X=\bigcup_{a\in A}X_a^\circ$, where $X_a^\circ$ is the interior of $X_a$ in $X$. Then $X$ is paracompact. 
\end{lem}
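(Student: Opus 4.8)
The plan is to deduce this from the previous lemma (the characterization of paracompactness in regular Hausdorff spaces via countably locally finite refinements). So the goal is: given an open covering $\ms{A}$ of $X$, produce a countably locally finite open covering of $X$ refining $\ms{A}$.

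First I would fix an open covering $\ms{A}$ of $X$. For each $a\in A$, the subspace $X_a$ is paracompact (and it is a closed subspace of the regular Hausdorff space $X$, hence itself regular Hausdorff), so the open covering $\{U\cap X_a\mid U\in\ms{A}\}$ of $X_a$ admits a locally finite (in $X_a$) open refinement $\ms{B}_a$. The subtlety is that "locally finite in $X_a$" and "open in $X_a$" are not the same as "locally finite in $X$" and "open in $X$". To fix this I would pass to the interiors: for each $B\in\ms{B}_a$, pick $U_B\in\ms{A}$ with $B\subset U_B$, and I would want to replace $B$ by an open subset of $X$. The natural move is to use that $B$, being open in $X_a$, has the form $B=V\cap X_a$ for some $V$ open in $X$; then set $\widetilde{B}=V\cap U_B\cap X_a^\circ$, which is open in $X$, contained in $U_B$, and still covers $B\cap X_a^\circ$. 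Since $X=\bigcup_{a\in A}X_a^\circ$, the collection $\ms{B}=\bigcup_{a\in A}\{\widetilde{B}\mid B\in\ms{B}_a\}$ is an open covering of $X$ refining $\ms{A}$.

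Next I would check countable local finiteness of $\ms{B}$. It is a countable union (indexed by $A$) of the families $\widetilde{\ms{B}}_a=\{\widetilde{B}\mid B\in\ms{B}_a\}$, so it suffices to show each $\widetilde{\ms{B}}_a$ is locally finite in $X$. Here I would use that every $\widetilde{B}$ in $\widetilde{\ms{B}}_a$ is contained in $X_a^\circ$, together with the fact that $\ms{B}_a$ is locally finite in $X_a$: for a point $x\in X$, if $x\notin X_a$ then $X\setminus X_a$ is an open neighborhood of $x$ meeting no member of $\widetilde{\ms{B}}_a$ (since all of them lie in $X_a^\circ\subset X_a$); if $x\in X_a$, local finiteness of $\ms{B}_a$ in $X_a$ gives an $X_a$-open neighborhood $W$ of $x$ meeting only finitely many $B\in\ms{B}_a$, and writing $W=W'\cap X_a$ with $W'$ open in $X$, the neighborhood $W'$ of $x$ in $X$ meets only finitely many $\widetilde{B}$ (because $\widetilde{B}\subset B$ forces $W'\cap\widetilde{B}\subset W'\cap B\cap X_a=W\cap B$). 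Hence $\widetilde{\ms{B}}_a$ is locally finite in $X$, so $\ms{B}$ is countably locally finite, and by the cited lemma $X$ is paracompact.

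I do not expect a serious obstacle here; the main point requiring care is the bookkeeping in the previous paragraph — making sure that when one refines inside the closed pieces $X_a$ and then shrinks into $X_a^\circ$, openness in $X$, the refinement property, and local finiteness in $X$ are all genuinely preserved rather than just "morally" preserved. One should also record at the outset that $X$ being regular Hausdorff is exactly what is needed to invoke the preceding lemma, and that closed subspaces of regular Hausdorff spaces are regular Hausdorff, so the $X_a$ really are paracompact in the sense the statement intends.
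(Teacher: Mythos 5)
Your proof is correct and follows essentially the same route as the paper's: refine the cover inside each paracompact piece $X_a$, extend the members to open sets of $X$, shrink into $X_a^\circ$, and verify local finiteness in $X$ by the same two-case argument ($x\notin X_a$ versus $x\in X_a$), then invoke the countably-locally-finite criterion. The only cosmetic difference is your extra intersection with $U_B$, which is redundant since $V\cap X_a^\circ\subset V\cap X_a=B\subset U_B$ already gives the refinement property.
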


\begin{proof}
This is Exercise 7 (b) of \S41 in \cite{Munk}. Let $\ms{A}$ be an open covering of $X$. For $a\in A$, $\ms{A}_a=\left\{U\cap X_a\ \middle|\ U\in\ms{A}\right\}$ is an open covering of $X_a$. By paracompactness, there exists a locally finite open covering $\ms{B}_a$ of $X_a$ which refines $\ms{A}_a$. For each $U\in\ms{B}_a$, take an open subset $V_U$ of $X$ such that $U=V_U\cap X_a$. Let $\ms{C}_a=\left\{V_U\cap X_a^\circ\ \middle|\ U\in\ms{B}_a\right\}$. Then $\bigcup\ms{C}_a=X_a^\circ$. 

\begin{claim}
$\ms{C}_a$ is locally finite in $X$. 
\end{claim}

\begin{proof}
Let $x\in X$. If $x\in X_a$, there exists an open neighborhood $U_x$ of $x$ in $X_a$ which intersects nontrivially with only finitely many elements of $\ms{B}_a$. Take an open subset $V_x$ of $X$ such that $U_x=X_a\cap V_x$. Then $V_x$ is an open neighborhood of $x$ in $X$ which has only finitely many non empty intersections with elements of $\ms{C}_a$. 

If $x\in X\setminus X_a$, $X\setminus X_a$ is an open neighborhood of $x$ in $X$ which has no nonempty intersections with elements of $\ms{C}_a$. 
\end{proof}

Then $\ms{C}=\bigcup_{a\in A}\ms{C}_a$ is countably locally finite. $\ms{C}$ is an open covering of $X$ which refines $\ms{A}$. 
\end{proof}

\begin{lem}\label{paracomcompara}
Let $X$ be a paracompact space and $Y$ be a compact space. Then $X\times Y$ is paracompact. 
\end{lem}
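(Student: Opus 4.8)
The statement to prove is: if $X$ is paracompact and $Y$ is compact, then $X \times Y$ is paracompact. The plan is to reduce this to the classical ``tube lemma'' together with the standard characterization of paracompactness via locally finite refinements, in the spirit of Munkres. First I would recall that a Hausdorff space is paracompact if and only if every open cover admits a locally finite open refinement; $X \times Y$ is Hausdorff (a product of Hausdorff spaces), and it is even regular, since $X$ is regular (paracompact Hausdorff implies regular) and $Y$ is regular (compact Hausdorff), and a product of regular spaces is regular. So it suffices to produce, for an arbitrary open cover $\mathscr{A}$ of $X \times Y$, a locally finite open refinement.

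\textbf{Key steps.} Fix an open cover $\mathscr{A}$ of $X \times Y$. For each point $x \in X$, the slice $\{x\} \times Y$ is compact, so it is covered by finitely many members of $\mathscr{A}$; by the tube lemma there is an open neighborhood $W_x$ of $x$ in $X$ such that $W_x \times Y$ is contained in the union of those finitely many members of $\mathscr{A}$. More precisely, for each $x$ choose finitely many sets $A_{x,1}, \ldots, A_{x,n_x} \in \mathscr{A}$ and basic open rectangles covering $\{x\}\times Y$ inside them, and let $W_x$ be the $X$-projection-type neighborhood produced by the tube lemma, together with a finite open cover $V_{x,1}, \ldots, V_{x,m_x}$ of $Y$ such that each $W_x \times V_{x,j}$ sits inside some member of $\mathscr{A}$. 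Now $\{W_x\}_{x \in X}$ is an open cover of the paracompact space $X$, so it has a locally finite open refinement $\{U_\alpha\}_{\alpha}$; for each $\alpha$ pick $x(\alpha)$ with $U_\alpha \subset W_{x(\alpha)}$. Then the collection $\{\, U_\alpha \times V_{x(\alpha),j} \;:\; \alpha,\ 1 \le j \le m_{x(\alpha)} \,\}$ is an open cover of $X \times Y$; each of its members is contained in some member of $\mathscr{A}$ (by construction of the $V$'s), so it refines $\mathscr{A}$; and it is locally finite, because given $(x,y) \in X\times Y$ a neighborhood $N$ of $x$ meeting only finitely many $U_\alpha$ gives a neighborhood $N \times Y$ of $(x,y)$ meeting only finitely many of the sets $U_\alpha \times V_{x(\alpha),j}$ (finitely many $\alpha$, and for each such $\alpha$ only finitely many $j$). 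This exhibits the required locally finite open refinement, so $X \times Y$ is paracompact.

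\textbf{Main obstacle.} The genuinely substantive ingredient is the tube lemma (for $x\in X$ fixed, any open set containing $\{x\}\times Y$ contains $W\times Y$ for some open $W\ni x$, using compactness of $Y$); everything else is bookkeeping with locally finite families. I would either cite this as standard or include the short compactness argument inline. A secondary point worth stating carefully is regularity of $X\times Y$, needed to invoke the ``locally finite open refinement'' characterization of paracompactness for non-Hausdorff-robustness; this follows from regularity of each factor, which in turn follows from $X$ being paracompact Hausdorff and $Y$ being compact Hausdorff. Since this lemma is exactly Exercise-level material in Munkres, I expect the proof to be short, and the write-up can simply invoke the tube lemma and the refinement characterization, as was done for Lemma~\ref{regpara} above.
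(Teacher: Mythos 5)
Your proof is correct and complete: the tube-lemma construction (shrink to a tube $W_x\times Y$ covered by finitely many rectangles $W_x\times V_{x,j}$ each inside a member of $\mathscr{A}$, refine $\{W_x\}$ locally finitely in $X$, and cross with the finitely many $V$'s) is exactly the standard argument, and your verification of covering, refinement, and local finiteness is sound. The paper itself offers no proof of Lemma \ref{paracomcompara} (it only remarks that one can be found on the internet), so there is no alternative route to compare against; your write-up would in fact fill that gap. One minor remark: the excursion through regularity is unnecessary, since your construction directly produces a locally finite open refinement of an arbitrary open cover, which is precisely the definition of paracompactness (Hausdorffness of $X\times Y$, if required by one's convention, follows immediately from that of the factors, which holds in the paper's application in Lemma \ref{tarpara}).
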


\begin{proof}
A proof can be found on the internet. 
\end{proof}

\begin{proof}[Proof of Lemma \ref{tarpara}]
Let $x\in O\in\mca{O}_\mca{G}$. Then $s\colon\mca{G}^x\to O$ is a principal $\mca{G}^x_x$-bundle by Proposition \ref{isoorb}. Since $\mca{G}^x_x$ is a Lie group by Proposition \ref{isoorb}, it is paracompact. Let $O_0$ be a connected component of $O$. Then $s\colon\mca{G}^x_{O_0}\to O_0$ is a principal $\mca{G}^x_x$-bundle. 

\begin{claim}
$O_0$ is $\sigma$-compact. 
\end{claim}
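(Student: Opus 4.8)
The plan is to show directly that $O_0$ is a countable union of plaques of a suitably chosen countable family of foliation charts, which immediately yields $\sigma$-compactness since each plaque is diffeomorphic to an open subset of a Euclidean space. By Proposition \ref{gmstsif} the partition $\mca{F}_\mca{G}$ of $M$ into connected components of orbits is a $C^\infty$ Stefan singular foliation, and $O_0\in\mca{F}_\mca{G}$ carries the immersed-submanifold structure coming from Proposition \ref{isoorb} (an open piece of $O$). Since $M$ is second countable it is Lindel\"of, so from the cover of $M$ by foliation charts of $\mca{F}_\mca{G}$ (Definition \ref{stelmixlf}) I would extract a countable subcover $\{U_n\}_{n\in\bb{N}}$, with each $U_n\cong U_{1,n}\times U_{2,n}$, $U_{1,n}$ connected; recall from the local model that the plaques of $U_n$ are open and connected in $O_0$, diffeomorphic to open subsets of $\bb{R}^{\dim O_0}$, and pairwise disjoint, so they are exactly the connected components of $O_0\cap U_n$.

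Next I would run a plaque-chain argument. Fix $p_0\in O_0$, let $L^{(0)}$ be the plaque of some $U_n$ through $p_0$, and inductively let $L^{(m+1)}$ be the union of $L^{(m)}$ with all plaques of all $U_n$ meeting $L^{(m)}$; set $L=\bigcup_{m\ge0}L^{(m)}$. Then $L$ is open in $O_0$, being a union of plaques. It is also closed: if $q$ lies in the closure of $L$ in $O_0$, pick $n$ with $q\in U_n$; the plaque $P_q$ of $U_n$ through $q$ is an open neighbourhood of $q$ in $O_0$ (it is a connected component of $O_0\cap U_n$, and $O_0$ is locally connected), hence $P_q$ meets $L$, so $P_q$ meets a plaque contained in some $L^{(m)}$, whence $P_q\subset L^{(m+1)}\subset L$ and $q\in L$. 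As $O_0$ is connected, $L=O_0$.

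It remains to check that $O_0=\bigcup_m L^{(m)}$ is assembled from only countably many plaques, and this is the one point needing an idea: I would prove by induction that each $L^{(m)}$ is a countable union of plaques. The base case is clear. For the step, if $L^{(m)}=\bigcup_j P_j$ with countably many plaques $P_j$, then the plaques appearing in $L^{(m+1)}$ are those of some $U_n$ meeting some $P_j$; but each $P_j$ is second countable, and for a fixed $n$ the plaques of $U_n$ meeting $P_j$ intersect $P_j$ in a pairwise disjoint family of nonempty open subsets of $P_j$, so there are only countably many of them (countable cellularity of a second countable space). Summing over the countably many $n$ and the countably many $j$ gives that $L^{(m+1)}$ is a countable union of plaques. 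Hence $O_0$ is a countable union of plaques, each $\sigma$-compact, so $O_0$ is $\sigma$-compact.

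The main obstacle is exactly this last cardinality bookkeeping: a priori a compact subset of $O_0$ can meet uncountably many plaques of another chart, so the argument must be organised plaque-by-plaque and use second countability of each plaque (equivalently, one may phrase the whole thing as: a connected \emph{initial} submanifold of a second countable manifold is second countable, the initiality of $O_0$ being built into the Stefan chart structure). Once $O_0$ is $\sigma$-compact it is paracompact and second countable, which is what the remainder of the proof of Lemma \ref{tarpara} uses: $\mca{G}^x_{O_0}$ is then a principal $\mca{G}^x_x$-bundle over a paracompact $\sigma$-compact base with paracompact fibre, hence paracompact, and finally $\mca{G}^x=\bigsqcup_{O_0}\mca{G}^x_{O_0}$ is paracompact as a disjoint union of paracompact spaces.
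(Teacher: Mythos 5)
Your plaque-chain scheme is sound in outline, but the step you dismiss as a recollection of ``the local model'' is exactly where the written argument breaks. A Stefan chart $\theta\colon U\to U_1\times U_2$ from Definition~\ref{stelmixlf} is centered at a point of some leaf $L_0$, and its slices $\theta^{-1}(U_1\times\{s\})$ have the dimension $k=\dim L_0$ of that \emph{central} leaf. When you extract a countable subcover from a cover of all of $M$ by Stefan charts, the charts you end up with along $O_0$ may be centered at leaves of dimension $k<\dim O_0$; for such a chart, by Lemma~\ref{slcount}(1) the slices met by $O_0$ are lower-dimensional submanifolds of $O_0$, not open subsets, and the connected components of $O_0\cap U_n$ in the leaf topology (which is what your open--closed argument needs as ``plaques'') are then unions of possibly uncountably many slices. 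In particular they are not diffeomorphic to open subsets of $\bb{R}^{\dim O_0}$, and nothing in the chart structure gives them second countability -- that is essentially the statement being proved. So the inductive counting step (``each $P_j$ is second countable, hence only countably many new plaques meet it'') is unjustified as it stands.

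The gap is localized and repairable: cover the image of $O_0$ in $M$ by Stefan charts centered at points of $O_0$ itself (so $k=\dim O_0$), observe that their union is an open, hence second countable and Lindel\"of, subset of $M$, and extract a countable subfamily still covering $O_0$. For such charts, Lemma~\ref{slcount} (in the case $\dim L=k$) combined with Lemma~\ref{nlnmcinf} shows that the slices are open in $O_0$ and are exactly the connected components of $O_0\cap U_n$ in the leaf topology, each diffeomorphic to $U_{1,n}$; with that in place your chain and counting arguments go through and do yield $\sigma$-compactness. Be aware, too, that this is a genuinely different route from the paper's, which is much shorter: since $M$ is second countable, locally compact and Hausdorff it is paracompact and carries a Riemannian metric; pulling the metric back along the immersion makes $O_0$ metrizable, hence paracompact, and a connected, paracompact, locally compact Hausdorff space is $\sigma$-compact. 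Your approach, once corrected, has the merit of staying entirely inside the Stefan chart combinatorics, but it needs the chart adaptation spelled out.
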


\begin{proof}
Since $M$ is a second countable locally compact Hausdorff space, it is paracompact. So there exists a Riemannian metric on $M$. Hence an immersed submanifold $O_0$ admits a Riemannian metric. Thus $O_0$, with the submanifold topology, is metrizable. Therefore $O_0$ is paracompact. Since $O_0$ is connected paracompact locally compact Hausdorff, it is $\sigma$-compact. 
\end{proof}

There exist a countable set $A$ and a compact contractible subset $D_a\subset O_0$ for each $a\in A$ such that $O_0=\bigcup_{a\in A}D_a^\circ$. $\mca{G}^x_{D_a}$ is homeomorphic to $D_a\times\mca{G}^x_x$. $\mca{G}^x_{D_a}$ is a closed paracompact subset of $\mca{G}^x_{O_0}$ by Lemma \ref{paracomcompara}. We have $(\mca{G}^x_{D_a})^\circ=\mca{G}^x_{D_a^\circ}$ in $\mca{G}^x_{O_0}$. Hence $\mca{G}^x_{O_0}=\bigcup_{a\in A}(\mca{G}^x_{D_a})^\circ$. 

Since $\mca{G}^x$ is a Hausdorff $C^\infty$ manifold, it is locally compact Hausdorff, hence regular Hausdorff. Thus $\mca{G}^x_{O_0}$ is regular Hausdorff. Therefore $\mca{G}^x_{O_0}$ is paracompact by Lemma \ref{regpara}. Since $O_0$ is open in $O$, $\mca{G}^x_{O_0}$ is open in $\mca{G}^x$. $\mca{G}^x$ is a disjoint union of open paracompact subsets. Hence $\mca{G}^x$ is paracompact. 
\end{proof}

\subsubsection{The diffeomorphisms induced by actions}
\begin{lem}\label{onuonud}
Let $\mca{G}\rightrightarrows M$ be a Lie groupoid, $N$ be a $C^\infty$ manifold, $\nu\colon N\to M$ be a $C^\infty$ map, $\rho\in\mca{A}(\nu,\mca{G})$ and $O\in\mca{O}_\rho$. Then the following statements hold: 
\begin{enumerate}
\item There exists a unique $\ul{O}\in\mca{O}_\mca{G}$ such that $\nu(O)\subset\ul{O}$. The map $\nu\colon O\to\ul{O}$ is a surjective submersion. Hence $O\cap\nu^{-1}(x)$ is an embedded submanifold of $O$ for any $x\in\ul{O}$. 
\item For any $x$, $x^\prime\in\ul{O}$ and $g\in\mca{G}^x_{x^\prime}$, $\rho(\cdot,g)\colon O\cap\nu^{-1}(x)\to O\cap\nu^{-1}(x^\prime)$ is a diffeomorphism. 
\end{enumerate}
(Note that $\nu^{-1}(x)$ may not be an embedded submanifold of $N$ for some $x\in M$, which complicates the proof of 2.) 
\end{lem}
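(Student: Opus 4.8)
The plan is to handle the two parts in order, using the action groupoid $N\rtimes_\rho\mca{G}$ and the principal bundle structures on orbits from Proposition \ref{isoorb}, together with Lemma \ref{oosmhs} which gives smoothness of base maps along orbits.

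For part 1: take $x_0\in O$ and set $\ul{O}$ to be the orbit of $\mca{G}$ through $\nu(x_0)$. If $x'\in O$, then $(x_0,x')$ lie in the same orbit of $N\rtimes_\rho\mca{G}$, so there is $g\in\mca{G}$ with $\nu(x_0)=t(g)$ and $\rho(x_0,g)=x'$; hence $\nu(x')=s(g)\in\ul{O}$, proving $\nu(O)\subset\ul{O}$ and uniqueness is clear. To see $\nu\colon O\to\ul{O}$ is a surjective submersion, I would apply Lemma \ref{oosmhs} to the associated action morphism $\varphi_\rho\colon N\rtimes_\rho\mca{G}\to\mca{G}$: its base map is $\nu$, so $\nu\colon O\to\ul{O}$ is $C^\infty$ (here $O$ carries the submanifold structure of $\mca{O}_\rho$ and $\ul{O}$ that of $\mca{O}_\mca{G}$ from Proposition \ref{isoorb}). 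Surjectivity is immediate since every point of $\ul{O}$ is $\nu$ of some $\rho(x_0,g)$. For the submersion property, I would use the commuting square
\begin{equation*}
\begin{tikzcd}
(N\rtimes_\rho\mca{G})^{x_0}\ar[r,"\varphi_\rho"]\ar[d,"s"']&\mca{G}^{\nu(x_0)}\ar[d,"s"]\\
O\ar[r,"\nu"']&\ul{O}
\end{tikzcd}
\end{equation*}
where $\varphi_\rho\colon(N\rtimes_\rho\mca{G})^{x_0}\to\mca{G}^{\nu(x_0)}$ is a diffeomorphism (it is an action morphism), the left $s$ is a surjective submersion onto $O$ (Proposition \ref{isoorb} applied to $N\rtimes_\rho\mca{G}$) and the right $s$ is a surjective submersion onto $\ul{O}$; chasing the square gives that $\nu\colon O\to\ul{O}$ is a submersion. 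Then $O\cap\nu^{-1}(x)=(\nu|_O)^{-1}(x)$ is an embedded submanifold of $O$ by the regular value / constant rank theorem.

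For part 2: fix $x,x'\in\ul{O}$ and $g\in\mca{G}^x_{x'}$. Right multiplication by $g$ gives a map $R_g\colon t^{-1}(x)\to t^{-1}(x')$ on the relevant source/target fibers of $N\rtimes_\rho\mca{G}$; concretely, using the diffeomorphisms $\varphi_\rho\colon(N\rtimes_\rho\mca{G})^{x_0}\xrightarrow{\sim}\mca{G}^{\nu(x_0)}$ I would identify $O\cap\nu^{-1}(x)$ with a quotient of a target fiber and realize $\rho(\cdot,g)$ as the induced map, which is $C^\infty$ because it is a restriction of the smooth map $N\times_{\nu,t}\mca{G}\to N$ to the (embedded, by part 1) submanifold $O\cap\nu^{-1}(x)$, landing in the embedded submanifold $O\cap\nu^{-1}(x')$ — here one must check the corestriction is smooth, which follows since $O\cap\nu^{-1}(x')$ is embedded in $O$ which is (injectively) immersed in $N$. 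Its inverse is $\rho(\cdot,g^{-1})$ since $\rho(\rho(y,g),g^{-1})=\rho(y,1_x)=y$, and this is $C^\infty$ by the same argument. Hence $\rho(\cdot,g)$ is a diffeomorphism.

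The main obstacle, as the parenthetical remark warns, is that $\nu^{-1}(x)$ need not be an embedded submanifold of $N$ globally, so one cannot directly say "$\rho(\cdot,g)$ is a smooth map between embedded submanifolds of $N$." The fix is to stay inside the orbit $O$: by part 1, $O\cap\nu^{-1}(x)$ \emph{is} embedded in $O$, and $O$ is an (injectively immersed, possibly non-Hausdorff) submanifold of $N$ from Proposition \ref{isoorb}; smoothness of $\rho(\cdot,g)$ as a map $O\cap\nu^{-1}(x)\to O\cap\nu^{-1}(x')$ then follows by first checking it is smooth as a map into $N$ (restriction of the smooth action map) and into $O$ (using that $O\cap\nu^{-1}(x')$ is locally closed in $O$ and the map is continuous into $O$, e.g. via the target-fiber description), and finally that smoothness into $O$ descends to smoothness into the embedded $O\cap\nu^{-1}(x')$. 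Carefully threading the continuity-into-$O$ point — perhaps invoking Lemma \ref{locconncon} or the argument behind Lemma \ref{oosmhs} — is where I expect the real work to lie.
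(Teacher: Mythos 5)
Your part 1 is fine and is essentially the paper's argument (the paper produces the submersion property by composing a local section of $s\colon\mca{G}^{\nu(y_0)}\to\ul{O}$ with $\rho(y_0,\cdot)$, which is the same computation as your square-chase). The gap is in part 2, at exactly the point you flag but do not close. The justification you do offer --- that the corestriction is smooth ``since $O\cap\nu^{-1}(x')$ is embedded in $O$ which is (injectively) immersed in $N$'' --- is not a valid inference: a map that is smooth into $N$ and happens to take values in the immersed submanifold $O$ need not even be continuous into $O$, and this is precisely the difficulty the parenthetical remark warns about. Your fallback, Lemma \ref{locconncon}, is not available either: it assumes the arrow space of the groupoid is second countable, whereas here the relevant groupoid is $N\rtimes_\rho\mca{G}=N\times_{\nu,t}\mca{G}$ and $\mca{G}$ (hence $N\rtimes_\rho\mca{G}$) is allowed to be non-second-countable, while the lemma being proved makes no such hypothesis. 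So as written, the essential step --- smoothness of $\rho(\cdot,g)$ \emph{into} $O\cap\nu^{-1}(x')$ --- remains unproved.

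The paper closes this by carrying out the ``target-fibre description'' you mention only in passing, arranged so that no corestriction into a merely immersed submanifold is ever needed. Fix $y\in O\cap\nu^{-1}(x)$ and put $\mca{H}=N\rtimes_\rho\mca{G}$. Since $s\colon\mca{H}^y\to O$ is a surjective submersion (Proposition \ref{isoorb}) and $O\cap\nu^{-1}(x)$, $O\cap\nu^{-1}(x')$ are embedded in $O$ by part 1, the preimages $\mca{H}^y_{O\cap\nu^{-1}(x)}$ and $\mca{H}^y_{O\cap\nu^{-1}(x')}$ are embedded submanifolds of $\mca{H}^y$, hence of $\mca{H}$, because $\mca{H}^y$ is itself a Hausdorff closed embedded submanifold of $\mca{H}$; this is where embeddedness is genuinely available. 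The map $(y,g')\mapsto(y,g'g)$ is smooth into $\mca{H}$ (it factors through the multiplication of $\mca{H}$ restricted to an embedded fibre product), so its corestriction to the embedded $\mca{H}^y_{O\cap\nu^{-1}(x')}$ is automatically smooth, and it covers $\rho(\cdot,g)$ over the two surjective submersions $s$. Taking local smooth sections of $s\colon\mca{H}^y_{O\cap\nu^{-1}(x)}\to O\cap\nu^{-1}(x)$ (Lemma \ref{fiberproduct}) then writes $\rho(\cdot,g)$ locally as a composition of smooth maps with the correct codomains, giving smoothness into $O\cap\nu^{-1}(x')$ directly; the same argument applied to $g^{-1}$ yields the smooth inverse. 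Your proposal points toward this route but stops short of it; that lifting-and-descending step is the content that must be supplied.
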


\begin{proof}
Put $\mca{H}=N\rtimes_\rho\mca{G}$. 

1. Since $\nu\colon N\to M$ is the base map of $\varphi_\rho\colon\mca{H}\to\mca{G}$, there exists a unique $\ul{O}\in\mca{O}_\mca{G}$ such that $\nu(O)\subset\ul{O}$. Take any $y_0\in O$ and put $x_0=\nu(y_0)\in\ul{O}$. 

For $x\in\ul{O}$, there exists $g\in\mca{G}^{x_0}_x$. We have $\rho(y_0,g)\in O$ and $\nu\rho(y_0,g)=x$. Hence $\nu\colon O\to\ul{O}$ is surjective. 

$s\colon\mca{G}^{x_0}\to\ul{O}$ is a submersion by Proposition \ref{isoorb}. Hence there exist an open neighborhood $U$ of $x_0$ in $\ul{O}$ and a $C^\infty$ map $\sigma\colon U\to\mca{G}^{x_0}$ such that $s\sigma=\id$ and $\sigma(x_0)=1_{x_0}$. Let 
\begin{align*}
\wt{\sigma}\colon U&\to O\\
x&\mapsto\rho(y_0,\sigma(x)). 
\end{align*}
Then $\wt{\sigma}$ is $C^\infty$, $\nu\wt{\sigma}=\id$ and $\wt{\sigma}(x_0)=y_0$. Hence $\nu_*\colon T_{y_0}O\to T_{x_0}\ul{O}$ is surjective. Therefore $\nu\colon O\to\ul{O}$ is a submersion. 

2. Fix $y\in O\cap\nu^{-1}(x)$. Then $s\colon\mca{H}^y\to O$ is a surjective submersion and $O\cap\nu^{-1}(x)$, $O\cap\nu^{-1}(x^\prime)$ are embedded submanifolds of $O$. We have 
\begin{equation*}
\begin{tikzcd}
\mca{H}^y_{O\cap\nu^{-1}(x)}\ar[r]\ar[d,"s"']\ar[rd,phantom,"\circlearrowright"{xshift=-2,yshift=3}]&\mca{H}^y\ar[d,"s"]\\
O\cap\nu^{-1}(x)\ar[r]&O, 
\end{tikzcd}
\qquad
\begin{tikzcd}
\mca{H}^y_{O\cap\nu^{-1}(x^\prime)}\ar[r]\ar[d,"s"']\ar[rd,phantom,"\circlearrowright"{xshift=-2,yshift=3}]&\mca{H}^y\ar[d,"s"]\\
O\cap\nu^{-1}(x^\prime)\ar[r]&O, 
\end{tikzcd}
\end{equation*}
where $s\colon\mca{H}^y_{O\cap\nu^{-1}(x)}\to O\cap\nu^{-1}(x)$, $s\colon\mca{H}^y_{O\cap\nu^{-1}(x^\prime)}\to O\cap\nu^{-1}(x^\prime)$ are surjective submersions by Lemma \ref{fiberproduct}. Consider 
\begin{align*}
R_{(s(\cdot),g)}\colon\mca{H}^y_{O\cap\nu^{-1}(x)}&\to\mca{H}^y_{O\cap\nu^{-1}(x^\prime)}\\
(y,g^\prime)&\mapsto(y,g^\prime g). 
\end{align*}
We have 
\begin{equation}\label{HHOO}
\begin{tikzcd}
\mca{H}^y_{O\cap\nu^{-1}(x)}\ar[r,"R_{(s(\cdot),g)}"]\ar[d,"s"']\ar[rd,phantom,"\circlearrowright"]&\mca{H}^y_{O\cap\nu^{-1}(x^\prime)}\ar[d,"s"]\\
O\cap\nu^{-1}(x)\ar[r,"\rho(\cdot{,}g)"']&O\cap\nu^{-1}(x^\prime). 
\end{tikzcd}
\end{equation}

\begin{claim}
$R_{(s(\cdot),g)}\colon\mca{H}^y_{O\cap\nu^{-1}(x)}\to\mca{H}^y_{O\cap\nu^{-1}(x^\prime)}$ is $C^\infty$. 
\end{claim}

\begin{proof}
$\mca{H}^y_{O\cap\nu^{-1}(x)}\times_{s,t}\mca{H}$ is an embedded submanifold of $\mca{H}^y_{O\cap\nu^{-1}(x)}\times\mca{H}$, hence of $\mca{H}\times_{s,t}\mca{H}$. Now $R_{(s(\cdot),g)}$ can be expressed as the composition 
\begin{align*}
\mca{H}^y_{O\cap\nu^{-1}(x)}&\to\mca{H}^y_{O\cap\nu^{-1}(x)}\times_{s,t}\mca{H}\to\mca{H}, \\
(y,g^\prime)&\mapsto((y,g^\prime),(\rho(y,g^\prime),g))
\end{align*}
where the second map is the restriction of the composition map $\mca{H}\times_{s,t}\mca{H}\to\mca{H}$, which is $C^\infty$. 
\end{proof}

By taking local $C^\infty$ sections of the left $s$ in Diagram \eqref{HHOO}, we see that $\rho(\cdot,g)\colon O\cap\nu^{-1}(x)\to O\cap\nu^{-1}(x^\prime)$ is $C^\infty$. By replacing $g$ with $g^{-1}$, we obtain 
\begin{equation*}
\begin{tikzcd}[column sep=large]
\mca{H}^y_{O\cap\nu^{-1}(x)}\ar[d,"s"']\ar[rd,phantom,"\circlearrowright"]&\mca{H}^y_{O\cap\nu^{-1}(x^\prime)}\ar[d,"s"]\ar[l,"R_{(s(\cdot),g^{-1})}"']\\
O\cap\nu^{-1}(x)&O\cap\nu^{-1}(x^\prime), \ar[l,"\rho(\cdot{,}g^{-1})"]
\end{tikzcd}
\end{equation*}
which shows the inverse map $\rho(\cdot,g^{-1})\colon O\cap\nu^{-1}(x^\prime)\to O\cap\nu^{-1}(x)$ is $C^\infty$ as well. 
\end{proof}

\subsubsection{The degree of a proper $C^\infty$ map}
\begin{dfn}
Let $M$, $M^\prime$ be connected paracompact Hausdorff oriented $C^\infty$ manifolds of dimension $n$ and $f\colon M\to M^\prime$ be a proper $C^\infty$ map. Then there exists a unique $\deg f\in\bb{R}$, called the \emph{degree} of $f$, such that 
\begin{equation*}
\begin{tikzcd}
H_c^n(M^\prime;\bb{R})\ar[r,"f^*"]\ar[d,"\sim"{sloped},"\int_{M^\prime}"']\ar[rd,phantom,"\circlearrowright"{xshift=-6,yshift=2}]&H_c^n(M;\bb{R})\ar[d,"\sim"{sloped},"\int_M"{xshift=7}]\\
\bb{R}\ar[r,"\deg f"']&\bb{R}, 
\end{tikzcd}
\end{equation*}
where $H_c^n$ is the $n$-th de Rham cohomology with compact support. 
\end{dfn}

\begin{prop}\label{degdiffp}
Let $M$, $M^\prime$ be connected oriented $C^\infty$ manifolds of dimension $n$ and $f\colon M\to M^\prime$ be a proper $C^\infty$ map. Then the following statements hold: 
\begin{enumerate}
\item $\deg f\in\bb{Z}$. 
\item If $f$ is a diffeomorphism, then $\deg f=\pm1$. 
\item If $\deg f\neq0$, then $f$ is surjective. 
\item If $f^\prime\colon M\to M^\prime$ is a proper $C^\infty$ map and $f$, $f^\prime$ are properly $C^\infty$ homotopic, then $\deg f=\deg f^\prime$. 
\end{enumerate}
\end{prop}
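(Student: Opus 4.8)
The statement to prove is Proposition \ref{degdiffp}, the basic properties of the degree of a proper $C^\infty$ map between connected oriented $n$-manifolds. The plan is to work entirely within de Rham cohomology with compact support, since that is how $\deg f$ was defined, and to reduce each item to a standard local computation.

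\textbf{Setup and item 1.} First I would recall the two key facts about $H_c^n$: for a connected oriented $n$-manifold $M$ (which is paracompact Hausdorff, as the paper assumes throughout), the integration map $\int_M\colon H_c^n(M;\bb{R})\to\bb{R}$ is an isomorphism, and a proper map $f$ induces $f^*\colon H_c^n(M';\bb{R})\to H_c^n(M;\bb{R})$ by pullback of compactly supported forms (properness guarantees the support stays compact). To prove $\deg f\in\bb{Z}$, I would pick a regular value $y_0$ of $f$ — it exists by Sard's theorem, and by properness the fiber $f^{-1}(y_0)$ is a finite set $\{x_1,\dots,x_k\}$ (finite because it is compact and discrete, discrete because $y_0$ is regular and $\dim M=\dim M'$). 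Near each $x_i$, $f$ is a diffeomorphism onto a neighborhood of $y_0$, orientation-preserving or -reversing; choose disjoint charts and a bump form $\omega$ supported in a small ball around $y_0$ with $\int_{M'}\omega=1$ contained in the common image. Then $f^*\omega$ is supported in the disjoint union of the $x_i$-charts, and $\int_M f^*\omega=\sum_i \varepsilon_i$ where $\varepsilon_i=\pm1$ is the local orientation sign; this sum is the degree, hence an integer. (Properness also ensures $\int_M f^*\omega$ is well-defined and that we can shrink the neighborhood of $y_0$ to avoid $f$ of the complement of the chosen charts.)

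\textbf{Items 2 and 3.} For item 2, if $f$ is a diffeomorphism then $f^{-1}(y_0)$ is a single point and $f$ is globally orientation-preserving or -reversing, so $\deg f=\pm1$; alternatively, $f^*$ is an isomorphism on $H_c^n$, forcing $\deg f=\pm1$ once we know it is an integer. For item 3, I argue by contraposition: if $f$ is not surjective, pick $y_0\notin f(M)$; then $f^{-1}(y_0)=\emptyset$. I claim $y_0$ has a neighborhood $V$ with $f^{-1}(V)=\emptyset$: this uses properness — $M'\setminus f(M)$ is open because $f(M)$ is closed, which holds since $f$ proper into a locally compact Hausdorff space is a closed map. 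Taking $\omega$ supported in such a $V$ with $\int_{M'}\omega=1$, we get $f^*\omega=0$, so $\deg f=0$.

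\textbf{Item 4 (the main obstacle).} This is the delicate point, because a homotopy through proper maps is not automatically a \emph{proper} homotopy $H\colon M\times I\to M'$, and conversely the statement as given hypothesizes a proper $C^\infty$ homotopy, so I should be careful to use exactly that. Given a proper $C^\infty$ homotopy $H$ between $f$ and $f'$, I want to show $f^*=(f')^*$ on $H_c^n$. The standard chain-homotopy argument produces an operator $K$ on compactly supported forms with $d K + K d = (f')^* - f^*$, defined by integration along the $I$-fiber: $K\alpha = \int_0^1 \iota_{\partial_t} H^*\alpha\, dt$. The crucial check is that $K$ preserves \emph{compact support}: if $\alpha$ has compact support $C\subset M'$, then $H^*\alpha$ is supported in $H^{-1}(C)$, which is compact by properness of $H$, hence its projection to $M$ is compact — so $K\alpha$ is compactly supported. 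This is exactly where properness of the homotopy is used and cannot be dispensed with. Then on $H_c^n$, $(f')^*-f^*=dK$ is exact, so the two maps agree on cohomology, giving $\deg f=\deg f'$. I expect the bookkeeping around compact supports in the chain homotopy (and making sure $H^*$ of a closed form is handled correctly, and that $K$ lands in forms of the right degree) to be the part requiring the most care, though none of it is conceptually hard.
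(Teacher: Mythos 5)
The paper states Proposition \ref{degdiffp} without proof, treating it as a standard fact about the degree defined via compactly supported de Rham cohomology; your argument is correct and is exactly the standard proof in that framework (regular-value fiber count using properness to localize, closedness of $f(M)$ for item 3, and a compactly supported chain homotopy $dK+Kd=(f')^*-f^*$ for item 4, where you rightly identify properness of the homotopy as the point guaranteeing $K$ preserves compact supports). One small caveat: in item 2 your ``alternative'' reasoning is loose --- $f^*$ being an isomorphism on the one-dimensional space $H_c^n$ only gives $\deg f\neq0$; to conclude $\pm1$ you need in addition $\deg(f^{-1})\in\bb{Z}$ and multiplicativity $\deg(f)\deg(f^{-1})=1$ --- but your primary argument (single preimage point, globally orientation-preserving or -reversing since the Jacobian sign is locally constant and $M$ is connected) already settles it, so there is no gap.
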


\begin{lem}\label{mfgone}
Let $M$ be a $C^\infty$ manifold possibly with boundary, $E$ be a closed subset of $M$ and $f\in C^0(M,\bb{R}^N)$. Assume that $f$ is $C^\infty$ on an open neighborhood of $E$ in $M$. Let $\delta\in C^0(M,\bb{R})$ be such that $\delta>0$. Then there exists $g\in C^\infty(M,\bb{R}^N)$ such that $\lVert f(x)-g(x)\rVert<\delta(x)$ for all $x\in M$ and $g=f$ on $E$. 
\end{lem}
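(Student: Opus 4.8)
The statement to prove is Lemma \ref{mfgone}: a relative smooth approximation result for continuous maps $M \to \mathbb{R}^N$ which are already smooth near a closed set $E$, with control by a positive continuous function $\delta$. Let me think about the proof.

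\textbf{The plan is to} use a partition of unity argument, which is the standard technique for such relative Whitney-type approximation theorems. First I would let $U$ be an open neighborhood of $E$ in $M$ on which $f$ is $C^\infty$. For each point $x \in M$ I choose an open neighborhood $V_x$ and a smooth map on it approximating $f$: if $x \in E$, take $V_x \subset U$ so that $f|_{V_x}$ is already smooth and set $g_x = f|_{V_x}$; if $x \notin E$, take $V_x \subset M \setminus E$ small enough that, by continuity of $f$ and $\delta$, the constant map $g_x \equiv f(x)$ satisfies $\lVert f(y) - f(x)\rVert < \delta(y)$ for all $y \in V_x$ (shrink $V_x$ using that $\delta$ is bounded below by a positive constant on a compact neighborhood and $f$ is continuous). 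This gives an open cover $\{V_x\}_{x \in M}$.

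\textbf{Next I would} take a $C^\infty$ partition of unity $\{\varphi_i\}_{i \in I}$ subordinate to $\{V_x\}$ (this exists since $M$, being a manifold possibly with boundary, is paracompact — one may invoke the standard fact, or the paracompactness results already developed in the excerpt such as Lemma \ref{regpara}); for each $i$ pick $x(i)$ with $\operatorname{supp}\varphi_i \subset V_{x(i)}$ and the corresponding smooth local map $g_{x(i)}$. Define
\begin{equation*}
g = \sum_{i \in I} \varphi_i\, g_{x(i)}.
\end{equation*}
This is a locally finite sum of $C^\infty$ maps, hence $g \in C^\infty(M, \mathbb{R}^N)$. For the estimate, fix $y \in M$ and note that $\sum_i \varphi_i(y) = 1$, so
\begin{equation*}
\lVert f(y) - g(y)\rVert = \Bigl\lVert \sum_{i} \varphi_i(y)\bigl(f(y) - g_{x(i)}(y)\bigr)\Bigr\rVert \leq \sum_{i} \varphi_i(y)\,\lVert f(y) - g_{x(i)}(y)\rVert.
\end{equation*}
For each $i$ with $\varphi_i(y) \neq 0$ we have $y \in V_{x(i)}$, so $\lVert f(y) - g_{x(i)}(y)\rVert < \delta(y)$ by construction (when $x(i) \in E$ this difference is actually $0$ since $g_{x(i)} = f$ there). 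Hence the right side is strictly less than $\delta(y)\sum_i \varphi_i(y) = \delta(y)$, unless every nonzero term has difference $0$, in which case $f(y)=g(y)$ and the strict inequality still holds as $\delta(y)>0$. Finally, if $y \in E$, then every $V_{x(i)}$ meeting $y$ either has $x(i) \in E$ (so $g_{x(i)} = f$ near $y$) — and in fact we must arrange that no $V_{x(i)}$ with $x(i) \notin E$ meets $E$, which holds automatically since those $V_x$ were chosen inside $M \setminus E$; therefore $g(y) = \sum_i \varphi_i(y) f(y) = f(y)$.

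\textbf{The main obstacle} I expect is a bookkeeping subtlety rather than a deep one: ensuring simultaneously that (a) the neighborhoods $V_x$ for $x \notin E$ stay away from $E$ (needed for $g = f$ on $E$), (b) the $\delta$-estimate is \emph{strict} even where all contributing pieces equal $f$ exactly, and (c) the partition of unity is genuinely locally finite and subordinate — this last point requires paracompactness of $M$, which for a manifold-with-boundary follows from second countability (part of our standing conventions) or can be deduced from the paracompactness lemmas already in the text. One should also double check the shrinking step for $x \notin E$: since $\delta$ is continuous and positive, on a small compact neighborhood $K_x$ of $x$ we have $\inf_{K_x}\delta > 0$, and by continuity of $f$ we can take $V_x \subset K_x^\circ \cap (M \setminus E)$ with $\lVert f(y) - f(x)\rVert < \inf_{K_x}\delta \leq \delta(y)$ for all $y \in V_x$. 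With these choices in place the argument above goes through verbatim, giving the desired $g$.
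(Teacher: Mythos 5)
Your proof is correct: the partition-of-unity construction, with constant maps $f(x)$ on pieces of $M\setminus E$ and $f$ itself on pieces inside the neighborhood where it is smooth, gives exactly the claimed $g$. The paper itself does not reprove this but simply cites Theorem 6.21 of \cite{Lee}, and your argument is essentially the standard proof of that cited result, so there is nothing further to fix.
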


\begin{proof}
This is Theorem 6.21 of \cite{Lee}. 
\end{proof}

\begin{lem}\label{nseonc}
Let $M$ be a $C^\infty$ manifold possibly with boundary, $M^\prime$ be a $C^\infty$ manifold without boundary, $E$ be a closed subset of $M$ and $f\in C^0(M,M^\prime)$. Assume that $f$ is $C^\infty$ on an open neighborhood of $E$ in $M$. If $U$ is an open neighborhood of $f$ in $C^0(M,M^\prime)$ with respect to the strong $C^0$ topology, then there exists $g\in U\cap C^\infty(M,M^\prime)$ such that $g=f$ on $E$. 
\end{lem}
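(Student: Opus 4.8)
\textbf{Proof proposal for Lemma \ref{nseonc}.}

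The plan is to reduce the statement to the Euclidean approximation result Lemma \ref{mfgone} by working in charts of $M^\prime$, while keeping track of the ``$g=f$ on $E$'' constraint and the strong $C^0$ topology. First I would recall the description of a basic neighborhood in the strong $C^0$ topology: shrinking $U$ if necessary, we may assume $U$ is determined by a locally finite family of charts $(V_\alpha,\psi_\alpha)$ of $M^\prime$, a locally finite family of compact sets $K_\alpha\subset M$ with $f(K_\alpha)\subset V_\alpha$ and $\bigcup_\alpha K_\alpha=M$, and positive numbers $\varepsilon_\alpha$, so that $g\in U$ whenever $g(K_\alpha)\subset V_\alpha$ and $\lVert\psi_\alpha g(x)-\psi_\alpha f(x)\rVert<\varepsilon_\alpha$ for all $x\in K_\alpha$ and all $\alpha$. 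The standard trick is then to embed $M^\prime$ as a closed $C^\infty$ submanifold of some $\bb{R}^N$ (Whitney), or alternatively to use a $C^\infty$ tubular neighborhood $W$ of $M^\prime$ in $\bb{R}^N$ with a $C^\infty$ retraction $r\colon W\to M^\prime$; I would take the latter route since it converts ``approximate a map into $M^\prime$'' into ``approximate a map into $\bb{R}^N$ then retract''.

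Next I would apply Lemma \ref{mfgone} to the composite $\iota\circ f\colon M\to\bb{R}^N$, where $\iota\colon M^\prime\hookrightarrow\bb{R}^N$ is the embedding: $\iota f$ is continuous, it is $C^\infty$ on an open neighborhood of $E$ (since $f$ is), and I would choose a positive continuous function $\delta\colon M\to\bb{R}$ small enough that (i) the $\delta$-ball around $\iota f(x)$ stays inside $W$ for every $x$, and (ii) after applying the retraction $r$, the resulting map into $M^\prime$ lies in $U$. Condition (ii) is arranged pointwise: on each $K_\alpha$, continuity of $r$ and of $\psi_\alpha$ on the compact set $\iota f(K_\alpha)$ gives a uniform modulus of continuity, so a sufficiently small $\delta|_{K_\alpha}$ guarantees $\psi_\alpha(r(\text{approximant}))$ is within $\varepsilon_\alpha$ of $\psi_\alpha f$ and that $r(\text{approximant})(K_\alpha)\subset V_\alpha$; local finiteness of $\{K_\alpha\}$ lets these local choices be amalgamated into one positive continuous $\delta$. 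Lemma \ref{mfgone} then produces $h\in C^\infty(M,\bb{R}^N)$ with $\lVert h-\iota f\rVert<\delta$ everywhere and $h=\iota f$ on $E$; setting $g=r\circ h\colon M\to M^\prime$ gives a $C^\infty$ map with $g\in U$, and since $h(E)=\iota f(E)\subset M^\prime$ and $r|_{M^\prime}=\id$ we get $g=f$ on $E$, as required.

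The main obstacle I anticipate is bookkeeping rather than conceptual: namely, verifying carefully that a single continuous $\delta>0$ can be chosen to simultaneously enforce ``image stays in the tubular neighborhood'', ``image stays in $V_\alpha$ after retraction'', and ``$\psi_\alpha$-coordinates move by less than $\varepsilon_\alpha$'', uniformly over a locally finite cover by not-necessarily-compact pieces. This is where one must use that $\{K_\alpha\}$ is locally finite and that each $K_\alpha$ is compact (so that $r$ and $\psi_\alpha$ are uniformly continuous on $\iota f(K_\alpha)$), then patch the $\delta_\alpha$ together by taking an infimum over the finitely many indices active near each point — this infimum is again continuous and positive. A secondary point to handle cleanly is that $M$ may have boundary, but Lemma \ref{mfgone} is already stated for manifolds possibly with boundary, so no extra work is needed there beyond invoking it in that generality. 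I would also remark that the existence of a tubular neighborhood and retraction for $M^\prime$ is classical and uses only that $M^\prime$ is a $C^\infty$ manifold without boundary (embeddable in Euclidean space since, by our standing conventions, manifolds are second countable), so it is legitimate to cite it.
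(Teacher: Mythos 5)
Your proposal is correct and follows essentially the same route as the paper: embed $M^\prime$ in $\bb{R}^N$, pick a positive continuous $\delta$ so that any $\delta$-close map lands in a tubular neighborhood and retracts into $U$, apply Lemma \ref{mfgone} to get a $C^\infty$ approximant equal to $f$ on $E$, and compose with the retraction. The only difference is that you spell out the choice of $\delta$ via basic strong-$C^0$ neighborhoods and local finiteness, which the paper leaves implicit.
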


\begin{proof}
There exists a $C^\infty$ embedding: $M^\prime\subset\bb{R}^N$. Let $V$ be a tubular neighborhood of $M^\prime$ in $\bb{R}^N$ and $\pi\colon V\to M^\prime$ be the projection. There exists $\delta\in C^0(M,\bb{R})$ such that: 
\begin{itemize}
\setlength\itemsep{0em}
\item $\delta>0$
\item if $h\in C^0(M,\bb{R}^N)$ satisfies $\lVert f(x)-h(x)\rVert<\delta(x)$ for all $x\in M$, then $h(M)\subset V$ and $\pi h\in U$. 
\end{itemize}
By Lemma \ref{mfgone}, there exists $g^\prime\in C^\infty(M,\bb{R}^N)$ such that $\lVert f(x)-g^\prime(x)\rVert<\delta(x)$ for all $x\in M$ and $g^\prime=f$ on $E$. Let $g=\pi g^\prime\in C^\infty(M,M^\prime)$. Then $g\in U$ and $g=f$ on $E$. 
\end{proof}

\begin{lem}\label{conorihod}
Let $M$, $M^\prime$ be $C^\infty$ manifolds without boundary and $f$, $f^\prime\colon M\to M^\prime$ be proper $C^\infty$ maps. If $f$ and $f^\prime$ are properly $C^0$ homotopic, then they are properly $C^\infty$ homotopic. 
\end{lem}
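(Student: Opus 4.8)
The plan is to prove Lemma \ref{conorihod} by taking a proper $C^0$ homotopy $H\colon M\times I\to M^\prime$ between $f$ and $f^\prime$, and approximating it relative to $M\times\{0,1\}$ by a $C^\infty$ map that is still proper. First I would observe that $H$ is already $C^\infty$ on an open neighborhood of the closed set $E=M\times\{0,1\}$ (namely on $M\times([0,\epsilon)\cup(1-\epsilon,1])$ for small $\epsilon$, after reparametrizing the homotopy so it is constant near the ends — or simply because $f,f^\prime$ are $C^\infty$ and one can arrange $H$ to be independent of $t$ near $0$ and $1$). Then I would apply Lemma \ref{nseonc} with $M$ replaced by the manifold-with-boundary $M\times I$, the closed subset $E=M\times\{0,1\}$, and a suitable strong $C^0$ neighborhood $U$ of $H$ in $C^0(M\times I,M^\prime)$, to obtain $H^\prime\in C^\infty(M\times I,M^\prime)$ with $H^\prime=H$ on $E$ and $H^\prime\in U$.

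The crux is choosing $U$ small enough in the strong $C^0$ topology so that \emph{any} $H^\prime\in U$ is automatically proper. This is where properness of $H$ is essential: since $H$ is proper, for every compact $K^\prime\subset M^\prime$ the preimage $H^{-1}(K^\prime)$ is compact, so one can cover $M^\prime$ by an exhaustion by compact sets and build a strong $C^0$ neighborhood controlled by an exhaustion of $M\times I$ in such a way that maps $C^0$-close to $H$ cannot send points near infinity to points near a fixed compact set. Concretely, using a proper function on $M^\prime$ (or a metric $d^\prime$ on each component and the fact that $M^\prime$ is locally compact), I would choose at each point $z\in M\times I$ a radius $\delta(z)>0$ small enough that the $\delta(z)$-ball around $H(z)$ stays within bounded "distance-to-infinity" of $H(z)$; then any $H^\prime$ with $d^\prime(H(z),H^\prime(z))<\delta(z)$ for all $z$ remains proper. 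Membership in the strong $C^0$ topology is exactly a condition of this pointwise-controlled form, so this defines the desired neighborhood $U$.

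Having obtained such an $H^\prime$, the map $H^\prime\colon M\times I\to M^\prime$ is a proper $C^\infty$ homotopy with $H^\prime(\cdot,0)=H(\cdot,0)=f$ and $H^\prime(\cdot,1)=H(\cdot,1)=f^\prime$, which is precisely the assertion. The one subtlety to handle carefully is the reparametrization step: to apply Lemma \ref{nseonc} I need $H$ genuinely $C^\infty$ on a neighborhood of $M\times\{0,1\}$, not merely restricting to $f$ and $f^\prime$ at the endpoints; I would fix this at the outset by replacing $H$ with $H\circ(\id_M\times\beta)$ for a $C^\infty$ function $\beta\colon I\to I$ that is $0$ near $0$, $1$ near $1$, and monotone — this is still a proper $C^0$ homotopy from $f$ to $f^\prime$ and is now $C^\infty$ (indeed locally constant in $t$) near the ends, while properness is preserved since $\id_M\times\beta$ is proper.

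The main obstacle I anticipate is verifying rigorously that the strong $C^0$ neighborhood $U$ can be chosen so that properness propagates to all of its members; this requires unwinding the definition of the strong (Whitney) $C^0$ topology in terms of locally finite families and pointwise radii, and matching it against an exhaustion argument on both $M\times I$ and $M^\prime$. Everything else — the reparametrization, the application of Lemma \ref{nseonc}, and reading off the endpoints — is routine. I do not expect to need the second countability hypothesis on $M^\prime$ beyond what Lemma \ref{nseonc} already uses; paracompactness of $M\times I$ follows since $M$ is a (second countable) manifold.
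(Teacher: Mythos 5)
Your proposal is correct and follows essentially the same route as the paper: reparametrize the proper $C^0$ homotopy so it is constant in $t$ (hence $C^\infty$) near the endpoints, then apply Lemma \ref{nseonc} with $E=M\times\{0,1\}$ inside a strong $C^0$ neighborhood of the homotopy consisting entirely of proper maps. The only difference is that the paper disposes of what you call the crux by citing 1.5 Theorem of \cite{Hirsch} (the set of proper maps is open in the strong $C^0$ topology), whereas you sketch the standard exhaustion/proper-metric argument proving that same openness directly.
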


\begin{proof}
Let $P$ be the set of proper maps in $C^0(M\times I,M^\prime)$. Then $P$ is open in $C^0(M\times I,M^\prime)$ with respect to the strong $C^0$ topology. (See 1.5 Theorem of \cite{Hirsch}. Manifolds are assumed to be without boundary there, but the same proof works for manifolds with boundary.) Let $H\colon M\times I\to M^\prime$ be a proper $C^0$ homotopy between $f$ and $f^\prime$. Define $H^\prime\colon M\times I\to M^\prime$ by 
\begin{equation*}
H^\prime(x,t)=
\begin{cases}
H(x,0)&0\leq t\leq\frac{1}{3}\\
H(x,3t-1)&\frac{1}{3}\leq t\leq\frac{2}{3}\\
H(x,1)&\frac{2}{3}\leq t\leq1. 
\end{cases}
\end{equation*}
Then $H^\prime\in P$ and $H^\prime$ is $C^\infty$ on an open neighborhood of $M\times\left\{0,1\right\}$ in $M\times I$. By Lemma \ref{nseonc} there exists $H^{\prime\prime}\in P\cap C^\infty(M\times I,M^\prime)$ such that $H^{\prime\prime}(\cdot,t)=H^\prime(\cdot,t)$ for $t=0$, $1$. So $H^{\prime\prime}$ is a proper $C^\infty$ homotopy between $f$ and $f^\prime$. 
\end{proof}

\subsection{Theorem for $bnt$ and corollaries}
\begin{thm}\label{ctop}
Let $\mca{G}\rightrightarrows M$, $\mca{G}^\prime\rightrightarrows M^\prime$ be Lie groupoids, $N$, $N^\prime$ be $C^\infty$ manifolds, $\nu\colon N\to M$, $\nu^\prime\colon N^\prime\to M^\prime$ be $C^\infty$ maps, $\rho\in\mca{A}(\nu,\mca{G})$, $\rho^\prime\in\mca{A}(\nu^\prime,\mca{G}^\prime)$ and $\varphi$, $\varphi^\prime\in\Hom(N\rtimes_\rho\mca{G},N^\prime\rtimes_{\rho^\prime}\mca{G}^\prime)$. 
Assume $\varphi$ is an isomorphism and $\varphi^\prime=F_{\varphi^\prime}\rtimes\Phi$ is a semiconjugacy. If: 
\begin{itemize}
\setlength\itemsep{0em}
\item $\varphi\sim_{bnt}\varphi^\prime$ with respect to $\mca{B}_{N^\prime\rtimes_{\rho^\prime}\mca{G}^\prime}$
\item $F_\Phi\colon\nu(N)\to M^\prime$ is injective
\item $F_\Phi\colon\ul{O}\to M^\prime$ is an immersion for all $\ul{O}\in\mca{O}_\mca{G}$ such that $\ul{O}\subset\nu(N)$
\item $\nu^\prime$ is a surjective submersion, 
\end{itemize}
then $\varphi^\prime$ is a conjugacy. 
\end{thm}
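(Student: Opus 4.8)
The strategy is to reduce Theorem \ref{ctop} to Proposition \ref{btop}(2). Recall that Proposition \ref{btop}(2) already proves that $\varphi^\prime$ is a conjugacy \emph{provided} $\varphi\sim_{nt}\varphi^\prime$ and $\Phi$ is an isomorphism. Under the hypotheses here we only know $\varphi\sim_{bnt}\varphi^\prime$ (which is stronger than $\sim_{nt}$, so that direction is free) and we do \emph{not} know that $\Phi$ is an isomorphism---instead we know $F_\Phi$ is injective on $\nu(N)$, an immersion on each orbit contained in $\nu(N)$, and $\nu^\prime$ is a surjective submersion, plus the boundedness hypothesis $\varphi\sim_{bnt}\varphi^\prime$ with respect to $\mca{B}_{N^\prime\rtimes_{\rho^\prime}\mca{G}^\prime}$. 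So the real content is: upgrade ``$F_\Phi$ injective and immersive on orbits'' to ``$\Phi|_{\mca{G}|_{\nu(N)}}$ is an isomorphism onto $\mca{G}^\prime$'', using the $bnt$-boundedness to control what happens at infinity along orbits. Once that is in hand, one runs the argument of Proposition \ref{btop}(2) verbatim (or cites it after replacing $\mca{G}$ by $\mca{G}|_{\nu(N)}$).

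\textbf{Key steps.} First, since $\varphi$ is an isomorphism and $\varphi\sim_{bnt}\varphi^\prime$, one gets from Proposition \ref{btop}(1)-style reasoning---which only needs $\varphi\sim_o\varphi^\prime$ and, for the orbit-by-orbit covering argument, that $\Phi$ restricted to each target fiber of $N\rtimes_\rho\mca{G}$ is a covering onto the corresponding target fiber of $N^\prime\rtimes_{\rho^\prime}\mca{G}^\prime$---that each $F_{\varphi^\prime}\colon O\to F_{\varphi^\prime}(O)$ is a covering of orbits. The immersion hypothesis on $F_\Phi$ together with the fact that $\varphi_\rho\colon(N\rtimes_\rho\mca{G})^y\to\mca{G}^{\nu(y)}$ is a diffeomorphism forces $\Phi\colon\mca{G}^{\nu(y)}\to(\mca{G}^\prime)^{\nu^\prime F_{\varphi^\prime}(y)}$ to be an immersion, and a dimension count via Proposition \ref{isoorb} and Lemma \ref{onuonud} (the orbits $O$ and $F_{\varphi^\prime}(O)$ have the same dimension because $\varphi$ is an isomorphism and $\varphi\sim_o\varphi^\prime$) shows it is a local diffeomorphism, hence (on identity components) a covering map. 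Second---this is where $bnt$ enters---I would use the bounded $nt$-homotopy $P\colon N\times I\to N^\prime\rtimes_{\rho^\prime}\mca{G}^\prime$ with $P(N\times I)\in\mca{B}_{N^\prime\rtimes_{\rho^\prime}\mca{G}^\prime}$ to show that $F_{\varphi^\prime}\colon O\to F_{\varphi^\prime}(O)$ is actually a \emph{proper} map on each orbit, or more precisely to reduce the covering degree to $1$: the boundedness of $P$ keeps the ``correction'' $\varphi_{\rho^\prime}P(y,1)$ in a locally compact set over each compact piece of the base, which lets one promote the homotopy $F_{\varphi^\prime}\simeq_0 F_\varphi$ between maps of orbits to a \emph{proper} homotopy, and then apply the degree theory of Proposition \ref{degdiffp} (the orbits are connected paracompact---Lemma \ref{tarpara} and the $\sigma$-compactness argument in its proof---hence the degree is defined and equals $\pm1$ since $F_\varphi$ is a diffeomorphism). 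A proper covering of degree $\pm1$ between connected manifolds is a diffeomorphism, so $F_{\varphi^\prime}\colon O\to F_{\varphi^\prime}(O)$ is a diffeomorphism for every orbit. Third, combine the injectivity of $F_\Phi$ on $\nu(N)$ with surjectivity of $F_{\varphi^\prime}$ orbit-by-orbit and the argument of Proposition \ref{conjdiffinj} / Proposition \ref{btop}(2): show $F_{\varphi^\prime}\colon N\to N^\prime$ is a bijection, then that $(F_{\varphi^\prime})_*$ is an isomorphism on each tangent space (the tangent-to-orbit part is covered by the orbit diffeomorphisms just established; the normal part is handled exactly as in Proposition \ref{btop}(2), using $P$, Lemma \ref{tsiso}, and the normal-bundle isomorphisms there), hence $F_{\varphi^\prime}$ is a diffeomorphism, and finally that $\varphi^\prime$ is a diffeomorphism with $(\varphi^\prime)^{-1}=(F_{\varphi^\prime})^{-1}\rtimes(\Phi|_{\mca{G}|_{\nu(N)}})^{-1}$ a semiconjugacy, invoking Proposition \ref{conjdiffinj} for the last step.

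\textbf{Main obstacle.} The hard part is the second step: converting the \emph{bounded} $nt$-homotopy into \emph{properness} of $F_{\varphi^\prime}$ on orbits, and doing the degree argument cleanly. The subtlety is that $F_{\varphi^\prime}\colon O\to F_{\varphi^\prime}(O)$ need not be proper as a self-map of $N^\prime$ at all---orbits are only injectively immersed---so ``proper'' must be understood intrinsically for the submanifold topologies, and one must check that the homotopy $H_t=F_{\eta(\cdot,t)}|_O$ (where $\eta(\cdot,t)=\varphi P(\cdot,t)$, the orbitwise homotopy attached to $P$ via Lemma \ref{hnhnphipoh}) stays proper for all $t$, which is precisely what $P(N\times I)$ lying in the locally compact bornology $\mca{B}_{N^\prime\rtimes_{\rho^\prime}\mca{G}^\prime}$ buys: over any compact $K^\prime\subset F_{\varphi^\prime}(O)$, the set $\{P(y,t):H_t(y)\in K^\prime\}$ is contained in a compact subset of the (paracompact) target fiber, hence its $t$-trace is controlled. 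I would isolate this as a lemma. Everything else is either a direct citation or a repetition of the bookkeeping already done in Proposition \ref{btop} with $\mca{G}$ replaced by the (possibly non-Hausdorff, non-second-countable) groupoid $\mca{G}|_{\nu(N)}$, for which the relevant facts (target fibers are principal bundles, paracompact, etc.) were deliberately proved in that generality in Section \ref{999}.
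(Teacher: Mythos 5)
There is a genuine gap at your first step, and it is fatal to the proposed reduction. You claim that the hypothesis that $F_\Phi$ is an immersion on each orbit $\ul{O}\subset\nu(N)$, together with $\varphi_\rho$ being a diffeomorphism on target fibers and a dimension count, forces $\Phi\colon\mca{G}^{\nu(y)}\to(\mca{G}^\prime)^{\nu^\prime F_{\varphi^\prime}(y)}$ to be an immersion, hence a local diffeomorphism, hence a covering, and from this that each $F_{\varphi^\prime}\colon O\to F_{\varphi^\prime}(O)$ is a covering of orbits. This cannot be derived from those hypotheses: $F_\Phi$ only sees $s_*$ of a tangent vector to a target fiber, so the isotropy directions $T_g\mca{G}^{t(g)}_{s(g)}\subset\ker s_*$ are completely invisible to it, and nothing in the geometric hypotheses prevents $\Phi_*$ from annihilating them (and even a local diffeomorphism between target fibers would not automatically be a covering without properness). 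The example given right after the proof in the paper, with $\rho(x,t)=x+t$ on $\bb{R}$, $\varphi=\id\times\id$, $\varphi^\prime=0\times0$, satisfies every input you use at this step ($\varphi$ an isomorphism, $\varphi\sim_o\varphi^\prime$, $F_\Phi$ injective and an immersion on the unique orbit, $\nu^\prime$ a surjective submersion), yet $\Phi=0$ kills the isotropy direction and $F_{\varphi^\prime}\colon O\to F_{\varphi^\prime}(O)$ is a constant map, with $F_{\varphi^\prime}(O)$ a proper subset of $F_\varphi(O)$. So the covering statement you feed into your later degree argument already requires the boundedness of the $nt$-homotopy, which in your plan enters only afterwards, to fix the degree.

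For comparison, the paper uses the boundedness first and never establishes ``$\Phi$ is an isomorphism on fibers'' as an intermediate step: $P(N\times I)\in\mca{B}_{N^\prime\rtimes_{\rho^\prime}\mca{G}^\prime}$ gives properness of $H=s^\prime P\colon N\times I\to N^\prime$ and of $\eta\colon\mca{H}^y\times I\to(\mca{H}^\prime)^{F^\prime(y)}$, and degree arguments (on orientation covers of components of $N$ and on the paracompact components of target fibers, Lemma \ref{tarpara}) yield surjectivity of $F^\prime$ and of $\varphi^\prime$ on target fibers; then Sard's theorem produces a regular point of $F^\prime\colon O\to O^\prime$, and only there do the injectivity and immersion hypotheses on $F_\Phi$ enter, making $(F_\Phi)_*$ an isomorphism on $T\ul{O}$, after which the isomorphism of $F^\prime_*$ along the orbit is propagated by the diffeomorphisms $\rho(\cdot,g)$ of Lemma \ref{onuonud}; the normal directions and the final passage to a conjugacy are as in Proposition \ref{btop} and Proposition \ref{conjdiffinj}. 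Your second and third steps echo parts of this, but without a correct replacement for the first step (and without proofs of orbitwise properness in the submanifold topology and of $F^\prime(O)=F_\varphi(O)$, which you only sketch), the reduction to Proposition \ref{btop}(2) --- which in any case requires $\Phi$ to be an isomorphism on all of $\mca{G}$, while $\nu$ is not assumed surjective and $\mca{G}|_{\nu(N)}$ need not be a Lie groupoid --- does not go through.
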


The proof of the theorem will be given in Section \ref{prth176}. Here we look at corollaries of the theorem. 

\begin{cor}
Let $\mca{G}\rightrightarrows M$ be an $s$-simply connected Lie groupoid, $\mca{G}^\prime\rightrightarrows M^\prime$ be a foliation groupoid, $N$, $N^\prime$ be $C^\infty$ manifolds, $\nu\colon N\to M$ be a surjective $C^\infty$ map, $\nu^\prime\colon N^\prime\to M^\prime$ be a surjective submersion, $\rho\in\mca{A}(\nu,\mca{G})$, $\rho^\prime\in\mca{A}(\nu^\prime,\mca{G}^\prime)$ and $\varphi$, $\varphi^\prime\in\Hom(N\rtimes_\rho\mca{G},N^\prime\rtimes_{\rho^\prime}\mca{G}^\prime)$. Assume that: 
\begin{itemize}
\setlength\itemsep{0em}
\item $\varphi\sim_{bnt}\varphi^\prime$ with respect to $\mca{B}_{N^\prime\rtimes_{\rho^\prime}\mca{G}^\prime}$
\item $\varphi$ is an isomorphism
\item there exists a $C^\infty$ map $\beta\colon M\to M^\prime$ such that: 
\begin{itemize}
\setlength\itemsep{0em}
\item \hfill$\begin{aligned}[t]
\begin{tikzcd}
N\ar[r,"F_{\varphi^\prime}"]\ar[d,"\nu"']\ar[rd,phantom,"\circlearrowright"]&N^\prime\ar[d,"\nu^\prime"]\\
M\ar[r,"\beta"']&M^\prime
\end{tikzcd}
\end{aligned}$\hfill\null
\item $\beta\colon M\to M^\prime$ is injective
\item $\beta\colon\ul{O}\to M^\prime$ is an immersion for all $\ul{O}\in\mca{O}_\mca{G}$. 
\end{itemize}
\end{itemize}
Then $\varphi^\prime$ is an isomorphism and $\beta$ is a diffeomorphism. 
\end{cor}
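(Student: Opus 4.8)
The plan is to deduce this corollary from Theorem \ref{ctop} together with the machinery of Section \ref{1010}. First I would set $\Phi$ to be a constant part of the semiconjugacy $\varphi^\prime=F_{\varphi^\prime}\rtimes\Phi$; the base map $F_\Phi\colon M\to M^\prime$ then makes the square with $\nu,\nu^\prime,F_{\varphi^\prime}$ commute, so since $\nu$ is surjective we get $F_\Phi=\beta$. Hence the three hypotheses on $\beta$ translate directly into the three middle bullets of Theorem \ref{ctop}: $F_\Phi\colon\nu(N)\to M^\prime$ is injective (indeed $\nu(N)=M$), $F_\Phi\colon\ul O\to M^\prime$ is an immersion for all orbits $\ul O\subset M=\nu(N)$, and $\nu^\prime$ is a surjective submersion by assumption. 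Together with $\varphi\sim_{bnt}\varphi^\prime$ and $\varphi$ an isomorphism, Theorem \ref{ctop} applies and yields that $\varphi^\prime$ is a conjugacy. In particular $\varphi^\prime$ is a diffeomorphism.

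Next I would upgrade "conjugacy" to "isomorphism". Since $\varphi^\prime$ is a conjugacy, $(\varphi^\prime)^{-1}$ is again a semiconjugacy, so $\varphi^\prime\in\Hom(N\rtimes_\rho\mca G,N^\prime\rtimes_{\rho^\prime}\mca G^\prime)$ is a diffeomorphism; by Proposition \ref{isodiff} it is an isomorphism of Lie groupoids, and consequently $F_{\varphi^\prime}$ is a diffeomorphism. That already gives the first conclusion. For $\beta$: from $F_\Phi=\beta$ and the fact that $\varphi^\prime$ is a conjugacy, Proposition \ref{conjdiffinj} (with $\nu^\prime$ a surjective submersion) tells us $F_\Phi\colon\nu(N)\to M^\prime$ is injective, which we already knew; to get surjectivity and smoothness of the inverse I would argue as in the proof of Proposition \ref{conjdiffinj}: writing $(\varphi^\prime)^{-1}=F_{(\varphi^\prime)^{-1}}\rtimes\Phi^\prime$ and chasing base maps gives $F_{\Phi^\prime}F_\Phi=\id$ on $\nu(N)=M$ and $F_\Phi F_{\Phi^\prime}=\id$ on $\nu^\prime(N^\prime)=M^\prime$ (the latter using surjectivity of $\nu^\prime$), so $\beta=F_\Phi$ is a bijection with $C^\infty$ inverse $F_{\Phi^\prime}$, hence a diffeomorphism.

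Actually a cleaner route for $\beta$: once $\varphi^\prime$ is an isomorphism, $F_{\varphi^\prime}\colon N\to N^\prime$ is a diffeomorphism, and the commuting square
\begin{equation*}
\begin{tikzcd}
N\ar[r,"F_{\varphi^\prime}"]\ar[d,"\nu"']&N^\prime\ar[d,"\nu^\prime"]\\
M\ar[r,"\beta"']&M^\prime
\end{tikzcd}
\end{equation*}
exhibits $\beta\nu=\nu^\prime F_{\varphi^\prime}$. Since $\nu$ and $\nu^\prime F_{\varphi^\prime}$ are surjective submersions (the latter as a composite of a surjective submersion and a diffeomorphism), $\beta$ is automatically surjective; and $\beta\colon M\to M^\prime$ being injective by hypothesis, it is a continuous bijection. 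To conclude it is a diffeomorphism I would use that $\nu$ admits local $C^\infty$ sections: locally $\beta=\nu^\prime F_{\varphi^\prime}\circ(\text{section of }\nu)$ is $C^\infty$, and symmetrically $\beta^{-1}=\nu F_{\varphi^\prime}^{-1}\circ(\text{section of }\nu^\prime)$ is $C^\infty$, so $\beta$ is a diffeomorphism.

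The main obstacle is verifying the hypothesis $F_\Phi=\beta$ cleanly — i.e. making sure the constant part $\Phi$ furnished by the semiconjugacy has base map exactly the given $\beta$, not merely agreeing with it on $\nu(N)$. Here surjectivity of $\nu$ is exactly what forces $F_\Phi=\beta$ on all of $M$ (uniqueness of the base map of $\varphi_{\rho^\prime}\varphi^\prime=\Phi\varphi_\rho$ composed with the identity $\nu^\prime F_{\varphi^\prime}=\beta\nu=F_\Phi\nu$), so the argument goes through; aside from that, everything is bookkeeping on top of Theorem \ref{ctop}, Proposition \ref{isodiff}, and Proposition \ref{conjdiffinj}. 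I would also double-check that the bornology in the hypothesis ($\mca B_{N^\prime\rtimes_{\rho^\prime}\mca G^\prime}$, not $\mca B_{\rho^\prime}$) is literally the one Theorem \ref{ctop} requires, which it is.
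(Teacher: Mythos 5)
There is a genuine gap at the very first step: you ``set $\Phi$ to be a constant part of the semiconjugacy $\varphi^\prime=F_{\varphi^\prime}\rtimes\Phi$'', but the corollary does not assume $\varphi^\prime$ is a semiconjugacy. The only hypothesis is the existence of a map $\beta$ with $\nu^\prime F_{\varphi^\prime}=\beta\nu$, which is a condition on base maps alone; it does not provide any morphism $\Phi\colon\mca{G}\to\mca{G}^\prime$ with $\varphi_{\rho^\prime}\varphi^\prime=\Phi\varphi_\rho$, and without such a $\Phi$ Theorem \ref{ctop} cannot be invoked. Producing $\Phi$ is exactly the content of the corollary, and it is where the two structural hypotheses you never use must enter: one first shows $\beta\in C^\infty(\mca{O}_\mca{G},\mca{O}_{\mca{G}^\prime})$ (for each orbit $\ul{O}$ of $\mca{G}$, using that $\nu\colon O\to\ul{O}$ is a surjective submersion by Lemma \ref{onuonud}, one chases orbits through the square and gets smoothness of $\beta\colon\ul{O}\to\ul{O^\prime}$ from local sections of $\nu|_O$); then Proposition \ref{simconfolbij} — which requires $\mca{G}$ $s$-simply connected and $\mca{G}^\prime$ a foliation groupoid — integrates $\beta$ to a unique $\Phi\in\Hom(\mca{G},\mca{G}^\prime)$ with $F_\Phi=\beta$; finally Proposition \ref{connuni} gives $\varphi_{\rho^\prime}\varphi^\prime=\Phi\varphi_\rho$, because both are morphisms from the $s$-connected groupoid $N\rtimes_\rho\mca{G}$ to the foliation groupoid $\mca{G}^\prime$ with the same base map $\nu^\prime F_{\varphi^\prime}=\beta\nu$. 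Only at this point is $\varphi^\prime$ known to be a semiconjugacy, and the remaining bullets of Theorem \ref{ctop} are then verified exactly as you describe ($\nu(N)=M$, etc.).

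The downstream bookkeeping in your proposal is essentially fine: once Theorem \ref{ctop} yields that $\varphi^\prime$ is a conjugacy, it is a diffeomorphism, hence an isomorphism by Proposition \ref{isodiff}, and the constant part $\Phi^\prime$ of $(\varphi^\prime)^{-1}$ gives $F_{\Phi^\prime}\beta=\id$ on $M$ and $\beta F_{\Phi^\prime}=\id$ on $M^\prime$ with $F_{\Phi^\prime}$ smooth (as in the proof of Proposition \ref{conjdiffinj}, using that $t^\prime\colon\mca{G}^\prime\to M^\prime$ is a surjective submersion), so $\beta$ is a diffeomorphism. Beware, however, of your ``cleaner route'' for $\beta$: it relies on local $C^\infty$ sections of $\nu$, but in this corollary $\nu$ is only a surjective $C^\infty$ map, not a submersion, so that argument does not apply; the route through $F_{\Phi^\prime}$ is the correct one.
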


\begin{proof}
First we prove $\beta\in C^\infty(\mca{O}_\mca{G},\mca{O}_{\mca{G}^\prime})$. Let $\ul{O}\in\mca{O}_\mca{G}$ and fix $x_0\in\ul{O}$. Since $\nu\colon N\to M$ is surjective, there exists $y_0\in N$ such that $\nu(y_0)=x_0$. Let $y_0\in O\in\mca{O}_\rho$. There exist a unique $O^\prime\in\mca{O}_{\rho^\prime}$ such that $F_{\varphi^\prime}(O)\subset O^\prime$, and a unique $\ul{O^\prime}\in\mca{O}_{\mca{G}^\prime}$ such that $\nu^\prime(O^\prime)\subset\ul{O^\prime}$. Since $\nu\colon O\to\ul{O}$ is surjective by Lemma \ref{onuonud}, it has a section, hence $\beta(\ul{O})\subset\ul{O^\prime}$. Now we have 
\begin{equation*}
\begin{tikzcd}
O\ar[r,"F_{\varphi^\prime}"]\ar[d,"\nu"']\ar[rd,phantom,"\circlearrowright"]&O^\prime\ar[d,"\nu^\prime"]\\
\ul{O}\ar[r,"\beta"']&\ul{O^\prime}. 
\end{tikzcd}
\end{equation*}
Since $\nu\colon O\to\ul{O}$ is a surjective submersion by Lemma \ref{onuonud}, it has a local $C^\infty$ section at any point of $\ul{O}$, so $\beta\colon\ul{O}\to\ul{O^\prime}$ is $C^\infty$. Hence $\beta\in C^\infty(\mca{O}_\mca{G},\mca{O}_{\mca{G}^\prime})$. 

There exists a unique $\Phi\in\Hom(\mca{G},\mca{G}^\prime)$ such that $F_\Phi=\beta$ by Proposition \ref{simconfolbij}. We have $\varphi_{\rho^\prime}\varphi^\prime=\Phi\varphi_\rho$ since the base maps coincide, $N\rtimes_\rho\mca{G}$ is $s$-connected and $\mca{G}^\prime$ is a foliation groupoid. So $\varphi^\prime$ is a semiconjugacy and apply Theorem \ref{ctop}. 
\end{proof}

\begin{cor}
Let $\mca{G}\rightrightarrows M$, $\mca{G}^\prime\rightrightarrows M^\prime$ be Lie groupoids and $\varphi$, $\varphi^\prime\in\Hom(\mca{G},\mca{G}^\prime)$. If: 
\begin{itemize}
\setlength\itemsep{0em}
\item $\varphi$ is an isomorphism
\item $\varphi\sim_{bnt}\varphi^\prime$ with respect to $\mca{B}_{\mca{G}^\prime}$
\item $F_{\varphi^\prime}\colon M\to M^\prime$ is injective
\item $F_{\varphi^\prime}\colon O\to M^\prime$ is an immersion for all $O\in\mca{O}_\mca{G}$, 
\end{itemize}
then $\varphi^\prime$ is an isomorphism. 
\end{cor}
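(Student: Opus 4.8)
The plan is to deduce this last corollary directly from Theorem \ref{ctop} by specializing to the case where the actions are the canonical ones $\tau\in\mca{A}(\id,\mca{G})$ and $\tau^\prime\in\mca{A}(\id,\mca{G}^\prime)$, so that $N=M$, $N^\prime=M^\prime$, $\nu=\id$, $\nu^\prime=\id$, and the action groupoids $M\rtimes_\tau\mca{G}$, $M^\prime\rtimes_{\tau^\prime}\mca{G}^\prime$ are identified with $\mca{G}$, $\mca{G}^\prime$ via $\varphi_\tau$, $\varphi_{\tau^\prime}$. Under this identification $\Hom(\mca{G},\mca{G}^\prime)=\Hom(M\rtimes_\tau\mca{G},M^\prime\rtimes_{\tau^\prime}\mca{G}^\prime)$ and $\varphi_\tau=\id$, $\varphi_{\tau^\prime}=\id$, so any $\varphi^\prime\in\Hom(\mca{G},\mca{G}^\prime)$ is automatically a semiconjugacy with constant part $\Phi=\varphi^\prime$ itself (the square $\varphi_{\tau^\prime}\varphi^\prime=\Phi\varphi_\tau$ reads $\varphi^\prime=\varphi^\prime$), and $F_\Phi=F_{\varphi^\prime}$. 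First I would spell out this translation carefully, checking that $\mca{B}_{M^\prime\rtimes_{\tau^\prime}\mca{G}^\prime}$ corresponds to $\mca{B}_{\mca{G}^\prime}$ under $\varphi_{\tau^\prime}$ (this is essentially immediate since $\varphi_{\tau^\prime}$ is an isomorphism, and boundedness is preserved by isomorphisms of topological groupoids — use the fact that $\varphi_{\tau^\prime}$ and its inverse are continuous morphisms, so Proposition \ref{properbounded} or just direct inspection applies).

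Next I would verify the four hypotheses of Theorem \ref{ctop} in this setting. Since $\nu=\id$ we have $\nu(N)=M$, so ``$F_\Phi\colon\nu(N)\to M^\prime$ is injective'' becomes ``$F_{\varphi^\prime}\colon M\to M^\prime$ is injective'', which is assumed. The condition ``$F_\Phi\colon\ul{O}\to M^\prime$ is an immersion for all orbits $\ul{O}$ of $\mca{G}$ contained in $\nu(N)=M$'' becomes ``$F_{\varphi^\prime}\colon O\to M^\prime$ is an immersion for all $O\in\mca{O}_\mca{G}$'', which is assumed. The condition ``$\nu^\prime$ is a surjective submersion'' becomes ``$\id\colon M^\prime\to M^\prime$ is a surjective submersion'', which is trivially true. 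And ``$\varphi\sim_{bnt}\varphi^\prime$ with respect to $\mca{B}_{N^\prime\rtimes_{\rho^\prime}\mca{G}^\prime}$'' is exactly the assumed $\varphi\sim_{bnt}\varphi^\prime$ with respect to $\mca{B}_{\mca{G}^\prime}$. Finally $\varphi$ being an isomorphism is assumed. So Theorem \ref{ctop} yields that $\varphi^\prime$ is a conjugacy of the actions $\tau$ and $\tau^\prime$.

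The last step is to convert ``$\varphi^\prime$ is a conjugacy'' back into ``$\varphi^\prime$ is an isomorphism'' of Lie groupoids. A conjugacy is in particular a semiconjugacy admitting a semiconjugacy inverse, hence $\varphi^\prime$ is invertible as a morphism $M\rtimes_\tau\mca{G}\to M^\prime\rtimes_{\tau^\prime}\mca{G}^\prime$; transporting along the isomorphisms $\varphi_\tau$, $\varphi_{\tau^\prime}$ gives an inverse morphism $\mca{G}^\prime\to\mca{G}$, so $\varphi^\prime$ is an isomorphism of Lie groupoids in the sense of Definition \ref{morgmgmpfpfp}. Alternatively, and more cleanly, a conjugacy is a diffeomorphism, so by Proposition \ref{isodiff} $\varphi^\prime$ is an isomorphism. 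I would use this second route since it is a one-line citation.

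I do not expect any real obstacle here — the corollary is a pure specialization of Theorem \ref{ctop}, and all the work lies in the (routine) bookkeeping of the identification $\mca{G}\simeq M\rtimes_\tau\mca{G}$ and of the bornologies. The only point requiring a moment's care is confirming that the locally compact bornology $\mca{B}_{\mca{G}^\prime}$ genuinely matches $\mca{B}_{\tau^\prime}=\mca{B}_{M^\prime\rtimes_{\tau^\prime}\mca{G}^\prime}$ under $\varphi_{\tau^\prime}$: by Definition \ref{acgrbong}, $\mca{B}_{\tau^\prime}$ is the inverse image bornology of $\varphi_{\tau^\prime}\colon M^\prime\rtimes_{\tau^\prime}\mca{G}^\prime\to\mca{G}^\prime$ under $\mca{B}_{\mca{G}^\prime}$, and since $\varphi_{\tau^\prime}$ is an isomorphism this inverse image bornology is exactly the pushforward of $\mca{B}_{\mca{G}^\prime}$, so the identification is an isomorphism of bornological groupoids, as needed.
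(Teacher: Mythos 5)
Your proposal is correct and is exactly the paper's argument: the paper proves this corollary by noting $\mca{A}(\id,\mca{G})=\{\tau\}$, $\mca{A}(\id,\mca{G}^\prime)=\{\tau^\prime\}$, identifying $\mca{G}\simeq M\rtimes_\tau\mca{G}$ and $\mca{G}^\prime\simeq M^\prime\rtimes_{\tau^\prime}\mca{G}^\prime$, and applying Theorem \ref{ctop} to $\tau$ and $\tau^\prime$. Your additional bookkeeping (every morphism is a semiconjugacy with constant part itself, the bornologies match under $\varphi_{\tau^\prime}$, and a conjugacy is in particular an isomorphism) is the routine content the paper leaves implicit.
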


\begin{proof}
We have $\mca{A}(\id,\mca{G})=\left\{\tau\right\}$, $\mca{A}(\id,\mca{G}^\prime)=\left\{\tau^\prime\right\}$ and $M\rtimes_\tau\mca{G}\simeq\mca{G}$, $M^\prime\rtimes_{\tau^\prime}\mca{G}^\prime\simeq\mca{G}^\prime$. Apply Theorem \ref{ctop} to $\tau$ and $\tau^\prime$. 
\end{proof}

\begin{cor}
Let $\mca{G}\rightrightarrows M$, $\mca{G}^\prime\rightrightarrows M^\prime$ be Lie groupoids, $N$, $Q$, $N^\prime$, $Q^\prime$ be $C^\infty$ manifolds, $\nu\colon N\to Q$, $\mu\colon Q\to M$, $\nu^\prime\colon N^\prime\to Q^\prime$, $\mu^\prime\colon Q^\prime\to M^\prime$ be $C^\infty$ maps and $\rho\in\mca{A}(\mu\nu,\mca{G})$, $\ul{\rho}\in\mca{A}(\mu,\mca{G})$, $\rho^\prime\in\mca{A}(\mu^\prime\nu^\prime,\mca{G}^\prime)$, $\ul{\rho^\prime}\in\mca{A}(\mu^\prime,\mca{G}^\prime)$, where $\nu\colon N\to Q$ is a $\mca{G}$-equivariant surjective map and $\nu^\prime\colon N^\prime\to Q^\prime$ is a $\mca{G}^\prime$-equivariant surjective submersion. Let $\varphi$, $\varphi^\prime\in\Hom(N\rtimes_\rho\mca{G},N^\prime\rtimes_{\rho^\prime}\mca{G}^\prime)$ and assume that there exists $\Phi\in\Hom(Q\rtimes_\ul{\rho}\mca{G},Q^\prime\rtimes_\ul{\rho^\prime}\mca{G}^\prime)$ such that 
\begin{equation*}
\begin{tikzcd}
N\rtimes_\rho\mca{G}\ar[r,"\varphi^\prime"]\ar[d,"\nu\rtimes\id"']\ar[rd,phantom,"\circlearrowright"]&N^\prime\rtimes_{\rho^\prime}\mca{G}^\prime\ar[d,"\nu^\prime\rtimes\id"]\\
Q\rtimes_\ul{\rho}\mca{G}\ar[r,"\Phi"']&Q^\prime\rtimes_\ul{\rho^\prime}\mca{G}^\prime. 
\end{tikzcd}
\end{equation*}
(Note that $F_{\varphi^\prime}$ takes each fiber of $\nu$ to a fiber of $\nu^\prime$ and $\varphi_{\rho^\prime}\varphi^\prime(y,g)$ is constant along the fibers of $\nu$.) If: 
\begin{itemize}
\setlength\itemsep{0em}
\item $\varphi$ is an isomorphism
\item $\varphi\sim_{bnt}\varphi^\prime$ with respect to $\mca{B}_{N^\prime\rtimes_{\rho^\prime}\mca{G}^\prime}$
\item $F_\Phi\colon Q\to Q^\prime$ is injective
\item $F_\Phi\colon\ul{O}\to Q^\prime$ is an immersion for each $\ul{O}\in\mca{O}_\ul{\rho}$, 
\end{itemize}
then $\varphi^\prime$ and $\Phi$ are isomorphisms. 
\end{cor}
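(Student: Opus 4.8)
The plan is to reduce this corollary to Theorem~\ref{ctop}. The key observation is the commutative square relating $\varphi'$ to $\Phi$ via the quotient maps $\nu\rtimes\id$ and $\nu'\rtimes\id$. By Proposition~\ref{nqmaphic} applied to the tower $N\xrightarrow{\nu}Q\xrightarrow{\mu}M$ with $\ul{\rho}\in\mca{A}(\mu,\mca{G})$, the action groupoid $N\rtimes_\rho\mca{G}$ is canonically identified with the action groupoid $N\rtimes_{\rho''}(Q\rtimes_{\ul{\rho}}\mca{G})$ of an action $\rho''\in\mca{A}(\nu,Q\rtimes_{\ul{\rho}}\mca{G})$ (here one uses that $\nu$ is $\mca{G}$-equivariant, which by Proposition~\ref{nqmaphic} is exactly the condition for such a $\rho''$ to exist), and under this identification $\nu\rtimes\id$ becomes $\varphi_{\rho''}$, the associated action morphism. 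Similarly $N'\rtimes_{\rho'}\mca{G}'\simeq N'\rtimes_{(\rho')''}(Q'\rtimes_{\ul{\rho'}}\mca{G}')$ with $\nu'\rtimes\id$ corresponding to $\varphi_{(\rho')''}$. Thus the hypothesis says precisely that $\varphi'$, viewed as a morphism $N\rtimes_{\rho''}(Q\rtimes_{\ul{\rho}}\mca{G})\to N'\rtimes_{(\rho')''}(Q'\rtimes_{\ul{\rho'}}\mca{G}')$, is a semiconjugacy with constant part $\Phi$, i.e.\ $\varphi'=F_{\varphi'}\rtimes\Phi$ in the sense of Definition~\ref{semiconconpng} for the Lie groupoids $Q\rtimes_{\ul{\rho}}\mca{G}\rightrightarrows Q$ and $Q'\rtimes_{\ul{\rho'}}\mca{G}'\rightrightarrows Q'$.

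Next I would check the four hypotheses of Theorem~\ref{ctop} for this reformulated data, with $\mca{G}$ replaced by $Q\rtimes_{\ul{\rho}}\mca{G}$, $\mca{G}'$ by $Q'\rtimes_{\ul{\rho'}}\mca{G}'$, and $\Phi$ playing the role of the constant part. The base map $F_\Phi\colon Q\to Q'$ is injective by assumption, and its restriction to each orbit $\ul{O}\in\mca{O}_{\ul{\rho}}$ is an immersion by assumption; since the orbits of $Q\rtimes_{\ul{\rho}}\mca{G}$ are exactly the elements of $\mca{O}_{\ul{\rho}}$, this matches the second and third bullets. The base map of the quotient $\nu'\rtimes\id = \varphi_{(\rho')''}$ is $\nu'$, which is a surjective submersion by hypothesis, giving the fourth bullet. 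The relation $\varphi\sim_{bnt}\varphi'$ with respect to $\mca{B}_{N'\rtimes_{\rho'}\mca{G}'}$ is unchanged since the underlying Lie groupoid $N'\rtimes_{\rho'}\mca{G}'\simeq N'\rtimes_{(\rho')''}(Q'\rtimes_{\ul{\rho'}}\mca{G}')$ and its locally compact bornology are unchanged by the identification. And $\varphi$ is an isomorphism by hypothesis. Applying Theorem~\ref{ctop} yields that $\varphi'$ is a conjugacy, hence in particular an isomorphism.

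It remains to deduce that $\Phi$ is an isomorphism. Since $\varphi'$ is now a conjugacy, $F_{\varphi'}\colon N\to N'$ is a diffeomorphism; because $\nu\colon N\to Q$ and $\nu'\colon N'\to Q'$ are surjective and $F_{\varphi'}$ carries fibers of $\nu$ bijectively onto fibers of $\nu'$ (it takes each $\nu$-fiber into a $\nu'$-fiber by the square, and $\varphi'^{-1}$ does the reverse), the induced map $Q\to Q'$ on the quotients — which is $F_\Phi$ — is a bijection; combined with $F_\Phi$ being an immersion on each orbit and a submersion onto $Q'$ through $\nu'$ (or directly: $F_\Phi = \nu' F_{\varphi'}\nu^{-1}$ locally and all three factors are smooth with smooth local inverses once $\varphi'$ is a conjugacy), $F_\Phi$ is a diffeomorphism. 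Then Proposition~\ref{conjdiffinj}, or more directly Proposition~\ref{isoactdiff}(2) together with the fact that $\Phi$ is automatically an action morphism when $\varphi_{(\rho')''}\Phi = \varphi'\varphi_{\rho''}$ has an action-morphism structure, shows $\Phi$ is an isomorphism; alternatively, one runs the same argument as in the proof of Proposition~\ref{conjdiffinj} directly on $\Phi\colon Q\rtimes_{\ul{\rho}}\mca{G}\to Q'\rtimes_{\ul{\rho'}}\mca{G}'$ using that $\nu'\rtimes\id = \varphi_{(\rho')''}$ is a surjective submersion.

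The main obstacle I anticipate is the bookkeeping in the first paragraph: verifying carefully, via Proposition~\ref{nqmaphic}, that the equivariance hypotheses on $\nu$ and $\nu'$ produce the actions $\rho''$, $(\rho')''$ of the action groupoids $Q\rtimes_{\ul{\rho}}\mca{G}$, $Q'\rtimes_{\ul{\rho'}}\mca{G}'$, that the quotient maps $\nu\rtimes\id$, $\nu'\rtimes\id$ are identified with the associated action morphisms $\varphi_{\rho''}$, $\varphi_{(\rho')''}$, and hence that the square in the hypothesis is literally the semiconjugacy square of Definition~\ref{semiconconpng} with constant part $\Phi$ — so that Theorem~\ref{ctop} applies verbatim. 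Once that translation is pinned down, the rest is a direct citation of Theorem~\ref{ctop} and a short argument that a conjugacy whose quotients are surjective submersions induces an isomorphism on the base action groupoids.
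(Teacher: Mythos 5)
Your proposal follows essentially the paper's own proof: translate via Proposition \ref{nqmaphic} so that $\nu\rtimes\id$ and $\nu^\prime\rtimes\id$ become the associated action morphisms of actions of $Q\rtimes_{\ul{\rho}}\mca{G}$ and $Q^\prime\rtimes_{\ul{\rho^\prime}}\mca{G}^\prime$, observe that the identification preserves the locally compact bornology, and apply Theorem \ref{ctop} verbatim; the extraction of the $\Phi$-statement, which the paper leaves implicit, is a correct addition. One caveat in your last paragraph: the shortcut ``$F_\Phi=\nu^\prime F_{\varphi^\prime}\nu^{-1}$ locally'' is not available, since $\nu$ is only assumed to be a surjective $C^\infty$ map (not a submersion) and so need not admit local sections; but your alternative route — using that $(\varphi^\prime)^{-1}$ is again a semiconjugacy and that $\nu\rtimes\id$, $\nu^\prime\rtimes\id$ are surjective (the latter a surjective submersion), as in the proof of Proposition \ref{conjdiffinj} — does yield that $\Phi$ is an isomorphism, so the proof stands.
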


\begin{proof}
Let $\sigma\in\mca{A}(\nu,Q\rtimes_\ul{\rho}\mca{G})$, $\sigma^\prime\in\mca{A}(\nu^\prime,Q^\prime\rtimes_\ul{\rho^\prime}\mca{G}^\prime)$ be the actions corresponding to $\rho$, $\rho^\prime$ by Proposition \ref{nqmaphic}. Then we have a commutative diagram 
\begin{equation*}
\begin{tikzcd}
N\rtimes_\sigma(Q\rtimes_\ul{\rho}\mca{G})\ar[r,"\sim"]\ar[rd,"\varphi_\sigma"']&N\rtimes_\rho\mca{G}\ar[r,"\varphi^\prime"]\ar[d,"\nu\rtimes\id"']&N^\prime\rtimes_{\rho^\prime}\mca{G}^\prime\ar[d,"\nu^\prime\rtimes\id"]\ar[r,"\sim"]&N^\prime\rtimes_{\sigma^\prime}(Q^\prime\rtimes_\ul{\rho^\prime}\mca{G}^\prime)\ar[ld,"\varphi_{\sigma^\prime}"]\\
&Q\rtimes_\ul{\rho}\mca{G}\ar[r,"\Phi"']&Q^\prime\rtimes_\ul{\rho^\prime}\mca{G}^\prime. 
\end{tikzcd}
\end{equation*}
Since the isomorphism $N^\prime\rtimes_{\rho^\prime}\mca{G}^\prime\simeq N^\prime\rtimes_{\sigma^\prime}(Q^\prime\rtimes_\ul{\rho^\prime}\mca{G}^\prime)$ takes $\mca{B}_{N^\prime\rtimes_{\rho^\prime}\mca{G}^\prime}$ to $\mca{B}_{N^\prime\rtimes_{\sigma^\prime}(Q^\prime\rtimes_\ul{\rho^\prime}\mca{G}^\prime)}$, we can apply Theorem \ref{ctop}. 
\end{proof}

\begin{cor}
Let $G$, $G^\prime$ be Lie groups, $N$, $Q$, $N^\prime$, $Q^\prime$ be $C^\infty$ manifolds, $\rho\in\mca{A}(N,G)$, $\ul{\rho}\in\mca{A}(Q,G)$, $\rho^\prime\in\mca{A}(N^\prime,G^\prime)$, $\ul{\rho^\prime}\in\mca{A}(Q^\prime,G^\prime)$, $\nu\colon N\to Q$ be a $G$-equivariant surjective $C^\infty$ map, $\nu^\prime\colon N^\prime\to Q^\prime$ be a $G^\prime$-equivariant surjective submersion and $\varphi$, $\varphi^\prime\in\Hom(N\rtimes_\rho G,N^\prime\rtimes_{\rho^\prime}G^\prime)$. Assume that $\varphi$ is an isomorphism and there exists $\Phi\in\Hom(Q\rtimes_\ul{\rho}G,Q^\prime\rtimes_\ul{\rho^\prime}G^\prime)$ such that 
\begin{equation*}
\begin{tikzcd}
N\rtimes_\rho G\ar[r,"\varphi^\prime"]\ar[d,"\nu\rtimes\id"']\ar[rd,phantom,"\circlearrowright"]&N^\prime\rtimes_{\rho^\prime}G^\prime\ar[d,"\nu^\prime\rtimes\id"]\\
Q\rtimes_\ul{\rho}G\ar[r,"\Phi"']&Q^\prime\rtimes_\ul{\rho^\prime}G^\prime. 
\end{tikzcd}
\end{equation*}
(Note that $F_{\varphi^\prime}$ takes the fibers of $\nu$ to the fibers of $\nu^\prime$ and $\varphi_{\rho^\prime}\varphi^\prime(y,g)$ is constant along the fibers of $\nu$.) If: 
\begin{itemize}
\setlength\itemsep{0em}
\item $\varphi\sim_{bnt}\varphi^\prime$ with respect to $\mca{B}_{N^\prime\rtimes_{\rho^\prime}G^\prime}$
\item $F_\Phi\colon Q\to Q^\prime$ is injective
\item $F_\Phi\colon\ul{O}\to Q^\prime$ is an immersion for all $\ul{O}\in\mca{O}_\ul{\rho}$, 
\end{itemize}
then $\varphi^\prime$ and $\Phi$ are isomorphisms. 
\end{cor}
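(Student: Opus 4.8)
The plan is to obtain this statement as the special case of the preceding corollary in which $\mca{G}$ and $\mca{G}^\prime$ are Lie groupoids over a point. Recall that a Lie group is the same thing as a Lie groupoid over $\mr{pt}$ (the example following Definition~\ref{liegrpgmtx}), so I would set $\mca{G}=G$, $\mca{G}^\prime=G^\prime$, $M=M^\prime=\mr{pt}$, and take $\mu\colon Q\to\mr{pt}$, $\mu^\prime\colon Q^\prime\to\mr{pt}$ to be the unique maps. In particular $G$ and $G^\prime$, being honest Lie groups, are Hausdorff and second countable, so no issue with the (relaxed) hypotheses on the total space of a Lie groupoid arises here.

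Under this identification, $\mu\nu\colon N\to\mr{pt}$ and $\mu\colon Q\to\mr{pt}$ are the unique maps, hence $\mca{A}(\mu\nu,\mca{G})=\mca{A}(N,G)$ and $\mca{A}(\mu,\mca{G})=\mca{A}(Q,G)$, and likewise for the primed data; so $\rho$, $\ul{\rho}$, $\rho^\prime$, $\ul{\rho^\prime}$ fit the hypotheses of the preceding corollary verbatim. Next I would observe that a $G$-equivariant map $\nu\colon N\to Q$ is exactly a $(\rho,\ul{\rho})$-equivariant map (and similarly for $\nu^\prime$), and that the action groupoids $N\rtimes_\rho G$, $N^\prime\rtimes_{\rho^\prime}G^\prime$, $Q\rtimes_\ul{\rho}G$, $Q^\prime\rtimes_\ul{\rho^\prime}G^\prime$, the morphisms $\varphi$, $\varphi^\prime$, $\Phi$, the commutative square, the orbit set $\mca{O}_\ul{\rho}$, the base maps $F_{\varphi^\prime}$, $F_\Phi$, and the locally compact bornology $\mca{B}_{N^\prime\rtimes_{\rho^\prime}G^\prime}$ are literally the same objects in both formulations. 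Therefore the three bulleted hypotheses here, together with the assumption that $\varphi$ is an isomorphism, are precisely the hypotheses of the preceding corollary.

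Applying that corollary then yields that $\varphi^\prime$ and $\Phi$ are isomorphisms, which is the desired conclusion. I do not expect any genuine obstacle: the entire content of the argument is the dictionary between ``action of a Lie group on a manifold'' and ``action of a Lie groupoid over $\mr{pt}$ on the unique map to $\mr{pt}$'', which is purely definitional. The ``hard part'', such as it is, is simply checking that every piece of data and every hypothesis transports correctly under this specialization, after which the statement is immediate; in effect the proof is the single sentence that this is the case $M=M^\prime=\mr{pt}$ of the preceding corollary.
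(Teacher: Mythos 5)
Your proposal is correct and matches the paper's intent exactly: the paper states this corollary without proof precisely because it is the specialization $M=M^\prime=\mr{pt}$ of the preceding corollary, with the identification of Lie groups as Lie groupoids over a point making all data and hypotheses transport verbatim. Nothing further is needed.
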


\begin{cor}
Let $\mca{G}\rightrightarrows M$, $\mca{G}^\prime\rightrightarrows M^\prime$, $\mca{G}^{\prime\prime}\rightrightarrows M^{\prime\prime}$ be Lie groupoids, $N$, $N^\prime$, $N^{\prime\prime}$ be $C^\infty$ manifolds, $\nu\colon N\to M$, $\nu^\prime\colon N^\prime\to M^\prime$, $\nu^{\prime\prime}\colon N^{\prime\prime}\to M^{\prime\prime}$ be $C^\infty$ maps and $\rho\in\mca{A}(\nu,\mca{G})$, $\rho^\prime\in\mca{A}(\nu^\prime,\mca{G}^\prime)$, $\rho^{\prime\prime}\in\mca{A}(\nu^{\prime\prime},\mca{G}^{\prime\prime})$. Let 
\begin{equation*}
\begin{tikzcd}[row sep=tiny]
&N^\prime\rtimes_{\rho^\prime}\mca{G}^\prime\ar[dd,"\psi"]\\
N\rtimes_\rho\mca{G}\ar[ru,"\varphi"]\ar[rd,"\varphi^\prime"']\\
&N^{\prime\prime}\rtimes_{\rho^{\prime\prime}}\mca{G}^{\prime\prime}. 
\end{tikzcd}
\end{equation*}
If: 
\begin{itemize}
\setlength\itemsep{0em}
\item $\varphi$, $\varphi^\prime$ are isomorphisms, $\psi=F_\psi\rtimes\Psi$ is a semiconjugacy
\item $\psi\varphi\sim_{bnt}\varphi^\prime$ with respect to $\mca{B}_{N^{\prime\prime}\rtimes_{\rho^{\prime\prime}}\mca{G}^{\prime\prime}}$
\item $F_\Psi\colon\nu^\prime(N^\prime)\to M^{\prime\prime}$ is injective
\item $F_\Psi\colon\ul{O^\prime}\to M^{\prime\prime}$ is an immersion for any $\ul{O^\prime}\in\mca{O}_{\mca{G}^\prime}$ such that $\ul{O^\prime}\subset\nu^\prime(N^\prime)$
\item $\nu^{\prime\prime}$ is a surjective submersion, 
\end{itemize}
then $\psi$ is a conjugacy. 
\end{cor}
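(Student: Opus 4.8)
The plan is to reduce the assertion to the Semiconjugacy-to-Conjugacy Theorem for $bnt$ (Theorem \ref{ctop}) by precomposing with $\varphi^{-1}$, so that the semiconjugacy $\psi$ and a suitable isomorphism share the common source $N^\prime\rtimes_{\rho^\prime}\mca{G}^\prime$. Note first that the displayed triangle is not asserted to commute: the only link between the three morphisms is the hypothesis $\psi\varphi\sim_{bnt}\varphi^\prime$. Since $\varphi$ and $\varphi^\prime$ are isomorphisms, $\varphi^{-1}$ is again a morphism by Proposition \ref{isodiff}, and the composite
\[
\chi=\varphi^\prime\varphi^{-1}\colon N^\prime\rtimes_{\rho^\prime}\mca{G}^\prime\to N^{\prime\prime}\rtimes_{\rho^{\prime\prime}}\mca{G}^{\prime\prime}
\]
is itself an isomorphism, being a composite of diffeomorphisms.

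Next I would transport the bounded $nt$-homotopy. Both $\psi\varphi$ and $\varphi^\prime$ are morphisms $N\rtimes_\rho\mca{G}\to N^{\prime\prime}\rtimes_{\rho^{\prime\prime}}\mca{G}^{\prime\prime}$, and by hypothesis $\psi\varphi\sim_{bnt}\varphi^\prime$ with respect to $\mca{B}_{N^{\prime\prime}\rtimes_{\rho^{\prime\prime}}\mca{G}^{\prime\prime}}$. Precomposing with $\varphi^{-1}$ and invoking part $1$ of Lemma \ref{simsim2} (which requires no boundedness of the precomposed morphism and preserves the bornology on the common target) gives
\[
\psi=(\psi\varphi)\varphi^{-1}\sim_{bnt}\varphi^\prime\varphi^{-1}=\chi
\]
with respect to the same bornology $\mca{B}_{N^{\prime\prime}\rtimes_{\rho^{\prime\prime}}\mca{G}^{\prime\prime}}$. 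Since $\sim_{bnt}$ is symmetric, this yields $\chi\sim_{bnt}\psi$.

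Finally I would apply Theorem \ref{ctop} to the pair $(\chi,\psi)$, taking $\chi$ in the role of the isomorphism and $\psi=F_\psi\rtimes\Psi$ in the role of the semiconjugacy, with source data $(\mca{G}^\prime,M^\prime,N^\prime,\nu^\prime,\rho^\prime)$ and target data $(\mca{G}^{\prime\prime},M^{\prime\prime},N^{\prime\prime},\nu^{\prime\prime},\rho^{\prime\prime})$. The four remaining hypotheses of that theorem are exactly the remaining hypotheses of the corollary: $\chi\sim_{bnt}\psi$ with respect to $\mca{B}_{N^{\prime\prime}\rtimes_{\rho^{\prime\prime}}\mca{G}^{\prime\prime}}$ (just established); $F_\Psi\colon\nu^\prime(N^\prime)\to M^{\prime\prime}$ injective; $F_\Psi\colon\ul{O^\prime}\to M^{\prime\prime}$ an immersion for every $\ul{O^\prime}\in\mca{O}_{\mca{G}^\prime}$ with $\ul{O^\prime}\subset\nu^\prime(N^\prime)$; and $\nu^{\prime\prime}$ a surjective submersion. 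The theorem then concludes that $\psi$ is a conjugacy, which is precisely the assertion.

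The one genuinely delicate point, and the step I would verify most carefully, is the bornology bookkeeping in the middle paragraph: one must confirm both that the bounded $nt$-homotopy between $\psi\varphi$ and $\varphi^\prime$ lies in $\mca{B}_{N^{\prime\prime}\rtimes_{\rho^{\prime\prime}}\mca{G}^{\prime\prime}}$ and that precomposition by $\varphi^{-1}$ does not enlarge its image. This is exactly the content of Lemma \ref{simsim2}(1): the precomposed $nt$-homotopy has the form $(y,t)\mapsto Q(F_{\varphi^{-1}}(y),t)$, whose image is contained in that of the original homotopy $Q$ and hence remains bounded. Everything else in the argument is a direct citation, so I expect no further obstacle.
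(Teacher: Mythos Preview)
Your proof is correct and follows essentially the same route as the paper's: precompose with $\varphi^{-1}$ using Lemma \ref{simsim2}(1) to obtain $\varphi'\varphi^{-1}\sim_{bnt}\psi$, then apply Theorem \ref{ctop} to the pair $(\varphi'\varphi^{-1},\psi)$. Your additional commentary on the bornology bookkeeping is accurate and simply spells out what the paper leaves implicit.
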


\begin{proof}
We have $\varphi^\prime\varphi^{-1}\sim_{bnt}\psi$ by Lemma \ref{simsim2}. Apply Theorem \ref{ctop} to see that $\psi$ is a conjugacy. 
\end{proof}

\subsection{Proof of Theorem \ref{ctop}}\label{prth176}
Put $\mca{H}=N\rtimes_\rho\mca{G}$, $\mca{H}^\prime=N^\prime\rtimes_{\rho^\prime}\mca{G}^\prime$ and $F=F_\varphi$, $F^\prime=F_{\varphi^\prime}\colon N\to N^\prime$. Then $F$ is a diffeomorphism. There exists a continuous map $P\colon N\times I\to\mca{H}^\prime$ such that: 
\begin{itemize}
\setlength\itemsep{0em}
\item $P(\cdot,t)\in C^\infty(N,\mca{H}^\prime)$ for all $t\in I$
\item $t^\prime P(\cdot,t)=F$ for all $t\in I$
\item $P(\cdot,0)=1_F$
\item $\varphi P(\cdot,1)=\varphi^\prime$
\item $B:=P(N\times I)\in\mca{B}_{\mca{H}^\prime}$. 
\end{itemize}

\begin{claim}
The map 
\begin{equation*}
H:=s^\prime P\colon N\times I\to N^\prime
\end{equation*}
is a proper continuous map. 
\end{claim}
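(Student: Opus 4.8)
The plan is to verify continuity, which is immediate since $H=s^\prime P$ is a composition of continuous maps, and then to establish properness by a direct trapping argument. Fix a compact subset $K^\prime\subset N^\prime$; I want to show $H^{-1}(K^\prime)$ is compact in $N\times I$. Since $N^\prime$ is Hausdorff, $K^\prime$ is closed, so $H^{-1}(K^\prime)$ is closed in $N\times I$ by continuity of $H$. Hence it suffices to exhibit a compact subset of $N\times I$ containing $H^{-1}(K^\prime)$.

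The key input is boundedness of the homotopy: $B=P(N\times I)\in\mca{B}_{\mca{H}^\prime}$, the locally compact bornology on $\mca{H}^\prime$ (which is defined since the base $N^\prime$ of $\mca{H}^\prime$ is Hausdorff). Applying the second description of $\mca{B}_{\mca{H}^\prime}$ in Proposition \ref{loccombor} to the compact set $K^\prime\subset N^\prime$, I obtain a compact subset $K^{\prime\prime}\subset\mca{H}^\prime$ with $B\cap(s^\prime)^{-1}(K^\prime)\subset K^{\prime\prime}$. Now if $(y,t)\in H^{-1}(K^\prime)$, then $P(y,t)\in B$ and $s^\prime P(y,t)=H(y,t)\in K^\prime$, so $P(y,t)\in B\cap(s^\prime)^{-1}(K^\prime)\subset K^{\prime\prime}$; moreover $F(y)=t^\prime P(y,t)\in t^\prime(K^{\prime\prime})$, hence $y\in F^{-1}(t^\prime(K^{\prime\prime}))$. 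Therefore $H^{-1}(K^\prime)\subset F^{-1}(t^\prime(K^{\prime\prime}))\times I$.

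Finally, $t^\prime(K^{\prime\prime})$ is compact in $N^\prime$, and since $F=F_\varphi$ is a diffeomorphism, $F^{-1}(t^\prime(K^{\prime\prime}))$ is compact in $N$; thus $F^{-1}(t^\prime(K^{\prime\prime}))\times I$ is compact, and $H^{-1}(K^\prime)$, being a closed subset of it, is compact. This shows $H$ is proper. There is no real obstacle here: the only ingredients are the boundedness hypothesis (so that the bornology condition applies to $B=P(N\times I)$), the compatibility $t^\prime P(\cdot,t)=F$, and the fact that $F$ is a homeomorphism; the argument just combines these, and the only point requiring a little care is to invoke the correct (source-map) half of the bornology condition in Proposition \ref{loccombor} together with the identity $t^\prime P(\cdot,t)=F$ to control the $N$-coordinate as well.
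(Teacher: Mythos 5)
Your proof is correct and follows essentially the same route as the paper: trap $P(y,t)$ in $B\cap(s^\prime)^{-1}(K^\prime)$ using the bornology condition of Proposition \ref{loccombor}, then use $t^\prime P(\cdot,t)=F$ and the compactness of $F^{-1}(t^\prime(K^{\prime\prime}))\times I$ to conclude. The only difference is that you spell out the closedness of $H^{-1}(K^\prime)$, which the paper leaves implicit.
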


\begin{proof}
The map $H$ is continuous. Let $K\subset N^\prime$ be any compact subset. Then there exists a compact subset $K^\prime\subset\mca{H}^\prime$ such that $B\cap(s^\prime)^{-1}(K)\subset K^\prime$ (see Proposition \ref{loccombor}). 

\begin{claim}
$H^{-1}(K)\subset F^{-1}( t^\prime(K^\prime))\times I$. 
\end{claim}

\begin{proof}
Let $(y,t)\in H^{-1}(K)$, ie $s^\prime P(y,t)\in K$. Then $P(y,t)\in B\cap(s^\prime)^{-1}(K)\subset K^\prime$. Hence $F(y)\in t^\prime(K^\prime)$, so $y\in F^{-1}( t^\prime(K^\prime))$. 
\end{proof}

Since $F^{-1}( t^\prime(K^\prime))\times I$ is compact, $H^{-1}(K)$ is compact. 
\end{proof}

We have $H(\cdot,0)=F$, which is a diffeomorphism, and $H(\cdot,1)=F^\prime$. 

\begin{claim}
$F^\prime\colon N\to N^\prime$ is surjective. 
\end{claim}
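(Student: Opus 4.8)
The plan is to run a mapping-degree argument, working one connected component of $N'$ at a time and passing to orientation double covers when a component is non-orientable.

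First I would reduce to the connected case. Since $F = H(\cdot,0)$ is a diffeomorphism, it induces a bijection between the connected components of $N$ and those of $N'$; fix a component $N'_0$ of $N'$ and set $N_0 = F^{-1}(N'_0)$, which is connected and has the same dimension $n$ as $N'_0$. The image $H(N_0\times I)$ is connected and contains $H(N_0\times\{0\}) = F(N_0) = N'_0$, so it lies in the clopen set $N'_0$; in particular $F'(N_0)\subset N'_0$. The restriction $H_0 := H|_{N_0\times I}\colon N_0\times I\to N'_0$ is still proper (for compact $K\subset N'_0$, $H_0^{-1}(K) = H^{-1}(K)\cap(N_0\times I)$ is a closed subset of the compact set $H^{-1}(K)$), and it is a proper $C^0$ homotopy between the diffeomorphism $F|_{N_0}$ and $F'|_{N_0}$. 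Since $N'$ is the union of its components, it suffices to show each $F'|_{N_0}$ is surjective onto $N'_0$.

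If $N'_0$ is orientable, I would fix an orientation on it and transport it to $N_0$ via $F|_{N_0}$; then by Lemma \ref{conorihod} the maps $F|_{N_0}$ and $F'|_{N_0}$ are properly $C^\infty$ homotopic, so Proposition \ref{degdiffp} gives $\deg(F'|_{N_0}) = \deg(F|_{N_0}) = \pm1 \neq 0$, whence $F'|_{N_0}$ is surjective. If $N'_0$ is not orientable, then neither is $N_0$, and I would pass to the (connected) orientation double covers $p\colon\widehat{N_0}\to N_0$ and $p'\colon\widehat{N'_0}\to N'_0$. The diffeomorphism $F|_{N_0}$ lifts to a diffeomorphism $\widehat F\colon\widehat{N_0}\to\widehat{N'_0}$; because $p_*\pi_1\widehat{N_0}$ and $p'_*\pi_1\widehat{N'_0}$ are the (normal) kernels of the orientation characters and $F|_{N_0\,*}$ carries the first onto the second, the lifting criterion shows that $H_0$, and hence $F'|_{N_0}$, lift to maps $\widehat H\colon\widehat{N_0}\times I\to\widehat{N'_0}$ and $\widehat{F'}\colon\widehat{N_0}\to\widehat{N'_0}$ with $\widehat H(\cdot,0)=\widehat F$ and $\widehat H(\cdot,1)=\widehat{F'}$. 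Since $p'$ is a finite covering, preimages of compact sets under $p'$ are compact, and $\widehat H$ is then seen to be proper by the same closed-subset argument as above. Orienting the double covers compatibly, Lemma \ref{conorihod} and Proposition \ref{degdiffp} yield $\deg\widehat{F'} = \deg\widehat F = \pm1\neq0$, so $\widehat{F'}$ is surjective; hence $F'|_{N_0}\circ p = p'\circ\widehat{F'}$ is surjective, and so is $F'|_{N_0}$. Taking the union over all components of $N'$ finishes the claim.

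I expect the non-orientable case to be the main obstacle: one must track orientation characters together with fundamental groups to guarantee that the \emph{free} homotopy $H_0$ (and the map $F'|_{N_0}$) genuinely lift to the orientation double covers, and then verify that the lifted homotopy $\widehat H$ is again proper, so that the degree invariance of Proposition \ref{degdiffp} and the $C^0$-to-$C^\infty$ promotion of Lemma \ref{conorihod} can be applied. The reduction to components and the properness bookkeeping are routine by comparison.
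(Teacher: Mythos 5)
Your proposal is correct and follows essentially the same route as the paper: reduce to connected components, pass to orientation double covers in the non-orientable case, lift the proper homotopy, and conclude via Lemma \ref{conorihod} and the degree argument of Proposition \ref{degdiffp}. The only cosmetic difference is that you invoke the lifting criterion for the whole homotopy, whereas the paper first lifts the diffeomorphism $H(\cdot,0)$ by its differential and then applies the homotopy lifting property; the substance is the same.
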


\begin{proof}
Let $N_0$ be a connected component of $N$. Then $N^\prime_0=H(N_0,0)$ is a connected component of $N^\prime$. The map $H\colon N_0\times I\to N^\prime_0$ is proper. $N_0$, $N^\prime_0$ are diffeomorphic connected $C^\infty$ manifolds. Assume that $N_0$ is not orientable. Let $\wt{N_0}\to N_0$, $\wt{N^\prime_0}\to N^\prime_0$ be the orientation coverings. Since $H(\cdot,0)$ is a diffeomorphism, it induces a diffeomorphism $\wt{H(\cdot,0)}\colon\wt{N_0}\to\wt{N^\prime_0}$ such that 
\begin{equation*}
\begin{tikzcd}
\wt{N_0}\ar[r,"\wt{H(\cdot,0)}"]\ar[d]\ar[rd,phantom,"\circlearrowright"]&\wt{N^\prime_0}\ar[d]\\
N_0\ar[r,"H(\cdot{,}0)"']&N^\prime_0
\end{tikzcd}
\end{equation*}
by taking differentials (The total spaces of orientation coverings consist of the orientations of the tangent space at each point). There exists a unique continuous map $\wt{H}\colon\wt{N_0}\times I\to\wt{N^\prime_0}$ such that $\wt{H}(\cdot,0)=\wt{H(\cdot,0)}$ and 
\begin{equation}\label{nininn}
\begin{tikzcd}
\wt{N_0}\times I\ar[r,"\wt{H}"]\ar[d]\ar[rd,phantom,"\circlearrowright"]&\wt{N^\prime_0}\ar[d]\\
N_0\times I\ar[r,"H"']&N^\prime_0
\end{tikzcd}
\end{equation}
since $\wt{N^\prime_0}\to N^\prime_0$ is a covering space. Then $\wt{H}$ is proper since $H$ and the left vertical map in \eqref{nininn} are proper. Now $\wt{N_0}$, $\wt{N^\prime_0}$ are connected oriented $C^\infty$ manifolds and $\wt{H}(\cdot,0)$, $\wt{H}(\cdot,1)\colon\wt{N_0}\to\wt{N^\prime_0}$ are properly $C^0$ homotopic, where $\wt{H}(\cdot,0)$ is a diffeomorphism. Hence the degree of $\wt{H}(\cdot,1)$ is $\pm1$ by Proposition \ref{degdiffp} and Lemma \ref{conorihod}. So $\wt{H}(\cdot,1)$ is surjective, otherwise the degree is $0$ by Proposition \ref{degdiffp}. Thus $H(\cdot,1)\colon N_0\to N^\prime_0$ is surjective since we have 
\begin{equation*}
\begin{tikzcd}
\wt{N_0}\ar[r,"\wt{H(\cdot,1)}"]\ar[d]\ar[rd,phantom,"\circlearrowright"]&\wt{N^\prime_0}\ar[d]\\
N_0\ar[r,"H(\cdot{,}1)"']&N^\prime_0. 
\end{tikzcd}
\end{equation*}
If $N_0$ is orientable, we can skip most of the above argument. 

Therefore $F^\prime=H(\cdot,1)\colon N\to N^\prime$ is surjective. 
\end{proof}

(It follows that $F_\Phi\colon M\to M^\prime$ is surjective, but we will not use this.) 

\begin{claim}
For any $y\in N$, 
\begin{align*}
\eta\colon\mca{H}^y\times I&\to(\mca{H}^\prime)^{F^\prime(y)}\\
((y,g),t)&\mapsto P(y,1)^{-1}\varphi(y,g)P(\rho(y,g),t)
\end{align*}
is a proper continuous map. 
\end{claim}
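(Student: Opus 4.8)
The plan is to establish three things about $\eta$ in turn: that it is well defined (the displayed product is composable and has target $F^\prime(y)$), that it is continuous, and that it is proper; the boundedness hypothesis on $P$ enters only in the last point.

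\emph{Well-definedness and continuity.} Fix $(y,g)\in\mca{H}^y$, so that $t(y,g)=y$ and $s(y,g)=\rho(y,g)$, and fix $t\in I$. Since $\varphi$ is a morphism with base map $F$, I would read off $t^\prime\varphi(y,g)=F(y)$ and $s^\prime\varphi(y,g)=F(\rho(y,g))$; from $t^\prime P(\cdot,t)=F$ and $s^\prime P=H$ that $P(y,1)$ has target $F(y)$ and source $F^\prime(y)$, while $P(\rho(y,g),t)$ has target $F(\rho(y,g))$. Hence $P(y,1)^{-1}$, $\varphi(y,g)$, $P(\rho(y,g),t)$ compose in this order, and the product lies in $(t^\prime)^{-1}(F^\prime(y))=(\mca{H}^\prime)^{F^\prime(y)}$, giving well-definedness. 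Continuity is then formal: the assignment $((y,g),t)\mapsto\bigl(P(y,1)^{-1},\varphi(y,g),P(\rho(y,g),t)\bigr)$ is continuous into the manifold of composable triples of $\mca{H}^\prime$ (it is built from $P$, $\varphi$, the source map of $\mca{H}$, and inversion), so composing with multiplication yields continuity of $\eta$.

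\emph{Properness.} Given a compact $K\subset(\mca{H}^\prime)^{F^\prime(y)}$, I would set $p_0:=P(y,1)$ and $L:=p_0K$, a compact subset of $(\mca{H}^\prime)^{F(y)}$ — legitimate since $s^\prime(p_0)=F^\prime(y)$ matches the common target of the elements of $K$, and $L$ is the image of $\{p_0\}\times K$ under multiplication. If $((y,g),t)\in\eta^{-1}(K)$, then $\varphi(y,g)P(\rho(y,g),t)\in L$, whence
\begin{equation*}
\varphi(y,g)=\bigl[\varphi(y,g)P(\rho(y,g),t)\bigr]\,P(\rho(y,g),t)^{-1}\in LB^{-1},
\end{equation*}
using $P(\rho(y,g),t)\in B$. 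Since the homotopy is bounded, $B\in\mca{B}_{\mca{H}^\prime}$, so $B^{-1}\in\mca{B}_{\mca{H}^\prime}$, and the defining property of the locally compact bornology applied to the compact set $L$ produces a compact $K^\prime\subset\mca{H}^\prime$ with $LB^{-1}\subset K^\prime$. As $(\mca{H}^\prime)^{F(y)}$ is a closed submanifold of $\mca{H}^\prime$, the set $K^\prime\cap(\mca{H}^\prime)^{F(y)}$ is compact, and since $\varphi$ is an isomorphism the restriction $\varphi\colon\mca{H}^y\to(\mca{H}^\prime)^{F(y)}$ is a diffeomorphism; thus $C:=\varphi^{-1}\!\bigl(K^\prime\cap(\mca{H}^\prime)^{F(y)}\bigr)$ is compact. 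This gives $\eta^{-1}(K)\subset C\times I$, and since $\eta$ is continuous and $(\mca{H}^\prime)^{F^\prime(y)}$ is Hausdorff, $\eta^{-1}(K)$ is closed in $C\times I$, hence compact.

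The only genuine obstacle I anticipate is the bornological estimate: one must use that $L$ is compact while $B$ is merely bounded so that $LB^{-1}$ still lands in a compact set — this is exactly the content of $B\in\mca{B}_{\mca{H}^\prime}$, and it depends essentially on the boundedness hypothesis $\varphi\sim_{bnt}\varphi^\prime$ together with the fact that target fibers of $\mca{H}^\prime$ are closed. Everything else is bookkeeping with the structure maps.
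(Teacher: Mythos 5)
Your argument is correct and is essentially the paper's own proof: both reduce properness to the bornological estimate that a compact set times $B^{-1}$ lies in a compact set (since $B\in\mca{B}_{\mca{H}^\prime}$ implies $B^{-1}\in\mca{B}_{\mca{H}^\prime}$), then pull back along $\varphi$ restricted to the target fiber and close off with the Hausdorffness of $(\mca{H}^\prime)^{F^\prime(y)}$. The only difference is cosmetic — you multiply by $P(y,1)$ before applying the bornology, the paper after — plus your added (and harmless) verification of well-definedness and continuity.
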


\begin{proof}
Let $K$ be a compact subset of $(\mca{H}^\prime)^{F^\prime(y)}$. Recall $B=P(N\times I)\in\mca{B}_{\mca{H}^\prime}$. Since $B^{-1}\in\mca{B}_{\mca{H}^\prime}$, there exists a compact subset $K^\prime$ of $\mca{H}^\prime$ such that $KB^{-1}\subset K^\prime$ (see Proposition \ref{loccombor}). Put $K^{\prime\prime}=P(y,1)K^\prime\subset(\mca{H}^\prime)^{F(y)}$. It is compact by the first Claim in the proof of Proposition \ref{loccombor}. So $\varphi^{-1}(K^{\prime\prime})\subset\mca{H}^y$ is compact as well. 

\begin{claim}
$\eta^{-1}(K)\subset\varphi^{-1}(K^{\prime\prime})\times I$. 
\end{claim}

\begin{proof}
Let $((y,g),t)\in\eta^{-1}(K)$, ie $P(y,1)^{-1}\varphi(y,g)P(\rho(y,g),t)\in K$. Then 
\begin{equation*}
\varphi(y,g)\in P(y,1)KB^{-1}\subset P(y,1)K^\prime=K^{\prime\prime}. 
\end{equation*}
Hence $(y,g)\in\varphi^{-1}(K^{\prime\prime})$. 
\end{proof}

The compact subset $K$ is closed in $(\mca{H}^\prime)^{F^\prime(y)}$ since $(\mca{H}^\prime)^{F^\prime(y)}$ is Hausdorff. Hence $\eta^{-1}(K)$ is closed in $\mca{H}^y\times I$. Since $\varphi^{-1}(K^{\prime\prime})\times I$ is compact, $\eta^{-1}(K)$ is compact. 
\end{proof}

For any $y\in N$, $\eta(\cdot,0)=P(y,1)^{-1}\varphi\colon\mca{H}^y\to(\mca{H}^\prime)^{F^\prime(y)}$ is a diffeomorphism and $\eta(\cdot,1)=\varphi^\prime\colon\mca{H}^y\to(\mca{H}^\prime)^{F^\prime(y)}$. 

\begin{claim}
$\varphi^\prime\colon\mca{H}\to\mca{H}^\prime$ is surjective. 
\end{claim}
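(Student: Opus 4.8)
The plan is to reduce the surjectivity of $\varphi^\prime$ on $\mca{H}$ to its surjectivity on each target fiber, and then to rerun the degree-theoretic argument already used for $F^\prime$, this time with the homotopy $\eta$ in place of $H$. Since $\mca{H}^\prime=\bigcup_{y^\prime\in N^\prime}(\mca{H}^\prime)^{y^\prime}$ and $F^\prime\colon N\to N^\prime$ is surjective (the previous claim), every target fiber of $\mca{H}^\prime$ equals $(\mca{H}^\prime)^{F^\prime(y)}$ for some $y\in N$; as $\varphi^\prime$ is a morphism with base map $F^\prime$ it sends $\mca{H}^y$ into $(\mca{H}^\prime)^{F^\prime(y)}$, so it suffices to show $\varphi^\prime\colon\mca{H}^y\to(\mca{H}^\prime)^{F^\prime(y)}$ is surjective for every $y\in N$. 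This map is exactly $\eta(\cdot,1)$, while $\eta(\cdot,0)=P(y,1)^{-1}\varphi|_{\mca{H}^y}$ is a $C^\infty$ diffeomorphism and $\eta\colon\mca{H}^y\times I\to(\mca{H}^\prime)^{F^\prime(y)}$ is a proper continuous map (established above).

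Fix $y$. First I would record that $\mca{H}^y\cong\mca{G}^{\nu(y)}$ and $(\mca{H}^\prime)^{F^\prime(y)}\cong(\mca{G}^\prime)^{\nu^\prime F^\prime(y)}$ are Hausdorff (definition of Lie groupoid) and paracompact (Lemma \ref{tarpara}). Decompose $\mca{H}^y$ into its connected components. For a component $C$, the set $C^\prime:=\eta(C\times\{0\})$ is a connected component of $(\mca{H}^\prime)^{F^\prime(y)}$ because $\eta(\cdot,0)$ is a diffeomorphism, and $\eta(C\times I)$ is connected, hence contained in $C^\prime$. Since $C\times I$ is a union of connected components of $\mca{H}^y\times I$ and $C^\prime$ is clopen in $(\mca{H}^\prime)^{F^\prime(y)}$, the restriction $\eta\colon C\times I\to C^\prime$ is again proper, and $C$, $C^\prime$ are connected paracompact Hausdorff $C^\infty$ manifolds, diffeomorphic via $\eta(\cdot,0)$ and in particular of equal dimension. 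Now I would run the orientability argument exactly as in the proof that $F^\prime$ is surjective: if $C$ is orientable then so is $C^\prime$ (transport the orientation along $\eta(\cdot,0)$) and one uses the degree directly; otherwise pass to the orientation coverings $\wt{C}\to C$, $\wt{C^\prime}\to C^\prime$, lift $\eta(\cdot,0)$ by differentials to a diffeomorphism, lift the full homotopy to a proper continuous $\wt\eta\colon\wt{C}\times I\to\wt{C^\prime}$ by the covering-space lifting property (properness of $\wt\eta$ follows from that of $\eta$ and of $\wt{C^\prime}\to C^\prime$). Then $\wt\eta(\cdot,0)$ is a diffeomorphism and $\wt\eta(\cdot,1)$ is properly $C^0$-homotopic to it, hence properly $C^\infty$-homotopic by Lemma \ref{conorihod}; by Proposition \ref{degdiffp} the degree of $\wt\eta(\cdot,1)$ equals $\deg\wt\eta(\cdot,0)=\pm1\neq0$, so $\wt\eta(\cdot,1)$ is surjective, and therefore $\eta(\cdot,1)\colon C\to C^\prime$ is surjective. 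Letting $C$ range over the components of $\mca{H}^y$ gives surjectivity of $\varphi^\prime\colon\mca{H}^y\to(\mca{H}^\prime)^{F^\prime(y)}$, and letting $y$ range over $N$ gives the claim.

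The main analytic obstacles are already cleared: properness of $\eta$ (the preceding claim), paracompactness of target fibers (Lemma \ref{tarpara}), and the degree machinery for proper maps (Proposition \ref{degdiffp}, Lemma \ref{conorihod}). The only point requiring a little care is the bookkeeping of connected components and clopen restrictions, so that properness is preserved when one passes from $\mca{H}^y\times I$ to $C\times I$ and then to the orientation covers; I do not expect a serious difficulty there. Conceptually the whole argument is just the fiberwise version of the statement that a map properly homotopic to a diffeomorphism between connected manifolds of the same dimension has degree $\pm1$, hence is onto.
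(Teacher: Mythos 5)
Your proof is correct and follows essentially the same route as the paper: reduce to surjectivity of $\varphi^\prime\colon\mca{H}^y\to(\mca{H}^\prime)^{F^\prime(y)}$ for each $y$, pass to connected components (using paracompactness from Lemma \ref{tarpara} and properness of the restricted homotopy), and rerun the proper-homotopy degree argument via orientation covers, Lemma \ref{conorihod} and Proposition \ref{degdiffp} — which is precisely what the paper abbreviates as ``the same argument as before''. One cosmetic point: properness of the lifted homotopy $\wt{\eta}$ is most directly deduced from properness of $\eta$ together with properness of the domain-side double cover (writing $\pi\colon\wt{C}\times I\to C\times I$ and $q\colon\wt{C^\prime}\to C^\prime$ for the covering projections, $\wt{\eta}^{-1}(K)$ is a closed subset of the compact set $\pi^{-1}(\eta^{-1}(q(K)))$), rather than from the target-side cover as you wrote, though both covers are of course proper.
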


\begin{proof}
Since $F^\prime\colon N\to N^\prime$ is surjective, it suffices to prove that $\varphi^\prime\colon\mca{H}^y\to(\mca{H}^\prime)^{F^\prime(y)}$ is surjective for all $y\in N$. 

Let $y\in N$ and put $y^\prime=F^\prime(y)\in N^\prime$. Let $\mca{H}^y_0$ be a connected component of $\mca{H}^y$. Then $(\mca{H}^\prime)^{y^\prime}_0:=\eta(\mca{H}^y_0,0)$ is a connected component of $(\mca{H}^\prime)^{y^\prime}$. Since $\mca{H}^y$ is paracompact by Lemma \ref{tarpara} and $\mca{H}^y_0$ is closed in $\mca{H}^y$, $\mca{H}^y_0$ is paracompact. So $\mca{H}^y_0$, $(\mca{H}^\prime)^{y^\prime}_0$ are connected Hausdorff paracompact $C^\infty$ manifolds and $\eta\colon\mca{H}^y_0\times I\to(\mca{H}^\prime)^{y^\prime}_0$ is a proper $C^0$ homotopy, where $\eta(\cdot,0)$ is a diffeomorphism. The same argument as before shows $\eta(\cdot,1)\colon\mca{H}^y_0\to(\mca{H}^\prime)^{y^\prime}_0$ is surjective. Hence $\varphi^\prime=\eta(\cdot,1)\colon\mca{H}^y\to(\mca{H}^\prime)^{y^\prime}$ is surjective. 
\end{proof}

(It follows that $\Phi\colon\mca{G}\to\mca{G}^\prime$ is surjective, but we will not use this.) 

\begin{rem}
Since $\mca{H}$, $\mca{H}^\prime$ may not be second countable or Hausdorff, it is not clear whether a similar argument can be applied to 
\begin{align*}
\mca{H}\times I&\to\mca{H}^\prime\\
((y,g),t)&\mapsto P(y,t)^{-1}\varphi(y,g)P(\rho(y,g),t)
\end{align*}
to show directly that $\varphi^\prime$ is surjective. 
\end{rem}

Let $O\in\mca{O}_\rho$ and put $O^\prime=F(O)\in\mca{O}_{\rho^\prime}$. Then $F\colon O\to O^\prime$ is a diffeomorphism by Lemma \ref{oosmhs}. 
The map 
\begin{align*}
\mca{O}_\rho&\to\mca{O}_{\rho^\prime}\\
O&\mapsto F(O)
\end{align*}
is bijective. It follows that $F^\prime\colon O\to O^\prime$ is surjective since $F^\prime\colon N\to N^\prime$ is surjective. Since $\nu\colon N\to M$, $\nu^\prime\colon N^\prime\to M^\prime$ are the base maps of $\varphi_\rho\colon\mca{H}\to\mca{G}$, $\varphi_{\rho^\prime}\colon\mca{H}^\prime\to\mca{G}^\prime$, there exist unique $\ul{O}\in\mca{O}_\mca{G}$, $\ul{O^\prime}\in\mca{O}_{\mca{G}^\prime}$ such that $\nu(O)\subset\ul{O}$, $\nu^\prime(O^\prime)\subset\ul{O^\prime}$. By 
\begin{equation*}
\begin{tikzcd}
N\ar[r,"F^\prime"]\ar[d,"\nu"']\ar[rd,phantom,"\circlearrowright"]&N^\prime\ar[d,"\nu^\prime"]\\
M\ar[r,"F_\Phi"']&M^\prime, 
\end{tikzcd}
\end{equation*}
we have $F_\Phi(\ul{O})\subset\ul{O^\prime}$ and 
\begin{equation*}
\begin{tikzcd}
O\ar[r,"F^\prime"]\ar[d,"\nu"']\ar[rd,phantom,"\circlearrowright"]&O^\prime\ar[d,"\nu^\prime"]\\
\ul{O}\ar[r,"F_\Phi"']&\ul{O^\prime}. 
\end{tikzcd}
\end{equation*}
By Lemma \ref{oosmhs}, $F_\Phi\colon\ul{O}\to\ul{O^\prime}$ is $C^\infty$. By Lemma \ref{onuonud}, $\nu\colon O\to\ul{O}$, $\nu^\prime\colon O^\prime\to\ul{O^\prime}$ are surjective submersions, and $O\cap\nu^{-1}(x)$, $O^\prime\cap(\nu^\prime)^{-1}(x^\prime)$ for any $x\in\ul{O}$, $x^\prime\in\ul{O^\prime}$ are embedded submanifolds of $O$, $O^\prime$. For each $x\in\ul{O}$, we obtain a $C^\infty$ map $F^\prime\colon O\cap\nu^{-1}(x)\to O^\prime\cap(\nu^\prime)^{-1}(F_\Phi(x))$. 

\begin{claim}
There exists a regular point $y_0\in O$ of $F^\prime\colon O\to O^\prime$. 
\end{claim}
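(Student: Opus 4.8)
The plan is to reduce the statement to the target fibers $\mca{H}^y$, $(\mca{H}^\prime)^{y^\prime}$, where the relevant maps have already been shown to be surjective and where paracompactness is available, apply Sard's theorem there, and transport the regular point down along the source maps $s$, $s^\prime$.

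First I would fix $y\in O$ and put $y^\prime:=F^\prime(y)\in O^\prime$ (recall $F^\prime\colon O\to O^\prime$ is surjective). Since $\varphi^\prime=F^\prime\rtimes\Phi$ is a morphism with base map $F^\prime$, it restricts to $\varphi^\prime\colon\mca{H}^y\to(\mca{H}^\prime)^{y^\prime}$ and fits into the commutative square
\begin{equation*}
\begin{tikzcd}
\mca{H}^y\ar[r,"\varphi^\prime"]\ar[d,"s"']\ar[rd,phantom,"\circlearrowright"]&(\mca{H}^\prime)^{y^\prime}\ar[d,"s^\prime"]\\
O\ar[r,"F^\prime"']&O^\prime,
\end{tikzcd}
\end{equation*}
where $s\colon\mca{H}^y\to O$ and $s^\prime\colon(\mca{H}^\prime)^{y^\prime}\to O^\prime$ are principal bundles, hence surjective submersions, by Proposition \ref{isoorb}; and the diffeomorphism $\eta(\cdot,0)=P(y,1)^{-1}\varphi\colon\mca{H}^y\to(\mca{H}^\prime)^{y^\prime}$ exhibited above gives $\dim\mca{H}^y=\dim(\mca{H}^\prime)^{y^\prime}$. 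Thus it suffices to produce a regular point $h_0$ of $\varphi^\prime\colon\mca{H}^y\to(\mca{H}^\prime)^{y^\prime}$: for $y_0:=s(h_0)\in O$ we have $(F^\prime)_*\circ s_*=s^\prime_*\circ(\varphi^\prime)_*$ at $h_0$, and since $s_*$, $s^\prime_*$ are surjective and $(\varphi^\prime)_*$ is surjective at $h_0$, the differential $(F^\prime)_*$ is surjective at $y_0$, so $y_0$ is a regular point of $F^\prime\colon O\to O^\prime$.

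To produce $h_0$ I would pass to a connected component $\mca{H}^y_0$ of $\mca{H}^y$ and set $(\mca{H}^\prime)^{y^\prime}_0:=\eta(\mca{H}^y_0,0)$, which is a connected component of $(\mca{H}^\prime)^{y^\prime}$ of the same dimension and into which $\varphi^\prime$ maps $\mca{H}^y_0$ by the connectedness argument already used above; moreover $\varphi^\prime\colon\mca{H}^y_0\to(\mca{H}^\prime)^{y^\prime}_0$ is surjective, as was shown in the proof of surjectivity of $\varphi^\prime$. By Lemma \ref{tarpara} the fiber $\mca{H}^y$ is paracompact, so its clopen subset $\mca{H}^y_0$ is a connected paracompact Hausdorff manifold, hence second countable (and likewise $(\mca{H}^\prime)^{y^\prime}_0$). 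If $\dim\mca{H}^y_0=0$ every point is trivially a regular point of $\varphi^\prime$; otherwise $(\mca{H}^\prime)^{y^\prime}_0$ is not of measure zero, whereas by Sard's theorem, now applicable since $\mca{H}^y_0$ is second countable, the set of critical values of $\varphi^\prime\colon\mca{H}^y_0\to(\mca{H}^\prime)^{y^\prime}_0$ has measure zero; so a regular value exists and, by surjectivity, any preimage $h_0\in\mca{H}^y_0$ of it is a regular point of $\varphi^\prime$. Since $\mca{H}^y_0$ is open in $\mca{H}^y$, $h_0$ is a regular point of $\varphi^\prime\colon\mca{H}^y\to(\mca{H}^\prime)^{y^\prime}$, and the previous paragraph concludes.

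The main obstacle is precisely that the target fibers $\mca{H}^y$, $(\mca{H}^\prime)^{y^\prime}$ need not be second countable, since $\mca{G}$, $\mca{G}^\prime$ are not assumed to be; a direct invocation of Sard's theorem for $\varphi^\prime$ is therefore not legitimate (on a non-second-countable manifold the image of a smooth map, or its set of critical values, can fill the whole target). The paracompactness of Lemma \ref{tarpara} is exactly what lets one cut down to a connected — hence second countable — component so that Sard applies, and the already-established surjectivity of $\varphi^\prime$ on that component is what guarantees that the regular value set is nonempty; the remaining diagram chases are routine.
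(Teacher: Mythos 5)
Your proof is correct, but it reaches the regular point by a different route than the paper. You apply Sard \emph{upstairs}, to $\varphi^\prime\colon\mca{H}^y_0\to(\mca{H}^\prime)^{y^\prime}_0$ on a connected component of the target fiber, which needs the paracompactness of $\mca{H}^y$ from Lemma \ref{tarpara} (to make the component second countable) together with the surjectivity of $\varphi^\prime$ on fiber components already established by the degree argument, and you then transport the regular point down to $O$ through the principal bundle $s\colon\mca{H}^y\to O$ of Proposition \ref{isoorb}, using $s^\prime\varphi^\prime=F^\prime s$ and the surjectivity of $s^\prime_*$; this transport step is sound, since surjectivity of $s^\prime_*\circ\varphi^\prime_*$ at $h_0$ forces surjectivity of $F^\prime_*$ at $y_0=s(h_0)$. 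The paper applies Sard \emph{downstairs}, directly to $F^\prime\colon O_0\to O^\prime_0$ for a connected component $O_0$ of $O$: second countability of $O_0$ comes from the standing assumption that $N$ is second countable (a Riemannian metric on $N$ restricts to the immersed orbit, making $O_0$ metrizable, paracompact, hence $\sigma$-compact and second countable --- the same argument as the claim inside the proof of Lemma \ref{tarpara}), and surjectivity of $F^\prime\colon O_0\to O^\prime_0$ follows from surjectivity of $F^\prime\colon O\to O^\prime$ with the component bookkeeping done via the diffeomorphism $F$. Both arguments rest on the same trick --- restrict to a connected component so that Sard is legitimate --- and face the same underlying obstruction: the orbit $O$, just like the fiber $\mca{H}^y$, need not be second countable even though $N$ is (compare Example \ref{discs1s1}). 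Your route buys the convenience of reusing only facts already proved about $\varphi^\prime$, at the cost of the extra (routine) descent along $s$; the paper's route is shorter because the orbit-level surjectivity of $F^\prime$ is already in hand and no transport is needed. One small correction to your closing remark: the possible non-second-countability of the fibers is an obstacle only for your route; for a direct application of Sard to $F^\prime$ the analogous issue is the possible non-second-countability of $O$ itself, which the paper resolves in exactly the same way, by passing to $O_0$.
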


\begin{proof}
Let $O_0$ be a connected component of $O$. Then $O^\prime_0=F(O_0)$ is a connected component of $O^\prime$. $F^\prime\colon O_0\to O^\prime_0$ is defined, and is surjective since $F^\prime\colon O\to O^\prime$ is surjective. Since $N$ is assumed to be second countable, $O_0$ is second countable by the same argument using a Riemannian metric as in the proof of Claim in the proof of Lemma \ref{tarpara}. Assume that there are no regular points of $F^\prime\colon O_0\to O^\prime_0$. Then $F^\prime(O_0)=O^\prime_0$ is the set of critical values of $F^\prime\colon O_0\to O^\prime_0$. Since $O_0$ is second countable, we can apply Sard's theorem to the map $F^\prime\colon O_0\to O^\prime_0$. Hence $F^\prime(O_0)=O^\prime_0$ is of measure zero in $O^\prime_0$. This implies $\dim O^\prime_0=0$. Then any point of $O_0$ must be a regular point of $F^\prime\colon O_0\to O^\prime_0$, which is a contradiction. So there exists a regular point $y_0\in O_0$ of $F^\prime\colon O_0\to O^\prime_0$. This is a regular point of $F^\prime\colon O\to O^\prime$. 
\end{proof}

Let $y_0\in O$ be a regular point of $F^\prime\colon O\to O^\prime$. Put $y^\prime_0=F^\prime(y_0)\in O^\prime$, $x_0=\nu(y_0)\in\ul{O}$ and $x^\prime_0=F_\Phi(x_0)\in\ul{O^\prime}$. Consider 
\begin{equation}\label{ttooto}
\begin{tikzcd}
0\ar[d]&0\ar[d]\\
T_{y_0}(O\cap\nu^{-1}(x_0))\ar[r,"F^\prime_*"]\ar[d]\ar[rd,phantom,"\circlearrowright"{xshift=-7}]&T_{y^\prime_0}(O^\prime\cap(\nu^\prime)^{-1}(x^\prime_0))\ar[d]\\
T_{y_0}O\ar[r,"\sim","F^\prime_*"']\ar[d,"\nu_*"']\ar[rd,phantom,"\circlearrowright"{yshift=-3}]&T_{y^\prime_0}O^\prime\ar[d,"\nu^\prime_*"]\\
T_{x_0}\ul{O}\ar[r,"(F_\Phi)_*"']\ar[d]&T_{x^\prime_0}\ul{O^\prime}\ar[d]\\
0&0. 
\end{tikzcd}
\end{equation}
It follows that $(F_\Phi)_*\colon T_{x_0}\ul{O}\to T_{x^\prime_0}\ul{O^\prime}$ is surjective. Since $\ul{O}\subset\nu(N)$, $(F_\Phi)_*\colon T_{x_0}\ul{O}\to T_{x^\prime_0}M^\prime$ is injective by assumption. So $(F_\Phi)_*\colon T_{x_0}\ul{O}\to T_{x^\prime_0}\ul{O^\prime}$ is an isomorphism and $\dim\ul{O}=\dim\ul{O^\prime}$. By \eqref{ttooto}, $F^\prime_*\colon T_{y_0}(O\cap\nu^{-1}(x_0))\to T_{y^\prime_0}(O^\prime\cap(\nu^\prime)^{-1}(x^\prime_0))$ is also an isomorphism. 

Let $y\in O$ and put $y^\prime=F^\prime(y)\in O^\prime$, $x=\nu(y)\in\ul{O}$ and $x^\prime=F_\Phi(x)\in\ul{O^\prime}$. Then $(F_\Phi)_*\colon T_x\ul{O}\to T_{x^\prime}\ul{O^\prime}$ is an isomorphism since $F_\Phi\colon\ul{O}\to M^\prime$ is an immersion by assumption. We have 
\begin{equation}\label{tooprito}
\begin{tikzcd}
0\ar[d]&0\ar[d]\\
T_y(O\cap\nu^{-1}(x))\ar[r,"F^\prime_*"]\ar[d]\ar[rd,phantom,"\circlearrowright"{xshift=-7}]&T_{y^\prime}(O^\prime\cap(\nu^\prime)^{-1}(x^\prime))\ar[d]\\
T_yO\ar[r,"F^\prime_*"']\ar[d,"\nu_*"']\ar[rd,phantom,"\circlearrowright"{yshift=-3}]&T_{y^\prime}O^\prime\ar[d,"\nu^\prime_*"]\\
T_x\ul{O}\ar[r,"\sim","(F_\Phi)_*"']\ar[d]&T_{x^\prime}\ul{O^\prime}\ar[d]\\
0&0. 
\end{tikzcd}
\end{equation}

\begin{claim}
$F^\prime_*\colon T_y(O\cap\nu^{-1}(x))\to T_{y^\prime}(O^\prime\cap(\nu^\prime)^{-1}(x^\prime))$ is an isomorphism. 
\end{claim}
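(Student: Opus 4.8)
The plan is to transport the statement from the regular point $y_0$ to an arbitrary $y\in O$ using the translation diffeomorphisms of Lemma \ref{onuonud} together with the equivariance of $F^\prime$ coming from $\varphi^\prime$ being a semiconjugacy. Fix $y\in O$ and keep the notation $y^\prime=F^\prime(y)$, $x=\nu(y)$, $x^\prime=F_\Phi(x)$. Since $y_0$ and $y$ lie in the same orbit $O$ of $\mca{H}=N\rtimes_\rho\mca{G}$, the fiber $\mca{H}^{y_0}_y$ is nonempty, so there is $g\in\mca{G}$ with $t(g)=\nu(y_0)=x_0$, $s(g)=\nu(y)=x$ and $\rho(y_0,g)=y$; thus $g\in\mca{G}^{x_0}_x$ and $x_0,x$ lie in the same orbit $\ul{O}\in\mca{O}_\mca{G}$. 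By part 2 of Lemma \ref{onuonud}, $\rho(\cdot,g)\colon O\cap\nu^{-1}(x_0)\to O\cap\nu^{-1}(x)$ is a diffeomorphism carrying $y_0$ to $y$. On the other side, $t^\prime\Phi=F_\Phi t$ and $s^\prime\Phi=F_\Phi s$ give $t^\prime\Phi(g)=x^\prime_0$ and $s^\prime\Phi(g)=x^\prime$, so $\Phi(g)\in(\mca{G}^\prime)^{x^\prime_0}_{x^\prime}$, and again by Lemma \ref{onuonud} the map $\rho^\prime(\cdot,\Phi(g))\colon O^\prime\cap(\nu^\prime)^{-1}(x^\prime_0)\to O^\prime\cap(\nu^\prime)^{-1}(x^\prime)$ is a diffeomorphism carrying $y^\prime_0=F^\prime(y_0)$ to $y^\prime$.

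The key is that $F^\prime$ intertwines these two translations. Because $\varphi^\prime=F^\prime\rtimes\Phi$ is a semiconjugacy we have $\varphi^\prime(z,g^\prime)=(F^\prime(z),\Phi(g^\prime))$, and commutativity of the morphism $\varphi^\prime$ with the source maps of the action groupoids yields $F^\prime(\rho(z,g))=\rho^\prime(F^\prime(z),\Phi(g))$ for every $z\in N$ with $\nu(z)=t(g)$. Letting $z$ range over $O\cap\nu^{-1}(x_0)$, this is an identity of $C^\infty$ maps $O\cap\nu^{-1}(x_0)\to O^\prime\cap(\nu^\prime)^{-1}(x^\prime)$,
\[
F^\prime\circ\rho(\cdot,g)=\rho^\prime(\cdot,\Phi(g))\circ F^\prime .
\]
Taking differentials at $y_0$ gives a commutative square of linear maps: its top edge is $\rho(\cdot,g)_*\colon T_{y_0}(O\cap\nu^{-1}(x_0))\xrightarrow{\sim}T_y(O\cap\nu^{-1}(x))$, its bottom edge is $\rho^\prime(\cdot,\Phi(g))_*\colon T_{y^\prime_0}(O^\prime\cap(\nu^\prime)^{-1}(x^\prime_0))\xrightarrow{\sim}T_{y^\prime}(O^\prime\cap(\nu^\prime)^{-1}(x^\prime))$, its left edge is $F^\prime_*\colon T_{y_0}(O\cap\nu^{-1}(x_0))\xrightarrow{\sim}T_{y^\prime_0}(O^\prime\cap(\nu^\prime)^{-1}(x^\prime_0))$ (already known to be an isomorphism), and its right edge is precisely the map $F^\prime_*\colon T_y(O\cap\nu^{-1}(x))\to T_{y^\prime}(O^\prime\cap(\nu^\prime)^{-1}(x^\prime))$ in question. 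Since the other three edges are isomorphisms, so is the fourth, which proves the claim.

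I expect no real obstacle beyond routine bookkeeping: one checks that $\rho(y_0,g)$ indeed lies in $O\cap\nu^{-1}(x)$ (from $\nu\rho(z,g)=s(g)$), that $F^\prime$ carries the $\nu^\prime$-fibers over $\ul{O^\prime}$ compatibly with $F_\Phi$ (from $\nu^\prime F^\prime=F_\Phi\nu$), and that all translation maps are smooth between the relevant embedded submanifolds, all of which are supplied by Lemma \ref{onuonud}. With this claim proved, the proof of Theorem \ref{ctop} finishes by reading off from diagram \eqref{tooprito} that $F^\prime_*\colon T_yO\to T_{y^\prime}O^\prime$ is an isomorphism for every $y$, hence $F^\prime\colon O\to O^\prime$, and then $F^\prime\colon N\to N^\prime$, is a local diffeomorphism; being also surjective and inducing the bijection $\mca{O}_\rho\simeq\mca{O}_{\rho^\prime}$ already obtained, $F^\prime$ is a diffeomorphism, $\varphi^\prime$ is a diffeomorphism, and $\varphi^\prime$ is a conjugacy by Proposition \ref{conjdiffinj}.
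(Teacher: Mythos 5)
Your proof is correct and follows essentially the same route as the paper: transport from the regular point $y_0$ to $y$ via the fiber diffeomorphisms $\rho(\cdot,g)$ and $\rho^\prime(\cdot,\Phi(g))$ of Lemma \ref{onuonud}, use the semiconjugacy identity $F^\prime\rho(\cdot,g)=\rho^\prime(\cdot,\Phi(g))F^\prime$ to get a commutative square, and differentiate to conclude from the already-established isomorphism at $y_0$. No gaps.
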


\begin{proof}
Since $y_0$, $y\in O$, there exists $g\in\mca{G}$ such that $\rho(y_0,g)=y$. Then $g\in\mca{G}^{x_0}_x$. By Lemma \ref{onuonud}, $\rho(\cdot,g)\colon O\cap\nu^{-1}(x_0)\to O\cap\nu^{-1}(x)$ is a diffeomorphism. Since $\varphi^\prime$ is a semiconjugacy, we obtain $\rho^\prime(y^\prime_0,\Phi(g))=y^\prime$. We have $\Phi(g)\in(\mca{G}^\prime)^{x^\prime_0}_{x^\prime}$ and $\rho^\prime(\cdot,\Phi(g))\colon O^\prime\cap(\nu^\prime)^{-1}(x^\prime_0)\to O^\prime\cap(\nu^\prime)^{-1}(x^\prime)$ is a diffeomorphism by Lemma \ref{onuonud}. Since 
\begin{equation*}
\begin{tikzcd}
O\cap\nu^{-1}(x_0)\ar[r,"F^\prime"]\ar[d,"\sim"{sloped},"\rho(\cdot{,}g)"']\ar[rd,phantom,"\circlearrowright"]&O^\prime\cap(\nu^\prime)^{-1}(x^\prime_0)\ar[d,"\sim"{sloped},"\rho^\prime(\cdot{,}\Phi(g))"{xshift=7}]\\
O\cap\nu^{-1}(x)\ar[r,"F^\prime"']&O^\prime\cap(\nu^\prime)^{-1}(x^\prime), 
\end{tikzcd}
\end{equation*}
we get 
\begin{equation*}
\begin{tikzcd}
T_{y_0}(O\cap\nu^{-1}(x_0))\ar[r,"\sim","F^\prime_*"{yshift=7}]\ar[d,"\sim"{sloped},"\rho(\cdot{,}g)_*"']\ar[rd,phantom,"\circlearrowright"]&T_{y^\prime_0}(O^\prime\cap(\nu^\prime)^{-1}(x^\prime_0))\ar[d,"\sim"{sloped},"\rho^\prime(\cdot{,}\Phi(g))_*"{xshift=7}]\\
T_y(O\cap\nu^{-1}(x))\ar[r,"F^\prime_*"']&T_{y^\prime}(O^\prime\cap(\nu^\prime)^{-1}(x^\prime)). 
\end{tikzcd}
\end{equation*}
Hence $F^\prime_*\colon T_y(O\cap\nu^{-1}(x))\to T_{y^\prime}(O^\prime\cap(\nu^\prime)^{-1}(x^\prime))$ is an isomorphism. 
\end{proof}

Thus $F^\prime_*\colon T_yO\to T_{y^\prime}O^\prime$ is an isomorphism by \eqref{tooprito}. 

\begin{claim}
$F^\prime_*\colon T_yN\to T_{y^\prime}N^\prime$ is an isomorphism. 
\end{claim}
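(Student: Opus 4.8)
The plan is to reproduce, in the present setting, the argument used for the $nt$ case in the proof of Proposition \ref{btop} part $2$. Write $P_1=P(\cdot,1)\colon N\to\mca{H}^\prime$ for the time-$1$ slice of the bounded $nt$-homotopy $P$. The identity $\varphi P(\cdot,1)=\varphi^\prime$ unwinds (using the action notation of Lemma \ref{hnhncnnb}) to $\varphi(h)P_1(s(h))=P_1(t(h))\varphi^\prime(h)$ for all $h\in\mca{H}$, and the boundary conditions give $t^\prime P_1=F$, $s^\prime P_1=F^\prime$; hence $P_1\in\Nat(\varphi,\varphi^\prime)$. In particular $P_1F^{-1}\colon N^\prime\to\mca{H}^\prime$ is a $C^\infty$ map with $t^\prime(P_1F^{-1})=\id$.

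Next I would apply Lemma \ref{tsiso} (part $2$) to the multiplication action $\mca{H}^\prime\stackrel{\gamma^\prime}{\curvearrowleft}\mca{H}^\prime\times\mca{H}^\prime$ of Example \ref{canaction}, taking $\sigma=P_1F^{-1}$ and the orbit $O^\prime=F(O)\in\mca{O}_{\rho^\prime}=\mca{O}_{\mca{H}^\prime}$. This produces $\wt{O^\prime}\in\mca{O}_{\gamma^\prime}$ with $P_1F^{-1}(O^\prime)\subset\wt{O^\prime}$, with $P_1F^{-1}\colon O^\prime\to\wt{O^\prime}$ of class $C^\infty$, and such that
\begin{equation*}
s^\prime_*\colon T_{P_1(y)}\mca{H}^\prime/T_{P_1(y)}\wt{O^\prime}\to T_{F^\prime(y)}N^\prime/T_{F^\prime(y)}O^\prime,\qquad(P_1F^{-1})_*\colon T_{F(y)}N^\prime/T_{F(y)}O^\prime\to T_{P_1(y)}\mca{H}^\prime/T_{P_1(y)}\wt{O^\prime}
\end{equation*}
are isomorphisms. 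Since $F^\prime=s^\prime\circ(P_1F^{-1})\circ F$ and $F_*\colon T_yN/T_yO\to T_{F(y)}N^\prime/T_{F(y)}O^\prime$ is an isomorphism ($F$ being a diffeomorphism, by Lemma \ref{oosmhs}), composing these three isomorphisms shows that $F^\prime_*\colon T_yN/T_yO\to T_{F^\prime(y)}N^\prime/T_{F^\prime(y)}O^\prime$ is an isomorphism.

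Finally I would combine this with the isomorphism $F^\prime_*\colon T_yO\to T_{y^\prime}O^\prime$ established just above, via the commutative ladder with exact rows
\begin{equation*}
\begin{tikzcd}
0\ar[r]&T_yO\ar[r]\ar[d,"F^\prime_*"]&T_yN\ar[r]\ar[d,"F^\prime_*"]&T_yN/T_yO\ar[r]\ar[d,"F^\prime_*"]&0\\
0\ar[r]&T_{y^\prime}O^\prime\ar[r]&T_{y^\prime}N^\prime\ar[r]&T_{y^\prime}N^\prime/T_{y^\prime}O^\prime\ar[r]&0,
\end{tikzcd}
\end{equation*}
and the five lemma, to conclude that $F^\prime_*\colon T_yN\to T_{y^\prime}N^\prime$ is an isomorphism. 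The argument is routine given the earlier preparation; the only point requiring care is purely bookkeeping, namely verifying that Lemma \ref{tsiso} genuinely applies — that $P_1F^{-1}$ is $C^\infty$ with $t^\prime(P_1F^{-1})=\id$ (checked above) and that $\mca{H}^\prime$ is a Lie groupoid with Hausdorff, second countable base, which holds because $N^\prime$ is a $C^\infty$ manifold in the sense of the paper's conventions.
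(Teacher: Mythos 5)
Your proposal is correct and is essentially the paper's own argument: the paper proves this claim simply by the remark ``This part is the same as in the proof of 2 in Proposition \ref{btop}'', and what you wrote is exactly that argument spelled out — taking $P_1=P(\cdot,1)\in\Nat(\varphi,\varphi^\prime)$, applying Lemma \ref{tsiso} with $\sigma=P_1F^{-1}$ to get the isomorphism on normal spaces $T_yN/T_yO\to T_{y^\prime}N^\prime/T_{y^\prime}O^\prime$ via $F^\prime=s^\prime\circ(P_1F^{-1})\circ F$, and combining with the previously established isomorphism $F^\prime_*\colon T_yO\to T_{y^\prime}O^\prime$. The only cosmetic difference is that you invoke the five lemma explicitly where the paper leaves the elementary linear-algebra step implicit.
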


\begin{proof}
This part is the same as in the proof of 2 in Proposition \ref{btop}. 
\end{proof}

Therefore $F^\prime\colon N\to N^\prime$ is a local diffeomorphism and proper. Let $N_0$ be a connected component of $N$. Then $N^\prime_0=F(N_0)$ is a connected component of $N^\prime$, and $F^\prime\colon N_0\to N^\prime_0$ is a local diffeomorphism and proper. 
%
It follows that $F^\prime\colon N_0\to N^\prime_0$ is a covering map by Proposition 4.46 of \cite{Lee}. For $y\in N_0$, $F^\prime_*\colon\pi_1(N_0,y)\to\pi_1(N^\prime_0,F^\prime(y))$ is an isomorphism since $F^\prime$ is $C^0$ homotopic to a diffeomorphism $F$. So $F^\prime\colon N_0\to N^\prime_0$ must be a diffeomorphism and $F^\prime\colon N\to N^\prime$ is a diffeomorphism. 

\begin{claim}
For any $y\in N$, $\varphi^\prime\colon\mca{H}^y_y\to(\mca{H}^\prime)^{F^\prime(y)}_{F^\prime(y)}$ is an isomorphism. 
\end{claim}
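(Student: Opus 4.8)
The plan is to deduce the claim from the explicit $nt$-homotopy $P$ fixed at the start of this proof, by writing $\varphi^\prime|_{\mca{H}^y_y}$ as a composite of two Lie group isomorphisms. Recall that $P\colon N\times I\to\mca{H}^\prime$ satisfies $P(\cdot,t)\in C^\infty(N,\mca{H}^\prime)$, $t^\prime P(\cdot,t)=F$, $P(\cdot,0)=1_F$ and $\varphi P(\cdot,1)=\varphi^\prime$. First I would set $c=P(y,1)\in\mca{H}^\prime$ and note $t^\prime(c)=F(y)$ while $s^\prime(c)=F_{\varphi P(\cdot,1)}(y)=F_{\varphi^\prime}(y)=F^\prime(y)$ by Lemma \ref{hnhncnnb}, so $c\in(\mca{H}^\prime)^{F(y)}_{F^\prime(y)}$, i.e.\ $c$ is an arrow from $F^\prime(y)$ to $F(y)$. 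Since $t(h)=s(h)=y$ for $h\in\mca{H}^y_y$, the formula for $\varphi P(\cdot,1)$ gives
\[
\varphi^\prime(h)=(\varphi P(\cdot,1))(h)=P(y,1)^{-1}\varphi(h)P(y,1)=c^{-1}\varphi(h)c .
\]

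Next I would check that each of the two operations appearing here is an isomorphism of Lie groups. Because $\varphi$ is an isomorphism of Lie groupoids, $\varphi$ and $F_\varphi=F$ are diffeomorphisms and $\varphi$ intertwines $t,s$ with $t^\prime,s^\prime$; hence $\varphi$ carries $\mca{H}^y_y$ (a Hausdorff closed embedded submanifold and a Lie group by Proposition \ref{isoorb}) bijectively onto $(\mca{H}^\prime)^{F(y)}_{F(y)}$, so $\varphi|_{\mca{H}^y_y}\colon\mca{H}^y_y\to(\mca{H}^\prime)^{F(y)}_{F(y)}$ is a Lie group isomorphism. The conjugation $k^\prime\mapsto c^{-1}k^\prime c$ is a group isomorphism $(\mca{H}^\prime)^{F(y)}_{F(y)}\to(\mca{H}^\prime)^{F^\prime(y)}_{F^\prime(y)}$ with inverse $k^{\prime\prime}\mapsto ck^{\prime\prime}c^{-1}$, and it is smooth with smooth inverse since it factors through left and right multiplications by the fixed arrows $c^{-1}$ and $c$ on the relevant source and target fibers, which are diffeomorphisms (equivalently, use a local bisection through $c$, exactly as in the proof of Lemma \ref{constantrank}). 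Composing, $\varphi^\prime|_{\mca{H}^y_y}$ is an isomorphism of Lie groups, which proves the claim.

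With the claim established I would then close the theorem. We already know $F^\prime\colon N\to N^\prime$ is a diffeomorphism and $\varphi^\prime\colon\mca{H}^y\to(\mca{H}^\prime)^{F^\prime(y)}$ is surjective for every $y$; in fact, more cleanly, the identity $\varphi^\prime(h)=P(t(h),1)^{-1}\varphi(h)P(s(h),1)$, together with the facts that $F^\prime=s^\prime P(\cdot,1)$ is a diffeomorphism and $\varphi$ is a diffeomorphism, exhibits an explicit smooth inverse for $\varphi^\prime$ (the target and source of $h$ are recovered from $\varphi^\prime(h)$ via $F^\prime$, then $\varphi(h)$ is recovered, then $h=\varphi^{-1}(\cdots)$), so $\varphi^\prime\colon\mca{H}\to\mca{H}^\prime$ is a diffeomorphism. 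Since $\varphi^\prime=F^\prime\rtimes\Phi$ is a semiconjugacy, $F_\Phi\colon\nu(N)\to M^\prime$ is injective and $\nu^\prime$ is a surjective submersion, Proposition \ref{conjdiffinj} then yields that $\varphi^\prime$ is a conjugacy.

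At this stage there is no serious obstacle: all the delicate work — the surjectivity arguments via degree theory and the differential computations that forced $F^\prime$ to be a diffeomorphism — is already done, and the claim is essentially a formal consequence of the relation $\varphi^\prime=\varphi P(\cdot,1)$ with $\varphi$ an isomorphism. The only point requiring (routine) care is the smoothness of conjugation by a fixed arrow between isotropy Lie groups in the possibly non-Hausdorff, non-second-countable setting allowed here, but this follows from smoothness of the groupoid structure maps in precisely the way used in the proof of Lemma \ref{constantrank}.
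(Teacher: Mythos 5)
Your proof of the claim is correct and is exactly the paper's argument: the paper also decomposes $\varphi^\prime|_{\mca{H}^y_y}$ as the isomorphism $\varphi\colon\mca{H}^y_y\to(\mca{H}^\prime)^{F(y)}_{F(y)}$ followed by conjugation $P(y,1)^{-1}\cdot P(y,1)\colon(\mca{H}^\prime)^{F(y)}_{F(y)}\to(\mca{H}^\prime)^{F^\prime(y)}_{F^\prime(y)}$, using $\varphi^\prime=\varphi P(\cdot,1)$. Your additional remarks on finishing the theorem go beyond the stated claim, but the claim itself is established just as in the paper.
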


\begin{proof}
This follows from 
\begin{equation*}
\begin{tikzcd}[row sep=tiny]
&(\mca{H}^\prime)^{F(y)}_{F(y)}\ar[dd,"\sim"{sloped,name=U},"P(y{,}1)^{-1}\cdot P(y{,}1)"{xshift=7}]\\
\mca{H}^y_y\ar[ru,"\sim"{sloped},"\varphi"{yshift=7}]\ar[rd,"\varphi^\prime"']\ar[to=U,phantom,"\circlearrowright"{xshift=4}]\\
&(\mca{H}^\prime)^{F^\prime(y)}_{F^\prime(y)}. 
\end{tikzcd}
\end{equation*}
\end{proof}

\begin{claim}
$\varphi^\prime\colon\mca{H}\to\mca{H}^\prime$ is injective. 
\end{claim}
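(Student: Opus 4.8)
The plan is to derive the injectivity of $\varphi^\prime$ from the two facts already established in this proof: that the base map $F^\prime=F_{\varphi^\prime}\colon N\to N^\prime$ is a diffeomorphism, and that $\varphi^\prime$ restricts to an \emph{isomorphism} $\mca{H}^y_y\xrightarrow{\ \sim\ }(\mca{H}^\prime)^{F^\prime(y)}_{F^\prime(y)}$ on each isotropy group (the claim just proved, via the diagram with conjugation by $P(y,1)$). The whole argument is a short diagram chase, so I would organize it in two steps: first reduce to a single target fibre, then pass to the isotropy group of that fibre.

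First I would take $h_1,h_2\in\mca{H}$ with $\varphi^\prime(h_1)=\varphi^\prime(h_2)$ and show they lie in the same set $\mca{H}^y_z$. Applying $t^\prime$ to $\varphi^\prime(h_1)=\varphi^\prime(h_2)$ and using that $\varphi^\prime$ commutes with the target maps (Definition \ref{morgmgmpfpfp}) gives $F^\prime(t(h_1))=F^\prime(t(h_2))$, hence $t(h_1)=t(h_2)=:y$ because $F^\prime$ is injective; applying $s^\prime$ likewise gives $s(h_1)=s(h_2)=:z$. So $h_1,h_2\in\mca{H}^y_z$. Then, since $t(h_2^{-1})=s(h_2)=z=s(h_1)$, the product $h:=h_1h_2^{-1}$ is defined, and $t(h)=t(h_1)=y$, $s(h)=s(h_2^{-1})=t(h_2)=y$, so $h\in\mca{H}^y_y$. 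Using $\varphi^\prime(h_1)=\varphi^\prime(h_2)$ and the morphism identities $\varphi^\prime(h_2^{-1})=\varphi^\prime(h_2)^{-1}$, $\varphi^\prime(1_y)=1_{F^\prime(y)}$, we compute $\varphi^\prime(h)=\varphi^\prime(h_1)\varphi^\prime(h_2)^{-1}=1_{F^\prime(y)}=\varphi^\prime(1_y)$. By the preceding claim $\varphi^\prime\colon\mca{H}^y_y\to(\mca{H}^\prime)^{F^\prime(y)}_{F^\prime(y)}$ is in particular injective, so $h=1_y$, i.e.\ $h_1=h_2$. This proves the claim.

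I do not expect a genuine obstacle in this last claim — all the real difficulty of the theorem has already been spent on proving that $F^\prime$ is a diffeomorphism and on the fibrewise covering arguments. The only points needing care are the well-definedness of $h_1h_2^{-1}$ and the verification that it lands in the isotropy group $\mca{H}^y_y$, both of which are handled by the source/target bookkeeping above. Once injectivity is in place, it combines with the surjectivity of $\varphi^\prime$ already established to give that $\varphi^\prime$ is a bijective morphism; this then feeds into Proposition \ref{isodiff} and Proposition \ref{conjdiffinj} (the latter using that $\nu^\prime$ is a surjective submersion and $F_\Phi\colon\nu(N)\to M^\prime$ is injective) to conclude that $\varphi^\prime$ is a conjugacy, completing the proof of Theorem \ref{ctop}.
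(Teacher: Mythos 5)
Your argument is correct and is essentially identical to the paper's: both form the element $h_1h_2^{-1}$ in the isotropy group $\mca{H}^y_y$ (the paper writes $h^\prime h^{-1}$), using that $F^\prime$ is bijective to see the source/target data agree, and then invoke the injectivity part of the preceding claim about $\varphi^\prime\colon\mca{H}^y_y\to(\mca{H}^\prime)^{F^\prime(y)}_{F^\prime(y)}$ to conclude $h_1h_2^{-1}=1_y$. No gap; your concluding remarks about how injectivity feeds into Proposition \ref{conjdiffinj} also match the paper's subsequent steps.
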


\begin{proof}
Let $h$, $h^\prime\in\mca{H}$ be such that $\varphi^\prime(h)=\varphi^\prime(h^\prime)$. By 
\begin{equation}\label{hhnnp}
\begin{tikzcd}
\mca{H}\ar[r,"(t{,}s)"]\ar[d,"\varphi^\prime"']\ar[rd,phantom,"\circlearrowright"{xshift=1,yshift=-2}]&N\times N\ar[d,"\sim"{sloped},"F^\prime\times F^\prime"{xshift=7}]\\
\mca{H}^\prime\ar[r,"(t^\prime{,}s^\prime)"']&N^\prime\times N^\prime, 
\end{tikzcd}
\end{equation}
we have $(t,s)(h)=(t,s)(h^\prime)$, hence $h^\prime h^{-1}\in\mca{H}^{t(h)}_{t(h)}$ is defined. Since $\varphi^\prime(h^\prime h^{-1})=1_{F^\prime t(h)}$, we get $h^\prime h^{-1}=1_{t(h)}$ by the previous claim. So $h=h^\prime$. 
\end{proof}

\begin{claim}
For any $h\in\mca{H}$, $\varphi^\prime_*\colon T_h\mca{H}\to T_{\varphi^\prime(h)}\mca{H}^\prime$ is an isomorphism. 
\end{claim}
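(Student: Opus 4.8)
```latex
The plan is to deduce the claim from the facts already established in the proof: namely that $F^\prime\colon N\to N^\prime$ is a diffeomorphism, and that for each $h\in\mca{H}$ the restriction $\varphi^\prime\colon\mca{H}^{t(h)}_{t(h)}\to(\mca{H}^\prime)^{F^\prime t(h)}_{F^\prime t(h)}$ is an isomorphism of Lie groups. First I would note that $\varphi^\prime\colon\mca{H}^{t(h)}\to(\mca{H}^\prime)^{F^\prime t(h)}$ is already known to be a diffeomorphism of target fibers: indeed $\eta(\cdot,0)=P(t(h),1)^{-1}\varphi$ is a diffeomorphism $\mca{H}^{t(h)}\to(\mca{H}^\prime)^{F^\prime t(h)}$, and $\varphi^\prime=\eta(\cdot,1)$ differs from it by left translation by $P(t(h),1)\in\mca{H}^\prime$, which is a diffeomorphism. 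Hence $\varphi^\prime_*$ restricted to $T_h\mca{H}^{t(h)}\to T_{\varphi^\prime(h)}(\mca{H}^\prime)^{F^\prime t(h)}$ is already an isomorphism.

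Next I would set up a short exact sequence of tangent spaces along the target fiber. For $h\in\mca{H}$, using that $t\colon\mca{H}\to N$ is a submersion, we have $\ker(t_*\colon T_h\mca{H}\to T_{t(h)}N)=T_h\mca{H}^{t(h)}$, and similarly on the primed side. The commuting square \eqref{hhnnp} (or rather just the commutativity $t^\prime\varphi^\prime=F^\prime t$) gives a commutative ladder
\begin{equation*}
\begin{tikzcd}
0\ar[r]&T_h\mca{H}^{t(h)}\ar[r]\ar[d,"\varphi^\prime_*"]&T_h\mca{H}\ar[r,"t_*"]\ar[d,"\varphi^\prime_*"]&T_{t(h)}N\ar[r]\ar[d,"\sim"',"F^\prime_*"]&0\\
0\ar[r]&T_{\varphi^\prime(h)}(\mca{H}^\prime)^{F^\prime t(h)}\ar[r]&T_{\varphi^\prime(h)}\mca{H}^\prime\ar[r,"t^\prime_*"]&T_{F^\prime t(h)}N^\prime\ar[r]&0,
\end{tikzcd}
\end{equation*}
where the top row is exact because $t$ is a submersion (so $t_*$ is surjective), the bottom row is exact because $t^\prime$ is a submersion, the left vertical arrow is an isomorphism by the previous paragraph, and the right vertical arrow is an isomorphism because $F^\prime$ is a diffeomorphism. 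By the five lemma the middle arrow $\varphi^\prime_*\colon T_h\mca{H}\to T_{\varphi^\prime(h)}\mca{H}^\prime$ is an isomorphism.

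There is essentially no obstacle here; the only point requiring a word of care is the identification $\ker t_* = T_h\mca{H}^{t(h)}$, which is immediate from $t$ being a submersion and $\mca{H}^{t(h)}=t^{-1}(t(h))$ being the fiber, and the fact that $\varphi^\prime$ maps $\mca{H}^{t(h)}$ into $(\mca{H}^\prime)^{F^\prime t(h)}$, which follows from $t^\prime\varphi^\prime=F^\prime t$. After this claim, the proof of Theorem \ref{ctop} concludes: $\varphi^\prime$ is a bijective local diffeomorphism, hence a diffeomorphism, hence an isomorphism of Lie groupoids by Proposition \ref{isodiff}; since $\varphi^\prime$ is a semiconjugacy, its inverse is a semiconjugacy too (this uses that $\nu^\prime$ is a surjective submersion, via Proposition \ref{conjdiffinj} together with the already-established injectivity of $F_\Phi$ on $\nu(N)$), so $\varphi^\prime$ is a conjugacy.
```
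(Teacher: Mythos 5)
There is a genuine gap, and it is in your first step. You assert that on a target fiber $\varphi^\prime=\eta(\cdot,1)$ ``differs from $\eta(\cdot,0)$ by left translation by $P(t(h),1)$''. That is false: by definition $\varphi^\prime=\varphi P(\cdot,1)$, i.e.
\begin{equation*}
\varphi^\prime(h^\prime)=P(t(h^\prime),1)^{-1}\,\varphi(h^\prime)\,P(s(h^\prime),1),
\end{equation*}
so on the target fiber $\mca{H}^{t(h)}$ the left factor is constant but the right factor $P(s(h^\prime),1)$ varies with $h^\prime$. Hence $\varphi^\prime|_{\mca{H}^{t(h)}}$ is \emph{not} obtained from the diffeomorphism $\eta(\cdot,0)=P(t(h),1)^{-1}\varphi$ by a translation, and at this stage of the proof it is only known to be a smooth bijection onto $(\mca{H}^\prime)^{F^\prime t(h)}$ (surjective by the degree argument, injective by the previous claim), which does not imply it is a diffeomorphism. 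In fact, knowing that $\varphi^\prime$ restricts to diffeomorphisms of target fibers is essentially the assertion that $\varphi^\prime$ is an action morphism, which is close to the content of the theorem itself; it cannot be taken as ``already known''. Your five-lemma ladder in the second step is correct as a skeleton (the identification $\ker t_*=T_h\mca{H}^{t(h)}$ and exactness are fine), but its left vertical arrow is exactly the unproven point, so the argument as written is circular at the crucial place.

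The paper avoids this by working with the $(t,s)$-fibers rather than the $t$-fibers: on $\mca{H}^{t(h)}_{s(h)}$ both conjugating factors $P(t(\cdot),1)$ and $P(s(\cdot),1)$ are constant, so (after right-translating by $h^{-1}$ and $\varphi^\prime(h)^{-1}$ to the isotropy groups, for which the isomorphism was already proved) $\varphi^\prime\colon\mca{H}^{t(h)}_{s(h)}\to(\mca{H}^\prime)^{F^\prime t(h)}_{F^\prime s(h)}$ is a diffeomorphism. Then, using $\ker(t,s)_*=T_h\mca{H}^{t(h)}_{s(h)}$ (Proposition \ref{isoorb}) and the fact that $F^\prime\times F^\prime$ is a diffeomorphism in the differentiated square \eqref{hhnnp}, any $X$ with $\varphi^\prime_*X=0$ satisfies $(t,s)_*X=0$ and hence $X=0$; injectivity plus $\dim\mca{H}=\dim\mca{H}^\prime$ gives the isomorphism, with no five lemma needed. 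Your approach could be repaired along the same lines: show directly that $\varphi^\prime_*$ is injective on $\ker t_*$ by noting that $F^\prime_*s_*X=s^\prime_*\varphi^\prime_*X=0$ forces $s_*X=0$ as well, which puts $X$ in $\ker(t,s)_*$ where the paper's fiberwise isomorphism applies --- but that is exactly the paper's argument, not the left-translation shortcut you used. The concluding remarks about Proposition \ref{conjdiffinj} and the surjectivity of $\nu^\prime$ are consistent with the paper.
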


\begin{proof}
Since $\dim\mca{H}=\dim\mca{H}^\prime$, it suffices to prove $\varphi^\prime_*$ is injective. Let $X\in T_h\mca{H}$ be such that $\varphi^\prime_*X=0$. By taking the differentials of \eqref{hhnnp}, we have 
\begin{equation*}
\begin{tikzcd}
T_h\mca{H}\ar[r,"(t{,}s)_*"]\ar[d,"\varphi^\prime_*"']\ar[rd,phantom,"\circlearrowright"]&T_{t(h)}N\times T_{s(h)}N\ar[d,"\sim"{sloped},"F^\prime_*\times F^\prime_*"{xshift=7}]\\
T_{\varphi^\prime(h)}\mca{H}^\prime\ar[r,"(t^\prime{,}s^\prime)_*"']&T_{t^\prime\varphi^\prime(h)}N^\prime\times T_{s^\prime\varphi^\prime(h)}N^\prime. 
\end{tikzcd}
\end{equation*}
It follows that $(t,s)_*X=0$. 
%
Since 
\begin{equation*}
\begin{tikzcd}
\mca{H}^{t(h)}_{s(h)}\ar[r,"\sim","\cdot h^{-1}"{yshift=7}]\ar[d,"\varphi^\prime"']\ar[rd,phantom,"\circlearrowright"{xshift=3,yshift=-1}]&\mca{H}^{t(h)}_{t(h)}\ar[d,"\sim"{sloped},"\varphi^\prime"{xshift=7}]\\
(\mca{H}^\prime)^{F^\prime t(h)}_{F^\prime s(h)}\ar[r,"\sim","\cdot\varphi^\prime(h)^{-1}"']&(\mca{H}^\prime)^{F^\prime t(h)}_{F^\prime t(h)}, 
\end{tikzcd}
\end{equation*}
$\varphi^\prime\colon\mca{H}^{t(h)}_{s(h)}\to(\mca{H}^\prime)^{F^\prime t(h)}_{F^\prime s(h)}$ is a diffeomorphism. Noting 
\begin{equation*}
\ker(t,s)_*=T_h\mca{H}^{t(h)}_{s(h)},\qquad\ker(t^\prime,s^\prime)_*=T_{\varphi^\prime(h)}(\mca{H}^\prime)^{F^\prime t(h)}_{F^\prime s(h)}
\end{equation*}
by Proposition \ref{isoorb}, we get an isomorphism $\varphi^\prime_*\colon\ker(t,s)_*\to\ker(t^\prime,s^\prime)_*$. 
Since $X\in\ker(t,s)_*$, we see that $X=0$. 
\end{proof}

Hence $\varphi^\prime\colon\mca{H}\to\mca{H}^\prime$ is a diffeomorphism by the inverse function theorem. By Proposition \ref{conjdiffinj}, $\varphi^\prime$ is a conjugacy.

\subsection{Examples}
\begin{example}
This example shows the assumption that $F_\Phi\colon\ul{O}\to M^\prime$ is an immersion in Theorem \ref{ctop} is necessary. Let $\Pair(\bb{R})\rightrightarrows\bb{R}$ be the pair groupoid of $\bb{R}$. Let 
\begin{gather*}
\mca{G}=\mca{G}^\prime=\Pair(\bb{R})=\bb{R}\times\bb{R}, \quad M=M^\prime=\bb{R}, \\
\nu=\nu^\prime=\id\colon\bb{R}\to\bb{R},\quad\mca{A}(\id,\Pair(\bb{R}))=\left\{\rho\right\}, \\
\bb{R}\rtimes_\rho\Pair(\bb{R})\overset{\varphi}{\underset{\varphi^\prime}{\rightrightarrows}}\bb{R}\rtimes_\rho\Pair(\bb{R}), \\
\varphi=\id,\quad\varphi^\prime(x,(x,y))=(x^3,(x^3,y^3)), \\
\Phi(x,y)=(x^3,y^3),\quad F_\Phi(x)=x^3. 
\end{gather*}
$\varphi$ is an isomorphism. We have 
\begin{equation*}
\begin{tikzcd}
\bb{R}\rtimes_\rho\Pair(\bb{R})\ar[r,"\varphi^\prime"]\ar[d,"\varphi_\rho"']\ar[rd,phantom,"\circlearrowright"]&\bb{R}\rtimes_\rho\Pair(\bb{R})\ar[d,"\varphi_\rho"]\\
\Pair(\bb{R})\ar[r,"\Phi"']&\Pair(\bb{R}). 
\end{tikzcd}
\end{equation*}
$\varphi^\prime$ is a semiconjugacy but is not a conjugacy. $F_\Phi$ is injective but is not an immersion. Let 
\begin{align*}
P\colon\bb{R}\times I&\to\Pair(\bb{R})\\
(x,t)&\mapsto(x,(1-t)x+tx^3). 
\end{align*}
Then $P(\bb{R}\times I)\in\mca{B}_{\Pair(\bb{R})}$ and $\varphi\sim_{bnt}\varphi^\prime$. 
\end{example}

\begin{example}
This example shows that we cannot drop the boundedness of $nt$-homotopy in Theorem \ref{ctop}, and that we need $\Phi$ being an isomorphism in Proposition \ref{btop}. Define $\rho\in\mca{A}(\bb{R},\bb{R})$ by $\rho(x,t)=x+t$. We have 
\begin{equation*}
\begin{tikzcd}
\bb{R}\rtimes_\rho\bb{R}\ar[r,"\id\times\id"]\ar[d,shift left]\ar[d,shift right]\ar[rd,phantom,"\circlearrowright"{xshift=-5}]&\bb{R}\rtimes_\rho\bb{R}\ar[d,shift left]\ar[d,shift right]\\
\bb{R}\ar[r,"\id"']&\bb{R}, 
\end{tikzcd}
\qquad
\begin{tikzcd}
\bb{R}\rtimes_\rho\bb{R}\ar[r,"0\times0"]\ar[d,shift left]\ar[d,shift right]\ar[rd,phantom,"\circlearrowright"{xshift=-5}]&\bb{R}\rtimes_\rho\bb{R}\ar[d,shift left]\ar[d,shift right]\\
\bb{R}\ar[r,"0"']&\bb{R}. 
\end{tikzcd}
\end{equation*}
$\id\times\id$ is an isomorphism. $0\times0$ is a semiconjugacy but not a conjugacy. 

\begin{claim}
The morphisms $\id\times\id$ and $0\times0$ are $nt$-homotopic. 
\end{claim}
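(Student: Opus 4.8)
The plan is to exhibit an explicit $nt$-homotopy. First I would fix the concrete model $\mca{H}:=\bb{R}\rtimes_\rho\bb{R}=\bb{R}\times\bb{R}$, with structure maps $t(x,g)=x$, $s(x,g)=\rho(x,g)=x+g$, unit $1_x=(x,0)$, product $(x,g)(x+g,g')=(x,g+g')$, and inverse $(x,g)^{-1}=(x+g,-g)$. In this model $\id\times\id$ is the identity morphism (an isomorphism) with base map $\id\colon\bb{R}\to\bb{R}$, and $0\times0$ is the constant morphism $(x,g)\mapsto(0,0)=1_0$ with base map the constant map $0$. By Definition \ref{nthomptppp} (applied with $\varphi=\id\times\id$, so $F_\varphi=\id$), it suffices to produce a continuous map $P\colon\bb{R}\times I\to\mca{H}$ such that $P(\cdot,t)\in C^\infty(\bb{R},\mca{H})$ for all $t$, $t P(\cdot,t)=\id$ for all $t$, $P(\cdot,0)=1_{\id}$, and $(\id\times\id)P(\cdot,1)=0\times0$.

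Next I would propose $P(x,t)=(x,-tx)$. The map is continuous, each $P(\cdot,t)$ is $C^\infty$ (polynomial), $t P(x,t)=x$, and $P(x,0)=(x,0)=1_x$, so the first three conditions are immediate. For the fourth I would compute $(\id\times\id)P(\cdot,1)$ using the formula $\varphi P(h)=P(t(h))^{-1}\varphi(h)P(s(h))$ from Lemma \ref{hnhncnnb}. With $\varphi=\id$, $h=(x,g)$, and $P(\cdot,1)(x)=(x,-x)$, one has $P(x,1)^{-1}=(0,x)$ and $P(s(h))=P(x+g,1)=(x+g,-(x+g))$, so
\[
((\id\times\id)P(\cdot,1))(x,g)=(0,x)(x,g)(x+g,-(x+g))=(0,x+g)(x+g,-(x+g))=(0,0),
\]
which is exactly $(0\times0)(x,g)$. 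Hence $P$ is an $nt$-homotopy from $\id\times\id$ to $0\times0$.

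I do not expect a genuine obstacle here: the only point requiring care is bookkeeping the source/target conventions in $\bb{R}\rtimes_\rho\bb{R}$ so that every composite appearing in the formula for $\varphi P$ is actually defined (each intermediate source must match the next target). It is worth remarking, for the purpose of the example, that $P(\bb{R}\times I)=\{(x,-tx):x\in\bb{R},\ t\in I\}$ is unbounded in $\mca{B}_{\mca{H}}$, and in fact any $nt$-homotopy between these morphisms must have unbounded image (since $P(\cdot,1)$ is forced to take $x$ to $(x,-x)$, as the displayed computation shows), so $\id\times\id\not\sim_{bnt}0\times0$; this is what makes the boundedness hypothesis in Theorem \ref{ctop} and the isomorphism hypothesis on $\Phi$ in Proposition \ref{btop} indispensable.
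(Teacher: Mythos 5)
Your proof is correct and is essentially the paper's argument: your $P(x,t)=(x,-tx)$ is exactly the paper's natural transformation $P_s(x)=-sx$ viewed in the action groupoid, and your endpoint computation $(\id\times\id)P(\cdot,1)=0\times0$ is all that Definition \ref{nthomptppp} requires (the paper merely computes the whole family $\varphi_s$, getting $\ol{\varphi_s}(x,t)=(1-s)t$, and reads off the endpoints). Your closing remark on the forced unboundedness of any such homotopy, hence $\id\times\id\not\sim_{bnt}0\times0$, is also correct and matches the stated purpose of the example.
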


\begin{proof}
For $s\in I$, define $P_s\in C^\infty(\bb{R},\bb{R})$ by $P_s(x)=-sx$, and $F_s\colon\bb{R}\to\bb{R}$ by $F_s(x)=\rho(x,P_s(x))=(1-s)x$. Let 
\begin{align*}
\varphi_s=(\id\times\id)(t,P_st)\colon\bb{R}\rtimes_\rho\bb{R}&\to\bb{R}\rtimes_\rho\bb{R}\\
(x,t)&\mapsto(F_s(x),\ol{\varphi_s}(x,t)), 
\end{align*}
where the notation $(\id\times\id)(t,P_st)$ is from Lemma \ref{hnhncnnb} and $t$ is the target map. Then 
\begin{equation*}
\ol{\varphi_s}(x,t)=-P_s(x)+t+P_s(x+t)=sx+t-s(x+t)=(1-s)t, 
\end{equation*}
so $\varphi_0=\id\times\id$ and $\varphi_1=0\times0$ are $nt$-homotopic. 
\end{proof}
\end{example}

\begin{example}
Let $G$, $G^\prime$ be Lie groups and $\rho\in\mca{A}(\mr{pt},G)$, $\rho^\prime\in\mca{A}(\mr{pt},G^\prime)$. Then $\Hom(\mr{pt}\rtimes_\rho G,\mr{pt}\rtimes_{\rho^\prime}G^\prime)\simeq\Hom(G,G^\prime)$. $\varphi$, $\varphi^\prime\in\Hom(G,G^\prime)$ are orbitwise homotopic if and only if they are homotopic through homomorphisms. So $\varphi$ being an isomorphism does not imply $\varphi^\prime$ being an isomorphism. This also shows that $\Phi$ being an isomorphism in 1 of Proposition \ref{btop} is necessary. $\varphi$, $\varphi^\prime\in\Hom(G,G^\prime)$ are $nt$-homotopic if and only if they are conjugate by an element of the connected component of $1$ in $G^\prime$. 
\end{example}

\begin{example}\label{notadiff}
We cannot prove that $F^\prime$ is a diffeomorphism in $1$ of Proposition \ref{btop} as the following example shows. Define $\rho$, $\rho^\prime\in\mca{A}(\bb{R},\bb{R})$ by $\rho(x,t)=xe^t$ and $\rho^\prime(x,t)=xe^{3t}$. Then $\mca{O}_\rho=\mca{O}_{\rho^\prime}=\{\bb{R}_{<0},\{0\},\bb{R}_{>0}\}$. We have an isomorphism 
\begin{equation*}
\begin{tikzcd}
\bb{R}\rtimes_\rho\bb{R}\ar[r,"\id\times\frac{1}{3}"]\ar[d,shift left]\ar[d,shift right]\ar[rd,phantom,"\circlearrowright"{xshift=-5}]&\bb{R}\rtimes_{\rho^\prime}\bb{R}\ar[d,shift left]\ar[d,shift right]\\
\bb{R}\ar[r,"\id"']&\bb{R}. 
\end{tikzcd}
\end{equation*}
(Actually it is a conjugacy.) Define $F^\prime\colon\bb{R}\to\bb{R}$ by $F^\prime(x)=x^3$. Then $F^\prime$ is not a diffeomorphism and 
\begin{equation*}
\begin{tikzcd}
\bb{R}\rtimes_\rho\bb{R}\ar[r,"F^\prime\times\id"]\ar[d,shift left]\ar[d,shift right]\ar[rd,phantom,"\circlearrowright"{xshift=-5}]&\bb{R}\rtimes_{\rho^\prime}\bb{R}\ar[d,shift left]\ar[d,shift right]\\
\bb{R}\ar[r,"F^\prime"']&\bb{R}
\end{tikzcd}
\end{equation*}
is a semiconjugacy but not a conjugacy. 

\begin{claim}
The morphisms $\id\times\frac{1}{3}$ and $F^\prime\times\id$ are orbitwise homotopic but not $nt$-homotopic. 
\end{claim}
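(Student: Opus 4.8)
The plan is to prove the two halves by quite different means.

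\emph{Not $nt$-homotopic.} Suppose, for a contradiction, that $P\colon\bb{R}\times I\to\bb{R}\rtimes_{\rho^\prime}\bb{R}$ were an $nt$-homotopy from $\id\times\tfrac13$ to $F^\prime\times\id$ (Definition~\ref{nthomptppp}). The base map of $\id\times\tfrac13$ is $\id$, so $t^\prime P(\cdot,t)=\id$ forces $P(x,t)=(x,p(x,t))$ with $p$ continuous, each $p(\cdot,t)\in C^\infty(\bb{R},\bb{R})$, and $p(\cdot,0)\equiv 0$. By the formula $((\id\times\tfrac13)P(\cdot,1))(x,g)=P(x,1)^{-1}\,(\id\times\tfrac13)(x,g)\,P(\rho(x,g),1)$ from Lemma~\ref{hnhncnnb}, computing the product in $\bb{R}\rtimes_{\rho^\prime}\bb{R}$ shows that the first coordinate of $(\id\times\tfrac13)P(\cdot,1)$ at $(x,g)$ is $xe^{3p(x,1)}$, i.e.\ its base map is $x\mapsto xe^{3p(x,1)}$. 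Since $(\id\times\tfrac13)P(\cdot,1)=F^\prime\times\id$ has base map $x\mapsto x^3$, we get $xe^{3p(x,1)}=x^3$, hence $p(x,1)=\tfrac23\log|x|$ for all $x\neq 0$ — impossible for a smooth function on $\bb{R}$, since $\tfrac23\log|x|\to-\infty$ as $x\to0$. So no $nt$-homotopy exists; note only the base-map equation is used.

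\emph{Orbitwise homotopic.} Here I would exhibit an explicit path $\eta_s\in\Hom(\bb{R}\rtimes_\rho\bb{R},\bb{R}\rtimes_{\rho^\prime}\bb{R})$, $s\in I$, with $\eta_0=\id\times\tfrac13$ and $\eta_1=F^\prime\times\id$, such that $s\mapsto\eta_s$ is an orbitwise homotopy. Any morphism $\bb{R}\rtimes_\rho\bb{R}\to\bb{R}\rtimes_{\rho^\prime}\bb{R}$ has the form $(x,g)\mapsto(h(x),c(x,g))$ with $h=F_\eta$ smooth; source-compatibility reads $h(x)e^{3c(x,g)}=h(xe^g)$, which for $x\neq0$ forces $c(x,g)=\tfrac13\log(h(xe^g)/h(x))$, while on the isotropy group at $0$ the morphism is a smooth endomorphism of $\bb{R}$, i.e.\ $c(0,g)=\mu g$. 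Since $h_0=\id$ and $h_1(x)=x^3$, the only data to choose is a jointly continuous family $h_s$ of smooth self-maps of $\bb{R}$ carrying each of $\bb{R}_{<0},\{0\},\bb{R}_{>0}$ into itself, running from $\id$ to $x\mapsto x^3$; one then sets $\eta_s(x,g)=(h_s(x),c_s(x,g))$ with $c_s$ forced off the axis and equal to its limit on it, and checks: (i) each $\eta_s\in\Hom$ (the cocycle identity for $\eta_s$ is automatic from source-compatibility once $h_s$ preserves the three orbits); (ii) $F_{\eta_s}=h_s$ is an orbitwise homotopy of base maps (immediate); (iii) $\eta$ is continuous on $(\bb{R}\rtimes_\rho\bb{R})\times I$. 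Equivalently, since $\id\times\tfrac13$ is a (conjugacy, hence) isomorphism, by Lemma~\ref{simsim} it suffices to build such a path from $\id$ to the endomorphism $(x,g)\mapsto(x^3,3g)$ of $\bb{R}\rtimes_\rho\bb{R}$, after which Proposition~\ref{btop} (part~1) can be quoted to recover, as a sanity check, that $F^\prime$ is bijective but not a diffeomorphism.

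\emph{The main obstacle} is step (iii): the joint continuity of $\eta$, equivalently of $c$, at the points $(0,g,s)$. Writing $h_s(x)=x\,u_s(x)$ with $u_s$ smooth and positive off $0$, one has $c_s(x,g)=g+\tfrac13\log(u_s(xe^g)/u_s(x))$ for $x\neq0$, and the limit of the last term as $x\to0$ is governed by the order of vanishing of $u_s$ at $0$ — a locally constant, integer-valued invariant of $s$ — whereas the isotropy slopes of the endpoints differ ($\mu=\tfrac13$ for $\eta_0$, $\mu=1$ for $\eta_1$). Arranging the family $h_s$ so that the forced off-axis values of $c_s$ nonetheless glue to a function continuous across $\{x=0\}$ is the heart of the argument and the one place where the particular form of $\rho$ and $\rho^\prime$ is essential; I expect this to be where the real work lies, while the remaining verifications (that each $\eta_s$ is a morphism, restricts correctly to orbits, and has the stated endpoints) are routine and parallel the other examples of this section.
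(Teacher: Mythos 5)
Your argument that the two morphisms are not $nt$-homotopic is correct and complete, and it takes a different route from the paper's: writing $P(x,t)=(x,p(x,t))$, the base map of $(\id\times\tfrac13)P(\cdot,1)$ is $s^\prime P(\cdot,1)\colon x\mapsto xe^{3p(x,1)}$ by Lemma \ref{hnhncnnb}, and equating this with $x\mapsto x^3$ forces $p(x,1)=\tfrac23\log\lvert x\rvert$ for $x\neq0$, which has no continuous extension to $x=0$; the paper instead quotes part 2 of Proposition \ref{btop} (an $nt$-homotopy to the isomorphism $\id\times\tfrac13$ would make $F^\prime\times\id$ a conjugacy, hence $F^\prime$ a diffeomorphism). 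The genuine gap is the positive half: you never exhibit the family of morphisms, and you explicitly defer the one step that carries the content. The paper's proof is an explicit construction: $F_s(x)=sx^3+(1-s)x$, $P_s(x)=\tfrac13\log(sx^2+1-s)$ (smooth on all of $\bb{R}$ for $0\le s<1$), and $\varphi_s=(\id\times\tfrac13)(t,P_st)$, whose second component is $\tfrac13t+\tfrac13\log\frac{sx^2e^{2t}+1-s}{sx^2+1-s}$; each $\varphi_s$ is a smooth morphism $\bb{R}\rtimes_\rho\bb{R}\to\bb{R}\rtimes_{\rho^\prime}\bb{R}$, equal to $\id\times\tfrac13$ at $s=0$ and, after the unique continuous extension across $x=0$ (where the second component becomes $t$), to $F^\prime\times\id$ at $s=1$.

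Moreover, the obstacle you singled out is not a removable technicality but an actual obstruction to the plan you sketch. Since the base maps $h_s$ must send each of $\bb{R}_{<0}$, $\{0\}$, $\bb{R}_{>0}$ into itself (they form an orbitwise homotopy starting at $\id$), each slice restricts on the isotropy group at $0$ to $g\mapsto\mu_sg$, and continuity of the slice at $x=0$ forces $e^{3\mu_sg}=\lim_{x\to0}h_s(xe^g)/h_s(x)$ for all $g$; this makes $3\mu_s$ equal to the order of vanishing of $h_s$ at $0$, which must be finite (a smooth function flat at $0$ cannot satisfy $h(\lambda x)/h(x)\to\lambda^{c}$ for all $\lambda>0$ with finite $c$). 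Hence $\mu_s\in\tfrac13\bb{Z}_{>0}$ for every $s$, so it cannot move continuously from $\mu_0=\tfrac13$ to $\mu_1=1$: no choice of $h_s$ makes the forced off-axis values of $c_s$ glue into a family continuous in $(x,g,s)$ at the points $(0,g)$ with $g\neq0$. Note that the paper's family has the same feature: $\varphi_s(0,t)=(0,\tfrac13t)$ for $s<1$ while $\varphi_1(0,t)=(0,t)$, so $((x,t),s)\mapsto\varphi_s(x,t)$ is not continuous at $((0,t),1)$ for $t\neq0$. Consequently, if Definition \ref{orbitwiseh} is read as requiring continuity of $\eta$ on $\mca{H}\times I$, your obstruction refutes the positive half rather than being the missing step; the assertion only survives under the weaker reading in which each slice is a morphism and the base maps form a (jointly continuous) orbitwise homotopy, which is what the paper's family does deliver. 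You should either complete the construction in that weaker sense (the paper's formulas do it in two lines) or flag the tension with the definition explicitly, rather than leave the continuity across $\{x=0\}$ as future work.
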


\begin{proof}
For $s\in I$, define $F_s\colon\bb{R}\to\bb{R}$ by $F_s(x)=sx^3+(1-s)x$. Since $F_0=\id$ and $F_1=F^\prime$, $\id$ and $F^\prime$ are orbitwise homotopic. 

Let us find $P_s\colon\bb{R}\setminus\{0\}\to\bb{R}$ such that $F_s(x)=\rho^\prime(F_0(x),P_s(x))$ for all $x\in\bb{R}\setminus\{0\}$. We have 
\begin{gather*}
sx^3+(1-s)x=xe^{3P_s(x)}, \\
sx^2+1-s=e^{3P_s(x)}, 
\end{gather*}
hence 
\begin{equation*}
P_s(x)=\frac{1}{3}\log(sx^2+1-s). 
\end{equation*}
For $0\leq s<1$, $P_s$ extends to a unique $C^\infty$ function on $\bb{R}$, while $P_1(x)=\frac{1}{3}\log(x^2)$ does not extend to a continuous function on $\bb{R}$. 

Consider 
\begin{align*}
\varphi_s=\left(\id\times\frac{1}{3}\right)(t,P_st)\colon(\bb{R}\setminus\{0\})\rtimes_\rho\bb{R}&\to\bb{R}\rtimes_{\rho^\prime}\bb{R}\\
(x,t)&\mapsto(F_s(x),\ol{\varphi_s}(x,t)), 
\end{align*}
where the notation $\left(\id\times\frac{1}{3}\right)(t,P_st)$ is that defined in Lemma \ref{hnhncnnb} and $t$ is the target map. Then 
\begin{align*}
\ol{\varphi_s}(x,t)&=-P_s(x)+\frac{1}{3}t+P_s(\rho(x,t))\\
&=-\frac{1}{3}\log(sx^2+1-s)+\frac{1}{3}t+\frac{1}{3}\log(sx^2e^{2t}+1-s)
\end{align*}
and 
\begin{gather*}
\ol{\varphi_0}(x,t)=\frac{1}{3}t, \\
\ol{\varphi_1}(x,t)=-\frac{1}{3}\log(x^2)+\frac{1}{3}t+\frac{1}{3}\log(x^2e^{2t})=t. 
\end{gather*}
So $\varphi_s$ has a unique continuous extension 
\begin{align*}
\varphi_s\colon\bb{R}\rtimes_\rho\bb{R}&\to\bb{R}\rtimes_{\rho^\prime}\bb{R}\\
(x,t)&\mapsto(F_s(x),\ol{\varphi_s}(x,t)). 
\end{align*}
Hence $\id\times\frac{1}{3}$ and $F^\prime\times\id$ are orbitwise homotopic. But $\id\times\frac{1}{3}$ and $F^\prime\times\id$ are not $nt$-homotopic since if they are, $F^\prime\times\id$ is a conjugacy by 2 of Proposition \ref{btop}, hence $F^\prime$ is a diffeomorphism, which is a contradiction. 
\end{proof}
\end{example}

\subsection{``Cocycle rigidity'' implies ``parameter rigidity''}
\begin{thm}\label{cosp}
Let $\mca{G}\rightrightarrows M$ be a Lie groupoid, $N_0$ be a $C^\infty$ manifold, $\nu_0\colon N_0\to M$ be a surjective submersion and $\rho_0\in\mca{A}(\nu_0,\mca{G})$. Assume that for any $c\in\ul{\Hom}^{bb}(N_0\rtimes_{\rho_0}\mca{G},\mca{G})^\times$, there exists $\Phi\in\ul{\End}(\mca{G})$ such that $c\mathrel{\ul{\sim_{bnt}}}\Phi\varphi_{\rho_0}$ with respect to $\mca{B}_\mca{G}$. Then $T_{bnt}(\rho_0)$ is a singleton. 
\end{thm}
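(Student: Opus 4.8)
The strategy is to reduce the claim, via the presentation of $T_{bnt}(\rho_0)$ by action groupoid morphisms, to an application of the Semiconjugacy-to-Conjugacy Theorem \ref{ctop}. By Lemma \ref{magid} it suffices to show that $\MAG^{bb}_{\id}(\rho_0)/\ul{\sim_{bnt}}$ is a singleton, and since $(N_0\rtimes_{\rho_0}\mca{G},\id)$ lies in $\MAG^{bb}_{\id}(\rho_0)$, it is enough to prove that every $(N_0\rtimes_\rho\mca{G},\varphi)\in\MAG^{bb}_{\id}(\rho_0)$ satisfies $(N_0\rtimes_\rho\mca{G},\varphi)\mathrel{\ul{\sim_{bnt}}}(N_0\rtimes_{\rho_0}\mca{G},\id)$. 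For such a marked action groupoid we have $\nu=\nu_0$, $F_\varphi=\id$, and by Proposition \ref{magidhom} the cocycle $c:=\varphi_\rho\varphi$ lies in $\ul{\Hom}^{bb}(N_0\rtimes_{\rho_0}\mca{G},\mca{G})^\times$ with $\varphi=c^!$ and $\rho_c=\rho$.

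First I would apply the hypothesis to $c$: there is $\Phi\in\ul{\End}(\mca{G})$ with $c\mathrel{\ul{\sim_{bnt}}}\Phi\varphi_{\rho_0}$ with respect to $\mca{B}_\mca{G}$. Running the construction in the proof of Proposition \ref{ampre}, if $P\colon N_0\times I\to\mca{G}$ is a bounded $nt$-homotopy realizing this and $\wt{P}(y,t):=(y,P(y,t))\in N_0\rtimes_\rho\mca{G}$, then $\wt{P}$ is a bounded $nt$-homotopy between $c^!=\varphi$ and $\psi':=c^!\wt{P}(\cdot,1)$ with $\wt{P}(N_0\times I)\in\mca{B}_\rho\subset\mca{B}_{N_0\rtimes_\rho\mca{G}}$ (Proposition \ref{crhol}), and $\varphi_\rho\psi'=\Phi\varphi_{\rho_0}$, so $\psi'=F_{\psi'}\rtimes\Phi$ is a semiconjugacy with constant part $\Phi$. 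Since $\varphi_\rho\circ\bigl(\varphi\wt{P}(\cdot,t)\bigr)=cP(\cdot,t)$ has base map $\nu_0$, each $\varphi\wt{P}(\cdot,t)$ lies in $\ul{\Hom}(N_0\rtimes_{\rho_0}\mca{G},N_0\rtimes_\rho\mca{G})$; hence $\varphi\mathrel{\ul{\sim_{bnt}}}\psi'$, and $\psi'\in\ul{\Hom}(N_0\rtimes_{\rho_0}\mca{G},N_0\rtimes_\rho\mca{G})$ by Lemma \ref{ulid} (using $F_\Phi=\id$ and the surjectivity of $\nu_0$).

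Next I would apply Theorem \ref{ctop} to $\varphi=c^!$ (an isomorphism) and $\varphi'=\psi'=F_{\psi'}\rtimes\Phi$ (a semiconjugacy) into $N_0\rtimes_\rho\mca{G}$. All four hypotheses hold: $c^!\sim_{bnt}\psi'$ with respect to $\mca{B}_{N_0\rtimes_\rho\mca{G}}$ by the previous paragraph; $F_\Phi=\id_M$ is injective on $\nu_0(N_0)=M$; $F_\Phi=\id$ restricts on each orbit $\ul{O}\in\mca{O}_\mca{G}$ to the inclusion $\ul{O}\hookrightarrow M$, which is an immersion by Proposition \ref{isoorb}; and $\nu_0$ is a surjective submersion. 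Therefore $\psi'$ is a conjugacy. Then $(\psi')^{-1}$ is a conjugacy in $\ul{\Hom}(N_0\rtimes_\rho\mca{G},N_0\rtimes_{\rho_0}\mca{G})$, it is bounded (Proposition \ref{semibounded}), and $(\psi')^{-1}\varphi\mathrel{\ul{\sim_{bnt}}}(\psi')^{-1}\psi'=\id$ by Lemma \ref{ulsimsim2}; by Definition \ref{gmngprmagl} this exhibits $(N_0\rtimes_\rho\mca{G},\varphi)\mathrel{\ul{\sim_{bnt}}}(N_0\rtimes_{\rho_0}\mca{G},\id)$, and we are done.

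I expect the main difficulty to be the bookkeeping in the second step: transferring the cocycle-level relation $c\mathrel{\ul{\sim_{bnt}}}\Phi\varphi_{\rho_0}$ (with respect to $\mca{B}_\mca{G}$) to a genuine $\ul{\sim_{bnt}}$ between $c^!$ and the semiconjugacy $\psi'$ (with respect to $\mca{B}_{N_0\rtimes_\rho\mca{G}}$), verifying that $\psi'$ is a semiconjugacy whose constant part has base map $\id$, and checking that $\psi'$ and the intermediate maps $\varphi\wt{P}(\cdot,t)$ lie in $\ul{\Hom}$. These are precisely the manipulations already carried out in the proof of Proposition \ref{ampre}; the one essential change is that there $\Phi$ is an isomorphism and Proposition \ref{btop} is invoked, whereas here $\Phi$ is a priori only an endomorphism of $\mca{G}$, so the stronger Theorem \ref{ctop} must be used in its place — and the degree and covering-space arguments inside Theorem \ref{ctop} are where the real content of the conclusion lives.
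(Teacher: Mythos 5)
Your proposal is correct and follows essentially the same route as the paper: reduce via $\MAG^{bb}_{\id}(\rho_0)$, apply the hypothesis to the cocycle $\varphi_\rho\varphi$, lift the bounded $nt$-homotopy $P$ to $\wt{P}(y,t)=(y,P(y,t))$ to get a semiconjugacy $\varphi'=\varphi\wt{P}(\cdot,1)$ with $\varphi\mathrel{\ul{\sim_{bnt}}}\varphi'$, and invoke Theorem \ref{ctop} (whose injectivity/immersion hypotheses are automatic since $F_\Phi=\id$) to conclude $\varphi'$ is a conjugacy. The only differences are presentational — you spell out the bornology transfer $\mca{B}_\rho\subset\mca{B}_{N_0\rtimes_\rho\mca{G}}$ and the final step via $(\psi')^{-1}$, which the paper leaves implicit (resp. handles via Lemma \ref{parae}).
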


\begin{proof}
Let $(N_0\rtimes_\rho\mca{G},\varphi)\in\MAG_{\id}^{bb}(\rho_0)$. Then $\varphi_\rho\varphi\in\ul{\Hom}^{bb}(N_0\rtimes_{\rho_0}\mca{G},\mca{G})^\times$ by Proposition \ref{magidhom}. By assumption there exists $\Phi\in\ul{\End}(\mca{G})$ such that $\varphi_\rho\varphi\mathrel{\ul{\sim_{bnt}}}\Phi\varphi_{\rho_0}$. Note that $F_{\varphi_\rho\varphi}=\nu_0$. There exists a continuous map $P\colon N_0\times I\to\mca{G}$ such that: 
\begin{itemize}
\setlength\itemsep{0em}
\item $P(\cdot,t)\in C^\infty(N_0,\mca{G})$ for all $t\in I$
\item $\nu_0=tP(\cdot,t)=sP(\cdot,t)$ for all $t\in I$
\item $P(\cdot,0)=1_{\nu_0(\cdot)}$
\item $\Phi\varphi_{\rho_0}=(\varphi_\rho\varphi)P(\cdot,1)$
\item $P(N_0\times I)\in\mca{B}_\mca{G}$. 
\end{itemize}
Define 
\begin{align*}
\wt{P}\colon N_0\times I&\to N_0\rtimes_\rho\mca{G}\\
(y,t)&\mapsto(y,P(y,t)). 
\end{align*}
Then 
\begin{itemize}
\setlength\itemsep{0em}
\item $\wt{P}$ is continuous
\item $\wt{P}(\cdot,t)\in C^\infty(N_0,N\rtimes_\rho\mca{G})$ for all $t\in I$
\item $t\wt{P}(\cdot,t)=\id$
\item $\wt{P}(\cdot,0)=1_{\id(\cdot)}$
\item $\nu_0s\wt{P}(y,t)=\nu_0\rho(y,P(y,t))=sP(y,t)=\nu_0(y)$ for all $(y,t)\in N_0\times I$
\item $\varphi_\rho\wt{P}(N_0\times I)=P(N_0\times I)\in\mca{B}_\mca{G}$, hence $\wt{P}(N_0\times I)\in\mca{B}_\rho$. 
\end{itemize}
Let $\varphi^\prime=\varphi\wt{P}(\cdot,1)\in\ul{\Hom}(N_0\rtimes_{\rho_0}\mca{G},N_0\rtimes_\rho\mca{G})$. Then $\varphi\mathrel{\ul{\sim_{bnt}}}\varphi^\prime$ with respect to $\mca{B}_\rho$. Since $\varphi_\rho\varphi^\prime=(\varphi_\rho\varphi)P(\cdot,1)=\Phi\varphi_{\rho_0}$, $\varphi^\prime$ is a semiconjugacy. $\varphi^\prime$ is a conjugacy by Theorem \ref{ctop}. Hence $(N_0\rtimes_\rho\mca{G},\varphi)\mathrel{\ul{\sim_{bnt}}}(N_0\rtimes_{\rho_0}\mca{G},\id)$ and $T_{bnt}(\rho_0)$ is a singleton. 
\end{proof}

\begin{rem}
Theorem \ref{cosp} is a generalization of Proposition 2.3 in \cite{MM}, where they assume, among other things, $M$ is a point, $\mca{G}$ is contractible, $N$ is compact and $\rho$ is locally free. Although not mentioned explicitly, Asaoka \cite{A} essentially proves the theorem with the assumption that $M$ is a point, $\mca{G}$ is simply connected, $N$ is compact and $\rho$ is locally free. 
\end{rem}

\section{The Teichm\"{u}ller spaces of transitive actions}\label{1414}
\subsection{Based Lie groupoids}
\begin{dfn}\label{blghom0}
We say $(\mca{G}\rightrightarrows M,x)$ is a \emph{based Lie groupoid} if $\mca{G}\rightrightarrows M$ is a Lie groupoid and $x\in M$. For based Lie groupoids $(\mca{G}\rightrightarrows M,x)$ and $(\mca{G}^\prime\rightrightarrows M^\prime,x^\prime)$, let 
\begin{gather*}
\Hom_0(\mca{G},\mca{G}^\prime)=\left\{\varphi\in\Hom(\mca{G},\mca{G}^\prime)\ \middle|\ F_\varphi(x)=x^\prime\right\}, \\
\Hom_0(\mca{G},\mca{G}^\prime)^\times=\Hom(\mca{G},\mca{G}^\prime)^\times\cap\Hom_0(\mca{G},\mca{G}^\prime). 
\end{gather*}
For $\varphi$, $\varphi^\prime\in\Hom_0(\mca{G},\mca{G}^\prime)$, we write $\varphi\sim_{o,0}\varphi^\prime$ if there exists an orbitwise homotopy $\eta$ between $\varphi$ and $\varphi^\prime$ such that $\eta(\cdot,t)\in\Hom_0(\mca{G},\mca{G}^\prime)$ for all $t\in I$. 
\end{dfn}

\begin{dfn}
Let $\mca{G}\rightrightarrows M$ be a Lie groupoid, $N$, $N^\prime$ be $C^\infty$ manifolds, $\nu\colon N\to M$, $\nu^\prime\colon N^\prime\to M$ be $C^\infty$ maps, $y\in N$, $y^\prime\in N^\prime$ be such that $\nu(y)=\nu^\prime(y^\prime)$, and $\rho\in\mca{A}(\nu,\mca{G})$, $\rho^\prime\in\mca{A}(\nu^\prime,\mca{G})$. Let 
\begin{equation*}
\ul{\Hom}_0(N\rtimes_\rho\mca{G},N^\prime\rtimes_{\rho^\prime}\mca{G})=\left\{\varphi\in\ul{\Hom}(N\rtimes_\rho\mca{G},N^\prime\rtimes_{\rho^\prime}\mca{G})\ \middle|\ F_\varphi(y)=y^\prime\right\}. 
\end{equation*}
For $\varphi$, $\varphi^\prime\in\ul{\Hom}_0(N\rtimes_\rho\mca{G},N^\prime\rtimes_{\rho^\prime}\mca{G}^\prime)$, we write $\varphi\mathrel{\ul{\sim_{o,0}}}\varphi^\prime$ if there exists an orbitwise homotopy $\eta$ between $\varphi$ and $\varphi^\prime$ such that $\eta(\cdot,t)\in\ul{\Hom}_0(N\rtimes_\rho\mca{G},N^\prime\rtimes_{\rho^\prime}\mca{G})$ for all $t\in I$. 
\end{dfn}

\begin{dfn}
Let $\mca{G}\rightrightarrows M$ be a Lie groupoid, $N_0$ be a $C^\infty$ manifold, $\nu_0\colon N_0\to M$ be a surjective submersion, $\rho_0\in\mca{A}(\nu_0,\mca{G})$ and $y_0\in N_0$. For $(N\rtimes_\rho\mca{G},\varphi)\in\MAG(\rho_0)$, letting $y=F_\varphi(y_0)\in N$, $(N\rtimes_\rho\mca{G},y)$ is a based Lie groupoid. In this case we write $(N\rtimes_\rho\mca{G},y,\varphi)$. 

For $(N\rtimes_\rho\mca{G},y,\varphi)$, $(N^\prime\rtimes_{\rho^\prime}\mca{G},y^\prime,\varphi^\prime)\in\MAG(\rho_0)$, we write $(N\rtimes_\rho\mca{G},y,\varphi)\mathrel{\ul{\sim_{o,0}}}(N^\prime\rtimes_{\rho^\prime}\mca{G},y^\prime,\varphi^\prime)$ if there exists a conjugacy $\psi\in\ul{\Hom}_0(N\rtimes_\rho\mca{G},N^\prime\rtimes_{\rho^\prime}\mca{G})$ such that $\psi\varphi\mathrel{\ul{\sim_{o,0}}}\varphi^\prime$ in $\ul{\Hom}_0(N_0\rtimes_{\rho_0}\mca{G},N^\prime\rtimes_{\rho^\prime}\mca{G})$. 
\end{dfn}

\begin{dfn}
Let $\mca{G}\rightrightarrows M$ be a Lie groupoid. A \emph{bisection} of $\mca{G}$ is a $C^\infty$ map $\sigma\colon M\to\mca{G}$ such that: 
\begin{itemize}
\setlength\itemsep{0em}
\item $t\sigma=\id$
\item $s\sigma\colon M\to M$ is a diffeomorphism. 
\end{itemize}
Let $\sigma$ be a bisection of $\mca{G}$. Define 
\begin{align*}
\tau\colon M&\to\mca{G}\\
x&\mapsto\sigma((s\sigma)^{-1}(x))^{-1}. 
\end{align*}
Then $\tau$ is $C^\infty$, $t\tau=\id$ and $s\tau=(s\sigma)^{-1}$. Hence $\tau$ is a bisection of $\mca{G}$. The map 
\begin{align*}
\mca{G}&\to\mca{G}\\
g&\mapsto\sigma(t(g))^{-1}g\sigma(s(g))
\end{align*}
is an isomorphism with base map $s\sigma\colon M\to M$. 
\end{dfn}

\begin{lem}
Let $\mca{G}\rightrightarrows M$ be a Lie groupoid, $N$ be a $C^\infty$ manifold, $\nu\colon N\to M$ be a $C^\infty$ map, $\rho\in\mca{A}(\nu,\mca{G})$ and $\sigma$ be a bisection of $\mca{G}$. Define 
\begin{align*}
\varphi_\sigma\colon N\rtimes_\rho\mca{G}&\to N\rtimes_\rho\mca{G}\\
(y,g)&\mapsto(\rho(y,\sigma\nu(y)),\sigma(t(g))^{-1}g\sigma(s(g))). 
\end{align*}
Then $\varphi_\sigma$ is a conjugacy with base map $\rho(\cdot,\sigma\nu(\cdot))$. 
\end{lem}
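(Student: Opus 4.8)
The plan is to realise $\varphi_\sigma$ as an instance of the natural-transformation construction of Lemma~\ref{hnhncnnb} applied to the identity morphism of $N\rtimes_\rho\mca{G}$, and then to write down an explicit two-sided inverse. Set $\Phi\colon\mca{G}\to\mca{G}$, $\Phi(g)=\sigma(t(g))^{-1}g\sigma(s(g))$; as recorded in the definition of a bisection above, $\Phi$ is an isomorphism with base map $s\sigma$. Let $\tau$ be the companion bisection $\tau(x)=\sigma((s\sigma)^{-1}(x))^{-1}$, so that $t\tau=\id$ and $s\tau=(s\sigma)^{-1}$; a one-line check gives $\tau(s\sigma(x))=\sigma(x)^{-1}$ and shows the companion bisection of $\tau$ is $\sigma$ again.

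First I would observe that $P\colon N\to N\rtimes_\rho\mca{G}$, $P(y)=(y,\sigma\nu(y))$, is a well-defined $C^\infty$ map (here one uses $t\sigma=\id$, so $t(\sigma\nu(y))=\nu(y)$) with $t\circ P=\id_N$. Thus $(\id_{N\rtimes_\rho\mca{G}},P)$ lies in the domain of the action of Lemma~\ref{hnhncnnb} with $\mca{H}=\mca{H}'=N\rtimes_\rho\mca{G}$. Unwinding $(\id\cdot P)(y,g)=P(t(y,g))^{-1}(y,g)\,P(s(y,g))=(y,\sigma\nu(y))^{-1}(y,g)(\rho(y,g),\sigma\nu(\rho(y,g)))$ with the composition and inversion formulas of Proposition~\ref{actiongroupoid}, and using $\sigma\nu(y)=\sigma(t(g))$ together with $\sigma\nu(\rho(y,g))=\sigma(s(g))$, one obtains $(\id\cdot P)(y,g)=(\rho(y,\sigma\nu(y)),\sigma(t(g))^{-1}g\sigma(s(g)))=\varphi_\sigma(y,g)$. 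Hence Lemma~\ref{hnhncnnb} already gives $\varphi_\sigma\in\Hom(N\rtimes_\rho\mca{G},N\rtimes_\rho\mca{G})$ with base map $F_{\varphi_\sigma}=\id_N\cdot P=s\circ P=\rho(\cdot,\sigma\nu(\cdot))$, as asserted. (This simultaneously disposes of well-definedness, smoothness and multiplicativity; each can of course also be verified by hand, the only point being that all the products in $\mca{G}$ that occur are composable because $\nu(y)=t(g)$ and $t\sigma=\id$.)

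Next I would check that $\varphi_\sigma=F_{\varphi_\sigma}\rtimes\Phi$ is a semiconjugacy. Since $\varphi_\rho\colon N\rtimes_\rho\mca{G}\to\mca{G}$ is the second-projection morphism and $\varphi_\rho(P(y))=\sigma\nu(y)$, applying the (multiplicative) map $\varphi_\rho$ to $(\id\cdot P)(y,g)$ and using $\sigma\nu(y)=\sigma(t(g))$, $\sigma\nu(\rho(y,g))=\sigma(s(g))$ gives $\varphi_\rho(\varphi_\sigma(y,g))=\sigma(t(g))^{-1}g\sigma(s(g))=\Phi(\varphi_\rho(y,g))$, which is exactly the commuting square of Definition~\ref{semiconconpng}.

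Finally, the same construction applied to $\tau$ produces $\varphi_\tau\in\Hom(N\rtimes_\rho\mca{G},N\rtimes_\rho\mca{G})$, again a semiconjugacy, with constant part the isomorphism $g\mapsto\tau(t(g))^{-1}g\tau(s(g))$. I would then verify $\varphi_\tau\circ\varphi_\sigma=\id$ by direct computation: the first component reduces, via $\nu(\rho(y,\sigma\nu(y)))=s\sigma(\nu(y))$, $\tau(s\sigma(x))=\sigma(x)^{-1}$ and the action axioms, to $\rho\bigl(y,\sigma\nu(y)\sigma\nu(y)^{-1}\bigr)=y$; the second component reduces, via $\tau\bigl(s\sigma(t(g))\bigr)^{-1}=\sigma(t(g))$ and $\tau\bigl(s\sigma(s(g))\bigr)^{-1}=\sigma(s(g))$, to $\sigma(t(g))\cdot\sigma(t(g))^{-1}g\sigma(s(g))\cdot\sigma(s(g))^{-1}=g$. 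Because the companion bisection of $\tau$ is $\sigma$, the identical computation with $\sigma$ and $\tau$ interchanged yields $\varphi_\sigma\circ\varphi_\tau=\id$. Thus $\varphi_\tau$ is a semiconjugacy inverse to $\varphi_\sigma$, so $\varphi_\sigma$ is a conjugacy by Definition~\ref{semiconconpng}. There is no real obstacle in this argument: everything is elementary bookkeeping with the source and target maps, the only step needing a little care being to track which products in $\mca{G}$ are defined and to carry out the cancellations $\sigma\nu(y)=\sigma(t(g))$ and $\tau(s\sigma(x))=\sigma(x)^{-1}$ correctly.
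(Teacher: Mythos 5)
Your proof is correct and takes essentially the same route as the paper: the paper verifies directly that $\varphi_\sigma$ is a morphism and then notes $\varphi_\sigma\varphi_\tau=\id$, $\varphi_\tau\varphi_\sigma=\id$ for the companion bisection $\tau$, which is exactly the inverse you construct and check. Your only deviation is to obtain the morphism property and the base map by recognizing $\varphi_\sigma$ as $\id\cdot P$ with $P(y)=(y,\sigma\nu(y))$ via Lemma~\ref{hnhncnnb}, a harmless repackaging of the paper's direct multiplicativity computation, while your explicit verification of the semiconjugacy squares and of the two inverse identities simply fills in details the paper leaves to the reader.
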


\begin{proof}
We have 
\begin{equation*}
\varphi_\sigma((y,g)(\rho(y,g),g^\prime))=\varphi_\sigma(y,gg^\prime)=(\rho(y,\sigma\nu(y)),\sigma(t(g))^{-1}gg^\prime\sigma(s(g^\prime))), 
\end{equation*}
\begin{align*}
\varphi_\sigma(y,g)\varphi_\sigma(\rho(y,g),g^\prime)&=(\rho(y,\sigma\nu(y)),\sigma(t(g))^{-1}g\sigma(s(g)))\\
&\qquad(\rho(y,g\sigma(t(g))),\sigma(t(g^\prime))^{-1}g^\prime\sigma(s(g^\prime)))\\
&=(\rho(y,\sigma\nu(y)),\sigma(t(g))^{-1}gg^\prime\sigma(s(g^\prime)))
\end{align*}
and $\varphi_\sigma\varphi_\tau=\id$, $\varphi_\tau\varphi_\sigma=\id$. 
\end{proof}

\begin{dfn}
A Lie groupoid $\mca{G}\rightrightarrows M$ is \emph{transitive} if $\mca{O}_\mca{G}=\left\{O\right\}$ and the inclusion map $\iota\colon O\to M$ is a diffeomorphism, where $O$ is equipped with the $C^\infty$ manifold structure given in Proposition \ref{isoorb}. 
\end{dfn}

\begin{lem}
Let $\mca{G}\rightrightarrows M$ be a Lie groupoid and $x\in M$. Then $\mca{G}$ is transitive if and only if $s\colon\mca{G}^x\to M$ is a surjective submersion. (This latter condition is equivalent to (ii) in Proposition 1.3.3 of \cite{Mackenzie}. Hence it is equivalent to the \emph{local triviality} defined in Definition 1.3.2 of \cite{Mackenzie}, though our definition of a Lie groupoid is different from that in \cite{Mackenzie}.) 
\end{lem}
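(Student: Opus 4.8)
The plan is to prove the two implications separately. The statement claims: $\mca{G}\rightrightarrows M$ is transitive if and only if $s\colon\mca{G}^x\to M$ is a surjective submersion, for a fixed $x\in M$. By Proposition \ref{isoorb}, we already know that $s\colon\mca{G}^x\to O$ is always a left principal $\mca{G}^x_x$ bundle, where $O$ is the orbit through $x$ with its intrinsic $C^\infty$ manifold structure, and that $\iota\colon O\to M$ is an injective immersion. The whole content of the lemma is therefore about matching up the intrinsic structure on $O$ with the subspace it cuts out in $M$.

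First I would prove the forward direction ($\Rightarrow$). Assume $\mca{G}$ is transitive. Then $\mca{O}_\mca{G}=\{O\}$, so $s\colon\mca{G}^x\to M$ has image $O$ which equals $M$ as a set, hence $s$ is surjective. Moreover $\iota\colon O\to M$ is a diffeomorphism by the definition of transitivity, so the principal bundle projection $s\colon\mca{G}^x\to O$ composed with $\iota$ is a surjective submersion $\mca{G}^x\to M$; this is exactly $s\colon\mca{G}^x\to M$. So this direction is essentially immediate from Proposition \ref{isoorb} and the definition.

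Next the converse ($\Leftarrow$). Assume $s\colon\mca{G}^x\to M$ is a surjective submersion. Surjectivity of $s$ on $\mca{G}^x$ forces the orbit $O$ through $x$ to be all of $M$ as a set, so $\mca{O}_\mca{G}=\{O\}$ with $O=M$ setwise. It remains to show that the inclusion $\iota\colon O\to M$ — where $O$ carries the quotient $C^\infty$ structure from $\mca{G}^x_x\bs\mca{G}^x$ — is a diffeomorphism. Since $\iota$ is already an injective immersion (Proposition \ref{isoorb}) and a bijection, I need to show it is a submersion, equivalently that $\dim O=\dim M$, equivalently that $\iota_*$ is surjective on tangent spaces. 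The key observation: we have two surjective submersions out of $\mca{G}^x$, namely $\pi\colon\mca{G}^x\to O$ (the principal bundle projection) and $s\colon\mca{G}^x\to M$ (by hypothesis), related by $s=\iota\circ\pi$ (this is the diagram in part 4 of Proposition \ref{isoorb}, $\ol{s}\colon\mca{G}^x_x\bs\mca{G}^x\to M$ being $\iota$ after the identification with $O$). For $g\in\mca{G}^x$ with $s(g)=y$, since $s_*\colon T_g\mca{G}^x\to T_yM$ is surjective and factors as $\iota_*\circ\pi_*$, and $\pi_*$ is surjective, it follows that $\iota_*\colon T_{\pi(g)}O\to T_yM$ is surjective. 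Combined with $\iota$ being an immersion, $\iota_*$ is an isomorphism at every point, so $\dim O=\dim M$ and $\iota$ is a local diffeomorphism; being a bijection it is a diffeomorphism. Hence $\mca{G}$ is transitive.

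I do not expect any serious obstacle here — this is a bookkeeping lemma unpacking definitions against Proposition \ref{isoorb}. The one point that needs a little care is not conflating the two $C^\infty$ structures on $O$ (the subspace structure, which may not even be a manifold structure a priori, versus the intrinsic quotient structure), and making sure the factorization $s=\iota\circ\pi$ is invoked with $\iota$ meaning the \emph{intrinsic}-to-$M$ map; this is precisely $\ol{s}$ from Proposition \ref{isoorb}(4). The parenthetical remark about local triviality and Mackenzie's Proposition 1.3.3 is just a comment and requires no proof.
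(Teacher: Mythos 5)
Your proof is correct. The necessity direction is the same as the paper's: compose the principal bundle projection $s\colon\mca{G}^x\to O$ with the diffeomorphism $\iota\colon O\to M$. For sufficiency the two arguments diverge slightly in mechanism, though both hinge on the factorization $s=\iota\circ\pi$ through the orbit with its intrinsic structure. The paper exploits the hypothesis that $s\colon\mca{G}^x\to M$ is a surjective submersion by taking local $C^\infty$ sections and composing them with $\pi\colon\mca{G}^x\to O$, which exhibits the set-theoretic inverse $\id\colon M\to O$ directly as a $C^\infty$ map; together with $\iota$ being $C^\infty$ this makes $\iota$ a diffeomorphism. You instead argue infinitesimally: surjectivity of $s_*=\iota_*\circ\pi_*$ forces $\iota_*$ to be surjective, injectivity comes from $\iota$ being an injective immersion (Proposition \ref{isoorb}), so $\iota$ is a bijective local diffeomorphism, hence a diffeomorphism. (You do not actually need surjectivity of $\pi_*$ for this step, only the factorization.) Both routes are valid, including in the possibly non-Hausdorff, non-second-countable setting for $O$, since the inverse function theorem is local; the paper's version avoids any rank counting, while yours avoids having to name local sections, and your explicit care about the two smooth structures on $O$ is exactly the right point to watch.
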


\begin{proof}
Necessity follows since $s\colon\mca{G}^x\to O$ is a surjective submersion and $\id\colon O\to M$ is a diffeomorphism. For sufficiency, we have 
\begin{equation*}
\begin{tikzcd}[column sep=tiny]
\mca{G}^x\ar[rr,"s"{name=U}]\ar[rd,"s"']&&M\\
&O. \ar[ru,equal]\ar[to=U,phantom,"\circlearrowright"]
\end{tikzcd}
\end{equation*}
Since $s\colon\mca{G}^x\to M$ has local $C^\infty$ sections, $\id\colon M\to O$ is $C^\infty$. 
\end{proof}

\begin{dfn}
Let $\mca{G}\rightrightarrows M$ be a Lie groupoid, $N$ be a $C^\infty$ manifold, $\nu\colon N\to M$ be a $C^\infty$ map and $\rho\in\mca{A}(\nu,\mca{G})$. We say that $\rho$ is \emph{transitive} if the action groupoid $N\rtimes_\rho\mca{G}$ is transitive. 
\end{dfn}

\begin{example}
For the Lie groupoid $S^1\rtimes_{\rho_d}S^1_d$ in Example \ref{discs1s1}, we have $\mca{O}_{\rho_d}=\left\{O_d\right\}$, but $O_d$ has the discrete topology, hence $\iota\colon O_d\to S^1$ is not a diffeomorphism. So $S^1\rtimes_{\rho_d}S^1_d$ and $\rho_d$ are not transitive. 
\end{example}

\begin{lem}
Let $\mca{G}\rightrightarrows M$ be a Lie groupoid such that $\mca{G}$ is second countable. Then $\mca{G}\rightrightarrows M$ is transitive if and only if $\mca{O}_\mca{G}=\left\{O\right\}$. 
\end{lem}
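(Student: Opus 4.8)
The plan is to prove the nontrivial ``if'' direction; the ``only if'' direction is immediate from the definition of a transitive Lie groupoid and uses nothing about second countability. So suppose $\mca{G}$ is second countable and $\mca{O}_\mca{G}=\left\{O\right\}$. Since the orbits partition $M$, the single orbit $O$ equals $M$ as a set, so the inclusion map $\iota\colon O\to M$ is a bijection, and by Proposition \ref{isoorb} it is an injective immersion once $O$ is given the manifold structure of that proposition. Thus everything reduces to showing that this bijective immersion is in fact a diffeomorphism, which is precisely the extra condition in the definition of transitivity.

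The first step is to record that $O$, with its intrinsic manifold topology, is second countable: fixing $x\in O$, the subspace $\mca{G}^x$ of the second countable space $\mca{G}$ is second countable, and by Proposition \ref{isoorb} the map $s\colon\mca{G}^x\to O$ is a left principal $\mca{G}^x_x$ bundle, hence a continuous open surjection onto $O$, so $O$ is second countable by Lemma \ref{topopsec}. The second step is to note that $O$ is a pure manifold of dimension $\dim\mca{G}^x-\dim\mca{G}^x_x$ (independent of $x\in O$, again by Proposition \ref{isoorb} together with the fact that $\mca{G}$, and hence each target fibre $\mca{G}^x$, is pure). The third step is the dimension comparison: since $\iota$ is an immersion, $\dim O\le\dim M$; if this were strict, then covering $O$ by a countable atlas would exhibit $M=\iota(O)$ as a countable union of $C^\infty$ images of open subsets of $\bb{R}^{\dim O}$ inside the $(\dim M)$-manifold $M$, each of measure zero, forcing $M$ to have measure zero in itself --- impossible for a nonempty manifold. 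Hence $\dim O=\dim M$, so $\iota$ is an immersion between equidimensional manifolds, i.e.\ a local diffeomorphism; being also a continuous bijection it is open, hence a homeomorphism, hence a diffeomorphism, and $\mca{G}\rightrightarrows M$ is transitive.

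The main obstacle --- and the only place the second countability hypothesis is genuinely used --- is the measure-zero step; Example \ref{discs1s1} shows the conclusion fails without second countability (there $\mca{O}_\mca{G}$ consists of a single orbit, but $\iota\colon O_d\to S^1$ is not a diffeomorphism). Making that step work requires first knowing that $O$ is second countable in its own topology, which is why the argument via $s\colon\mca{G}^x\to O$ is carried out before the dimension count; the remaining manipulations are routine.
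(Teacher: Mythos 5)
Your proof is correct and follows essentially the same route as the paper: establish second countability of $O$ via the open surjection $s\colon\mca{G}^x\to O$ and Lemma \ref{topopsec}, rule out $\dim O<\dim M$ by a measure-zero argument, and conclude that the bijective immersion $\iota$ is a diffeomorphism. The only cosmetic difference is that the paper invokes Sard's theorem (no regular points would make $M$ the set of critical values, hence of measure zero), whereas you use the elementary fact that the image of a smooth map from a lower-dimensional second countable manifold has measure zero — the same idea in slightly more economical form.
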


\begin{proof}
Assume $\mca{O}_\mca{G}=\left\{O\right\}$. Let $x\in O$. Since $\mca{G}^x$ is second countable and $s\colon\mca{G}^x\to O$ is a continuous open map, $O$ is second countable by Lemma \ref{topopsec}. Let $\iota\colon O\to M$ be the inclusion map. If $\iota\colon O\to M$ has no regular points, then $\iota(O)=M$ is the set of critical values of $\iota$. By the second countability of $O$ and Sard's theorem, $M$ has measure zero in $M$. Hence $\dim M=0$ and any point of $O$ is a regular point of $\iota\colon O\to M$, which is a contradiction. Thus $\iota\colon O\to M$ has a regular point and $\dim O\geq\dim M$. For any $x\in O$, $\iota_*\colon T_xO\to T_xM$ is an isomorphism since $\iota\colon O\to M$ is an immersion. Therefore $\iota\colon O\to M$ is a diffeomorphism by the inverse function theorem. 
\end{proof}

\begin{prop}
Let $\mca{G}\rightrightarrows M$ be a transitive Lie groupoid, $N_0$ be a $C^\infty$ manifold, $\nu_0\colon N_0\to M$ be a surjective submersion, $\rho_0\in\mca{A}(\nu_0,\mca{G})$ and $y_0\in N_0$. Then $\mathrel{\ul{\sim_{o,0}}}$ is equal to $\mathrel{\ul{\sim_o}}$ on $\MAG(\rho_0)$. 
\end{prop}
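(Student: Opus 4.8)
The forward implication $\mathrel{\ul{\sim_{o,0}}}\Rightarrow\mathrel{\ul{\sim_o}}$ is immediate from the definitions ($\ul{\Hom}_0\subseteq\ul{\Hom}$, and a homotopy with values in $\ul{\Hom}_0$ is a fortiori one with values in $\ul{\Hom}$), so the real content is the converse, and my plan for it is to \emph{straighten the basepoint path} of a given orbitwise homotopy. Suppose $(N\rtimes_\rho\mca{G},\varphi)\mathrel{\ul{\sim_o}}(N^\prime\rtimes_{\rho^\prime}\mca{G},\varphi^\prime)$, witnessed by a conjugacy $\psi\in\ul{\Hom}(N\rtimes_\rho\mca{G},N^\prime\rtimes_{\rho^\prime}\mca{G})$ and an orbitwise homotopy $\eta$ with values in $\ul{\Hom}(N_0\rtimes_{\rho_0}\mca{G},N^\prime\rtimes_{\rho^\prime}\mca{G})$ from $\psi\varphi$ to $\varphi^\prime$. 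Put $y=F_\varphi(y_0)$, $y^\prime=F_{\varphi^\prime}(y_0)$, $z_0=F_{\psi\varphi}(y_0)$, $x_0=\nu_0(y_0)$, and consider $\gamma(t)=F_{\eta(\cdot,t)}(y_0)$, a path from $z_0$ to $y^\prime$. Since each $\eta(\cdot,t)\in\ul{\Hom}$ forces $\nu^\prime(\gamma(t))=x_0$, and orbitwise-ness forces $\gamma$ into a single orbit $O^\prime\in\mca{O}_{\rho^\prime}$, the path $\gamma$ lies in $Q:=O^\prime\cap(\nu^\prime)^{-1}(x_0)$, which by Lemma \ref{onuonud} is an embedded submanifold of $O^\prime$ carrying a smooth action of the isotropy Lie group $H:=\mca{G}^{x_0}_{x_0}$ (via $g\mapsto\rho^\prime(\cdot,g)$); this action is transitive, because any two points of $Q$ lie in $O^\prime$ and any $g\in\mca{G}$ joining them lies in $\mca{G}^{x_0}_{x_0}=H$.

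Next I would lift $\gamma$ through the orbit map $H\to Q$, $g\mapsto\rho^\prime(z_0,g)$ — a principal $(H\cap\Stab_{\rho^\prime}(z_0))$-bundle by Proposition \ref{isoorb} applied to the transitive restricted action groupoid over $O^\prime$, hence a fibration — to a continuous $\wt\gamma\colon I\to H$ with $\wt\gamma(0)=1_{x_0}$ and $\rho^\prime(z_0,\wt\gamma(t))=\gamma(t)$. Set $g_0:=\wt\gamma(1)$ (so $\rho^\prime(z_0,g_0)=y^\prime$) and $a(t):=\wt\gamma(t)^{-1}g_0\in H$, a path with $a(1)=1_{x_0}$, hence lying in the identity component of $H$. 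Because $\mca{G}$ is transitive, its inner subgroupoid $\bigsqcup_{x\in M}\mca{G}^x_x\to M$ is a locally trivial bundle of Lie groups, and — using local bisections of $\mca{G}$ (Section \ref{999}) and the identity section — I would choose a family of $C^\infty$ bisections $\sigma^{(t)}$ of $\mca{G}$ with $s\sigma^{(t)}=t\sigma^{(t)}=\id$, depending continuously on $t$, with $\sigma^{(t)}(x_0)=a(t)$ and $\sigma^{(1)}$ equal to the identity section. Let $\theta_t:=\varphi_{\sigma^{(t)}}$ be the associated bisection conjugacy of $N^\prime\rtimes_{\rho^\prime}\mca{G}$ from the lemma preceding the statement; each $\theta_t$ is a conjugacy lying in $\ul{\Hom}$, $\theta_1=\id$, and $F_{\theta_t}(\gamma(t))=\rho^\prime(\gamma(t),a(t))=\rho^\prime(z_0,\wt\gamma(t)a(t))=\rho^\prime(z_0,g_0)=y^\prime$.

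Then I would set $\psi^\prime:=\theta_0\psi$ and $H_t:=\theta_t\circ\eta(\cdot,t)$. Here $\psi^\prime$ is a conjugacy in $\ul{\Hom}(N\rtimes_\rho\mca{G},N^\prime\rtimes_{\rho^\prime}\mca{G})$ with $F_{\psi^\prime}(y)=F_{\theta_0}(z_0)=y^\prime$, so $\psi^\prime\in\ul{\Hom}_0$; each $H_t$ lies in $\ul{\Hom}(N_0\rtimes_{\rho_0}\mca{G},N^\prime\rtimes_{\rho^\prime}\mca{G})$ and has $F_{H_t}(y_0)=F_{\theta_t}(\gamma(t))=y^\prime$, so $H_t\in\ul{\Hom}_0$; moreover $H_0=\psi^\prime\varphi$, $H_1=\varphi^\prime$, and $H$ is an orbitwise homotopy because $\eta$ is and $\theta_t$ carries the orbit containing $\gamma(I)$ onto the fixed orbit containing $y^\prime$. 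Hence $\psi^\prime\varphi\mathrel{\ul{\sim_{o,0}}}\varphi^\prime$, and together with the bookkeeping of Lemma \ref{ulsimsim} and Definition \ref{gmngprmagl} this yields $(N\rtimes_\rho\mca{G},y,\varphi)\mathrel{\ul{\sim_{o,0}}}(N^\prime\rtimes_{\rho^\prime}\mca{G},y^\prime,\varphi^\prime)$, as required.

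The hard part is the middle paragraph. The delicate points are (i) lifting the merely continuous $\gamma$ through $H\to Q$ — routine once one verifies this is a fibration, via the principal-bundle structure of Proposition \ref{isoorb} for the transitive action groupoid over $O^\prime$ — and (ii) promoting the point-valued path $a$ in $H$ to a continuous family of global $C^\infty$ sections $\sigma^{(t)}$ of the inner subgroupoid with $\sigma^{(1)}$ the identity section; transitivity of $\mca{G}$ is exactly what makes the inner subgroupoid a genuine bundle of Lie groups so that such an extension is available, and extra care is needed because $\mca{G}$, and hence $H$, may be non-Hausdorff or non–second countable, so one cannot appeal to the usual smooth‑manifold machinery without a local patching argument.
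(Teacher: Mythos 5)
Your proposal is correct and takes essentially the same route as the paper's proof: lift the basepoint path of the orbitwise homotopy through the principal bundle structure on target fibers (Proposition \ref{isoorb}), extend the resulting path in $\mca{G}^{x_0}_{x_0}$ to a continuous family of global $C^\infty$ sections of the inner subgroupoid $\mca{IG}$ (a Lie group bundle by transitivity), and correct the homotopy and the conjugacy by the associated bisection conjugacies $\varphi_\sigma$. The differences are only bookkeeping (you reverse the orientation and lift based at $z_0$ through $H\to Q$ instead of at $y^\prime$ through $\mca{G}^{x_0}\to O^\prime$), and the existence of the family $\sigma^{(t)}$ is asserted at the same level of detail as in the paper.
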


\begin{proof}
Let $(N\rtimes_\rho\mca{G},y,\varphi)$, $(N^\prime\rtimes_{\rho^\prime}\mca{G},y^\prime,\varphi^\prime)\in\MAG(\rho_0)$ be such that $(N\rtimes_\rho\mca{G},\varphi)\mathrel{\ul{\sim_o}}(N^\prime\rtimes_{\rho^\prime}\mca{G},\varphi^\prime)$. There exists a conjugacy $\psi\in\ul{\Hom}(N\rtimes_\rho\mca{G},N^\prime\rtimes_{\rho^\prime}\mca{G})$ such that $\psi\varphi\mathrel{\ul{\sim_o}}\varphi^\prime$. Let $\eta$ be an orbitwise homotopy between $\varphi^\prime$ and $\psi\varphi$ such that $\eta(\cdot,t)\in\ul{\Hom}(N_0\rtimes_{\rho_0}\mca{G},N^\prime\rtimes_{\rho^\prime}\mca{G})$ for all $t\in I$. Let $x_0=\nu_0(y_0)=\nu^\prime(y^\prime)\in M$ and $y^\prime\in O^\prime\in\mca{O}_{\rho^\prime}$. Then 
\begin{align*}
\mca{G}^{x_0}&\to O^\prime\\
g&\mapsto\rho^\prime(y^\prime,g)
\end{align*}
is a principal bundle. There exists a continuous map $c\colon I\to\mca{G}^{x_0}$ such that $c(0)=1_{x_0}$ and $F_{\eta(\cdot,t)}(y_0)=\rho^\prime(y^\prime,c(t))$ for all $t\in I$. Since 
\begin{equation*}
sc(t)=\nu^\prime\rho^\prime(y^\prime,c(t))=\nu^\prime F_{\eta(\cdot,t)}(y_0)=\nu_0(y_0)=x_0, 
\end{equation*}
we have $c\colon I\to\mca{G}^{x_0}_{x_0}$. Let $\mca{IG}=\left\{g\in\mca{G}\ \middle|\ t(g)=s(g)\right\}$. Then $t=s\colon\mca{IG}\to M$ is a Lie group bundle since $\mca{G}\rightrightarrows M$ is transitive (see Proposition 1.3.9 of \cite{Mackenzie}). Thus there exists a $C^\infty$ section $\sigma_t$ of $\mca{IG}\to M$ for each $t\in I$ such that: 
\begin{itemize}
\setlength\itemsep{0em}
\item $\sigma_t(x_0)=c(t)$ for all $t\in I$
\item $\sigma_0(x)=1_x$ for all $x\in M$
\item the map 
\begin{align*}
M\times I&\to\mca{IG}\\
(x,t)&\mapsto\sigma_t(x)
\end{align*}
is continuous. 
\end{itemize}
Let $\eta^\prime(\cdot,t)=\varphi_{\sigma_t^{-1}}\eta(\cdot,t)\in\Hom(N_0\rtimes_{\rho_0}\mca{G},N^\prime\rtimes_{\rho^\prime}\mca{G})$. Then 
\begin{gather*}
F_{\eta^\prime(\cdot,t)}=\rho^\prime(\cdot,\sigma_t^{-1}\nu^\prime(\cdot))F_{\eta(\cdot,t)}, \\
\nu^\prime F_{\eta^\prime(\cdot,t)}=s\sigma_t^{-1}\nu^\prime F_{\eta(\cdot,t)}=\nu^\prime F_{\eta(\cdot,t)}=\nu_0, \\
F_{\eta^\prime(\cdot,t)}(y_0)=\rho^\prime(\cdot,\sigma_t^{-1}\nu^\prime(\cdot))\rho^\prime(y^\prime,c(t))=\rho^\prime(\rho^\prime(y^\prime,c(t)),\sigma_t(x_0)^{-1})=y^\prime, 
\end{gather*}
hence $\eta^\prime(\cdot,t)\in\ul{\Hom}_0(N_0\rtimes_{\rho_0}\mca{G},N^\prime\rtimes_{\rho^\prime}\mca{G})$ for all $t\in I$. We have $\eta^\prime(\cdot,0)=\varphi^\prime$, $\eta^\prime(\cdot,1)=\varphi_{\sigma_1^{-1}}\psi\varphi=\psi^\prime\varphi$, where $\psi^\prime=\varphi_{\sigma_1^{-1}}\psi\in\ul{\Hom}(N\rtimes_\rho\mca{G},N^\prime\rtimes_{\rho^\prime}\mca{G})$ is a conjugacy. Since $F_{\psi^\prime}(y)=y^\prime$, $\varphi^\prime\mathrel{\ul{\sim_{o,0}}}\psi^\prime\varphi$ and $(N\rtimes_\rho\mca{G},y,\varphi)\mathrel{\ul{\sim_{o,0}}}(N^\prime\rtimes_{\rho^\prime}\mca{G},y^\prime,\varphi^\prime)$. 
\end{proof}

\subsection{Morphisms between transitive based Lie groupoids}
\begin{dfn}\label{hnhnyychy}
Let $(\mca{H}\rightrightarrows N,y)$, $(\mca{H}^\prime\rightrightarrows N^\prime,y^\prime)$ be transitive based Lie groupoids, $H$ be a closed subgroup of $\mca{H}^y_y$ and $H^\prime$ be a closed subgroup of $(\mca{H}^\prime)^{y^\prime}_{y^\prime}$. Define 
\begin{equation*}
C^\infty(\mca{H}^y,(\mca{H}^\prime)^{y^\prime})_{H,H^\prime}=\left\{\theta\in C^\infty(\mca{H}^y,(\mca{H}^\prime)^{y^\prime})\ \middle|\ 
\begin{gathered}
\theta(H)\subset H^\prime\\
\theta(kh)=\theta(k)\theta(h)\text{ for }k\in H,h\in\mca{H}^y
\end{gathered}
\right\}, 
\end{equation*}
\begin{equation*}
C^\infty(\mca{H}^y,(\mca{H}^\prime)^{y^\prime})_{H,H^\prime}^\times=\left\{\theta\in C^\infty(\mca{H}^y,(\mca{H}^\prime)^{y^\prime})_{H,H^\prime}\ \middle|\ \text{$\theta$ is a diffeomorphism}\right\}. 
\end{equation*}
For $\theta$, $\theta^\prime\in C^\infty(\mca{H}^y,(\mca{H}^\prime)^{y^\prime})_{H,H^\prime}$, write $\theta\sim_{o,0}\theta^\prime$ if there exists a continuous map $J\colon\mca{H}^y\times I\to(\mca{H}^\prime)^{y^\prime}$ such that: 
\begin{itemize}
\setlength\itemsep{0em}
\item $J(\cdot,t)\in C^\infty(\mca{H}^y,(\mca{H}^\prime)^{y^\prime})_{H,H^\prime}$ for all $t\in I$
\item $J(\cdot,0)=\theta$, $J(\cdot,1)=\theta^\prime$. 
\end{itemize}
\end{dfn}

\begin{prop}
Let $(\mca{G}\rightrightarrows M,x)$, $(\mca{G}^\prime\rightrightarrows M^\prime,x^\prime)$ be transitive based Lie groupoids. Then the following statements hold: 
\begin{enumerate}
\item We have a bijection 
\begin{align*}
\Hom_0(\mca{G},\mca{G}^\prime)&\simeq C^\infty(\mca{G}^x,(\mca{G}^\prime)^{x^\prime})_{\mca{G}_x^x,(\mca{G}^\prime)_{x^\prime}^{x^\prime}}\\
\varphi&\mapsto\theta_\varphi\\
\varphi_\theta&\mapsfrom\theta, 
\end{align*}
where $\theta_\varphi=\varphi|_{\mca{G}^x}$ and $\varphi_\theta(g)=\theta(h)^{-1}\theta(hg)$ for a choice of $h\in\mca{G}_{t(g)}^x$. 
\begin{equation*}
\begin{tikzpicture}[every label/.append style={font=\scriptsize},matrix of math nodes,decoration={markings,mark=at position0.5with{\arrow{>}}}]
\node(1)[label=left:x]{};
\node(2)[below=of 1,label=left:t(g)]{};
\node(3)[right=of 2]{};
\node(4)[right=4 of 1,label=left:x^\prime]{};
\node(5)[below=of 4]{};
\node(6)[right=of 5]{};
\draw[postaction={decorate}](2.center)to[out=100,in=260]node[left,scale=.75]{h}(1.center);
\draw[postaction={decorate}](3.center)to[out=170,in=10]node[above,scale=.75]{g}(2.center);
\draw[postaction={decorate}](5.center)to[out=100,in=260]node[left,scale=.75]{\theta(h)}(4.center);
\draw[postaction={decorate}](6.center)to[out=145,in=305]node[above right,scale=.75]{\theta(hg)}(4.center);
\foreach\x in{1,...,6}\filldraw(\x)circle(1pt);
\end{tikzpicture}
\end{equation*}
\item The bijection above induces a bijection 
\begin{equation*}
\Hom_0(\mca{G},\mca{G}^\prime)^\times\simeq C^\infty(\mca{G}^x,(\mca{G}^\prime)^{x^\prime})_{\mca{G}_x^x,(\mca{G}^\prime)_{x^\prime}^{x^\prime}}^\times. 
\end{equation*}
\item $\sim_{o,0}$ on $\Hom_0(\mca{G},\mca{G}^\prime)$ corresponds to $\sim_{o,0}$ on $C^\infty(\mca{G}^x,(\mca{G}^\prime)^{x^\prime})_{\mca{G}_x^x,(\mca{G}^\prime)_{x^\prime}^{x^\prime}}$. 
\end{enumerate}
\end{prop}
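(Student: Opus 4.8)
The plan is to verify directly that the two assignments in statement $1$ are mutually inverse, and then to read statements $2$ and $3$ off that correspondence. Write $H=\mca{G}^x_x$ and $H^\prime=(\mca{G}^\prime)^{x^\prime}_{x^\prime}$. Throughout I use that transitivity of $\mca{G}$ makes $s\colon\mca{G}^x\to M$ a surjective submersion, in fact a left principal $H$ bundle by Proposition \ref{isoorb}, so it admits local $C^\infty$ sections; likewise for $\mca{G}^\prime$. I also use that $\mca{G}^x$ and $(\mca{G}^\prime)^{x^\prime}$ are closed embedded submanifolds (Proposition \ref{isoorb}).

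\emph{Statement $1$.} For $\varphi\in\Hom_0(\mca{G},\mca{G}^\prime)$, the equality $F_\varphi(x)=x^\prime$ gives $\varphi(\mca{G}^x)\subset(\mca{G}^\prime)^{x^\prime}$; the restriction $\varphi|_{\mca{G}^x}$ is $C^\infty$ into the embedded submanifold $(\mca{G}^\prime)^{x^\prime}$, commutation with $s$ gives $\varphi(H)\subset H^\prime$, and $\theta_\varphi(kh)=\theta_\varphi(k)\theta_\varphi(h)$ is multiplicativity of $\varphi$, so $\theta_\varphi\in C^\infty(\mca{G}^x,(\mca{G}^\prime)^{x^\prime})_{H,H^\prime}$. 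For $\theta$ in the latter set, $\varphi_\theta(g)=\theta(h)^{-1}\theta(hg)$ is independent of the choice of $h\in\mca{G}^x_{t(g)}$: any other choice is $kh$ with $k\in H$, and $\theta(kh)^{-1}\theta(khg)=\theta(h)^{-1}\theta(k)^{-1}\theta(k)\theta(hg)=\theta(h)^{-1}\theta(hg)$. Taking $h=\sigma(t(g))$ for a local $C^\infty$ section $\sigma$ of $s\colon\mca{G}^x\to M$ exhibits $\varphi_\theta$ as locally $C^\infty$, hence $C^\infty$; multiplicativity $\varphi_\theta(g_1g_2)=\varphi_\theta(g_1)\varphi_\theta(g_2)$ follows by computing $\varphi_\theta(g_1)$ with some $h_1\in\mca{G}^x_{t(g_1)}$ and $\varphi_\theta(g_2)$ with $h_2=h_1g_1$; the base map is the map $\ol\theta\colon M\to M^\prime$ to which $\theta$ descends along the bundle projections (it is $C^\infty$ as $s,s^\prime$ are submersions), and $\ol\theta(x)=x^\prime$ since $\theta|_H$ is a group homomorphism and so $\theta(1_x)=1_{x^\prime}$. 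Thus $\varphi_\theta\in\Hom_0(\mca{G},\mca{G}^\prime)$. Finally $\theta_{\varphi_\theta}=\theta$ and $\varphi_{\theta_\varphi}=\varphi$ each by a one line computation: for $g\in\mca{G}^x$ take $h\in H$ so $\theta(hg)=\theta(h)\theta(g)$; for general $g$ and $h\in\mca{G}^x_{t(g)}$ use $\varphi(hg)=\varphi(h)\varphi(g)$.

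\emph{Statements $2$ and $3$.} If $\varphi$ is an isomorphism it is a diffeomorphism with $F_\varphi$ a diffeomorphism (Proposition \ref{isodiff}, Definition \ref{morgmgmpfpfp}), so it restricts to a diffeomorphism $\mca{G}^x\to(\mca{G}^\prime)^{x^\prime}$ and $\theta_\varphi$ is a diffeomorphism. Conversely, if $\theta$ is a diffeomorphism then, being $H$-equivariant over free actions, it carries $H$-orbits bijectively onto $H^\prime$-orbits; in particular $\theta|_H\colon H\to H^\prime$ is a bijective Lie group homomorphism, hence an isomorphism of Lie groups, so $\theta^{-1}\in C^\infty((\mca{G}^\prime)^{x^\prime},\mca{G}^x)_{H^\prime,H}$. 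Since restriction to the based target fibers is functorial ($\theta_{\psi\varphi}=\theta_\psi\theta_\varphi$ for composable morphisms in $\Hom_0$, by a one line check), the injectivity part of statement $1$ gives $\varphi_{\theta^{-1}}\varphi_\theta=\id$ and $\varphi_\theta\varphi_{\theta^{-1}}=\id$, so $\varphi_\theta$ is an isomorphism. For statement $3$: if $\eta$ is an orbitwise homotopy between $\varphi$ and $\varphi^\prime$ through $\Hom_0$, then $J(\cdot,t)=\eta(\cdot,t)|_{\mca{G}^x}$ is continuous, lies in $C^\infty(\mca{G}^x,(\mca{G}^\prime)^{x^\prime})_{H,H^\prime}$, and joins $\theta_\varphi$ to $\theta_{\varphi^\prime}$; conversely, given $J$ joining $\theta$ to $\theta^\prime$, put $\eta(g,t)=\varphi_{J(\cdot,t)}(g)$, which lies in $\Hom_0$ for each $t$ by statement $1$, and the local formula $\eta(g,t)=J(\sigma(t(g)),t)^{-1}J(\sigma(t(g))g,t)$ shows $\eta$ is continuous on $\mca{G}\times I$. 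Since $\mca{G},\mca{G}^\prime$ are transitive, the orbitwise condition on $F_{\eta(\cdot,t)}$ reduces to continuity of $(z,t)\mapsto\ol{J(\cdot,t)}(z)$ as a map $M\times I\to M^\prime$, again immediate from the same formula; hence $\varphi_\theta\sim_{o,0}\varphi_{\theta^\prime}$.

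\emph{Main obstacle.} The formal bookkeeping is routine; the two steps that genuinely use the hypotheses are the $C^\infty$-ness of $\varphi_\theta$ together with the continuity of the reconstructed homotopy in statement $3$ (both handled via local $C^\infty$ sections of $s\colon\mca{G}^x\to M$, available only by transitivity), and the passage in statement $2$ from ``$\theta|_H$ bijective'' to ``$\theta$ is a principal bundle isomorphism'', which rests on the principal bundle structure of the target fibers supplied by Proposition \ref{isoorb}.
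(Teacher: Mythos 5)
Your treatment of statement 1 is essentially the paper's own argument (well-definedness via $h'=kh$, smoothness via local sections of $s\colon\mca{G}^x\to M$, the descended base map $\ol\theta$, and the two one-line inverse checks), and your argument for statement 3, which the paper omits, is plausible. The problem is statement 2: you have misread what $\Hom_0(\mca{G},\mca{G}^\prime)^\times$ means. By Definition \ref{blghom0} it is $\Hom(\mca{G},\mca{G}^\prime)^\times\cap\Hom_0(\mca{G},\mca{G}^\prime)$, and throughout the paper the superscript $\times$ on $\Hom$ denotes \emph{action morphisms} (Definition \ref{hnggnmactmor}), i.e.\ morphisms $\varphi$ for which $\varphi^!=(t,\varphi)$ is a diffeomorphism — not isomorphisms. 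So statement 2 asserts that $\varphi$ is an action morphism iff $\theta_\varphi$ is a diffeomorphism. The version you prove instead, ``$\varphi$ is an isomorphism iff $\theta_\varphi$ is a diffeomorphism,'' is actually false: take a Lie group $G$ acting transitively but not freely on $N$, with $\rho$ the action, and $\varphi=\varphi_\rho\in\Hom_0(N\rtimes_\rho G,G)^\times$ based at some $y\in N$; then $\theta_{\varphi_\rho}\colon(N\rtimes_\rho G)^y\to G$, $(y,g)\mapsto g$, is a diffeomorphism, but $\varphi_\rho$ is not an isomorphism since its base map $N\to\mr{pt}$ is not injective. The same example breaks the internal step of your converse: from $\theta$ being a diffeomorphism you infer that it ``carries $H$-orbits bijectively onto $H^\prime$-orbits,'' hence $\theta(H)=H^\prime$ and $\theta^{-1}\in C^\infty((\mca{G}^\prime)^{x^\prime},\mca{G}^x)_{H^\prime,H}$; but equivariance only gives $\theta(Hh)=\theta(H)\theta(h)\subset H^\prime\theta(h)$, and in the example $\theta(H)=\Stab_\rho(y)\subsetneq H^\prime=G$, so the claimed surjectivity onto orbits (and with it the whole inverse construction at the level of $\theta^{-1}$) fails.

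What statement 2 actually requires, and what the paper does, is the following: one direction is immediate from the remark in Definition \ref{hnggnmactmor} that an action morphism restricts to a diffeomorphism $\mca{G}^x\to(\mca{G}^\prime)^{F_\varphi(x)}$; for the converse one must show that if $\theta$ is a diffeomorphism then $\varphi_\theta^!\colon\mca{G}\to M\times_{\ol\theta,t^\prime}\mca{G}^\prime$ is a diffeomorphism, which the paper does by writing down the candidate inverse $(y,g^\prime)\mapsto h^{-1}\theta^{-1}(\theta(h)g^\prime)$ for a choice of $h\in\mca{G}^x_y$, and checking independence of $h$ (using the $H$-equivariance of $\theta$), smoothness (local $C^\infty$ sections of $s\colon\mca{G}^x\to M$, i.e.\ transitivity), and that it inverts $\varphi_\theta^!$. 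This distinction matters downstream: in Corollary \ref{homhomtimessim} and the proof of Proposition \ref{trans} the $\varphi_\theta$ produced from a diffeomorphism $\theta$ is only an action morphism, and becomes an isomorphism only after one separately verifies that $\ol\theta$ is a diffeomorphism and invokes Proposition \ref{isoactdiff}. You should redo statement 2 with the correct meaning of $\times$ and an argument along these lines.
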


\begin{proof}
1. Let $\theta\in C^\infty(\mca{G}^x,(\mca{G}^\prime)^{x^\prime})_{\mca{G}_x^x,(\mca{G}^\prime)_{x^\prime}^{x^\prime}}$. For $g\in\mca{G}$, take $h\in\mca{G}_{t(g)}^x$. Define 
\begin{align*}
\varphi_\theta\colon\mca{G}&\to\mca{G}^\prime\\
g&\mapsto\theta(h)^{-1}\theta(hg). 
\end{align*}
To show the well-definedness, take $h^\prime\in\mca{G}_{t(g)}^x$. Then 
\begin{align*}
\theta(h)^{-1}\theta(hg)&=\theta(h)^{-1}\theta(h\textcolor{red}{(h^\prime)^{-1}h^\prime}g)=\theta(h)^{-1}\theta(h(h^\prime)^{-1})\theta(h^\prime g)\\
&=\theta(h)^{-1}\theta(h(h^\prime)^{-1})\textcolor{red}{\theta(h^\prime)\theta(h^\prime)^{-1}}\theta(h^\prime g)\\
&=\theta(h)^{-1}\theta(h)\theta(h^\prime)^{-1}\theta(h^\prime g)\\
&=\theta(h^\prime)^{-1}\theta(h^\prime g). 
\end{align*}
Hence $\varphi_\theta$ is well-defined. Since $s\colon\mca{G}^x\to M$ is a surjective submersion, $h$ can be taken smoothly on a small neighborhood of $g$, hence $\varphi_\theta$ is $C^\infty$. Since $\mca{G}_x^x\bs\mca{G}^x\simeq M$, $(\mca{G}^\prime)_{x^\prime}^{x^\prime}\bs(\mca{G}^\prime)^{x^\prime}\simeq M^\prime$, there exists a unique $\ol{\theta}\colon M\to M^\prime$ such that 
\begin{equation*}
\begin{tikzcd}
\mca{G}^x\ar[r,"\theta"]\ar[d,"s"']\ar[rd,phantom,"\circlearrowright"]&(\mca{G}^\prime)^{x^\prime}\ar[d,"s^\prime"]\\
M\ar[r,"\ol{\theta}"']&M^\prime. 
\end{tikzcd}
\end{equation*}
We have 
\begin{gather*}
t^\prime\varphi_\theta(g)=s^\prime\theta(h)=\ol{\theta}s(h)=\ol{\theta}t(g), \\
s^\prime\varphi_\theta(g)=s^\prime\theta(hg)=\ol{\theta}s(hg)=\ol{\theta}s(g)
\end{gather*}
hence 
\begin{equation*}
\begin{tikzcd}
\mca{G}\ar[r,"\varphi_\theta"]\ar[d,shift left]\ar[d,shift right]\ar[rd,phantom,"\circlearrowright"{xshift=2,yshift=-2}]&\mca{G}^\prime\ar[d,shift left]\ar[d,shift right]\\
M\ar[r,"\ol{\theta}"']&M^\prime. 
\end{tikzcd}
\end{equation*}
For $g$, $g^\prime\in\mca{G}$ such that $s(g)=t(g^\prime)$, take $h\in\mca{G}_{t(g)}^x$ and $h^\prime\in\mca{G}_{t(g^\prime)}^x$. Then 
\begin{align*}
\varphi_\theta(g)\varphi_\theta(g^\prime)&=\theta(h)^{-1}\theta(hg)\theta(h^\prime)^{-1}\theta(h^\prime g^\prime)\\
&=\theta(h)^{-1}\theta(hg(h^\prime)^{-1}h^\prime)\theta(h^\prime)^{-1}\theta(h^\prime g^\prime)\\
&=\theta(h)^{-1}\theta(hg(h^\prime)^{-1})\theta(h^\prime)\theta(h^\prime)^{-1}\theta(h^\prime g^\prime)\\
&=\theta(h)^{-1}\theta(hg(h^\prime)^{-1})\theta(h^\prime g^\prime)\\
&=\theta(h)^{-1}\theta(hg(h^\prime)^{-1}h^\prime g^\prime)\\
&=\theta(h)^{-1}\theta(hgg^\prime)\\
&=\varphi_\theta(gg^\prime). 
\end{align*}
We have $\theta(1_x)=1_{x^\prime}$ and $\ol{\theta}(x)=x^\prime$. Hence $\varphi_\theta\in\Hom_0(\mca{G},\mca{G}^\prime)$. 

For the composition $\theta\mapsto\varphi_\theta\mapsto\theta_{\varphi_\theta}$, we have $\theta_{\varphi_\theta}(g)=\varphi_\theta(g)=\theta(g)$, and for $\varphi\mapsto\theta_\varphi\mapsto\varphi_{\theta_\varphi}$, we have $\varphi_{\theta_\varphi}(g)=\theta_\varphi(h)^{-1}\theta_\varphi(hg)=\varphi(h)^{-1}\varphi(hg)=\varphi(g)$. 

2. Let $\varphi\in\Hom_0(\mca{G},\mca{G}^\prime)^\times$. 
Then $\varphi\colon\mca{G}^x\to(\mca{G}^\prime)^{x^\prime}$ is a diffeomorphism. Hence $\theta_\varphi\in C^\infty(\mca{G}^x,(\mca{G}^\prime)^{x^\prime})_{\mca{G}_x^x,(\mca{G}^\prime)_{x^\prime}^{x^\prime}}^\times$. 

Conversely let $\theta\in C^\infty(\mca{G}^x,(\mca{G}^\prime)^{x^\prime})_{\mca{G}_x^x,(\mca{G}^\prime)_{x^\prime}^{x^\prime}}^\times$. We will construct the inverse of $\varphi_\theta^!\colon\mca{G}\to M\times_{\ol{\theta},t^\prime}\mca{G}^\prime$. Define 
\begin{align*}
M\times_{\ol{\theta},t^\prime}\mca{G}^\prime&\to\mca{G}\\
(y,g^\prime)&\mapsto h^{-1}\theta^{-1}(\theta(h)g^\prime), 
\end{align*}
where $h\in\mca{G}_y^x$. 
\begin{equation*}
\begin{tikzpicture}[every label/.append style={font=\scriptsize},matrix of math nodes,decoration={markings,mark=at position0.5with{\arrow{>}}}]
\node(1)[label=left:x]{};
\node(2)[below=of 1,label=left:y]{};
\node(3)[right=of 2]{};
\node(4)[right=4 of 1,label=left:x^\prime]{};
\node(5)[below=of 4,label=left:\ol{\theta}(y)]{};
\node(6)[right=of 5]{};
\draw[postaction={decorate}](2.center)to[out=100,in=260]node[left,scale=.75]{h}(1.center);
\draw[postaction={decorate}](3.center)to[out=145,in=305]node[above right,scale=.75]{\theta^{-1}(\theta(h)g^\prime)}(1.center);
\draw[postaction={decorate}](5.center)to[out=100,in=260]node[left,scale=.75]{\theta(h)}(4.center);
\draw[postaction={decorate}](6.center)to[out=170,in=10]node[above,scale=.75]{g^\prime}(5.center);
\foreach\x in{1,...,6}\filldraw(\x)circle(1pt);
\end{tikzpicture}
\end{equation*}
To check the well-definedness, let $h^\prime\in\mca{G}_y^x$. Then 
\begin{align*}
(h^\prime)^{-1}\theta^{-1}(\theta(h^\prime)g^\prime)&=\textcolor{red}{h^{-1}h}(h^\prime)^{-1}\theta^{-1}(\theta(h^\prime)g^\prime)\\
&=h^{-1}\textcolor{red}{\theta^{-1}\theta}(h(h^\prime)^{-1}\theta^{-1}(\theta(h^\prime)g^\prime))\\
&=h^{-1}\theta^{-1}(\textcolor{red}{\theta(h(h^\prime)^{-1})}\theta(h^\prime)g^\prime)\\
&=h^{-1}\theta^{-1}(\theta(h)g^\prime). 
\end{align*}
Hence the map is well-defined. Since $s\colon\mca{G}^x\to M$ is a surjective submersion, $h$ can be taken smoothly on a small neighborhood of $(y,g^\prime)$. So the map is $C^\infty$. To see that it is the inverse of $\varphi_\theta^!$, compute the compositions: 
\begin{align*}
(y,g^\prime)&\mapsto h^{-1}\theta^{-1}(\theta(h)g^\prime)\mapsto(y,\varphi_\theta(h^{-1}\theta^{-1}(\theta(h)g^\prime)))\\
&=(y,\theta(h)^{-1}\theta(h)g^\prime)=(y,g^\prime), 
\end{align*}
\begin{align*}
g&\mapsto(t(g),\varphi_\theta(g))\mapsto h^{-1}\theta^{-1}(\theta(h)\varphi_\theta(g))\\
&=h^{-1}\theta^{-1}(\theta(h)\theta(h)^{-1}\theta(hg))=g. 
\end{align*}
Hence $\varphi_\theta^!\colon\mca{G}\to M\times_{\ol{\theta},t^\prime}\mca{G}^\prime$ is a diffeomorphism. 

3. Omitted. 
\end{proof}

\begin{dfn}
Let $(\mca{G}\rightrightarrows M,x)$ be a based Lie groupoid and $H$, $H^\prime$ be closed subgroups of $\mca{G}^x_x$. We have the map $s\colon\mca{G}^x\to M$. Define 
\begin{equation*}
\ul{C}^\infty(\mca{G}^x,\mca{G}^x)_{H,H^\prime}=\left\{\theta\in C^\infty(\mca{G}^x,\mca{G}^x)\ \middle|\ 
\begin{gathered}
s\theta=s,\theta(H)\subset H^\prime\\
\theta(hg)=\theta(h)\theta(g)\text{ for }h\in H,g\in\mca{G}^x
\end{gathered}
\right\}, 
\end{equation*}
\begin{equation*}
\ul{C}^\infty(\mca{G}^x,\mca{G}^x)_{H,H^\prime}^\times=\left\{\theta\in\ul{C}^\infty(\mca{G}^x,\mca{G}^x)_{H,H^\prime}\ \middle|\ \text{$\theta$ is a diffeomorphism}\right\}. 
\end{equation*}
For $\theta$, $\theta^\prime\in\ul{C}^\infty(\mca{G}^x,\mca{G}^x)_{H,H^\prime}$, write $\theta\mathrel{\ul{\sim_{o,0}}}\theta^\prime$ if there exists a continuous map $J\colon\mca{G}^x\times I\to\mca{G}^x$ such that: 
\begin{itemize}
\setlength\itemsep{0em}
\item $J(\cdot,t)\in\ul{C}^\infty(\mca{G}^x,\mca{G}^x)_{H,H^\prime}$ for all $t\in I$
\item $J(\cdot,0)=\theta$, $J(\cdot,1)=\theta^\prime$. 
\end{itemize}
\end{dfn}

\begin{cor}\label{homhomtimessim}
Let $(\mca{G}\rightrightarrows M,x)$ be a based Lie groupoid, $N$, $N^\prime$ be $C^\infty$ manifolds, $\nu\colon N\to M$, $\nu^\prime\colon N^\prime\to M$ be $C^\infty$ maps, $\rho\in\mca{A}(\nu,\mca{G})$, $\rho^\prime\in\mca{A}(\nu^\prime,\mca{G})$ be transitive actions and $y\in N$, $y^\prime\in N^\prime$ be such that $\nu(y)=\nu^\prime(y^\prime)=x$. Let $H=\Stab_\rho(y)$, $H^\prime=\Stab_{\rho^\prime}(y^\prime)$. Then the following statements hold: 
\begin{enumerate}
\item We have a bijection 
\begin{align}
\Hom_0(N\rtimes_\rho\mca{G},N^\prime\rtimes_{\rho^\prime}\mca{G})&\simeq C^\infty(\mca{G}^x,\mca{G}^x)_{H,H^\prime}\label{ngnpggxgxhh}\\
\varphi&\mapsto\theta_\varphi\nonumber\\
\varphi_\theta&\mapsfrom\theta, \nonumber
\end{align}
where 
\begin{equation*}
\theta_\varphi(g)=\varphi_{\rho^\prime}\varphi(y,g)
\end{equation*}
for $g\in\mca{G}^x$ and 
\begin{gather*}
\varphi_\theta(\rho(y,g),g^\prime)=(\rho^\prime(y^\prime,\theta(g)),\theta(g)^{-1}\theta(gg^\prime)), \\
F_{\varphi_\theta}(\rho(y,g))=\rho^\prime(y^\prime,\theta(g))
\end{gather*}
for $g\in\mca{G}^x$ and $g^\prime\in\mca{G}^{s(g)}$. 
\item The bijection \eqref{ngnpggxgxhh} induces bijections 
\begin{gather*}
\ul{\Hom}_0(N\rtimes_\rho\mca{G},N^\prime\rtimes_{\rho^\prime}\mca{G})\simeq\ul{C}^\infty(\mca{G}^x,\mca{G}^x)_{H,H^\prime}, \\
\ul{\Hom}_0(N\rtimes_\rho\mca{G},N^\prime\rtimes_{\rho^\prime}\mca{G})^\times\simeq\ul{C}^\infty(\mca{G}^x,\mca{G}^x)_{H,H^\prime}^\times. 
\end{gather*}
\item $\mathrel{\ul{\sim_{o,0}}}$ on $\ul{\Hom}_0(N\rtimes_\rho\mca{G},N^\prime\rtimes_{\rho^\prime}\mca{G})$ corresponds to $\mathrel{\ul{\sim_{o,0}}}$ on $\ul{C}^\infty(\mca{G}^x,\mca{G}^x)_{H,H^\prime}$. 
\end{enumerate}
\end{cor}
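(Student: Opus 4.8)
The plan is to specialize the preceding proposition to the action groupoids $\mca{H}=N\rtimes_\rho\mca{G}$ and $\mca{H}^\prime=N^\prime\rtimes_{\rho^\prime}\mca{G}$ and then transport the resulting bijection along the canonical identifications of their target fibers with $\mca{G}^x$. First I would observe that, since $\rho$ and $\rho^\prime$ are transitive, $(\mca{H},y)$ and $(\mca{H}^\prime,y^\prime)$ are transitive based Lie groupoids, so the preceding proposition gives a bijection
\begin{equation*}
\Hom_0(\mca{H},\mca{H}^\prime)\simeq C^\infty(\mca{H}^y,(\mca{H}^\prime)^{y^\prime})_{\mca{H}^y_y,(\mca{H}^\prime)^{y^\prime}_{y^\prime}},\qquad\varphi\mapsto\varphi|_{\mca{H}^y},
\end{equation*}
restricting to the $\times$-subsets and carrying $\sim_{o,0}$ to $\sim_{o,0}$. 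Next, by Definition \ref{hnggnmactmor} the associated action morphisms restrict to diffeomorphisms $\varphi_\rho\colon\mca{H}^y\to\mca{G}^x$ and $\varphi_{\rho^\prime}\colon(\mca{H}^\prime)^{y^\prime}\to\mca{G}^x$, and by Example \ref{gmnmnrgyy} they carry $\mca{H}^y_y$ onto $\Stab_\rho(y)=H$ and $(\mca{H}^\prime)^{y^\prime}_{y^\prime}$ onto $\Stab_{\rho^\prime}(y^\prime)=H^\prime$; these are closed subgroups of $\mca{G}^x_x$ because $\mca{H}^y_y$ is closed in $\mca{H}^y$ and $\varphi_\rho$ is a homeomorphism. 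Conjugation by the pair $(\varphi_\rho,\varphi_{\rho^\prime})$, namely $\psi\mapsto\varphi_{\rho^\prime}\psi\varphi_\rho^{-1}$, is then a bijection
\begin{equation*}
C^\infty(\mca{H}^y,(\mca{H}^\prime)^{y^\prime})_{\mca{H}^y_y,(\mca{H}^\prime)^{y^\prime}_{y^\prime}}\simeq C^\infty(\mca{G}^x,\mca{G}^x)_{H,H^\prime}
\end{equation*}
that again restricts to the $\times$-subsets and to $\sim_{o,0}$. Composing the two bijections sends $\varphi$ to the map $g\mapsto\varphi_{\rho^\prime}\varphi(y,g)$, which is $\theta_\varphi$, and unwinding the inverse from the preceding proposition with the choice $h=(y,g)\in\mca{H}^y_{\rho(y,g)}$ produces the displayed formula for $\varphi_\theta$; this yields the first assertion, and together with the second statement of the preceding proposition also the $\times$-part of the second assertion.

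It remains to match the underline decorations, which is the only point where a genuine (though short) computation is needed rather than bookkeeping. For $\varphi\in\Hom_0(\mca{H},\mca{H}^\prime)$, using that $\varphi$ and $\varphi_{\rho^\prime}$ commute with source maps, I would compute for $g\in\mca{G}^x$ that
\begin{equation*}
s\theta_\varphi(g)=s\,\varphi_{\rho^\prime}\varphi(y,g)=\nu^\prime s(\varphi(y,g))=\nu^\prime F_\varphi(\rho(y,g)),
\end{equation*}
while $s(g)=\nu(\rho(y,g))$ by the definition of a right action. Since $\rho$ is transitive, the orbit map $\mca{G}^x\to N$, $g\mapsto\rho(y,g)$, is surjective, so $s\theta_\varphi=s$ holds if and only if $\nu^\prime F_\varphi=\nu$, i.e.\ if and only if $F_\varphi\in\ul{C}^\infty(N,N^\prime)$, i.e.\ if and only if $\varphi\in\ul{\Hom}_0(N\rtimes_\rho\mca{G},N^\prime\rtimes_{\rho^\prime}\mca{G})$. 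Hence the bijection of the first assertion restricts to $\ul{\Hom}_0\simeq\ul{C}^\infty(\mca{G}^x,\mca{G}^x)_{H,H^\prime}$ and further to the $\times$-subsets, which is the second assertion. Applying this equivalence pointwise in $t$, an orbitwise homotopy through $\ul{\Hom}_0$ corresponds to a homotopy through $\ul{C}^\infty(\mca{G}^x,\mca{G}^x)_{H,H^\prime}$ and conversely, so the third assertion follows from the third statement of the preceding proposition.

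The hard part will be purely organizational: one must keep several identifications in play simultaneously — the action morphisms $\varphi_\rho,\varphi_{\rho^\prime}$ and their restrictions to the isotropy groups, and the fact that transitivity forces the unique orbits of $\mca{H},\mca{H}^\prime$ to be $N,N^\prime$ — and verify carefully that the source-map condition $s\theta=s$ on the function side is exactly the translate of the condition $\nu^\prime F_\varphi=\nu$ on the morphism side. Beyond this, no new idea is required; everything is a transport of the transitive case already established in the preceding proposition.
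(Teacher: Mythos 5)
Your proposal is correct and follows essentially the same route as the paper: part 1 is obtained exactly by transporting the preceding proposition along the identifications $(N\rtimes_\rho\mca{G})^y\simeq\mca{G}^x$, $(N^\prime\rtimes_{\rho^\prime}\mca{G})^{y^\prime}\simeq\mca{G}^x$, $(N\rtimes_\rho\mca{G})^y_y\simeq H$, $(N^\prime\rtimes_{\rho^\prime}\mca{G})^{y^\prime}_{y^\prime}\simeq H^\prime$ given by the action morphisms, and part 2 rests on the same source-map computation $s\theta_\varphi(g)=\nu^\prime F_\varphi(\rho(y,g))$ versus $s(g)=\nu(\rho(y,g))$. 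The only cosmetic difference is that the paper verifies the two implications separately (checking $\nu^\prime F_{\varphi_\theta}=\nu$ directly for underlined $\theta$) whereas you obtain the equivalence in one step from surjectivity of the orbit map, and part 3 is left as routine in both.
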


\begin{proof}
1. This follows from 
\begin{equation*}
(N\rtimes_\rho\mca{G})^y\simeq\mca{G}^x, \ (N^\prime\rtimes_{\rho^\prime}\mca{G})^{y^\prime}\simeq\mca{G}^x, \ (N\rtimes_\rho\mca{G})^y_y\simeq H, \ (N^\prime\rtimes_{\rho^\prime}\mca{G})^{y^\prime}_{y^\prime}\simeq H^\prime. 
\end{equation*}

2. Let $\varphi\in\ul{\Hom}_0(N\rtimes_\rho\mca{G},N^\prime\rtimes_{\rho^\prime}\mca{G}^\prime)$. We have 
\begin{equation*}
s\theta_\varphi(g)=s\varphi_{\rho^\prime}\varphi(y,g)=\nu^\prime s^\prime\varphi(y,g)=\nu^\prime F_\varphi s(y,g)=\nu s(y,g)=\nu\rho(y,g)=s(g)
\end{equation*}
for all $g\in\mca{G}^x$. Hence $s\theta_\varphi=s$ and $\theta_\varphi\in\ul{C}^\infty(\mca{G}^x,\mca{G}^x)_{H,H^\prime}$. 

Conversely let $\theta\in\ul{C}^\infty(\mca{G}^x,\mca{G}^x)_{H,H^\prime}$. Then 
\begin{equation*}
\nu^\prime F_{\varphi_\theta}\rho(y,g)=\nu^\prime\rho^\prime(y^\prime,\theta(g))=s\theta(g)=s(g)=\nu\rho(y,g)
\end{equation*}
for all $g\in\mca{G}^x$. Hence $\nu^\prime F_{\varphi_\theta}=\nu$ and $\varphi_\theta\in\ul{\Hom}_0(N\rtimes_\rho\mca{G},N^\prime\rtimes_{\rho^\prime}\mca{G})$. 

3. Omitted. 
\end{proof}

\begin{lem}\label{compos}
Let $(\mca{H}\rightrightarrows N,y)$, $(\mca{H}^\prime\rightrightarrows N^\prime,y^\prime)$, $(\mca{H}^{\prime\prime}\rightrightarrows N^{\prime\prime},y^{\prime\prime})$ be transitive based Lie groupoids and 
\begin{equation*}
\theta\in C^\infty(\mca{H}^y,(\mca{H}^\prime)^{y^\prime})_{\mca{H}_y^y,(\mca{H}^\prime)_{y^\prime}^{y^\prime}}, \quad\theta^\prime\in C^\infty((\mca{H}^\prime)^{y^\prime},(\mca{H}^{\prime\prime})^{y^{\prime\prime}})_{(\mca{H}^\prime)_{y^\prime}^{y^\prime},(\mca{H}^{\prime\prime})_{y^{\prime\prime}}^{y^{\prime\prime}}}. 
\end{equation*}
Then we have $\varphi_{\theta^\prime\theta}=\varphi_{\theta^\prime}\varphi_\theta$. 
\end{lem}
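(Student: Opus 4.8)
The plan is to evaluate both morphisms at an arbitrary arrow of $\mca{H}$ and to exploit the choice-independence built into the construction $\theta\mapsto\varphi_\theta$ from the preceding proposition. Recall that for $\theta\in C^\infty(\mca{H}^y,(\mca{H}^\prime)^{y^\prime})_{\mca{H}_y^y,(\mca{H}^\prime)_{y^\prime}^{y^\prime}}$ one has $\varphi_\theta(g)=\theta(h)^{-1}\theta(hg)$ for any $h\in\mca{H}^y_{t(g)}$, that such $h$ exists because $\mca{H}$ is transitive so $s\colon\mca{H}^y\to N$ is a surjective submersion, and that $F_{\varphi_\theta}=\ol{\theta}$, where $\ol{\theta}\colon N\to N^\prime$ is the map induced by $\theta$ through $s$ and $s^\prime$ (so $s^\prime\theta=\ol{\theta}s$).

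First I would check that $\varphi_{\theta^\prime\theta}$ is even defined, i.e.\ that $\theta^\prime\theta\colon\mca{H}^y\to(\mca{H}^{\prime\prime})^{y^{\prime\prime}}$ lies in $C^\infty(\mca{H}^y,(\mca{H}^{\prime\prime})^{y^{\prime\prime}})_{\mca{H}_y^y,(\mca{H}^{\prime\prime})_{y^{\prime\prime}}^{y^{\prime\prime}}}$: it is smooth, it carries $\mca{H}_y^y$ into $(\mca{H}^{\prime\prime})_{y^{\prime\prime}}^{y^{\prime\prime}}$ since $\theta$ and $\theta^\prime$ do, and for $k\in\mca{H}_y^y$ and $g\in\mca{H}^y$ one has $(\theta^\prime\theta)(kg)=\theta^\prime(\theta(k)\theta(g))=\theta^\prime(\theta(k))\theta^\prime(\theta(g))$, the last equality because $\theta(k)\in(\mca{H}^\prime)_{y^\prime}^{y^\prime}$.

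Then, fixing $g\in\mca{H}$ and $h\in\mca{H}^y_{t(g)}$, I would write $\varphi_\theta(g)=\theta(h)^{-1}\theta(hg)$ and observe that $\theta(h)\in(\mca{H}^\prime)^{y^\prime}$ satisfies $s^\prime(\theta(h))=\ol{\theta}(s(h))=\ol{\theta}(t(g))=t^\prime(\varphi_\theta(g))$, so $\theta(h)\in(\mca{H}^\prime)^{y^\prime}_{t^\prime(\varphi_\theta(g))}$ is an admissible choice when applying the formula for $\varphi_{\theta^\prime}$ to $\varphi_\theta(g)$. By choice-independence,
\[
\varphi_{\theta^\prime}(\varphi_\theta(g))=\theta^\prime(\theta(h))^{-1}\theta^\prime\bigl(\theta(h)\varphi_\theta(g)\bigr)=\theta^\prime(\theta(h))^{-1}\theta^\prime\bigl(\theta(hg)\bigr)=(\theta^\prime\theta)(h)^{-1}(\theta^\prime\theta)(hg),
\]
which is exactly $\varphi_{\theta^\prime\theta}(g)$ evaluated with the same $h$. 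Since $g\in\mca{H}$ was arbitrary, this yields $\varphi_{\theta^\prime}\varphi_\theta=\varphi_{\theta^\prime\theta}$.

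There is essentially no genuine obstacle here: the only step requiring care is the justification that $\theta(h)$ is a legitimate choice in the formula for $\varphi_{\theta^\prime}$, which is precisely the commutation $s^\prime\theta=\ol{\theta}s$ recorded in the proposition; everything else reduces to the one-step cancellation $\theta(h)\theta(h)^{-1}\theta(hg)=\theta(hg)$ and the definition of composition of maps.
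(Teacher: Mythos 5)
Your proof is correct, and it takes a slightly different route from the paper's. The paper disposes of the lemma by restricting everything to the target fiber over the base point: since $\varphi_\theta|_{\mca{H}^y}=\theta$ and $\varphi_{\theta^\prime}|_{(\mca{H}^\prime)^{y^\prime}}=\theta^\prime$, the composite morphism $\varphi_{\theta^\prime}\varphi_\theta$ restricts on $\mca{H}^y$ to $\theta^\prime\theta$, and then the injectivity of $\varphi\mapsto\varphi|_{\mca{H}^y}$ (one half of the bijection $\Hom_0\simeq C^\infty(\cdot,\cdot)_{\cdot,\cdot}$) forces $\varphi_{\theta^\prime}\varphi_\theta=\varphi_{\theta^\prime\theta}$. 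You instead verify the identity at every arrow $g\in\mca{H}$ directly from the formula $\varphi_\theta(g)=\theta(h)^{-1}\theta(hg)$, using the choice-independence of $h$ and the relation $s^\prime\theta=\ol{\theta}s$ to see that $\theta(h)$ is an admissible auxiliary arrow for evaluating $\varphi_{\theta^\prime}$ at $\varphi_\theta(g)$; the cancellation $\theta(h)\varphi_\theta(g)=\theta(hg)$ then gives exactly $\varphi_{\theta^\prime\theta}(g)$. So the paper leans on the uniqueness (injectivity) part of the preceding proposition, while you lean only on the well-definedness of the explicit formula; your argument is longer but more self-contained, and it has the small added virtue of explicitly checking that $\theta^\prime\theta$ lies in $C^\infty(\mca{H}^y,(\mca{H}^{\prime\prime})^{y^{\prime\prime}})_{\mca{H}^y_y,(\mca{H}^{\prime\prime})^{y^{\prime\prime}}_{y^{\prime\prime}}}$, a point the paper leaves implicit.
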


\begin{proof}
We have $(\varphi_{\theta^\prime\theta})|_{\mca{H}^y}=\theta^\prime\theta=\varphi_{\theta^\prime}|_{(\mca{H}^\prime)^{y^\prime}}\varphi_\theta|_{\mca{H}^y}=(\varphi_{\theta^\prime}\varphi_\theta)|_{\mca{H}^y}$. Hence $\varphi_{\theta^\prime\theta}=\varphi_{\theta^\prime}\varphi_\theta$. 
\end{proof}

\subsection{Transitive actions of Lie groupoids}
\begin{example}
Let $(\mca{G}\rightrightarrows M,x)$ be a transitive based Lie groupoid and $H$ be a closed subgroup of $\mca{G}^x_x$. Since $s\colon\mca{G}^x\to M$ is a principal $\mca{G}^x_x$ bundle, the quotient map $\mca{G}^x\to H\bs\mca{G}^x$ and $s\colon H\bs\mca{G}^x\to M$ are surjective submersions. Define $\rho_H\in\mca{A}(s,\mca{G})$ by 
\begin{align*}
\rho_H\colon(H\bs\mca{G}^x)\times_{s,t}\mca{G}&\to H\bs\mca{G}^x\\
(Hg,g^\prime)&\mapsto Hgg^\prime. 
\end{align*}
Take $H\in H\bs\mca{G}^x$ as a base point. Since 
\begin{equation*}
\begin{tikzcd}[column sep=tiny]
((H\bs\mca{G}^x)\rtimes_{\rho_H}\mca{G})^H\ar[rr,"\sim"{name=U},"\varphi_{\rho_H}"{yshift=7}]\ar[rd,"s"']&&\mca{G}^x\ar[ld]\\
&H\bs\mca{G}^x, \ar[to=U,phantom,"\circlearrowright"{xshift=-3,yshift=-1}]
\end{tikzcd}
\end{equation*}
$s\colon((H\bs\mca{G}^x)\rtimes_{\rho_H}\mca{G})^H\to H\bs\mca{G}^x$ is a surjective submersion. Hence $\rho_H$ is transitive. We have $\Stab_{\rho_H}(H)=H$. 
\end{example}

\begin{lem}
Let $\mca{G}\rightrightarrows M$ be a Lie groupoid, $\nu\colon N\to M$ be a surjective submersion and $\rho\in\mca{A}(\nu,\mca{G})$ be a transitive action. Let $y\in N$ and put $x=\nu(y)\in M$, $H=\Stab_\rho(y)\subset\mca{G}^x_x$. Then $\mca{G}\rightrightarrows M$ is transitive and the map 
\begin{align*}
(H\bs\mca{G}^x)\rtimes_{\rho_H}\mca{G}&\to N\rtimes_\rho\mca{G}\\
(Hg,g^\prime)&\mapsto(\rho(y,g),g^\prime)
\end{align*}
is a conjugacy. 
\end{lem}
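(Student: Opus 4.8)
The plan is to exhibit the map in the statement as an explicit semiconjugacy whose constant part is $\id_\mca{G}$ and whose base map is a diffeomorphism, and then to invoke Proposition \ref{conjdiffinj}. Write $\Psi$ for the map $(Hg,g')\mapsto(\rho(y,g),g')$.

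First I would establish transitivity of $\mca{G}$ and the key submersion statement at the same time. Since $\rho$ is transitive, the action groupoid $N\rtimes_\rho\mca{G}$ is a transitive Lie groupoid whose unique orbit is $N$; fixing the base point $y$, Proposition \ref{isoorb}(4) says $s\colon(N\rtimes_\rho\mca{G})^y\to N$ is a left principal $(N\rtimes_\rho\mca{G})^y_y$-bundle. Transporting this along the isomorphism $\varphi_\rho\colon(N\rtimes_\rho\mca{G})^y\xrightarrow{\ \sim\ }\mca{G}^x$, $(y,g)\mapsto g$, I get that $p\colon\mca{G}^x\to N$, $g\mapsto\rho(y,g)$, is a left principal $H$-bundle for $H=\Stab_\rho(y)$ with $H$ acting by left multiplication. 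In particular $p$ is a surjective submersion, and composing with $\nu$ and using $\nu\rho(y,g)=s(g)$ shows that $s\colon\mca{G}^x\to M$ is a surjective submersion; by the characterization of transitive Lie groupoids, $\mca{G}\rightrightarrows M$ is transitive, so $(H\bs\mca{G}^x)\rtimes_{\rho_H}\mca{G}$ makes sense.

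Next I would check that $\Psi$ is a well-defined $C^\infty$ semiconjugacy. Well-definedness: for $h\in H$, $\rho(y,hg)=\rho(\rho(y,h),g)=\rho(y,g)$, and $\nu\rho(y,g)=s(g)=t(g')$ places the image in $N\times_{\nu,t}\mca{G}$. Smoothness: since $p$ descends along the submersion $\mca{G}^x\to H\bs\mca{G}^x$ to a $C^\infty$ map $F_\Psi\colon H\bs\mca{G}^x\to N$, $Hg\mapsto\rho(y,g)$, and $\Psi(Hg,g')=(F_\Psi(Hg),g')$, the map $\Psi$ is $C^\infty$. A direct computation with the structure maps of the two action groupoids (using $\rho(\rho(y,g),g')=\rho(y,gg')$ for compatibility with the source map and with multiplication, and $\nu F_\Psi=s$) shows $\Psi\in\Hom$ with base map $F_\Psi$; and since $\varphi_\rho\circ\Psi=\varphi_{\rho_H}$ (both send $(Hg,g')\mapsto g'$), $\Psi$ is a semiconjugacy with constant part $\id_\mca{G}$, i.e.\ $\Psi=F_\Psi\rtimes\id$ in the notation of Definition \ref{semiconconpng}.

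Finally I would observe that $F_\Psi$ is precisely the diffeomorphism $H\bs\mca{G}^x\xrightarrow{\ \sim\ }N$ through which the principal $H$-bundle $p$ factors (by Step 1), so $F_\Psi$ is a diffeomorphism; since $s=\nu F_\Psi$, the map $\Psi=F_\Psi\rtimes\id\colon(H\bs\mca{G}^x)\times_{s,t}\mca{G}\to N\times_{\nu,t}\mca{G}$ is the base change of the fibre product along $F_\Psi$, hence also a diffeomorphism. Then Proposition \ref{conjdiffinj} applies: $\nu$ is a surjective submersion, $\Psi$ is a diffeomorphism, and $F_{\id_\mca{G}}=\id_M$ is injective, so $\Psi$ is a conjugacy. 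The only step that needs genuine care is the transfer in Step 1 of the principal-bundle structure along $\varphi_\rho$: one must verify that the structure group $(N\rtimes_\rho\mca{G})^y_y$ corresponds to $H=\Stab_\rho(y)$ and that its action corresponds to left multiplication on $\mca{G}^x$, so that the quotient $H\bs\mca{G}^x$ and the action $\rho_H$ are exactly those in the statement; everything else is routine verification.
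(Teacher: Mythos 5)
Your proof is correct and follows essentially the same route as the paper's: transport the principal bundle structure of $s\colon(N\rtimes_\rho\mca{G})^y\to N$ along $\varphi_\rho$ to get the diffeomorphism $H\bs\mca{G}^x\xrightarrow{\sim}N$, $Hg\mapsto\rho(y,g)$, and conclude that the displayed map, which is $F_\Psi\rtimes\id$, is a conjugacy. The only cosmetic differences are that the paper establishes transitivity of $\mca{G}$ by showing the inclusion of the orbit $O=\nu(N)$ into $M$ is a diffeomorphism (via Lemma \ref{oosmhs} and local sections of $\nu$) rather than via the surjective-submersion criterion for $s\colon\mca{G}^x\to M$, and it asserts the conjugacy directly (in effect via Lemma \ref{ngconj}) where you invoke Proposition \ref{conjdiffinj}; both justifications are valid within the paper's toolkit.
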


\begin{proof}
Since $\nu=F_{\varphi_\rho}$ and $N\in\mca{O}_{N\rtimes_\rho\mca{G}}$, there exists $O\in\mca{O}_\mca{G}$ such that $\nu(N)\subset O$. Hence $O=M$ as sets. We have 
\begin{equation*}
\begin{tikzcd}[column sep=tiny]
N\ar[rr,"\nu"{name=U}]\ar[rd,"\nu"']&&M\\
&O. \ar[ru,equal]\ar[to=U,phantom,"\circlearrowright"]
\end{tikzcd}
\end{equation*}
$\nu\colon N\to O$ is $C^\infty$ by Lemma \ref{oosmhs}. $\nu\colon N\to M$ has local $C^\infty$ sections since it is a surjective submersion. Hence $\id\colon M\to O$ is $C^\infty$. Thus $\mca{G}\rightrightarrows M$ is transitive. 

$s\colon(N\rtimes_\rho\mca{G})^y\to N$ is a principal $(N\rtimes_\rho\mca{G})^y_y$ bundle since $\rho$ is transitive. Hence $s\colon(N\rtimes_\rho\mca{G})^y_y\bs(N\rtimes_\rho\mca{G})^y\to N$ is a diffeomorphism. We have 
\begin{equation*}
\begin{tikzcd}
(N\rtimes_\rho\mca{G})^y_y\bs(N\rtimes_\rho\mca{G})^y\ar[r,"s"]\ar[d,dash,"\sim"{sloped}]&N. \\
H\bs\mca{G}^x\ar[ru,"\rho(y{,}\cdot)"',""{pos=.3,name=U}]
\ar[from=1-1,to=U,phantom,"\circlearrowright"]
\end{tikzcd}
\end{equation*}
So 
\begin{align*}
H\bs\mca{G}^x&\to N\\
Hg&\mapsto\rho(y,g)
\end{align*}
is a diffeomorphism. 
\begin{align*}
(H\bs\mca{G}^x)\rtimes_{\rho_H}\mca{G}&\to N\rtimes_\rho\mca{G}\\
(Hg,g^\prime)&\mapsto(\rho(y,g),g^\prime)
\end{align*}
is a conjugacy. 
\end{proof}

\subsection{The Teichm\"{u}ller space of a transitive action}
\begin{dfn}\label{gmxththhh}
Let $(\mca{G}\rightrightarrows M,x)$ be a based Lie groupoid, $H$ be a closed subgroup of $\mca{G}^x_x$ and $\theta$, $\theta^\prime\in\ul{C}^\infty(\mca{G}^x,\mca{G}^x)_{H,\mca{G}^x_x}^\times$. We write $\theta\sim\theta^\prime$ if $\theta(H)=\theta^\prime(H)$ and $\theta\mathrel{\ul{\sim_{o,0}}}\theta^\prime$ in $\ul{C}^\infty(\mca{G}^x,\mca{G}^x)_{H,\theta(H)}$. 
\end{dfn}

\begin{dfn}\label{gmxdiffgxth}
For a transitive based Lie groupoid $(\mca{G}\rightrightarrows M,x)$ and a closed subgroup $H$ of $\mca{G}^x_x$, let 
\begin{equation*}
\ul{\Diff}(\mca{G}^x)=\left\{\theta\in\Diff(\mca{G}^x)\ \middle|\ s\theta=s\right\}
\end{equation*}
and 
\begin{align*}
\ul{\Diff}(\mca{G}^x)_H&=\ul{C}^\infty(\mca{G}^x,\mca{G}^x)_{H,\mca{G}^x_x}^\times\\
&=\left\{\theta\in\ul{\Diff}(\mca{G}^x)\ \middle|\ \text{$\theta(hg)=\theta(h)\theta(g)$ for $h\in H$, $g\in\mca{G}^x$}\right\}. 
\end{align*}
We have 
\begin{align*}
\ul{\Aut}(\mca{G})\simeq\ul{\Hom}_0(M\rtimes\mca{G},M\rtimes\mca{G})^\times&\simeq\ul{C}^\infty(\mca{G}^x,\mca{G}^x)_{\mca{G}^x_x,\mca{G}^x_x}^\times=\ul{\Diff}(\mca{G}^x)_{\mca{G}^x_x}\\
\Phi&\mapsto\theta_\Phi. 
\end{align*}
There is an action $\ul{C}^\infty(\mca{G}^x,\mca{G}^x)_{\mca{G}^x_x,\mca{G}^x_x}^\times\curvearrowright\ul{C}^\infty(\mca{G}^x,\mca{G}^x)_{H,\mca{G}^x_x}^\times$. 
\end{dfn}

\begin{prop}\label{trans}
Let $(\mca{G}\rightrightarrows M,x)$ be a transitive based Lie groupoid and $H$ be a closed subgroup of $\mca{G}^x_x$. Recall that $\rho_H$ is the action of $\mca{G}\rightrightarrows M$ on $s\colon H\bs\mca{G}^x\to M$ by right multiplication. Then we have a bijection 
\begin{align*}
T_o(\rho_H)&\simeq\ul{\Diff}(\mca{G}^x)_{\mca{G}^x_x}\bs(\ul{\Diff}(\mca{G}^x)_H/{\sim})\\
[N\rtimes_\rho\mca{G},\varphi]&\mapsto[\theta_\varphi]\\
[(\theta(H)\bs\mca{G}^x)\rtimes_{\rho_{\theta(H)}}\mca{G},\varphi_\theta]&\mapsfrom[\theta]. 
\end{align*}
\end{prop}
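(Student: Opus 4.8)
The plan is to trivialize $T_o(\rho_H)$ down to the level of diffeomorphisms of the single target fiber $\mca{G}^x$, using the base point $y_0=H\in H\bs\mca{G}^x$ (so $\nu(y_0)=s(y_0)=x$ and $\Stab_{\rho_H}(y_0)=H$). First I would note that $\mca{G}$ is transitive by hypothesis and $\rho_H$ is transitive, so the preceding proposition identifying $\mathrel{\ul{\sim_o}}$ with $\mathrel{\ul{\sim_{o,0}}}$ on $\MAG(\rho_H)$ lets me work with based marked action groupoids $(N\rtimes_\rho\mca{G},y,\varphi)$, $y=F_\varphi(H)$, modulo $\mathrel{\ul{\sim_{o,0}}}$. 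For such a triple, $\rho$ is transitive (an action groupoid isomorphism transports the transitive structure), $H':=\Stab_\rho(y)$ is a closed subgroup of $\mca{G}^x_x$ by Proposition \ref{isoorb}, and the canonical conjugacy $(H'\bs\mca{G}^x)\rtimes_{\rho_{H'}}\mca{G}\to N\rtimes_\rho\mca{G}$, $(H'g,g')\mapsto(\rho(y,g),g')$ (which lies in $\ul{\Hom}$ and sends the base point $H'$ to $y$) shows that every class in $T_o(\rho_H)$ has a representative of the ``standard'' form $\bigl((H'\bs\mca{G}^x)\rtimes_{\rho_{H'}}\mca{G},\,H',\,\varphi\bigr)$ with $\varphi\in\ul{\Hom}_0\bigl((H\bs\mca{G}^x)\rtimes_{\rho_H}\mca{G},(H'\bs\mca{G}^x)\rtimes_{\rho_{H'}}\mca{G}\bigr)^\times$.

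Next I would apply Corollary \ref{homhomtimessim} to each pair of standard models: it gives a bijection $\varphi\mapsto\theta_\varphi$ from $\ul{\Hom}_0\bigl((H\bs\mca{G}^x)\rtimes_{\rho_H}\mca{G},(H'\bs\mca{G}^x)\rtimes_{\rho_{H'}}\mca{G}\bigr)^\times$ onto $\ul{C}^\infty(\mca{G}^x,\mca{G}^x)_{H,H'}^\times$, carrying $\mathrel{\ul{\sim_{o,0}}}$ to $\mathrel{\ul{\sim_{o,0}}}$, with $\theta_\varphi(H)=\Stab_\rho(y)=H'$. Since the inverse of a $\theta\in\ul{C}^\infty(\mca{G}^x,\mca{G}^x)_{H,H'}^\times$ lies in $\ul{C}^\infty(\mca{G}^x,\mca{G}^x)_{H',H}^\times$ (apply Lemma \ref{compos} to $\varphi_\theta^{-1}$), one gets $\theta(H)=H'$ and hence $\ul{\Diff}(\mca{G}^x)_H=\bigsqcup_{H'}\ul{C}^\infty(\mca{G}^x,\mca{G}^x)_{H,H'}^\times$ over the closed subgroups $H'$ of $\mca{G}^x_x$. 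Thus the set of standard triples is in bijection with $\ul{\Diff}(\mca{G}^x)_H$, and $T_o(\rho_H)$ is the quotient of $\ul{\Diff}(\mca{G}^x)_H$ by the equivalence relation induced from $\mathrel{\ul{\sim_o}}$ on $\MAG(\rho_H)$.

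It then remains to show that the induced relation is exactly the one defining $\ul{\Diff}(\mca{G}^x)_{\mca{G}^x_x}\bs(\ul{\Diff}(\mca{G}^x)_H/{\sim})$, where $\sim$ is as in Definition \ref{gmxththhh}. On standard triples the relation from $\mathrel{\ul{\sim_o}}$ reads: there is a conjugacy $\psi$ in $\ul{\Hom}_0$ between the corresponding standard models with $\psi\varphi_{\theta_1}\mathrel{\ul{\sim_{o,0}}}\varphi_{\theta_2}$. The crucial sub-step is to characterize which isomorphisms between standard models are conjugacies. I would use that a morphism of the action groupoid of a transitive action of $\mca{G}$ is determined by its restriction to a single target fiber (any two points of the base are joined by an arrow of $\mca{G}$, so the morphism identity $\psi(y,g')=\psi(y_0,g)^{-1}\psi(y_0,gg')$, with $\rho(y_0,g)=y$, propagates the restriction), together with Lemma \ref{ulid} and the transitivity argument showing the constant part $\Psi$ of such a conjugacy satisfies $F_\Psi=\id$ and $\Psi\in\ul{\Aut}(\mca{G})$; this yields: $\psi$ is a conjugacy iff $\theta_\psi=\theta_\Phi$ for some $\Phi\in\ul{\Aut}(\mca{G})\simeq\ul{\Diff}(\mca{G}^x)_{\mca{G}^x_x}$ (necessarily $\Phi(\theta_1(H))=\theta_2(H)$). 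Then $\psi\varphi_{\theta_1}=\varphi_{\theta_\Phi\theta_1}$ by Lemma \ref{compos}, so the relation becomes ``$\exists\,\Phi\in\ul{\Aut}(\mca{G})$ with $\theta_\Phi\theta_1\mathrel{\ul{\sim_{o,0}}}\theta_2$ in $\ul{C}^\infty(\mca{G}^x,\mca{G}^x)_{H,\theta_2(H)}$ and $\theta_\Phi(\theta_1(H))=\theta_2(H)$'', which is precisely $\ul{\Diff}(\mca{G}^x)_{\mca{G}^x_x}\bs(\ul{\Diff}(\mca{G}^x)_H/{\sim})$. Finally I would check that the two maps in the statement are the ones induced by $\varphi\mapsto\theta_\varphi$ and $\theta\mapsto\varphi_\theta$ and are mutually inverse; this is immediate from Corollary \ref{homhomtimessim} once one verifies $\psi_0\varphi_{\theta_\varphi}=\varphi$ for the canonical conjugacy $\psi_0$ (both sides restrict to $\theta_\varphi$ on the target fiber over $H$, so they coincide).

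The hard part will be the conjugacy characterization of the third paragraph: this is where the extra quotient by $\ul{\Diff}(\mca{G}^x)_{\mca{G}^x_x}$ on the right-hand side genuinely comes from, and it forces one to combine the ``determined by one target fiber'' rigidity for morphisms of transitive action groupoids with a careful accounting of base points, stabilizers, and constant parts. Everything else — the based reduction, the standard-model normalization, and the transport of $\mathrel{\ul{\sim_{o,0}}}$ — is a routine, if bookkeeping-heavy, application of the preceding transitivity proposition, the transitive-action lemma, and Corollary \ref{homhomtimessim}.
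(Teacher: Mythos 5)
Your proposal is correct and follows essentially the same route as the paper: reduce $\mathrel{\ul{\sim_o}}$ to $\mathrel{\ul{\sim_{o,0}}}$ via the preceding transitivity proposition, translate morphisms through Corollary \ref{homhomtimessim} into elements of $\ul{C}^\infty(\mca{G}^x,\mca{G}^x)_{H,H'}$, and identify the conjugacies (equivalently their constant parts in $\ul{\Aut}(\mca{G})\simeq\ul{\Diff}(\mca{G}^x)_{\mca{G}^x_x}$, using Lemma \ref{ulid} and Lemma \ref{compos}) as the source of the extra left quotient; your preliminary normalization to the standard models $(H'\bs\mca{G}^x)\rtimes_{\rho_{H'}}\mca{G}$ is only a cosmetic variant, since the paper does exactly this for the inverse map. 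One small phrasing point: the identity $\theta_\varphi(H)=\Stab_\rho(y)$ should be justified from the invertibility of $\varphi$ (e.g.\ by applying $\varphi$ to the isotropy groups, or by Lemma \ref{compos} applied to $\varphi^{-1}$), not as a general feature of $\ul{C}^\infty(\mca{G}^x,\mca{G}^x)_{H,H'}^\times$, whose elements correspond only to action morphisms and need not satisfy $\theta(H)=H'$ (so the union over $H'$ is not disjoint as you state, though this does not affect the argument).
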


The proof of this proposition comes after the following corollary. 

\begin{cor}\label{tblgcgsgxx}
Let $G$ be a Lie group and $H$ be a closed subgroup of $G$. Then we have a bijection 
\begin{align*}
T_o(\rho_H)&\simeq\Aut(G)\bs(\Diff(G)_H/{\sim})\\
[N\rtimes_\rho G,\varphi]&\mapsto[\theta_\varphi]\\
[(\theta(H)\bs G)\rtimes_{\rho_{\theta(H)}}G,\varphi_\theta]&\mapsfrom[\theta], 
\end{align*}
where 
\begin{itemize}
\setlength\itemsep{0em}
\item $H\bs G\stackrel{\rho_H}{\curvearrowleft}G$ is the action by right multiplication
\item $\Diff(G)_H=\left\{\theta\in\Diff(G)\ \middle|\ \text{$\theta(hg)=\theta(h)\theta(g)$ for all $h\in H$ and $g\in G$}\right\}$
\item $\theta\sim\theta^\prime$ for $\theta$, $\theta^\prime\in\Diff(G)_H$ if $\theta(H)=\theta^\prime(H)$ and there exists a continuous map $J\colon G\times I\to G$ such that: 
\begin{itemize}
\setlength\itemsep{0em}
\item $J(\cdot,t)\in C^\infty(G,G)$ for all $t\in I$
\item $J(H,t)\subset\theta(H)$ for all $t\in I$
\item $J(hg,t)=J(h,t)J(g,t)$ for all $h\in H$, $g\in G$, $t\in I$
\item $J(\cdot,0)=\theta$, $J(\cdot,1)=\theta^\prime$
\end{itemize}
\item $\Aut(G)$ acts by post composition
\item $\varphi\colon(H\bs G)\rtimes_{\rho_H}G\to N\rtimes_\rho G$ is an isomorphism and $\theta_\varphi(g)=\varphi_\rho\varphi(H,g)$ for $g\in G$
\item $\varphi_\theta(Hg,g^\prime)=(\theta(H)\theta(g),\theta(g)^{-1}\theta(gg^\prime))$ for $g$, $g^\prime\in G$. 
\end{itemize}
\end{cor}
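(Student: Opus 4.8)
The plan is to specialize Proposition~\ref{trans} to the transitive based Lie groupoid $(G\rightrightarrows\mr{pt},\mr{pt})$, where the Lie group $G$ is regarded as a Lie groupoid over a point, and to the closed subgroup $H$ of $\mca{G}^{\mr{pt}}_{\mr{pt}}$. Once this is set up, the entire content of the corollary is the translation of the notation of Proposition~\ref{trans}, Definition~\ref{gmxdiffgxth} and Definition~\ref{gmxththhh} into the purely group-theoretic language of the statement, together with the observation that the conditions involving the source map $s$ become vacuous when $M=\mr{pt}$.

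First I would record the elementary identifications. For $\mca{G}=G\rightrightarrows\mr{pt}$ we have $\mca{G}^{\mr{pt}}=t^{-1}(\mr{pt})=G$ and $\mca{G}^{\mr{pt}}_{\mr{pt}}=t^{-1}(\mr{pt})\cap s^{-1}(\mr{pt})=G$, so $H$ is just a closed subgroup of $G$. The map $s\colon G\to\mr{pt}$ is trivially a surjective submersion, hence $(G\rightrightarrows\mr{pt},\mr{pt})$ is a transitive based Lie groupoid and the construction of Proposition~\ref{trans} applies. Under $\mca{G}^{\mr{pt}}_{\mr{pt}}=G$ the quotient $H\bs\mca{G}^{\mr{pt}}$ is $H\bs G$, and since $(H\bs G)\times_{s,t}G=(H\bs G)\times G$, the action $\rho_H$ is exactly right multiplication $H\bs G\stackrel{\rho_H}{\curvearrowleft}G$. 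Because $s\theta=s$ holds automatically for every $\theta\colon G\to G$, we get $\ul{\Diff}(\mca{G}^{\mr{pt}})=\Diff(G)$, and then by Definition~\ref{gmxdiffgxth} that $\ul{\Diff}(\mca{G}^{\mr{pt}})_H=\Diff(G)_H$ with precisely the description of $\Diff(G)_H$ stated in the corollary.

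Next I would identify the acting group. By Definition~\ref{gmxdiffgxth}, $\ul{\Diff}(\mca{G}^{\mr{pt}})_{\mca{G}^{\mr{pt}}_{\mr{pt}}}=\ul{C}^\infty(G,G)_{G,G}^\times$, the diffeomorphisms $\theta$ of $G$ with $\theta(hg)=\theta(h)\theta(g)$ for all $h,g\in G$; evaluating at $h=e$ and using surjectivity of $\theta$ gives $\theta(e)=e$, so $\theta$ is a Lie group automorphism, and conversely every automorphism of $G$ lies here, whence $\ul{\Diff}(\mca{G}^{\mr{pt}})_{\mca{G}^{\mr{pt}}_{\mr{pt}}}=\Aut(G)$. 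This is consistent with $\ul{\Aut}(G\rightrightarrows\mr{pt})=\Aut(G)$, since a morphism $\mca{G}\to\mca{G}$ is an isomorphism iff it is a diffeomorphism by Proposition~\ref{isodiff} and the base map is automatically $\id$; hence the $\ul{\Aut}(\mca{G})$-action by post composition becomes the $\Aut(G)$-action $\theta\mapsto\Phi\theta$. Unravelling Definition~\ref{gmxththhh} (again $s\psi=s$ is vacuous), the relation $\theta\sim\theta'$ becomes $\theta(H)=\theta'(H)$ together with the existence of a continuous $J\colon G\times I\to G$ with $J(\cdot,t)\in C^\infty(G,G)$, $J(H,t)\subset\theta(H)$, $J(hg,t)=J(h,t)J(g,t)$ for $h\in H$, $g\in G$, $t\in I$, joining $\theta$ to $\theta'$ — exactly the relation in the statement. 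Finally the formulas $\theta_\varphi(g)=\varphi_\rho\varphi(H,g)$ and $\varphi_\theta(Hg,g^\prime)=(\theta(H)\theta(g),\theta(g)^{-1}\theta(gg^\prime))$ are the specializations of the formulas in Corollary~\ref{homhomtimessim}(1) with base points $H$ and $\theta(H)$. Substituting all of this into Proposition~\ref{trans} yields the asserted bijection.

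The work here is bookkeeping, and there is no genuine mathematical obstacle; the only point that deserves a line of care is verifying that $\ul{\Diff}(\mca{G}^{\mr{pt}})_{\mca{G}^{\mr{pt}}_{\mr{pt}}}$ really coincides with $\Aut(G)$ and that this identification is compatible with the $\ul{\Aut}(\mca{G})$-action appearing in Proposition~\ref{trans}. Everything else is a direct substitution of $\mr{pt}$ for $M$ and an unravelling of definitions.
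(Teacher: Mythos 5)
Your proposal is correct and is essentially the paper's own route: the corollary is obtained exactly by specializing Proposition~\ref{trans} (together with Definition~\ref{gmxdiffgxth}, Definition~\ref{gmxththhh} and Corollary~\ref{homhomtimessim}) to the Lie groupoid $G\rightrightarrows\mr{pt}$, where the conditions involving $s$ become vacuous and $\ul{\Diff}(\mca{G}^{\mr{pt}})_{\mca{G}^{\mr{pt}}_{\mr{pt}}}=\Aut(G)$. Your bookkeeping of these identifications, including the check that $\theta(e)=e$ forces elements of $\Diff(G)_G$ to be automorphisms and that the $\ul{\Aut}(\mca{G})$-action becomes post composition, matches the intended argument.
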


\begin{proof}[Proof of Proposition \ref{trans}]
Let $(N\rtimes_\rho\mca{G},y,\varphi)\in\MAG(\rho_H)$. Then $\varphi\in\ul{\Hom}_0((H\bs\mca{G}^x)\rtimes_{\rho_H}\mca{G},N\rtimes_\rho\mca{G})^\times$ by Proposition \ref{isoactdiff}. Since $H=\Stab_{\rho_H}(H)$, we have $\theta_\varphi\in\ul{C}^\infty(\mca{G}^x,\mca{G}^x)_{H,\Stab_\rho(y)}^\times\subset\ul{C}^\infty(\mca{G}^x,\mca{G}^x)_{H,\mca{G}^x_x}^\times$ by 3 of Corollary \ref{homhomtimessim}. Define 
\begin{align*}
\MAG(\rho_H)&\to\ul{\Diff}(\mca{G}^x)_{\mca{G}^x_x}\bs(\ul{\Diff}(\mca{G}^x)_H/{\sim})\\
(N\rtimes_\rho\mca{G},\varphi)&\mapsto[\theta_\varphi]. 
\end{align*}

Let $(N\rtimes_\rho\mca{G},y,\varphi)$, $(N^\prime\rtimes_{\rho^\prime}\mca{G},y^\prime,\varphi^\prime)\in\MAG(\rho_H)$ be such that $(N\rtimes_\rho\mca{G},y,\varphi)\mathrel{\ul{\sim_{o,0}}}(N^\prime\rtimes_{\rho^\prime}\mca{G},y^\prime,\varphi^\prime)$. There exists a conjugacy $\psi=F_\psi\rtimes\Psi\colon N\rtimes_\rho\mca{G}\to N^\prime\rtimes_{\rho^\prime}\mca{G}$ such that: 
\begin{itemize}
\setlength\itemsep{0em}
\item $\Psi\in\ul{\Aut}(\mca{G})$
\item $F_\psi(y)=y^\prime$
\item $\psi\varphi\mathrel{\ul{\sim_{o,0}}}\varphi^\prime$ in $\ul{\Hom}_0((H\bs\mca{G}^x)\rtimes_{\rho_H}\mca{G},N^\prime\rtimes_{\rho^\prime}\mca{G})$. 
\end{itemize}
Then $\theta_{\psi\varphi}\mathrel{\ul{\sim_{o,0}}}\theta_{\varphi^\prime}$ in $\ul{C}^\infty(\mca{G}^x,\mca{G}^x)_{H,\Stab_{\rho^\prime}(y^\prime)}$ by 4 in Corollary \ref{homhomtimessim}. We have 
\begin{align*}
\psi\varphi(\{H\}\times H)&=\psi\varphi((H\bs\mca{G}^x)\rtimes_{\rho_H}\mca{G})^H_H=(N^\prime\rtimes_{\rho^\prime}\mca{G})^{y^\prime}_{y^\prime}\\
&=\varphi^\prime((H\bs\mca{G}^x)\rtimes_{\rho_H}\mca{G})^H_H=\varphi^\prime(\{H\}\times H)
\end{align*}
since $\psi\varphi$, $\varphi^\prime$ are isomorphisms and $F_{\psi\varphi}(H)=y^\prime=F_{\varphi^\prime}(H)$. By applying $\varphi_{\rho^\prime}$, we get $\theta_{\psi\varphi}(H)=\Stab_{\rho^\prime}(y^\prime)=\theta_{\varphi^\prime}(H)$. Since $\theta_{\psi\varphi}\mathrel{\ul{\sim_{o,0}}}\theta_{\varphi^\prime}$ in $\ul{C}^\infty(\mca{G}^x,\mca{G}^x)_{H,\theta_{\psi\varphi}(H)}$, $\theta_{\psi\varphi}\sim\theta_{\varphi^\prime}$ in $\ul{C}^\infty(\mca{G}^x,\mca{G}^x)_{H,\mca{G}^x_x}^\times$. We have 
\begin{equation*}
\theta_{\psi\varphi}(g)=\varphi_{\rho^\prime}\psi\varphi(H,g)=\Psi\varphi_\rho\varphi(H,g)=\Psi\theta_\varphi(g)
\end{equation*}
for $g\in\mca{G}^x$. Hence $\theta_{\psi\varphi}=\Psi\theta_\varphi$. Therefore $[\theta_\varphi]=[\theta_{\varphi^\prime}]$ in $\ul{\Diff}(\mca{G}^x)_{\mca{G}^x_x}\bs(\ul{\Diff}(\mca{G}^x)_H/{\sim})$. So we get a map 
\begin{equation}\label{tordgx}
T_o(\rho_H)\to\ul{\Diff}(\mca{G}^x)_{\mca{G}^x_x}\bs(\ul{\Diff}(\mca{G}^x)_H/{\sim}). 
\end{equation}

Conversely let $\theta\in\ul{\Diff}(\mca{G}^x)_H=\ul{C}^\infty(\mca{G}^x,\mca{G}^x)_{H,\mca{G}^x_x}^\times$. Then $\theta(H)$ is a closed subgroup of $\mca{G}^x_x$. Since $\theta\in\ul{C}^\infty(\mca{G}^x,\mca{G}^x)_{H,\theta(H)}^\times$, we get 
$\varphi_\theta\in\ul{\Hom}_0((H\bs\mca{G}^x)\rtimes_{\rho_H}\mca{G},(\theta(H)\bs\mca{G}^x)\rtimes_{\rho_{\theta(H)}}\mca{G})^\times$ by 3 in Corollary \ref{homhomtimessim}. Since $\ol{\theta}\colon H\bs\mca{G}^x\to\theta(H)\bs\mca{G}^x$ is a diffeomorphism, $\varphi_\theta$ is an isomorphism by Proposition \ref{isoactdiff}. 
%
%
%
%
Thus we get a map 
\begin{align*}
\ul{\Diff}(\mca{G}^x)_H&\to\MAG(\rho_H)\\
\theta&\mapsto[(\theta(H)\bs\mca{G}^x)\rtimes_{\rho_{\theta(H)}}\mca{G},\varphi_\theta]. 
\end{align*}

Let $\theta$, $\theta^\prime\in\ul{\Diff}(\mca{G}^x)_H$ be such that $\theta\sim\theta^\prime$. So we have $\theta(H)=\theta^\prime(H)$ and $\theta\mathrel{\ul{\sim_{o,0}}}\theta^\prime$ in $\ul{C}^\infty(\mca{G}^x,\mca{G}^x)_{H,\theta^\prime(H)}$. Hence 
\begin{equation*}
\varphi_\theta\mathrel{\ul{\sim_{o,0}}}\varphi_{\theta^\prime}\ \ \text{in}\ \ \ul{\Hom}_0((H\bs\mca{G}^x)\rtimes_{\rho_H}\mca{G},(\theta^\prime(H)\bs\mca{G}^x)\rtimes_{\rho_{\theta^\prime(H)}}\mca{G})
\end{equation*}
by 4 in Corollary \ref{homhomtimessim}. So we get a map 
\begin{equation*}
\ul{\Diff}(\mca{G}^x)_H/{\sim}\to T_o(\rho_H). 
\end{equation*}

Let $\theta\in\ul{\Diff}(\mca{G}^x)_H$ and $\Theta\in\ul{\Diff}(\mca{G}^x)_{\mca{G}^x_x}$. Then $\theta\in\ul{C}^\infty(\mca{G}^x,\mca{G}^x)_{H,\theta(H)}^\times$ and $\Theta\in\ul{C}^\infty(\mca{G}^x,\mca{G}^x)_{\theta(H),\Theta\theta(H)}^\times$. We have 
\begin{equation*}
\varphi_\Theta=\varphi_{\Theta,\theta(H),\Theta\theta(H)}\in\ul{\Hom}_0((\theta(H)\bs\mca{G}^x)\rtimes_{\rho_{\theta(H)}}\mca{G},(\Theta\theta(H)\bs\mca{G}^x)\rtimes_{\rho_{\Theta\theta(H)}}\mca{G})^\times
\end{equation*}
by 3 in Corollary \ref{homhomtimessim}. $\Theta\theta\in\ul{C}^\infty(\mca{G}^x,\mca{G}^x)_{H,\Theta\theta(H)}^\times$ and $\varphi_{\Theta\theta}=\varphi_\Theta\varphi_\theta$ by Lemma \ref{compos}: 
\begin{equation*}
\begin{tikzcd}[row sep=tiny]
&(\theta(H)\bs\mca{G}^x)\rtimes_{\rho_{\theta(H)}}\mca{G}\ar[dd,"\varphi_\Theta"]\\
(H\bs\mca{G}^x)\rtimes_{\rho_H}\mca{G}\ar[ru,"\varphi_\theta"]\ar[rd,"\varphi_{\Theta\theta}"']\ar[r,phantom,"\circlearrowright"pos=.65]&\ \\
&(\Theta\theta(H)\bs\mca{G}^x)\rtimes_{\rho_{\Theta\theta(H)}}\mca{G}. 
\end{tikzcd}
\end{equation*}
Thus $\varphi_\Theta\varphi_\theta\mathrel{\ul{\sim_{o,0}}}\varphi_{\theta^\prime}$. Since $\Theta\in\ul{C}^\infty(\mca{G}^x,\mca{G}^x)_{\mca{G}^x_x,\mca{G}^x_x}^\times$ as well, there corresponds 
\begin{equation*}
\ol{\varphi_\Theta}=\varphi_{\Theta,\mca{G}^x_x,\mca{G}^x_x}\in\ul{\Hom}_0(M\rtimes\mca{G},M\rtimes\mca{G})^\times=\ul{\Aut}(\mca{G})
\end{equation*}
by 3 in Corollary \ref{homhomtimessim}. We have 
\begin{gather*}
\ol{\varphi_\Theta}(\mca{G}^x_xg,g^\prime)=(\mca{G}^x_x\Theta(g),\Theta(g)^{-1}\Theta(gg^\prime))=(\mca{G}^x_xg,\Theta(g)^{-1}\Theta(gg^\prime)), \\
\varphi_\Theta(\theta(H)g,g^\prime)=(\Theta\theta(H)\Theta(g),\Theta(g)^{-1}\Theta(gg^\prime)). 
\end{gather*}
Hence $\varphi_\Theta=\ol{\Theta}\rtimes\ol{\varphi_\Theta}$ and $\varphi_\Theta$ is a conjugacy. Thus 
\begin{equation*}
[(\theta(H)\bs\mca{G}^x)\rtimes_{\rho_{\theta(H)}}\mca{G},\varphi_\theta]=[(\Theta\theta(H)\bs\mca{G}^x)\rtimes_{\rho_{\Theta\theta(H)}}\mca{G},\varphi_{\Theta\theta}]\ \ \text{in}\ \ T_o(\rho_H). 
\end{equation*}
So we get a map 
\begin{equation}\label{dgxdgxtr}
\ul{\Diff}(\mca{G}^x)_{\mca{G}^x_x}\bs(\ul{\Diff}(\mca{G}^x)_H/{\sim})\to T_o(\rho_H). 
\end{equation}

We need to check the maps \eqref{tordgx} and \eqref{dgxdgxtr} are inverse to each other. First we have 
\begin{equation*}
[\theta]\mapsto[(\theta(H)\bs\mca{G}^x)\rtimes_{\rho_{\theta(H)}}\mca{G},\varphi_\theta]\mapsto[\theta_{\varphi_\theta}]. 
\end{equation*}
Since $\theta_{\varphi_\theta}=\theta$, this composition is the identity. The other one is 
\begin{equation*}
[N\rtimes_\rho\mca{G},\varphi]\mapsto[\theta_\varphi]\mapsto[(\theta_\varphi(H)\bs\mca{G}^x)\rtimes_{\rho_{\theta_\varphi(H)}}\mca{G},\varphi_{\theta_\varphi}]. 
\end{equation*}
Since 
\begin{equation*}
\begin{tikzcd}[row sep=tiny]
&(\theta_\varphi(H)\bs\mca{G}^x)\rtimes_{\rho_{\theta_\varphi(H)}}\mca{G}\ar[dd]\\
(H\bs\mca{G}^x)\rtimes_{\rho_H}\mca{G}\ar[ru,"\varphi_{\theta_\varphi}"]\ar[rd,"\varphi"']\ar[r,phantom,"\circlearrowright"pos=.65]&\ \\
&N\rtimes_\rho\mca{G}, 
\end{tikzcd}
\end{equation*}
$[(\theta_\varphi(H)\bs\mca{G}^x)\rtimes_{\rho_{\theta_\varphi(H)}}\mca{G},\varphi_{\theta_\varphi}]=[N\rtimes_\rho\mca{G},\varphi]$. 
\end{proof}

\section{The Teichm\"{u}ller spaces of transitive actions: Examples}\label{1717}
\subsection{The Teichm\"{u}ller space of $G\curvearrowleft G$}\label{1515}
Let $G$ be a Lie group and $G\stackrel{\rho}{\curvearrowleft}G$ be the action by right multiplication. Then we have $T_{nt}(\rho)=T_o(\rho)$ by Corollary \ref{tnteqto}, and by Corollary \ref{tblgcgsgxx}, 
\begin{equation*}
T_o(\rho)\simeq\Aut(G)\bs(\Diff(G,1)/\text{based homotopy}), 
\end{equation*}
where $\Diff(G,1)=\left\{\theta\in\Diff(G)\ \middle|\ \theta(1)=1\right\}$. 

\begin{lem}
If $G$ is connected, then the map 
\begin{equation*}
\Diff(G,1)/\text{based homotopy}\to\Diff(G)/\text{homotopy}
\end{equation*}
induced by the inclusion is bijective. Hence 
\begin{equation*}
T_o(\rho)\simeq\Aut(G)\bs(\Diff(G)/\text{homotopy}). 
\end{equation*}
\end{lem}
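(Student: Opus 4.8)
The plan is to show that the inclusion $\Diff(G,1)\hookrightarrow\Diff(G)$ induces a bijection on path components, which immediately gives the statement since ``homotopy'' for self-maps of $G$ and ``based homotopy'' for maps fixing $1$ correspond to path components in $C^\infty(G,G)$ and $C^\infty(G,G;1)$ respectively, and then composing with the bijection from the previous lemma yields $T_o(\rho)\simeq\Aut(G)\bs(\Diff(G)/\text{homotopy})$. So the whole task reduces to the group-theoretic/topological fact: for a connected Lie group $G$, every diffeomorphism of $G$ is isotopic (or at least homotopic through diffeomorphisms) to one fixing $1$, and two elements of $\Diff(G,1)$ that are homotopic in $\Diff(G)$ are already homotopic within $\Diff(G,1)$.

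The key tool I would use is left translation. For $g\in G$, the left translation $L_g\colon G\to G$ is a diffeomorphism depending smoothly on $g$, with $L_1=\id$. Since $G$ is connected, it is path connected, so for any $\theta\in\Diff(G)$ we can pick a smooth (or just continuous) path $c\colon I\to G$ from $1$ to $\theta(1)$, and then $t\mapsto L_{c(t)}^{-1}\circ\theta$ is a path in $\Diff(G)$ from $\theta$ to $L_{\theta(1)}^{-1}\circ\theta\in\Diff(G,1)$. This shows surjectivity of $\pi_0(\Diff(G,1))\to\pi_0(\Diff(G))$. For injectivity, suppose $\theta_0,\theta_1\in\Diff(G,1)$ and $H\colon I\to\Diff(G)$ is a path (homotopy through diffeomorphisms — or more precisely a continuous map $I\to\Diff(G)$ in the weak $C^\infty$ topology, which is what ``homotopy'' unwinds to here) with $H(0)=\theta_0$, $H(1)=\theta_1$. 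Then $t\mapsto H(t)(1)$ is a loop in $G$ based at $1$; correcting by left translations along this path, $t\mapsto L_{H(t)(1)}^{-1}\circ H(t)$ is a path in $\Diff(G,1)$ from $\theta_0$ to $\theta_1$. Hence the map on $\pi_0$ is injective as well.

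I would then spell out the (routine) identification between the quotient by ``based homotopy'' appearing in Corollary \ref{tblgcgsgxx} — where the homotopies $J$ satisfy the equivariance condition $J(hg,t)=J(h,t)J(g,t)$, but here $H=\{1\}$ so this is vacuous — and ordinary path-components of $\Diff(G,1)$ in the weak $C^\infty$ topology, together with the fact (used throughout the paper, cf.\ the lemmas in Section \ref{222}) that a continuous path in $C^\infty(G,G)$ and a $C^0$/$C^\infty$ homotopy define the same equivalence relation. The one subtlety is that ``homotopy'' in the target should be interpreted consistently: I would take it to mean the relation generated by continuous paths in $\Diff(G)$, and note that because left translation $G\times G\to G$, $(g,x)\mapsto gx$ is $C^\infty$, the correcting homotopies $t\mapsto L_{c(t)}^{-1}\circ\theta$ and $t\mapsto L_{H(t)(1)}^{-1}\circ H(t)$ are genuinely continuous (indeed as smooth as $c$ and $H$ permit) into $\Diff(G)$ and $\Diff(G,1)$ respectively.

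The main obstacle is essentially bookkeeping rather than a deep point: making sure that the ``based homotopy'' equivalence relation in the statement of Corollary \ref{tblgcgsgxx} (with its equivariance clause) really does collapse, for $H=\{1\}$, to path-connectedness in $\Diff(G,1)$, and that the weak $C^\infty$ topology issues (continuity of $g\mapsto L_g$ as a map into $\Diff(G)$, which holds because multiplication is smooth and $G$ is locally compact) are handled correctly so that the correcting homotopies land in the right function spaces. Once this is set up, both surjectivity and injectivity of the induced map on $\pi_0$ follow from the single idea of translating by a path joining $1$ to $\theta(1)$, and the final bijection $T_o(\rho)\simeq\Aut(G)\bs(\Diff(G)/\text{homotopy})$ is obtained by composing with the bijection of the preceding lemma and noting the $\Aut(G)$-action (post-composition) is compatible with these identifications because automorphisms fix $1$.
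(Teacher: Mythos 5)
Your central device---correcting a homotopy by translating along the trace path $t\mapsto H(t)(1)$, using the group structure of $G$---is exactly the mechanism of the paper's proof (which phrases it via the action of $\pi_1(G,1)$ on $[G,G]_0$ and kills the trace loop $c$ with the homotopy $R_{c(t)}\theta^\prime$). The gap is in your reduction. In this lemma ``based homotopy'' is the relation $\sim$ of Corollary \ref{tblgcgsgxx} for the trivial subgroup: a continuous $J\colon G\times I\to G$ with $J(\cdot,t)\in C^\infty(G,G)$ and $J(1,t)=1$, where the intermediate maps are \emph{not} required to be diffeomorphisms; and ``homotopy'' in the target likewise means homotopy of the underlying maps (this is how the lemma is used afterwards, e.g.\ $\Diff(\SU(2))/\text{homotopy}\simeq\left\{\pm1\right\}$ via the degree). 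So the two quotients are the images of $\Diff(G,1)$ and $\Diff(G)$ in $[G,G]_0$ and $[G,G]$, not $\pi_0$ of the diffeomorphism groups, and your claim that ``homotopy'' unwinds to continuous paths in $\Diff(G)$ is a misreading. Consequently your injectivity step assumes a strictly stronger hypothesis (a path of diffeomorphisms, i.e.\ an isotopy) than the one that must be handled (a free homotopy of maps), and bijectivity of $\pi_0\Diff(G,1)\to\pi_0\Diff(G)$ does not ``immediately give the statement'': injectivity for the finer relations says nothing about injectivity for the coarser ones.

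The repair is immediate and keeps your idea intact. Given $\theta_0$, $\theta_1\in\Diff(G,1)$ and a free homotopy $H\colon G\times I\to G$ with $H(\cdot,0)=\theta_0$, $H(\cdot,1)=\theta_1$ and each $H(\cdot,t)$ smooth (a continuous homotopy can first be replaced by such a one, as the paper does elsewhere via Theorem 6.29 of \cite{Lee}), set $c(t)=H(1,t)$ and $K(x,t)=c(t)^{-1}H(x,t)$. Then $K$ is continuous, each $K(\cdot,t)$ is smooth, $K(1,t)=1$, and $K(\cdot,0)=\theta_0$, $K(\cdot,1)=\theta_1$ because $c(0)=c(1)=1$; no invertibility of the intermediate maps is used, so $\theta_0$ and $\theta_1$ are based homotopic in the required sense. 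Your surjectivity argument (replacing $\theta$ by $L_{\theta(1)}^{-1}\theta$ and translating along a path from $1$ to $\theta(1)$, using connectedness) is fine as written. With this correction your proof coincides with the paper's, with left translations in place of the paper's right translations.
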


\begin{proof}
It is surjective. We prove injectivity. Let $\theta$, $\theta^\prime\in\Diff(G,1)$ be such that $\theta$ is homotopic to $\theta^\prime$. Let $c\colon I\to G$ be the loop obtained from a homotopy between $\theta$ and $\theta^\prime$ by evaluating $1\in G$. We have an action $[G,G]_0\curvearrowleft\pi_1(G,1)$ (see Section \ref{onemore}) and $[\theta]=[\theta^\prime][c]$ in $[G,G]_0$. The figure below shows $[\theta^\prime][c]=[\theta^\prime]$. 
\begin{equation*}
\begin{tikzpicture}[every label/.append style={font=\scriptsize},matrix of math nodes,decoration={markings,mark=at position0.5with{\arrow{>}}}]
\def\a{2};
\def\b{1};
\def\c{1.4};
\node(1)at(0,0)[label=left:0]{};
\node(2)at(\a,0)[label=right:\theta^\prime]{};
\node(3)at(\a,\a)[label=right:\theta^\prime]{};
\node(4)at(0,\a)[label=left:1]{};
\node(5)at(\b,0)[label=below:1]{};
\node(6)at(\b,\a)[label=above:1]{};
\node(7)at(0,\c)[label=left:t]{};
\node(8)at(\a,\c)[label=right:R_{c(t)}\theta^\prime]{};
\draw(1.center)--(2.center)--(3.center)--(4.center)--(1.center);
\draw[postaction={decorate}](5.center)--node[label=right:c]{}(6.center);
\draw(7.center)--(8.center);
\draw[->](\a+1.2,\a/2)--(\a+2,\a/2);
\node at(\a+2.9,\a/2){G};
\end{tikzpicture}
\end{equation*}
Hence $\theta$ and $\theta^\prime$ are based homotopic. 
\end{proof}

\begin{example}
If $G$ is contractible, then $\lvert T_o(\rho)\rvert=1$. 
\end{example}

\begin{example}\label{tnto}
For $G=\bb{T}^n$, we have $\lvert T_o(\rho)\rvert=1$. 

\begin{proof}
We have $\Aut(G)\simeq\GL(n,\bb{Z})$ and $\Aut(G)\simeq\Diff(G)/\text{homotopy}$. 
\end{proof}
\end{example}

\begin{example}
For $G=\SU(2)$, we have $\lvert T_o(\rho)\rvert=2$. 

\begin{proof}
We have a diffeomorphism $G\simeq S^3$. Hence 
\begin{align*}
\Diff(G)/\text{homotopy}&\simeq\left\{\pm1\right\}\\
[\theta]&\mapsto\deg\theta. 
\end{align*}
For $\theta\in\Diff(G)$, $\deg\theta=1$ if and only if $\theta$ is orientation preserving. Since $\Aut(G)$ consists of inner automorphisms, which must be orientation preserving, 
\begin{equation*}
\Aut(G)\bs(\Diff(G)/\text{homotopy})\simeq\left\{\pm1\right\}. \qedhere
\end{equation*}
\end{proof}
\end{example}

\begin{example}
For $G=\PSL(2,\bb{R})$, we have $\lvert T_o(\rho)\rvert=2$. 
\end{example}

\subsection{The space of marked lattices}\label{marked lattices}\label{1616}
\begin{dfn}
Let $\Gamma$ be a group and $G$ be a Lie group. Put 
\begin{equation*}
\mca{H}(\Gamma,G)=\left\{\alpha\in\Hom(\Gamma,G)\ \middle|\ \begin{gathered}
\alpha\text{ is injective}, \\
\alpha(\Gamma)\text{ is discrete in }G
\end{gathered}\right\}. 
\end{equation*}
We have an action $\Aut(G)\curvearrowright\mca{H}(\Gamma,G)$ by post composition. 
\end{dfn}

\begin{prop}\label{cocompact}
Let $G$ be a connected Lie group, $\Gamma$ be a torsion free discrete subgroup of $G$ and $\alpha\in\mca{H}(\Gamma,G)$. Then $\Gamma$ is cocompact in $G$ if and only if $\alpha(\Gamma)$ is cocompact in $G$. 
\end{prop}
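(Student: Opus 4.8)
The plan is to reduce the statement to a known fact about lattices, using the Teichm\"{u}ller-space machinery developed in the paper to realize $\alpha$ geometrically. First I would observe that both $\Gamma$ and $\alpha(\Gamma)$ are torsion free discrete subgroups of the connected Lie group $G$, so it suffices to show that the quotient $\Gamma\bs G$ is compact if and only if $\alpha(\Gamma)\bs G$ is compact. The key idea is that $\alpha$, together with the identity automorphism of $G$ in the other factor, induces a comparison between the two homogeneous spaces via an action groupoid isomorphism. Concretely, for a discrete subgroup $\Gamma\le G$ the action $\Gamma\bs G\curvearrowleft G$ by right multiplication is a transitive locally free action whose action groupoid $(\Gamma\bs G)\rtimes G$ is a foliation groupoid (indeed a monodromy-type groupoid, as in the earlier examples), and $\alpha\in\mca{H}(\Gamma,G)$ gives rise in a functorial way to a morphism between $(\Gamma\bs G)\rtimes G$ and $(\alpha(\Gamma)\bs G)\rtimes G$ covering $\id_G$.

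Next I would make this precise using Corollary~\ref{tblgcgsgxx} and the transitivity results of Section~\ref{1414}: the action $\Gamma\bs G\curvearrowleft G$ is (conjugate to) $\rho_\Gamma$ for the closed — here discrete — subgroup $\Gamma$ of $G=G^x_x$, and the homeomorphism type of $\Gamma\bs G$ is recovered as the base space. The homomorphism $\alpha$ extends to a diffeomorphism $\wt\alpha\colon G\to G$ of the target fiber intertwining the left $\Gamma$-action with the left $\alpha(\Gamma)$-action (one builds $\wt\alpha$ from a set-theoretic section of $\Gamma\bs G\to G$ twisted by $\alpha$, smoothing via a $C^\infty$ bisection as in the proof of Proposition~\ref{trans}), and this descends to a diffeomorphism $\Gamma\bs G\xrightarrow{\sim}\alpha(\Gamma)\bs G$ which is moreover equivariant for the right $G$-actions up to the identity on $G$. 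In particular $\Gamma\bs G$ and $\alpha(\Gamma)\bs G$ are diffeomorphic, hence one is compact iff the other is. This is exactly the assertion, since cocompactness of a discrete subgroup $\Gamma$ in $G$ means precisely that $\Gamma\bs G$ is compact.

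Actually, since the statement is purely topological, the cleanest route is to bypass the groupoid language and directly produce the $G$-equivariant diffeomorphism $\Phi_\alpha\colon\Gamma\bs G\to\alpha(\Gamma)\bs G$. The step where one must be careful — and what I expect to be the main obstacle — is the smoothness of $\wt\alpha$ and its well-definedness on the quotient: $\alpha$ is only a priori an abstract group homomorphism, so one needs that $\alpha(\Gamma)$ being discrete (which is part of $\alpha\in\mca{H}(\Gamma,G)$) forces $\Gamma\bs G\to\alpha(\Gamma)\bs G$, $\,\Gamma g\mapsto\alpha$-twisted image, to be a proper submersive map; here is where I would invoke that $s\colon G\to \Gamma\bs G$ admits local $C^\infty$ sections (it is a covering since $\Gamma$ is discrete) to lift, twist by $\alpha$, and project back, checking compatibility exactly as in the well-definedness arguments of Corollary~\ref{homhomtimessim}. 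Once $\Phi_\alpha$ is shown to be a diffeomorphism, the equivalence of cocompactness is immediate, and the same construction applied to $\alpha^{-1}$ (thinking of $\alpha$ as an isomorphism $\Gamma\to\alpha(\Gamma)$) gives the converse, completing the proof.
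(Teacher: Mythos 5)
There is a genuine gap, and it sits exactly at the step you yourself flagged as "the main obstacle": the claimed $G$-equivariant diffeomorphism $\Phi_\alpha\colon\Gamma\bs G\to\alpha(\Gamma)\bs G$ does not exist in general, and nothing in your construction produces it. An element $\alpha\in\mca{H}(\Gamma,G)$ is only an abstract injective homomorphism with discrete image; it induces no map at all between the quotients. To get even a set-theoretic map $\Gamma g\mapsto\alpha(\Gamma)\wt{\alpha}(g)$ you need a map $\wt{\alpha}\colon G\to G$ satisfying $\wt{\alpha}(\gamma g)=\alpha(\gamma)\wt{\alpha}(g)$, i.e.\ a $\Gamma$-equivariant map from $G$ (left $\Gamma$-action) to $G$ (left action twisted by $\alpha$). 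There is nothing to ``lift, twist and project back'': $s\colon G\to\Gamma\bs G$ being a covering gives local sections of the source, but no candidate map into the target to smooth. Since $G$ need not be contractible (e.g.\ $G=\PSL(2,\bb{R})$ is homotopy equivalent to $S^1$), such an equivariant map need not exist; and even when a continuous one exists it need not be proper, injective, or a diffeomorphism — degree/properness is exactly what can fail. Indeed, producing an element of $\ul{\Diff}$-type extending a given isomorphism of discrete subgroups is a main theme of the later sections of this very paper: in the solvable case it requires Witte's theorem $(7,4^\prime)$, and in the $\PSL(2,\bb{R})$ case it requires Waldhausen's theorem for Haken $3$-manifolds, with cocompactness of \emph{both} lattices already known as input. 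So your argument either assumes a much stronger (and generally false) statement, or is circular.

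The paper's proof avoids any comparison map between $\Gamma\bs G$ and $\alpha(\Gamma)\bs G$. It takes a maximal compact subgroup $K\subset G$, so that $G/K$ is contractible (Cartan--Iwasawa--Malcev). Torsion-freeness makes the left actions of $\Gamma$ and $\alpha(\Gamma)$ on $G/K$ properly discontinuous and free, so $\Gamma\bs G/K$ and $\alpha(\Gamma)\bs G/K$ are aspherical manifolds of the same dimension $d=\dim G/K$, and $H_d(\Gamma;\bb{Z})\simeq H_d(\Gamma\bs G/K;\bb{Z})$, $H_d(\alpha(\Gamma);\bb{Z})\simeq H_d(\alpha(\Gamma)\bs G/K;\bb{Z})$. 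Cocompactness of $\Gamma$ in $G$ is equivalent to compactness of $\Gamma\bs G/K$ (as $K$ is compact), which is detected by the top homology being $\bb{Z}$; since $\alpha$ is an abstract isomorphism $\Gamma\simeq\alpha(\Gamma)$, this purely group-homological criterion transfers, giving the equivalence. If you want to salvage your approach, you would have to replace the claimed diffeomorphism by an invariant of the abstract group that detects compactness of the quotient — which is precisely what the homological argument does.
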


\begin{proof}
There exists a maximal compact subgroup $K$ of $G$ and $G/K$ is contractible (Cartan--Iwasawa--Malcev). The actions $\Gamma\curvearrowright G/K$, $\alpha(\Gamma) \curvearrowright G/K$ by left multiplication are properly discontinuous and free, so $\Gamma\bs G/K$, $\alpha(\Gamma)\bs G/K$ are connected orientable manifolds. Let $d=\dim G/K$. We have 
\begin{equation*}
H_d(\Gamma;\bb{Z})\simeq H_d(\Gamma\bs G/K;\bb{Z}),\quad H_d(\alpha(\Gamma);\bb{Z})\simeq H_d(\alpha(\Gamma)\bs G/K;\bb{Z}). 
\end{equation*}
Therefore, 
\begin{align*}
\Gamma\text{ is cocompact}&\Leftrightarrow\Gamma\bs G/K\text{ is compact}\\
&\Leftrightarrow H_d(\Gamma\bs G/K;\bb{Z})\simeq\bb{Z}\\
&\Leftrightarrow H_d(\alpha(\Gamma)\bs G/K;\bb{Z})\simeq\bb{Z}\\
&\Leftrightarrow\alpha(\Gamma)\bs G/K\text{ is compact}\\
&\Leftrightarrow\alpha(\Gamma)\text{ is cocompact}. \qedhere
\end{align*}
\end{proof}

\begin{dfn}
Let $(X,x)$ be a pointed topological space. For $[c]$, $[c^\prime]\in\pi_1(X,x)$, their product is defined as $[c][c^\prime]=[cc^\prime]$, where $cc^\prime$ is the curve that follows $c^\prime$ first and then follows $c$ both with twice the speed. We chose this convention so as to make it compatible with the definition of the composition of elements in a monodromy groupoid. Note that this is different from the definition which is commonly used. 
\end{dfn}

\begin{dfn}\label{gxgxpxx}
Let $\Gamma$ be a group, $X$ be a simply connected, locally path connected topological space and $\Gamma\curvearrowright X$ be a continuous, properly discontinuous, free action. Take $x\in X$. Then $p\colon(X,x)\to(\Gamma\bs X,\Gamma x)$ is a covering map. 

Let $\Pi_1(\Gamma\bs X)\rightrightarrows\Gamma\bs X$ be the fundamental groupoid of $\Gamma\bs X$. Then $\pi_1(\Gamma\bs X,\Gamma x)=\Pi_1(\Gamma\bs X)^{\Gamma x}_{\Gamma x}$ and $s\colon\Pi_1(\Gamma\bs X)^{\Gamma x}\to\Gamma\bs X$ is a left principal $\pi_1(\Gamma\bs X,\Gamma x)$ bundle. This covering is isomorphic to $p$: 
\begin{equation*}
\begin{tikzcd}[column sep=tiny]
\Pi_1(\Gamma\bs X)^{\Gamma x}\ar[rr,""{name=U}]\ar[rd,"s"']&&X\ar[ld,"p"]\\
&\Gamma\bs X\ar[to=U,phantom,"\circlearrowright"{xshift=-3}]
\end{tikzcd}
\end{equation*}
by 
\begin{equation}\label{pgxgxx}
\begin{aligned}
\Pi_1(\Gamma\bs X)^{\Gamma x}&\simeq X\\
[c]&\mapsto\wt{c}(0), 
\end{aligned}
\end{equation}
where $\wt{c}\colon I\to X$ is the lift of $c$ such that $\wt{c}(1)=x$. To compute the induced action $\pi_1(\Gamma\bs X,\Gamma x)\curvearrowright X$, let $[c]\in\pi_1(\Gamma\bs X,\Gamma x)$ and $y\in X$. Let $\wt{c^\prime}\colon I\to X$ be a continuous curve such that $\wt{c^\prime}(1)=x$, $\wt{c^\prime}(0)=y$. Put $c^\prime=p\wt{c^\prime}$. Then $[c^\prime]\in\Pi_1(\Gamma\bs X)^{\Gamma x}$ and $[c^\prime]\mapsto y$ by \eqref{pgxgxx}. We have $[c][c^\prime]=[cc^\prime]\mapsto\wt{cc^\prime}(0)$ by \eqref{pgxgxx}, where $\wt{cc^\prime}\colon I\to X$ is a continuous curve such that $\wt{cc^\prime}(1)=x$. Hence the action $\pi_1(\Gamma\bs X,\Gamma x)\curvearrowright X$ is given by $[c]y=\wt{cc^\prime}(0)$. 

Therefore we have an isomorphism 
\begin{equation*}
\iota_x\colon\pi_1(\Gamma\bs X,\Gamma x)\to\Gamma
\end{equation*}
defined as follows: For $[c]\in\pi_1(\Gamma\bs X,\Gamma x)$, let $\wt{c}$ be the lift of $c$ such that $\wt{c}(1)=x$. Then $\wt{c}(0)=(\iota_x[c])x$ for a unique element $\iota_x[c]\in\Gamma$. 
\begin{equation*}
\begin{tikzpicture}[every label/.append style={font=\scriptsize},matrix of math nodes,decoration={markings,mark=at position0.5with{\arrow{>}}}]
\clip(-.7,-1.8)rectangle(1.1,1.6);
\node(1)[label=below:x]{};
\node(2)[above=2em of 1,label=above:(\iota_x{[}c{]})x]{};
\node(3)[below=of 1,label=below:\Gamma x]{};
\node(4)at(3){};
\draw[postaction={decorate}](2.center)to[out=-80,in=70]node[right,scale=.75]{\wt{c}}(1.center);
\draw[postaction={decorate}](3.center)..controls+(1,1)and+(1,-1)..(4.center)node[pos=.5,right,scale=.75]{c};
\foreach\x in{1,2,3}\filldraw(\x)circle(1pt);
\end{tikzpicture}
\end{equation*}
\end{dfn}

\begin{dfn}
Let $G$ be a Lie group, $\Gamma$ be a discrete subgroup of $G$ and $\Gamma\bs G\stackrel{\rho_\Gamma}{\curvearrowleft}G$ be the action by right multiplication. By Corollary \ref{tblgcgsgxx}, we have $T_o(\rho_\Gamma)\simeq\Aut(G)\bs(\Diff(G)_\Gamma/{\sim})$. Consider the map 
\begin{align*}
\Diff(G)_\Gamma&\to\mca{H}(\Gamma,G)\\
\theta&\mapsto\theta|_\Gamma. 
\end{align*}
It induces a map 
\begin{equation*}
r\colon\Diff(G)_\Gamma/{\sim}\to\mca{H}(\Gamma,G). 
\end{equation*}
This is $\Aut(G)$-equivariant, hence we get 
\begin{equation*}
\Aut(G)\bs(\Diff(G)_\Gamma/{\sim})\to\Aut(G)\bs\mca{H}(\Gamma,G). 
\end{equation*}
\end{dfn}

\begin{lem}\label{injective}
Let $G$ be a Lie group and $\Gamma$ be a discrete subgroup of $G$. If $G$ is contractible, then $r\colon\Diff(G)_\Gamma/{\sim}\to\mca{H}(\Gamma,G)$ is injective. 
\end{lem}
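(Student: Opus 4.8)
The plan is to show that if $\theta,\theta'\in\Diff(G)_\Gamma$ satisfy $\theta|_\Gamma=\theta'|_\Gamma$ as elements of $\mca{H}(\Gamma,G)$, then $\theta\sim\theta'$ in $\Diff(G)_\Gamma$. First I would record the two immediate consequences of $\theta|_\Gamma=\theta'|_\Gamma$: we have $\theta(\Gamma)=\theta'(\Gamma)$ (so the condition $\theta(H)=\theta'(H)$ in Definition \ref{gmxththhh}, with $H=\Gamma$, is satisfied), and $\theta\theta'^{-1}$ restricts to the identity on $\theta'(\Gamma)$. Set $\theta''=\theta\theta'^{-1}\circ\theta'=\theta$; more usefully, observe that it suffices to construct a path from $\theta$ to $\theta'$ through maps in $\ul{C}^\infty(G,G)$ (here $s$ is trivial since $G$ is a group over a point, so $\ul{C}^\infty=C^\infty$) that are left $\Gamma$-equivariant in the sense $J(\gamma g,t)=\theta'(\gamma)J(g,t)$ for $\gamma\in\Gamma$ — note $\theta(\gamma)=\theta'(\gamma)$, so the equivariance constraint is the same whether phrased via $\theta$ or $\theta'$, and the endpoint conditions $J(\cdot,0)=\theta$, $J(\cdot,1)=\theta'$ are compatible with it.

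The key device is to reduce to a statement about the quotient $\Gamma\bs G$. Both $\theta$ and $\theta'$, being $\Gamma$-equivariant with the same restriction to $\Gamma$ (using the left action $\gamma\cdot g=\gamma g$ on the source and $\gamma\cdot h=\theta'(\gamma)h$ on the target), descend to diffeomorphisms $\ol\theta,\ol{\theta'}\colon\Gamma\bs G\to\theta'(\Gamma)\bs G$. A left $\Gamma$-equivariant homotopy $J$ between $\theta$ and $\theta'$ is the same thing as a homotopy $\ol J$ between $\ol\theta$ and $\ol{\theta'}$ through $C^\infty$ maps $\Gamma\bs G\to\theta'(\Gamma)\bs G$ (the lift of such an $\ol J$ exists and is unique once we prescribe its value on a point over the basepoint, by the covering-space lifting property, since $G\to\Gamma\bs G$ is a covering and $G$ is simply connected as it is contractible — and one checks the lift is automatically left $\Gamma$-equivariant). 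So the problem becomes: $\ol\theta$ and $\ol{\theta'}$ are $C^\infty$ homotopic through maps into $\theta'(\Gamma)\bs G$. Now $\ol\theta$ and $\ol{\theta'}$ are homotopic as maps if and only if they induce the same map on $\pi_1$ up to the usual basepoint ambiguity — but here both are diffeomorphisms and, crucially, on fundamental groups $\ol\theta_*,\ol{\theta'}_*\colon\pi_1(\Gamma\bs G,\Gamma)\to\pi_1(\theta'(\Gamma)\bs G)$ are, after the identification $\iota$ of Definition \ref{gxgxpxx}, both equal to the isomorphism induced by $\theta|_\Gamma=\theta'|_\Gamma\colon\Gamma\xrightarrow{\sim}\theta'(\Gamma)$ (this is where contractibility of $G$ is used decisively: $\Gamma\bs G$ and $\theta'(\Gamma)\bs G$ are aspherical, i.e.\ $K(\pi,1)$'s, so homotopy classes of maps between them are classified by homomorphisms on $\pi_1$ modulo conjugacy). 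Hence $\ol\theta$ and $\ol{\theta'}$ are freely homotopic.

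The remaining work is to upgrade a free continuous homotopy to a $C^\infty$ one and to deal with the basepoint, i.e.\ to arrange the homotopy so that its lift is genuinely $\Gamma$-equivariant rather than merely equivariant up to a wandering twist. For the smoothing I would invoke a standard smooth-approximation-of-homotopies result (the same kind already used in the paper, e.g.\ via the Whitney approximation / the argument behind Lemma \ref{conorihod}): a continuous homotopy between smooth maps of manifolds can be replaced by a smooth one with the same endpoints. For the basepoint issue: a free homotopy from $\ol\theta$ to $\ol{\theta'}$ traces out a loop $c$ at the basepoint $\Gamma\in\Gamma\bs G$, and since $\ol\theta_*=\ol{\theta'}_*$ on $\pi_1$, conjugation by $[c]$ acts trivially on the image subgroup; using that $\pi_1(\theta'(\Gamma)\bs G)=\theta'(\Gamma)$ and working inside $G$ itself, one corrects the homotopy by composing with a loop of left translations $L_{c(t)}$ (exactly the move illustrated by the figure in Section \ref{1515}: $L_{c(t)}\ol\theta'$ interpolates between $\ol{\theta'}$ and $L_{c(1)}\ol{\theta'}$, and $c(1)\in\theta'(\Gamma)$ acts trivially), landing us at a based homotopy. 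I expect the main obstacle to be making the equivariance bookkeeping precise — tracking that every map in sight respects the two different $\Gamma$-actions on source and target, that the lift of the corrected quotient homotopy is $\Gamma$-equivariant on the nose, and that the intermediate maps $J(\cdot,t)$ indeed satisfy $J(\gamma g,t)=\theta'(\gamma)J(g,t)$ so as to land in $C^\infty(G,G)_\Gamma=\ul{C}^\infty(\mca{G}^x,\mca{G}^x)_{H,\theta'(H)}$ as required by Definition \ref{gmxththhh}.
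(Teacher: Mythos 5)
Your overall route is the same as the paper's: pass to the quotients $\Gamma\bs G\to\theta'(\Gamma)\bs G$, use that $G$ contractible makes these quotients aspherical so that the descended maps $\ol{\theta},\ol{\theta'}$ are homotopic because they induce the same map on $\pi_1$ (identified with $\theta|_\Gamma=\theta'|_\Gamma$ via $\iota_1$), then lift the homotopy to $G$ and check that the lift is $\Gamma$-equivariant with each slice smooth. All of that is exactly what the paper does, and your reductions (equivariant homotopies upstairs correspond to homotopies downstairs; the lift of a based homotopy is equivariant by uniqueness of lifts) are sound.

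The genuine problem is the step where you pass from a free homotopy to a based one by ``composing with a loop of left translations $L_{c(t)}$''. Left translations do not descend to the left quotient $\theta'(\Gamma)\bs G$ (only right translations do, and the figure in Section \ref{1515} that you cite indeed uses $R_{c(t)}$, and moreover concerns self-maps of $G$, not maps into a quotient). If instead you try the correction upstairs, replacing the lift $\wt{H}(g,t)$ by $\wt{c}(t)^{-1}\wt{H}(g,t)$ where $\wt{c}$ lifts the trace loop, you kill the basepoint defect but destroy the required equivariance: the new slices satisfy $J(\gamma g,t)=\wt{c}(t)^{-1}\theta'(\gamma)\wt{c}(t)\,J(g,t)$, and $\wt{c}(t)^{-1}\theta'(\gamma)\wt{c}(t)$ need not lie in $\theta'(\Gamma)$; a right-translation correction preserves equivariance but only repairs the endpoint when the trace-loop class $\delta$ is central in $G$, not merely in $\theta'(\Gamma)$, which is all that ``$\ol{\theta}_*=\ol{\theta'}_*$'' gives you. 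So the patch as described would fail. Fortunately it is also unnecessary: the asphericity fact you invoke has a based form (Hatcher, Proposition 1B.9: a map from a connected CW complex to a $K(\pi,1)$ inducing a given homomorphism is unique up to homotopy fixing the basepoint), and since $\ol{\theta}_*=\ol{\theta'}_*$ on the nose this yields a \emph{based} homotopy directly — this is precisely the paper's move. With a based homotopy, its lift normalized by $\wt{H}(1,0)=1$ is constant equal to $\theta(\gamma)$ along $\{\gamma\}\times I$, equivariant by uniqueness of lifts, and ends at $\theta'$ because $\wt{H}(1,1)=1$; combined with your smoothing of the slices rel endpoints, this completes the proof as in the paper.
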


\begin{proof}
Let $\theta_1$, $\theta_2\in\Diff(G)_\Gamma$ be such that $\theta_1|_\Gamma=\theta_2|_\Gamma$. Then $\theta_1(\Gamma)=\theta_2(\Gamma)$ is a discrete subgroup of $G$ and $\theta_i$ induces a map $\ol{\theta_i}$: 
\begin{equation*}
\begin{tikzcd}
G\ar[r,"\theta_i"]\ar[d]\ar[rd,phantom,"\circlearrowright"{xshift=2,yshift=-2}]&G\ar[d]\\
\Gamma\bs G\ar[r,"\ol{\theta_i}"']&\theta_i(\Gamma)\bs G. 
\end{tikzcd}
\end{equation*}

\begin{claim}
We have 
\begin{equation*}
\begin{tikzcd}
\pi_1(\Gamma\bs G,\Gamma)\ar[r,"\ol{\theta_i}_*"]\ar[d,"\sim"{sloped},"\iota_1"']\ar[rd,phantom,"\circlearrowright"{xshift=-7}]&\pi_1(\theta_i(\Gamma)\bs G,\theta_i(\Gamma))\ar[d,"\sim"{sloped},"\iota_1"']\\
\Gamma\ar[r,"\theta_i|_\Gamma"']&\theta_i(\Gamma). 
\end{tikzcd}
\end{equation*}
Note that vertical maps are defined since $G$ is simply connected. 
\end{claim}

\begin{proof}
Let $\gamma\in\Gamma$. Then the commutativity can be observed from the following figure. 
\begin{equation*}
\begin{tikzpicture}[every label/.append style={font=\scriptsize},matrix of math nodes,decoration={markings,mark=at position0.5with{\arrow{>}}}]
\clip(-.5,-1.7)rectangle(4.5,2);
\node(1)[label=below:1]{};
\node(2)[above=1cm of 1,label=above:\gamma]{};
\node(3)[below=1cm of 1,label=left:\Gamma]{};
\node(4)at(3){};
\draw[postaction={decorate}](2.center)to[out=-50,in=50](1.center);
\draw[postaction={decorate}](3.center)..controls+(1,1)and+(1,-1)..(4.center)node[pos=.5,label=right:\gamma]{};
\node(5)[right=2.5cm of 1,label=below:1]{};
\node(6)[above=1cm of 5,label={\theta_i(\gamma)}]{};
\node(7)[below=1cm of 5,label=left:\theta_i(\Gamma)]{};
\node(8)at(7){};
\draw[postaction={decorate}](6.center)to[out=-50,in=50](5.center);
\draw[postaction={decorate}](7.center)..controls+(1,1)and+(1,-1)..(8.center)node[pos=.5,label=right:\theta_i(\gamma)]{};
\foreach\x in{1,2,3,5,6,7}\filldraw(\x)circle(1pt);
\end{tikzpicture}\qedhere
\end{equation*}
\end{proof}

Hence $\ol{\theta_1}_*=\ol{\theta_2}_*\colon\pi_1(\Gamma\bs G,\Gamma)\to\pi_1(\theta_1(\Gamma)\bs G,\theta_1(\Gamma))$. Since $\theta_1(\Gamma)\bs G$ is aspherical, $\ol{\theta_1}$ is based homotopic to $\ol{\theta_2}$. See for example Proposition 1B.9 in \cite{Hatcher}. So there exists a continuous map $H\colon(\Gamma\bs G)\times I\to\theta_1(\Gamma)\bs G$ such that: 
\begin{itemize}
\setlength\itemsep{0em}
\item $H(\Gamma,t)=\theta_1(\Gamma)$ for $t\in I$
\item $H(\cdot,t)$ is $C^\infty$ for $t\in I$
\item $H(\cdot,0)=\ol{\theta_1}$, $H(\cdot,1)=\ol{\theta_2}$. 
\end{itemize}
Then there exists a unique continuous map $\wt{H}\colon G\times I\to G$ such that $\wt{H}(1,0)=1$ and 
\begin{equation}\label{hh}
\begin{tikzcd}
G\times I\ar[r,"\wt{H}"]\ar[d]\ar[rd,phantom,"\circlearrowright"{xshift=2,yshift=-1}]&G\ar[d]\\
(\Gamma\bs G)\times I\ar[r,"H"']&\theta_1(\Gamma)\bs G. 
\end{tikzcd}
\end{equation}
We have $\wt{H}(\cdot,t)$ is $C^\infty$ for $t\in I$, and $\wt{H}(\cdot,0)=\theta_1$. 

\begin{claim}
$\wt{H}(\gamma,t)=\theta_1(\gamma)$ for $\gamma\in\Gamma$ and $t\in I$. 
\end{claim}

\begin{proof}
In Diagram \eqref{hh}, $\{\gamma\}\times I$ above left goes to $\{\Gamma\}\times I$ below left and then to the point $\theta_1(\Gamma)$ below right. Hence $\wt{H}(\{\gamma\}\times I)$ must be a point since the right vertical map is a covering. So $\wt{H}(\gamma,t)=\wt{H}(\gamma,0)=\theta_1(\gamma)$ for all $t\in I$. 
\end{proof}

In particular, we have $\wt{H}(1,1)=1$, hence $\wt{H}(\cdot,1)=\theta_2$. 

\begin{claim}
$\wt{H}(\gamma g,t)=\theta_1(\gamma)\wt{H}(g,t)$ for $\gamma\in\Gamma$, $g\in G$, $t\in I$. 
\end{claim}

\begin{proof}
We have 
\begin{equation*}
\begin{tikzcd}
G\ar[r,"\gamma\ \cdot"]\ar[rd]&G\ar[r,"\wt{H}(\cdot{,}t)"]\ar[d]\ar[rd,phantom,"\circlearrowright"{xshift=2,yshift=-2}]\ar[ld,phantom,"\circlearrowright",pos=.15]&G\ar[d]\\
\ &\Gamma\bs G\ar[r,"H(\cdot{,}t)"']&\theta_1(\Gamma)\bs G, 
\end{tikzcd}\qquad
\begin{tikzcd}
G\ar[r,"\wt{H}(\cdot{,}t)"]\ar[d]\ar[rd,phantom,"\circlearrowright"{xshift=2,yshift=-2}]&G\ar[r,"\theta_1(\gamma)\ \cdot"]\ar[d]\ar[rd,phantom,"\circlearrowright",pos=.18]&G\ar[ld]\\
\Gamma\bs G\ar[r,"H(\cdot{,}t)"']&\theta_1(\Gamma)\bs G. &\ 
\end{tikzcd}
\end{equation*}
The top composed maps are lifts of the same base map and the left map sends $1$ to $\wt{H}(\gamma,t)$ and the right map sends $1$ to $\theta_1(\gamma)$, and they are equal, so two maps must be the same. 
\end{proof}

Therefore we get $\wt{H}(\gamma g,t)=\wt{H}(\gamma,t)\wt{H}(g,t)$ for $\gamma\in\Gamma$, $g\in G$, $t\in I$ and hence $\theta_1\sim\theta_2$. 
\end{proof}

\subsection{The Teichm\"{u}ller space of $\Gamma\bs S\curvearrowleft S$}
Let $S$ be a simply connected solvable Lie group, $\Gamma$ be a lattice in $S$ and $\Gamma\bs S\stackrel{\rho_\Gamma}{\curvearrowleft}S$ be the action by right multiplication. It is known that $S$ is diffeomorphic to $\bb{R}^{\dim S}$, $\Gamma$ is torsion free and cocompact in $S$. We have $T_{bnt}(\rho_\Gamma)=T_{nt}(\rho_\Gamma)=T_o(\rho_\Gamma)$ by Proposition \ref{gmncptbnt} and Corollary \ref{tnteqto}. Consider the map 
\begin{equation*}
r\colon\Diff(S)_\Gamma/{\sim}\to\mca{H}(\Gamma,S)
\end{equation*}
defined in Section \ref{marked lattices}. 

\begin{lem}
The map $r$ is surjective. 
\end{lem}

\begin{proof}
Take $\alpha\in\mca{H}(\Gamma,S)$. Then $\alpha(\Gamma)$ is a lattice in $S$ by Proposition \ref{cocompact} and $\alpha$ gives an isomorphism $\Gamma\simeq\alpha(\Gamma)$ between lattices. By $(7{,}4^\prime)$ Corollary of \cite{Witte_1995}, there exists $\theta\in\Diff(S)_\Gamma$ such that $\theta|_\Gamma=\alpha$. 
\end{proof}

\begin{prop}\label{torautshgs}
We have 
\begin{equation*}
T_o(\rho_\Gamma)\simeq\Aut(S)\bs\mca{H}(\Gamma,S). 
\end{equation*}
\end{prop}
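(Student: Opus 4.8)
The plan is to assemble this from the pieces already set up. By Corollary \ref{tblgcgsgxx} applied to $G=S$ and $H=\{1\}$ (so that $H\bs S=S$ and $\rho_H=\rho_\Gamma$... wait, we need $H=\Gamma$), we get
\begin{equation*}
T_o(\rho_\Gamma)\simeq\Aut(S)\bs(\Diff(S)_\Gamma/{\sim}),
\end{equation*}
and the $\Aut(S)$-equivariant map $r\colon\Diff(S)_\Gamma/{\sim}\to\mca{H}(\Gamma,S)$, $[\theta]\mapsto[\theta|_\Gamma]$, was defined in Section \ref{marked lattices}. So it suffices to show $r$ is a bijection, and then pass to $\Aut(S)$-quotients on both sides (which preserves the bijection since $r$ is equivariant).

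First I would invoke the lemma just proved (surjectivity of $r$): given $\alpha\in\mca{H}(\Gamma,S)$, Proposition \ref{cocompact} shows $\alpha(\Gamma)$ is again a lattice in $S$ (using that $\Gamma$ is torsion free cocompact, which holds for lattices in simply connected solvable groups), so $\alpha$ is an isomorphism between lattices, and by Witte's rigidity result $(7,4^\prime)$ in \cite{Witte_1995} it extends to a diffeomorphism $\theta\in\Diff(S)_\Gamma$ with $\theta|_\Gamma=\alpha$; hence $r([\theta])=\alpha$. Next, injectivity of $r$ is exactly Lemma \ref{injective}, whose hypothesis ``$G$ is contractible'' is met because a simply connected solvable Lie group $S$ is diffeomorphic to $\bb{R}^{\dim S}$, hence contractible. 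Therefore $r\colon\Diff(S)_\Gamma/{\sim}\ \to\ \mca{H}(\Gamma,S)$ is a bijection.

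Finally, since $r$ is $\Aut(S)$-equivariant, it descends to a bijection on orbit spaces
\begin{equation*}
\Aut(S)\bs(\Diff(S)_\Gamma/{\sim})\ \simeq\ \Aut(S)\bs\mca{H}(\Gamma,S),
\end{equation*}
and composing with the bijection from Corollary \ref{tblgcgsgxx} gives $T_o(\rho_\Gamma)\simeq\Aut(S)\bs\mca{H}(\Gamma,S)$, as claimed.

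I do not expect a serious obstacle here — the statement is essentially a bookkeeping assembly of Corollary \ref{tblgcgsgxx}, Lemma \ref{injective}, Proposition \ref{cocompact}, and Witte's theorem. The one point requiring a little care is making sure the hypotheses of each cited result genuinely apply: that $\Gamma$ (and $\alpha(\Gamma)$) are torsion free and cocompact so that Proposition \ref{cocompact} and the homological argument there are available, and that $S$ being contractible is precisely what Lemma \ref{injective} needs. All of these are standard facts about lattices in simply connected solvable Lie groups, stated in the text preceding the proposition.
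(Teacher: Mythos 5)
Your proposal is correct and follows essentially the same route as the paper: the paper's proof cites Proposition \ref{trans} (of which Corollary \ref{tblgcgsgxx} is the Lie-group specialization you invoke), Lemma \ref{injective} for injectivity of $r$ via contractibility of $S$, and the preceding surjectivity lemma via Proposition \ref{cocompact} and Witte's $(7,4^\prime)$ Corollary, with the $\Aut(S)$-equivariance of $r$ giving the bijection on quotients. The only blemish is the momentary slip about taking $H=\{1\}$, which you correct yourself; with $H=\Gamma$ everything checks out.
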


\begin{proof}
This follows from Proposition \ref{trans}, Lemma \ref{injective} and the above lemma. 
\end{proof}

Recall that $S$ is of type (R) if all eigenvalues of $\Ad(s)$ are real for any $s\in S$. 

\begin{cor}
If $S$ is of type (R), then $\lvert T_o(\rho_\Gamma)\rvert=1$. 
\end{cor}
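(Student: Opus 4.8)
By Proposition~\ref{torautshgs} we have $T_o(\rho_\Gamma)\simeq\Aut(S)\bs\mca{H}(\Gamma,S)$, so the plan is to show that, when $S$ is of type (R), the group $\Aut(S)$ acts transitively on $\mca{H}(\Gamma,S)$. The inclusion homomorphism $\Gamma\hookrightarrow S$ is injective with discrete image, hence lies in $\mca{H}(\Gamma,S)$, so $\mca{H}(\Gamma,S)\neq\emptyset$; and since $\Aut(S)$ acts on $\mca{H}(\Gamma,S)$ by post composition, transitivity is equivalent to the assertion that every $\alpha\in\mca{H}(\Gamma,S)$ extends to some $\Phi\in\Aut(S)$, i.e.\ $\Phi|_\Gamma=\alpha$ (then $\Phi$ carries the inclusion to $\alpha$, so $\alpha$ lies in the orbit of the inclusion).

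First I would fix $\alpha\in\mca{H}(\Gamma,S)$ and record that, $S$ being a simply connected solvable Lie group and $\Gamma$ a lattice in it, $\Gamma$ is torsion free and cocompact in $S$ (as noted in the excerpt). Applying Proposition~\ref{cocompact} to the torsion free discrete subgroup $\Gamma$ and to $\alpha\in\mca{H}(\Gamma,S)$ then shows that $\alpha(\Gamma)$ is cocompact in $S$; since $\alpha(\Gamma)$ is discrete by the definition of $\mca{H}(\Gamma,S)$, it is again a lattice in $S$. Thus $\alpha$ is an isomorphism of abstract groups between the two lattices $\Gamma$ and $\alpha(\Gamma)$ of $S$.

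The crucial input is the rigidity theorem for lattices in simply connected solvable Lie groups of type (R): any isomorphism between two lattices of such a group extends (uniquely) to an automorphism of the group (Mostow, Sait\^{o}; this also follows from the superrigidity results in \cite{Witte_1995} already used above). Granting this, $\alpha\colon\Gamma\to\alpha(\Gamma)$ extends to some $\Phi\in\Aut(S)$ with $\Phi|_\Gamma=\alpha$, so $\alpha$ is in the $\Aut(S)$-orbit of the inclusion $\Gamma\hookrightarrow S$. Hence $\Aut(S)\bs\mca{H}(\Gamma,S)$ is a single point and $\lvert T_o(\rho_\Gamma)\rvert=1$.

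The main obstacle here is purely one of invoking the type (R) rigidity theorem in the precise form needed and verifying its hypotheses — which reduces, by Proposition~\ref{cocompact}, to the fact that $\alpha(\Gamma)$ is again a lattice. One should double-check that the cited statement yields an automorphism of $S$ itself rather than of some associated algebraic or Mal'cev hull; since $S$ is simply connected solvable of type (R), this is exactly the classical formulation, so nothing further is required. The reduction via Propositions~\ref{torautshgs} and~\ref{cocompact} is otherwise routine.
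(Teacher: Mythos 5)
Your proposal is correct and follows exactly the paper's argument: reduce via Proposition \ref{torautshgs} to showing $\Aut(S)$ acts transitively on $\mca{H}(\Gamma,S)$, use Proposition \ref{cocompact} to see $\alpha(\Gamma)$ is again a lattice, and invoke Saito's rigidity theorem for type (R) simply connected solvable groups to extend $\alpha$ to an automorphism of $S$. The only difference is cosmetic (your extra remark about superrigidity), so nothing further is needed.
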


\begin{proof}
Take $\alpha\in\mca{H}(\Gamma,S)$. Then $\alpha(\Gamma)$ is a lattice in $S$ by Proposition \ref{cocompact} and $\alpha$ gives an isomorphism $\Gamma\simeq\alpha(\Gamma)$ between lattices. By a theorem of Saito in \cite{Sa}, there exists $\Phi\in\Aut(S)$ such that $\Phi|_\Gamma=\alpha$. 
\end{proof}

\begin{cor}
If $S$ is nilpotent, then $\lvert T_o(\rho_\Gamma)\rvert=1$. 
\end{cor}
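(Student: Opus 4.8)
The plan is to reduce the statement to the preceding corollary (the case when $S$ is of type (R)) by observing that a simply connected nilpotent Lie group is automatically of type (R). Let $S$ be simply connected nilpotent with Lie algebra $\mf{s}$. Then $\mf{s}$ is nilpotent, so $\ad(X)\colon\mf{s}\to\mf{s}$ is a nilpotent endomorphism for every $X\in\mf{s}$. For a simply connected nilpotent Lie group the exponential map $\exp\colon\mf{s}\to S$ is a diffeomorphism, so every $s\in S$ can be written as $s=\exp X$, and $\Ad(s)=\Ad(\exp X)=e^{\ad X}$ is unipotent. Hence all eigenvalues of $\Ad(s)$ equal $1$ and are in particular real, so $S$ is of type (R). The conclusion $\lvert T_o(\rho_\Gamma)\rvert=1$ then follows immediately from the corollary for type (R) groups.

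Alternatively, one can argue directly from Proposition \ref{torautshgs}, which gives $T_o(\rho_\Gamma)\simeq\Aut(S)\bs\mca{H}(\Gamma,S)$: given $\alpha\in\mca{H}(\Gamma,S)$, Proposition \ref{cocompact} shows $\alpha(\Gamma)$ is again a lattice in $S$, so $\alpha$ is an isomorphism between two lattices in the simply connected nilpotent group $S$; by Malcev's rigidity theorem this isomorphism extends to an automorphism $\Phi\in\Aut(S)$ with $\Phi|_\Gamma=\alpha$ (this is the nilpotent specialization of the theorem of Saito invoked in the type (R) case). Thus $\Aut(S)$ acts transitively on $\mca{H}(\Gamma,S)$, giving $\lvert T_o(\rho_\Gamma)\rvert=1$. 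Either route is essentially immediate; the only point to verify is the standard fact that nilpotent implies type (R) (equivalently, that Saito's extension theorem reduces to Malcev's), so there is no genuine obstacle in the proof.
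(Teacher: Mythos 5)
Your proposal is correct and matches the paper's own proof, which likewise reduces the statement to the type (R) corollary by noting that a nilpotent Lie group is of type (R), and mentions the alternative of invoking Malcev's theorem in place of Saito's. The extra detail you supply (that $\Ad(\exp X)=e^{\ad X}$ is unipotent) is a fine justification of the standard fact the paper takes for granted.
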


\begin{proof}
This is because a nilpotent Lie group is of type (R). One can also apply a theorem by Malcev instead of a theorem by Saito. 
\end{proof}

\section{The Teichm\"{u}ller space of $\Gamma\bs\PSL(2,\bb{R})\curvearrowleft\PSL(2,\bb{R})$}\label{1818}
Let $G=\PSL(2,\bb{R})$, $\Gamma$ be a torsion free cocompact lattice in $G$ and $\Gamma\bs G\stackrel{\rho_\Gamma}{\curvearrowleft}G$ be the action by right multiplication. We have 
\begin{equation*}
T_o(\rho_\Gamma)\simeq T_{nt}(\rho_\Gamma)\simeq T_{bnt}(\rho_\Gamma)
\end{equation*}
since $\rho_\Gamma$ is locally free and $\Gamma\bs G$ is compact (see Corollary \ref{tnteqto} and Proposition \ref{gmncptbnt}). We compute $T_o(\rho_\Gamma)$ in this section. Let $\wt{G}=\wt{\PSL}(2,\bb{R})$ be the universal covering group of $G$ and $p\colon\wt{G}\to G$ be the covering map. Let $Z=\ker p$. Then $Z$ is the center of $\wt{G}$. Since $G$ is homotopy equivalent to $S^1$, $Z\simeq\bb{Z}$ (not canonically). Let $\wt{\Gamma}=p^{-1}(\Gamma)$. We have 
\begin{equation*}
\begin{tikzcd}[row sep=14]
1\ar[r]&Z\ar[r]\ar[d,equal]&\wt{G}\ar[r]&G\ar[r]&1\\
1\ar[r]&Z\ar[r]&\wt{\Gamma}\ar[r]\ar[u,phantom,sloped,"\subset"]&\Gamma\ar[r]\ar[u,phantom,sloped,"\subset"]&1. 
\end{tikzcd}
\end{equation*}
$\wt{\Gamma}\bs\wt{G}\simeq\Gamma\bs G$ and $\wt{\Gamma}$ is a torsion free cocompact lattice in $\wt{G}$. By Borel density theorem (see 5.18 of \cite{Rag}), $Z$ coincides with the center of $\wt{\Gamma}$. If $\alpha\in\mca{H}(\Gamma,G)$ (resp. $\wt{\alpha}\in\mca{H}(\wt{\Gamma},\wt{G})$), then $\alpha(\Gamma)$ (resp. $\wt{\alpha}(\wt{\Gamma})$) is a torsion free cocompact lattice in $G$ (resp. $\wt{G}$) by Proposition \ref{cocompact}. 

What we will prove in this section can be summarized in the following diagram (we do not explain the detail here): 
\begin{equation*}
\begin{tikzcd}[row sep=10]
&[-20]1\ar[dd]&[-40]&[-15]\\
\ \\
\Hom(\wt{\Gamma},Z)\ar[dd]&\Hom(\wt{\Gamma},Z)\ar[dd]\\
\ \\
\Aut(\wt{G})\bs\mca{H}(\wt{\Gamma},\wt{G})\ar[r,phantom,"\curvearrowleft"]\ar[dd,"\ol{\pi}"]&\Out(\wt{\Gamma})\ar[rrd,dash,"\sim"{sloped}]\ar[dd,"\ol{\pi}_0"{yshift=-3}]\\
&&T_o(\rho_\Gamma)\ar[r,phantom,"\curvearrowleft"]&\MCG_o(\rho_\Gamma)\\
\Aut(G)\bs\mca{H}(\Gamma,G)\ar[r,phantom,"\curvearrowleft"]\ar[uu,bend left,"\ol{\sigma}"]&\Out(\Gamma)\ar[rrd,dash,"\sim"{sloped}]\ar[dd]\ar[uu,bend left,"\ol{\sigma}_0"]\\
&&T(\Sigma)\ar[r,phantom,"\curvearrowleft"]&\MCG^\pm(\Sigma)\\
&1. 
\ar[from=5-1,to=6-3,dash,"\sim"{sloped},crossing over]\ar[from=7-1,to=8-3,dash,"\sim"{sloped},crossing over]
\end{tikzcd}
\end{equation*}

\subsection{The construction of $\pi\colon\mca{H}(\wt{\Gamma},\wt{G})\to\mca{H}(\Gamma,G)$}
\begin{lem}\label{descends}
For any $\wt{\alpha}\in\mca{H}(\wt{\Gamma},\wt{G})$, there exists a unique $\alpha\in\mca{H}(\Gamma,G)$ such that 
\begin{equation*}
\begin{tikzcd}
Z\ar[r,"\sim"]\ar[d]\ar[rd,phantom,"\circlearrowright"]&Z\ar[d]\\
\wt{\Gamma}\ar[r,"\wt{\alpha}"]\ar[d]\ar[rd,phantom,"\circlearrowright"]&\wt{G}\ar[d,"p"]\\
\Gamma\ar[r,"\alpha"']&G
\end{tikzcd}
\end{equation*}
and $\wt{\alpha}(\wt{\Gamma})=p^{-1}(\alpha(\Gamma))$. 
\end{lem}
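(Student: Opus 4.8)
\textbf{Proof plan for Lemma \ref{descends}.}
The plan is to produce $\alpha$ as the map induced by $\wt{\alpha}$ on the quotient $\Gamma = \wt{\Gamma}/Z$. First I would observe that $Z$ is the center of $\wt{\Gamma}$ (stated above via the Borel density theorem), and that $\wt{\alpha}(\wt{\Gamma})$ is a torsion free cocompact lattice in $\wt{G}$ (by Proposition \ref{cocompact}), whose center is again $Z$ (Borel density applied to $\wt{G}$). Since $\wt{\alpha}$ is an isomorphism onto its image, it must carry the center $Z$ of $\wt{\Gamma}$ isomorphically onto the center $Z$ of $\wt{\alpha}(\wt{\Gamma})$; in particular $\wt{\alpha}(Z) = Z = \ker p$. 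This gives the top square and shows that $p\circ\wt{\alpha}\colon\wt{\Gamma}\to G$ kills $Z$, hence factors through a unique homomorphism $\alpha\colon\Gamma = \wt{\Gamma}/Z \to G$ making the lower square commute; uniqueness is immediate because $\wt{\Gamma}\to\Gamma$ is surjective.

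Next I would check $\alpha\in\mca{H}(\Gamma,G)$, i.e. that $\alpha$ is injective with discrete image. For injectivity: if $\alpha(\gamma Z) = 1$ then $p\wt{\alpha}(\gamma)=1$, so $\wt{\alpha}(\gamma)\in Z = \wt{\alpha}(Z)$, and injectivity of $\wt{\alpha}$ forces $\gamma\in Z$, i.e. $\gamma Z$ is trivial in $\Gamma$. For the image: $\alpha(\Gamma) = p(\wt{\alpha}(\wt{\Gamma}))$, and $\wt{\alpha}(\wt{\Gamma})$ is a discrete (indeed cocompact lattice) subgroup of $\wt{G}$ containing $Z=\ker p$; since $p$ is a covering homomorphism, the image of a discrete subgroup containing the kernel is discrete in $G$ (its preimage is exactly $\wt{\alpha}(\wt{\Gamma})$, which is closed). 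So $\alpha(\Gamma)$ is discrete, and $\alpha\in\mca{H}(\Gamma,G)$.

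Finally, the identity $\wt{\alpha}(\wt{\Gamma}) = p^{-1}(\alpha(\Gamma))$: the inclusion $\subset$ follows from $p\wt{\alpha}(\wt{\Gamma}) = \alpha(\Gamma)$. For $\supset$, if $g\in\wt{G}$ with $p(g)\in\alpha(\Gamma) = p(\wt{\alpha}(\wt{\Gamma}))$, pick $\gamma\in\wt{\Gamma}$ with $p(g) = p(\wt{\alpha}(\gamma))$; then $g\,\wt{\alpha}(\gamma)^{-1}\in\ker p = Z = \wt{\alpha}(Z)\subset\wt{\alpha}(\wt{\Gamma})$, so $g\in\wt{\alpha}(\wt{\Gamma})$.

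I do not expect a serious obstacle here; the only point requiring care is justifying that $\wt{\alpha}$ carries center to center, which is where Borel density (already invoked in the excerpt to identify $Z$ with the center of $\wt{\Gamma}$) is essential — without it one could not rule out $\wt{\alpha}(Z)\subsetneq Z$ a priori. Everything else is a routine diagram chase with covering groups.
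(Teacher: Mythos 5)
There is a genuine gap at the step where you claim that the center of $\wt{\alpha}(\wt{\Gamma})$ is ``again $Z$'' by Borel density. What Borel density actually gives is that an element of the lattice $\wt{\alpha}(\wt{\Gamma})$ commuting with all of $\wt{\alpha}(\wt{\Gamma})$ must be central in $\wt{G}$; hence the center of $\wt{\alpha}(\wt{\Gamma})$ is $\wt{\alpha}(\wt{\Gamma})\cap Z$, not $Z$ itself. Since $\wt{\alpha}$ carries the center of $\wt{\Gamma}$ onto the center of its image, you only get $\wt{\alpha}(Z)=\wt{\alpha}(\wt{\Gamma})\cap Z\subset Z$. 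That inclusion is enough for the existence and uniqueness of $\alpha$ and for $\alpha\in\mca{H}(\Gamma,G)$ (this part of your argument agrees with the paper), but it is not enough for the final identity: $p^{-1}(\alpha(\Gamma))=\wt{\alpha}(\wt{\Gamma})\,Z$, so $\wt{\alpha}(\wt{\Gamma})=p^{-1}(\alpha(\Gamma))$ holds if and only if $Z\subset\wt{\alpha}(\wt{\Gamma})$, i.e.\ if and only if $\wt{\alpha}(Z)=Z$ --- which is exactly what your appeal to Borel density was supposed to deliver and does not. Your ``$\supset$'' step uses $\ker p=\wt{\alpha}(Z)$ and hence is circular. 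Note also that the statement ``a cocompact lattice in $\wt{G}$ contains $Z$'' is false in general: if $k$ divides $2g-2$, a genus-$g$ surface group lifts to the intermediate cover $\wt{G}/\langle c^k\rangle$, and the preimage in $\wt{G}$ of such a lift is a torsion free cocompact lattice meeting $Z$ only in $\langle c^k\rangle$. What rules this out for $\wt{\alpha}(\wt{\Gamma})$ is its abstract isomorphism type (the Euler number of the central extension), not discreteness or cocompactness alone.

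This is precisely why the paper's proof is long: after obtaining $\wt{\alpha}(Z)\subset Z$ and defining $\alpha$ as you do, it proves $\wt{\alpha}(Z)=Z$ by an Euler-number argument. It takes the presentation $\Gamma=\langle a_i,b_i\mid[a_1,b_1]\cdots[a_g,b_g]=1\rangle$, lifts $\wt{a}_i,\wt{b}_i\in\wt{\Gamma}$, and shows via Milnor's lemma (the obstruction class of $\bb{H}^2\times_\Gamma G\to\Gamma\bs\bb{H}^2$, identified with the Euler class of the unit tangent bundle) that $[\wt{a}_1,\wt{b}_1]\cdots[\wt{a}_g,\wt{b}_g]=c^{\pm(2-2g)}$ for a generator $c$ of $Z$; then it produces a basepoint-preserving homeomorphism $(\Gamma\bs\bb{H}^2,\ol{i})\to(\alpha(\Gamma)\bs\bb{H}^2,\ol{i})$ inducing $\alpha$ on $\pi_1$ (asphericity plus Dehn--Nielsen--Baer, with a homotopy-extension argument to fix the basepoint), so the same computation in $p^{-1}(\alpha(\Gamma))$ gives $[\wt{\alpha}(\wt{a}_1),\wt{\alpha}(\wt{b}_1)]\cdots[\wt{\alpha}(\wt{a}_g),\wt{\alpha}(\wt{b}_g)]=c^{\pm(2-2g)}$. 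Comparing with $\wt{\alpha}$ applied to the relation in $\wt{\Gamma}$ yields $\wt{\alpha}(c)^{\pm(2-2g)}=c^{\pm(2-2g)}$, hence $\wt{\alpha}(c)=c^{\pm1}$ and $\wt{\alpha}(Z)=Z$, after which the identification $\wt{\alpha}(\wt{\Gamma})=p^{-1}(\alpha(\Gamma))$ follows as in your last paragraph. To repair your proof you need to supply this (or an equivalent) argument; the diagram chase alone cannot do it.
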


\begin{proof}
Let $\wt{\gamma}\in Z$. Then $\wt{\alpha}(\wt{\gamma})$ commutes with $\wt{\alpha}(\wt{\Gamma})$. Since $\wt{\alpha}(\wt{\Gamma})$ is a lattice in $\wt{G}$, $\wt{\alpha}(\wt{\gamma})\in Z$ by Borel density theorem. Hence $\wt{\alpha}(Z)\subset Z$. So there exists a unique homomorphism $\alpha\colon\Gamma\to G$ such that 
\begin{equation*}
\begin{tikzcd}
Z\ar[r]\ar[d]\ar[rd,phantom,"\circlearrowright"]&Z\ar[d]\\
\wt{\Gamma}\ar[r,"\wt{\alpha}"]\ar[d]\ar[rd,phantom,"\circlearrowright"]&\wt{G}\ar[d]\\
\Gamma\ar[r,"\alpha"']&G. 
\end{tikzcd}
\end{equation*}
By the discreteness of $\wt{\alpha}(\wt{\Gamma})$ in $\wt{G}$, we can show that $\alpha(\Gamma)$ is discrete in $G$. We have $\wt{\alpha}(\wt{\Gamma})\cap Z=\wt{\alpha}(Z)$. So $\alpha$ is injective, hence $\alpha\in\mca{H}(\Gamma,G)$. The situation is as follows: 
\begin{equation}\label{diagram}
\begin{tikzcd}[row sep=small,column sep=small]
1\ar[d]&1\ar[d]&1\ar[d]\\
Z\ar[r,"\sim"]\ar[d]&\wt{\alpha}(Z)\ar[r,hook]\ar[d]&Z\ar[d]\\
\wt{\Gamma}\ar[r,"\sim"]\ar[d]&\wt{\alpha}(\wt{\Gamma})\ar[r,hook]\ar[d]&\wt{\alpha(\Gamma)}\ar[d]\\
\Gamma\ar[r,"\sim"]\ar[d]&\alpha(\Gamma)\ar[r,equal]\ar[d]&\alpha(\Gamma)\ar[d]\\
1&1&1, 
\end{tikzcd}
\end{equation}
where $\wt{\alpha(\Gamma)}=p^{-1}(\alpha(\Gamma))$. 

Let $\bb{H}^2=\left\{z\in\bb{C}\ \middle|\ \im z>0\right\}$ be the hyperbolic plane, on which $G$ acts through 
\begin{align*}
G&\to\Isom(\bb{H}^2)\\
\pm
\begin{pmatrix}
a&b\\
c&d
\end{pmatrix}
&\mapsto\left(z\mapsto\frac{az+b}{cz+d}\right). 
\end{align*}
The actions $\Gamma\curvearrowright\bb{H}^2$, $\alpha(\Gamma)\curvearrowright\bb{H}^2$ are properly discontinuous and free, so $\Gamma\bs\bb{H}^2$, $\alpha(\Gamma)\bs\bb{H}^2$ are connected oriented closed $2$-manifolds. 

\begin{notation}\label{gh2agh2yb}
In this section objects on $\Gamma\bs\bb{H}^2$ and $\alpha(\Gamma)\bs\bb{H}^2$ coming from objects on $\bb{H}^2$ are denoted by putting overlines on them. For example, if $y\in\bb{H}^2$, the projections of $y$ to $\Gamma\bs\bb{H}^2$ and $\alpha(\Gamma)\bs\bb{H}^2$ are denoted by $\ol{y}$. Whether $\ol{y}$ is a point of $\Gamma\bs\bb{H}^2$ or $\alpha(\Gamma)\bs\bb{H}^2$ will be understood from the context. 
\end{notation}

Let $g\geq2$ be the genus of $\Gamma\bs\bb{H}^2$. Then $\Gamma\bs\bb{H}^2$ is homeomorphic to a surface which is obtained from a $4g$-gon by identifying sides. The homeomorphism is chosen so that $\ol{i}$ corresponds to the unique point coming from the vertices of the $4g$-gon. The figure below shows the case of $g=2$. 
\begin{equation*}
\begin{tikzpicture}[matrix of math nodes,decoration={markings,mark=at position0.5with{\arrow{>}}}]
\foreach\x in{0,...,7}\node(\x)at(45/2+45*\x:10mm){};
\draw[postaction={decorate}](1.center)--node[above right,scale=.75]{b_1}(0.center);
\draw[postaction={decorate}](2.center)--node[above,scale=.75]{a_1}(1.center);
\draw[postaction={decorate}](5.center)--node[below left,scale=.75]{b_2}(4.center);
\draw[postaction={decorate}](6.center)--node[below,scale=.75]{a_2}(5.center);
\draw[postaction={decorate}](2.center)--node[above left,scale=.75]{b_2}(3.center);
\draw[postaction={decorate}](3.center)--node[left,scale=.75]{a_2}(4.center);
\draw[postaction={decorate}](6.center)--node[below right,scale=.75]{b_1}(7.center);
\draw[postaction={decorate}](7.center)--node(8)[right,scale=.75]{a_1}(0.center);
\node[right=0em of 8]{\simeq\ (\Gamma\bs\bb{H}^2,\ol{i})};
\end{tikzpicture}
\end{equation*}
Then we have 
\begin{equation*}
\Gamma=\left\langle a_1,\ldots,a_g,b_1,\ldots,b_g\ \middle|\ [a_1,b_1]\cdots[a_g,b_g]=1\right\rangle, 
\end{equation*}
where $[a_i,b_i]=a_ib_ia_i^{-1}b_i^{-1}$. Take $\wt{a}_i$, $\wt{b}_i\in\wt{\Gamma}$ such that $p\wt{a}_i=a_i$, $p\wt{b}_i=b_i$. Let $c$ be a generator of $Z$. 

\begin{claim}
We have 
\begin{equation*}
[\wt{a}_1,\wt{b}_1]\cdots[\wt{a}_g,\wt{b}_g]=c^{\pm(2-2g)}. 
\end{equation*}
\end{claim}

\begin{proof}
We have a right principal $G$ bundle $p\colon\bb{H}^2\times_\Gamma G\to\Gamma\bs\bb{H}^2$, where $\Gamma$ acts on $G$ by left multiplication. Since $\pi_1(G)$ is abelian, the fundamental group of a fiber of $p$ does not depend on a choice of a base point, hence the local system $\pi_1(p^{-1}(\cdot))$ on $\Gamma\bs\bb{H}^2$ is defined. Since $G$ is connected, the classifying space $BG$ is simply connected. Hence the local system $\pi_1(p^{-1}(\cdot))$ is trivial. Let $c_\Gamma\in H^2(\Gamma\bs\bb{H}^2;\pi_1(p^{-1}(\cdot)))\simeq H^2(\Gamma\bs\bb{H}^2;\pi_1(G))$ be the first obstruction to the existence of a continuous section of $\bb{H}^2\times_\Gamma G\to\Gamma\bs\bb{H}^2$. Let $[\Gamma\bs\bb{H}^2]\in H_2(\Gamma\bs\bb{H}^2;\bb{Z})$ be the fundamental class of $\Gamma\bs\bb{H}^2$. By Lemma 2 of \cite{Milnor_1958}, $c_\Gamma[\Gamma\bs\bb{H}^2]$ corresponds to $([\wt{a}_1,\wt{b}_1]\cdots[\wt{a}_g,\wt{b}_g])^{\pm1}$ through the isomorphism $\iota_1\colon\pi_1(G)\simeq Z$. Let $\partial\bb{H}^2=\bb{R}\cup\left\{\infty\right\}$. We have an action $G\curvearrowright\partial\bb{H}^2$. Let $\bb{H}^2\times_\Gamma\partial\bb{H}^2\to\Gamma\bs\bb{H}^2$ be the associated bundle. The map 
\begin{align*}
G&\to\partial\bb{H}^2\\
g&\mapsto g0
\end{align*}
is a $\Gamma$ equivariant homotopy equivalence since 
\begin{equation*}
\begin{bmatrix}
\cos\theta&\sin\theta\\
-\sin\theta&\cos\theta
\end{bmatrix}
0=\tan\theta. 
\end{equation*}
This induces a fiberwise homotopy equivalence $\bb{H}^2\times_\Gamma G\to\bb{H}^2\times_\Gamma\partial\bb{H}^2$. Therefore $c_\Gamma$ coincides with the first obstruction to the existence of a continuous section of $\bb{H}^2\times_\Gamma\partial\bb{H}^2\to\Gamma\bs\bb{H}^2$. Since $\bb{H}^2\times_\Gamma\partial\bb{H}^2\to\Gamma\bs\bb{H}^2$ is isomorphic to the unit tangent bundle $T^1(\Gamma\bs\bb{H}^2)\to\Gamma\bs\bb{H}^2$, $c_\Gamma=e(T(\Gamma\bs\bb{H}^2))$, where $e(T(\Gamma\bs\bb{H}^2))\in H^2(\Gamma\bs\bb{H}^2;\bb{Z})$ is the Euler class of $T(\Gamma\bs\bb{H}^2)$. Hence $c_\Gamma[\Gamma\bs\bb{H}^2]=e(T(\Gamma\bs\bb{H}^2))[\Gamma\bs\bb{H}^2]=2-2g$. 
\end{proof}

Since $\Gamma\stackrel{\alpha}{\simeq}\alpha(\Gamma)$, we have 
\begin{equation*}
\alpha(\Gamma)=\left\langle\alpha(a_1),\ldots,\alpha(a_g),\alpha(b_1),\ldots,\alpha(b_g)\ \middle|\ [\alpha(a_1),\alpha(b_1)]\cdots[\alpha(a_g),\alpha(b_g)]=1\right\rangle, 
\end{equation*}
$p\wt{\alpha}(\wt{a}_i)=\alpha(a_i)$, $p\wt{\alpha}(\wt{b}_i)=\alpha(b_i)$ and $\wt{\alpha}(\wt{a}_i)$, $\wt{\alpha}(\wt{b}_i)\in\wt{\alpha(\Gamma)}$. 

\begin{claim}
There exists a homeomorphism $f\colon(\Gamma\bs\bb{H}^2,\ol{i})\to(\alpha(\Gamma)\bs\bb{H}^2,\ol{i})$ such that 
\begin{equation*}
\begin{tikzcd}
\pi_1(\Gamma\bs\bb{H}^2,\ol{i})\ar[r,"f_*"]\ar[d,"\iota_i"']\ar[rd,phantom,"\circlearrowright"{xshift=-3}]&\pi_1(\alpha(\Gamma)\bs\bb{H}^2,\ol{i})\ar[d,"\iota_i"]\\
\Gamma\ar[r,"\alpha"']&\alpha(\Gamma). 
\end{tikzcd}
\end{equation*}
\end{claim}

\begin{proof}
Since $\alpha(\Gamma)\bs\bb{H}^2$ is aspherical, there exists a continuous map $f_0\colon(\Gamma\bs\bb{H}^2,\ol{i})\to(\alpha(\Gamma)\bs\bb{H}^2,\ol{i})$ such that 
\begin{equation*}
\begin{tikzcd}
\pi_1(\Gamma\bs\bb{H}^2,\ol{i})\ar[r,"(f_0)_*"]\ar[d,"\sim"{sloped},"\iota_i"']\ar[rd,phantom,"\circlearrowright"{xshift=-3}]&\pi_1(\alpha(\Gamma)\bs\bb{H}^2,\ol{i})\ar[d,"\iota_i","\sim"'{sloped}]\\
\Gamma\ar[r,"\alpha"']&\alpha(\Gamma). 
\end{tikzcd}
\end{equation*}
See for example Proposition 1B.9 in \cite{Hatcher}. We see that $f_0$ is a homotopy equivalence by using $\alpha^{-1}$ and Proposition 1B.9 in \cite{Hatcher}. 

By Theorem 8.9 (Dehn--Nielsen--Baer theorem) of \cite{FMar}, there exists a homeomorphism $f_1\colon\Gamma\bs\bb{H}^2\to\alpha(\Gamma)\bs\bb{H}^2$ which is homotopic to $f_0$. Let $h_t$ be a homotopy between $f_0$ and $f_1$. There exists a homotopy $h^\prime_t$ between $f_1$ and a homeomorphism $f\colon(\Gamma\bs\bb{H}^2,\ol{i})\to(\alpha(\Gamma)\bs\bb{H}^2,\ol{i})$ such that the loop 
\begin{align*}
I&\to\alpha(\Gamma)\bs\bb{H}^2\\
t&\mapsto 
\begin{cases}
h_{2t}(\ol{i})&0\leq t\leq\frac{1}{2}\\
h^\prime_{2t-1}(\ol{i})&\frac{1}{2}\leq t\leq1
\end{cases}
\end{align*}
is homotopic to the constant loop $\ol{i}$ relative to endpoints. Let $h^{\prime\prime}_s$ be such a homotopy of loops. 
\begin{equation*}
\begin{tikzpicture}[every label/.append style={font=\scriptsize},matrix of math nodes,decoration={markings,mark=at position0.5with{\arrow{>}}}]
\newcommand\x{2}
\newcommand\y{2}
\newcommand\z{2}
\newcommand\vv{2}
\coordinate(O)at(0,0,0);
\coordinate(A)at(0,\y,0);
\coordinate(B)at(0,\y,\z);
\coordinate(C)at(0,0,\z);
\coordinate(D)at(\x,0,0);
\coordinate(E)at(\x,\y,0);
\coordinate(F)at(\x,\y,\z);
\coordinate(G)at(\x,0,\z);
\draw(O)--(C)--(G)--(D)--cycle;
\draw(O)--(A)--(E)--(D)--cycle;
\draw(O)--(A)--(B)--(C)--cycle;
\draw(D)--(E)--(F)--(G)--cycle;
\draw(C)--(B)--(F)--(G)--cycle;
\draw(A)--(B)--(F)--(E)--cycle;

\node(1)at(\x/\vv,0,\z)[label={[yshift=-17]\ol{i}}]{};
\node(2)at(\x/\vv,\y,0)[label={[xshift=10]\text{constant}}]{};
\draw(1.center)--(\x/\vv,0,0);
\draw(\x/\vv,\y,\z)--(2.center);
\draw[dotted](1.center)--(\x/\vv,\y,\z);
\draw[dotted](\x/\vv,0,0)--(2.center);
\node(3)at(\x,0,\z/2)[label=right:f_1]{};
\node at(D)[label=right:f]{};
\draw(0,0,\z/2)--(3.center);
\node at(G)[label=right:f_0]{};
\node at(\x/2,-.7,\z){\Gamma\bs\bb{H}^2};
\node at(C)[label=left:0]{};
\node at(O)[label=left:1]{};
\node at(B)[label=left:1]{};

\draw[->,xshift=100](0,0,\z/2)--node[label=above:H]{}(1,0,\z/2);

\begin{scope}[xshift=150]
\node(4)at(\x/2,0,\z)[label={[yshift=-17]\ol{i}}]{};
\draw[postaction={decorate}](4.center)to[out=30,in=240](\x/2,0,\z/5);
\draw[postaction={decorate}](\x/2,0,\z/5)to[out=210,in=60](\x/2,0,\z);
\node at(\x/2,-.7,\z){\alpha(\Gamma)\bs\bb{H}^2};
\end{scope}
\end{tikzpicture}
\end{equation*}
By the homotopy extension property for the pair 
\begin{equation*}
\left((\Gamma\bs\bb{H}^2)\times I,((\Gamma\bs\bb{H}^2)\times\{0,1\})\cup(\{\ol{i}\}\times I)\right), 
\end{equation*}
there exists a continuous map $H\colon(\Gamma\bs\bb{H}^2)\times I\times I\to\alpha(\Gamma)\bs\bb{H}^2$ such that: 
\begin{itemize}
\setlength\itemsep{0em}
\item $H\left(\cdot,\frac{1}{2}t,0\right)=h_t$, $H\left(\cdot,\frac{1}{2}t+\frac{1}{2},0\right)=h^\prime_t$ for all $t\in I$
\item $H(\ol{i},t,s)=h^{\prime\prime}_s(t)$ for all $t$, $s\in I$
\item $H(\cdot,0,s)=f_0$, $H(\cdot,1,s)=f$ for all $s\in I$. 
\end{itemize}
Then $H(\cdot,t,1)$ is a based homotopy between $f_0$ and $f$. Hence $f$ satisfies the required condition. 
\end{proof}

This shows 
\begin{equation*}
\begin{tikzpicture}[matrix of math nodes,decoration={markings,mark=at position0.5with{\arrow{>}}}]
\foreach\x in{0,...,7}\node(\x)at(45/2+45*\x:10mm){};
\draw[postaction={decorate}](1.center)--node[above right,scale=.75]{\alpha(b_1)}(0.center);
\draw[postaction={decorate}](2.center)--node[above,scale=.75]{\alpha(a_1)}(1.center);
\draw[postaction={decorate}](5.center)--node[below left,scale=.75]{\alpha(b_2)}(4.center);
\draw[postaction={decorate}](6.center)--node[below,scale=.75]{\alpha(a_2)}(5.center);
\draw[postaction={decorate}](2.center)--node[above left,scale=.75]{\alpha(b_2)}(3.center);
\draw[postaction={decorate}](3.center)--node[left,scale=.75]{\alpha(a_2)}(4.center);
\draw[postaction={decorate}](6.center)--node[below right,scale=.75]{\alpha(b_1)}(7.center);
\draw[postaction={decorate}](7.center)--node(8)[right,scale=.75]{\alpha(a_1)}(0.center);
\node[right=0em of 8]{\simeq\ (\alpha(\Gamma)\bs\bb{H}^2,\ol{i}). };
\end{tikzpicture}
\end{equation*}
So as in the first claim, we get 
\begin{equation*}
\wt{\alpha}(c)^{\pm(2-2g)}=[\wt{\alpha}(\wt{a}_1),\wt{\alpha}(\wt{b}_1)]\cdots[\wt{\alpha}(\wt{a}_g),\wt{\alpha}(\wt{b}_g)]=c^{\pm(2-2g)}. 
\end{equation*}
Hence $\wt{\alpha}(c)=c^{\pm1}$ and $\wt{\alpha}(Z)=Z$. 

By diagram \eqref{diagram}, $\wt{\alpha}(\wt{\Gamma})=\wt{\alpha(\Gamma)}$. 
\end{proof}

\begin{lem}
We have an isomorphism 
\begin{align*}
\Aut(\wt{G})&\to\Aut(G)\\
\wt{\Phi}&\mapsto\Phi
\end{align*}
such that 
\begin{equation*}
\begin{tikzcd}
\wt{G}\ar[r,"\wt{\Phi}"]\ar[d]\ar[rd,phantom,"\circlearrowright"]&\wt{G}\ar[d]\\
G\ar[r,"\Phi"']&G. 
\end{tikzcd}
\end{equation*}
\end{lem}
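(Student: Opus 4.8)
The plan is to construct the inverse map explicitly using the universal property of the covering homomorphism $p\colon\wt{G}\to G$: since $\wt{G}$ is simply connected, every smooth homomorphism $\wt{G}\to G$ lifts uniquely to a smooth homomorphism $\wt{G}\to\wt{G}$ carrying $1$ to $1$ (standard Lie theory; alternatively this is the monodromy argument used repeatedly in Section \ref{1010}, applied to $\wt{G}$ regarded as a Lie groupoid over a point).

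First I would check that $\wt{\Phi}\mapsto\Phi$ is well-defined. Given $\wt{\Phi}\in\Aut(\wt{G})$, the subgroup $Z=\ker p$ is the center of $\wt{G}$, hence characteristic, so $\wt{\Phi}(Z)=Z$; therefore $\wt{\Phi}$ descends to a homomorphism $\Phi\colon G\to G$ with $p\wt{\Phi}=\Phi p$. Surjectivity of $p$ forces $\Phi$ to be surjective, and the equality $\wt{\Phi}(Z)=Z$ (not merely $\wt{\Phi}(Z)\subset Z$) forces $\Phi$ to be injective, so $\Phi\in\Aut(G)$. The assignment is visibly multiplicative and sends $\id$ to $\id$, so it is a group homomorphism $\Aut(\wt{G})\to\Aut(G)$ and makes the required square commute.

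Then I would prove bijectivity. For injectivity, if $\Phi=\id$ then $p\wt{\Phi}=p$, so both $\wt{\Phi}$ and $\id_{\wt{G}}$ are lifts of $p$ through $p$ fixing $1$; uniqueness of such lifts gives $\wt{\Phi}=\id$. For surjectivity, given $\Phi\in\Aut(G)$ lift the smooth homomorphism $\Phi p\colon\wt{G}\to G$ to a smooth homomorphism $\wt{\Phi}\colon\wt{G}\to\wt{G}$ with $p\wt{\Phi}=\Phi p$ and $\wt{\Phi}(1)=1$, and likewise lift $\Phi^{-1}p$ to $\wt{\Psi}$. Then $p(\wt{\Psi}\wt{\Phi})=\Phi^{-1}p\wt{\Phi}=\Phi^{-1}\Phi p=p$ and $\wt{\Psi}\wt{\Phi}(1)=1$, so $\wt{\Psi}\wt{\Phi}=\id$ by uniqueness, and symmetrically $\wt{\Phi}\wt{\Psi}=\id$; hence $\wt{\Phi}\in\Aut(\wt{G})$ and it maps to $\Phi$.

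I do not expect a genuine obstacle: the only points needing a word of justification are that $Z$ is characteristic (it coincides with the center of $\wt{G}$, as recorded above) and the uniqueness of lifts of homomorphisms along $p$, both of which are routine. If one prefers to stay internal to the paper, the lifting and uniqueness can be replaced by a direct appeal to the monodromy groupoid of the trivial foliation of $\wt{G}$, exactly as in the proof of Proposition \ref{simconfolbij}.
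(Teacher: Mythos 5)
Your proof is correct and follows essentially the same route as the paper: the descent direction uses $\wt{\Phi}(Z)=Z$ (since $Z$ is the center, hence characteristic), and the lifting direction uses the unique lift of $\Phi p$ through $p$ fixing $1$ together with uniqueness of lifts. The only cosmetic difference is that the paper verifies multiplicativity of the lift by hand via the uniqueness-of-lifts argument instead of quoting the standard lifting theorem for homomorphisms, and leaves the construction of the inverse lift $\wt{\Psi}$ implicit, which you spell out.
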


\begin{proof}
For $\wt{\Phi}\in\Aut(\wt{G})$, we have $\wt{\Phi}(Z)=Z$, hence it induces $\Phi\in\Aut(G)$. For the other direction, let $\Phi\in\Aut(G)$. There exists a unique continuous map $\wt{\Phi}\colon\wt{G}\to\wt{G}$ such that $\wt{\Phi}(1)=1$ and 
\begin{equation*}
\begin{tikzcd}
\wt{G}\ar[r,"\wt{\Phi}"]\ar[d]\ar[rd,phantom,"\circlearrowright"]&\wt{G}\ar[d]\\
G\ar[r,"\Phi"']&G. 
\end{tikzcd}
\end{equation*}

\begin{claim}
We have $\wt{\Phi}\left(\wt{g}\wt{g}^\prime\right)=\wt{\Phi}(\wt{g})\wt{\Phi}\left(\wt{g}^\prime\right)$ for $\wt{g}$, $\wt{g}^\prime\in\wt{G}$. 
\end{claim}

\begin{proof}
The bottom composed maps in the diagrams
\begin{equation*}
\begin{tikzcd}
\wt{G}\ar[r,"\cdot\ \wt{g}^\prime"]\ar[d]\ar[rd,phantom,"\circlearrowright"]&\wt{G}\ar[r,"\wt{\Phi}"]\ar[d]\ar[rd,phantom,"\circlearrowright"]&\wt{G}\ar[d]\\
G\ar[r,"\cdot\ p\wt{g}^\prime"']&G\ar[r,"\Phi"']&G, 
\end{tikzcd}\qquad
\begin{tikzcd}
\wt{G}\ar[r,"\wt{\Phi}"]\ar[d]\ar[rd,phantom,"\circlearrowright"]&\wt{G}\ar[r,"\cdot\ \wt{\Phi}(\wt{g}^\prime)"]\ar[d]\ar[rd,phantom,"\circlearrowright"]&\wt{G}\ar[d]\\
G\ar[r,"\Phi"']&G\ar[r,"\cdot\ \Phi(p\wt{g}^\prime)"']&G
\end{tikzcd}
\end{equation*}
coincide and both of the top composed maps take $1$ to $\wt{\Phi}(\wt{g}^\prime)$, they must be the same. 
\end{proof}
\end{proof}

By Lemma \ref{descends}, we obtain a map 
\begin{align*}
\pi\colon\mca{H}(\wt{\Gamma},\wt{G})&\to\mca{H}(\Gamma,G)\\
\wt{\alpha}&\mapsto\alpha. 
\end{align*}
This is $(\Aut(\wt{G})\stackrel{\sim}{\to}\Aut(G))$-equivariant and defines a map 
\begin{equation*}
\ol{\pi}\colon\Aut(\wt{G})\bs\mca{H}(\wt{\Gamma},\wt{G})\to\Aut(G)\bs\mca{H}(\Gamma,G). 
\end{equation*}

\subsection{$T_o(\rho_\Gamma)\simeq\Aut(\wt{G})\bs\mca{H}(\wt{\Gamma},\wt{G})$}
In this section we prove the following theorem. 

\begin{thm}\label{traghg}
We have a natural bijection 
\begin{equation*}
T_o(\rho_\Gamma)\simeq\Aut(\wt{G})\bs\mca{H}(\wt{\Gamma},\wt{G}). 
\end{equation*}
\end{thm}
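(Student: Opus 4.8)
The strategy is to reduce the transitive action $\rho_\Gamma$ to the algebraic data of Corollary \ref{tblgcgsgxx}, then use the fact that $G=\PSL(2,\bb R)$ is connected with fundamental group $\bb Z$ to pass from $\Diff(G)_\Gamma$ to $\mca H(\wt\Gamma,\wt G)$. Concretely, Corollary \ref{tblgcgsgxx} gives
\begin{equation*}
T_o(\rho_\Gamma)\simeq\Aut(G)\bs(\Diff(G)_\Gamma/{\sim}),
\end{equation*}
so it suffices to produce an $\Aut(G)$-equivariant bijection $\Diff(G)_\Gamma/{\sim}\ \simeq\ \mca H(\wt\Gamma,\wt G)$ compatible with the identifications $\Aut(\wt G)\simeq\Aut(G)$ established above. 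First I would construct the map $\Diff(G)_\Gamma\to\mca H(\wt\Gamma,\wt G)$: given $\theta\in\Diff(G)_\Gamma$, it is not an automorphism of $G$, but $\theta$ restricted to $\Gamma$ lands in $\mca H(\Gamma,G)$, and one lifts $\theta$ to a map $\wt\theta\colon\wt G\to\wt G$ (using simple connectivity of $\wt G$ and $\wt\theta(1)=1$) — however $\wt\theta$ need not be a homomorphism. The right object to extract is instead the restriction $\wt\theta|_{\wt\Gamma}$, which I claim is a homomorphism into $\wt G$: since $\theta(\gamma g)=\theta(\gamma)\theta(g)$ only for $\gamma\in\Gamma$, the induced $\wt\theta$ satisfies $\wt\theta(\wt\gamma\wt g)=\wt\theta(\wt\gamma)\wt\theta(\wt g)$ for $\wt\gamma\in\wt\Gamma$ by the same covering-space uniqueness argument used in the proof of Lemma \ref{injective} (the two lifts of a common base map agreeing at $1$). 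Then $\wt\theta|_{\wt\Gamma}\in\mca H(\wt\Gamma,\wt G)$: injectivity and discreteness of the image follow because $\theta|_\Gamma$ is injective with discrete (cocompact, by Proposition \ref{cocompact}) image and $\wt\Gamma=p^{-1}(\Gamma)$, so $\wt\theta(\wt\Gamma)=p^{-1}(\theta(\Gamma))$ is discrete, exactly as in diagram \eqref{diagram} of Lemma \ref{descends}.

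Next I would check this descends to $\sim$-classes and is $\Aut(G)$-equivariant. If $\theta\sim\theta'$ via a homotopy $J(\cdot,t)$ through maps satisfying $J(\gamma g,t)=J(\gamma,t)J(g,t)$ (this is the relation in Corollary \ref{tblgcgsgxx}, noting $H=\Gamma$ so $\theta(\Gamma)=\theta'(\Gamma)$ is automatic from the based-homotopy constraint on a discrete group), lift $J$ to $\wt J\colon\wt G\times I\to\wt G$ with $\wt J(1,0)=1$; the lifted homotopy restricted to $\wt\Gamma$ is a homotopy of homomorphisms into the discrete set $\wt\theta(\wt\Gamma)$, hence constant, so $\wt\theta|_{\wt\Gamma}=\wt{\theta'}|_{\wt\Gamma}$. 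Equivariance under post-composition with $\Phi\in\Aut(G)$ is immediate once one notes the lift of $\Phi\theta$ is $\wt\Phi\wt\theta$ (uniqueness of lifts), where $\wt\Phi$ is the automorphism of $\wt G$ covering $\Phi$. This yields a well-defined $\Aut(G)$-equivariant map $\Diff(G)_\Gamma/{\sim}\to\mca H(\wt\Gamma,\wt G)$, hence $T_o(\rho_\Gamma)\to\Aut(\wt G)\bs\mca H(\wt\Gamma,\wt G)$.

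For surjectivity I would proceed as follows: given $\wt\alpha\in\mca H(\wt\Gamma,\wt G)$, Lemma \ref{descends} produces $\alpha=\pi(\wt\alpha)\in\mca H(\Gamma,G)$ with $\wt\alpha(\wt\Gamma)=p^{-1}(\alpha(\Gamma))$. Both $\Gamma$ and $\alpha(\Gamma)$ are torsion free cocompact lattices in $G$; by the Dehn--Nielsen--Baer argument already carried out inside the proof of Lemma \ref{descends} (the ``third claim'' there, producing a homeomorphism $f\colon(\Gamma\bs\bb H^2,\ol i)\to(\alpha(\Gamma)\bs\bb H^2,\ol i)$ with $f_*=\alpha$ under $\iota_i$), one gets a diffeomorphism — after smoothing via the leafwise/plaquewise approximation results, or more simply since we only need a diffeomorphism of surfaces homotopic to a given homeomorphism — of $\Gamma\bs\bb H^2$ onto $\alpha(\Gamma)\bs\bb H^2$. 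Lifting to $\bb H^2$ and then to $G$ (which fibers over $\bb H^2$ with fiber $\SO(2)/\{\pm1\}$, contractible circle direction handled by the $Z$-extension) gives $\theta\in\Diff(G)$ with $\theta|_\Gamma$ conjugate under $\Aut(G)$ to $\alpha$; adjusting, $\theta\in\Diff(G)_\Gamma$ maps to $\wt\alpha$ up to $\Aut(\wt G)$. Injectivity is Lemma \ref{injective} applied with $S$ replaced by the contractible $\wt G$: two elements of $\Diff(\wt G)_{\wt\Gamma}$ with the same restriction to $\wt\Gamma$ are $\sim$-equivalent because $\wt\Gamma\bs\wt G\simeq\Gamma\bs G$ is aspherical — but one must be careful that $\Diff(G)_\Gamma/{\sim}$ and $\Diff(\wt G)_{\wt\Gamma}/{\sim}$ agree, which follows from the covering correspondence $\theta\leftrightarrow\wt\theta$ and the observation that $\wt\theta$ is $\wt\Gamma$-equivariant in the appropriate sense, so it induces a diffeomorphism of $\wt\Gamma\bs\wt G=\Gamma\bs G$ and conversely.

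\textbf{Main obstacle.} The delicate point is the passage between $\Diff(G)_\Gamma$ and $\mca H(\wt\Gamma,\wt G)$ through lifts: one must verify that $\wt\theta|_{\wt\Gamma}$ is genuinely a homomorphism (not merely that $\wt\theta$ is $\Gamma$-equivariant) and that the based-homotopy relation $\sim$ on $\Diff(G)_\Gamma$ corresponds exactly — not just one direction — to equality of the restrictions to $\wt\Gamma$ up to $\Aut(\wt G)$. The reverse direction of injectivity, i.e. that if $\wt\theta|_{\wt\Gamma}$ and $\wt{\theta'}|_{\wt\Gamma}$ agree then $\theta\sim\theta'$, requires the asphericity of $\wt\Gamma\bs\wt G$ together with the equivariant homotopy-extension argument exactly as in the proof of Lemma \ref{injective}; transcribing that argument to the present $\PSL(2,\bb R)$ setting (rather than a contractible solvable $S$) is where the real work lies, since $G$ itself is not contractible — it is only $\wt G$ that is, and one must consistently work upstairs. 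Everything else is bookkeeping with the commutative diagrams already set up in Lemma \ref{descends} and the automorphism-lifting lemma.
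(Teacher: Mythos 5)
Your reduction is the same as the paper's: Corollary \ref{tblgcgsgxx}, the lift correspondence $\Diff(G)_\Gamma\simeq\Diff(\wt{G})_{\wt{\Gamma}}$ compatible with $\sim$ (the paper's Lemma \ref{diffggdiffwgwg} and Lemma \ref{diffsimdiffsim}), $\Aut(G)\simeq\Aut(\wt{G})$-equivariance, and injectivity of $[\wt{\theta}]\mapsto\wt{\theta}|_{\wt{\Gamma}}$ from Lemma \ref{injective} via contractibility of $\wt{G}$. Up to that point your argument is correct and essentially identical to the text.

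The genuine gap is in your surjectivity argument. Given $\wt{\alpha}\in\mca{H}(\wt{\Gamma},\wt{G})$ you pass to $\alpha=\pi(\wt{\alpha})\in\mca{H}(\Gamma,G)$, realize $\alpha$ by a diffeomorphism of the surfaces $\Gamma\bs\bb{H}^2\to\alpha(\Gamma)\bs\bb{H}^2$ (Dehn--Nielsen--Baer), lift through the circle bundle to get $\theta\in\Diff(G)_\Gamma$ with $\theta|_\Gamma=\alpha$ up to $\Aut(G)$, and then assert that $\theta$ "maps to $\wt{\alpha}$ up to $\Aut(\wt{G})$". That last step is exactly what fails: this construction only prescribes the image of $\wt{\theta}|_{\wt{\Gamma}}$ under $\pi$, not $\wt{\theta}|_{\wt{\Gamma}}$ itself. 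The fiber $\pi^{-1}(\alpha)$ is a torsor under $\Hom(\wt{\Gamma},Z)\simeq\bb{Z}^{2g}$, and the $\Aut(\wt{G})$-action does \emph{not} collapse these fibers: the paper proves (Theorem \ref{authauthhom} and the lemma preceding it, via Borel density) that $\Hom(\wt{\Gamma},Z)$ acts simply transitively on the fibers of $\ol{\pi}\colon\Aut(\wt{G})\bs\mca{H}(\wt{\Gamma},\wt{G})\to\Aut(G)\bs\mca{H}(\Gamma,G)$, so two lifts of the same $\alpha$ differing by a nontrivial $e\in\Hom(\wt{\Gamma},Z)$ are never $\Aut(\wt{G})$-equivalent. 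Your "contractible circle direction handled by the $Z$-extension" parenthesis is precisely the missing content: you would have to show in addition that every element of $\Hom(\wt{\Gamma},Z)$ can be realized by modifying $\theta$ fiberwise (vertical twisting), which is real work and is not in your sketch; otherwise your map hits only the image of the section $\sigma$, i.e.\ the slice $T(\Sigma)\times\{1\}$ of $T(\Sigma)\times\Hom(\wt{\Gamma},Z)$. The paper avoids this entirely by working three-dimensionally: it applies Waldhausen's rigidity theorem to the Haken manifolds $\wt{\Gamma}\bs\wt{G}$ and $\wt{\alpha}(\wt{\Gamma})\bs\wt{G}$ to get a homeomorphism inducing the prescribed $\wt{\alpha}$ on $\pi_1$, smooths it (Munkres, then Hirsch), and lifts it to an element of $\Diff(\wt{G})_{\wt{\Gamma}}$ restricting to $\wt{\alpha}$ on the nose; the surface-level Dehn--Nielsen--Baer argument you propose cannot see the fiber-direction data and so does not prove surjectivity.
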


First, we have 
\begin{equation*}
T_o(\rho_\Gamma)\simeq\Aut(G)\bs(\Diff(G)_\Gamma/{\sim})
\end{equation*}
by Corollary \ref{tblgcgsgxx}. We will compute the right hand side to show Theorem \ref{traghg}. 

\begin{lem}\label{diffggdiffwgwg}
There is a natural bijection 
\begin{align}
\Diff(G)_\Gamma&\simeq\Diff(\wt{G})_{\wt{\Gamma}}\label{dgdgwgw}\\
\theta&\mapsto\wt{\theta}\nonumber
\end{align}
such that 
\begin{equation*}
\begin{tikzcd}
\wt{G}\ar[r,"\wt{\theta}"]\ar[d]\ar[rd,phantom,"\circlearrowright"]&\wt{G}\ar[d]\\
G\ar[r,"\theta"']&G. 
\end{tikzcd}
\end{equation*}
\end{lem}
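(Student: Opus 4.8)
The plan is to construct the bijection in both directions by lifting diffeomorphisms through the covering $p\colon\wt{G}\to G$ and checking the $\wt{\Gamma}$-equivariance condition is preserved. First I would treat the direction $\theta\mapsto\wt{\theta}$. Given $\theta\in\Diff(G)_\Gamma$, since $\wt{G}$ is simply connected and $p\colon\wt{G}\to G$ is a covering, the composition $\theta p\colon\wt{G}\to G$ lifts uniquely to a continuous map $\wt{\theta}\colon\wt{G}\to\wt{G}$ with $\wt{\theta}(1)=1$ (here I use that $\theta(1)\in\Gamma$, so $\theta p(1)=\theta(1)$ has a preimage; to normalize, pick the preimage in $\wt{\Gamma}$ that maps to $\theta(1)$ — any fixed choice works, but taking $\wt\theta$ to send $1$ to a specific lift makes the construction canonical). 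The lift $\wt\theta$ is $C^\infty$ because $p$ is a local diffeomorphism and $\theta p$ is $C^\infty$. Applying the same argument to $\theta^{-1}$ produces a $C^\infty$ lift $\wt{\theta^{-1}}$, and by uniqueness of lifts $\wt{\theta}\circ\wt{\theta^{-1}}$ and $\wt{\theta^{-1}}\circ\wt\theta$ are the identity (they are lifts of the identity of $G$ fixing $1$), so $\wt\theta\in\Diff(\wt G)$.

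Next I would verify $\wt\theta\in\Diff(\wt G)_{\wt\Gamma}$, i.e. that $\wt\theta(\wt\gamma\wt g)=\wt\theta(\wt\gamma)\wt\theta(\wt g)$ for $\wt\gamma\in\wt\Gamma$ and $\wt g\in\wt G$. This is the standard ``uniqueness of lifts'' argument used repeatedly in this section (e.g. in the proof of the previous lemma that $\Aut(\wt G)\simeq\Aut(G)$ and in Lemma \ref{injective}): for fixed $\wt\gamma$, both $\wt g\mapsto\wt\theta(\wt\gamma\wt g)$ and $\wt g\mapsto\wt\theta(\wt\gamma)\wt\theta(\wt g)$ are lifts of $G\to G$, $g\mapsto\theta(p(\wt\gamma)g)=\theta(p\wt\gamma)\theta(g)$ (using $\theta\in\Diff(G)_\Gamma$ and $p\wt\gamma\in\Gamma$), and since $\wt\Gamma=p^{-1}(\Gamma)$ the relation $\theta(\gamma'g)=\theta(\gamma')\theta(g)$ for $\gamma'=p\wt\gamma\in\Gamma$ holds; comparing the two lifts at a single point (say $\wt g=1$, where the left side is $\wt\theta(\wt\gamma)$ and the right side is $\wt\theta(\wt\gamma)\wt\theta(1)=\wt\theta(\wt\gamma)$) forces them to agree. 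Hence $\wt\theta$ is $\wt\Gamma$-equivariant in the required sense, so the map $\theta\mapsto\wt\theta$ lands in $\Diff(\wt G)_{\wt\Gamma}$.

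For the inverse direction, I would take $\wt\theta\in\Diff(\wt G)_{\wt\Gamma}$ and show it descends. The key point is that $\wt\theta(Z)=Z$: since $\wt\theta$ is $\wt\Gamma$-equivariant and $Z$ is the center of $\wt\Gamma$ (Borel density, as recalled in the excerpt), $\wt\theta$ maps $Z$ bijectively to itself — more precisely, $\wt\theta(z\wt g)=\wt\theta(z)\wt\theta(\wt g)$ for $z\in Z$ shows that the map $\wt g\mapsto\wt\theta(z)^{-1}\wt\theta(z\wt g)$ is a diffeomorphism fixing $1$; alternatively one argues as in Lemma \ref{descends} that $\wt\theta$ carries the kernel of $p$ to itself because $\wt\theta|_{\wt\Gamma}\in\mca H(\wt\Gamma,\wt G)$ and Lemma \ref{descends} applies. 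Granting $\wt\theta(Z)=Z$, there is a unique $\theta\colon G\to G$ with $\theta\circ p=p\circ\wt\theta$; it is $C^\infty$ since $p$ has local sections, a diffeomorphism since $\wt{\theta^{-1}}$ descends to its inverse, and $\Gamma$-equivariant because $\wt\theta$ is $\wt\Gamma$-equivariant and $p(\wt\Gamma)=\Gamma$. Finally I would check the two constructions are mutually inverse, which is immediate from the defining relation $\theta p=p\wt\theta$ together with the normalization $\wt\theta(1)=1$ and uniqueness of lifts. Naturality (with respect to composition, and compatibility with the $\Aut$-actions and with the relation $\sim$) follows formally from $\theta p=p\wt\theta$. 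I expect the main obstacle to be the normalization/basepoint bookkeeping in defining $\wt\theta$ so that the correspondence is genuinely canonical and compatible with the later maps $\pi$ and $\ol\pi$ — once the right normalization is fixed, everything else is the routine lifting-uniqueness argument used throughout this section.
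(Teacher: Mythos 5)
Your proposal is correct and takes essentially the same route as the paper: the unique lift with $\wt{\theta}(1)=1$ together with the two-lifts-agreeing-at-a-point argument for $\wt{\Gamma}$-equivariance, and the descent in the converse direction via $\wt{\theta}(Z)=Z$ obtained from Lemma \ref{descends} applied to $\wt{\theta}|_{\wt{\Gamma}}\in\mca{H}(\wt{\Gamma},\wt{G})$. One parenthetical remark is off: it is not true that ``any fixed choice'' of lift works, since the relation $\wt{\theta}(\wt{\gamma}\wt{g})=\wt{\theta}(\wt{\gamma})\wt{\theta}(\wt{g})$ forces $\wt{\theta}(1)=1$ (and $\theta(1)=1$ holds automatically because $1\in\Gamma$), but as you do adopt this normalization the argument stands.
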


\begin{proof}
For $\theta\in\Diff(G)_\Gamma$, there exists a unique continuous map $\wt{\theta}\colon\wt{G}\to\wt{G}$ such that $\wt{\theta}(1)=1$ and 
\begin{equation*}
\begin{tikzcd}
\wt{G}\ar[r,"\wt{\theta}"]\ar[d]\ar[rd,phantom,"\circlearrowright"]&\wt{G}\ar[d]\\
G\ar[r,"\theta"']&G. 
\end{tikzcd}
\end{equation*}
By using $\theta^{-1}$ we can show that $\wt{\theta}\in\Diff(\wt{G})$. 

\begin{claim}
$\wt{\theta}(\wt{\gamma}\wt{g})=\wt{\theta}(\wt{\gamma})\wt{\theta}(\wt{g})$ for all $\wt{\gamma}\in\wt{\Gamma}$, $\wt{g}\in\wt{G}$. 
\end{claim}

\begin{proof}
For any $\wt{\gamma}\in\wt{\Gamma}$, the bottom maps of the diagrams 
\begin{equation*}
\begin{tikzcd}
\wt{G}\ar[r,"\wt{\gamma}\ \cdot"]\ar[d]\ar[rd,phantom,"\circlearrowright"]&\wt{G}\ar[r,"\wt{\theta}"]\ar[d]\ar[rd,phantom,"\circlearrowright"]&\wt{G}\ar[d]\\
G\ar[r,"p\wt{\gamma}\ \cdot"']&G\ar[r,"\theta"']&G, 
\end{tikzcd}\qquad
\begin{tikzcd}
\wt{G}\ar[r,"\wt{\theta}"]\ar[d]\ar[rd,phantom,"\circlearrowright"]&\wt{G}\ar[r,"\wt{\theta}(\wt{\gamma})\ \cdot"]\ar[d]\ar[rd,phantom,"\circlearrowright"]&\wt{G}\ar[d]\\
G\ar[r,"\theta"']&G\ar[r,"\theta(p\wt{\gamma})\ \cdot"']&G
\end{tikzcd}
\end{equation*}
coincide and the top maps send $1$ to $\wt{\theta}(\wt{\gamma})$, so they are equal. 
\end{proof}

Hence $\wt{\theta}\in\Diff(\wt{G})_{\wt{\Gamma}}$. 

For the other direction let $\wt{\theta}\in\Diff(\wt{G})_{\wt{\Gamma}}$. Since $\wt{\theta}|_{\wt{\Gamma}}\in\mca{H}(\wt{\Gamma},\wt{G})$, we have $\wt{\theta}(Z)=Z$ by Lemma \ref{descends}. Hence there exists a unique $C^\infty$ map $\theta\colon G\to G$ such that 
\begin{equation*}
\begin{tikzcd}
\wt{G}\ar[r,"\wt{\theta}"]\ar[d]\ar[rd,phantom,"\circlearrowright"]&\wt{G}\ar[d]\\
G\ar[r,"\theta"']&G. 
\end{tikzcd}
\end{equation*}
By using $\wt{\theta}^{-1}\in\Diff(\wt{G})_{\wt{\theta}(\wt{\Gamma})}$, we can show $\theta\in\Diff(G)$. Since $\wt{\theta}(\wt{\gamma}\wt{g})=\wt{\theta}(\wt{\gamma})\wt{\theta}(\wt{g})$ for all $\wt{\gamma}\in\wt{\Gamma}$, $\wt{g}\in\wt{G}$, we get $\theta(p\wt{\gamma}p\wt{g})=\theta(p\wt{\gamma})\theta(p\wt{g})$. Therefore $\theta\in\Diff(G)_\Gamma$. 
\end{proof}

\begin{lem}\label{diffsimdiffsim}
The bijection \eqref{dgdgwgw} induces a bijection 
\begin{equation*}
\Diff(G)_\Gamma/{\sim}\simeq\Diff(\wt{G})_{\wt{\Gamma}}/{\sim}. 
\end{equation*}
\end{lem}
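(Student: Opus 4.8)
The statement to prove is that the bijection
\begin{equation*}
\Diff(G)_\Gamma\simeq\Diff(\wt{G})_{\wt{\Gamma}},\qquad\theta\mapsto\wt{\theta}
\end{equation*}
of Lemma \ref{diffggdiffwgwg} descends to a bijection between the quotient sets $\Diff(G)_\Gamma/{\sim}$ and $\Diff(\wt{G})_{\wt{\Gamma}}/{\sim}$. Recall from Corollary \ref{tblgcgsgxx} that for $\theta$, $\theta^\prime\in\Diff(G)_\Gamma$ we have $\theta\sim\theta^\prime$ iff $\theta(\Gamma)=\theta^\prime(\Gamma)$ and there is a continuous $J\colon G\times I\to G$ with $J(\cdot,t)\in C^\infty(G,G)$, $J(\Gamma,t)\subset\theta(\Gamma)$, $J(\gamma g,t)=J(\gamma,t)J(g,t)$ for all $\gamma\in\Gamma$, $g\in G$, $t\in I$, and $J(\cdot,0)=\theta$, $J(\cdot,1)=\theta^\prime$; the same with tildes on $\wt{G}$, $\wt{\Gamma}$. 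So the task is purely to match up the homotopies $J$ and their lifts $\wt{J}$.

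First I would show that $\sim$ is \emph{preserved}: given $J$ as above realizing $\theta\sim\theta^\prime$, I lift $J$ to a continuous map $\wt{J}\colon\wt{G}\times I\to\wt{G}$ using the covering $p\colon\wt{G}\to G$ and the fact that $\wt{G}\times I$ is simply connected; normalize by requiring $\wt{J}(1,0)=1$, so $\wt{J}(\cdot,0)=\wt{\theta}$. Then, exactly as in the proofs of Lemma \ref{diffggdiffwgwg} and Lemma \ref{injective}, two arguments ``two composite maps covering the same base map, agreeing at $1$, hence equal'' give: (i) $\wt{J}(\cdot,t)$ covers $J(\cdot,t)$ so it is $C^\infty$; (ii) $\wt{J}(\{\wt{\gamma}\}\times I)$ is connected and covers the curve $t\mapsto J(p\wt{\gamma},t)$, and for $\wt{\gamma}\in Z$ this curve equals $t\mapsto J(1,t)$... more usefully, since $J(\gamma,t)=J(\gamma,t)J(1,t)^{-1}\cdot J(1,t)$ and $J(1,t)=J(1\cdot1,t)=J(1,t)^2$ forces $J(1,t)=1$, we get $\wt{J}(\cdot,1)=\wt{\theta^\prime}$ after checking $\wt{J}(1,1)=1$; and (iii) $\wt{J}(\wt{\gamma}\wt{g},t)=\wt{J}(\wt{\gamma},t)\wt{J}(\wt{g},t)$ for $\wt{\gamma}\in\wt{\Gamma}$, $\wt{g}\in\wt{G}$, by comparing the two lifts of $\theta$-type maps $\wt{G}\xrightarrow{\wt{\gamma}\cdot}\wt{G}\xrightarrow{\wt{J}(\cdot,t)}\wt{G}$ and $\wt{G}\xrightarrow{\wt{J}(\cdot,t)}\wt{G}\xrightarrow{\wt{J}(\wt{\gamma},t)\cdot}\wt{G}$, which cover the same base map and agree at $1$. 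Finally $\wt{J}(\wt{\Gamma},t)\subset\wt{\theta}(\wt{\Gamma})$ because $p\wt{J}(\wt{\Gamma},t)=J(\Gamma,t)\subset\theta(\Gamma)$ and $\wt{J}(\wt{\Gamma},t)$ is a subgroup containing $\wt{J}(Z,t)$; since $\wt{J}(Z\times I)$ is connected and contains $\wt{J}(1,0)=1$ and lands in the discrete set $Z$ (it covers $J(Z,t)\subset\theta(Z)$... actually $\theta(\Gamma)$, but the relevant containment $p^{-1}(\theta(\Gamma))=\wt\theta(\wt\Gamma)$ is Lemma \ref{descends}), we get $\wt{J}(\wt{\Gamma},t)\subset p^{-1}(\theta(\Gamma))=\wt{\theta}(\wt{\Gamma})$ and $\wt{J}(Z,t)=Z$, whence $\wt{\theta}\sim\wt{\theta^\prime}$.

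Conversely I would show $\sim$ is \emph{reflected}: given $\wt{J}$ realizing $\wt{\theta}\sim\wt{\theta^\prime}$, since $\wt{J}(\cdot,t)(\wt{\Gamma})\subset\wt{\theta}(\wt{\Gamma})$ and this contains $Z$ with $\wt{J}(Z,t)=Z$ (by the same connectedness-in-$Z$ argument), $\wt{J}(\cdot,t)$ descends through $p$ to a $C^\infty$ map $J(\cdot,t)\colon G\to G$; continuity of $J\colon G\times I\to G$ is automatic since $p$ is a covering and $\wt{J}$ is continuous. Checking $J(\Gamma,t)\subset\theta(\Gamma)$, $J(\gamma g,t)=J(\gamma,t)J(g,t)$, $J(\cdot,0)=\theta$, $J(\cdot,1)=\theta^\prime$ is then a routine descent, using the commuting squares from Lemma \ref{diffggdiffwgwg} and the already-established properties of $\wt{J}$. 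Combining the two directions with the bijectivity from Lemma \ref{diffggdiffwgwg} gives the claimed bijection on quotients, and naturality (compatibility with the $\Aut$-actions, to be used in the sequel) follows from the corresponding statement in Lemma \ref{diffggdiffwgwg}. The main obstacle is bookkeeping rather than conceptual: one must be careful with the normalization of the lift $\wt{J}$ (pinning $\wt{J}(1,0)=1$ and then \emph{deducing} $\wt{J}(1,t)=1$ for all $t$ and $\wt{J}(1,1)=1$, rather than assuming it), and with the fact that ``$J(\Gamma,t)\subset\theta(\Gamma)$'' allows the homomorphism-up-to-$Z$ structure to wander within $p^{-1}(\theta(\Gamma))$ as $t$ varies — this is exactly where Lemma \ref{descends} (i.e. $\wt{\alpha}(\wt\Gamma)=p^{-1}(\alpha(\Gamma))$ and $\wt\alpha(Z)=Z$) and the discreteness of $Z$ are essential to keep the lifted picture rigid.
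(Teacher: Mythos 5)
Your proposal is correct and takes essentially the same route as the paper: lift the homotopy $J$ to $\wt{J}$ normalized by $\wt{J}(1,0)=1$ for one direction, and project $\wt{J}$ through $p$ for the other, using discreteness (of $\theta(\Gamma)$, $\wt{\theta}(\wt{\Gamma})$, $Z$) together with Lemma \ref{descends} to pin down the behavior on $\wt{\Gamma}$ and $Z$. The only cosmetic difference is that the paper obtains $\wt{J}(\wt{\Gamma},t)\subset\wt{\theta}(\wt{\Gamma})$ from the constancy $J(\gamma,t)=J(\gamma,0)$ (hence $\wt{J}(\wt{\gamma},t)=\wt{\theta}(\wt{\gamma})$), whereas you invoke $p^{-1}(\theta(\Gamma))=\wt{\theta}(\wt{\Gamma})$; both arguments are fine.
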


\begin{proof}
Let $\theta$, $\theta^\prime\in\Diff(G)_\Gamma$ be such that $\theta\sim\theta^\prime$. Then there exists a continuous map $J\colon G\times I\to G$ such that: 
\begin{itemize}
\setlength\itemsep{0em}
\item $J(\cdot,0)=\theta$, $J(\cdot,1)=\theta^\prime$
\item $J(\cdot,t)\in C^\infty(G,G)$ for all $t\in I$
\item $J(\gamma g,t)=J(\gamma,t)J(g,t)$ for all $\gamma\in\Gamma$, $g\in G$, $t\in I$
\item $J(\Gamma,t)\subset J(\Gamma,0)$ for all $t\in I$
\item $J(\Gamma,1)=J(\Gamma,0)$. 
\end{itemize}
See Corollary \ref{tblgcgsgxx}. There exists a unique continuous map $\wt{J}\colon\wt{G}\times I\to\wt{G}$ such that $\wt{J}(1,0)=1$ and 
\begin{equation*}
\begin{tikzcd}
\wt{G}\times I\ar[r,"\wt{J}"]\ar[d]\ar[rd,phantom,"\circlearrowright"]&\wt{G}\ar[d]\\
G \times I\ar[r,"J"']&G. 
\end{tikzcd}
\end{equation*}
Then we have: 
\begin{itemize}
\setlength\itemsep{0em}
\item $\wt{J}(1,t)=1$ for all $t\in I$
\item $\wt{J}(\cdot,0)=\wt{\theta}$, $\wt{J}(\cdot,1)=\wt{\theta^\prime}$
\item $\wt{J}(\cdot,t)\in C^\infty(\wt{G},\wt{G})$ for all $t\in I$
\item $\wt{J}(\wt{\gamma}\wt{g},t)=\wt{J}(\wt{\gamma},t)\wt{J}(\wt{g},t)$ for all $\wt{\gamma}\in\wt{\Gamma}$, $\wt{g}\in\wt{G}$ and $t\in I$
\item $\wt{J}(\wt{\gamma},t)=\wt{J}(\wt{\gamma},0)$ for all $\wt{\gamma}\in\wt{\Gamma}$ and $t\in I$ since $J(\gamma,t)=J(\gamma,0)$ for all $\gamma\in\Gamma$ and $t\in I$. 
\end{itemize}
Hence $\wt{\theta}\sim\wt{\theta^\prime}$. 

Conversely let $\theta$, $\theta^\prime\in\Diff(G)_\Gamma$ be such that $\wt{\theta}\sim\wt{\theta^\prime}$. There exists a continuous map $\wt{J}\colon\wt{G}\times I\to\wt{G}$ such that: 
\begin{itemize}
\setlength\itemsep{0em}
\item $\wt{J}(\cdot,0)=\wt{\theta}$, $\wt{J}(\cdot,1)=\wt{\theta^\prime}$
\item $\wt{J}(\cdot,t)\in C^\infty(\wt{G},\wt{G})$ for all $t\in I$
\item $\wt{J}(\wt{\gamma}\wt{g},t)=\wt{J}(\wt{\gamma},t)\wt{J}(\wt{g},t)$ for all $\wt{\gamma}\in\wt{\Gamma}$, $\wt{g}\in\wt{G}$ and $t\in I$
\item $\wt{J}(\wt{\Gamma},t)\subset\wt{J}(\wt{\Gamma},0)$ for all $t\in I$
\item $\wt{J}(\wt{\Gamma},1)=\wt{J}(\wt{\Gamma},0)$. 
\end{itemize}
We have $\wt{J}(\wt{\gamma},t)=\wt{J}(\wt{\gamma},0)$ for all $\wt{\gamma}\in\wt{\Gamma}$. In particular $\wt{J}(Z,t)=\wt{\theta}(Z)=Z$ for all $t\in I$. Hence there exists a unique continuous map $J\colon G\times I\to G$ such that 
\begin{equation*}
\begin{tikzcd}
\wt{G}\times I\ar[r,"\wt{J}"]\ar[d]\ar[rd,phantom,"\circlearrowright"]&\wt{G}\ar[d]\\
G \times I\ar[r,"J"']&G. 
\end{tikzcd}
\end{equation*}
Then we have: 
\begin{itemize}
\setlength\itemsep{0em}
\item $J(\cdot,0)=\theta$, $J(\cdot,1)=\theta^\prime$
\item $J(\cdot,t)\in C^\infty(G,G)$ for all $t\in I$
\item $J(\gamma g,t)=J(\gamma,t)J(g,t)$ for all $\gamma\in\Gamma$, $g\in G$, $t\in I$
\item $J(\Gamma,t)=J(\Gamma,0)$ for all $t\in I$. 
\end{itemize}
Therefore $\theta\sim\theta^\prime$. 
\end{proof}

Note that $\Diff(G)_\Gamma/{\sim}\simeq\Diff(\wt{G})_{\wt{\Gamma}}/{\sim}$ is $(\Aut(G)\simeq\Aut(\wt{G}))$-equivariant since 
\begin{equation*}
\begin{tikzcd}
\wt{G}\ar[r,"\wt{\theta}"]\ar[d]\ar[rd,phantom,"\circlearrowright"]&\wt{G}\ar[r,"\wt{\Phi}"]\ar[d]\ar[rd,phantom,"\circlearrowright"]&\wt{G}\ar[d]\\
G\ar[r,"\theta"']&G\ar[r,"\Phi"']&G
\end{tikzcd}
\end{equation*}
for $\theta\in\Diff(G)_\Gamma$ and $\Phi\in\Aut(G)$. Hence 
\begin{equation*}
\Aut(G)\bs(\Diff(G)_\Gamma/{\sim})\simeq\Aut(\wt{G})\bs(\Diff(\wt{G})_{\wt{\Gamma}}/{\sim}). 
\end{equation*}
(This shows $T(\rho_\Gamma)\simeq T(\rho_{\wt{\Gamma}})$ by Corollary \ref{tblgcgsgxx}, where $\wt{\Gamma}\bs\wt{G}\stackrel{\rho_\wt{\Gamma}}{\curvearrowleft}\wt{G}$ is the action by right multiplication.) Recall the map 
\begin{equation}\label{dffwgsimgg}
\begin{aligned}
r\colon\Diff(\wt{G})_{\wt{\Gamma}}/{\sim}&\to\mca{H}(\wt{\Gamma},\wt{G})\\
[\wt{\theta}]&\mapsto\wt{\theta}|_{\wt{\Gamma}}
\end{aligned}
\end{equation}
considered in Section \ref{marked lattices}. Since $\wt{G}$ is contractible, $r$ is injective by Lemma \ref{injective}. 

\begin{lem}
The map $r$ is surjective. 
\end{lem}

\begin{proof}
Let $\wt{\alpha}\in\mca{H}(\wt{\Gamma},\wt{G})$. We have $\wt{\Gamma}\bs\wt{G}\simeq\Gamma\bs G$, and $\wt{\alpha}(\wt{\Gamma})\bs\wt{G}=\wt{\alpha(\Gamma)}\bs\wt{G}\simeq\alpha(\Gamma)\bs G$ by Lemma \ref{descends}. The manifolds $\wt{\Gamma}\bs\wt{G}, \wt{\alpha}(\wt{\Gamma})\bs\wt{G}$ are Haken. By a theorem of Waldhausen (Corollary 6.5 in \cite{Waldhausen}), there exists a homeomorphism $f\colon(\wt{\Gamma}\bs\wt{G},\wt{\Gamma})\to(\wt{\alpha}(\wt{\Gamma})\bs\wt{G},\wt{\alpha}(\wt{\Gamma}))$ such that 
\begin{equation*}
\begin{tikzcd}
\pi_1(\wt{\Gamma}\bs\wt{G},\wt{\Gamma})\ar[r,"f_*"]\ar[d,"\sim"{sloped},"\iota_1"']\ar[rd,phantom,"\circlearrowright"{xshift=-5}]&\pi_1(\wt{\alpha}(\wt{\Gamma})\bs\wt{G},\wt{\alpha}(\wt{\Gamma}))\ar[d,"\sim"{sloped},"\iota_1"{xshift=7}]\\
\wt{\Gamma}\ar[r,"\wt{\alpha}"']&\wt{\alpha}(\wt{\Gamma}). 
\end{tikzcd}
\end{equation*}
Since a homeomorphism between $3$-dimensional $C^\infty$ manifolds is homotopic to a $C^1$ diffeomorphism (see the first corollary in \cite{Munkres}) and a $C^1$ diffeomorphism is homotopic to a $C^\infty$ diffeomorphism (see 2.7 Theorem in \cite{Hirsch}), we may assume that $f$ is a $C^\infty$ diffeomorphism (by a similar argument in the proof of Lemma \ref{descends} to make a homotopy basepoint preserving). 

There exists a unique continuous map $\wt{f}\colon\wt{G}\to\wt{G}$ such that $\wt{f}(1)=1$ and 
\begin{equation*}
\begin{tikzcd}
\wt{G}\ar[r,"\wt{f}"]\ar[d]\ar[rd,phantom,"\circlearrowright"{xshift=5,yshift=-2}]&\wt{G}\ar[d]\\
\wt{\Gamma}\bs\wt{G}\ar[r,"f"']&\wt{\alpha}(\wt{\Gamma})\bs\wt{G}. 
\end{tikzcd}
\end{equation*}
By using $f^{-1}$, we see that $\wt{f}\in\Diff(\wt{G})$. 

\begin{claim}
$\wt{f}(\wt{\gamma})=\wt{\alpha}(\wt{\gamma})$ for all $\wt{\gamma}\in\wt{\Gamma}$. 
\end{claim}

\begin{proof}
This can be seen from the following figure. 
\begin{equation*}
\begin{tikzpicture}[every label/.append style={font=\scriptsize},matrix of math nodes,decoration={markings,mark=at position0.5with{\arrow{>}}}]
\clip(-1,-1.7)rectangle(5.5,1.5);
\node(1)[label=left:1]{};
\node(2)[above=of 1,label=left:\wt{\gamma}]{};
\node(3)[below=of 1,label=left:\wt{\Gamma}]{};
\node(4)at(3){};
\draw[postaction={decorate}](2.center)to[out=-50,in=50](1.center);
\draw[postaction={decorate}](3.center)..controls+(1,1)and+(1,-1)..(4.center)node[pos=.5,label=right:\wt{\gamma}]{};

\node(5)[right=3 of 1,label=right:1]{};
\node(6)[above=of 5,label=right:\wt{f}(\wt{\gamma})\ {=}\ \wt{\alpha}(\wt{\gamma})]{};
\node(7)[below=of 5,label=left:\wt{\alpha}(\wt{\Gamma})]{};
\node(8)at(7){};
\draw[postaction={decorate}](6.center)to[out=-50,in=50](5.center);
\draw[postaction={decorate}](7.center)..controls+(1,1)and+(1,-1)..(8.center)node[pos=.5,label=right:\wt{\alpha}(\wt{\gamma})]{};
\foreach\x in{1,2,3,5,6,7}\filldraw(\x)circle(1pt);
\end{tikzpicture}\qedhere
\end{equation*}
\end{proof}

\begin{claim}
$\wt{f}(\wt{\gamma}\wt{g})=\wt{f}(\wt{\gamma})\wt{f}(\wt{g})$ for all $\wt{\gamma}\in\wt{\Gamma}$, $\wt{g}\in\wt{G}$. 
\end{claim}

\begin{proof}
We have 
\begin{equation*}
\begin{tikzcd}
\wt{G}\ar[r,"\wt{\gamma}\ \cdot"]\ar[rd]&\wt{G}\ar[r,"\wt{f}"]\ar[d]\ar[rd,phantom,"\circlearrowright"{xshift=4,yshift=-3}]\ar[ld,phantom,"\circlearrowright",pos=.15]&\wt{G}\ar[d]\\
\ &\wt{\Gamma}\bs\wt{G}\ar[r,"f"']&\wt{\alpha}(\wt{\Gamma})\bs\wt{G}, 
\end{tikzcd}\qquad
\begin{tikzcd}
\wt{G}\ar[r,"\wt{f}"]\ar[d]\ar[rd,phantom,"\circlearrowright"{xshift=3,yshift=-3}]&\wt{G}\ar[r,"\wt{\alpha}(\wt{\gamma})\ \cdot"]\ar[d]\ar[rd,phantom,"\circlearrowright",pos=.18]&\wt{G}\ar[ld]\\
\wt{\Gamma}\bs\wt{G}\ar[r,"f"']&\wt{\alpha}(\wt{\Gamma})\bs\wt{G}. &\ 
\end{tikzcd}
\end{equation*}
Since both of the top horizontal compositions in the diagrams are lifts of $f$ and map $1$ to $\wt{f}(\wt{\gamma})$, they must be equal. 
\end{proof}

Hence $\wt{f}\in\Diff(\wt{G})_{\wt{\Gamma}}$ and $r[\wt{f}]=\wt{\alpha}$. 
\end{proof}

This completes the proof of Theorem \ref{traghg}.

\subsection{The construction of a section $\sigma$ of $\pi$}
In this section we will construct a section $\sigma$ of the map $\pi\colon\mca{H}(\wt{\Gamma},\wt{G})\to\mca{H}(\Gamma,G)$. Let $\alpha\in\mca{H}(\Gamma,G)$ and $y\in\bb{H}^2$. We have an isomorphism $\Gamma\stackrel{\alpha}{\simeq}\alpha(\Gamma)$. 

By Proposition 1B.9 in \cite{Hatcher}, there exists a homotopy equivalence $f\colon(\Gamma\bs\bb{H}^2,\ol{i})\to(\alpha(\Gamma)\bs\bb{H}^2,\ol{y})$ such that 
\begin{equation*}
\begin{tikzcd}
\pi_1(\Gamma\bs\bb{H}^2,\ol{i})\ar[r,"f_*"]\ar[d,"\sim"{sloped},"\iota_i"']\ar[rd,phantom,"\circlearrowright"{xshift=-3}]&\pi_1(\alpha(\Gamma)\bs\bb{H}^2,\ol{y})\ar[d,"\sim"{sloped},"\iota_y"{xshift=7}]\\
\Gamma\ar[r,"\alpha"']&\alpha(\Gamma). 
\end{tikzcd}
\end{equation*}
Since a homotopy equivalence between connected orientable closed hyperbolic surfaces is homotopic to a diffeomorphism (see 8.3.2 of \cite{FMar}), we may assume that $f$ is a diffeomorphism (by the same argument in the proof of the second claim in the proof of Lemma \ref{descends} to make a homotopy basepoint preserving). 

Define a map 
\begin{equation*}
\wh{f}\colon T^1(\Gamma\bs\bb{H}^2)\to T^1(\alpha(\Gamma)\bs\bb{H}^2)
\end{equation*}
by $\wh{f}(v)=\frac{f_*(v)}{\lVert f_*(v)\rVert}$. Put $u=(i,i)\in T_i^1\bb{H}^2$. There exists a unique $w\in T_y^1\bb{H}^2$ such that $\ol{w}=\wh{f}(\ol{u})$. Define $Df$ by 
\begin{equation*}
\begin{tikzcd}
\Gamma g\ar[r,phantom,"\in"]\ar[d,mapsto]&[-20pt]\Gamma\bs G\ar[r,"Df"]\ar[d,dash,sloped,"\sim"]\ar[rd,phantom,"\circlearrowright"{xshift=2,yshift=-2}]&\alpha(\Gamma)\bs G\ar[d,dash,sloped,"\sim"]\ar[r,phantom,"\ni"]&[-20pt]\alpha(\Gamma)g\ar[d,mapsto]\\
\Gamma g_*u\ar[r,phantom,"\in"]&\Gamma\bs T^1\bb{H}^2\ar[r]\ar[d,dash,sloped,"\sim"]\ar[rd,phantom,"\circlearrowright"{xshift=-2}]&\alpha(\Gamma)\bs T^1\bb{H}^2\ar[d,dash,sloped,"\sim"]\ar[r,phantom,"\ni"]&\alpha(\Gamma)g_*w\\
&T^1(\Gamma\bs\bb{H}^2)\ar[r,"\wh{f}"']&T^1(\alpha(\Gamma)\bs\bb{H}^2). 
\end{tikzcd}
\end{equation*}
Then $(Df)(\Gamma)=\alpha(\Gamma)$. Now let $\wt{\alpha}$ be defined by 
\begin{equation}\label{pigggdfa}
\begin{tikzcd}
\pi_1(\Gamma\bs G,\Gamma)\ar[r,"(Df)_*"]\ar[d,dash,sloped,"\sim"]\ar[rd,phantom,"\circlearrowright"{xshift=1,yshift=-1}]&\pi_1(\alpha(\Gamma)\bs G,\alpha(\Gamma))\ar[d,dash,sloped,"\sim"]\\
\pi_1(\wt{\Gamma}\bs\wt{G},\wt{\Gamma})\ar[r]\ar[d,dash,sloped,"\sim"]\ar[rd,phantom,"\circlearrowright"{xshift=-6}]&\pi_1(\wt{\alpha(\Gamma)}\bs\wt{G},\wt{\alpha(\Gamma)})\ar[d,dash,sloped,"\sim"]\\
\wt{\Gamma}\ar[r,"\wt{\alpha}"']&\wt{\alpha(\Gamma)}\ar[r,phantom,"\subset"]&[-40pt]\wt{G}. 
\end{tikzcd}
\end{equation}
Hence $\wt{\alpha}\in\mca{H}(\wt{\Gamma},\wt{G})$. 

\begin{lem}\label{well}
The map 
\begin{align*}
\sigma\colon\mca{H}(\Gamma,G)&\to\mca{H}(\wt{\Gamma},\wt{G})\\
\alpha&\mapsto\wt{\alpha}
\end{align*}
is well-defined. 
\end{lem}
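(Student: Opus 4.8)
\textbf{Plan for the proof of Lemma \ref{well}.} The map $\sigma$ was constructed by choosing, for a given $\alpha\in\mca{H}(\Gamma,G)$, a basepoint $y\in\bb{H}^2$, a diffeomorphism $f\colon(\Gamma\bs\bb{H}^2,\ol{i})\to(\alpha(\Gamma)\bs\bb{H}^2,\ol{y})$ inducing $\alpha$ on fundamental groups, a vector $w\in T^1_y\bb{H}^2$ determined by $\wh{f}(\ol{u})$, the associated map $Df\colon\Gamma\bs G\to\alpha(\Gamma)\bs G$, and finally $\wt\alpha$ by the chain of isomorphisms \eqref{pigggdfa}. To prove well-definedness I need to show $\wt\alpha$ does not depend on the choices of $y$, $f$ (and the implicit basepoints and identifications). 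The plan is to run through the choices one at a time, always using the rigidity statement that any two diffeomorphisms inducing the same map on $\pi_1$ of an aspherical surface are homotopic (Proposition 1B.9 in \cite{Hatcher} and the surface case used already in the construction), and then tracking what a homotopy of $f$ does to $Df$ and to the induced map on $\pi_1$.

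First I would fix $y$ and show independence of the choice of $f$. If $f_0,f_1\colon(\Gamma\bs\bb{H}^2,\ol i)\to(\alpha(\Gamma)\bs\bb{H}^2,\ol y)$ are two diffeomorphisms with $\iota_y\circ(f_j)_*\circ\iota_i^{-1}=\alpha$, then $(f_0)_*=(f_1)_*$ on $\pi_1$, so $f_0$ and $f_1$ are homotopic through a homotopy that can be taken to be basepoint-preserving (the same trick used in the proof of the second claim in the proof of Lemma \ref{descends}: absorb the basepoint loop, which is null-homotopic since $(f_0)_*=(f_1)_*$, into the homotopy). A basepoint-preserving homotopy $f_t$ lifts to a homotopy $\wh{f_t}\colon T^1(\Gamma\bs\bb{H}^2)\to T^1(\alpha(\Gamma)\bs\bb{H}^2)$ of the derivative-normalization maps, hence to a basepoint-preserving homotopy $Df_t\colon(\Gamma\bs G,\Gamma)\to(\alpha(\Gamma)\bs G,\alpha(\Gamma))$ — here one checks the basepoint is preserved because $w$ is \emph{defined} by $\ol w=\wh{f}(\ol u)$, and $\wh{f_t}(\ol u)$ traces a loop at $\ol w$ in the unit tangent bundle, which via the homotopy extension argument can again be made to start and end at $w$. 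Since homotopic basepoint-preserving maps induce the same map on $\pi_1$, $(Df_0)_*=(Df_1)_*$, so \eqref{pigggdfa} produces the same $\wt\alpha$.

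Next I would show independence of $y$. Given $y,y'\in\bb{H}^2$, pick $g_0\in G$ with $g_0 y=y'$ matching the chosen unit vectors appropriately, or more simply observe that the construction with basepoint $y'$ differs from the one with basepoint $y$ by post-composing $f$ with (a lift of) left translation by an element of $\alpha(\Gamma)\bs G$, i.e. by an inner/translation automorphism of the relevant spaces that is the identity on $\pi_1$ modulo conjugation; the resulting diffeomorphism of $\alpha(\Gamma)\bs G$ is homotopic to the original one through basepoint-preserving maps by the same rigidity argument, so the induced map on $\pi_1$, and hence $\wt\alpha$, is unchanged. The remaining implicit choices (the vertical identifications in \eqref{pigggdfa}: $\Gamma\bs G\simeq\Gamma\bs T^1\bb H^2\simeq T^1(\Gamma\bs\bb H^2)$, and $\pi_1(\Gamma\bs G,\Gamma)\simeq\pi_1(\wt\Gamma\bs\wt G,\wt\Gamma)\simeq\wt\Gamma$ via $\iota$) are canonical once the basepoint data is fixed, so nothing new is needed there; I would just remark that these are the standard identifications from Definition \ref{gxgxpxx} and Proposition \ref{isoorb}.

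The main obstacle I anticipate is the bookkeeping around basepoints: at every stage a homotopy of surface maps must be upgraded to a \emph{basepoint-preserving} homotopy of the induced maps on $\Gamma\bs G$ (equivalently on the unit tangent bundle), and this requires the homotopy-extension-property argument with the pair $\big((\Gamma\bs G)\times I,\ ((\Gamma\bs G)\times\{0,1\})\cup(\{\Gamma\}\times I)\big)$ exactly as in the proof of the second claim in Lemma \ref{descends}. Getting the loop traced by the basepoint under $\wh{f_t}(\ol u)$ to be null-homotopic — which is what makes that extension possible — is the one spot where I must actually use that $(f_0)_*=(f_1)_*$ rather than merely that $f_0,f_1$ are homotopic. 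Once that point is handled, the rest is a routine, if lengthy, chase of commuting diagrams, and I would present it compactly rather than spelling out every identification.
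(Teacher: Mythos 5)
There is a genuine gap, and it is not where you anticipate it. Your plan hinges on the step ``a basepoint-preserving homotopy $f_t$ lifts to a homotopy $\wh{f_t}$ of the derivative-normalization maps, hence to a based homotopy $Df_t$.'' But the homotopy you get from asphericity (Proposition 1B.9 of \cite{Hatcher}) is only a $C^0$ homotopy: the intermediate maps $f_t$ need not be diffeomorphisms, nor even differentiable, so $\wh{f_t}(v)=f_{t*}(v)/\lVert f_{t*}(v)\rVert$ is simply not defined along the homotopy; and even if you smooth the homotopy, the intermediate maps need not be immersions, and continuity of $(v,t)\mapsto\wh{f_t}(v)$ is not automatic. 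Since $w$ (and hence $Df$) is defined through derivatives, the whole comparison of $Df$ and $Df'$ breaks down at exactly this point. This is the hard analytic content of the paper's proof: Lemma \ref{sigconorihthe} and Corollary \ref{sigmaconoriclH} upgrade a based $C^0$ homotopy between diffeomorphisms to a path in $\Diff(\Sigma,x)_1$, i.e.\ a homotopy through basepoint-fixing diffeomorphisms whose differentials vary continuously, and this uses the Earle--Eells contractibility of $\Diff(\Sigma)_1$ together with Hamstrom's theorem ($\pi_i\Homeo(\Sigma)_0=0$) and Epstein/Baer to identify $\pi_0\Diff(\Sigma,x)_1\simeq\pi_0\Homeo(\Sigma,x)_1$. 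Your ``main obstacle'' (basepoint bookkeeping via the homotopy extension property) is indeed present and is handled in the paper exactly as you describe, but it is the easy part; without the $\Diff$-versus-$\Homeo$ comparison your argument does not produce $Df_t$ at all.

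A secondary problem is your treatment of the change of basepoint $y\mapsto y'$. Left translation by $g_0\in G$ does not descend to $\alpha(\Gamma)\bs\bb{H}^2$ (the quotient is by a left action), and the phrase ``identity on $\pi_1$ modulo conjugation'' would at best give $\wt{\alpha}$ up to an inner automorphism, whereas Lemma \ref{well} asserts equality in $\mca{H}(\wt{\Gamma},\wt{G})$, which is needed later (e.g.\ for the equivariance statements and for $\sigma_0$ being a homomorphism). The paper avoids this by treating the two choices $(y,f)$ and $(y',f')$ simultaneously: one connects $y$ to $y'$ by a path $c$, verifies that $f'_*$ equals $f_*$ followed by the change-of-basepoint isomorphism along $\ol{c}$ (Diagram \eqref{pighipi1}), invokes the general aspherical-target Lemma \ref{cwpipi} to get a homotopy whose basepoint trace is $\ol{c}$ up to homotopy rel endpoints, and only then runs the diffeotopy upgrade and the lift $c'$, $w_t$ to produce the based homotopy $Df_t$ with $Df_t(\Gamma)=\alpha(\Gamma)$ for all $t$. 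You should restructure your argument along these lines rather than splitting into ``fix $y$, vary $f$'' and ``vary $y$.''
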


\begin{proof}
Proof will be given in Section \ref{well s}. 
\end{proof}

Let $\iota\colon\Gamma\to G$, $\wt{\iota}\colon\wt{\Gamma}\to\wt{G}$ be the inclusions: 
\begin{equation*}
\begin{tikzcd}
\wt{\Gamma}\ar[r,"\wt{\iota}"]\ar[d]\ar[rd,phantom,"\circlearrowright"]&\wt{G}\ar[d]\\
\Gamma\ar[r,"\iota"']&G. 
\end{tikzcd}
\end{equation*}
Then it is easy to see $\sigma(\iota)=\wt{\iota}$. 

\begin{lem}\label{section}
We have $\pi\sigma=\id$, ie $\sigma$ is a section of $\pi$: 
\begin{equation*}
\begin{tikzcd}
\mca{H}(\wt{\Gamma},\wt{G})\ar[d,"\pi"']\\
\mca{H}(\Gamma,G). \ar[u,bend right,"\sigma"']
\end{tikzcd}
\end{equation*}
\end{lem}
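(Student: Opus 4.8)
The statement $\pi\sigma=\id$ asks us to verify that if we start with $\alpha\in\mca{H}(\Gamma,G)$, build $\wt{\alpha}=\sigma(\alpha)\in\mca{H}(\wt{\Gamma},\wt{G})$ via Diagram \eqref{pigggdfa}, and then apply $\pi$, we recover $\alpha$. By Lemma \ref{descends}, $\pi(\wt{\alpha})$ is the unique element of $\mca{H}(\Gamma,G)$ fitting into the commutative square with $p$ on the right and $p$ on the left; so it suffices to show that $\alpha$ itself fits into this square, i.e.\ that
\begin{equation*}
\begin{tikzcd}
\wt{\Gamma}\ar[r,"\wt{\alpha}"]\ar[d,"p"']\ar[rd,phantom,"\circlearrowright"]&\wt{G}\ar[d,"p"]\\
\Gamma\ar[r,"\alpha"']&G
\end{tikzcd}
\end{equation*}
commutes. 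First I would unwind the construction of $\wt{\alpha}$: it is defined by chasing $(Df)_*$ through the canonical identifications $\pi_1(\Gamma\bs G,\Gamma)\simeq\wt{\Gamma}$ and $\pi_1(\alpha(\Gamma)\bs G,\alpha(\Gamma))\simeq\wt{\alpha(\Gamma)}$ (these are the maps $\iota_{(\cdot)}$ of Definition \ref{gxgxpxx}, applied to the $\pi_1$-principal bundles $\wt{G}\to\wt{\Gamma}\bs\wt{G}$ etc., noting $\wt{\Gamma}\bs\wt{G}\simeq\Gamma\bs G$ and $\wt{\alpha(\Gamma)}\bs\wt{G}\simeq\alpha(\Gamma)\bs G$). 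So the key is to track how the covering projections $p\colon\wt{G}\to G$ correspond, under these identifications, to the maps $\Gamma\to\pi_1(\Gamma\bs G,\Gamma)$ and $\alpha(\Gamma)\to\pi_1(\alpha(\Gamma)\bs G,\alpha(\Gamma))$ — in other words, $p|_{\wt{\Gamma}}$ becomes, under $\iota_1^{-1}$, the natural surjection $\pi_1(\Gamma\bs\bb H^2\times_\Gamma?)$... more precisely the map induced on $\pi_1$ by the bundle projection $Df$'s source's... Let me be concrete: the diffeomorphism $Df\colon\Gamma\bs G\to\alpha(\Gamma)\bs G$ covers the diffeomorphism $\wh f\colon T^1(\Gamma\bs\bb H^2)\to T^1(\alpha(\Gamma)\bs\bb H^2)$, which in turn covers $f\colon\Gamma\bs\bb H^2\to\alpha(\Gamma)\bs\bb H^2$ with $f_*=\iota_y^{-1}\alpha\,\iota_i$ on $\pi_1$. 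The plan is to assemble the commutative cube relating $T^1(\cdot)\to(\cdot)$ bundle projections on both sides with $Df$ and $f$, pass to $\pi_1$, and identify each fundamental group with the relevant group ($\Gamma$, $\alpha(\Gamma)$, $\pi_1(\Gamma\bs\bb H^2,\ol i)$, $\pi_1(\alpha(\Gamma)\bs\bb H^2,\ol y)$) via $\iota$, so that the square above drops out.

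Concretely, the steps would be: (1) Recall that for the $S^1$-bundle (unit tangent bundle) $q\colon T^1(\Gamma\bs\bb H^2)\to\Gamma\bs\bb H^2$, the long exact sequence of the fibration gives a surjection $q_*\colon\pi_1(T^1(\Gamma\bs\bb H^2),\ol u)\to\pi_1(\Gamma\bs\bb H^2,\ol i)$ whose kernel is the image of $\pi_1(S^1)$; under the identifications $\pi_1(T^1(\Gamma\bs\bb H^2),\ol u)\simeq\pi_1(\Gamma\bs G,\Gamma)\simeq\wt{\Gamma}$, $\pi_1(\Gamma\bs\bb H^2,\ol i)\simeq\Gamma$, and $\pi_1(S^1)\simeq Z$, this surjection \emph{is} exactly $p|_{\wt{\Gamma}}\colon\wt{\Gamma}\to\Gamma$ (this is essentially the defining property of $\wt G\to G$ as the universal cover and of the extension $1\to Z\to\wt\Gamma\to\Gamma\to 1$). (2) The same holds on the $\alpha(\Gamma)$ side: the bundle projection $q'\colon T^1(\alpha(\Gamma)\bs\bb H^2)\to\alpha(\Gamma)\bs\bb H^2$ induces $p|_{\wt{\alpha(\Gamma)}}\colon\wt{\alpha(\Gamma)}\to\alpha(\Gamma)$ on $\pi_1$. (3) Since $\wh f$ covers $f$ (i.e.\ $q'\wh f=fq$) and $Df$ is $\wh f$ transported through $\Gamma\bs G\simeq\Gamma\bs T^1\bb H^2\simeq T^1(\Gamma\bs\bb H^2)$, we get $q'_*(Df)_*=f_*q_*$ on $\pi_1$. (4) Now translate: $q_*$ is $p|_{\wt\Gamma}$, $(Df)_*$ is $\wt\alpha$ (by definition \eqref{pigggdfa}), $q'_*$ is $p|_{\wt{\alpha(\Gamma)}}$, and $f_*$ is (conjugate to) $\alpha$ via the basepoint identifications $\iota_i,\iota_y$. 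Chasing through, $p|_{\wt{\alpha(\Gamma)}}\circ\wt\alpha=\alpha\circ p|_{\wt\Gamma}$ as maps $\wt\Gamma\to G$, which is precisely the commutativity of the square above. (5) Conclude by uniqueness in Lemma \ref{descends} that $\pi(\wt\alpha)=\alpha$, and additionally note $\wt\alpha(\wt\Gamma)=\wt{\alpha(\Gamma)}=p^{-1}(\alpha(\Gamma))$ matches the $\wt\alpha(\wt\Gamma)=p^{-1}(\pi(\wt\alpha)(\Gamma))$ requirement.

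\textbf{Main obstacle.} The bookkeeping in step (1)/(2) — verifying that the $\pi_1$-level bundle projection of the unit tangent bundle, under the string of identifications $T^1(\Gamma\bs\bb H^2)\simeq\Gamma\bs T^1\bb H^2\simeq\Gamma\bs G$ and Definition \ref{gxgxpxx}'s $\iota$, really \emph{is} $p|_{\wt\Gamma}$ — is the delicate point, because it requires pinning down all the basepoint and orientation conventions consistently (which lift of $u$, $w$, which generator of $Z$ vs.\ $\pi_1(S^1)$) so that the identification $\pi_1(\Gamma\bs G,\Gamma)\simeq\wt\Gamma$ used throughout Section 18 is the same one implicit in \eqref{pigggdfa}. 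Much of this was already set up in the proof of Lemma \ref{descends} (the extension $1\to Z\to\wt\Gamma\to\Gamma\to 1$, the identification of $c_\Gamma$ with the Euler class, etc.), so I would lean on that and on Definition \ref{gxgxpxx}; the remaining work is to check that the naturality square for $\iota$ with respect to the diffeomorphism $Df$ (resp.\ $f$) holds, which follows formally from $Df$ covering $f$ and from the functoriality of the monodromy/fundamental groupoid construction. Once the conventions are fixed, steps (3)–(5) are a straightforward diagram chase.
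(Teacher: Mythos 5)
Your proposal is correct and follows essentially the same route as the paper: the actual proof of Lemma \ref{section} likewise reduces to the commutativity of the outer square $p\circ\wt{\alpha}=\alpha\circ p$ (which determines $\pi(\wt{\alpha})$ by the uniqueness in Lemma \ref{descends}) and establishes it by splitting Diagram \eqref{gpgggp1} into the inner rectangle (your step (3): $Df$ covers $f$ through $\Gamma\bs G\simeq T^1(\Gamma\bs\bb{H}^2)$) and the two trapezoids (your steps (1)--(2): the vertical composites $\wt{\Gamma}\simeq\pi_1(\Gamma\bs G,\Gamma)\to\pi_1(\Gamma\bs\bb{H}^2,\ol{i})\simeq\Gamma$ and its $\alpha(\Gamma)$-analogue are $p$). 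The basepoint bookkeeping you flag as the main obstacle is exactly what the paper settles in Section \ref{section s} with the short lift-and-project argument (the map $\wt{g}\mapsto(p\wt{g})i$ and the accompanying figure), using the explicit description of $\iota$ from Definition \ref{gxgxpxx}.
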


\begin{proof}
Proof will be given in Section \ref{section s}. 
\end{proof}

\begin{lem}\label{g eq}
$\sigma$ is $(\Aut(G)\simeq\Aut(\wt{G}))$-equivariant. 
\end{lem}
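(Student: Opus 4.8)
The statement to prove is that the section $\sigma\colon\mca{H}(\Gamma,G)\to\mca{H}(\wt{\Gamma},\wt{G})$ constructed above is equivariant with respect to the isomorphism $\Aut(G)\xrightarrow{\sim}\Aut(\wt{G})$, $\Phi\mapsto\wt{\Phi}$ (and the pre-composed version $\wt{\Phi}\mapsto\Phi$ coming from $\wt{\Phi}(Z)=Z$). Concretely, for $\alpha\in\mca{H}(\Gamma,G)$ and $\Phi\in\Aut(G)$, writing $\Phi_*\alpha=\Phi\alpha\in\mca{H}(\Gamma,G)$ and $\wt{\Phi}_*\wt{\alpha}=\wt{\Phi}\wt{\alpha}\in\mca{H}(\wt{\Gamma},\wt{G})$, I must show $\sigma(\Phi\alpha)=\wt{\Phi}\,\sigma(\alpha)$, i.e. $\widetilde{\Phi\alpha}=\wt{\Phi}\wt{\alpha}$.

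\textbf{Key steps.} First I would recall that $\Phi\in\Aut(G)=\Aut(\PSL(2,\bb{R}))$ is an isometry of $\bb{H}^2$ up to the standard action: $\Phi$ is realized by an element of $\Isom(\bb{H}^2)$, so there is an induced diffeomorphism of $\bb{H}^2$, which I also denote $\Phi$, with $\Phi\circ g=\Phi(g)\circ\Phi$ for all $g\in G$; it descends to a diffeomorphism $\ol{\Phi}\colon\Gamma\bs\bb{H}^2\to\Phi\alpha(\Gamma)\bs\ldots$ wait — more carefully: $\Phi$ descends to a diffeomorphism $\alpha(\Gamma)\bs\bb{H}^2\to\Phi\alpha(\Gamma)\bs\bb{H}^2$. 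Second, I would take the diffeomorphism $f\colon(\Gamma\bs\bb{H}^2,\ol{i})\to(\alpha(\Gamma)\bs\bb{H}^2,\ol{y})$ used to build $\sigma(\alpha)=\wt{\alpha}$ via \eqref{pigggdfa}, and observe that $\ol{\Phi}\circ f$ is a diffeomorphism $(\Gamma\bs\bb{H}^2,\ol{i})\to(\Phi\alpha(\Gamma)\bs\bb{H}^2,\ol{\Phi(y)})$ inducing $\Phi\alpha$ on $\pi_1$ (this uses the compatibility of $\iota_{\ol{y}}$ with $\Phi$: $\iota_{\ol{\Phi(y)}}\circ\ol{\Phi}_*=\Phi\circ\iota_{\ol{y}}$, which is a direct path-lifting computation). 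So $\ol{\Phi}\circ f$ is a legitimate choice of the diffeomorphism for $\Phi\alpha$, and by Lemma \ref{well} (well-definedness of $\sigma$) we may use it to compute $\widetilde{\Phi\alpha}$. Third, I would trace through the definition: $\wh{\ol{\Phi}\circ f}=\wh{\ol{\Phi}}\circ\wh{f}$ on unit tangent bundles, hence $D(\ol{\Phi}\circ f)=D\ol{\Phi}\circ Df$ at the level of $\Gamma\bs G\to\Phi\alpha(\Gamma)\bs G$, where $D\ol{\Phi}\colon\alpha(\Gamma)\bs G\to\Phi\alpha(\Gamma)\bs G$ is induced by $\Phi\colon G\to G$; then apply $\pi_1$ and the identifications in \eqref{pigggdfa}. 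Since $(D\ol{\Phi})_*$ corresponds under the isomorphisms $\pi_1(\alpha(\Gamma)\bs G,\alpha(\Gamma))\simeq\wt{\alpha(\Gamma)}$ etc. to the restriction of $\wt{\Phi}$ (using that $\wt{\Phi}$ is by construction the lift of $\Phi$ fixing $1$, and monodromy groupoid functoriality from Proposition \ref{monfmonfmb}), the composite $(D(\ol{\Phi}\circ f))_*=(D\ol{\Phi})_*\circ(Df)_*$ gives exactly $\wt{\Phi}\circ\wt{\alpha}$. This is $\widetilde{\Phi\alpha}=\wt{\Phi}\wt{\alpha}=\wt{\Phi}\,\sigma(\alpha)$, as desired.

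\textbf{Main obstacle.} The essential point — and the step I expect to require the most care — is verifying that the lift of $D\ol{\Phi}$ to $\wt{G}$ induced by the construction (via $\pi_1$ of the orbit manifolds and the identifications with the monodromy/fundamental groupoids) really is the automorphism $\wt{\Phi}$ fixing $1$, rather than $\wt{\Phi}$ composed with a translation by an element of $Z$. This amounts to checking a basepoint/normalization compatibility: the isomorphism $\pi_1(\alpha(\Gamma)\bs G,\alpha(\Gamma))\simeq\wt{\alpha(\Gamma)}$ comes from $\iota_{\alpha(\Gamma)}$ in Definition \ref{gxgxpxx}, and one needs $D\ol{\Phi}$ to be compatible with these $\iota$-identifications on the nose. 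I would handle this by drawing the relevant path-lifting diagram (as in the proof of Lemma \ref{descends}, the figures with $\wt{c}$ and $c$) for the covering $\wt{G}\to G$ and the maps $\Phi$, $\wt{\Phi}$, using that $\wt{\Phi}$ is uniquely characterized by $\wt{\Phi}(1)=1$ and commutativity with the projections. Everything else is routine chasing through the definitions of $\wh{(\cdot)}$, $D(\cdot)$, and the functoriality already established; the bulk of the proof is bookkeeping, and once the normalization is pinned down the equivariance drops out immediately.
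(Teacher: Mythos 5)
Your proposal is correct and follows essentially the same route as the paper: realize $\Phi$ by an element $h$ of $\PGL(2,\bb{R})\simeq\Isom(\bb{H}^2)$, replace $f$ by $\ol{h}\circ f$ (legitimate by the well-definedness of $\sigma$), check via path-lifting that this induces $\Phi\alpha$ on $\pi_1$, factor $D(\ol{h}\circ f)=\ol{\Phi}\circ Df$ on the quotients of $G$, and verify the basepoint normalization so that $(\ol{\Phi})_*$ on $\pi_1$ corresponds to $\wt{\Phi}$ (not a $Z$-translate of it). The normalization subtlety you flag is exactly the second claim the paper checks with its path-lifting figure, so nothing further is needed.
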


\begin{proof}
Proof will be given in Section \ref{g eq s}. 
\end{proof}

For any $\wt{h}\in\wt{G}$, let $h=p\wt{h}\in G$. Then 
\begin{equation*}
\begin{tikzcd}
\wt{\Gamma}\ar[r,"\wt{\iota}"]\ar[d]\ar[rd,phantom,"\circlearrowright"]&\wt{G}\ar[r,"\wt{h}\ \cdot\ \wt{h}^{-1}"]\ar[d]\ar[rd,phantom,"\circlearrowright"]&\wt{G}\ar[d]\\
\Gamma\ar[r,"\iota"']&G\ar[r,"h\ \cdot\ h^{-1}"']&G, 
\end{tikzcd}
\end{equation*}
so we have $\sigma(h\iota h^{-1})=\wt{h}\ \wt{\iota}\ \wt{h}^{-1}$ by Lemma \ref{g eq}. 

For any $\wt{\alpha}\in\Aut(\wt{\Gamma})$, there exists a unique $\alpha\in\Aut(\Gamma)$ such that 
\begin{equation*}
\begin{tikzcd}
Z\ar[r,"\sim"]\ar[d]\ar[rd,phantom,"\circlearrowright"]&Z\ar[d]\\
\wt{\Gamma}\ar[r,"\wt{\alpha}"]\ar[d]\ar[rd,phantom,"\circlearrowright"]&\wt{\Gamma}\ar[d]\\
\Gamma\ar[r,"\alpha"']&\Gamma. 
\end{tikzcd}
\end{equation*}
This defines a homomorphism 
\begin{align*}
\pi_0\colon\Aut(\wt{\Gamma})&\to\Aut(\Gamma)\\
\wt{\alpha}&\mapsto\alpha
\end{align*}
and it is a restriction of $\pi$: 
\begin{equation*}
\begin{tikzcd}[row sep=small]
\mca{H}(\wt{\Gamma},\wt{G})\ar[r,"\pi"]\ar[rd,phantom,"\circlearrowright"]&\mca{H}(\Gamma,G)\\
\Aut(\wt{\Gamma})\ar[r,"\pi_0"']\ar[u,phantom,sloped,"\subset"]&\Aut(\Gamma). \ar[u,phantom,sloped,"\subset"]
\end{tikzcd}
\end{equation*}
Since 
\begin{equation*}
\begin{tikzcd}
\wt{\Gamma}\ar[r,"\wt{\gamma}\ \cdot\ \wt{\gamma}^{-1}"]\ar[d]\ar[rd,phantom,"\circlearrowright"]&\wt{\Gamma}\ar[d]\\
\Gamma\ar[r,"p\wt{\gamma}\ \cdot\ p\wt{\gamma}^{-1}"']&\Gamma, 
\end{tikzcd}
\end{equation*}
$\pi_0$ descends to a homomorphism $\ol{\pi_0}$: 
\begin{equation*}
\begin{tikzcd}
\Aut(\wt{\Gamma})\ar[r,"\pi_0"]\ar[d]\ar[rd,phantom,"\circlearrowright"]&\Aut(\Gamma)\ar[d]\\
\Out(\wt{\Gamma})\ar[r,"\ol{\pi_0}"']&\Out(\Gamma). 
\end{tikzcd}
\end{equation*}
Let $\sigma_0$ be the restriction of $\sigma$: 
\begin{equation*}
\begin{tikzcd}[row sep=small]
\mca{H}(\Gamma,G)\ar[r,"\sigma"]\ar[rd,phantom,"\circlearrowright"]&\mca{H}(\wt{\Gamma},\wt{G})\\
\Aut(\Gamma)\ar[r,"\sigma_0"']\ar[u,phantom,sloped,"\subset"]&\Aut(\wt{\Gamma}). \ar[u,phantom,sloped,"\subset"]
\end{tikzcd}
\end{equation*}
Then $\sigma_0$ is a section of $\pi_0$: 
\begin{equation*}
\begin{tikzcd}
\Aut(\wt{\Gamma})\ar[d,"\pi_0"']\\
\Aut(\Gamma). \ar[u,bend right,"\sigma_0"']
\end{tikzcd}
\end{equation*}

\begin{lem}\label{gamma eq}
$\sigma$ is $(\Aut(\Gamma)\to\Aut(\wt{\Gamma}))$-equivariant. 
\end{lem}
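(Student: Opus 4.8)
The goal is to show that the section $\sigma\colon\mca{H}(\Gamma,G)\to\mca{H}(\wt{\Gamma},\wt{G})$ intertwines the actions $\Aut(\Gamma)\curvearrowright\mca{H}(\Gamma,G)$ and $\Aut(\wt{\Gamma})\curvearrowright\mca{H}(\wt{\Gamma},\wt{G})$ via the homomorphism $\pi_0^{-1}$-style compatibility; more precisely, for $\alpha\in\mca{H}(\Gamma,G)$ and $\beta\in\Aut(\Gamma)$ with lift $\wt{\beta}=\sigma_0(\beta)\in\Aut(\wt{\Gamma})$, we want $\sigma(\alpha\beta)=\sigma(\alpha)\wt{\beta}$ (precomposition). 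The natural approach is to trace through the construction of $\sigma$ from the previous subsection and check that each of the ingredients (a pointed homotopy equivalence $f$ of surfaces realizing $\alpha$ on $\pi_1$, its unit-derivative map $Df$ on frame bundles, and the induced map on monodromy/fundamental groupoids) transforms compatibly when $\alpha$ is replaced by $\alpha\beta$. Since $\beta\in\Aut(\Gamma)$ changes only the \emph{marking} and not the target lattice, the surfaces $\Gamma\bs\bb{H}^2$ involved are literally the same; what changes is the chosen identification of $\pi_1$ with $\Gamma$.

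\textbf{Key steps.} First I would unwind the definition: given $\alpha\in\mca{H}(\Gamma,G)$, recall $\wt{\alpha}=\sigma(\alpha)$ is defined through the diagram \eqref{pigggdfa} using a diffeomorphism $f\colon(\Gamma\bs\bb{H}^2,\ol{i})\to(\alpha(\Gamma)\bs\bb{H}^2,\ol{y})$ with $f_*=\iota_y^{-1}\alpha\iota_i$ on fundamental groups. Second, for $\beta\in\Aut(\Gamma)$, observe that $\beta=\iota_j^{-1}\circ(b_*)\circ\iota_i$ for a suitable self-homotopy-equivalence $b\colon(\Gamma\bs\bb{H}^2,\ol{i})\to(\Gamma\bs\bb{H}^2,\ol{j})$ (using Proposition 1B.9 of \cite{Hatcher} as in the construction of $\sigma$, or simply that any automorphism of $\pi_1$ of an aspherical manifold is induced by a based map); then $\alpha\beta$ is realized by the composite $f\circ b$, up to the usual adjustments of basepoint and homotopy. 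Third, I would check that $D(f\circ b)=Df\circ Db$ on the frame bundles (this is functoriality of the unit-tangent-map construction, modulo the $\lVert\cdot\rVert$-normalization, which is innocuous because it is a fiberwise homotopy) and hence, passing to monodromy groupoids as in \eqref{pigggdfa}, that $\widetilde{\alpha\beta}=\wt{\alpha}\circ\wt{b}$ where $\wt{b}$ is the lift of $Db$ to $\wt{\Gamma}\bs\wt{G}$. Fourth, I would identify $\wt{b}|_{\wt{\Gamma}}$ with $\sigma_0(\beta)=\wt{\beta}$: this follows from the uniqueness clause in the construction (Lemma \ref{section} / the computation $\sigma(\iota)=\wt\iota$ combined with naturality) since $\wt{b}$ is the lift of $Db$ fixing $1$, and the diagram defining $\sigma_0$ is exactly this lift. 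Combining, $\sigma(\alpha\beta)=\sigma(\alpha)\sigma_0(\beta)$, which is the asserted equivariance.

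\textbf{Main obstacle.} The delicate point is not the functoriality of $Df$ or of the monodromy construction — those are formal — but rather controlling basepoints and homotopy classes when composing the surface maps: the map realizing $\alpha\beta$ is only homotopic (not equal) to $f\circ b$, and one must check the homotopy can be taken basepoint-preserving, exactly the kind of argument carried out in the second claim in the proof of Lemma \ref{descends} and invoked again in the construction of $\sigma$. Since $\sigma$ was already shown to be well-defined (Lemma \ref{well}), the cleanest route is to phrase the whole computation at the level of $\pi_1$ and monodromy groupoids, where $f_*$, $b_*$, $Df_*$, $Db_*$ are honest group homomorphisms and composition is strictly associative; the only use of the geometric realizations is to guarantee the relevant groupoid morphisms exist and are smooth, which is already established. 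So in practice I expect the proof to be: reduce to a diagram chase using \eqref{pigggdfa}, functoriality of $\Mon(-)$ (Proposition \ref{monfmonfmb}), and the uniqueness in Proposition \ref{fgmonlod}, with the basepoint bookkeeping handled verbatim as in Lemma \ref{descends}. The argument is routine once set up this way, and I would present it as such rather than re-deriving the basepoint lemmas.
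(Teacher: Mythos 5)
Your overall strategy — realize $\beta$ by a self-map $b$ of the surface, compose with $f$, and push the composite through $D(\cdot)$ and $\pi_1$ — is indeed the paper's strategy, but the step you dismiss as formal, $D(f\circ b)=Df\circ Db$, is exactly where the real content lies, and as you state it it fails. The map $Df$ is not determined by $f$ (or by $\wh{f}$) alone: its definition uses the identification of $\alpha(\Gamma)\bs G$ with $\alpha(\Gamma)\bs T^1\bb{H}^2$ through the reference unit vector $w\in T^1_y\bb{H}^2$ singled out by $\ol{w}=\wh{f}(\ol{u})$, while the source is identified using $u$ at $i$. For the middle identifications in the composite to match, the realization $b$ of $\beta$ must have target data again $(i,u)$, i.e., $b(\ol{i})=\ol{i}$ and $\wh{b}(\ol{u})=\ol{u}$. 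If $\wh{b}(\ol{u})\neq\ol{u}$ (or if, as you allow, $b$ lands at a different basepoint $\ol{j}$, which moreover forces a new choice of lift for the identification $\iota$ there), then $D(f\circ b)$ and $Df\circ Db$ differ by right translations by rotations; these are only freely homotopic to the identity, not based-homotopic, so the induced homomorphisms $\pi_1(\Gamma\bs G,\Gamma)\to\pi_1(\alpha(\Gamma)\bs G,\alpha(\Gamma))$, i.e., $\wt{\Gamma}\to\wt{\alpha(\Gamma)}\subset\wt{G}$, can differ by conjugation in $\wt{G}$. Since the lemma asserts the on-the-nose identity $\sigma(\alpha\beta)=\sigma(\alpha)\,\sigma_0(\beta)$ in $\mca{H}(\wt{\Gamma},\wt{G})$ (not merely equality of $\Aut(\wt{G})$-orbits), such a discrepancy is not innocuous, and your remark that the $\lVert\cdot\rVert$-normalization is a fiberwise homotopy does not address it.

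The paper's proof handles precisely this point: it chooses the realization $g$ of $\beta$ to be a diffeomorphism of $(\Gamma\bs\bb{H}^2,\ol{i})$ fixing the basepoint \emph{and} satisfying $(g_*)_{\ol{i}}=\id$ on $T_i^1(\Gamma\bs\bb{H}^2)$, so that $\wh{g}(\ol{u})=\ol{u}$; then $\wh{fg}(\ol{u})=\ol{w}$ with the same $w$ used for $f$, the identity $D(fg)=(Df)(Dg)$ holds literally, and the $\pi_1$ diagram chase gives $\wt{\alpha\beta}=\wt{\alpha}\wt{\beta}$. Well-definedness (Lemma \ref{well}) is invoked only to know that computing $\sigma(\alpha\beta)$, $\sigma(\alpha)$ and $\sigma_0(\beta)$ via these particular realizations is legitimate. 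So your proposal needs either this normalization of $b$, or an explicit computation showing the rotation corrections cancel; as written, the key identity is asserted but not justified. Your suggestion to route the argument through $\Mon(-)$, Proposition \ref{monfmonfmb} and Proposition \ref{fgmonlod} is unnecessary here — ordinary covering-space theory for $\wt{G}\to G$ suffices — but that is a stylistic matter, not the gap.
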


\begin{proof}
Proof will be given in Section \ref{gamma eq s}. 
\end{proof}

In summary we have 
\begin{equation*}
\begin{tikzcd}[column sep=tiny]
\Aut(\wt{G})\ar[r,phantom,"\curvearrowright"]\ar[d,dash,sloped,"\sim"]&\mca{H}(\wt{\Gamma},\wt{G})\ar[r,phantom,"\curvearrowleft"]\ar[d,"\pi"']&\Aut(\wt{\Gamma})\ar[d,"\pi_0"']\\
\Aut(G)\ar[r,phantom,"\curvearrowright"]&\mca{H}(\Gamma,G)\ar[r,phantom,"\curvearrowleft"]\ar[u,bend right,"\sigma"']&\Aut(\Gamma). \ar[u,bend right,"\sigma_0"']
\end{tikzcd}
\end{equation*}

\begin{cor}
$\sigma_{0}$ is a homomorphism and induces a homomorphism $\ol{\sigma_0}$ which is a section of $\ol{\pi_0}$: 
\begin{equation*}
\begin{tikzcd}
\Out(\wt{\Gamma})\ar[d,"\ol{\pi_0}"']\\
\Out(\Gamma). \ar[u,bend right,"\ol{\sigma_0}"']
\end{tikzcd}
\end{equation*}
\end{cor}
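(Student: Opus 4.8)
The corollary asserts that $\sigma_0$ descends to a map $\ol{\sigma_0}\colon\Out(\Gamma)\to\Out(\wt{\Gamma})$ which is a homomorphism and a section of $\ol{\pi_0}$. The plan is to verify the three claims in order: (i) $\sigma_0$ is a homomorphism; (ii) $\sigma_0$ carries inner automorphisms to inner automorphisms, hence descends to $\ol{\sigma_0}$; and (iii) $\ol{\pi_0}\ol{\sigma_0}=\id$.

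For (i), I would use the $(\Aut(\Gamma)\to\Aut(\wt{\Gamma}))$-equivariance of $\sigma$ from Lemma \ref{gamma eq} together with the fact that $\sigma(\iota)=\wt{\iota}$, where $\iota\colon\Gamma\to G$ and $\wt{\iota}\colon\wt{\Gamma}\to\wt{G}$ are the inclusions. Concretely, for $\alpha$, $\beta\in\Aut(\Gamma)$, equivariance says $\sigma(\alpha\cdot\beta')=\wt{\alpha}\cdot\sigma(\beta')$ for any $\beta'\in\mca{H}(\Gamma,G)$ where the dot denotes precomposition-by-the-automorphism action on $\mca{H}$; specializing $\beta'$ appropriately (noting that the action of $\Aut(\Gamma)$ on $\Aut(\Gamma)\subset\mca{H}(\Gamma,G)$ by precomposition matches composition of automorphisms, and $\sigma_0$ is the restriction of $\sigma$) gives $\sigma_0(\alpha\beta)=\sigma_0(\alpha)\sigma_0(\beta)$. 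Since $\sigma_0(\iota_\Gamma)=\wt{\iota}_{\wt{\Gamma}}$ is the identity of $\Aut(\wt{\Gamma})$ (the inclusion restricted to $\Aut$ being the identity map), $\sigma_0$ is a group homomorphism.

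For (ii), I would invoke the computation already recorded just before the corollary: for $\wt{h}\in\wt{G}$ with $h=p\wt{h}$, Lemma \ref{g eq} gives $\sigma(h\iota h^{-1})=\wt{h}\ \wt{\iota}\ \wt{h}^{-1}$. Restricting to the case where $h$ normalizes $\Gamma$ (equivalently $\wt{h}$ normalizes $\wt{\Gamma}$, using that $\wt{\Gamma}=p^{-1}(\Gamma)$), the inner automorphism $\gamma\mapsto\gamma_0\gamma\gamma_0^{-1}$ of $\Gamma$ given by $\gamma_0\in\Gamma$ is obtained by taking $\wt{h}=\wt{\gamma_0}\in\wt{\Gamma}$, and $\sigma_0$ sends it to conjugation by $\wt{\gamma_0}$ in $\wt{\Gamma}$, which is inner. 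Hence $\sigma_0(\Inn(\Gamma))\subset\Inn(\wt{\Gamma})$, so $\sigma_0$ descends to a homomorphism $\ol{\sigma_0}\colon\Out(\Gamma)\to\Out(\wt{\Gamma})$. Finally for (iii), since $\sigma$ is already a section of $\pi$ (Lemma \ref{section}) and $\pi_0$, $\sigma_0$ are the restrictions of $\pi$, $\sigma$ to the automorphism subgroups, we have $\pi_0\sigma_0=\id$ on $\Aut(\Gamma)$; passing to quotients and using that $\ol{\pi_0}$, $\ol{\sigma_0}$ are the induced maps on $\Out$ yields $\ol{\pi_0}\ol{\sigma_0}=\id$ on $\Out(\Gamma)$.

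The main obstacle, such as it is, is purely bookkeeping: one must be careful that the $\Aut(\Gamma)$-action on $\mca{H}(\Gamma,G)$ used in the equivariance statement of Lemma \ref{gamma eq} is precomposition (so that its restriction to $\Aut(\Gamma)$ is genuinely the group law of $\Aut(\Gamma)$, possibly up to the order of composition — which just interchanges $\sigma_0(\alpha\beta)$ with $\sigma_0(\beta\alpha)$ and is harmless since we only need that a homomorphism or antihomomorphism carrying inner to inner descends, and inspecting $\sigma(\iota)=\wt\iota$ fixes the identity correctly). Everything else is a direct consequence of the already-established Lemmas \ref{section}, \ref{g eq}, \ref{gamma eq}, so no genuinely new idea is required; the proof is two or three lines once these are assembled.
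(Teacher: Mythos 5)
Your proposal is correct and follows essentially the same route as the paper: the descent to $\Out$ comes from the already-recorded identity $\sigma(h\iota h^{-1})=\wt{h}\,\wt{\iota}\,\wt{h}^{-1}$ specialized to $h=\gamma\in\Gamma$ (which is exactly the paper's displayed computation $\sigma_0(\gamma\cdot\gamma^{-1})=\wt{\gamma}\cdot\wt{\gamma}^{-1}$), while the homomorphism and section properties are read off from Lemma \ref{gamma eq} and the previously noted $\pi_0\sigma_0=\id$, just as the paper implicitly does. Your worry about a possible antihomomorphism is unnecessary, since the equivariance proved in Lemma \ref{gamma eq} is precisely $\sigma(\alpha\beta)=\sigma(\alpha)\sigma_0(\beta)$, which restricted to $\alpha\in\Aut(\Gamma)$ gives the homomorphism property in the correct order.
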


\begin{proof}
For any $\wt{\gamma}\in\wt{\Gamma}$, let $\gamma=p\wt{\gamma}\in\Gamma$. Then we have 
\begin{equation*}
\sigma_0(\gamma\ \cdot\ \gamma^{-1})=\sigma(\gamma\iota\gamma^{-1})=\wt{\gamma}\ \wt{\iota}\ \wt{\gamma}^{-1}=\wt{\gamma}\ \cdot\ \wt{\gamma}^{-1}. 
\end{equation*}
Hence it induces a homomorphism $\ol{\sigma_0}$: 
\begin{equation*}
\begin{tikzcd}
\Aut(\Gamma)\ar[r,"\sigma_0"]\ar[d]\ar[rd,phantom,"\circlearrowright"{xshift=2,yshift=-1}]&\Aut(\wt{\Gamma})\ar[d]\\
\Out(\Gamma)\ar[r,"\ol{\sigma_0}"']&\Out(\wt{\Gamma}). 
\end{tikzcd}
\end{equation*}
\end{proof}

\subsubsection{Well-definedness of $\sigma$ (proof of Lemma \ref{well})}\label{well s}
Take another point $y^\prime\in\bb{H}^2$ and a diffeomorphism $f^\prime\colon(\Gamma\bs\bb{H}^2,\ol{i})\to(\alpha(\Gamma)\bs\bb{H}^2,\overline{y^\prime})$ such that 
\begin{equation*}
\begin{tikzcd}
\pi_1(\Gamma\bs\bb{H}^2,\ol{i})\ar[r,"f^\prime_*"]\ar[d,"\sim"{sloped},"\iota_i"']\ar[rd,phantom,"\circlearrowright"{xshift=-3}]&\pi_1(\alpha(\Gamma)\bs\bb{H}^2,\ol{y^\prime})\ar[d,"\sim"{sloped},"\iota_{y^\prime}"{xshift=7}]\\
\Gamma\ar[r,"\alpha"']&\alpha(\Gamma). 
\end{tikzcd}
\end{equation*}
Take a continuous curve $c\colon I\to\bb{H}^2$ such that $c(0)=y$, $c(1)=y^\prime$ and let $\ol{c}\colon I\to\alpha(\Gamma)\bs\bb{H}^2$ be its projection. 

\begin{claim}
We have 
\begin{equation*}
\begin{tikzcd}[row sep=tiny]
&[-10pt]&[-10pt]&[-10pt]\alpha(\Gamma)\ar[dddd,"\id"]\\
&&\pi_1(\alpha(\Gamma)\bs\bb{H}^2,\ol{y})\ar[dd,"\ol{c}\ \cdot\ \ol{c}^{-1}"]\ar[ru,dash,sloped,"\sim"]\\
\Gamma\ar[r,dash,"\sim"]\ar[rrruu,bend left,"\alpha"]\ar[rrruu,phantom,bend left=15,"\circlearrowright"]\ar[rrrdd,bend right,"\alpha"']\ar[rrrdd,phantom,bend right=15,"\circlearrowright"]&\pi_1(\Gamma\bs\bb{H}^2,\ol{i})\ar[ru,"f_*"]\ar[rd,"f^\prime_*"']&\ \ar[r,phantom,"\circlearrowright"pos=.6]&\ \\
&&\pi_1(\alpha(\Gamma)\bs\bb{H}^2,\ol{y^\prime})\ar[rd,dash,sloped,"\sim"]\\
&&&\alpha(\Gamma). 
\end{tikzcd}
\end{equation*}
\end{claim}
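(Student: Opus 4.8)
The claim is the commutativity of a diagram whose two composite maps $\Gamma \to \alpha(\Gamma)$ are both equal to $\alpha$; what actually needs to be verified is that the two \emph{isomorphisms} $\pi_1(\Gamma\bs\bb{H}^2,\ol{i}) \simeq \alpha(\Gamma)$ obtained by going around the upper and lower branches coincide. Unwinding the diagram, the upper branch is $\iota_y \circ (\text{conjugation by }[\ol{c}]) \circ f_*$ and the lower branch is $\iota_{y^\prime}\circ f^\prime_*$, where $[\ol{c}]\in\Pi_1(\alpha(\Gamma)\bs\bb{H}^2)^{\ol{y^\prime}}_{\ol{y}}$ is the homotopy class of the path $\ol{c}$, and ``conjugation by $[\ol{c}]$'' is the change-of-basepoint isomorphism $\pi_1(\alpha(\Gamma)\bs\bb{H}^2,\ol{y})\to\pi_1(\alpha(\Gamma)\bs\bb{H}^2,\ol{y^\prime})$, $[\gamma]\mapsto[\ol{c}^{-1}\gamma\ol{c}]$ (with the concatenation convention of Definition \ref{gxgxpxx}). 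So the assertion reduces to a purely $\pi_1$-level identity: for every $[\delta]\in\pi_1(\Gamma\bs\bb{H}^2,\ol{i})$,
\begin{equation*}
\iota_{y}\bigl([\ol{c}^{-1}\,(f\delta)\,\ol{c}]\bigr) = \iota_{y^\prime}\bigl([f^\prime\delta]\bigr).
\end{equation*}

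First I would recall from Definition \ref{gxgxpxx} the precise recipe for $\iota_y$ and $\iota_{y^\prime}$: given a loop $\epsilon$ based at $\ol{y}$ in $\alpha(\Gamma)\bs\bb{H}^2$, lift it to $\bb{H}^2$ so that the lift ends at $y$; then $\iota_y[\epsilon]$ is the unique element of $\alpha(\Gamma)$ carrying $y$ to the starting point of the lift. The plan is to compute $\iota_y([\ol{c}^{-1}(f\delta)\ol{c}])$ by lifting the concatenated loop $\ol{c}^{-1}(f\delta)\ol{c}$ to $\bb{H}^2$ ending at $y$: the lift traverses (in the concatenation order of Definition \ref{gxgxpxx}, which runs right-to-left) a lift of $\ol{c}$ ending at $y$ — call its other endpoint $\hat y$ — then a lift of $f\delta$, then a lift of $\ol{c}^{-1}$. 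Since $c$ itself is a lift of $\ol{c}$ from $y$ to $y^\prime$, we may take $\hat y = y^\prime$, so the middle segment is exactly the lift of $f\delta$ ending at $y^\prime$, whose starting point is $\iota_{y^\prime}([f\delta])\cdot y^\prime$ by definition; one then checks the outer $\ol{c}$-lifts translate everything back appropriately, yielding $\iota_y([\ol{c}^{-1}(f\delta)\ol{c}]) = \iota_{y^\prime}([f\delta])$. Combined with the hypothesis $f^\prime_* = \iota_{y^\prime}^{-1}\circ\alpha\circ\iota_i = (\iota_{y^\prime}^{-1}\circ\iota_y)\circ f_*$ (both $f$ and $f^\prime$ induce $\alpha$ after identifying via the $\iota$'s), this gives $\iota_{y^\prime}([f^\prime\delta]) = \iota_y([f\delta])$, and the two branches agree. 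Here I would be careful with the two competing concatenation conventions in the paper (Definition \ref{gxgxpxx}'s monodromy-compatible one versus the usual one) and track which endpoint of each lift is the ``based'' one.

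The main obstacle I anticipate is purely bookkeeping: getting the order of concatenation, the direction of path-reversal, and the left-versus-right action of $\alpha(\Gamma)$ on $\bb{H}^2$ all consistent, so that the change-of-basepoint conjugation $[\ol{c}]\cdot(-)\cdot[\ol{c}]^{-1}$ matches exactly the discrepancy between $\iota_y$ and $\iota_{y^\prime}$ rather than its inverse or a twisted version. There is no deep geometric content — $\bb{H}^2$ is simply connected and the covering $\bb{H}^2\to\alpha(\Gamma)\bs\bb{H}^2$ has deck group $\alpha(\Gamma)$, so everything is governed by the standard dictionary between $\pi_1$, covering space lifts, and deck transformations — but the notational density (overlines for two different quotients, two concatenation conventions, $\widetilde{\phantom{x}}$ and $\widehat{\phantom{x}}$ decorations) makes it easy to introduce a sign error. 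I would therefore carry out the lift computation very explicitly on a single loop $\delta$, drawing the picture in $\bb{H}^2$ with the lifted paths $c$, $\widetilde{f\delta}$ labelled by their endpoints, and only afterwards phrase the conclusion diagrammatically.
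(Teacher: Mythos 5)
Your plan is correct and is essentially the paper's own proof: the only genuine content is the right square $\iota_{y^\prime}\circ\bigl([\ol{c}]\cdot(-)\cdot[\ol{c}]^{-1}\bigr)=\iota_y$, which the paper likewise verifies by lifting a loop representing $\delta\in\alpha(\Gamma)$ to $\bb{H}^2$ and using that $c$ is a lift of $\ol{c}$ joining $y$ to $y^\prime$ (this is exactly the figure with $c_\delta$ and $\delta c$), the two bent triangles being the defining hypotheses on $f$ and $f^\prime$. Only fix the swapped subscripts in your displayed identity, which should read $\iota_{y^\prime}\bigl([\ol{c}]\,[f\delta]\,[\ol{c}]^{-1}\bigr)=\iota_y\bigl([f\delta]\bigr)$ in the paper's composition convention; with that correction your reduction and conclusion go through as stated.
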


\begin{proof}
Take $\delta\in\alpha(\Gamma)$. Then the commutativity of the right square follows from the following figure. 
\begin{equation*}
\begin{tikzpicture}[every label/.append style={font=\scriptsize},matrix of math nodes,decoration={markings,mark=at position0.5with{\arrow{>}}}]
\clip(-.5,-2)rectangle(3.2,1.7);
\node(1)[label=below:y^\prime]{};
\node(2)[right=5em of 1,label=below:y]{};
\node(3)[above=2em of 2,label=above:\delta y]{};
\node(4)[above=2em of 1,label=above:\delta y^\prime]{};
\node(5)[below=of 1,label=below:\ol{y^\prime}]{};
\node(6)[below=of 2,label=below:\ol{y}]{};
\node(7)at(6){};
\draw[postaction={decorate}](2.center)to[out=190,in=-10]node[above,scale=.75]{c(t)}(1.center);
\draw[postaction={decorate}](3.center)to[out=-80,in=70]node[right,scale=.75]{c_\delta(t)}(2.center);
\draw[postaction={decorate}](3.center)to[out=190,in=-10]node[above,scale=.75]{\delta c(t)}(4.center);
\draw[postaction={decorate}](6.center)to[out=190,in=-10]node[above,scale=.75]{\ol{c}(t)}(5.center);
\draw[postaction={decorate}](6.center)..controls+(1,1)and+(1,-1)..(7.center)node[pos=.5,right,scale=.75]{\delta};
\foreach\x in{1,...,6}\filldraw(\x)circle(1pt);
\end{tikzpicture}\qedhere
\end{equation*}
\end{proof}

Hence 
\begin{equation}\label{pighipi1}
\begin{tikzcd}[row sep=tiny]
&\pi_1(\alpha(\Gamma)\bs\bb{H}^2,\ol{y})\ar[dd,"\ol{c}\ \cdot\ \ol{c}^{-1}"]\\
\pi_1(\Gamma\bs\bb{H}^2,\ol{i})\ar[ru,"f_*"]\ar[rd,"f^\prime_*"']\ar[r,phantom,"\circlearrowright"{xshift=6,yshift=-1}]&\ \\
&\pi_1(\alpha(\Gamma)\bs\bb{H}^2,\ol{y^\prime}). 
\end{tikzcd}
\end{equation}

\begin{lem}\label{cwpipi}
Let $(X,x_0)$ be a CW pair, where $x_0\in X$ and $X$ is connected, $Y$ be an aspherical topological space and $y$, $y^\prime\in Y$. Let $f\colon(X,x_0)\to(Y,y)$, $f^\prime\colon(X,x_0)\to(Y,y^\prime)$ be continuous maps, $c\colon I\to Y$ be a continuous curve such that $c(0)=y$, $c(1)=y^\prime$. Assume 
\begin{equation*}
\begin{tikzcd}[row sep=tiny]
&\pi_1(Y,y)\ar[dd,"c\ \cdot\ c^{-1}"]\\
\pi_1(X,x_0)\ar[ru,"f_*"]\ar[rd,"f^\prime_*"']\ar[r,phantom,"\circlearrowright"{xshift=5,yshift=-1}]&\ \\
&\pi_1(Y,y^\prime). 
\end{tikzcd}
\end{equation*}
Then there exists a homotopy $h_t$ between $f$ and $f^\prime$ such that the two curves $h_t(x_0)$ and $c(t)$ are homotopic relative to endpoints. 
\end{lem}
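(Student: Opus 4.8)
\textbf{Proof plan for Lemma \ref{cwpipi}.}

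The plan is to reduce the statement to the standard fact that, for a CW pair $(X,x_0)$ with $X$ connected and $Y$ aspherical, two based maps $(X,x_0)\to(Y,y)$ are based-homotopic if and only if they induce the same map on $\pi_1$ (this is Proposition 1B.9 of \cite{Hatcher}, already invoked several times in this section). The obstacle is purely bookkeeping about basepoints: $f$ and $f'$ have \emph{different} basepoints $y$ and $y'$, and the hypothesis only says that conjugation by $[c]$ intertwines $f_*$ and $f'_*$, so we must first transport everything to a common basepoint using $c$, apply the unbased version of Proposition 1B.9, and then carefully upgrade the resulting free homotopy to one that traces out $c$ on the basepoint.

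First I would replace $f'$ by a based map to $(Y,y)$. Pick a homotopy $H\colon X\times I\to Y$ from $f'$ with $H(x_0,t)=c(1-t)$ (slide the image of $x_0$ along $c$ back from $y'$ to $y$); this exists by the homotopy extension property for the CW pair $(X\times I,\,(X\times\{0\})\cup(\{x_0\}\times I))$, using that $f'$ is already defined on $X\times\{0\}$ and $c$ on $\{x_0\}\times I$. Set $f'' = H(\cdot,1)\colon(X,x_0)\to(Y,y)$. A standard change-of-basepoint computation with the ``slide along $c$'' homotopy shows that $f''_* = (c\cdot c^{-1})\circ f'_*$ as maps $\pi_1(X,x_0)\to\pi_1(Y,y)$, where $c\cdot c^{-1}$ denotes conjugation by $[c]$ (with the paper's concatenation convention); by hypothesis this equals $f_*$. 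Hence $f$ and $f''$ are based maps $(X,x_0)\to(Y,y)$ inducing the same homomorphism on $\pi_1$, so by Proposition 1B.9 of \cite{Hatcher} there is a based homotopy $G\colon X\times I\to Y$ from $f$ to $f''$, i.e.\ $G(x_0,t)=y$ for all $t$.

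Finally I would concatenate the two homotopies: define $h_t$ to be $G$ on $[0,\tfrac12]$ (reparametrized) followed by the reverse of $H$ on $[\tfrac12,1]$ (reparametrized), giving a homotopy from $f$ to $f'$. On the basepoint, $h_t(x_0)$ traces the constant path at $y$ followed by $c$, which is homotopic relative to endpoints to $c$ itself. This is exactly the conclusion, so the lemma follows. The only step requiring genuine care is the change-of-basepoint identity $f''_* = (c\cdot c^{-1})\circ f'_*$ — matching the conjugation convention of the $\pi_1$ product used in this paper with the direction in which the homotopy slides $x_0$ along $c$ — but this is a routine verification once the conventions are pinned down, and everything else is assembly of standard CW/asphericity facts.
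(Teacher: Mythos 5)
Your proposal is correct and takes essentially the same route as the paper's proof: the paper uses the homotopy extension property to slide $f$ forward along $c$ and compares the result with $f^\prime$ at $y^\prime$ via Proposition 1B.9 of Hatcher, whereas you slide $f^\prime$ backward along $c$ and compare with $f$ at $y$, then both arguments conclude by concatenating the two homotopies. The only (self-flagged) slip is the direction of the change-of-basepoint identity: sliding along the reverse of $c$ applies the inverse isomorphism, so it should read $f^{\prime\prime}_*=(c\ \cdot\ c^{-1})^{-1}\circ f^\prime_*$, which by the hypothesis equals $f_*$, exactly as your argument needs.
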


\begin{proof}
Since $(X,x_0)$ is a CW pair, it has the homotopy extension property. So there exists a continuous map $H\colon X\times I\to Y$ such that: 
\begin{itemize}
\setlength\itemsep{0em}
\item $H(\cdot,0)=f$
\item $H(x_0,\cdot)=c$. 
\end{itemize}
\begin{equation*}
\begin{tikzpicture}[every label/.append style={font=\scriptsize},matrix of math nodes,decoration={markings,mark=at position0.5with{\arrow{>}}}]
\def\a{2}
\def\b{1}
\node(1)at(0,0)[label=left:0]{};
\node(2)at(\a,0)[label=right:f]{};
\node(3)at(\a,\a){};
\node(4)at(0,\a)[label=left:1]{};
\node(5)at(\b,0)[label=below:x_0]{};
\node(6)at(\b,\a){};
\node(7)at(\a+1,\a/2){};
\node(8)at(\a+2,\a/2){};
\node(9)at(\a+3,\a/2)[label=below:y^\prime]{};
\node(10)at(\a+4,\a/2)[label=below:y]{};
\draw(1.center)--(2.center)--(3.center)--(4.center)--cycle;
\draw[postaction={decorate}](5.center)--node[label=right:c]{}(6.center);
\draw[->](7)--node[label=above:H]{}(8);
\draw[postaction={decorate}](10.center)--node[label=above:c(t)]{}(9.center);
\foreach\x in{9,10}\filldraw(\x)circle(1pt);
\end{tikzpicture}
\end{equation*}
We have 
\begin{equation*}
\begin{tikzcd}[row sep=tiny]
&\pi_1(Y,y)\ar[dd,"c\ \cdot\ c^{-1}"]\\
\pi_1(X,x_0)\ar[ru,"f_*"]\ar[rd,"H(\cdot{,}1)_*"']\ar[r,phantom,"\circlearrowright"{xshift=5,yshift=-1}]&\ \\
&\pi_1(Y,y^\prime). 
\end{tikzcd}
\end{equation*}
Hence $f^\prime_*=H(\cdot,1)_*$. Since $Y$ is aspherical, $H(\cdot,1)$ is homotopic to $f^\prime$ relative to $x_0$. Let $H^\prime\colon X\times I\to Y$ be such a homotopy. Then the concatenation of $H$ and $H^\prime$ gives a desired homotopy between $f$ and $f^\prime$. 
\end{proof}

\begin{lem}\label{sigconorihthe}
Let $\Sigma$ be a connected orientable closed surface of genus at least $2$ and $x\in\Sigma$ be a basepoint. Let $\theta\in\Diff(\Sigma)$ be such that $\theta(x)=x$ and $\theta$ is based $C^0$ homotopic to $\id$. Then there exists a based $C^0$ homotopy $H\colon\Sigma\times I\to\Sigma$ between $\theta$ and $\id$ such that: 
\begin{itemize}
\setlength\itemsep{0em}
\item $H(\cdot,t)\in\Diff(\Sigma)$ for all $t\in I$
\item the map 
\begin{align*}
T\Sigma\times I&\to T\Sigma\\
(v,t)&\mapsto H(\cdot,t)_*v
\end{align*}
is continuous. 
\end{itemize}
\end{lem}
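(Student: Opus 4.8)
The statement asserts that for a based diffeomorphism $\theta$ of a hyperbolic surface $\Sigma$ which is \emph{basepoint-preservingly} $C^0$-homotopic to the identity, the homotopy can be upgraded to a \emph{smooth} family of diffeomorphisms, still fixing the basepoint, depending continuously (together with its differentials) on the homotopy parameter. The plan is to first forget the basepoint, obtain a smooth isotopy through diffeomorphisms by classical surface theory, and then correct it near the basepoint to make it basepoint-preserving again.

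The key steps, in order, are as follows. First I would recall that $\theta$, being $C^0$-homotopic to $\id$, is in particular isotopic to $\id$: by the work of Earle--Eells and Gramain (or by the contractibility of $\Diff_0(\Sigma)$ for $\Sigma$ of genus $\geq 2$, which the authors will presumably cite in the appealed-to reference), there is a smooth path $c\colon I\to\Diff(\Sigma)$ with $c(0)=\id$, $c(1)=\theta$, and the evaluation map $\Diff_0(\Sigma)\to\Sigma$, $g\mapsto g(x)$, is a fibration with contractible (in fact trivial-$\pi_1$) fibers. Second, the loop $t\mapsto c(t)(x)$ in $\Sigma$, together with the hypothesis that $\theta$ is \emph{based} homotopic to $\id$, is null-homotopic: the based homotopy gives a null-homotopy of $t\mapsto H(x,t)$, and via the fibration $\Diff_0(\Sigma)\to\Sigma$ (here one uses that the fiber $\Diff(\Sigma,x)_0$ is connected and simply connected, again part of the Earle--Eells--Gramain package) this null-homotopy lifts to an adjustment of $c$. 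Concretely, choose a smooth family $g_t\in\Diff_0(\Sigma)$ with $g_0=\id$, $g_1=\id$, and $g_t(c(t)(x))=x$ for all $t$ — such a family exists because the based loop $t\mapsto c(t)(x)$ bounds a disc, and one can realize the pointwise translation of $\Sigma$ carrying that loop back to $x$ by an ambient smooth isotopy (an ``isotopy extension'' / ``homogeneity'' argument, e.g. using a time-dependent vector field supported in a disc containing the loop). Then $H(\cdot,t):=g_t\circ c(t)$ is a smooth path in $\Diff(\Sigma)$ with $H(\cdot,0)=\id$, $H(\cdot,1)=\theta$, and $H(x,t)=x$ for all $t$. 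Setting $H(y,t)=H(\cdot,t)(y)$, the joint smoothness of $(y,t)\mapsto H(\cdot,t)(y)$ gives continuity of $(v,t)\mapsto H(\cdot,t)_*v$ on $T\Sigma\times I$. Finally, $H$ is a based $C^0$ homotopy between $\theta$ and $\id$ through diffeomorphisms, as required; if one insists on $H(\cdot,t)\in\Diff(\Sigma)$ literally rather than a smooth path, the above already delivers it.

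The main obstacle, and the subtlety flagged by the acknowledgement to Tsutaya in the introduction, is the passage from ``$\theta$ is $C^0$-homotopic to $\id$'' to ``$\theta$ is isotopic to $\id$,'' and more precisely the control of the \emph{smooth} topology of $\Diff(\Sigma)$ versus its $C^0$ topology: one must know that $\Diff_0(\Sigma)$ (the identity component for the $C^\infty$ topology) coincides with the set of diffeomorphisms $C^0$-homotopic to $\id$, which for surfaces of genus $\geq 2$ follows from the theorem that $\Diff_0(\Sigma)$ is contractible. The ``idea of comparing the diffeomorphism group with the homeomorphism group'' mentioned in the acknowledgement is presumably used precisely here: a priori the given homotopy is only continuous (through homeomorphisms, or through maps that need not be embeddings), and one must first approximate/deform it to a path through diffeomorphisms, invoking smoothing theory for surfaces (any homeomorphism of a surface is isotopic to a diffeomorphism, and a $C^0$-small homeomorphism is $C^0$-isotopic to the identity). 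I would therefore expect the bulk of the actual proof to be this smoothing-and-comparison argument, with the basepoint-fixing correction (steps two and the isotopy-extension trick) being comparatively routine once the smooth isotopy is in hand. The fact that the evaluation fibration has simply connected fibers is what guarantees the basepoint correction loop can be killed without introducing new obstructions, so I would state and use that explicitly.
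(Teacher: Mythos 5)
Your overall strategy (get a free smooth isotopy $c\colon I\to\Diff(\Sigma)$ from $\id$ to $\theta$, then correct it through the evaluation fibration to make it basepoint-preserving) is genuinely different from the paper's proof, which never produces a free isotopy and corrects it: the paper instead compares the two evaluation fibrations $\Diff(\Sigma,x)_1\to\Diff(\Sigma)_1\to\Sigma$ and $\Homeo(\Sigma,x)_1\to\Homeo(\Sigma)_1\to\Sigma$, feeds in Hamstrom's theorem ($\pi_i\Homeo(\Sigma)_0=0$), Epstein/Baer (identity component of $\Homeo(\Sigma)$ equals the homotopically trivial homeomorphisms) and Earle--Eells ($\Diff(\Sigma)_1$ contractible), and gets $\pi_0\Diff(\Sigma,x)_1\simeq\pi_0\Homeo(\Sigma,x)_1$ from the five lemma; the based hypothesis is then converted, via Epstein, into the statement that $\theta$ lies in the identity path component of $\Homeo(\Sigma,x)_1$, hence of $\Diff(\Sigma,x)_1$, and a path there is the desired $H$. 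Your route is viable in principle, but as written it has a genuine gap at its central step.

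The gap is the claim that the basepoint trace loop of your isotopy is null-homotopic. The loop you must kill is $\gamma(t)=c(t)(x)$, where $c$ is the smooth isotopy you obtained from Earle--Eells; this isotopy has nothing a priori to do with the given based $C^0$ homotopy $H$, whose trace $t\mapsto H(x,t)=x$ is constant and so gives no information about $[\gamma]$. Without an argument that $[\gamma]=1$ in $\pi_1(\Sigma,x)$, the correction step fails: since $\Diff(\Sigma)_1$ is contractible, the image of $\pi_1(\Diff(\Sigma)_1)\to\pi_1(\Sigma,x)$ under evaluation is trivial, so a loop $g_t$ of diffeomorphisms with $g_0=g_1=\id$ and $g_t(\gamma(t))=x$ can exist only if $[\gamma]=1$; and indeed $\pi_0$ of the fiber of the evaluation fibration is $\pi_1(\Sigma,x)\neq1$, so your assertion that the fiber is ``connected and simply connected'' is false for $\Diff(\Sigma,x)_1$ and circular if you mean exactly the connectivity statement the lemma is asserting. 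The missing ingredient can be supplied: since $c$ is a free homotopy from $\id$ to $\theta$ with both endpoints fixing $x$, one has $\theta_*=[\gamma]^{\pm1}\cdot[\gamma]^{\mp1}$-conjugation on $\pi_1(\Sigma,x)$, and the based hypothesis gives $\theta_*=\id$, so $[\gamma]$ is central; as the genus is at least $2$, the center of $\pi_1(\Sigma,x)$ is trivial, whence $[\gamma]=1$, and then the homotopy lifting property of the evaluation fibration (rather than the ad hoc ``supported in a disc'' isotopy extension, since a null-homotopic loop need not lie in an embedded disc) produces a path from $\id$ to $\theta$ inside $\Diff(\Sigma,x)_1$, which is the desired $H$. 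With that repair your argument works and in fact bypasses the comparison with $\Homeo(\Sigma)$ (Hamstrom's theorem) that the paper uses, at the cost of invoking triviality of the center of the surface group and the classical fact that a diffeomorphism $C^0$-homotopic to $\id$ lies in $\Diff(\Sigma)_0$, which you correctly flag but which does not follow from contractibility of $\Diff(\Sigma)_0$ alone.
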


\begin{proof}
Let 
\begin{align*}
\Homeo(\Sigma)_1&=\left\{\theta^\prime\in\Homeo(\Sigma)\ \middle|\ \text{$\theta^\prime$ is $C^0$ homotopic to $\id$}\right\}, \\
\Homeo(\Sigma,x)_1&=\left\{\theta^\prime\in\Homeo(\Sigma)_1\ \middle|\ \theta^\prime(x)=x\right\}, \\
\Diff(\Sigma)_1&=\left\{\theta^\prime\in\Diff(\Sigma)\ \middle|\ \text{$\theta^\prime$ is $C^0$ homotopic to $\id$}\right\}, \\
\Diff(\Sigma,x)_1&=\left\{\theta^\prime\in\Diff(\Sigma)_1\ \middle|\ \theta^\prime(x)=x\right\}. 
\end{align*}
Equip $\Homeo(\Sigma)$ with the compact-open topology and $\Diff(\Sigma)$ with the weak $C^\infty$ topology. Then $\Homeo(\Sigma)$ and $\Diff(\Sigma)$ are topological groups. 

\begin{claim}
With respect to the action $\Homeo(\Sigma)_1\curvearrowleft\Homeo(\Sigma,x)_1$ by right multiplication, 
\begin{align*}
\Homeo(\Sigma,x)_1\hookrightarrow\Homeo(\Sigma)_1&\xrightarrow{p}\Sigma\\
\theta&\mapsto\theta(x)
\end{align*}
is a principal $\Homeo(\Sigma,x)_1$ bundle. 
\end{claim}

\begin{proof}
We show the local triviality. There exist an open neighborhood $U$ of $x$ in $\Sigma$ and a continuous section $\sigma\colon U\to p^{-1}(U)$ of $p$ such that $\sigma(x)=\id$ (use the exponential map for a Riemannian metric of $\Sigma$ and a bump function). 

Let $y\in\Sigma$. There exists $\theta\in\Homeo(\Sigma)_1$ such that $\theta(x)=y$. Then $\theta(U)$ is an open neighborhood of $y$ in $\Sigma$ and let 
\begin{align*}
\sigma_y\colon\theta(U)&\to p^{-1}(\theta(U))\\
y^\prime&\mapsto\theta\sigma(\theta^{-1}(y^\prime)). 
\end{align*}
Then $\sigma_y$ is a continuous local section of $p$ and we have a local trivialization 
\begin{align*}
p^{-1}\theta(U)&\simeq\theta(U)\times\Homeo(\Sigma,x)_1\\
\sigma_y(y^\prime)\theta^\prime&\mapsfrom(y^\prime,\theta^\prime)\\
\theta^{\prime\prime}&\mapsto(\theta^{\prime\prime}(x),\sigma_y(\theta^{\prime\prime}(x))^{-1}\theta^{\prime\prime}). 
\end{align*}
(Note that this is a homeomorphism.) 
\end{proof}

Similarly $\Diff(\Sigma,x)_1\to\Diff(\Sigma)_1\to\Sigma$ is a principal $\Diff(\Sigma,x)_1$ bundle. We have 
\begin{equation*}
\begin{tikzcd}
\Diff(\Sigma,x)_1\ar[r,hook]\ar[d]&\Homeo(\Sigma,x)_1\ar[d]\\
\Diff(\Sigma)_1\ar[r,hook]\ar[d]&\Homeo(\Sigma)_1\ar[d]\\
\Sigma\ar[r,equal]&\Sigma. 
\end{tikzcd}
\end{equation*}

Let $\Homeo(\Sigma)_0$ be the identity component of $\Homeo(\Sigma)$. By Theorem 5.1 of \cite{Hamstrom}, $\pi_i\Homeo(\Sigma)_0=0$ for all $i\geq0$. In particular $\Homeo(\Sigma)_0$ is path connected, hence it coincides with the identity path component of $\Homeo(\Sigma)$. The identity path component of $\Homeo(\Sigma)$ coincides with $\Homeo(\Sigma)_1$ by \cite{Baer_1927} and \cite{Baer_1928} (or 6.4 Theorem of \cite{Ep}). Hence $\Homeo(\Sigma)_0=\Homeo(\Sigma)_1$ and $\pi_i\Homeo(\Sigma)_1=0$ for all $i\geq0$. 

On the other hand $\Diff(\Sigma)_1$ is contractible by 4 in the first corollary of \cite{EarleEells2} (or (c) of Theorem 1 in \cite{EarleEells1}). 

By the long exact sequences of homotopy groups and the five lemma, the inclusion $\Diff(\Sigma,x)_1\hookrightarrow\Homeo(\Sigma,x)_1$ induces 
\begin{equation}\label{pidiffsimp}
\pi_0\Diff(\Sigma,x)_1\simeq\pi_0\Homeo(\Sigma,x)_1. 
\end{equation}

Since $\theta$ is based $C^0$ homotopic to $\id$, $\theta$ is based $C^0$ homotopic to $\id$ through maps in $\Homeo(\Sigma)$ by 6.4 Theorem of \cite{Ep}. So $\theta$ is in the path component of $\Homeo(\Sigma,x)_1$ containing $\id$. By \eqref{pidiffsimp}, $\theta$ is in the path component of $\Diff(\Sigma,x)_1$ containing $\id$. A continuous path connecting $\theta$ and $\id$ in $\Diff(\Sigma,x)_1$ gives a desired homotopy. 
\end{proof}

\begin{cor}\label{sigmaconoriclH}
Let $\Sigma$, $\Sigma^\prime$ be connected orientable closed surfaces of genus at least $2$ and $x\in\Sigma$, $x^\prime\in\Sigma^\prime$ be basepoints. Let $\theta$, $\theta^\prime\colon\Sigma\to\Sigma^\prime$ be diffeomorphisms such that $\theta(x)=x^\prime$, $\theta^\prime(x)=x^\prime$ and $\theta$ is based $C^0$ homotopic to $\theta^\prime$. Then there exists a based $C^0$ homotopy $H\colon\Sigma\times I\to\Sigma^\prime$ between $\theta$ and $\theta^\prime$ such that: 
\begin{itemize}
\setlength\itemsep{0em}
\item $H(\cdot,t)$ is a diffeomorphism for all $t\in I$
\item the map 
\begin{align*}
T\Sigma\times I&\to T\Sigma^\prime\\
(v,t)&\mapsto H(\cdot,t)_*v
\end{align*}
is continuous. 
\end{itemize}
\end{cor}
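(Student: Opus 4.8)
The plan is to reduce Corollary \ref{sigmaconoriclH} to Lemma \ref{sigconorihthe} by the standard trick of composing with a fixed diffeomorphism. First I would pick any diffeomorphism $g\colon\Sigma^\prime\to\Sigma$ with $g(x^\prime)=x$; since $\Sigma$ and $\Sigma^\prime$ are diffeomorphic connected orientable closed surfaces of the same genus (the genus of $\Sigma^\prime$ equals that of $\Sigma$ because $\theta$ is a diffeomorphism), such a $g$ exists, and one can arrange $g(x^\prime)=x$ after post-composing with a diffeomorphism of $\Sigma$ isotopic to the identity moving $g(x^\prime)$ to $x$ (this uses that $\Sigma$ is connected). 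Then consider $\psi=g\theta(g\theta^\prime)^{-1}\in\Diff(\Sigma^\prime)$; wait — more convenient is to work on $\Sigma$: set $\psi=g\circ\theta\circ(g\circ\theta^\prime)^{-1}$, which is a diffeomorphism of $\Sigma^\prime$... Let me instead form $\eta=(g\theta^\prime)^{-1}\circ(g\theta)=(\theta^\prime)^{-1}\theta\in\Diff(\Sigma)$; this fixes $x$ since $\theta(x)=x^\prime=\theta^\prime(x)$ and $g(x^\prime)=x$, and $\eta$ is based $C^0$ homotopic to $\id_\Sigma$ because $\theta$ is based homotopic to $\theta^\prime$ (compose a based homotopy $\theta\simeq\theta^\prime$ on the left with $(\theta^\prime)^{-1}$, using that $(\theta^\prime)^{-1}$ fixes $x^\prime$ — here I note that $(\theta^\prime)^{-1}$ need not be continuous-in-a-family but it is a single fixed map, so left-composition of a homotopy is fine).

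Next I would apply Lemma \ref{sigconorihthe} to $\Sigma$, $x$, and $\eta$ to obtain a based $C^0$ homotopy $J\colon\Sigma\times I\to\Sigma$ between $\eta$ and $\id_\Sigma$ with $J(\cdot,t)\in\Diff(\Sigma)$ for all $t$ and $(v,t)\mapsto J(\cdot,t)_*v$ continuous on $T\Sigma\times I$. Then define $H\colon\Sigma\times I\to\Sigma^\prime$ by $H(y,t)=\theta^\prime\bigl(J(y,t)\bigr)$. Since $\eta=(\theta^\prime)^{-1}\theta$, we get $H(\cdot,0)=\theta^\prime\eta=\theta$ and $H(\cdot,1)=\theta^\prime$; each $H(\cdot,t)=\theta^\prime\circ J(\cdot,t)$ is a diffeomorphism as a composition of diffeomorphisms; $H(x,t)=\theta^\prime(J(x,t))=\theta^\prime(x)=x^\prime$ for all $t$ because $J(x,t)=x$; and the family of differentials $(v,t)\mapsto H(\cdot,t)_*v=\theta^\prime_*\bigl(J(\cdot,t)_*v\bigr)$ is continuous because it is the composition of the continuous family $(v,t)\mapsto J(\cdot,t)_*v$ with the fixed continuous bundle map $\theta^\prime_*\colon T\Sigma\to T\Sigma^\prime$. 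This $H$ is the desired homotopy, and no mention of $g$ is actually needed in the final construction — $g$ only served to see that the genera agree, which is automatic since $\theta$ is a diffeomorphism, so I would streamline by dropping $g$ entirely and defining $\eta=(\theta^\prime)^{-1}\circ\theta$ directly.

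The only subtle point — and the step I would be most careful about — is verifying that $\eta=(\theta^\prime)^{-1}\circ\theta$ is indeed \emph{based} $C^0$ homotopic to $\id_\Sigma$, as opposed to merely freely homotopic. If $K\colon\Sigma\times I\to\Sigma^\prime$ is a based homotopy from $\theta$ to $\theta^\prime$ (so $K(x,t)=x^\prime$ for all $t$), then $(y,t)\mapsto(\theta^\prime)^{-1}(K(y,t))$ is continuous, equals $\eta$ at $t=0$, equals $\id_\Sigma$ at $t=1$, and sends $(x,t)$ to $(\theta^\prime)^{-1}(x^\prime)=x$; hence it is a based homotopy. Everything else is formal composition of diffeomorphisms and of differentials, so I do not anticipate any real obstacle beyond bookkeeping. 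If the paper's convention for Lemma \ref{sigconorihthe} requires the genus hypothesis explicitly, I would simply remark that $\Sigma$ has genus at least $2$ by hypothesis, so the lemma applies verbatim.
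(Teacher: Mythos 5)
Your proposal is correct and is essentially the paper's own proof: the paper simply applies Lemma \ref{sigconorihthe} to $(\theta^\prime)^{-1}\theta\in\Diff(\Sigma)$, which is exactly your (streamlined) reduction, and your verifications of the based homotopy, the diffeomorphism property of $H(\cdot,t)=\theta^\prime\circ J(\cdot,t)$, and the continuity of the differentials are the routine details the paper leaves implicit.
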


\begin{proof}
Apply Lemma \ref{sigconorihthe} to $(\theta^\prime)^{-1}\theta\in\Diff(\Sigma)$. 
\end{proof}

By Diagram \eqref{pighipi1} and Lemma \ref{cwpipi}, there exists a $C^0$ homotopy $h_t$ between $f$ and $f^\prime$ such that $h_t(\ol{i})$ is homotopic to $\ol{c}(t)$ relative to endpoints. 

\begin{claim}
There exists a $C^0$ homotopy $f_t$ between $f$ and $f^\prime$ such that: 
\begin{itemize}
\setlength\itemsep{0em}
\item $f_t\colon\Gamma\bs\bb{H}^2\to\alpha(\Gamma)\bs\bb{H}^2$ is a diffeomorphism for each $t\in I$
\item the map 
\begin{align*}
T^1(\Gamma\bs\bb{H}^2)\times I&\to T^1(\alpha(\Gamma)\bs\bb{H}^2)\\
(v,t)&\mapsto\wh{f_t}(v)
\end{align*}
is continuous
\item $f_t(\ol{i})$ is homotopic to $h_t(\ol{i})$ relative to endpoints. 
\end{itemize}
\end{claim}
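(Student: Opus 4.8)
The plan is to straighten out the motion of the basepoint by an ambient isotopy of $\alpha(\Gamma)\bs\bb{H}^2$, thereby reducing to the case of a \emph{based} homotopy between diffeomorphisms, and then to apply Corollary \ref{sigmaconoriclH}. First I would replace $c\colon I\to\bb{H}^2$ by a $C^\infty$ path from $y$ to $y'$ (homotopic to the original one rel endpoints), so that $\ol{c}\colon I\to\alpha(\Gamma)\bs\bb{H}^2$ is $C^\infty$. Since $\alpha(\Gamma)\bs\bb{H}^2$ is a connected manifold, the isotopy extension theorem yields a $C^\infty$ ambient isotopy $g_t\colon\alpha(\Gamma)\bs\bb{H}^2\to\alpha(\Gamma)\bs\bb{H}^2$ with $g_0=\id$ and $g_t(\ol{y})=\ol{c}(t)$ for all $t\in I$; in particular $g_1(\ol{y})=\ol{c}(1)=\ol{y'}$.

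Next I would show that the two diffeomorphisms $f$ and $g_1^{-1}\circ f'$, both mapping $\Gamma\bs\bb{H}^2\to\alpha(\Gamma)\bs\bb{H}^2$ and sending $\ol{i}$ to $\ol{y}$, are based $C^0$ homotopic. The free homotopy $t\mapsto g_t^{-1}\circ h_t$ connects $f$ (at $t=0$) to $g_1^{-1}\circ f'$ (at $t=1$), and its basepoint trace $\ell(t)=g_t^{-1}(h_t(\ol{i}))$ is a loop at $\ol{y}$ (using $g_1(\ol{y})=\ol{y'}$). To see that $\ell$ is nullhomotopic, choose a homotopy $\Gamma_s$ rel endpoints from the path $t\mapsto h_t(\ol{i})$ to $\ol{c}$ --- available since $h_t(\ol{i})$ is homotopic to $\ol{c}(t)$ rel endpoints --- and note that $(s,t)\mapsto g_t^{-1}(\Gamma_s(t))$ is a homotopy rel endpoints from $\ell$ to the constant loop at $\ol{y}$. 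A free $C^0$ homotopy whose basepoint trace is nullhomotopic can be promoted to a based $C^0$ homotopy by the homotopy extension property, exactly as in the proof of Lemma \ref{cwpipi} and in the second Claim in the proof of Lemma \ref{descends}; this yields $f\simeq g_1^{-1}\circ f'$ based.

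Now $\Gamma\bs\bb{H}^2$ and $\alpha(\Gamma)\bs\bb{H}^2$ are connected orientable closed surfaces of genus $g\geq2$ (they are diffeomorphic via $f$, and $g\geq2$ as noted), so Corollary \ref{sigmaconoriclH} applied to $f$ and $g_1^{-1}\circ f'$ produces a based $C^0$ homotopy $H\colon(\Gamma\bs\bb{H}^2)\times I\to\alpha(\Gamma)\bs\bb{H}^2$ from $f$ to $g_1^{-1}\circ f'$, through diffeomorphisms, with $(v,t)\mapsto H(\cdot,t)_*v$ continuous on $T(\Gamma\bs\bb{H}^2)\times I$. I would then set $f_t=g_t\circ H(\cdot,t)$. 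Then $f_0=f$ and $f_1=g_1\circ g_1^{-1}\circ f'=f'$; each $f_t$ is a composition of diffeomorphisms, hence a diffeomorphism; the map $(v,t)\mapsto(f_t)_*v=(g_t)_*\bigl(H(\cdot,t)_*v\bigr)$ is continuous because $g$ is a $C^\infty$ isotopy, and since each $f_t$ is a diffeomorphism this descends to continuity of $(v,t)\mapsto\wh{f_t}(v)=\dfrac{(f_t)_*v}{\lVert(f_t)_*v\rVert}$ on $T^1(\Gamma\bs\bb{H}^2)\times I$; finally $f_t(\ol{i})=g_t(H(\ol{i},t))=g_t(\ol{y})=\ol{c}(t)$, which is homotopic rel endpoints to $h_t(\ol{i})$.

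I expect the main obstacle to be the second step: arranging the ambient isotopy so that it exactly tracks $\ol{c}$, verifying that the basepoint trace of $t\mapsto g_t^{-1}\circ h_t$ is nullhomotopic, and passing from a free homotopy to a based one. The first and last steps are essentially bookkeeping with compositions of diffeomorphisms and with the continuity of differentials under a $C^\infty$ isotopy, and everything smooth-analytic needed about surfaces of genus $\geq2$ has already been isolated in Lemma \ref{sigconorihthe} and Corollary \ref{sigmaconoriclH}.
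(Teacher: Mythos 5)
Your proof is correct and takes essentially the same route as the paper: both arguments reduce to Corollary \ref{sigmaconoriclH} by first using a smooth isotopy through diffeomorphisms to replace $f^\prime$ with a diffeomorphism sending $\ol{i}$ to $\ol{y}$, promote the resulting free $C^0$ homotopy to a based one via the homotopy extension property, and then recombine with the isotopy --- you compose $f_t=g_t\circ H(\cdot,t)$ where the paper concatenates the two homotopies in time, a purely bookkeeping difference. If anything, your appeal to the isotopy extension theorem makes explicit the existence of the smooth isotopy (the paper's $h^\prime_t$), which the paper asserts without justification.
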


\begin{proof}
There exists a $C^\infty$ homotopy $h^\prime_t$ between $f^\prime$ and a diffeomorphism $g\colon\Gamma\bs\bb{H}^2\to\alpha(\Gamma)\bs\bb{H}^2$ such that: 
\begin{itemize}
\setlength\itemsep{0em}
\item $h^\prime_t$ is a diffeomorphism for each $t\in I$
\item the curve 
\begin{align*}
I&\to\alpha(\Gamma)\bs\bb{H}^2\\
t&\mapsto
\begin{cases}
h_{2t}(\ol{i})&0\leq t\leq\frac{1}{2}\\
h^\prime_{2t-1}(\ol{i})&\frac{1}{2}\leq t\leq1
\end{cases}
\end{align*}
is $C^0$ homotopic to the constant curve $\ol{y}$ relative to endpoints. 
\end{itemize}
(Thus $h_t(\ol{i})$ is $C^0$ homotopic to $h^\prime_{1-t}(\ol{i})$ relative to endpoints.) By the homotopy extension property there exists a $C^0$ homotopy $h^{\prime\prime}_t$ between $f$ and $g$ such that $h^{\prime\prime}_t(\ol{i})=\ol{y}$ for all $t\in I$. By Corollary \ref{sigmaconoriclH}, there exists a $C^0$ homotopy $h^{\prime\prime\prime}_t$ between $f$ and $g$ such that: 
\begin{itemize}
\setlength\itemsep{0em}
\item $h^{\prime\prime\prime}_t(\ol{i})=\ol{y}$
\item $h^{\prime\prime\prime}_t$ is a diffeomorphism for all $t\in I$
\item the map 
\begin{align*}
T^1(\Gamma\bs\bb{H}^2)\times I&\to T^1(\alpha(\Gamma)\bs\bb{H}^2)\\
(v,t)&\mapsto\wh{h^{\prime\prime\prime}_t}(v)
\end{align*}
is continuous. 
\end{itemize}
Define a $C^0$ homotopy $f_t$ between $f$ and $f^\prime$ by 
\begin{equation*}
f_t=
\begin{cases}
h^{\prime\prime\prime}_{2t}&0\leq t\leq\frac{1}{2}\\
h^\prime_{2-2t}&\frac{1}{2}\leq t\leq1. 
\end{cases}
\end{equation*}
Then $f_t(\ol{i})$ is homotopic to $h^\prime_{1-t}(\ol{i})$, hence is homotopic to $h_t(\ol{i})$ both relative to endpoints. 
\end{proof}

\begin{equation*}
\begin{tikzpicture}[every label/.append style={font=\scriptsize},matrix of math nodes,decoration={markings,mark=at position0.5with{\arrow{>}}}]
\def\a{1.6}
\node(1)[label=left:y^\prime]{};
\node(2)[right=2 of 1,label=right:y]{};
\node(3)[below=\a of 1,label=left:\ol{y^\prime}]{};
\node(4)[below=\a of 2,label=right:\ol{y}]{};
\draw[postaction={decorate}](2.center)to[out=160,in=20]node[above,scale=.75]{c(t)}(1.center);
\draw[postaction={decorate}](2.center)to[out=230,in=-50]node[below,scale=.75]{c^\prime(t)}(1.center);
\draw[postaction={decorate}](4.center)to[out=160,in=20]node[above,scale=.75]{\ol{c}(t)}(3.center);
\draw[postaction={decorate}](4.center)--node[below,scale=.75]{h_t(\ol{i})}(3.center);
\draw[postaction={decorate}](4.center)to[out=250,in=-70]node[below,scale=.75]{f_t(\ol{i})}(3.center);
\foreach\x in{1,...,4}\filldraw(\x)circle(1pt);
\end{tikzpicture}
\end{equation*}
By the homotopy lifting property applied to the homotopy between $\ol{c}(t)$ and $f_t(\ol{i})$, and a lift $c(t)$ of $\ol{c}(t)$, there exists a continuous map $c^\prime\colon I\to\bb{H}^2$ such that: 
\begin{itemize}
\setlength\itemsep{0em}
\item $\ol{c^\prime}(t)=f_t(\ol{i})$ for all $t\in I$
\item $c^\prime(0)=y$, $c^\prime(1)=y^\prime$. 
\end{itemize}
There exists a unique $w_t\in T_{c^\prime(t)}^1\bb{H}^2$ such that $\ol{w_t}=\wh{f_t}(\ol{u})$. Note that $w_t$ depends continuously on $t$. We have 
\begin{equation*}
\begin{tikzcd}
\Gamma g\ar[r,phantom,"\in"]\ar[d,mapsto]&[-20pt]\Gamma\bs G\ar[r,"Df_t"]\ar[d,dash,sloped,"\sim"]\ar[rd,phantom,"\circlearrowright"]&\alpha(\Gamma)\bs G\ar[d,dash,sloped,"\sim"]\ar[r,phantom,"\ni"]&[-20pt]\alpha(\Gamma)g\ar[d,mapsto]\\
\Gamma g_*u\ar[r,phantom,"\in"]&\Gamma\bs T^1\bb{H}^2\ar[r]\ar[d,dash,sloped,"\sim"]\ar[rd,phantom,"\circlearrowright"]&\alpha(\Gamma)\bs T^1\bb{H}^2\ar[d,dash,sloped,"\sim"]\ar[r,phantom,"\ni"]&\alpha(\Gamma)g_*w_t\\
&T^1(\Gamma\bs\bb{H}^2)\ar[r,"\wh{f_t}"']&T^1(\alpha(\Gamma)\bs\bb{H}^2)
\end{tikzcd}
\end{equation*}
and $(Df_t)(\Gamma)=\alpha(\Gamma)$. So $Df_t$ is a based $C^0$ homotopy between $Df$ and $Df^\prime$. Hence 
\begin{equation*}
(Df)_*=(Df^\prime)_*\colon\pi_1(\Gamma\bs G,\Gamma)\to\pi_1(\alpha(\Gamma)\bs G,\alpha(\Gamma))
\end{equation*}
and $\wt{\alpha}$ is well-defined (see Diagram \eqref{pigggdfa}).

\subsubsection{$\sigma$ is a section of $\pi$ (proof of Lemma \ref{section})}\label{section s}
We have 
\begin{equation*}
\begin{tikzcd}
\wt{\Gamma}\wt{g}\ar[r,phantom,"\in"]\ar[ddd,mapsto]&[-20pt]\wt{\Gamma}\bs\wt{G}\ar[r,"Df"]\ar[d,dash,sloped,"\sim"]\ar[rd,phantom,"\circlearrowright"]&\wt{\alpha(\Gamma)}\bs\wt{G}\ar[d,dash,sloped,"\sim"]\ar[r,phantom,"\ni"]&[-20pt]\wt{\alpha(\Gamma)}\wt{g}\ar[ddd,mapsto]\\
&\Gamma\bs G\ar[r]\ar[d,dash,sloped,"\sim"]\ar[rd,phantom,"\circlearrowright"]&\alpha(\Gamma)\bs G\ar[d,dash,sloped,"\sim"]\\
&T^1(\Gamma\bs\bb{H}^2)\ar[r,"\wh{f}"]\ar[d]\ar[rd,phantom,"\circlearrowright"]&T^1(\alpha(\Gamma)\bs\bb{H}^2)\ar[d]\\
\Gamma(p\wt{g})i\ar[r,phantom,"\in"]&\Gamma\bs\bb{H}^2\ar[r,"f"']&\alpha(\Gamma)\bs\bb{H}^2\ar[r,phantom,"\ni"]&\alpha(\Gamma)(p\wt{g})y. 
\end{tikzcd}
\end{equation*}
This shows the commutativity of the inner rectangle in the following diagram: 
\begin{equation}\label{gpgggp1}
\begin{tikzcd}
\wt{\Gamma}\ar[rrr,"\wt{\alpha}"]\ar[rd,dash,sloped,"\sim"]\ar[rrr,phantom,bend right=11,"\circlearrowright"]\ar[ddd,"p"']
&[-10pt]&&[-17pt]\wt{\alpha(\Gamma)}\ar[ddd,"p"]
\\[-10pt]
&\pi_1(\wt{\Gamma}\bs\wt{G},\wt{\Gamma})\ar[r,"(Df)_*"]\ar[d]\ar[rd,phantom,"\circlearrowright"]&\pi_1(\wt{\alpha(\Gamma)}\bs\wt{G},\wt{\alpha(\Gamma)})\ar[d]\ar[ru,dash,sloped,"\sim"]\\
&\pi_1(\Gamma\bs\bb{H}^2,\ol{i})\ar[r,"f_*"']\ar[ld,sloped,"\sim","\iota_i"']&\pi_1(\alpha(\Gamma)\bs\bb{H}^2,\ol{y})\ar[rd,sloped,"\sim","\iota_y"']\\[-5pt]
\Gamma\ar[rrr,"\alpha"']\ar[rrr,phantom,bend left=11,"\circlearrowright"]&&&\alpha(\Gamma). 
\end{tikzcd}
\end{equation}
The commutativity of the left trapezoid in \eqref{gpgggp1} follows from the following diagram and a figure. 
\begin{equation*}
\begin{tikzcd}
\wt{g}\ar[r,phantom,"\in"]\ar[d,mapsto]&[-20pt]\wt{G}\ar[r]\ar[d]\ar[rd,phantom,"\circlearrowright"]&\wt{\Gamma}\bs\wt{G}\ar[r,phantom,"\ni"]\ar[d]&[-20pt]\wt{\Gamma}\wt{g}\ar[d,mapsto]\\
(p\wt{g})i\ar[r,phantom,"\in"]&\bb{H}^2\ar[r]&\Gamma\bs\bb{H}^2\ar[r,phantom,"\ni"]&\Gamma(p\wt{g})i
\end{tikzcd}
\end{equation*}
\begin{equation*}
\begin{tikzpicture}[every label/.append style={font=\scriptsize},matrix of math nodes,decoration={markings,mark=at position0.5with{\arrow{>}}}]
\node(1)[label=right:1]{};
\node(2)[above=of 1,label=right:\wt{\gamma}]{};
\path(1)--node(3){}(2);
\node(4)[right=2 of 3,label=below:\wt{\Gamma}]{};
\node(5)at(4){};
\node[left=of 3]{\wt{G}};
\draw[postaction={decorate}](2.center)to[out=-70,in=70](1.center);
\draw[postaction={decorate}](4.center)..controls+(1,1)and+(1,-1)..(5.center)node(6)[pos=.5,label=right:\wt{\gamma}]{};
\node[right=of 6]{\wt{\Gamma}\bs\wt{G}};

\node(7)[below=of 1,label=right:(p\wt{\gamma})i]{};
\node(8)[below=of 7,label=right:i]{};
\path(7)--node(9){}(8);
\node(10)[right=2 of 9,label=below:\ol{i}]{};
\node(11)at(10){};
\node[left=of 9]{\bb{H}^2};
\draw[postaction={decorate}](7.center)to[out=-70,in=70](8.center);
\draw[postaction={decorate}](10.center)..controls+(1,1)and+(1,-1)..(11.center)node(12)[pos=.5,label=right:p\wt{\gamma}]{};
\node[right=of 12]{\Gamma\bs\bb{H}^2};
\foreach\x in{1,2,4,7,8,10}\filldraw(\x)circle(1pt);
\end{tikzpicture}
\end{equation*}
The commutativity of the right trapezoid in \eqref{gpgggp1} follows similarly. Hence the commutativity of the outer rectangle in \eqref{gpgggp1} follows, ie $\pi(\wt{\alpha})=\alpha$.

\subsubsection{$(\Aut(G)\simeq\Aut(\wt{G}))$-equivariance of $\sigma$ (proof of Lemma \ref{g eq})}\label{g eq s}
Let 
\begin{equation*}
\PGL(2,\bb{R})=\GL(2,\bb{R})\left/\left\{
\begin{pmatrix}
a&\\
&a
\end{pmatrix}
\ \middle|\ a\in\bb{R}\setminus\{0\}\right\}\right.. 
\end{equation*}
The inclusion $\SL(2,\bb{R})\subset\GL(2,\bb{R})$ induces an inclusion $\PSL(2,\bb{R})\subset\PGL(2,\bb{R})$. We have a determinant homomorphism: 
\begin{equation*}
\begin{tikzcd}
\GL(2,\bb{R})\ar[r,"\det"]\ar[d]\ar[rd,phantom,"\circlearrowright"]&\bb{R}^\times\ar[d]\\
\PGL(2,\bb{R})\ar[r,"\det"']&\{\pm\}. 
\end{tikzcd}
\end{equation*}
Then $\PSL(2,\bb{R})=\left\{g\in\PGL(2,\bb{R})\ \middle|\ \det g>0\right\}$. 

For $g=
\begin{pmatrix}
a&b\\
c&d
\end{pmatrix}
\in\PGL(2,\bb{R})$, define a map $g\colon\bb{H}^2\to\bb{H}^2$ by 
\begin{equation*}
gz=
\begin{cases}
\frac{az+b}{cz+d}&\det g>0\\
\frac{a\ol{z}+b}{c\ol{z}+d}&\det g<0. 
\end{cases}
\end{equation*}
%
%
It is known that this gives an isomorphism $\PGL(2,\bb{R})\simeq\Isom(\bb{H}^2)$. We also have 
\begin{align*}
\PGL(2,\bb{R})&\simeq\Aut(G)\\
g&\mapsto g\cdot g^{-1}. 
\end{align*}

Let $\alpha\in\mca{H}(\Gamma,G)$, $h\in\PGL(2,\bb{R})$ and 
\begin{alignat*}{2}
\PGL(2,\bb{R})&\simeq\Aut(G)&&\simeq\Aut(\wt{G})\\
h&\mapsto\Phi&&\mapsto\wt{\Phi}. 
\end{alignat*}
$h\colon\bb{H}^2\to\bb{H}^2$ induces $\ol{h}\colon\alpha(\Gamma)\bs\bb{H}^2\to h\alpha(\Gamma)h^{-1}\bs\bb{H}^2$. Let $y\in\bb{H}^2$ and $f\colon(\Gamma\bs\bb{H}^2,\ol{i})\to(\alpha(\Gamma)\bs\bb{H}^2,\ol{y})$ be a diffeomorphism such that 
\begin{equation*}
\begin{tikzcd}
\pi_1(\Gamma\bs\bb{H}^2,\ol{i})\ar[r,"f_*"]\ar[d,"\sim"{sloped},"\iota_i"']\ar[rd,phantom,"\circlearrowright"{xshift=-3}]&\pi_1(\alpha(\Gamma)\bs\bb{H}^2,\ol{y})\ar[d,"\iota_y","\sim"'{sloped}]\\
\Gamma\ar[r,"\alpha"']&\alpha(\Gamma). 
\end{tikzcd}
\end{equation*}
Let $f^\prime$ be defined by 
\begin{equation}\label{ghahhahh}
\begin{tikzcd}[row sep=tiny]
&\alpha(\Gamma)\bs\bb{H}^2\ar[dd,"\ol{h}"]\\
\Gamma\bs\bb{H}^2\ar[ru,"f"]\ar[rd,"f^\prime"']\ar[r,phantom,"\circlearrowright"{xshift=10}]&\ \\
&h\alpha(\Gamma)h^{-1}\bs\bb{H}^2. 
\end{tikzcd}
\end{equation}

\begin{claim}
We have 
\begin{equation*}
\begin{tikzcd}
\pi_1(\alpha(\Gamma)\bs\bb{H}^2,\ol{y})\ar[r,"\sim","\iota_y"{yshift=7}]\ar[d,"\ol{h}_*"']\ar[rd,phantom,"\circlearrowright"{xshift=10}]&\alpha(\Gamma)\ar[d,"h\ \cdot\ h^{-1}"]\\
\pi_1(h\alpha(\Gamma)h^{-1}\bs\bb{H}^2,\overline{hy})\ar[r,"\sim","\iota_{hy}"']&h\alpha(\Gamma)h^{-1}. 
\end{tikzcd}
\end{equation*}
\end{claim}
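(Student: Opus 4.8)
The plan is to argue directly from the definition of $\iota_x$ given in Definition \ref{gxgxpxx}, by a lift-tracking diagram chase; there is no real difficulty here, so the "proof" is essentially the chase itself.

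First I would recall the set-up. The map $\ol{h}$ is the one induced on quotients by the isometry $h\colon\bb{H}^2\to\bb{H}^2$, so that
\begin{equation*}
\begin{tikzcd}
\bb{H}^2\ar[r,"h"]\ar[d,"p_\alpha"']&\bb{H}^2\ar[d,"p_h"]\\
\alpha(\Gamma)\bs\bb{H}^2\ar[r,"\ol{h}"']&h\alpha(\Gamma)h^{-1}\bs\bb{H}^2
\end{tikzcd}
\end{equation*}
commutes, where $p_\alpha$, $p_h$ denote the quotient maps; this square is well defined because $h(\delta z)=(h\delta h^{-1})(hz)$ for $\delta\in\alpha(\Gamma)$ and $z\in\bb{H}^2$. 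Moreover $\alpha(\Gamma)$ and $h\alpha(\Gamma)h^{-1}$ are torsion free (by Proposition \ref{cocompact}, using torsion freeness of $\Gamma$), hence act freely and properly discontinuously on $\bb{H}^2$, so $p_\alpha$ and $p_h$ are covering maps and the maps $\iota_y$, $\iota_{hy}$ of Definition \ref{gxgxpxx} are defined.

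Next, take $[c]\in\pi_1(\alpha(\Gamma)\bs\bb{H}^2,\ol{y})$ and let $\wt{c}\colon I\to\bb{H}^2$ be the lift of $c$ with $\wt{c}(1)=y$, so that $\wt{c}(0)=(\iota_y[c])\,y$ by the definition of $\iota_y$. Then $h\circ\wt{c}$ is a path in $\bb{H}^2$ with $p_h\circ(h\circ\wt{c})=\ol{h}\circ p_\alpha\circ\wt{c}=\ol{h}\circ c$ and $(h\circ\wt{c})(1)=hy$, so $h\circ\wt{c}$ is precisely the lift of the loop $\ol{h}_*[c]=[\ol{h}\circ c]$ (based at $\overline{hy}$) ending at $hy$. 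Therefore, by the definition of $\iota_{hy}$,
\begin{equation*}
\bigl(\iota_{hy}(\ol{h}_*[c])\bigr)(hy)=(h\circ\wt{c})(0)=h(\wt{c}(0))=h\bigl((\iota_y[c])\,y\bigr)=\bigl(h(\iota_y[c])h^{-1}\bigr)(hy).
\end{equation*}
Since $h\alpha(\Gamma)h^{-1}$ acts freely on $\bb{H}^2$, I can cancel $hy$ to conclude $\iota_{hy}(\ol{h}_*[c])=h(\iota_y[c])h^{-1}$, which is exactly the asserted commutativity of the square.

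I do not expect any genuine obstacle: the computation is a one-line point-chase once the definitions are unwound. The only points that deserve a moment's care are (i) checking that $\ol{h}$ is well defined and that the covering-space picture applies, which is where freeness of the actions (hence torsion freeness via Proposition \ref{cocompact}) enters, and (ii) keeping straight the non-standard composition convention for $\pi_1$ used in this section; but since $\ol{h}_*$ is a homomorphism and $\iota_y$, $\iota_{hy}$ are group isomorphisms with respect to that same convention, the convention plays no role in the point-tracking argument above.
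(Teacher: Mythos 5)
Your proof is correct and is essentially the paper's own argument: the paper verifies the same identity by tracking the lift $c_\delta$ of a loop representing $\delta=\iota_y[\ol{c_\delta}]$ and its image $hc_\delta$ under $h$, which is exactly your lift-chase written starting from $\delta$ instead of from $[c]$. (One trivial remark: torsion freeness of $\alpha(\Gamma)$ follows simply from injectivity of $\alpha$, not from Proposition \ref{cocompact}, which concerns cocompactness; this does not affect the argument.)
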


\begin{proof}
Let $\delta\in\alpha(\Gamma)$. The claim follows from the following figure. 
\begin{equation*}
\begin{tikzpicture}[every label/.append style={font=\scriptsize},matrix of math nodes,decoration={markings,mark=at position0.5with{\arrow{>}}}]
\clip(-.8,-1.8)rectangle(6.7,1.6);
\node(1)[label=left:y]{};
\node(2)[above=of 1,label=left:\delta y]{};
\node(3)[below=of 1,label=left:\ol{y}]{};
\node(4)at(3){};
\draw[postaction={decorate}](2.center)to[out=-50,in=50]node[label=right:c_\delta(t)]{}(1.center);
\draw[postaction={decorate}](3.center)..controls+(1,1)and+(1,-1)..(4.center)node[pos=.5,label=right:\ol{c_\delta}(t)]{};

\node(5)[right=3.5 of 1,label=right:hy]{};
\node(6)[above=of 5,label=right:h\delta y\ {=}\ (h\delta h^{-1})hy]{};
\node(7)[below=of 5,label=left:\ol{h}\ol{y}\ {=}\ \ol{hy}]{};
\node(8)at(7){};
\draw[postaction={decorate}](6.center)to[out=-50,in=50]node[label=right:hc_\delta(t)]{}(5.center);
\draw[postaction={decorate}](7.center)..controls+(1,1)and+(1,-1)..(8.center)node[pos=.5,label=right:\ol{hc_\delta}(t)\ {=}\ \ol{h}\ol{c_\delta}(t)]{};
\foreach\x in{1,2,3,5,6,7}\filldraw(\x)circle(1pt);
\end{tikzpicture}\qedhere
\end{equation*}
\end{proof}

So $f^\prime\colon(\Gamma\bs\bb{H}^2,\ol{i})\to(h\alpha(\Gamma)h^{-1}\bs\bb{H}^2,\ol{hy})$ is a diffeomorphism such that 
\begin{equation*}
\begin{tikzcd}
\pi_1(\Gamma\bs\bb{H}^2,\ol{i})\ar[r,"f^\prime_*"]\ar[d,"\sim"{sloped},"\iota_i"']\ar[rd,phantom,"\circlearrowright"]&\pi_1(h\alpha(\Gamma)h^{-1}\bs\bb{H}^2,\ol{hy})\ar[d,"\iota_{hy}","\sim"'{sloped}]\\
\Gamma\ar[r,"h\alpha h^{-1}"']&h\alpha(\Gamma)h^{-1}. 
\end{tikzcd}
\end{equation*}

\begin{claim}
We have 
\begin{equation*}
\begin{tikzcd}[row sep=tiny]
&[-10pt]&[-10pt]&[-11pt]\alpha(\Gamma)\bs G\ar[r,phantom,"\ni"]\ar[dddd,"\ol{\Phi}"]&[-20pt]\alpha(\Gamma)g\ar[dddd,mapsto]\\
&&T^1(\alpha(\Gamma)\bs\bb{H}^2)\ar[dd,"(\ol{h})_*"]\ar[ru,dash,sloped,"\sim"]\\
\Gamma\bs G\ar[r,dash,"\sim"]\ar[rrruu,bend left,"Df"]\ar[rrruu,phantom,bend left=15,"\circlearrowright"]\ar[rrrdd,bend right,"Df^\prime"']\ar[rrrdd,phantom,bend right=15,"\circlearrowright"]&T^1(\Gamma\bs\bb{H}^2)\ar[ru,"\wh{f}"]\ar[rd,"\wh{f^\prime}"']\ar[r,phantom,"\circlearrowright"]&\ \ar[r,phantom,"\circlearrowright"]&\ \\
&&T^1(h\alpha(\Gamma)h^{-1}\bs\bb{H}^2)\ar[rd,dash,sloped,"\sim"]\\
&&&h\alpha(\Gamma)h^{-1}\bs G\ar[r,phantom,"\ni"]&h\alpha(\Gamma)h^{-1}hgh^{-1}. 
\end{tikzcd}
\end{equation*}
\end{claim}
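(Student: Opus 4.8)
The plan is to split the asserted commutativity into two parts and then compose them: (i) the left triangle, $\wh{f'}=(\ol h)_*\circ\wh f$ on $T^1(\Gamma\bs\bb{H}^2)$; and (ii) the statement that the isometry-induced map $(\ol h)_*\colon T^1(\alpha(\Gamma)\bs\bb{H}^2)\to T^1(h\alpha(\Gamma)h^{-1}\bs\bb{H}^2)$ corresponds to $\ol\Phi\colon\alpha(\Gamma)\bs G\to h\alpha(\Gamma)h^{-1}\bs G$ under the identifications $\bs G\simeq\bs T^1\bb{H}^2\simeq T^1(\bs\bb{H}^2)$ that are used to define $Df$ and $Df'$. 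Once (i) and (ii) are in hand, the outer triangle $\ol\Phi\circ Df=Df'$ follows formally, since $Df$ and $Df'$ are by construction the transports of $\wh f$ and $\wh{f'}$ through these identifications, and all remaining cells of the diagram are then the defining squares of $Df$, $Df'$.

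For (i): by \eqref{ghahhahh} we have $f'=\ol h\circ f$, and $\ol h$ is a local isometry, being covered by the isometry $h\colon\bb{H}^2\to\bb{H}^2$. An isometry preserves lengths of tangent vectors, so $(\ol h\circ f)_*v=\ol h_*(f_*v)$ has the same length as $f_*v$; hence the normalization in the definition of the hat operation commutes with $\ol h_*$, giving $\wh{f'}(v)=(\ol h)_*(\wh f(v))$ for every $v\in T^1(\Gamma\bs\bb{H}^2)$. Evaluating at $\ol u$ and using $\ol{w}=\wh f(\ol u)$ shows that the base vector $w'\in T^1_{hy}\bb{H}^2$ attached to $f'$ (characterized by $\ol{w'}=\wh{f'}(\ol u)$) is exactly $h_*w$; this is the vector with respect to which $h\alpha(\Gamma)h^{-1}\bs G\simeq T^1(h\alpha(\Gamma)h^{-1}\bs\bb{H}^2)$ in the definition of $Df'$.

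For (ii) and the final comparison I would pass to $\bb{H}^2$. Fix the $\alpha$-equivariant lift $\wt f\colon\bb{H}^2\to\bb{H}^2$ of $f$ normalized by $\wt f(i)=y$; then $\wt{f'}:=h\circ\wt f$ is an $h\alpha h^{-1}$-equivariant lift of $f'$ with $\wt{f'}(i)=hy$, whose normalized differential is $h_*$ postcomposed with that of $\wt f$. Unwinding the definition, $Df(\Gamma g)=\alpha(\Gamma)g'$ where $g'\in G$ is the unique element with $g'_*w$ equal to the normalized image of $g_*u$ under $\wt f$; running the identical bookkeeping for $\wt{f'}$, using $w'=h_*w$, gives $Df'(\Gamma g)=h\alpha(\Gamma)h^{-1}\,g''$ with $g''_*w'=h_*(g'_*w)=(hg'h^{-1})_*(h_*w)=(hg'h^{-1})_*w'$. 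Since $G=\PSL(2,\bb{R})$ acts simply transitively on $T^1\bb{H}^2$, this forces $g''=hg'h^{-1}=\Phi(g')$, so $Df'(\Gamma g)=h\alpha(\Gamma)h^{-1}\Phi(g')=\ol\Phi(\alpha(\Gamma)g')=\ol\Phi(Df(\Gamma g))$; read on the tangent-bundle side, the same chain of equalities is precisely statement (ii), namely $(\ol h)_*(\ol{g_*w})=\ol{(hg'h^{-1})_*w'}$.

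The routine content is the bookkeeping of the base points $u$, $w=\wh f(\ol u)$, $w'=h_*w$ and of the equivariance twists; the one place that calls for a small remark is that $h$ lies in $\PGL(2,\bb{R})$ rather than in $G=\PSL(2,\bb{R})$, so $h$ may reverse orientation on $\bb{H}^2$. This is harmless: conjugation by $h$ still preserves $G$, $h_*$ still maps $T^1\bb{H}^2$ onto itself, and the fact that $h$ is only an isometry (not an element of $G$ acting through the chosen base vector) is absorbed by the normalizations in the hat operation together with the choice $w'=h_*w$ on the target. I therefore expect no genuine obstacle: the claim is in essence the naturality of the identification $G\simeq T^1\bb{H}^2$ with respect to $\Aut(G)\simeq\PGL(2,\bb{R})\simeq\Isom(\bb{H}^2)$.
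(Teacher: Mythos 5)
Your proposal is correct and follows essentially the same route as the paper: the inner triangle comes from $f'=\ol h\circ f$ together with $\ol h$ being a (local) isometry, so $\wh{f'}=(\ol h)_*\wh f$ and $w'=h_*w$, and the remaining cell reduces to the identity $h_*g_*w=(hgh^{-1})_*h_*w$, which is exactly the paper's counterclockwise computation in the right trapezoid. Your detour through $\alpha$-equivariant lifts on $\bb{H}^2$ and simple transitivity of $G$ on $T^1\bb{H}^2$ is harmless but not needed, since the same bookkeeping can be done directly on the quotients as in the paper.
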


\begin{proof}
The commutativity of the inner triangle follows from \eqref{ghahhahh}. We show the commutativity of the right trapezoid. Applying $(\ol{h})_*$ to $\wh{f}(\ol{u})=\ol{w}$, we get $\wh{f^\prime}(\ol{u})=\ol{h_*w}$. So starting at $\alpha(\Gamma)\bs G$ and going counterclockwise, 
\begin{align*}
\alpha(\Gamma)g&\mapsto\alpha(\Gamma)g_*w\mapsto h\alpha(\Gamma)h^{-1}h_*g_*w=h\alpha(\Gamma)h^{-1}(hgh^{-1})_*h_*w\\
&\mapsto h\alpha(\Gamma)h^{-1}hgh^{-1}. \qedhere
\end{align*}
\end{proof}

Recall that we have 
\begin{equation*}
\begin{tikzcd}
\wt{G}\ar[r,"\wt{\Phi}"]\ar[d,"p"']\ar[rd,phantom,"\circlearrowright"{xshift=1}]&\wt{G}\ar[d,"p"]\\
G\ar[r,"\Phi"']&G. 
\end{tikzcd}
\end{equation*}
So $p^{-1}(\Phi\alpha(\Gamma))=\wt{\Phi}p^{-1}(\alpha(\Gamma))$, ie $\wt{\Phi\alpha(\Gamma)}=\wt{\Phi}\wt{\alpha(\Gamma)}$, and 
\begin{equation}\label{aggaggpppa}
\begin{tikzcd}
\alpha(\Gamma)\bs G\ar[d,"\ol{\Phi}"']\ar[rd,phantom,"\circlearrowright"{xshift=3,yshift=-2}]&\wt{\alpha(\Gamma)}\bs\wt{G}\ar[l,"\sim"']\ar[d,"\ol{\wt{\Phi}}"]\\
\Phi\alpha(\Gamma)\bs G&\wt{\Phi\alpha(\Gamma)}\bs\wt{G}. \ar[l,"\sim"']
\end{tikzcd}
\end{equation}

\begin{claim}
We have 
\begin{equation*}
\begin{tikzcd}
\pi_1(\alpha(\Gamma)\bs G,\alpha(\Gamma))\ar[d,"(\ol{\Phi})_*"']\ar[rd,phantom,"\circlearrowright"{xshift=3,yshift=-2}]&\pi_1(\wt{\alpha(\Gamma)}\bs\wt{G},\wt{\alpha(\Gamma)})\ar[l,"\sim"']\ar[r,"\sim"]\ar[d,"(\ol{\wt{\Phi}})_*"']\ar[rd,phantom,"\circlearrowright"]&\wt{\alpha(\Gamma)}\ar[d,"\wt{\Phi}"]\\
\pi_1(\Phi\alpha(\Gamma)\bs G,\Phi\alpha(\Gamma))&\pi_1(\wt{\Phi\alpha(\Gamma)}\bs\wt{G},\wt{\Phi\alpha(\Gamma)})\ar[l,"\sim"']\ar[r,"\sim"]&\wt{\Phi\alpha(\Gamma)}. 
\end{tikzcd}
\end{equation*}
\end{claim}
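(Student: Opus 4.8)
The plan is to split the rectangle in the Claim into its left square and its right square and verify each separately, since each is of a quite different nature.

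For the left square, I would simply apply the functor $\pi_1(-,\text{basepoint})$ to the commutative square \eqref{aggaggpppa}. The two horizontal isomorphisms appearing there are the diffeomorphisms $\wt{\alpha(\Gamma)}\bs\wt{G}\to\alpha(\Gamma)\bs G$ and $\wt{\Phi\alpha(\Gamma)}\bs\wt{G}\to\Phi\alpha(\Gamma)\bs G$ induced by $p\colon\wt{G}\to G$ (these are diffeomorphisms because $Z=\ker p$ is contained in $\wt{\alpha(\Gamma)}$ and in $\wt{\Phi\alpha(\Gamma)}$), and they carry the basepoint $\wt{\alpha(\Gamma)}$ to $\alpha(\Gamma)$ and the basepoint $\wt{\Phi\alpha(\Gamma)}$ to $\Phi\alpha(\Gamma)$. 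Since $p\wt{\Phi}=\Phi p$ and $\wt{\Phi}(1)=1$, the map $\ol{\wt{\Phi}}$ sends $\wt{\alpha(\Gamma)}$ to $\wt{\Phi\alpha(\Gamma)}$ and $\ol{\Phi}$ sends $\alpha(\Gamma)$ to $\Phi\alpha(\Gamma)$, so all four basepoints are compatible and the induced maps on $\pi_1$ form exactly the left square.

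For the right square, recall that the horizontal isomorphisms $\pi_1(\wt{\alpha(\Gamma)}\bs\wt{G},\wt{\alpha(\Gamma)})\to\wt{\alpha(\Gamma)}$ and $\pi_1(\wt{\Phi\alpha(\Gamma)}\bs\wt{G},\wt{\Phi\alpha(\Gamma)})\to\wt{\Phi\alpha(\Gamma)}$ are the maps $\iota_1$ of Definition \ref{gxgxpxx}, built from the coverings $\wt{G}\to\wt{\alpha(\Gamma)}\bs\wt{G}$ and $\wt{G}\to\wt{\Phi\alpha(\Gamma)}\bs\wt{G}$, which are principal bundles because $\wt{G}$ is simply connected and $\wt{\alpha(\Gamma)}$, $\wt{\Phi\alpha(\Gamma)}$ are discrete. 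I would then use that $\wt{\Phi}\colon\wt{G}\to\wt{G}$, normalized by $\wt{\Phi}(1)=1$, is a lift of $\ol{\wt{\Phi}}$ through these two quotient maps (this is exactly the commuting square defining $\wt{\Phi}$ together with the identity $\wt{\Phi}\wt{\alpha(\Gamma)}=\wt{\Phi\alpha(\Gamma)}$ noted just above \eqref{aggaggpppa}). Given $[c]\in\pi_1(\wt{\alpha(\Gamma)}\bs\wt{G},\wt{\alpha(\Gamma)})$, take the lift $\wt{c}\colon I\to\wt{G}$ of $c$ with $\wt{c}(1)=1$; by definition of $\iota_1$, and since the basepoint $\wt{\alpha(\Gamma)}$ corresponds to $1\in\wt{G}$, we have $\wt{c}(0)=\iota_1[c]$. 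Then $\wt{\Phi}\wt{c}$ is a lift of the representative $\ol{\wt{\Phi}}c$ of $(\ol{\wt{\Phi}})_*[c]$ with $\wt{\Phi}\wt{c}(1)=\wt{\Phi}(1)=1$, so $\iota_1\bigl((\ol{\wt{\Phi}})_*[c]\bigr)=\wt{\Phi}\wt{c}(0)=\wt{\Phi}(\iota_1[c])$, which is the commutativity of the right square.

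The whole argument is formal once the basepoints and the defining coverings are pinned down. The only point requiring care — and the closest thing to an obstacle — is confirming that the $\pi_1$-identifications used in the Claim's diagram are precisely the $\iota_1$'s for the quotient maps just named, and that $\wt{\Phi}$ with $\wt{\Phi}(1)=1$ is genuinely a lift of $\ol{\wt{\Phi}}$ through them; given the earlier construction of $\wt{\Phi}$ this is immediate, so I expect no real difficulty here.
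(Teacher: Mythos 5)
Your proposal is correct and follows essentially the paper's own argument: the left square is obtained by applying $\pi_1$ to the square \eqref{aggaggpppa}, and the right square is checked by the same lifting computation with the identifications $\iota_1$ that the paper presents via its figure (lift a loop to a path ending at $1$, apply $\wt{\Phi}$, and read off the starting point).
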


\begin{proof}
The commutativity of left rectangle follows from \eqref{aggaggpppa}. Let $\delta\in\wt{\alpha(\Gamma)}$. The commutativity of the right rectangle follows from the following figure. 
\begin{equation*}
\begin{tikzpicture}[every label/.append style={font=\scriptsize},matrix of math nodes,decoration={markings,mark=at position0.5with{\arrow{>}}}]
\clip(-3,-2)rectangle(8.2,1.6);
\node(1)[label=left:1]{};
\node(2)[above=of 1,label=left:\delta]{};
\node(3)[below=of 1,label=left:\wt{\alpha(\Gamma)}]{};
\node(4)at(3){};
\draw[postaction={decorate}](2.center)to[out=-50,in=50](1.center);
\draw[postaction={decorate}](3.center)..controls+(1,1)and+(1,-1)..(4.center)node[pos=.5,label=right:\delta]{};

\node(5)[right=4 of 1,label=left:1]{};
\node(6)[above=of 5,label=left:\wt{\Phi}(\delta)]{};
\node(7)[below=of 5,label=left:\wt{\Phi\alpha(\Gamma)}]{};
\node(8)at(7){};
\draw[postaction={decorate}](6.center)to[out=-50,in=50](5.center);
\draw[postaction={decorate}](7.center)..controls+(1,1)and+(1,-1)..(8.center)node[pos=.5,label=right:\wt{\Phi}(\delta)]{};
\foreach\x in{1,2,3,5,6,7}\filldraw(\x)circle(1pt);

\path(1)--node(9){}(2);
\node[left=1.5 of 9]{\wt{G}};
\node[left=1.5 of 3]{\wt{\alpha(\Gamma)}\bs\wt{G}};
\path(5)--node(10){}(6);
\node[right=2 of 10]{\wt{G}};
\node[right=2 of 7]{\wt{\Phi\alpha(\Gamma)}\bs\wt{G}};

\path(9)--(10)node(11)[pos=.35]{}node[pos=.55](12){};
\draw[->](11)--node[label=above:\wt{\Phi}]{}(12);
\path(3)--(7)node(13)[pos=.35]{}node[pos=.55](14){};
\draw[->](13)--node[label=below:\ol{\wt{\Phi}}]{}(14);
\end{tikzpicture}\qedhere
\end{equation*}
\end{proof}

The two claims above show the commutativity of the following diagram: 
\begin{equation*}
\begin{tikzcd}[row sep=tiny]
&[-10pt]&[-10pt]&[-10pt]\wt{\alpha(\Gamma)}\ar[dddd,"\wt{\Phi}"]\\
&&\pi_1(\alpha(\Gamma)\bs G,\alpha(\Gamma))\ar[dd,"(\ol{\Phi})_*"]\ar[ru,dash,sloped,"\sim"]\\
\wt{\Gamma}\ar[r,dash,"\sim"]\ar[rrruu,bend left,"\wt{\alpha}"]\ar[rrruu,phantom,bend left=15,"\circlearrowright"]\ar[rrrdd,bend right,"\wt{\Phi\alpha}"']\ar[rrrdd,phantom,bend right=15,"\circlearrowright"]&\pi_1(\Gamma\bs G,\Gamma)\ar[ru,"(Df)_*"]\ar[rd,"(Df^\prime)_*"']\ar[r,phantom,"\circlearrowright"{xshift=6}]&\ \ar[r,phantom,"\circlearrowright"{xshift=3}]&\ \\
&&\pi_1(\Phi\alpha(\Gamma)\bs G,\Phi\alpha(\Gamma))\ar[rd,dash,sloped,"\sim"]\\
&&&\wt{\Phi\alpha(\Gamma)}. 
\end{tikzcd}
\end{equation*}
Therefore $\wt{\Phi\alpha}=\wt{\Phi}\wt{\alpha}$, which is the equivariance.

\subsubsection{$(\Aut(\Gamma)\to\Aut(\wt{\Gamma}))$-equivariance of $\sigma$ (proof of Lemma \ref{gamma eq})}\label{gamma eq s}
Let $\alpha\in\mca{H}(\Gamma,G)$ and $\beta\in\Aut(\Gamma)$. There exists a diffeomorphism $g\colon(\Gamma\bs\bb{H}^2,\ol{i})\to(\Gamma\bs\bb{H}^2,\ol{i})$ such that 
\begin{equation*}
\begin{tikzcd}
\pi_1(\Gamma\bs\bb{H}^2,\ol{i})\ar[r,"g_*"]\ar[d,"\sim"{sloped},"\iota_i"']\ar[rd,phantom,"\circlearrowright"{xshift=-7,yshift=2}]&\pi_1(\Gamma\bs\bb{H}^2,\ol{i})\ar[d,"\sim"{sloped},"\iota_i"']\\
\Gamma\ar[r,"\beta"']&\Gamma
\end{tikzcd}
\end{equation*}
and $(g_*)_{\ol{i}}=\id\colon T_i^1(\Gamma\bs\bb{H}^2)\to T_i^1(\Gamma\bs\bb{H}^2)$. Let $y\in\bb{H}^2$. There exists a diffeomorphism $f\colon(\Gamma\bs\bb{H}^2,\ol{i})\to(\alpha(\Gamma)\bs\bb{H}^{2},\ol{y})$ such that 
\begin{equation*}
\begin{tikzcd}
\pi_1(\Gamma\bs\bb{H}^2,\ol{i})\ar[r,"f_*"]\ar[d,dash,sloped,"\sim"]\ar[rd,phantom,"\circlearrowright"{xshift=-5}]&\pi_1(\alpha(\Gamma)\bs\bb{H}^2,\ol{y})\ar[d,dash,sloped,"\sim"]\\
\Gamma\ar[r,"\alpha"']&\alpha(\Gamma). 
\end{tikzcd}
\end{equation*}
Then $fg\colon(\Gamma\bs\bb{H}^2,\ol{i})\to(\alpha(\Gamma)\bs\bb{H}^2,\ol{y})$ is a diffeomorphism such that 
\begin{equation*}
\begin{tikzcd}
\pi_1(\Gamma\bs\bb{H}^2,\ol{i})\ar[r,"(fg)_*"]\ar[d,dash,"\sim"{sloped}]\ar[rd,phantom,"\circlearrowright"{xshift=-5}]&\pi_1(\alpha(\Gamma)\bs\bb{H}^2,\ol{y})\ar[d,dash,"\sim"{sloped}]\\
\Gamma\ar[r,"\alpha\beta"']&\alpha(\Gamma). 
\end{tikzcd}
\end{equation*}
We have $\wh{fg}=\wh{f}\wh{g}$ and $\wh{g}(\ol{u})=\ol{u}$, where $u=(i,i)\in T_i^1\bb{H}^2$. Recall that we have a unique $w\in T_y^1\bb{H}^2$ such that $\wh{f}(\ol{u})=\ol{w}$. Hence $\wh{fg}(\ol{u})=\ol{w}$. We have 
\begin{equation*}
\begin{tikzcd}
\Gamma g\ar[r,phantom,"\in"]\ar[d,mapsto]&[-20pt]\Gamma\bs G\ar[r,"Dg"]\ar[d,dash,sloped,"\sim"]\ar[rd,phantom,"\circlearrowright"]&\Gamma\bs G\ar[d,dash,sloped,"\sim"]\ar[r,phantom,"\ni"]&[-20pt]\Gamma g\ar[d,mapsto]\\
\Gamma g_*u\ar[r,phantom,"\in"]&T^1(\Gamma\bs\bb{H}^2)\ar[r,"\wh{g}"']&T^1(\Gamma\bs\bb{H}^2)\ar[r,phantom,"\ni"]&\Gamma g_*u, 
\end{tikzcd}
\end{equation*}
\begin{equation*}
\begin{tikzcd}
\Gamma g\ar[r,phantom,"\in"]\ar[d,mapsto]&[-20pt]\Gamma\bs G\ar[r,"Df"]\ar[d,dash,sloped,"\sim"]\ar[rd,phantom,"\circlearrowright"{yshift=-2}]&\alpha(\Gamma)\bs G\ar[d,dash,sloped,"\sim"]\ar[r,phantom,"\ni"]&[-20pt]\alpha(\Gamma)g\ar[d,mapsto]\\
\Gamma g_*u\ar[r,phantom,"\in"]&T^1(\Gamma\bs\bb{H}^2)\ar[r,"\wh{f}"']&T^1(\alpha(\Gamma)\bs\bb{H}^2)\ar[r,phantom,"\ni"]&\alpha(\Gamma)g_*w, 
\end{tikzcd}
\end{equation*}
\begin{equation*}
\begin{tikzcd}
\Gamma g\ar[r,phantom,"\in"]\ar[d,mapsto]&[-20pt]\Gamma\bs G\ar[r,"D(fg)"]\ar[d,dash,sloped,"\sim"]\ar[rd,phantom,"\circlearrowright"{xshift=2,yshift=-2}]&\alpha(\Gamma)\bs G\ar[d,dash,sloped,"\sim"]\ar[r,phantom,"\ni"]&[-20pt]\alpha(\Gamma)g\ar[d,mapsto]\\
\Gamma g_*u\ar[r,phantom,"\in"]&T^1(\Gamma\bs\bb{H}^2)\ar[r,"\wh{fg}"']&T^1(\alpha(\Gamma)\bs\bb{H}^2)\ar[r,phantom,"\ni"]&\alpha(\Gamma)g_*w. 
\end{tikzcd}
\end{equation*}
Thus $D(fg)=(Df)(Dg)$. We have 
\begin{equation*}
\begin{tikzcd}[row sep=tiny]
&[-10pt]&[-10pt]&[-10pt]\wt{\Gamma}\ar[dddd,"\wt{\alpha}"]\\
&&\pi_1(\Gamma\bs G,\Gamma)\ar[dd,"(Df)_*"]\ar[ru,dash,sloped,"\sim"]\\
\wt{\Gamma}\ar[r,dash,"\sim"]\ar[rrruu,bend left,"\wt{\beta}"]\ar[rrruu,phantom,bend left=15,"\circlearrowright"]\ar[rrrdd,bend right,"\wt{\alpha\beta}"']\ar[rrrdd,phantom,bend right=15,"\circlearrowright"]&\pi_1(\Gamma\bs G,\Gamma)\ar[ru,"(Dg)_*"]\ar[rd,"(D(fg))_*"']\ar[r,phantom,"\circlearrowright"{xshift=4}]&\ \ar[r,phantom,"\circlearrowright"{xshift=5}]&\ \\
&&\pi_1(\alpha(\Gamma)\bs G,\alpha(\Gamma))\ar[rd,dash,sloped,"\sim"]\\
&&&\wt{\alpha(\Gamma)}
\end{tikzcd}
\end{equation*}
Therefore $\wt{\alpha\beta}=\wt{\alpha}\wt{\beta}$. This is the equivariance.

\subsection{The structure of $\Aut(\wt{G})\bs\mca{H}(\wt{\Gamma},\wt{G})$}
\begin{lem}\label{egzez1}
For any $e\in\Hom(\wt{\Gamma},Z)$, we have $e(Z)=\{1\}$. 
\end{lem}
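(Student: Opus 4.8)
The plan is to exploit the Euler number computation already carried out in the proof of Lemma \ref{descends}. Recall that $Z$ is generated by the element $c$, and that inside that proof it was shown that
\begin{equation*}
[\wt{a}_1,\wt{b}_1]\cdots[\wt{a}_g,\wt{b}_g]=c^{\pm(2-2g)}
\end{equation*}
in $\wt{\Gamma}$, where $g\geq2$ is the genus of $\Gamma\bs\bb{H}^2$ and $\wt{a}_i,\wt{b}_i\in\wt{\Gamma}$ are lifts of the standard generators $a_i,b_i$ of $\Gamma$. In particular a nonzero power of $c$ (namely $c^{\pm(2g-2)}$ with $2g-2\geq2$) is a product of commutators of elements of $\wt{\Gamma}$.

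First I would take an arbitrary $e\in\Hom(\wt{\Gamma},Z)$. Since $Z$ is abelian, $e$ factors through the abelianization of $\wt{\Gamma}$, hence $e$ annihilates every commutator in $\wt{\Gamma}$. Applying $e$ to the displayed identity gives $e(c)^{\pm(2-2g)}=1$, i.e. $(2g-2)\cdot e(c)=0$ in $Z$. Because $Z\simeq\bb{Z}$ is torsion free and $2g-2\neq0$, this forces $e(c)=1$. Finally, since $c$ generates $Z$, we conclude $e(Z)=\{1\}$, which is the assertion.

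There is no real obstacle here: the statement is a short consequence of the already-established commutator relation for $c$ together with the torsion-freeness of $Z\simeq\bb{Z}$. The only point to be careful about is citing the correct equation from the proof of Lemma \ref{descends} (the "first claim" there) and noting that $g\geq2$ so the exponent $2-2g$ is nonzero; everything else is immediate.
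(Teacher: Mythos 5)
Your proposal is correct and follows essentially the same route as the paper: both invoke the relation $[\wt{a}_1,\wt{b}_1]\cdots[\wt{a}_g,\wt{b}_g]=c^{\pm(2-2g)}$ established in the proof of Lemma \ref{descends}, apply $e$ (which kills commutators since $Z$ is abelian), and use torsion-freeness of $Z\simeq\bb{Z}$ together with $2g-2\neq0$ to conclude $e(c)=1$. Your write-up merely makes explicit the steps the paper compresses into ``Since $Z\simeq\bb{Z}$, we must have $e(c)=1$.''
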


\begin{proof}
As in the proof of Lemma \ref{descends}, let 
\begin{equation*}
\Gamma=\left\langle a_1,\ldots,a_g,b_1,\ldots,b_g\ \middle|\ [a_1,b_1]\cdots[a_g,b_g]=1\right\rangle, 
\end{equation*}
$\wt{a}_i$, $\wt{b}_i\in\wt{\Gamma}$ be such that $p\wt{a}_i=a_i$, $p\wt{b}_i=b_i$, and $Z=\langle c\rangle$. Then $[\wt{a}_1,\wt{b}_1]\cdots[\wt{a}_g,\wt{b}_g]=c^{\pm(2-2g)}$. Since $Z\simeq\bb{Z}$, we must have $e(c)=1$. 
\end{proof}

\begin{lem}\label{ejeggauj}
For $e\in\Hom(\wt{\Gamma},Z)$, let 
\begin{align*}
j(e)\colon\wt{\Gamma}&\to\wt{\Gamma}\\
\wt{\gamma}&\mapsto\wt{\gamma}e(\wt{\gamma}). 
\end{align*}
Then $j(e)\in\Aut(\wt{\Gamma})$ and $j\colon\Hom(\wt{\Gamma},Z)\to\Aut(\wt{\Gamma})$ is an injective homomorphism. 
\end{lem}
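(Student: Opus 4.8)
The plan is to verify directly that $j(e)$ is a bijective homomorphism with inverse $j(-e)$ (or more precisely $j(e^{-1})$, writing the group $Z$ multiplicatively), and then that $j$ is a homomorphism with trivial kernel. First I would check that $j(e)$ is a group homomorphism. Given $\wt\gamma_1,\wt\gamma_2\in\wt\Gamma$, we have $j(e)(\wt\gamma_1\wt\gamma_2)=\wt\gamma_1\wt\gamma_2 e(\wt\gamma_1\wt\gamma_2)=\wt\gamma_1\wt\gamma_2 e(\wt\gamma_1)e(\wt\gamma_2)$, and since $e(\wt\gamma_1)\in Z$ is central in $\wt G$ (recall $Z$ is the center of $\wt G$, hence of $\wt\Gamma$ by Borel density, as noted earlier), this equals $\wt\gamma_1 e(\wt\gamma_1)\wt\gamma_2 e(\wt\gamma_2)=j(e)(\wt\gamma_1)j(e)(\wt\gamma_2)$. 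So $j(e)\in\End(\wt\Gamma)$. The centrality of the image of $e$ is the one nontrivial input here, and it is already available.

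Next I would show $j(e)$ is invertible. The natural candidate for the inverse is $j(e')$ where $e'(\wt\gamma)=e(\wt\gamma)^{-1}$; note $e'\in\Hom(\wt\Gamma,Z)$ since $Z$ is abelian. Then $j(e')\bigl(j(e)(\wt\gamma)\bigr)=j(e')(\wt\gamma e(\wt\gamma))=\wt\gamma e(\wt\gamma)\,e'(\wt\gamma e(\wt\gamma))=\wt\gamma e(\wt\gamma)\,e'(\wt\gamma)e'(e(\wt\gamma))$. Here I must handle the term $e'(e(\wt\gamma))$: since $e(\wt\gamma)\in Z$ and, by Lemma \ref{egzez1}, any homomorphism $\wt\Gamma\to Z$ kills $Z$, we get $e'(e(\wt\gamma))=1$, so the expression collapses to $\wt\gamma e(\wt\gamma)e(\wt\gamma)^{-1}=\wt\gamma$. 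By symmetry $j(e)\circ j(e')=\id$ as well, so $j(e)\in\Aut(\wt\Gamma)$. This is the step where Lemma \ref{egzez1} is essential, and I expect it to be the main (though still routine) obstacle: without it the composition does not simplify and $j(e)$ need not be invertible.

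Then I would verify $j$ is a homomorphism: for $e_1,e_2\in\Hom(\wt\Gamma,Z)$, $j(e_1)\bigl(j(e_2)(\wt\gamma)\bigr)=j(e_1)(\wt\gamma e_2(\wt\gamma))=\wt\gamma e_2(\wt\gamma)e_1(\wt\gamma)e_1(e_2(\wt\gamma))$, and again $e_1(e_2(\wt\gamma))=1$ by Lemma \ref{egzez1}, while $e_1,e_2$ have central image, so this is $\wt\gamma\,e_1(\wt\gamma)e_2(\wt\gamma)=\wt\gamma\,(e_1e_2)(\wt\gamma)=j(e_1e_2)(\wt\gamma)$. Thus $j(e_1)\circ j(e_2)=j(e_1e_2)$ (with $\Hom(\wt\Gamma,Z)$ written multiplicatively), so $j$ is a homomorphism into $\Aut(\wt\Gamma)$.

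Finally, injectivity of $j$: if $j(e)=\id$, then $\wt\gamma e(\wt\gamma)=\wt\gamma$ for all $\wt\gamma$, hence $e(\wt\gamma)=1$ for all $\wt\gamma$, i.e. $e$ is trivial. Therefore $\ker j=\{1\}$ and $j$ is an injective homomorphism, completing the proof. I would present this compactly, spelling out only the two places where centrality of $\im e$ and Lemma \ref{egzez1} are used, and leaving the mirror-image computations (e.g. $j(e)\circ j(e')=\id$) to the reader.
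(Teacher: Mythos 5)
Your proof is correct and follows essentially the same route as the paper: both rest on the centrality of $Z$ and on Lemma \ref{egzez1}, and your computations for $j(e_1)\circ j(e_2)=j(e_1e_2)$ and for $\ker j=\{1\}$ are exactly the ones in the paper. The only (minor) difference is how bijectivity of $j(e)$ is obtained: you exhibit the explicit inverse $j(e^{-1})$, while the paper observes that $j(e)$ restricts to the identity on $Z$ (by Lemma \ref{egzez1}) and descends to the identity on $\Gamma$, so it is an automorphism via the commutative ladder over $1\to Z\to\wt{\Gamma}\to\Gamma\to1$; both arguments are equally valid and equally short.
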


\begin{proof}
$j(e)$ is a homomorphism. By Lemma \ref{egzez1}, we have $j(e)(\wt{\gamma})=\wt{\gamma}$ for any $\wt{\gamma}\in Z$. Hence 
\begin{equation*}
\begin{tikzcd}
1\ar[r]&Z\ar[r]\ar[d,"\id"]&\wt{\Gamma}\ar[r]\ar[d,"j(e)"]&\Gamma\ar[r]\ar[d,"\id"]&1\\
1\ar[r]&Z\ar[r]&\wt{\Gamma}\ar[r]&\Gamma\ar[r]&1
\end{tikzcd}
\end{equation*}
is commutative. It follows that $j(e)\in\Aut(\wt{\Gamma})$. For any $e$, $e^\prime\in Z$ and $\wt{\gamma}\in\wt{\Gamma}$, 
\begin{align*}
j(e)j(e^\prime)(\wt{\gamma})&=j(e)(\wt{\gamma}e^\prime(\wt{\gamma}))=\wt{\gamma}e^\prime(\wt{\gamma})e(\wt{\gamma})e(e^\prime(\wt{\gamma}))\\
&=\wt{\gamma}(ee^\prime)(\wt{\gamma})=j(ee^\prime)(\wt{\gamma}). 
\end{align*}
Hence $j$ is a homomorphism. Let $e\in\Hom(\wt{\Gamma},Z)$ be such that $j(e)=\id$. Then $\wt{\gamma}e(\wt{\gamma})=\wt{\gamma}$ for any $\wt{\gamma}\in\wt{\Gamma}$, ie $e(\wt{\gamma})=1$. Hence $e=1$ and $j$ is injective. 
\end{proof}

We have commuting actions $\Aut(\wt{G})\curvearrowright\mca{H}(\wt{\Gamma},\wt{G})\curvearrowleft\Aut(\wt{\Gamma})$ by composition. Hence the homomorphism $j$ induces commuting actions $\Aut(\wt{G})\curvearrowright\mca{H}(\wt{\Gamma},\wt{G})\curvearrowleft\Hom(\wt{\Gamma},Z)$. Explicitly for $\wt{\alpha}\in\mca{H}(\wt{\Gamma},\wt{G})$ and $e\in\Hom(\wt{\Gamma},Z)$, the right action is defined by $\wt{\alpha}*e:=\wt{\alpha}\circ j(e)$. Hence for $\wt{\gamma}\in\wt{\Gamma}$, $(\wt{\alpha}*e)(\wt{\gamma})=\wt{\alpha}(\wt{\gamma} e(\wt{\gamma}))=\wt{\alpha}(\wt{\gamma})\wt{\alpha}(e(\wt{\gamma}))$. Note that $\wt{\alpha}(e(\wt{\gamma}))\in Z$, so $\wt{\alpha}*e\equiv\wt{\alpha}$ mod $Z$ and $\pi(\wt{\alpha}*e)=\pi(\wt{\alpha})$. 

\begin{lem}
The action 
\begin{equation*}
\begin{tikzcd}[column sep=tiny]
\mca{H}(\wt{\Gamma},\wt{G})\ar[r,phantom,"\curvearrowleft"]\ar[d,"\pi"]&\Hom(\wt{\Gamma},Z)\\
\mca{H}(\Gamma,G)
\end{tikzcd}
\end{equation*}
is simply transitive on each fiber of $\pi$. 
\end{lem}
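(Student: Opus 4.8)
The plan is to establish three things: that the $\Hom(\wt{\Gamma},Z)$-orbit of any $\wt{\alpha}\in\mca{H}(\wt{\Gamma},\wt{G})$ is contained in a single fiber of $\pi$, that the action is free, and that it is transitive on each fiber. The first point has essentially already been recorded: for $e\in\Hom(\wt{\Gamma},Z)$ one has $(\wt{\alpha}*e)(\wt{\gamma})=\wt{\alpha}(\wt{\gamma})\wt{\alpha}(e(\wt{\gamma}))$ with $\wt{\alpha}(e(\wt{\gamma}))\in Z$, so $\wt{\alpha}*e\equiv\wt{\alpha}$ modulo $Z$ and therefore $\pi(\wt{\alpha}*e)=\pi(\wt{\alpha})$. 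Freeness is immediate: if $\wt{\alpha}*e=\wt{\alpha}$, then $\wt{\alpha}(\wt{\gamma})\wt{\alpha}(e(\wt{\gamma}))=\wt{\alpha}(\wt{\gamma})$ for all $\wt{\gamma}$, hence $\wt{\alpha}(e(\wt{\gamma}))=1$, and injectivity of $\wt{\alpha}$ forces $e(\wt{\gamma})=1$, i.e.\ $e=1$. Uniqueness of the group element carrying one point of a fiber to another then follows formally.

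The substantive step is transitivity on a fiber. Given $\wt{\alpha},\wt{\alpha}^\prime\in\mca{H}(\wt{\Gamma},\wt{G})$ with $\pi(\wt{\alpha})=\pi(\wt{\alpha}^\prime)=\alpha$, I would use the commutative square defining $\pi$ (from Lemma \ref{descends}), namely $p\wt{\alpha}=\alpha p$ and $p\wt{\alpha}^\prime=\alpha p$, where $p$ denotes the covering $\wt{G}\to G$ as well as its restriction $\wt{\Gamma}\to\Gamma$. This gives $p(\wt{\alpha}(\wt{\gamma}))=p(\wt{\alpha}^\prime(\wt{\gamma}))$ for all $\wt{\gamma}$, so
\begin{equation*}
d(\wt{\gamma}):=\wt{\alpha}(\wt{\gamma})^{-1}\wt{\alpha}^\prime(\wt{\gamma})\in\ker p=Z.
\end{equation*}
Because $Z$ is central in $\wt{G}$, one checks that $d\colon\wt{\Gamma}\to Z$ is a homomorphism:
\begin{align*}
d(\wt{\gamma}\wt{\gamma}^\prime)&=\wt{\alpha}(\wt{\gamma}^\prime)^{-1}\wt{\alpha}(\wt{\gamma})^{-1}\wt{\alpha}^\prime(\wt{\gamma})\wt{\alpha}^\prime(\wt{\gamma}^\prime)=\wt{\alpha}(\wt{\gamma}^\prime)^{-1}d(\wt{\gamma})\wt{\alpha}^\prime(\wt{\gamma}^\prime)\\
&=d(\wt{\gamma})\wt{\alpha}(\wt{\gamma}^\prime)^{-1}\wt{\alpha}^\prime(\wt{\gamma}^\prime)=d(\wt{\gamma})d(\wt{\gamma}^\prime),
\end{align*}
using in the third equality that $d(\wt{\gamma})\in Z$ is central. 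By Lemma \ref{descends} we have $\wt{\alpha}(Z)=Z$, so $\wt{\alpha}|_Z\colon Z\to Z$ is an automorphism; I then set $e:=(\wt{\alpha}|_Z)^{-1}\circ d\in\Hom(\wt{\Gamma},Z)$, so that $\wt{\alpha}(e(\wt{\gamma}))=d(\wt{\gamma})$ and consequently $(\wt{\alpha}*e)(\wt{\gamma})=\wt{\alpha}(\wt{\gamma})\wt{\alpha}(e(\wt{\gamma}))=\wt{\alpha}(\wt{\gamma})d(\wt{\gamma})=\wt{\alpha}^\prime(\wt{\gamma})$, i.e.\ $\wt{\alpha}*e=\wt{\alpha}^\prime$.

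Every step here is formal, so I do not expect a genuine obstacle; the one point demanding care is the assertion that $\wt{\alpha}|_Z$ maps $Z$ \emph{onto} $Z$ (so that it can be inverted). For this I will invoke the identity $\wt{\alpha}(Z)=Z$ proved inside Lemma \ref{descends}—which rests on Borel density together with the Euler-number computation identifying $[\wt{a}_1,\wt{b}_1]\cdots[\wt{a}_g,\wt{b}_g]$ with $c^{\pm(2-2g)}$—combined with the injectivity of $\wt{\alpha}$. The remaining verifications (that $\wt{\alpha}*e\in\mca{H}(\wt{\Gamma},\wt{G})$, which is clear since $j(e)\in\Aut(\wt{\Gamma})$, and the homomorphism property of $d$) are routine manipulations with central elements.
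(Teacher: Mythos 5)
Your proposal is correct and follows essentially the same route as the paper: both arguments rest on $\wt{\alpha}\colon Z\xrightarrow{\sim}Z$ from Lemma \ref{descends} to produce the unique $e$ with $\wt{\alpha}(e(\wt{\gamma}))=\wt{\alpha}(\wt{\gamma})^{-1}\wt{\alpha}^\prime(\wt{\gamma})$, and freeness via injectivity of $\wt{\alpha}$. The only cosmetic difference is that you verify the homomorphism property for $d(\wt{\gamma})=\wt{\alpha}(\wt{\gamma})^{-1}\wt{\alpha}^\prime(\wt{\gamma})$ using centrality of $Z$ and then transport it through $(\wt{\alpha}|_Z)^{-1}$, whereas the paper checks it for $e$ directly inside $\wt{\alpha}$ using injectivity; both verifications are immediate.
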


\begin{proof}
Let $\wt{\alpha}$, $\wt{\alpha}^\prime\in\mca{H}(\wt{\Gamma},\wt{G})$ be such that $\pi(\wt{\alpha})=\pi(\wt{\alpha}^\prime)$. Since $\wt{\alpha}\colon Z\xrightarrow{\sim}Z$ by Lemma \ref{descends}, there exists a unique $e\colon\wt{\Gamma}\to Z$ such that $\wt{\alpha}^\prime(\wt{\gamma})=\wt{\alpha}(\wt{\gamma})\wt{\alpha}(e(\wt{\gamma}))=\wt{\alpha}(\wt{\gamma}e(\wt{\gamma}))$ for all $\wt{\gamma}\in\wt{\Gamma}$. For any $\wt{\gamma}$, $\wt{\gamma}^\prime\in\wt{\Gamma}$, we have 
\begin{equation*}
\wt{\alpha}(\wt{\gamma}\wt{\gamma}^\prime e(\wt{\gamma}\wt{\gamma}^\prime))=\wt{\alpha}^\prime(\wt{\gamma}\wt{\gamma}^\prime)=\wt{\alpha}^\prime(\wt{\gamma}) \wt{\alpha}^\prime(\wt{\gamma}^\prime)=\wt{\alpha}(\wt{\gamma}e(\wt{\gamma}))\wt{\alpha}(\wt{\gamma}^\prime e(\wt{\gamma}^\prime))=\wt{\alpha}(\wt{\gamma}\wt{\gamma}^\prime e(\wt{\gamma})e(\wt{\gamma}^\prime)), 
\end{equation*}
hence $e\left(\wt{\gamma}\wt{\gamma}^\prime\right)=e(\wt{\gamma})e\left(\wt{\gamma}^\prime\right)$. So $e\in\Hom(\wt{\Gamma},Z)$ and $\wt{\alpha}^\prime=\wt{\alpha}*e$. 

Let $\wt{\alpha}\in\mca{H}(\wt{\Gamma},\wt{G})$, $e\in\Hom(\wt{\Gamma},Z)$ be such that $\wt{\alpha}*e=\wt{\alpha}$. Then for any $\wt{\gamma}\in\wt{\Gamma}$, we have $\wt{\alpha}(\wt{\gamma})\wt{\alpha}(e(\wt{\gamma}))=\wt{\alpha}(\wt{\gamma})$, hence $e(\wt{\gamma})=1$ and $e=1$. 
\end{proof}

We have $\Aut(\wt{\Gamma})\subset\mca{H}(\wt{\Gamma},\wt{G})$. The subset $\Aut(\wt{\Gamma})$ is $\Aut(\wt{\Gamma})$-invariant, hence $\Hom(\wt{\Gamma},Z)$-invariant. 

\begin{cor}
The action 
\begin{equation*}
\begin{tikzcd}[column sep=tiny]
\Aut(\wt{\Gamma})\ar[r,phantom,"\curvearrowleft"]\ar[d,"\pi_0"]&\Hom(\wt{\Gamma},Z)\\
\Aut(\Gamma)
\end{tikzcd}
\end{equation*}
is simply transitive on each fiber of $\pi$. 
\end{cor}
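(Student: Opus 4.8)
The plan is to deduce this corollary from the immediately preceding lemma, which states that the action of $\Hom(\wt{\Gamma},Z)$ on $\mca{H}(\wt{\Gamma},\wt{G})$ is simply transitive on each fiber of $\pi$, by simply restricting everything to the $\Aut(\wt{\Gamma})$-part and using the compatibility of $\pi$ with $\pi_0$ established earlier. First I would recall the inclusions $\Aut(\wt{\Gamma})\subset\mca{H}(\wt{\Gamma},\wt{G})$ and $\Aut(\Gamma)\subset\mca{H}(\Gamma,G)$, together with the fact (noted in the text just after the definition of $\pi_0$) that $\pi|_{\Aut(\wt{\Gamma})}=\pi_0$, so that the fibers of $\pi_0$ are exactly the intersections of the fibers of $\pi$ with $\Aut(\wt{\Gamma})$. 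I would also recall that $\Aut(\wt{\Gamma})$ is an $\Aut(\wt{\Gamma})$-invariant, hence $\Hom(\wt{\Gamma},Z)$-invariant, subset of $\mca{H}(\wt{\Gamma},\wt{G})$ (this is stated immediately before the corollary), so the action $\Aut(\wt{\Gamma})\curvearrowleft\Hom(\wt{\Gamma},Z)$ really does make sense and preserves each fiber of $\pi_0$.

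The substantive point to check is transitivity: given $\wt{\beta}$, $\wt{\beta}^\prime\in\Aut(\wt{\Gamma})$ with $\pi_0(\wt{\beta})=\pi_0(\wt{\beta}^\prime)$, the preceding lemma applied to them as elements of $\mca{H}(\wt{\Gamma},\wt{G})$ produces a unique $e\in\Hom(\wt{\Gamma},Z)$ with $\wt{\beta}^\prime=\wt{\beta}*e=\wt{\beta}\circ j(e)$. One then only has to observe that $\wt{\beta}\circ j(e)$ is automatically again an element of $\Aut(\wt{\Gamma})$ — which is clear since $j(e)\in\Aut(\wt{\Gamma})$ by Lemma \ref{ejeggauj} and $\wt{\beta}\in\Aut(\wt{\Gamma})$ — so the $e$ witnessing transitivity in $\mca{H}(\wt{\Gamma},\wt{G})$ is the same $e$ that witnesses it in $\Aut(\wt{\Gamma})$. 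Freeness is inherited verbatim: if $\wt{\beta}*e=\wt{\beta}$ for $\wt{\beta}\in\Aut(\wt{\Gamma})\subset\mca{H}(\wt{\Gamma},\wt{G})$ and $e\in\Hom(\wt{\Gamma},Z)$, the freeness statement in the preceding lemma forces $e=1$.

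I expect no real obstacle here; the corollary is essentially a formal restriction of the lemma, and the only thing requiring a word of justification is that the $\Hom(\wt{\Gamma},Z)$-orbit through a point of $\Aut(\wt{\Gamma})$ stays inside $\Aut(\wt{\Gamma})$, which follows from $j(e)\in\Aut(\wt{\Gamma})$. So the proof will read roughly: ``Since $\pi|_{\Aut(\wt{\Gamma})}=\pi_0$ and $\Aut(\wt{\Gamma})$ is $\Hom(\wt{\Gamma},Z)$-invariant, this is immediate from the preceding lemma.'' If a little more detail is wanted, I would spell out the two displayed implications (transitivity and freeness) exactly as above, each in one line, being careful to keep any multi-line reasoning out of a blank-line-free display. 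That is the entire plan.
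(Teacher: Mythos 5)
Your proposal is correct and is essentially the paper's argument: the paper simply cites the diagram expressing $\Aut(\wt{\Gamma})\subset\mca{H}(\wt{\Gamma},\wt{G})$, $\Aut(\Gamma)\subset\mca{H}(\Gamma,G)$ and $\pi|_{\Aut(\wt{\Gamma})}=\pi_0$, i.e. it treats the corollary as a formal restriction of the preceding lemma, exactly as you do. Your extra remarks (the $\Hom(\wt{\Gamma},Z)$-invariance of $\Aut(\wt{\Gamma})$ via $j(e)\in\Aut(\wt{\Gamma})$, and the one-line checks of transitivity and freeness on fibers) just make explicit what the paper leaves implicit.
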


\begin{proof}
This follows from the diagram 
\begin{equation*}
\begin{tikzcd}[column sep=tiny]
\Aut(\wt{\Gamma})\ar[r,phantom,"\subset"]\ar[d,"\pi_0"]&\mca{H}(\wt{\Gamma},\wt{G})\ar[r,phantom,"\curvearrowleft"]\ar[d,"\pi"]&\Hom(\wt{\Gamma},Z)\\
\Aut(\Gamma)\ar[r,phantom,"\subset"]&\mca{H}(\Gamma,G). 
\end{tikzcd}
\end{equation*}
\end{proof}

\begin{lem}
The action 
\begin{equation*}
\begin{tikzcd}[column sep=tiny]
\Aut(\wt{G})\bs\mca{H}(\wt{\Gamma},\wt{G})\ar[r,phantom,"\curvearrowleft"]\ar[d,"\ol{\pi}"]&\Hom(\wt{\Gamma},Z)\\
\Aut(G)\bs\mca{H}(\Gamma,G)
\end{tikzcd}
\end{equation*}
is simply transitive on each fiber of $\ol{\pi}$. 
\end{lem}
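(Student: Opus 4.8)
The plan is to descend the simple transitivity of the $\Hom(\wt{\Gamma},Z)$-action on the fibers of $\pi\colon\mca{H}(\wt{\Gamma},\wt{G})\to\mca{H}(\Gamma,G)$, just established above, to the fibers of $\ol{\pi}$, exploiting that this action commutes with the $\Aut(\wt{G})$-action and that both are compatible with $\pi$ through the isomorphism $\Aut(\wt{G})\simeq\Aut(G)$. First I would record the formal preliminaries: since the right action $\wt{\alpha}\mapsto\wt{\alpha}*e=\wt{\alpha}\circ j(e)$ of $\Hom(\wt{\Gamma},Z)$ commutes with the left action $\wt{\alpha}\mapsto\wt{\Phi}\circ\wt{\alpha}$ of $\Aut(\wt{G})$, it descends to a well-defined action $[\wt{\alpha}]*e=[\wt{\alpha}*e]$ on $\Aut(\wt{G})\bs\mca{H}(\wt{\Gamma},\wt{G})$; and since $\pi(\wt{\alpha}*e)=\pi(\wt{\alpha})$ (already observed), this descended action preserves the fibers of $\ol{\pi}$. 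Each fiber of $\ol{\pi}$ is nonempty because $\pi$, hence $\ol{\pi}$, is surjective, having the section $\sigma$.

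For transitivity, let $[\wt{\alpha}],[\wt{\alpha}^\prime]$ lie in a common fiber of $\ol{\pi}$, so $\pi(\wt{\alpha})$ and $\pi(\wt{\alpha}^\prime)$ lie in one $\Aut(G)$-orbit. Choose $\Phi\in\Aut(G)$ with $\pi(\wt{\alpha}^\prime)=\Phi\circ\pi(\wt{\alpha})$ and let $\wt{\Phi}\in\Aut(\wt{G})$ be the corresponding automorphism, so $p\wt{\Phi}=\Phi p$ and hence $\pi(\wt{\Phi}\circ\wt{\alpha})=\Phi\circ\pi(\wt{\alpha})=\pi(\wt{\alpha}^\prime)$. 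Since $\wt{\Phi}\circ\wt{\alpha}$ and $\wt{\alpha}^\prime$ then lie in the same fiber of $\pi$, the transitivity on $\pi$-fibers gives $e\in\Hom(\wt{\Gamma},Z)$ with $\wt{\alpha}^\prime=(\wt{\Phi}\circ\wt{\alpha})*e=\wt{\Phi}\circ(\wt{\alpha}*e)$, whence $[\wt{\alpha}^\prime]=[\wt{\alpha}*e]=[\wt{\alpha}]*e$.

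For freeness, suppose $[\wt{\alpha}]*e=[\wt{\alpha}]$, i.e.\ there is $\wt{\Phi}\in\Aut(\wt{G})$ with $\wt{\Phi}\circ\wt{\alpha}\circ j(e)=\wt{\alpha}$. Applying $p$ and using $\pi(\wt{\alpha}*e)=\pi(\wt{\alpha})$ yields $\Phi\circ\pi(\wt{\alpha})=\pi(\wt{\alpha})$, so $\Phi$ fixes the subgroup $\pi(\wt{\alpha})(\Gamma)$ pointwise. By Proposition \ref{cocompact}, $\pi(\wt{\alpha})(\Gamma)$ is a torsion free cocompact lattice in $G=\PSL(2,\bb{R})$, hence Zariski dense by the Borel density theorem; as the fixed-point set of $\Phi\in\Aut(G)\simeq\PGL(2,\bb{R})$ is a Zariski closed subgroup, it must be all of $G$, so $\Phi=\id_G$. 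Consequently $\wt{\Phi}$, being the unique lift of $\id_G$ with $\wt{\Phi}(1)=1$, equals $\id_{\wt{G}}$, so $\wt{\alpha}\circ j(e)=\wt{\alpha}$; since $\wt{\alpha}$ is injective, $j(e)=\id_{\wt{\Gamma}}$, and then $e=1$ by the injectivity of $j$ in Lemma \ref{ejeggauj}.

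The only genuinely non-formal step is the Borel density argument forcing $\Phi=\id_G$, together with the identification of $\wt{\Phi}$ with $\id_{\wt{G}}$; everything else is a mechanical transport of the already-proved simple transitivity on the $\pi$-fibers across the commuting, $\pi$-compatible $\Aut(\wt{G})$-action. I expect that step to be the main — and essentially the only — point requiring care.
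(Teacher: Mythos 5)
Your argument is correct, and its transitivity half is exactly the paper's: pass to $\wt{\Phi}$ via $\Aut(\wt{G})\simeq\Aut(G)$, note $\pi(\wt{\Phi}\circ\wt{\alpha})=\Phi\circ\pi(\wt{\alpha})=\pi(\wt{\alpha}^\prime)$, and invoke the simple transitivity on the fibers of $\pi$ together with the commutation $(\wt{\Phi}\circ\wt{\alpha})*e=\wt{\Phi}\circ(\wt{\alpha}*e)$. Where you genuinely diverge is in the freeness half. The paper does not project all the way down to $G$: from $\wt{\Phi}\circ(\wt{\alpha}*e)=\wt{\alpha}$ it descends only to $\ul{G}=\wt{G}/Z^2\simeq\SL(2,\bb{R})$, where the induced $\ul{e}$ survives modulo $Z^2$, then restricts to $\ul{\Gamma}^\prime=\ker\ul{e}$ (still mapping to a cocompact lattice) to get $\ul{\Phi}=\id$ from Borel density, and finally lifts back to $\wt{\Phi}=\id$. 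You instead push the relation down to $G$ itself, where the term $\wt{\alpha}(e(\wt{\gamma}))\in Z$ dies entirely, so that $\Phi$ fixes the lattice $\alpha(\Gamma)$ pointwise; Borel density (Zariski density of the lattice versus the Zariski-closed fixed-point set of conjugation by an element of $\PGL(2,\bb{R})$) then forces $\Phi=\id$, hence $\wt{\Phi}=\id$ by uniqueness of lifts, and $e=1$ follows from injectivity of $\wt{\alpha}$ and of $j$ (equivalently, from the freeness on $\pi$-fibers, as the paper concludes). Your route is a little cleaner, since it avoids the auxiliary quotient $\wt{G}/Z^2$ and the restriction to $\ker\ul{e}$; the paper's detour buys nothing essential here, and both arguments rest on the same input (Borel density, as in Raghunathan 5.18, already used in Lemma \ref{descends}). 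The only point to state carefully in your version is the identification of $\Aut(G)$ with $\PGL(2,\bb{R})$ acting by conjugation, so that the fixed-point set is indeed a centralizer and hence Zariski closed — but the paper provides exactly this identification, so there is no gap.
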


\begin{proof}
The action preserves each fiber of $\ol{\pi}$. Let $\wt{\alpha}$, $\wt{\alpha}^\prime\in\mca{H}(\wt{\Gamma},\wt{G})$ be such that there exists $\Phi\in\Aut(G)$ with $\alpha^\prime=\Phi\circ\alpha$, where $\alpha:=\pi(\wt{\alpha})$, $\alpha^\prime:=\pi(\wt{\alpha}^\prime)\in\mca{H}(\Gamma,G)$. Since $\pi(\wt{\Phi}\circ\wt{\alpha})=\pi(\wt{\alpha}^\prime)$, there exists a unique $e\in\Hom(\wt{\Gamma},Z)$ such that $\wt{\alpha}^\prime=(\wt{\Phi}\circ\wt{\alpha})*e=\wt{\Phi}(\wt{\alpha}*e)$. 

Let $\wt{\alpha}\in\mca{H}(\wt{\Gamma},\wt{G})$, $e\in\Hom(\wt{\Gamma},Z)$ be such that there exists $\wt{\Phi}\in\Aut(\wt{G})$ with $\wt{\Phi}\circ(\wt{\alpha}*e)=\wt{\alpha}$. For any $\wt{\gamma}\in\wt{\Gamma}$, we have 
\begin{equation}\label{pagaegag}
\wt{\Phi}(\wt{\alpha}(\wt{\gamma})\wt{\alpha}(e(\wt{\gamma})))=\wt{\alpha}(\wt{\gamma}). 
\end{equation}
Let $\ul{G}=\wt{G}/Z^2\simeq\SL(2,\bb{R})$ and $\ul{\Gamma}=\wt{\Gamma}/Z^2$. Then (since $\wt{\alpha}(Z)\subset Z$) there exist unique $\ul{\Phi}$, $\ul{\alpha}$, $\ul{e}$ such that 
\begin{equation*}
\begin{tikzcd}
\wt{G}\ar[r,"\wt{\Phi}"]\ar[d]\ar[rd,phantom,"\circlearrowright"]&\wt{G}\ar[d]\\
\ul{G}\ar[r,"\ul{\Phi}"']&\ul{G}, 
\end{tikzcd}\qquad
\begin{tikzcd}
\wt{\Gamma}\ar[r,"\wt{\alpha}"]\ar[d]\ar[rd,phantom,"\circlearrowright"{xshift=1,yshift=-1}]&\wt{G}\ar[d]\\
\ul{\Gamma}\ar[r,"\ul{\alpha}"']&\ul{G}, 
\end{tikzcd}\qquad
\begin{tikzcd}
\wt{\Gamma}\ar[r,"e"]\ar[d]\ar[rd,phantom,"\circlearrowright"{xshift=2,yshift=-2}]&Z\ar[d]\\
\ul{\Gamma}\ar[r,"\ul{e}"']&Z/Z^2. 
\end{tikzcd}
\end{equation*}
Since $\wt{\alpha}\colon Z\xrightarrow{\sim}Z$ by Lemma \ref{descends}, $\ul{\alpha}$ is injective. Since $\wt{\alpha}(\wt{\Gamma})\supset Z\supset Z^2$ by the same lemma, $\ul{\alpha}(\ul{\Gamma})$ is discrete. Since there is a surjective map $\wt{\alpha}(\wt{\Gamma})\bs\wt{G}\to\ul{\alpha}(\ul{\Gamma})\bs\ul{G}$, $\ul{\alpha}(\ul{\Gamma})$ is cocompact. 

By \eqref{pagaegag}, 
we have $\ul{\Phi}(\ul{\alpha}(\ul{\gamma})\ul{\alpha}(\ul{e}(\ul{\gamma})))=\ul{\alpha}(\ul{\gamma})$ for all $\ul{\gamma}\in\ul{\Gamma}$. Let $\ul{\Gamma}^\prime=\ker\ul{e}\subset\ul{\Gamma}$. Then $\ul{\alpha}(\ul{\Gamma}^\prime)$ is a cocompact lattice in $\ul{G}$ and $\ul{\Phi}(\ul{\alpha}(\ul{\gamma}))=\ul{\alpha}(\ul{\gamma})$ for all $\ul{\gamma}\in\ul{\Gamma}^\prime$. Hence $\ul{\Phi}=\id$ by the Borel density theorem. So $\wt{\Phi}=\id$, $\wt{\alpha}*e=\wt{\alpha}$ and $e=1$. 
\end{proof}

We have a split short exact sequence 
\begin{equation*}
\begin{tikzcd}
1\ar[r]&\Hom(\wt{\Gamma},Z)\ar[r,"j"]&\Aut(\wt{\Gamma})\ar[r,"\pi_0"']&\Aut(\Gamma)\ar[r]\ar[l,bend right=18,"\sigma_0"']&1
\end{tikzcd}
\end{equation*}
by Lemma \ref{ejeggauj}. We get 
\begin{equation}\label{homgzautga}
\begin{tikzcd}
1\ar[r]&\Hom(\wt{\Gamma},Z)\ar[r,"j"]\ar[d,equal]\ar[rd,phantom,"\circlearrowright"{xshift=2,yshift=-1}]&\Aut(\wt{\Gamma})\ar[r]\ar[d]\ar[rd,phantom,"\circlearrowright"]&\Aut(\Gamma)\ar[r]\ar[d]\ar[l,bend right=15]&1\\
1\ar[r]&\Hom(\wt{\Gamma},Z)\ar[r]&\Out(\wt{\Gamma})\ar[r]&\Out(\Gamma)\ar[r]\ar[l,bend left=15]&1. 
\end{tikzcd}
\end{equation}

\begin{lem}
The bottom row of \eqref{homgzautga} is a split short exact sequence. 
\end{lem}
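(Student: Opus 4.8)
The plan is to deduce the splitness of the bottom row of \eqref{homgzautga} from the splitness of the top row, which was already established via the section $\sigma_0\colon\Aut(\Gamma)\to\Aut(\wt{\Gamma})$ of $\pi_0$ coming from the section $\sigma$ of $\pi$. First I would note that, by the preceding discussion, $\sigma_0$ is a homomorphism (this was recorded in the corollary stating that $\sigma_0$ is a homomorphism and induces $\ol{\sigma_0}$) and that $\pi_0\sigma_0=\id$. Then the diagram \eqref{homgzautga} together with the commutativity of its right square shows that passing to $\Out$ we get an induced map $\ol{\sigma_0}\colon\Out(\Gamma)\to\Out(\wt{\Gamma})$, and I would check that $\ol{\pi_0}\,\ol{\sigma_0}=\id$ by chasing the diagram: for $[\beta]\in\Out(\Gamma)$ with $\beta\in\Aut(\Gamma)$, $\ol{\pi_0}\ol{\sigma_0}[\beta]$ is the class of $\pi_0\sigma_0(\beta)=\beta$, hence equals $[\beta]$.

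Next I would verify exactness of the bottom row as a sequence of groups. The injectivity of $\Hom(\wt{\Gamma},Z)\to\Out(\wt{\Gamma})$: suppose $j(e)$ is an inner automorphism of $\wt{\Gamma}$, say conjugation by $\wt{\gamma}_0$; then for all $\wt{\gamma}$ we have $\wt{\gamma}e(\wt{\gamma})=\wt{\gamma}_0\wt{\gamma}\wt{\gamma}_0^{-1}$, and reducing mod $Z$ this forces $\wt{\gamma}_0$ to centralize $\Gamma$, i.e.\ $\wt{\gamma}_0$ projects into the center of $\Gamma$, which is trivial since $\Gamma$ is a torsion free cocompact lattice in $\PSL(2,\bb{R})$ (Borel density); hence $\wt{\gamma}_0\in Z$ and $j(e)=\id$, so $e=1$ by Lemma \ref{ejeggauj}. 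Exactness at $\Out(\wt{\Gamma})$: $\ol{\pi_0}$ is surjective because $\pi_0$ is (it has the section $\sigma_0$), and $\ker\ol{\pi_0}$ consists of classes $[\psi]$ with $\psi\in\Aut(\wt{\Gamma})$ such that $\pi_0(\psi)$ is inner on $\Gamma$; composing $\psi$ with an inner automorphism of $\wt{\Gamma}$ (which does not change its class in $\Out(\wt{\Gamma})$) I may assume $\pi_0(\psi)=\id$, and then the exactness of the top row of \eqref{homgzautga} gives $\psi=j(e)$ for a unique $e\in\Hom(\wt{\Gamma},Z)$, so $[\psi]$ is in the image of $\Hom(\wt{\Gamma},Z)$. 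I would also remark that $\Hom(\wt{\Gamma},Z)$ is central, or at least normal, in $\Out(\wt{\Gamma})$: this follows from the corresponding fact in the top row (the image of $j$ is normal in $\Aut(\wt{\Gamma})$ since the top row is exact) together with surjectivity of $\Aut(\wt{\Gamma})\to\Out(\wt{\Gamma})$.

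Finally, splitness: the composite $\Out(\Gamma)\xrightarrow{\ol{\sigma_0}}\Out(\wt{\Gamma})$ is a homomorphism with $\ol{\pi_0}\ol{\sigma_0}=\id$, which is exactly a splitting of the bottom row. Since $\ol{\sigma_0}$ was constructed just above (in the corollary) by passing $\sigma_0$ to outer automorphism groups and shown there to be well-defined and a homomorphism, the only thing left to record is the identity $\ol{\pi_0}\ol{\sigma_0}=\id$, which I verified by the diagram chase above. I expect no real obstacle here; the one point requiring a little care is the injectivity of $\Hom(\wt{\Gamma},Z)\hookrightarrow\Out(\wt{\Gamma})$, i.e.\ ruling out that a nontrivial $j(e)$ becomes inner, and this is handled cleanly by the Borel density theorem as above (which is already invoked repeatedly in this section, e.g.\ via 5.18 of \cite{Rag}).
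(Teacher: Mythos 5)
Your proof is correct and follows essentially the same route as the paper: exactness at $\Hom(\wt{\Gamma},Z)$ by observing that the conjugating element projects to the (trivial) center of $\Gamma$ and hence lies in $Z$, exactness at $\Out(\wt{\Gamma})$ by adjusting a representative by an inner automorphism of $\wt{\Gamma}$ (the paper conjugates by a lift $\wt{\delta}$ and exhibits $j(e)$ directly, which is the same computation you delegate to the exactness of the top row), and splitness via the previously constructed $\ol{\sigma_0}$. The paper's own proof only records the two exactness statements, leaving surjectivity and the splitting to the earlier corollary on $\ol{\sigma_0}$, exactly as you note.
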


\begin{proof}
Exactness at $\Hom(\wt{\Gamma},Z)$. Let $e\in\Hom(\wt{\Gamma},Z)$ and assume that there exists $\delta\in\wt{\Gamma}$ such that $\wt{\gamma}e(\wt{\gamma})=\delta\wt{\gamma}\delta^{-1}$ for all $\wt{\gamma}\in\wt{\Gamma}$. Then $p\delta$ is in the center of $\Gamma$. Since $G$ has the trivial center, $p\delta=1$, ie $\delta\in Z$. Hence for any $\wt{\gamma}\in\wt{\Gamma}$, $\wt{\gamma}e(\wt{\gamma})=\wt{\gamma}$. Thus $e(\wt{\gamma})=1$ and $e=1$. 

Exactness at $\Out(\wt{\Gamma})$. Let $\wt{\alpha}\in\Aut(\wt{\Gamma})$ and assume that there exists $\delta\in\Gamma$ such that 
\begin{equation*}
\begin{tikzcd}
\wt{\Gamma}\ar[r,"\wt{\alpha}"]\ar[d]\ar[rd,phantom,"\circlearrowright"]&\wt{\Gamma}\ar[d]\\
\Gamma\ar[r,"\delta\ \cdot\ \delta^{-1}"']&\Gamma. 
\end{tikzcd}
\end{equation*}
Take $\wt{\delta}\in\wt{\Gamma}$ such that $p\wt{\delta}=\delta$. Then for any $\wt{\gamma}\in\wt{\Gamma}$, we have 
\begin{equation*}
p\left(\wt{\delta}^{-1}\wt{\alpha}(\wt{\gamma})\wt{\delta}\right)=\delta^{-1}\delta(p\wt{\gamma})\delta^{-1}\delta=p\wt{\gamma}. 
\end{equation*}
Hence there exists a unique $e(\wt{\gamma})\in Z$ such that $\wt{\delta}^{-1}\wt{\alpha}(\wt{\gamma})\wt{\delta}=\wt{\gamma}e(\wt{\gamma})$. We have $e\in\Hom(\wt{\Gamma},Z)$ and $\wt{\delta}^{-1}\wt{\alpha}\wt{\delta}=j(e)$. 
\end{proof}

\begin{thm}\label{authauthhom}
We have 
\begin{align*}
\Aut(\wt{G})\bs\mca{H}(\wt{\Gamma},\wt{G})&\simeq(\Aut(G)\bs\mca{H}(\Gamma,G))\times\Hom(\wt{\Gamma},Z)\\
(\sigma[\alpha])*e&\mapsfrom([\alpha],e). 
\end{align*}
\end{thm}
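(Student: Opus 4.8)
The statement asserts that the bijection $\sigma\colon\mca{H}(\Gamma,G)\to\mca{H}(\wt\Gamma,\wt G)$ constructed in the previous subsection, combined with the $\Hom(\wt\Gamma,Z)$-action, trivializes the bundle $\ol\pi\colon\Aut(\wt G)\bs\mca{H}(\wt\Gamma,\wt G)\to\Aut(G)\bs\mca{H}(\Gamma,G)$. So the proof has two ingredients: (i) the fact just proved that $\ol\pi$ is a principal bundle with structure group $\Hom(\wt\Gamma,Z)$, i.e.\ that $\Hom(\wt\Gamma,Z)$ acts simply transitively on each fiber of $\ol\pi$; and (ii) the existence of the continuous (in fact canonical) section $\ol\sigma\colon\Aut(G)\bs\mca{H}(\Gamma,G)\to\Aut(\wt G)\bs\mca{H}(\wt\Gamma,\wt G)$ obtained from $\sigma$. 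Given (i) and (ii), the trivialization map is the standard one for a trivial torsor: $([\alpha],e)\mapsto(\ol\sigma[\alpha])*e$.

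\textbf{Carrying it out.} First I would record that $\ol\sigma\colon\Aut(G)\bs\mca{H}(\Gamma,G)\to\Aut(\wt G)\bs\mca{H}(\wt\Gamma,\wt G)$ is well-defined: by Lemma \ref{g eq}, $\sigma$ is $(\Aut(G)\simeq\Aut(\wt G))$-equivariant, so it descends to the quotients, and by Lemma \ref{section} we have $\pi\sigma=\id$, hence $\ol\pi\,\ol\sigma=\id$. Thus $\ol\sigma$ is a section of $\ol\pi$. Next, define
\begin{equation*}
\Psi\colon(\Aut(G)\bs\mca{H}(\Gamma,G))\times\Hom(\wt\Gamma,Z)\to\Aut(\wt G)\bs\mca{H}(\wt\Gamma,\wt G),\qquad([\alpha],e)\mapsto(\ol\sigma[\alpha])*e,
\end{equation*}
which makes sense because the $\Aut(\wt G)$-action and the $\Hom(\wt\Gamma,Z)$-action on $\mca{H}(\wt\Gamma,\wt G)$ commute, so $*$ descends to $\Aut(\wt G)\bs\mca{H}(\wt\Gamma,\wt G)$. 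I would then verify $\Psi$ is a bijection: surjectivity follows because for any class $[\wt\alpha]$, setting $[\alpha]=\ol\pi[\wt\alpha]$, both $[\wt\alpha]$ and $(\ol\sigma[\alpha])$ lie in the same fiber $\ol\pi^{-1}[\alpha]$ (since $\ol\pi\,\ol\sigma=\id$), and by the simple transitivity of the $\Hom(\wt\Gamma,Z)$-action on that fiber there is a (unique) $e$ with $[\wt\alpha]=(\ol\sigma[\alpha])*e$; injectivity follows similarly — if $(\ol\sigma[\alpha])*e=(\ol\sigma[\alpha'])*e'$, applying $\ol\pi$ and using that $*$ preserves $\ol\pi$-fibers gives $[\alpha]=[\alpha']$, and then uniqueness in the simply transitive action gives $e=e'$. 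The inverse is exactly $[\wt\alpha]\mapsto(\ol\pi[\wt\alpha],e)$ with $e$ determined as above, matching the formula $([\alpha],e)\mapsto(\sigma[\alpha])*e$ displayed in the theorem (the theorem writes $\sigma$ rather than $\ol\sigma$, but these agree at the level of equivalence classes).

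\textbf{Main obstacle.} The genuinely substantive content — that $\Hom(\wt\Gamma,Z)$ acts simply transitively on the fibers of $\ol\pi$ — has already been established in the preceding lemma using the Borel density theorem, and the construction and properties of $\sigma$ (well-definedness, $\pi\sigma=\id$, the two equivariances) occupy Sections \ref{well s}, \ref{section s}, \ref{g eq s}, \ref{gamma eq s}. So at this point the proof is essentially a formal assembly: the one thing that requires a moment's care is checking that the $\Hom(\wt\Gamma,Z)$-action really does descend to the double coset $\Aut(\wt G)\bs\mca{H}(\wt\Gamma,\wt G)$ and that $\Psi$ is well-defined on equivalence classes (independent of the representative $\alpha$), but this is immediate from the commutation of the two actions together with Lemma \ref{g eq}. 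I do not anticipate any hidden difficulty beyond bookkeeping with the quotients.
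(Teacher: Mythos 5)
Your proof is correct and follows exactly the route the paper intends: the theorem is stated as an immediate consequence of the preceding lemma (simple transitivity of the $\Hom(\wt{\Gamma},Z)$-action on the fibers of $\ol{\pi}$) together with the section $\ol{\sigma}$ induced by $\sigma$ via Lemmas \ref{section} and \ref{g eq}, which is precisely the standard trivialization of a torsor admitting a section that you carry out. No discrepancy with the paper's (implicit) argument.
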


Note that $\Hom(\wt{\Gamma},Z)\simeq\Hom(\Gamma,Z)\simeq Z^{2g}$ and $\Aut(G)\bs\mca{H}(\Gamma,G)$ is the Teichm\"{u}ller space of $\Gamma$. 

The action $\Aut(\wt{G})\bs\mca{H}(\wt{\Gamma},\wt{G})\curvearrowleft\Aut(\wt{\Gamma})$ induces an action $\Aut(\wt{G})\bs\mca{H}(\wt{\Gamma},\wt{G})\curvearrowleft\Out(\wt{\Gamma})$. The situation is summarized in the following diagram: 
\begin{equation*}
\begin{tikzcd}[column sep=tiny]
&1\ar[d]\\[-7pt]
&\Hom(\wt{\Gamma},Z)\ar[d]\\[-7pt]
\Aut(\wt{G})\bs\mca{H}(\wt{\Gamma},\wt{G})\ar[r,phantom,"\curvearrowleft"]\ar[d,"\ol{\pi}"']&\Out(\wt{\Gamma})\ar[d,"\ol{\pi}_0"']\\
\Aut(G)\bs\mca{H}(\Gamma,G)\ar[r,phantom,"\curvearrowleft"]\ar[u,bend right,"\ol{\sigma}"']&\Out(\Gamma)\ar[u,bend right,"\ol{\sigma}_0"']\ar[d]\\[-7pt]
&1. 
\end{tikzcd}
\end{equation*}

\subsection{Computation of the mapping class group}
Recall from Section \ref{mpcgs} that 
\begin{align*}
\MCG_o(\rho_\Gamma)&=\Aut((\Gamma\bs G)\rtimes_{\rho_\Gamma}G)/\Aut((\Gamma\bs G)\rtimes_{\rho_\Gamma}G)_{o,1}\\
&=\Aut((\Gamma\bs G)\rtimes_{\rho_\Gamma}G)/{\sim}_o
\end{align*}
and we have an action $T_o(\rho_\Gamma)\curvearrowleft\MCG_o(\rho_\Gamma)$. We prove the following theorem in this section. 

\begin{thm}\label{tormcgo}
We have 
\begin{equation*}
\begin{tikzcd}
T_o(\rho_\Gamma)\ar[r,phantom,"\curvearrowleft"]\ar[d,dash,sloped,"\sim"]&[-23pt]\MCG_o(\rho_\Gamma)\ar[d,dash,sloped,"\sim"]\\
\Aut(\wt{G})\bs\mca{H}(\wt{\Gamma},\wt{G})\ar[r,phantom,"\curvearrowleft"]&\Out(\wt{\Gamma}). 
\end{tikzcd}
\end{equation*}
The left vertical map (see Theorem \ref{traghg}) is equivariant with respect to the actions. 
\end{thm}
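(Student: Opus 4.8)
The strategy is to reduce the statement about $\MCG_o(\rho_\Gamma)$ to the corresponding statement about $\Aut(\wt{G})\bs\mca{H}(\wt{\Gamma},\wt{G})$ via the chain of identifications already established in the excerpt. Concretely, by Corollary \ref{tblgcgsgxx} (specialized to the transitive action $\rho_\Gamma$, with $H=\{1\}$ since $\Gamma$ is discrete torsion free) we have $T_o(\rho_\Gamma)\simeq\Aut(G)\bs(\Diff(G)_\Gamma/{\sim})$, and then by Lemma \ref{diffggdiffwgwg}, Lemma \ref{diffsimdiffsim}, the injectivity of $r$ in \eqref{dffwgsimgg} (Lemma \ref{injective}, applicable because $\wt{G}$ is contractible) and the surjectivity of $r$ proven via Waldhausen's theorem, we obtain $T_o(\rho_\Gamma)\simeq\Aut(\wt{G})\bs\mca{H}(\wt{\Gamma},\wt{G})$, which is Theorem \ref{traghg}. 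The plan is to run the \emph{same} chain one level up, replacing ``objects'' by ``self-maps / automorphisms'', so that the mapping class group is read off at each stage.

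First I would unwind the definition: $\MCG_o(\rho_\Gamma)=\ul{\Aut}((\Gamma\bs G)\rtimes_{\rho_\Gamma}G)/{\sim_o}$, where by the last displayed formula in the subsection ``The mapping class groups'' the subgroup $\ul{\Aut}(\cdots)_{o,1}$ consists exactly of the $\theta$ with $\theta\mathrel{\ul{\sim_o}}\id$. Applying Corollary \ref{homhomtimessim} (case $N=N^\prime=\Gamma\bs G$, $H=H^\prime=\{1\}$) identifies $\ul{\Aut}((\Gamma\bs G)\rtimes_{\rho_\Gamma}G)$ with $\ul{C}^\infty(G,G)^\times_{\{1\},\{1\}}=\Diff(G)$ — wait, more precisely with the group of $\theta\in\Diff(G)$ such that $\theta$ is a group-like diffeomorphism fixing $1$; for $H=\{1\}$ this is $\Diff(G,1)$, and base maps of conjugacies correspond to $\Aut(G)$ acting by post-composition (cf.\ Definition \ref{gmxdiffgxth}). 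Then, exactly as in Corollary \ref{tblgcgsgxx} and Section \ref{1515}, $\Diff(G,1)/{\sim}$ sits inside the picture; since $G=\PSL(2,\bb R)$ is connected, $\Diff(G,1)$-based-homotopy agrees with free homotopy (the lemma in Section \ref{1515}), so $\MCG_o(\rho_\Gamma)$ becomes a quotient of $\Diff(G)/{\text{homotopy}}$ by the $\Aut(G)$-action, i.e.\ of $\Diff(G)_\Gamma/{\sim}$ by $\Aut(G)$, compatibly with $T_o(\rho_\Gamma)$. Next I push this through Lemma \ref{diffggdiffwgwg} and Lemma \ref{diffsimdiffsim} to get $\MCG_o(\rho_\Gamma)\simeq\Aut(\wt G)\bs(\Diff(\wt G)_{\wt\Gamma}/{\sim})$, and then through the bijection $r\colon\Diff(\wt G)_{\wt\Gamma}/{\sim}\xrightarrow{\ \sim\ }\mca H(\wt\Gamma,\wt G)$ to land at $\Aut(\wt G)\bs\mca H(\wt\Gamma,\wt G)$. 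The key point is that the \emph{group} structure on $\MCG_o$ (composition of classes of automorphisms) transports, under $r$, to the $\Out(\wt\Gamma)$-action on $\mca H(\wt\Gamma,\wt G)$: an automorphism $\theta\in\Diff(\wt G)_{\wt\Gamma}$ restricts to $\theta|_{\wt\Gamma}\in\Aut(\wt\Gamma)$ (here $\theta$ is a self-map so its restriction is a \emph{bijective} homomorphism, unlike the generic $\mca H(\wt\Gamma,\wt G)$ case), and precomposition of markings by $\theta|_{\wt\Gamma}$ is exactly the action $\mca H(\wt\Gamma,\wt G)\curvearrowleft\Aut(\wt\Gamma)$ already discussed before Theorem \ref{authauthhom}; quotienting by inner automorphisms of $\wt\Gamma$ (which come from $\wt h\,\cdot\,\wt h^{-1}\in\Aut(\wt G)$, cf.\ the computation before the definition of $\ol{\sigma_0}$) gives $\Out(\wt\Gamma)$. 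Finally, equivariance of $T_o(\rho_\Gamma)\simeq\Aut(\wt G)\bs\mca H(\wt\Gamma,\wt G)$ under these actions is automatic because all the identifications in the chain are $\Diff(\wt G)_{\wt\Gamma}$- (hence $\Aut(\wt\Gamma)$-) equivariant by construction, the action of a class $[\theta]$ on a marking class being $[\alpha]\mapsto[\alpha\circ(\theta|_{\wt\Gamma})]$.

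The step I expect to be the main obstacle is verifying cleanly that the \emph{homotopy} relations match up under restriction to $\wt\Gamma$, i.e.\ that $\ul{\Aut}(\cdots)_{o,1}$ corresponds precisely to the kernel of $\Aut(\wt\Gamma)\to\Out(\wt\Gamma)$ composed with $r$ — equivalently, that $\theta\mathrel{\ul{\sim_o}}\id$ iff $\theta|_{\wt\Gamma}$ is an inner automorphism of $\wt\Gamma$. One direction (inner $\Rightarrow$ homotopic to $\id$) follows by conjugating by a bisection / using $\wt h\,\cdot\,\wt h^{-1}$ and the contractibility of $\wt G$; the other direction (homotopic to $\id\Rightarrow$ restriction inner) requires combining Corollary \ref{bggoogg}, the identification $b\colon\ul\Aut(N_0\rtimes_{\rho_0}\mca G)\simeq\ul\Diff(\mca O_{\rho_0})$ of the Corollary in Section \ref{1111}, and Lemma \ref{injective}/Lemma \ref{diffsimdiffsim} to promote an orbitwise homotopy to a based homotopy of diffeomorphisms of $\Gamma\bs\bb H^2$ (via $\rho_\Gamma$ locally free, $\mca F_{\rho_\Gamma}$ a foliation), and then invoke $\pi_1$-injectivity of $\Gamma\bs\bb H^2$ plus asphericity — essentially the same circle of ideas as in the proof of Lemma \ref{injective} and the Dehn--Nielsen--Baer argument. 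I would isolate this as a short lemma before assembling the chain. Everything else is bookkeeping: tracking base points, checking that the $\Aut(G)\simeq\Aut(\wt G)$ and $\Aut(\Gamma)\to\Aut(\wt\Gamma)$ actions are intertwined (already done in Lemmas \ref{g eq} and \ref{gamma eq}), and that the group operation on $\MCG_o$ matches the $\Out(\wt\Gamma)$-action orientation-consistently.
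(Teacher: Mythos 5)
Your overall plan — identify automorphisms of the action groupoid with a group of diffeomorphisms of $G$, lift to $\wt{G}$, restrict to $\wt{\Gamma}$, and then pass to $\Out(\wt{\Gamma})$ by killing inner automorphisms — is the same skeleton as the paper's proof (Proposition \ref{gggzgzaut} together with the proposition of Section \ref{onemore}). But as written the middle of your chain is wrong in two concrete places. First, the stabilizer is not trivial: $\Stab_{\rho_\Gamma}(\Gamma)=\{g\in G\mid \Gamma g=\Gamma\}=\Gamma$, so Corollary \ref{homhomtimessim} applied to $(\Gamma\bs G)\rtimes_{\rho_\Gamma}G$ with base point $\Gamma$ identifies the based automorphisms $\Aut_0((\Gamma\bs G)\rtimes G)$ with $\Diff_0(G)_\Gamma$ (the $\Gamma$-equivariant diffeomorphisms of $G$ preserving $\Gamma$), not with $\Diff(G,1)$; torsion-freeness and discreteness of $\Gamma$ are irrelevant here. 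Second, and more seriously, you then assert that $\MCG_o(\rho_\Gamma)$ is the quotient of $\Diff(G)_\Gamma/{\sim}$ by the $\Aut(G)$-action and hence isomorphic to $\Aut(\wt{G})\bs(\Diff(\wt{G})_{\wt{\Gamma}}/{\sim})\simeq\Aut(\wt{G})\bs\mca{H}(\wt{\Gamma},\wt{G})$. That set is $T_o(\rho_\Gamma)$ itself (Corollary \ref{tblgcgsgxx} and Theorem \ref{traghg}), not the mapping class group, and it is not $\Out(\wt{\Gamma})$. The quotient that produces $\MCG_o(\rho_\Gamma)$ is not by post-composition with $\Aut(G)$ at all; it is by the translation action of $\Gamma$, $\varphi\mapsto\varphi_\gamma^{-1}\varphi$, whose image under $\Aut_0((\Gamma\bs G)\rtimes G)/{\sim}_{o,0}\simeq\Diff_0(G)_\Gamma/{\sim}_{o,0}\simeq\Diff_0(\wt{G})_{\wt{\Gamma}}/{\sim}_{o,0}\simeq\Aut(\wt{\Gamma})$ is exactly $\wt{\Gamma}/Z$ acting by inner automorphisms, giving $\Out(\wt{\Gamma})$. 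Your closing paragraph (restrict $\theta$ to $\wt{\Gamma}$, quotient by inner automorphisms) shows you know the correct target, but it contradicts the displayed chain; the write-up cannot stand with both.

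The step you isolate as the main obstacle is indeed the crux, but your sketch of it is off target in two ways. The statement to prove is that the unbased relation $\sim_o$ on $\Aut((\Gamma\bs G)\rtimes G)$ corresponds to ``based relation modulo the $\Gamma$-translation action,'' i.e. $(\Aut_0((\Gamma\bs G)\rtimes G)/{\sim}_{o,0})/\Gamma\simeq\Aut((\Gamma\bs G)\rtimes G)/{\sim}_o$. The paper proves surjectivity by moving the basepoint with $\varphi_g$ (connectedness of $G$), and injectivity by embedding $\Aut_0((\Gamma\bs G)\rtimes G)/{\sim}_{o,0}$ into the set $\lbrack\Gamma\bs G,\Gamma\bs G\rbrack_0$ of based homotopy classes via the base-map functor $b$ — whose injectivity needs Proposition \ref{homcon}, Proposition \ref{connuni}, Lemma \ref{hnhnphipoh} and smoothing of continuous homotopies — and checking that $b$ intertwines the $\Gamma$-translation action with the $\pi_1(\Gamma\bs G,\Gamma)\simeq\wt{\Gamma}$-action on based classes. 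Note that the relevant aspherical space is the $3$-manifold $\Gamma\bs G$ (a $K(\wt{\Gamma},1)$ since $\wt{G}$ is contractible), not the surface $\Gamma\bs\bb{H}^2$ as in your sketch; also Lemma \ref{injective} concerns the map $r$ to $\mca{H}(\wt{\Gamma},\wt{G})$ and is not the tool for this step. With the stabilizer corrected to $\Gamma$, the $\Aut(G)$-quotient removed, and this based-versus-unbased lemma carried out on $\Gamma\bs G$, your argument becomes the paper's.
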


\subsubsection{A proposition}
To prove Theorem \ref{tormcgo}, we first prove the following proposition. The groups and homomorphisms will be defined in the proof. 

\begin{prop}\label{gggzgzaut}
We have a diagram 
\begin{equation*}
\begin{tikzcd}[column sep=10]
1\ar[r]&\Gamma\ar[r]\ar[d,equal]\ar[rd,phantom,"\circlearrowright"{xshift=3,yshift=-2}]&\Aut_0((\Gamma\bs G)\rtimes G)/{\sim}_{o,0}\ar[r]\ar[d,dash,"\sim"{sloped}]\ar[rd,phantom,"\circlearrowright"{xshift=-2,yshift=0}]&(\Aut_0((\Gamma\bs G)\rtimes G)/{\sim}_{o,0})/\Gamma\ar[r]\ar[d,dash,"\sim"{sloped}]&1\\
1\ar[r]&\Gamma\ar[r]\ar[d,dash,"\sim"{sloped}]\ar[rd,phantom,"\circlearrowright"{xshift=5,yshift=-3}]&\Diff_0(G)_\Gamma/{\sim}_{o,0}\ar[r]\ar[d,dash,"\sim"{sloped}]\ar[rd,phantom,"\circlearrowright"{xshift=2,yshift=-2}]&(\Diff_0(G)_\Gamma/{\sim}_{o,0})/\Gamma\ar[r]\ar[d,dash,"\sim"{sloped}]&1\\
1\ar[r]&\wt{\Gamma}/Z\ar[r]\ar[d,equal]\ar[rd,phantom,"\circlearrowright"{xshift=0,yshift=-2}]&\Diff_0(\wt{G})_{\wt{\Gamma}}/{\sim}_{o,0}\ar[r]\ar[d,dash,"\sim"{sloped}]\ar[rd,phantom,"\circlearrowright"{xshift=-7,yshift=0}]&(\Diff_0(\wt{G})_{\wt{\Gamma}}/{\sim}_{o,0})/\wt{\Gamma}\ar[r]\ar[d,dash,"\sim"{sloped}]&1\\
1\ar[r]&\wt{\Gamma}/Z\ar[r]&\Aut(\wt{\Gamma})\ar[r]&\Out(\wt{\Gamma})\ar[r]&1
\end{tikzcd}
\end{equation*}
of groups and homomorphisms, where each row is exact. 
\end{prop}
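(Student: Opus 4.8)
The diagram has three columns (a kernel, a total group, a quotient) and four rows. The strategy is to build the \emph{middle column} first, then read off the other two. The middle column consists of four groups that I claim are successively isomorphic: $\Aut_0((\Gamma\bs G)\rtimes G)/{\sim_{o,0}}$, $\Diff_0(G)_\Gamma/{\sim_{o,0}}$, $\Diff_0(\wt G)_{\wt\Gamma}/{\sim_{o,0}}$ and $\Aut(\wt\Gamma)$. Here the subscript $0$ everywhere refers to the based versions (fixing the basepoint coming from $y_0\in\Gamma\bs G$, which is the image $\Gamma\cdot 1$), and $\Aut_0((\Gamma\bs G)\rtimes G)$ is the group of based action groupoid automorphisms, which acts on $\MAG(\rho_\Gamma)$; the ${\sim_{o,0}}$ quotient is the natural one coming from Definition~\ref{gmngprmagl} restricted to basepoint-fixing conjugacies. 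The first isomorphism is an instance of the based version of the bijection $\Diff(G)_\Gamma/{\sim}\simeq\ul{\Aut}((H\bs\mca G^x)\rtimes\mca G)$-type statement (Corollary~\ref{tblgcgsgxx} together with Proposition~\ref{trans}), specialised to $H=\Gamma$ acting on $G=\mca G^x$ for the point-groupoid $\mca G=G\rightrightarrows\mathrm{pt}$; the based refinement is exactly $\ul{C}^\infty(G,G)_{\Gamma,\Gamma}^\times/{\sim_{o,0}}$, which is $\Diff_0(G)_\Gamma/{\sim_{o,0}}$ after observing that fixing $1\in G$ is forced by the equivariance $\theta(hg)=\theta(h)\theta(g)$ with $h=1$. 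The second isomorphism is Lemma~\ref{diffggdiffwgwg} and Lemma~\ref{diffsimdiffsim} (lifting diffeomorphisms of $G$ to $\wt G$ and descending homotopies), restricted to the basepoint-fixing subgroups, which is automatic since the lift $\wt\theta$ is normalised by $\wt\theta(1)=1$. The third isomorphism is the based analogue of the computation in the proof of Theorem~\ref{traghg}: the map $r\colon\Diff_0(\wt G)_{\wt\Gamma}/{\sim_{o,0}}\to\mca H(\wt\Gamma,\wt G)$ of \eqref{dffwgsimgg} is injective by Lemma~\ref{injective} (since $\wt G$ is contractible) and its image is exactly $\Aut(\wt\Gamma)$ --- surjectivity onto $\Aut(\wt\Gamma)$ follows because an automorphism of $\wt\Gamma$ gives, via Waldhausen's theorem as in the surjectivity argument for $r$, a based diffeomorphism of $\wt\Gamma\bs\wt G$ inducing it, whose lift to $\wt G$ lies in $\Diff_0(\wt G)_{\wt\Gamma}$; injectivity onto $\Aut(\wt\Gamma)$ is Lemma~\ref{injective} again. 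So the middle column is $\Aut(\wt\Gamma)$ four times over.

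\textbf{The kernel and quotient columns.} Once the middle column is identified, the left column is obtained as follows: the kernel of $\Aut_0((\Gamma\bs G)\rtimes G)/{\sim_{o,0}}\twoheadrightarrow(\dots)/\Gamma$ is by definition the image of $\Gamma$, where $\Gamma$ acts on the based automorphism group by "change of basepoint along an element of $\Gamma$", i.e.\ $\gamma$ acts as conjugation by the translation $R_\gamma$; this is injective on $\Gamma$ because $\Gamma$ has trivial centre (as $G=\PSL(2,\bb R)$ has trivial centre and $\Gamma$ is Zariski-dense, Borel density). Transporting through the isomorphisms, the $\Gamma$-action on $\Diff_0(G)_\Gamma/{\sim_{o,0}}$ is $\theta\mapsto L_{\gamma^{-1}}\theta L_\gamma$ composed with the appropriate right translation to restore the basepoint, and through $r$ this becomes precisely the inner-automorphism action of $\Gamma$ on $\Aut(\wt\Gamma)$ --- more precisely, on $\wt G$ the action is conjugation by a lift $\wt\gamma$, which on $\wt\Gamma$ is conjugation by $\wt\gamma$, and $\wt\gamma$ is well-defined modulo $Z$, so the image is $\wt\Gamma/Z\simeq\Gamma$ sitting inside $\Aut(\wt\Gamma)$ as inner automorphisms. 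This matches the bottom row of the diagram in Proposition~\ref{gggzgzaut}, whose left term is $\wt\Gamma/Z$ and whose sequence $1\to\wt\Gamma/Z\to\Aut(\wt\Gamma)\to\Out(\wt\Gamma)\to1$ is exact by definition of $\Out$ (using again that the centre of $\wt\Gamma$ is $Z$ by Borel density, so the conjugation map $\wt\Gamma\to\Aut(\wt\Gamma)$ has kernel exactly $Z$). The quotient column is then forced: each quotient is the cokernel of the corresponding kernel map, and the vertical isomorphisms in the quotient column are induced by those in the middle column since they carry the images of the various copies of $\Gamma$ (resp.\ $\wt\Gamma/Z$) to one another. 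Exactness of all four rows is then immediate: the top three rows are "definition of the quotient" and the bottom one is the definition of $\Out$.

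\textbf{Main obstacle.} The routine part is checking that all squares commute; the genuinely delicate point is tracking the \emph{basepoint bookkeeping} through the three horizontal isomorphisms and verifying that the $\Gamma$-action (change of basepoint) corresponds on the $\Aut(\wt\Gamma)$ side exactly to conjugation by $\wt\Gamma/Z$, not by something larger or smaller. Concretely, the action of $\gamma\in\Gamma$ on a based object involves choosing a path in $\Gamma\bs G$ from the old to the new basepoint, lifting it, and seeing which deck transformation it realises; the claim is that the resulting ambiguity is exactly $Z=\ker(\wt\Gamma\to\Gamma)\cap Z(\wt\Gamma)$, so that the map $\Gamma\to\Aut(\wt\Gamma)$ is well-defined and injective with image the inner automorphisms. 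This is the same mechanism as the isomorphism $\iota_x\colon\pi_1(\Gamma\bs X,\Gamma x)\to\Gamma$ of Definition~\ref{gxgxpxx} combined with Borel density (5.18 of \cite{Rag}), and I expect the argument to parallel closely the proof that $Z$ is the centre of $\wt\Gamma$ already invoked earlier in this section; once that identification is in hand the rest of the diagram assembles mechanically.
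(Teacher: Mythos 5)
Your proposal is correct and follows essentially the same route as the paper: the same chain of isomorphisms $\Aut_0((\Gamma\bs G)\rtimes G)/{\sim_{o,0}}\simeq\Diff_0(G)_\Gamma/{\sim_{o,0}}\simeq\Diff_0(\wt{G})_{\wt{\Gamma}}/{\sim_{o,0}}\simeq\Aut(\wt{\Gamma})$ (via the based correspondence underlying Corollary \ref{homhomtimessim}, Lemmas \ref{diffggdiffwgwg} and \ref{diffsimdiffsim}, and the restriction of the map $r$ of \eqref{dffwgsimgg}), with the image of $\Gamma$ identified with the inner automorphisms $\wt{\Gamma}/Z$ and injectivity coming from $Z$ being the centre of $\wt{\Gamma}$ (Borel density). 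The only difference is presentational: you assemble the middle column first and then pass to kernels and quotients, while the paper builds the diagram one pair of rows at a time, and your citations (Corollary \ref{tblgcgsgxx}, Proposition \ref{trans}) rest on the same correspondence the paper invokes directly.
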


We divide the proof of Proposition \ref{gggzgzaut} into the following three parts.

\subsubsection*{The diagram consisting of the third and fourth rows}
The exact sequence 
\begin{align*}
\wt{\Gamma}&\to\Aut(\wt{\Gamma})\to\Out(\wt{\Gamma})\to1\\
\wt{\gamma}&\mapsto\wt{\gamma}\cdot\wt{\gamma}^{-1}
\end{align*}
gives the fourth row of the diagram in Proposition \ref{gggzgzaut}. It is exact since $Z$ coincides with the center of $\wt{\Gamma}$. 

Recall from Corollary \ref{tblgcgsgxx} (or Definition \ref{gmxdiffgxth}) that 
\begin{equation*}
\Diff(\wt{G})_\wt{\Gamma}=\left\{\wt{\theta}\in\Diff(\wt{G})\ \middle|\ \text{$\wt{\theta}(\wt{\gamma}\wt{g})=\wt{\theta}(\wt{\gamma})\wt{\theta}(\wt{g})$ for all $\wt{\gamma}\in\wt{\Gamma}$, $\wt{g}\in\wt{G}$}\right\}. 
\end{equation*}
Let 
\begin{equation*}
\Diff_0(\wt{G})_\wt{\Gamma}=\left\{\wt{\theta}\in\Diff(\wt{G})_\wt{\Gamma}\ \middle|\ \wt{\theta}(\wt{\Gamma})=\wt{\Gamma}\right\}. 
\end{equation*}
Then $\Diff_0(\wt{G})_\wt{\Gamma}$ is a group. 

We have an equivalence relation $\sim $ on $\Diff(\wt{G})_\wt{\Gamma}$ defined in Definition \ref{gmxththhh}. 

Recall from Definition \ref{hnhnyychy} that 
\begin{equation*}
C^\infty(\wt{G},\wt{G})_{\wt{\Gamma},\wt{\Gamma}}=\left\{\wt{\theta}\in C^\infty(\wt{G},\wt{G})\ \middle|\ 
\begin{gathered}
\wt{\theta}(\wt{\Gamma})\subset\wt{\Gamma}\\
\text{$\wt{\theta}(\wt{\gamma}\wt{g})=\wt{\theta}(\wt{\gamma})\wt{\theta}(\wt{g})$ for all $\wt{\gamma}\in\wt{\Gamma}$, $\wt{g}\in\wt{G}$}
\end{gathered}
\right\}
\end{equation*}
and an equivalence relation $\sim_{o,0}$ on $C^\infty(\wt{G},\wt{G})_{\wt{\Gamma},\wt{\Gamma}}$. 

Since $\Diff_0(\wt{G})_\wt{\Gamma}\subset C^\infty(\wt{G},\wt{G})_{\wt{\Gamma},\wt{\Gamma}}$, the group $\Diff_0(\wt{G})_\wt{\Gamma}$ has two equivalence relations $\sim$ and $\sim_{o,0}$, but they coincide by Definition \ref{gmxththhh}. Hence the map $\Diff_0(\wt{G})_\wt{\Gamma}/{\sim}_{o,0}\to\Diff(\wt{G})_\wt{\Gamma}/{\sim}$ induced by the inclusion is injective. $\Diff_0(\wt{G})_\wt{\Gamma}/{\sim}_{o,0}$ is a group. 

The bijection \eqref{dffwgsimgg} restricts to an isomorphism of groups: 
\begin{equation*}
\begin{tikzcd}[row sep=small,column sep=small]
\Diff(\wt{G})_{\wt{\Gamma}}/{\sim}\ar[r,dash,"\sim"]\ar[d,phantom,sloped,"\supset"]&\mca{H}(\wt{\Gamma},\wt{G})\ar[d,phantom,sloped,"\supset"]\\
\Diff_0(\wt{G})_{\wt{\Gamma}}/{\sim}_{o,0}\ar[r,dash,"\sim"]&\Aut(\wt{\Gamma}). 
\end{tikzcd}
\end{equation*}

The map (note that $\Diff(\wt{G})_\wt{\Gamma}$ is not a group) 
\begin{align*}
\wt{G}&\to\Diff(\wt{G})_\wt{\Gamma}\\
\wt{g}&\mapsto\wt{g}\ \cdot\ \wt{g}^{-1}
\end{align*}
restricts to a homomorphism $\wt{\Gamma}\to\Diff_0(\wt{G})_\wt{\Gamma}$. 

We have 
\begin{equation*}
\begin{tikzcd}
\wt{\Gamma}\ar[r]\ar[d,equal]\ar[rd,phantom,"\circlearrowright"{xshift=5,yshift=-3}]&\Diff_0(\wt{G})_{\wt{\Gamma}}/{\sim}_{o,0}\ar[d,dash,"\sim"{sloped}]\\
\wt{\Gamma}\ar[r]&\Aut(\wt{\Gamma}). 
\end{tikzcd}
\end{equation*}
Hence the image of $\wt{\Gamma}$ in $\Diff_0(\wt{G})_{\wt{\Gamma}}/{\sim}_{o,0}$ is a normal subgroup. This induces 
\begin{equation*}
\begin{tikzcd}
1\ar[r]&\wt{\Gamma}/Z\ar[r]\ar[d,equal]\ar[rd,phantom,"\circlearrowright"{xshift=5,yshift=-3}]&\Diff_0(\wt{G})_{\wt{\Gamma}}/{\sim}_{o,0}\ar[r]\ar[d,dash,"\sim"{sloped}]\ar[rd,phantom,"\circlearrowright"{xshift=-3,yshift=0}]&(\Diff_0(\wt{G})_{\wt{\Gamma}}/{\sim}_{o,0})/\wt{\Gamma}\ar[r]\ar[d,dash,"\sim"{sloped}]&1\\
1\ar[r]&\wt{\Gamma}/Z\ar[r]&\Aut(\wt{\Gamma})\ar[r]&\Out(\wt{\Gamma})\ar[r]&1, 
\end{tikzcd}
\end{equation*}
which gives the third and fourth rows of the diagram in Proposition \ref{gggzgzaut}. Since $Z$ coincides with the center of $\wt{\Gamma}$, the fourth row of the diagram is exact.

\subsubsection*{The diagram consisting of the second and third rows}
The bijection in Lemma \ref{diffggdiffwgwg} restricts to an isomorphism of groups: 
\begin{equation*}
\begin{tikzcd}[row sep=small,column sep=small]
\Diff(G)_\Gamma\ar[r,dash,"\sim"]\ar[d,phantom,sloped,"\supset"]&\Diff(\wt{G})_{\wt{\Gamma}}\ar[d,phantom,sloped,"\supset"]\\
\Diff_0(G)_\Gamma\ar[r,dash,"\sim"]&\Diff_0(\wt{G})_{\wt{\Gamma}}, 
\end{tikzcd}
\end{equation*}
where $\Diff_0(G)_\Gamma$ is defined similarly. Hence 
\begin{equation*}
\begin{tikzcd}[row sep=small,column sep=small]
\Diff(G)_\Gamma/{\sim}\ar[r,dash,"\sim"]\ar[d,phantom,sloped,"\supset"]&\Diff(\wt{G})_{\wt{\Gamma}}/{\sim}\ar[d,phantom,sloped,"\supset"]\\
\Diff_0(G)_\Gamma/{\sim}_{o,0}\ar[r,dash,"\sim"]&\Diff_0(\wt{G})_{\wt{\Gamma}}/{\sim}_{o,0}
\end{tikzcd}
\end{equation*}
by Lemma \ref{diffsimdiffsim}. We have 
\begin{equation*}
\begin{tikzcd}
\Gamma\ar[r]\ar[rd,phantom,"\circlearrowright"{xshift=3,yshift=-3}]&\Diff_0(G)_\Gamma\ar[d,dash,"\sim"{sloped}]\\
\wt{\Gamma}\ar[r]\ar[u,"p"]&\Diff_0(\wt{G})_{\wt{\Gamma}}. 
\end{tikzcd}
\end{equation*}
This induces 
\begin{equation*}
\begin{tikzcd}
1\ar[r]&\Gamma\ar[r]\ar[rd,phantom,"\circlearrowright"{xshift=3,yshift=-3}]&\Diff_0(G)_\Gamma/{\sim}_{o,0}\ar[r]\ar[d,dash,"\sim"{sloped}]\ar[rd,phantom,"\circlearrowright"{xshift=2,yshift=-1}]&(\Diff_0(G)_\Gamma/{\sim}_{o,0})/\Gamma\ar[r]\ar[d,dash,"\sim"{sloped}]&1\\
1\ar[r]&\wt{\Gamma}/Z\ar[r]\ar[u,"p","\sim"'{sloped}]&\Diff_0(\wt{G})_{\wt{\Gamma}}/{\sim}_{o,0}\ar[r]&(\Diff_0(\wt{G})_{\wt{\Gamma}}/{\sim}_{o,0})/\wt{\Gamma}\ar[r]&1, 
\end{tikzcd}
\end{equation*}
which is the second and third rows of the diagram in Proposition \ref{gggzgzaut}. (The exactness at $\Gamma$ follows from the exactness at $\wt{\Gamma}/Z$.)

\subsubsection*{The diagram consisting of the first and second rows}
We see $(\Gamma\bs G)\rtimes G\rightrightarrows\Gamma\bs G$ as a based Lie groupoid $((\Gamma\bs G)\rtimes G\rightrightarrows\Gamma\bs G,\Gamma)$. By Corollary \ref{homhomtimessim}, we have 
\begin{align*}
\End_0((\Gamma\bs G)\rtimes G)&\simeq C^\infty(G,G)_{\Gamma,\Gamma}\\
\varphi&\mapsto\theta_\varphi\\
\varphi_\theta&\mapsfrom\theta, 
\end{align*}
where $\theta_\varphi(g)=\varphi_{\rho_\Gamma}\varphi(\Gamma,g)$, $\varphi_\theta(\Gamma x,g)=(\Gamma\theta(x),\theta(x)^{-1}\theta(xg))$, and $\sim_{o,0}$ on the left corresponds to $\sim_{o,0}$ on the right. Let 
\begin{equation*}
\Aut_0((\Gamma\bs G)\rtimes G)=\left\{\varphi\in\Aut((\Gamma\bs G)\rtimes G)\ \middle|\ F_\varphi(\Gamma)=\Gamma\right\}. 
\end{equation*}

\begin{lem}
We have a group isomorphism 
\begin{align*}
\Aut_0((\Gamma\bs G)\rtimes G)&\simeq\Diff_0(G)_\Gamma\\
\varphi&\mapsto\theta_\varphi\\
\varphi_\theta&\mapsfrom\theta. 
\end{align*}
\end{lem}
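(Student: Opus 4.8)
The plan is to show that the bijection $\End_0((\Gamma\bs G)\rtimes G)\simeq C^\infty(G,G)_{\Gamma,\Gamma}$ from Corollary \ref{homhomtimessim} restricts to a bijection between $\Aut_0((\Gamma\bs G)\rtimes G)$ and $\Diff_0(G)_\Gamma$, and that on these subsets it is a group homomorphism (hence isomorphism). The key point is to identify, under the correspondence $\varphi\leftrightarrow\theta$, the condition ``$\varphi$ is an automorphism fixing the basepoint'' with the condition ``$\theta\in\Diff_0(G)_\Gamma$'', i.e. $\theta$ is a diffeomorphism of $G$ satisfying $\theta(\gamma g)=\theta(\gamma)\theta(g)$ and $\theta(\Gamma)=\Gamma$.

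First I would recall that in the transitive setting $(\Gamma\bs G)\rtimes G$ is the action groupoid $\rho_\Gamma$ with $\Stab_{\rho_\Gamma}(\Gamma)=\Gamma$, so that $((\Gamma\bs G)\rtimes G)^\Gamma\simeq G$ and $((\Gamma\bs G)\rtimes G)^\Gamma_\Gamma\simeq\Gamma$ under $\varphi_{\rho_\Gamma}$. By part 2 of Corollary \ref{homhomtimessim} (applied with $H=H'=\Gamma$), the bijection \eqref{ngnpggxgxhh} restricts to $\ul{\Hom}_0((\Gamma\bs G)\rtimes G,(\Gamma\bs G)\rtimes G)^\times\simeq\ul{C}^\infty(G,G)_{\Gamma,\Gamma}^\times$, where the right-hand side is exactly $\Diff_0(G)_\Gamma$: a $\theta\in\ul{C}^\infty(G,G)_{\Gamma,\Gamma}$ is a diffeomorphism with $s\theta=s$ (trivial here since $G\to\mathrm{pt}$), $\theta(\Gamma)\subset\Gamma$, and $\theta(\gamma g)=\theta(\gamma)\theta(g)$; being invertible forces $\theta(\Gamma)=\Gamma$ and $\theta^{-1}\in\ul{C}^\infty(G,G)_{\Gamma,\Gamma}$ as well. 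On the left, Proposition \ref{isoactdiff} (part 2) together with the fact that $F_{\varphi_\theta}$ is the diffeomorphism $\rho_\Gamma(\cdot,\ol{\theta}(\cdot))$ — equivalently the diffeomorphism $\ol\theta$ of $\Gamma\bs G$ — shows that $\varphi\in\ul{\Hom}_0(\cdots)^\times$ precisely when $\varphi$ is an action groupoid isomorphism with $F_\varphi(\Gamma)=\Gamma$, which is the definition of $\Aut_0((\Gamma\bs G)\rtimes G)$. This settles the bijectivity statement.

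Next I would check that the correspondence is multiplicative. Given $\varphi,\varphi'\in\Aut_0((\Gamma\bs G)\rtimes G)$ with associated $\theta_\varphi,\theta_{\varphi'}$, restrict $\varphi'\varphi$ to the target fiber over $\Gamma$: using $\theta_{\varphi'\varphi}=\varphi_{\rho_\Gamma}(\varphi'\varphi)|_{(\cdots)^\Gamma}$ and the identity $\varphi_{\rho_\Gamma}\varphi' = $ (the constant part of $\varphi'$) $\circ\,\varphi_{\rho_\Gamma}$ on the relevant fiber, one gets $\theta_{\varphi'\varphi}=\theta_{\varphi'}\circ\theta_\varphi$ on $G\simeq(\cdots)^\Gamma$. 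This is really the transitive-groupoid analogue of Lemma \ref{compos}: the map $\varphi\mapsto\theta_\varphi=\varphi|_{\text{target fiber}}$ is compatible with composition because composition of morphisms restricts to composition on target fibers. So I would invoke (or re-derive in one line) the fiber-restriction argument of Lemma \ref{compos}, noting that $\theta_{\varphi_\theta}=\theta$ and $\varphi_{\theta_\varphi}=\varphi$ are already recorded in Corollary \ref{homhomtimessim}(1), whence the inverse map is automatically a homomorphism too.

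The main obstacle — though a mild one — is bookkeeping: one must be careful that the ``constant part'' appearing when restricting $\varphi'$ to a target fiber is consistent with the identification $((\Gamma\bs G)\rtimes G)^{\Gamma}_{\Gamma}\simeq\Gamma$, and that the basepoint $\Gamma\in\Gamma\bs G$ is genuinely preserved so that the three identifications $(\cdots)^\Gamma\simeq G$ used for $\varphi$, $\varphi'$, and $\varphi'\varphi$ are the \emph{same} identification. Once one fixes the basepoint throughout (which is exactly what $\ul{\Hom}_0$ and $\Aut_0$ encode), there is no ambiguity, and the verification reduces to the computations already packaged in Corollary \ref{homhomtimessim} and Lemma \ref{compos}. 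I do not expect any genuinely new difficulty beyond assembling these pieces.
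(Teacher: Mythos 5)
Your skeleton (Corollary \ref{homhomtimessim} plus Lemma \ref{compos}) is the right one, and your multiplicativity step via Lemma \ref{compos} is exactly what the paper uses. But there is a genuine gap in the identification of the two ``$\times$'' sets with $\Aut_0((\Gamma\bs G)\rtimes G)$ and $\Diff_0(G)_\Gamma$. By Definition \ref{hnhnyychy}, $\theta\in C^\infty(G,G)_{\Gamma,\Gamma}^\times$ only means: $\theta$ is a diffeomorphism of $G$, $\theta(\Gamma)\subset\Gamma$, and $\theta(\gamma g)=\theta(\gamma)\theta(g)$; membership in $\Diff_0(G)_\Gamma$ requires in addition $\theta(\Gamma)=\Gamma$. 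Your sentence ``being invertible forces $\theta(\Gamma)=\Gamma$'' is an assertion, not a proof: invertibility of $\theta$ as a map of $G$ gives no control over $\theta^{-1}(\Gamma)$, and nothing in the definitions rules out $\theta(\Gamma)\subsetneq\Gamma$. The same unproved point contaminates the other side of your argument: the claim that $F_{\varphi_\theta}$ is the diffeomorphism $\ol{\theta}$ of $\Gamma\bs G$ (so that Proposition \ref{isoactdiff}(2) yields $\ul{\Hom}_0(\cdot)^\times=\Aut_0(\cdot)$) already presupposes $\theta(\Gamma)=\Gamma$; if $\theta(\Gamma)$ were a proper subgroup, the base map $\Gamma g\mapsto\Gamma\theta(g)$ would be a covering of degree $[\Gamma:\theta(\Gamma)]$ rather than injective. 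So as written the two key ``precisely when'' identifications are circular.

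The gap can be closed, but it needs a real input or a restructuring. One repair is to prove $\theta(\Gamma)=\Gamma$ directly: $\ol{\theta}\colon\Gamma\bs G\to\theta(\Gamma)\bs G$ is a diffeomorphism, so $\theta(\Gamma)\bs G$ is compact and $[\Gamma:\theta(\Gamma)]<\infty$, and then an Euler characteristic count as in the proof of Proposition \ref{cocompact} (equivalently, co-Hopficity of closed surface groups of genus $\geq2$) forces the index to be $1$ — but this is an extra argument specific to the present $\Gamma$ that you would have to supply. The paper instead sidesteps the issue entirely: it characterizes $\Aut_0((\Gamma\bs G)\rtimes G)$ as the group of invertible elements of the monoid $\End_0((\Gamma\bs G)\rtimes G)$ and $\Diff_0(G)_\Gamma$ as the group of invertible elements of the monoid $C^\infty(G,G)_{\Gamma,\Gamma}$, where ``invertible'' means the inverse lies in the \emph{same} monoid — and it is precisely $\theta^{-1}\in C^\infty(G,G)_{\Gamma,\Gamma}$, i.e.\ $\theta^{-1}(\Gamma)\subset\Gamma$, that encodes $\theta(\Gamma)=\Gamma$. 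Since Corollary \ref{homhomtimessim}(1) together with Lemma \ref{compos} makes the bijection $\End_0(\cdot)\simeq C^\infty(G,G)_{\Gamma,\Gamma}$ a monoid isomorphism, it restricts to an isomorphism of unit groups, with no need to decide whether a one-sided condition on a diffeomorphism forces $\theta(\Gamma)=\Gamma$. I recommend adopting that formulation rather than the action-morphism route through $\ul{\Hom}_0(\cdot)^\times$.
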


\begin{proof}
We have 
\begin{equation*}
\Aut_0((\Gamma\bs G)\rtimes G)=\left\{\varphi\in\End_0((\Gamma\bs G)\rtimes G)\ \middle|\ 
\begin{gathered}
\text{there is $\varphi^\prime\in\End_0((\Gamma\bs G)\rtimes G)$}\\
\text{such that $\varphi\varphi^\prime=\id$, $\varphi^\prime\varphi=\id$}
\end{gathered}
\right\}
\end{equation*}
and 
\begin{equation*}
\Diff_0(G)_\Gamma=\left\{\theta\in C^\infty(G,G)_{\Gamma,\Gamma}\ \middle|\ 
\begin{gathered}
\text{there exists $\theta^\prime\in C^\infty(G,G)_{\Gamma,\Gamma}$}\\
\text{such that $\theta\theta^\prime=\id$, $\theta^\prime\theta=\id$}
\end{gathered}
\right\}. 
\end{equation*}
Hence the lemma follows from Lemma \ref{compos}. 
\end{proof}

Hence it induces a group isomorphism 
\begin{equation*}
\Aut_0((\Gamma\bs G)\rtimes G)/{\sim}_{o,0}\simeq\Diff_0(G)_\Gamma/{\sim}_{o,0}. 
\end{equation*}

We have a homomorphism 
\begin{align*}
G&\to\Aut((\Gamma\bs G)\rtimes G)\\
g&\mapsto\varphi_g
\end{align*}
defined by 
\begin{equation*}
\varphi_g(\Gamma x,g^\prime)=(\Gamma xg^{-1},gg^\prime g^{-1}). 
\end{equation*}
It restricts to a homomorphism $\Gamma\to\Aut_0((\Gamma\bs G)\rtimes G)$. 

We have 
\begin{equation*}
\begin{tikzcd}
\Gamma\ar[r]\ar[d,equal]\ar[rd,phantom,"\circlearrowright"{xshift=5,yshift=-3}]&\Aut_0((\Gamma\bs G)\rtimes G)\ar[d,dash,"\sim"{sloped}]\\
\Gamma\ar[r]&\Diff_0(G)_\Gamma. 
\end{tikzcd}
\end{equation*}
This induces 
\begin{equation*}
\begin{tikzcd}[column sep=15]
1\ar[r]&\Gamma\ar[r]\ar[d,equal]\ar[rd,phantom,"\circlearrowright"{xshift=5,yshift=-3}]&\Aut_0((\Gamma\bs G)\rtimes G)/{\sim}_{o,0}\ar[r]\ar[d,dash,"\sim"{sloped}]\ar[rd,phantom,"\circlearrowright"{xshift=0,yshift=0}]&(\Aut_0((\Gamma\bs G)\rtimes G)/{\sim}_{o,0})/\Gamma\ar[r]\ar[d,dash,"\sim"{sloped}]&1\\
1\ar[r]&\Gamma\ar[r]&\Diff_0(G)_\Gamma/{\sim}_{o,0}\ar[r]&(\Diff_0(G)_\Gamma/{\sim}_{o,0})/\Gamma\ar[r]&1, 
\end{tikzcd}
\end{equation*}
which is the first and second row of the diagram in Proposition \ref{gggzgzaut}. The exactness of the first row follows from that of the second. This completes the proof of Proposition \ref{gggzgzaut}. 

\begin{prop}
We have 
\begin{equation*}
\begin{tikzcd}[row sep=13]
T_o(\rho_\Gamma)\ar[r,phantom,"\curvearrowleft"]\ar[d,dash,sloped,"\sim"]&[-31]\Aut_0((\Gamma\bs G){\rtimes}G)/{\sim}_{o,0}\ar[d,dash,sloped,"\sim"]\\
\Aut(G)\bs(\Diff(G)_\Gamma/{\sim})\ar[r,phantom,"\curvearrowleft"]\ar[d,dash,sloped,"\sim"]&\Diff_0(G)_\Gamma/{\sim}_{o,0}\ar[d,dash,sloped,"\sim"]\\
\Aut(\wt{G})\bs(\Diff(\wt{G})_{\wt{\Gamma}}/{\sim})\ar[r,phantom,"\curvearrowleft"]\ar[d,dash,sloped,"\sim"]&\Diff_0(\wt{G})_{\wt{\Gamma}}/{\sim}_{o,0}\ar[d,dash,sloped,"\sim"]\\
\Aut(\wt{G})\bs\mca{H}(\wt{\Gamma},\wt{G})\ar[r,phantom,"\curvearrowleft"]&\Aut(\wt{\Gamma}). 
\end{tikzcd}
\end{equation*}
The left vertical maps are equivariant with respect to the actions. 
\end{prop}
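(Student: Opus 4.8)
The plan is to assemble the proposition from the four horizontal isomorphisms already proved in the excerpt together with the vertical chain of isomorphisms established in Proposition \ref{gggzgzaut}. Concretely, the left column is the sequence
\[
T_o(\rho_\Gamma)\ \simeq\ \Aut(G)\bs(\Diff(G)_\Gamma/{\sim})\ \simeq\ \Aut(\wt G)\bs(\Diff(\wt G)_{\wt\Gamma}/{\sim})\ \simeq\ \Aut(\wt G)\bs\mca{H}(\wt\Gamma,\wt G),
\]
where the first bijection is Corollary \ref{tblgcgsgxx}, the second is the $(\Aut(G)\simeq\Aut(\wt G))$-equivariant bijection obtained by descending the isomorphism of Lemma \ref{diffggdiffwgwg} through $\sim$ via Lemma \ref{diffsimdiffsim} (stated explicitly just after Lemma \ref{diffsimdiffsim}), and the third is the bijection $\Diff(\wt G)_{\wt\Gamma}/{\sim}\simeq\mca{H}(\wt\Gamma,\wt G)$ of \eqref{dffwgsimgg}, which the previous subsection showed to be a bijection of groups. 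The right column is the corresponding chain for the $\MCG$-side: $\Aut_0((\Gamma\bs G)\rtimes G)/{\sim}_{o,0}\simeq\Diff_0(G)_\Gamma/{\sim}_{o,0}\simeq\Diff_0(\wt G)_{\wt\Gamma}/{\sim}_{o,0}\simeq\Aut(\wt\Gamma)$, which are precisely the three middle vertical isomorphisms appearing in Proposition \ref{gggzgzaut}.

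The first step I would carry out is to make the actions on both columns explicit and match them up. On the top row the action of $\Aut_0((\Gamma\bs G)\rtimes G)/{\sim}_{o,0}\simeq\MCG_o(\rho_\Gamma)$ on $T_o(\rho_\Gamma)$ is by precomposition $[N\rtimes_\rho\mca{G},\varphi]\mapsto[N\rtimes_\rho\mca{G},\varphi\theta]$ as in Section \ref{mpcgs}. Under the bijection $T_o(\rho_\Gamma)\simeq\Aut(G)\bs(\Diff(G)_\Gamma/{\sim})$ of Corollary \ref{tblgcgsgxx}, which sends $[N\rtimes_\rho\mca{G},\varphi]$ to $[\theta_\varphi]$ with $\theta_\varphi(g)=\varphi_{\rho_\Gamma}\varphi(\Gamma,g)$, precomposition by $\theta\in\Aut_0((\Gamma\bs G)\rtimes G)$ corresponds to right multiplication by $\theta_\theta\in\Diff_0(G)_\Gamma$ in $\Aut(G)\bs(\Diff(G)_\Gamma/{\sim})$; this is an instance of Lemma \ref{compos} applied to the composite $\varphi\theta$. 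Then I would transport this right action along the two further isomorphisms of the left column: Lemma \ref{diffggdiffwgwg} is manifestly compatible with the group structure (both sides are diffeomorphisms composing), and \eqref{dffwgsimgg} restricts the isomorphism $\Diff(\wt G)_{\wt\Gamma}/{\sim}\simeq\mca{H}(\wt\Gamma,\wt G)$, so the right-multiplication action of $\Diff_0(\wt G)_{\wt\Gamma}/{\sim}_{o,0}$ on $\Aut(\wt G)\bs(\Diff(\wt G)_{\wt\Gamma}/{\sim})$ becomes the right-multiplication action of $\Aut(\wt\Gamma)$ on $\Aut(\wt G)\bs\mca{H}(\wt\Gamma,\wt G)$ by composition. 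Each of these transports is a routine diagram check that the relevant square of maps commutes and intertwines the two actions.

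The second step is to check that the $\Aut(\wt\Gamma)$-action on $\Aut(\wt G)\bs\mca{H}(\wt\Gamma,\wt G)$ descends to the claimed $\Out(\wt\Gamma)$-action, which amounts to observing (as already recorded in the excerpt, in the diagram preceding Proposition \ref{gggzgzaut} and in the summary diagram just before Section \ref{tormcgo}'s statement) that inner automorphisms of $\wt\Gamma$ act trivially on $\Aut(\wt G)\bs\mca{H}(\wt\Gamma,\wt G)$ — equivalently, that conjugation by $\wt\gamma\in\wt\Gamma$ corresponds on the $T_o$-side to an element of $\Aut((\Gamma\bs G)\rtimes_{\rho_\Gamma}G)_{o,1}$, i.e.\ to a conjugacy that is $\mathrel{\ul{\sim_{o,0}}}\id$. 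This is exactly the content of the first row of Proposition \ref{gggzgzaut}: the image of $\Gamma$ in $\Aut_0((\Gamma\bs G)\rtimes G)/{\sim}_{o,0}$ is normal, and quotienting by it one gets $\Out(\wt\Gamma)\simeq\MCG_o(\rho_\Gamma)$. Composing with the identification $\MCG_o(\rho_\Gamma)=\Aut((\Gamma\bs G)\rtimes_{\rho_\Gamma}G)/{\sim}_o$ and noting that every automorphism of $(\Gamma\bs G)\rtimes_{\rho_\Gamma}G$ is $\mathrel{\ul{\sim_o}}$ one whose base map fixes the basepoint (using, as in the transitive-action section, a bisection/path-lifting argument to move the basepoint, since $\rho_\Gamma$ is transitive — this is Proposition \ref{trans}'s setup applied with $H=\{1\}$) gives $\MCG_o(\rho_\Gamma)\simeq\Aut_0((\Gamma\bs G)\rtimes G)/{\sim}_{o,0}$.

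I expect the main obstacle to be the bookkeeping of \emph{equivariance} rather than any one isomorphism: one must verify that each of the many squares in the tower simultaneously intertwines the $\MCG$/group actions on the two sides, and in particular that the passage between the $/{\sim}$ (full) and $/{\sim}_{o,0}$ (based) quotients does not spoil equivariance. The potentially delicate point is the reduction from $\mathrel{\ul{\sim_o}}$ to $\mathrel{\ul{\sim_{o,0}}}$ for the mapping class group, i.e.\ showing $\MCG_o(\rho_\Gamma)$ is really computed by \emph{based} homotopy classes of automorphisms; this requires the transitivity of $\rho_\Gamma$ and an argument, parallel to the proof that $\mathrel{\ul{\sim_{o,0}}}=\mathrel{\ul{\sim_o}}$ on $\MAG(\rho_0)$ for transitive actions (Proposition \ref{trans}), that any orbitwise homotopy can be corrected by a path of interior-bisection conjugations to fix the basepoint. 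Everything else is either already in the excerpt or a mechanical diagram chase, so the proof will mostly consist of citing Proposition \ref{gggzgzaut}, Corollary \ref{tblgcgsgxx}, Lemmas \ref{diffggdiffwgwg}, \ref{diffsimdiffsim}, \ref{compos}, and \eqref{dffwgsimgg}, and assembling the pieces.
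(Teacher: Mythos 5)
Your proposal is correct and takes essentially the paper's route: the paper proves this proposition simply by assembling the previously established bijections (Corollary \ref{tblgcgsgxx}, Lemmas \ref{diffggdiffwgwg} and \ref{diffsimdiffsim}, the bijection \eqref{dffwgsimgg}) with the group isomorphisms of Proposition \ref{gggzgzaut} and checking equivariance directly from the definitions, which is exactly your first step. Only note that your second step — descending the $\Aut(\wt{\Gamma})$-action to $\Out(\wt{\Gamma})$ and identifying $\MCG_o(\rho_\Gamma)$ with based classes via transitivity — is not needed for this statement (which involves $\Aut(\wt{\Gamma})$, not $\Out(\wt{\Gamma})$); that material belongs to the subsequent corollary and to Theorem \ref{tormcgo}.
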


\begin{proof}
This can be seen easily from the definitions. 
\end{proof}

\begin{cor}
We have 
\begin{equation*}
\begin{tikzcd}[row sep=13]
T_o(\rho_\Gamma)\ar[r,phantom,"\curvearrowleft"]\ar[d,dash,sloped,"\sim"]&[-31](\Aut_0((\Gamma\bs G){\rtimes}G)/{\sim}_{o,0})/\Gamma\ar[d,dash,sloped,"\sim"]\\
\Aut(G)\bs(\Diff(G)_\Gamma/{\sim})\ar[r,phantom,"\curvearrowleft"]\ar[d,dash,sloped,"\sim"]&(\Diff_0(G)_\Gamma/{\sim}_{o,0})/\Gamma\ar[d,dash,sloped,"\sim"]\\
\Aut(\wt{G})\bs(\Diff(\wt{G})_{\wt{\Gamma}}/{\sim})\ar[r,phantom,"\curvearrowleft"]\ar[d,dash,sloped,"\sim"]&(\Diff_0(\wt{G})_{\wt{\Gamma}}/{\sim}_{o,0})/\wt{\Gamma}\ar[d,dash,sloped,"\sim"]\\
\Aut(\wt{G})\bs\mca{H}(\wt{\Gamma},\wt{G})\ar[r,phantom,"\curvearrowleft"]&\Out(\wt{\Gamma}). 
\end{tikzcd}
\end{equation*}
The left vertical maps are equivariant with respect to the actions. 
\end{cor}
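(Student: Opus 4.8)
<br>

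The plan is to deduce this corollary directly from the preceding proposition, whose bottom three rows furnish vertical identifications
\[
\Aut_0((\Gamma\bs G)\rtimes G)/{\sim}_{o,0}\simeq\Diff_0(G)_\Gamma/{\sim}_{o,0}\simeq\Diff_0(\wt{G})_{\wt\Gamma}/{\sim}_{o,0}\simeq\Aut(\wt\Gamma),
\]
all compatible with the appropriate left-hand identifications of the Teichm\"uller-type sets, and all equivariant for the indicated actions. The point is simply to quotient each of these groups, on the right, by the (normal) copy of $\Gamma$ (respectively $\wt\Gamma$, whose image in $\Aut(\wt\Gamma)$ gives $\Out(\wt\Gamma)$), and to observe that passing to the quotient does not change the object acted upon, because the copy of $\Gamma$ acts trivially.

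First I would invoke Proposition \ref{gggzgzaut}: its four rows are short exact sequences, and the vertical maps in the middle column are group isomorphisms carrying the normal subgroup $\Gamma$ (resp.\ $\wt\Gamma/Z$, resp.\ $\wt\Gamma$) in one row to the corresponding normal subgroup in the next. Hence the induced maps on the right-hand quotient groups
\[
(\Aut_0((\Gamma\bs G)\rtimes G)/{\sim}_{o,0})/\Gamma
\simeq(\Diff_0(G)_\Gamma/{\sim}_{o,0})/\Gamma
\simeq(\Diff_0(\wt G)_{\wt\Gamma}/{\sim}_{o,0})/\wt\Gamma
\simeq\Out(\wt\Gamma)
\]
are group isomorphisms. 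This gives the right-hand column of the corollary with no further work.

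Next I would treat the left-hand column. Each copy of the quotient group $\Gamma$ (resp.\ $\wt\Gamma$) acts trivially on the corresponding left-hand set: for $\MCG_o$ this is exactly the statement that the image of $\Gamma$ in $\MCG_o(\rho_\Gamma)$ lies in $\ul{\Aut}(N_0\rtimes_{\rho_0}\mca G)_{o,1}$, i.e.\ the elements $\varphi_g$ for $g\in\Gamma$ are $\sim_o$-trivial (this is the content of the exactness at $\Gamma$ in the first row of Proposition \ref{gggzgzaut} together with the definitions in Section \ref{mpcgs}); correspondingly, the inner automorphisms $\wt\gamma\cdot\wt\gamma^{-1}$ act trivially on $\Aut(\wt G)\bs\mca H(\wt\Gamma,\wt G)$ because post- and pre-composition by an element of $\Aut(\wt G)$ (namely $\wt g\mapsto\wt g\cdot\wt g^{-1}$ extending $p\wt\gamma$) shows $\wt\alpha\circ(\wt\gamma\cdot\wt\gamma^{-1})$ and $\wt\alpha$ lie in the same $\Aut(\wt G)$-orbit, exactly as in the computation $\sigma(h\iota h^{-1})=\wt h\,\wt\iota\,\wt h^{-1}$. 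Therefore the left-hand set in the preceding proposition is unchanged under passage to the $\Gamma$- (resp.\ $\wt\Gamma$-)quotient of the acting group, and the equivariant bijections of the preceding proposition descend to the equivariant bijections claimed here. Finally I would note that the identifications $T_o(\rho_\Gamma)\simeq\Aut(G)\bs(\Diff(G)_\Gamma/{\sim})\simeq\Aut(\wt G)\bs(\Diff(\wt G)_{\wt\Gamma}/{\sim})\simeq\Aut(\wt G)\bs\mca H(\wt\Gamma,\wt G)$ (Corollary \ref{tblgcgsgxx}, Lemmas \ref{diffsimdiffsim}, \ref{diffggdiffwgwg}, and Theorem \ref{traghg}) are already known and carry over verbatim.

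The only genuine point requiring care — and the step I expect to be the main obstacle — is verifying that $\Gamma$ acts trivially on $T_o(\rho_\Gamma)$ compatibly on all four levels at once, i.e.\ that the triviality of the $\Gamma$-action matches up correctly under the three vertical identifications. Concretely one must check that $\varphi_g$ for $g\in\Gamma$ corresponds, under $\Aut_0((\Gamma\bs G)\rtimes G)/{\sim}_{o,0}\simeq\Diff_0(G)_\Gamma/{\sim}_{o,0}$, to left translation $L_g$, which is $\sim_{o,0}$-trivial only after descending to the quotient by $\Gamma$ — equivalently, that the horizontal bijections in the previous proposition intertwine the two descriptions of the $\Gamma$-action. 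This is a routine unwinding of the formulas $\theta_{\varphi_g}$, $\varphi_\theta$ from Corollary \ref{homhomtimessim} and the definition of $\sim_{o,0}$, but it is where all the bookkeeping lives; everything else is a formal consequence of Proposition \ref{gggzgzaut} and the already-established identifications.
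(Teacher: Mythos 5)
Your proposal is correct and is essentially the (implicit) argument the paper intends for this corollary: quotient the right-hand column of the preceding proposition along the rows of Proposition \ref{gggzgzaut}, whose vertical isomorphisms match up the normal copies of $\Gamma$ (resp.\ $\wt{\Gamma}$), so the quotient groups are all identified with $\Out(\wt{\Gamma})$; and the actions descend because the image of the normal subgroup acts trivially on each left-hand set, so the equivariant bijections of the preceding proposition pass to the quotients.

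Two small slips are worth fixing. First, the $\sim_o$-triviality of $\varphi_\gamma$ is not what exactness at $\Gamma$ in Proposition \ref{gggzgzaut} says — that exactness is the injectivity of $\Gamma\to\Aut_0((\Gamma\bs G)\rtimes G)/{\sim}_{o,0}$, i.e.\ precisely that $\varphi_\gamma$ is \emph{not} trivial in the based quotient; the triviality you need is in the unbased setting and comes from connectedness of $G$ (a path from $\gamma$ to $1$ gives an orbitwise homotopy $\varphi_\gamma\sim_o\id$, exactly as in the lemma of the next subsubsection). Second, under $\Aut_0((\Gamma\bs G)\rtimes G)\simeq\Diff_0(G)_\Gamma$ the element $\varphi_\gamma$ corresponds to conjugation $\gamma\cdot\gamma^{-1}$ (compute $\theta_{\varphi_\gamma}(g^\prime)=\gamma g^\prime\gamma^{-1}$), not to left translation $L_\gamma$, which does not even lie in $\Diff(G)_\Gamma$. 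Finally, the "compatibility on all four levels" you single out as the main obstacle is automatic: once the normal subgroup acts trivially on one level — most easily the bottom one, where $\wt{\alpha}\circ(\wt{\gamma}\cdot\wt{\gamma}^{-1})=\wt{\alpha}(\wt{\gamma})\,\wt{\alpha}(\cdot)\,\wt{\alpha}(\wt{\gamma})^{-1}$ is absorbed into the $\Aut(\wt{G})$-quotient — the equivariance of the vertical bijections in the preceding proposition, together with the matching of the normal subgroups in Proposition \ref{gggzgzaut}, transports the triviality to every level. None of this affects the validity of your overall route.
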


\subsubsection{One more proposition}\label{onemore}
In this section we prove the following proposition, from which Theorem \ref{tormcgo} follows. 

\begin{prop}
We have an isomorphism 
\begin{equation*}
(\Aut_0((\Gamma\bs G)\rtimes G)/{\sim}_{o,0})/\Gamma\simeq\Aut((\Gamma\bs G)\rtimes G)/{\sim}_o=\MCG_o(\rho_\Gamma). 
\end{equation*}
\end{prop}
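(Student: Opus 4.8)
The goal is to exhibit a natural isomorphism between $\bigl(\ul{\Aut}_0((\Gamma\bs G)\rtimes G)/{\sim}_{o,0}\bigr)/\Gamma$ and $\ul{\Aut}((\Gamma\bs G)\rtimes G)/{\sim}_o=\MCG_o(\rho_\Gamma)$. The natural map to build is: start with $\varphi\in\ul{\Aut}_0((\Gamma\bs G)\rtimes G)$, regard it as an element of $\ul{\Aut}((\Gamma\bs G)\rtimes G)$ by forgetting the basepoint condition $F_\varphi(\Gamma)=\Gamma$, and pass to $\sim_o$-classes. First I would check this map $\ul{\Aut}_0((\Gamma\bs G)\rtimes G)\to\ul{\Aut}((\Gamma\bs G)\rtimes G)/{\sim}_o$ is a group homomorphism (it is a composite of an inclusion of a subgroup and a quotient map, so this is immediate), then verify it kills both $\sim_{o,0}$ and the image of $\Gamma$, so that it descends to the quotient $\bigl(\ul{\Aut}_0((\Gamma\bs G)\rtimes G)/{\sim}_{o,0}\bigr)/\Gamma$. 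The first is clear since $\sim_{o,0}$ refines $\sim_o$. For the second: the homomorphism $\Gamma\to\ul{\Aut}_0((\Gamma\bs G)\rtimes G)$, $\gamma\mapsto\varphi_\gamma$ with $\varphi_\gamma(\Gamma x,g')=(\Gamma x\gamma^{-1},\gamma g'\gamma^{-1})$, should land in $\ul{\Aut}((\Gamma\bs G)\rtimes G)_{o,1}$; indeed $\varphi_\gamma$ is the conjugation isomorphism $\varphi_\sigma$ for a bisection coming from $\gamma\in G$, and one sees it is $\ul{\sim_o}$-equivalent to $\id$ by using a path in $G$ from $1$ to $\gamma$ (this is exactly the observation that $\Diff(G)_1$ contains $R_\gamma$ for $\gamma$ in the identity component, which $G=\PSL(2,\bb R)$ is, being connected). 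So the descended map is well-defined.

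Next I would prove surjectivity. Given any $\varphi\in\ul{\Aut}((\Gamma\bs G)\rtimes G)$, its base map $F_\varphi\in\ul{\Diff}(\mca{O}_{\rho_\Gamma})$ takes the orbit $\Gamma\bs G$ (which, since $\rho_\Gamma$ is transitive, is the whole space) to itself, and $F_\varphi(\Gamma)$ is some point $\Gamma g$. Using the transitivity and the fact that $\rho_\Gamma(\Gamma,g)=\Gamma g$, compose $\varphi$ with the conjugation automorphism $\varphi_\sigma$ attached to a bisection carrying $\Gamma g$ back to $\Gamma$ — concretely $\varphi_{g^{-1}}$ in the notation above — to get $\varphi_{g^{-1}}\varphi\in\ul{\Aut}_0((\Gamma\bs G)\rtimes G)$ whose $\sim_o$-class agrees with that of $\varphi$ (because $\varphi_{g^{-1}}$ differs from $\id$ by an element in the image of $G$, hence is $\ul{\sim_o}$-trivial, once we use connectedness of $G$). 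This realizes $[\varphi]$ in the image, giving surjectivity.

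For injectivity I would argue: suppose $\varphi_1,\varphi_2\in\ul{\Aut}_0((\Gamma\bs G)\rtimes G)$ with $\varphi_1\mathrel{\ul{\sim_o}}\varphi_2$ as unbased automorphisms, and show $[\varphi_1]=[\varphi_2]$ in $\bigl(\ul{\Aut}_0/{\sim}_{o,0}\bigr)/\Gamma$. An orbitwise homotopy $\eta$ between $\varphi_1$ and $\varphi_2$ traces out a path $t\mapsto F_{\eta(\cdot,t)}(\Gamma)$ in $\Gamma\bs G$ from $\Gamma$ to $\Gamma$, i.e.\ a loop, hence (lifting via $G\to\Gamma\bs G$, which is the universal-cover-type principal $\Gamma$-bundle because $G$ is connected — here one invokes the isomorphism $\iota_x\colon\pi_1(\Gamma\bs G,\Gamma)\simeq\Gamma$ of Definition~\ref{gxgxpxx}) an element $\gamma\in\Gamma$. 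Then, modifying $\eta$ by the conjugation family $\varphi_{\sigma_t}$ for a suitable continuous family of bisections $\sigma_t$ fixing the basepoint path (exactly the maneuver used in the proof of Proposition~\ref{trans}, turning $\mathrel{\ul{\sim_o}}$ into $\mathrel{\ul{\sim_{o,0}}}$ using that $G\rightrightarrows\mathrm{pt}$, equivalently the inner group bundle, is transitive), produces a conjugacy $\psi=\varphi_{\gamma}$ such that $\psi\varphi_1\mathrel{\ul{\sim_{o,0}}}\varphi_2$. Since $\psi=\varphi_\gamma$ is the image of $\gamma\in\Gamma$, this says precisely $[\varphi_1]=[\varphi_2]$ in the double quotient. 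Conversely, $\sim_{o,0}$-equivalence or differing by $\Gamma$ clearly implies $\sim_o$-equivalence, so the map is injective.

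The main obstacle I anticipate is the injectivity step, specifically the bookkeeping needed to convert an arbitrary (unbased) orbitwise homotopy into a based one at the cost of a conjugation by an element of $\Gamma$: one must produce the continuous family of bisections $\sigma_t$ realizing the prescribed basepoint path and check that the resulting $\varphi_{\sigma_1}$ is exactly $\varphi_\gamma$ with $\gamma=\iota_\Gamma$ of the loop. This is essentially a repackaging of the argument already carried out in the proof of Proposition~\ref{trans} (the passage $\mathrel{\ul{\sim_{o,0}}}$ vs.\ $\mathrel{\ul{\sim_o}}$ for transitive actions, using that the isotropy/inner-group bundle is locally trivial via Proposition~1.3.9 of~\cite{Mackenzie}), so I would cite and adapt that rather than redo it; the only genuinely new point is identifying the ambiguity group as $\Gamma$ via $\iota_\Gamma$, which follows from Definition~\ref{gxgxpxx} applied to the action $\Gamma\curvearrowright G$ by left multiplication.
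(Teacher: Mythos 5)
Your proposal is correct in substance but handles the key injectivity step by a genuinely different route from the paper, and it contains one incorrect assertion that needs repair. The paper proves surjectivity exactly as you do (compose with $\varphi_g$, which is $\sim_o$-trivial because $G$ is connected), but for injectivity it passes to base maps: it embeds $\Aut_0((\Gamma\bs G)\rtimes G)/{\sim}_{o,0}$ into the set $\lbrack\Gamma\bs G,\Gamma\bs G\rbrack_0$ of based homotopy classes (injectivity of this embedding resting on Proposition \ref{connuni}, Proposition \ref{homcon} and Lemma \ref{hnhnphipoh}), identifies $\pi_1(\Gamma\bs G,\Gamma)\simeq\wt{\Gamma}$, proves the embedding is equivariant for the action $[\varphi]\mapsto[\varphi_{p\wt{\gamma}}^{-1}\varphi]$ versus the standard $\pi_1$-action relating based and free homotopy classes, and concludes. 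You instead stay at the level of groupoid morphisms: given an unbased orbitwise homotopy $\eta$ between two based automorphisms, you lift the basepoint loop through the bundle $G\to\Gamma\bs G$, obtain $\gamma=c(1)\in\Stab_{\rho_\Gamma}(\Gamma)=\Gamma$, and correct $\eta$ by the family $\varphi_{c(t)^{-1}}$ exactly as in the proof of Proposition \ref{trans} (with $\psi=\id$ and base $\mathrm{pt}$, so the sections of the inner group bundle are just the elements $c(t)\in G$); this gives $\varphi_2\mathrel{\ul{\sim_{o,0}}}\varphi_{c(1)^{-1}}\varphi_1$, i.e.\ equality in the double quotient. This is legitimate and arguably more self-contained: it avoids the auxiliary sets $\lbrack\Gamma\bs G,\Gamma\bs G\rbrack$, $\lbrack\Gamma\bs G,\Gamma\bs G\rbrack_0$ and the injectivity of the base-map functor, at the cost of re-running a special case of Proposition \ref{trans}, whereas the paper's argument reduces the problem to classical homotopy theory of the base maps it has already controlled.

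Two corrections. First, your parenthetical claim that $G\to\Gamma\bs G$ is ``universal-cover-type'' and that Definition \ref{gxgxpxx} yields $\pi_1(\Gamma\bs G,\Gamma)\simeq\Gamma$ is false: $G=\PSL(2,\bb{R})$ is not simply connected (it is homotopy equivalent to $S^1$), Definition \ref{gxgxpxx} requires a simply connected total space, and in fact $\pi_1(\Gamma\bs G,\Gamma)\simeq\wt{\Gamma}$ --- this is precisely why the paper's own proof works with $\wt{\Gamma}$ and the projection $p\colon\wt{\Gamma}\to\Gamma$. Fortunately your argument never uses the purported isomorphism: all you need is the element $\gamma=c(1)$ produced by unique path lifting through the covering $G\to\Gamma\bs G$ (equivalently, the image under $\wt{\Gamma}\to\Gamma$ of the class of the basepoint loop), which does lie in $\Gamma$ and suffices for the correction step; rephrase accordingly. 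Second, a sign slip in the surjectivity step: with your convention $\varphi_h(\Gamma x,g^\prime)=(\Gamma xh^{-1},hg^\prime h^{-1})$ and $F_\varphi(\Gamma)=\Gamma g$, the correcting automorphism is $\varphi_g$ (so that $F_{\varphi_g\varphi}(\Gamma)=\Gamma gg^{-1}=\Gamma$), not $\varphi_{g^{-1}}$.
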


\begin{lem}
The homomorphism $\Aut_0((\Gamma\bs G)\rtimes G)/{\sim}_{o,0}\to\Aut((\Gamma\bs G)\rtimes G)/{\sim}_o$ induced by the inclusion is surjective. 
\end{lem}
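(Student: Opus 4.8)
The statement is that the natural map $\Aut_0((\Gamma\bs G)\rtimes G)/{\sim}_{o,0}\to\Aut((\Gamma\bs G)\rtimes G)/{\sim}_o$ is surjective. The plan is to take an arbitrary $\varphi\in\Aut((\Gamma\bs G)\rtimes G)$ and modify it by an automorphism of the source which is $\ul{\sim_o}$-trivial, so as to make the base map fix the basepoint $\Gamma\in\Gamma\bs G$. The tool for this is exactly the conjugation automorphisms $\varphi_g$ constructed in the proof of Proposition \ref{gggzgzaut}: for $g\in G$, $\varphi_g(\Gamma x,g')=(\Gamma xg^{-1},gg'g^{-1})$ is an automorphism of $(\Gamma\bs G)\rtimes G$ with base map $\Gamma x\mapsto\Gamma xg^{-1}$. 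These base maps realise the action $G\curvearrowright\Gamma\bs G$ by right multiplication, which is transitive, so by composing a given $\varphi$ with a suitable $\varphi_g$ we can arrange that the composite base map sends $\Gamma$ to $\Gamma$.

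\textbf{Key steps.} First I would fix $\varphi\in\Aut((\Gamma\bs G)\rtimes G)$ and set $y_0=F_\varphi(\Gamma)\in\Gamma\bs G$; choose $g\in G$ with $\Gamma g^{-1}=y_0$, i.e. $y_0g=\Gamma$ (possible since $G$ acts transitively on $\Gamma\bs G$). Then $\varphi_g\in\Aut((\Gamma\bs G)\rtimes G)$ has base map $F_{\varphi_g}(\Gamma x)=\Gamma xg^{-1}$, so $F_{\varphi_g\varphi}(\Gamma)=F_{\varphi_g}(y_0)=y_0g^{-1}\cdot$... — here I must be careful with the convention; the correct choice is to pick $g$ so that $F_{\varphi_g}(y_0)=\Gamma$, and since $F_{\varphi_g}(\Gamma x)=\Gamma xg^{-1}$ one takes $g$ with $y_0g^{-1}=\Gamma$. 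With this choice $\varphi_g\varphi\in\Aut_0((\Gamma\bs G)\rtimes G)$. Second, I would show $\varphi_g\varphi\mathrel{\ul{\sim_o}}\varphi$, which amounts to $\varphi_g\mathrel{\ul{\sim_o}}\id$: since $G$ is connected, choose a continuous path $c\colon I\to G$ with $c(0)=1$, $c(1)=g$, and consider $\eta(\cdot,t)=\varphi_{c(t)}$. This is a continuous family in $\Aut((\Gamma\bs G)\rtimes G)$ from $\id$ to $\varphi_g$, its base maps $F_{\varphi_{c(t)}}(\Gamma x)=\Gamma xc(t)^{-1}$ preserve each orbit of $\rho_\Gamma$ (there is only one orbit, $\Gamma\bs G$ itself, since $\rho_\Gamma$ is transitive), and $F_{\varphi_{c(t)}}\colon\Gamma\bs G\to\Gamma\bs G$ is continuous; hence $\eta$ is an orbitwise homotopy through elements of $\ul{\Hom}((\Gamma\bs G)\rtimes G,(\Gamma\bs G)\rtimes G)$ in the sense of Definition \ref{lgpcccffsimo}. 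Finally, I would note that post-composing the image $[\varphi_g\varphi]$ with the coset of $\Gamma$ recovers $[\varphi]$ modulo $\ul{\sim_o}$, establishing surjectivity of the map from $(\Aut_0((\Gamma\bs G)\rtimes G)/{\sim}_{o,0})/\Gamma$; combined with the evident injectivity (two elements of $\Aut_0$ that are $\ul{\sim_o}$-equivalent in the big group differ, after adjusting the basepoint by a conjugation $\varphi_\gamma$ with $\gamma\in\Gamma$, by something $\ul{\sim_{o,0}}$-trivial — this is where the quotient by $\Gamma$ enters), one gets the claimed isomorphism.

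\textbf{Main obstacle.} The surjectivity itself is essentially formal once one has the conjugation automorphisms $\varphi_g$ and the connectedness of $G$; the genuinely delicate point is the implicit claim that the map descends to an \emph{isomorphism} after quotienting the source by $\Gamma$ — i.e. identifying the kernel of $\Aut_0((\Gamma\bs G)\rtimes G)/{\sim}_{o,0}\to\Aut((\Gamma\bs G)\rtimes G)/{\sim}_o$ with the image of $\Gamma$. Here one takes $\varphi_1,\varphi_2\in\Aut_0$ with $\varphi_1\mathrel{\ul{\sim_o}}\varphi_2$ in the big group, chosen via an orbitwise homotopy $\eta$; the loop $t\mapsto F_{\eta(\cdot,t)}(\Gamma)$ in $\Gamma\bs G$ need not be constant, but it lifts (via $G\to\Gamma\bs G$, which is a covering since $\Gamma$ is discrete) to a path in $G$ from $1$ to some $\gamma\in\Gamma$, and then $\varphi_\gamma^{-1}\varphi_2\mathrel{\ul{\sim_{o,0}}}\varphi_1$ by a basepoint-preserving refinement of $\eta$ — exactly analogous to the argument in the proof of Proposition \ref{trans} that $\ul{\sim_{o,0}}$ equals $\ul{\sim_o}$ on $\MAG(\rho_0)$ for transitive actions. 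So the hard part is not the stated surjectivity but the verification, using transitivity of $\rho_\Gamma$ and discreteness of $\Gamma$, that no further collapsing occurs beyond dividing by $\Gamma$; for the present lemma, however, only the surjectivity portion is required, and that follows from the two-step argument above.
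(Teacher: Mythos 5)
Your surjectivity argument is exactly the paper's proof: compose $\varphi$ with the conjugation automorphism $\varphi_g$ (with $g$ chosen so that $F_\varphi(\Gamma)=\Gamma g$) to land in $\Aut_0((\Gamma\bs G)\rtimes G)$, and use connectedness of $G$, via a path from $1$ to $g$ and the family $\varphi_{c(t)}$, to get $\varphi_g\sim_o\id$, hence $\varphi_g\varphi\sim_o\varphi$. The additional discussion of the kernel/injectivity is beyond what the lemma asserts (the paper treats it separately afterwards), but the part proving the stated lemma is correct and coincides with the paper's route.
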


\begin{proof}
Let $\varphi\in\Aut((\Gamma\bs G)\rtimes G)$. Let $g\in G$ be such that $F_\varphi(\Gamma)=\Gamma g$. Then $F_{\varphi_g\varphi}(\Gamma)=\Gamma$. So $\varphi_g\varphi\in\Aut_0((\Gamma\bs G)\rtimes G)$. Since $G$ is connected, we have $\varphi_g\sim_o\id$. Hence $\varphi_g\varphi\sim_o\varphi$. 
\end{proof}

The homomorphism in the lemma induces a surjective homomorphism 
\begin{equation}\label{aut0ggggaut}
(\Aut_0((\Gamma\bs G)\rtimes G)/{\sim}_{o,0})/\Gamma\to\Aut((\Gamma\bs G)\rtimes G)/{\sim}_o. 
\end{equation}
We will show that it is injective. 

Let $\lbrack\Gamma\bs G,\Gamma\bs G\rbrack$ denote the set of homotopy classes of continuous maps $\Gamma\bs G\to\Gamma\bs G$ and $\lbrack\Gamma\bs G,\Gamma\bs G\rbrack_0$ denote the set of based homotopy classes of continuous maps $(\Gamma\bs G,\Gamma)\to(\Gamma\bs G,\Gamma)$. The map $\lbrack\Gamma\bs G,\Gamma\bs G\rbrack_0\to\lbrack\Gamma\bs G,\Gamma\bs G\rbrack$ induced by the inclusion is surjective. 

Recall that there is an action $\lbrack\Gamma\bs G,\Gamma\bs G\rbrack_0\curvearrowleft\pi_1(\Gamma\bs G,\Gamma)$ defined by $[f][c]=[f_0]$ for $[f]\in\lbrack\Gamma\bs G,\Gamma\bs G\rbrack_0$ and $[c]\in\pi_1(\Gamma\bs G,\Gamma)$, where $f_t\colon\Gamma\bs G\to\Gamma\bs G$ is a continuous homotopy such that $f_1=f$ and $f_t(\Gamma)=c(t)$ for all $t\in I$, which exists by the homotopy extension property. Note that this is a right action by our convention of the product of fundamental groups. We have 
\begin{equation*}
\begin{tikzcd}
\lbrack\Gamma\bs G,\Gamma\bs G\rbrack_0\ar[r]\ar[d]\ar[rd,phantom,"\circlearrowright",pos=.1]&\lbrack\Gamma\bs G,\Gamma\bs G\rbrack\\
\lbrack\Gamma\bs G,\Gamma\bs G\rbrack_0/\pi_1(\Gamma\bs G,\Gamma). \ar[ru,sloped,"\sim"]&\ 
\end{tikzcd}
\end{equation*}
Since $\wt{\Gamma}\bs\wt{G}\simeq\Gamma\bs G$, we have an isomorphism $\Phi\colon\wt{\Gamma}\xrightarrow{\iota_1^{-1}}\pi_1(\wt{\Gamma}\bs\wt{G},\wt{\Gamma})\simeq\pi_1(\Gamma\bs G,\Gamma)$. 

Define $\Aut_0((\Gamma\bs G)\rtimes G)\curvearrowleft\Gamma$ by $(\varphi,\gamma)\mapsto\varphi_\gamma^{-1}\varphi$. It induces an action $\Aut_0((\Gamma\bs G)\rtimes G)/{\sim}_{o,0}\curvearrowleft\Gamma$, hence $\Aut_0((\Gamma\bs G)\rtimes G)/{\sim}_{o,0}\curvearrowleft\wt{\Gamma}$ through the projection $p\colon\wt{\Gamma}\to\Gamma$. 

\begin{lem}
The left vertical map in 
\begin{equation*}
\begin{tikzcd}
\Aut_0((\Gamma\bs G)\rtimes G)/{\sim}_{o,0}\ar[r,phantom,"\curvearrowleft"]\ar[d,hook,"b"']&[-35pt]\wt{\Gamma}\ar[d,"\sim"{sloped},"\Phi"{xshift=5}]\\
\lbrack\Gamma\bs G,\Gamma\bs G\rbrack_0\ar[r,phantom,"\curvearrowleft"]&\pi_1(\Gamma\bs G,\Gamma)
\end{tikzcd}
\end{equation*}
is equivariant with respect to the actions, where $b$ is the map induced by the map taking base maps. 
\end{lem}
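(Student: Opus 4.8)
The statement to prove is that the map $b\colon\Aut_0((\Gamma\bs G)\rtimes G)/{\sim}_{o,0}\to\lbrack\Gamma\bs G,\Gamma\bs G\rbrack_0$ is equivariant for the actions of $\wt{\Gamma}$ (on the left via $p\colon\wt{\Gamma}\to\Gamma$ and the rule $(\varphi,\gamma)\mapsto\varphi_\gamma^{-1}\varphi$) and $\pi_1(\Gamma\bs G,\Gamma)$ (on the right, via the homotopy-extension action), where the two group actions are matched through the isomorphism $\Phi\colon\wt{\Gamma}\xrightarrow{\sim}\pi_1(\Gamma\bs G,\Gamma)$. Concretely: for $\varphi\in\Aut_0((\Gamma\bs G)\rtimes G)$ and $\wt{\gamma}\in\wt{\Gamma}$ with $\gamma=p\wt{\gamma}\in\Gamma$, I must show $b(\varphi_\gamma^{-1}\varphi)=b(\varphi)\cdot\Phi(\wt{\gamma})$ in $\lbrack\Gamma\bs G,\Gamma\bs G\rbrack_0$, i.e. the based-homotopy class of $F_{\varphi_\gamma^{-1}\varphi}$ equals the based-homotopy class of $F_\varphi$ acted on by $\Phi(\wt{\gamma})$.

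\textbf{First steps.} I would begin by writing down the base maps explicitly. From the defining formula $\varphi_g(\Gamma x,g')=(\Gamma xg^{-1},gg'g^{-1})$ one reads off $F_{\varphi_g}(\Gamma x)=\Gamma xg^{-1}$, so $F_{\varphi_\gamma^{-1}}(\Gamma x)=\Gamma x\gamma$. Hence $F_{\varphi_\gamma^{-1}\varphi}=F_{\varphi_\gamma^{-1}}\circ F_\varphi=R_\gamma\circ F_\varphi$, where $R_\gamma\colon\Gamma\bs G\to\Gamma\bs G$ is right translation by $\gamma$ (which is well-defined on $\Gamma\bs G$ since left cosets are preserved). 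Note $R_\gamma$ is based: $R_\gamma(\Gamma)=\Gamma\gamma=\Gamma$. So the task reduces to identifying the based-homotopy class of $R_\gamma\circ F_\varphi$ with $[F_\varphi]\cdot\Phi(\wt{\gamma})$ under the action of $\pi_1(\Gamma\bs G,\Gamma)$ on $\lbrack\Gamma\bs G,\Gamma\bs G\rbrack_0$.

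\textbf{The core computation.} The action $[f]\cdot[c]$ is defined by choosing a free homotopy $f_t$ from $f$ (at $t=1$) with $f_t(\Gamma)=c(t)$, and setting $[f]\cdot[c]=[f_0]$. The natural way to produce such a homotopy for $f=R_\gamma\circ F_\varphi$ is to take a path $\gamma_t$ in $G$ from $1$ to $\gamma$ (possible since $G$ is connected; its projected loop $t\mapsto\Gamma\gamma_t$ in $\Gamma\bs G$ is precisely a loop whose class is $\Phi(\wt{\gamma})$ — this is exactly the content of the identification $\iota_1\colon\pi_1(\Gamma\bs G,\Gamma)\xrightarrow{\sim}\Gamma$, cf. Definition~\ref{gxgxpxx}, with the chosen convention for concatenation). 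Then $f_t:=R_{\gamma_t}\circ F_\varphi$ satisfies $f_1=R_\gamma\circ F_\varphi$, $f_0=F_\varphi$, and $f_t(\Gamma)=\Gamma\gamma_t$. By definition of the action, $[R_\gamma\circ F_\varphi]=[F_\varphi]\cdot[t\mapsto\Gamma\gamma_t]^{\pm1}=[F_\varphi]\cdot\Phi(\wt{\gamma})$, modulo checking the orientation/convention bookkeeping so that the sign is correct (the paper has taken pains — in Definition~\ref{gxgxpxx} and the surrounding material — to fix conventions for the product on $\pi_1$ and for $\iota_1$ precisely so this works out without a sign). I also need to verify that the classes are independent of the representative $\varphi$ modulo $\sim_{o,0}$ and of the choice of $\gamma_t$, but both are immediate: $\sim_{o,0}$ refines based homotopy of base maps, and two paths $\gamma_t$ from $1$ to $\gamma$ are homotopic rel endpoints iff their difference is null in $\pi_1(G,1)$, which maps correctly.

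\textbf{Expected main obstacle.} The genuine content is not the homotopy construction — that is routine — but pinning down that the loop $t\mapsto\Gamma\gamma_t$ represents \emph{exactly} $\Phi(\wt{\gamma})$ (and not its inverse, or the image of a different lift), i.e. checking the compatibility of three conventions simultaneously: the definition of $\Phi$ as $\iota_1^{-1}$ composed with the identification $\pi_1(\wt{\Gamma}\bs\wt{G},\wt{\Gamma})\simeq\pi_1(\Gamma\bs G,\Gamma)$ coming from $\wt{\Gamma}\bs\wt{G}\simeq\Gamma\bs G$; the nonstandard concatenation convention for $\pi_1$ fixed in this paper; and the direction of the action $\varphi\mapsto\varphi_\gamma^{-1}\varphi$ versus the right action of $\pi_1$. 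I would handle this by tracing a single lift: lift the path $\gamma_t$ to $\wt{G}$ starting at $1$, observe it ends at $\wt\gamma$ (the chosen lift of $\gamma$), and match this against the definition of $\iota_1$ in Definition~\ref{gxgxpxx}, which says precisely that the lift of a loop representing $[c]$ ending at the basepoint starts at $(\iota_1[c])\cdot(\text{basepoint})$. Getting the two $\pm1$ signs to cancel is the only place where care is genuinely required; everything else is formal.
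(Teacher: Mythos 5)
Your plan is essentially the paper's proof: compute $F_{\varphi_\gamma^{-1}\varphi}=R_\gamma F_\varphi$, use the right-translation homotopy $R_{c(t)}F_\varphi$ to compare $[R_\gamma F_\varphi]$ with $[F_\varphi]$ acted on by a loop class, and identify that class via the definition of $\iota_1$ in Definition \ref{gxgxpxx}. The sign bookkeeping you flag is real but benign: with your path $\gamma_t$ from $1$ to $\gamma$ (whose lift starting at $1$ ends at $\wt{\gamma}$), the projected loop represents $\Phi(\wt{\gamma})^{-1}$, not $\Phi(\wt{\gamma})$; since your homotopy has $f_1=R_\gamma F_\varphi$ and $f_0=F_\varphi$, the two inversions cancel and you still get $[R_\gamma F_\varphi]=[F_\varphi]\Phi(\wt{\gamma})$. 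The paper sidesteps this entirely by running the path from $\wt{\gamma}$ at $t=0$ to $1$ at $t=1$, so that $\iota_1[\ol{\wt{c}}]=\wt{\gamma}$ on the nose and the homotopy $R_{c(t)}F_\varphi$ computes $[F_\varphi][\ol{c}]=[R_\gamma F_\varphi]$ directly. One omission: the hooked arrow in the lemma also asserts injectivity of $b$, which the paper proves separately (smoothing of continuous based homotopies, Proposition \ref{homcon}, Proposition \ref{connuni} and Lemma \ref{hnhnphipoh}); your proposal addresses only the equivariance, so that part would still need to be supplied or cited.
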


\begin{proof}
The injectivity of $b$ follows from Theorem 6.29 in \cite{Lee} (the existence of a continuous (based) homotopy between $C^\infty$ maps implies the existence of a $C^\infty$ (based) homotopy between them), Proposition \ref{homcon}, Proposition \ref{connuni} and Lemma \ref{hnhnphipoh}. 

We prove the equivariance. Let $\wt{\gamma}\in\wt{\Gamma}$ and $\varphi\in\Aut_0((\Gamma\bs G)\rtimes G)$. Let $\gamma=p\wt{\gamma}\in\Gamma$. We have 
\begin{equation*}
\begin{tikzcd}
\wt{G}\ar[r,"p"]\ar[d]\ar[rd,phantom,"\circlearrowright"{xshift=2}]&G\ar[d]\\
\wt{\Gamma}\bs\wt{G}\ar[r,dash,"\sim"]&\Gamma\bs G. 
\end{tikzcd}
\end{equation*}
Let $\wt{c}\colon I\to\wt{G}$ be a continuous map such that $\wt{c}(0)=\wt{\gamma}$ and $\wt{c}(1)=1$. Let $\ol{\wt{c}}$, $c$ and $\ol{c}$ be the images of $\wt{c}$ in $\wt{\Gamma}\bs\wt{G}$, $G$ and $\Gamma\bs G$. 
\begin{equation*}
\begin{tikzpicture}[every label/.append style={font=\scriptsize},matrix of math nodes,decoration={markings,mark=at position0.5with{\arrow{>}}}]
\clip(-2,-1.8)rectangle(6,1.6);
\node(1)at(-1.5,.5){\wt{G}};
\node(2)at(5.5,.5){G};
\node(3)at(-1.5,-1.3){\wt{\Gamma}\bs\wt{G}};
\node(4)at(5.5,-1.3){\Gamma\bs G};
\foreach\a/\b/\c/\d/\e/\f in{1/0/\wt{\gamma}/\wt{c}/\wt{\Gamma}/\ol{\wt{c}},2/3.5/\gamma/c/\Gamma/\ol{c}}{
\node(a\a)[xshift=\b cm,label=left:1]at(0,0){};
\node(b\a)[above=1cm of a\a,label=left:\c]{};
\node(c\a)[below=1cm of a\a,label=left:\e]{};
\node(d\a)at(c\a){};
\draw[postaction={decorate}](b\a.center)to[out=-50,in=50]node[label=right:\d]{}(a\a.center);
\draw[postaction={decorate}](c\a.center)..controls+(1,1)and+(1,-1)..(d\a.center)node[pos=.5,label=right:\f]{};
\foreach\x in{a,b,c}\filldraw(\x\a)circle(1pt);
}
\end{tikzpicture}
\end{equation*}
Then $\iota_1[\ol{\wt{c}}]=\wt{\gamma}$, so $\Phi(\wt{\gamma})=[\ol{c}]$. To show the equivariance, we compute the two compositions: 
\begin{align*}
[\varphi]&\mapsto[\varphi_{\gamma}^{-1}\varphi]\mapsto[R_\gamma F_\varphi], \\
[\varphi]&\mapsto[F_\varphi]\mapsto[F_\varphi][\ol{c}], 
\end{align*}
where $R_\gamma\colon\Gamma\bs G\to\Gamma\bs G$ is the right multiplication by $\gamma$. The equality $[R_\gamma F_\varphi]=[F_\varphi][\ol{c}]$ in $\lbrack\Gamma\bs G,\Gamma\bs G\rbrack_0$ can be seen from the following figure. 
\begin{equation*}
\begin{tikzpicture}[every label/.append style={font=\scriptsize},matrix of math nodes,decoration={markings,mark=at position0.5with{\arrow{>}}}]
\def\a{2};
\def\b{1};
\def\c{1.4};
\node(1)at(0,0)[label=left:0]{};
\node(2)at(\a,0)[label=right:R_\gamma F_\varphi]{};
\node(3)at(\a,\a)[label=right:F_\varphi]{};
\node(4)at(0,\a)[label=left:1]{};
\node(5)at(\b,0)[label=below:\Gamma]{};
\node(6)at(\b,\a)[label=above:\Gamma]{};
\node(7)at(0,\c)[label=left:t]{};
\node(8)at(\a,\c)[label=right:R_{c(t)}F_\varphi]{};
\draw(1.center)--(2.center)--(3.center)--(4.center)--(1.center);
\draw[postaction={decorate}](5.center)--node[label=right:\ol{c}]{}(6.center);
\draw(7.center)--(8.center);
\draw[->](\a+1.2,\a/2)--(\a+2,\a/2);
\node at(\a+2.9,\a/2){\Gamma\bs G};
\end{tikzpicture}\qedhere
\end{equation*}
\end{proof}

To show that \eqref{aut0ggggaut} is injective, let $\varphi$, $\varphi^\prime\in\Aut_0((\Gamma\bs G)\rtimes G)$ be such that $\varphi\sim_o\varphi^\prime$. Since $[F_\varphi]=[F_{\varphi^\prime}]$ in $\lbrack\Gamma\bs G,\Gamma\bs G\rbrack$, there exists $[c]\in\pi_1(\Gamma\bs G,\Gamma)$ such that $[F_\varphi][c]=[F_{\varphi^\prime}]$ in $\lbrack\Gamma\bs G,\Gamma\bs G\rbrack_0$. Let $\wt{\gamma}=\Phi^{-1}[c]\in\wt{\Gamma}$. Then 
\begin{equation*}
b([\varphi]\wt{\gamma})=[F_\varphi][c]=[F_{\varphi^\prime}]=b[\varphi^\prime]. 
\end{equation*}
Hence $[\varphi^\prime]=[\varphi]\wt{\gamma}=[\varphi_{p\wt{\gamma}}^{-1}\varphi]$.

\subsubsection{The Teichm\"{u}ller space and the mapping class group of a surface}
Let $K=\PSO(2)$ and $\Sigma=\Gamma\bs G/K$. Then $\Sigma$ is a closed connected oriented hyperbolic surface of genus $g\geq2$. 

\begin{dfn}\label{mhssmhsfxx}
We say $(X,\phi)$ is a \emph{$\Sigma$-marked hyperbolic surface} if $X$ is a hyperbolic surface and $\phi\colon\Sigma\to X$ is a diffeomorphism. Let 
\begin{equation*}
\MHS(\Sigma)=\left\{\text{$\Sigma$-marked hyperbolic surfaces}\right\}. 
\end{equation*}
For $(X,\phi)$, $(X^\prime,\phi^\prime)\in\MHS(\Sigma)$, we write $(X,\phi)\sim(X^\prime,\phi^\prime)$ if there exists an isometry $\Phi\colon X\to X^\prime$ such that $\Phi\phi$ is homotopic to $\phi^\prime$. Then $\sim$ is an equivalence relation on $\MHS(\Sigma)$. The set $T(\Sigma)=\MHS(\Sigma)/{\sim}$ is called the \emph{Teichm\"{u}ller space} of $\Sigma$. 
\end{dfn}

\begin{prop}\label{bijtsigauth}
We have a bijection $T(\Sigma)\simeq\Aut(G)\bs\mca{H}(\Gamma,G)$. 
\end{prop}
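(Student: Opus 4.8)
The plan is to construct the bijection by relating both sides to conjugacy classes of discrete faithful representations of $\Gamma$ into $G = \PSL(2,\bb{R}) = \Isom^+(\bb{H}^2)$. The key observation is that a hyperbolic structure on $\Sigma$ is the same thing (up to isometry) as a realization of $\Sigma$ as $\alpha(\Gamma) \bs \bb{H}^2$ for some $\alpha \in \mca{H}(\Gamma, G)$ with $\alpha(\Gamma)$ a torsion free cocompact lattice, and by Proposition \ref{cocompact} every $\alpha \in \mca{H}(\Gamma,G)$ produces such a lattice (the image is automatically discrete, torsion free since $\Gamma$ is, and cocompact since $\Gamma$ is). So the natural map should be
\begin{equation*}
\Aut(G)\bs\mca{H}(\Gamma,G) \to T(\Sigma), \qquad [\alpha] \mapsto [\alpha(\Gamma)\bs\bb{H}^2, \phi_\alpha],
\end{equation*}
where $\phi_\alpha \colon \Sigma = \Gamma\bs\bb{H}^2 \to \alpha(\Gamma)\bs\bb{H}^2$ is a marking diffeomorphism inducing $\alpha$ on fundamental groups (via the identification $\iota_i$ of Definition \ref{gxgxpxx}). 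Such a $\phi_\alpha$ exists because $\alpha(\Gamma)\bs\bb{H}^2$ is aspherical (apply Proposition 1B.9 of \cite{Hatcher} to get a based homotopy equivalence realizing $\alpha$, then upgrade to a diffeomorphism by 8.3.2 of \cite{FMar} as is done repeatedly in Section \ref{1818}, e.g. in the proof of Lemma \ref{descends} and Lemma \ref{well}).

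First I would check well-definedness: if $\alpha' = \Phi \circ \alpha$ for $\Phi \in \Aut(G) \simeq \PGL(2,\bb{R}) = \Isom(\bb{H}^2)$, represented by an isometry $h$ of $\bb{H}^2$, then conjugation by $h$ descends to an isometry $\ol{h}\colon \alpha(\Gamma)\bs\bb{H}^2 \to \alpha'(\Gamma)\bs\bb{H}^2$, and $\ol{h}\phi_\alpha$ induces the same map on $\pi_1$ as $\phi_{\alpha'}$ up to the ambiguity $\iota$ allows, hence is homotopic to $\phi_{\alpha'}$ by asphericity; this shows $[\alpha]$ determines a well-defined point of $T(\Sigma)$. (This is essentially the computation already performed in Section \ref{g eq s}.) Also I must check the choice of $\phi_\alpha$ does not matter: two choices differ by a self-homotopy-equivalence of $\Sigma$ inducing the identity on $\pi_1$ up to inner automorphism, and since the target realization is determined only up to isometry and homotopy, this is absorbed — this is a routine diagram chase with $\iota$.

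Next, surjectivity: given a $\Sigma$-marked hyperbolic surface $(X,\phi)$, uniformization writes $X = \Delta\bs\bb{H}^2$ for a torsion free cocompact lattice $\Delta$, and $\phi_* \colon \pi_1(\Sigma,*) \to \pi_1(X,*) \simeq \Delta$ combined with $\iota$ on the source yields an injective homomorphism $\Gamma \to \Delta \subset G$ with discrete image, i.e. an element of $\mca{H}(\Gamma,G)$ mapping to $[X,\phi]$ (changing the basepoint/path only changes the representation by an inner automorphism, which is in $\Aut(G)$). Injectivity: if $[\alpha(\Gamma)\bs\bb{H}^2,\phi_\alpha] = [\alpha'(\Gamma)\bs\bb{H}^2,\phi_{\alpha'}]$ in $T(\Sigma)$, there is an isometry $\Phi$ between the two quotient surfaces with $\Phi\phi_\alpha \simeq \phi_{\alpha'}$; $\Phi$ lifts to an isometry $h$ of $\bb{H}^2$ conjugating $\alpha(\Gamma)$ to $\alpha'(\Gamma)$, and comparing the induced maps on $\pi_1$ (using the homotopy $\Phi\phi_\alpha\simeq\phi_{\alpha'}$ and the naturality of $\iota$) forces $\Phi_* \circ \alpha = \alpha'$ on the nose, where $\Phi_* = $ conjugation by $h \in \Aut(G)$; hence $[\alpha] = [\alpha']$.

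\textbf{Main obstacle.} The genuinely delicate point is the bookkeeping around basepoints and the isomorphism $\iota_x \colon \pi_1(\Gamma\bs\bb{H}^2, \Gamma x) \to \Gamma$: one must be scrupulous that the marking $\phi_\alpha$ really induces $\alpha$ (and not $\alpha$ post-composed with an inner automorphism, or $\alpha$ conjugated by a deck transformation), so that the $\Aut(G)$-quotient on the left matches exactly the ``isometry plus free homotopy'' equivalence on the right — free rather than based homotopy on the $T(\Sigma)$ side is precisely what kills the residual inner ambiguity, and verifying this matches up cleanly is the crux. The diffeomorphism-versus-homotopy-equivalence upgrades and the basepoint-preserving refinements of homotopies are routine given the techniques already established in Section \ref{1818} (in particular the argument pattern in the proof of Lemma \ref{descends} and Lemma \ref{sigconorihthe}), so I would cite those rather than redo them.
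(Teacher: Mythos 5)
Your construction is correct, but it runs in the opposite direction from the paper's and replaces the paper's appeal to the literature by a direct (sketched) bijectivity argument. The paper only constructs the holonomy map $T(\Sigma)\to\Aut(G)\bs\mca{H}(\Gamma,G)$, sending $(X,\phi)$ to the composite $\Gamma\xrightarrow{\iota_K^{-1}}\pi_1(\Sigma,\Gamma K)\xrightarrow{\phi_*}\pi_1(X,\phi(\Gamma K))\xrightarrow{\iota_{\wt{x}}}\Isom_+(\wt{X})\simeq\Isom_+(\bb{H}^2)\simeq G$, checks that the class is independent of the choices and of the representative of $[X,\phi]$, and then cites Proposition 10.2 of \cite{FMar} for bijectivity; you instead build the inverse map $[\alpha]\mapsto[\alpha(\Gamma)\bs\bb{H}^2,\phi_\alpha]$ and sketch well-definedness, surjectivity and injectivity using the same toolkit the paper deploys elsewhere in Section \ref{1818} (Hatcher 1B.9, the diffeomorphism upgrade 8.3.2 of \cite{FMar}, and the basepoint bookkeeping of Lemma \ref{well} and Section \ref{g eq s}). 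What the paper's route buys is brevity and an explicit formula $\alpha(X,\phi)$ that is used immediately afterwards to verify equivariance of the bijection under $\MCG^\pm(\Sigma)\simeq\Out(\Gamma)$ (the identity $\alpha(X,\phi\psi)=\alpha(X,\phi)\beta(\psi)$); if you work with the inverse direction you should still record that formula, or redo the equivariance through your map. What your route buys is self-containedness: you are essentially supplying the argument the paper delegates to \cite{FMar}. One small imprecision: in your injectivity step the free homotopy $\Phi\phi_\alpha\simeq\phi_{\alpha'}$ and the ambiguity in the choice of lift $h$ only give $h\alpha h^{-1}=\delta\alpha'\delta^{-1}$ for some $\delta\in\alpha'(\Gamma)$, not equality ``on the nose''; this is harmless because inner automorphisms lie in $\Aut(G)\simeq\PGL(2,\bb{R})$, so $[\alpha]=[\alpha']$ still follows, but the statement should be phrased up to that inner twist, exactly the basepoint issue you flag as the main obstacle.
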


\begin{proof}
Here we only construct a map $T(\Sigma)\to\Aut(G)\bs\mca{H}(\Gamma,G)$ and omit the proof of its bijectivity. See Proposition 10.2 of \cite{FMar}. Let $(X,\phi)\in\MHS(\Sigma)$. Let $(\wt{X},\wt{x})\to(X,\phi(\Gamma K))$ be a universal cover and $f\colon\wt{X}\to\bb{H}^2$ be an isometry. Let $\alpha(X,\phi)$ be the composition 
\begin{equation*}
\begin{tikzcd}[column sep=15]
\Gamma\ar[r,"\sim","\iota_K^{-1}"{yshift=7}]&\pi_1(\Sigma,\Gamma K)\ar[r,"\sim","\phi_*"{yshift=7}]&\pi_1(X,\phi(\Gamma K))\ar[r,hook,"\iota_\wt{x}"{yshift=3}]&\Isom_+(\wt{X})\ar[r,"\sim","f\ \cdot\ f^{-1}"{xshift=3,yshift=5}]&\Isom_+(\bb{H}^2)\ar[r,dash,"\sim"]&[-7]G, 
\end{tikzcd}
\end{equation*}
where $\iota_K$ (resp. $\iota_\wt{x}$) is defined with respect to the covering $(G/K,K)\to(\Sigma,\Gamma K)$ (resp. $(\wt{X},\wt{x})\to(X,\phi(\Gamma K))$). Then $[\alpha(X,\phi)]\in\Aut(G)\bs\mca{H}(\Gamma,G)$ is an element independent of the choices of $(\wt{X},\wt{x})\to(X,\phi(\Gamma K))$ and $f$. 

If $(X,\phi)$, $(X^\prime,\phi^\prime)\in\MHS(\Sigma)$ are such that $(X,\phi)\sim(X^\prime,\phi^\prime)$, we have a commutative diagram 
\begin{equation*}
\begin{tikzcd}[column sep=17]
&\pi_1(X,\phi(\Gamma K))\ar[d,"\Phi_*"]\ar[r,hook,"\iota_\wt{x}"]&\Isom_+(\wt{X})\ar[d,"\wt{\Phi}\ \cdot\ \wt{\Phi}^
{-1}"]\ar[r,dash,"\sim"]&\Isom_+(\bb{H}^2)\ar[r,dash,"\sim"]\ar[d,dash,"\sim"{sloped}]&G\ar[d,dash,"\sim"{sloped}]\\
\pi_1(\Sigma,\Gamma K)\ar[ru,"\phi_*"]\ar[rd,"\phi^\prime_*"']&\pi_1(X^\prime,\Phi\phi(\Gamma K))\ar[d,"c\ \cdot\ c^{-1}"]\ar[r,hook,"\iota_{\wt{\Phi}\wt{x}}"]&\Isom_+(\wt{X^\prime})\ar[r,dash,"\sim"]\ar[d,"\id"]&\Isom_+(\bb{H}^2)\ar[r,dash,"\sim"]\ar[d,"\id"]&G\ar[d,"\id"]\\
&\pi_1(X^\prime,\phi^\prime(\Gamma K))\ar[r,hook,"\iota_\wt{x^\prime}"]&\Isom_+(\wt{X^\prime})\ar[r,dash,"\sim"]&\Isom_+(\bb{H}^2)\ar[r,dash,"\sim"]&G, 
\end{tikzcd}
\end{equation*}
where $c\colon I\to X^\prime$ is a path obtained by evaluating $\Gamma K$ to a homotopy between $\Phi\phi$ and $\phi^\prime$, $(\wt{X^\prime},\wt{x^\prime})\to(X^\prime,\phi^\prime(\Gamma K))$ is a universal covering and $\wt{\Phi}\colon\wt{X}\to\wt{X^\prime}$ is a lift of $\Phi$. Hence the map $\MHS(\Sigma)\to\Aut(G)\bs\mca{H}(\Gamma,G)$ factors through $\MHS(\Sigma)\to T(\Sigma)$. 
\end{proof}

\begin{dfn}
The group $\MCG^\pm(\Sigma)=\pi_0\Homeo(\Sigma)$ is called the \emph{extended mapping class group} of $\Sigma$. 
\end{dfn}

\begin{thm}[Dehn--Nielsen--Baer]
We have a group isomorphism $\MCG^\pm(\Sigma)\simeq\Out(\Gamma)$. 
\end{thm}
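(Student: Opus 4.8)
The plan is to prove the Dehn--Nielsen--Baer theorem in the form $\MCG^\pm(\Sigma)\simeq\Out(\Gamma)$ by combining the algebraic Dehn--Nielsen--Baer theorem (Theorem 8.9 of \cite{FMar}, already invoked in the proof of Lemma~\ref{descends}) with the homotopy-theoretic rigidity of aspherical surfaces. Since $\Sigma$ is a closed connected oriented hyperbolic surface of genus $g\geq2$, it is a $K(\Gamma,1)$, so $\Gamma\simeq\pi_1(\Sigma,\Gamma K)$. First I would construct a homomorphism $\MCG^\pm(\Sigma)\to\Out(\Gamma)$: a homeomorphism $\psi\colon\Sigma\to\Sigma$ induces an automorphism $\psi_*$ of $\pi_1(\Sigma,\Gamma K)$ well-defined up to inner automorphism (the ambiguity coming from the choice of a path from $\psi(\Gamma K)$ to $\Gamma K$), and a $C^0$ isotopy between homeomorphisms induces, by the homotopy lifting considerations already used in this section (cf.\ the proof of Lemma~\ref{cwpipi}), an inner automorphism; so we get a well-defined map $\pi_0\Homeo(\Sigma)\to\Out(\Gamma)$, and functoriality of $\pi_1$ makes it a homomorphism.

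Next I would prove surjectivity. Given $\alpha\in\Aut(\Gamma)$, the theorem of Dehn--Nielsen--Baer as stated in \cite{FMar} (Theorem 8.9) says every automorphism of $\pi_1$ of a closed surface is realized by a homeomorphism; concretely, since $\Sigma$ is aspherical one first realizes $\alpha$ by a based continuous self-map $f\colon(\Sigma,\Gamma K)\to(\Sigma,\Gamma K)$ with $f_*=\alpha$ (Proposition~1B.9 of \cite{Hatcher}, as used in the proof of Lemma~\ref{descends}), then uses $\alpha^{-1}$ to see $f$ is a homotopy equivalence, and then Theorem~8.9 of \cite{FMar} gives a homeomorphism homotopic to $f$, which maps to $[\alpha]\in\Out(\Gamma)$. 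For injectivity, suppose $\psi\colon\Sigma\to\Sigma$ is a homeomorphism with $\psi_*$ inner, i.e.\ $\psi_*$ agrees with conjugation by some $\gamma$ on $\pi_1$. After composing $\psi$ with a suitable homeomorphism isotopic to the identity (translating the basepoint along a loop representing $\gamma$, using the homotopy extension property and the fact that $\Homeo(\Sigma)_1$ acts transitively on $\Sigma$, exactly as in the fiber-bundle argument inside the proof of Lemma~\ref{sigconorihthe}), we may assume $\psi$ fixes $\Gamma K$ and induces the identity on $\pi_1(\Sigma,\Gamma K)$. Since $\Sigma$ is aspherical, $\psi$ is then based homotopic to $\id_\Sigma$; and by 6.4~Theorem of \cite{Ep} (invoked in the proof of Lemma~\ref{sigconorihthe}), a homeomorphism of a surface homotopic to the identity is isotopic to the identity, so $[\psi]=1$ in $\pi_0\Homeo(\Sigma)=\MCG^\pm(\Sigma)$.

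The main obstacle is the injectivity step, specifically the passage from ``$\psi$ is homotopic to $\id$'' to ``$\psi$ is isotopic to $\id$'' for homeomorphisms: this is precisely the subtle point about homotopy-versus-isotopy that the paper flags repeatedly (the gap in \cite{MM}, Proposition~\ref{btop} vs.\ Theorem~\ref{ctop}), and here it is resolved by citing the Baer--Epstein result 6.4~Theorem of \cite{Ep} for surfaces, together with the identification of $\pi_0\Homeo(\Sigma)$ with the group of homotopy classes (also from \cite{Baer_1927}, \cite{Baer_1928}, \cite{Ep}, as recorded in the proof of Lemma~\ref{sigconorihthe}). Everything else is bookkeeping: checking the map is a well-defined homomorphism and that the two ambiguities (basepoint path for well-definedness, conjugating element for injectivity) match up, which is routine given the homotopy-extension and covering-space facts already assembled in this section. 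Since the statement is classical and the ingredients are all cited, the ``proof'' can reasonably be given as a reference to Theorem~8.9 of \cite{FMar} (with the asphericity of $\Sigma$ supplying the identification $\Gamma\simeq\pi_1\Sigma$), which is the economical route consistent with the style of this section.
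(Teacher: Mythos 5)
Your proposal is correct and matches the paper's treatment: the paper likewise constructs the homomorphism $\MCG^\pm(\Sigma)\to\Out(\Gamma)$ by sending a homeomorphism fixing the basepoint $\Gamma K$ to its action on $\pi_1(\Sigma,\Gamma K)\simeq\Gamma$ (checking well-definedness via the basepoint loop of a homotopy), and then simply cites Theorem 8.1 of \cite{FMar} for bijectivity. Your additional sketch of surjectivity (asphericity plus realizing a homotopy equivalence by a homeomorphism) and injectivity (point-pushing to fix the basepoint, then Baer--Epstein homotopy-implies-isotopy) is the standard argument behind that citation and is sound.
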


\begin{proof}
We only define the map $\MCG^\pm(\Sigma)\to\Out(\Gamma)$ here and omit the proof of its bijectivity. See Theorem 8.1 of \cite{FMar}. Let $\psi\in\Homeo(\Sigma)$ such that $\psi(\Gamma K)=\Gamma K$. Let 
\begin{equation*}
\beta(\psi)\colon\Gamma\xrightarrow{\iota_K^{-1}}\pi_1(\Sigma,\Gamma K)\xrightarrow{\psi_*}\pi_1(\Sigma,\Gamma K)\xrightarrow{\iota_K}\Gamma. 
\end{equation*}
Then $\beta(\psi)\in\Aut(\Gamma)$. 

If $\psi$, $\psi^\prime\in\Homeo(\Sigma)$ are such that $\psi(\Gamma K)=\Gamma K$, $\psi^\prime(\Gamma K)=\Gamma K$ and $[\psi]=[\psi^\prime]$ in $\MCG^\pm(\Sigma)$, we have 
\begin{equation*}
\begin{tikzcd}[row sep=5]
&\pi_1(\Sigma,\Gamma K)\ar[dd,"c\ \cdot\ c^{-1}"{name=U}]\\
\pi_1(\Sigma,\Gamma K)\ar[ru,"\psi_*"]\ar[rd,"\psi^\prime_*"']\\
&\pi_1(\Sigma,\Gamma K), 
\ar[from=2-1,to=U,phantom,"\circlearrowright"]
\end{tikzcd}
\end{equation*}
where $c\colon I\to\Sigma$ is the loop obtained by evaluating $\Gamma K$ to a homotopy between $\psi$ an $\psi^\prime$. Hence the map 
\begin{align*}
\left\{\psi\in\Homeo(\Sigma)\ \middle|\ \psi(\Gamma K)=\Gamma K\right\}&\to\Out(\Gamma)\\
\psi&\mapsto[\beta(\psi)]
\end{align*}
induces a map $\MCG^\pm(\Sigma)\to\Out(\Gamma)$. 
\end{proof}

We have an action $T(\Sigma)\curvearrowleft\MCG^\pm(\Sigma)$ defined by $[X,\phi][\psi]=[X,\phi\psi]$, where $\psi\colon\Sigma\to\Sigma$ is taken to be a diffeomorphism. 

\begin{prop}
We have 
\begin{equation*}
\begin{tikzcd}
T(\Sigma)\ar[r,phantom,"\curvearrowleft"]\ar[d,dash,sloped,"\sim"]&[-19pt]\MCG^\pm(\Sigma)\ar[d,dash,sloped,"\sim"]\\
\Aut(G)\bs\mca{H}(\Gamma,G)\ar[r,phantom,"\curvearrowleft"]&\Out(\Gamma), 
\end{tikzcd}
\end{equation*}
where the left vertical map is equivariant with respect to the actions. 
\end{prop}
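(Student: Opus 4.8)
The statement asserts that the bijections $T(\Sigma)\simeq\Aut(G)\bs\mca{H}(\Gamma,G)$ (Proposition \ref{bijtsigauth}) and $\MCG^\pm(\Sigma)\simeq\Out(\Gamma)$ (Dehn--Nielsen--Baer) intertwine the natural right actions. Both horizontal maps are already constructed in the excerpt (through $\iota_K$, $\phi_*$, $\psi_*$, and identification of deck groups with isometry groups), so the only thing left is compatibility with the actions. The plan is to trace a single element $([X,\phi],[\psi])$ through the two ways of going around the square and check the results agree up to the equivalence relations involved.

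\textbf{Key steps.} First I would fix a $\Sigma$-marked hyperbolic surface $(X,\phi)$ and a diffeomorphism $\psi\colon\Sigma\to\Sigma$, and without loss of generality arrange $\psi(\Gamma K)=\Gamma K$ by composing with a homotopy (which does not change the class in $\MCG^\pm(\Sigma)$, as in the proof of Dehn--Nielsen--Baer in the excerpt; one uses the homotopy extension property exactly as in the proof of Lemma \ref{sigconorihthe} to make the homotopy basepoint-preserving). Second, I would compute the class $\alpha(X,\phi\psi)\in\mca{H}(\Gamma,G)$ directly from the definition in the proof of Proposition \ref{bijtsigauth}: it is the composition
\begin{equation*}
\Gamma\xrightarrow{\iota_K^{-1}}\pi_1(\Sigma,\Gamma K)\xrightarrow{(\phi\psi)_*}\pi_1(X,\phi\psi(\Gamma K))\xrightarrow{\iota_{\wt x}}\Isom_+(\wt X)\xrightarrow{f\,\cdot\,f^{-1}}\Isom_+(\bb H^2)\simeq G.
\end{equation*}
Since $(\phi\psi)_*=\phi_*\circ\psi_*$ and $\psi_*$ differs from $\iota_K\circ\beta(\psi)\circ\iota_K^{-1}$ only by the conjugation used in defining $\beta(\psi)$, this composition equals $\alpha(X,\phi)\circ\beta(\psi)$ up to an inner automorphism of $\Gamma$ (the discrepancy coming from $\psi(\Gamma K)=\Gamma K$ only up to a loop). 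Third, I would record that the right action on $\Aut(G)\bs\mca{H}(\Gamma,G)$ by $\Out(\Gamma)$ is precisely $[\alpha]\cdot[\beta]=[\alpha\circ\beta]$ — well-defined because precomposition with an inner automorphism of $\Gamma$ can be absorbed into the $\Aut(G)$ on the left (conjugation by the corresponding element of $G=\Isom_+(\bb H^2)$), and postcomposition with $\Aut(G)$ is already quotiented out. Fourth, putting these together gives $\alpha(X,\phi\psi)=\alpha(X,\phi)\cdot\beta(\psi)$ in $\Aut(G)\bs\mca{H}(\Gamma,G)$, which is exactly the commutativity of the square on the level of the actions. Finally I would note the two horizontal maps are bijections (cited from the excerpt and from \cite{FMar}), so equivariance transfers automatically in the other direction.

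\textbf{Main obstacle.} The routine parts (functoriality of $\pi_1$, the identifications of deck groups) are harmless; the one genuinely delicate point is bookkeeping of basepoints and inner automorphisms. A homotopy equivalence or diffeomorphism carrying $\Gamma K$ to $\Gamma K$ only gives an outer class, and a homotopy realizing $[\psi]=[\psi']$ generally moves the basepoint along a loop $c$, so $\psi_*$ and $\psi'_*$ differ by conjugation by $[c]$. One must check that the ambiguity introduced is exactly an inner automorphism of $\Gamma$ and that this is absorbed by the left $\Aut(G)$-quotient — i.e. that conjugation by $\gamma\in\Gamma\subset G$ on $\mca{H}(\Gamma,G)$ coincides with the $\Aut(G)$-action by the inner automorphism $g\mapsto\gamma g\gamma^{-1}$ restricted to $\alpha(\Gamma)$. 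This is the same $\iota_x$-vs-$\iota_{x'}$ comparison already carried out in Section \ref{well s} (the figures relating $\iota_y$ and $\iota_{hy}$), so I would invoke that computation essentially verbatim. Everything else is a diagram chase.
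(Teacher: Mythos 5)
Your argument is correct and follows essentially the same route as the paper: normalize $\psi$ to a diffeomorphism with $\psi(\Gamma K)=\Gamma K$, then compute $\alpha(X,\phi\psi)$ from the definition using $(\phi\psi)_*=\phi_*\psi_*$ to get $\alpha(X,\phi\psi)=\alpha(X,\phi)\circ\beta(\psi)$ (with this normalization the identity holds on the nose, since the same universal cover and isometry $f$ serve both sides, so the "up to inner automorphism" caveat — absorbed into the $\Aut(G)$-quotient via conjugation by $\alpha(\gamma)$ — is only needed for well-definedness of the two classes, which is already established). Your extra bookkeeping of basepoints and inner automorphisms is the part the paper leaves implicit, but it is the same proof.
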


\begin{proof}
Let $[X,\phi]\in T(\Sigma)$, $[\psi]\in\MCG^\pm(\Sigma)$. We may assume $\psi$ is a diffeomorphism and $\psi(\Gamma K)=\Gamma K$. Then $\alpha(X,\phi\psi)=\alpha(X,\phi)\beta(\psi)$. This proves the equivariance. 
\end{proof}

\subsubsection{Summary}
What we have proved in this section can be summarized in the following diagram: 
\begin{equation*}
\begin{tikzcd}[row sep=10]
&[-20]1\ar[dd]&[-40]&[-15]\\
\ \\
\Hom(\wt{\Gamma},Z)\ar[dd]&\Hom(\wt{\Gamma},Z)\ar[dd]\\
\ \\
\Aut(\wt{G})\bs\mca{H}(\wt{\Gamma},\wt{G})\ar[r,phantom,"\curvearrowleft"]\ar[dd,"\ol{\pi}"]&\Out(\wt{\Gamma})\ar[rrd,dash,"\sim"{sloped}]\ar[dd,"\ol{\pi}_0"{yshift=-3}]\\
&&T_o(\rho_\Gamma)\ar[r,phantom,"\curvearrowleft"]&\MCG_o(\rho_\Gamma)\\
\Aut(G)\bs\mca{H}(\Gamma,G)\ar[r,phantom,"\curvearrowleft"]\ar[uu,bend left,"\ol{\sigma}"]&\Out(\Gamma)\ar[rrd,dash,"\sim"{sloped}]\ar[dd]\ar[uu,bend left,"\ol{\sigma}_0"]\\
&&T(\Sigma)\ar[r,phantom,"\curvearrowleft"]&\MCG^\pm(\Sigma)\\
&1. 
\ar[from=5-1,to=6-3,dash,"\sim"{sloped},crossing over]\ar[from=7-1,to=8-3,dash,"\sim"{sloped},crossing over]
\end{tikzcd}
\end{equation*}

\bibliographystyle{amsalpha}
\bibliography{maruhashi}
\end{document}